\def\abstract#1{{\noindent\textbf{Abstract.} \em #1}
\par
\medskip

}
\definecolor{Labo}{rgb}{0.9,0.9,1.0}
\definecolor{Black}{rgb}{0.00,0.00,0.00}
\definecolor{Red}{rgb}{1.00,0.00,0.00}
\definecolor{Blue}{rgb}{0.00,0.00,1.00}
\definecolor{White}{rgb}{1.00,1.00,1.00}
\definecolor{aqua}{rgb}{0.00,255,255}
\definecolor{Yellow}{rgb}{1.00,0.00,0.00}
\newtheorem{theorem}{Theorem}[section]
\newtheorem{lemma}[theorem]{Lemma}
\newtheorem{corollary}[theorem]{Corollary}
\newtheorem{proposition}[theorem]{Proposition}
\newtheorem{definition}[theorem]{Definition}
\newtheorem{remark}[theorem]{Remark}
\newcounter{example}
    \newenvironment{example}[1][]{\refstepcounter{example}\par\medskip\noindent
    \textbf{Example~\theexample #1 ---} \rmfamily}{\medskip}
\newenvironment{proof}[1][Proof]{\medskip\noindent\emph{#1 ---\;}}{\\ 
    \null\  \hfill\textbf{Q.E.D.}\medskip}
\DeclareMathOperator{\dist}{\mathrm{dist}}
\def\e{\varepsilon}
\def\eps{\varepsilon}
\newcommand{\lsc}{l.s.c.\xspace}
\newcommand{\usc}{u.s.c.\xspace}
\newcommand{\cob}{\overline{\mathop{\rm co}}}
\def\be{\begin{equation}}
\def\ee{\end{equation}}
\def\R{\mathbb R}
\def\N{\mathbb N}
\def\Z{\mathbb Z}
\def\F{\mathbb{F}}
\def\Fk{\mathbb{F}^k}
\def\G{\mathbb G}
\def\I{\mathbb I}
\def\OOb{\overline{\mathcal{O}}}
\def\Omegb{{\overline \Omega}}
\def\domeg{{\partial \Omega}}
\renewcommand{\H}{\mathcal{H}}
\newcommand{\Ga}{\Gamma}
\newcommand{\V}[1]{V^{(#1)}}
\def\OO{\mathcal{O}}
\def\VV{\mathcal{V}}
\renewcommand{\H}{\mathcal{H}}
\def\KK{{\mathcal K}}
\def\VV{{\mathcal V}}
\newcommand{\mT}{\mathcal{T}}
\newcommand{\mTreg}{\mathcal{T}^\mathrm{reg}}
\newcommand{\E}{\mathcal{E}}
\newcommand{\cT}{\mathcal{T}}
\def\ue{u_\eps}
\def\xe{x_\eps}
\def\pe{p_\eps}
\def\te{t_\eps}
\def\ye{y_\eps}
\def\re{r_\eps}
\def\se{s_\eps}
\def\ze{z_\eps}
\def\ou{\overline{u}}
\def\uu{\underline{u}}
\def\ow{\overline{w}}
\def\ovp{\overline{p}}
\def\unp{\underline{p}}
\def\ovq{\overline{q}}
\def\unq{\underline{q}}
\def\xb{{\bar x}}
\def\yb{{\bar y}}
\def\zb{{\bar z}}
\def\tb{{\bar t}}
\def\ovs{{\bar s}}
\def\tb{{\bar t}}
\def\gb{{\mathbf b}}
\def\gc{{\mathbf c}}
\def\gl{{\mathbf l}}
\def\uc{{\underline{c}}}
\def\vcb{{\bar X}}
\def\vc{X}
\def\vt{Y}
\def\vtb{\bar Y}
\def\vnb{\bar Z}
\def\vte{Y_\e}
\def\vn{Z}
\def\tX{{\tilde X}}
\def\tr{{\tilde r}}
\def\bx{{\bar{x}}}
\def\bt{{\bar{t}}}
\def\Um{\mathbf{U}^-}
\def\Up{\mathbf{U}^+}
\def\VF{\mathbf{U}}
\def\VFp{\mathbf{U}^+}
\def\VFm{\mathbf{U}^-}
\newcommand{\Uim}{\mathbf{U}^\mathrm{FL}}
\def\u0{u_0}
\def\Ge{\G_\eps}
\newcommand{\Hti}{\tilde{H}}
\newcommand{\Htireg}{\tilde H^{\rm reg}}
\def\HT{\HT}
\def\Hb{\bar{H}}
\def\HTreg{{ H}^{\rm reg}_T}
\def\FTreg{{F}^{\rm reg}_T}
\def\FT{{F}_T}
\def\limssup{\mathop{\rm limsup\hspace*{0.1em}\raisebox{0.25em}{$\scriptstyle \ast$}\,}}
\def\limiinf{\mathop{\rm liminf\hspace*{0.07em}\raisebox{-0.15em}{$\scriptstyle \ast$}\,}}
\def\YtoXandEPStoZERO{{\displaystyle{\mathop{\scriptstyle{Y\to
X}}_{\varepsilon\to 0}}}}
\renewcommand{\d}{\,\mathrm{d}}
\newcommand{\dalpha}{\,\mathrm{d}\alpha}
\newcommand{\ds}{\,\mathrm{d}s}
\newcommand{\dt}{\,\mathrm{d}t}
\def\apg{\left\{}
\def\chg{\right\}}
\def\ch{\right.}
\def\a{a}
\def\A{{\cal A}}
\def\Ta{{\cal T}_{x,t}}
\def\Treg{{\cal T}^{\rm reg}_{x,t}}
\def\HTreg{{H}^{\rm reg}_T}
\def\HT{{H}_T}
\def\Aoreg{A_0^{\rm reg}}
\newcommand\PP[1]{$\mathbf{P}(#1)$}
\newcommand{\Man}[1]{\mathbf{M}^{#1}}
\newcommand{\tMan}[1]{\mathbf{\tilde M}^{#1}}
\newcommand{\hyp}[1]{$(\mathbf{H}_\mathbf{#1})$}
\newcommand{\BCL}{\mathbf{BCL}}
\newcommand{\hU}{\mathbf{U}}
\newcommand{\Sub}{\mathbf{Sub}}
\newcommand{\B}{\mathbf{B}}
\newcommand{\C}{\mathbf{C}}
\renewcommand{\L}{\mathbf{L}}
\newcommand{\LCR}{{\bf (LCR)}\xspace}
\newcommand{\SCR}{{\bf (SCR)}\xspace}
\newcommand{\GCR}{{\bf (GCR)}\xspace}
\newcommand{\LCRxt}{$\mathbf{LCR}^\psi(\xb,\tb)$\xspace}
\newcommand{\LCREV}{\text{{\bf (LCR)}-\emph{evol}}\xspace}
\newcommand{\GCREV}{\text{{\bf (GCR)}-\emph{evol}}\xspace}
\newcommand{\HJgen}{{\rm (HJ-Gen)}\xspace}
\newcommand{\KC}{{\rm (KC)}\xspace}
\newcommand{\FL}{{\rm (FL)}\xspace}
\newcommand{\GFL}{{\rm (GFL)}\xspace}
\newcommand{\GJC}{{\rm (GJC)}\xspace}
\newcommand{\CVS}{{\rm (CVS)}\xspace}
\newcommand{\JVS}{{\rm (JVS)}\xspace}
\newcommand{\JVSub}{{\rm (JVSub)}\xspace}
\newcommand{\JVSup}{{\rm (JVSuper)}\xspace}
\newcommand{\FLS}{{\rm (FLS)}\xspace}
\newcommand{\FLSub}{{\rm (FLSub)}\xspace}
\newcommand{\FLSup}{{\rm (FLSuper)}\xspace}
\newcommand{\wSSub}{{\rm (w-S-Sub)}\xspace}
\newcommand{\sSSub}{{\rm (s-S-Sub)}\xspace}
\newcommand{\SSup}{{\rm (S-Super)}\xspace}
\newcommand{\SBJ}{{\rm (SBJ)}\xspace}
\newcommand{\HJBS}{\text{\rm (HJB-S)}\xspace}
\newcommand{\TC}{{\bf (TC)}\xspace}
\newcommand{\TCs}{{\bf (TC-$s$)}\xspace}
\newcommand{\TCsH}{{\bf (TC-HJ)}\xspace}
\newcommand{\NCe}{{\bf (NC)}\xspace}
\newcommand{\NCw}{{\bf (NC$_w$)}\xspace}
\newcommand{\NCHJ}{{\bf (NC-HJ)}\xspace}
\newcommand{\Mong}{{\bf (Mon)}\xspace}
\newcommand{\TCBCL}{{\bf (TC-BCL)}\xspace}
\newcommand{\NCBCL}{{\bf (NC-BCL)}\xspace}
\newcommand{\M}{\mathbb{M}}
\newcommand{\tM}{\mathbb{\tilde M}}
\newcommand{\GAConv}{{\bf (GA-Conv)}\xspace}
\newcommand{\GACC}{{\bf (GA-CC)}\xspace}
\newcommand{\GAQC}{{\bf (GA-QC)}\xspace}
\newcommand{\GAGen}{{\bf (GA-Gen)}\xspace}
\newcommand{\HContG}{{\bf (GA-ContG)}\xspace}
\newcommand{\GAGFL}{{\bf (GA-G-FL)}\xspace}
\newcommand{\GAGKT}{{\bf (GA-G-GKT)}\xspace}
\newcommand{\GAGFLT}{{\bf (GA-G-FLT)}\xspace}
\newcommand{\AFS}{{\rm (AFS)}\xspace}
\newcommand{\LFS}{\text{\rm (LFS)}\xspace}
\newcommand{\TFS}{{\rm (TFS)}\xspace}
\newcommand{\Sbb}{\mathbb{S}}
\newcommand{\SSP}{{\rm (SSP)}\xspace}
\newcommand{\QRB}{{\bf (QRB)}\xspace}
\newcommand{\dN}{\mathrm{d}_{\mathbf{N}}}
\newcommand{\toLFSs}{\mathop{\xrightarrow{\mathrm{LFS-}s\,}}}
\newcommand{\toTFSs}{\mathop{\xrightarrow{s\,}}}
\newcommand{\toTFSw}{\mathop{\xrightarrow{w\,}}}
\def\Esing{E_{\rm sing}^{\eta}}
\def\sym{{\cal S}^N}
\newcommand{\ind}[1]{{1\hspace{-1.2mm}{\rm I}}_{\{#1\}}}
\def\1{{1\hspace{-1.2mm}{\rm I}}}
\newcommand{\commentout}[1]{}
\newcommand{\resp}[1]{$[\,resp.$ #1$\,]$\xspace}
\newcommand{\mF}{\mathcal{F}}
\newcommand{\mA}{\mathcal{A}}
\def\Tf{T_f}
\def\tTf{\tilde{T}_f}
\def\PC1{\mathrm{PC}^{1}(\R^N\times [0,\Tf])}
\newcommand{\cyl}{Q^{x,t}_{r,h}}
\newcommand{\cylp}{Q^{x,t}_{r',h'}}
\newcommand{\cylb}{\overline{Q^{x,t}_{r,h}}}
\newcommand{\Monu}{$(\mathbf{Mon}\text{-}u)$\xspace}
\newcommand{\Monp}{$(\mathbf{Mon}\text{-}p)$\xspace}
\newcommand{\reg}{{\rm reg}}
\def\xoe{{\displaystyle \frac{x}{\varepsilon}}}
\newcommand{\HHJ}{$(\mathbf{H}_\mathbf{BA-HJ})$\xspace}
\newcommand{\HCP}{$(\mathbf{H}_\mathbf{BA-CP})$\xspace}
\newcommand{\HGCP}{$(\mathbf{H}_\mathbf{BA-}p_t)$\xspace}
\newcommand{\HBACP}{$(\mathbf{H}_\mathbf{BA-CP})$\xspace}
\newcommand{\HBAHJ}{$(\mathbf{H}_\mathbf{BA-HJ})$\xspace}
\newcommand{\HBACPp}{$(\mathbf{H}_\mathbf{BA-CP})^+$\xspace}
\newcommand{\HBAHJp}{$(\mathbf{H}_\mathbf{BA-HJ})^+$\xspace}
\newcommand{\HBAIDCP}{$(\mathbf{H}_\mathbf{BA-ID}^\mathbf{CP})$\xspace}
\newcommand{\HBACPc}{$(\mathbf{H}_\mathbf{BA-CP}^\mathrm{class.})$\xspace}
\newcommand{\HBAConv}{$(\mathbf{H}_\mathbf{BA-Conv})$\xspace}
\newcommand{\HSubHJ}{$(\mathbf{H}_\mathbf{Sub-HJ})$\xspace}
\newcommand{\HConv}{$(\mathbf{H}_\mathbf{Conv})$\xspace}
\newcommand{\NCoH}{$(\mathbf{NC}_\mathbf{\H})$\xspace}
\newcommand{\HQC}{$(\mathbf{H}_\mathbf{QC})$\xspace}
\newcommand{\HST}[1][]{$(\mathbf{H}_\mathbf{ST}^{#1})$\xspace}
\newcommand{\HBCLa}{$(\mathbf{H}_\mathbf{BCL})_{fund}$\xspace}
\newcommand{\HBCLb}{$(\mathbf{H}_\mathbf{BCL})_{struct}$\xspace}
\newcommand{\HBCL}{$(\mathbf{H}_\mathbf{BCL})$\xspace}
\newcommand{\HBASF}{$(\mathbf{H}_\mathbf{BA-SF})$\xspace}
\newcommand{\HBASFstar}{$(\mathbf{H}_\mathbf{BA-SF}^\mathbf{*})$\xspace}
\newcommand{\HSBJ}{$(\mathbf{H}_\mathbf{SBJ})$\xspace}
\newcommand{\Hgamma}{$(\mathbf{H}_{\gamma,g})$\xspace}
\newcommand{\Hgam}[1][]{$(\mathbf{H}_{\gamma,g}^{#1})$\xspace}
\newcommand{\HSBC}{$(\mathbf{H}_\mathbf{BC}^\emph{simpl.})$\xspace}
\newcommand{\LOCa}{$(\mathbf{LOC1})$\xspace}
\newcommand{\LOCb}{$(\mathbf{LOC2})$\xspace}
\newcommand{\USCS}{\textrm{\rm USC-Sub}\xspace}
\newcommand{\LSCS}{\textrm{\rm LSC-Sup}\xspace}
\newcommand{\LOCaEV}{$(\mathbf{LOC1})$-\emph{evol}\xspace}
\newcommand{\LOCbEV}{$(\mathbf{LOC2})$-\emph{evol}\xspace}
\newcommand{\floorF}[1]{{\lfloor #1 \rfloor}^x_{\partial\mF^{x,r}}}
\newcommand{\floorFt}[1]{{\lfloor #1 \rfloor}^{(x,t)}_{\partial_\mathrm{\,lat} Q}}
\newcommand{\ved}{v^{\e,\delta}}
\newcommand{\Se}{{S_\e}}
\newcommand{\epsn}{{\eps_n}}
\newcommand{\BCLHO}{\BCL^{H_0}}
\newcommand{\VFmHO}{\VFm_{H_0}}
\newcommand{\VFpHO}{\VFp_{H_0}}
\newcommand{\AHO}{A^{H_0}}
\newcommand{\mAHO}{\mA^{H_0}}
\newcommand{\HHO}{H^{H_0}}
\newcommand{\FHO}{\F^{H_0}}
\newcommand{\mTHO}{\mT_{H_0}}
\newcommand{\cO}{\mathcal{O}}
\newcommand{\In}{{C}^+}
\newcommand{\Inf}{{C}^+_\mathrm{flat}}
\newcommand{\Kxt}{\mathcal{K}_{(\xb,\tb)}}
\newcommand{\BCLloc}{\BCL^{\delta}_\mathrm{loc}}
\newcommand{\mK}{\mathcal{K}}
\newcommand{\mC}{\mathcal{C}}
\newcommand{\QCR}{$(\mathbf{H}_{\mathbf{QC-}\R})$\xspace}
\newcommand{\QOm}{Q^{\bar\Omega}_r}
\newcommand{\BCLp}{\BCL'}
\newcommand{\IDP}{$\mathbf{(IDP)}$\xspace}
\newcommand{\IDPN}{$\mathbf{(IDPN)}$\xspace}
\newcommand{\BCLeq}{\BCL_\mathrm{eq}}
\newcommand{\BCLbc}{\BCL_\mathrm{bc}}
\newcommand{\mus}{{\mu_\star}}
\newcommand{\wlg}{\emph{w.l.o.g.}\xspace}
\newcommand{\ie}{\emph{i.e.}\xspace}
\newcommand{\wrt}{\emph{w.r.t.}\xspace}
\newcommand{\cf}{\emph{cf.}\xspace}
\newcommand{\adhoc}{\emph{ad hoc}\xspace}
\newcommand{\smsp}{\ \\[3pt]}
\newenvironment{assumption}[2]{%
    \vspace{1em}
    \noindent #1\ --- {#2}\nopagebreak\smsp
    \begingroup\it}
    {\endgroup\vspace{1em}
    }
\newenvironment{app}[3]{%
    \vspace*{1.5em}
    \noindent #1\ --- \emph{#2}~{#3:}\nopagebreak}
    {}
\newcommand{\keypoint}[2]{%
    \begin{center}
    \begin{minipage}{0.8\textwidth}
    \textsc{#1 ---}
    \emph{#2} 
    \end{minipage}
    \end{center}
}
\newcommand{\HSTLFS}{$(\mathbf{H}_{\mathbf{ST}}^\text{\sc lfs})$\xspace}
\newcommand{\HSTTFS}{$(\mathbf{H}_{\mathbf{ST}}^\text{\sc tfs})$\xspace}
\newcommand{\HSTFLAT}{$(\mathbf{H}_{\mathbf{ST}}^\text{\sc flat})$\xspace}
\newcommand{\HSTGEN}{$(\mathbf{H}_{\mathbf{ST}}^\text{\sc gen})$\xspace}
\begin{document}

\pagestyle{empty}
\thispagestyle{empty}
\pagestyle{fancy}
\fancyhf{}

%
%

\title{An Illustrated Guide of the Modern Approaches of Hamilton-Jacobi Equations and Control
Problems with Discontinuities} 
\author{Guy Barles\footnote{\texttt{<guy.barles@idpoisson.fr>}} \&
Emmanuel Chasseigne\footnote{\texttt{<emmanuel.chasseigne@idpoisson.fr>}}}

\date{
Institut Denis Poisson (UMR CNRS 7013)\\
Universit\'e de Tours, Universit\'e d'Orl\'eans, CNRS\\
Parc de Grandmont\\
37200 Tours, France\\
}

\maketitle
\cleardoublepage

\centerline{\Large \bf Acknowledgements and various informations}

\ \\

\noindent{\large \bf Key-words:} Hamilton-Jacobi-Bellman Equations, deterministic control problems,
discontinuous Hamiltonians, stratification problems, comparison principles, viscosity solutions,
boundary conditions, vanishing viscosity method.  \\

\noindent{\large \bf AMS Class. No:}
35F21, 
49L20,   
49L25,   
35B51.  
\\

\noindent{\large\bf Acknowledgements: } The authors  were partially supported  by the ANR projects
HJnet (ANR-12-BS01-0008-01) and MFG  (ANR-16-CE40-0015-01).

\ \\
\noindent{\large\bf Informations: }
\begin{enumerate} 
    \item At the beginning of the book, just after the preface, a section called \emph{``Survival
        kit for the potential reader: how can this book be useful to YOU?''} aims at explaining how
        to enter into this book without reading it from the first pages. The answer depending of
        course on who you are and what you wish to find here.
    \item At the end of the book, in addition to the usual index, two appendices gather the main
        notations and assumptions which are used throughout this book. Also included is a list of
        the different notions of solutions as a quick reference guide.
\end{enumerate}

%
%

\cleardoublepage
\addcontentsline{toc}{chapter}{Preface}

\centerline{\large \bf Preface}
\vspace{2cm}

The genesis of this book can be traced back to the early 2010's. 

At that time, many researchers in the viscosity solutions community got interested in
Hamilton-Jacobi Equations set on networks. In order to avoid traffic jams on such research
themes, with Ariela Briani we decided to consider problems set in the usual euclidian space, but
having \emph{discontinuities}.

Of course we first considered the case of a codimension $1$ discontinuity. Meanwhile, we were
listening to talks on networks with interest, but as if they concerned different problems;
conversely, people working on networks were clearly thinking that we were addressing different
questions.

Then, inspired by the article of Bressan and Hong \cite{BH}, we moved to stratified problems, \ie
problems with discontinuities of any codimensions, but still in the whole euclidian space. We also
started thinking about possible generalizations to problems set in domains, bounded or not.

\

\noindent\emph{End of year 2017, starting the project ---} 
Three main facts convinced us that starting to write a book could be worth considering:
\begin{enumerate} 
    \item[$(i)$] Several discussions with Cyril Imbert made us realize that the methods used for
        networks could be useful for treating problems with codimension $1$ discontinuities; the
        article written in collaboration with Ariela Briani and Cyril Imbert \cite{BBCI} was a first
        step in this direction. But clearly more had to be said about this ``network approach''.
    \item[$(ii)$] The Tanker Problem exposed by Pierre-Louis Lions in one of his courses at the
        Coll\`ege de France was illuminating on the possible extensions of our stratified approach
        to treat a large variety of possibly singular boundary value problems without much
        additional effort.  
    \item[$(iii)$] Last---and perhaps least---, we noticed that some of the techniques we developed in
        the stratified context could be useful to extend the ``network approach'' to a
        multi-dimensional framework.  
\end{enumerate}

Though exploiting these ideas in publishing a series of articles was tempting, we decided instead to
start writing an ``evolutive book'': from the beginning, our plan was to get an online version
available to other researchers, that we would keep improving with possible contributions or help
from readers. And indeed, all the versions were modified by taking into account such
remarks as well as our own progress. 

This choice may appear quite particular as, in general, mathematical books are written when the theory
starts being well-established, key results have reached their (almost) definitive form and a global
understanding of the various phenomena has been validated by the community. 

But as we explained, we were not at all in such an idyllic situation when we started this project.
Our aim was to take time to produce a ``clean'' contribution to the subject, instead of polluting
literature with several unsatisfactory articles. By doing so, we decided to give ourselves time to correct
our own mistakes, be it minor ones in the proofs or errors in the strategy of those proofs, but also
in the presentation and articulation of the different results. 

The least we can say today is that we overused these possibilities.

\

\noindent\emph{Early 2018, writing the first pages ---}
The above paragraphs may give the impression that we were very ambitious but this was not entirely
the case. In terms of content, our initial plans for this book were rather modest: the main idea was
to gather in the same publication simplified versions of the comparison arguments for the hyperplane
case and the stratified framework which were known at that time. Concretely, this meant putting
together:
\begin{enumerate}
    \item[1.] our works on Ishii solutions for the hyperplane case \cite{BBC1,BBC2} showing the
        problems encountered by the classical viscosity solutions approach; 
    \item[2.] the comparison result for flux-limited solutions found in \cite{BBCI}, which was
        simplifying the Imbert-Monneau comparison arguments found in \cite{IM,IM-md}; 
    \item[3.] the Lions-Souganidis \cite{LiSo1,LiSo2} arguments for junction viscosity solutions;
    \item[4.] the stratified framework developed in \cite{AEYW}, with some ``easy'' extensions to
         state-constrained problems. 
\end{enumerate}
This project was thought of as a kind of compendium of 150-\emph{ish} pages about discontinuities
in Hamilton-Jacobi equations related to control problems, \ie restricting ourselves to the case of
convex Hamiltonians.  For the ``network approach'', our aim was both to clarify and simplify the
existing results and their proofs, as it seemed to us that there was some room to do so!
On the other hand, we wished to show that the ideas we had for stratified problems can be pushed
quite far, in particular with the aim of treating problems with boundary conditions---though we did
not realize how far and how concrete we could go at that time; but, in any case, we did not plan to
go too far in the treatment of these extensions.

However, even if we were not very ambitious with regards to generality, we were more so
on the contribution of this book: revisiting the recent progresses did not mean that we were merely
copy-pasting existing articles with few modifications. Instead, our goal was to highlight
the main common ideas, whether technical or more fundamental. With a better understanding of the
existing proofs, our hope was to simplify them as much as possible in order to promote further
developments.

All these original plans explain the organization of this book today: while thinking about all the
common points in several works, we decided from the beginning to dedicate an entire part,
Part~\ref{Part:prelim}, to the ``basic results'', which are common bricks, used very often under
perhaps slightly different forms, to prove the main results. This also has the advantage of
lightening the presentation of the main results and their proofs. But we cannot deny that this
creates a rather technical part that may also prove difficult to read, although it can be interesting to
see some classical ideas revisited in sometimes unusual ways.

Unfortunately (or fortunately?), even the first draft was not along the lines of our initial
objectives: we decided to add ``a little more'' material and the project soon reached almost 300 
pages---version~$1$, december 2018. The only rule we respected at that time was the framework of 
\emph{convex Hamiltonians for equations with a codimension~$1$ discontinuities}.

\

\noindent\emph{Year 2019, a reorganized and expanded second version ---}
We had to admit that our decision to restrict ourselves to convex Hamiltonians in
the case of codimension~$1$ discontinuities was a nonsense. Indeed, in the ``network approach'',
all the results inspired by the works of Imbert-Monneau \cite{IM,IM-md} were valid without much
change in their framework of \emph{quasi-convex Hamiltonians}. 

We then reorganized the book, building an entire part on this ``network approach''. Concerning the
arguments of Lions-Souganidis \cite{LiSo1,LiSo2} for junction viscosity solutions, we recall that
they work for Hamiltonians which are only continuous.

Moreover, we realized there was far more to be told than what we initially had in mind:
\begin{enumerate}
    \item[$(i)$] A comparison between the notions of \emph{Ishii, flux-limited and junction
        solutions} was not part of the initial plan, despite some results already appearing in
        \cite{IM,IM-md}. But, pushed by the challenging study of the convergence of the vanishing
        viscosity method---and the applications to KPP or Large Deviations type problems---,
        we discovered that we were able to make a quite complete and rather simple description of
        their links, in particular the conditions under which they are equivalent notions of
        solutions.
    \item[$(ii)$] We noticed that the stratified framework allowed us to deal with far more general
        situations than what we thought, including time-dependent stratifications, state-constrained
        and boundary value problems. Though all these themes were somehow present in the first
        version, we revisited all the results, simplified and sometimes generalized them. We even
        realized that some of what we considered as being the unavoidable ``basic tools'' had to be
        defined or used differently.
\end{enumerate}

\

\noindent\emph{The pandemic years, third version ---}
In 2020, the pandemic struck and kept us away from the project for more than a year for various
reasons. This imposed step back made us realize the numerous weaknesses of our first and second
versions. This led once more to a lot of additions and modifications in 2021--2022 which made the
project go far beyond the 500-page mark.

As the book unfolded, and even if this was not our objective at the beginning, we ended up
developing a very general framework to the cost of some complexities and technicalities. In
particular, it was challenging for the stratified approach to see how our initial ideas based on the
simple assumptions of \emph{normal controllability} and \emph{tangential continuity} could be pushed
to solve rather singular problems. And sometimes without much additional effort.

We are fully aware that the general framework we are presenting today is probably a bit complex when
considering simple and concrete applications. We hope it will not prevent or stop the reader from
delving into it. We have devoted a lot of time and effort to give non technical explanations as
much as possible.

We also made a point from the beginning, not only to give abstract results but also to explain how
they can be applied to concrete applications and contribute to new results. This explains the use of
``illustrative'' in the title: we have tried to incorporate as many examples and counter-examples as
we could, provide various applications and we have pointed out several puzzling open problems. 

As we also mentioned in various places, some situations can be treated with weaker assumptions,
through making good use of the specificities of each problem. But we are now convinced that the
assumptions we make are really needed in order to build quite a general framework, as
counter-examples show.

Though we did not fully implement them, we also tried to show how these approaches can be useful in
treating other situations like for instance non-local equations (trajectories with jumps) or
multi-dimensional networks.

\

\noindent\emph{Spring 2023, ending the project ---}
Five years after writing the first lines of this book, we decided to put an end to the writing
process of the project.

Version 4 reaching now more than 630 pages in its standard LateX version---a bit less in the
Springer Nature format---, we feel that it is now high time to publish what we somehow consider to
be a final version of the book. Since we have make even major changes right up to the end, we are
convinced that we could still improve the presentation. We could also probably add some other
results and implement new material. 

But of course, this would become an endless pursuit. 

Although in its form this book is far from what we initially had in mind, we have the
feeling that we approximately reached what was our aim: to present a collection of results,
approaches, situations that all share some common concepts and provide a framework which could make
everything coexist rather smoothly, even if everything is certainly still imperfect.

We hope that the reader of this manuscript will enjoy reading it and that its content will be
useful to anyone interested in these topics. Of course, we would be very happy to hear that some of
the open problems we mention here are finally solved in the future.\\

\

\parbox[c]{.3\hsize}{
G. Barles} \hfill\parbox[c]{.3\hsize}{E. Chasseigne}

\cleardoublepage
\addcontentsline{toc}{chapter}{Survival kit for the reader}

\centerline{\Large \bf Survival kit for the potential reader: }

\bigskip

\centerline{\large \emph{how can this book be useful to YOU?}}
\vspace{2cm}

Upon taking this book in your hands, looking at its size and content you might be a little
bit discouraged. Furthermore, the idea that you have to read and digest the huge first part
called the ``Toolbox''---containing the basic results which are useful to solve problems involving
Hamilton-Jacobi-Bellman Equations and/or deterministic control problems with discontinuities---can
be more than frightening.

We admit that this part is unavoidably ``a bit technical'', hard to read without some serious
motivation... Which we hope can be found in the rest of the book!  But, and this may be good news,
we think most of our readers will skip the ``Toolbox'', at least parts of it.
We have however to issue a warning:

\begin{center}
\begin{minipage}{0.8\textwidth}
\emph{This book is not designed for complete beginners in the theory of viscosity solutions nor in
deterministic optimal control problems.} 
\end{minipage}
\end{center}

Indeed, it seems clear to us that addressing problems on Hamilton-Jacobi Equations and/or
deterministic control problems with discontinuities requires reasonable mastery of such problems
in the continuous case. 

More precisely, we find it unavoidable to assume that the reader is at least familiar with some
notions, results and their related proofs such as: comparison results for viscosity solutions;
stability results for viscosity solutions; connections between standard finite horizon control
problems with Hamilton-Jacobi-Bellman Equations using the viscosity solution approach. A good test
in this direction consists of checking that you are not lost while browsing
Chapter~\ref{chap:BasicFram}.\\

Coming back to the toolbox, we have tried to draft all the proofs in the book by emphasizing the
role of the related key bricks (introduced in this toolbox), and we did it in a manner that the
arguments remain readable without knowing the details of such bricks. In this way, one can avoid
reading the different independent sections of Part~\ref{Part:prelim} at first, before being
completely convinced that it may be necessary.

On the other hand, depending on who you are and what you hope to find in this book, you may consider
different (and safer!) entry points than the ``Toolbox''. Here are some suggestions for different
readers:

\begin{enumerate}
    \item[$(i)$] {\bf You are an ``enlightened beginner''} and want to learn some basics about HJB-Equations with
        discontinuities: Part~\ref{part:codim1} is certainly the most unavoidable. Starting from
        Chapter~\ref{chap:BasicFram} which exposes the standard continuous case, this part then
        goes on by describing all the challenges and potential solutions at hand in the rather simple context of a
        codimension $1$ discontinuities. Yet the difficulty of this part is to extract a clear global
        vision and we try to provide our point of view in Chapter~\ref{sec:SCQ}.
    \item[$(ii)$] {\bf You are interested in stratified problems:} this clearly requires a
        non-negligible investment since it seems difficult to avoid first reading
        Chapter~\ref{chap:control.tools} on \emph{Control Tools}, even just to get the notations. Then
        you can start reading Part~\ref{stratRN}: we have tried to point out the main ideas to keep
        in mind, starting from the easiest case before going towards the most sophisticated
        ones. We hope that the general treatment of singular boundary conditions in non-smooth
        domains, Part~\ref{S-BC}, will be a sufficient motivation for enduring all the
        difficulties! The applications contained in Chapter~\ref{chap:appl-strat} may also motivate you.
    \item[$(iii)$] {\bf You are interested in HJ-Equations on networks:} Part~\ref{part:NA} is made
        for you! Of course, we do not really treat networks (we only consider two-branch
        junctions) but this part contains ideas---strongly inspired from Imbert-Monneau and
        Lions-Souganidis---which we have simplified as much as we could, that you will certainly be
        able to use in far more complicated situations. You can also have a look at
        Chapter~\ref{chap:networks} for some ideas on multi-dimensional networks.
    \item[$(iv)$] {\bf You are interested in scalar conservation laws and the connections with
        HJ-Equations:} it is brave of you to be here! As a reward for such audacity, we have
        written Chapter~\ref{Embl-Exemple} especially for you! We hope to have done a good enough
        job there.
\end{enumerate}

%
%

\setcounter{page}{0}

\tableofcontents


\chapter*{Introduction}
\addcontentsline{toc}{part}{Introduction}
\fancyhead[LE,RO]{\thepage}
\fancyhead[CE]{Barles \& Chasseigne}
\fancyhead[CO]{HJ-Equations with Discontinuities: Introduction}


\section*{Viscosity solutions and discontinuities}
\addcontentsline{toc}{section}{Viscosity solutions and discontinuities}
\label{chap:Intro}

In 1983, the introduction of the notion of viscosity solutions by Crandall and
Lions \cite{CL} solved the main questions concerning first-order
Hamilton-Jacobi Equations (HJE in short), at least those set in the whole space
$\R^N$, for both stationary and evolution equations:  this framework provided
the right notion of solutions for which uniqueness and stability hold, allowing
to prove (for example) the convergence of the vanishing viscosity method. In
this founding article the definition was very inspired by the works of Kru\v
zkov \cite{SNK1,SNK2,SNK3,SNK4} and, in fact, viscosity solutions appeared as
the $L^\infty$-analogue of the $L^1$-entropy solutions for scalar conservation
laws.

This initial, rather complicated Kru\v zkov-type definition, was quickly
replaced by the present definition, given in the article of Crandall, Evans and
Lions \cite{CEL}, emphasizing the key role of the Maximum Principle and of the
degenerate ellipticity, thus preparing the future extension to second-order
equations.

\

\noindent\textsc{A simple, universal and efficient notion of solution}

The immediate success of the notion of viscosity solutions came from both its
simplicity but also universality: only one definition for all equations, no
matter whether the Hamiltonian was convex or not. A single theory was providing
a very good framework to treat all the difficulties connected to the
well-posedness (existence, uniqueness, stability...) but it was also
fitting perfectly with the applications to deterministic control problems,
differential games, front propagations, image analysis etc.

Of course, a second key breakthrough was made with the first proofs of
comparison results for second-order elliptic and parabolic, possibly
degenerate, fully nonlinear partial differential equations (pde in short) by
Jensen \cite{J2nd} and Ishii \cite{I2nd}. They allow the extension of the
notion of viscosity solutions to its natural framework and open the way to more
applications. This extension
definitively clarifies the connections between viscosity solutions and the
Maximum Principle since, for second-order equations, the Maximum Principle is a
standard tool and viscosity solutions for degenerate equations are those for
which the Maximum Principle holds when testing with smooth test-functions. 

The article of Ishii and Lions \cite{IL} was the first one in
which the comparison result for second-order equations was presented in the
definitive form; we recommend this article which contains a lot of results and
ideas, in particular in using the ellipticity in order to obtain more general
comparison results or Lipschitz regularity of solutions.

We refer to the User's guide of Crandall, Ishii and Lions \cite{Users} for a
rather complete introduction of the theory. See also Bardi and
Capuzzo-Dolcetta\cite{BCD} and Barles \cite{Ba} for first-order equations,
Fleming and Soner \cite{FS} for second-order equations together with
applications to deterministic and stochastic control, Bardi, Crandall, Evans,
Soner and Souganidis \cite{BCESS} ot the CIME course \cite{CIME} for a more
modern presentation of the theory with new applications. 

\

\noindent\textsc{Discontinuities, a potential weakness of viscosity solutions}

Despite all these positive points, the notion of viscosity solutions had a
little weakness: it only applies with the maximal efficiency when solutions are
continuous and, this is even more important, when the Hamiltonians in the
equations are continuous. This fact is a consequence of the keystone of the
theory, namely the comparison result, which is mainly proved by the  ``doubling
of variables'' technique, relying more or less on the continuity of both the
solutions and the Hamiltonians.

Yet, a definition of discontinuous solutions has appeared very early (in 1985)
in Ishii \cite{I1} and a first attempt to use it in applications to control
problems was proposed in Barles and Perthame \cite{BP1}. The main contribution
of \cite{BP1} is the ``half-relaxed limits method'', a stability result for
which only a $L^\infty$-bound on the solutions is needed. But this method,
based on Ishii's notion of discontinuous viscosity solutions for
discontinuous Hamiltonians, uses discontinuous solutions more as an
intermediate tool than as an interesting object by itself.

\

\noindent\textsc{The end of universality?}

However, in the late 80's, two other types of works considered discontinuous
solutions and Hamiltonians, breaking the universality feature of viscosity
solutions. The first one was the study of measurable dependence in time in
time-dependent equation (\cf Barron and Jensen \cite{BJ-meas}, Lions and
Perthame \cite {LP-meas}, see also the case of second-order equations in
Nunziante \cite{Nun1,Nun2}, Bourgoing \cite {Bou1,Bou2} with Neumann boundary
conditions, and Camilli and Siconolfi \cite{CaSi}): in these works, the
pointwise definition of viscosity solutions has to be modified to take into
account the measurable dependence in time. It is worth pointing out that there
was still no difference between convex and non-convex Hamiltonians.

On the contrary, Barron and Jensen \cite{BJ} in 1990 considered semi-continuous
solutions of control problems (See also \cite{Ba-GV} for a slightly simpler
presentation of the ideas of \cite{BJ} and Frankowska \cite{Fran}, Frankowska
and Plaskacz \cite{Fran-P}, Frankowska and Mazzola \cite{Fran-M} for different
approaches): they introduced a particular notion of viscosity solution which
differs according to whether the control problem consists in minimizing some
cost or maximizing some profit; thus treating differently convex and concave
Hamiltonians. This new definition had the important advantage to provide a
uniqueness result for lower semi-continuous solutions in the case of  convex
Hamiltonians, a very natural result when thinking in terms of optimal control.

In the period 1990-2010, several attempts were made to go further in the
understanding of Hamilton-Jacobi Equations with discontinuities. A pioneering
work is the one of Dupuis \cite{Du} whose aim was to construct and study a
numerical method for a calculus of variation problem with discontinuous
integrand, motivated by a Large Deviations problem.  Then, control problems
with a discontinuous running cost were addressed by Garavello and Soravia
\cite{GS1,GS2} and Soravia \cite{So}  who highlight some non-uniqueness feature
for the Bellman Equations in optimal control, but identify the maximal and
minimal solutions. To the best  of our knowledge, all the uniqueness results
use either a special structure of the discontinuities or different notions
solutions, which are introduced to try to tackle the main difficulties as in
\cite{DeZS,DE,GGR,GH, H1} or an hyperbolic  approach as in \cite{AMV,CR}. For
the boundary conditions, Blanc \cite{Bl1,Bl2} extended the approaches found in \cite{BP1} and
\cite{BJ} to treat problems with discontinuities in the boundary
data for Dirichlet problems. Finally, even the case of measurability in the
state variable was considered for Eikonal type equations by Camilli and
Siconolfi~\cite{CaSi-meas}.

Before going further, we point out that we do not mention here
$L^p$-viscosity solutions nor viscosity solutions for stochastic pdes, two very
interesting subjects but too far from the scope of this book. 

\

\noindent\textsc{Towards more general discontinuities}

In this period, the most general contribution for first-order Hamilton-Jacobi-Bellman Equations was
the work of Bressan and Hong \cite{BH} who considered the case of control problems in {\em
stratified domains}. In their framework, the Hamiltonians can have discontinuities on submanifolds
of $\R^N$ of any codimensions which form a Whitney stratification and the viscosity solutions
inequalities are disymmetric between sub and supersolutions (we come back on this important point
later on). In this rather general setting, they are able to provide comparison results by combining
pde and control methods. Of course, we are very far from the context of a universal definition but
it seems difficult to have more general discontinuities. Before going further, we refer the reader
to Whitney \cite{W1,W2} for the notion of {\em Whitney stratified space}. 

\

\noindent\textsc{Networks}

In the years 2010's, a lot of efforts have been spent to understand Hamilton-Jacobi Equations on
networks and, maybe surprisingly, this had a key impact on the study of codimension~$1$
discontinuities in these equations.  An easy way to understand why is to look at an HJ Equation set
on the real line $\R$, with only one discontinuity at $x=0$. Following this introduction, it seems
natural to jump on  to Ishii's definition and to address the problem as an equation set on $\R$. But
another point of view consists in seeing $\R$ as a network with two branches $\R^-$ and $\R^+$. This
way, $x=0$ becomes the intersection of the two branches and it is conceivable that the
test-functions could be quite different in each branch, leading to a different notion of solution.
Moreover, a ``junction condition'' is needed at $0$ which might come from the two Hamiltonians
involved (one for each branch) but also a specific inequality at $0$ coming from the model and the
transmission condition we have in mind.  Therefore, at first glance, these ``classical approach''
and  ``network approach''  seem rather different.

Surprisingly (with today's point of view), these two approaches were investigated by different
people and (almost) completely independently until Briani, Imbert and the authors of this book made
the simple remark which is described in the last above paragraph. But, in some sense, this ``mutual
ignorance'' was a good point since different complementary questions were investigated and we are
going to described these questions now.

For the ``classical approach'', in the case of the simplest codimension $1$
discontinuity in $\R$ or $\R^N$ and for deterministic control problems, \ie
with convex Hamiltonians, these questions were
\begin{enumerate}
\item[$(i)$] Is Ishii's definition of viscosity solutions providing a unique
    solution which is the value function of an associated control problem?
\item[$(ii)$] If not, can we identify the minimal and maximal solutions in
    terms of value functions of \adhoc control problems?
\item[$(iii)$] In non-uniqueness cases, is it possible to recover uniqueness by
    imposing some additional condition on the discontinuity?
\item[$(iv)$] Can the limit of the vanishing viscosity method be identified? Is
    it the maximal or minimal solution? Or can it change depending on the
        problem?
\end{enumerate}
These questions were investigated by Rao \cite{rao:pastel-00927358, Rao-2013},
Rao and Zidani \cite{Rao-Zid-2012}, Rao, Siconolfi and Zidani \cite{RSZ} by
optimal control method, and  Barles, Briani and Chasseigne \cite{BBC1,BBC2} by
more pde methods. In \cite{BBC1,BBC2}, there are some complete answers to
questions $(i)$ and $(ii)$, almost complete for $(iii)$ and really incomplete
for $(iv)$.

For the ``network approach'',  in the case of two (or several) $1-$dimensional
(or multi-dimensional) branches, the questions were different and the convexity
of the Hamiltonians appears as being less crucial:
\begin{enumerate} 
\item[$(v)$] What is the correct definition of solution at the junction? What
    are the different possible junction conditions and their meanings in the
        applications?
\item[$(vi)$] Does a comparison result for such network problems hold?
\item[$(vii)$] Does the Kirchhoff condition (involving derivatives of the
    solution in all branches) differ from tangential conditions (which just
    involve tangential derivatives)?  
\item[$(viii)$] What are the suitable assumptions on the Hamiltonians to get
comparison?  
\item[$(ix)$] Can we identify the limit of the vanishing viscosity
    method?
\end{enumerate}
Questions $(v)$-$(vi)$ were investigated under different assumptions in
Schieborn \cite{Sc}, Camilli and Marchi \cite{CM}, Achdou, Camilli,
Cutr{\`{\i}} and Tchou \cite{ACCT}, Schieborn and Camilli \cite{ScCa}, Imbert,
Monneau and Zidani \cite{IMZ}, Imbert and Monneau \cite{IM} for $1$-dimensional
branches and Achdou, Oudet and Tchou \cite{AOT,AOTbis}, Imbert and Monneau
\cite{IM-md} for all dimensions; while Graber, Hermosilla and Zidani \cite{GHZ}
consider the case of discontinuous solutions. The most general comparison
result  (with some restrictions anyway) is the one of Lions and Souganidis
\cite{LiSo1,LiSo2} which is valid with very few, natural assumptions on the
Hamiltonians, and not only in the case of Kirchhoff conditions but also for
general junction conditions. It allows to answer in full generality to question
$(ix)$ which is also investigated in Camilli, Marchi and Schieborn
\cite{CMS-VV}.

In fact, Lions and Souganidis use a notion of solution which we call in this book ``junction
viscosity solution'', rather close to the classical notion of viscosity solutions; the
only difference which is imposed by the network framework is the space of test-functions but this is
a common feature for all the notions of solution in this context. Because of this similarity, the
half-relaxed limits' method extends without any difficulty and, taking into account the very general
ideas of their comparison result, almost all the above questions seem to be solved by this notion of
solution.

Two questions still remain however: on one hand, despite of its generality, the comparison result of Lions
and Souganidis requires in higher dimensions some unnatural hypotheses; on the other hand, this result
is originally proved in \cite{LiSo1,LiSo2} for Kirchhoff type junction conditions which is not the most
natural conditions for control problems, but which appear when studying the convergence of the vanishing
viscosity method. Hence, a very concrete question is the following: in the case of convex or concave
Hamiltonian, is it possible s to give formulas of representation for such problems with Kirchhoff type junction
conditions?  To answer this question, it seems clear that one has to investigate the connections
between Kirchhoff type junction conditions and ``flux-limited conditions'' in the terminology of
Imbert and Monneau \cite{IM,IM-md} which are the natural junction conditions for control problems.

The extensive study of ``flux-limited conditions'' by Imbert and Monneau \cite{IM,IM-md} uses the
notion of ``flux-limited solutions'': contrarily to the notion of ``junction viscosity solution'',
this notion is less general and requires quasi-convex Hamiltonians on each branch of the network. It
has also the defect to lead to a rather complicated (and limited) stability result. But it perfectly
fits with control problems and the comparison result is proved under very natural and general
assumptions.

In this book, we completely describe these two notions of solutions and theirs properties but we
also show the connections between general Kirchhoff conditions and flux-limited conditions in the
quasi-convex case, allowing the complete identification of the vanishing viscosity limit.

\section*{Key considerations related to discontinuities}
\addcontentsline{toc}{section}{Key considerations related to discontinuities}

In this short section, our aim is to highlight a few simple and fundamental ideas that pervade the
whole book.

Let us begin with saying that in order to understand Hamilton-Jacobi Equation with discontinuities,
a first natural step is to look at deterministic control problems. Since our aim is to extend
viscosity solutions theory to this discontinuous framework---in particular the pillars of the theory
which are the comparison and stability results---we can only do so under some assumptions which
ensure that the value function is continuous and the unique solution of the associated
Hamilton-Jacobi-Bellman Equation. Indeed, these properties are standard consequences of the
comparison result for this equation.

While looking at problems with codimension~$1$ discontinuities, one quickly realizes that the
standard definition of viscosity solutions in the sense of Ishii, in particular the subsolution
condition, is not strong enough to imply uniqueness; in the worst cases, the subsolution condition
completely ignores the possibilities that the control problem offers on the discontinuity. This
is particularly the case when the only optimal trajectory for the controller consists in staying on
the discontinuity, because the situation is far more favorable there. The reader may have in mind the
example of a car ride where taking advantage of $1$-dimensional highways allows to reach the
destination must faster; if the subsolution condition does not see the highway, we clearly get
meaningless subsolutions.

This is the first point to keep in mind for Hamilton-Jacobi-Bellman Equation, \ie for
Hamilton-Jacobi Equation with convex Hamiltonians: 

\keypoint{Key point 1}{A subsolution condition is missing on the discontinuities and we have to
super-impose a right one on each of them in order to build a satisfactory theory.}

On the other hand, the example of the car ride and the highway suggests a second key remark: if you
can enter the highway everywhere, you can expect that your travel time does not depend too much on
your departure point, in the sense that, if you start from two close points, the two travel times
are almost the same. But if the highway has only few entrances and if you take two close points, one
on the highway, one outside, both being far from an entrance, the travel times can be very
different. Hence such situations generate a discontinuity for the value function (that is, the
travel time to a fixed destination) and we have to rule them out.

\begin{figure}[htp]
   \begin{center}
   \includegraphics[width=0.6\textwidth]{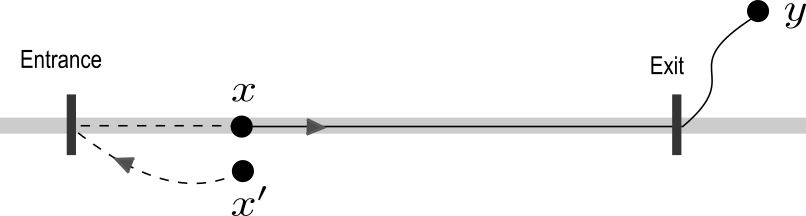}
   \caption{Highway generating discontinuities}
   \label{fig:highway} 
   \end{center}
\end{figure}

In the above example, being located on the discontinuity is favorable for the controller but you may also
imagine the opposite situation: if the highway is replaced by a very muddy road where the velocity
is far slower than every else. Then you may probably want to be able to get out of this road.
This is the second important point to keep in mind: 

\keypoint{Key point 2}{In such problems having discontinuities, the normal controllability---or normal
    coercivity---of the problem is fundamental.}

This property appears below under either the form \NCe, \ie  Normal Coercivity for the equation or
\NCBCL, \ie Normal Controllability for the control problem. But most of the time they are exactly the
same.
 
As the above examples shows, in the framework of control problems, this property means that one
should be able either to quickly reach the discontinuity (we will use it for discontinuities of any
codimension) or, on the contrary, to leave it in any directions, in order to take advantage of a
more favorable situation in terms of cost. Such assumption also ensures that this potentially
favorable situation is ``seen'' by subsolutions provided that the right conditions are imposed on
the discontinuities. Finally, at the equation level, this is translated into a partial
coercivity-type assumption in the normal coordinates of the gradient on the associated Hamiltonian.
        
The last key idea is the tangential continuity, denoted below by either \TC for the
equation or \TCBCL for the control problem. Roughly speaking, if we face a discontinuity
$\mathcal{D}$ which is an affine subspace, the Hamiltonian has to satisfy locally similar continuity
hypotheses as those used in standard comparison in $\R^N$ (or $\R^N \times (0,\Tf)$) on each affine
subspace which is parallel to $\mathcal{D}$, with respect to the coordinates of this subspace.

\keypoint{Key point 3}{Some natural continuity assumptions should hold with respect to each strata.}

We do insist on the fundamental role played by assumptions \NCe-\NCBCL and \TC-\TCBCL
throughout all the parts of this book. Not only are they key ingredients for the comparison result
between sub and supersolutions, but also for the stability and even the connections with control
problems, \ie to actually prove that the value function is sub and supersolution with the adapted
definition.

%
%
%
\section*{Overview of the content}
\addcontentsline{toc}{section}{Overview of the content}

As the reader has probably understood, this book aims at considering various Hamilton-Jacobi and
control problems with different types of discontinuities. Our intention is to describe the different
approaches to treat them and build a consistent framework in which they can fit altogether. Let us
now sketch the content of this book part by part.

\

\noindent\textsc{Overview of Part~\ref{Part:prelim} ---}
Thinking about all the common points that can be found in the works mentioned
in the historical introduction above, and because of the central roles played by \NCe-\NCBCL and
\TC-\TCBCL, we have decided to dedicate an entire part to the ``basic results'', which are common
bricks, used very often under perhaps slightly different forms. 

This organisation has the advantage to lighten the presentation of the main results
and their proofs, but this clearly creates a rather technical---and perhaps difficult
to read---first part. We think anyway that collecting some classical ideas, sometimes 
revisited in unusual ways, presents sufficient advantages to accept this flaw.

\

\noindent\textsc{Overview of Parts~\ref{part:codim1} \&~\ref{part:NA} ---}
The first problems we address concern ``simple'' codimension $1$ discontinuities, \ie a discontinuity
along an hyperplane or an hypersurface in the whole space $\R^N$.  For these problems, we provide in
Part~\ref{part:codim1} a full description of the ``classical approach''. By this, we mean the
results that can be obtained by using only the standard notion of viscosity solutions. In
Part~\ref{part:NA}, we describe the ``network approach'', including different comparison proofs (the
Lions-Souganidis one and the Barles-Briani-Chasseigne-Imbert one) and stability results. We also
analyze their advantages and disadvantages, and the connections between all the notions of
solutions.

The main results of these parts are the following.
\begin{enumerate}
    \item[$(i)$] Identification of the minimal viscosity supersolution and maximal viscosity
        subsolution with explicit controls formulas.  Furthermore, we provide an easy-to-check
        condition on the Hamiltonians ensuring that these minimal supersolution and maximal
        subsolution are equal, \ie that there is a unique viscosity solution. This condition turns
        out to be useful in different applications.  
    \item[$(ii)$] For the different notions of solutions in the ``network approach'', we provide
        comparison and stability results, and a complete analysis of the connections between these
        different types of solutions (classical Ishii viscosity solutions, flux-limited solutions
        and junction viscosity solutions).
    \item[$(iii)$] Several versions of the convergence of the vanishing viscosity method, for convex
        and non-convex Hamiltonians, each of them relying on a particular notion of solution; the
        most complete form uses all the results of $(i)$ and $(ii)$ above, in particular the links
        between the different notions of solutions in the ``network approach''.
\end{enumerate}
The reader who wants to have a quick idea of all these results can take a look
at Section~\ref{Embl-Exemple}. This section gives a flavor of them in a simple
framework, the Hamilton-Jacobi analogue of $1$-d scalar conservation laws with
a discontinuous flux.

\

\noindent\textsc{Overview of Part~\ref{stratRN} ---} 
This part is devoted to the case of time-dependent ``stratified problems'' in the
whole cylinder $\R^N\times[0,\Tf)$, \ie  the case where discontinuities of any
codimensions can appear, provided they form a Whitney stratification. In this part, we describe the
extensions of the works by Bressan and Hong \cite{BH} and by the authors in \cite{AEYW}, with a lot
of further applications.

We point out anyway two main differences with \cite{AEYW}: first, we introduce a notion of {\em weak
stratified subsolutions}\footnote{The situation for supersolutions is simpler since stratified
supersolutions are just Ishii supersolutions.} where, on each manifold of the Whitney
stratification, we only impose inequalities associated to dynamics which are tangent to the
manifold. Such subsolutions are not assumed to satisfy the usual ``global'' Ishii subsolution inequality on the
manifolds of codimension bigger than $1$; hence they are not a priori Ishii subsolutions. On the
contrary, {\em strong stratified subsolutions}---as used in \cite{AEYW}---are weak stratified
subsolutions, which are also Ishii subsolutions.

In the stratified setting, the notion of subsolution that has to be imposed on the discontinuities
is a key issue: the concepts of weak and strong stratified subsolutions turn out to be the weakest
and the strongest possible versions. In the different works on the subject, various other type of
definitions appears, from a ``quasi-strong'' notion in Bressan and Hong \cite{BH} to the use of
``essential dynamics'' in Rao \cite{rao:pastel-00927358, Rao-2013}, Rao and Zidani
\cite{Rao-Zid-2012}, Rao, Siconolfi and Zidani \cite{RSZ}  and Jerhaoui and Zidani~\cite{JZ} where
the authors try to obtain the best possible inequalities from the control point of view. 

Despite being rather natural from the control point of view, the notion of weak stratified
subsolutions has the defect to allow ``artificial values'' on the discontinuities of the equation
since no connection between these values on the different parts of the Whitney stratification is
imposed by the weak subsolution inequalities. This is the second key difference with \cite{AEYW} where
the ``global'' Ishii subsolution inequality and \NCe (or \NCBCL) imply the ``regularity of
subsolutions'', \ie the fact that on a discontinuity, the values of a subsolution is the $\limsup$
of its values outside this discontinuity. Hence strong stratified subsolutions are necessarily
``regular'' while it may not be the case for the weak ones. And concerning the definitions with
``essential dynamics'',  we point out that, in general, the subsolution conditions which are imposed
imply the regularity of the subsolutions and conversely the inequalities associated to with
``essential dynamics'' are automatically satisfied by regular subsolutions.

As it is already remarked in \cite{AEYW}, the regularity property for subsolutions is playing a very
important role for all the results, and more particularly for the comparison one. 

To summarize the content of Part~\ref{stratRN}, let us first mention that all the results of
\cite{AEYW} hold for \emph{regular weak stratified subsolutions}. But more precisely:
\begin{enumerate}
    \item[$(i)$] Regular weak stratified subsolutions are \emph{strong stratified
        subsolutions} under suitable assumptions, which are, in our opinion, the natural
        hypotheses to be used in this framework. 

    \item[$(ii$)] The comparison result between either regular weak or strong stratified
        subsolutions and supersolutions; it uses in a key way \NCBCL-\TCBCL but also standard
        reductions presented in Part~\ref{Part:prelim}. 

    \item[$(iii)$] We present different stability results where we improve the one given in
        \cite{AEYW} by taking into account changes in the structure of discontinuities: indeed we
        handle cases where some discontinuities may either disappear or appear when we pass to the
        limit. It is worth remarking that the notion of weak stratified subsolutions has the
        advantage to simplify the proofs of these stability results, even if the regularity of the
        limiting subsolution becomes a problem.

    \item[$(iv)$] We provide conditions under which classical viscosity (sub)solutions are
        stratified (sub)solutions. Under these conditions, classical viscosity solutions and
        stratified solutions are the same, which allows to treat in a rather precise way some
        applications. This applies in particular to KPP-type problems, even in rather complicated
        domains. Indeed, we can take advantage at the same time of the good properties of viscosity
        solutions in terms of stability, and the uniqueness of stratified solutions.
\end{enumerate}

Different approaches for control problems in stratified frameworks, more in the
spirit of Bressan and Hong \cite{BH} have been developed by Hermosilla, Wolenski and
Zidani \cite{Her-Wol-Zid-2017} for Mayer and Minimum Time problems, Hermosilla
and Zidani \cite{Her-Zid-2015} for classical state-constrained problems,
Hermosilla, Vinter and Zidani \cite{Her-Vin-Zid-2017} for (very general)
state-constrained problems, including a network part.

\

\noindent\textsc{Overview of Part~\ref{S-BC} ---} Here we extend these ideas to consider ``stratified
problems'' set in a ``stratified domain'' with state-constraints boundary conditions. Without
enterinf into too much details here, the reader may imagine that a ``stratified domain'' may be far
from being smooth and corners are not the only source of irregularity for the boundary. Indeed, the
discontinuities in the data itself have to be taken into account.

Concretely, the advantage of the stratified formulation is to provide an approach where:
\begin{enumerate}
    \item[$(i$)] One can treat various boundary conditions (Dirichlet, Neumann, sliding boundary
        conditions,...) in the same framework. 
    \item[$(ii)$] The boundary of the domain need not be smooth, nor does the data.
    \item[$(iii)$] The mixing of mixing boundary conditions in some rather exotic way is allowed.
\end{enumerate}

A typical example of mixing singular boundary conditions is the Tanker problem, presented at the
beginning of this part.

Roughly speaking, all the results of Part~\ref{stratRN} can be extended to this more general
framework since, essentially, the boundary and the discontinuities in the boundary conditions just
create new parts of the stratification and new associated Hamiltonians. Only the ``one-sided
feature'' coming from the absence of exterior controllability at the boundary generates some
technical difficulties. For instance, the regularity of subsolutions which comes automatically from
\NCBCL in $\R^N$ is not so simple here. We show in this part how to reformulate classical boundary
conditions and conclude with the non-standard example of the Tanker problem.

\

\noindent\textsc{Overview of Part~\ref{part:compl-appl} ---} In this last part we collect some concrete applications where the
stratified approach helps or may help solving some problems. The study of fronts propagations for
KPP Equation via the Freidlin's approach (\cite{F}) is a classical playground for viscosity solutions and
we investigate the type of new results that the methods of this book allow to prove. We also propose some
ideas to address problem with jumps or set on ``stratified networks''.

\

\noindent\textsc{Appendices ---} As this book contains quite a lot of notions, definitions of solutions and
properties, we decided to reference all of them in those two appendices (Notations, and Assumptions).


\part{A Toolbox for Discontinuous Hamilton-Jacobi Equations and Control Problems}
\label{Part:prelim}
\fancyhead[CO]{HJ-Equations with Discontinuities: A Toolbox for Discontinuous Problems}


\chapter{The Basic Continuous Framework Revisited}
\label{chap:BasicFram}

\abstract{In this first chapter, the most classical results in the continuous framework are presented. The
assumptions and methods are discussed and revisited in order to introduce and partially justify the
general approach that is developed afterwards.}

Viscosity solutions' theory relies on two types of key results: comparison results and stability
results. If the ``half-relaxed limits'' method provides stability in a very general discontinuous
framework where both solutions and Hamiltonians may be discontinuous (see Section~\ref{sect:stab}),
the situation is completely different for comparison. If most of the classical arguments for
comparison can handle discontinuous sub and supersolutions, none of them can really handle
discontinuous Hamiltonians, even in the simplest cases of discontinuities.

As indicated in the abstract, we first describe one of the most classical result in the
continuous framework and in the simplest framework; it explains the connections between deterministic
optimal control problems and Hamilton-Jacobi-Bellman Equations, with the role played by viscosity solutions.
Even if our presentation is certainly too sketchy, the reader will notice that this result relies on two key arguments which,
throughout this book, will also be at the origin of most of the presented results: the Dynamic Programming Principle and the
comparison result.

In this chapter, we assume that the reader is more or less familiar with such approach and classical results.
And we refer to well-known references on this subject for more details: Lions \cite{L}, Bardi and
Capuzzo-Dolcetta \cite{BCD}, Fleming and Soner \cite{FS}, the CIME courses \cite{BCESS,CIME} and
Barles \cite{Ba}.

\section{The value function and the associated pde}

We consider a finite horizon control problem\index{Control problem!basic} in $\R^N$ on the time
interval $[0,\Tf]$ for some $\Tf>0$, where, for $x\in \R^N$ and $t\in [0,\Tf]$, the state of the system
is described by the solution $X(\cdot)$ of the ordinary differential equation
$$
    \dot X(s)= b(X(s),t-s,\alpha(s))\; ,\; X(0)=x \in \R^N \;.
$$
Here, $\alpha(\cdot) \in \mathcal{A}:=L^\infty(0,\Tf;A)$ is the control which takes values in the
compact metric space $A$ and $b$ is a continuous function of all its variables. More precise
assumptions are introduced later on.

For a finite horizon problem, the value function is classically defined by
\begin{align*}
    U(x,t) = \inf_{\alpha(\cdot) \in \mathcal{A}}& \biggl\{\int_0^{t} 
    l(X(s),t-s,\alpha(s))\exp\left(-\int_0^s c(X(\tau),t-\tau,\alpha(\tau)) d\tau \right) \ds\\ 
    & + \u0 (X(t))\exp\left(-\int_0^t c(X(\tau),t-\tau,\alpha(\tau)) d\tau \right) \biggr\}\; ,
\end{align*}
where $l$ is the running cost, $c$ is the discount factor and $\u0$ is the final cost. All these
functions are assumed to be continuous on $\R^N \times [0,\Tf] \times A$ (for $l$ and $c$) and on
$\R^N$ (for $\u0$) respectively.

The most classical framework use the following assumptions which will be refered below as

\label{page:HBACPc} 
\begin{assumption}{\HBACPc}{Basic Assumptions on the Control Problem -- Classical case.}\\[-1.2cm]
\begin{enumerate} 
    \item[$(i)$] The function $\u0: \R^N\to \R$ is a bounded, uniformly continuous
            function.
    \item[$(ii)$] The functions $b,c,l$ are bounded, uniformly continuous on $\R^N \times [0,\Tf]
        \times A$.
    \item[$(iii)$] There exists a constant $C_1>0$ such that, for any $x,y \in \R^N$, $t \in [0,\Tf]$,
        $\alpha \in A$, we have $$ |b(x,t, \alpha)-b(y,t,\alpha)| \leq C_1 |x-y|\; .$$
\end{enumerate} 
\end{assumption}

One of the most classical results connecting the value function with the associated
Hamilton-Jacobi-Bellman Equation is the \index{Viscosity solutions!value functions as}
\begin{theorem}\label{MBR:C-HJ}
    If Assumption \HBACPc holds, the value function  $U$ is continuous on $\R^N \times [0,\Tf]$ and is
    the unique viscosity solution of 
    \begin{align}      
        u_t + H(x,t,u,D_x u) = 0 \quad \hbox{in  }\R^N \times (0,\Tf)\;,\label{be-control} \\
         u(x,0) = \u0(x) \quad \hbox{in  }\R^N \;.\label{bid-control}
    \end{align}
    where 
    $$
        H(x,t,r,p):= \sup_{\alpha \in A}\,\left\{-b(x,t, \alpha)\cdot p + 
        c(x,t, \alpha)r-l(x,t,\alpha)\right\}\; .
    $$ 
\end{theorem}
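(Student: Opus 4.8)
The proof proceeds in three classical steps: (a) regularity of $U$, (b) the dynamic programming principle (DPP), (c) the verification that $U$ solves \eqref{be-control}--\eqref{bid-control} in the viscosity sense, and then (d) uniqueness via comparison. First I would establish that $U$ is bounded and uniformly continuous on $\R^N\times[0,T]$. Boundedness is immediate from the boundedness of $b,c,l,\u0$ in \HBACPc$(ii)$--$(i)$: the exponential discount factors are bounded above and below on the compact time interval, so $|U|\le C$ for some constant $C=C(T)$. For continuity in $x$, the key is the Lipschitz bound \HBACPc$(iii)$ on $b$: by Gr\"onwall's lemma two trajectories $X_x(\cdot)$ and $X_y(\cdot)$ starting from $x$ and $y$ with the \emph{same} control satisfy $|X_x(s)-X_y(s)|\le e^{C_1 s}|x-y|$, and then the uniform continuity of $l,c,\u0$ transfers this into a modulus of continuity for $U$ in $x$, uniform in $\alpha(\cdot)$. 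Continuity in $t$ follows from a similar comparison, splitting the integral and using boundedness of $b$ to control how far the trajectory moves over a short time increment; one also uses the uniform continuity of the data in the time variable. Taking the two moduli together gives uniform continuity of $U$ on $\R^N\times[0,T]$.

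Next I would prove the dynamic programming principle: for $0\le \sigma\le t$,
$$U(x,t)=\inf_{\alpha(\cdot)\in\mathcal A}\Big\{\int_0^{\sigma} l(X(s),t-s,\alpha(s))e^{\int_0^s c\,\d\tau}\ds + U(X(\sigma),t-\sigma)e^{\int_0^{\sigma} c\,\d\tau}\Big\}.$$
This is the standard semigroup/concatenation argument: the inequality ``$\le$'' comes from concatenating a near-optimal control on $[\sigma,t]$ for the point $X(\sigma)$ after an arbitrary control on $[0,\sigma]$, using the cocycle property of the discount factor (that $\exp(\int_0^s c)$ over $[0,t]$ factors as the product over $[0,\sigma]$ times that over $[\sigma,t]$ shifted in time); the inequality ``$\ge$'' comes from restricting an $\eta$-optimal control for $U(x,t)$ to $[0,\sigma]$ and using the definition of $U$ at $(X(\sigma),t-\sigma)$. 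Measurable selection of near-optimal controls is needed but routine here since $A$ is a compact metric space and the data are continuous.

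Given the DPP and the continuity of $U$, the viscosity subsolution and supersolution properties of \eqref{be-control} are obtained in the usual way: take a smooth test function $\varphi$ touching $U$ from above (resp.\ below) at an interior point $(x_0,t_0)$; plug the DPP with $\sigma\downarrow 0$, use a constant control $\alpha(s)\equiv a$ for the subsolution inequality and near-optimal controls for the supersolution inequality, divide by $\sigma$ and pass to the limit, invoking continuity of $b,c,l$ to identify the limit with $\varphi_t(x_0,t_0)+H(x_0,t_0,U(x_0,t_0),D_x\varphi(x_0,t_0))$. The initial condition \eqref{bid-control} holds since $U(x,0)=\u0(x)$ by definition and $U$ is continuous. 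Finally, uniqueness follows from the standard comparison principle for \eqref{be-control}: under \HBACPc the Hamiltonian $H(x,t,r,p)=\sup_{\alpha\in A}\{-b\cdot p+cr-l\}$ inherits, by taking sup over $\alpha$, the structural properties needed — it is Lipschitz in $p$ with at most linear growth (from boundedness of $b$), it satisfies the monotonicity in $r$ up to the standard exponential change of unknown (one may reduce to $c\ge 0$ or absorb $c$), and it has the right modulus in $x$, namely $|H(x,t,r,p)-H(y,t,r,p)|\le \omega(|x-y|)(1+|p|)$, coming from \HBACPc$(iii)$ for the $b\cdot p$ term and uniform continuity of $c,l$ for the rest. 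These are exactly the hypotheses of the classical comparison theorem for first-order Hamilton-Jacobi equations (see Barles \cite{Ba} or \cite{Users}), which gives $u\le v$ for any bounded USC subsolution $u$ and bounded LSC supersolution $v$ with $u(\cdot,0)\le v(\cdot,0)$; applied both ways this yields uniqueness, and since $U$ is a solution it is \emph{the} unique one.

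The main obstacle is not conceptual but lies in the bookkeeping of the discount factor $c$: one must carefully verify the cocycle identity for $\exp(\int_0^s c(X(\tau),t-\tau,\alpha(\tau))\,\d\tau)$ under the time-shift $s\mapsto s-\sigma$ used in the DPP, and, in the comparison step, handle the fact that $c$ need not be nonnegative. The latter is resolved by the classical substitution $u\mapsto e^{-\lambda t}u$ (or $e^{\lambda t}u$) for $\lambda$ large enough relative to $\|c\|_\infty$, which restores the required monotonicity of the Hamiltonian in the zeroth-order term; everything else is a routine application of Gr\"onwall's inequality and the doubling-of-variables technique, which I would not reproduce in detail.
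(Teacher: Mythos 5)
Your proposal follows the same three-step scheme as the paper's sketch (Dynamic Programming Principle, passage from the DPP to the viscosity inequalities, comparison for uniqueness), and the extra detail you supply on regularity and on handling the discount factor is exactly what the paper leaves implicit. This is the standard argument and matches the paper's intended proof.
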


In Theorem~\ref{MBR:C-HJ}, we have used the notation $u_t$ for the time derivative of the function
$(x,t)\mapsto u(x,t)$ and $D_x u$ for its derivatives with respect to the space variable~$x$. These
notations will be used throughout this book.

\begin{proof}[Sketch of Proof] 
Of course, there exists a lot of variants of this result with different assumptions 
on $b,c,l$ and $\u0$ but, with technical variants, the proofs use mainly the same steps.

\noindent \textbf{(a)} The first one consists in proving that $U$ is continuous and satisfies a Dynamic
    Programming Principle (DPP in short)\index{Dynamic Programming Principle!simplest form}, $i.e.$ that for any
    $0<h<t$,
\begin{align*}
     U(x,t) = \inf_{\alpha(\cdot) \in \mathcal{A}}& \bigg\{\int_0^{h} 
     l(X(s),t-s,\alpha(s))\exp\left(-\int_0^s 
     c(X(\tau),t-\tau,\alpha(\tau)) d\tau \right) \ds  \\
    &+ U (X(h),t-h)\exp \left(-\int_0^h c(X(\tau),t-\tau,\alpha(\tau)) 
    d\tau \right) \bigg\}\; .
\end{align*}
This is obtained by using the very definition of $U$ and taking suitable controls.

    \noindent\textbf{(b)} If $U$ is smooth, using the DPP on $[0,h]$ and performing expansions of the
    different terms with respect to the variable $h$, we deduce that $U$ is a classical solution of
    \eqref{be-control}-\eqref{bid-control}. If $U$ is not smooth, this has to be done with
    test-functions and we obtain that $U$ is a viscosity solution of the problem.

    \noindent \textbf{(c)} Finally one proves a comparison result for \eqref{be-control}-\eqref{bid-control},
    which shows that $U$ is the unique viscosity solution of \eqref{be-control}-\eqref{bid-control}.
\end{proof}

We point out that, in this sketch of proof, the continuity (or uniform continuity) of $U$ is not as
crucial as it seems to be. Of course continuity can be obtained directly by working on the
definition of $U$ in this framework. But one may also show that $U$ is a discontinuous
viscosity solution (see Section~\ref{sect:stab}) and deduce continuity from the comparison result.
We insist on the fact that in this classical framework, people are mainly interested in 
cases where $U$ is continuous and therefore in assumptions ensuring this continuity.

Concerning Assumption \HBACPc, it is clear that $(iii)$ together with $(ii)$ ensure that for any
choice of control $\alpha(\cdot)$ there is a well-defined trajectory, by the Cauchy-Lipschitz
Theorem. Moreover, this trajectory $X(\cdot)$ exists for all times, thanks to the boundedness of
$b$. On the other hand, the boundedness of $l,c$ allows to show that  $U(x,t)$ is well-defined, 
bounded in $\R^N \times [0,\Tf]$ and even uniformly continuous there. Therefore we get all the
necessary information at the control level.

But Assumption \HBACPc plays also a key role at the pde level, in view of the comparison result:
indeed, it implies that the Hamiltonian $H$ satisfies the following property: for any $R\geq 1$

\noindent{\em There exists $M>0$, $C_1$ and a modulus of continuity $m: [0,+\infty) \to [0,+\infty)$
such that, for any $x,y\in \R^N$, $t,s \in [0,\Tf]$, $-R \leq r_1\leq r_2 \leq R \in \R$ and $p,q\in
\R^N$ 
$$
|H(x,t,r_1,p)-H(y,s,r_1,p)| \leq  \left(C_1|x-y|+m(|t-s|)\right)|p| 
    + m\left((|x-y|+|t-s|)R\right)\;,
$$
$$
H(x,t,r_2,p)-H(x,t,r_1,p)\geq -M(r_2-r_1)\; ,$$
$$
|H(x,t,r_1,p)-H(x,t,r_1,q)| \leq M|p-q|\;.$$
}
Of course, these properties are satisfied with $M=\max(||b||_\infty, ||c||_\infty,||l||_\infty)$ and
$m$ is the modulus of uniform continuity of $b,c,l$.

\

\section{Important remarks on the comparison proof}

We want to insist on several points here, and highlight several remarks that are important to
understand the methods and strategies we develop throughout this book.

\noindent\textbf{On proper Hamiltonians --}\label{page:c.positif} 
in the process of performing comparison between a subsolution $u$ and a supersolution $v$ (See
Section~\ref{sect:stab}), the initial step is to reduce the proof to the case when $r \mapsto
H(x,t,r,p)$ is increasing (or even non-decreasing) for any $x,t,p$. Such Hamiltonians are often
called ``proper''. 

This can be done through the classical change of unknown functions 
$$
    u(x,t) \to \tilde u(x,t) := u(x,t)\exp(-Kt)\;,
$$
and the same for $v\to\tilde v$, for some $K\geq M$. The Hamiltonian $H$ is changed into 
$$
    \tilde H(x,t,r,p):= \sup_{\alpha \in A}\,\left\{-b(x,t, \alpha)\exp(-Kt) \cdot p
    + [c(x,t, \alpha)+K] r-l(x,t, \alpha)\exp(-Kt)\right\}.
$$
This allows to reduce to the case where $c(x,t, \alpha) \geq 0$ for any $x,t,\alpha$, 
or even $\geq 1$. 

\keypoint{Note}{We will always assume in this book that, one way or the other, we can reduce to the
case when $c\geq 0$.}

\noindent\textbf{On the $x$ and $t$-dependence --}
the second point we want to emphasize is the $t$-dependence of $b$. It is well-know that, in the
comparison proof, the term 
$$\mathcal{Q}:=\left(C_1|x-y|+m(|t-s|)\right)|p| $$
is playing a key role. In order
to handle the difference in the behavior of $b$ in $x$ and $t$, one has to perform a proof with a
``doubling of variable'' technique which is different in $x$ and $t$. Namely we have to consider the
function
$$ 
    (x,t,y,s) \mapsto \tilde u(x,t)-\tilde v(y,s) - \frac{|x-y|^2}{\eps^2}-
    \frac{|t-s|^2}{\beta^2}-\eta (|x|^2+|y|^2)\;,
$$
where $0<\beta \ll \eps \ll 1$ and $0 <\eta \ll1$. We recall that the $\eta$-term ensures that this
function achieves its maximum while the $\eps,\beta$-terms ensure $(x,t)$ is close to $(y,s)$.
Therefore the maximum of this function is close to $\sup_{\R^N} (\tilde u -\tilde v)$.

The idea behind this different doubling in $x$ and $t$ is the following: the proof requires a 
quantity similar to $\mathcal{Q}$ above to be small. Now, since $|p|$ behaves like
$o(1)\eps^{-1}$, while $|x-y|$ is like $o(1)\eps$ and $|t-s|$ like $o(1)\beta$, the product
$C_1|x-y||p|$ is indeed small. But in order to ensure that the product $m(|t-s|)|p|$ is also small,
we need to choose $\beta$ small enough compared to $\eps$.

In this book, we want to handle cases when $b,c,l$ can be discontinuous on submanifolds
of $\R^N \times [0,\Tf]$. From a technical point of view, one quickly realizes that the $x$ and $t$ variables 
often play a similar role in this framework. 

\keypoint{Note}{Our assumptions on the behavior of $b,c,l$ or $H$ with respect to $x$ and $t$ will
essentially be the same.}

In particular, we will assume that $b$ is also Lipschitz continuous in $t$. This unnatural
hypothesis simplifies the proofs but we indicate in Section~\ref{sect:mgdt} how it can be removed at
the expense of more technicalities.

\noindent\textbf{On localization arguments --}
last but not least, this classical comparison proof does not use a real ``localization'' procedure. 
Of course, the role of the $-\eta (|x|^2+|y|^2)$-term is to ensure that the function associated to
the ``doubling of variable'' achieves its maximum. However, the way to play with the parameters, letting
first $\eta$ tend to $0$ and then sending $\beta$ and $\eps$ to zero afterwards implies that these
maximum points do not remain a priori bounded. 

\keypoint{Note}{In all the arguments in the book, we will use in a central way either the Lipschitz continuity or the
convexity of $H$ in $p$ in order to have a more local comparison proof.}

We systematically develop this point of view in Section~\ref{sect:htc}.

\section{Basic assumptions}\label{sec:BasicA}

The previous remarks lead us to replace \HBACPc by the following basic (yet less classical)
set of assumptions on the control problem:

\label{page:HBACP}\index{Control problem!basic}
\begin{assumption}{\HBACP}{Basic Assumptions on the Control Problem.}\\[-1.2cm]
\nopagebreak
\begin{enumerate}
\item[$(i)$] The function $\u0: \R^N\to \R$ is a bounded, continuous function.
\item[$(ii)$] The functions $b,c,l$ are bounded, continuous functions on $\R^N \times [0,\Tf] \times
A$ and the sets $(b,c,l)(x,t, A)$ are convex compact subsets of $\R^{N+2}$ for any $x\in \R^N$,
$t\in [0,\Tf]$ \footnote{The last part of this assumption which is not a loss of generality will be
used for the connections with the approach by differential inclusions.}.  \item[$(iii)$] For any
    ball $B\subset \R^N$, there exists a constant $C_1 (B)>0$ such that, for any $x,y \in \R^N$, $t
        \in [0,\Tf]$, $\alpha \in A$, we have
$$ |b(x,t, \alpha)-b(y,s,\alpha)| \leq C_1(B)\left( |x-y| +|t-s| \right)\; .$$
\end{enumerate} 
\end{assumption}

We will explain in Section~\ref{sect:mgdt} how to handle a more general dependence in time when the
framework allows it. In terms of equations and Hamiltonians, and although the following assumption
is not completely equivalent to \HBACP, we will use the

\label{page:BA-HJ}
\begin{assumption}{\HBAHJ}{Basic Assumptions on the Hamilton-Jacobi equation.}
There exists a constant $C_2>0$ and, for any ball $B\subset \R^N\times [0,\Tf]$, for any $R>0$, there
exists constants $C_1=C_1(B,R)>0, \gamma (R)\in \R $ and a modulus of continuity $m=m(B,R): [0,+\infty)
\to [0,+\infty)$ such that, for any $x,y \in B$, $t,s \in [0,\Tf]$, $-R \leq r_1 \leq r_2 \leq R$ and
$p,q \in \R^N$ $$ |H(x,t,r_1,p)-H(y,s,r_1,p)|\leq C_1[|x-y|+|t-s|]|p| + m(|x-y|+|t-s|)\;
,$$
$$ |H(x,t,r_1,p)-H(x,t,r_1,q)|\leq C_2 |p-q|\; ,$$
$$ H(x,t,r_2,p)-H(x,t,r_1,p) \geq \gamma(R) (r_2-r_1) \; .$$
\end{assumption}

In the next part ``Tools'', we introduce the key ingredients which allow to pass from the above
standard framework to the discontinuous one; they are concerned with 
\begin{enumerate}
    \item[a.] \textbf{Hamilton-Jacobi Equations:} we recall the notion of viscosity solutions and we
        revisit the comparison proof in order to have an easier generalization to the discontinuous
        case. We immediately point out that the regularization of sub and supersolutions by sup or
        inf-convolutions will play a more important role in the discontinuous setting than in the
        continuous one.  
    \item[b.] \textbf{Control problems:} the discontinuous framework leads to introduce
        \textbf{Differential inclusions} in order to define properly the dynamic, discount and cost
        when $b, c, l$ are discontinuous. We provide classical and less classical results on the DPP
        in this setting.  
    \item[c.] \textbf{Stratifications:} we describe the notion of Whitney stratification which is
        the notion used in Bressan and Hong \cite{BH} for the structure of the discontinuities of
        $H$ or the $(b,c,l)$ and we introduce the notions of ``Admissible Flat Stratification'',
        ``Locally Flattenable Stratification'', and ``Tangentially Flattenable Stratification'' which are useful
        for our approach.
\end{enumerate}

Using these tools requires to make some basic assumptions for each of them, which are introduced
progressively in this next part.  Apart from \HHJ and \HCP that we introduced above, we will use
\HBCL and \HST respectively for the Differential Inclusion and the Stratification.


\chapter{PDE Tools}
\fancyhead[CO]{HJ-Equations with Discontinuities: PDE Tools}
\label{chap:pde.tools}

\abstract{This chapter presents all the tools which involve only pde-type arguments: while
stability results, and in particular the ``half-relaxed limits method'', are just described, ``Strong
Comparison Results'' are revisited to obtain a version which can be used in the discontinuous
framework. Whitney stratifications are introduced and some of their properties are studied with the
regularization of subsolutions procedure in mind, a key step in the proof of comparison results for
stratified problems. The important notion of ``regularity of discontinuous functions'' is exposed.
Finally, properties of viscosity sub and supersolutions on the boundary are studied with two points
of view, linking their regularity and the approach of Lions-Souganidis for problems set on
networks.}

\section{Discontinuous viscosity solutions for equations with discontinuities,
    ``half-relaxed limits'' method}
\label{sect:stab}

In this section, we recall the classical definition of discontinuous viscosity solutions introduced
by Ishii\cite{I1} for equations which present discontinuities. We have chosen to present it in the
first-order framework since, in this book, we are mainly interested in Hamilton-Jacobi Equations but
it extends without major changes to the case of fully nonlinear elliptic and parabolic pdes. We
refer to the Users' guide of Crandall, Ishii and Lions \cite{Users}, the books of Bardi and
Capuzzo-Dolcetta \cite{BCD} and Fleming and Soner \cite{FS} and  the CIME courses \cite{BCESS,CIME}
for more detailed presentations of the notion of viscosity solutions in this more general setting.

We (unavoidably) complement this definition by the description of the discontinuous stability
result, often called ``Half-Relaxed Limits Method'', being clearly needed when dealing with
discontinuities. We recall that it allows passage to the limit in fully nonlinear elliptic and
parabolic pdes with just an $L^\infty$--bound on the solutions. The ``Half-Relaxed Limits Method''
was introduced by Perthame and the first author in \cite{BP1} and developed in a series of works
\cite{BP2,BP3}. One of its first striking consequences was the ``Perron's method'' of Ishii
\cite{IPer}, proving the existence of viscosity solutions for a very large class of first- and
second-order equations (see also the above  references for a complete presentation).

\label{not:semic}\label{not:usc}
The definition of viscosity solutions uses the upper semicontinuous (\usc) envelope and lower
semicontinuous (\lsc) envelope of both the (sub and super) solutions and of the Hamiltonians and we
introduce the following notations: if $f:A\subset\R^p\to\R$ is a locally bounded function (possibly
discontinuous), we denote by $f^*$ its \usc envelope\index{Lower semicontinuous, upper semicontinuous envelopes}
$$ f^* (X) = \limsup_{\tilde X\to X}\; f(\tilde X) \quad \hbox{for  }X \in A\;, $$
and by $f_*$ its \lsc envelope
$$ f_* (X) = \liminf_{\tilde X\to X}\; f(\tilde X)\quad \hbox{for  }X \in A\;.$$

Throughout this section, we use $X\in\R^N$ as the generic variable to cover both the stationary and
evolution cases where respectively, $X=x\in\R^n$ or $X=(x,t)\in\R^{n}\times\R$.

\subsection{Discontinuous viscosity solutions}
\label{subsec.disc.visc.sol}
\index{Viscosity solutions!Ishii definition}
We consider a generic Hamiltonian $\G: \OOb \times \R \times \R^N \to \R$ where $\OO$ is an open
subset of $\R^N$ and $\OOb$ denotes its closure. We just assume that $\G$ is a locally bounded
function which is defined pointwise.

The definition of viscosity sub and supersolution is the following 
\begin{definition}\emph{--- Discontinuous Viscosity Solutions.}
    \label{class.visc. sol}\smsp
A locally bounded function $u:\OOb \to \R$ is a
viscosity subsolution of the equation
\begin{equation}\label{Geqn}
\G(X,u,Du) = 0 \quad \hbox{ on }\OOb
\end{equation}
if, for any $\varphi \in C^1(\overline\OO)$, at a maximum point $X_0 \in\OOb$
of $u^*-\varphi$, one has
$$\G_*(X_0,u^*(X_0),D\varphi(X_0))\leq 0\; .$$

A locally bounded function $v:\OOb \to \R$ is a
viscosity supersolution of Equation (\ref{Geqn}) if,
for any $\varphi \in C^1(\overline\OO)$, at a minimum point $X_0 \in\OOb$
of $v_*-\varphi$, one has
$$\G^*(X_0,v_*(X_0),D\varphi(X_0))\geq 0\; .$$

A (discontinuous) solution is a function which is both viscosity sub and supersolution of the
equation.
\end{definition}

Several classical remarks on this definition: 

\noindent$(i)$ In general, the notion of subsolution is given for \usc functions while the
notion of super-solution is given for \lsc functions: this may appear natural when looking at the above
definition where just $u^*$ and $v_*$ play a role and actually we can reformulate the above definition for
general functions as: $u$ is a subsolution if and only if the \usc function $u^*$ is a subsolution and $v$ is a supersolution
if and only if the \lsc function $v_*$ is a supersolution.  The interest of this more general definition comes from the applications,
for example to control problems, where we face functions which are a priori neither \usc nor \lsc and still we wish to prove
that they are sub and supersolution of some equations.  Therefore such a formulation is needed. But when we will have to give a
result which holds for subsolutions (or supersolutions), we will assume the subsolution to be \usc (or the supersolution to be \lsc) in order to lighten the notations in the statement.

\noindent $(ii)$ If the space of ``test-functions'' $\varphi$
which is here $C^1(\overline\OO)$ is changed into $C^2(\overline\OO)$, $C^k(\overline\OO)$ for any
$k>1$ or $C^\infty(\overline\OO)$, we obtain an equivalent definition. Then, for a classical
stationary equation (say in $\R^n$) like $$ H(x,u,Du)=0 \quad \hbox{in  }\R^n\, ,$$
the variable $X$ is just $x$, $N=n$ and $Du$ stand for the usual gradient of $u$ in $\R^n$. 
But this framework also contains the case of evolution equations
$$ u_t + H(x,t,u,D_x u)=0 \quad \hbox{in  }\R^n\times (0,\Tf)\, ,$$
where $X=(x,t) \in \R^n\times (0,\Tf)$, $N=n+1$ and $Du=(D_x u,u_t)$ where $u_t$ denotes the
time-derivative of $u$ and $D_x u$ is the derivative with respect to the space variables $x$, and
the Hamiltonian reads $$ G(X,r,P)=p_t + H(x,t,r,p_x)\; ,$$
for any $(x,t) \in \R^n\times (0,\Tf)$, $r\in \R$ and $P=(p_x,p_t)$.

\noindent $(iii)$ 
This definition is a little bit strange since the equation is set on a closed
subset, a very unusual situation.  There are two reasons for introducing it this way: the first one
is to unify equation and boundary condition in the same formulation as we will see below. With such
a general formulation, we avoid to have a different results for each type of boundary conditions.
The second one, which provides also a justification of the ``boundary conditions in the viscosity
sense'' is the convergence result we present in the next section.

To be more specific, let us consider the problem
$$
\left\{
\begin{array}{cc}
F(x,u,Du) = 0 & \hbox{in  }\OO \subset \R^n ,\cr
L(x,u,Du) = 0 & \hbox{on  }\partial \OO,\cr
\end{array}\right.
$$ where $F,L$ are given continuous functions. 
If we introduce the function $G$ defined by
$$\G(x, r, p)  = \left\{ \begin{array}{cc}F(x,r,p) & \hbox{if   }x\in \OO
 , \cr L(x,r,p) & \hbox{if  }x\in  \partial \OO.\cr\end{array}\right.$$
we can just rewrite the above problem as
$$\G(x,u,Du) = 0 \quad \hbox{ on } {\overline \OO}\;,$$
where the first important remark is that $\G$ is a priori a discontinuous Hamiltonian.
Hence, even if we assume $F$ and $L$ to be continuous, we face a typical example which we want to
treat in this book!

The interpretation of this new problem can be done by setting the
equation in $\overline \OO$ instead of $\OO$. 
Applying blindly the definition, we see that $u$ is a subsolution if
$\G_*(x,u^*,Du^*) \leq 0$ on $\overline \OO$,
i.e. if
$$\begin{cases}
    F(x,u^*,Du^*) \leq 0 & \hbox{in  }\OO\;,\\[2mm] \min ( F(x,u^*,Du^*) , L(x,u^*,Du^*))
\leq 0 & \hbox{on  }\partial \OO\;,\end{cases}
$$
while $v$ is a supersolution if $\G^*(x,v_*,Dv_*) \geq  0$ on $\overline \OO$,
$i.e.$ if
$$\begin{cases}
    F(x,v_*,Dv_*) \geq 0 & \hbox{in  }\OO\;,\\[2mm] \max ( F(x,v_*,Dv_*) , L (x,v_*,Dv_*)
) \geq 0 & \hbox{on  }\partial \OO\;.\end{cases}
$$
Indeed, we have just to compute $\G_*$ and $\G^*$ on $\OOb$ and this is where the ``$\min$'' and the
``$\max$'' come from on $\partial \OO$.

Of course, these properties have to be justified and this can be done by the discontinuous stability
result of the next section which can be applied for example to the most classical way to solve the
above problem, namely the vanishing viscosity method 
$$\begin{cases}
-\e \Delta
    u_\e + F(x,u_\e ,Du_\e) = 0 & \hbox{ in  }\OO\;,\\
    \qquad L (x,u_\e,Du_\e) = 0  &\hbox{ on  } \partial \OO\;.
\end{cases}$$
Indeed, by adding a $-\e \Delta $ term, we regularize the equation
in the sense that one can expect to have more regular solutions for this
approximate problem---typically in $C^2(\OO)\cap C^1(\overline\OO)$.

To complete this section, we turn to a key example: the case of a two half-spaces problem, 
which presents a discontinuity along an hyperplane. We use the following framework: 
in $\R^N$, we set  $\Omega_1=\{x_N>0\}$, $\Omega_2=\{x_N<0\}$ and $\mathcal{H}=\{x_N=0\}$.
We assume that we are given three continuous Hamiltonians, $H_1$ on $\overline \Omega_1$, 
$H_2$ on $\overline  \Omega_2$ and $H_0$ on $\mathcal{H}$. Here, $X=(x,t)$ and let us introduce 
$$\G(X,r,p):=\begin{cases}
	p_t+H_1(x,t,r,p_x) & \text{if }x\in\Omega_1\;,\\
	p_t+H_2(x,t,r,p_x) & \text{if }x\in\Omega_2\;,\\
	p_t+H_0(x,t,r,p_x) & \text{if }x\in\mathcal{H}\;.\\
\end{cases}$$
Then solving $\G(X,u,Du)=0$ for $X=(x,t)\in\R^{N+1}$ means to solve the equations
$u_t+H_i(x,t,u,Du)=0$ in each $\Omega_i$ ($i=1,2$) with the ``natural'' conditions on $\mathcal{H}$
given by the Ishii's conditions for the sub and super-solutions, namely
$$ \begin{cases}\min(u_t+H_1 (x,t,u^*,Du^*),u_t+H_2(x,t,u^*,Du^*),u_t+H_0(x,t,u^*,Du^*)) \leq 0 & 
    \hbox{on  }\mathcal{H} \;,\\
\max(u_t+H_1(x,t,v_*,Dv_*),u_t+H_2(x,t,v_*,Dv_*),u_t+H_0(x,t,v_*,Dv_*)) \geq 0&
\hbox{on  }\mathcal{H}\;.\end{cases}$$

\begin{remark}We have decided to present the definition of viscosity solution on a closed space
    $\OOb$ for the reasons we explained above.  But we can define as well equations set in open
    subset of $\R^N$ (typically $\OO$)  or open subsets of $\OOb$ 
(typically $\OOb\cap B(X,r)$ for some $X \in \OOb$ and $r>0$). 
The definition is readily the same, considering local maximum points of $u^*-\varphi$ 
    or minimum points of $v_*-\varphi$ which are in $\OO$ or $\OOb\cap B(X,r)$.
\end{remark}

We end this section with a classical ``trick'' that is used in many stability results like the
half-relaxed limits method, which is detailed in the next section.
\begin{lemma}\label{lem:strict.point} When testing the sub or supersolution condition for an equation of the type 
$\G(X,u,Du)=0$, if $u-\varphi$ reaches a local extremum at $X_0$, we can always assume that $X_0$ is a strict maximum or minimum point, without changing $D\varphi(X_0)$.
\end{lemma}

We point out that a immediate consequence of this lemma is that we have an equivalent definition of viscosity
sub and supersolutions by considering only strict local maximum/minimum points.

\begin{proof}
    In the case of a maximum point, we just need to replace $\varphi$ by
    $\psi(X):=\varphi(X)-c|X-X_0|^2$ where $c>0$: it is clear that $u-\psi$ has a strict maximum at
    $X_0$ and moreover since $D\varphi(X_0)=D\psi(X_0)$, the subsolution condition still takes the
    form $$\G(X_0,u(X_0),D\varphi(X_0))\leq0\;.$$ Of course, the same argument applies for the supersolution
    condition by adding this time $c|X-X_0|^2$ to $\varphi$.
\end{proof}

Notice that the same trick works for second-order equations, but in order to keep the second-order
derivatives unchanged we have to use $\varphi(X)\pm c|X-X_0|^4$.

\subsection{The half-relaxed limits method}\label{sec:hrl}

\newcommand{\fe}{f_\eps}

In order to state it we use the following notations: if $A\subset \R^p$ and if $(\fe)_\e$ is a
sequence of uniformly locally bounded real-valued functions defined on $A$, the half-relaxed limits
of $(\fe)_\e$ are defined, for any $X \in A$, by\index{Half-relaxed limits}
$$\limssup \fe (X)\;=\;\limsup_\YtoXandEPStoZERO
 \fe(Y)\quad \hbox{and}\quad\limiinf \fe (X)\;=\;\liminf_\YtoXandEPStoZERO
  \fe(Y)\; .$$

\begin{theorem}\label{hrl}\emph{--- Half-relaxed limits.}\smsp
    Assume that, for $\e >0$, $u_{\e}$ is a
viscosity subsolution \resp{a supersolution} of the equation
$$\Ge (X,u_{\e},Du_{\e}) =0 \quad \hbox { on } \overline \OO\; ,
$$
where $(\Ge )_\e$ is a sequence of uniformly locally bounded
functions in $ \overline \OO \times \R
\times \R^N $. If the
functions $u_{\e}$ are uniformly locally bounded on $ \overline \OO$, then
$\overline u=\limssup u_{\e}$ \resp{$\underline u=\limiinf u_{\e}$} 
is a subsolution \resp{a supersolution}
of the equation $$\underline \G (X,u,Du) =0 \quad \hbox { on }
\overline \OO\; ,$$ where $\underline \G=\limiinf \Ge$.
\resp{of the equation
$$\overline \G (X,u,Du) =0 \quad \hbox { on }  \overline  \OO\; ,$$
where
$\overline \G=\limssup \Ge$}.
\end{theorem}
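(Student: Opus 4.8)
The plan is to prove the subsolution statement, the supersolution case being completely symmetric. Fix a test-function $\varphi \in C^1(\overline\OO)$ and a point $y_0 \in \overline\OO$ which is a \emph{strict} local maximum point of $\overline u - \varphi$ on $\overline\OO$; by the usual argument (replacing $\varphi$ by $\varphi + |y-y_0|^4$) it suffices to treat strict local maxima, and this is enough to conclude by a density/perturbation argument. I must show $\underline G(y_0, \overline u(y_0), D\varphi(y_0)) \le 0$.

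The key step is the standard ``maximum points converge'' lemma for half-relaxed limits. Since $\overline u = \limssup u_\e$, one can produce a sequence $\e_n \to 0$ and points $y_n \in \overline\OO$ such that $y_n$ is a local maximum point of $u_{\e_n}^* - \varphi$ on $\overline\OO$, $y_n \to y_0$, and $u_{\e_n}^*(y_n) \to \overline u(y_0)$. The construction is classical: on a fixed closed ball $\overline B$ around $y_0$ on which $y_0$ is the strict max of $\overline u - \varphi$, let $y_n$ be a maximum point of $u_{\e_n}^* - \varphi$ over $\overline B$; extract a convergent subsequence $y_n \to \bar y \in \overline B$; then
$$\overline u(\bar y) - \varphi(\bar y) \ge \limsup_n \big(u_{\e_n}^*(y_n) - \varphi(y_n)\big) \ge \limsup_n \big(u_{\e_n}^*(y_0) - \varphi(y_0)\big) \ge \overline u(y_0) - \varphi(y_0),$$
where the first inequality uses the definition of $\limssup$ and the last uses that $u_{\e_n}^*(y_0) \to$ some value $\ge \overline u(y_0)$ along a further subsequence (or directly the definition of the upper relaxed limit). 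Strictness of the maximum at $y_0$ forces $\bar y = y_0$, and then the chain of inequalities collapses to give $u_{\e_n}^*(y_n) - \varphi(y_n) \to \overline u(y_0) - \varphi(y_0)$, hence $u_{\e_n}^*(y_n) \to \overline u(y_0)$; moreover for $n$ large $y_n$ lies in the interior of $\overline B$ so it is a genuine local max point on $\overline\OO$.

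Now apply the definition of viscosity subsolution to $u_{\e_n}$ with test-function $\varphi$ at the point $y_n$: this gives
$$(\Ge[_n])_*\big(y_n, u_{\e_n}^*(y_n), D\varphi(y_n)\big) \le 0.$$
It remains to pass to the limit $n \to \infty$ in this inequality. We have $y_n \to y_0$, $u_{\e_n}^*(y_n) \to \overline u(y_0)$, and $D\varphi(y_n) \to D\varphi(y_0)$ by continuity of $D\varphi$. Writing $z_n := (y_n, u_{\e_n}^*(y_n), D\varphi(y_n)) \to z_0 := (y_0, \overline u(y_0), D\varphi(y_0))$, the definition of $\underline G = \limiinf \Ge$ as the lower half-relaxed limit of the sequence $(\Ge)$ gives precisely
$$\underline G(z_0) \le \liminf_n (\Ge[_n])_*(z_n) \le \liminf_n 0 = 0,$$
since by definition of the relaxed lower limit, $\underline G(z_0) = \liminf_{(z,\e)\to(z_0,0)} \Ge(z) \le \liminf_n (\Ge[_n])_*(z_n)$ (the extra lower-semicontinuous envelope only lowers values, and the relaxed limit already incorporates a liminf over nearby points). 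This yields $\underline G(y_0, \overline u(y_0), D\varphi(y_0)) \le 0$, as required.

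The main obstacle — really the only non-routine point — is the bookkeeping in the convergence of maximum points, specifically checking that $u_{\e_n}^*(y_n)$ converges to exactly $\overline u(y_0)$ rather than to something smaller, which is where strictness of the maximum is used in an essential way, and ensuring that $y_n$ stays in the interior of the test ball so that the subsolution inequality is available at $y_n$. Everything else is a mechanical unravelling of the definitions of the half-relaxed limits of the functions $u_\e$ and of the Hamiltonians $\Ge$; one should be slightly careful that the same extraction of a subsequence $\e_n$ works simultaneously for the convergence $y_n \to y_0$ and for the relaxed-limit inequality on $\Ge$, but since both are $\liminf$-type statements a single diagonal subsequence suffices.
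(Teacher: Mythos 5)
Your argument follows the same route as the paper's: the paper isolates the convergence of maximum points as Lemma~\ref{convptsmaxdisc} and then concludes exactly as you do, so the architecture is fine, and your final passage to the limit in $(G_{\e_n})_*(z_n)\leq 0$ using the definition of $\underline G$ (including the remark that the lsc envelope is absorbed by a diagonal extraction) is correct. However, there is a genuine flaw in your justification of the key step, namely the third inequality in your chain: you claim $\limsup_n u_{\e_n}^*(y_0)\geq \overline u(y_0)$ ``by the definition of the upper relaxed limit''. This is false in general: $\overline u(y_0)=\limssup u_\e(y_0)$ is a $\limsup$ over pairs $(\tilde y,\e)\to(y_0,0)$ with $\tilde y$ ranging over a neighbourhood of $y_0$, not over the single point $y_0$. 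For instance, if $u_\e$ equals $1$ on the sphere $\{|y-y_0|=\e\}$ and $0$ elsewhere (an usc function, so $u_\e^*=u_\e$), then $u_\e^*(y_0)=0$ for every $\e$ while $\overline u(y_0)=1$. So the chain of inequalities, as written, does not close, and this is precisely the ``only non-routine point'' you identify.

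The repair is short and is what the paper does in Lemma~\ref{convptsmaxdisc}: instead of evaluating the maximum-point inequality at the fixed point $y_0$, use it for \emph{all} $z$ in the ball, i.e.\ $u_\e^*(z)-\varphi(z)\leq u_\e^*(y_\e)-\varphi(y_\e)$, and take the relaxed $\limsup$ of the left-hand side over $(z,\e)\to(y_0,0)$; this yields $\overline u(y_0)-\varphi(y_0)\leq \limsup_\e\big(u_\e^*(y_\e)-\varphi(y_\e)\big)$, after which one extracts the subsequence realizing this $\limsup$ together with $y_{\e'}\to\bar y$ and concludes by strictness as you do. (Equivalently: first pick $(\tilde y_n,\e_n)\to(y_0,0)$ with $u_{\e_n}(\tilde y_n)\to\overline u(y_0)$, and compare $y_n$ with $\tilde y_n$ rather than with $y_0$.) Note also that the order of extractions matters: the subsequence $\e_n$ must be chosen \emph{after} this inequality, to realize $\limsup_\e u_\e^*(y_\e)$, not fixed in advance as your write-up suggests. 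With this correction your proof coincides with the paper's.
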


In order to compare them, we recall that the first stability result for viscosity solutions is given
in the introductory article of Crandall and Lions \cite{CL}: it takes the form
\begin{theorem}\label{stab:first} Assume that, for $\e >0$, $u_{\e}\in C(\OO)$ is a
viscosity subsolution \resp{a supersolution} of the equation
$$\Ge (X,u_{\e},Du_{\e}) =0 \quad \hbox { in } \OO\; ,
$$
where $(\Ge )_\e$ is a sequence of continuous
functions in $\OO \times \R
\times \R^N $. If $u_{\e} \to u$ in $C(\OO)$ and if $\Ge \to \G$ in $C(\OO \times \R
\times \R^N )$, then $u$
is a subsolution \resp{a supersolution}
of the equation $$ \G (X,u,Du) =0 \quad \hbox { in }
 \OO\; .$$
\end{theorem}

We recall that the convergence in the space of continuous functions ($C(\OO)$ or $C(\OO \times \R
\times \R^N) $) is the local uniform convergence.

Theorem~\ref{stab:first} is, in fact, a particular case of Theorem~\ref{hrl}. Indeed, as the proof
will show, the result of Theorem~\ref{hrl} remains valid if we replace $\overline  \OO$ by $\OO$
and if $\ue$ and $\Ge$ converge uniformly then $u=\overline u=\underline u$ and $\G=\overline
\G=\underline \G$.

Hence Theorem~\ref{hrl} is more general when applied to either sub or supersolutions: its main interest
is to allow the passage to the limit in the notion of sub and supersolutions with very weak assumptions on the
solutions but also on the equations: only uniform local $L^\infty$--bounds. In particular, phenomenas like {\em
boundary layers} can be handled with such a result. This is a striking difference with
Theorem~\ref{stab:first} which, in practical uses, requires some compactness of the $\ue$'s in the
space of continuous functions (typically some gradient bounds) in order to have a converging
subsequence.
 
The counterpart is that we do not have a limit anymore, but two half-limits $\overline u$ and
$\underline u$ which have to be connected in order to obtain a real convergence result. In fact, the
complete {\bf Half-Relaxed Limit Method} is performed as follows \index{Comparison
result!strong}\label{not:SCR}
\begin{enumerate}
\item Get a locally (or globally) uniform $L^\infty$--bound for the $(\ue)_\e$.
\item Apply the above discontinuous stability result.
\item The inequality $\uu \leq \ou$ on  $\overline \OO$ holds by definition.
\item To obtain the converse inequality, use a {\bf Strong Comparison Result}, \SCR in short,
 \ie a comparison result which is valid for {\em discontinuous} sub and supersolutions, which yields
$$ \ou \leq \uu \quad \hbox{in }\;\OO\;(\hbox{or on  }\;\overline \OO\,)\;. $$
\item From the \SCR, we deduce that $\ou=\uu$ in $\OO$ (or on  $\overline \OO$). 
Setting $u:=\ou=\uu$, it follows that $u$ is continuous (because $\ou$ is \usc and $\uu$ is \lsc)
        and it is easy to show that, $u$ is {\em the unique solution} of the limit equation, by
        using again the \SCR. 
\item Finally, we also get the convergence
of $u_{\e}$ to $u$ in $C(\OO)$ (or in $C(\overline \OO)$) (see Lemma~\ref{ubegalub} below).
\end{enumerate}

It is clear that, in this method, \SCR play a central role and one of the main challenge in this
book is to show how to obtain them in various contexts.

Now we give the \noindent{\bf Proof of Theorem~\ref{hrl}}. 
We do it only for the subsolution case, the supersolution one being analogous.

We first remark that $\limssup u_{\e} = \limssup u_{\e}^*$ and therefore changing $u_{\e}$ in
$u_{\e}^*$, we can assume without loss of generality that $\ue$ is \usc.  Recall also
that by Lemma~\ref{lem:strict.point}, we are always reduced to consider strict extremum points in viscosity
inequalities testing. The proof is based on the
\begin{lemma}\label{convptsmaxdisc} Let
$(w_\e)_\e$ be a sequence of uniformly bounded \usc functions on $\OOb$ and 
$\ow = \limssup w_\e$. If $X \in \OOb$ is a strict local maximum point 
of $\ow$ on $\OOb$, there exists a subsequence
$(w_{\e'})_{\e'}$ of $(w_\e)_{\e}$ and a sequence
$(X_{\e'})_{\e'}$ of points in $\OOb$ such that,
for all $\e'$, $X_{\e'}$ is a local maximum point of $w_{\e'}$ in $\OOb$, the sequence
$(X_{\e'})_{\e'}$ converges to $X$ and
$w_{\e'}(X_{\e'})\to \ow(X)$.
\end{lemma}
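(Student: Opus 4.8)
\textbf{Proof plan for Lemma~\ref{convptsmaxdisc}.}

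The plan is to follow the standard compactness argument for relaxed limits, adapted to a closed set $\OOb$ and to \usc functions. First I would localize: since $y$ is a \emph{strict} local maximum point of $\ow$ on $\OOb$, there exists $r>0$ such that $\ow(z) < \ow(y)$ for all $z \in \OOb \cap \overline{B}(y,r)$ with $z \neq y$. Replacing $\OOb$ by the compact set $K := \OOb \cap \overline{B}(y,r)$ throughout, I may assume $y$ is a strict \emph{global} maximum of $\ow$ on $K$. For each $\e$, since $w_\e$ is \usc on the compact set $K$, it attains its maximum over $K$ at some point $y_\e \in K$; this $y_\e$ is in particular a local maximum point of $w_\e$ on $\OOb$ as soon as it lies in the interior ball, which I will check a posteriori once I know $y_\e \to y$.

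The heart of the argument is to extract a subsequence along which $y_\e \to y$ and $w_\e(y_\e) \to \ow(y)$. By compactness of $K$, pass to a subsequence (still indexed by $\e$, or by $\e'$) so that $y_\e \to \bar y \in K$ and $w_\e(y_\e) \to \ell \in \R$ (the values are bounded uniformly). Two inequalities then pin everything down. On one hand, by the very definition of $\ow = \limssup w_\e$ as an upper relaxed limit, $\limsup_{\e} w_\e(y_\e) \le \ow(\bar y)$, so $\ell \le \ow(\bar y)$. On the other hand, again by definition of the relaxed upper limit at the point $y$, there is a sequence $z_\e \to y$ with $w_\e(z_\e) \to \ow(y)$; since $z_\e \in K$ for $\e$ small and $y_\e$ maximizes $w_\e$ over $K$, we get $w_\e(y_\e) \ge w_\e(z_\e)$, hence $\ell = \lim w_\e(y_\e) \ge \lim w_\e(z_\e) = \ow(y)$. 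Combining, $\ow(y) \le \ell \le \ow(\bar y)$. But $y$ is the strict global maximum of $\ow$ on $K$, so $\ow(\bar y) \le \ow(y)$, forcing $\ow(\bar y) = \ow(y)$ and therefore $\bar y = y$ by strictness. This also yields $\ell = \ow(y)$, i.e. $w_\e(y_\e) \to \ow(y)$.

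Finally I would clean up: once $y_\e \to y$, for $\e$ small enough $y_\e \in \OOb \cap B(y,r)$, which is open in $\OOb$, so being a maximum of $w_\e$ over $K = \OOb \cap \overline B(y,r)$ makes $y_\e$ a genuine local maximum point of $w_\e$ on $\OOb$. Renaming the extracted subsequence $(w_{\e'})_{\e'}$ and $(y_{\e'})_{\e'}$ gives exactly the statement. The only mildly delicate point — the ``main obstacle'' if there is one — is making sure the two defining properties of $\limssup$ are used in the right direction (the $\le$ at the cluster point $\bar y$, and the $\ge$ coming from a recovery sequence at $y$ combined with the maximality of $y_\e$); once those are set up correctly, strictness of the maximum does the rest. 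No hypothesis beyond uniform boundedness, upper semicontinuity, and the strictness of the local max is needed, and the argument is insensitive to whether we work on $\OO$, $\OOb$, or a relatively open subset.
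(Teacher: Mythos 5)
Your overall strategy is the paper's strategy: localize to the compact set $K = \OOb \cap \overline B(y,r)$, take maximum points $y_\e$ of $w_\e$ over $K$, and squeeze using the two directions of the $\limssup$ definition together with the strictness of the maximum. But there is a genuine gap in the order of your steps. You first pass to an \emph{arbitrary} convergent subsequence, fixing $y_\e \to \bar y$ and $w_\e(y_\e) \to \ell$, and only afterwards invoke a ``recovery sequence'' $z_\e \to y$ with $w_\e(z_\e) \to \ow(y)$ to conclude $\ell \ge \ow(y)$. The definition of $\ow(y)=\limssup w_\e(y)$ guarantees such a recovery sequence along \emph{some} sequence of $\e$'s tending to $0$, not along the particular subsequence you have already extracted. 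Along your extracted subsequence, $\ell$ is just one cluster value of $(w_\e(y_\e))_\e$ and need not equal the limsup, so the inequality $\ell \ge \ow(y)$ can fail. A concrete illustration: take $\OOb=\{y\}$ and let $w_{1/n}(y)$ alternate between $0$ and $1$; then $\ow(y)=1$, yet along the even subsequence $\ell=0<\ow(y)$.

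The fix is to establish the lower bound \emph{before} extracting, which is what the paper does: from $w_\e(z) \le w_\e(y_\e)$ for all $z\in K$, take $\limsup$ over $(z,\e)\to(y,0)$ to obtain $\ow(y) \le \limsup_\e w_\e(y_\e)$ over the \emph{full} sequence, and only then choose the subsequence $\e'$ realizing this limsup (further extracting so that $y_{\e'}\to\bar y$ by compactness). Your upper bound $\lim_{\e'} w_{\e'}(y_{\e'}) \le \ow(\bar y)$ and the strictness argument then close the proof exactly as you wrote. So the two halves of the squeeze are the right ideas, but the quantifier ordering as written leaves $\ell \ge \ow(y)$ unjustified.
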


We first prove Theorem~\ref{hrl} by using the lemma. Let $\varphi \in C^1(\OOb)$ and let $X\in \OOb$ 
be a strict local maximum point  de $\overline u -\varphi$. We apply Lemma~\ref{convptsmaxdisc} 
to $w_\e = u_{\e} -\varphi$ and $\ow=\overline u-\varphi =  \limssup\,(u_{\e}-\varphi)$.
There exists a subsequence $(u_{\e'})_{\e'}$ and a sequence $(X_{\e'})_{\e'}$ such that, for all 
$\e'$, $X_{\e'}$ is a local maximum point of $u_{\e'}-\varphi$ on $\OOb$. But $u_{\e'}$ is a
subsolution of the $\G_{\e'}$-equation, therefore
$$\G_{\e'} (X_{\e'},u_{\e'}(X_{\e'}),D\varphi (X_{\e'})) \leq 0\; .$$
Since $X_{\e'}\to X$ and since $\varphi$ is smooth $D\varphi (X_{\e'}) \to D\varphi(X)$; 
but we have also $u_{\e'}(X_{\e'})\to \overline u(X)$, therefore by definition of  $\underline \G$
$$\underline \G (X,\overline u(X),D\varphi (X)) 
\leq \liminf\, \G_{\e'} (X_{\e'},u_{\e'}(X_{\e'}),D\varphi (X_{\e'})) \;.$$
This immediately yields
$$ \underline \G (X, \overline u(X),D\varphi (X)) \leq 0\; ,$$
and the proof is complete.

\begin{proof}[Proof of Lemma~\ref{convptsmaxdisc}] 
    Since $X$ is a strict local maximum point of $ \ow$ on $\OOb$, there exists $r>0$ such that
$$\forall Y \in \OOb \cap \overline B(X,r)\;,\quad  \ow (Y)
\leq  \ow (X)\; ,$$
the inequality being strict for $Y\neq X$. But $\OOb \cap \overline 
B(X,r)$ is compact and $w_\e$ is \usc, therefore, for all $\e >0$, 
there exists a maximum point $X_\e$ of $w_\e$ on $\OOb \cap \overline  B(X,r)$. In other words
\begin{equation}\label{pourlimsup}
\forall Y\in \OOb \cap \overline 
B(X,r)\;,\quad  w_\e (Y) \leq w_\e(X_\e)\; 
.\end{equation}
Now we take the $\limsup$ as $Y\to X$ and $\e \to 0$: we obtain
    $$  \ow (X) \leq \limsup_{\e\to0} \, w_\e(X_\e)\; .$$

Next we consider the right-hand side of this inequality: 
extracting a subsequence denoted by $\e'$, we have 
$\limsup_\e \, w_\e(X_\e) = \lim_{\e'}w_{\e'} (X_{\e'})$ and since $\OOb  \cap \overline B(X,r)$
is compact, we may also assume that $X_{\e'} \to \bar X \in \OOb  \cap \overline B(X,r)$. 
But using again the definition of the $\limssup$ at $\bar X$, we get
    $$  \ow (X) \leq \limsup_{\e\to0} \, w_\e(X_\e)= 
    \lim_{\e'\to0}w_{\e'} (X_{\e'}) \leq  \ow(\bar X) \; .$$
Since $X$ is a strict maximum point of $ \ow$ in $\OOb  \cap \overline
B(X,r)$ and that $\bar X \in \OOb \cap \overline B(X,r)$, 
this inequality implies that $\bar X=X$ and that $w_{\e'} (X_{\e'}) \to  \ow (X)$, so that
    the proof is complete.
\end{proof}

Controlling the liminf and limsup also implies local uniform convergence:
\begin{lemma}\label{ubegalub} If $\KK$ is a compact subset of $\OOb$ and if $\ou=\uu$ 
on $\KK$ then $u_{\e}$ converges uniformly to the function $u:=\ou=\uu$ on $\KK$.
\end{lemma}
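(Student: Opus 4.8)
The plan is to argue by contradiction using the compactness of $\KK$ together with the very definitions of the half-relaxed limits $\ou = \limssup u_\e$ and $\uu = \limiinf u_\e$. Suppose the convergence is not uniform on $\KK$. Then there exist $\delta > 0$, a subsequence $(u_{\e'})_{\e'}$, and points $y_{\e'} \in \KK$ such that $|u_{\e'}(y_{\e'}) - u(y_{\e'})| \geq \delta$ for all $\e'$. Since $\KK$ is compact, up to a further extraction we may assume $y_{\e'} \to \yb \in \KK$. Splitting into the two cases according to the sign of $u_{\e'}(y_{\e'}) - u(y_{\e'})$ (one of which occurs for infinitely many $\e'$), we treat for instance the case $u_{\e'}(y_{\e'}) \geq u(y_{\e'}) + \delta$.

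First I would exploit the continuity of $u = \ou = \uu$ on $\KK$: since $\ou$ is \usc and $\uu$ is \lsc and they coincide, $u$ is continuous on $\KK$, hence $u(y_{\e'}) \to u(\yb)$. Therefore $\limsup_{\e'} u_{\e'}(y_{\e'}) \geq u(\yb) + \delta$. On the other hand, by the very definition of $\limssup$, for any sequence $y_{\e'} \to \yb$ one has $\limsup_{\e'} u_{\e'}(y_{\e'}) \leq \ou(\yb) = u(\yb)$, which is a contradiction. The symmetric case $u_{\e'}(y_{\e'}) \leq u(y_{\e'}) - \delta$ is handled identically, using instead that $\liminf_{\e'} u_{\e'}(y_{\e'}) \geq \uu(\yb) = u(\yb)$ while $u(y_{\e'}) - \delta \to u(\yb) - \delta$, again a contradiction. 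This proves uniform convergence of $u_\e$ to $u$ on $\KK$.

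I do not expect any serious obstacle here: the only mild subtlety is making sure the argument uses $\KK$ compact (to extract a convergent subsequence $y_{\e'} \to \yb$ and to keep $\yb \in \KK$), and that the definition of the half-relaxed limits is applied at the \emph{limit point} $\yb$ rather than pointwise along the sequence — exactly as in the proof of Lemma~\ref{convptsmaxdisc}. One should also note that although $u_\e$ need not be \usc or \lsc a priori, replacing $u_\e$ by $u_\e^*$ (resp.\ $(u_\e)_*$) changes neither the half-relaxed limits nor the conclusion, so there is no loss of generality, and in any case the inequalities $\limsup_{\e'} u_{\e'}(y_{\e'}) \le \ou(\yb)$ and $\liminf_{\e'} u_{\e'}(y_{\e'}) \ge \uu(\yb)$ hold directly from the definitions without any semicontinuity assumption on the $u_\e$.
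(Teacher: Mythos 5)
Your proposal is correct and is essentially the paper's own argument, just phrased as a proof by contradiction rather than as a direct estimate: the paper works with $M_\e=\sup_{\KK}(u_\e^*-u)$ and $\tilde M_\e=\sup_{\KK}(u-(u_\e)_*)$, extracts maximizers $y_\e$, and shows $\limsup_\e M_\e\le 0$ (resp.\ $\limsup_\e\tilde M_\e\le 0$) by exactly the same three ingredients you use: compactness of $\KK$, continuity of $u$ on $\KK$, and the defining inequalities $\limsup_{\e'}u_{\e'}(y_{\e'})\le\ou(\yb)$, $\liminf_{\e'}u_{\e'}(y_{\e'})\ge\uu(\yb)$ along any sequence $y_{\e'}\to\yb$. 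Your closing remark that these inequalities hold without any semicontinuity hypothesis on the $u_\e$ is correct and in fact makes the passage to $u_\e^*$ and $(u_\e)_*$ (which the paper uses so that the sup is attained) unnecessary in your version, since you never need the supremum to be attained.
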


\noindent{\bf Proof of Lemma~\ref{ubegalub} :} Since $\ou=\uu$ on $\KK$ and since $\ou$ is \usc and
$\uu$ is \lsc on $\OOb$, $u$ is continuous on $\KK$.  We first consider 
$$M_\e  =\sup_{\KK}\,\left(u_{\e}^* - u \right)\;.$$ 
The function $u_{\e}^*$ being \usc and $u$ being continuous, this supremum is in fact a maximum and
is achieved at a point $X_\e$. The sequence $\left(u_{\e}\right)_\e$ being locally uniformly bounded,
the sequence $\left(M_\e\right)_\e$ is also bounded and, $\KK$ being compact, we can extract
subsequences such that $M_{\e'} \to \limsup_\e\,M_\e$ and 
$X_{\e'} \to \bar X \in \KK$. But by the definition of the $\limssup$, 
$\limsup u_{\e'}^*(X_{\e'}) \leq \ou (\bar X)$ while we have also $u(X_{\e'}) \to u(\bar X)$ 
by the continuity of $u$. We conclude that
$$ \limsup_{\e\to0}\,M_\e = \lim_{\e'\to0}\, M_{\e'} = 
\lim_{\e'\to0}\, (u_{\e'}^*(X_{\e'}) -u(X_{\e'}) ) \leq \ou (\bar X)-u(\bar X)=0\; .$$
This part of the proof gives half of the uniform convergence, 
the other part being obtained analogously by considering 
$\displaystyle{\tilde M_\e  =\sup_{\KK}\,\left(u -(u_{\e})_* \right)}$.

\section{Strong comparison results: how to cook them?}
\label{sect:htc}

\index{Comparison result!strong}\index{Comparison result!global}
\index{Comparison result!local}\label{not:SCR2}
In the previous section, we have seen that \SCR are key tools which are needed to use the
``Half-Relaxed Limit Method''. We have used the terminology ``strong'' because such comparison
results have to hold for discontinuous sub and supersolutions, which are only \usc and \lsc
respectively. From a technical point of view, it is easier to compare at least continuous
sub and supersolutions and of course, some comparison results may even fail in the discontinuous
framework. However, in this book we mainly prove \SCR therefore the expression ``comparison
result'' always refers to a \emph{strong} one.

In general, a comparison result is a \emph{global} inequality (i.e. on the whole domain) between
sub and supersolutions. However, in the case of Hamilton-Jacobi Equations with discontinuities it
is far easier, if not necessary, to argue locally. This is why in this section we explain how to
reduce the proof of {\em global} comparison results to the proof of {\em local} comparison results.
We do not pretend this section to cover all cases but we have tried to make it as general as we could.

\index{Viscosity solutions!comparison results}

\subsection{Stationary equations}

In this section we are in the situation where $X=x$ is the space variable in $\R^N$ or a subset of
it, and no time variable is involved here. We consider a general equation
\begin{equation}\label{GenHJ}
\G(x,u,Du)= 0\quad \hbox{on  }\mF \; ,
\end{equation}
where $\mF$ is a closed subset of $\R^N$ and $\G$ is a continuous or discontinuous 
function on $\mF \times \R \times \R^N$. 

\label{not:USCS}\label{not:LSCS}
We introduce the following notations: $\USCS(\mF)$ is a subset of \usc subsolutions of \eqref{GenHJ}
while $\LSCS(\mF)$ is a subset of \lsc supersolutions of \eqref{GenHJ}. We prefer to remain a little
bit vague on these subsets but the reader may have in mind that they are generally defined by some
growth conditions at infinity if $\mF$ is an unbounded subset of $\R^N$. In these definitions, we
may replace below $\mF$ by a subset (open or closed) of~$\mF$ and we use below the following
notations 
$$ \mF^{x,r} := B(x,r)\cap \mF \quad \hbox{and}\quad  \partial \mF^{x,r} := \partial
{B(x,r)}\cap \mF\; .$$
Finally we denote by $\USCS(\mF^{x,r})$ \resp {$\LSCS(\mF^{x,r})$} the set of \usc \resp{\lsc}
functions on $\overline{\mF^{x,r}}$ which are subsolutions \resp{supersolutions} of $\G=0$ in 
$\mF^{x,r}$. Notice that, for these sub and supersolutions, no viscosity inequality is imposed on
$\partial {B(x,r)}$.

By ``global'' and ``local'' comparison results we mean the following

\index{Comparison result!global}\label{not:GCR}\label{page:GCR}
\begin{assumption}{$\GCR^\mF$}{Global Comparison Result in $\mF$.}
 For any $u \in \USCS(\mF)$ and $v \in \LSCS(\mF)$, we have $u\leq v$ on $\mF$\;.
\end{assumption}

\index{Comparison result!local}\label{not:LCR}\label{page:LCR}
\begin{assumption}{$\LCR^\mF$}{Local Comparison Result in $\mF$.}
For any $x \in \mF$, there exists $\bar r>0$ such that, 
if $u \in \USCS(\mF^{x,\bar r})$, $v \in \LSCS(\mF^{x,\bar r})$ then for any $0<r\leq \bar r$,
$$ \max_{\overline{\mF^{x,r}}}(u-v)_+ \leq \max_{\partial \mF^{x,r} }(u-v)_+ .$$
\end{assumption}

 \label{pos.neg} We recall that $s_+$ denotes the positive part of $s\in \R$ while $s_-$ stands for its negative part
 and we extend this definition to functions.

Writing the $\LCR^\mF$ with the $(...)_+$ inequality is quite standard, see for instance \cite{GT}.
The meaning of this formulation is the following: either $u\leq v$ in $\overline{\mF^{x,r}}$ and we
are done locally speaking, or the maximum of $u-v$ is positive, but controlled by the values
at the boundary. The reader may be surprised by the formulation of $\LCR^\mF$ with both
$\bar r$ and $r$ but, on one hand, proving such result for all $r$ small enough (instead of a fixed
$r>0$) turns out to be the same in general. On the other hand, this formulation will give 
us more flexibility while presenting the strategy to reduce the proof of $\GCR^\mF$ to $\LCR^\mF$.

In the rest of this section, we skip the reference to $\mF$ in \LCR and \GCR since there is no
ambiguity here.  It is clear that proving \LCR seems much easier because of the compactness of
$\overline {\mF^{x,r}}$ and the fact that we only have to prove them for $0<r\leq\bar r= \bar r(x)$.

Indeed, the behavior at infinity of $u$ and $v$ does not play a role anymore and moreover we only
use local properties of $\G$; in particular, if $\G$ has discontinuities which form a
stratification, we can use this localization to restrict to a ball where some part of the stratification is flat with
suitable properties nearby, \cf~Section~\ref{sect:whitney}.

Now we formulate a first key assumption in order to reduce \GCR to \LCR.

\label{page:LOCa}
\begin{assumption}{\LOCa}{Localization assumption one.}
    If $\mF$ is unbounded, for any $u \in \USCS(\mF)$, for any $v \in \LSCS(\mF)$, there exists a
    sequence $ (u_\alpha)_{\alpha>0}$ of \usc subsolutions of \eqref{GenHJ} such that $u_\alpha(x) -
    v (x) \to -\infty$ when $|x|\to +\infty$, $x\in \mF$.  Moreover, for any $x\in \mF$,
    $u_\alpha(x) \to u(x)$ when $\alpha \to 0$.
\end{assumption}

In the above assumption, we do not write that $u_\alpha \in \USCS(\mF)$ because this is not the case
in general: typically, $\USCS(\mF)$ may be the set of {\em bounded} subsolutions of \eqref{GenHJ}
while $u_\alpha$ is not expected to be bounded.

The main consequence of \LOCa is that there exists $\xb \in \mF$ such that
$$u_\alpha(\xb) - v (\xb)= \max_{\mF}(u_\alpha-v)\; ,$$ 
and the basic ideas of the reduction of \GCR to \LCR can be understood through
the two following particular cases which we will generalize afterwards.

\noindent\emph{$(i)$ Strict local maximum point ---}
If $\xb$ is a {\em strict} local maximum point of $u_\alpha-v$ for any $\alpha>0$ small enough and $\bar
r:=\bar r(\xb)$ is defined in $\LCR^\mF$, then for any $0<r<\bar r$,
$$ (u_\alpha-v)_+ (\xb) \leq \max_{\partial \mF^{x,r} }(u_\alpha-v)_+\;.$$
But on the other hand, the {\em strict} local maximum point property implies that, if $r$ is small enough
$$ (u_\alpha-v) (\xb) > \max_{\partial \mF^{x,r} }(u_\alpha-v)\;.$$
So, if $(u_\alpha-v) (\xb) >0$, these two inequalities lead to contradiction and therefore, we
necessarily have $(u_\alpha-v) (\xb) \leq 0$ which implies $u_\alpha \leq v$ in
$\mF$. Since this is true for any $\alpha>0$ small enough, we let $\alpha$ tend to $0$ to conclude
that $u\leq v$, \ie\ \GCR holds.

Of course, this first case, although being rather illuminating, seems unrealistic. Indeed, after the
standard localization argument producing $u_\alpha$ that the reader may have in mind---or see how we
check \LOCa below---, it is clearly impossible in general to show that $u_\alpha-v$ has at least a
{\em strict} local maximum point, or to build $u_\alpha$ in order that this property holds. A second
argument is needed to possibly transform a local maximum point into a strict local maximum point, or
to be able to perform a similar proof as above in order to obtain \GCR. 

\

\noindent \emph{$(ii)$ Strict subsolution ---} This second case is more realistic: let
us assume that $u_\alpha$ is a {\em strict} subsolution, \ie there exists $\eta(\alpha)>0$ such that
$$\G(x,u_\alpha,Du_\alpha )\leq -\eta(\alpha)<0\quad \hbox{on  }\mF \; ,$$
and that $(r,p_x)\mapsto \G(x,r,p_x)$ is uniformly continuous in $\R\times \R^N$, uniformly \wrt $x$.
In this case, for $0<\delta \ll 1$ we set
$$ u^\delta_\alpha (x):=u_\alpha (x)-\delta |x-\xb|^2\; ,$$
where $\xb$ is defined as above. Thanks to the assumptions on $u_\alpha$ and $\G$, if $\delta$ is
chosen small enough, we see that, for any $\bar r$, $u^\delta_\alpha$ is a subsolution in $\mF^{x,\bar r}$, and
moreover $\xb$ is a {\em strict} maximum point of $u^\delta_\alpha-v$ in $\mF^{x,\bar r}$.
Therefore we are in an analogous situation as in the first case, 
$(u^\delta_\alpha-v)(\xb)=(u_\alpha-v)(\xb)=0$ and we conclude in the same way.

Our aim is to present a generalization of these two particular cases, especially the
second one. As the reader will notice, in the two main frameworks we investigate below---the 
``Lipschitz case'' and the ``convex case''---, only the convex framework will be really different
from case $(ii)$ above; in the ``Lipschitz case'', we will only formulate differently the
arguments.

\

In order to introduce the second localization hypothesis, let us define
$$\floorF{f}:=f(x)-\max_{y\in\partial\mF^{x,r}}f(y)\;,$$
which in some sense measures the variation of $f$ between $x$ and the boundary. Notice that since
$\max(f+g)\leq\max(f)+\max(g)$, this operator enjoys the following property
\begin{equation}\label{sur-add}
\floorF{f}+\floorF{g}\leq\floorF{f+g}\;.
\end{equation}

\label{page:LOCb}
\begin{assumption}{\LOCb}{Localization assumption two.}
    For any $x \in \mF$, $r>0$, if $u \in \USCS(\mF^{x,r})$, there exists a sequence
    $(u^\delta)_{\delta>0}$ of functions in $\USCS(\mF^{x,r})$ such that $\floorF{u^\delta -
    u}\geq\eta(\delta)>0$ for any $\delta$.  Moreover, for any $y\in \mF^{x,r}$, $u^\delta(y) \to
    u(y)$ when $\delta \to 0$.
\end{assumption}

As we have already used it in the study of the two particular cases above the role of \LOCa is clear:
\LOCa leads to a standard localization procedure. Instead of having to prove the comparison in $\mF$ which
can be unbounded, it allows to do it only on a compact subset of $\mF$. This has several advantages: first,
we can consider maximum points for the \usc function $u_\alpha-v$ in such a compact subset,  while
this is not, in general, the case for $u-v$ in $\mF$ since $u,v$ can also be unbounded. But, reducing the proof to \LCR,
we can also have more general assumptions on $\G$: the reader may compare \HBACPc with \HBACP and/or \HBAHJ in Section~\ref{sec:BasicA}.

The role of \LOCb is to give a suitable replacement of the construction of $u^\delta_\alpha$ in the
second particular case we describe. It is is a technical assumption which allows to make sure that
in \LCR the max is not attained at the boundary, by replacing $u$ with another subsolution which has
a greater variation between $x$ and the boundary. This is a key point in the proof of the main
result that we give now.

\begin{proposition}\label{reducCR}\emph{--- Reduction to a Local Comparison Result.}\smsp
 Assuming \LOCa and \LOCb, $\LCR$ implies $\GCR$.
\end{proposition}

\begin{proof}Given $u \in \USCS(\mF)$ and $v \in \LSCS(\mF)$, we have to prove that $u\leq v$ on $\mF$. 

Instead of comparing $u$ and $v$, we are going to compare $u_\alpha$ and $v$ for $u_\alpha$ given by
    \LOCa and then to let $\alpha$ tend to $0$. Arguing in that way and droping the $\alpha$ for
    simplifying the notations means that we can assume without loss of generality that $u(x)-v(x)
    \to -\infty$ when $|x|\to +\infty$, $x\in \mF$ and therefore we can consider $M:=\max_\mF (u-v)$
    and we argue by contradiction, assuming that $M>0$.

    Since $\mF$ is closed, $u-v$ is \usc and tends to $-\infty$ at infinity, this function achieves
    its maximum at some point $x\in \mF$. Considering $r>0$ for which \LCR holds, this means that $\floorF{u-v}\geq0$.

    Now we apply \LOCb. Since
    $u^\delta \in \USCS(\mF^{x,r})$ and \LCR holds, we get the following alternative 

    \noindent \textbf{(a)}
    either $u^\delta \leq v$ in $\overline{\mF^{x,r}}$, but this cannot be the case for $\delta$
    small enough since $u^\delta(x)- v(x)\to u(x)-v(x)>0$;

    \noindent \textbf{(b)} or $\max_{\overline{\mF^{x,r}}}(u^\delta -v)>0$ and $$
    \max_{\overline{\mF^{x,r}}}(u^\delta-v) \leq \max_{\partial \mF^{x,r}}(u^\delta-v) .$$

    In particular, this implies that $\floorF{u^\delta-v} \leq 0$. But using \eqref{sur-add}, we deduce that
    $$\floorF{u-v}\leq\floorF{u^\delta-v}-\floorF{u^\delta-u}\leq-\eta(\delta)<0\;,$$
    which yields a contradiction. The conclusion is that $M$ cannot be positive, hence $u_\alpha\leq
    v$ in $\mF$ for any $\alpha$ and we get the \GCR by sending $\alpha\to0$.
\end{proof}

Now an important key question is: how can we check \LOCa and \LOCb ? We provide some typical examples.

\

\noindent{\bf The Lipschitz case ---}
We assume that there exists a constant $c>0$ such that the function $\G$ 
satisfies, for all $x \in F$, $z_1 \leq z_2$ and $p,q \in \R^N$
\begin{equation}\label{assump:G-r}
\G(x,z_1,p)-\G(x,z_2,p) \geq c^{-1}(z_1-z_2)\; ,
\end{equation}
\begin{equation}\label{assump:G-p}
|\G(x,z_1,p)-\G(x,z_1,q) \leq c|p-q|\; .
\end{equation}
In the case when \USCS, \LSCS are sets of bounded sub or supersolutions then
\LOCa is satisfied with $u_\alpha(x)=u(x)-\alpha [(|x|^2+1)^{1/2} + c^2]$, indeed
\begin{align*}
 \G(x, u_\alpha(x),Du_\alpha(x)) &\leq \G(x,u(x),Du(x))
    -c^{-1}\alpha [(|x|^2+1)^{1/2} + c^2] +c \alpha \frac{|x|}{(|x|^2+1)^{1/2}}\; ,\\
 &\leq -c^{-1}(\alpha c^2)) +c \alpha =0.
 \end{align*}
Concerning \LOCb, for any $r>0$ we can use
$$u^\delta (y)=u(y)-\delta (|y-x|^2 +k)$$
for some well-chosen constant $k$. Indeed 
\begin{align*}
 \G(y, u^\delta(y),Du^\delta(y)) &\leq 
    \G(y,u(y),Du(y))-c^{-1}\delta (|y-x|^2 +k)+2c\delta|y-x|  \; ,\\
 &\leq -\frac{\delta}{c} ( |y-x|^2 +k-2c^2|y-x|),
 \end{align*}
and with the choice $k=c^4$ we get a subsolution since $X^2-2c^2X+c^4$ has no real roots.
On the other hand, if $y \in \partial \mF^{x,r}$
$$ u^\delta(x)-u(x)=-\delta k \geq -\delta (|y-x|^2 +k)+\delta r^2=u^\delta(y)-u(y)+\delta r^2\; ,$$
so that $\floorF{u^\delta -u}\geq\eta(\delta)=\delta r^2$.

We point out that, even if the assumption on $\G$ are slightly different from the ones we use in the
second particular case we describe above, we could have used similar arguments to treat it.

\

\noindent{\bf The convex case ---}
Here we assume that $\G(x,z,p)$ is convex in $z$ and $p$ and satisfies property~\eqref{assump:G-r}.

For the localization \LOCa, we do not propose any
explicit building of $u_\alpha$ since it strongly depends on (typically) the growth at infinity of
the solutions we want to handle. But a classical construction is described by the following
assumption which emphasizes not only the role of the growth of solutions (via $\psi_1$) but also of
the convexity of $\G$, via the way the $u_\alpha$ are built:
 
\begin{assumption}{\textbf{(Subsol1)}}{Subsolution hypothesis one.}
    For any $u \in \USCS(\mF)$, $v \in \LSCS(\mF)$, there exists an \usc subsolution 
    $\psi_1: F \to \R $ such that for any $0<\alpha < 1$, 
    $u_\alpha (x):=(1-\alpha)u(x) + \alpha \psi_1(x)$ satisfies \LOCa.
\end{assumption}

Concerning \LOCb, the main remark is that, in general, the assumption on the uniform continuity
of $\G$ in $(r,p_x)$ is not satisfied anymore and the above argument based on a perturbation by a term of the
form $-\delta |x-\xb|^2$ does not work. But we may also use a similar construction as for \LOCa
relying on the convexity, assuming for instance

\begin{assumption}{\textbf{(Subsol2)}}{Subsolution hypothesis two.}
    For any $u \in \USCS(\mF)$ and $x \in \mF$, there exists $r>0$ and $\psi_2 \in \USCS(\mF^{x,r})$ such that
    for any $0<\delta < 1$, $u_\delta (y)=(1-\delta)u(y) + \delta \psi_2(y)$ satisfies \LOCb.
\end{assumption}

\noindent 
A typical candidate is $\psi^K_2(x)= -(K|y-x|^2 +k)$ for $k>0$ 
large enough depending on $K$; indeed, thanks to  \eqref{assump:G-r}, $\psi^K_2$ is in
$\USCS(\mF^{x,r})$ if $k$ is sufficiently large. 

It follows that if $y \in \partial \mF^{x,r}$,
$$ u_\delta (y)- u(y) =\delta (\psi^K_2(y)-u(y))\leq -\delta (K r^2-k-u(y))\; ,$$
while $u_\delta (x)- u(x) =-\delta(\tilde k+u(x))$.
Hence, if $|u(z)|\leq m_r$ if $z \in \mF^{x,r}$, we get
$$ \floorF{u_\delta- u}\geq \delta(Kr^2-2m_r)=\eta(\delta)>0\;,$$
if $K$ is chosen large enough. This implies that \LOCb holds.

\subsection{The evolution case}\label{htc:ev}

There are some key differences in the evolution case due to the fact that the time-variable is
playing a particular role since we are mainly solving a Cauchy problem, hence we have to reformulate the
results with the ``parabolic boundary''.
Using here the variable
$X=(x,t)$, we first write the equation as
\begin{equation}\label{GenHJ-evol}
\G(x,t,u,(D_x u,u_t) )= 0\quad \hbox{on  }\mF \times (0,\Tf]\; ,
\end{equation}
where $\mF$ is a closed subset of $\R^N$ and $\G$ is a continuous or discontinuous function on $\mF
\times [0,\Tf] \times \R \times \R^{N+1}$. 

This equation has to be complemented by an initial data at time $t=0$ which can be of an usual form,
namely 
\begin{equation}\label{GenHJ-evol-init1}
u(x,0)= \u0 (x) \quad \hbox{on  }\mF \; ,
\end{equation}
where $\u0$ is a given function defined on $\mF$,
or this initial value of $u$ can be obtained by solving an equation of the type
\begin{equation}\label{GenHJ-evol-init2}
\G_{init} (x,0,u(x,0) ,D_x u (x,0)) = 0 \quad \hbox{on  }\mF \; ,
\end{equation}
where $\G_{init}$ is a continuous or discontinuous function on
$\mF \times [0,\Tf] \times \R \times \R^{N}$.

A strong comparison result for either \eqref{GenHJ-evol}-\eqref{GenHJ-evol-init1} or
\eqref{GenHJ-evol}-\eqref{GenHJ-evol-init2} which is denoted below by \GCR-evol can be defined in
an analogous way as \GCR: subsolutions (in a certain class of functions) are below supersolutions
(in the same class of functions),  $\USCS(\mF)$ and $\LSCS(\mF)$ being just replaced by $\USCS(\mF
\times [0,\Tf])$ and $\LSCS(\mF \times [0,\Tf])$; we just point out that the initial data is included in
the equation in this abstract formulation: for example, a subsolution $u$ satisfies either $$
u(x,0)\leq (\u0)^* (x) \quad \hbox{on  }\mF \; ,$$
in the case of \eqref{GenHJ-evol-init1} or the function $x\mapsto u(x,0)$ satisfies
$$\G_{init} (x,0,u(x,0) ,D_x u (x,0)) \leq 0 \quad \hbox{on  }\mF \; ,
$$
in the viscosity sense, in the case of \eqref{GenHJ-evol-init2}. 

As it is even more clear in the case of \eqref{GenHJ-evol-init2}, a comparison result in the
evolution case consists in two steps 
\begin{enumerate}
\item[$(i)$] proving that for any $u \in \USCS(\mF \times [0,\Tf])$ and $v \in \LSCS(\mF \times
    [0,\Tf])$, 
        \begin{equation}\label{init-ineq-SCR}
        u(x,0) \leq v(x,0) \quad\hbox{on  }\mF,
        \end{equation} 
\item[$(ii)$] showing that this inequality remains true for $t>0$, i.e. 
    $$u(x,t) \leq v(x,t)\quad\hbox{on }\mF\times [0,\Tf]\;.$$
\end{enumerate}
\noindent Of course, in the case of \eqref{GenHJ-evol-init1}, \eqref{init-ineq-SCR} is obvious if
$\u0$ is a continuous function; but, in the case of \eqref{GenHJ-evol-init2}, the proof of such
inequality is nothing but a stationary \GCR in $\mF\times\{0\}$.

Therefore the main additional difficult consists in showing that Property $(ii)$ holds true and we
are going to explain now the analogue of the approach of the previous section {\em assuming that we
have \eqref{init-ineq-SCR}}.

To redefine \LCR, we have to introduce, for $x\in \mF$, $t\in (0,\Tf]$, $r>0$ and $0<h<t$, the sets $$
Q^{x,t}_{r,h}[\mF]:=(B(x,r)\cap \mF)\times (t-h,t ]  \;. $$ 
\label{not:cyl} As in the stationary case, we introduce the set $\USCS(Q^{x,t}_{r,h}[\mF])$, $\LSCS(Q^{x,t}_{r,h}[\mF])$ of respectively \usc subsolutions and 
\lsc supersolution of $G(x,t,u,(D_x u,u_t) )= 0$ in $Q^{x,t}_{r,h}[\mF]$. This means that the viscosity inequalities holds in $Q^{x,t}_{r,h}[\mF]$ 
and not necessarily on its closure, but these sub and supersolutions are \usc or \lsc on $\overline{Q^{x,t}_{r,h}[\mF]}$. 

On the other hand, including $(B(x,r)\cap \mF)\times \{t \}$ in the set where the subsolution or supersolution inequalities hold 
is important in order to have the suitable comparison up to time $t$ and we also refer to Proposition~\ref{jusquaT} 
for the connection between sub and supersolutions in $(B(x,r)\cap \mF)\times (t-h,t )$ and on $(B(x,r)\cap \mF)\times (t-h,t ]$.

With this definition we have

\index{Comparison result!local (evolution)}\label{page:LCRevol}
\begin{assumption}{\LCREV}{Local comparison result -- evolution case.}
    For any $(x,t) \in \mF \times (0,\Tf]$, there exists $\bar r>0$, $0<\bar h<t$ such that, for any $0<r\leq \bar r$, $0<h<\bar h$, 
    if $u \in \USCS(Q^{x,t}_{\bar r,\bar h}[\mF])$, $v \in \LSCS(Q^{x,t}_{\bar r,\bar h}[\mF])$,
    $$ \max_{\overline{Q^{x,t}_{r,h}[\mF]}}(u-v)_+ \leq \max_{\partial_p Q^{x,t}_{r,h}[\mF]}(u-v)_+\;,$$
    where $\partial_p Q^{x,t}_{r,h}[\mF]$ stands for the parabolic boundary of $Q^{x,t}_{r,h}[\mF]$, 
    composed of a ``lateral'' part and an ``initial'' part as follows
    $$\begin{aligned}\partial_p Q^{x,t}_{r,h}[\mF] &= \Big\{(\partial {B(x,r)}\cap \mF)\times
    [t-h,t]\Big\}\bigcup \Big\{(\overline{B(x,r)}\cap \mF)\times \{t-h\}\Big\}\\
    &=: \partial_\mathrm{\,lat} Q \cup \partial_\mathrm{\,ini}Q\;.\end{aligned}
    $$
\end{assumption}

We point out that, in the sequel, we are going to play with the parameters $r,h$ to obtain the comparison result. This explains the formulation 
of \LCR where the local comparison result has to hold in $Q^{x,t}_{r,h}$ for any $0<r\leq \bar r$, $0<h\leq \bar h$.
 
The corresponding evolution versions of \LOCa and \LOCb are given by

\label{page:LOCevol1}
\begin{assumption}{\LOCaEV}{Localization assumption one -- evolution case.}
    If $\mF$ is unbounded, for any $u \in \USCS(\mF\times [0,\Tf])$, for any $v \in \LSCS(\mF\times
    [0,\Tf])$, there exists a sequence $(u_\alpha)_{\alpha>0}$ of \usc subsolutions of \eqref{GenHJ-evol}
    such that $u_\alpha(x,t) - v (x,t) \to -\infty$ when $|x|\to +\infty$, $x\in \mF$, uniformly for
    $t\in [0,\Tf]$. Moreover, for any $x\in \mF$, $u_\alpha(x,t) \to u(x,t)$ when $\alpha \to 0$.
\end{assumption}

\label{page:LOCevol2}
\begin{assumption}{\LOCbEV}{Localization assumption two -- evolution case.}
    For any $x \in \mF$, if $u \in \USCS(Q^{x,t}_{\bar r,\bar h}[\mF])$ for some $0<\bar r$, $0<\bar h <t$, there
    exists $0<h\leq \bar h$ and a sequence $(u^\delta)_{\delta>0}$ of functions in
    $\USCS(Q^{x,t}_{\bar r,h}[\mF])$ such that $\floorFt{u^\delta-u}\geq\tilde\eta(\delta)>0$ with
    $\tilde\eta(\delta)\to0$ as $\delta\to0$.  Moreover $u^\delta \to u$ uniformly on $\overline{Q^{x,t}_{r,h}[\mF]}$
    when $\delta \to 0$.
\end{assumption}

Notice that \LOCbEV is only concerned with a property at the lateral boundary. As we see in the
proof, the initial boundary is easily left out by a minimality argument.

With these assumptions, we have the
\begin{proposition}\label{reducCR-evol}\emph{--- Reduction to a Local Comparison Result, evolution
    case.}\smsp
Assuming \LOCaEV and \LOCbEV,  \LCREV implies \GCREV.
\end{proposition}

\begin{proof}There is no main change in the proof except the following point: using \LOCaEV, we
    may assume that the maximum of $u-v$ is achieved at some point $(x,t)$. Here we choose $t$ as
    the minimal time such that we have a maximum of $u-v$. And we assume that this maximum is
    positive.

    \smallskip

    \noindent\textbf{(a)}
    Notice first that $t>0$ because $u\leq v$ on $\mF\times \{0\}$ and, if $r$ and $h\leq \bar h$ are given by \LCREV, notice also that 
    by the minimality property of $t$,
    $$ \max_{(\overline{B(x,r)}\cap\mF)\times\{t-h\}}(u-v)<
    \max_{\overline{Q^{x,t}_{r,h}[\mF]}}(u-v)= u(x,t)-v(x,t)\;.$$
    In other words, the maximum of $u-v$ is not attained
    on the initial boundary, $\partial_\mathrm{\,ini}Q$.
    On the other hand, on the lateral boundary we obviously get
    $$\floorFt{u-v}=(u-v)(x,t)-\max_{\partial_\mathrm{\,lat} Q^{x,t}_{r,h}[\mF]}(u-v)\geq0\;.$$

    \noindent\textbf{(b)}
    Then we apply \LOCbEV. Using the properties of the sequence $(u^\delta)_{\delta>0}$, we can choose $\delta$ 
    small enough in order that again, the maximum of $u^\delta-v$ is not attained at time  $t-h$.
\begin{align}
    u^\delta (x,t)-v(x,t) & \leq \max_{\overline{Q^{x,t}_{r,h}[\mF]}}(u^\delta-v) \\
    &\leq \max_{\partial_p Q^{x,t}_{r,h}[\mF]}(u^\delta-v)= \max_{\partial_\mathrm{\,lat} Q^{x,t}_{r,h}[\mF]}(u^\delta-v)\;.
\end{align}
    In other words, $\floorFt{u^\delta-v}\leq0$ and the rest of the proof follows the same arguments
    as in the stationary case 
$$\floorFt{u-v}\leq
    \floorFt{u^\delta-v}-\floorFt{u^\delta-u}\leq-\tilde\eta(\delta)<0\;,$$
    which leads to a contradiction.
\end{proof}

Now we consider \eqref{GenHJ-evol} and the assumptions on $\G$ for the Lipschitz case are:
there exists a constant $c>0$ such that, for all $x \in \F$, $t\in [0,\Tf]$, $z_1 \leq z_2$, $p_t^1\leq p_t^2$, $p_x^1,p_x^2 \in \R^N$
\begin{align}
    & \G(x,t,z_2,(p_x^2,p_t^2))-\G(x,t,z_1,(p_x^1,p_t^1)) \geq c^{-1}(p_t^2-p_t^1)\; ,
    \label{assump:G-pt} \\[2mm]
    &|\G(x,t,z_1,(p_x^2,p_t^1))-\G(x,t,z_1,(p_x^1,p_t^1)) \leq c|p_x^2-p_x^1|\; .
    \label{assump:G-px}
\end{align}

In particular, Assumption~\eqref{assump:G-pt} is a key property and, building the $u_\alpha$ and $u^\delta$ turns
out to be easy. Indeed
$$u_\alpha(x,t)=u(x,t)-\alpha [(|x|^2+1)^{1/2} +Kt]\; ,$$
for $K>0$ large enough. And for $u^\delta$,
$$u^\delta(y,s)=u(y,s)-\delta [(|y-x|^2+1)^{1/2}-1 +K(s-t)]\; ,$$
where $K$ has to be chosen large enough to have a subsolution and $h$ small enough to have the right property on the parabolic boundary. 
This is because of this property on the parabolic boundary that \LOCbEV has this formulation for $h$.

In the convex case, Assumption~\eqref{assump:G-pt} still holds but Assumption~\eqref{assump:G-px} is replaced by
the fact that $(p_x,p_t)\mapsto \G(x,t,z,(p_x,p_t))$ is convex for any $x\in \F$, $t\in [0,\Tf]$, $z\in \R$ and by the fact that $\G(x,t,0,(0,0))$ is bounded from above. Then, we build $u_\alpha$ and $u^\delta$
in the following way
$$ u_\alpha(x,t)=(1-\alpha) u(x,t)+\alpha \chi(x,t)\; ,$$
where $\chi(x,t):=[(|x|^2+1)^{1/2} +Kt]$. For $K>0$ large enough, the above assumptions imply that $\chi$ is a subsolution of the $\G$-equation and so is $u_\alpha$ by convexity. We may even take $K$ larger in order that $\chi$ and $u_\alpha$ are stict subsolutions.

On the other hand, for $u^\delta$,
$$u^\delta(y,s)=(1-\delta) u(y,s)+\delta \psi^K(y,s)\; ,$$
where $\psi^K(y,s):= -K[(|y-x|^2+1)^{1/2}-1] -k(s-t)\; .$ Again for any $K>0$, there exists $k>0$ large enough such that $\psi^K$ is a subsolution and so is $u^\delta$ by convexity. Moreover it is clear that $u^\delta \to u$ uniformly on $\overline{Q^{x,t}_{r,h}[\mF]}$.

It remains to evaluate $\floorFt{u^\delta-u}\geq\tilde\eta(\delta)>0$. If $(y,s) \in \partial_{lat} Q$ then
$$ (u^\delta-u)(y,s)= \delta[\psi^K(y,s)-u(y,s)]\leq\delta\left(K[(r^2+1)^{1/2}-1] + kh-u(y,s)\right)\; ,$$
while $(u^\delta-u)(x,t)=-\delta u(x,t)$. Hence
$$\floorFt{u^\delta-u}\geq \delta\left(K[(r^2+1)^{1/2}-1] - kh+u(y,s)-u(x,t)\right)\; .$$
If $\displaystyle m_r=\max_{\overline{Q^{x,t}_{r,h}[\mF]}} |u(y,s)|$, we have
$$\floorFt{u^\delta-u}\geq \delta\left(K[(r^2+1)^{1/2}-1] - kh+2m_r\right)\; .$$
The new point here is that we have to choose $h$ small enough in order that $kh \leq 2^{-1}K[(r^2+1)^{1/2}-1]$, which
gives
$$\floorFt{u^\delta-u}\geq \delta\left(2^{-1} K[(r^2+1)^{1/2}-1] +2m_r\right)\; ,$$
and the choice of $K$ large enough provides the desired property.

\begin{remark}\label{rem:htc}We are going to use these localization properties throughout the book in order
    to treat discontinuities, so let us make two important comments here.
    \begin{enumerate}
        \item[$(i)$] As the proofs show, both in the stationary and evolution case, in order to
            have \GCR, we do not need \LCR to hold on the whole set $\mF$: indeed, if we already
            know that $u\leq v$ on some subset $\mA$ of $\mF$, then \LCR is required only in
            $\mF\setminus \mA$.
        \item[$(ii)$] Both in the Lipschitz and convex case we can check \LOCa, \LOCb---and their
            evolution variations---in standard ways. It should be noticed that, in both cases, the
            localization procedure is independent of the possible discontinuities in the
            $x$-variable. Which is why it will be systematically applied to get the various \GCR
            throughout this book as a first step.
          \item[$(iii)$] The above checking of \LOCa, \LOCb---and their evolution
              variations---strongly relies on either \eqref{assump:G-r} or \eqref{assump:G-pt} and
              does not allow to take into account important examples involving gradient constraints, for
              instance: 
              $$\max(\G(x,u,D_xu);|D_x u|-1)=0\;.$$
              Indeed, the quadratic perturbation above is not be compatible with the constraint in general.
              However, we point out that such situations can be handled under suitable assumptions;
              the reader may have a look at Lemma~\ref{lem:loc-strat} in the proof of the
              comparison result in the stratified setting where we develop this idea.
    \end{enumerate}
\end{remark}

\subsection{Viscosity inequalities at $t=\Tf$ in the evolution case} 

We conclude this section by examining the viscosity sub and supersolutions inequalities at $t=\Tf$ and
their consequences on the properties of sub and supersolutions. To do so, we have to be a little bit
more precise on the assumptions on the function $\G$ appearing in \eqref{GenHJ-evol}. We introduce
the following hypothesis

\label{page:HGCP} 
\begin{assumption}{\HGCP}{Basic Assumption for the evolution case.}
\emph{For any $(x,t,r,p_x,p_t)\in \mF\times(0,\Tf]\times\R\times\R^N\times\R$,
the function $p_t\mapsto \G\big(x,t,r,(p_x ,p_t)\big)$ is increasing and $\G\big(x,t,r,(p_x ,p_t) \big)
\to +\infty$ as $p_t \to +\infty$, uniformly for bounded $x,t,r,p_x$.}
\end{assumption}

This assumption is obviously satisfied in the standard case, $i.e.$ for equations like
$$u_t + H(x,t,u,D_x u)=0\quad \hbox{in  }\R^N \times (0,\Tf]\; ,$$
provided $H$ is continuous (or only locally bounded) since in this case 
$\G(x,t,r,(p_x ,p_t) )=p_t+H(x,t,r,p_x)$.

\begin{proposition}\label{jusquaT}Under assumption \HGCP, we have
\begin{enumerate}
\item[$(i)$] If $u: \mF \times (0,\Tf) \to \R$ \resp{$v: \mF \times (0,\Tf) \to \R$} is an \usc viscosity subsolution \resp{lsc supersolution} of 
$$ \G(x,t,w,(D_x w,w_t) )= 0\quad \hbox{on  }\mF \times (0,\Tf)\; ,$$
then, for any $0<T'<\Tf$, $u$ \resp{$v$} is an \usc viscosity subsolution \resp{lsc supersolution} of 
$$ \G(x,t,w,(D_x w,w_t) )= 0\quad \hbox{on  }\mF \times (0,T']\; .$$
\item[$(ii)$] Under the same conditions on $u$ and $v$ and if 
\begin{equation}\label{prop:T:sub.semic}
u(x,\Tf) = \limsup_{\substack{ (y,s)\to (x,\Tf)\\ s<\Tf}} u(y,s) \quad [\hbox{resp.} \quad v(x,\Tf) = \liminf_{\substack{ (y,s)\to (x,\Tf)\\ s<\Tf}} v(y,s)]\; ,
\end{equation}
then $u$ and $v$ are respectively sub and supersolution of \eqref{GenHJ-evol}.

\item[$(iii)$] If $u: \mF \times (0,\Tf] \to \R$ is an \usc viscosity subsolution of \eqref{GenHJ-evol},
 then, for any $x\in \mF$, \eqref{prop:T:sub.semic} holds for $u$.

\item[$(iv)$] If $\G$ satisfies $\G(x,t,r,(p_x ,p_t) ) \to -\infty$ as $p_t \to -\infty$, uniformly for bounded $x,t,r,p_x$ and if $v: \mF \times (0,\Tf] \to \R$ is a \lsc viscosity supersolution of \eqref{GenHJ-evol}, then \eqref{prop:T:sub.semic} holds for $v$.
\end{enumerate}
\end{proposition}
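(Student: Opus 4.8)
I would obtain all four assertions from one device: a penalization in the time variable, which turns a one-sided extremum at the final time into a genuine interior extremum where the sub/supersolution inequalities are available, combined with the monotonicity of $G$ (hence of $G_*$ and $G^*$) in the $p_t$-variable and with the semicontinuity of the relaxed Hamiltonians to pass to the limit. The coercivity of $G$ in $p_t$ built into \HGCP will play an essential role: it is exactly what forbids a subsolution to have an ``upward jump'' at the final time (and, with the extra hypothesis of $(iv)$, a supersolution to have a ``downward jump'').

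For $(i)$ in the subsolution case, I would fix $0<T'<T$, $\varphi\in C^1$ and a point $(x_0,T')$ with $x_0\in\mF$ where $u-\varphi$ has a local maximum over $\mF\times(0,T']$; adding $|x-x_0|^2+(t-T')^2$ to $\varphi$, I may assume this maximum is strict. For $\delta>0$ set $\varphi_\delta(y,s):=\varphi(y,s)+\delta/(T'-s)$, so $u-\varphi_\delta\to-\infty$ as $s\to(T')^{-}$; hence $u-\varphi_\delta$ attains its maximum over a small cylinder $\overline{Q^{x_0,T'}_{r,h}[\mF]}$ at some $(y_\delta,s_\delta)$ with $s_\delta<T'$, which for $\delta$ small is also interior in space, hence an interior local maximum of $u-\varphi_\delta$ in $\mF\times(0,T)$. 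Since $u$ is a subsolution there,
$$G_*\Bigl(y_\delta,s_\delta,u(y_\delta,s_\delta),\bigl(D_x\varphi(y_\delta,s_\delta),\,\varphi_t(y_\delta,s_\delta)+\tfrac{\delta}{(T'-s_\delta)^2}\bigr)\Bigr)\leq 0\,,$$
and since $G_*$ is non-decreasing in $p_t$ and $\delta/(T'-s_\delta)^2\geq 0$, the inequality persists after dropping the last term. I would then let $\delta\to 0$ using the usual penalization lemma for maxima of semicontinuous functions to get $(y_\delta,s_\delta)\to(x_0,T')$ and $u(y_\delta,s_\delta)\to u(x_0,T')$. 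Here \HGCP enters twice: its coercivity forces $\varphi_t(y_\delta,s_\delta)+\delta/(T'-s_\delta)^2$ to stay bounded above (otherwise the left-hand side above would be positive), so $\delta/(T'-s_\delta)^2$, hence $\delta/(T'-s_\delta)$, is bounded and the penalty value tends to $0$; and, through the argument used below in $(iii)$ applied at the interior point $(x_0,T')$, it rules out an upward jump of $u$ there, which is what prevents the penalized maxima from escaping. The lower semicontinuity of $G_*$ then gives $G_*(x_0,T',u(x_0,T'),(D_x\varphi(x_0,T'),\varphi_t(x_0,T')))\leq 0$, as wanted. The supersolution case is entirely parallel (penalize with $\varphi-\delta/(T'-s)$, work with minima and $G^*$, pass to the limit by the upper semicontinuity of $G^*$, the lsc of $v$ and the monotonicity of $G^*$ in $p_t$ playing the role of the coercivity). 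For $(ii)$, the inequalities at $t<T$ hold by hypothesis, so only those at $t=T$ remain; I would run the same penalization at $(x_0,T)$ with $\varphi\pm\delta/(T-s)$, noting that \eqref{prop:T:sub.semic} is precisely what makes the penalization lemma work, since testing the penalized maxima against a sequence $(y_k,s_k)\to(x_0,T)$, $s_k<T$, $u(y_k,s_k)\to u(x_0,T)$, forces $(y_\delta,s_\delta)\to(x_0,T)$ with $u(y_\delta,s_\delta)\to u(x_0,T)$.

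For $(iii)$, since $u$ is usc we already have $u(x,T)\geq\limsup_{(y,s)\to(x,T),\,s<T}u(y,s)$, so I would argue by contradiction: if this is strict at some $x_0$, with gap $\theta>0$, then $u(y,s)\leq u(x_0,T)-\theta/2$ for $(y,s)$ near $(x_0,T)$ with $s<T$. Fixing $\epsilon>0$ small and maximizing, for $\lambda>0$ large, the function $(y,s)\mapsto u(y,s)-|y-x_0|^2/\epsilon^2-\lambda(T-s)$ over $\overline{Q^{x_0,T}_{r,h}[\mF]}$, comparison with the value at $(x_0,T)$ shows the maximizer $(y_\lambda,s_\lambda)$ satisfies $u(y_\lambda,s_\lambda)\geq u(x_0,T)$, $|y_\lambda-x_0|=O(\epsilon)$ and $T-s_\lambda=O(1/\lambda)$. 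So for $\epsilon$ small and $\lambda$ large $(y_\lambda,s_\lambda)$ is close to $(x_0,T)$; it cannot have $s_\lambda<T$ (that would contradict $u\leq u(x_0,T)-\theta/2$), hence $s_\lambda=T$. Then $(y_\lambda,T)$ is a local maximum over $\mF\times(0,T]$ of $u-\varphi_\lambda$ with $\varphi_\lambda(y,s)=|y-x_0|^2/\epsilon^2+\lambda(s-T)$, so the subsolution inequality of \eqref{GenHJ-evol} reads $G_*\bigl(y_\lambda,T,u(y_\lambda,T),(2(y_\lambda-x_0)/\epsilon^2,\lambda)\bigr)\leq 0$; but with $\epsilon$ fixed all arguments except $p_t=\lambda$ remain bounded, so by the coercivity in $p_t$ of \HGCP the left-hand side tends to $+\infty$ as $\lambda\to+\infty$, a contradiction. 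I would prove $(iv)$ in the same way for $v$, penalizing with $-\lambda(T-s)$ so that the relevant $p_t$-slot is $-\lambda\to-\infty$, and using the extra hypothesis $G(x,t,r,(p_x,p_t))\to-\infty$ as $p_t\to-\infty$ to close the argument.

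The hard part is the passage to the limit in the penalizations of $(i)$ and $(ii)$: one must guarantee that the penalized extrema do not drift away from $(x_0,T')$ / $(x_0,T)$ and that their extremal values converge — equivalently, that a subsolution cannot jump upward, nor a supersolution jump downward, at the terminal time. This is exactly what $(iii)$ and $(iv)$ make quantitative, and it is the one place where the coercivity of \HGCP in $p_t$ (and, for $(iv)$, the one-sided coercivity as $p_t\to-\infty$) is genuinely indispensable; everything else is routine once this is in hand.
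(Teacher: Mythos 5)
Your overall strategy --- time-penalization combined with the $p_t$-monotonicity and coercivity of $G$ --- is the paper's, and parts $(i)$ and $(ii)$ are sound. For $(i)$ you push the maximum strictly below $T'$ with $\delta/(T'-s)$, whereas the paper penalizes $s>T'$ with $[(s-T')^+]^2/\eps$, which keeps the maximum two-sided and produces only a nonnegative extra $p_t$-term to be dropped by monotonicity; your variant is legitimate, and you correctly identify that it requires the ``no upward jump from the left'' property at the interior time $T'$, which indeed follows from a $(iii)$-type argument (easier there than at $t=T$, since a maximum at an interior time is automatically two-sided).

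The genuine problem is in $(iii)$: the function you maximize and the test function you then differentiate are not the same, and with either consistent choice a step fails. You maximize $u(y,s)-|y-x_0|^2/\eps^2-\lambda(T-s)$; the corresponding test function is $|y-x_0|^2/\eps^2+\lambda(T-s)$, whose time-derivative is $-\lambda$, so the viscosity inequality you actually get is $G_*\bigl(y_\lambda,T,u(y_\lambda,T),(2(y_\lambda-x_0)/\eps^2,-\lambda)\bigr)\leq 0$. Since $G_*$ is nondecreasing in $p_t$, this carries no information as $\lambda\to+\infty$, and the coercivity in \HGCP (which is as $p_t\to+\infty$) cannot be invoked: no contradiction results. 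If instead you keep the test function you wrote, $|y-x_0|^2/\eps^2+\lambda(s-T)$ (time-derivative $+\lambda$), then the function being maximized is $u-|y-x_0|^2/\eps^2+\lambda(T-s)$, which \emph{rewards} $s<T$; your localization claims ($u(y_\lambda,s_\lambda)\geq u(x_0,T)$, $T-s_\lambda=O(1/\lambda)$, hence $s_\lambda=T$) no longer follow as stated, and to keep the maximum at $s=T$ one must shrink the time-width $h$ of the cylinder with $\lambda$ (take $\lambda h<\theta/2$ so the reward cannot offset the gap $\theta$). That corrected version is the paper's proof of $(iii)$. The same sign discipline and shrinking of $h$ are needed in $(iv)$, which you only sketch; there the target $p_t=-\lambda\to-\infty$ is indeed the right one, matched to the additional hypothesis on $G$.
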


This result clearly shows the particularities of the viscosity inequalities at the terminal time $t=\Tf$ or $t=T'$: 
sub and supersolutions in $\mF \times (0,\Tf)$ are automatically sub and supersolutions on $\mF \times (0,T']$ 
for any $0<T'<\Tf$ and even for $T'=\Tf$ provided that they are extended in the right way up to time $\Tf$, 
according to \eqref{prop:T:sub.semic}. And conversely sub and supersolutions on $\mF \times (0,\Tf]$ 
satisfy \eqref{prop:T:sub.semic} provided that $G$ has some suitable properties which clearly hold 
for the standard $H$-equation above. Here there is a difference between sub and supersolutions due 
to the disymmetry of Assumption~\HGCP. We will come back later on this point with the control interpretation.

\begin{proof}
    We only prove the first and second part of the result in the subsolution case, the proof for the
    supersolution being analogous.  

    \

    \noindent\textbf{(a)}
    Let $\varphi$ be a smooth function (say, in $\mF \times [0,\Tf]$) and let $(x,T')$ be a strict
    local maximum point of $u-\varphi$ in $\mF \times [0,T']$. We introduce the function
    $$ (y,s)\mapsto u(y,s)-\varphi(y,s) - \frac{[(s-T')_+]^2}{\eps}\; .$$
    An easy application of Lemma~\ref{lem:cv-pen} implies that this function has a local maximum
    point at $(\xe,\te)$ and we have 
    $$ (\xe,\te)\to (x,T')\quad\hbox{and}\quad u(\xe,\te)\to u(x,T')\; \hbox{as  }\e \to 0\; ,$$ 
    because of both the strict maximum point property and the $\eps$-penalisation. Moreover, for
    $\eps$ small enough, the penalization implies that $t_\e<\Tf$.

    Since $u$ is a subsolution of the $\G$-equation in $\mF \times (0,\Tf)$ and as we noticed,
    $(\xe,\te)$ is a local maximum point in $\mF \times (0,\Tf)$, we have 
    $$ \G_*\Big(\xe,\te, u(\xe,\te), (D_x\varphi(\xe,\te), \varphi_t (\xe,\te) 
    + 2\e^{-1}(s-\Tf)_+)\Big)\leq 0\; .$$
    But, by \HGCP , $\G(y,s,r,(p_x,p_t))$ and therefore $\G_*(y,s,r,(p_x,p_t))$ is increasing in the
    $p_t$-variable and we have 
    $$ \G_*(\xe,\te, u(\xe,\te), (D_x\varphi(\xe,\te), D_t \varphi(\xe,\te)))\leq 0\; .$$
    The conclusion follows from the lower semicontinuity of $\G_*$ by letting $\e$ tend to $0$.

    \

    \noindent\textbf{(b)}
    For the proof of $(ii)$, we argue in an analogous way: if $(x,\Tf)$ is a strict local maximum
    point of $u-\varphi$ in $\mF \times [0,\Tf] $, we introduce the function 
    $$ (y,s)\mapsto u(y,s)-\varphi(y,s) - \frac{\e}{(\Tf-s)}\; .$$
    By Lemma~\ref{lem:cv-pen}, this function has a local maximum point at $(\xe,\te)$ and we have
    $$ (\xe,\te)\to (x,\Tf)\quad\hbox{and}\quad u(\xe,\te)\to u(x,\Tf)\; \hbox{as  }\e \to 0\; .$$ 
    It is worth pointing out that, in this case, the proof of such properties uses not only the
    strict maximum point property and the fact that the $\eps$-penalisation is vanishing, but also
    strongly Property~\eqref{prop:T:sub.semic} for $u$ which provides Assumption-(iii) of
    Lemma~\ref{lem:cv-pen}.

    We are led to
    $$ \G_*\Big(\xe,\te, u(\xe,\te), (D_x\varphi(\xe,\te), \varphi_t (\xe,\te) 
    + \frac{\e}{(\Tf-s)^2})\Big)\leq 0\; ,$$
    and we conclude by similar arguments as in the proof of $(i)$.

    \

    \noindent\textbf{(c)}
    Finally we prove $(iii)$ since the supersolution one, $(iv)$, follows again from similar arguments
    with the additional assumption on $\G$.

    We pick some $(x,\Tf)\in \mF \times \{\Tf\}$ and we aim at proving \eqref{prop:T:sub.semic}. We
    argue by contradiction: if this is not the case then $u(x,\Tf)>\limsup u(y,s)$ as
    $(y,s)\to(x,\Tf)$, with $s<\Tf$. This implies that for any $\e>0$ small enough and any $C>0$,
    the function
    $$ (y,s)\mapsto u(y,s) - \frac{|y-x|^2}{\e^2} -C(s-\Tf)$$ 
    can only have a maximum point for $s=\Tf$, say at $y=\xe$ close to $x$. The viscosity
    subsolution inequality reads 
    $$ \G_*\Big(\xe,\Tf, u(\xe,\Tf), (\frac{2(\xe-x)}{\e^2}, C)\Big)\leq 0\; .$$
    But if we fix $\e$ (small enough), all the arguments in $\G_*$ remains bouded, except $C$.  So,
    choosing $C$ large enough, we have a contradiction because of \HGCP.
\end{proof}

\begin{remark}\label{rem:MaT}
    We point out that, even if Proposition~\ref{jusquaT} only provides the result for sub or
    supersolutions inequalities in sets of the form $\mF\times (0,\Tf)$, a similar result can be
    obtained, under suitable assumptions, for sub and supersolution properties at any point
    $(x,\Tf)$ of $\mathcal{M}$ where $\mathcal{M}$ is the restriction to $\R^N\times(0,\Tf]$ to a
    submanifold of $\R^N\times \R$. Indeed, it is clear from the proof that only Assumption \HGCP is
    really needed to have such properties.  
\end{remark}

\subsection{The simplest examples of comparison results: the continuous case}
\label{simple-ex-comp}

\index{Comparison result!simplest examples}As a simple example, we consider the standard continuous Hamilton-Jacobi Equation
\begin{equation}\label{HJ:evol}
u_t + H(x,t,u,D_x u)=0 \quad \hbox{in  }\R^N\times (0,\Tf)\, ,
\end{equation}
where $H: \R^N\times [0,\Tf]\times \R \times \R^N \to \R$ is a continuous function, $u_t$ denotes
the time-derivative of $u$ and $D_x u$ is the derivative with respect to the space variables $x$. Of
course, this equation has to be complemented by an initial data 
\begin{equation}\label{id:HJ:evol}
    u(x,0)=u_0 (x) \quad \hbox{in  }\R^N\; .
\end{equation}
In this section, we always assume that $u_0\in C (\R^N)$.

We provide comparison results in the two cases we already consider above, namely the Lipschitz case and
the convex case, the later one allowing more general Hamiltonians coming from unbounded control problems.
In order to formulate the results, let us introduce
\begin{enumerate}
    \item[$(i)$] $\USCS(\R^N\times
        [0,\Tf])$ the set of \usc subsolution $u$ of \eqref{HJ:evol} such that $u(x,0)\leq
        u_0(x)$ in $\R^N$;
    \item[$(ii)$] $\LSCS(\R^N\times [0,\Tf])$ is the set of  \lsc supersolutions $v$
         of \eqref{HJ:evol} such that $v(x,0)\geq u_0(x)$ in $\R^N$. 
\end{enumerate}
Our result is the following
\begin{theorem}\label{comp:LC}\emph{--- Comparison for the Lipschitz case}\smsp 
    Under assumption \HBAHJ, a \GCREV holds for bounded sub and supersolutions of
    \eqref{HJ:evol}-\eqref{id:HJ:evol} in $\USCS(\R^N\times [0,\Tf])$ and $\LSCS(\R^N\times
    [0,\Tf])$ respectively.  
\end{theorem}

\begin{proof}
    We just sketch it since it is the standard comparison proof that we recast in a little unsual way. 

    \

    \noindent\textbf{(a)}
    By the arguments of the previous section, it suffices to prove \LCREV. Therefore, we argue in
    $\overline{Q^{\xb,\tb}_{r,h}}$ for some $\xb \in \R^N$, $0<\tb<\Tf$, $r,h>0$ and we assume that
    $\displaystyle \max_{\overline{Q^{\xb,\tb}_{r,h}}}(u-v) >0$  where $u \in
    \USCS(Q^{\xb,\tb}_{r,h})$, $v \in \LSCS(Q^{\xb,\tb}_{r,h})$. 

    It is worth pointing out that, in $\overline{Q^{\xb,\tb}_{r,h}}$, taking into account the fact
    that $u$ and $v$ are bounded, we have fixed constants and modulus in \hyp{BA-HJ} that we
    denote below by $C_1,\gamma$ and $m$. Moreover, we can assume \wlg that $\gamma>0$ through
    the classical change $u(x,t)\to \exp(Kt)u(x,t)$, $v(x,t)\to \exp(Kt)v(x,t)$ for some large
    enough constant $K$.
    
    \

    \noindent\textbf{(b)}
    We argue by contradiction, assuming that
    $$ \max_{\overline{Q^{\xb,\tb}_{r,h}}}(u-v) > \max_{\partial_p Q^{\xb,\tb}_{r,h}}(u-v)\;,$$
    and we introduce the classical doubling of variables
    $$ (x,t,y,s) \mapsto u(x,t)-v(y,s)-\frac{|x-y|^2}{\e^2}-\frac{|t-s|^2}{\e^2}\; .$$
    Using Lemma~\ref{lem:cv-pen}, this \usc function has a maximum point at $(\xe,\te,\ye,\se)$ with
    $(\xe,\te),(\ye,\se) \in Q^{\xb,\tb}_{r,h}$ and 
    $$ u(\xe,\te)-v(\ye,\se)\to \max_{\overline{Q^{\xb,\tb}_{r,h}}}(u-v)\quad \hbox{and}\quad
    \frac{|\xe-\ye|^2}{\e^2}+\frac{|\te-\se|^2}{\e^2}\to 0\; .$$
    It remains to write the viscosity inequalities which reads
    $$ a_\e + H(\xe,\te,u(\xe,\te),p_\e)\leq 0\quad \hbox{and}\quad  a_\e + H(\ye,\se,v(\ye,\se),p_\e)\geq 0\; ,$$
    with
    $$ a_\e= \frac{2(\te-\se)}{\e^2}\quad \hbox{and}\quad p_\e =\frac{2(\xe-\ye)}{\e^2}\; .$$
    Subtracting the two inequalities, we obtain
    $$ H(\xe,\te,u(\xe,\te),p_\e)-H(\ye,\se,v(\ye,\se),p_\e)\leq 0\; ,$$
    that we can write as
    $$ \begin{aligned}
        \Big[ H(\xe,\te,u(\xe,\te),p_\e)- & H(\xe,\te,v(\xe,\te),p_\e)\Big ]\\
        \leq & \Big[ H(\xe,\te,v(\xe,\te),p_\e)-H(\ye,\se,v(\ye,\se),p_\e) \Big]\;.
    \end{aligned}
    $$

    \noindent\textbf{(c)}
    It remains to apply \hyp{BA-HJ}, leading to
    $$ \gamma(u(\xe,\te),p_\e)-v(\xe,\te)) -C_1(|\xe-\ye|+|\te-\se|)|p_\e|-m(|\xe-\ye|+|\te-\se|)\leq 0\; .$$
    But, as $\e\to 0$, $m(|\xe-\ye|+|\te-\se|)\to 0$ since $|\xe-\ye|+|\te-\se|=o(\e)$ and
    $$(|\xe-\ye|+|\te-\se|)|p_\e|= \frac{2|\xe-\ye|^2}{\e^2}+ \frac{2|\te-\se||\xe-\ye|}{\e^2} \to 0\; .$$
    Therefore we have a contradiction for $\e$ small enough since
    $$\gamma(u(\xe,\te),p_\e)-v(\xe,\te))\to \gamma \max_{\overline{Q^{\xb,\tb}_{r,h}}}(u-v) >0\; .$$ 
    And the proof is complete.
\end{proof}

It is worth pointing out the simplifying effect of the localization argument in this proof: the core
of the proof becomes far simpler since we do have to handle several penalization terms at the same
time (the ones for the doubling of variables and the localization ones).

We have formulated and proved Theorem~\ref{comp:LC} in a classical way and in a way which is
consistent with the previous sections but in this Lipschitz framework, we may have the stronger
result based on {\em a finite speed of propagation type phenomena} which we present here since it
follows from very similar arguments
\index{Finite speed of propagation}
\begin{theorem}\label{comp:FSP}\emph{--- Finite speed of propagation}\smsp
    Assume that \hyp{BA-HJ} holds with $\gamma(R)$ independent of $R$. Let $u$ be a bounded \usc subsolution
    of \eqref{HJ:evol} and $v$ be a bounded \lsc supersolution of \eqref{HJ:evol}.
    If $u(x,0) \leq v(x,0)$ for $|x|\leq R$ for some $R>0$, then
    $$ u(x,t) \leq v(x,t) \quad \hbox{for  }|x| \leq R-C_2t\;,$$
    where $C_2$ is given by \hyp{BA-HJ}.
\end{theorem}
\begin{proof}
    Let $\chi : (-\infty, R) \to \R$ be a smooth function such that $\chi(s) \equiv 0$ if
    $s\leq 0$, $\chi$ is increasing on $\R$ and $\chi(s)\to +\infty$ when $s \to R^-$. We set $$
    \psi(x,t) := \exp(-|\gamma| t) \chi(|x|+C_2t)\; .$$
    This function is well-defined in $\mathcal{C}:=\{(x,t):\ |x|+C_2t \leq R\}$. 

    We claim that, for $0<\alpha\ll 1$, the function $u_\alpha(x,t):= u(x,t)-\alpha  \psi(x,t)$ in a
    subsolution of \eqref{HJ:evol} in $\mathcal{C}$ and satisfies $u_\alpha(x,t)\to -\infty$ if
    $(x,t)\to \partial \mathcal{C}\cap \{t>0\}$ and $u_\alpha(x,0) \leq u(x,0)$ for $|x|\leq R$. 

    The second part of the claim is obvious by the properties of $\psi$. To prove the first one, we
    first compute formally $$ (u_\alpha)_t + H(x,t,u_\alpha,D_x u_\alpha)\leq u_t + H(x,t,u,D_x u)
    -\alpha (\psi_t -|\gamma| \psi -C_2|D_x \psi|)\; .$$ But an easy---again formal---computation
    shows that $\psi_t -|\gamma| \psi -C_2|D_x \psi| \geq 0$ in $\mathcal{C}$ and since the
    justification of these formal computations is straightforward by regularizing $|x|$ in order
    that $\psi$ becomes $C^1$, the claim is proved.

    The rest of the proof consists in comparing $u_\alpha$ and $v$ in $\mathcal{C}$, which follows from
    the same arguments as in the proof of Theorem~~\ref{comp:LC}.
\end{proof}

Now we turn to the {\em convex case} where we may have some more general behavior for $H$ and in
particular no Lipschitz continuity in $p$. To simplify the exposure, we do not formulate the
assumption in full generality but in the most readable way:

\label{page:HJConv} 
\begin{assumption}{\HBAConv}{Basic assumptions in the convex case.}
    $H(x,t,r,p)$ is a locally Lipschitz function which is convex in $(r,p)$. Moreover, for any ball
    $B\subset \R^N\times [0,\Tf]$, for any $R>0$, there exists constants $L=L(B,R), K=K (B,R) >0$
    and a function $G=G(B,R):\R^N\to [1,+\infty[$ such that, for any $x,y \in B$, $t,s \in [0,\Tf]$,
    $-R \leq u \leq v \leq R$ and $p \in \R^N$ $$ D_p H(x,t,r,p)\cdot p -H(x,t,u,p) \geq G(p) -L\;
    ,$$ $$ |D_x H(x,t,r,p)|, |D_t H(x,t,r,p)| \leq K G(p)( 1+ |p|)\; ,$$ $$ D_r H(x,t,r,p) \geq 0 \;.$$
\end{assumption} \\[-1.2cm]

On the other hand, we assume the existence of a subsolution

\begin{assumption}{\HSubHJ}{Assumption on the existence of a subsolution.}
    \label{page:HJSub} There exists an $C^1$-function $\psi : \R^N \times [0,\Tf] \to \R$ which is a
    subsolution of \eqref{HJ:evol} and which satisfies $\psi(x,t) \to -\infty$ as $|x| \to +\infty$,
    uniformly for $t\in [0,\Tf]$ and $\psi (x,0) \leq u_0(x) $ in $\R^N$.
\end{assumption}

Let us now introduce the sets
\begin{enumerate}
    \item[$(i)$] $\USCS^\psi(\R^N\times [0,\Tf])$, of bounded \usc subsolution $u$ of
        \eqref{HJ:evol} satisfying 
        $$ \limsup_{|x|\to +\infty} \frac{u(x,t)}{\psi(x,t)} \geq0\quad
        \text{uniformly for $t\in [0,\Tf]$}\;.$$
    \item[$(ii)$] $\LSCS^\psi(\R^N\times [0,\Tf])$, of bounded \lsc supersolutions $v$ 
        of \eqref{HJ:evol} satisfying
        $$\liminf_{|x|\to +\infty} \frac{v(x,t)}{\psi(x,t)} \leq 0\quad 
        \text{uniformly for $t\in [0,\Tf]$}\;.$$
\end{enumerate}

The result is the
\begin{theorem}\label{comp:CC}\emph{--- Comparison in the Convex case.}\smsp
    Assume \hyp{BA-HJ-U} and \HSubHJ. Then a \GCREV holds for sub and supersolutions of
    \eqref{HJ:evol}-\eqref{id:HJ:evol} in $\USCS^\psi(\R^N\times [0,\Tf])$ and
    $\LSCS^\psi(\R^N\times [0,\Tf])$ respectively.
\end{theorem}

\begin{proof} We use a similar approach as in the Lipschitz case, with a few modifications.

    \smallskip

    \noindent\textbf{(a)}
    The first step consists in replacing $u$ by $u_\alpha := (1-\alpha)u +\alpha \psi$ for
    $0<\alpha \ll 1$. The convexity of $H(x,t,r,p)$ in $(r,p)$ implies that $u_\alpha$ is still a
    subsolution of \eqref{HJ:evol} and $u_\alpha (x,0) \leq u_0(x)$ in $\R^N$. Moreover, by the
    definition of $\USCS(\R^N\times [0,\Tf])$ and $\LSCS(\R^N\times [0,\Tf])$, 
    $$ \lim (u_\alpha (x,t)-v(x,t)) = -\infty \quad\hbox{as  $|x| \to +\infty$, 
    uniformly for $t\in [0,\Tf]$.}$$
    Therefore the subsolution $\psi$ plays its localization role.

    \smallskip

    \noindent\textbf{(b)}
    For \LCREV, we argue exactly in the same way as in the proof of Theorem~\ref{comp:LC} in
    $Q^{\xb,\tb}_{r,h}$---therefore with fixed contants $L,K$ and a fixed function $G$---but with
    the following preliminary reductions: changing $u,v$ in $u(x,t)+Lt$ and $v(x,t)+Lt$, we may
    assume that $L=0$. Finally we perform Kru\v{z}kov's change of variable
    $$ \tilde u(x,t):=-\exp(-u(x,t))\quad , \quad \tilde v(x,t):=-\exp(-v(x,t))\; .$$
    The function $\tilde u,\tilde v$ are respectively sub and supersolution of 
    $$ w_t + \tilde H(x,t,w,Dw)=0\quad\hbox{ in  }Q^{\xb,\tb}_{r,h}\; ,$$
    with $\tilde H(x,t,r,p)= -r H(x,t,-\log(-r), -p/r)$.

    Computing $D_r \tilde H(x,t,r,p)$, we find $(D_p H \cdot p -H)(x,t,-\log(-r), -p/r)) \geq
    G(-p/r)$, while $D_x \tilde H(x,t,r,p)$, $D_t \tilde H(x,t,r,p)$ are estimated by $|r| |D_x
    H(x,t,-\log(-r), -p/r)|$, $|r| |D_t H(x,t,-\log(-r), -p/r)|$, i.e. by $|r| K G(-p/r)( 1+
    |p/r|)$.

    \smallskip

    \noindent\textbf{(c)}
    Following the proof of Theorem~\ref{comp:LC}, we have to examine an inequality like
    $$ \tilde H(\xe,\te,\tilde  u(\xe,\te),p_\e)-\tilde 
    H(\ye,\se,\tilde  v(\ye,\se),p_\e)\leq 0\; .$$
    To do so, we argue as if $\tilde H$ was $C^1$ (the justification is easy by a standard
    approximation argument) and we introduce the function 
    $$ f(\mu):=  \tilde H(\mu \xe+ (1-\mu)\ye,\mu \te+ (1-\mu)\se ,\mu 
    \tilde  u(\xe,\te)++ (1-\mu)\tilde  v(\ye,\se) ,p_\e)\; ,$$
    which is defined on $[0,1]$. The above inequality reads $f(1)-f(0)\leq 0$ while
    $$ f'(\mu) = D_x \tilde H. (\xe-\ye) + D_t \tilde H .(\te-\se) 
    + D_r \tilde H.(\tilde  u(\xe,\te)-\tilde  v(\ye,\se))\; ,$$
    where all the $\tilde H$ derivatives are computed at the point 
    $$(\mu \xe+ (1-\mu)\ye ,\mu \te+ (1-\mu)\se ,\mu \tilde  u(\xe,\te)+ (1-\mu)\tilde  v(\ye,\se) ,p_\e)\; .$$
    If we denote by $r_\e = \mu \tilde  u(\xe,\te)+ (1-\mu)\tilde  v(\ye,\se)$, we have, by the above estimates,
    \begin{align*}
     f'(\mu) \geq & - |r_\e| K G(-p_\e/r_\e)( 1+ |p_\e/r_\e|)(|\xe-\ye| + |\te-\se|) \\
     &+ G(-p_\e/r_\e).(\tilde  u(\xe,\te)-\tilde  v(\ye,\se))\\
     \geq & G(-p_\e/r_\e)\Big[ -  K  (|r_\e|+ |p_\e)(|\xe-\ye| + |\te-\se|) + (\tilde
        u(\xe,\te)-\tilde  v(\ye,\se))\Big] \; .
     \end{align*}
    But if $\displaystyle M:=\max_{\overline{Q^{\xb,\tb}_{r,h}}}(\tilde u-\tilde v) >0$, the
    arguments of the proof of Theorem~\ref{comp:LC} show that the bracket is larger than $M/2$ if $\e$
    is small enough. Therefore $ f'(\mu) \geq M/2>0$, a contradiction with $f(1)-f(0)\leq 0$.
\end{proof}

We conclude this part by an application of Theorem~~\ref{comp:LC} and ~\ref{comp:CC}.

\begin{example}
    We consider the equation
    $$ u_t + a(x,t)|D_x u|^q -b(x,t)\cdot D_x u = f(x,t) \quad \hbox{in  }\R^N\times (0,\Tf)\, ,$$
    where $a,b, f$ are at least continuous function in $\R^N\times [0,\Tf]$ and $q\geq 1$.

    Of course, Theorem~~\ref{comp:LC} applies if $q=1$ and $a,b$ are locally Lipschitz continuous
    functions and $f$ is a uniformly continuous function on $\R^N\times [0,\Tf]$.
    Theorem~~\ref{comp:CC} is concerned with the case $q>1$ and $a(x,t) \geq 0$ in $\R^N\times
    [0,\Tf]$ in order to have a convex Hamiltonian.

    Next the computation gives
    $$ D_p H(x,t,r,p)\cdot p -H(x,t,u,p) = a(x,t)(q-1)|p|^q -b(x,t)\cdot p + f(x,t)\; .$$
    and in order to verify \hyp{BA-HJ-U}, we have to reinforce the convexity assumption by assuming
    $a(x,t)>0$ in $\R^N\times [0,\Tf]$.  If $B$ is a ball in $\R^N\times [0,\Tf]$, we set
    $m(B)=\min_B a(x,t)$ and we have, using Young's inequality
    $$ D_p H(x,t,r,p)\cdot p -H(x,t,u,p) = m(B)(q-1)|p|^q + 1 -L(B)\; .$$
    Here the ``$+1$'' is just a cosmetic term to be able to set $G(p):= m(B)(q-1)|p|^q+1\geq 1$ and
    $L(B)$ is a constant depending on the $L^\infty$-norm of $b$ and $f$ on $B$.

    Finally, $a, b, f$ being locally Lipschitz continuous, it is clear enough that the estimates on
    $|D_x H(x,t,r,p)|, |D_t H(x,t,r,p)|$ hold. It is worth pointing out that the behavior at
    infinity of $a,b,f$ does not play any role since we have the arguments of the comparison proof
    are local. But, of course, we do not pretend that this strategy of proof is optimal...

    The checking of \HSubHJ is more ``example-dependent'' and we are not going to try to find ``good
    frameworks''. If $b=0$ and if there exists $\eta >$ such that 
    $$ \eta \leq a(x,t)\leq \eta^{-1}  \quad \hbox{in  }\R^N\times (0,\Tf)\, ,$$
    the Oleinik-Lax Formula suggests subsolutions of the form
    $$ \psi(x,t) = -\alpha (t + 1)( |x|^{q'}+1) - \beta\; ,$$
    where $q'$ is the conjugate exponent of $q$, i.e. $\displaystyle \frac1q + \frac1{q'}=1$ and
    $\alpha, \beta$ are large enough constants. Indeed 
    $$\psi_t + a(x,t)|D_x \psi|^q -f(x,t) \leq -\alpha ( |x|^{q'}+1) + 
    \eta^{-1}[q' \alpha (t + 1)]^q |x|^{q'} -f(x,t)\; .$$
    If there exists $c>0$ such that
    $$ f(x,t) \geq -c( |x|^{q'}+1)  \quad \hbox{in  }\R^N\times (0,\Tf)\, ,$$
    then, for large $\alpha$, namely $\alpha > \eta^{-1}[q' \alpha]^q + c$, one has a subsolution
    BUT only on a short time interval $[0,\tau]$. Therefore one has a comparison result if, in
    addition, the initial data satisfies for some $c'>0$ 
    $$ u_0(x) \geq -c'( |x|^{q'}+1)  \quad    \hbox{in  }\R^N\, ,$$
    in which case, we should also have $\alpha >c'$.

    In the good cases, the comparison result on $[0,\tau]$ can be iterated on $[\tau,2\tau]$,
    $[2\tau,3\tau]$, etc. to get a full result on $[0,\Tf]$.
\end{example}

\section{Whitney stratifications}
\label{sect:whitney}
\index{Whitney stratification}

There are mainly two reasons for introducing stratifications in dealing with discontinuities. On one
hand we may want to solve different equations on different submanifolds---or \emph{strata}---of the
stratification and make them work coherently; on the other hand we can consider a general
Hamilton-Jacobi equation (or control problem) posed everywhere, but presenting some discontinuities
located on the strata. Essentially, both questions are two different ways of looking a the same
reality.

Now, before going further, let us mention that in this book we use several concepts of
stratifications, labelled as
\begin{enumerate}
    \item \emph{General Stratifications}, which is the closest to the general concept of
stratifications in the sense of Whitney.
\item \AFS for \emph{Admissible Flat Stratifications}, where the strata are given by affine
subspaces, a particularly simple example of stratification.
\item \LFS for \emph{Locally Flattenable Stratifications}, which are stratifications that can
be locally reduced to an \AFS through a diffeomorphism.
\item \TFS for \emph{Tangentially Flattenable Stratifications}, where the flattening can be relaxed,
extending the notion of \LFS to situations involving some cusps for instance. This last notion of
stratification is really the one that is needed to make our methods work. 
\end{enumerate}

Let us now rapidly review where each type of stratification is used.

In \cite{BH}, Bressan and Hong study Hamilton-Jacobi-Bellman Equations and control problems
with discontinuities in the case when these discontinuities form a Whitney stratification, \ie when they
satisfy the Whitney conditions found in \cite{W1,W2}. 

In \cite{AEYW}, the more restrictive notions of \AFS and \LFS are introduced as well-adapted
structures to deal general discontinuities\footnote{The terminology is slightly different in
\cite{AEYW}}. We recall below this approach, and we also describe the restrictions these notions
impose on the Whitney stratification.

Finally we define the more general notion of \TFS, which turns out to be the most suitable framework
for setting up the methods we use throughout this book---especially in Part~\ref{stratRN}.

Before we begin, notice that, for the moment, we consider stratifications in $\R^N$ but 
\begin{enumerate}
    \item[$(i)$] since the various definitions of \AFS and \LFS are purely local, such stratifications of an open
    subset $\OO \subset \R^N$ can be defined exactly in the same way. We will do it for the \TFS.
    \item[$(ii)$] When considering time-dependent problems, we have to consider
        stratifications in $\R^{N+1}$---or more precisely of $\R^N\times (0,\Tf)$---, adding one dimension for time and
        using the remark of Point $(i)$. This allows
        to treat the case of time-depending stratifications, 
        see Chapter~\ref{chap:strat-def}.
    \item[$(iii)$] Stratifications can also be considered in a closed set, typically the closure of a domain $
    \Omega\subset\R^N$. In this case, as we will see in Part~\ref{S-BC}, both the interior of the set and
    the boundary---typically $\Omega$ and $\partial\Omega$---can be stratified. Of course, this last point can also
    be combined with $(i)$ and $(ii)$ and this is what we will do in Part~\ref{S-BC}, looking at stratifications of
    $\Omegb\times (0,\Tf)$.
\end{enumerate}

\subsection{General and admissible flat stratifications}
\label{sect:afs}

The notion of stratification we consider follows those introduced in Bressan and Hong~\cite{BH} but
the different parts of the stratification are not organized in the same way. Here
we assume that
$$\R^N=\Man{0}\cup\Man{1}\cup\cdots\cup\Man{N}\; ,$$
where the $\Man{k}$ ($k=0..N$) are disjoint $k$-dimensional submanifolds of $\R^N$. While, in~\cite{BH},
only a finite number of $\Man{k}$ are considered--or with our convention, the $\Man{k}$ can only
have a finite number of connected components--, here Definition~\ref{def:GS} states that each
$\Man{k}$ has only a locally finite number of connected components. We will write this decomposition
of $\Man{k}$ in connected components as
$$\Man{k}=\bigcup_{i\in\mathcal{I}_k }\Man{k}_i\, ,$$
where $\mathcal{I}_k$ are finite or countable sets. The $\Man{k}_i$ are called the ``stratas''.
In other words, we gather in $\Man{k}$ all the stratas which have the same dimension. 

Let us begin with the definition of a general stratification. 
\index{Stratification!general}

\begin{definition}\label{def:GS}\emph{--- General Stratifications.}\smsp
We say that $\M=(\Man{k})_{k=0..N}$ is a \emph{General Stratification} of $\R^N$
if the following set of hypotheses \HSTGEN is
satisfied\label{page:HSTgen}
\begin{enumerate}
    \item[$(i)$] For any $k=0..N$, $\Man{k}$ is a $k$-dimensional submanifold of $\R^N$.
    \item[$(ii)$] If $\Man{k}_i \cap \overline {\Man{l}_j}\neq \emptyset$ for some $l>k$
       then $\Man{k}_i \subset \overline {\Man{l}_j}$.
    \item[$(iii)$] For any $k=0..N$, $\overline{\Man{k}} \subset
       \Man{0}\cup\Man{1}\cup\cdots\cup\Man{k}$. 
    \item[$(iv)$] If $x \in \Man{k}$ for some $k=0..N$, there exists $r=r_x>0$ such that
    \begin{enumerate}
    \item[$(a)$] $B(x,r) \cap \Man{k}$ is a {\em connected} submanifold of $\R^N$;
     \item[$(b)$]  For any $l<k$, $ B(x,r) \cap \Man{l}=\emptyset$\,;
     \item[$(c)$]  For any $l>k$, $B(x,r)\cap\Man{l}$ is either empty or has at most a finite
                number of connected components\,;
    \item[$(d)$] For any $l>k$, $B(x,r)\cap\Man{l}_j\neq\emptyset$ if and only if
                $x\in\partial\Man{l}_j$.
        \end{enumerate}
    \end{enumerate}
\end{definition}

We point out that, even if the formulation is slightly different, and forgetting the number of connected components
of each $\Man{k}$, Assumptions \HSTGEN are
equivalent to the assumptions of Bressan and Hong \cite{BH}. Indeed, we both assume that we have a partition of
$\R^N$ with disjoints submanifolds but, as we already mention it above, we define a different way
the submanifolds $\Man{k}$. The key point is that for us $\Man{k}$ is here a $k$-dimensional
submanifold while, in \cite{BH}, the $\Man{j}$ can be of any dimension.  In other words, {\em our}
$\Man{k}$ is the union of all submanifolds of dimension $k$ in the stratification of Bressan and Hong.

With this in mind it is easier to see that our assumptions  \HSTGEN-$(ii)$-$(iii)$
are equivalent to the following assumption of Bressan and Hong: if $\Man{k}
\cap \overline {\Man{l}} \neq \emptyset$  then $\Man{k} \subset
\overline{\Man{l}}$  for all indices  $l,k$ without asking $l>k$ in our case.
But according to \HSTGEN-$(iii)$, as we already mention it above, $\Man{k} \cap \overline
{\Man{l}} = \emptyset$ if $l<k$: indeed for any $x\in \Man{k}$, there
exists $r>0$ such that $B(x,r) \cap \Man{l}=\emptyset$. This property
clearly implies \HSTGEN-$(iv)(b)$.

On the other hand, Assumption \HSTGEN-$(iv)(d)$ is just a consequence of $(iv)(c)$ provided we choose
the radius $r_x>0$ small enough. Indeed, since, by $(iv)(c)$, there is only a finite number
of connected components $\Man{l}_j$ for $l>k$ such that $B(x,r)\cap\Man{l}_j\neq\emptyset$,
we can exclude all those such that $\dist(x,\Man{l}_j)>0$ by choosing a smaller radius $r$.
        
Finally Condition \HSTGEN-$(iv)(a)$ implies that the set $\Man{0}$, if not void, consists
of isolated points. 

A specific and important case of stratification satisfying \HSTGEN is when the strata are flat,
\ie they all reduce to portions of vector spaces in $\R^N$. We call such stratifications \AFS, for
\emph{Admissible Flat Stratifications}.

To state a precise definition, we use the notations: 
\begin{enumerate}
 \item for $k=0..N$, $\V{k}$ is the set of all $k$-dimensional affine subspaces of $\R^N$;
 \item For $x\in\R^N$, $\V{k}(x)\subset\V{k}$ is the subset of affine subspaces containing $x$. In
     other words, $V\in\V{k}(x)$ if $V=x+V_k$ where $V_k$ is a $k$-dimensional vector subspace of
        $\R^N$. 
\end{enumerate}

\index{Stratification!admissible flat}
\begin{definition}\label{def:AFS}\emph{--- Admissible Flat Stratifications.}\smsp
    The stratification $\M$ is an \AFS if it satisfies \HSTGEN, with the exception of 
    property \HSTGEN-$(iv)(a)$, which is replaced by \label{not:AFS}\label{page:HSTflat} 
    \begin{equation*}\label{not:Vk} 
        \text{\HSTFLAT-}(iv)(a)\quad B(x,r) \cap \Man{k} = B(x,r) \cap (x+V_k) \text{ for some
        }(x+V_k)\in\V{k}(x)\;.
    \end{equation*}
    We denote by \HSTFLAT the set of conditions $(i)-(iv)$ with this replacement.
\end{definition}

Before providing several useful properties of \AFS, we consider several examples, the first one being the simplest
relevant example of a flat stratification.
\begin{example}
    We consider in $\R^2$ a chessboard-type configuration, see Figure~\ref{fig:chess}. In this case,
    we have the following decomposition: $$\Man{0}= \Z \times \Z\; ,$$
    $$ \Man{1}= \Big\{(\Z\times \R) \cup (\R \times \Z)\Big\}\setminus \Z^2\; ,$$
    and $\Man{2}=\R^2 \setminus (\Man{0}\cup \Man{1})$. In this simple case, the checking of the
    \HSTFLAT-assumptions is straightforward.  
\begin{figure}[htp]
   \begin{center}
   \includegraphics[width=0.8\textwidth]{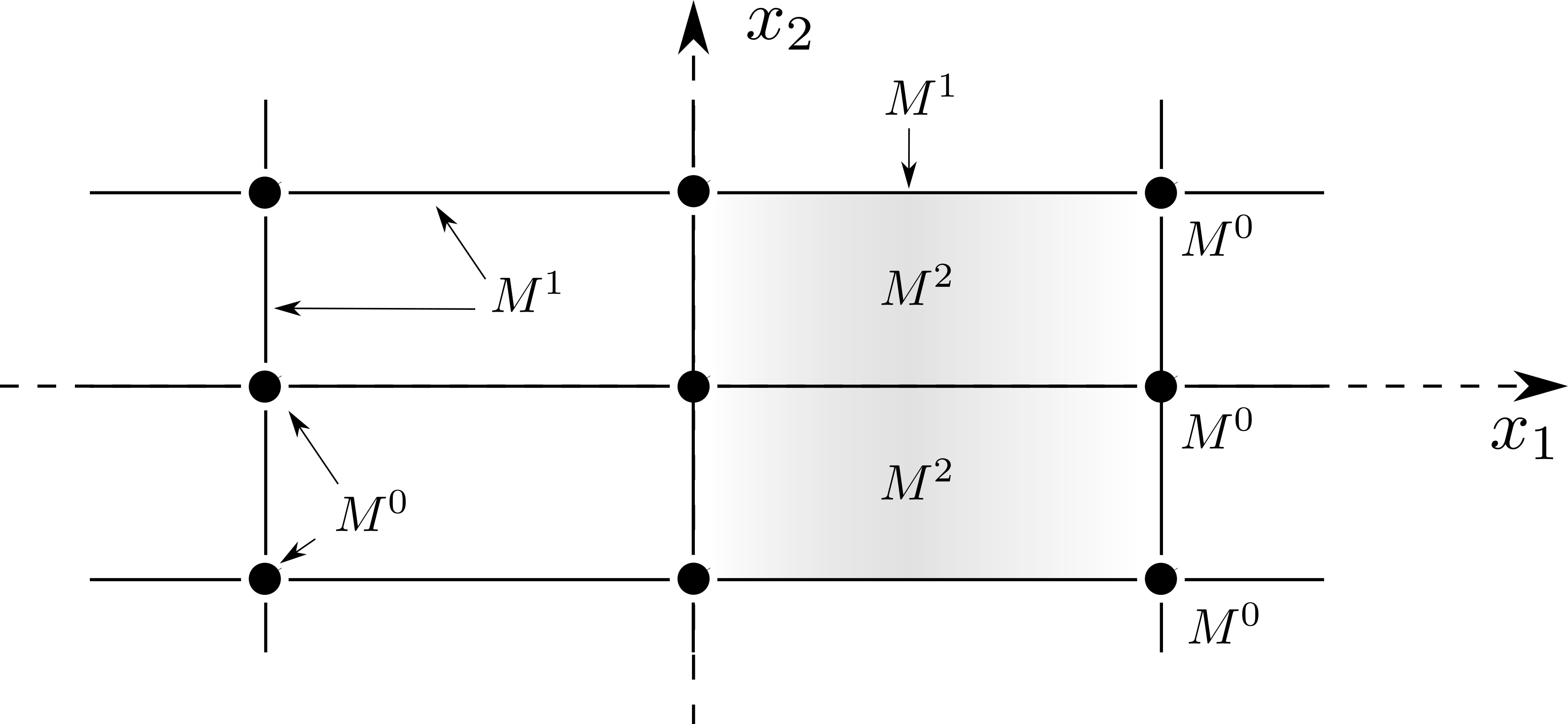}
   \caption{The chessboard-type configuration}
   \label{fig:chess} 
   \end{center}
\end{figure}
\end{example}

To emphasize the difference between ``flat configurations'' which are (or not) an \AFS, we propose
the example
\begin{example}
Let us consider a flat stratification in $\R^3$ induced by
the upper half-plane $\{x_3>0,x_2=0\}$ and the $x_2$-axis (see
figure~\ref{fig:strat.ex}).

\begin{figure}[htp]
   \begin{center}
   \includegraphics[width=0.6\textwidth]{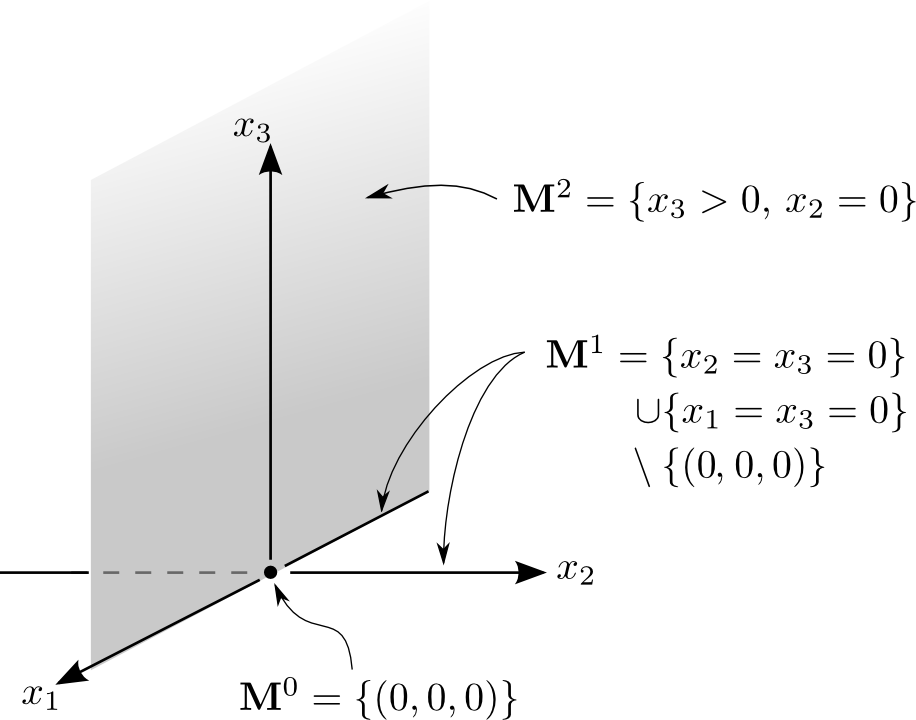}
   \caption{Example of a 3-D stratification}
   \label{fig:strat.ex} 
   \end{center}
\end{figure}

\noindent \textbullet\ \emph{The ``good'' stratification} consists in 
setting first $\Man{2}=\{x_3>0,x_2=0\}$. By \HSTFLAT-(iii), the boundary of $\Man{2}$ which is the
$x_1$-axis is included in $\Man{1}\cup\Man{0}$ and we also have $x_2$-axis in the stratification. In
this case, $\Man{1}\cup\Man{0}$ is the cross formed by the $x_1$ and $x_2$-axis but in order for
$\Man{1}$ to be a manifold, $(0,0,0)$ has to be excluded and we have to set here
$\Man{0}=\{(0,0,0)\}$. Thus, $\Man{1}$ consists of four connected components which are induced by
the $x_1$- and $x_2$-axis (but excluding the origin, which is in $\Man{0}$). Notice that in this
situation, the $x_3$-axis has no particular status, it is included in $\Man{2}$.

\noindent \textbullet\ \emph{A wrong approach} would be the following alternative decomposition:
$$ \Man{2} =\{x_3>0,x_2=0\},\  \Man{1} =\{x_1=x_3=0\} \cup \{
    x_2=x_3=0\},\  \Man{3}=\R^3-\Man{2}-\Man{1}\;.$$
Because $(0,0,0)\in \Man{1} \cap\, \overline {\Man{2}}$ but clearly $\Man{1}$ is not
included in $\overline {\Man{2}}$, so \HSTFLAT-$(ii)$ forbids this decomposition of $\R^3$.
\end{example}

Now we study the properties of \AFS.

\begin{lemma}\label{flat-lem.0} 
    Let $\M=(\Man{k})_{k=0..N}$ be an \AFS of $\R^N$. Then, for any $k=0..N$ and $i\in\mathcal{I}_k$,
    there exists an open set $\cO=\cO(i,k)\subset\R^N$ and $\V{k}_i\in\V{k}$ such that
    $$\Man{k}_i=\cO\cap\V{k}_i\;.$$
    In other words, there exists a $k$-dimensional vector space $V^i_k$ such that for any $x\in \Man{k}_i$, 
    $\Man{k}_i=\cO\cap (x+ V^i_k)$.
\end{lemma}
\begin{proof}
    Let $k\in\{0,..,N\}$, $i\in\mathcal{I}_k$, and fix $x\in\Man{k}_i$. By \HSTFLAT-$(iv)(a)$, for
    any $z\in \Man{k}_i$, there exists $\V{k}_{i(z)} \in \V{k}$ such that $z\in \V{k}_{i(z)}$.
    Now, consider the function
    $$\begin{aligned} \chi :\quad & \Man{k}_i\to\{0,1\}\\[2mm]
        & z\mapsto \begin{cases} 1 \text{ if }\V{k}_{i(z)}=\V{k}_{i(x)}\;,\\
            0 \text{ otherwise.}
    \end{cases}\end{aligned}$$
    This function is obviously locally constant: indeed, by \HSTFLAT-$(iv)(a)$, if $z\in\Man{k}_i$
    then $B(z,r_z)\cap \Man{k}_i=B(z,r_z)\cap \V{k}_{i(z)}$ and therefore if $z'\in B(z,r_z)\cap
    \Man{k}_i$, necessarily $\V{k}_{i(z')}=\V{k}_{i(z)}$.
        
    Therefore, since $\Man{k}_i$ is connected, it follows that
    $\chi$ is in fact constant, so that $i(z)=i(x)=i$ for all $z\in\Man{k}_i$. In other words,
    \HSTFLAT-$(iv)(a)$ can be written for all $z\in\Man{k}_i$ by means of only one affine subspace
    $$B(z,r_z)\cap \Man{k}_i=B(z,r_z)\cap\V{k}_{i}\;.$$
    We then set $\cO:=\cup_{z\in\Man{k}_i}B(z,r_z)$ which is an open set in $\R^N$. We deduce
    from the previous set equality that $\cO\cap\Man{k}_i = \cO\cap\V{k}_{i}$.
\end{proof}

As a consequence of the definition we have following result which will be
useful in a tangential regularization procedure (see Figure~\ref{fig.1} below) but that we will generalize through the
notion of tangentially flattenable stratification.

\begin{lemma}\label{flat-lem} 
    Let $\M=(\Man{k})_{k=0..N}$ be an \AFS of $\R^N$. Let $x\in\Man{k}$ and $r>0$, $V_k$ be as in
    \HSTFLAT-$(i)$.  If $y \in B(x,r) \cap \Man{l}_j$ for some $l>k$ and $j\in \mathcal{I}_l$ 
    then $x\in\overline{\Man{l}_j}$ and 
    $$B(x,r) \cap (y+V_k) \subset B(x,r)\cap\Man{l}_j\;.$$
\end{lemma}

\begin{proof} 
    Notice that by \HSTFLAT-$(iv)(d)$ we already know that $x\in\overline{\Man{l}_j}$, but moreover
    \HSTFLAT-$(ii)$ implies that $\Man{k}_i\subset \overline{\Man{l}_j}$. Using open sets $\cO_k$
    and $\cO_l$ defined in Lemma~\ref{flat-lem.0} we get 
    $$\cO_k\cap(x+V_k)\subset\overline{\cO_l}\cap(y+V_l)=\overline{\cO_l}\cap(x+V_l)\;,$$
    the last equality being justified by the fact that $x\in\partial\Man{l}_j$.

    This implies that $V_k$ is a subspace of $V_l$, so that clearly for any $y\in\Man{l}_j$,
    $y+V_k\subset y+V_l$. The result directly follows after intersecting with $B(x,r)$.
\end{proof}

\begin{figure}[htp]
    \label{fig.1} 
    \begin{center}
   \includegraphics[width=0.6\textwidth]{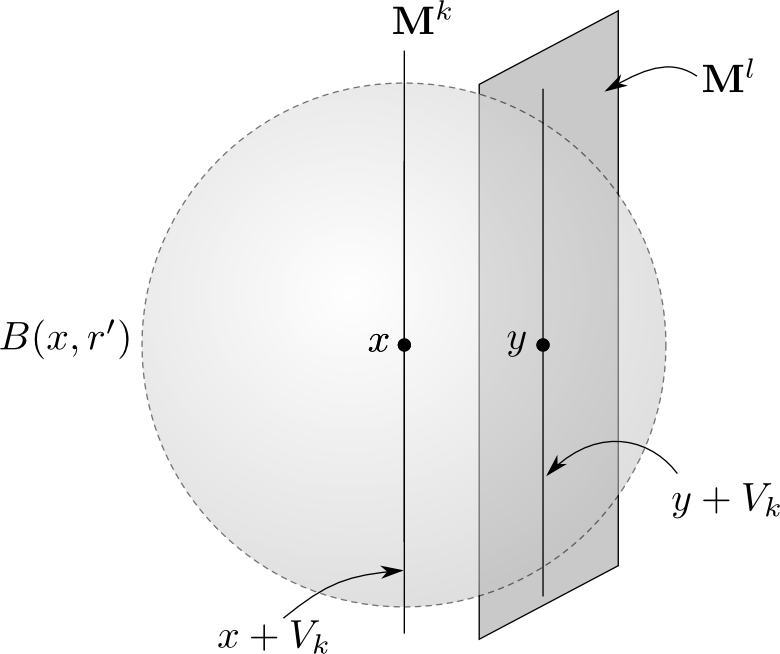}
   \caption{local situation}
   \end{center}
\end{figure}

\begin{remark}\label{rem:whitney}
    In this flat situation, the tangent space of $\Man{k}$ at $x$ is $T_x:=x+V_k$ while the tangent
    space of $\Man{l}$ at $y$ is $T_y:=y+V_l$, where $l>k$. The previous lemma implies that if
    $(y_n)_n$ is a sequence converging to $x$, then the limit tangent plane of the $T_{y_n}$ is
    $x+V_l$ and it contains $T_x$, which is exactly the Whitney condition---see {\rm
    \cite{W1,W2}}.
\end{remark}

As we will see it below, an \AFS is a perfect framework where our methods fully apply, in particular
because of Lemma~\ref{flat-lem}. And clearly a similar remark holds for all the stratifications
which can locally be reduced to \AFS through a smooth enough diffeomorphism; this leads us to
introduce, in the next section, the notion of {\em Locally Flattenable Stratification \LFS}. But
Section~\ref{sect:LRSA} provides some properties of the \LFS which shows that a general
stratifications is not, in general, a \LFS. Intuitively the reader should realize that
``flattening'' locally all the $\Man{k}$ imposes rather rigid conditions on the ``geometry'' of a
stratification and we do not know checkable conditions or characterizations which would allow to
decide whether a given stratification is a \LFS or not.  On an other hand, a more adapted concept to
our approach, which we call {\em Tangentially Flattenable Stratifications \TFS}, consists in looking
at stratifications which satisfy Lemma~\ref{flat-lem} after a suitable change of coordinates. This
notion is more general than the \LFS-one but still the same remark holds: we do not know checkable
conditions or characterizations which would allow to decide whether a given stratification is an
\TFS or not.

\subsection{Locally flattenable stratifications \LFS}
\index{Stratification!locally flattenable}

Particular---yet quite representative---cases of general stratifications can be obtained by
smooth enough modifications of flat stratifications.
\begin{definition}\label{def:LFS}\label{page:HSTlfs}\emph{--- Locally Flattenable
    Stratifications.}\smsp
    We say that $\M=(\Man{k})_{k=0..N}$ is a locally flattenable stratification \label{page:HSTreg} of
    $\R^N$---\,\LFS in short---\,if it satisfies the two following assumptions denoted by \HST[\rm LFS]
    \begin{enumerate}
        \item[$(i)$] the following decomposition holds:
            $\R^N=\Man{0}\cup\Man{1}\cup\cdots\cup\Man{N}$; 
        \item[$(ii)$] for any $x \in \R^N$, there exists $r=r(x)>0$ and a $C^{1,1}$-change of
            coordinates $\Psi^x: B(x,r) \to \R^N$ such that $\Psi^x(x)=x$ and
            $\{\Psi^x(\Man{k}\cap B(x,r))\}_{k=0..N}$ is the restriction to $\Psi^x(B(x,r))$ of an
            \AFS in $\R^N$.
    \end{enumerate}
\end{definition}

We point out that it is easy to check that a \LFS satisfies \HSTGEN and therefore is a general
stratification in the sense of Definition~\ref{def:GS}; indeed, all the properties of a general
stratification are local and the way a \LFS is defined, the diffeomorphisms $\Psi^x$ transfer all
the local property of an \AFS, in particular the  \HSTGEN-ones.

\begin{remark}
    If we need to be more specific, we also say that $(\M,\Psi)$ is a
    stratification of $\R^N$, keeping the reference $\Psi$ for the collection of
    changes of variables $(\Psi^x)_x$. This will be usefull in
    Section~\ref{NESR} when we consider sequences of stratifications.
\end{remark}

\noindent\textbf{Tangent spaces --}
The definition of locally flattenable stratifications (flat or not) allows to define, for
each $x\in\Man{k}$, the tangent space to $\Man{k}$ at $x$, denoted by
$T_x\Man{k}$. 
To be more precise, if $x\in\Man{k}$ and $r>0,\ V_k$ are as in \HSTFLAT-$(iv)$, then
$$T_x\Man{k}=(D\Psi^x (x))^{-1}(V_k)\;,$$ which can be identified to $\R^{k}$. 
Moreover, we can decompose $\R^N = V_k
\oplus V_k^\bot$, where $V_k^\bot$ is the orthogonal space to $V_k$. For any
$p \in \R^N$,  we have $p= p_\top + p_\bot$ with $p_\top \in V_k$ and
$p_{\bot}\in V_k^\bot$. In the special case $x\in\Man{0}$, we have $V_0=\{0\}$,
$p=p_\bot$ and $T_x\Man{0}=\{0\}$.

\bigskip

The notion of stratification is introduced above as a pure geometrical tool and
it remains to connect it with the singularities of Hamilton-Jacobi Equations.
Our aim is to define below a ``natural framework'' allowing to treat
Hamilton-Jacobi Equations (or control problems) with discontinuities, which
will involve two types of information: some conditions on the kinds of
singularities we can handle and some assumptions on the Hamiltonians in a
neighborhood of those singularities.

We provide here a first step in this direction by considering the simple
example of an equation set in the whole space $\R^N$ $$ H(x,u,Du)=0\quad
\hbox{in  }\R^N\; ,$$ where the Hamiltonian $H$ has some discontinuities (in
the $x$-variable) located on some set $\Gamma\subset\R^N$. The first question
is: what kind of sets $\Gamma$ can be handled?

The approach we systematically use consists in assuming that $\Gamma$ provides a stratification
$\M=(\Man{k})_{k=0..N}$ of $\R^N$. This means that $\Man{N}$ is the open subset of $\R^N$ where $H$
is continuous while $\Man{k}$ contains the discontinuities of dimension $0\leq k\leq (N-1)$.  Of
course, some of the $\Man{k}$ can be empty.

What should be done next is to clarify the structure of the Hamiltonian $H$ in
a neighborhood of each point $x\in \Man{k}$ and for each $k\leq (N-1)$. This is
where the previous analysis on stratifications allows to reduce locally the
problem to  the following situation: if $x\in\Man{k}$, there is a ball
$B(x,r)$ for some $r>0$, and a $C^1$-diffeomorphism $\Psi$ such that 
$$B(x,r)\cap\Psi(\Man{k})=B(x,r)\cap\bigcup_{j=0}^k \big(x+V_{j}\big)\;.$$
In other words, through a suitable $C^1$ change of coordinates, we are in a
\emph{flat} situation where $x$ is only possibly ``touched'' by
$j$-dimensional vector spaces for $j\geq k$.

\subsection{Limits of the \LFS approach}\label{sect:LRSA}

The notion of locally flattenable stratification seems to provide a very general framework in which
one could think that many situations can be treated. As we have seen, several quite special
geometric structures can be handled, corresponding to a great variety of discontinuities in the
equations we consider.

However, there are very simple situations that the stratified framework cannot handle. Let us focus
here on curves in $\R^2$ in order to better understand the problems that may occur.

The major restriction that stems directly from the very definition of \LFS is
that locally, all the elements of the stratification have to be flattenable \emph{simultaneously}.
We come back later on how the notion of \TFS allows to relax this hypothesis but let us mention that
this leaves out the following example: consider in $\R^2$ a continuous curve
$\gamma:(0,1)\to\R^2$ having an infinite length (near $s=0^+$), such that $\gamma(0^+)=(0,0)$. The natural
stratification associated to this situation is 
$$\Man{0}=\{(0,0)\}\;,\quad \Man{1}=\{(s,\gamma(s)):s\in(0,1)\}\;,\quad
\Man{2}=\R^2\setminus(\Man{0}\cup\Man{1})\;.$$
But we clearly see that locally around $(0,0)$, the \LFS condition cannot hold, 
otherwise $\overline{\Man{1}}=\{(s,\gamma(s))\}\cup\{(0,0)\}$ could be
flattened through a $C^{1,1}$-diffeomorphism, implying that the initial curve is of finite length.

Cusps are also the typical examples of geometric structures which cannot be included in \LFS:
consider the curve 
$$\Gamma:=\big\{y=\sqrt{|x|} : x\in\R\big\}\subset\R^2\;.$$
The natural (and only) stratification of $\Gamma$ would be to set
$$\Man{0}=\{(0,0)\}\;,\ \text{and}\  \Man{1}=\{y=\sqrt{-x}:x<0\}\cup
\{y=\sqrt{x}:x>0\}\;.$$
However, condition $(ii)$ of the locally flattenable stratification definition cannot hold.
More precisely, at the singular point $z=(0,0)$, there is no $C^{1,1}$ change of variables $\Psi^z$
which can transform the cusp into a flat stratification since such a change of variables could not
be even Lipschitz continuous.
\begin{figure}[htp]
    \begin{center}
   \includegraphics[width=0.47\textwidth]{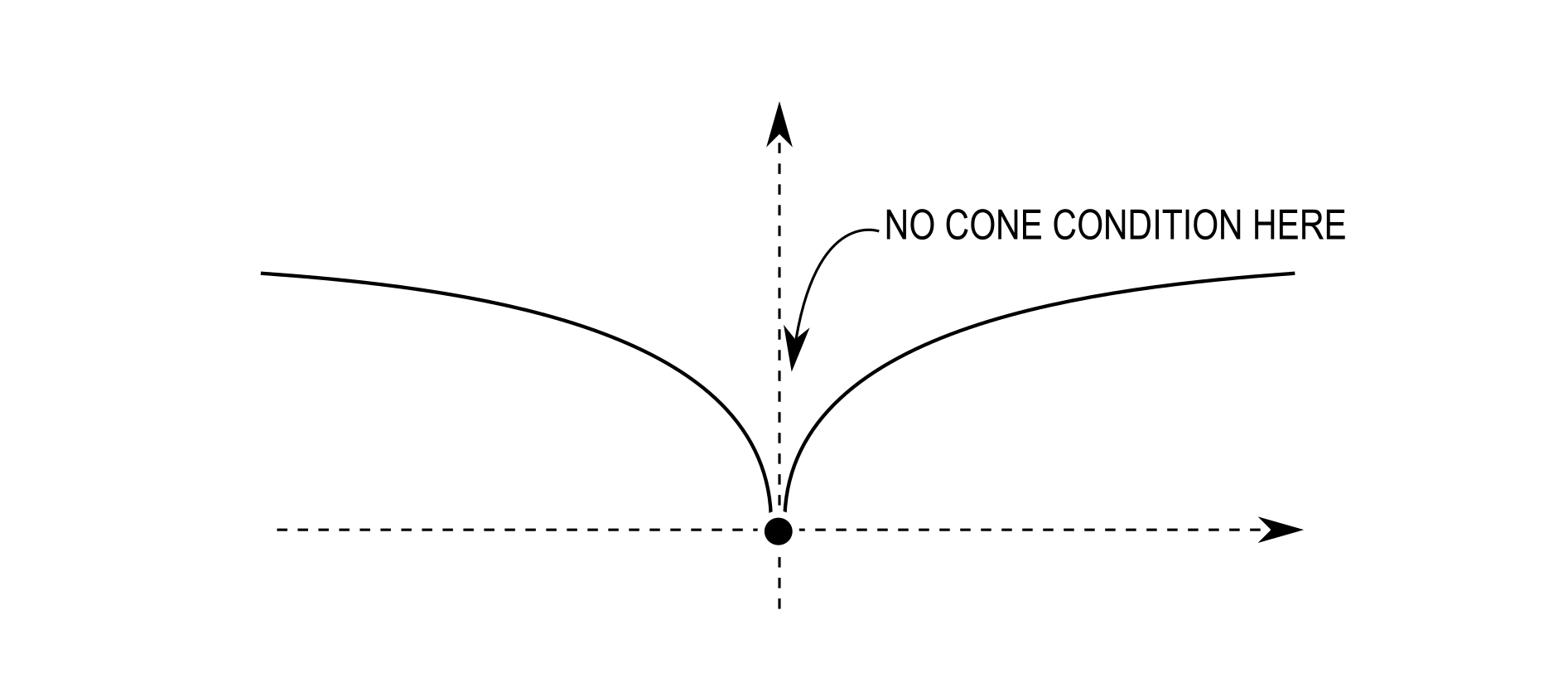}
    \includegraphics[width=0.47\textwidth]{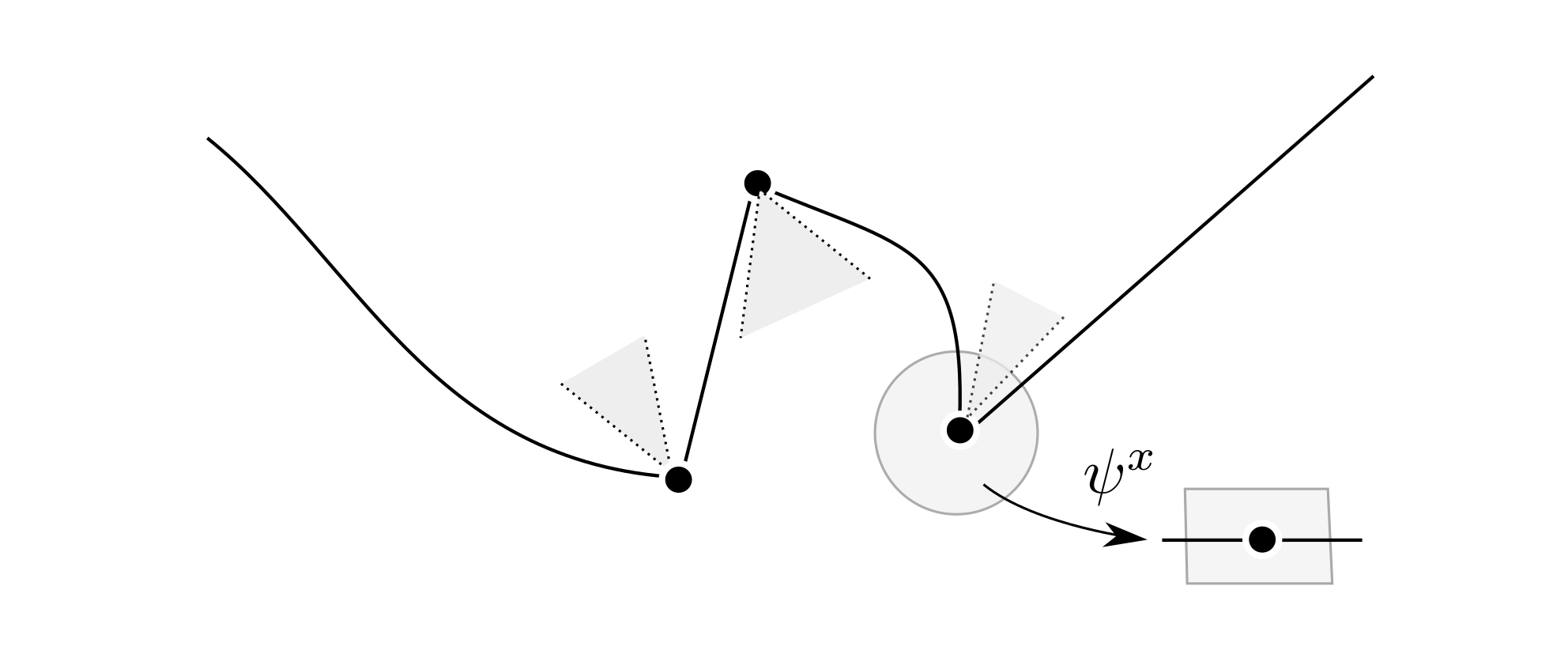}
        \caption{\emph{Left:} A cusp; \emph{Right:} A piecewise smooth example}
        \label{fig:cusp}\label{fig:cone-cond}
   \end{center}
\end{figure}

Nevertheless, a piecewise $C^1$ curve $\Gamma\subset\R^2$ satisfying a 
double-sided cone condition at junction points can always be considered 
as a locally flattenable stratification, after choosing $\Man{0}$
as the set of singular points. 
Indeed, if $x\in\Man{0}$, the $C^{1,1}$ diffeormorphism $\Psi^x$ just has to
``flatten the angle'' in order to get a flat stratification 
(see fig~\ref{fig:cone-cond}), which is of course possible.


In order to give a general result that \LFS must satisfy, we need to
introduce some objects. 

\bigskip

\noindent\textsc{Extended tangent spaces ---} 
    Let $x\in\Man{k}_i$ and $\Psi^x$, $B(x,r)$ as in the definition of \LFS.  If
    $x\in\partial\Man{l}_j$, then, combining Lemma~\ref{flat-lem} and Remark~\ref{rem:whitney},
    there exists a $l$-dimensional vector space $V_{l,j}$ such that $$ \Psi^x(\Man{l}_j \cap
    B(x,r))\subset (x+V_{l,j})\; ,$$ and we can extend the tangent space to $\Man{l}_{j}$ up to $x$
    by setting $$\overline{T}_x\Man{l}_{j}:=D(\Psi^x (x))^{-1}(V_{l,j})\;.$$

\


\noindent\textsc{Inward pointing cones ---} Let $\M$ be an \AFS and fix $x\in\Man{k}_i$ for some
$k\in\{0..N-1\}$, $i\in\mathcal{I}_k$. We assume that $x\in\partial\Man{l}_j$ for some $l>k$. We
first introduce the notion of \emph{inward directions to $\Man{l}_j$ at $x$}: a direction
$v\in\R^N\setminus\{0\}$ is said to point inward to $\Man{l}_j$ at $x$ if $x+hv \in \Man{l}_j$ for
$h>0$ small enough. Since $\Man{l}_j={\cO_{l,j}}\cap(x+V_{l,j})$ is flat, all these inward
directions $v$ belongs to $V_{l,j}$. Then we define the \emph{inward pointing cone}
$\Inf(l,j)(x)$ as the set containing all these inward directions to $\Man{l}_j$ at $x$. This
vector set is strictly positively homogeneous by definition and it does not contain
the tangential directions in $\partial\Man{l}_j$ nor $0$.

More generally, in the case of a \LFS the definition of the inward pointing cone is given by
 $$\In_x\Man{l}_j:=(D\Psi^x (x))^{-1}\Big(\Inf(l,j)(x)\Big) \subset \overline{T}_x\Man{l}_j\;.$$
Here also, the vectors in $\In_x\Man{j}_l$ are pointing strictly inwards $\Man{l}_j$,
excluding the directions tangent to $\partial\Man{l}_j$ at $x$ and $0$. Notice finally that since
$(D\Psi^x (x))^{-1}$ is linear, $\In_x\Man{l}_{j}$ is also strictly positively homogeneous.

An intrinsic characterization of the inward pointing cone can be given. To do so, for a given
$x\in\partial\Man{l}_j$, we consider the $C^1$-curves $\gamma:\R \to \R^N$ such that $\gamma(0)=x$
and $\gamma(s)  \in \Man{l}_j$ if $s\in (0,s_0)$ for some $s_0>0$. We will say that $\gamma \in
\Lambda^l_j(x)$ if there exists $\eta>0$ such that
\begin{equation}\label{eqn:carac-gamma}
    \dist(\gamma(s),\partial\Man{l}_j)\geq\eta s\quad \hbox{for all  }s\in(0,s_0)\;.
\end{equation}
Then the following characterization holds:
\begin{lemma}\label{lem:carac.cone}
    Given $x\in\Man{k}_i\cap\partial\Man{l}_j$, we have
    $\In_x\Man{l}_j=\big\{\dot\gamma(0):\gamma\in\Lambda^l_j(x)\big\}$.
\end{lemma}
\begin{proof}
    We first prove the result in the case of an \AFS. 

    \noindent \textbf{Direct inclusion --} 
    For the inclusion $\Inf(l,j)(x)\subset \In_x\Man{l}_{j}$, we have to show that if $v \in
    \Inf(l,j)(x)$ there exists $\eta >0$ such that $\gamma(s):= x+sv$ satisfies
    \eqref{eqn:carac-gamma} for $s\in (0,s_0)$, $s_0$ being small enough.  We argue by
    contradiction: if \eqref{eqn:carac-gamma} does not hold, there exists a sequence of positive
    numbers $s_\e\to0$ such that $$0<\dist(x+s_\e v,\partial \Man{l}_{j})\leq\e s_\e$$
    (of course the distance is positive because $x+s_\e v$ is in $\Man{l}_j$, not on its boundary).

    By \HSTFLAT-$(iv)$-$(c)$ and $(iii)$, we can extract a subsequence of $(s_\e)_\e$ (still
    denoted in the same way to simplify the exposure) such that the distance is achieved for $y_\e$
    in {\em the same} $\Man{n}_{m}$ for some $n<l$ and $m\in\mathcal{I}_n$.
    Hence, if $\Man{n}_{m}=(x+V_{n,m})\cap\cO_{n,m}$,
    $$ \Big|(x+s_\e v) - ( x+ w_\e) \Big|\leq \e s_\e\quad \hbox{for some  } w_\e\in V_{n,m}\; .$$
    We deduce from this property that $$\Big|v - \frac{w_\e}{s_\e}\Big|\leq \e \; ,$$ 
    and since $w_\e/s_\e\in V_{n,m}$ for any $\e>0$, by letting $\e$ tend to $0$ we deduce that $v\in
    V_{n,m}$. It follows that $x+s_\e v\in (x+V_{n,m})$ and thus, for $\e>0$ small enough, $x+s_\e
    v\in (x+V_{n,m})\cap\cO_{n,m}\subset\partial\Man{l}_j$ which contradicts 
    $\dist(x+s_\e v,\partial\Man{l}_j)>0$.  Hence \eqref{eqn:carac-gamma} is proved.

    \noindent \textbf{Converse inclusion --} 
    In order to prove that $\In_x\Man{l}_{j} \subset \Inf(l,j)(x)$, we take any $\gamma
    \in \Lambda^l_j(x)$ and we have to show that $\dot \gamma (0) \in \Inf(l,j)(x)$. Notice first
    that $\gamma (s) \in x+V_{l,j}$ for any $s\in (0,s_0)$ and therefore $\dot \gamma (0) \in
    V_{l,j}$. 
    On the other hand, by the differentiability of $\gamma$ at $0$,
    $$ \gamma(s)=x+ \dot \gamma (0)s +o(s)\; ,$$ and $x+ \dot \gamma (0)s \in x+V_{l,j}$.
    Now, by \eqref{eqn:carac-gamma} we see that for $s>0$ small enough,
    $$\dist(x+ \dot \gamma(0)s, \partial\Man{l}_j)\geq \dist(\gamma (s), \partial\Man{l}_j) + o(s)
    \geq (\eta +o(1)) s>0\;,$$
    which implies that $x+ \dot \gamma (0)s\in \Man{l}_j$ for any $s>0$ small enough. Hence
    $\dot\gamma(0)\in \Inf(l,j)(x)$ and we are done.

    \bigskip

    \noindent \textbf{The \LFS case --} 
    Here we use in an essential way the Lipschitz continuity of $\Psi^x$ and its
    $C^1$-property.

    If $v\in\In_x\Man{l}_j$, we claim that the curve $\gamma(s):=(\Psi^x)^{-1}(x+sv)$ belongs to
    $\Lambda^l_j(x)$: indeed, $\Psi^x(\gamma(s))=x+sv$ with $v\in \Inf(l,j)(x)$ and the first part
    of the proof implies that it satisfies \eqref{eqn:carac-gamma} for the (locally) flat
    stratification. Using the Lipschitz continuity of $\Psi^x$, we deduce that 
    $\gamma$ also satisfies \eqref{eqn:carac-gamma}, for some other parameters $\tilde\eta,
    \tilde s_0>0$.

    Conversely,  if $\gamma \in \Lambda^l_j(x)$, then the curve
    $\Gamma(\cdot):=\Psi^x(\gamma(\cdot))$ is also in the set $\Lambda^l_j(x)$ (but for the flat
    stratification) and therefore 
    $$\dot \Gamma (0) = (D\Psi^x)(x)\big(\dot \gamma(0)\big) \in  \Inf(l,j)(x)\;.$$
    By definition of $\In_x\Man{l}_j$, it follows that
    $\dot \gamma(0)\in \In_x\Man{l}_j$, and the proof is complete.
\end{proof}

\

The main result of this section is the
\begin{proposition}\label{prop:tangent.strat}
    Let $\M$ be a locally flattenable stratification of $\R^N$, $0\leq k<N$ and $x\in\Man{k}_i$ for some
    $i\in\mathcal{I}_k$.
    Assume that $x\in\partial\Man{l}_j\cap\partial\Man{l'}_{j'}$ for some $k<l,l'\leq N$.
    If $(l,j)\neq(l,j')$, then
    \begin{enumerate}
        \item[$(i)$] the following inclusion holds:
            $\Man{k}_i\subset\partial\Man{l}_j\cap\partial\Man{l'}_{j'}$\,;
        \item[$(ii)$] for any $x\in\Man{k}_i$, $\In_x\Man{l}_j\cap \In_x\Man{l'}_{j'}(x)=\emptyset$. 
    \end{enumerate}
\end{proposition}

Though this proposition is simple in its form, it rules out several cusp-like configurations
involving various dimensions (see below examples after the proof). 
In particular, in the case of the piecewise smooth curve in dimension $N=2$, we
recover that the tangents from both sides of a singular point cannot be equal
in the limit at such point. Notice that of course, they can possibly make
a $\pi$-angle but in that case, the inward pointing directions are opposite.

\begin{proof}
    Concerning $(i)$, the result follows directly from
    \HSTFLAT-$(ii)$:  since $$x\in\overline{\Man{l}_{j}}\cap\overline{\Man{l'}_{j'}}\;,$$ we get that
    $\Man{k}_i$ is included in both $\overline{\Man{l}_{j}}\cap\overline{\Man{l'}_{j'}}$. But since
    $l>k$, $\Man{k}_i$ does not intersect with $\Man{l}_{j}$ nor with $\Man{l'}_{j'}$, so that $(i)$
    holds.

    We now turn to $(ii)$ and consider first the case of an \AFS. Since $(l,j)\neq(l',j')$ then 
    $\Man{l}_j\cap\Man{l'}_{j'}=\emptyset$ which clearly implies that the inward pointing cones are
    disjoint. Indeed, as we noticed before, if $e\in\Inf(l,j)\cap\Inf(l',j')$ then for $h$ small
    enough, we get that $x+he\in\Man{l}_j\cap\Man{l'}_{j'}$ which is a contradiction. 

    In the \LFS case, the conclusion follows from the fact that since $D(\Psi^x (x))^{-1}$ is invertible
    it cannot map two different directions on the same one. More precisely, assume that 
    $$e\in\In_x\Man{l}_j\cap\In_x\Man{l'}_{j'}\neq\emptyset\;.$$
    Then there exist two  vectors $w\in\Inf(i,j)$,
    $w'\in\Inf(l',j')$ such that 
    $$e=D(\Psi^x (x))^{-1}(w)=D(\Psi^x (x))^{-1}(w')\;.$$
    But since $w\neq w'$ because they belong to $\Inf(l,j)$ and $\Inf(l',j')$ respectively, 
    we get a contradiction with the bijectivity of $D(\Psi^x (x))^{-1}$.
\end{proof}

\begin{remark}
    The fact that the cones $\In_x\Man{l}_j$ do not intersect implies that various cone conditions
    hold at $x$, separating the manifolds touching at this point (which are in finite number, see 
    \HSTFLAT-$(iv)(c)$). However, building explicitly such cones is quite difficult in all its
    generality and we wil not try to state it here. But notice that there is a lot of freedom in choosing 
    the directions of such cones: if $\In_x\Man{l}_j\cap\In_x\Man{l'}_{j'}=\emptyset$, any direction
    $e$ at positive distance from both cones allows to build a separating cone.
\end{remark}

\

\noindent\textsc{Typical situations ---} 
Of course very complex situations can occur involving different dimensions but let us see two simple
situations to understand the meaning of Proposition~\ref{prop:tangent.strat}-$(ii)$.

\

\noindent\textbf{Fig. \ref{fig:ex.cusp.1} ---} On the left the situation is allowed since at the point
$\{x\}=\Man{0}$, $\In_x\Man{2}_1=\Man{2}_1$, $\In_x\Man{2}_2=\Man{2}_2$ and therefore
$\In_x\Man{2}_1\cap\In_x\Man{2}_2=\emptyset$. Notice however that the boundaries intersect, which
corresponds to the direction of $\Man{1}_1$. 

On the right, it is clear that the problem does not come from $\In_x\Man{2}_{1/2/3}$ which do not
intersect (although $\In_x\Man{2}_{3}=\emptyset$), but from the $\Man{1}$ manifolds since
$\In_x\Man{1}_1\cap\In_x\Man{1}_2=\Man{1}_1\neq \emptyset$. This cusp-type situation is of course
not allowed.

\begin{figure}[htp]
   \begin{center}
   \includegraphics[width=0.9\textwidth]{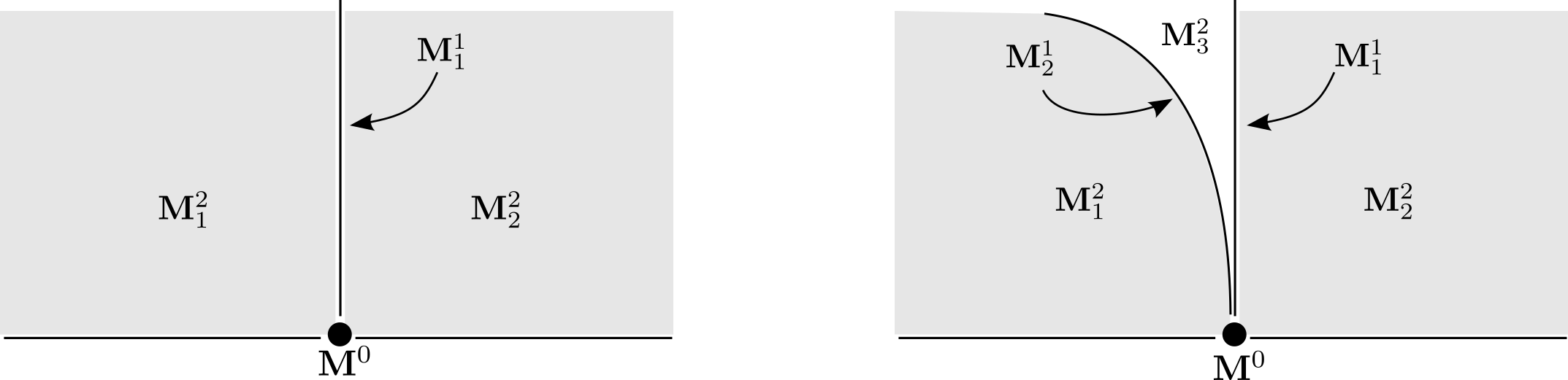}
       \caption{Examples in 2-D}
   \label{fig:ex.cusp.1} 
   \end{center}
\end{figure}

\noindent\textbf{Fig. \ref{fig:ex.cusp.2} ---} On the left the situation is allowed since the
semi-line $\Man{1}$ makes a non-zero contact angle with the plane $\Man{2}$. However, using for
instance the characterization in Lemma~\ref{lem:carac.cone} we see that $\In_x\Man{1}=\R^+_*e$, while $\In_x\Man{2}=\Man{2}$.  Hence $\In_x\Man{1}\cap\In_x\Man{2}=\R^+_*
e\neq \emptyset$, another cusp-type situation that is not allowed.

\begin{figure}[htp]
   \begin{center}
   \includegraphics[width=0.9\textwidth]{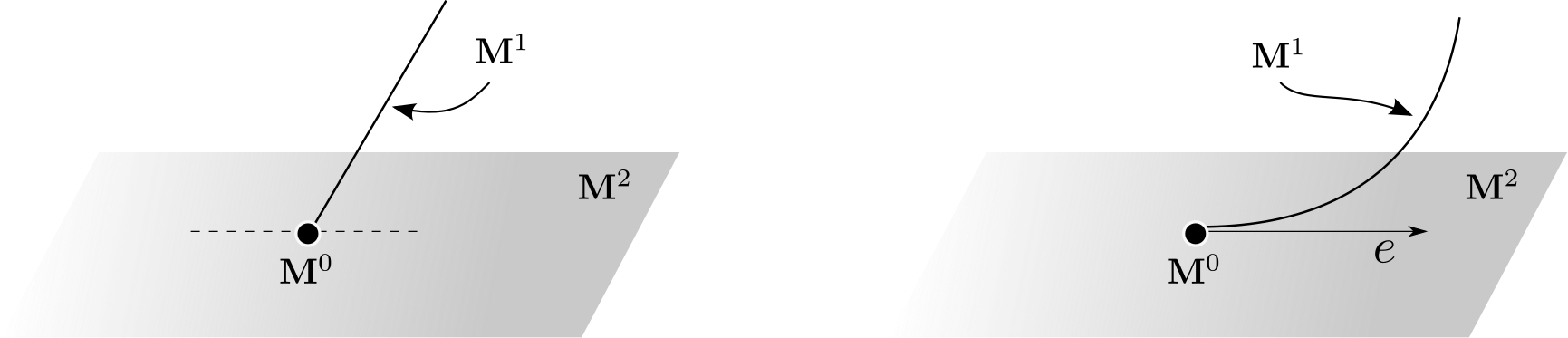}
       \caption{Examples in 3-D}
   \label{fig:ex.cusp.2} 
   \end{center}
\end{figure}

\

\noindent\textsc{Stratifications in domains ---} The question is whether we can
extend or not the notion of stratification in $\R^N$ to the case of open 
sets $\Omega$.

At this point of the book, we do not enter into details on this because we
devote a complete part of the book (Part~\ref{S-BC}) to the case of
state-constrained problems. Let us just mention that when we consider a domain
$\Omega$, its boundary $\partial\Omega$ has to be understood as a specific
part of the stratification. And if the boundary is not regular, we use the
stratified approach to decompose it in various manifolds of different
dimensions.

The conditions on the inward pointing cones that we proved above imply that $\Omega$ has to satisfy
a double-sided cone condition in order to deal with it in the stratified approach.  This cone
condition (at least the interior one) is also used in Section~\ref{abl} in order to get a suitable
boundary regularity for subsolutions.

\subsection{Tangentially flattenable stratifications \TFS}
\index{Stratification!tangentially flattenable}

As we have seen in the previous section, the notion of \LFS is quite restrictive:
it implies that, in a neighborhood of each point of $\R^N$, there exists a diffeomorphism
which flattens simultaneously every part of the stratification nearby. In fact, this property
turns out to be stronger than what we need. 

So, let us introduce finally the notion of {\em Tangentially Flattenable Stratification} which is less
restrictive, allowing to handle some situations where, for instance, cusps appear.  We even consider
the case of extended stratifications not only in $\R^N$, but in any domain $\OO\subset \R^N$: this
does not create any additional difficulty since every property is purely local.

\begin{definition}\label{def:TFS}\label{page:HSTtfs}\emph{--- Tangentially Flattenable
    Stratifications.}\smsp
    We say that $\M=(\Man{k})_{k=0..N}$ is a \emph{Tangentially Flattenable Stratification of $\OO$}\,---\TFS
    in short\label{not:TFS}---\,if the following hypotheses hold:
    \begin{enumerate}
        \item[$(i)$] Hypotheses \HSTGEN are satisfied;
        \item[$(ii)$] for any $k$, $\Man{k}$ is a $C^{1,1}$-submanifold of $\OO$; moreover, if $x\in \Man{k}$,
            there exists $r=r_x>0$ such that $B(x,r)\subset \OO$ and a $C^{1,1}$-diffeomorphism $\Psi_x$
            defined on $B(x,r)$ such that $\Psi_x(x)=x$ and
            $$\Psi_x(B(x,r) \cap \Man{k})=\Psi_x(B(x,r)) \cap (x+V_k)$$ 
            where $V_k$ is a $k$-dimensional vector subspace of $\R^N$;
        \item[$(iii)$] setting $\tMan{l}:=\Psi_x(B(x,r) \cap \Man{l})$ and
            $\tMan{l}_j= \Psi_x(B(x,r) \cap \Man{l}_j)$ for any connected component $\Man{l}_j$ of
            $\Man{l}$, 
            \begin{enumerate}
                \item[$(a)$] for any $l<k$, $ \tMan{l}=\emptyset$\,;
                \item[$(b)$] for any $l>k$, $\tMan{l}$ is either empty or has at most a finite
                number of connected components;
            \item[$(c)$] if $x\in\partial\tMan{l}_j$ and $y\in \tMan{l}_j$,
                $\Psi_x(B(x,r)) \cap (y+V_k) \subset \tMan{l}_j\;.$
            \end{enumerate}
    \end{enumerate}
    We denote by \HSTTFS this set of assumptions and we will say that a stratification which satisfies the same
    properties as $(\tMan{k})_{k=0..N}$ is {em tangentially flat}.
\end{definition}

The difference between a locally flattenable stratification and a tangentially flattenable one is that, in the \TFS case,
$(\tMan{l})_{l=0,..N}$ is not necessarily the restriction of an \AFS to $B(x,r)$, hence the
$\tMan{l}$ for $l\neq k$ are not necessarily affine spaces. They just have to be ``tangentially
flat'' thanks to \HSTTFS-$(ii)$ and $(iii)$-$(c)$. This property is the one we need in particular to perform the
tangential regularization described in the next section, while flattening all the stratification at
the same time is not a requirement.

\

In particular, the following stratification in $\R^3$ is an \TFS but not a \LFS:
$$ \Man{0}=\emptyset \quad,\quad \Man{1}=\{(x_1,x_2,x_3):\,x_2=0,x_3=0\}\; ,$$
$$\Man{2}=\{(x_1,x_2,x_3):\,x_2\neq 0,|x_3|=x_2^2\}\;,$$
and $\Man{3}=\R^3\setminus(\Man{0}\cup \Man{1}\cup\Man{2})$. In checking that this is an \TFS, only
Condition~$(iii)$ may cause a problem but it is more than clear here that it is satisfied. On the
other hand, $\Man{2}$ forms a cusp on $\Man{1}$ and therefore this cannot be a \LFS.

\begin{figure}[!htp]
    \begin{center}
   \includegraphics[width=0.85\textwidth]{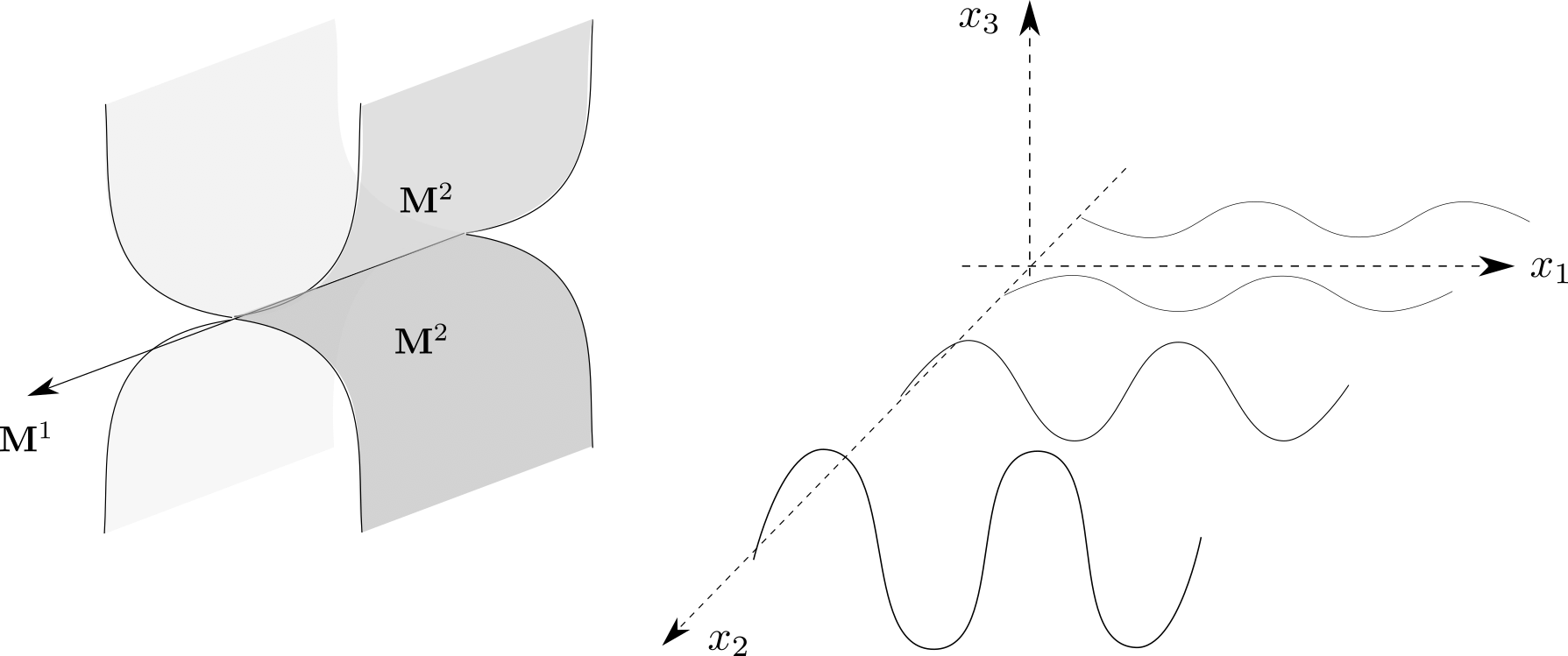}
   \caption{Left: a cusp. Right: a corrugated sheet.}
    \label{fig:tfs} 
   \end{center}
\end{figure}

\begin{remark}
    It may be thought that, using the fact that $\Man{k}$ is a $k$-dimensional submanifold, it can
    be flatten as in Definition~\ref{def:TFS} and maybe \HSTTFS-$(iii)$ could be always true.
    Unfortunately, this is not clear as shown by the example of a ``corrugated sheet''.
    Suppose that, after the flattening of $\Man{1}$, we end up with
    $$ \tMan{0}=\emptyset \quad,\quad \tMan{1}=\{(x_1,x_2,x_3):\,x_2=0,x_3=0\}\; ,$$
    $$\tMan{2}=\{(x_1,x_2,x_3):\,x_2\neq 0,x_3=x_2\sin(x_1)\}\;,$$
    and $\tMan{3}=\R^3\setminus(\Man{0}\cup \Man{1}\cup\Man{2})$. 
    In this situation, it is clear enough that $\tMan{2}$ does not satisfy \HSTTFS-$(iii)$. 

    The reader could argue that we may use an other change of variables in order to flatten
    $\tMan{2}$. This is probably right but this means that $(i)$ flattening $\Man{1}$ is not enough;
    $(ii)$ using an other change of variables to flatten $\Man{2}$ may be possible here, but more
    difficult and perhaps impossible if we consider an example where $\Man{2}$ has several connected
    components having $\Man{1}$ as boundary. We would face again the difficulty of ``simultaneous
    flattening''.
\end{remark}

Throughout the rest of the book, unless otherwise specified we will always assume that we are in the
framework of tangentially flattenable stratifications.

\section{Partial regularity, partial regularization}
\label{sect:sup.reg}

In this section, motivated by Sections~\ref{sect:htc} and~\ref{sect:whitney}, we present some key
ingredients in the proof of local comparison results for HJ Equations with discontinuities. The
assumptions we are going to use are those which are needed everywhere in this book to prove any kind
of results and therefore we define at the end of the section a ``good local framework for HJ
Equations with discontinuities''.

Local comparison results lead to consider HJ-Equations in a ball, namely
\begin{equation}\label{gen-eqn-F}
\G(\vc,u,Du)=0\quad \hbox{in  } B_\infty (\vcb,r)\; ,
\end{equation}
where $\vcb\in\R^{N}$ and $r>0$ are fixed. We recall that the notation $X$ can refer to either $X=x$
or $X=(x,t)$. Because of the previous section, it is natural to assume that the discontinuities in
this equation have a general \emph{\TFS stratification-type structure} and, near a point of
$\Man{k}$, after a suitable change of variables, we can assume that the variable $\vc \in \R^{N}$
can be decomposed as $(\vt,\vn)\in \R^k \times \R^{N-k}$ and $\G$ is
continuous \wrt $u$, $p$ and $\vt$ but not with respect to $\vn$. In particular we have in mind
that locally around $\vcb$, Hamiltonian $\G$ has a discontinuity on $\Gamma_0=\{(\vt,\vn);\ \vn=0\}$
which can be identified with $\R^{k}$.

The properties of discontinuous sub and supersolutions on $\Gamma$ are playing a key role in the
proof of such local comparison results and the aim of the next section is to introduce the notion of
``regular discontinuous function''.

\subsection{Regular discontinuous functions}\label{sec:regular}

The following definition provides several notions of regularity for discontinuous functions. 

\begin{definition}\label{def:regular}\index{Regular discontinuous function}
    \emph{--- Regular discontinuous functions.}\smsp
    Let $A\subset\R^k$,
    $f:A\to \R$ an \usc \resp{\lsc} function and $\omega\subset A$.
    \begin{enumerate}
        \item[$(i)$] The function $f$ is said to be $\omega$-regular at $x\in \partial \omega\cap A$
            if $$ f(x)=\limsup_{\substack{ y\to x \\ y\in \omega}}f(y) \qquad \bigl[resp.\
            f(x)=\liminf_{\substack{ y\to x \\ y\in \omega}}f(y)\bigr].$$
        \item[$(ii)$] Let $\mathcal{E}\subset \partial \omega \cap A$. The function $f$ is said to be
            $\omega$-regular on $\mathcal{E}$ if it is $\omega$-regular at any point of~$\mathcal{E}$.

        \item[$(iii)$] Let $\mathcal{E}\subset A$. Given $x\in\mathcal{E}$ and $r>0$, we denote by
            $\VV(x,r)$ the set of all connected components of $(A\setminus\mathcal{E})\cap B(x,r)$.
            We make the following assumption: for any $x\in\mathcal{E}$, there exists $r_0=r_0(x)>0$ such
            that
            \begin{equation}\label{hyp.comp}
            \begin{cases}
                \hbox{for all  } \omega\in\VV(x,r_0)\;,\ x\in\partial\omega\;,\\[2mm]
                \hbox{if  }0<r<r_0\;,\ \VV(x,r)=\Big\{ \omega\cap B(x,r)\;,\text{ where }
                \omega\in\VV(x,r_0)\Big\}\;.
            \end{cases}
            \end{equation}
            The function $f$ is said to be regular on $\mathcal{E}$ if, for any $x\in\mathcal{E}$
            and $0<r<r_0(x)$, $f$ is $\omega$-regular at $x$ for all $\omega\in\VV(x,r)$.
    \end{enumerate}
\end{definition}

Let first explain the admittedly strange assumption in \eqref{hyp.comp}. The first one is to
avoid pathological example like
$$ A=\bigcup_{n\geq 1}(\frac{1}{n+1},\frac{1}{n})\cup \{0\}\quad ,\quad \mathcal{E}=\{0\}\; .$$
Here $[A\setminus \mathcal{E}]\cap (-r,r)$ contains an infinite numbers of connected components
$\omega$ but none of them satisfies $0\in \overline{\omega}$. Clearly this is not the type of
situations we wish to handle and therefore the assumption excludes them.

The second assumption is to avoid appearance of vanishing of connected components as $r\to0$: this
assumptions means that the decomposition in connected components does not change for $r>0$ small.

On the other hand, we can consider $A=[-1,1]\times[-1,1]$ and $\mathcal{E}=[0,1]\times \{0\}$. It is
clear that $A\setminus \mathcal{E}$ is connected but we are interested in the {\em local situation},
not in the global one.  If $x=(x_1,0) \in \mathcal{E}$ is such that $0<x_1\leq 1$ then, for $r$
small enough, $[A\setminus \mathcal{E}]\cap B(x,r)$ has two connected components and ``regular'' at
such point for a  discontinuous function means regular ``from both sides'' of the segment
$\mathcal{E}$, \ie with respect to the two connected components.  Of course, if $x_1=0$, we come
back to the case when we only have one connected component.

In this book, this local aspect will always be important since almost all the arguments are local.
But concerning $A$ and $\mathcal{E}$, we will often be in a simple situation like $A=\R^N\times
(0,\Tf)$ and $\mathcal{E}=\M \times (0,\Tf)$ where $\M\subset \R^N$ is a $k$-dimensional manifold.
We point out anyway that here there are two different cases: if $k<N-1$, $\omega=A\setminus
\mathcal{E}$ is connected and there is no difference between $(ii)$ and $(iii)$. But if
$\mathcal{E}$ is an hyperplane, then, as in the above example, $A\setminus \mathcal{E}$ has two
connected components $\omega_1, \omega_2$ and, roughly speaking, the regularity property has to hold
in both side of $\mathcal{E}$, i.e. both for $\omega_1$ and $\omega_2$. This is actually the case
which will be studied in Part~\ref{part:codim1} and~\ref{part:NA}.

The regularity of \usc subsolution or \lsc supersolutions is used in several type of situations: the
most classical one is when we consider a stationary HJ Equation set in a domain $\Omega$ of $\R^N$;
a natural choice is $A=\Omegb$, $\omega=\Omega$, $\mathcal{E}=\domeg$. In the study of the Dirichlet
problem (\cf for example  \cite{BP1,BP2,BP3}), such regularity of the sub and/or supersolution is
needed to have a comparison result up to the boundary. The point is to avoid ``artificial values''
of these sub or supersolution on $\domeg$. For the case of evolution equations, one may also choose
$A=\Omegb\times (0,\Tf)$, $\omega=\Omega\times (0,\Tf)$, $\mathcal{E}=\domeg\times (0,\Tf)$.  In the same
context, some result can be  formulated using the $\omega$-regularity of the sub or  supersolution
at some point of $\domeg\times (0,\Tf)$, \cf Section~\ref{sect:eqn-on-b}.

In most of these applications, the assumption imposed on $\Omegb$ and $\domeg$ by
Definition~\ref{def:regular}-$(iii)$ is obviously satisfied but, if the domain is less regular,
typically as in the above example 
$$ \Omega=\left [ (-1,1)\times (-1,1) \right ] \setminus \left [ [0,1)\times \{0\}\right ]\; ,$$
then a more general notion of regularity can be useful. We refer to Part~\ref{S-BC} for a discussion
of such boundary regularity.

\subsection{Regularity of subsolutions}
\label{sec:regu-subsol}

The aim of this section is to study subsolutions of \eqref{gen-eqn-F} and to prove that, under
suitable assumptions, they satisfy some ``regularity properties''.

We immediately point out that, for reasons which will clear later on in this book, we are not going
to use only subsolutions in the Ishii sense and therefore, we are not going to use only the lower
semi-continuous enveloppe of some Hamiltonian as in the Ishii definition. To simplify matter, we
assume here that the function $\G$ contains all the necessary information for subsolutions. In
other words, by subsolution of \eqref{gen-eqn-F}, we mean an \usc function $u$ which satisfies

{\em\noindent At any 
maximum point $\vc \in B_\infty (\vcb,r)$ of $u-\phi$, where $\phi$ is a smooth test-function, we have
$$ \G(\vc,u(\vc), D\phi (\vc) )\leq 0 \; .$$
}
In the sequel, we decompose $Du$ as $(D_\vt u,D_\vn u)$ (the same convention is used for the
test-functions $\phi$) and the corresponding variable in $\G$ will be $p=(p_\vt,p_\vn)$.

In order to state our main result on the ``regularity of subsolutions'', let us introduce the assumption

\label{page:wNC}
\begin{assumption}{\NCw}{Weak Normal Controllability.}\\[-1.3cm]
    \begin{enumerate}
    \item[$(i)$] If $N-k>1$, there exists $e \in \R^{N-k}$ such that, for any $R>0$, we have
    $$
    \G(\vc,u,(p_\vt,Ce))\to +\infty \quad\hbox{when  }C \to +\infty\;,
    $$
    uniformly for $\vc=(\vt,\vn)\in B_\infty (\vcb,r)$, $|u|\leq R$, $|p_\vt|\leq R$. 
    \item[$(ii)$] If $N-k=1$, this property holds for $e =+1$.
    \item[$(iii)$] If $N-k=1$, this property holds for $e =-1$.
    \end{enumerate}
\end{assumption}

The results are the following:\index{Regularity of subsolutions!general result} 
\begin{proposition}\label{reg-sub} We consider equation~\eqref{gen-eqn-F} in $B_\infty(\bar X,r)$.
\begin{enumerate}
\item[$(a)$] Assume that \NCw holds. If $u$ be a bounded, \usc subsolution of \eqref{gen-eqn-F} and if 
$\Gamma_c:=B_\infty (\vcb,r)\cap \{(\vt,\vn)\;;\; \vn=c\}\neq
\emptyset$, then $u$ is regular on $\Gamma_c$\;. In particular, $u$ is regular on $\Gamma=\Gamma_0$\;.
\item[$(b)$] If $u$ be a bounded, \usc subsolution of \eqref{gen-eqn-F}, if $N-k=1$ and if \NCw-$(ii)$
    holds, then $u$ is regular on $\Gamma_0$ with respect to $B_\infty (\vcb,r)\cap\{Z>0\}$. In the
    same way, if $N-k=1$ and if \NCw-$(iii)$ holds, then $u$ is regular on $\Gamma_0$ with respect
    to $B_\infty (\vcb,r)\cap\{Z<0\}$.
\item[$(c)$] If $u$ is a subsolution of $\G=0$ on $B_\infty (\vcb,r)\cap\{Z\geq0\}$ and if either
    \NCw-$(ii)$ or \NCw-$(iii)$ holds then $u$ is is regular on $\Gamma_0$ with respect to
    $B_\infty (\vcb,r)\cap\{Z>0\}$.
\end{enumerate}
\end{proposition}

This proposition means that in $B_\infty (\vcb,r)$, subsolutions cannot have ``singular values'' on
affine subspaces of the form $\{\,(\vt,\vn);\ \vn=c\,\}$. By singular values we mean here values
which are not given by limits coming from outside of those affine subspaces. The three above results
can be interpreted in the following way: (a) is the general ``good case'' of a subsolution which is
regular on $\Gamma_0$, a set of discontinuity for $\G$, when we use the entire assumption \NCw.
Result (b) is the case when $\Gamma_0$ is an affine hyperplan but only one part of assumption \NCw.
Result (c) deals with boundary regularity; such regularity property is useful in order to use the
results of Section~\ref{sect:eqn-on-b}.

\begin{proof}
    We start by (a). We recall that, thanks to Definition~\ref{def:regular}, in the case when $k<N-1$, we
    have to show that, for any $\vc=(\vt,\vn)\in \Gamma_c$
\begin{equation}\label{reg-kp}
u(\vc)=\limsup\{u(\vt',\vn')\ ; (\vt',\vn')\to \vc,\ \vn'\neq \vn\}\; .
\end{equation}
since $B_\infty (\vcb,r)\setminus \Gamma_c$ is connected and $\vn'\neq \vn$ is equivalent to
$(\vt',\vn')\notin \Gamma_c$.  Moreover, if $N-k=1$, we also have to show
\begin{align}
u(\vc)&=\limsup\{u(\vt',\vn'); (\vt',\vn')\to \vc,\ \vn'> \vn\} \nonumber \\
&=\limsup\{u(\vt',\vn'); (\vt',\vn')\to \vc,\ \vn'< \vn\} ,\label{reg-kp-codim1}
\end{align}
since in this case, $B_\infty (\vcb,r)\setminus \Gamma_c$ has two connected components. 
In order to prove \eqref{reg-kp} we argue by contradiction assuming that
$$ u(\vc)>\limsup\{u(\vt',\vn')\ ; (\vt',\vn')\to \vc,\ \vn'\neq \vn\}\; .$$
Therefore there exists some $\delta>0$ small enough such that $u(\vt',\vn')<u(\vc) -\delta$ if
$|(\vt',\vn')-\vc| <\delta$, with $\vn'\neq \vn$.  Next, for $\e >0$, we consider the function
$$\vt' \mapsto u(\vt',\vn) - \frac{|\vt-\vt'|^2}{\eps}\; .$$
If $\e $ is small enough, this function has a local maximum point at $\vte$ which satisfies $|\vte
    -\vt|<\delta$ and $u(\vte,\vn)\geq u(\vc)$. But because of the above property, there exists a
    neighborhood $\VV$ of $(\vte,\vn)$ such that, if $(\vt',\vn') \in \VV$ and $\vn'\neq \vn$,
    $u(\vt',\vn')<u(\vte,\vn) -\delta$.

This implies that $(\vte,\vn)$ is also a local maximum point of the function
$$(\vt',\vn') \mapsto u(\vt',\vn') - \frac{|\vt-\vt'|^2}{\eps}-Ce\cdot (\vn'-\vn)\; .$$
for any positive constant $C$ and the vector $e$ of $\R^{N-k}$ given by \NCw. But, by the subsolution
    property, we have $$\G\left((\vte,\vn),u(\vte,\vn ),
    \left(\frac{2(\vte-\vt)}{\eps},Ce\right)\right)\leq 0\; .$$
But, using \NCw with $R=\max(||u||_\infty,2\delta\e^{-1})$, we reach a contradiction for $C$ large enough.

For the case $N-k=1$, we repeat the same argument by choosing either $e=+1$ or $e=-1$.

Indeed, if we assume by contradiction that $u(\vc)>\limsup\{u(\vt',\vn')\ ; (\vt',\vn')\to \vc,\ \vn'> \vn\}$, we argue
as above but looking at a local maximum point of the function 
$$(\vt',\vn') \mapsto u(\vt',\vn') - \frac{|\vt-\vt'|^2}{\eps}+C (\vn'-\vn)\; ,$$
therefore with the choice $e=-1$.  We first look at a maximum point of this function in compact set
of the form $$ \{(\vt',\vn'); |\vt'-\vt|+|\vn'-\vn|\leq \delta,\ \vn'\leq \vn\}\; .$$ 

Notice that, in this set, the term $C\cdot (\vn'-\vn)$ is negative (therefore it has the right
sign) and this function has a local maximum point which depends on $\e$ and $C$, but, in order to
simplify the notations, we denote it by $(\bar \vt, \bar \vn)$. We have
$u(\bar \vt, \bar \vn) \geq u(\vc)$ by the maximum point property and we have $(\bar \vt, \bar \vn)\to (\vte,0)$ when
$C\to +\infty$, where $(\vte,0)$ is a maximum point of the function
$$\vt' \mapsto u(\vt',0) - \frac{|\vt-\vt'|^2}{\eps}\; .$$

Using that $u(\vc)>\limsup\{u(\vt',\vn')\ ; (\vt',\vn')\to \vc,\ \vn'> \vn\}$, we clearly have the
same property at $(\vte,0)$ and therefore, for $C$ large enough, at $(\bar \vt, \bar \vn)$ which
is also a maximum point of the above function for all $(\vt',\vn')$ such that $|\vt'-\vt|+|\vn'-\vn|\leq \delta$ if $\delta$ is chosen small enough. And we reach a contradiction as in the first part of the proof using \NCw.

Hence $u$ is regular with respect to the the $\{\vn'> \vn\}$ side but an analogous proof shows the same property for
the other side.

Finally the proofs of (b) and (c) rely on analogous arguments, therefore we skip them. We just point out that, for (c),
the fact that $B_\infty (\vcb,r)\cap\{Z<0\}$ is not part of the domain allows to do the proof as in the first case of (a).
\end{proof}

\begin{remark}\label{rem:reg-sub-weak} \ 
    \begin{enumerate}
        \item[$(i)$] We have stated and proved Proposition~\ref{reg-sub} under Assumption \NCw but,
            in the sequel, we will mainly use Assumption \NCe which will be introduced in the next
            section.  Clearly \NCe implies \NCw.

            On an other hand, we point out that, in control problems, provided that the Hamiltonian
            $\G$ is defined in a suitable way, \NCw is equivalent to the existence of a
            non-tangential dynamic in the case $N-k>1$ while, in the case when $N-k=1$, it is
            equivalent to the  existence of two dynamics pointing strictly inward each of the two
            half-spaces defined by the hyperplan $\Gamma_0$.

        \item[$(ii)$] Notice that a similar result still holds for \lsc subsolutions à la
            Barron-Jensen, where we consider minimum points of $u-\phi$. Of course in this case, the
            regularity property  has to be expressed with a liminf instead of a limsup but the
            modifications are straightforward. We refer to Section~\ref{sect:SBJ} where the
            Barron-Jensen approach is detailed and we use this liminf regularity property.
    \end{enumerate}
\end{remark}

\subsection{Regularization of subsolutions}
\label{sec:regplus-subsol}

The aim of this section is to construct, for a given subsolution, a suitable approximation by Lipschitz
continuous subsolutions which are even $C^1$ in $\vt$ in the convex case.

To do so, we use for $\G$ the following assumptions: for any $R>0$, there exist some constants $C^R_i>0$ for
$i=1\dots 4$, a modulus of continuity $m^R : [0,+\infty[ \to [0,+\infty[$ and either a constant
$\lambda^R>0$ or $\mu^R>0$  such that

\index{Hamiltonians!tangential continuity}\index{Tangential continuity!Hamiltonian version}\label{page:TC}\label{not:TC}
\begin{assumption}{\TC}{Tangential Continuity.}
    For any $\vc_1=(\vt_1,\vn), \vc_2=(\vt_2,\vn) \in B_\infty (\vcb,r)$, $|u|\leq R$ and $p\in \R^{N}$,
    then 
    $$|\G(\vc_1,u,p)- \G(\vc_2,u,p)|\leq C^R_1|\vt_1-\vt_2|.|p| +m^R\big(|\vt_1-\vt_2|\big)\;.$$
\end{assumption}

\vspace*{-1.5em}

\index{Hamiltonians!normal controllability}\index{Normal controllability!Hamiltonian version}\label{page:NC}\label{not:NC}
\begin{assumption}{\NCe}{Normal Controllability.}
    For any $\vc=(\vt,\vn)\in B_\infty (\vcb,r)$, $|u|\leq R$, $p=(p_\vt,p_\vn)\in \R^{N}$, then 
    $$\G(\vc,u,p)\geq C^R_2 |p_\vn| - C^R_3|p_\vt| -C^R_4\;.$$
\end{assumption}

\vspace*{-0.7em}

Notice that \NCe and \TC have counterparts in terms of control elements \ie dynamic and cost, see
\NCBCL, \TCBCL, p. \pageref{page:TCBCL}.  For the last assumption, if $p_\vt\in \R^k$, we set
$p_\vt=(p_{\vt_1},\cdots,p_{\vt_k})$ 
\index{Hamiltonians!monotonicity}\label{page:Mong}\label{not:Mong}

\begin{assumption}{\Mong}{Monotonicity.}
    For any $R>0$, there exists $\lambda_R, \mu_R \in \R$, such that one of the two following properties holds
\begin{enumerate}
    \item[] \Monu : $\lambda_R>0$ and for any $\vc\in B_\infty (\vcb,r)$, $p=(p_\vt,p_\vn)\in \R^{N}$,
        any $- R\leq u_1 \leq u_2 \leq R$, 
        \begin{equation}\label{mon-u}
        \G(\vc,u_2,p)-\G(\vc,u_1,p)\geq \lambda^R(u_2-u_1)\;; 
        \end{equation}
    \item[] \Monp :
         \eqref{mon-u} holds with $\lambda_R=0$, we have $\mu_R>0$ and
        \begin{equation}\label{mon-p}
            \G(\vc,u_1,q)-\G(\vc,u_1,p)\geq \mu^R(q_{\vt_1}-p_{\vt_1})\; ,
        \end{equation}
        for any $q=(q_\vt,p_\vn)$ with $p_{\vt_1}\leq q_{\vt_1}$ and
        $p_{\vt_i}=q_{\vt_i}$ for $i=2,...,p$.
\end{enumerate}
\end{assumption}

Before providing results using these assumptions, we give an example showing the type of properties
hidden behind these general assumptions. 

\

\begin{example}
We consider an equation in $\R^{N+1}$ written as 
$$ \mu u_t + H((x_1, x_2), t, u, (D_{x_1} u, D_{x_2} u))= 0\quad 
\hbox{in  }\R^k \times \R^{N-k} \times (0,+\infty)\; ,$$
Here the constant $\mu$ satisfies $0\leq \mu \leq 1$ and in order to simplify we can assume that $H$
is a continuous function. To be in the above framework, we write $\vc=(t,x_1, x_2) \in
(0,+\infty)\times \R^k \times \R^{N-k}$ and we set $\vt=(t,x_1) \in \R^{k+1}$, $\vn=x_2\in R^{N-k}$
and $$ \G(\vc , u, P) = \mu p_{t} + H((x_1,x_2),t,u,(p_{x_1},p_{x_2}))\; ,$$ where
$P=(p_t,(p_{x_1},p_{x_2}))$. 

In order to formulate \TC, \NCe and \Mong in a simple way, we assume that $(x_1,t,u) \mapsto
H((x_1,x_2),t,u,(p_{x_1},p_{x_2}))$ is locally Lipschitz continuous for any $x_2, p_{x_1},p_{x_2}$.
Then these assumptions can be formulated in the following way

\noindent\textbullet\; For \TC, recalling that we always argue {\em locally}, one has to assume
that, for any $R>0$, there exists a constant $C^R_1>0$ such that, for any $(t,x_1, x_2) \in
[0,+\infty)\times \R^k \times \R^{N-k}$ with $t+|x_1|+|x_2|\leq R$, $|u|\leq R$ and
$(p_{x_1},p_{x_2})\in  \R^k \times \R^{N-k}$, we have
$$ |D_{x_1} H((x_1,x_2),t,u,(p_{x_1},p_{x_2}))|, |D_{t} H((x_1,x_2),t,u,(p_{x_1},p_{x_2}))| \leq
C^R_1(|(p_{x_1},p_{x_2})|+1)\; .$$
Here we are in the simple case when $m^R\big(\tau \big)= C^R_1 \tau$ for any $\tau
\geq 0$. One can easily check that these assumptions imply the right property for $\G$ with
$\vt=(t,x_1)$.

\noindent\textbullet\; Next since $p_{\vt_1}=p_t$, \Mong reduces to either $\mu >0$ or $D_{u}
H((x_1,x_2),t,u,(p_{x_1},p_{x_2})) \geq \lambda_R >0$ for the same set of
$(t,x_1,x_2),u,(p_{x_1},p_{x_2})$ as for \TC. Hence, either we are in a real time evolution
context ($\mu >0$), or $\mu = 0$ and the standard assumption ``$H$ strictly
increasing in $u$'' has to hold.

\noindent\textbullet\; Finally \NCe holds if $H$ satisfies the following coercivity assumption in $p_{x_2}$
$$H((x_1,x_2),t,u,(p_{x_1},p_{x_2})) \geq C^R_2 |p_{x_2}| - C^R_3|p_{x_1}| -C^R_4\;,$$ again for the
same set of $(t,x_1,x_2),u,(p_{x_1},p_{x_2})$ as for \TC. Notice that in order to check \NCe for
$\G$, the constant $C^R_3$ may have to be changed in order to incorporate the $\mu p_t$-term if $\mu
\neq 0$. 
\end{example}

Our result concerning the approximation by Lipschitz subsolutions is the\index{Regularization!subsolutions}
\begin{proposition}\label{reg-by-sc}\emph{--- Regularization of subsolutions.}\smsp 
    Let $u$ be a bounded subsolution of \eqref{gen-eqn-F} and
    assume that \TC, \NCe and \Mong hold. Then there exists a sequence of Lipschitz continuous
    functions $(u^\e)_\e$ defined in $B_\infty (\vcb,r-a(\e))$ where $a(\e) \to 0$ as $\e\to 0$ such
    that \\[-5mm]
    \begin{enumerate} 
        \item[$(i)$] each $u^\e$ is a subsolution of \eqref{gen-eqn-F} in $B_\infty (\vcb,r-a(\e))$,
        \item[$(ii)$] each $u^\e$ are semi-convex in the $\vt$-variable
        \item[$(iii)$] $\limssup u^\e = u$ as $\e\to 0$.
    \end{enumerate}
\end{proposition}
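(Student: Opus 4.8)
The natural approach is a \emph{partial sup-convolution in the tangential variable only}. For $\vc=(\vt,\vn)$ in a slightly smaller ball $B_\infty(\vcb,r-a(\e))$, set
$$u^\e(\vt,\vn):=\sup_{\vt'}\Big\{u(\vt',\vn)-\frac{|\vt-\vt'|^2}{2\e}\Big\}\,,$$
the supremum running over $\vt'$ such that $(\vt',\vn)\in B_\infty(\vcb,r)$. If $|u|\le M$, the maximizing $\vt'=\vt'(\vc)$ satisfies $|\vt-\vt'|\le 2\sqrt{M\e}$, so choosing $a(\e):=c\sqrt\e$ large enough the truncation is inactive and the maximum is interior. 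From the definition one reads off immediately: $u^\e\ge u$; the family is nondecreasing in $\e$ with $u^\e(\vc)\downarrow \limsup_{\vt'\to\vt}u(\vt',\vn)=u(\vc)$ as $\e\to0$ (using only upper semicontinuity of $u$); and $\vt\mapsto u^\e(\vt,\vn)+|\vt|^2/(2\e)$ is convex, being a supremum of affine functions of $\vt$. The second point gives $(iii)$ ($\limssup u^\e=u$, via a Dini-type argument since the $u^\e$ decrease to the \usc function $u$), the third gives $(ii)$, semiconvexity in $\vt$ with constant $\e^{-1}$; together with the bound on $u$ this also forces $u^\e$ to be Lipschitz in $\vt$, with $|D_\vt u^\e|\le 2\sqrt M\,\e^{-1/2}$ at points of differentiability.

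Next I would prove Lipschitz continuity in the normal variable $\vn$, which is where \NC enters. Mimicking the contradiction argument in the proof of Proposition~\ref{reg-sub}: if the oscillation of $\vn\mapsto u^\e(\vt,\vn)$ were too large relative to $|\vn_1-\vn_2|$, one penalizes in $\vt$, localizes to a compact neighbourhood, and at the resulting interior maximum point obtains, through the translation identity used below, a test function whose $\vn$-slope is a large constant; \NC then forces this $\vn$-slope to be dominated by the $\vt$-slope (controlled by the tangential Lipschitz constant of $u^\e$) plus $C^R_4/C^R_2$, a contradiction. Hence $u^\e$ is jointly Lipschitz on $B_\infty(\vcb,r-a(\e))$, with a normal Lipschitz constant $\Lambda^\vn_\e\le (C^R_3\cdot 2\sqrt M\,\e^{-1/2}+C^R_4)/C^R_2$.

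The core of the proof is $(i)$. Let $\phi\in C^1$ and let $\vc_0=(\vt_0,\vn_0)$ be a maximum point of $u^\e-\phi$, with $\vt_0'$ realizing the supremum defining $u^\e(\vc_0)$. The standard sup-convolution translation trick shows that $(\vt',\vn)\mapsto u(\vt',\vn)-\phi(\vt'+\vt_0-\vt_0',\vn)$ has a maximum at $(\vt_0',\vn_0)$, so, $u$ being a subsolution,
$$G\big(\vt_0',\vn_0,u(\vt_0',\vn_0),D\phi(\vc_0)\big)\le 0\,.$$
Since $u^\e$ is semiconvex in $\vt$ it is differentiable there in the $\vt$-direction with $D_\vt\phi(\vc_0)=(\vt_0-\vt_0')/\e$, while $|D_\vn\phi(\vc_0)|\le\Lambda^\vn_\e$ by the previous step. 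Moving the first slot from $(\vt_0',\vn_0)$ to $\vc_0$ using \TC, and the third slot from $u(\vt_0',\vn_0)$ down to $u^\e(\vc_0)=u(\vt_0',\vn_0)-|\vt_0-\vt_0'|^2/(2\e)$ using \Monu (which moreover contributes the favourable term $-\lambda^R|\vt_0-\vt_0'|^2/(2\e)$), one reaches
$$G\big(\vc_0,u^\e(\vc_0),D\phi(\vc_0)\big)\le C^R_1|\vt_0-\vt_0'|\,|D\phi(\vc_0)|+m^R(|\vt_0-\vt_0'|)-\frac{\lambda^R}{2}\cdot\frac{|\vt_0-\vt_0'|^2}{\e}\,.$$
The right-hand side is then treated by splitting $|D\phi(\vc_0)|$ into its tangential part $|\vt_0-\vt_0'|/\e$ — absorbed, up to a vanishing constant, by the $\lambda^R$-term together with a downward shift $u^\e\mapsto u^\e-\kappa_\e$, $\kappa_\e\to 0$, legitimate again by \Monu — and its normal part $\Lambda^\vn_\e$, which is controlled via \NC exactly as above; in the \Monp alternative the residual is instead absorbed by subtracting a small multiple of the first tangential coordinate. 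The shift does not affect $(ii)$–$(iii)$, and one obtains $G(\vc_0,u^\e(\vc_0),D\phi(\vc_0))\le 0$, i.e. $(i)$.

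The delicate point — and the main obstacle — is precisely this last bookkeeping: the tangential-continuity error carries a factor $|D\phi|$ which along the sup-convolution is of size $\e^{-1/2}$, so it cannot simply be let vanish; it must be defeated coordinate by coordinate, the normal component by the Lipschitz bound coming from \NC and the tangential component by the monotonicity \Mong, which is exactly why all three assumptions \TC, \NC, \Mong are genuinely needed. Finally, for the sharper statement announced in the convex case (the $u^\e$ being moreover $C^1$ in $\vt$), one post-composes with a mollification in $\vt$ only: convolving the $\vt$-semiconvex function $u^\e$ against a smooth kernel in $\vt$ keeps it a subsolution up to a further vanishing error — here using convexity of $G$ in $p$ and Jensen's inequality — and renders it smooth in $\vt$.
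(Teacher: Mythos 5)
Your proposal correctly identifies the crux --- the \TC-error carries the factor $|D\phi|\sim|\vt_0-\vt_0'|/\e$ --- but your proposed resolution does not actually close with the standard quadratic kernel. With $u^\e(\vt,\vn)=\sup_{\vt'}\big\{u(\vt',\vn)-|\vt-\vt'|^2/(2\e)\big\}$, the tangential slope at the maximizer is $p_\vt=(\vt_0'-\vt_0)/\e$, hence $|\vt_0-\vt_0'|\,|p_\vt|=|\vt_0-\vt_0'|^2/\e$, which is exactly twice the kernel value itself, with no tunable small factor in front. By \NC the normal slope is bounded by $\bar K(|p_\vt|+1)$, so the whole error you must beat is, up to lower-order terms, $C_1^R\bar K\,|\vt_0-\vt_0'|^2/\e$, while the \Monu penalty is $-\lambda^R|\vt_0-\vt_0'|^2/(2\e)$: these are of the \emph{same} order in $|\vt_0-\vt_0'|^2/\e$, and absorption requires the extraneous structural condition $\lambda^R/2>C_1^R\bar K$, which the proposition does not assume. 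A downward shift $u^\e-\kappa_\e$ with $\kappa_\e\to0$ only buys a vanishing constant $\lambda^R\kappa_\e$, whereas the residual $(C_1^R\bar K-\lambda^R/2)\,|\vt_0-\vt_0'|^2/\e$ depends on the (a priori uncontrolled) maximizer and can be of order $\|u\|_\infty$; the same mismatch defeats subtracting $c_\e\,y_1$ with $c_\e\to 0$ in the \Monp case.

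The paper's proof resolves this by changing the kernel, not the bookkeeping. In the \Monu case it takes $u^\e(\vc)=\max_{\vt'}\big\{u(\vt',\vn)-(|\vt-\vt'|^2+\e^4)^{\alpha/2}/\e^\alpha\big\}$ for a small parameter $\alpha>0$: then
$$p_\vt=\alpha(\vt_0'-\vt_0)\,\frac{(|\vt_0-\vt_0'|^2+\e^4)^{\alpha/2-1}}{\e^\alpha}\,,\qquad |\vt_0-\vt_0'|\,|p_\vt|\le\alpha\,\frac{(|\vt_0-\vt_0'|^2+\e^4)^{\alpha/2}}{\e^\alpha}\,,$$
so the error is $\alpha\,C_1^R\bar K$ times the kernel value, and the \Monu penalty, $-\lambda^R$ times the kernel value, dominates as soon as $\alpha<\lambda^R/(C_1^R\bar K)$ --- a free choice, independent of the data. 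In the \Monp case the paper instead weights the quadratic kernel by $\exp(Ky_1)$; differentiating produces an extra term $-K\exp(Ky_1)|\vt_0-\vt_0'|^2/\e^2$ in the $y_1$-direction, and \Monp then yields a penalty $-\mu^R K\exp(Ky_1)|\vt_0-\vt_0'|^2/\e^2$ which beats the error once $K>C_1^R\bar K/\mu^R$, again a free choice. Introducing one of these tunable parameters ($\alpha$ or $K$) into the kernel is precisely the idea your proposal is missing; the remaining steps of your argument (the translation trick, the use of \NC to control the normal slope by the tangential one, the Lipschitz bound in $\vn$, and the final $\vt$-mollification in the convex case) are in line with the paper's.
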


\begin{remark}Equations of the form
$$ \max(u_t + G_1(x,D_x u); G_2(x,u,D_x u)) = 0 \; ,$$
do not satisfy \Mong even if $G_2$ satisfies \Monu and the Hamiltonian $p_t + G_1(x,p_x)$ satisfies
    \Monp.  To overcome this difficulty, we have to use a change of variable of the form
    $v=\exp(Kt)\cdot u$ in order that both Hamiltonians satisfy \Monu, which is a natural change (cf.
    Section~\ref{sec:GFHJD}). Of course, suitable assumptions on $G_1$ and $G_2$ are needed in order
    to have \TC and \NCe.
\end{remark}

\begin{proof}
    First we can drop the $R$ in all the constants appearing in the assumptions by remarking that,
    $u$ being bounded, we can use the constants with $R=||u||_\infty$.

In the case, when \Mong holds with $\lambda>0$ we set for $\vc=(\vt,\vn)$
$$
u^{\e}(\vc):=\max_{\vt' \in \R^k}\Big\{u(\vt',\vn)-
    \frac{\left(|\vt-\vt'|^2+\e^4\right)^{\alpha/2}}{\e^\alpha}\Big\},
$$
for some (small) $\alpha >0$ to be chosen later on,
while, in the other case we set
$$
u^{\e}(\vc):=\max_{\vt' \in \R^k}
\Big\{u(\vt',\vn)-\exp(K\vt_1) \frac{|\vt-\vt'|^2}{\e^2}\Big\},
$$
for some constant $K$ to be chosen later on.

In both cases, the maximum is achieved for some $\vt'$ such that $ |\vt-\vt'|\leq O(\e)$, hence with a point $(\vt',\vn)\in B_\infty (\vcb,r)$ for $a(\e)>O(\e)$, and therefore $u^\e$ is well-defined in $B_\infty (\vcb,r-a(\e))$. By standard properties of the sup-convolution, the $u^{\e}$'s are continuous in $\vt$ but, for the time being, not necessarily in $\vn$, despite of Proposition~\ref{reg-sub}.

To prove that $u^\e$ is a subsolution in $B_\infty (\vcb,r-a(\e))$, we consider a smooth test-function $\phi$ and we assume that $\vc\in B_\infty (\vcb,r-a(\e))$ is a maximum point of $u^{\e} - \phi$. We first consider the ``$\lambda>0$'' case : if 
$$u^{\e}(\vc)=u(\vt',\vn)-
    \frac{\left(|\vt-\vt'|^2+\e^4\right)^{\alpha/2}}{\e^\alpha},$$
then $(\vt',\vn)$ is a maximum point of 
$(\tilde \vt,\tilde \vn) \mapsto u(\tilde \vt,\tilde \vn)-
    \e^{-\alpha}\big(|\vt-\tilde \vt|^2+\e^4\big)^{\alpha/2}-\phi(\vt,\tilde \vn)\; ,$
and therefore, by the subsolution property for $u$
$$ \G((\vt',\vn), u(\vt',\vn), (p_\vt,D_\vn \phi(\vt,\vn)))\leq 0\; ;$$
where 
$$p_\vt := \alpha (\vt'-\vt) \frac{\left(|\vt-\vt'|^2+\e^4\right)^{\alpha/2-1}}{\e^\alpha}\; .$$
On the other hand the maximum point property in $\vt$, implies that $p_\vt=D_\vt\phi(\vt,\vn)$.

To obtain the right inequality, we have to replace $(\vt',\vn)$ by $\vc=(\vt,\vn)$ in this inequality and $u(\vt',\vn)$ by $u^\e (\vc)$. To do so, we have to use \TC; in order to do it, we need to have a precise estimate on the term $|\vt-\vt'||(p_\vt,D_\vn \phi(\vt,\vn))|$.
The explicit form of $p_\vt$ gives it for $|\vt-\vt'||p_\vt|$ but this is not the case for $|\vt-\vt'|.|D_\vn\phi(\vt,\vn)|$ since we have not such a precise information on $D_\vn\phi(\vt,\vn)$. Instead we have to use \NCe which implies
$$ 
C_2 |D_\vn\phi(\vt,\vn)| - C_3|p_\vt| -C_4\leq 0\; .$$
(remember that we have dropped the dependence in $R$ for all the constants). On the other hand, we have combining \TC and \Mong
$$
\G(\vc, u^\e (\vc), (D_\vt\phi(\vt,\vn),D_\vn \phi(\vt,\vn))) \leq  \G((\vt',\vn), u(\vt',\vn), (p_\vt,D_\vn \phi(\vt,\vn))) + $$
$$ \ \qquad\qquad C_1|\vt-\vt'||D\phi (\vc)| +m(|\vt-\vt'|)-\lambda \frac{\left(|\vt-\tilde \vt|^2+\e^4\right)^{\alpha/2}}{\e^\alpha}\; .
$$
It remains to estimate the right-hand side of this inequality: we have seen above that $|\vt-\vt'|=O(\e)$ and \NCe implies that
$$ |D\phi (\vc)|\leq \bar K(|p_\vt|+1)\; ,$$
for some large constant $\bar K$ depending only on $C_2,C_3,C_4$. Finally
$$|\vt-\vt'||p_\vt|=\alpha |\vt-\vt'|^2 \frac{\left(|\vt-\vt'|^2+\e^4\right)^{\alpha/2-1}}{\e^\alpha}\leq \alpha \frac{\left(|\vt-\tilde \vt|^2+\e^4\right)^{\alpha/2}}{\e^\alpha}\; .$$
By taking $\alpha < \bar K$, we finally conclude that
$$
\G(\vc, u^\e (\vc), (D_\vt\phi(\vt,\vn),D_\vn \phi(\vt,\vn))\leq O(\e)+m(O(\e))\; ,$$
and changing $u^\e$ in $u^\e -\lambda^{-1}(O(\e)+m(O(\e)))$, we have the desired property.

In the $\mu$-case, the equality $p_\vt=D_\vt\phi(\vt,\vn)$ is replaced by
$$ D_\vt\phi(\vt,\vn)= -K\exp(K\vt_1) \frac{|\vt-\vt'|^2}{\e^2}e_1+ \exp(Kt) \frac{(\vt'-\vt)}{\e^2}\; ,$$
where $e_1$ is the vector $(1,0,\cdots,0)$ in $\R^{k}$. The viscosity subsolution inequality for $u$ at $(\vt',\vn)$ reads
$$ \G((\vt',\vn), u(\vt',\vn), (\tilde p_\vt,D_\vn \phi(\vt,\vn))\leq 0\; ,$$
where $\displaystyle \tilde p_\vt =\exp(Kt) \frac{(\vt'-\vt)}{\e^2}$. 

We first use \NCe, which implies 
$$ |D\phi (\vc)|\leq \bar K(|\tilde p_\vt|+1)=\bar K(\exp(Kt) \frac{|\vt'-\vt|}{\e^2}+1)\; .$$
Then we combine \TC and \Mong to obtain
$$
\G(\vc, u^\e (\vc), (D_\vt\phi(\vt,\vn),D_\vn \phi(\vt,\vn)) \leq  \G((\vt',\vn), u(\vt',\vn), (\tilde p_\vt,D_\vn \phi(\vt,\vn)) + $$
$$ \ \qquad\qquad C_1|\vt-\vt'||D\phi (\vc)| +m(|\vt-\vt'|)-\mu K\exp(K\vt_1) \frac{|\vt-\vt'|^2}{\e^2}\; .
$$
We conclude easily as in the first case choosing $K$ such that $\mu K>C_1\bar K$.

Properties $(ii)$ and $(iii)$ are classical properties which are easy to obtain and we drop the proof. 

We conclude this proof by sketching the proof of the Lipschitz continuity of $u^{\e}$ in $\vn$. To do so, we write $\vcb=(\vtb,\vnb)$ and for any fixed $\vt$ such that $|\vt-\vtb| < r-a(\e)$, we consider the function $\vn\mapsto u^{\e}(\vt,\vn)$. By using \NCe and the Lipschitz continuity of $u^{\e}$ in the $\vt$-variable, it is easy to prove that this function is a subsolution of 
$$
C_2 |D_\vn w| \leq  C_3K_\eps +C_4\; ,$$
where $K_\eps=||D_\vt u^{\e}||_\infty$ and the estimates of $D_\vn u^{\e}$ follows.
\end{proof}

\

\noindent\textbf{The convex case --}
The above regularization result can be improved when some convexity property of the Hamiltonian holds. 
More precisely, let us introduce the following assumption

\begin{assumption}{\HConv}{Convexity assumption.}
    \label{page:HConv} 
    For any $\vc\in B_\infty (\vcb,r)$, the function $(u,p)\mapsto \G(\vc,u,p)$ is convex.
\end{assumption}

We begin with a result concerning convex combinations of subsolutions. While the result is
interesting in itself even in the case of continuous Hamiltonians, we actually need it to make a
suitable regularization of subsolutions. By a convex combination of subsolutions $u_i$ for $i=1,..,n$,
we mean of course a finite sum
$$W:=\sum_{i=1}^n \mu_iu_i\;,\quad\text{where for all  } i,\ 
\mu_i\geq0\quad \hbox{and}\quad\sum_{i=1}^n\mu_i=1\;. $$

\begin{lemma}\label{combconvsub} Assume that $(X,r,p)\mapsto\G(X,r,p)$ is \lsc and 
satisfies \HConv. Then any convex combination of Lipschitz continuous subsolutions of $\G=0$ is a subsolution 
of $\G=0$. 
\end{lemma}

\begin{proof} We just sketch the proof since most of the arguments are rather standard.
We have only to prove the result for a convex combination of two subsolutions
$W:=\lambda w_1+(1-\lambda)w_2$, the general case involving $n$ subsolutions for $n>2$ deriving
immediately by iteration of the result. Of course, we can assume w.l.o.g. that $0<\lambda<1$.

Let $\phi$ be a smooth test-function and $\tX \in B_\infty (\vcb,r)$ a local strict maximum point of
$W-\phi$ in $\overline{B (\tX,\tr)} \subset B_\infty (\vcb,r)$. We use a tripling of variables
by considering in $\overline{B (\tX,\tr)}^3$ the function
$$  \Psi(X_1,X_2,X):=\lambda w_1(X_1)+(1-\lambda)w_2(X_2)-\phi(X)-\lambda\frac{|X_1-X|^2}{\e}-
    (1-\lambda)\frac{|X_2-X|^2}{\e}\;.$$ 
Denoting by $(X_1^\e,X_2^\e,X^\e)$ a maximum point of this function and applying
Lemma~\ref{lem:cv-pen} in Section~\ref{sec:cv-pen}, we have $(X_1^\e,X_2^\e,X^\e) \to
(\tX,\tX,\tX)$ when $\e \to 0$, therefore $X_1^\e,X_2^\e,X^\e \in B (\tX,\tr)$ for $\e$ small
enough. Hence we get the viscosity inequalities
$$  \G(X_1^\e,w_1(X_1^\e),P_1^\e)\leq 0\quad , \quad \G(X_2^\e,w_2(X_2^\e),P_2^\e)\leq 0\; ,$$
and the property $D\phi(X)=\lambda P_1^\e+ (1-\lambda) P_2^\e$, where, for $i=1,2$,
$$P_i^\e=\frac{2(X^\e_i-X^\e)}{\e}\;. $$
Using the Lipschitz continuity of $w_1,w_2$, the $P_i^\e$ are uniformly bounded with respect to
$\e$ and extracting if necessary subsequences, we can
assume that they converge respectively to $P_i$ when $\e\to 0$.

Letting $\e$ tend to $0$, using in a crucial way the lower semi-continuity of $\G$, we are
lead to the same situation as above:
$$  \G(\tX,w_1(\tX),P_1)\leq 0\quad ,\quad  \G(\tX,w_2(\tX),P_2)\leq 0\;.$$
Because of the continuity of $D\phi$, $D\phi(\tX)=\lambda P_1+ (1-\lambda) P_2$. So, making the
convex combinaison of the above inequalities, after using \HConv we finally get
$$\G(\tX,W(\tX),D\phi(\tX))\leq 0\;,$$
which proves that $W$ is a viscosity subsolution of $\G=0$.    
\end{proof}

We can now state the regularization result
\index{Regularization!subsolutions, convex case}

\begin{proposition}\label{C1-reg-by-sc}\emph{--- Regularization of subsolutions, convex case.}\smsp 
    Under the assumptions of Proposition~\ref{reg-by-sc},  if $\G$ is \lsc and \HConv holds,
    the sequence $(u^\e)_\e$ of Lipschitz continuous subsolutions of \eqref{gen-eqn-F} can be built
    in such a way that they are $C^1$ (and even $C^\infty$) in the $\vt$ variable.
\end{proposition}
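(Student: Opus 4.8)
The plan is to keep the Lipschitz continuous subsolutions $u^\e$ produced by Proposition~\ref{reg-by-sc} --- which are semi-convex in $\vt$, subsolutions of \eqref{gen-eqn-F} in $B_\infty(\vcb,r-a(\e))$, $L_\e$-Lipschitz for some finite $L_\e$, and satisfy $\limssup u^\e=u$ --- and to regularize them \emph{only in the tangential variable} by convolution. Fixing $R=\|u^\e\|_\infty$ (so that all constants in \TC, \NC, \Mong are frozen, as in the proof of Proposition~\ref{reg-by-sc}), let $(\rho_\delta)_{\delta>0}$ be a standard sequence of mollifiers on $\R^k$ and set, for $\vc=(\vt,\vn)$,
$$ u^{\e,\delta}(\vc):=\int_{\R^k} u^\e(\vt-\tau,\vn)\,\rho_\delta(\tau)\,\d\tau\; .$$
This function is $C^\infty$ (in particular $C^1$) in $\vt$, it is still $L_\e$-Lipschitz in $\vn$ (being an average of $L_\e$-Lipschitz functions of $\vn$), and it is defined on the slightly smaller ball $B_\infty(\vcb,r-a(\e)-\delta)$. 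It then suffices to show that, thanks to \HConv, $u^{\e,\delta}$ is a subsolution of \eqref{gen-eqn-F} up to an error vanishing as $\delta\to0$, to absorb this error using \Mong, and finally to extract a diagonal sequence $\delta=\delta(\e)\to0$.

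For the approximate subsolution property, fix $|\tau|\leq\delta$ and consider the tangential translate $w_\tau(\vc):=u^\e(\vt-\tau,\vn)$. If $\phi$ is smooth and $\vc_0=(\vt_0,\vn_0)$ is a maximum point of $w_\tau-\phi$, then $(\vt_0-\tau,\vn_0)$ is a maximum point of $u^\e-\phi(\cdot+\tau,\cdot)$, hence the subsolution property of $u^\e$ gives $G((\vt_0-\tau,\vn_0),w_\tau(\vc_0),D\phi(\vc_0))\leq0$; since $D\phi(\vc_0)$ lies in the superdifferential of the $L_\e$-Lipschitz function $w_\tau$, one has $|D\phi(\vc_0)|\leq L_\e$, and \TC yields
$$ G(\vc_0,w_\tau(\vc_0),D\phi(\vc_0))\leq C_1|\tau|\,|D\phi(\vc_0)|+m(|\tau|)\leq C_1 L_\e\,\delta+m(\delta)=:\omega_\e(\delta)\; .$$
Thus each $w_\tau$ is a viscosity subsolution of $G(\vc,w,Dw)=\omega_\e(\delta)$, with uniform Lipschitz bound $L_\e$. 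Now \HConv is used: the class of $L_\e$-Lipschitz viscosity subsolutions of a \emph{convex} equation is stable under averaging against the probability density $\rho_\delta$. Concretely, at points of differentiability $Du^{\e,\delta}(\vc)=\int Dw_\tau(\vc)\,\rho_\delta(\tau)\,\d\tau$, and Jensen's inequality together with the a.e. subsolution property of each $w_\tau$ gives $G(\vc,u^{\e,\delta}(\vc),Du^{\e,\delta}(\vc))\leq\int G(\vc,w_\tau(\vc),Dw_\tau(\vc))\,\rho_\delta(\tau)\,\d\tau\leq\omega_\e(\delta)$ for a.e. $\vc$; this almost-everywhere inequality upgrades to the viscosity sense (see the last paragraph). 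Hence $u^{\e,\delta}$ is a viscosity subsolution of $G(\vc,w,Dw)=\omega_\e(\delta)$ in $B_\infty(\vcb,r-a(\e)-\delta)$.

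It remains to kill the constant $\omega_\e(\delta)$ using \Mong. If \Monu holds ($\lambda>0$), then $u^{\e,\delta}-\lambda^{-1}\omega_\e(\delta)$ is a genuine subsolution of \eqref{gen-eqn-F}. If instead \Monp holds (so that $G$ is non-decreasing in $u$ and strictly increasing in $p_{y_1}$), then subtracting from $u^{\e,\delta}$ a \emph{linear} function of the first tangential coordinate $y_1$ with slope of order $\mu^{-1}\omega_\e(\delta)$ restores the subsolution property, while preserving the $C^\infty$ dependence on $\vt$ and the boundedness. Choosing then $\delta(\e)\to0$ with $L_\e\,\delta(\e)\to0$ (possible since $L_\e<\infty$ for each fixed $\e$) and $a(\e)+\delta(\e)\to0$, and calling $\tilde u^\e$ the corrected function, we obtain Lipschitz continuous subsolutions of \eqref{gen-eqn-F} on $B_\infty(\vcb,r-\tilde a(\e))$ with $\tilde a(\e)\to0$, which are $C^\infty$ in $\vt$; moreover $\|\tilde u^\e-u^\e\|_\infty\leq L_\e\,\delta(\e)+O(\omega_\e(\delta(\e)))\to0$, so property $(iii)$, $\limssup\tilde u^\e=u$, is preserved, and $(ii)$ now holds in the stronger form of smoothness in $\vt$.

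I expect the main obstacle to be the passage from the Jensen/a.e. inequality back to the viscosity inequality, since $G$ is discontinuous in the normal variable $\vn$ and one cannot mollify in that direction. What should save the argument is precisely the tangential nature of the regularization: \TC (and not any continuity of $G$ in $\vn$) controls the translation error, and after tangential mollification $u^{\e,\delta}$ is classically differentiable in $\vt$, so the only non-smooth directions left are the normal ones --- where \NC (coercivity in $p_z$) pins down the admissible normal slopes, exactly as in the proof of Proposition~\ref{reg-sub}, allowing one to conclude. One should also note that, unlike in the proof of Proposition~\ref{reg-by-sc}, no change of unknown is needed here: the whole construction is run directly on $G$ under \HConv, so compatibility with convexity is immediate.
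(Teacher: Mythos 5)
Your proposal is correct and follows essentially the same route as the paper's proof: tangential mollification of the Lipschitz subsolution from Proposition~\ref{reg-by-sc}, then \TC together with \HConv (the paper phrases the convexity step as a Riemann-sum approximation of the convolution, you phrase it as Jensen plus the a.e.\ characterization --- the same mechanism) to get an approximate subsolution, and finally the \Mong correction by $\lambda^{-1}\eta(\e)$ or $\mu^{-1}\eta(\e)y_1$ to absorb the error. The extra care you take with the Lipschitz bound on the gradient and the diagonal choice $\delta=\delta(\e)$ is implicit in the paper's (terser) argument.
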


\begin{proof} By Proposition~\ref{reg-by-sc}, we can assume without loss of generality that $u$ is
    Lipschitz continuous. In order to obtain further regularity, we are going to use a standard
    convolution with a sequence of mollifying kernels but only in the $\vt$-variable.

    Let us introduce a sequence $(\rho_\eps)_\eps$ of positive, $C^\infty$-functions on $\R^k$,
    $\rho_\eps$ having a compact support in $B_\infty(0,\eps)$ and with
    $\int_{\R^k}\,\rho_\eps(e)de=1$. Then we set, for $\vc=(\vt,\vn) \in B_\infty (\vcb,r - \eps)$
    $$u^\e(\vc):= \int_{|e|_\infty <\eps} u(\vt-e,\vn)\rho_\eps(e)de\; .$$
    By standard arguments, it is clear that $u^\e$ is smooth in $\vt$.

    We first want to prove that the $u^\e$ are approximate subsolutions of
    \eqref{gen-eqn-F}, i.e. there exists some $\eta(\e) \to 0$ as $\e\to0$ such that 
    \begin{equation}\label{def:approx.sub}
        \G(\vc,u^\e,Du^\e)\leq \eta (\e) \quad \hbox{in  }B_\infty(\vcb,r-\eps)\;.
    \end{equation}
    To do so we follow the strategy of \cite{BJ:rate}[Lemma A.3], approximating the integral by a
    Riemann sum. We are lead to consider a function $u_n^\e$ defined by 
    $$u_n^\e(X):=\sum_{i=1}^n \mu_i\, u(Y-e_i,Z)\;,$$ 
    for some $|e_i|<\e$ and for coefficients $\mu_i\geq0$ such that $\sum \mu_i=\int \rho_\e=1$.

    Using \TC and the Lipschitz continuity of $u$, it is clear that there exists $\eta (\e)$, satisfying
    the above mentioned properties and independent of $i$, such that the functions $X=(Y,Z)\mapsto
    u(Y-e_i,Z)$ are all subsolutions of $\G-\eta(\e)=0$.
    
    Applying Lemma~\ref{combconvsub}, $u_n^\e$ is also a subsolution of $\G-\eta(\e)=0$ and since
    $u_n^\e$ converges uniformly to $u^\e$ when $n\to +\infty$, a standard stability result (cf.
    Theorem~\ref{hrl}) implies that $u^\e$ a subsolution of $\G-\eta(\e)=0$ as well.

    Finally, in order to drop the $\eta (\e)$-term in \eqref{def:approx.sub},
    we can either replace $u_\e$ by $u^\e-\lambda^{-1}\eta (\e)$ in the \Monu-case, 
    or $u^\e-\mu^{-1}\eta (\e)\vt_1$ in the \Monp case, and we get indeed a subsolution
    of $\G=0$. 
\end{proof}

\begin{remark}\label{rem:r-nonconvex} Let us make three complementary comments.\\[2mm]
     \noindent $(i)$ It is clear from the proof of Proposition~\ref{C1-reg-by-sc} that the convexity
            of $\G(X,r,P)$ in $r$ is not necessary to obtain such a result, the continuity in
            $r$ being enough, as we explain now. Notice first that, by the Lipschitz continuity of $u$,
            $$|u(Y-e_i,Z)-u(Y,Z)|, |u(Y,Z)-u^\e(Y,Z)|, |u(Y,Z)-u^\e_n(Y,Z)|\leq K\e\;,$$ 
            $K$ being the Lipschitz constant. Then, we are reduced to a version of
            Lemma~\ref{combconvsub} with no $r$-dependence by using an approximate Hamiltonian of
            the form $\tilde G(X,P):=\G(X,u^\e(X),P)-\tilde \eta (\e)$, depending only on $X$ and $P$.
            Indeed, taking into account the $K\e$ error term into $\tilde\eta (\e)$, the functions
            $u(Y-e_i,Z)$, $u_n^\e$ and $u^\e$ become all subsolutions of $\tilde G=0$, which satisfies
            \HConv. The rest of the proof is then the same as above.
        
        \noindent $(ii)$ The next remark concerns ``tangential regularizations'' in the case of a
            ``tangential viscosity inequalities''. In several situations, and in particular in
            stratified problems, the subsolution $u$ of \eqref{gen-eqn-F} satisfies also a
            subsolution inequality of the form $$ \G^\Gamma(\vt,u(\vt,0),D_\vt u(\vt,0))\leq 0 \quad
            \hbox{on  }\Gamma \; ,$$ where the precise meaning of this subsolution inequality is
            obtained by looking at maximum points of $u(\vt,0)-\phi(\vt)$ on $\Gamma$, not in all
            $\R^N$. As the proofs of Proposition~\ref{reg-by-sc} and \ref{C1-reg-by-sc} show, if
            $\G^\Gamma$ satisfies \TC, then the $u^\e$ given by the regularization processes of
            these results are also semi-convex or $C^1$ subsolutions of $\G^\Gamma\leq 0$; indeed
            the main difficulty in the proofs of these results comes from the $Z$-variable which
            does not appear here. A remark which plays a crucial in the case of stratified problems.

        \noindent $(iii)$ The result still applies to quasi-convex Hamiltonians. Indeed, for
        instance using $(i)$ above for simplicity, the convexity of $\G$ is used to prove
        essentially that if $G(X,u,P_i)\leq0$ for $i=1,2$, then $\G(X,u,sP_1+(1-s)P_2)\leq0$. But of
        course this is also true in the case of quasi-convexity since
        $$\G(X,u,sP_1+(1-s)P_2)\leq\max\{\G(X,u,P_1),\G(X,u,P_2)\}\leq0\;.$$
        However, in the context of evolution equations this means that we need a ``full''
        quasi-convexity assumption: under the form $H(x,t,u,(D_xu,u_t))=0$, the quasi-convexity is
        required to hold with respect to both $(D_xu,u_t)$. Suprisingly, this assumption leaves out
        ``natural'' evolution equations under the form $u_t+F(x,t,u,D_xu)=0$ where $F$ is quasi-convex
        in $D_xu$. Indeed, the full Hamiltonian $H=u_t+F$ is not quasi-convex with respect to both
        variables in general.  
\end{remark}

\subsection{What about regularization for supersolutions?}\label{sec:regul-supersol}

\index{Regularization!supersolutions}
The previous section shows how to regularize subsolutions and we address here the question: is it
possible to do it for supersolutions, changing (of course) the sup-convolution into an
inf-convolution? 

Looking at the proof of Theorem~\ref{reg-by-sc}, the answer is not completely obvious: on one hand,
the arguments for an inf-convolution may appear as being analogous but, on the other hand, we use in
a key way Assumption \NCe which allows to control the derivatives in $\vn$ of the sup-convolution (or
the test-function), {\em an argument which is, of course, valid only for subsolutions}.

Actually, regularizing a supersolution $v$ of \eqref{gen-eqn-F}---a notion which is defined exactly
in the same way as for subsolutions---requires additional assumptions on either $v$ or $\G$. For
$\G$, we introduce the following stronger version of \TC

\index{Tangential continuity!strong}\index{Hamiltonians!strong tangential continuity}
\begin{assumption}{\TCs}{Strong Tangential Continuity.}
    \label{page:TCs} For any $R>0$, there exists $C^R_1>0$ and a modulus of continuity $m^R :
    [0,+\infty[ \to [0,+\infty[$ such that for any $\vc_1=(\vt_1,\vn), \vc_2=(\vt_2,\vn) \in
    B_\infty (\vcb,r)$, $|u|\leq R$, $p=(p_\vt,p_\vn)\in \R^{N}$, then 
    $$|\G(\vc_1,u,p)- \G(\vc_2,u,p)|\leq C^R_1|\vt_1-\vt_2|.|p_\vt| +m^R\big(|\vt_1-\vt_2|\big)\;.$$
\end{assumption}
\vspace*{-0.5em}

We point out that, compared to \TC, the ``$|p|$'' is replaced by ``$|p_\vt|$''. This assumption is
typically satisfied by equations of the form $$ \G(\vc,u,p)= \G_1(\vc,u,p_\vt)+\G_2(z,u,p)\; ,$$
since, for $\G_1$, \TCs reduces to \TC and $\G_2$ readily satisfies \TCs.

Another possibility is to assume that $v(\vc)=v(\vt,\vn)$ is Lipschitz continuous in $\vn$ in $B_\infty (\vcb,r)$, uniformly in $\vt$, i.e. there exists a constant $K>0$ such that, for any $\vc_1=(\vt,\vn_1), \vc_2=(\vt,\vn_2) \in B_\infty (\vcb,r)$
\begin{equation}\label{Lip-z-v}
|v(\vc_1)-v(\vc_2)|\leq K|\vn_1-\vn_2|\; .
\end{equation}

The result for the supersolutions is the
\begin{proposition}\label{reg-by-ic}\emph{--- Regularizations of supersolutions.}\smsp
    Let $v$ be a bounded supersolution of \eqref{gen-eqn-F} and
    assume that\\[2mm]
    $(a)$ either \TCs and \Mong hold\\[2mm]
    $(b)$ or \TC, \Mong and \eqref{Lip-z-v} hold.\\[2mm]
Then there exists a sequence $(v^\e)_\e$ defined in $B_\infty (\vcb,r-a(\e))$ where $a(\e) \to 0$ as $\e\to 0$ such that
    \begin{enumerate}
        \item[$(i)$] each $v^\e$ is a supersolution of \eqref{gen-eqn-F} in $B_\infty (\vcb,r-a(\e))$,
        \item[$(ii)$] each $v^\e$ is semi-concave in the $\vt$-variable,
        \item[$(iii)$] $\limiinf v^\e = v$ as $\e\to 0$.
    \end{enumerate}
\end{proposition}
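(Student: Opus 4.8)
The plan is to mimic the sup-convolution argument of Proposition~\ref{reg-by-sc}, but replacing the sup-convolution by the inf-convolution in the $\vt$-variable, and to locate precisely the place where the missing \NC must be compensated by either \TCs or \eqref{Lip-z-v}. For a bounded supersolution $v$ we set, in the case \Mong holds with $\lambda>0$,
$$ v^{\e}(\vc):=\min_{y' \in \R^k}\Big\{v(y',z)+\frac{\left(|y-y'|^2+\e^4\right)^{\alpha/2}}{\e^\alpha}\Big\}\; , $$
and in the $\mu$-case the analogous inf-convolution with the $\exp(Ky_1)$-weight. As before the minimum is attained at some $y'$ with $|y-y'|=O(\e)$, so $v^\e$ is well-defined on $B_\infty(\vcb,r-a(\e))$ with $a(\e)=O(\e)$, and properties $(ii)$ and $(iii)$ (semi-concavity in $\vt$, $\liminf_* v^\e=v$) are the standard ones, so I would drop their proof as in Proposition~\ref{reg-by-sc}.

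The heart of the matter is property $(i)$. Let $\phi$ be a smooth test-function and let $x=(\vt,\vn)\in B_\infty(\vcb,r-a(\e))$ be a minimum point of $v^\e-\phi$; if the inf-convolution is realized at $y'$, then $(y',z)$ is a minimum point of $(\tilde y,\tilde z)\mapsto v(\tilde y,\tilde z)+\text{(penalization in }\tilde y)-\phi(y,\tilde z)$, so by the supersolution inequality for $v$,
$$ G\big((y',z),v(y',z),(p_y,D_z\phi(\vt,\vn))\big)\geq 0\; , $$
where $p_y$ is the explicit gradient of the penalization and the minimum-point property in $\vt$ gives $p_y=D_y\phi(\vt,\vn)$. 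To convert this into the wanted inequality $G(\vc,v^\e(\vc),D\phi(\vc))\geq -o(1)$ I have to move from $(y',z)$ to $(\vt,\vn)$ using tangential continuity plus \Mong, which costs a term of order $C^R_1|y-y'|\,|D\phi(\vc)|$. Under hypothesis $(a)$ I invoke \TCs, so this cost is only $C^R_1|y-y'|\,|D_y\phi(\vt,\vn)|=C^R_1|y-y'|\,|p_y|$, and, exactly as in Proposition~\ref{reg-by-sc}, $|y-y'|\,|p_y|\leq \alpha(|y-y'|^2+\e^4)^{\alpha/2}\e^{-\alpha}$, which is absorbed by the $\lambda$-monotonicity term (or, in the $\mu$-case, by choosing $K$ with $\mu K>C^R_1$); shifting $v^\e$ down by $\lambda^{-1}(O(\e)+m(O(\e)))$ (resp.\ subtracting $\mu^{-1}(\cdots)y_1$) finishes. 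Under hypothesis $(b)$ I only have \TC, so I must control $|y-y'|\,|D_z\phi(\vt,\vn)|$; here I use the extra Lipschitz bound \eqref{Lip-z-v}: since $v^\e$ inherits the Lipschitz constant $K$ of $v$ in $\vn$ (the inf-convolution is taken only in $\vt$), at a minimum point of $v^\e-\phi$ one has $|D_z\phi(\vt,\vn)|\leq K$, hence $|y-y'|\,|D_z\phi(\vt,\vn)|\leq K|y-y'|=O(\e)$, again absorbed by the monotonicity term.

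The main obstacle, and the only genuinely new point compared with Proposition~\ref{reg-by-sc}, is precisely this control of $|D_z\phi|$ at the test point: for subsolutions it came for free from \NC applied with a large normal slope, but that trick is unavailable for supersolutions, which is why one is forced into the dichotomy ``\TCs'' (kill the $p_z$-contribution in the continuity estimate) versus ``$v$ Lipschitz in $\vn$'' (bound $p_z$ directly). Once this is settled, the $\lambda$-case and the $\mu$-case run in parallel to the corresponding parts of the proof of Proposition~\ref{reg-by-sc}, and the verification that $v^\e$ is semi-concave in $\vt$ and that $\liminf_* v^\e=v$ is routine.
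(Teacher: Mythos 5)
Your proposal is correct and follows exactly the route the paper intends: the paper does not write out a proof but only remarks that the argument is the inf-convolution analogue of Proposition~\ref{reg-by-sc}, with the $\vn$-derivative of the test-function either irrelevant under \TCs (case $(a)$) or bounded via \eqref{Lip-z-v} (case $(b)$) — precisely the dichotomy you identify and justify. (Only a trivial sign slip: to upgrade $G\geq -o(1)$ to $G\geq 0$ for a supersolution one shifts $v^\e$ \emph{up} by $\lambda^{-1}(O(\e)+m(O(\e)))$, not down.)
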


Two remarks on this proposition: first, the proof is readily the same as for subsolutions, the only
difference is that we do not need to control the $ \vn$-derivative in case $(a)$ because of the
form of \TCs while it is clearly bounded in case $(b)$ because of \eqref{Lip-z-v}. 
The second remark is that, a priori, the $v^\e$ are not continuous in $\vn$ in case $(a)$. But of
course, they are Lipschitz continuous in $\vt$ and $\vn$ in case $(b)$.

\section{Sub and superdifferentials, inequalities at the boundary}
\label{sect:eqn-on-b}\index{Equations at the boundary}

We conclude this chapter with several results concerning the properties of viscosity sub and
supersolutions of an HJ Equation at the boundary of the domain where the equation is set. 
Those results will be mainly applied in Part~\ref{part:NA} but we formulate both in a quite general way
here, considering a general HJ Equation of the form
\begin{equation}\label{genHJ-Om}
u_t+H (x,t,u,Du) = 0 \quad\hbox{in  }Q\; , 
\end{equation}
where $Q:=\Omega \times (0,\Tf)$, $H$ is a continuous function and $\Omega$ is a $C^1$-domain of $\R^N$.
We also set $\partial_\ell Q:= \domeg \times (0,\Tf)$ and $\overline{Q}^\ell = \Omegb\times (0,\Tf)$.

The first result is used below in the proof of Proposition~\ref{prop:equivFL-LS}: in terms of control,
it means that viscosity subsolution inequalities hold up the boundary for all dynamics which are
pointing inward the domain. Here, $d(z)=\dist(z,\partial\Omega)$ denotes the distance to the
boundary  which is $C^1$ in a neighborhood of $\domeg$.

\begin{proposition}\label{sub-up-to-b}\emph{--- Viscosity inequalities at the boundary.}\smsp
    Assume that $u$ is an \usc, locally bounded function on $\overline{Q}^\ell$ which is a subsolution of \eqref{genHJ-Om}. 
    If there exists $(x,t)\in \partial_\ell Q$ and $r>0$ such that
    \begin{enumerate}
        \item[$(i)$]  The \usc function $u$ is $Q$-regular on $\partial_\ell Q
            \cap [B(x,r)\times (t-r,t+r)]$.
    \item[$(ii)$] The distance function $d$  to $\domeg$ is smooth in $\Omegb \cap B(x,r)$,
    \item[$(iii)$] There exists a function $L: \overline{Q}^\ell \cap [B(x,r)\times (t-r,t+r)]
        \times \R \times \R^N \to \R$ such that $L\leq H$ on $\overline{Q}^\ell  \cap
            [B(x,r)\times (t-r,t+r)] \times \R \times \R^N$ and 
            $$ \lambda \mapsto L(y,s,u,p+\lambda Dd(y))\; ,$$
        is a decreasing function for any
        $(y,s,u,p)\in \overline{Q}^\ell \cap [B(x,r)\times (t-r,t+r)] \times \R \times \R^N$.
    \end{enumerate}
    Then $u$ is a subsolution of 
    $$ u_t+L (x,t,u,Du) = 0 \quad\hbox{on  }\partial_\ell Q
            \cap [B(x,r)\times (t-r,t+r)]\; . $$
    Moreover, if we can take $L=H$ the same result is valid for supersolutions.
\end{proposition}

We point out that this result holds for ``regular subsolutions'', i.e. which satisfy $(i)$, a
regularity which is a consequence of Proposition~\ref{reg-sub} if we have suitable normal
controllability and tangential continuity type assumptions.

\begin{proof}
    We consider a test-function $\psi$ which is $C^1$ on $\overline{Q}^\ell$ and we assume that $(y,s)
    \in \partial_\ell Q  \cap [B(x,r)\times (t-r,t+r)]$ is a strict local maximum point of $u-\psi$ (again we refer to
    Lemma~\ref{lem:strict.point} to see why we can always assume the maximum point to be strict). To prove
    the $L$-inequality, we consider the function
    $$ (z,\tau)\mapsto u(z,\tau) - \psi(z,\tau)-\frac{\alpha}{d(z)}\; ,$$
    where $\alpha >0$ is a parameter devoted to tend to $0$. 

    We apply Lemma~\ref{lem:cv-pen} with
    $$\begin{aligned}
        & w(z,\tau) := u(z,\tau)-\psi(z,\tau)\;,\ \chi_\alpha(z,\tau)=\frac{\alpha}{d(z)}\;,\\
        & K=F=\overline{Q}^\ell  \cap (\overline{B(x,r)}\times[t-r,t+r]) \;.
    \end{aligned}$$
    Assumption $(i)$ of Lemma~\ref{lem:cv-pen} is clearly satisfied and since 
    $\liminf_*\chi_\alpha=0$ in $K$ (even on $\domeg$), Assumption $(ii)$ also holds.
    We now turn to condition $(iii)$ which requires some explanations.

    By the $Q$-regularity of $u$ on $\partial_\ell Q  \cap [B(x,r)\times (t-r,t+r)]$, there exists a 
    sequence $(y_k,t_k)$ converging to $(y,t)$ such that $u(y_k,t_k)\to u(y,t)$ and $y_k \in \Omega$.
    We may assume without loss of generality that $d(y_k)\geq k^{-1/2}$. 

    Then, considering the sequence $(y^\alpha,t^\alpha):= (y_{[\alpha^{-1}]},t_{[\alpha^{-1}]})$
    where $[\alpha^{-1}]$ is the integer part of $\alpha^{-1}$, we have $(y^\alpha,t^\alpha)\to
    (y,t)$. Moreover, since $d(y^\alpha)\geq[\alpha^{-1}]^{1/2}$, we deduce also that
    $\chi_\alpha(y^\alpha,t^\alpha)\to 0$ and $w(y^\alpha,t^\alpha)\to w(y,t)$.  In other
    words, this sequence corresponds to the sequence $(z_0^\e)_\e$ required in Assumption $(iii)$ of
    Lemma~\ref{lem:cv-pen}.

    Now, for $\alpha$ small enough, this function has a local maximum
    at $(\bar z,\bar \tau)\in K$, depending on $\alpha$ but we drop this
    dependence for the sake of simplicity of notations. The strict maximum property at
    $(y,s)$ implies its uniqueness, hence Lemma~\ref{lem:cv-pen} ensures that up to
    extraction, as $\alpha \to 0$ we get
    $$ (\bar z,\bar \tau)\to (y,s)\; ,\; u(\bar z,\bar \tau)\to u(y,s)\;.$$

    Writing the viscosity subsolution inequality for $u$, we have 
    $$\psi_t(\bar z,\bar \tau) +H(\bar z,\bar \tau,u(\bar z,\bar \tau),D\psi(\bar z,\bar \tau)-
    \frac{\alpha}{[d(\bar z)]^2}Dd(\bar z) ) \leq  0\; ,$$
    which implies that the same inequality holds for $L$ since $L\leq H$.
    Finally we use the monotonicity property of $L$ in the $Dd(y)$-direction which yields
    $$\psi_t(\bar z,\bar \tau) +L (\bar z,\bar \tau,u(\bar z,\bar \tau),D\psi(\bar z,\bar \tau) )
    \leq  0\; .$$
    The conclusion follows by letting $\alpha$ tends to $0$, using the continuity of $L$.

    For the supersolution property, we argue in an analogous way, looking at a minimum point and
    introducing a ``$\displaystyle +\frac{\alpha}{d(z)}$'' term instead of the ``$\displaystyle
    -\frac{\alpha}{d(z)}$''-one. 
\end{proof}

Then we turn to the classical notions of sub and superdifferentials: we describe their properties on the boundary
$\partial_\ell Q$ since those on $Q$ are well-known and, as we already mentioned it above, some of these properties play a 
crucial role in Part~\ref{part:NA}. Here we add the term ``relatively to $\overline{Q}^\ell$'' since, in the sequel,
we are going to consider at least two domains
with a common boundary. Therefore, on $\partial_\ell Q$ we can consider both sub and
super-differentials relatively either to $\overline{Q}^\ell$ or to its complementary.

We first give the general definition for any point in $\overline{Q}^\ell$.

\begin{definition}\label{def:sub-superd}\emph{--- Sub/superdifferentials relatively to
    $\overline{Q}^\ell$.}\nobreak
    \begin{enumerate}
        \item[$(i)$] The superdifferential relatively to $\overline{Q}^\ell$ of an \usc function
            $u:\overline{Q}^\ell  \to \R$
        at a point $(\xb,\tb) \in \overline{Q}^\ell$ is the, possibly empty,
            closed convex set $D_{\overline{Q}^\ell}^+ u (\xb,\tb)\subset \R^{N+1}$, defined by: $(p_x,p_t)\in
            D_{\overline{Q}^\ell}^+ u (\xb,\tb)$ if and only if, for any $(x,t)\in \overline{Q}^\ell$,
                        $$ u (x,t)\leq u (\xb,\tb) +p_x\cdot(x-\xb) + p_t(t-\tb) + o(|t-\tb|+|x-\xb|)\; ,$$
        \item[$(ii)$] The subdifferential relatively to $\overline{Q}^\ell$ of an \lsc function
            $v:\overline{Q}^\ell \to \R$
        at a point $(\xb,\tb) \in \overline{Q}^\ell$ is the, possibly empty, closed
            convex set $D_{\overline{Q}^\ell}^-v (\xb,\tb)\subset \R^{N+1}$, defined by: $(p_x,p_t)\in
            D_{\overline{Q}^\ell}^-v (\xb,\tb)$ if and only if, for any $(x,t) \in \overline{Q}^\ell$, 
            $$ v (x,t)\geq v (\xb,\tb) +p_x\cdot(x-\xb) + p_t(t-\tb) + o(|t-\tb|+|x-\xb|)\; .$$
    \end{enumerate}
\end{definition}

Of course, the terminology ``relatively to $\overline{Q}^\ell$'' only makes sense for points $(\xb,\tb) \in
\partial_\ell Q$ and if $u$ (or $v$) is defined not only on $\overline{Q}^\ell \times (0,\Tf)$ but on a
larger domain, typically $\R^N \times (0,\Tf)$. Moreover, for points in $Q$,
Definition~\ref{def:sub-superd} is the classical definition. 

The first lemma is classical and we leave its proof to the reader.
\begin{lemma}\label{subdiff-hp-Omega}\emph{--- Sub/superdifferentials on $\overline{Q}^\ell$ and
    test-functions.}
    \begin{enumerate}
        \item[$(i)$] Let $u:\overline{Q}^\ell \to \R$ be an \usc function and $(\xb,\tb)\in \overline{Q}^\ell$. An element $(p_x,p_t)$ is in
            $D_{\overline{Q}^\ell}^+ u (\xb,\tb)$ if and only if there exists a $C^1$-function $\varphi$ such
            that $(\xb,\tb)$ is a strict local maximum point of $u-\varphi$ on $\overline{Q}^\ell$
            and $D_x\varphi (\xb,\tb) =p_x$, $\varphi_t (\xb,\tb) =p_t$.
        \item[$(ii)$] Let $v:\overline{Q}^\ell \to \R$ be an \lsc function and $(\xb,\tb)\in \overline{Q}^\ell$. An element $(p_x,p_t)$ is in
            $D_{\overline{Q}^\ell}^-v (\xb,\tb)$ if and only if there exists a $C^1$-function $\varphi$ such
            that $(\xb,\tb)$ is a strict local minimum point of $u-\varphi$ on $\overline{Q}^\ell$
            and $D_x\varphi (\xb,\tb) =p_x$, $\varphi_t (\xb,\tb) =p_t$.
    \end{enumerate}
\end{lemma}

We have formulated Lemma~\ref{subdiff-hp-Omega} with ``strict'' local maximum or minimum point but,
obviously, this is a fortiori true with just local maximum or minimum. 

Now we turn to the structure of the sub and superdifferentials on the boundary and the connections 
with Equation~\eqref{genHJ-Om}. With the notations of Proposition~\ref{sub-up-to-b}, we have

\begin{proposition}\label{sub-ineq-on-b}\emph{--- Structure of the sub and superdifferentials 
    on $\partial_\ell Q$ and inequalities up to the boundary.}
    \begin{enumerate}
        \item[$(i)$] Assume that $u:\overline{Q}^\ell \to \R$ is an \usc, locally bounded subsolution of \eqref{genHJ-Om} which is 
        $Q$-regular at the point $(x,t)\in \partial_\ell Q$.
        If $(p_x,p_t) \in D_{\overline{Q}^\ell}^+ u (x,t)$, then the set $I= \{\lambda \in \R: (p_x+\lambda
            Dd(x),p_t) \in D_{\overline{Q}^\ell}^+ u (x,t)\}$ is an interval, either $I=\R$ or
            $I=[\underline\lambda,+\infty)$ for some $\underline\lambda \leq 0$ and in this latter case,
            $$ p_t +H (x,t,u(x,t),p_x+\underline\lambda Dd(x))\leq 0\; .$$
        \item[$(ii)$] Assume that $v:\overline{Q}^\ell \to \R$ is a \lsc, locally bounded supersolution of \eqref{genHJ-Om}
            which is $Q$-regular at the point $(x,t)\in \partial_\ell Q$.
            If $(p_x,p_t) \in D_{\overline{Q}^\ell}^- v(x,t)$, then the set $J= \{\lambda \in \R: (p_x+\lambda
            Dd(x),p_t) \in D_{\overline{Q}^\ell}^-v (x,t)\}$ is an interval, either $J=\R$ or
            $J=(-\infty,\overline\lambda]$ for some $\overline\lambda\geq 0$ and in this latter case, 
            $$p_t + H (x,t,v(x,t),p_x+\overline\lambda Dd(x))\geq 0\; .$$
    \end{enumerate}
\end{proposition}

We recall that, for $x\in \domeg$, $Dd(x)$ is the unit normal vector to $\domeg$ at $x$ pointing
inward $\Omega$. Therefore Proposition~\ref{sub-ineq-on-b} gives informations on the structure of
the sub and superdifferentials on the boundary in the normal direction.

\begin{proof}
    We provide a complete proof only in the subsolutions case, the case of supersolutions follows
    from similar arguments.

    \noindent \textbf{(a)} 
    Since $D_{\overline{Q}^\ell}^+ u (x,t)$ is a non-empty closed convex subset of $\R^{N+1}$, it is clear that
    $I$ is also a non-empty closed convex subset of $\R$, hence an interval. Moreover, we claim that
    since $(p_x,p_t) \in D_{\overline{Q}^\ell}^+ u (x,t)$, then also $(p_x+\lambda  Dd(x),p_t) \in
    D_{\overline{Q}^\ell}^+
    u (x,t)$ for any $\lambda \geq 0$. Indeed, if $y\in \Omegb$, by the regularity of $d(\cdot)$, 
    $$ 0\leq d(y)=d(x)+Dd(x)\cdot (y-x)+ o(|y-x|)=Dd(x)\cdot(y-x)+o(|y-x|)\;.$$
    So, for any $\lambda\geq0$, $\lambda Dd(x)\cdot (y-x)\geq o(|y-x|)$ and the claim follows. 
    Hence $I$ is either $\R$ or of the form $[\underline\lambda,+\infty)$ for some
    $\underline\lambda\in\R$, and necessarily $\underline{\lambda}\leq 0$ because $\lambda=0\in I$.

    It remains to prove the viscosity inequality when $\underline{\lambda}>-\infty$. 
    
    \smallskip

    \noindent \textbf{(b)} Since $(p_x+\underline \lambda Dd(x),p_t) \in
    D_{\overline{Q}^\ell}^+ u (x,t)$, by Lemma~\ref{subdiff-hp-Omega}, there exists a $C^1$-function $\varphi$ such
    that $(x,t)$ is a strict local maximum point of $u-\varphi$ on $\overline{Q}^\ell$ and
    $D_x\varphi (x,t) =p_x+\underline \lambda Dd(x)$, $\varphi_t (x,t) =p_t$. Then, for $0<\e\ll1$,
    we consider the function
    $$ \psi_\e (y,s) = u(y,s)-\varphi(y,s)+\e d(y)\; .$$
    Since $(x,t)$ is a strict local maximum point of $u-\varphi$ on $\overline{Q}^\ell$, for $\e$
    small enough, there exists a maximum point $(\ye,\se)$ of $\psi_\e$ near $(x,t)$ and we have
    $(\ye,\se)\to (x,t)$, $u(\ye,\se) \to u(x,t)$ as $\e \to 0$.

    We claim that $(\ye,\se) \in Q$, at least for $\e$ small enough. 
    Indeed, if $(\ye,\se) \in \partial_\ell Q $, then necessarily $ (\ye,\se) =(x,t)$; otherwise,
    by the strict maximum point property, we would have $$\psi_\e (\ye,\se)  = (u-\varphi)(\ye,\se)
    < (u-\varphi) (x,t)=\psi_\e (x,t)$$ which would contradict the maximality of $(\ye,\se)$ for
    $\psi_\e$. But $ (\ye,\se) =(x,t)$ is not possible since it would imply that $(p_x+(\underline
    \lambda-\e) Dd(x),p_t) \in D_{\overline{Q}^\ell}^+ u (x,t)$, a contradiction to the minimality of
    $\underline \lambda$.

    \smallskip

    \noindent \textbf{(c)} Therefore $(\ye,\se) \in Q$ and the viscosity subsolution
    inequality holds, namely $$ \varphi_t (\ye,\se) +H (\ye,\se,u(\ye,\se),D_x\varphi
    +(\underline\lambda-\e) Dd(\ye))\leq 0\; .$$ 
    The result follows by letting $\e\to 0$, using the continuity of $H$ and
    the fact that $\varphi$ is~$C^1$.
\end{proof}

\begin{remark}\emph{--- Sub and superdifferentials on $\partial_\ell Q$ and regularity.}\smsp
In Proposition~\ref{sub-ineq-on-b}, we assume the sub and supersolutions to be $Q$-regular at the
    point $(x,t)\in \partial_\ell Q$: this
is, of course, to obtain the viscosity inequalities for $\underline\lambda$ and $\overline\lambda$. We point out anyway that\\
-- even if these regularity properties hold, $\underline\lambda$ and $\overline\lambda$ can be infinite. Take $Q=(0,+\infty)\times (0,\Tf)$
and consider the functions $u(x,t)=-x^{1/2}$ or $v(x,t)=x^{1/2}$.\\
-- If $u$ is NOT $Q$-regular at the point $(x,t)\in \partial_\ell Q$ and if $D_{\overline{Q}^\ell}^+ u (x,t)$ is non-empty then $I=\R$ and, in the 
same way, if $v$ is NOT $Q$-regular at the point $(x,t)\in \partial_\ell Q$ and if
    $D_{\overline{Q}^\ell}^- v (x,t)$ is non-empty then $J=\R$.
\end{remark}

We conclude this section by a ``two-domain'' result. More precisely we consider a domain $\tilde
\Omega \subset \R^N$ which can be written as $$\tilde \Omega= \Omega_1\cup \Omega_2 \cup \H\; ,$$
where $\Omega_1, \Omega_2$ are two disjoints domains of $\R^N$ and $\H=\partial \Omega_1 \cap \partial
\Omega_2$ is a smooth $(N-1)$-manifold. We use the notations $Q_i=\Omega_i\times (0,\Tf)$,
$\Omegb_i^\ell =\Omegb_i\times (0,\Tf)$ and we notice that $\H \times (0,\Tf) \subset
\partial_\ell \Omega_i$ for $i=1,2$. Finally we
denote by $d(\cdot)$ the distance function to $\H$ and by
$n(x)$ the unit normal vector to $\H$ pointing inward to $\Omega_1$.

Given $\Lambda=(\lambda_1,\lambda_2)\in\R$, we define the continuous function 
$\chi^\Lambda:\tilde \Omega \to \R$ by 
            $$\chi^\Lambda(x)=\begin{cases} \lambda_1d(x) & \hbox{if $x \in \overline \Omega_1$,}\\
            \lambda_2 d(x) & \hbox{if $x\in \overline \Omega_2$,} \end{cases}$$ 

\begin{lemma}\label{diff-twod}\emph{--- Sub and superdifferentials on $\H \times (0,\Tf)$ and
    test-functions.}
    \begin{enumerate}
        \item[$(i)$] Let $u:\tilde \Omega \times (0,\Tf)\to \R$ be an \usc function and $(\xb,\tb)$ a point
            of $\H\times(0,\Tf)$. We assume that there exists $(p_x,p_t)\in \R^{N+1}$ and
            $\Lambda=(\lambda_1,\lambda_2)\in \R^2$ such that 
            $$\begin{cases}
                (p_x+\lambda_1n(x),p_t)\in D_{\Omegb_1^l}^+ u (\xb,\tb)\;,\\ 
                (p_x-\lambda_2 n(x),p_t)\in D_{\Omegb_2^l}^+ u (\xb,\tb)\;.
            \end{cases}$$
            Then there exists a $C^1$-function $\varphi$ such that $(\xb,\tb)$ is a strict local
            maximum point of $u-\chi^\Lambda-\varphi$ on $\tilde \Omega \times (0,\Tf)$ and $D_x\varphi
            (\xb,\tb) =p_x$, $\varphi_t (\xb,\tb) =p_t$.

        \item[$(ii)$] Let $v:\tilde \Omega \times (0,\Tf)\to \R$
            be a \lsc function and $(\xb,\tb)$ a point of $\H\times(0,\Tf)$. We assume that there
            exists $(p_x,p_t)\in \R^{N+1}$ and $\Lambda=(\lambda_1,\lambda_2)\in \R^2$ such that 
            $$\begin{cases}
             (p_x+\lambda_1n(x),p_t)\in D_{\Omegb_1^l}^-v (\xb,\tb)\,\\
             (p_x-\lambda_2 n(x),p_t)\in D_{\Omegb_2^l}^-v(\xb,\tb)\;.
            \end{cases}$$
            Then there exists a $C^1$-function
            $\varphi$ such that $(\xb,\tb)$ is a strict local maximum point of $u-\chi^\Lambda-\varphi$ on
            $\tilde \Omega \times (0,\Tf)$ and $D_x\varphi (\xb,\tb) =p_x$, $\varphi_t (\xb,\tb)
            =p_t$.
    \end{enumerate}
\end{lemma}

We refer the reader to Part~\ref{part:NA} where we introduce test-functions which are piecewise
$C^1$ like $\chi^\Lambda+\varphi$ above. Lemma~\ref{diff-twod} will be useful in this context.

\begin{proof}
    The proof is short and we provide it only in the subdifferential case, the proof for the
    superdifferential being analogous. We just notice that $(p_x,p_t)$ is in the super-differential
    of the \usc function $u-\chi^\Lambda$ at $(\xb,\tb)$. The existence of $\varphi$ is therefore a
    consequence of the classical results on subdifferentials.  
\end{proof}


\chapter{Control Tools}
\fancyhead[CO]{HJ-Equations with Discontinuities: Control Tools}
\label{chap:control.tools}

\abstract{Classical results for control problems with discontinuities are presented. This is the
occasion to describe the classical framework that is used in particular in the stratified
setting. The main results concern the connections between viscosity sub and supersolutions, and the
associated sub and super-dynamic programming principles.}

Of course, the key ingredients used in this chapter are not new, we just try to revisit them in a
more modern way: we refer the reader to the founding article of Filippov \cite{MR0149985} and to
Aubin and Cellina \cite{MR755330}, Aubin and Frankowska \cite{AF}, Clarke \cite{MR1058436}, Clarke,
Ledyaev, Stern and Wolenski \cite{MR1488695} for the classical approach of deterministic control
problems by non-smooth analysis methods.

\section{Introduction: how to define deterministic control problems with discontinuities?  The two half-spaces problem}
\label{sect:disc.pb}

\index{Control problem!two-half-spaces}
As in the basic example of a two half-space discontinuity introduced in Section~\ref{sect:stab}, we
consider a partition of $\R^N$ into 
$$\H=\{x_N=0\}\;,\ \Omega_1=\{x_N>0\}\;,\ \Omega_2=\{x_N<0\} \; ,$$
and, following Chapter~\ref{chap:BasicFram}, we assume that we are given three different control problems
in each of these subsets associated to dynamics, discount factors and costs respectively denoted by
$(b_0,c_0,l_0), (b_1,c_1,l_1),(b_2,c_2,l_2)$; hence, the Hamiltonians take the form
$$
        H_i(x,t,r,p_x):= \sup_{\alpha_i \in A_i}\,\left\{-b_i(x,t, \alpha)\cdot p_x + 
        c_i(x,t, \alpha)r-l_i(x,t,\alpha)\right\}\; ,
    $$ 
for $i=0,1,2$, where the $A_i$ are the spaces of controls. For the sake of simplicity, we can assume
that the $(b_i,c_i,l_i)$ are all defined on $\R^N\times[0,\Tf]\times A_i$ for $i=0,1,2 $ and even
that they all satisfy \HCP. As a consequence, the $H_i$ are well-defined and continuous in
$\R^N\times[0,\Tf]\times \R \times \R^N$.

For such problems, the first question consists in defining properly the global dynamic $b$ since,
when the trajectory reaches $\H$, a discontinuity in the dynamic occurs: the controller may have
access to dynamics $b_1$ and $b_2$, but also to the specific dynamics $b_0$. Of course,
the similar question of defining globally the discount factor and cost holds.

The natural tool consists in using the theory of {\em differential inclusions} that we first
introduce on the simple example of Chapter~\ref{chap:BasicFram}. The idea consists in looking at the
set valued map 
$$\BCL(x,t):=\{ (b(x,t, \alpha),c(x,t, \alpha),l(x,t, \alpha))\ : \ \alpha \in A\}\;,$$
and to solve the differential inclusion
$$ (\dot X (s),\dot D (s),\dot L (s)) \in \BCL (X(s),t-s)\; , \; (X,D,L)(0)=(x,0,0)\; ,$$
which only required that the set valued map $\BCL$ is upper-semicontinuous, with values in compact,
convex sets (which is almost satisfied here, at least, adding the assumptions that the $\BCL(x,t)$
are convex or solving with their convex hull). Then 
$$
 \tilde U(x,t) = \inf_{(X,D,L)} \bigg(\int_0^{t} \dot L(s) \exp(D(s))) \ds + u_0(X(t))\exp(D(t) )
 \bigg)\; ,
$$
The advantage of this approach is to allow to define the dynamic, discount and cost without any
regularity in $b,c,l$. 

The next step is the half-space discontinuity for which we are going to define $\BCL$ in the same
way for $x \in \Omega_1$ and $x\in \Omega_2$ by just setting, for $t\in [0,\Tf]$
$$
\begin{cases}
    (b(x,t,\alpha) ,c(x,t,\alpha),l(x,t,\alpha)) = 
    (b_1(x,t,\alpha_1),c_1(x,t,\alpha_1),l_1(x,t,\alpha_1))& \hbox{if $x\in \Omega_1$}\\
    (b(x,t,\alpha) ,c(x,t,\alpha),l(x,t,\alpha)) = 
    (b_2(x,t,\alpha_2),c_2(x,t,\alpha_2),l_2(x,t,\alpha_2))& \hbox{if $x\in \Omega_2$}
\end{cases}
$$
where $\alpha \in A=A_0 \times A_1 \times A_2$, the ``extended control space''.

For $x \in \H$ and $t\in [0,\Tf]$, we just follow the theory of differential inclusions: by the
upper semi-continuity of $\BCL$, we necessarily have in $\BCL(x,t)$ all the
$(b_i(x,t,\alpha_i),c_i(x,t,\alpha_i),l_i(x,t,\alpha_i))$ for $i=0,1,2$ but we have also to take the
convex hull of all these elements, namely all the convex combinations of them.  In particular, for
the dynamic, we have (a priori) all the $b=\mu_0 b_0+\mu_1 b_1+\mu_2 b_2$ such that
$\mu_0+\mu_1+\mu_2=1$, $\mu_i \geq 0$ but we will show that such $b$ play a role only if the
trajectory stays on $\H$ and therefore if we have $b\cdot e_N=0$. A more precise statement will be
given in Section~\ref{sect:codimIa}.

\section{A general framework for deterministic control problems}
\label{Gen-DCP}

\index{Control problem!general}
Based on the ideas that we sketched in last section, we consider a general approach of finite
horizon control problems with differential inclusions. We use an \emph{extended trajectory}
$(X,T,D,L)$ in which we also embed the running time variable $T$, pointing out that, in the basic
example we introduced in the previous section, we just have $T(s)=t-s$.
 
This framework may seem complicated but we made this choice because it allows us to consider all the
applications we have in mind: on one hand, time and space will play analogous roles when we face
time-dependent discontinuities, or for treating some unbounded control type features; on the
other hand, discount factors will be necessarily involved when dealing with boundary conditions---\,see Part~\ref{S-BC}. 

In this section, we present general and classical results which do not require any particular
assumption concerning neither the structure of the discontinuities, nor on the control sets.

In the following, we denote by $\mathcal{P}(E)$ the set of all subsets of $E$.

\subsection{Dynamics, discounts and costs} 
\label{sec:HBCLa}

The first hypothesis we make is

\label{page:HBCLa}\label{not:BCL}
\begin{assumption}{\HBCLa}{Fundamental assumptions on $\BCL$.}
    The set-valued map
    $\BCL:\R^N\times[0,\Tf]\to\mathcal{P}(\R^{N+3})$ satisfies
    \begin{enumerate}
    \item[$(i)$] the map $(x,t)\mapsto\BCL(x,t)$ has compact, convex images
      and is upper semi-continuous;
    \item[$(ii)$] there exists $M>0$, such that, for any $x\in\R^N$ and $t\in [0,\Tf]$,
      $$\BCL(x,t)\subset \big\{(b,c,l)\in\R^{N+1}\times\R\times\R:|b|\leq M; |c|\leq M; | l | \leq
          M\big\}\;.$$
  \end{enumerate}
\end{assumption}
Here, $|\cdot|$ stands for the usual euclidian norm in any euclidean space $\R^p$ (which reduces to
the absolute value in $\R$, for the $c$ and $l$ variables). If $(b,c,l) \in \BCL(x,t)$, $b$
corresponds to the dynamic (both in space and time), $c$ to the discount factor and $l$ to the
running cost. Assumption \HBCLa-$(ii)$ means that dynamics, discount factors and running costs are
uniformly bounded.  In the following, we sometimes have to consider separately dynamics, discount
factors and running costs. To do so, we set
$$
\B(x,t)=\big\{b \in\R^{N+1};\ \hbox{there exists $c,l\in\R$ such that  } (b,c,l)\in\BCL(x,t) \big\}\; ,
$$
and analogously for $\C (x,t),\L(x,t)\subset\R$. Finally, we decompose any $b\in \B (x,t)$ as
$(b^x,b^t)$, where $b^x$ and $b^t$ are respectively the space and time dynamics.

We recall the definition of upper semi-continuity we use here: 
a set-valued map $x\mapsto F(x)$ is upper-semi continuous at $x_0$ if for any
open set $\mathcal{O}\supset F(x_0)$, there exists an open set $\omega$
containing $x_0$ such that $F(\omega)\subset\mathcal{O}$. Expressed in other terms, 
$F(x)\supset \limsup\limits_{y\to x} F(y)$.

\subsection{The control problem} 

\label{not:XTDL}
We look for trajectories $(X,T,D,L)(\cdot)$ of the differential inclusion
\begin{equation}\label{eq:diff.inc}
\begin{cases}
\dfrac{\d}{\dt}(X,T,D,L)(s)\in\BCL\big(X(s),T(s)\big)&\ \text{ for a.e. }s\in[0,+\infty)\;,\\[2mm]
(X,T,D,L)(0)=(x,t,0,0)\;.
\end{cases}
\end{equation}
The key existence result is the

\begin{theorem}\label{thm:existence.traj}
	Assume that \HBCLa holds. Then \\[2mm]
	$(i)$ for any $(x,t)\in\R^N\times[0,\Tf)$ there exists a Lipschitz function 
    $(X,T,D,L):[0,\Tf]\to\R^N\times\R^3$
	which is a solution of the differential inclusion \eqref{eq:diff.inc}.\\[2mm]
	$(ii)$ for each solution $(X,T,D,L)$ of \eqref{eq:diff.inc} there exist 
    measurable functions $(b,c,l)(\cdot)$ such that for a.e. $s\in(t,\Tf)$,
	$$(\dot X,\dot T,\dot D, \dot L)(s)=(b,c,l)(s)\in\BCL(X(s),T(s))\;.$$
\end{theorem}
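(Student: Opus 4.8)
The statement splits into an existence part $(i)$ and a measurable-selection part $(ii)$; essentially all the work is in $(i)$, which is the classical existence theorem for differential inclusions governed by a bounded, upper semi-continuous, convex–compact–valued right-hand side (in the spirit of Filippov and of Aubin--Cellina, Aubin--Frankowska quoted at the beginning of this chapter). The plan is to reduce $(i)$ to the autonomous case, to produce Euler-type approximate trajectories, to pass to a uniform limit, and to identify the limit as a genuine solution using the convexity of the images; then $(ii)$ is immediate.

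For $(ii)$: given any Lipschitz solution $(X,T,D,L)$ of \eqref{eq:diff.inc}, the derivative $(\dot X,\dot T,\dot D,\dot L)$ exists for a.e.\ $s$ and is a bounded measurable function, and by the very definition of a solution of the inclusion it satisfies $(\dot X,\dot T,\dot D,\dot L)(s)\in\BCL(X(s),T(s))$ for a.e.\ $s$. It then suffices to set $b(s):=(\dot X(s),\dot T(s))$, $c(s):=\dot D(s)$, $l(s):=\dot L(s)$, which is precisely the wanted measurable triple; so no further argument is needed once $(i)$ is established.

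For $(i)$: first reduce to an autonomous inclusion by considering the set-valued map $\tilde F(X,T,D,L):=\BCL(X,T)$ on $\R^{N+3}$; by Assumption \HBCLa it is upper semi-continuous, has nonempty compact convex images, and is contained in a fixed ball of radius $\sqrt{3}\,M$. Fix $(x,t)$ and a horizon $T$, and build approximate solutions $Z_k=(X_k,T_k,D_k,L_k)$ by the Euler polygonal scheme with step $h_k=T/k$: on each grid interval set $\dot Z_k$ equal to some fixed element of $\tilde F$ evaluated at the left grid point (possible since the images are nonempty). The uniform bound in \HBCLa makes the $Z_k$ Lipschitz with a constant independent of $k$, and defined on all of $[0,+\infty)$ since $\tilde F$ is bounded, so no finite-time blow-up occurs. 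By Arzel\`a--Ascoli, a subsequence of $(Z_k)$ converges uniformly on $[0,T]$ to a Lipschitz $Z=(X,T,D,L)$ with $Z(0)=(x,t,0,0)$, while the derivatives $\dot Z_k$, uniformly bounded in $L^\infty(0,T)$, admit a further subsequence converging weakly-$*$ in $L^\infty(0,T)$ (equivalently, weakly in $L^1(0,T)$) to $\dot Z$.

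The only genuinely delicate point --- and the main obstacle --- is to check that $\dot Z(s)\in\tilde F(Z(s))$ for a.e.\ $s$, i.e.\ the closure of the solution set under this limit; this is exactly where the convexity of the images of $\BCL$ is used. By Mazur's lemma, a sequence of convex combinations of the $\dot Z_k$ converges strongly in $L^1(0,T)$, hence (along a subsequence) a.e.\ pointwise, to $\dot Z$. Now fix a Lebesgue point $s$ of $\dot Z$ and $\eta>0$. By upper semi-continuity of $\tilde F$ together with the uniform convergence $Z_k\to Z$, for all large $k$ and all $s'$ in a small neighbourhood of $s$, $\tilde F(Z_k(s'))$ is contained in the closed convex $\eta$-neighbourhood of $\tilde F(Z(s))$; by the Euler construction $\dot Z_k(s')$ lies in $\tilde F$ evaluated at the nearby grid point, hence in that same set, and so do the convex combinations. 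Passing to the limit gives $\dot Z(s)$ within distance $\eta$ of $\tilde F(Z(s))$; letting $\eta\to0$ and using that $\tilde F(Z(s))$ is closed yields $\dot Z(s)\in\tilde F(Z(s))$. Restricting $Z$ to $[0,T]$ gives the solution claimed in $(i)$, and $(ii)$ follows as explained. An equally valid alternative for $(i)$ would replace the Euler scheme by approximating $\tilde F$ from outside by a decreasing family of continuous convex-valued maps, solving the resulting ordinary differential inclusions via Michael's continuous selection theorem and Peano's theorem, and then letting the approximation parameter tend to $0$ --- the closure argument at the end being identical.
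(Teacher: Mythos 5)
The paper does not prove Theorem~\ref{thm:existence.traj}: it is stated as a known result, and the beginning of Chapter~\ref{chap:control.tools} explicitly defers to the classical literature (Filippov, Aubin--Cellina, Aubin--Frankowska, Clarke et al.) for exactly this kind of existence statement for differential inclusions. Your proof is correct and reproduces that classical argument: part~$(ii)$ is indeed immediate, since the derivative of a Lipschitz arc exists a.e., is bounded and measurable, and by definition lies in $\BCL(X(s),T(s))$; part~$(i)$ is the Filippov-type existence proof via Euler polygons, Arzel\`a--Ascoli and weak-$*$ compactness of the derivatives, followed by Mazur's lemma together with upper semi-continuity, compactness and convexity of the images to identify the limit as a solution. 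The only cosmetic remark is that the phrase ``fix a Lebesgue point of $\dot Z$'' is not quite what is being used -- what matters is that the convex combinations produced by Mazur's lemma converge a.e.\ after extraction -- but since those two full-measure sets play the same role here, this does not affect the validity of the argument.
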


Throughout this chapter, we mostly write 
$$
    \begin{aligned}
    (\dot X(s), \dot T(s)) &= b\big(X(s),T(s)\big)\\
    \dot D(s) &= c\big(X(s),T(s)\big)\\
    \dot L(s) &= l\big(X(s),T(s)\big)
    \end{aligned}
$$
in order to remember that $b$, $c$ and $l$ correspond to a specific choice in $\BCL(X(s),T(s))$.
Later on, we will also introduce a control $\alpha(\cdot)$ to represent the $(b,c,l)$ as
$$(b,c,l)(X(s),T(s),\alpha(s))\;.$$

In order to simplify the notations, we just use the notation $X,T,D,L$ when there is no ambiguity
but we may also use the notations $X^{x,t},T^{x,t}, D^{x,t},L^{x,t}$ when the dependence in
$x,t$ plays an important role.

Let us introduce a point of vocabulary here: by a state-constrained control problem in a set
$\mathcal{W}$, we mean that the controller can only use trajectories which remain in $\mathcal{W}$:
$(X,T,D,L)(s)\in\mathcal{W}$ for any $s\in[0,+\infty)$. In general, such constraint only concerns
the state variable $X$, which is required to satisfy $X(s) \in \Omegb$ for some domain $\Omega$: 
we study these state-constrained problems in Part~\ref{S-BC}.  

However, throughout this book we have chosen a framework with a dynamic on $T$ in order to describe
finite horizon control problems in $\R^N \times [0,\Tf]$ (or $\Omegb \times [0,\Tf]$). Hence,
the $T$-variable is also constrained to satisfy $T(s)\in[0,\Tf]$. This property is at the origin of
some of the hypotheses below. In this setting, the usual terminal cost is changed into a running
cost, which also requires some assumptions in order to have a bounded value function.

Before describing the value function, we are going to make the following structure assumptions 
on the $\BCL$-set valued map

\label{page:HBCLb}
\begin{assumption}{\HBCLb}{Structure assumptions on the $\BCL$.}
    There exists $\uc, K >0$ such that
\begin{enumerate}
\item[$(i)$] For all $x\in \R^N$, $t\in [0,\Tf]$ and $b=(b^x,b^t) \in \B (x,t)$, 
    $-1 \leq b^t \leq 0$. Moreover, there exists $b=(b^x,b^t) \in \B (x,t)$ such that $b^t =-1$.
\item[$(ii)$] For all $x\in \R^N$, $t\in [0,\Tf]$,  if $((b^x,b^t),c,l) \in \BCL (x,t)$, 
    then $-Kb^t + c \geq 0$. 
\item[$(iii)$] For any $x \in \R^N$, there exists an element in $\BCL(x,0)$ 
    of the form $((0,0),c,l)$ with $c \geq \uc$.
\item[$(iv)$] For all $x\in \R^N$, $t\in [0,\Tf]$, if $(b,c,l) \in \BCL (x,t)$ 
    then $\max(-b^t,c,l)\geq \uc$.
\end{enumerate}
\end{assumption}

By introducing  this general framework, our aim is to gather different type of control problems and
treat them within the same setting. In classical finite horizon problems $b^t=-1$, which indicates
a time direction associated to the $u_t$-term, and in this case $T(s)=t-s$.  Here we choose the more
general assumption $-1\leq b^t \leq 0$ in order to respect this monotonicity in time, but allowing
also $b^t=0$ which can corresponds
\begin{enumerate}
\item either to a control problem with a stopping time; in particular, we point out that a classical final cost
is treated as associated to a stopping time control problem.
\item Or an unbounded control problem. The reader may be surprised by this claim since the $b$'s are bounded
but this framework typically contains cases where the cost is proportional to the dynamic, allowing jumps (See, for example,
Section~\ref{sec:eiuc} and the
beginning of Chapter~\ref{chap:jumps}).
\end{enumerate}
Of course, a combination of the two is possible. We point out anyway that unbounded control problems with a cost having a superlinear growth \wrt
the dynamic (typically, a quadratic cost) does not enter a priori in our framework.

Assumption $(iii)$ and a part of $(iv)$ concern the final cost ($u_0$ in the example of the previous
section) which is in general the initial data for the Hamilton-Jacobi-Bellman Equation. As we
pointed out above, the value function we define below is associated to a state-constrained problem in
$\R^N \times [0,\Tf]$, and therefore it is necessary that strategies with $b^t=0$ for any point
$(x,0)\in \R^N\times \{0\}$ exist.

Assumption~$(iii)$ means that we can stop the trajectory at any point $(x,0)$, as for the case of a
classical initial data, the assumption on $c$ being necessary, in general, to keep the integral of
the running cost bounded. However, strategies with $b^t=0$, $b^x\neq 0$ are also allowed provided
that they satisfy $(iv)$ at time $t=0$ in order, again, that the associated cost remains bounded:
indeed, either the trajectory is associated to a positive discount factor $c\geq\underline{c}$ which
ensures the boundedness of the integral of the running cost or it has a positive cost
$l\geq\underline{c}$ in order to avoid the long use of this strategy.

Such situations may also happen for $t>0$, either to model a possible stopping time (obstacle type
problem) or an exit cost (see in Part~\ref{S-BC}, Dirichlet boundary condition), which is why
$(iv)$ is written for all $t\in[0,\Tf]$.

On the other hand, the consequence of $(ii)$ is that the change of unknown function $u\to
\exp(-Kt)u$ allows to reduce to the easier case of a positive discount factor. Such assumption is
necessary in this framework since the formulation below leads to a stationary type equation, because
we treat time as a space variable.

Finally, notice that the fact that $b^t$ can be $0$ (or close to it) includes the unbounded control
case. In particular if $b^t=0$, the trajectory can stay at a constant time $\bar t$ for, say, $s \in
[s_1,s_2]$ while if $b^x\neq0$, the trajectory can be seen as an instantaneous jump from the point
$X(s_1)$ to the point $X(s_2)$ since time does not vary on this interval.

In all the rest of the book, \HBCL means that both \HBCLa and \HBCLb are fulfilled.

In order to introduce the value function, we state below a result showing that the cost we use is
well-defined and bounded from below. We also provide various properties, among which the fact that
we can always reduce ourselves to the case $c\geq 0$ for any $(b,c,l) \in \BCL(x,t)$ and for any
$(x,t)\in \R^N\times[0,\Tf]$.
\begin{lemma}\label{c-posit}
    Assume that \HBCL holds and let $(X,T,D,L)$ be a solution of \eqref{eq:diff.inc} associated to
    $(b,c,l)(\cdot)$ such that $(X(s),T(s)) \in \R^N\times [0,\Tf]$ for all $s>0$.  Then
    \begin{enumerate}
        \item[$(i)$] The following integral is well-defined and uniformly bounded from below
$$ J(X,T,D,L)=\int_0^{+\infty} l\big(X(s),T(s)\big)\exp(-D(s))\dt\;.$$
\item[$(ii)$] For any trajectory $(X,T,D,L)$ of the differential inclusion such that 
    $$J(X,T,D,L):=\int_0^{+\infty} l\big(X(s),T(s)\big)\exp(-D(s))ds<\infty\;,$$
    then $D(s) \to +\infty$ as $s\to +\infty$. \\
\item[$(iii)$] If $K$ is  the constant given by \HBCLb, we have 
$$ \exp(-Kt)J(X,T,D,L)=J(\tilde X,\tilde T,\tilde D,\tilde L)\; ,$$
where $(\tilde X,\tilde T,\tilde D,\tilde L)$ is the solution of \eqref{eq:diff.inc} associated to
$(b,c-Kb_t,l\exp(-KT(s)))(\cdot)$. In particular $\tilde X=X$, $\tilde T=T$, $\tilde D=D+K(T-t)$ and
    of course we still have $(\tilde X,\tilde T,\tilde D,\tilde L)(0)=(x,t,0,0)$.
    \end{enumerate}
\end{lemma}

The use of this lemma will be clear in the next sections but it is obvious from \HBCLb-$(ii)$ that
the replacement of $c$ by $c-Kb_t\geq0$ allows as we wish to reduce $c\in\R$ to the case when
$c\geq0$.

\begin{proof} We divide it into several steps.\smsp
    \noindent\textbf{(a)} In order to prove the two properties of $(i)$, we use \HBCLb-$(iv)$ and introduce the sets 
    $$ E_1:=\{s: - b^t \geq \uc\,\}\; ,\; E_2:=\{s\notin E_1 : c \geq \uc\,\}\; ,
    \; E_3=[0,+\infty)\setminus (E_1 \cup E_2)\; .$$
    By \HBCLb-$(iv)$, we have $[0,+\infty)=E_1\cup E_2\cup E_3$ and $E_1,E_2,E_3$ are disjoint by
    construction. We now evaluate the integral on each of these three sets.
    
    Concerning the $E_1$-contribution, we notice that, using that $\dot T(s)=b^t$,
    $$|E_1|\,\uc\leq \int_{E_1} -b^t(X(s),T(s))\ds \leq T(0)=t\;.$$ 
    Since $l$ is bounded, $0\leq \exp(-D(s))\leq 1$ and $|E_1|\leq t/\uc$, the function
    $$s \mapsto \1_{E_1}(s)l\big(X(s),T(s)\big)\exp(-D(s))\; ,$$
    is in $L^1(0,+\infty)$ and its contribution---its $L^1$-norm---is uniformly bounded by $M t/\uc$.
    
    On $E_2$, since $\dot 
    D(s)=c(s) \geq \uc$, it follows that 
    $$\begin{aligned}
        \int_{E_2} \vert l\big(X(s),T(s)\big)\vert \exp(-D(s))\ds \leq & M \int_{E_2} \exp(-D(s))ds \\
        \leq & M \int_{E_2} \frac{\dot D(s)}{\uc} \exp(-D(s))\ds\\
         \leq & M \int_{[0,+\infty)} \frac{\dot D(s)}{\uc} \exp(-D(s))\ds \leq \frac{M}{\uc} \;,
    \end{aligned}$$
    Hence we have also that the function
    $$s \mapsto \1_{E_2}(s)l\big(X(s),T(s)\big)\exp(-D(s))\; ,$$
    is in $L^1(0,+\infty)$ and its contribution---its $L^1$-norm---is uniformly bounded by $M/\uc$.
    
    Finally, on $E_3$, we integrate a positive function; therefore the corresponding integral is
    well-defined and bounded from below. This completes the proof of $(i)$.
 
    \noindent\textbf{(b)} In order to prove $(ii)$, we examine carefully the sets $E_1,
    E_2, E_3$ defined above. We recall first that $|E_1|\leq t/\uc<+\infty$, so that
    necessarily, either $E_2$ or $E_3$ has infinite Lebesgue measure.
    Now, on $E_2$, $\dot D(s)=c(s)\geq\uc$ so that
    $$\uc\cdot |E_2\cap[0,S]|\leq\int_{E_2\cap[0,S]}\dot D(s)\ds\leq D(S)\;.$$
    We deduce that if the increasing function $s\mapsto D(s)$ does not tend to $+\infty$ when $s\to
    +\infty$, then $|E_2|\leq \sup_s D(s)/\uc\,<\infty$, so that $|E_3|=+\infty$.

    By the monotonicity of $D$, if $D(s)$ does not tend to $+\infty$ when $s\to +\infty$, there
    exists $\gamma>0$ such that $\exp(-D(s))\geq \gamma$ on $[0,+\infty)$ but on $E_3$, since
    $l(s)\geq \uc$ we see that $$\int_{E_3} l\big(X(s),T(s)\big)\exp(-D(s))\ds \geq \int_{E_3}
    \uc\cdot \gamma \ds = \uc\cdot\gamma\cdot |E_3|=+\infty\; ,$$
    and we reach a contradiction because integral $J(X,T,D,L)$ is bounded.\\
\noindent\textbf{(c)} The proof of $(iii)$ relies on an easy manipulation on the integral and we skip it.
\end{proof}

\subsection{The value function}\label{sec:VF}

\label{not:mT}
Now we introduce the value function which is defined on $\R^N\times [0,\Tf]$ by
\begin{equation}\label{eqn:VF}
U(x,t)=\inf_{\cT(x,t)}\Big\{\int_0^{+\infty} 
     l\big(X(s),T(s)\big)\exp(-D(s))\ds\Big\}\;,
\end{equation}
where $\cT(x,t)$ stands for all the Lipschitz trajectories $(X,T,D,L)$ of the differential inclusion
which start at $(x,t)\in \R^N\times [0,\Tf]$ and such that $(X(s),T(s)) \in \R^N\times [0,\Tf]$ for all
$s>0$.  

As we explained above, Assumption $(iii)-(iv)$ imply formally the existence of trajectories
$(X,T,D,L)$ satisfying the constraint $(X,T) \in \R^N\times[0,\Tf]$ and, by Lemma~\ref{c-posit},
these trajectories are associated to a well-defined cost $J(X,T,D,L)$ which is uniformly bounded
from below. Hence we expect both that $\cT(x,t)\neq \emptyset$ for all $(x,t) \in \R^N \times
[0,\Tf]$ and that $U$ is bounded. A rigorous proof of this claim is contained in the

\begin{lemma}\label{u-bound}
    Assume that \HBCL holds. Then the value function $U$ is bounded on $\R^N \times [0,\Tf]$ and is
    lower semi-continuous in $\R^N\times [0,\Tf]$. Moreover an optimal trajectory exists, \ie
    for any $(x,t)$, there exists a trajectory $(X,T,D,T)\in \cT(x,t)$ such that
$$ U(x,t)=\int_0^{+\infty} 
     l\big(X(s),T(s)\big)\exp(-D(s))\ds\; .
$$ 
\end{lemma}

\begin{proof}
    We first use Lemma~\ref{c-posit} to reduce the proof in the case when $c$ is positive.

    \noindent\textbf{(a)} In order to prove that $U$ is bounded, we first show that $\cT(x,t)\neq \emptyset$.
    Let us solve differential inclusion \eqref{eq:diff.inc}, replacing $\BCL$ by 
    $$\BCL_\flat(x,t):=\BCL(x,t)\cap \{(b,c,l)\in \R^{N+3};\ b^t=-1\}\;.$$ 
    The reader can easily check that this new set-valued map satisfies all the required assumptions
    \HBCLa and \HBCLb. 
    Moreover, for any trajectory associated with $\BCL_\flat$ starting at $(x,t,0,0)$, it is clear
    that $T(t)=0$ since $T(s)=t-s$. Notice that for $s\in[0,t]$, this trajectory may be seen as a
    trajectory associated to the original $\BCL$ since $\BCL_\flat\subset\BCL$.

    Then, for any $s\geq t$ we redefine the trajectory by solving 
    $$(\dot X,\dot T,\dot D, \dot L)(s) = ((0, 0), c, l)$$
    where $((0, 0), c, l)$ is given by Assumption~\HBCLb-$(iii)$---\ie with $c(s)\geq \uc$ for any $s$---for the original $\BCL$,
    at $(x,0)=(X(t),T(t))$. This defines a new trajectory for all $s\in[t,+\infty)$ associated to
    $\BCL$ and obviously, $(X(s),T(s))\in\R^N\times[0,\Tf]$ so that the constructed trajectory 
    $(X,T,D,L)$ belongs to $\cT(x,t)$. Moreover
    $$\int_s^{+\infty} 
     l\big(X(s),T(s)\big)\exp(-D(s))\ds \leq \int_0^{+\infty} 
     M\exp(-\uc(s-t) )\ds \leq \frac{M}{\uc}\; .$$
    Hence, since the contribution on $[0,t]$ is bounded by $M$, $U$ is bounded from above and since
    we know by Lemma~\ref{c-posit} that it is also bounded from below, $U$ is bounded.

    \smallskip

 \noindent\textbf{(b)} In order to show that $U$ is \lsc, we are going to use by anticipation
    Theorem~\ref{DPP}, \ie the Dynamic Programming Principle. Let $(x,t) \in \R^N \times [0,\Tf]$
    and $((\xe,\te))_\e$ a sequence of points of $\R^N \times [0,\Tf]$ which converges to $(x,t)$
    and such that $\lim_\e U(\xe,\te) = \liminf_{(y,s)\to (x,t)} U(y,s)$.
    Our aim is to show that
    $$ \lim_\e U(\xe,\te)\geq U(x,t)\; .$$
 
    By definition of $U$, there exists a trajectory $(X_\e, T_\e,D_\e,L_\e)$ such that
    $$U(\xe,\te)\geq \int_0^{+\infty} l\big(X_\e(s),T_\e(s)\big)\exp(-D_\e(s)) \ds-\e\; .$$
    Using that the $\BCL$-sets are uniformly bounded, we can apply Ascoli-Arzela Theorem together
    with a diagonal extraction procedure to the family of trajectories $(X_\e, T_\e,D_\e,L_\e)$ to
    show that $$ (X_\e, T_\e,D_\e,L_\e)\to (X, T,D,L)\quad\hbox{locally uniformly
    on  }[0,+\infty)\; ,$$
    where $(X,T)$ remains in the domain $\R^N\times [0,\Tf]$. We may also assume that $\dot L_\e =
    l\big(X_\e,T_\e\big)$ weakly converges in the $L^\infty$-weak $*$ topology to $l\big(X,T)$.
 
    In order to pass to the limit we pick some large $S>0$ and by standard manipulations on the
    integral (see the proof of Theorem~\ref{DPP} below), we have
    $$ \int_S^{+\infty} l\big(X_\e(s),T_\e(s)\big)\exp(-D_\e(s)) \ds = \exp(-D_\e(S))J(\tilde X,
    \tilde T,\tilde D,\tilde L)\; ,$$ 
    where $(\tilde X, \tilde T,\tilde D,\tilde L)$ is a trajectory starting from
    $(X_\e(S),T_\e(S),0,0)$ in $\cT(X_\e(S),T_\e(S))$. Hence $J(\tilde X, \tilde T,\tilde D,\tilde
    L)$ is bounded from below by a constant $\tilde K$ and we can rewrite the above property on
    $U(\xe,\te)$ as
     $$U(\xe,\te)\geq \int_0^{S} l\big(X_\e(s),T_\e(s)\big)\exp(-D_\e(s)) \ds+\tilde K\exp(-D_\e(S))
     -\e\; .$$

    We pass to the limit in this inequality and obtain
    $$\lim_\e U(\xe,\te) \geq \int_0^S l\big(X(s),T(s)\big)\exp(-D(s)) \ds+\tilde K\exp(-D(S))\; .$$
    Since this inequality is valid for any $S>0$, the arguments of the proof of Lemma~\ref{c-posit}
    implies that $s \mapsto l\big(X(s),T(s)\big)\exp(-D(s))$ is in $L^1(0,+\infty)$ \footnote{since
    the integrals on $E_1$ and $E_2$ are bounded and so only the integral on $E_3$ where the
    integrand is positive plays a real role in the $L^1$-property.} and letting $S \to +\infty$, we
    end up with 
    \begin{equation}\label{VF.lsc}
        \lim_\e U(\xe,\te) \geq \int_0^{+\infty} l\big(X(s),T(s)\big)\exp(-D(s)) \ds \geq
        U(x,t)\;,
    \end{equation} and the proof is complete.
    
     \smallskip

\noindent\textbf{(c)} Finally the existence of an optimal trajectory relies on exactly the same
arguments as above, \ie on the compactness of the trajectories.
\end{proof}

\section{Ishii solutions for the Bellman Equation}\label{IS-BE}

In this section we prove that the value function is a (discontinuous) viscosity solution of the
Bellman Equation associated with the control problem, namely
\begin{equation}\label{eq:bellmann.0}
    \F(x,t,u,Du)=0\quad\text{in}\quad\R^N\times [0,\Tf]\;,
\end{equation}
where, for any $x \in \R^N$, $t\in [0,\Tf]$, $r\in \R$ and $p=(p_x,p_t)\in \R^{N+1}$
\begin{equation}\label{def.F.global}
    \F(x,t,r,p):=\sup_{(b,c,l)\in\BCL(x,t)}\Big\{-b\cdot p+cr-l\big\}\;.
\end{equation}

Writing the Bellman Equation under the form \eqref{eq:bellmann.0} is a little bit formal: if a more or less
classical definition of viscosity sub and supersolutions can be used in $\R^N\times]0,\Tf]$
following Definition~\ref{class.visc. sol}, the case of $t=0$ requires a particular treatment.

Indeed, it is well-known that the supersolution inequality for such Bellman Equation is related to the optimality
of one or several trajectories while the subsolution one reflects the fact that any trajectory for
any possible control is sub-optimal. At a point $(x,0)$, the standard $\F\geq 0$ supersolution inequality does not seem
to cause any problem, even if the optimal trajectory has to stay on $\R^N\times\{0\}$. On the contrary, there is a
problem with the standard subsolution inequality since we cannot use any solution $(X,T,D,L)$ of the
$\BCL$-differential inclusion, but only those for which $b^t=0$. This is why the constraint to remain in $\R^N\times[0,\Tf]$ 
obliges us to change the definition of subsolution for $t=0$.

This leads to introduce the ``initial Hamiltonian''
\begin{equation}\label{def.F.init}
    \F_{init}(x,r,p_x):=\sup_{((b^x,0),c,l)\in\BCL(x,0)}\big\{-b^x \cdot p_x +cr - l \big\}\;.
\end{equation}

Before going further, we describe the properties of $\F$ and $\F_{init}$ in the following result.
\begin{lemma} 
    The Hamiltonians $(x,t,r,p)\mapsto\F(x,t,r,p)$ and $(x,t,r,p)\mapsto\F_{init}(x,r,p)$ are
    \usc with respect to all the variables, and convex and Lipschitz as a function of $r$ and $p$.
\end{lemma}
\begin{proof} We only provide the proof for $\F$, the one for $\F_{init}$ being analogous.

    For the upper semi-continuity, let us take a sequence $(x_n,t_n,r_n,p_n)\to
    (x,t,r,p)\in\R^N\times[0,\Tf]\times\R\times\R^{N+1}$. Since, for any $n$, 
    $\BCL(x_n,t_n)$ is compact, there exists $(b_n,c_n,l_n)\in\BCL(x_n,t_n)$ such that
    $$\F(x_n,t_n,r_n,p_n)= - b_n\cdot p_n +c_n r_n-l_n\;.$$
    Since $\BCL(\cdot,\cdot)$ is \usc as a set-valued map, it follows that, for any
    $\delta>0$, if $n$ is large enough, 
    $$(b_n,c_n,l_n)\in\BCL(x_n,t_n)\subset\BCL(x,t)+\delta B_{2N+3}\;,$$
    where $B_{2N+3}$ is the unit ball in $\R^{2N+3}$. For such $n$,
    $(b_n,c_n,l_n)$ can be decomposed as the sum
    $(\tilde b_n,\tilde c_n,\tilde l_n)+\delta e_n$ for some $(\tilde b_n,\tilde c_n,\tilde
    l_n)\in\BCL(x,t)$ and some $e_n\in B_{2N+3}$. Now, since $(x_n,t_n,r_n,p_n)$ is bounded,
    $$\begin{aligned}
        \F(x,t,r,p) &\geq -\tilde b_n\cdot p+\tilde c_n r -\tilde l_n\\
        &\geq -b_n\cdot p_n+c_n r_n-l_n - o_\delta(1)\\
        &\geq \F(x_n,t_n,r_n,p_n) - o_\delta(1)\;.
    \end{aligned}$$
    Passing to the limsup on $n$ and sending $\delta\to0$ yields the upper semi-continuity property.

    The Lipschitz continuity is just a consequence of the boundedness of the $b$ and $c$ components
    in $\BCL(x,t)$ for any $x$ and $t$\,: if $F(x,t,r,p)=-b\cdot p+cr-l$, then since
    $\F(x,t,r',q)\geq -b\cdot q+cr'-l$, we have $$\F(x,t,r,p)-\F(x,t,r',q)\leq |c||r-r'|+|b||p-q|\leq
    M\big(|r-r'|+|p-q|\big)\;,$$ and of course the converse inequality is also true.

    Finally, the convexity of $\F$ with respect to $(r,p)$ just comes from the fact that $\F$ is the
    supremum of affine functions in $(r,p)$. 
\end{proof}

\subsection{Discontinuous viscosity solutions}

Let us first give the definition based on the notion of discontinuous (or Ishii) viscosity solution
exposed in Chapter~\ref{chap:pde.tools}, but modified in a suitable way to take into account the
particularity of $t=0$.
\begin{definition}\label{def:sub.sup.gen} 
    A locally bounded function $u$ is a subsolution of \eqref{eq:bellmann.0} if its \usc enveloppe
    satisfies 
    \begin{equation}\label{eq:sub.H}
    \F_*(x,t,u^*, Du^*) \leq 0 \quad \hbox{on  } \R^N \times]0,\Tf]\; ,
    \end{equation}
    and, for $t=0$
    \begin{equation}\label{eq:sub.Ht0}
    \min(\F_*(x,0,u^*, Du^*), (\F_{init})_* (x,u^*(x,0),D_x u^*(x,0)))\leq 0 \quad\hbox{in  }\R^N \; .
    \end{equation}
    A locally bounded function $v$ is a supersolution \eqref{eq:bellmann.0} if its \lsc enveloppe
    satisfies 
    \begin{equation}\label{eq:super.H}
        \F(x,t,v_*, Dv_*) \geq 0 \quad \hbox{on  } \R^N \times[0,\Tf]\; .
    \end{equation}
    A locally bounded function is a viscosity solution of \eqref{eq:bellmann.0}
    if it is both a subsolution and a supersolution of
    \eqref{eq:bellmann.0}.
\end{definition}

For the supersolution property, the simple formulation comes from the fact that $\F$ is \usc in
$\R^N \times[0,\Tf] \times \R \times \R^N$. For the subsolution, the inequality is the expected one
 on $\R^N \times]0,\Tf]$ but is modified for $t=0$. In fact, we show below that the value function satisfies
 $$(\F_{init})_* (x,U^*(x,0),D_x U^*(x,0)))\leq 0 \quad\hbox{in  }\R^N\; ,$$
and Section~\ref{IC-HJB} (see Proposition~\ref{visc-ineq-init}) will confirm
that the $\F_*$-contribution  in \eqref{eq:sub.Ht0} is not necessary, the initial data condition 
being totally equivalent to $(\F_{init})_*\leq0$.


\subsection{The dynamic programming principle}

The first step towards establishing the sub/supersolution properties of $U$ is to prove the
classical 
\index{Dynamic Programming Principle!simplest form}
\begin{theorem}\label{DPP}\emph{--- Dynamic Programming Principle.} \smsp
Under hypothesis \HBCL, the value function $U$ satisfies
$$
    U(x,t)=
    \inf_{\cT(x,t)}\Big\{\int_0^\theta
    l\big(X(s),T(s)\big)\exp(-D(s)) \ds+U \big(X(\theta),T(\theta))\exp(-D(\theta))\Big\}\;,
$$
for any $(x,t)\in\R^N\times(0,\Tf]$, $\theta >0$.
\end{theorem}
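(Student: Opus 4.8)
The plan is to prove the two inequalities in the Dynamic Programming Principle separately, as is standard, working directly from the definition \eqref{eqn:VF} of the value function. The key structural tool is Theorem~\ref{thm:existence.traj}, which tells us that trajectories of the differential inclusion exist and can be concatenated; combined with the fact (used in Lemma~\ref{u-bound}) that $\cT(x,t)\neq\emptyset$ for all $(x,t)$, this is what makes both directions work. Throughout, I would first invoke Lemma~\ref{c-posit} to reduce to the case $c\geq 0$, so that the discount factors $\exp(-D(s))$ are bounded in $[0,1]$ and all integrals converge nicely; the cost functional is then insensitive to this reduction up to the factor $\exp(-Kt)$, which cancels in the identity being proved.

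\textbf{The inequality $U(x,t)\leq \inf_{\cT(x,t)}\{\cdots\}$.} Fix a trajectory $(X,T,D,L)\in\cT(x,t)$ and $\theta>0$. Set $(x',t')=(X(\theta),T(\theta))$. For any $\eta>0$ pick a trajectory $(X',T',D',L')\in\cT(x',t')$ that is $\eta$-optimal for $U(x',t')$. Concatenate: follow $(X,T,D,L)$ on $[0,\theta]$ and then append the shifted trajectory $(X',T',D',L')(\cdot-\theta)$ afterwards, taking care that the discount and cost accumulators $D,L$ are continued additively (i.e. the new $D$ on $[\theta,+\infty)$ is $D(\theta)+D'(\cdot-\theta)$, and similarly for $L$). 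By Theorem~\ref{thm:existence.traj}\,$(ii)$ this concatenation is again a solution of the differential inclusion, it starts at $(x,t)$, stays in $\R^N\times[0,T]$, hence lies in $\cT(x,t)$. Evaluating the cost functional on it and splitting the integral at $s=\theta$ gives
$$U(x,t)\leq \int_0^\theta l\big(X(s),T(s)\big)\exp(-D(s))\ds + \exp(-D(\theta))\Big(U(x',t')+\eta\Big)\,,$$
where the factor $\exp(-D(\theta))$ comes from the additive continuation of $D$. Since $\eta>0$ is arbitrary and the trajectory $(X,T,D,L)$ was arbitrary in $\cT(x,t)$, taking the infimum yields the ``$\leq$'' direction.

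\textbf{The inequality $U(x,t)\geq \inf_{\cT(x,t)}\{\cdots\}$.} Fix $\eta>0$ and choose $(X,T,D,L)\in\cT(x,t)$ that is $\eta$-optimal for $U(x,t)$. Its restriction to $[\theta,+\infty)$, re-based at $s=0$ (i.e. $s\mapsto(X,T)(s+\theta)$ with the accumulators restarted at $0$), is a trajectory in $\cT\big(X(\theta),T(\theta)\big)$, so by definition of $U$ at that point,
$$\int_\theta^{+\infty} l\big(X(s),T(s)\big)\exp\!\big(-(D(s)-D(\theta))\big)\ds \;\geq\; U\big(X(\theta),T(\theta)\big)\,.$$
Multiplying by $\exp(-D(\theta))$ and adding the piece on $[0,\theta]$ gives
$$U(x,t)+\eta \;\geq\; \int_0^\theta l\big(X(s),T(s)\big)\exp(-D(s))\ds + U\big(X(\theta),T(\theta)\big)\exp(-D(\theta)) \;\geq\; \inf_{\cT(x,t)}\Big\{\cdots\Big\}\,,$$
and letting $\eta\to 0$ concludes.

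\textbf{Where the real work is.} The genuinely delicate point, and the one I would spend most care on, is the concatenation/restriction argument at the level of differential inclusions: one must check that gluing two Lipschitz solutions at a time $\theta$ (respectively, restricting and re-basing one) again produces a Lipschitz solution of \eqref{eq:diff.inc} with the accumulators $D,L$ continued additively, and — crucially — that the glued trajectory still satisfies the state constraint $(X(s),T(s))\in\R^N\times[0,T]$ for all $s>0$, which is exactly the defining property of membership in $\cT(x,t)$. Here the hypotheses \HBCLb (in particular $-1\leq b^t\leq 0$, so $T$ is nonincreasing, and the existence of stopping/zero-time elements) are what guarantee the constraint is preserved and that $\cT$ at the intermediate point is nonempty. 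The measurable-selection bookkeeping — that the $(b,c,l)(\cdot)$ of the concatenation is the concatenation of the selections, a.e. — is routine given Theorem~\ref{thm:existence.traj} but should be stated. Everything else (splitting integrals, the role of $\exp(-D(\theta))$ as the natural multiplicative cocycle coming from the additivity of $D$, the $\eta$-optimal trajectory selections) is standard once this is in place.
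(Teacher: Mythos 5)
Your proof is correct and is exactly the standard concatenation/restriction argument that the paper itself alludes to (Theorem~\ref{DPP} is stated without proof, the authors having indicated in the sketch of Theorem~\ref{MBR:C-HJ} that the DPP ``is done by using the very definition of $U$ and taking suitable controls''). The only quibble is that the state constraint for the glued trajectory is automatic from membership of both pieces in their respective $\cT$'s rather than a consequence of \HBCLb — what \HBCLb (via Lemma~\ref{u-bound}) actually supplies is the nonemptiness of $\cT\big(X(\theta),T(\theta)\big)$ and the finiteness of $U$ there, which your argument does use.
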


\begin{proof}
    Let us denote by $J_\theta(X,T,D,L)$ the integral over $(0,\theta)$ inside the $\inf$ and by
    $\hat U(x,t)$ the complete right-hand side, while $U(x,t)=\inf_{\cT(x,t)}J(X,T,D,L)$ and
    $J(\cdot)$ stands for the integral over $(0,+\infty)$. 

    \smallskip

    \noindent\textbf{(a)} Let us prove that $U\leq\hat U$.
    We first take any trajectory $(X,T,D,L)\in\cT(x,t)$. Then, noting 
    $(x_\theta,t_\theta):=(X(\theta),T(\theta))$, we select an $\eps$-optimal trajectory
    $(X^\e,T^\e,D^\e,L^\e)\in\cT(x_\theta,t_\theta)$, in the sense that
    $$U(x_\theta,t_\theta)\leq J(X^\e,T^\e,D^\e,L^\e)+\eps\;.$$
    We then construct a new trajectory in $\cT(x,t)$ by setting 
    $$(\hat X,\hat T,\hat D,\hat L)(s):=
    \begin{cases}
        (X,T,D,L)(s) & \text{ if } 0\leq s\leq \theta\;,\\
        (X^\e,T^\e,D^\e+D(\theta),L^\e+L(\theta))(s-\theta) & \text{ if }s>\theta\;.
    \end{cases}$$
    Using the definition of $U(x,t)$ we get
    $$\begin{aligned}
        U(x,t) &\leq  J(\hat X,\hat T,\hat D,\hat L)\\ 
         &\leq J_\theta(X,T,D,L)+\int_\theta^{+\infty} 
        l(X^\e(s-\theta),T^\e(s-\theta))\exp(- D^\e(s-\theta)-D(\theta))\ds\\
        &\leq J_\theta(X,T,D,L)+\exp(-D(\theta))\int_0^{+\infty}l(X^\e(s),T^\e(s))
        \exp(- D^\e(s))\ds\\
        & \leq J_\theta(X,T,D,L)+\exp(-D(\theta))(U(X(\theta),T(\theta))+\eps)
    \end{aligned}$$
    Notice that the trajectory $(X,T,D,L)\in\cT(x,t)$ is arbitrary and does not depend on $\eps$.
    Hence, taking the infimum over $\cT(x,t)$ and sending $\eps$ to zero, we conclude that indeed
    $U\leq \hat U$.

    \smallskip

    \noindent\textbf{(b)} The converse inequality follows from similar manipulations: let us take an
    $\e$-optimal trajectory $(X^\e,T^\e,D^\e,L^\e)\in\cT(x,t)$ for estimating $U(x,t)$. 
    After separating the integral in
    two parts and changing variable $s\mapsto s-\theta$ in the second part we get
    \begin{equation}\label{ineq.vf.hatu}
    \begin{aligned}
        U(x,t)+\eps \geq & \ J_\theta(X^\e,T^\e,D^\e,L^\e)\\ & \ + \int_0^{+\infty}
        l(X^\e(s+\theta),T^\e(s+\theta))\exp(-D^\e(s+\theta))\ds\;.
    \end{aligned}
    \end{equation}
    The trajectory 
    $(X,T,D,L)(s):=(X^\e,T^\e,D^\e,L^\e)(s+\theta)-(0,0,D^\e(\theta),L^\e(\theta))$ belongs
    to $\cT(X^\e(\theta),T^\e(\theta))$, and \eqref{ineq.vf.hatu} can be written as
    \begin{align*}
        U(x,t)+\eps \geq &\  J_\theta(X^\e,T^\e,D^\e,L^\e)\\ 
        & \ + \exp(-D^\e(\theta))\int_0^{+\infty}
        l(X(s),T(s))\exp(-D(s))\ds\,;\\[2mm]
         \geq &\ J_\theta(X^\e,T^\e,D^\e,L^\e) + \exp(-D^\e(\theta))J(X,T,D,L)\;.
    \end{align*}
    Now, using $(X,T,D,L)$ as an admissible trajectory starting at $(X^\e(\theta),T^\e(\theta))$
    we use the estimate 
    $$U(X^\e(\theta),T^\e(\theta))\leq J(X,T,D,L)\,$$
    to get the inequality
    $$U(x,t)+\eps\geq
    J_\theta(X^\e,T^\e,D^\e,L^\e)+\exp(-D^\e(\theta))U(X^\e(\theta),T^\e(\theta))\;.$$
    Finally, $\hat U(x,t)$ being the infimum of all trajectories in $\cT(x,t)$, 
    the right-hand side is greater than or equal to $\hat U(x,t)$ and the conclusion follows.
\end{proof}


\subsection{The value function is an Ishii solution}
\label{subsec:vf.ishii}

Following the definition recalled in Section~\ref{IS-BE}, we first prove the 
Following Definition~\ref{def:sub.sup.gen}, we first prove the 
\begin{theorem}\label{SP}\emph{--- Supersolution Property.}\smsp 
    Under assumption \HBCL, the value function $U$ is a viscosity supersolution of the Bellman
    equation~\eqref{eq:bellmann.0}.
\end{theorem}

\begin{proof} 
    We keep here the notation $J_\theta(X,T,D,L)$ introduced in the proof of Proposition~\ref{DPP}
    for the integral over $(0,\theta)$ in the dynamic programming principle. 

    \smallskip
    
    In this proof, we are going to ignore on purpose that we know that $U$ is \lsc on $\R^N\times
    [0,\Tf]$. Therefore, we are going to actually prove that $U_*$ is a supersolution. The reason
    to do so is to show that the proof of this property is robust and does not require a priori
    the information that $U$ is \lsc

    Let $(x,t)\in\R^N\times[0,\Tf]$ be a local minimum point of $U_*-\phi$ where $\phi\in
    C^1(\R^N\times[0,\Tf])$. We can assume without loss of generality that $U_*(x,t)=\phi(x,t)$. In
    particular, $U\geq U_*\geq\phi$ in a neighborhood of $(x,t)$.  Moreover, by definition of the
    lower semi-continuous envelope, there exists a sequence $(x_n,t_n)\to(x,t)$ such that
    $U(x_n,t_n)\to  U_*(x,t)$.
    
    We apply the dynamic programming principle for $U$ at $(x_n,t_n)$: 
    $$U(x_n,t_n)= \inf_{\cT(x_n,t_n)}\Big(J_\theta(X_n,T_n,D_n,L_n) +
    U(X_n(\theta),T_n(\theta))\exp(-D_n(\theta))\Big)\;.$$ 
    On one hand, for the left-hand side, using the definition of the sequence $(x_n,t_n)$, the fact
    that $U_*(x,t)=\phi(x,t)$ and the continuity of $\phi$, there exists a sequence $(\e_n)_n$ of
    non-negative real numbers converging to $0$ such that $U(x_n,t_n)\leq \phi(x_n,t_n)+\e_n$.

    On the other hand, since $|b|\leq M$ is bounded, if $\theta$ is small enough the trajectory
    $(X_n(s),T_n(s))$ remains close enough to $(x,t)$ and we can use the inequalities $U\geq
    U_*\geq\phi$, the last one coming from the local minimum point property. This yields
    \begin{equation}\label{super.prog.dyn.phi}
        \phi(x_n,t_n)+\e_n
        \geq \inf_{\cT(x_n,t_n)}\Big( J_\theta(X_n,T_n,D_n,L_n)
        + \phi(X_n(\theta),T_n(\theta))\exp(-D_n(\theta))\Big)\;.
    \end{equation}

    For simplicity of notations, we set $Z_s:=(X_n(s),T_n(s))$.  Since $\phi$ is $C^1$, the
    following expansion holds 
    \begin{equation}\label{expansion.phi}
        \begin{aligned}
        \phi(Z_\theta)\exp(-D(\theta))-\phi(Z_0) &=
        \int_0^\theta\frac{d}{ds}\Big(\phi(Z_s)\exp(-D_n(s))\Big)\ds \\
        &= \int_0^\theta \Big(b(Z_s)\cdot
        D\phi(Z_s)-c(Z_s)\phi(Z_s)\Big)\exp(-D_n(s))\ds\;.
     \end{aligned}\end{equation}
     Combining with \eqref{super.prog.dyn.phi} yields
    $$\begin{aligned}
        0\geq &\inf_{\cT(x_n,t_n)} \int_0^\theta \Big\{ b(Z_s)\cdot D\phi(Z_s) -c(Z_s)\phi(Z_s)+
        l(Z_s)\Big\}\exp(-D_n(s))\ds-\e_n\;,\\ \geq & \int_0^\theta
        -\F(X_n(s),T_n(s),\phi(X_n(s),T_n(s)),D\phi(X_n(s),T_n(s)))\exp(-D_n(s))\ds -\e_n\;.
    \end{aligned}$$
    Since $\theta$ is arbitrary, we can choose a sequence $\theta_n$ in order that
    $\e_n\theta_n^{-1}\to 0$.  We remark that
    $(X_n(s),T_n(s),\phi(X_n(s),T_n(s)),D\phi(X_n(s),T_n(s)) \to (x,t, \phi(x,t),D\phi(x,t))$.
    Therefore, if $\delta >0$ is fixed and small, provided $n$ is large enough we have 
    $$\F(X_n(s),T_n(s),\phi(X_n(s),T_n(s)),D\phi(X_n(s),T_n(s)) \leq 
    \F(x,t, \phi(x,t),D\phi(x,t)) +\delta\; .$$ 
    In addition, $\exp(-D_n(s))=1+O(\theta_n)$; so,
    using all these informations in the above inequality, we deduce that 
    $$ 0\geq \theta_n\left( -\F(x,t,\phi(x,t),D\phi(x,t))-\delta (1+O(\theta_n)) \right)-\e_n\; .$$
    Dividing by $\theta_n$ and letting $n$ tend to infinity, we obtain
    $\F(x,t,\phi(x,t),D\phi(x,t))+\delta \geq0$ and this inequality being true for any
    $\delta>0$, the result is proved.
\end{proof}

Now we turn to the subsolution properties and to do so, we first need a result for the \usc
enveloppe of $U$ at $t=0$:
\begin{lemma}\label{VF-usc-init}
    Under assumption \HBCL, we have, for any $x\in \R^N$
    $$ U^*(x,0) = \limsup_{y\to x} U(y,0)\; .$$
    In other word, the \usc enveloppe of $U$ at points $(x,0)$ can be computed by using only $U$ on
    $\R^N\times\{0\}$.  
\end{lemma}
\begin{proof}
    By definition of $U^*$, there exists a sequence $(x_\e,t_\e) \to (x,0)$ such
    that  $U(x_\e,t_\e) \to U^*(x,0)$. Then we apply the dynamic programming principle
    $$U(x_\e,t_\e)= \inf_{\cT(x_\e,t_\e)}\Big(J_\theta(X_\e,T_\e,D_\e,L_\e) +
    U(X_\e(\theta),T_\e(\theta))\exp(-D_\e(\theta))\Big)\;.$$
    We consider a trajectory $(X_\e,T_\e,D_\e,L_\e)$ which is solution of the differential inclusion
    associated with $\BCL_\flat$ defined in the proof of Lemma~\ref{u-bound}, \ie with $b^t=-1$,
    and we use it in the dynamic programming principle with $\theta=t_\e$. Since
    $T_\e(\theta)=T_\e(t_\e)=0$, we obtain
    $$U(x_\e,t_\e)\leq J_{t_\e}(X_\e,T_\e,D_\e,L_\e) + U(X_\e(t_\e),0)\exp(-D_\e(t_\e))\;.$$
    But $J_{t_\e}(X_\e,T_\e,D_\e,L_\e)=O(\te)$ and $\exp(-D_\e(t_\e))=1 +O(t_\e)$, therefore:
    $$ U(x_\e,t_\e) \leq U(X_\e(t_\e),0) +O(t_\e)\; ,$$
    and $U^*(x,0)\leq  \limsup U(x_\e,t_\e)\leq \limsup U(X_\e(t_\e),0)\leq U^*(x,0)$, 
    proving the claim.
\end{proof}

Now we can prove the 
\begin{theorem}\label{thm:SubP}\emph{--- Subsolution Properties.}\smsp 
    Under assumption \HBCL, the value function $U$ is a viscosity subsolution of 
    \begin{equation}\label{sub.vf}
        \F_*(x,t,U, DU) \leq 0 \quad \hbox{on  } \R^N \times]0,\Tf]\; ,
    \end{equation}
    and for $t=0$, it satisfies
    \begin{equation}\label{sub.init.vf}
       (\F_{init})_* (x,U(x,0),D_x U(x,0))\leq 0 \quad\hbox{in  }\R^N,
    \end{equation}
    hence it is a subsolution of \eqref{eq:bellmann.0}.
\end{theorem}

\begin{proof} 
    The proof is more involved than for the supersolution condition, first because we need to
    consider $\F_*$ which a priori differs from $\F$, but also because we face here the potential
    discontinuities of $b,c,l$ with respect to $x,t$. 

    \

    \noindent\textbf{(a)}
    We first prove \eqref{sub.vf}. We consider a maximum point $(x,t)\in\R^N\times]0,\Tf]$ of
    $U^*-\phi$ where $\phi$ is a $C^1$ test-function and, as above, we assume that
    $U^*(x,t)=\phi(x,t)$. By definition of $U^*$,
    there exists a sequence $(x_n,t_n)\to(x,t)$ such that $U(x_n,t_n)\to U^*(x,t)$ and, by the
    continuity of $\phi$, we also have $\phi(x_n,t_n)\leq U(x_n,t_n)+\e_n$ for some sequence
    $(\e_n)_n$ of non-negative real numbers converging to $0$.
 
    Applying the dynamic programming principle, we have 
    $$ U(x_n,t_n)=\inf_{\cT(x_n,t_n)} \Big(J_\theta(X,T,D,L))+
    U(X(\theta),T(\theta))\exp(-D(\theta))\Big)\;.$$ 
    If $\theta>0$ is small enough, the maximum point property implies 
    $$U(X(\theta),T(\theta))\leq U^*(X(\theta),T(\theta))\leq\phi(X(\theta),T(\theta))$$
    and therefore we obtain
    $$\phi(x_n,t_n)-\e_n \leq \inf_{\cT(x_n,t_n)}
    \Big(J_\theta(X,T,D,L))+ \phi(X(\theta),T(\theta))\exp(-D(\theta))\Big)\;.$$ 
    Using expansion \eqref{expansion.phi}---here also with the notation $Z_s=(X(s),T(s))$---leads to
    \begin{equation}\label{ineq.theta.n}
    \int_0^\theta \Big(-b(Z_s)D\phi(Z_s)+c(Z_s)\phi(Z_s)-l(Z_s)\Big)
    \exp(-D(s))\ds\leq\e_n\;,
    \end{equation}
    for any trajectory $(X,T,D,L)\in\cT(x_n,t_n)$.

    In order to conclude, we have to show that, for any $n$, we can choose a trajectory
    $(X,T,D,L)_n\in\cT(x_n,t_n)$ such that the integral is close to
    $\F_*(x,t,\phi(x,t),D\phi(x,t))$.

    \

    \noindent\textbf{(b)}
    To do so, we are going to solve a suitable differential inclusion for a set-valued map that we
    build in the following way.  We  consider the auxiliary function $h_\phi(b,c,l):=-b\cdot
    D\phi(x,t)+c\phi(x,t)-l$ and for $\delta>0$, we define a restricted set-valued map for $(y,s)$
    in a neighborhood of $(x,t)$ as follows
    $$
    \BCLloc(y,s):=\BCL(y,s)\cap \Big\{h_\phi(b,c,l)\geq
    \F_*(x,t,\phi(x,t),D\phi(x,t))-\delta\,\Big\}\;.
    $$
    We claim that $\BCLloc$ is not empty and satisfies \HBCLa, at least for $(y,s)$ close enough to
    $(x,t)$.
    
    Indeed, if on the contrary, $\BCLloc (y_n,s_n)$ is empty for some sequence $(y_n,s_n)\to(x,t)$,
    this means that, for any $(b,c,l)\in\BCL(y_n,s_n)$, we have  
    $$h_\phi(b,c,l)<\F_*(x,t,\phi(x,t),D\phi(x,t))-\delta\;,$$
    which implies that 
    $$\F(y_n,s_n,\phi(x,t),D\phi(x,t))=\sup_{(b,c,l)\in\BCL(y_n,s_n)}h_\phi(b,c,l)\leq
    \F_*(x,t,\phi(x,t),D\phi(x,t))-\delta\;.$$
    But using the lower semi-continuity of $\F_*$ we are led to a contradiction since
    $$\begin{aligned}
        \F_*(x,t,\phi(x,t),D\phi(x,t)) &\leq \liminf_{n\to\infty}
        \F(y_n,s_n,\phi(x,t),D\phi(x,t))\\
        & \leq\F_*(x,t,\phi(x,t),D\phi(x,t))-\delta\;.
    \end{aligned}$$
    Concerning the images $\BCLloc(y,s)$, they are clearly convex and compact from the properties
    of $\BCL$ and the fact that the set $\{h_\phi\geq \alpha\}$ is closed and convex.  Moreover, the
    \usc property derives from the fact that $\BCL$ is \usc while $\{h_\phi\geq\delta\}$ is a fixed
    set. 

    \

    \noindent\textbf{(c)}
    Hence we can solve the differential inclusion associated to $\BCLloc\subset\BCL$ with initial
    data $(x_n,t_n)$ on a small time interval $(0,\theta)$. For this specific trajectory, up to
    taking $\theta$ smaller and $n$ larger, using that $\phi$ is $C^1$, we get for $s\in[0,\theta]$
    $$\begin{aligned}
      -b(Z_s)\cdot D\phi(Z_s)+c(Z_s)\phi(Z_s) -l(Z_s)&= h_\phi(b(Z_s),c(Z_s),l(Z_s)) + O(\theta)\\
        & \geq \F_*(x,t,\phi(x,t),D\phi(x,t))-\delta + O(\theta)\;.
    \end{aligned}$$
    Plugging this into \eqref{ineq.theta.n}, using also that $\exp(-D(s)) = 1 + O(\theta)$, we get
    $$
    \theta\bigl( \F_*(x,t,\phi(x,t),D\phi(x,t))-\delta+ O(\theta)\bigr)(1 + O(\theta))\leq \e_n\;.
    $$
    To conclude, we send $n\to\infty$ and then we divide by $\theta$ and we send it to $0$.  We end
    up with the inequality $\F_*(x,t,\phi(x,t),D\phi(x,t))\leq \delta$ for any $\delta>0$ and
    therefore $\F_*(x,t,\phi(x,t),D\phi(x,t))\leq 0$.
    
    \

    \noindent\textbf{(c)}
    Now we turn to \eqref{sub.init.vf}, which is treated by the same technique as above, using 
    Lemma~\ref{VF-usc-init}: the same proof as above readily applied since we can choose $t_n=0$ and
    therefore, in the definition of $\BCLloc$, we can consider only the $b$ such that $b^t=0$ and
    replace $\F_*$ by $(\F_{init})_*$. Indeed, any relevant trajectory starting from $(x_n,0)$
    necessarily satisfies $b^t(Z_s)=0$.
\end{proof}

As we shall see later on in this book, Ishii solutions are not unique in general in the
presence of discontinuities. Nevertheless, we prove below that $U$ is the minimal one, see
Corollary~\ref{VFm-minsup}, and we will explain later on several ways in which we can recover some
uniqueness.

\section{Supersolutions of the Bellman Equation}

\subsection{The super-dynamic programming principle}

We prove here that supersolutions always satisfy a super-dynamic
programming principle. Again, we remark that this result is independent of the possible
discontinuities for the dynamic, discount factor and cost. But to prove it, we need an additional
ingredient in which we assume that we have already used Lemma~\ref{c-posit} to reduce to
the case when $c\geq 0$.
\begin{lemma}\label{subsol-chi}
    Assume \HBCLb holds and let $\chi(t)=-K (t+1)$ for $K>0$ large enough. Then, for
    any $(x,t) \in \R^N\times [0,\Tf]$ and any  $(b,c,l)\in \BCL(x,t)$,
    $$ - b\cdot D\chi(t)+c\chi(t)-l \leq - \uc  < 0 \;.$$
\end{lemma}
\begin{proof} This is just obtained by direct computation: $-b\cdot D\chi(t)=Kb^t\leq0$ 
    while $c\chi(t)-l\leq-Kc-l$. By taking $K\geq (\uc+l)/c$, we get the result.
\end{proof}

Lemma~\ref{subsol-chi}, which is valid both for $t>0$ and $t=0$, provides a very classical
property: the underlying HJB Equation has a strict subsolution, which is a key
point in comparison results. Of course, in this time-dependent case, one could say
that such property is obvious. But we are not completely in a standard time-dependent case since we
recall that $b^t=0$ is allowed potentially for any $t\geq0$.

Our next result is the\index{Dynamic Programming Principle!for supersolutions}
\begin{lemma}\label{lem:super.dpp}
    Under assumption \HBCL, if $v$ is a bounded \lsc supersolution of \eqref{eq:super.H}  in
    $\R^N\times(0,\Tf]$, then, for any $(\xb,\tb)\in\R^N\times(0,\Tf]$ and any $\sigma >0$,
    \begin{equation}\label{ineq:super.dpp}
        v(\xb,\tb)\geq
        \inf_{\cT(\xb,\tb)}
        \Big\{\int_0^{\sigma}
        l\big(X(s),T(s)\big)\exp(-D(s)) \ds+v\big(X(\sigma),T(\sigma)\big)\exp(-D(\sigma))\Big\}
    \end{equation}
\end{lemma}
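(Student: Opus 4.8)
The plan is to combine a reduction to a positive discount factor with a ``greedy'' construction of a near-optimal trajectory, the engine being a one-step version of \eqref{ineq:super.dpp} extracted from the viscosity supersolution property. First I would reduce to the case $c\geq 0$: by Lemma~\ref{c-posit}, replacing $v$ by $\tilde v(x,t):=\exp(-Kt)v(x,t)$ — which is again bounded and lsc — transforms \eqref{eq:super.H} into the analogous equation associated with the set-valued map in which $c$ is changed into $c-Kb^t\geq 0$ (the sign coming from \HBCLb-$(ii)$); since $\tilde v$ is a supersolution of the new equation and the cost functionals are related by $\exp(-Kt)J(X,T,D,L)=J(\tilde X,\tilde T,\tilde D,\tilde L)$, proving \eqref{ineq:super.dpp} for $v$ is equivalent to proving it after this change. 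So I assume $c\geq 0$ for all $(b,c,l)\in\BCL(x,t)$; then $s\mapsto D(s)$ is nondecreasing along any trajectory, so $0\leq\exp(-D(s))\leq 1$. Denoting by $m$ the right-hand side of \eqref{ineq:super.dpp} (an infimum over $\cT(\xb,\tb)$), it then suffices to show that for every $\rho>0$ there is $(X,T,D,L)\in\cT(\xb,\tb)$ with
$$v(\xb,\tb)+\rho\sigma\ \geq\ \int_0^\sigma l\big(X(s),T(s)\big)\exp(-D(s))\,\ds+v\big(X(\sigma),T(\sigma)\big)\exp(-D(\sigma))\,,$$
since this yields $v(\xb,\tb)\geq m-\rho\sigma$ and one lets $\rho\to 0$. (A trajectory segment defined on $[0,\sigma]$ extends to an element of $\cT(\xb,\tb)$ exactly as in the proof of Lemma~\ref{u-bound}, so producing the segment on $[0,\sigma]$ is enough.)

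The core step is a local estimate: for every $(x_0,t_0)\in\R^N\times[0,T]$ and every $\rho>0$ there exist $h>0$ and a trajectory on $[0,h]$ issued from $(x_0,t_0,0,0)$, staying in $\R^N\times[0,T]$, with $v(x_0,t_0)+\rho h\geq\int_0^h l\exp(-D)\,\ds+v(X(h),T(h))\exp(-D(h))$. I would prove this by contradiction: if it fails at some $(x_0,t_0)$ for some $\rho>0$, every short admissible trajectory satisfies the reverse strict inequality, and using the lower semicontinuity of $v$ together with the fact that $\BCL$ is upper semicontinuous with convex compact images (so that the relaxed dynamics realize the $\sup$ defining $\F$), one extracts a proximal subgradient $q$ of $v$ at $(x_0,t_0)$ — equivalently a $C^1$ function $\phi$ touching $v$ from below there with $D\phi(x_0,t_0)=q$ — for which $\F(x_0,t_0,v(x_0,t_0),q)<0$, contradicting the supersolution inequality. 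The strict subsolution $\chi(t)=-K(t+1)$ of Lemma~\ref{subsol-chi} is convenient to localize this argument in a small cylinder, and at $t_0=0$ one uses \HBCLb-$(iii)$, which lets the trajectory simply stop.

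Finally I would concatenate. Fix $\rho>0$ and let $\theta^*$ be the supremum of those $\tau\in[0,\sigma]$ for which some trajectory on $[0,\tau]$ from $(\xb,\tb)$ satisfies $v(\xb,\tb)+\rho\tau\geq\int_0^\tau l\exp(-D)\,\ds+v(X(\tau),T(\tau))\exp(-D(\tau))$; the set is nonempty ($\tau=0$). The uniform bound $|\dot X|,|\dot T|,|\dot D|,|\dot L|\leq M$ from \HBCLa-$(ii)$, Ascoli's theorem, and the closure theorem for differential inclusions with upper semicontinuous convex compact right-hand side (Filippov; Aubin--Cellina) show that a maximizing sequence of trajectories converges uniformly on $[0,\theta^*]$ to a trajectory, and the lower semicontinuity of $v$ (with $\exp(-D(\cdot))>0$) shows the inequality persists at $\tau=\theta^*$, so the supremum is attained. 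If $\theta^*<\sigma$, applying the local estimate at $(X(\theta^*),T(\theta^*))$ and gluing the new segment (shortened so as not to overshoot $\sigma$), the bound $\exp(-D(\theta^*))\leq 1$ gives the inequality at some $\tau>\theta^*$, a contradiction; hence $\theta^*=\sigma$ and the corresponding trajectory is the one required. The main obstacle is precisely this local estimate: converting a dynamic-programming inequality into a pointwise viscosity inequality for the merely lower semicontinuous $v$, which is where the structure of $\BCL$ and the strict subsolution $\chi$ are genuinely used, with some extra care needed on the boundary $\{t=0\}$ where only \HBCLb-$(iii)$ is available.
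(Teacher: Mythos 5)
Your overall architecture (reduction to $c\geq 0$ via Lemma~\ref{c-posit}, a one-step local estimate, then concatenation via a maximal-time argument with Ascoli and the closure theorem for the differential inclusion) is reasonable, and the concatenation step itself is sound: lower semicontinuity of $v$ does make the inequality persist at $\tau=\theta^*$, and the factor $\exp(-D(\theta^*))\leq 1$ does let you glue a further segment. The problem is that your ``core step'' is precisely the entire difficulty of the lemma, and the argument you sketch for it does not work. If the local estimate fails at $(x_0,t_0)$, you cannot ``extract a proximal subgradient $q$ of $v$ at $(x_0,t_0)$'': a merely lower semicontinuous function has a nonempty (proximal or viscosity) subdifferential only on a dense set of points, not at an arbitrarily prescribed one, and the supersolution inequality is only available at points where $v-\phi$ attains a minimum. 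Worse, even granting a subgradient $q$ at $(x_0,t_0)$, the failed inequality $v(x_0,t_0)+\rho h<\int_0^h l\,e^{-D}\ds+v(X(h),T(h))e^{-D(h)}$ places $v(X(h),T(h))$ on the larger side, so the subgradient inequality $v(X(h),T(h))\geq v(x_0,t_0)+q\cdot(X(h)-x_0,T(h)-t_0)+o(h)$ points in the useless direction: you would need an \emph{upper} expansion of $v$ along the trajectory (a supergradient, or differentiability) to convert the failure into $\F(x_0,t_0,v(x_0,t_0),q)<0$. The super-optimality principle for lsc supersolutions is genuinely a forward construction (weak invariance / proximal aiming, where the supersolution inequality is used only at the dense set of subdifferentiability points), not a pointwise contradiction.

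The paper circumvents this entirely by a regularization-and-comparison route: it enlarges $\BCL$ into $\BCL_\delta$ with a Lipschitz penalization of the distance to the graph of $\BCL$, so that the resulting Hamiltonians $\F_\delta\geq\F$ are globally Lipschitz and decrease to $\F$; it approximates $v$ from below by an increasing sequence of continuous $v_\delta$; it defines the value function $u_\delta$ of the regularized exit-time problem with terminal cost $v_\delta$, which is continuous and solves $\F_\delta=0$; it proves $u_\delta\leq v$ by a standard doubling-of-variables comparison made strict via the subsolution $\chi$ of Lemma~\ref{subsol-chi}; and it finally sends $\delta\to0$ along $\delta$-optimal trajectories, using the penalization term to show the limiting trajectory solves the original inclusion. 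If you want to keep your trajectory-by-trajectory scheme you would need to replace your core step by the full proximal-aiming machinery of Clarke--Ledyaev--Stern--Wolenski; as written, the local estimate is asserted rather than proved.
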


\begin{proof} 
    To begin with, because of Lemma~\ref{c-posit} we can assume that $c\geq 0$ for any $(b,c,l) \in
    \BCL(x,t)$ and for any $(x,t)$.  Fixing $(\xb,\tb)$ and $\sigma>0$, we argue through a
    three-step proof involving a regularization procedure and comparison result in the compact
    domain $$\Kxt:=\overline{B(\xb,M\sigma)}\times [0,\tb]\;,$$ 
    where $M$ is given by \HBCLa. 

    \medskip

    \noindent\textsc{Step 1: regularization ---}
    We consider a sequence of regularized Hamiltonians using the penalization function 
    $$\psi(b,c,l,x,t)=\inf_{(y,s) \in \R^N\times [0,\Tf]} 
    \Big(\dist\big((b,c,l),\BCL(y,s)\big)+ |y-x|+|t-s|\Big) \; , $$
    where $\dist(\cdot,\BCL(y,s))$ denotes the distance to the set $\BCL(y,s)$.
    We notice that $\psi$ is Lipschitz continuous and that $\psi(b,c,l,x,t)=0$ if
    $(b,c,l)\in\BCL(x,t)$. Then we set 
    $$\F_\delta(x,t,r,p):=\sup_{(b_\delta ,c_\delta,l_\delta )\in \BCL_\delta (x,t)}
    \big\{ -b_\delta \cdot p +c_\delta  r -l_\delta  \big\}  \;,$$
    where $\BCL_\delta (x,t)$ is the set of all $(b_\delta ,c_\delta ,l_\delta)
    \in\R^{N+1}\times\R\times\R$ such that $|b_\delta^x|\leq M$,
    $-1 \leq b_\delta ^t\leq 0 $, $0\leq c_\delta \leq M$ 
    and 
    $$l_\delta= l + \delta^{-1}\psi\Big(b_\delta ,c_\delta ,l, x,t\Big)\quad
    \text{for some } |l| \leq M\;.$$
    This sequence of Hamiltonians enjoys the following straightforward properties:
    \begin{enumerate}
        \item[$(i)$] for any $\delta >0$, $\F_\delta\geq \F $ and therefore $v$ is a \lsc supersolution
    of $\F_\delta\geq 0$\\ on $B(\xb,M\sigma) \times (0,t]$;
    \item[$(ii)$] the Hamiltonians $\F_\delta$  are (globally) Lipschitz continuous w.r.t.  all variables;
    \item[$(iii)$] $\F_\delta \downarrow \F$ as $\delta \to 0$, all the other variables being fixed.
    \end{enumerate}

    \smallskip

    On the other hand, $v$ being \lsc on $\Kxt$, there exists an increasing sequence
    $(v_\delta)_\delta$ of Lipschitz continuous functions such that $v_\delta \leq v$ and
    $\sup_\delta v_\delta =v$ on $\Kxt$.

    For $(x,t)\in \Kxt$, we now introduce the function
    $$\begin{aligned}
	u_\delta(x,t):=
        \inf \Big\{\int_0^{\sigma \wedge \theta} &
        l_\delta\big(X_\delta(s),T_\delta(s)\big)\exp(-D_\delta (s)) \ds\\
		& + v_\delta \big(X_\delta(\sigma \wedge \theta),T_\delta(\sigma \wedge \theta)\big)\exp(-
        D_\delta(\sigma \wedge \theta))\Big\}\; ,
    \end{aligned}$$
    where $(X_\delta,T_\delta, D_\delta,L_\delta)$ is a solution of the differential inclusion 
    $$\begin{cases}
        & (\dot X_\delta,\dot T_\delta,\dot D_\delta , \dot L_\delta ) (s) \in \BCL_\delta
        (X_\delta(s),T_\delta(s))\;,\\
        & (X_\delta,T_\delta, D_\delta,L_\delta)(0)=(x,t,0,0)\;,
    \end{cases}$$
    the infimum being taken over all trajectories $X_\delta$ which stay in $\overline{B(\xb,M\sigma)}$ 
    till time $\sigma \wedge \theta$ and any stopping time $\theta$ such that either 
    $X_\delta(\theta)$ on $\partial B(\xb,M\sigma)$ or $T_\delta(\theta)=0$. 

    By classical arguments, $u_\delta$ is continuous since all the data involved are continuous,
    $u_\delta\leq v_\delta$ on $(\partial B(\xb,M\sigma) \times [0,\tb])\cup(B(\xb,M\sigma) \times
    \{0\})$ (for the same reason) and $u_\delta$ satisfies 
    $$ \F_\delta(x,t,u,Du)= 0 \quad \hbox{in }B(\xb,M\sigma) \times (0,\tb]\; .$$ 
    Notice that this equation and the one for $v_\delta$ hold up to time $\tb$, as a consequence of
    the fact that $b^t\leq 0$ for all $b\in \B(x,t)$ and all $(x,t)$.

    \medskip

    \noindent\textsc{Step 2: comparison for the approximated problem ---}
    In order to show that $u_\delta\leq v$ in $\Kxt$ we argue by contradiction assuming that
    $\max_{\Kxt}(u_\delta - v) >0$.

    We consider the function $\chi$ given by Lemma~\ref{subsol-chi}: using the definition of
    $l_\delta$, it is easy to show that 
    $$ \F_\delta(x,t,\chi,D\chi)\leq -\uc < 0 \quad \hbox{in }B(\xb,M\sigma) \times (0,\tb]\; ,$$
    and, by convexity, for any $0<\mu<1$, $u_{\delta,\mu}=\mu u_\delta +(1-\mu)\chi$ is a
    subsolution of 
    $$ \F_\delta(x,t,u_{\delta,\mu},Du_{\delta,\mu})\leq -(1-\mu)\uc < 0 \quad \hbox{in
    }B(\xb,M\sigma) \times (0,\tb]\; .$$

    Moreover, if $\mu<1$ is close enough to $1$, we still have $\max_{\Kxt}(u_{\delta,\mu} - v) >0$
    and we can choose $K$ large enough in order to have $u_{\delta,\mu} \leq v_\delta$ on $(\partial
    B(\xb,M\sigma) \times [0,\tb])\cup(B(\xb,M\sigma)
    \times \{0\}$.

    If $(\tilde x,\tilde t)\in \Kxt$ is a maximum point of
    $u_{\delta,\mu} - v$, we remark that $(\tilde x,\tilde t)$ cannot be on $(\partial
    B(\xb,M\sigma) \times [0,\tb])\cup(B(\xb,M\sigma) \times \{0\})$ since on these parts of the
    boundary $u_{\delta,\mu} \leq v$. 

    Now we perform the standard proof using the doubling of variables with the test-function 
    $$u_{\delta,\mu} (x,t)-v(y,s)-\frac{|x-y|^2}{\epsilon^2}-\frac{|t-s|^2}{\epsilon^2}-
    (x-\tilde x)^2-(t-\tilde t)^2\;.$$
    By standard arguments, see Lemma~\ref{lem:cv-pen}, this function has a maximum point
    $(\xe,\te,\ye,\se)$ which converges to $(\tilde x,\tilde t,\tilde x,\tilde t)$ since $(\tilde
    x,\tilde t)$ is a strict global maximum point of  $(y,s) \mapsto u_{\delta,\mu}
    (y,s)-v(y,s)-(y-\tilde x)^2-(s-\tb)^2$ in $\Kxt$.

    We use now the $ \F_\delta$-supersolution inequality for $v$, the strict subsolution inequality
    for $ u_{\delta,\mu}$ and the regularity of $\F_\delta$ together with the fact that $c\geq 0$
    for all $(b,c,l)\in \BCL(y,s)$ [or $\BCL_\delta (y,s)$] and any $(y,s) \in
    \Kxt$. We are led to the inequality $$o(1) \leq
    -(1-\mu)\exp(-K\tb) \eta<0\;,$$ which yields a contradiction. Sending $\mu\to1$, we get that
    $u_\delta\leq v$ in $\Kxt$.

    \medskip

    \noindent\textsc{Step 3: passing to the limit ---} 
    To conclude the proof, we use the inequality $u_\delta (\xb,\tb) \leq
    v(\xb,\tb)$ and we first remark that, in the definition of $u_\delta (\xb,\tb)$, necessarily
    $\sigma \wedge\theta=\sigma $ since the trajectory $X_\delta$ cannot exit $B(\xb,M\sigma)$
    before time $\sigma$. Then, in order to let $\delta$ tend to $0$ in this inequality,
    we pick a $\delta$-optimal trajectory $(X_\delta,T_\delta, D_\delta,L_\delta)$.

    By the uniform bounds on  $(\dot X_\delta,\dot T_\delta, \dot D_\delta,\dot  L_\delta),$
    Ascoli-Arzela's Theorem implies that up to the extraction of a subsequence, we may assume that
    $(X_\delta,T_\delta, D_\delta,L_\delta)$ converges locally uniformly on $[0,+\infty)$ 
    to some $(X,T,D,L)$. We may also assume that their derivatives converge in $L^\infty$ weak-*
    topology (in particular $\dot L_\delta =l_\delta$).

    Using the $\delta$-optimal trajectory for approching $u_\delta$ leads to
    \begin{equation}\label{spdd}
        \begin{aligned}
            v(\xb,\tb) \geq 
            \int_0^{\sigma} & l_\delta\big(X_\delta(s),T_\delta(s)\big)\exp(-D_\delta (s)) \ds\\
            & +v_\delta \big(X_\delta(\sigma),T_\delta(\sigma)\big)\exp(-D_\delta(\sigma))
            -\delta\;,
        \end{aligned}
    \end{equation}
    an inequality that we use in two ways. 

    First, by multiplying by $\delta$ and using that $v$ and $v_\delta$ are bounded. 
    Writing $Z_s=(X_\delta(s),T_\delta(s))$ for simplicity, we obtain
    $$
        \int_0^{\sigma} \psi\Big(b_\delta (Z_s),c_\delta (Z_s),l_\delta (z_s),
        X_\delta(s),T_\delta(s)\Big)\exp(-D_\delta (s))ds = O(\delta)\;.
    $$
    By classical results on weak convergence, since the functions $(b_\delta,c_\delta,l_\delta)$
    converge weakly to $(b,c,l)$, there exists $\mu_s \in L^\infty \big(0,t;\mathbb{P}(B(0,M)\times
    [-M,M]^2\big))$ where $\mathbb{P}(B(0,M)\times [-M,M]^2)$ is the set of probability measures on
    $B(0,M)\times [-M,M]^2$ such that, taking into account the uniform convergence of $X_\delta,
    T_\delta$ and $D_\delta$, we have
    $$
    \int_0^{\sigma} \int_{B(0,M)\times [-M,M]^2} \psi\Big(b,c,l,
    X(s),T(s)\Big)\exp(-D (s))\d\mu_s(b,c,l) \ds = $$
    $$\lim_{\delta\to0} \int_0^{\sigma} \psi\Big(b_\delta (s),c_\delta (s),l_\delta (s),
    X_\delta(s),T_\delta(s)\Big)\exp(-D_\delta (s))ds = 0\; .$$
    We remark that $\psi \geq 0$ and $\psi(b,c,l,x,t)=0$ if and only if $(b,c,l)\in
    \BCL(x,t)$, therefore $(X,T,D,L)$ is a solution of the $\BCL$-differential inclusion.

    Second, we come back to \eqref{spdd} after recalling that $\psi$ is nonnegative, which implies that
    $l_\delta\big(X_\delta (s),T_\delta(s)\big) \geq l\big(X_\delta (s),T_\delta(s)\big)$ and 
    therefore
    $$
        \int_0^{\sigma} l \big(X_\delta(s),T_\delta(s)\big) \exp(-D_\delta (s)) \ds+v_\delta
        \big(X_\delta(\sigma),T_\delta(\sigma)\big)\exp(-D_\delta(\sigma)) -\delta \leq v(x,t)\;.
    $$
    We pass to the limit in this inequality using the lower-semicontinuity of $v$, together with
    the uniform convergence of $X_\delta,T_\delta,D_\delta$ and the dominated convergence theorem
    for the $l$-term. In particular,
    $$
    \liminf_{\delta\to0} \Big(v_\delta \big(X_\delta(\sigma),T_\delta(\sigma)\big)\Big)\geq
    v\big(X(\sigma),T(\sigma)\big)\;,
    $$
    which yields 
    $$
    \int_0^{\sigma} l(X(s),T(s)) \exp(-D (s))\ds+v\big(X(\sigma),T(\sigma)\big)\exp(-D(\sigma)) \leq
    v(\xb,\tb)\; .
    $$
    Finally, recalling that $(X,T,D,L)$ is a solution of the $\BCL$-differential inclusion, taking
    the infimum in the left-hand side over all solutions of this differential inclusion gives the
    desired inequality.
\end{proof}

\subsection{The value function is the minimal supersolution}

An easy consequence of Lemma~\ref{lem:super.dpp} is the
\begin{corollary}\label{VFm-minsup}\emph{--- Minimality of the value function.}\smsp
    Under assumption \HBCL, the value function $U$ is the minimal Ishii supersolution of
    \eqref{eq:super.H}.  
\end{corollary}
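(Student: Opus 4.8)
This corollary follows almost immediately from Lemma~\ref{lem:super.dpp}, so the plan is short. First I would recall that Theorem~\ref{SP} already tells us that the value function $U$ is itself a (bounded, by Lemma~\ref{u-bound}) viscosity supersolution of \eqref{eq:super.H} on $\R^N\times[0,T]$; hence $U$ belongs to the class of competitors and it only remains to show that it lies below every other bounded lsc supersolution. So let $v$ be any bounded lsc supersolution of \eqref{eq:super.H} in $\R^N\times(0,T]$; we want $v\geq U$ on $\R^N\times[0,T]$.

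The key step is to apply Lemma~\ref{lem:super.dpp} to $v$ and let the horizon $\sigma$ go to infinity. Fix $(\xb,\tb)\in\R^N\times(0,T]$. For each $\sigma>0$, Lemma~\ref{lem:super.dpp} gives
$$ v(\xb,\tb)\geq \inf_{\cT(\xb,\tb)}\Big\{\int_0^{\sigma} l\big(X(s),T(s)\big)\exp(-D(s))\ds + v\big(X(\sigma),T(\sigma)\big)\exp(-D(\sigma))\Big\}\,. $$
Since $v$ is bounded, say $|v|\leq C_v$, the last term is bounded in absolute value by $C_v\exp(-D(\sigma))$. The point is to argue that for trajectories which are near-optimal for the right-hand side, $\exp(-D(\sigma))\to 0$ as $\sigma\to+\infty$; this is exactly the content of Lemma~\ref{u-bound}-$(ii)$ (after the reduction to $c\geq 0$ via Lemma~\ref{c-posit}), applied to a trajectory whose running-cost integral $\int_0^{+\infty} l\exp(-D)\ds$ is finite. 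Combining this with the definition \eqref{eqn:VF} of $U$, one gets, for each $\sigma$, $v(\xb,\tb)\geq U(\xb,\tb) - o(1)$ where the $o(1)$ accounts both for the tail of the integral beyond $\sigma$ (which is small by the $\exp(-D)$ decay together with the boundedness of $l$) and the boundary term; letting $\sigma\to+\infty$ yields $v(\xb,\tb)\geq U(\xb,\tb)$.

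The main obstacle — and the place where the argument must be carried out with some care — is the passage to the limit $\sigma\to+\infty$, because the infimum over $\cT(\xb,\tb)$ in the super-DPP inequality depends on $\sigma$ and one cannot simply use a single fixed trajectory. The clean way is: pick $\eta>0$, choose for each $\sigma$ an $\eta$-optimal trajectory $(X_\sigma,T_\sigma,D_\sigma,L_\sigma)\in\cT(\xb,\tb)$ in the right-hand side; concatenate it with any admissible continuation to produce a full trajectory in $\cT(\xb,\tb)$ whose total cost differs from $\int_0^\sigma l\exp(-D)$ by a controlled error (using that the continuation can be taken with $c\geq\uc$ so the discount term $\exp(-D(\sigma))$ kills the tail contribution, up to $O(\exp(-D(\sigma)))$). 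Then $v(\xb,\tb)\geq \int_0^\sigma l\exp(-D_\sigma)\ds - C_v\exp(-D_\sigma(\sigma)) - \eta \geq U(\xb,\tb) - C\exp(-D_\sigma(\sigma)) - \eta$; and along near-optimal trajectories $D_\sigma(\sigma)\to+\infty$ by the argument behind Lemma~\ref{u-bound}-$(ii)$, so sending $\sigma\to+\infty$ and then $\eta\to0$ gives $v(\xb,\tb)\geq U(\xb,\tb)$. Finally, for $\tb=0$ the inequality is immediate since one can stop at $(\xb,0)$ using Assumption~\HBCLb-$(iii)$, or simply invoke continuity/lsc arguments; this completes the proof that $U$ is the minimal supersolution.
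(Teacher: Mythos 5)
Your proof is correct and takes exactly the route the paper intends: its own proof of Corollary~\ref{VFm-minsup} is the one-line ``use \eqref{ineq:super.dpp} letting $\sigma$ tend to $+\infty$ and using Lemma~\ref{u-bound}.'' Your elaboration of the limit $\sigma\to+\infty$ (near-optimal trajectories, the decay $\exp(-D(\sigma))\to0$ from Lemma~\ref{u-bound}-$(ii)$ after the reduction to $c\geq0$) is precisely the argument being compressed there.
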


\begin{proof}
    Let $v$ be any bounded \lsc supersolution in the Ishii sense of $\F=0$. Using \eqref{ineq:super.dpp} we
    see that for any $(x,t)\in\R^N\times(0,\Tf]$ and $\sigma>0$,
    $$
    v(x,t)\geq  \inf_{\cT(x,t)}  \Big\{\int_0^{\sigma}
        l\big(X(s),T(s)\big)\exp(-D(s)) \ds+v\big(X(\sigma),T(\sigma)\big)\exp(-D(\sigma))\Big\}\;.
    $$
    Sending $\sigma\to+\infty$, we see that in particular for any trajectory $(X,T,D,L)$ 
    the integral $J(X,T,D,L)$ in Lemma~\ref{u-bound}-$(ii)$ is bounded by $2\Vert
    v\Vert_\infty$.

    Therefore, $D(\sigma)\to0$ as $\sigma\to +\infty$ and passing to the limit in the dynamic
    programming principle yields
    $$
    v(x,t)\geq  \inf_{\cT(x,t)}  \int_0^{+\infty}
        l\big(X(s),T(s)\big)\exp(-D(s)) \ds = U(x,t)\;.
    $$
    The conclusion is that $v\geq U$, which proves the minimality of the value function.
\end{proof}

We end this chapter by some comment: as we saw, the situation is not totally symmetric between
general Ishii supersolutions and subsolutions. For supersolutions, properties derive directly from
the Bellman Equation while the treatment of general subsolutions requires more advanced tools and
some structure assumption on the discontinuities. This is done in Chapter~\ref{chap:mixed.tools}.

\chapter{Mixed Tools}
\fancyhead[CO]{HJ-Equations with Discontinuities: Mixed Tools}
\label{chap:mixed.tools}

\abstract{This chapter contains all the results either connecting the optimal control problem and
the associated HJB Equation, or using both of them simultaneously. Included here are four very important
building blocks, in particular for the study of stratified problems:
$(i)$ a general formulation for the initial condition which gives the way to compute the initial data;
$(ii)$ the dynamic programming principle for subsolutions, a key ingredient in the proof of the comparison result
for stratified problems;
$(iii)$ the ``Magical Lemma'', which gives the local comparison argument for stratified problems;
$(iv)$ the description of the ``good assumptions'' needed for stratified problems.
}

\section{Initial conditions for sub and supersolutions of the Bellman Equation}
\label{IC-HJB}

In this section, we consider a little bit more precisely the conditions satisfied by sub and
supersolutions of the Bellman Equation at time $t=0$ according to Definition~\ref{def:sub.sup.gen}.

In the classical cases where one has a standard initial data $u_0$, these conditions read 
$ \min(\F_*,u-u_0) \leq 0$ for the subsolution and $\max(\F,v-u_0) \geq 0$ for the supersolution, 
and it is known that they just reduce to either $u \leq u_0$ in $\R^N$ if $u$
is a subsolution or $v \geq u_0$ in $\R^N$ if $v$ is a supersolution. 

Here we have an analogous result but which is more complicated, involving the initial
Hamiltonian $\F_{init}$ defined in Section~\ref{IS-BE}.  

\subsection{The general result}

The result is the following.

\begin{proposition}\label{visc-ineq-init}
    Under assumption \HBCL, if $u: \R^N \times [0,\Tf] \to \R$ is an \usc viscosity subsolution of
    the Bellman Equation $\F=0$, then $u(x,0)$  is a subsolution in $\R^N$ of
    $$ (\F_{init})_* \big(x,u(x,0),D_x u(x,0)\big)\leq 0 \quad\hbox{in  }\R^N\;.$$
    Similarly, if $v: \R^N \times [0,\Tf] \to \R$ is a \lsc supersolution of the Bellman Equation,
    then $v(x,0)$ is a supersolution of 
    $$\F_{init}(x,v(x,0),D_x v(x,0) )\geq 0 \quad\hbox{in }\R^N\;.$$
\end{proposition}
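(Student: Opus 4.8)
The plan is to prove the subsolution and supersolution statements separately, in both cases penalizing the time variable in the test function so as to push the extremum of the auxiliary function toward $t=0$, and then exploiting the one-sided sign condition $-1\leq b^t\leq 0$ of \HBCLb-$(i)$ together with the bound $|b|,|c|,|l|\leq M$ of \HBCLa-$(ii)$. A first elementary remark, used throughout, is that \HBCLb-$(i)$ guarantees an element of $\BCL(x,t)$ with $b^t=-1$, so that $\F(x,t,r,(p_x,p_t))\geq p_t-M(|p_x|+|r|+1)$ for all $(x,t,r,p_x,p_t)$; since the right-hand side is continuous, the same lower bound holds for $\F_*$.

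For the subsolution case, fix $\varphi\in C^1(\R^N)$ and a point $x_0$ which, after the harmless replacement of $\varphi(x)$ by $\varphi(x)+|x-x_0|^2$, is a strict local maximum of $x\mapsto u(x,0)-\varphi(x)$ on some $\overline{B(x_0,r)}$, normalized so that $u(x_0,0)=\varphi(x_0)$. For $\e>0$ I would maximize $(x,t)\mapsto u(x,t)-\varphi(x)-t/\e$ over $\overline{B(x_0,r)}\times[0,\tau]$ with $\tau\in(0,T)$ fixed, obtaining a maximizer $(x_\e,t_\e)$. The standard penalization estimate (using that $u$ is locally bounded above and $\varphi$ continuous) gives $t_\e/\e$ bounded, hence $t_\e\to0$, and the strictness of the maximum then yields $(x_\e,t_\e)\to(x_0,0)$ and $u(x_\e,t_\e)\to u(x_0,0)$. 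If $t_\e>0$, the subsolution inequality on $\R^N\times(0,T]$ at $(x_\e,t_\e)$, with momentum $(D\varphi(x_\e),1/\e)$, would give $0\geq \F_*(x_\e,t_\e,u(x_\e,t_\e),(D\varphi(x_\e),1/\e))\geq 1/\e-M(|D\varphi(x_\e)|+|u(x_\e,t_\e)|+1)\to+\infty$, a contradiction; hence $t_\e=0$ for $\e$ small. Then the subsolution condition at $t=0$ (Definition~\ref{def:sub.sup.gen}) applies at $x_\e$, and since its first alternative $\F_*(x_\e,0,\cdot,(D\varphi(x_\e),1/\e))$ blows up exactly as above, the second alternative must hold: $(\F_{init})_*(x_\e,u(x_\e,0),D\varphi(x_\e))\leq 0$. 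Letting $\e\to0$ and using the lower semicontinuity of $(\F_{init})_*$ with $x_\e\to x_0$, $u(x_\e,0)\to u(x_0,0)$, $D\varphi(x_\e)\to D\varphi(x_0)$ concludes this case.

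For the supersolution case, symmetrically take $x_0$ a strict local minimum of $v(\cdot,0)-\varphi$, normalized so $v(x_0,0)=\varphi(x_0)$, and minimize $(x,t)\mapsto v(x,t)-\varphi(x)+t/\e$ over $\overline{B(x_0,r)}\times[0,\tau]$; the same penalization argument gives a minimizer $(x_\e,t_\e)\to(x_0,0)$ with $v(x_\e,t_\e)\to v(x_0,0)$. Applying the supersolution inequality $\F\geq0$ (which is valid up to $t=0$) at $(x_\e,t_\e)$ with momentum $(D\varphi(x_\e),-1/\e)$ and using that the sup defining $\F$ is attained on the compact set $\BCL(x_\e,t_\e)$, pick an optimal $(b_\e,c_\e,l_\e)\in\BCL(x_\e,t_\e)$, so that $-b_\e^x\cdot D\varphi(x_\e)+b_\e^t/\e+c_\e v(x_\e,t_\e)-l_\e\geq0$. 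Since $b_\e^t\leq0$ we have $b_\e^t/\e\leq0$, hence $A_\e:=-b_\e^x\cdot D\varphi(x_\e)+c_\e v(x_\e,t_\e)-l_\e\geq -b_\e^t/\e\geq0$; but $A_\e$ is bounded by $M(|D\varphi(x_\e)|+|v(x_\e,t_\e)|+1)$, which forces $|b_\e^t|=O(\e)\to0$. Extracting a subsequence with $(b_\e,c_\e,l_\e)\to(b_0,c_0,l_0)$, the upper semicontinuity of $\BCL$ (in the form $\BCL(x_0,0)\supseteq\limsup_{(x,t)\to(x_0,0)}\BCL(x,t)$) gives $(b_0,c_0,l_0)\in\BCL(x_0,0)$, with moreover $b_0^t=0$; passing to the limit in $A_\e\geq0$ yields $-b_0^x\cdot D\varphi(x_0)+c_0 v(x_0,0)-l_0\geq0$, and since $((b_0^x,0),c_0,l_0)$ is an admissible element in the definition of $\F_{init}$, we conclude $\F_{init}(x_0,v(x_0,0),D\varphi(x_0))\geq0$.

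The point requiring the most care — rather than a genuine obstacle — is the interplay between the blow-up of the $p_t$-momentum coming from the penalization and the structural sign of $b^t$: one must verify that the coercivity-in-$p_t$ lower bound for $\F$ survives passage to the envelope $\F_*$ (for the subsolution part), and that the extracted optimal controls in the supersolution part have $b^t$-component tending to $0$, so that the limiting control is genuinely of the ``$b^t=0$'' type entering $\F_{init}$; the closed-graph/upper-semicontinuity property of $\BCL$ from \HBCLa is precisely what makes the latter work. All the remaining steps — the penalization estimates and the reduction to strict extrema — are routine viscosity-solution bookkeeping.
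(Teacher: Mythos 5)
Your proof is correct and follows essentially the same route as the paper's: penalize with $\pm t/\e$ to push the extremum to $t=0$, use the existence of an element with $b^t=-1$ to make $\F_*$ blow up in the subsolution case, and use $b^t\le 0$ plus boundedness of the remaining terms to force $b^t_\e=O(\e)$ and extract a limiting control with $b^t=0$ via the upper semicontinuity of $\BCL$ in the supersolution case. The only cosmetic difference is that the paper observes $x_\e=x_0$ outright once $t_\e=0$ (by strictness of the maximum at $t=0$), whereas you pass to the limit using the lower semicontinuity of $(\F_{init})_*$; both are valid.
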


\begin{proof}
    We provide the full proof in the supersolution case and we will add additional comments in the
    subsolution one.  Let $\phi : \R^N\to \R$ be a smooth function and let $x$ be a local strict
    minimum point of the function $y\mapsto v(y,0) -\phi(y)$.  In order to use the supersolution
    property of $v$, we consider for $0<\e \ll 1$ the function $(y,t) \mapsto v(y,t) -\phi(y) +
    \e^{-1}t$. 

    By an easy application of Lemma~\ref{lem:cv-pen} in a compact neighborhood of $(x,0)$---with a
    straightforward adaptation to the case of minimas---,this function has a local minimum point at
    $(\xe,\te)$ and we have at the same time $(\xe,\te)\to (x,0)$ and $v(\xe,\te)\to v(x,0)$ as
    $\e\to 0$. The viscosity supersolution inequality reads 
    $$\sup_{(b,c,l)\in\BCL(\xe,\te)}\big\{ \e^{-1}b^t -b^x \cdot
    D_x\phi(\xe) + c v(\xe,\te) - l \big\}\geq 0\; .$$
    We denote by $(b_\e,c_\e,l_\e)$ the $(b,c,l)$ for which the supremum is achieved and which exists since $BCL(\xe,\te)$ is
    compact. By Assumptions
    \HBCL, we may assume that up to extraction, $(b_\e,c_\e,l_\e)\to (\bar b, \bar c,\bar l) \in
    \BCL(x,0)$. Moreover, since $b^t_\e \leq 0$ and the other terms are bounded, the above
    inequality implies that $\e^{-1}b^t_\e$ is also bounded independently of $\e$. In other words,
    $b^t_\e = O(\e)$ and $\bar b= (\bar b^x,0)$. 

    Dropping the negative $\e^{-1}b^t_\e$-term in the supersolution inequality, we obtain 
    $$-b^x_\e \cdot D_x\phi(\xe) + c_\e v(\xe,\te) - l_\e \geq 0\; ,$$ 
    and letting $\e\to0$, we end up with 
    $-\bar b^x \cdot D_x\phi(x) + \bar c v(x,0) - \bar l \geq 0\;.$ 
    since $(\bar b, \bar c,\bar l) \in \BCL(x,0)$, we deduce that
    $$\sup_{((b^x,0),c,l)\in\BCL(x,0)}\big\{-b^x \cdot D_x\phi(x) + c v(x,0) - l \big\}\geq 0\;,$$
    in other words: $\F_{init}\big(x,v(x,0),D_x v(x,0)\big)\geq0$ holds in the viscosity sense.

    \smallskip

    In the subsolution case, the proof is analogous but we consider local strict maximum point of
    the function $y\mapsto u(y,0) -\phi(y)$. Introducing the function $(y,t) \mapsto u(y,t) -\phi(y)
    - \e^{-1}t$ for $0<\e \ll 1$, we have a sequence of local maximas $(\xe,\te)$ such that
    $(\xe,\te)\to (x,0)$ and $u(\xe,\te)\to u(x,0)$ as $\e\to 0$.

    If $\te >0$, the subsolution inequality reads
    \begin{equation*}
        \F_*(\xe,\te, u(\xe,\te), (D_x\phi(\xe),\e^{-1}))\leq 0\;.
    \end{equation*}
    This time, we cannot bound $\e^{-1}b^t$ as we did for the supersolution case, but because of
    \HBCLb-$(i)$, in all $\BCL(x,t)$ for $t\geq 0$, there exists an element with $b^t=-1$.  Since
    the other terms are bounded, this implies that the $\F_*$-term in the above inequality is larger
    than $\e^{-1} +O(1)$ and therefore, for $\e$ small enough, the $\F_*$-inequality above cannot
    hold.

    Hence, necessarily $\te = 0$ and the strict maximum point property for $u-\phi$ implies that
    $\xe=x$. But for the same reason as above, for $\e>0$ small enough the viscosity inequality
    $$ \F_*(x,0,u(x,0),(D_x\phi(x),\e^{-1}))\leq0$$
    cannot hold unless it corresponds to a $(b,c,l)\in\BCL(x,0)$ 
    such that $b^t=0$. Which leads finally to
    $$(\F_{init})_* (x,u(x,0),D_x \phi(x))\leq 0\;,$$
    the inequality we wanted to prove. 
\end{proof}

The above result means that, in order to compute the initial data, one has to solve an equation. A
fact which is already known in the case of unbounded control. 

In the case of classical problems, a typical situation is when for $t>0$, the elements of
$\BCL(x,t)$ are of the form $((b^x,-1),c,l)$ while for $t=0$ we consider a \lsc cost $u_0$ in $\R^N$.
In order to satisfy the upper semi-continuity of $\BCL$ at $t=0$, we need a priori to consider both
elements of the form $((b^x,-1),c,l)$ and $((0,0),1,u_0(x))$. But in that situation, the result
above leads back to the standard initial data conditions
$$ u(x,0) \leq (u_0)^* (x) \quad \hbox{and}\quad v(x,0) \geq u_0(x)\quad \hbox{in  }\R^N\;,$$ 
due to the fact that
$\F_{init}(x,u,p_x)=u-u_0(x)$ and $(\F_{init})_* (x,u,p_x)=u-(u_0)^* (x)$.

\subsection{A relevant example involving unbounded control}\label{sec:eiuc}

As we have seen it above, the general framework we introduce in Section~\ref{Gen-DCP} allows to treat some unbounded
control problems: this is related to the possibility of having $b^t=0$ in the $\BCL$ which is a striking difference with
Chapter~\ref{chap:BasicFram} (we again refer the reader to the beginning of Chapter~\ref{chap:jumps} for some details).

We want to consider here such a problem that we address from the pde point of view by considering the equation
\begin{equation}\label{eq:sre}
\max(u_t + H(x,t,u,D_x u),|D_x u|-1)=0\quad\hbox{in  }\R^N\times (0,\Tf),
\end{equation}
with an ``initial data'' $g$, a bounded, continuous function in $\R^N$ (we are going to make more
precise what we mean by initial data). Here the Hamiltonian $H$ is still given by
$$H(x,t,r,p):= \sup_{\alpha \in A}\,\left\{-b(x,t, \alpha)\cdot p + c(x,t, \alpha)r-l(x,t, \alpha)\right\}\; ,$$
but the functions $b,c,l$ may be discontinuous. Our first aim is to connect this problem with the
above framework and deduce the key assumptions which have to be imposed on $b,c,l$ in order to have
our assumptions being satisfied.

First we have to give the sets $\BCL$ and to do so, we set, for $x\in \R^N$, $t\in (0,\Tf]$
$$ \BCL_1(x,t):=\{ ((b(x,t, \alpha),-1),c(x,t, \alpha),l(x,t, \alpha))\ : \ \alpha \in A\}\; ,$$
and
$$  \BCL_2 (x,t):=\{ ((\beta,0), 0,1)\ : \ \beta \in \overline{B(0,1)}\}\; .$$
Then we introduce
$$\BCL(x,t)=\cob\bigl(\BCL_1 (x,t)\cup\BCL_2 (x,t)\bigr)\; ,$$
where, if $E\subset \R^k$ for some $k$, $\cob (E)$ denotes the closed convex of $E$; computing
$\F(x,t,r,p)=\sup_{(b,c,l)\in\BCL(x,t)}\big\{ -b\cdot p + c r- l \big\}$, we actually find that, for
any $x,t,r,p_x,p_t$
$$\F(x,t,r,(p_x,p_t))=\max(p_t + H(x,t,u,p_x), |p_x|-1)\; .$$
For $t=0$, we have to add the following set
$$  \BCL_0 (x,0):=\{ ((0,0), 1,g(x))\}\;,$$
so that $\BCL(x,0)=\cob\left(\BCL_0 (x,0) \cup \BCL_1 (x,0)\cup\BCL_2 (x,0)\right)$.

We first consider Assumption \HBCLa which is satisfied if the three functions $b(x,t, \alpha)$,
$c(x,t, \alpha)$, $l(x,t, \alpha)$ are bounded on $\R^N\times [0,\Tf]\times A$ and if $ \BCL_1(x,t)$
has compact, convex images and is upper semi-continuous. Next we remark that \HBCLb obviously holds
and we are going to assume in addition that $c(x,t,\alpha)\geq 0$ for all $x,t,\alpha$ (this is not
really an additional assumption since we can reduce to this case by the $\exp(-Kt)$- change).

Since all these assumptions hold, this means that all the results of Section~\ref{Gen-DCP} also
hold. Moreover we have for the initial data $\F_{init}(x,u,p_x):=\max\big\{|p_x|-1, u-g(x) \big\}$
and therefore the computation of the ``real'' initial data comes from the resolution of the
stationary equation 
\begin{equation}\label{eq:sre:id}
    \max( |D_x u|-1, u-g(x))=0\quad\hbox{in  }\R^N.
\end{equation}

\begin{remark}
    Of course, this example remains completely standard as long as we are in the continuous 
    case---typically under the assumptions \HCP. It will be more interesting when treating examples
    in which we have discontinuities in the dynamics, discount factors and costs; or when the term
    ``$|D_x u|-1$'' is replaced by, for instance, ``$|D_x u|-a(x)$'' where $a(\cdot)$ is a
    discontinuous functions satisfying suitable assumptions, in particular $a(x)\geq \eta >0$ in
    $\R^N$.
\end{remark}

As we mentioned it above, unbounded control problems where the cost has a superlinear growth \wrt the dynamic do not enter
into the present framework: we refer the reader to \cite{CRS22,Reis2022} for results on such discontinuous problems with
quadratic growth.

\section{The sub-dynamic programming principle for subsolutions}
\label{sect:dpp.sub}

\index{Dynamic Programming Principle!for subsolutions}
In this section, we provide a sub-dynamic programming principle for subsolutions of Bellman
Equations, but in a more general form than usual, due to the very general framework we use in
Section~\ref{sect:disc.pb} allowing dynamics to have some $b^t=0$. Roughly speaking, we show that if
a \LCR holds in a suitable subdomain $\OO$ of $\R^N\times [0,\Tf]$ and for a suitable equation, then
subsolutions satisfy a sub-dynamic programming principle inside $\OO$.

This formulation is needed in order to get sub-dynamic principles away from the various manifolds on
which the singularities are located, and to deal with situations where the definition of
``subsolution'' may be different from the standard one: even if, to simplify matter, we write below
the equation in a usual form (cf. \eqref{HJ-evol}), the notion of ``subsolution'' can be either an
Ishii subsolution or a stratified subsolution, depending on the context. These specific sub-dynamic
programming principles will play a key role in the proofs of most of our global comparison results,
via Lemma~\ref{lem:comp.fundamental}. 

In order to be more specific, we consider $(x_0,t_0) \in \R^N \times (0,\Tf]$ and the same equation as 
in the previous section set in $Q^{x_0,t_0}_{r,h}$ for some $r>0$ and $0<h<t_0$, namely
\begin{equation}\label{HJ-evol}
\F(x,t,u,Du) = 0\quad \hbox{on  }Q^{x_0,t_0}_{r,h} \;,
\end{equation}
where $\F$ is defined by \eqref{def.F.global}, and we recall that $Du=(D_xu,u_t)$.
We point out that we assume that $\BCL$ and $\F$ are defined in the whole domain $\R^N\times [0,\Tf]$. 

In the sequel, $\mathcal{M}$ is a closed subset of $\overline{Q^{x_0,t_0}_{r,h}}$ such that 
$(x_0,t_0) \notin \mathcal{M}$ and $\OO= Q^{x_0,t_0}_{r,h} \setminus \mathcal{M} \neq \emptyset$.
We denote by $\mathcal{T}^h_\OO(x_0,t_0)$ the set of trajectories starting from $(x_0,t_0)$, such that 
$(X(s),T(s)) \in \OO$ for all $s \in [0,h]$. For simplicity here, 
we assume that the size of the cylinder satisfies $Mh<r$. This is not restrictive at all since when we use the following
sub-dynamic programming principle, we can always apply it in situations where $r$ is fixed and we can choose a 
smaller $h$.

Our result is the
\begin{theorem}\label{thm:sub.Qk.dpp}\emph{--- Extended sub-dynamic programming principle I.}\smsp
    Let $h,r>0$ be such that $Mh<r$. Let $u$ be a subsolution of \eqref{HJ-evol} and 
    let us assume that, for any continuous function $\psi$ such that $\psi \geq u$ on $\overline{Q^{x_0,t_0}_{r,h}}$,
        a \LCR holds in $\OO$ for the equation
        \begin{equation}
            \label{Obst:DPP-sub} \max(\F(x,t,w,Dw), w-\psi) = 0\quad \hbox{in  }\OO\;.
        \end{equation}
    If $\mathcal{T}^h_\OO(x_0,t_0)\neq \emptyset$, then for any $\eta \leq h$
    \begin{equation}\label{ineq:sub.Qk.dpp}
        u(x_0,t_0) \leq 
        \inf_{X \in \mathcal{T}^h_\OO(x_0,t_0)}
        \Big\{\int_0^{\eta}
        l\big(X(s),T(s)\big)\exp(-D(s)) \ds+u\big(X(\eta),T(\eta)\big)\exp(-D(\eta))\Big\}\;.
    \end{equation}
\end{theorem}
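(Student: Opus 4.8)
The plan is to derive the sub-dynamic programming inequality from the local comparison result by a now-classical comparison-against-the-value-function argument, adapted to the stratified/discontinuous setting. Fix a trajectory $(X,T,D,L) \in \mathcal{T}^h_\OO(x_0,t_0)$ and set, for $(x,t) \in \overline{Q^{x_0,t_0}_{r,h}}$,
$$ w(x,t) := \inf\Big\{\int_0^{\eta'} l\big(Y(s),S(s)\big)\exp(-E(s))\ds + u\big(Y(\eta'),S(\eta')\big)\exp(-E(\eta')) \Big\}\,, $$
where the infimum runs over trajectories $(Y,S,E,F)$ issued from $(x,t)$, staying in $\OO$, and over stopping times $\eta'\leq h$ at which the trajectory either hits $\partial_p Q^{x_0,t_0}_{r,h}$ or hits $\mathcal{M}$. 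Since $Mh<r$, a trajectory issued from $(x_0,t_0)$ cannot reach $\partial B(x_0,r)$ before time $h$, so the only relevant exits are through $\{t = t_0-h\}$, through $\mathcal{M}$, or no exit at all up to time $\eta$. The target inequality \eqref{ineq:sub.Qk.dpp} is then exactly $u(x_0,t_0)\leq w(x_0,t_0)$ evaluated at the appropriate $\eta$, once one observes $w(x_0,t_0)$ is bounded below by the right-hand side of \eqref{ineq:sub.Qk.dpp} (no exit occurs before $\eta$ for trajectories in $\mathcal{T}^h_\OO(x_0,t_0)$, so one may always take $\eta'=\eta$).

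\textbf{Key steps.} First I would show, by the standard dynamic programming argument on trajectories (as in Theorem~\ref{DPP}), that $w$ — or rather a continuous regularization $\psi \geq w$ obtained by perturbing slightly, $\psi := w + \kappa$ for small $\kappa>0$ together with the usual $\delta$-relaxation of $\BCL$ used in the proof of Lemma~\ref{lem:super.dpp} to get continuity — is a supersolution of the obstacle problem $\max(\F(x,t,v,Dv),\,v-\psi)=0$ in $\OO$ in the viscosity sense, and that $\psi \geq u$ on the parabolic-type boundary $\partial_p\OO$ consisting of $\mathcal{M}$, of $\{t=t_0-h\}$, and of $\partial B(x_0,r)\cap\overline{Q}$ (the last being irrelevant by $Mh<r$). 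On $\mathcal{M}$ and on $\{t=t_0-h\}$ the trajectory can be stopped immediately, so $w \geq u$ there by definition of $w$ with $\eta'=0$; hence $\psi\geq u$ there. Second, $u$ is by hypothesis a subsolution of \eqref{Obst:DPP-sub} with this $\psi$ (it solves $\F\leq 0$ and $u\leq\psi$ trivially on $\partial_p\OO$, and $u-\psi\leq 0$ everywhere since $\psi\geq u$ on $\overline{Q}$). Third, apply the \LCR hypothesis for \eqref{Obst:DPP-sub} in $\OO$: by the localization machinery of Section~\ref{sect:htc} (Proposition~\ref{reducCR-evol}, noting the obstacle term only helps and \HGCP-type properties hold for $\F$ by \HBCLb-$(i)$), the local comparison upgrades to $u \leq \psi$ on $\overline{Q^{x_0,t_0}_{r,h}}$, in particular $u(x_0,t_0)\leq \psi(x_0,t_0) = w(x_0,t_0)+\kappa$. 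Fourth, let $\kappa\to0$ and, in parallel, let the $\delta$-relaxation parameter used to make $\psi$ continuous tend to $0$ exactly as in the last part of the proof of Lemma~\ref{lem:super.dpp} (Ascoli–Arzelà on the trajectories, weak-$*$ limits of $(b_\delta,c_\delta,l_\delta)$, Young measures, and the identity $\psi(b,c,l,x,t)=0 \iff (b,c,l)\in\BCL(x,t)$) to recover $u(x_0,t_0) \leq w(x_0,t_0)$ with the genuine $\BCL$-dynamics. Finally, since for trajectories in $\mathcal{T}^h_\OO(x_0,t_0)$ no exit through $\mathcal{M}$ or $\{t=t_0-h\}$ occurs before time $\eta$, one has $w(x_0,t_0) \leq \int_0^\eta l(X(s),T(s))\exp(-D(s))\ds + u(X(\eta),T(\eta))\exp(-D(\eta))$ for each such trajectory, and taking the infimum gives \eqref{ineq:sub.Qk.dpp}.

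\textbf{Main obstacle.} The delicate point is not the comparison step but ensuring that the obstacle function $\psi$ is both continuous (so that \LCR genuinely applies, since \LCR is stated for usc subsolutions versus lsc supersolutions on a compact cylinder) \emph{and} dominates $u$ on the whole closed cylinder while $w$ itself is a supersolution of the obstacle equation. This forces the $\delta$-relaxation/regularization of $\BCL$ borrowed from Lemma~\ref{lem:super.dpp}, and the care needed is in the final passage to the limit $\delta\to0,\ \kappa\to0$: one must check the limiting trajectory is admissible (stays in $\OO$ on $[0,h]$, which uses that $\OO$ is open and $\mathcal{M}$ closed, plus uniform convergence of $X_\delta,T_\delta$), and that the lower semicontinuity of $u$'s... — rather, the upper semicontinuity of $u$ combined with $\psi_\delta\downarrow$ appropriately — does not destroy the inequality. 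I expect this limiting argument, essentially a transcription of the corresponding part of the proof of Lemma~\ref{lem:super.dpp}, to be where all the real work lies; the rest is bookkeeping with the localization reductions already established in Section~\ref{sect:htc}.
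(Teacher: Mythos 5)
Your overall plan — compare $u$ with a value-function-type supersolution via the \LCR hypothesis, then use the dynamic programming principle of that value function — is exactly the right one, and it is also the paper's plan. But the execution as written contains a genuine circularity that the paper's proof is specifically designed to avoid.

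The problem is that you choose the obstacle $\psi = w + \kappa$, where $w$ is a stopped value function built out of the subsolution $u$ itself, \emph{and} you also use this same $\psi$ as the supersolution in the comparison. This creates two difficulties. First, the theorem's hypothesis only grants the \LCR for continuous obstacles satisfying $\psi \geq u$ on the whole closed cylinder $\overline{Q^{x_0,t_0}_{r,h}}$; you assert this inequality holds for your $\psi$, but for interior points $(x,t)\in\OO$ the inequality $w(x,t)\geq u(x,t)$ is essentially the sub-dynamic programming inequality you are trying to prove. Second, because $\psi$ is simultaneously the obstacle and the candidate supersolution, the supersolution condition reads $\max(\F^*,\psi-\psi)\geq 0$, which is vacuously true — the comparison carries no information. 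The subsolution condition you invoke for $u$ (in particular, $u-\psi\leq 0$ at touching points) already contains the pointwise inequality $u\leq\psi$ you want to conclude, by the standard argument penalizing around any fixed point; so the whole comparison step is assuming its own output.

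The paper avoids this by decoupling the obstacle from the supersolution. The obstacle is taken to be $u^\delta+\delta$, where $(u^\delta)_\delta$ is a decreasing family of continuous functions with $u^\delta\downarrow u$; this is $\geq u$ by construction, with no reference to any DPP. The supersolution is a genuine value function $v^{\delta,\e}$ for an \emph{unconstrained} control problem in $\R^N\times[t_0-h,t_0]$, where $\BCL$ is enlarged by the option $((0,0),1,u^\delta(x,t)+\delta)$ (this is what produces the obstacle term in the equation for $v^{\delta,\e}$, and forces $v^{\delta,\e}\leq u^\delta+\delta$) and where the running cost carries a penalization $\psi_\e$ blowing up near $\mathcal{M}$, $\partial B(x_0,r)$ and $\{t=t_0-h\}$. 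The hard technical work then lies in Lemma~\ref{lem:dpp-sub-boundary}, showing that the penalization forces $v^{\delta,\e}\geq u^\delta$ on $\partial\OO$ for $\e$ small, which requires careful measure-theoretic estimates on how long an optimal trajectory can spend near the penalized region. Your construction, by constraining trajectories to exit $\OO$ on arrival, does make the boundary behavior ($w=u$ on $\partial\OO$) trivial — which is a nice idea — but that constraint is precisely what ties $w$ to the sub-DPP and makes the argument circular. Penalized unconstrained trajectories are the way to break this, at the cost of a nontrivial boundary lemma.
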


\begin{proof} In order to prove \eqref{ineq:sub.Qk.dpp}, the strategy is the following:
we build suitable value functions $\ved$, depending on two
small parameters $\e, \delta$ which are supersolutions of some problems of the type $\max(\F(x,t,v,Dv), v-\psi^\delta)
\geq 0$, for some function $\psi^\delta \geq u$ on $\overline{Q^{x_0,t_0}_{r,h}}$. Then, comparing the supersolutions $\ved$
with the subsolution $u$ and choosing properly the parameters $\e, \delta$ we obtain \eqref{ineq:sub.Qk.dpp} 
after using the dynamic programming principle satisfied by $\ved$. 

The main difficulty is that we have a comparison result which is not valid up to $\mathcal{M}$, only in $\mathcal{O}$.
Therefore we need to make sure that the supersolution enjoys suitable properties not only on $\partial Q^{x_0,t_0}_{r,h}$ 
but also on $\mathcal{M}$.

To do so, we introduce a control problem in $\R^N\times [t_0 - h,t_0]$ with a large penalization 
both in a neighborhood of $\partial Q^{x_0,t_0}_{r,h}$ and outside
$\overline{Q^{x_0,t_0}_{r,h}}$, but also in a neighborhood of $\mathcal{M}$.
Unfortunately, the set valued map $\BCL$ does not necessarily satisfy assumption \HBCLb-$(iii)$ at time $t=t_0-h$, 
which plays the role of the initial time $t=0$ here. We need also to take care of the possibility
that $b^t$ vanishes inside $\overline{Q^{x_0,t_0}_{r,h}}$. For these reasons, we need to enlarge not only the ``restriction'' of  
$\BCL$ to $\R^N \times \{t_0-h\}$ in order to satisfy \HBCLb, but also on the whole domain $\R^N \times [t_0-h,t_0]$. 

For doing so, since $u$ is \usc, it can be approximated a decreasing sequence $(u^\delta)_\delta$ of bounded continuous 
functions and we enlarge $\BCL(x,t)$ for $t \in [t_0-h,t_0]$ by adding elements of the form 
$$((b^x,b^t),c,l)=((0,0),1,u^\delta (x,t)+\delta )\quad \text{for } 0\leq \delta \ll 1\;.$$

On the other hand, we introduce, for $0<\e\ll 1$, the penalization function
$$ \chi_\eps (x,t):= \frac{1}{\eps^4} \Big[\big(2\eps -d((x,t), \mathcal{M}) \big)_+ +(2\e-(r-|x-x_0|))_+ + (2\e-(t-t_0+h))_+\Big],$$
so that  $\chi_\eps (x,t) \geq \eps^{-3}$ if either $d((x,t), \mathcal{M}) \leq \eps$, $d(x,\partial B(x_0,r))\leq \eps$ or
$t-(t_0-h)\leq\e$.

We use this penalization in order to modify the original elements in $\BCL(x,t)$, where $l(x,t)$ is replaced  by  
$l(x,t)+ \chi_\eps(x,t)$. We denote by $\BCL^{\delta,\e}$ this new set-valued map where, at the same time, $\BCL$ is enlarged and modified;
the elements of $\BCL^{\delta,\e}$ are referenced as $(b^{\delta,\e}, c^{\delta,\e}, l^{\delta,\e})$. 
We recall that we can assume that for the original $\BCL$, we have $c\geq 0$ and therefore we also have 
$c^{\delta,\e}\geq 0$ for all $(x,t)$ and  $(b^{\delta,\e}, c^{\delta,\e}, l^{\delta,\e})\in\BCL^{\delta,\e}(x,t)$.

In $\R^N\times [t_0-h,t_0]$, we introduce the value function $\ved$ 
given by
$$
\ved(x,t) = \inf_{\cT^{\delta,\e}(x,t)}\Big\{\int_0^{+\infty}
    l^{\delta,\e} \big(X^{\delta,\e}(s),T^{\delta,\e}(s)\big)\exp(-D^{\delta,\e}(s)) ds \Big\}\; ,$$
where $(X^{\delta,\e},T^{\delta,\e},D^{\delta,\e},L^{\delta,\e})$ are solutions of the differential inclusion associated with $\BCL^{\delta,\e}$, constrained
to stay in $\R^N\times [t_0-h,t_0]$, $\cT^{\delta,\e}(x,t)$ 
standing for the set of such trajectories.

Borrowing arguments from Section~\ref{sect:disc.pb} and computing carefully the new Hamiltonian, 
we see that $\ved$ is a \lsc supersolution of the HJB-equation 
$$\max(\F(x,t,w,Dw),w -(u^\delta+\delta))=0 \quad \hbox{in  }\R^N\times (t_0-h,t_0]\; ,$$
because $l(x,t)+ \chi_\eps(x,t)\geq l(x,t)$ for any $x$ and $t$, and we notice that $u$ is a subsolution of this equation since $u \leq u^\delta +\delta$ in $\R^N\times (t_0-h,t_0]$.
We also remark that, due to the enlargement of $\BCL$, $\ved(x,t)\leq u^\delta(x,t)+\delta$, which is the value obtained by solving
the differential inclusion with $(b,c,l)=((0,0),1,u^\delta(x,t)+\delta)$.
We want to show that $\ved \geq u$ in $\OOb$. In order to do so, 
we have to examine the behavior of $\ved$ in a neighborhood of $\partial \OO$ first, which is provided by the 

\begin{lemma}\label{lem:dpp-sub-boundary}
    For $\e>0$ small enough, $\ved(x,t) \geq  u^\delta(x,t)$ on $\partial\OO$.
\end{lemma}

We postpone the proof of this result and finish the argument. Since $\ved\geq u^\delta\geq u$ on the boundary of $\OO$, 
we have just to look at maximum points of $u-\ved$ in $\OO$ but, in this set, \LCR holds for \eqref{Obst:DPP-sub} 
with $\psi := u^\delta+\delta$. Therefore the comparison is valid and we end up with $\ved\geq u$ everywhere in 
$\overline{\OO}$. 

Ending the proof and getting the sub-dynamic principle is done in three steps as follows.

\noindent\textbf{Step 1 --} at the specific point $(x_0,t_0)$ we have $u(x_0,t_0)\leq \ved (x_0,t_0)$, and using 
the Dynamic programming Principle for $\ved$ at $(x_0,t_0)$ gives
that for any $\eta>0$,
\begin{equation}\label{eq:inf.ved}
u(x_0,t_0)\leq  \inf_{\cT^{\e,\delta}(x_0,t_0)}\Big\{\int_0^{\eta}
    l^{\delta,\e}  \big(X(s),T(s)\big)\exp(-D(s)) ds + \ved \big(X(\eta),T(\eta)\big)\exp(-D(\eta))\Big\}\;.
\end{equation}
we want to get the same inequality, but for trajectories in $\mathcal{T}^h_\OO(x_0,t_0)$. This relies on the following
step.

\noindent\textbf{Step 2 --} \emph{Claim: if $(X,T,D,L)$ is a given trajectory in $\mathcal{T}^h_\OO(x_0,t_0)$ and
if $\eta<h$, then, for $\e>0$ small enough, $(X,T,D,L)$ coincides with a trajectory in $\cT^{\e,\delta}(x_0,t_0)$ on $[0,\eta]$.}

The main argument in order to prove this claim is to notice that
for $\e$ small enough, such trajectories satisfy $\psi_\e(X(s),T(s))=0$ on $[0,\eta]$.

Indeed, let us fix $\eta<h$ and take $\e$ small enough such that $t_0-h+2\e < t_0-\eta$.
Then, for any trajectory $(X,T,D,L)$ in $\mathcal{T}^h_\OO(x_0,t_0)$, $T(s)\in[t_0-\eta,t_0]$ for $s\in[0,\eta]$, so that
$T(s)>t_0-h+2\e$. Similarly, since $Mh<r$ and $|b|\leq M$, we get that $d(X(s);\partial B(x_0,r))>2\e$ for $s\in[0,\eta]$.
Of course, by definition of $\mathcal{T}^h_\OO(x_0,t_0)$, the trajectory does not reach $\mathcal{M}$ hence,
if $\e$ is small enough, 
$d((X(s),T(s));\mathcal{M})>2\e$ for any $s\in[0,\eta]$. In other words, for each fixed trajectory in $\mathcal{T}^h_\OO(x_0,t_0)$, 
if we take $\e$ small enough (depending on the trajectory) we have $\psi_\e(X(s),T(s))=0$ on $[0,\eta]$. 

Therefore, for any trajectory $(X,T,D,L)\in\mathcal{T}^h_\OO(x_0,t_0)$, $l^{\delta,\e}(X(s),T(s))=l(X(s),T(s))$ if $\e>0$ is small enough and $0\leq s \leq \eta <h$.
This means that $(X,T,D,L)$ can be seen as a trajectory associated to the extended $\BCL^{\delta,\e}$, with initial data $(x_0,t_0,0,0)$.
Hence it belongs to $\cT^{\delta,\e}(x,t)$, which proves the claim.

\noindent\textbf{Step 3 --} \emph{Passing to the limit in $\e$ and $\delta$.}

We take a specific trajectory $(X,T,D,L)\in\mathcal{T}^h_\OO(x_0,t_0)$ and take $\e$ small enough so that
we can use it in \eqref{eq:inf.ved}. As we already noticed, $\ved\leq (u^\delta+\delta)$ 
everywhere in $Q^{x_0,t_0}_{r,h}$ due to the enlargement of $\BCL$.
Passing to the limit as $\e\to0$ yields
\begin{equation*}
u(x_0,t_0)\leq \int_0^{\eta}
    l\big(X(s),T(s)\big)\exp(-D(s)) ds + (u^{\delta}+\delta) \big(X(\eta),T(\eta)\big)\exp(-D(\eta))\Big\}\, .
\end{equation*}
Then, we can let $\delta\to 0$ in this inequality, using that $(u^{\delta}+\delta)_\delta$ is a decreasing sequence which converges
to $u$ and
that the trajectory $(X,T,D,L)$ and $\eta$ are fixed.

Therefore $(u^\delta+\delta) \big(X(\eta),T(\eta)\big)\to u \big(X(\eta),T(\eta)\big)$ and we get
$$u(x_0,t_0)\leq  \int_0^{\eta}
    l \big(X(s),T(s)\big)\exp(-D(s)) ds + u\big(X(\eta),T(\eta)\big)\exp(-D(\eta))\;.$$
Taking the infimum over all trajectories in $\mathcal{T}^h_\OO(x_0,t_0)$ yields the conclusion when $\eta<h$. 
The result for $\eta = h$ is obtained by letting $\eta$ tend to $h$, arguing once more
trajectory by trajectory.
\end{proof}

\begin{proof}[Proof of Lemma~\ref{lem:dpp-sub-boundary}] We need to consider three portions of $\partial\OO$: 
$t=t_0-h$, $x \in \partial B(x_0,r)$ and $(x,t) \in \mathcal{M}$. We detail the first estimate which is technically involved, 
then the last two ones are done with similar arguments. In the following, 
we use an optimal trajectory for $v^{\delta,\e}$, denoted by $(X^{\delta,\e},T^{\delta,\e},D^{\delta,\e},L^{\delta,\e})$.

\noindent\textbf{Part A. Initial estimates --} if $t=t_0-h$, we have to consider 

-- the running costs $l \big(X^{\delta,\e} (s),T^{\delta,\e} (s)\big)+ \chi_\eps (X^{\delta,\e} (s),T^{\delta,\e} (s))$, with (perhaps) a non-zero dynamic $b^x$.

-- the running costs $u^\delta(X^{\delta,\e} (s), T^{\delta,\e}(s) )+\delta $ coming from the enlargement with a zero dynamic;

-- and the convex combinations of the two above possibilities, obtained by using a weight $\mu^{\delta,\e}(s)\in[0,1]$.

We first notice that since $t= t_0-h$, we have $T^{\delta,\e}(s)=t_0-h$ for any $s\geq0$ since $b^t\leq 0$ and the
trajectories have the constraint to stay in $\R^N \times [t_0-h,t_0]$. In the following,
we make various estimates (for $\e$ small enough) involving constants $\kappa_0,\kappa_1,\kappa_2,\kappa_3$ 
depending on the datas of the problem and $\delta>0$ but neither on $\e$ nor on $x\in \overline {B(x_0,r)}$.

Next we set
$$E:=\big\{s\in[0,+\infty): l^{\delta,\e} \big(X^{\delta,\e}(s),T^{\delta,\e}(s)\big)=
l^{\delta,\e}\big(X^{\delta,\e}(s),t_0-h\big) \geq \e^{-3/2}\big\}\;,$$
where $l^{\delta,\e}$ is given by the convex combination
$$\begin{aligned}
l^{\delta,\e} \big(X^{\delta,\e}(s),t_0-h\big) &= \mu^{\delta,\e}(s)\Big\{l \big(X^{\delta,\e}(s),t_0-h\big)+ 
\chi_\eps \big(X^{\delta,\e}(s),t_0-h\big)\Big\}\\
& +\big(1-\mu^{\delta,\e}(s)\big) (u^\delta +\delta)\big(X^{\delta,\e} (s), t_0-h\big)
\;.\end{aligned}
$$
By definition of $l^{\delta,\e}$ and in particular because of the $\chi_\eps$-term, we have, for any $s\geq 0$, if $\e$ is small enough
$$ l \big(X^{\delta,\e}(s),t_0-h\big)+ 
\chi_\eps \big(X^{\delta,\e}(s),t_0-h\big)\geq \kappa_0\e^{-3}\; ,$$
while $(1-\mu^{\delta,\e})(u^\delta+\delta)(X^{\delta,\e}(s),t_0-h\big)$ is bounded uniformly with respect to $\e$, $s$ and $x$. Therefore, on $E^c$,
we necessarily have $\mu^{\delta,\e}(s)\leq \kappa_1\e^{3/2}$ for some $\kappa_1>0$.

\

\noindent \emph{Estimates on $E$ --} As we noticed in the proof of Theorem~\ref{thm:sub.Qk.dpp}, $\ved\leq u^\delta+\delta$. In particular,
$$\begin{aligned}
(u^\delta +\delta)(x,0) &\geq \ved(x,0) \\
& \geq \int_0^{+\infty}
    l^{\delta,\e}  \big(X^{\delta,\e}(s),T^{\delta,\e}(s)\big)\exp(-D^{\delta,\e}(s)) ds\\
& \geq  \int_E
    l^{\delta,\e}  \big(X^{\delta,\e}(s),T^{\delta,\e}(s)\big)\exp(-D^{\delta,\e}(s)) ds \\
    & +  \int_{E^c}
    l^{\delta,\e}  \big(X^{\delta,\e}(s),T^{\delta,\e}(s)\big)\exp(-D^{\delta,\e}(s)) ds
\end{aligned}$$
By definition of $E$, the first integral is estimated by
$$\int_E
    l^{\delta,\e}  \big(X^{\delta,\e}(s),T^{\delta,\e}(s)\big)\exp(-D^{\delta,\e}(s)) ds
\geq 
\int_E
    \e^{-3/2} \exp(-D^{\delta,\e}(s)) ds \; ,$$
while, using the boundedness of $l$ and $(u^\delta +\delta)$ there exists $C>0$ such that
$$\int_{E^c}
    l^{\delta,\e}  \big(X^{\delta,\e}(s),T^{\delta,\e}(s)\big)\exp(-D^{\delta,\e}(s)) ds \geq 
 - C \int_{E^c} \exp(-D^{\delta,\e}(s)) ds\; .$$
 To get an estimate on the Lebesgue measure of $E$, we need an upper estimate of $\int_{E^c} \exp(-D^{\delta,\e}(s)) ds$. 
Notice that on $E^c$, because of the estimate on $\mu^{\delta,\e}(s)$ we have
 $$
\dot D^{\delta,\e}(s)=c^{\delta,\e} \big(X^{\delta,\e}(s),T^{\delta,\e}(s)\big) = 
\mu^{\delta,\e}(s) c\big(X^{\delta,\e}(s),T^{\delta,\e}(s)\big)+\big(1-\mu^{\delta,\e}(s)\big) = 1 + O(\e^{3/2})\; ,
$$
where the $|O(\e^{3/2})|\leq M\kappa_1\e^{3/2}$ is independent of $x$.
Hence, since $\dot D^{\delta,\e}(s)\geq 0$ for any $s\geq 0$,
\begin{align}
\int_{E^c} \exp(-D^{\delta,\e}(s)) ds &=
\int_{E^c} \frac{\dot D^{\delta,\e}(s)}{(1 + O(\e^{3/2}))} \exp(-D^{\delta,\e}(s)) ds\\
&\leq  (1 + O(\e^{3/2}))^{-1} \int_0^{+\infty} \dot D^{\delta,\e}(s) \exp(-D^{\delta,\e}(s)) ds\\
&\leq  (1 + O(\e^{3/2}))^{-1}\; .
\end{align}

Gathering all the above informations, we finally conclude that
$$ \int_E
    \e^{-3/2} \exp(-D^{\delta,\e}(s)) ds \leq \kappa_2\; ,$$
for some constant $\kappa_2$ which is independent of $\e$ and $x$.

We introduce now a parameter $S>0$ and denote by $E_S:=E\cap[0,S]$.
Since $0 \leq \dot D^{\delta,\e}(s)\leq M$ for any $s\geq 0$, we have
$$
 \exp(-MS)|E_S| \leq \int_{E_S}
     \exp(-D^{\delta,\e}(s)) ds \leq \int_E
     \exp(-D^{\delta,\e}(s)) ds \leq \kappa_2\e^{3/2}\; ,
$$
where $|E_S|$ denotes the Lebesgue measure of $E_S$. We choose $S=\Se$ such that $\exp(M\Se)=\e^{-1/6}$ which yields
$$|E_\Se| \leq \kappa_2 \e^{3/2}\exp(M\Se)= \kappa_2\e^{4/3}\; .$$
We remark that $\Se$ behaves like $\ln(\eps^{-1/6})$, uniformly in $x$. The reason why 
we choose $\Se$ in order to get a power $4/3>1$ in $|E_\Se|$ will become clear in the lateral estimates. For Part A, 
any power in $(0,3/2)$ is convenient. \\
 
\noindent \emph{Consequences on $\ved$ --} We first apply the Dynamic Programming Principle for $\ved$ which gives
\begin{align}
\ved (x,t_0-h)= &\int_0^{\Se}
    l^{\delta,\e} \big(X^{\delta,\e}(s),t_0-h\big)\exp(-D^{\delta,\e}(s)) ds \\ &
    + \ved(X^{\delta,\e}(\Se),t_0-h)\exp(-D^{\delta,\e}(\Se))\; .
\end{align}
Now we have to examine each term carefully. 
We first come back to the equation of $D^{\delta,\e}$: 
we have seen above that $|\dot D^{\delta,\e}(s)-1|\leq M\kappa_1\e^{3/2}$ on $E^c$, while $|E_\Se|\leq\kappa_2\e^{4/3}$. We deduce that, for $s \in [0,\Se]$
\begin{equation}\label{eq:D}
 |D^{\delta,\e}(s)-s|\leq M(\kappa_1\e^{3/2}\Se+\kappa_2\e^{4/3})\leq\kappa_3\e^{4/3}
\end{equation}
for some $\kappa_3>0$.
In particular, since $\Se\to +\infty$ as $\e\to 0$, $\exp(-D^{\delta,\e}(\Se)) \to 0$ as $\e\to 0$ and 
$$\liminf_{\e\to 0}\left(\ved(X^{\delta,\e}(S),t_0-h)\exp(-D^{\delta,\e}(\Se))\right)\geq 0 \;,$$
 uniformly w.r.t. $x$ since $\ved$ is bounded from below.

On an other hand, for the $X^{\delta,\e}$-equation, we also have, on $E^c$ (in fact only the $b^x$ part is useful here)
$$b^{\delta,\e}(X^{\delta,\e}(s),t_0-h)=\mu^{\delta,\e}(s)b(X^{\delta,\e}(s),t_0-h)+(1-\mu^{\delta,\e}(s))(0,0)= O(\e^{3/2})\; , $$
more precisely the bound takes the form $M\kappa_2\e^{3/2}$.
Using the decomposition with $E_\Se$ and its complementary $E_\Se^c=E^c\cap[0,\Se]$ as in \eqref{eq:D}, it follows that
$$\begin{aligned}
    \int_0^\Se|b^{\delta,\e}(\tau)| d \tau & =\int_0^\Se|b^{\delta,\e}(\tau)| \ind{E_\Se}(s)d \tau+\int_0^\Se|b^{\delta,\e}(\tau)|\ind{E_\Se^c}(s)\,d \tau\\
     &\leq M(\kappa_2\e^{4/3}+\kappa_1\e^{3/2}\Se)\leq\kappa_3\e^{4/3}\;.
\end{aligned}$$
We deduce that if $s \in [0, \Se]$, $X^{\delta,\e} (s)-x = O(\e^{4/3})$ and since $u^\delta$ is continuous, 
$$ (u^\delta +\delta)(X^{\delta,\e} (s), t_0-h)=(u^\delta +\delta)(x, t_0-h) + o_\e(1)\geq (u^\delta +\delta/2)(x, t_0-h)\;.$$
For a similar reason, on $E^c_\Se$ we can absorb the $o_\e(1)$-term by a $\delta/2$ for $\e$ small enough
$$l^{\delta,\e} \big(X^{\delta,\e}(s),t_0-h\big) \geq (u^\delta +\delta/2)\big(x, t_0-h\big)\;.$$

Gathering all these informations, using \eqref{eq:D} and that $(l+\psi_\e)\geq0$ on $E_\Se$ we get
$$\begin{aligned}
I_\e &:= \int_0^{\Se}
    l^{\delta,\e}  \big(X^{\delta,\e}(s),t_0-h\big) \exp(-D^{\delta,\e}(s)) ds \\
  &\geq \int_{E^c_\Se}
    \Big((u^\delta +\delta/2)\big(x, t_0-h\big) \Big) \exp\big(-s+O(\e^{4/3})\big) ds\;.
\end{aligned}$$
Then, since $\Se$ behaves like $\ln(\e^{-1/6})$ and $|E_\Se|\leq\kappa_2\e^{4/3}$, we get 
$$\begin{aligned}
     I_\e & \geq (u^\delta +\delta/2) \big(x, t_0-h\big) \int_{E^c_\Se} \exp(-s) ds + o_\e(1) \\
 & \geq (u^\delta +\delta/2) \big(x, t_0-h\big)+ o_\e(1)\; .
\end{aligned}
$$
Hence $\ved (x,t_0-h) \geq (u^\delta +\delta/2) \big(x, t_0-h\big)+ o_\e(1)$ where the ``$o_\e(1)$'' is independent of $x$
and for $\e$ small enough, we have $\ved (x,t_0-h) \geq u^\delta \big(x, t_0-h\big)$ on $\overline{B(x_0,r)}$.\\

\noindent \textbf{Part B. Lateral estimates --} Essentially, the proof is the same as for the initial estimates:
the only difference is that the trajectory may exit the region where $\chi_\eps$ is large. But,
if $d((x,t), \mathcal{M}) \leq \eps$ or if $d(x,\partial B(x_0,r))\leq \eps$, the running cost satisfies again the estimate
$l \big(X^{\delta,\e} (s),T^{\delta,\e} (s)\big)+ \chi_\eps (X^{\delta,\e} (s),T^{\delta,\e} (s))\geq
\kappa_0\e^{-3}\geq0$.

We consider the case when $(x,t) \in \mathcal{M}$, the proof being the same if $(x,t)\in\partial B(x_0,r)$. 
Since the dynamic $b$ is bounded by $M$, a trajectory $(X,T)$ starting at $(x,t)$ satisfies
$d((X(s),T(s)),\mathcal{M})\leq Ms$ and therefore,  it stays in an $\e$-neighborhood of $\mathcal{M}$ for $s<\eps/M$.

For an optimal trajectory, we repeat the same proof as in Part A, but on $E\cap [0,\tau_\e\wedge\Se]$, where $\tau_\e$ is the first time for which 
$d((X^{\delta,\e}(s),T^{\delta,\e}(s)),\mathcal{M})= \eps $ and $a\wedge b=\min(a,b)$.

If we set as above
$$
    E:=\big\{s\in[0,\infty): l^{\delta,\e} \big(X^{\delta,\e}(s),T^{\delta,\e}(s)\big) \geq \e^{-3/2}\big\}\;,
$$
then the Lebesgue measure of $E\cap [0,\tau_\e\wedge\Se]$ is less than $\kappa_3\e^{4/3}$ for some $\kappa_3>0$,
while on $E^c\cap[0,\tau_\e\wedge S_\e]$ we have $\mu^{\delta,\e}(s)\leq \kappa_4 \e^{3/2}$ for some $\kappa_4>0$.
As in Part A, using the decomposition on $E\cap [0,\tau_\e\wedge\Se]$ and its complementary we deduce that 
$$\int_0^{\tau_\e\wedge S_\e}|b^{\delta,\e}(s)|\,ds\leq\,M\big\{\kappa_3\e^{4/3}+\kappa_4\e^{3/2}(\tau_\e\wedge\Se)\big\}\;,$$
while by definition the distance between $(x,t)$ and $(X^{\delta,\e}(\tau_\e),T^{\delta,\e}(\tau_\e))$ is larger than $\e$ 
(if $\tau_\e$ is finite, of course).

We claim that for $\e$ small enough, $\tau_\e\wedge\Se=\Se$. 
Indeed, assume on the contrary that for some subsequence $\e_n\to0$,
$\tau_{\e_n}<S_{\e_n}$. From the previous estimate it follows that 
$$\e_n\leq |(X^{\delta,\e}(\tau_\e),T^{\delta,\e}(\tau_\e))-(x,t)|\leq M\big\{\kappa_3\e_n^{4/3}+\kappa_4\e_n^{3/2}\tau_{\e_n}\big\}\;.$$
The fact that the power in the first term is greater than 1 implies that $\tau_{\e_n}$ goes to infinity, at least like $\e_n^{-1/2}$.
But since by construction $S_{\e_n}$ behaves like $\ln(\e_n^{-1/6})$, we reach a contradiction.

We deduce that necessarily $\tau_\e>\Se$ as $\e\to 0$, and that on $[0,\Se]$, the trajectory 
remains ``trapped'' in an $\e$-neighborhood of $\mathcal{M}$. 
We end the proof exactly as in Part A, sending $\e\to0$.

The proof if $x\in\partial B(x_0,r)$ being the same, in conclusion 
we have shown that $v^{\delta,\e} \geq u^\delta$ on $\partial \OO$ for $\e$ small enough.
\end{proof}   

\

In the case when $b^t$ is not allowed to vanish, obtaining the sub-dynamic principle 
is a bit easier since we do not need to consider an obstacle-type problem like \eqref{Obst:DPP-sub}.
\begin{theorem}\label{thm:sub.Qk.dpp.bt}\emph{--- Extended sub-dynamic programming principle II.}\smsp
    Let $h,r>0$ be such that $Mh<r$ and assume that, for any 
$(x,t) \in \overline{Q^{x_0,t_0}_{r,h}}$ and any $(b,c,l) \in \BCL(x,t)$, $b^t=-1$. If $u$ is a subsolution of \eqref{HJ-evol}, if
    $\mathcal{T}^h_\OO(x_0,t_0)\neq \emptyset$ and if a \LCR holds in $\OO$ for the equation $\F=0$, then for any $\eta \leq h$
    \begin{equation}\label{ineq:sub.Qk.dpp2}
        u(x_0,t_0) \leq 
        \inf_{X \in \mathcal{T}^h_\OO(x_0,t_0)}
        \Big\{\int_0^{\eta}
        l\big(X(s),T(s)\big)\exp(-D(s)) \ds+u\big(X(\eta),T(\eta)\big)\exp(-D(\eta))\Big\}\;.
    \end{equation}
\end{theorem}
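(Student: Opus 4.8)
The plan is to follow the same three-step scheme as in the proof of Theorem~\ref{thm:sub.Qk.dpp}, but with two simplifications coming from the fact that $b^t\equiv-1$ on $\overline{Q^{x_0,t_0}_{r,h}}$. First, since the time-component of every dynamic is $-1$, the running time $T$ is automatically nontrivial: along any trajectory $T(s)=t-s$, so trajectories constrained to $\R^N\times[t_0-h,t_0]$ automatically exit through $\{t=t_0-h\}$ at time $s=h$, and there is no need to enlarge $\BCL$ near the artificial ``initial time'' $t_0-h$ in order to recover \HBCLb-$(iii)$. Second, and more importantly, since $b^t$ never vanishes we do not need the obstacle-type equation \eqref{Obst:DPP-sub}: the \LCR is assumed directly for $\F=0$ in $\OO$, and the penalized value functions we build will be supersolutions of $\F=0$ itself (up to the penalization term, which only adds to the running cost), so the comparison argument can be applied with the equation $\F=0$ without the obstacle $u-\psi$.

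Concretely, I would first introduce, for $0<\e\ll1$, the same penalization function
$$ \psi_\eps (x,t):= \frac{1}{\eps^4} \Big[\big(2\eps -d((x,t), \mathcal{M}) \big)^+ +(2\e-(r-|x-x_0|))^+ + (2\e-(t-t_0+h))^+\Big],$$
and define $\BCL^\e(x,t)$ by replacing $l$ with $l+\psi_\e$ in every element of $\BCL(x,t)$ (no enlargement needed). Then set
$$\ve(x,t) = \inf_{\cT^{\e}(x,t)}\Big\{\int_0^{+\infty}
    l^{\e} \big(X^{\e}(s),T^{\e}(s)\big)\exp(-D^{\e}(s)) ds \Big\}\,,$$
where trajectories are constrained to stay in $\R^N\times[t_0-h,t_0]$; by the arguments of Section~\ref{Gen-DCP}, $\ve$ is an \lsc supersolution of $\F=0$ in $\R^N\times(t_0-h,t_0]$ (the extra running cost $\psi_\e\geq0$ only reinforces the supersolution inequality). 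The key auxiliary fact, the analogue of Lemma~\ref{lem:dpp-sub-boundary}, is that for $\e$ small enough $\ve\geq u$ on $\partial\OO$; here $u$ is bounded, so one does not even need the $u^\delta$ approximation — it suffices to show $\ve\to+\infty$ (or at least $\ve\geq\sup u$) near the three pieces of $\partial\OO$ ($t=t_0-h$, $x\in\partial B(x_0,r)$, $(x,t)\in\mathcal M$), which follows from the measure estimate on the set $E=\{s:\ l^\e\geq\e^{-3/2}\}$ exactly as in Part A and Part B of that lemma, using the crucial point that $\dot D^\e$ behaves like $c^\e=O(1)$ so that $\int_{E^c}\exp(-D^\e)\,ds$ is bounded and hence $|E\cap[0,S_\e]|\leq\kappa\e^{4/3}$ with $S_\e\sim\ln(\e^{-1/6})$. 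Since $\ve\geq u$ on $\partial\OO$ and the \LCR holds for $\F=0$ in $\OO$, comparing the subsolution $u$ with the supersolution $\ve$ in $\OO$ gives $u\leq\ve$ throughout $\overline\OO$.

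Finally, the passage to the limit is cleaner than in Theorem~\ref{thm:sub.Qk.dpp}: there is no $\delta$ parameter. At $(x_0,t_0)$ we have $u(x_0,t_0)\leq\ve(x_0,t_0)$, and the Dynamic Programming Principle for $\ve$ yields, for any $\eta\leq h$,
$$u(x_0,t_0)\leq \inf_{\cT^{\e}(x_0,t_0)}\Big\{\int_0^{\eta}
    l^{\e}\big(X(s),T(s)\big)\exp(-D(s)) ds + \ve\big(X(\eta),T(\eta)\big)\exp(-D(\eta))\Big\}\,.$$
As in Step~2 of the previous proof, fixing a trajectory $(X,T,D,L)\in\mathcal{T}^h_\OO(x_0,t_0)$ and $\eta<h$, for $\e$ small enough (depending on that trajectory) one has $d((X(s),T(s)),\mathcal M)>2\e$, $d(X(s),\partial B(x_0,r))>2\e$ and $T(s)-(t_0-h)>2\e$ on $[0,\eta]$ — using $Mh<r$ and $T(s)=t_0-s$ — so $\psi_\e$ vanishes along it and $l^\e=l$ on $[0,\eta]$; thus this trajectory is admissible for $\ve$. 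Plugging it in, using $\ve$ bounded from below so that $\liminf_{\e\to0}\ve(X(\eta),T(\eta))\exp(-D(\eta))\geq u(X(\eta),T(\eta))\exp(-D(\eta))$ (here one needs that $\limiinf\ve\geq u$ pointwise, which follows from $\ve\geq u$ on $\OO$ plus \lsc of $u$ — actually $\ve\geq u$ already on $\overline\OO$, so simply $\ve(X(\eta),T(\eta))\geq u(X(\eta),T(\eta))$), and letting $\e\to0$ gives
$$u(x_0,t_0)\leq \int_0^{\eta} l\big(X(s),T(s)\big)\exp(-D(s)) ds + u\big(X(\eta),T(\eta)\big)\exp(-D(\eta))\,.$$
Taking the infimum over $\mathcal{T}^h_\OO(x_0,t_0)$ gives \eqref{ineq:sub.Qk.dpp2} for $\eta<h$, and the case $\eta=h$ follows by letting $\eta\to h$ trajectory by trajectory. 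I expect the main obstacle to be, as before, the boundary estimate (the analogue of Lemma~\ref{lem:dpp-sub-boundary}): one must carefully track the measure of the ``large running cost'' set and the resulting control on $D^\e$ and on the displacement of $X^\e$ near each portion of $\partial\OO$; but this is a direct transcription of Parts A and B of that lemma, now without the $\mu^{\delta,\e}$-convex-combination bookkeeping since no enlargement of $\BCL$ is performed.
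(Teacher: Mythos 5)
Your proposal omits the enlargement of $\BCL$ at $t=t_0-h$, and this is not a harmless simplification: it breaks the very definition of the penalized value function. Under the hypothesis of Theorem~\ref{thm:sub.Qk.dpp.bt}, every $(b,c,l)\in\BCL(x,t)$ has $b^t=-1$ on $\overline{Q^{x_0,t_0}_{r,h}}$. A trajectory starting at $(x_0,t_0)$ therefore satisfies $T(s)=t_0-s$, and since $Mh<r$ it cannot leave $\overline{B(x_0,r)}$ before $T$ hits $t_0-h$ at time $s=h$. At that moment there is no element of $\BCL$ with $b^t=0$ available to keep the trajectory in $\R^N\times[t_0-h,t_0]$, so $\cT^{\e}(x_0,t_0)=\emptyset$ and your $\ve(x_0,t_0)=+\infty$; the inequality $u(x_0,t_0)\leq\ve(x_0,t_0)$ becomes vacuous and the whole argument collapses. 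This is precisely why the paper's proof keeps the enlargement, \emph{but only at $t=t_0-h$}: adding the elements $((0,0),1,(u^\delta+\delta)(x,t_0-h))$ there furnishes admissible stationary continuations, makes $\ved$ finite, and forces $\ved(x,t_0-h)\geq(u^\delta+\delta)(x,t_0-h)$ directly — no Part~A-type measure estimate is needed on $\{t=t_0-h\}$, which is also why the paper drops the corresponding term $(2\e-(t-t_0+h))^+$ from $\psi_\e$ rather than keeping it as you do. (Note also that Part~A of Lemma~\ref{lem:dpp-sub-boundary} relies on the enlargement element having discount $c=1$ to control $\int\exp(-D)\,ds$; without it those bounds do not transfer.)

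As a consequence, the $\delta$-parameter cannot be dropped either: the enlargement uses the approximating continuous functions $u^\delta$ to keep the set-valued map upper-semicontinuous with compact convex images, and the final passage $\delta\to0$ is still required. Your observation that, away from $t=t_0-h$, one avoids the obstacle problem and works with the $\F=0$ comparison alone is correct and is exactly what the paper records in point (a) of its proof — but that simplification occurs \emph{because} the enlargement is confined to the single time slice $t=t_0-h$, not because it is absent. Once you reinstate the enlargement at $t=t_0-h$ (and hence the $\delta$-scheme, while dropping the $t$-penalization near the bottom of the cylinder), your Steps 2 and 3 match the paper's argument.
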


 \begin{proof}The difference between the two cases comes from the fact that, under the assumption of 
 Theorem~\ref{thm:sub.Qk.dpp.bt}, we could have $T(h)>t_0-h$ in \eqref{eq:inf.ved} (Step~1) for a trajectory starting from
 $(x_0,t_0)$ since $b^t$ was allowed to be different from $-1$: this is why the strategy of the proof of this theorem
 uses $\eta<h$ and, for handling this situation, we need to have $\ved (x,t) \leq u^\delta(x,t) +\delta$ in 
 the whole domain to conclude after using the Dynamic Programming Principle for $\ved$ (cf. Step~3).
 
Here on the contrary we are sure that $T(h)=t_0-h$ for any such
trajectory and we are going the Dynamic Programming Principle for $\ved$ up to time $t_0-h$, i.e. with $s=h$.

 For this reason, we are going to prove \eqref{ineq:sub.Qk.dpp2} for $\eta=h$, the inequality for $\eta<h$ being obtained by applying the result with $h$ replaced by $\eta$.

For all these reasons the proof is similar to that of Theorem~\ref{thm:sub.Qk.dpp} but there are substantial simplifications. 

\noindent\textbf{(a)} We enlarge $\BCL$ in the same way BUT ONLY at time  $t=t_0-h$.
The consequence is that $\ved$ is a supersolution for the HJB-equation $\F=0$ and not of \eqref{Obst:DPP-sub}, since we have no enlargement for $t\in (t_0-h,t_0)$. Hence we just have to deal with the comparison results for the $\F$-equation, we do not need to assume some obstacle-type comparison property.

\noindent\textbf{(b)} The penalization function we use here does not require a specific penalization for the initial time and we just write it as
$$ \chi_\eps (x,t):= \frac{1}{\eps^4} \Big[\big(2\eps -d((x,t), \mathcal{M}) \big)_+ +(2\e-(r-|x-x_0|))_+\Big]\;.$$
The initial inequality $\ved(x,t_0-h)\geq (u^\delta+\delta)(x,t_0-h)$ for any $x\in B(x_0,r)$ follows from the following argument:
since $b^t=-1$ in $\BCL$, the only possibility for a constrained trajectory $(X^{\delta,\e},T^{\delta,\e},D^{\delta,\e},L^{\delta,\e})\in\cT^{\delta,\e}(x,t_0-h)$ to remain in $\R^N \times [t_0-h,t_0]$ is to solve the differential inclusion
by using the elements $((0,0),1,(u^\delta+\delta)(x,t_0-h))$ of $\BCL^{\delta,\e}$. 
This implies directly that $\ved(x,t_0-h)\geq (u^\delta+\delta)(x,t_0-h)$.

\noindent\textbf{(c)} With these simplifications, the proof remains the same as in the general case $b^t\in[-1,0]$: we first get
 that $\ved\geq u$ on $t=t_0-h$, for $x\in\partial B(x_0,r)$ and for $(x,t) \in \mathcal{M}$. Using that we have a \LCR in 
 $\OO$ implies that $\ved\geq u$ on $\overline{\OO}$. Then we proceed as above using the dynamic programming 
 principle for $\ved$. For $\eta\leq h$\footnote{Here we do not have to treat separately the cases when $\eta< h$ and $\eta
 = h$ since we have dropped the penalization term in a neighborhood of $t=t_0-h$ and we know that $\ved(x,t_0-h)\geq (u^\delta+\delta)(x,t_0-h)$.}, taking $\e>0$ small enough
allows to restrict this dynamic principle to the trajectories in $\mathcal{T}^h_\OO(x_0,t_0)$, which avoid $\mathcal{M}$. 
Sending $\e\to0$ and $\delta\to0$ is done ``trajectory by trajectory''.
\end{proof}

\section{Local comparison for discontinuous HJB Equations}
\label{sect:Local.Comparison}

The aim of this section is to provide an argument which is a keystone in several comparison results we
give for HJB Equations with discontinuities, and in particular for stratified problems.

To do so, we consider a $C^1$-manifold $\mathcal{M}\subset\R^N\times (0,\Tf)$ (which will be in the sequel a set of discontinuity for the HJB Equation) and
for any $(x,t) \in \mathcal{M}$, we denote by $T_{(x,t)} \mathcal{M}$, the tangent space of $\mathcal{M}$ at $(x,t)$. Then we define the \emph{tangential Hamiltonian} associated 
with $\mathcal{M}$ by setting
\begin{equation}\label{def:tangent.Hamiltonian}
	\F^{\mathcal{M}}(x,t,u,p):=\sup_{(b,c,l)\in\BCL_T(x,t)} \big\{-b\cdot p + cu-l\big\}\;,
\end{equation}
where $\BCL_T(x,t):=\big\{(b,c,l)\in\BCL(x,t):b\in T_{(x,t)} \mathcal{M}\big\}$. This tangential Hamiltonian is defined for any
$(x,t)\in \mathcal{M}\times[0,\Tf]$, $u\in\R$ and $p\in T_{(x,t)} \mathcal{M}$. But by a slight abuse of notation, we also write
$\F^{\mathcal{M}} (x,t,u,p)$ when $p\in\R^{N+1}$, meaning that only the projection of $p$ onto $T_{(x,t)} \mathcal{M}$ is used for the computation.
We also recall that $Du=(D_xu,u_t)$.

Our main argument comes from the\index{Comparison result!a general lemma for}\index{Dynamic Programming Principle!in the Magical Lemma}
\index{Magical Lemma}
\begin{lemma}\label{lem:comp.fundamental}\emph{--- The ``Magical Lemma''.}\smsp
Assume that \HBCL holds and fix $(x,t)\in \mathcal{M}$, $0<t-h<t\leq \Tf$. Assume that $v:\cylb\to\R$ is a \lsc
    supersolution of $\F(x,t,v,Dv)=0$ in $\cyl$ and $u:\cylb\to\R$ has the following properties:
\begin{enumerate}
\item[$(i)$] $u\in C^0(\cylb)\cap C^1(\mathcal{M})$,
\item[$(ii)$] $\F^{\mathcal{M}}(y,s,u,Du)<0\text{ on }\mathcal{M}$,
\item[$(iii)$] $u$ satisfies a ``strict'' subdynamic principle in $\cyl[\mathcal{M}^c]=(B(x,r)\times(t-h,t])\setminus \mathcal{M}$, i.e.
there exists $\eta >0$, such that, for any $(\xb,\tb) \in \cyl[\mathcal{M}^c]$, for any solution $(X,T,D,L)$ of the differential inclusion such that $X(0)=\xb$, $T(0)=\tb$ and
$(X(s),T(s))\in \cyl[\mathcal{M}^c]$ for $0< s \leq \bar \tau$, we have, for any $0<\tau\leq \bar \tau$
\begin{equation}\label{strict-sdpp}
u(\xb,\tb)\leq  \int_0^\tau (l(X(s),T(s))-\eta)\exp(-D(s))\ds + u(X(\tau),T(\tau))\exp(-D(\tau)).
\end{equation}
	\end{enumerate}
If $\displaystyle \max_{\overline{\cyl}}(u-v) >0$, then, for any $(y,s)\in \overline{\cyl} \setminus \partial_p \cyl $, 
$$(u-v )(y,s)<m:=\max\limits_{\partial_p \cyl}(u-v)\, .$$
\end{lemma}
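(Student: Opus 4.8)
The plan is to argue by contradiction, showing that $u-v$ cannot attain its maximum over $\cylb$ at any point of $\cyl=\cylb\setminus\partial_p\cyl$; since $\partial_p\cyl\subset\cylb$, this forces $\max_{\cylb}(u-v)=\max_{\partial_p\cyl}(u-v)=m$, and then at any $(y,s)\in\cyl$ one has $(u-v)(y,s)\le m$ with equality excluded (it would be an interior maximum point), which is the assertion. So assume for contradiction that $m_0:=\max_{\cylb}(u-v)$ is attained at some $(\bar y,\bar s)\in\cyl$ (note $m_0\ge m$, and in the situations where the lemma is used $m_0>0$). By Lemma~\ref{c-posit} we may assume $c\ge 0$, so that $D(0)=0$ and $s\mapsto D(s)$ is nondecreasing along every trajectory; since $(\bar y,\bar s)$ is interior and $|b|\le M$, $b^t\in[-1,0]$, fix $\sigma>0$ so small that every solution $(X,T,D,L)$ of the differential inclusion issued from $(\bar y,\bar s)$ stays in $\cylb$ for $s\in[0,\sigma]$.

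\textbf{Step 1: a strict sub-dynamic programming inequality for $u$ along \emph{arbitrary} trajectories.} I claim there is $\eta_0>0$ such that, for every such trajectory,
\[
u(\bar y,\bar s)\ \le\ \int_0^\sigma l(X(s),T(s))\,e^{-D(s)}\ds\ +\ u(X(\sigma),T(\sigma))\,e^{-D(\sigma)}\ -\ \eta_0\int_0^\sigma e^{-D(s)}\ds .
\]
Set $\Phi(s):=\int_0^s l\,e^{-D}+u(X(s),T(s))\,e^{-D(s)}$, so $\Phi(0)=u(\bar y,\bar s)$; it suffices to prove $\Phi(s_2)-\Phi(s_1)\ge\eta_0\int_{s_1}^{s_2}e^{-D}$ for all $0\le s_1<s_2\le\sigma$. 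Write $[s_1,s_2]=K\cup\bigcup_j(a_j,b_j)$ with $K:=\{s:(X(s),T(s))\in\mathcal{M}\}$ and $(a_j,b_j)$ the components of the complement. On each $(a_j,b_j)$ the trajectory remains in $\cyl[\mathcal{M}^c]$, so hypothesis (iii)---rescaled by $e^{-D(a_j)}$ and extended to the endpoints by continuity of $u$---gives $\Phi(b_j)-\Phi(a_j)\ge\eta\int_{a_j}^{b_j}e^{-D}$. On $K$, the velocity $(\dot X,\dot T)(s)$ is tangent to $\mathcal{M}$ for a.e.\ $s$, hence $(\dot X,\dot T,\dot D,\dot L)(s)\in\BCL_T$ there; since $u\in C^1(\mathcal{M})$, the map $s\mapsto u(X(s),T(s))\,e^{-D(s)}$ is Lipschitz on $K$, and using (ii) in the quantitative form $\F^{\mathcal{M}}\le-\eta'<0$ on $\cylb\cap\mathcal{M}$ (with $\eta'>0$ by upper semicontinuity and, after possibly shrinking $\cyl$, compactness) a direct computation gives $\Phi'(s)\ge\eta'\,e^{-D(s)}$ for a.e.\ $s\in K$. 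Because $\Phi$ is continuous on $[s_1,s_2]$ and Lipschitz on $K$, its increment over $[s_1,s_2]$ equals the sum of the gap increments plus $\int_K\Phi'$; with $\eta_0:=\min(\eta,\eta')$ this yields the claim.

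\textbf{Step 2: the super-DPP for $v$ and the contradiction.} By the (local version of the) super-dynamic programming principle of Lemma~\ref{lem:super.dpp}, applied to the supersolution $v$ in $\cyl$ (its proof being local in nature), for each $\delta>0$ there is a trajectory $(X,T,D,L)$ issued from $(\bar y,\bar s)$, remaining in $\cylb$ on $[0,\sigma]$, with $v(\bar y,\bar s)\ge\int_0^\sigma l\,e^{-D}+v(X(\sigma),T(\sigma))\,e^{-D(\sigma)}-\delta$. Subtracting the Step~1 inequality for this very trajectory and using $(u-v)(X(\sigma),T(\sigma))\le m_0$,
\[
m_0\,\bigl(1-e^{-D(\sigma)}\bigr)\ \le\ -\,\eta_0\int_0^\sigma e^{-D}\ +\ \delta .
\]
Letting $\delta\to0$ along a uniformly convergent subsequence of trajectories (the limit is again a trajectory of the inclusion, and only the displayed inequality, which involves the $D$'s alone, need pass to the limit), the right-hand side is $\le-\eta_0\,\sigma\,e^{-M\sigma}<0$, while the left-hand side is $\ge0$ because $m_0\ge0$ and $D(\sigma)\ge0$: a contradiction. (When $c\equiv0$---as for the evolution problems to which this lemma is applied---$D\equiv0$ and the contradiction $0\le-\eta_0\sigma+\delta$ is immediate, with no restriction on the sign of $m_0$.)

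\textbf{The main obstacle.} The delicate point is Step~1: turning the strict sub-DPP of (iii), which is available only along trajectory arcs that \emph{stay off} $\mathcal{M}$, together with the pointwise strict tangential inequality (ii), valid only \emph{on} $\mathcal{M}$, into a single strict sub-DPP holding along a trajectory that may cross $\mathcal{M}$ arbitrarily often. The gluing relies essentially on hypothesis (i) (so that $s\mapsto u(X(s),T(s))$ is continuous on $[s_1,s_2]$ and Lipschitz on the closed time-set $K$, hence without singular part there) and on the elementary fact that a trajectory confined to $\mathcal{M}$ has tangent velocity almost everywhere; the remaining work is the measure-theoretic bookkeeping of $K$ and of the complementary intervals.
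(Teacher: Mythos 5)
Your proof is correct, but at the crucial point --- an interior maximum point $(\xb,\tb)$ of $u-v$ lying on $\mathcal{M}$ --- it takes a genuinely different route from the paper's. The paper never establishes your Step~1: it picks a $\delta$-optimal trajectory for the super-DPP of $v$ and splits according to whether that trajectory leaves $\mathcal{M}$ instantly (then one argues on $[\e,\tau]$ exactly as in the off-$\mathcal{M}$ case and lets $\e\to0$ using the continuity of $u$) or keeps returning to $\mathcal{M}$ along a sequence $t_n\searrow 0$; in the latter case it writes the DPP for $v$ on $[0,t_n]$, substitutes $u$ for $v$ via the maximum-point property, Taylor-expands $u$ on $\mathcal{M}$ using (i) together with the integral representation of $(X(t_n)-\xb,T(t_n)-\tb)$ and the upper semicontinuity of $\BCL$, and lets $n\to\infty$ to get $\F^{\mathcal{M}}(\xb,\tb,u,Du)\ge 0$, contradicting (ii) \emph{pointwise}. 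You instead upgrade (iii) to a strict sub-optimality principle valid along arbitrary trajectories crossing $\mathcal{M}$, via the decomposition $[s_1,s_2]=K\cup\bigcup_j(a_j,b_j)$, Stampacchia-type tangency of the velocity a.e.\ on $K$, and the gluing of the increments of $\Phi$. This buys a stronger, reusable statement (a sub-DPP across $\mathcal{M}$), at the price of (a) needing a \emph{uniform} bound $\F^{\mathcal{M}}\le-\eta'<0$ on $\mathcal{M}\cap\cylb$ rather than the pointwise negativity of (ii) --- harmless in every application of the lemma, where $u$ is an $\eta$-strict subsolution, but strictly a small strengthening of the hypothesis --- and (b) the measure-theoretic bookkeeping you flag, which does go through: extend $\Phi|_K$ affinely on the complementary intervals; the extension is Lipschitz with a.e.\ nonnegative derivative, hence monotone. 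Note finally that both your contradiction and the paper's implicitly require the interior maximum value to be nonnegative when $D\not\equiv 0$ (otherwise multiplying by $e^{-D(\tau)}\le 1$ increases a negative quantity); you flag this correctly, and it is guaranteed wherever the lemma is invoked, namely under $\max(u-v)>0$.
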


\begin{proof} Using \HBCLb, we can assume without loss of generality that $c\geq 0$ for all $(b,c,l)\in\BCL(y,s)$ and for all $(y,s) \in  \overline{\cyl}$.

We assume by contradiction that $(u-v)$ reaches its maximum on $\cylb$ at a point $(\xb,\tb)\in \cyl$. If $(\xb,\tb)\in \cyl \setminus \mathcal{M}$, we easily reach a
contradiction: by Lemma~ \ref{lem:super.dpp}, $v$ satisfies \eqref{ineq:super.dpp} and for sufficiently small $\tau$, all the trajectories $(X,T,D,L)$ are such that $(X(s),T(s))\in \cyl[\mathcal{M}^c]$. We consider an optimal trajectory for $v$ at $(\xb,\tb)$, $(X,T,D,L)$ and we gather the information given by \eqref{ineq:super.dpp} and \eqref{strict-sdpp} for some time $\tau$ small enough: substracting
these inequalities, we get
\begin{equation}\label{eq:dpp.comp.local}
	u(\xb,\tb)-v(\xb,\tb)\leq -\eta \tau + (u(X(\tau),T(\tau))-v(X(\tau),T(\tau)))\exp(-D(\tau))\;.
	\end{equation}
But $(\xb,\tb)$ is a maximum point of $u-v$ in $\cylb$ and therefore we have at the same time $u(\xb,\tb)-v(\xb,\tb)>0$ and 
$u(\xb,\tb)-v(\xb,\tb) \geq u(X(\tau),T(\tau))-v(X(\tau),T(\tau))$; hence, since $\exp(-D(\tau))\geq 0$
$$
u(\xb,\tb)-v(\xb,\tb)\leq -\eta \tau + (u(\xb,\tb)-v(\xb,\tb))\exp(-D(\tau))\;,$$
which is a contradiction since $\exp(-D(\tau))\leq 1$.

If $(u-v)$ reaches its maximum on $\cylb$ at a point $(\xb,\tb)\in \cyl \cap  \mathcal{M}$, we face two cases

\noindent\textbf{A. --} In \eqref{ineq:super.dpp} for $(\xb,\tb)$, there exists a trajectory $(X,T,D,L)$ and $\tau>0$ such that $X(0)=\xb$, $T(0)=\tb$ and
\begin{equation}\label{sup-dpp-v-tau}
v(\xb,\tb)\geq \int_0^{\tau}
        l\big(X(s),T(s)\big)\exp(-D(s)) \ds+v\big(X(\tau),T(\tau)\big)\exp(-D(\tau))\; ,
\end{equation}
AND $(X(s),T(s))\in \cyl \setminus \mathcal{M}$ for $s\in(0,\tau]$. In this case we argue essentially as above: we use as a starting point $(x_\e,t_\e):=(X(\e),T(\e)) \in \cyl[\mathcal{M}^c] $ for $0<\e \ll 1$ and we use \eqref{strict-sdpp} for the specific trajectory $(X,T,D,L)$ but on the time interval $[\e,\tau]$
$$u(x_\e,t_\e) \leq  \int_\e^\tau (l(X(s),T(s))-\eta)\exp(-D(s))\ds + u(X(\tau),T(\tau))\exp(-D(\tau))\;.$$
But in this inequality, we can send $\e$ to $0$, using the continuity of $u$ and finally get, combining it with the above inequality for $v$
to obtain \eqref{eq:dpp.comp.local} and a contradiction.

\noindent\textbf{B. --} If Case~A cannot hold, this means that, for any $\tau$ and for any trajectory $(X,T,D,L)$ such that \eqref{sup-dpp-v-tau} holds, then there exists a sequence $t_n\searrow 0$ such that 
$X(t_n)\in \mathcal{M}$ for any $n\in\N$. \index{Dynamic Programming Principle!for supersolutions}We first use the dynamic programming inequality for $v$
between $s=0$ and $s=t_n$, which yields
$$v(\xb,\tb)\geq \int_{0}^{t_n} l(X(s),T(s))\exp(-D(s))\ds + v(X(t_n),T(t_n))\exp(-D(t_n))\;.$$
Since $u-v$ reaches a maximum at $(\xb,\tb)$ and since this maximum is positive, we can replace $v$ by $u$ in this inequality which leads to
$$\frac{u(\xb,\tb)-u(X(t_n),T(t_n))\exp(-D(t_n))}{t_n}\geq \frac{1}{t_n}\int_{0}^{t_n} l(X(s),T(s))\exp(-D(s))\ds\;.$$
Now, since $u$ is $C^1$-smooth on $\mathcal{M}\times (t-h,t)$, we have (recall that $Du=(D_x u,u_t)$ and that here we use only derivatives which are in the tangent space of $\mathcal{M}$)
\begin{align}
u(X(t_n),T(t_n))=& u(\xb,\tb)+ D u(\xb,\tb)(X(t_n)-\xb,T(t_n)-\tb)+  o(|X(t_n)-\xb|+|T(t_n)-\tb|)\\
 = & u(\xb,\tb)+ D u(\xb,\tb)(X(t_n)-\xb,T(t_n)-\tb)+ o(t_n)\; ,
\end{align}
and writing
$$ (X(t_n)-\xb,T(t_n)-\tb) = \int_{0}^{t_n} b(s)ds\; , \; \exp(-D(t_n)) = \int_{0}^{t_n} - c(s)\exp(-D(s))ds $$
we obtain
$$ 
\frac{1}{t_n}\int_{0}^{t_n}\left\{ -b(s)\cdot Du(\xb,\tb) + c(s)u(\xb,\tb) - l(X(s),T(s))\right\}\exp(-D(s)) ds \geq 0 \; .
$$
And since $\exp(-D(s))=1+O(t_n)$, we can write this inequality as
$$-b_n \cdot Du(\xb,\tb) +  c_n u(\xb,\tb) -l_n
  \geq o_n(1) \; ,
$$
where
$$ b_n=\left(\frac{1}{t_n}\int_{0}^{t_n} b(s)ds\right)\; ,\; c_n=\left(\frac{1}{t_n}\int_{0}^{t_n} c(s)ds\right)\; ,\; l_n\left(\frac{1}{t_n}\int_{0}^{t_n} l(X(s),T(s))ds\right).$$
But the $b_n, c_n,l_n$ are uniformly bounded and therefore we can assume that $b_n\to \bar b, c_n\to \bar c,l_n\to \bar l$. Using the convexity and upper semi-continuity of $\BCL$, we have $(\bar b, \bar c, \bar l)\in \BCL(\xb,\tb)$ and by the definition of $b_n$, we also have $\bar b \in T_{(\xb,\tb)} \mathcal{M}$. Finally, passing to the limit in the above inequality yields
$$ -\bar b\cdot Du(\xb,\tb) + \bar c u(\xb,\tb) - \bar l \geq 0\; .$$
But, thanks to the definition of $ \F^{\mathcal{M}}$ and the properties of $u$, we have the inequalities
$$ 0\leq -\bar b\cdot Du(\xb,\tb) + \bar c u(\xb,\tb) - \bar l \leq  \F^{\mathcal{M}}(\xb,\tb,u(\xb,\tb),Du(\xb,\tb))<0\; ,$$
which is the desired contradiction.
\end{proof}

\begin{remark}\label{rem:comp.fundamental} 
There are possible variants for this lemma. In particular, in Part~\ref{part:codim1}, we use one of them where the sub and supersolution properties 
for $u$ and $v$ are defined in a slightly different way, namely with taking a more restrictive set of control on $\mathcal{M}$. Of course, in that case,
$\F^{\mathcal{M}}$ is replaced by an Hamiltonian which defined in a different way. The proof is still valid if the Dynamic Programming argument of 
\textbf{B.} leads to the right inequality.
\end{remark}

\section{The ``good framework for HJ Equations with discontinuities''}
\label{sec:GFHJD}
\index{Good framework for HJE}
\label{page:GF.HJE}

The study of Hamilton-Jacobi with discontinuities or the associated control problems in the convex
case leads to various situations, many of which we consider in Parts~\ref{part:codim1},~\ref{part:NA},
~\ref{stratRN} or~\ref{S-BC}. These situations may appear to be quite different, but still we can
identify some common structure on the equations and the discontinuities of the Hamiltonians which
seems quite ``natural'' to get most of the results. Of course, what we are going to describe as the
``good framework for HJ-Equations with discontinuities'' does not perfectly fit all situations and
some adaptations have to be made in each case. But the definition below provides a good idea of the
key assumptions which are required to treat those problems.

\subsection{General definition at the pde level}

\index{Good framework for HJE!at the pde level}
\label{page:GF.HJE:pde}
\begin{definition}\label{GF-HJD}\emph{--- The good framework for HJ-Equations.}\smsp
    We say that we are in the ``good framework for HJ-Equations with discontinuities'' for the
    equation
    \begin{equation}\label{eq:GF-HJD}
    \G(X,u,Du)=0\quad \hbox{in  }\OO\subset \R^{N}
    \end{equation}
    if \LOCa, \LOCb hold and if there exists an \TFS $\M=(\Man{k})_{k=0..N}$ of $\R^{N}$
    such that, for any $k=0,..,N$ 
    \begin{enumerate}
        \item[$(i)$] if $\bar X \in \Man{k}\cap \OO$, there is a ball $B(\bar X,r)\subset \OO$ for
            some $r>0$ and a $C^{1,1}$-diffeomorphism $\Psi:B(\bar X,r) \to \R^N$ such that
            $\Psi(\bar X)=\bar X$,
            $$\Psi(B(\bar X,r)\cap\Man{k})=\big(\bar X+\R^k\times \{0_{\R^{N-k}}\}\big)\cap
            \Psi(B(\bar X,r))\, ,$$
            and, for any $l=(k+1)..N$ and $\bar Y\in \Psi(\Man{l}\cap B(x,r))$
            $$ \big(\bar Y+\R^k\times \{0_{\R^{N-k}}\}\big)\cap
            \Psi(B(\bar X,r))\subset \Psi(\Man{l}\cap B(x,r))\quad \hbox{.}$$
    \item[$(ii)$] Denoting $\Psi(X)=\bar X+(Y,Z)$ with $Y\in \R^k$, $Z\in \R^{N-k}$ and 
        $$ \tilde \G ((Y,Z),r,(p_Y,p_Z))= \G(\Psi^{-1}\left(\bar X+(Y,Z)\right),r,
            [(\Psi^{-1})']^T \left(\bar X+(Y,Z)\right)(p_Y,p_Z))\;,$$ 
        where $[(\Psi^{-1})']^T$ denotes the transpose matrix of $(\Psi^{-1})'$, then \TC, \NCe,\Mong
        hold for $\tilde \G$ on $\Psi(B(\bar X,r)\cap\Man{k})$.
    \end{enumerate}
    In this case, we will say that $\M$ is associated to Equation~\eqref{eq:GF-HJD}.
\end{definition}

As we already mentioned it in Section~\ref{sect:sup.reg}, the difficulty when stating such
definition is that it is supposed to cover very different situations for which the sense of $\G=0$
may vary and may also involve several Hamiltonians.  In these various situations, we use the
following convention

\noindent {\em \TC, \Mong have to be satisfied - {\em up to some change of variables} -  by ANY
Hamiltonians which are involved in the sub and supersolutions inequalities while \NCe has to be
satisfied by the Hamiltonians which are involved in the subsolutions inequalities related to local
maximum points in $\OO$---or $\Psi(B(\bar X,r))$---but not by the Hamiltonians related to local
maximum points on the $\Man{k}$ for $k<N$.}

But before coming back to this point, let us explain the key ideas beyond this
``good framework for HJ-Equations with discontinuities''.

The very first idea is that the discontinuities of $\G$ form an \TFS. Since we
always argue locally (using \LOCa, \LOCb, for comparison results), we can use
Definition~\ref{def:TFS}---with perhaps a smaller $r$---to reduce to the case when the
$k$-dimensional discontinuity on $\G$, $\Man{k}$, can be flatten, here replaced by $\bar X+\R^k\times
\{0_{\R^{N-k}}\}$). This is the first important reduction. We immediately point out that, in
Definition~\ref{GF-HJD}, the diffeomorphism $\Psi$ is assumed to be $C^{1,1}$ which is needed in
general to get \TC but, in  coercive cases, i.e. when $G$ is coercive in $p$,
$C^{1}$-diffeomorphisms may be enough.

Once this change is done, we are in the framework of Section~\ref{sect:sup.reg} and using a
combination \TC, \NCe,\Mong allows us to regularize subsolutions in order to be able to apply
Lemma~\ref{lem:comp.fundamental}. The triptych ``\emph{Tangential continuity $+$ normal
controllability $+$ some suitable monotonicity}'' seems to us the basis of most of our results, and
not only the comparison ones.

The two extreme cases have also to be commented: if $k=N$, then there is no normal directions, \TC
has to be satisfied by all coordinates, $\G$ is continuous in a neighborhood of $\bar X$, no change
$\Psi$ is really needed and, through \TC, we just recover the classical assumption  for the
uniqueness of viscosity solutions for a standard HJ-Equations without discontinuity. If $k=0$, $\bar
X$ is an isolated point, we have no  ``tangent coordinates'' and \TC is void but \NCe implies that
$\G$ is coercive in $p$ in a neighborhood of $\bar X$.

\subsection{The stratified case, ``good assumptions'' on the control problem}

Now let us come back on the sense of the equation $\G=0$ and the way the above convention has to be
applied. Anticipating Part~\ref{stratRN} on the full stratified case, we have an HJ Equation of the
type
$$
\F(x,t,U,DU)=0\quad\text{in}\quad\R^N\times[0,\Tf]\;,
$$
where $DU=(D_x U, D_t U)$ and 
$$
\F(x,t,r,p):=\sup_{(b,c,l)\in\BCL(x,t)}\big\{ -b\cdot p + c r- l \big\}\;.
$$
Assuming that \HBCL holds, what does it mean to be in the ``good framework for HJ-Equations with
discontinuities'' here? 

In the case of stratified problems, roughly speaking, the sense of the equation is $F^*\geq 0$ in
$\R^N\times(0,\Tf]$ for supersolutions and, for subsolutions, $F_*\leq 0$ in $\R^N\times(0,\Tf]$
with the additional conditions $\F^k\leq 0$ on $\Man{k}$ where the ``tangential Hamiltonians'' $\F^k$
for $k=0..N$ are defined for $(x,t)\in \Man{k}$, $r\in \R$ and $p \in T_{(x,t)}\Man{k}$, by
$$
\F^k(x,t,u,p):=\sup_{\substack{(b,c,l)\in\BCL(x,t)\\ b\in T_{(x,t)}\Man{k}}}\big\{ -
    b\cdot p +cu - l\big\}\;.
$$
For $t=0$, we have analogous properties but for $\F_{init}$. We refer to
Chapter~\ref{chap:strat-def} for more precise definitions. 


Now we examine the needed assumptions on the $\BCL$ in $\R^N \times (0,\Tf]$ in order to have \TC and
\NCe: we are going to do it precisely for \TC and \NCe since, for \Mong, this is a more standard
consequence of \HBCL and we come back on that point in Chapter~\ref{chap:strat-def},  more
specifically in Section~\ref{sec:compstrat}. On the other hand, for $t=0$, such checking
is analogous using $\F_{init}$ and the associated Hamiltonians on $\Man{k}_0$. 

Since these assumptions are local and invariant by the $\Psi$-changes, we can state them in a ball
$B((x,t),r)$ centered at $(x,t) \in \Man{k}$ with a small radius $r>0$ and we can assume that, in
$B((x,t),r)$, $\M$ is an \TFS with $\Man{k}=(x,t) + V_k$, where $V_k$ is a $k$-dimensional vector
space in $\R^{N+1}$ and $B((x,t),r)$ intersects only $\Man{k}, \Man{k+1},\cdots, \Man{N+1}$. We
denote by $V_k^\bot$ the orthogonal space to $V_k$ and by $P^\bot$ the orthogonal projector on
$V_k^\bot$.  We trust the reader to be able to translate them for the original stratification and
$\BCL$.

In this framework, \TC \& \NCe are satisfied if, with the above notations
\index{Good framework for HJE!for control problems}
\label{page:GF.HJE:control}

\label{page:TCBCL} 
\index{Tangential continuity!control version}
\begin{assumption}{\TCBCL}{Tangential Continuity -- $\BCL$ version.}
For any  $0\leq k\leq N+1$ and for any $(x,t) \in
\Man{k}$, there exists a constant $C_1>0$ and a modulus $m:[0,+\infty)\to \R^+$ such that,
for any $j\geq  k$, if
$(y_1,t_1),(y_2,t_2)\in\Man{j}\cap B((x,t),r)$ with 
$(y_1,t_1)-(y_2,t_2) \in V_k$, then for any $(b_1,c_1,l_1) \in \BCL(y_1,t_1)$,
there exists $(b_2,c_2,l_2) \in \BCL(y_2,t_2)$ such that
$$ |b_1-b_2|\leq C_1(|y_1-y_2|+|t_1-t_2|)\quad,\quad 
|c_1-c_2|+|l_1-l_2| \leq m\big(|y_1-y_2|+|t_1-t_2|\big)\; .$$
\end{assumption}

\vspace*{-1.5em}

\index{Normal controllability!control version}
\label{page:NCBCL}\begin{assumption}{\NCBCL}{Normal Controllability -- $\BCL$ version.}
For any  $0\leq k\leq N+1$ and for any $(x,t) \in
\Man{k}$, there exists $\delta = \delta(x,t)>0$, such that, for any $(y,s) \in B((x,t),r)$,
one has $$B(0,\delta) \cap V_k^\bot \subset P^\bot\left(\B(y,s) \right)\, .$$
\end{assumption}

\vspace*{-1em}

Of course, the case $k=0$ is particular since $V_k=\{0\}$: here we impose
a complete controllability of the system in a neighborhood of $x\in\Man{0}$ since the
condition reduces to $B(0,\delta)\subset \B(y,t)$ because $V_k^\bot = \R^{N+1}$.

As we will see it throughout this book, the normal controllability assumption plays a key role in
all our analysis: first, at the control level, to obtain the viscosity subsolution inequalities for
the value function on each $\Man{k}$, then in the comparison proof to allow the regularization (in a
suitable sense) of the subsolutions and, last but not least, for the stability
result.\index{Regularization of subsolutions}

It is rather easy to prove that \NCBCL implies \NCe in this \TFS framework. We therefore concentrate
on \TCBCL and the following result first gives an important consequence of these assumptions:
the continuity of all the Hamiltonians $\{\F^k\}_{k=0..N}$, whose proof uses a combination of \TCBCL
and \NCBCL. We point out that, on the contrary, it is easy to prove that $\F^{N+1}$ satisfies \TC in $\Man{N+1}$.

With the same notations as above we set, for $(y,s) \in B((x,t),r)\cap \Man{k}$ 
$$\BCL^k(y,s):=\{(b,c,l)\in\BCL(y,s);\  b\in T_{(y,s)}\Man{k} = V_k\}\; ,$$ 
and $\B^k (y,s)$ is the set of all $b$ such that there exists $c,l$ for which
$(b,c,l)\in\BCL^k(y,s)$.

We have the
\begin{lemma}\label{tgfields} 
    If \TCBCL and \NCBCL hold, then
    \begin{enumerate}
    \item[$(i)$]  $\BCL^k (y,s)\neq \emptyset$ for any $(y,s) \in B((x,t),r)\cap \Man{k}$.
       
    \item[$(ii)$] There exists $\bar C_1>0$ and a modulus $\bar m$ such that, if
    $(y_1,t_1),(y_2,t_2)\in B((x,t),r)\cap \Man{k}$ and if $(b_1,c_1,l_1) \in \BCL^k (y_1,t_1)$,
    there exists $(b_2,c_2,l_2) \in \BCL^k(y_2,t_2)$ such that
    $$ |b_1-b_2|\leq \bar C_1(|y_1-y_2|+|t_1-t_2|)\;,\; |c_1-c_2|+|l_1-l_2| \leq 
    \bar m\big(|y_1-y_2|+|t_1-t_2|\big)\; .$$
    In particular, the Hamiltonian $\F^k$ satisfies \TC on $\M^k$, i.e. for any $R>0$, 
    for any $(y_1,t_1),(y_2,t_2)\in B((x,t),r)\cap \Man{k}$, $|r|\leq R$, $p\in V_k $ (or $p\in \R^{N+1}$)
    $$\begin{aligned}
        |\F^k(y_1,t_1,r,p)-\F^k(y_2,t_2,r,p)|\leq & \bar C_1(|y_1-y_2|+ |t_1-t_2|)|p|\\[2mm]
        & + (R+1)\bar m\big(|y_1-y_2|+|t_1-t_2|\big)\;.
    \end{aligned}
    $$
    
    \item[$(iii)$] For any $j\geq  k$, there exists $\tilde C_1>0$ and a modulus $\tilde m$ such that, if
    $(y_1,t_1),(y_2,t_2)\in\Man{j}\cap B(x,r)$ with $(y_1,t_1)-(y_2,t_2) \in V_k$, if $(b_1,c_1,l_1)
    \in \BCL^j (y_1,t_1)$, there exists $(b_2,c_2,l_2) \in \BCL^j(y_2,t_2)$ such that
    $$ |b_1-b_2|\leq \tilde C_1(|y_1-y_2|+|t_1-t_2|)\;,\; |c_1-c_2|+|l_1-l_2| 
    \leq \tilde m\big(|y_1-y_2|+|t_1-t_2|\big)\; .$$
    In particular, the Hamiltonian $\F^j$ satisfies \TC on $\M^j$, i.e. for any $R>0$, 
    for any $(y_1,t_1),(y_2,t_2)\in B((x,t),r)\cap \Man{k}$, $|r|\leq R$, $p\in V_k $ (or $p\in \R^{N+1}$)
    $$\begin{aligned}
        |\F^j(y_1,t_1,r,p)-\F^j(y_2,t_2,r,p)|\leq & \tilde C_1(|y_1-y_2|+|t_1-t_2|)|p|\\[2mm]
        & + (R+1)\tilde m\big(|y_1-y_2|+|t_1-t_2|\big)\;.
    \end{aligned}$$
    \end{enumerate}
\end{lemma}

\begin{proof}
    The first part of the result is a direct consequence of \NCBCL: indeed $0\in P^\bot\left(\B(y,s)
    \right)$, hence there exists $(b,c,l)\in \BCL (y,s)$ such that $P^\bot (b)=0$, i.e. $b \in V_k =
    T_{(y,s)}\Man{k}$.

    For the second part of the result, we use \TCBCL: if $(b_1,c_1,l_1) \in \BCL^k(y_1,t_1) \subset
    \BCL(y_1,t_1) $, there exists $(b_2,c_2,l_2) \in \BCL(y_2,t_2)$ such that
    $$ |b_1-b_2|\leq C_1(|y_1-y_2|+|t_1-t_2|)\quad,\quad 
    |c_1-c_2|+|l_1-l_2| \leq m\big(|y_1-y_2|+|t_1-t_2|\big)\; .$$
    We have to modify $(b_2,c_2,l_2)$ in order to obtain $(\tilde b_2,\tilde c_2,\tilde l_2)\in
    \BCL^k(y_2,t_2)$ with the right property. To do so, we notice that, since $P^\bot(b_1)=0$ then
    $|P^\bot(b_2)| \leq \eta: =C_1(|y_1-y_2|+|t_1-t_2|)$. 

    If $P^\bot(b_2)=0$ the result holds, hence we may assume that $P^\bot(b_2)\neq 0$ and set
    $$ e= \frac{P^\bot(b_2)}{|P^\bot(b_2)|}\; .$$
    Using \NCBCL, there exists $(\bar b_2,\bar c_2,\bar l_2)\in \BCL(y_2,t_2)$ such that
    $P^\bot(\bar b_2) = -(\delta/2)e$ and we consider the convex combination 
    $$ (\tilde b_2,\tilde c_2,\tilde l_2):= (1-\alpha)(b_2,c_2,l_2) + \alpha(\bar b_2,\bar c_2,\bar
    l_2)\; .$$ 
    Since
    $$P^\bot(\tilde b_2)=(1-\alpha)P^\bot(b_2) + \alpha P^\bot(\bar b_2)=(1-\alpha)\eta e 
    - \frac{\delta}2 \alpha e\;,$$
    choosing $\alpha = \eta/(\eta +\delta/2)$ we get $P^\bot(\tilde b_2)=0$. Therefore $(\tilde
    b_2,\tilde c_2,\tilde l_2)\in \BCL^k(y_2,t_2)$ and the estimates on $|b_1-\tilde b_2|$,
    $|c_1-\tilde c_2|$, $|l_1-\tilde l_2|$ are an easy consequence of the value of $\alpha$, because
    of the definition of $\eta$ and the properties of $b_2,c_2,l_2$. Indeed, the difference between
    $(\tilde b_2, \tilde c_2,\tilde l_2)$ and $(b_2,c_2,l_2)$ behaves like $3M\alpha \leq
    3M\delta^{-1} \eta$ and therefore the result holds with 
    $$ \bar C_1:= (1+3M\delta^{-1})C_1 \quad\hbox{and}\quad \bar m(\tau)= m(\tau)+ 3M\delta^{-1}
    C_1\tau\; .$$

    Finally the \TC inequality for $\F^k$ is a direct consequence of the previous result. The
    third result follows from analogous arguments as in $(ii)$.  
\end{proof}

\subsection{Ishii solutions for a codimension one discontinuous Hamilton-Jacobi Equation}
\label{sec:GA-codim1}

We conclude this section by some remarks on the model problem which is studied in Part~\ref{part:codim1} and~\ref{part:NA} where $\OO=\R^N\times (0,\Tf)$, $X=(x,t)$ and
$$
\G(x,t,r,(p_x,p_t)):=\begin{cases}p_t +H_1 (x,t,r,p_x) & \hbox{if  }x_N >0,\\
p_t +H_2 (x,t,r,p_x) & \hbox{if  }x_N <0.
\end{cases}$$
For Part~\ref{part:codim1}, we are in the control case and we use the standard Ishii inequalities, namely $G^*\geq 0$ in $\OO$ and 
$G_*\leq 0$ in $\OO$. We can use \TCBCL and \NCBCL which are satisfied if \HBAHJ holds and if $H_1,H_2$ satisfies the 
assumption \NCoH (see p.~\pageref{page:NCoH}) on $\Man{N}=\H:=\{x:\ x_N=0\}\times (0,\Tf)$ . In fact, \Mong but also \LOCa, \LOCb are 
also satisfied under these assumptions.

Concerning Part~\ref{part:NA}, we point out that essentially the same type of assumptions are needed but since the Hamiltonians
$H_1,H_2$ will only be assumed to be quasi-convex, we have to come back to the \TC, \NCe formulations.


\chapter{Other Tools}
\fancyhead[CO]{HJ-Equations with Discontinuities: Other Tools}
\label{chap:other.tools}

\abstract{In this catch-all chapter are gathered several results which are of either of general
interest like those on semi-convex/semi-concave functions and on penalizations; or related to
networks like those for quasi-convex functions or the Kirchhoff-related lemma.}

\section{Semi-convex and semi-concave functions: the main properties}
\label{sect:semi.convexity}

\index{Semi-convex, semi-concave functions}
The aim of this section is to describe the properties of semi-convex and semi-concave functions which
will be used throughout this book, in particular those connected to their differentiability. Considering
Section~\ref{sec:regplus-subsol}, it is clear that we are not going to manipulate functions which are 
semi-convex or semi-concave \wrt all variables but only in the ``tangential variables''; anyway, since this latter case
consists only in applying the results of the first one by fixing the normal coordinates, we will only be interested in this section in
the case of the functions which are semi-convex/semi-concave \wrt all variables.

We first recall that, if $\OO \subset \R^N$ is a convex domain and $f:\OO \to \R$, the function $f$ is semi-convex
\resp{semi-concave} if there exists a constant $C\geq 0$ such that $x\mapsto f(x)+C|x|^2$ is convex
\resp{$x\mapsto f(x)-C|x|^2$ is concave}.

In the sequel, we consider only the semi-convex case, the semi-concave one being deduced by changing $f$ in $-f$ in the results
below. In addition, we point out that all the properties we are going to describe are nothing but properties of convex functions which
are translated in a suitable (and easy) way, the term $C|x|^2$ being smooth and therefore causing no problem for the differentiability.

We list all the properties in the following result
\begin{proposition}\label{prop:semi-conv-prop}\emph{--- Properties of semi-convex/semi-concave
    functions.}\smsp
    If $f:\OO \to \R$ is a locally bounded functioni, semi-convex for a constant $C\geq 0$, then
\begin{enumerate}
    \item[$(i)$] $f$ is locally Lipschitz continuous in $\OO$ and if $\overline{B(x,2r)}\subset \OO$, the Lipschitz constant of $f$ 
in $\overline{B(x,r)}$ depends only on $||f||_{L^\infty(\overline{B(x,r)})}$.
\item[$(ii)$] $f$ is differentiable a.e. in $\OO$.
\item[$(iii)$] For any $x \in \OO$, $D^-_{\OO} f(x)\neq \emptyset$ and if $p \in D^-_{\OO} f(x)$, we have, for all $y \in \OO$,
\begin{equation}\label{eq:semic-prop}
f(y) \geq f(x) +p\cdot (y-x) -2C|y-x|^2\; .
\end{equation}
\item[$(iv)$] Let $(f_\e)_\e$ be a sequence of functions which are semi-convex with the same constant $C$ and which are converging to
$f$ locally uniformly in $\OO$ and let $(\xe)_\e$ a sequence of points of $\OO$ which converges to $x\in \OO$. If $\pe \in D^-_{\OO} f(\xe)$
and if $(p_{\e'})_{\e'}$ is subsequence of $(\pe)_\e$ which converges to $p$ then
$p\in D^-_{\OO} f(x)$. In particular, if $f_\e$ is differentiable at $\xe$ for any $\e$ and if $f$ is differentiable at $x$, then $Df_\e (\xe)\to Df(x)$.
\item[$(v)$] If $\varphi$ is either a $C^1$ or a semi-concave function defined on $\OO$ and if $x$ is a maximum point of $f-\varphi$, then $f$ is differentiable at $x$, $\varphi$ is also differentiable at $x$ in the semi-concave case and $Df(x)=D\varphi(x)$.
\end{enumerate}
\end{proposition}

Of course we are not going to give a complete proof of Proposition~\ref{prop:semi-conv-prop}: as we mentioned it above, most of the
results are very classical for convex functions and extend without any difficulty to the case of semi-convex ones. But we provide some comments for 
each of them.

\begin{enumerate}
\item $(i)$ and $(ii)$ are famous classical results for convex functions, $(ii)$ being a consequence
    of $(i)$ through Rademacher's Theorem (even
if historically Rademacher's Theorem is more a consequence of $(ii)$).
\item $(iii)$ also reflects a classical property of convex function, in particular Inequality~\eqref{eq:semic-prop} with the correcting term $-2C|y-x|
^2$.
\item $(iv)$ is an easy consequence of  Inequality~\eqref{eq:semic-prop}. We point out that the existence of converging subsequences $
(p_{\e'})_{\e'}$ is a consequence of $(i)$ since it is easy to show that $|\pe|$ is controlled by the Lipschitz constant of $f_\e$ and these Lipschitz 
constants are uniformly bounded by $(i)$ and the local uniform convergence of the sequence $(f_\e)_\e$. An interesting particular case is the 
choice when $f_\e\equiv f$ where 
we have some kind of ``continuity of the gradient'' since, if we have a sequence $(\xe)_\e$ of points where $f$ is differentiable
which converges to $x\in \OO$ where $f$ is differentiable, then $Df (\xe)\to Df(x)$. This is proved by a standard compactness argument 
since $Df(x)$ is the only possible limit of subsequences of  $(Df (\xe))_\e$.
\item Property $(v)$ will play a key role for us since we are going to be any time in this context (we recall here that this will be only a property to be used in the ``tangential variables''). This property is a consequence of the following result: if $D^-_{\OO} f(x)\neq \emptyset$ AND $D^+_{\OO} f(x)\neq \emptyset$ then $f$ is differentiable at $x$ and $D^-_{\OO} f(x)=D^+_{\OO} f(x)=\{Df(x)\}$. In our context, we know by 
$(iii)$ that $D^-_{\OO} f(x)\neq \emptyset$ and then we have two cases

-- if $\varphi$ is $C^1$, the maximum point property implies $D\varphi (x) \in D^+_{\OO} f(x)$ which is therefore non-empty and the conclusion follows readily.

-- If $\varphi$ is semi-concave, then $D^+_{\OO} \varphi (x)\neq \emptyset$ by an analogous property
        of $(iii)$ for semi-concave function and
the maximum point property both implies $D^+_{\OO} \varphi (x)\subset D^+_{\OO} f (x)$ and $D^-_{\OO} f (x)\subset D^-_{\OO} \varphi (x)$. Hence both $f$ and $\varphi$ are differentiable at $x$ and $Df(x)=D\varphi(x)$.
\end{enumerate}

\begin{remark}\label{rem:cont-SC-D} Property $(iv)$ will mainly be used in the case when $f$ is differentiable
at $x$. Then, for any sequence $(\xe)_\e$ of points of $\OO$ which converges to $x\in \OO$ and for any choice of
$\pe \in D^-_{\OO} f_\e (\xe)$, the sequence of $(\pe)_\e$ converges to $Df(x)$. Indeed, the sequence $(\pe)_\e$ is bounded,
hence it lies in a compact subset of $\R^N$ and $Df(x)$ is the only possible limit for converging subsequences of $(\pe)_\e$.
\end{remark}

\section{Quasi-convexity: definition and main properties}
\label{sect:quasi.convexity}

\index{Quasi-convex functions}\label{page:QCR}
Let $\mC\subset\R^N$ be a convex set. A \emph{quasi-convex} function $f:\mC\to\R$ is a function
such that, for any $a\in\R$, the lower level set $\{x:f(x)\leq a\}$ is convex. 

An equivalent definition is: for any $x,y\in \mC$ and $\lambda\in(0,1)$, $$f(\lambda x+(1-\lambda)y)\leq
\max\{f(x),f(y)\}\;.$$

Of course, convex functions are quasi-convex but the converse is false since quasi-convex functions
can be discontinuous, even if they are bounded: for example, take, in $\R^N$, the indicator function
of the complementary of a convex set. Hence, one of the differences between convex and quasi-convex
functions is that quasi-convex functions may have various ``flat'' zones, not only where they
achieve their minimum.
\subsection{Quasi-convex functions on the real line}

We introduce the assumption

\begin{assumption}{\QCR}{Basic quasi-convexity assumption.}
    The function $f:\R\to\R$ is continuous, coercive and quasi-convex.  
\end{assumption}

The first (classical) result we have for such functions is the
\begin{lemma} If $f:\R\to\R$ satisfies \QCR then
    \begin{enumerate}
        \item[$(i)$] there exists $m^-(f)\leq m_+(f)$ such that the set where $f$ achieves its
            minimum is exactly the interval $[m^-(f), m_+(f)]$.
    
        \item[$(ii)$] $f$ is nonincreasing on $]-\infty,m^-(f)[$ and
        nondecreasing on $]m_+(f),+\infty[$.
    
    \item[$(iii)$] $f=\max\{f^\sharp,f_\flat\}$ where $f^\sharp$ is
    nondecreasing and $f_\flat$ is nonincreasing. 
    \end{enumerate}
\end{lemma}

\begin{proof} 
    The proof of $(i)$ is easy: since $f$ is continuous and coercive, it is bounded from below and
    achieves its minimum.  Moreover by quasi-convexity, the set $\{x:f(x)\leq \min_\R (f)\}$ is
    convex, hence this is an interval $[m^-(f), m_+(f)]$.

    For $(ii)$, we consider $x,y \in ]-\infty,m^-(f)[$ with $x<y$. If $f(x)<f(y)$, then, by the
    quasi-convexity of $f$, the convex set $\{t:\ f(t)\leq f(x)\}$ contains $x$ and $m^-(f)$, hence
    all the interval $[x,m^-(f)]$. A contradiction since $y\in [x,m^-(f)]$. Hence $f$ is
    nonincreasing on $]-\infty,m^-(f)[$ and an analogous proof shows that $f$ nondecreasing on
    $]m_+(f),+\infty[$.

    For $(iii)$, we consider $$f^\sharp(x)= \min\{f(t); t\geq x\}\; ,\; f_\flat(x)=\min\{f(t); t\leq
    x\}.$$ Clearly we have
    $$ f^\sharp(x)= \min_\R (f) \; \hbox{if  }x\leq m_+(f)\quad ,\quad f_\flat(x)=\min_\R (f)\; 
    \hbox{if  }x\geq m^-(f)\; ,$$
    while, by using $(ii)$,
    $$ f^\sharp(x)= f(x) \; \hbox{if  }x> m_+(f)\quad ,\quad f_\flat(x)=\min_\R (f)\; 
    \hbox{if  }x< m^-(f)\; .$$
    The conclusion follows by analyzing the different cases $x< m^-(f)$, $m^-(f)\leq x\leq m_+(f)$
    and $x> m_+(f)$.  
\end{proof}

\subsection{On the maximum of two quasi-convex functions}\label{sec:max-qc}

In this section, we describe a result which is crucial in order to give sufficient conditions for
the uniqueness of Ishii solutions in problems with codimension 1 discontinuities (see
Section~\ref{upoH}). 

Let $f,g:\R\to\R$ satisfy \QCR and define
$$M(s):=\max\{f(s),g(s)\}\;,\qquad M^\reg(s):=\max\{f^\sharp(s),g_\flat(s)\}\;.$$
We point out that we use the strange notation $M^\reg$ to be consistent with Section~\ref{upoH}. Notice
that the definition of $M^\reg$ is not symmetric on $f$ and $g$.

\begin{lemma}\label{lem:qcr.Mreg}
    We assume that $f,g$ satisfy \QCR.
    There exists $\nu_1\leq\nu_2$ such that 
    $$M^\reg(s):=\begin{cases}g_\flat(s)>f^\sharp(s) & \text{if }s< \nu_1\;,\\
        f^\sharp(s)=g_\flat(s) & \text{if }\nu_1\leq s\leq \nu_2\;,\\
        f^\sharp(s)>g_\flat(s) & \text{if }s> \nu_2\;.
    \end{cases}$$
    Of course, $\min\limits_{s\in\R}M^\reg(s)$ is attained on $[\nu_1,\nu_2]$.
\end{lemma}
\begin{proof}
    We introduce the function $\varphi(s):=f^\sharp(s)-g_\flat(s)$. Due to the properties of
    $f^\sharp$ and $b_\flat$, the function $\varphi$ is nondecreasing. Moreover, due to the
    coercivity assumption,  $\varphi(s)\to-\infty$ as
    $x\to-\infty$ and $\varphi(s)\to+\infty$ as $x\to+\infty$. Therefore, there exists $\nu_1\leq
    \nu_2$ such that $\varphi(s)<0$ if $s<\nu_1$, $\varphi(s)>0$ if $s>\nu_2$ and $\varphi(s)=0$ on
    $[\nu_1,\nu,_2]$. The lemma directly follows.
\end{proof}

\begin{proposition}\label{prop:quasi.convex.maxreg}
    Let $f,g:\R\to\R$ satisfy \QCR. If $m_+(f)\leq m^-(g)$ then the following
    property holds
    $$\min_{s\in\R}M(s)=\min_{s\in\R}M^\reg(s)\;.$$
\end{proposition} 

\begin{proof}
    Notice first that of course the inequality $\max\{f,g\}\geq \max\{f^\sharp,g_\flat\}$ holds
    simply because of the definition of $f^\sharp$ and $g_\flat$; therefore the same inequality
    holds when taking the minimum over $s$. 

    In order to get the opposite inequality, we first remark that, by Lemma~\ref{lem:qcr.Mreg}, the
    minimum of $M^\reg$ is attained at some point $s_0$ which satisfies $s_0\in[\nu_1,\nu_2]$. Moreover,
    $f^\sharp(s_0)=g_\flat(s_0)$. There are three cases, some of which may be void.

    \noindent \emph{First case:} $s_0\in[m_+(f),m^-(g)]$. In this case
    the conclusion easily follows from the fact that $f^\sharp(s_0)=f(s_0)=g_\flat(s_0)=g(s_0)$: we
    deduce immediately that $\min_\R (M^\reg)=M^\reg(s_0)=M(s_0)\geq \min_\R (M)$.

    \noindent \emph{Second case:} $s_0\leq m_+(f)\leq m^-(g)$. This implies that
    $f^\sharp(s_0)=\min_\R (f)=g_\flat(s_0)$ and $\min_\R(M^\reg)=M^\reg(s_0)=\min_\R(f)$. 

    Considering the situation at $s=m_+(f)$ we see that
    $$\begin{aligned}
        g(m_+(f)) &= g_\flat(m_+(f))\quad \text{because $m_+(f)\leq m^-(g)$}\\
        &\leq g_\flat(s_0)\quad \text{because $g_\flat$ is nonincreasing}\\
        &\leq f^\sharp(s_0)\quad \text{by the definition of $s_0$}\\
        &\leq f^\sharp(m_+(f))\quad \text{because $f^\sharp$ is flat for $s\leq m_+(f)$}\\
        &\leq f(m_+(f))=\min_\R(f)\;.
    \end{aligned}$$
    We deduce that, at $s=m_+(f)$, $M(m_+(f))=\min_\R(f)=\min_\R(M^\reg)$.
    Hence, we conclude that $\min_\R(M^\reg)\geq\min_\R(M)$.

    \noindent \emph{Third case:} if $s_0\geq m^-(g)\geq m_+(f)$, the proof is the same
    after reversing the roles of $f^\sharp$ and $g_\flat$. 

    The conclusion is that, in any case, $\min_\R(M^\reg)\geq\min_\R(M)$ which implies that those
    minima are equal.
\end{proof}

\subsection{Application to quasi-convex Hamiltonians}\label{sec:QCH-def}

As we have seen in the previous sections, throughout this book we deal with Hamiltonians of the form
$H(x,t,r,p)$. Those may be either convex, Lipschitz, or have a quasi-convexity property that we
describe now. 

The quasi-convex case (mainly exposed in Part~\ref{part:NA}) is defined in the following way: if we
set $p=(p',p_N)$ with $p'\in \R^{N-1}$ and $p_N \in \R$, we will say that we are in the {\em
quasi-convex case} if

\label{page:HQC}
\begin{assumption}{\HQC}{Quasi-convex Hamiltonians.}
    For any  $(x,t,r,p')$, the function $h:s\mapsto H(x,t,r,p'+se_N)$ satisfies \QCR.
\end{assumption}

Using the previous sections, we can introduce the new Hamiltonians
$$ H^- (x,t,r,p) =h^\sharp(p_N)=\big[ H(x,t,r,p'+p_N e_N)\Big]^\sharp\;,$$
$$ H^+ (x,t,r,p)=h_\flat(s)=\big[ H(x,t,r,p'+s e_N)\Big]_\flat\;.$$
Thanks to the above results, we have $H=\max(H^+,H^-)$. We use extensively this decomposition in
Part~\ref{part:NA} and we point out that, if $H$ satisfies \hyp{BA-HJ}, then the Hamiltonians
$H^+,H^-$ also satisfy \hyp{BA-HJ}.

\section{A strange, Kirchhoff-related lemma}\label{tmgktc}

In Part~\ref{part:NA}, the following lemma will be useful in order to connect general Kirchhoff type
conditions with flux-limited type conditions on the interface.

\begin{lemma}\label{lem:fgh}
Assume that f,g : $\R \to \R$ and $h:\R^2 \to \R$ are continuous functions such that
\begin{enumerate}
\item[$(i)$] $f$ is an increasing function with $f(t) \to +\infty$ as $t\to +\infty$,
\item[$(ii)$] $g$ is a decreasing function with $g(t) \to +\infty$ as $t\to -\infty$,
\item[$(iii)$] there exists $\alpha >0$ such that, for any $t_2\geq t_1$ and $s_2 \leq s_1$, we have
$$ h(t_2,s_2)-h(t_1,s_1)
 \leq -\alpha(t_2-t_1) +\alpha(s_2-s_1)\; .$$ 
 \end{enumerate}
If $\psi : \R^2 \to \R$ is the function defined by
$$ \psi(t,s) :=\max(f(t), g(s), h(t,s))\; ,$$
then $\psi$ is a coercive continuous function in $\R^2$ and there exists $(\tb,\ovs)$ such that
\begin{equation}\label{min-prop}
\psi(\tb,\ovs)=\min_{t,s}\,(\psi(t,s))
\end{equation}
and 
\begin{equation}\label{min-prop-cons}
f(\tb)=g(\ovs)=h(\tb,\ovs)\; .
\end{equation}
Moreover, if a point $(\tilde t,\tilde s)\in \R^2$ satisfies \eqref{min-prop-cons} then $(\tilde
t,\tilde s) $ is a minimum point of $\psi$.  Finally,
$$ \min_{t,s}\left\{\max(f(t), g(s), h(t,s))\right\}=\max_{t,s}\left\{\min(f(t), g(s), h(t,s))\right\}.$$
\end{lemma}

In the statement of the above lemma, we point out that the assumption on $h$ implies that $h(t,s)$ is a strictly decreasing function of
$t$ and a strictly increasing function of $s$ with $h(t,s) \to +\infty$ if $t\to -\infty$, $s$ remaining bounded or if $s\to +\infty$, $t$ remaining
bounded.

\begin{proof} Using the three properties we impose on $f,g,h$, and in particular, the consequences of the assumption on $h$
we describe above, it is easy to prove that $\psi$ is actually continuous and
coercive; therefore such a minimum point $(\tb,\ovs)$ exists. 

We have to show that \eqref{min-prop-cons} holds and to do so, we may assume without loss of generality that $f$ is
strictly increasing and $g$ is strictly decreasing. Otherwise, we may prove the result for $f(t) +\e t$ and $g(s)-\e s$ for $\e>0$ and pass
to the limit $\e \to 0$ remarking that the associated minimum points remain in a fixed compact subset of $\R^2$.

If $m=\min_{t,s}\,(\psi(t,s))$, we first notice that $h(\tb,\ovs)=m$. Otherwise $h(\tb,\ovs)<m$ and it is clear enough by using the monotonicity of $f$ and $g$ that, for 
$\delta >0$ small enough, then
$$ \psi(\tb-\delta,\ovs+\delta ) < \psi(\tb,\ovs)\; ,$$
a contradiction.

In the same way, if $f(\tb)<m$, using the properties of $h$, there exists $\delta,\delta' >0$ small enough such that $h(\tb+\delta,\ovs+\delta')<m$, 
$g(\ovs+\delta')<m$ and $\psi(\tb+\delta,\ovs+\delta') < \psi(\tb,\ovs)$, again a contradiction.

A similar proof allowing to conclude that $g(\ovs)=m$, \eqref{min-prop-cons} holds.

Notice that if we have replaced $f(t)$ by
$f(t) +\e t$ and $g(t)$ by $g(s)-\e s$, we can let $\e$ tend to $0$ and keep this property for at least one minimum point.

Now we consider a point $(\tilde t,\tilde s)\in \R^2$ which satisfies \eqref{min-prop-cons} 
and we pick any point $(t,s) \in \R^2$. We examine the different possible cases, 
taking into account the particular form of $\psi$ and the monotonicity properties of $f,g,h$, 
using that, of course, $\psi(\tilde t, \tilde s)= f(\tilde t)=g(\tilde s)=h(\tilde t,\tilde s)$
\begin{enumerate}
\item If $t\geq \tilde t$, $\psi(t,s) \geq f(t) \geq  f(\tilde t)=\psi(\tilde t, \tilde s)$.
\item If $s \leq \tilde s$, the same conclusion holds by using that $g$ is decreasing.
\item If $t\leq \tilde t$ and $s \geq \tilde s$, then $\psi(t,s) \geq h(t,s) \geq  h(\tilde t, \tilde s)=\psi(\tilde t, \tilde s)$.
\end{enumerate}
And the conclusion follows since we have obtained that $\psi$ reaches its minimum at $(\tilde t,\tilde s)$.

For the last property, we set
$$\chi(t,s)=\min(f(t), g(s), h(t,s))\; .$$
If, as above, $(\tilde t,\tilde s)\in \R^2$ is a point which satisfies \eqref{min-prop-cons}, we have $\chi(\tilde t, \tilde s)= f(\tilde t)=g(\tilde s)=h(\tilde t,\tilde 
s)$ and by similar arguments as above
\begin{enumerate}
\item If $t\leq \tilde t$, $\chi(t,s) \leq f(t) \leq  f(\tilde t)=\chi(\tilde t, \tilde s)$.
\item If $s \geq \tilde s$, the same conclusion holds by using that $g$ is decreasing.
\item If $t\geq \tilde t$ and $s \leq \tilde s$, then $\chi(t,s) \leq h(t,s) \leq  h(\tilde t, \tilde s)=\chi(\tilde t, \tilde s)$.
\end{enumerate}
And the proof is complete.
\end{proof}

\begin{remark}\label{min-max} A similar result to the last part of Lemma~\ref{lem:fgh}, but with a simpler proof,  is
\begin{equation}\label{eq:min-max}
 \min_t \{\max(f(t), g(t))\}=\max_t \{\min(f(t),g(t))\}\; .
\end{equation}
This equality is also useful in Part~\ref{part:NA}.
\end{remark}

\section{A few results for penalized problems}\label{sec:cv-pen}

In viscosity solutions' theory, several proofs require penalization arguments, i.e. approximations of
maxima or minima by penalizing the function. The most emblematic example is certainly the
doubling of variables in comparison proofs but there are several other examples, such as the
treatment of some boundary conditions (evolution equations set in $(0,\Tf)$ which hold up to time $\Tf$
or more generally boundary conditions in the case when all dynamics are pointing inward the domain)
or the convergence of regularization by inf or sup-convolution...etc.

Instead of referring to these (rather easy) results as ``standard results'' all along this book, we
have decided to provide two general lemmas gathering the key informations, one for penalization in
compact sets, the other one (more restrictive) concerns the penalization at infinity.

\subsection{The compact case}

\index{Penalization!the compact case}
\begin{lemma}\label{lem:cv-pen}\emph{--- Penalization procedure, the compact case.}\smsp 
    Let $w:K\to\R$ be an \usc function defined on some compact set $K\subset\R^p$ and $F\subset K$
    be closed. We denote by $M:=\max_{z\in F}w(z)$. For any $\e>0$ let $\chi_\e:K\to\R\cup\{+\infty\}$ 
    satisfying
\begin{enumerate}
    \item[$(i)$] the functions $\{\chi_\e\}$ are uniformly bounded from below
        and \lsc \footnote{in the expected generalized sense in order to take into account the
    $+\infty$ value at some points if necessary.}; 
    
    \item[$(ii)$] $\limiinf \chi_\e  (z)=
        \begin{cases} 0 & \hbox{if $z\in
        F$\;,}\\ +\infty & \hbox{if $z \in K\setminus F$}\,;\end{cases}$

    \item[$(iii)$] for any $z_0 \in F$, there exists $(z_0^\e)_\e$ such that
        $w(z_0^\e)-\chi_\e(z_0^\e)\to w(z_0)$ as $\e\to 0$.
\end{enumerate}
Then
\begin{enumerate}
    \item[$1.$] $M_\e:=\max\limits_{z\in K}\left(w(z)-\chi_\e(z)\right)\to M$ as $\e\to 0$.

    \item[$2.$] For any $\e>0$ let $\ze$ be a maximum point of $z\mapsto w(z)-\chi_\e(z)$. If
        $(z_{\e'})_{\e'}$ is a subsequence of $(z_\e)$ converging to some $\zb$, then
        $$\zb\in F\;,\; w(\zb)=M\; ,\; w(z_{\e'})\to w(\zb)\; ,\; \chi_{\e'} (z_{\e'}) \to 0\; .$$

    \item[$3.$] If $w=w_1-w_2$ where $w_1$ is \usc and $w_2$ is \lsc, then $w_1(z_{\e'})\to w_1(\zb)$ 
        and $w_2(z_{\e'})\to w_2(\zb)$.

    \item[$4.$] If there is a unique maximum point $\zb$ of $w$ on $F$ then $\ze \to \zb$,
        $w(\ze)\to w(\zb)$ and $\chi_\e (\ze) \to 0$.
\end{enumerate}
\end{lemma}

\begin{proof} 
    Since $K$ is compact and $F$ is a closed subset of $K$, there exists $z_0$ such that $w(z_0)=M$.
    By the definition of $\ze$ and $(iii)$, we have $$ M+o_\e(1)=w(z_0^\e)-\chi_\e(z_0^\e) \leq
    w(\ze)-\chi_\e (\ze)=M_\e$$
    and this inequality immediately gives $\limsup M_\e \geq M$. 

    On the other hand, if we extract a converging subsequence $z_{\e'}\to \zb\in K$, by
    letting $\e'\to 0$ and using the upper semicontinuity of $w$ we obtain
    $$M \leq \liminf\left( w(z_{\e'})-
    \chi_{\e'}(z_{\e'})\right) \leq \limsup\left( w(z_{\e'})- \chi_{\e'}(z_{\e'})\right)\leq
    w(\zb)-\limiinf \chi_\e(\zb)\; .$$
    Using $(i)$, we see that necessarily $\zb\in F$ since $\limiinf \chi_\e(\zb)$ cannot be
    $+\infty$, therefore $\limiinf \chi_\e(\zb)=0$. We deduce from this property and the above
    inequality that $w(\zb)\geq M$ but since $\zb \in F$, we conclude that $w(\zb)=M$.

    Gathering all these informations, the above inequality can be rewritten as
    $$M \leq \liminf\left( w(z_{\e'})- \chi_{\e'}(z_{\e'})\right) \leq \limsup\left( w(z_{\e'})-
    \chi_{\e'}(z_{\e'})\right)\leq M\; ,$$
    and therefore $M_{\e'} =w(z_{\e'})- \chi_{\e'}(z_{\e'})\to M$. 

    Extracting first a subsequence such that $\lim M_{\e'}=\liminf M_\e$ and then a converging
    subsequence out of $(z_{\e'})_{\e'}$, the above argument shows that $\liminf M_\e=M$ and
    therefore $M_\e \to M$. This proves 1.

    Point 2 is a direct consequence of the above argument: for any converging subsequence
    $z_{\e'}\to \zb\in K$, we have $\zb \in F$, $w(\zb)=M$ and since $\limsup w(z_{\e'}) \leq
    w(\zb)\leq M$ by the upper semi-continuity of $w$ and $\liminf \chi_{\e'}(z_{\e'})\geq \limiinf
    \chi_\e(\zb)=0$, the only possibility to have such a convergence to $M$ is $w(z_{\e'})\to M=
    w(\zb)$ and $\lim \chi_{\e'}(z_{\e'}) = 0$.

    For Point 3, the argument is analogous: since $\limsup w_1(z_{\e'})\leq w_1(\zb)$ and $\liminf
    w_2(z_{\e'})\geq w_2(\zb)$, the only possibility to have $w(z_{\e'})\to w(\zb)$ is to have at
    the same time $w_1(z_{\e'})\to w_1(\zb)$ and $w_2(z_{\e'})\to w_2(\zb)$.

    Finally 4. comes from a standard compactness argument.
\end{proof}

\noindent\textbf{Typical application: the doubling of variables --}
After the localization procedure described in Section~\ref{sect:htc}, we get two functions
$u,v:\overline{B(x,r)}\to \R$ for some $x\in \R^N$ and $r>0$, $u$ being \usc while $v$ is \lsc and
we are considering $M:=\max_{x\in \overline{B(x,r)}} (u(x)-v(x))$, that we approximate
by the maximum of the function
$$ \psi_\e(x,y)= u(x)-v(y)-\frac{|x-y|^2}{\eps^2}\; .$$
We apply Lemma~\ref{lem:cv-pen} with $K=\overline{B(x,r)}\times \overline{B(x,r)}$, $F=K\cap
\{(x,y):\ x=y\}$, $$z=(x,y)\;,\; w(x,y)=u(x)-v(y)\;,\; \chi_\e(x,y)=\dfrac{|x-y|^2}{\eps^2}$$ and $w_1=u$,
$w_2=v$. We notice that Assumptions~$(i)-(ii)-(iii)$ for $\chi_\e$ are obviously satisfied with
$z_0^\e=z_0$ for any $\e$.
 
So, if $(\xe,\ye) \in K$ is a maximum point of $\psi_\e$ in $K$ and if $(x_{\e'},y_{\e'})$ 
 is a converging subsequence of maximum points of $\psi_{\e'}$, we first have that
 $(x_{\e'},y_{\e'})\to (\xb,\xb) \in F$ and $$ u(x_{\e'})\to u(\xb)\; ,\; v(y_{\e'})\to v(\xb)\; ,\;
 \frac{|x_{\e'}-y_{\e'}|^2}{(\eps')^2}\to 0\; ,$$
 which is the classical result we use.

 \

 \noindent\textbf{Remarks on the assumptions}

\noindent\textbf{(a)}
As a first comment, we point out that, one way or the
 other, the ``compactness'' assumption on $K$ in Lemma~\ref{lem:cv-pen} is necessary, although it
 may be replaced by a stronger assumption on $w$ like coercivity which prevents infinity to play a
 role, see Subsection~\ref{subsec:pen.inf} below.

 Moreover, this type of lemma does not hold in non-compact situations, in general, even if we
 replace $\max$ by $\sup$. Indeed if we look at the following penalization $$ \psi_\e(x,y)=
 \sin(x^2)-\sin(y^2)-\frac{|x-y|^2}{\eps^2}-\e |x|\; ,$$ but with $K=\R\times \R$ and $F=\{(x,y):\
 x=y\}$, the reader will easily check, using the non-uniform continuity of $\sin(x^2)$, that $M_\e$
 exists and $M_\e \to 2$ as $\e\to 0$ while $M=\sup_{(x,y)\in F}(\sin(x^2)-\sin(y^2))=0$.

 \

\noindent\textbf{(b)}
Notice that $\chi_\e$ can take the value $+\infty$, a case which gives important applications too.
For instance if $K=[0,\Tf]$, we can handle terms like $\e/(\Tf-t)$ in $\chi_\e$, which prevent the
maximum to be attained at $t=\Tf$. The lower semicontinuity
property for $\chi_\e$ holds since $$ \lim_{\substack{ t\to \Tf\\ t<\Tf}} \chi_\e (t)=+\infty\; .$$
Similarly if $\Omega$ is a bounded smooth domain, we can use a penalization like
$\e[d(x)]^{-1}$ in $\chi_\e:\Omegb\to \R\cup \{+\infty\}$ where $d(\cdot)$ stands for the
distance to the boundary of $\Omega$. Such penalizations avoid maximum points at the boundary 
---See for instance Proposition~\ref{sub-up-to-b} where this approached is used.

 \

\noindent\textbf{(c)}
Finally, let us explain the (admittedly strange) Assumption~$(iii)$ for $\chi_\e$. In
state-constrained problems where the subsolution inequalities hold only in a domain $\Omega$ while
the supersolution ones hold on $\overline \Omega$, one needs to ``push inside $\Omega$'' the point
$x$ corresponding to the subsolution. In order to prove comparison result for such problems,
Soner \cite{Son1,Son2} introduces penalization terms of the form 
$$ \left\vert \frac{x-y}{\eps}+n(y)\right\vert^2$$
where, if $\domeg$ is smooth, $n$ denotes an extension to a
neighborhood of $\domeg$ of the unit outward normal to $\domeg$. But such penalization terms do not
tend to $0$ if we choose as above $x=y$. Moreover, it is known that a cone condition should hold for the
subsolution. So, here we require by $(iii)$ that for any $\xb\in \domeg$, there exist $(\xe,\ye)\to
(\xb,\yb)$ such that $$ u(\xe)-v(\ye)-\left\vert \frac{\xe-\ye}{\eps}+n(\ye)\right\vert^2\to
u(\xb)-v(\xb)\;.$$ This assumption is satisfied by $\xe=\xb-\e n(\xb)$, $\ye=\xb$ if $u$ is 
continuous or if the cone condition holds for $u$.

\subsection{Penalization at infinity}\label{subsec:pen.inf}

\index{Penalization!the non-compact case}
The following result is connected to our localization procedure.
\begin{proposition}\emph{--- Penalization at infinity.}\smsp 
    Let $w:\R^N\to \R$ a bounded \usc function and $(w_\alpha)_{\alpha>0}$ a sequence of \usc
    functions such that
    \begin{enumerate}
        \item[$(i)$] $w_\alpha(x)\to -\infty$ as $|x|\to +\infty$,
        \item[$(ii)$] $w_\alpha(x)\to w(x)$ when $\alpha \to 0$ for any $x\in \R^N$.
    \end{enumerate}
Then, if $M_\alpha :=\max_{\R^N}(w_\alpha)$ and $M :=\sup_{\R^N}(w)$, we have
    $$\liminf M_\alpha \geq M\;.
    $$ Moreover, if $w_\alpha (x)=w(x)-\alpha \chi(x)$ where $\chi:\R^N\to \R$ is a coercive, locally
    bounded, \lsc function and if $x_\alpha$ is such that $w_\alpha (x_\alpha)=M_\alpha$ then
    $w(x_\alpha) \to M$ and $\alpha \chi(x_\alpha) \to 0$.
\end{proposition}

\begin{proof} By definition of the supremum, there exists a sequence $(x_k)_k$ of points in $\R^N$ such that $w(x_k)\to M$ and,
for any $k$, 
$$ w_\alpha(x_k) \leq M_\alpha\; .$$
Taking the liminf  as $\alpha$ tend to $0$ and letting $k$ tend to infinity, we obtain the first part of the result.

For the second part, we use the fact that $\chi$ is bounded from below and therefore $M_\alpha \leq M-\alpha m$, where $m=\min_{\R^N} (\chi)$. Hence $\limsup M_\alpha \leq M$ and therefore $M_\alpha \to M$. In other words
$$  w_\alpha (x_\alpha)=w(x_\alpha)-\alpha \chi(x_\alpha)\to M\; .$$
But $ -\alpha \chi(x_\alpha)\leq - \alpha m$ and therefore
$$w(x_\alpha)=M_\alpha + \alpha \chi(x_\alpha)\geq M_\alpha +\alpha m\; .$$
Hence $\liminf w(x_\alpha) \geq M$ but obviously $\limsup w(x_\alpha)\leq M$. This yields
$\lim w(x_\alpha)= M$ and, as a consequence,
$ -\alpha \chi(x_\alpha)=M_\alpha - w(x_\alpha)\to 0$.
\end{proof}


\part{Deterministic Control Problems and Hamilton-Jacobi Equations for Codimension One Discontinuities}
\label{part:codim1}
\fancyhead[CO]{HJ-Equations with Discontinuities: Codimension-$\mathbf{1}$ Discontinuities}


\chapter{Introduction : Ishii Solutions for the Hyperplane Case}
\label{chap:Ishii}

\abstract{This introduction describes the difficulties to address the simplest problems involving
discontinuities, \ie the case of a codimension~$1$ discontinuity on an hyperplane, both from the pde
and control points-of-view. The uniqueness/comparison questions are especially emphasized.} 

In this part, we consider one of the simplest and emblematic case of discontinuity for an equation
or a control problem: the case when this discontinuity is an hyperplane, say $\H=\{x_N=0\}$. In terms
of stratification, as introduced in Section~\ref{sect:whitney}, this is one of the simplest examples
of stratification of $\R^N\times(0,\Tf)$ for which $\Man{N+1}=(\Omega_1\cup\Omega_2)\times(0,\Tf)$,
$\Man{N}=\H\times(0,\Tf)$ and
$\Man{k}=\emptyset$ for any $k=0..(N-1)$, where
$$\ \Omega_1=\{x_N>0\}\;,\ \Omega_2=\{x_N<0\} \; .$$ 
For simplicity of notations, we also write $\Omega_0=\H$ and we take the convention to denote by
$e_N=(0,\dots,0,1)$ the unit vector pointing inside $\Omega_1$, so that $e_N$ is also the outward
unit normal to $\Omega_2$, see figure~\ref{fig:codim1} below.
\begin{figure}[!htp]
    \begin{center}
   \includegraphics[width=0.5\textwidth]{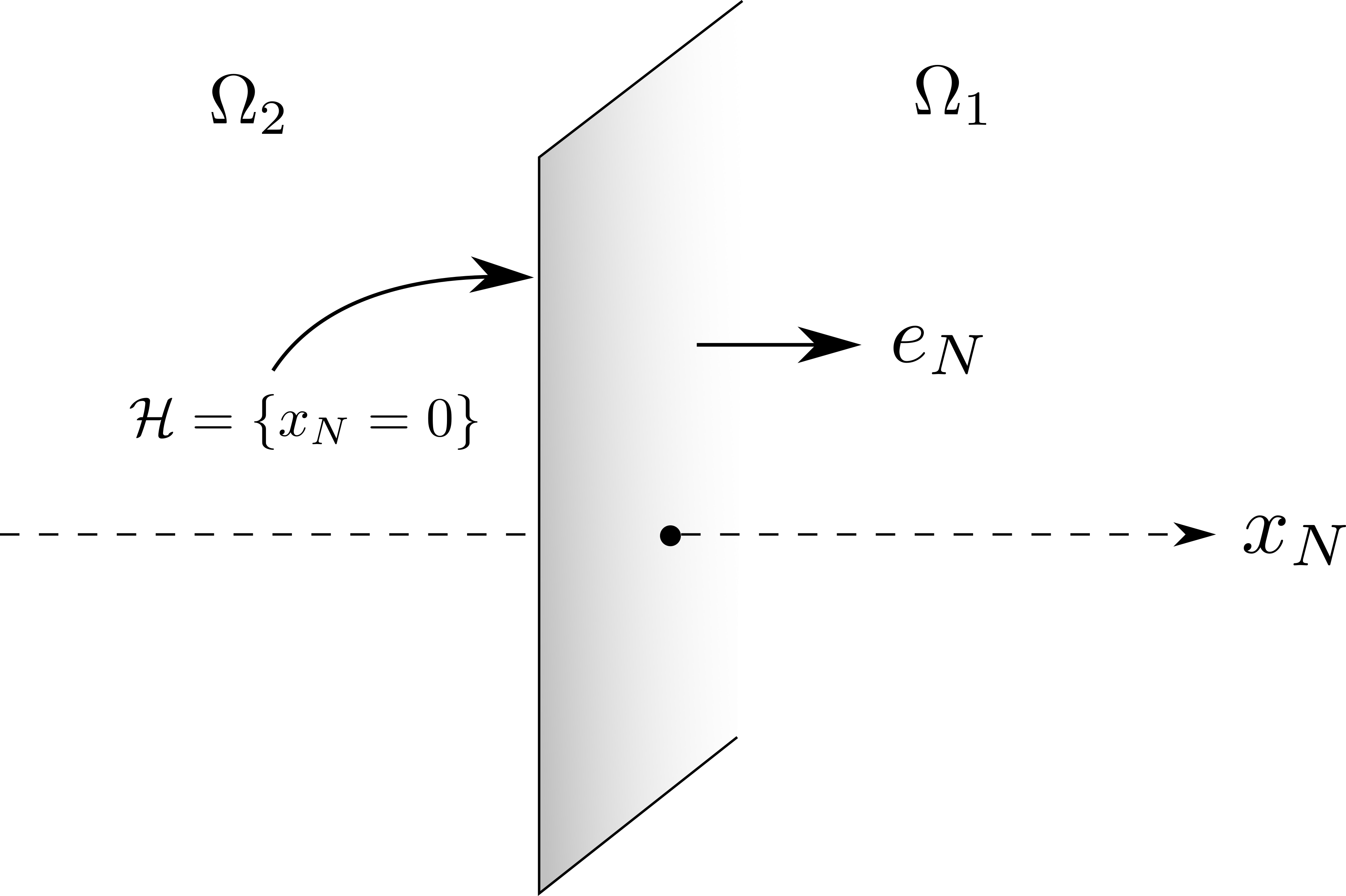}
   \caption{Setting of the codimension one case}
    \label{fig:codim1} 
   \end{center}
\end{figure}

Two types of questions can be addressed whether we choose the pde or control point of view and, in
this part, both will be very connected since we mainly consider Hamilton-Jacobi-Bellman type
equations. 

\section{The pde viewpoint}

From the pde viewpoint, the main question concerns the existence and uniqueness of solutions to the problem
\begin{equation}\label{pb:half-space}
	\begin{cases}
        u_t+H_1(x,t,u,Du)=0 & \text{ for }x\in\Omega_1\times(0,\Tf)\;,\\
        u_t+H_2(x,t,u,Du)=0 & \text{ for }x\in\Omega_2\times(0,\Tf)\;,\\	
    u(x,0)=\u0 (x) & \text{ for }x\in\R^N\;,
\end{cases}
\end{equation}
under some standard assumptions on $H_1,H_2$ and $\u0$. It is also very natural to consider a
specific control problem or pde on $\H$, which amounts to adding an equation 
\begin{equation}\label{eq:H0}
	u_t+H_0(x,t,u,D_T u)=0  \text{ for }x\in\H\;,\,
\end{equation}
where $D_Tu$ stands for the \emph{tangential derivative} of $u$, $i.e.$ the $(N-1)$ first
components of the gradient, leaving out the normal derivative.  However, for reasons that will be
exposed later in Section~\ref{sec:H0.case}, adding such a condition is not completely tractable in
the context of Ishii solutions and is more relevant in the context of flux-limited solutions or
junction conditions (see Part~\ref{part:NA}). Therefore, except for Section~\ref{sec:H0.case}, we
restrict ourselves to problem~\eqref{pb:half-space}.

As we explained in Section~\ref{sect:stab}, the conditions on $\H$ for those equations have to be
understood in the relaxed (Ishii) sense, namely
\begin{equation}\label{eq:ishii.cond}
\!\!\begin{cases}
	\max\Big(u_t+H_1(x,t,u,Du),u_t+H_2(x,t,u,Du)\Big)\geq0 \;,\\
	\min\Big(u_t+H_1(x,t,u,Du),u_t+H_2(x,t,u,Du)\Big)\leq0 \;,\\
	\end{cases}
\end{equation}
meaning that for the supersolution \resp{subsolution} condition, at least one of the inequation for
$H_1$ or $H_2$ has to hold.

\section{The control viewpoint}

From the control viewpoint, we are in the situation where different dynamics, discount factors and
costs are defined on $\Omega_1$ and $\Omega_2$. A double question arises: $(i)$ how to define a global
control problem in $\R^N$\,? $(ii)$ once this is done, if each Hamiltonian in \eqref{pb:half-space}
is associated to the control problem in the corresponding domain, is the ``usual'' value function
still the unique solution of \eqref{pb:half-space}?

In this chapter, we combine several tools introduced in Part~\ref{Part:prelim} in order to
address these problems. Notice that the present stratification \index{Stratification!in the
two-half-spaces case} of $\R^N$ is obviously a typical \AFS. So, assuming moreover that each
Hamiltonian satisfies \NCe, \TC and \Mong, we are in what we called a ``good'' framework
for treating discontinuities in the sense of Definition~\ref{GF-HJD} (here, no diffeomorphism is
needed since the stratification is flat).

\section{The uniqueness question}

As we will see, Ishii's notion of solution is not strong enough to ensure comparison (and
uniqueness) in this setting in general: this is already true for Equation~\eqref{pb:half-space} but
the situation is even worse when adding \eqref{eq:H0} on $\H$.  Let us give a brief overview of this
story here.

The general formulation of control problems described in Chapter~\ref{chap:control.tools} provides a
``natural'' control solution of \eqref{pb:half-space}, obtained by minimizing a cost over all the
possible trajectories. We denoted this solutin by $\VFm$. By Corollary~\ref{VFm-minsup}, $\VFm$ is
in fact the minimal supersolution (and solution) of \eqref{pb:half-space}. 

But we introduce another value function denoted by $\VFp$ where we minimize over a subset of those
trajectories, that are called \emph{regular}. We will show that $\VFp$ is also an Ishii
solution of \eqref{pb:half-space}, and it is even the maximal Ishii (sub)solution of
\eqref{pb:half-space}. In general $\VFm\neq\VFp$ and we provide an explicit example of such a
configuration. Finally both $\VFm$ and $\VFp$ can be characterized by means of an additional
``tangential'' Hamiltonian on $\H$.  Later in this part, we will also see that $\VFp$ is the limit
of the vanishing viscosity method.

At this point, the reader may think that there is no difference when adding \eqref{eq:H0} to problem
\eqref{pb:half-space}, after modifying in a suitable way the specific control problem on $\H$. It
is, of course, the case for $\VFm$ where again the general results of
Chapter~\ref{chap:control.tools} apply. 

But the determination of the maximal Ishii (sub)solution is more tricky: to understand why, we refer
the reader to the Dirichlet/exit time problem for deterministic control problem in a domain; it is
shown in \cite{BP2} that, if the minimal solution of the Dirichlet problem is actually given by an
analogue of the value function $\VFm$ for such problems, the maximal one is obtained by considering
the ``worse stopping time'' on the boundary (see also \cite{Ba}).  This \emph{differential game}
feature arises here in a more complicated way and we give some elements to understand it in
Section~\ref{sec:H0.case}.

In the next four chapters, we give a complete study of \eqref{pb:half-space}: we first introduce
the control problem, define and characterize $\VFm$. Then we construct and study $\VFp$. Some
uniqueness and non-uniqueness results are proved and we discuss the problem of adding \eqref{eq:H0}
in the last Chapter~\ref{sec:H0.case}.

\chapter{The Control Problem and the ``Natural'' Value Function}
\label{sect:codimIa}

\abstract{This chapter is devoted to study the properties of the
``natural'' value function $\VFm$ under the ``good assumptions'', namely the normal controllability
and the tangential continuity. The main results are that $\VFm$ can be characterized as the minimal
Ishii supersolution (and solution) of the standard HJB Equation and the unique solution of an HJB
problem provided that an additional subsolution condition is imposed on the discontinuity.}

\index{Control problem!two-half-spaces} 
Assuming that \eqref{pb:half-space} is associated to a control problem means that there exists some
triplets dynamics-discount factors-costs  $(b_i,c_i,l_i):\Omegb_i\times[0,\Tf]\times A_i\to\R^{N+3}$  
for $i=1,2$, such that, for any $(x,t,u,p)\in\Omegb_i\times(0,\Tf]\times\R\times\R^N$,
$$H_i(x,t,u,p)=\sup_{\alpha_i\in A_i}\{-b_i (x,t,\alpha_i)\cdot
p+c_i(x,t,\alpha_i)u-l_i(x,t,\alpha_i)\}\;.$$

All these $(b_i,c_i,l_i)$ can be assumed as well to be defined on $\R^N \times[0,\Tf]\times A_i$.
Moreover, in the following we assume that they satisfy the basic assumptions \HBACP and the normal
controllability assumption

\smallskip

\begin{assumption}{\NCoH}{Normal Controllability.} 
    \label{page:NCoH}
    For any $(x,t) \in \H \times [0,\Tf]$, there exists $\delta =
    \delta(x,t)$ and a neighborhood $\VV = \VV(x,t)$ such that, for any $(y,s) \in \VV$ \\[-4mm]
    $$\begin{aligned}[]
    [-\delta,\delta\,] & \subset \{b_1(y,s,\alpha_1)\cdot e_N,\ \alpha_1 \in A_1\}\quad 
    \hbox{if }(y,s)\in \Omegb_1\;,\\
    [-\delta,\delta\,] & \subset \{b_2(y,s,\alpha_2)\cdot e_N,\ \alpha_2 \in A_2\}\quad 
    \hbox{if }(y,s)\in \Omegb_2\;,
    \end{aligned}$$
    where $e_N=(0,0\cdots,0,1)\in \R^N$.
\end{assumption}

It is easy to check that Assumption \NCoH implies \NCe for $H_1$ and $H_2$ and we refer below to
assumptions \HBACP for $(b_i,c_i,l_i)$, $i=1,2$ and \NCoH as the ``standard assumptions in the
codimension-$1$ case''.

\

\section{Finding trajectories by differential inclusions}

In order to introduce the set-valued map $\BCL$, we first notice that all the equations in
\eqref{pb:half-space} have the form ``$u_t+H(x,t,u,Du)$'', which means that 
$b_i^t(x,s,\alpha_i)=-1$ for all $i=1,2$ and all $(x,s,\alpha_i)  \in\bar\Omega_i\times(0,\Tf]\times
A_i.$ Therefore, for $i=1,2$, $x\in \Omega_i$ and $t\in [0,\Tf]$ we set 
$$\BCL_i(x,t):=((b_i,-1),c_i,l_i)(x,t,A_i)$$ 
and, for $x\in \R^N, t\in (0,\Tf]$,
$$ \BCL(x,t):=
\begin{cases}
	\BCL_1(x,t) & \text{if }x\in \Omega_1\;,\\
    \BCL_2(x,t) & \text{if }x\in\Omega_2\;,\\
    \cob(\BCL_1,\BCL_2)(x,t) & \text{if }x\in\H\;,
\end{cases}
$$
where $\cob(E_1,E_2)$ denotes the closure of the convex hull of the sets $E_1,E_2$. Notice that
here, since $\BCL_1$ and $\BCL_2$ have compact images, the convex closure reduces to the union of all possible
convex combinations of elements.

For $t=0$ we need to add more information: since we consider a finite horizon problem, 
we have to be able to stop the trajectory at time $s=0$,
and we want the initial condition $u(0)=\u0 $ to be 
encoded through the Hamiltonian $H_{init}(x,u,Du)=u-\u0$. 
So, setting $\text{Init}(x):=\{(0,0),1,\u0 (x)\}$, we are led to define
\begin{equation}\label{cond:control.half-space}
\BCL(x,0):=
\begin{cases}
	\cob(\BCL_1(x,0)\cup \text{Init}(x)) & \text{if }x\in \Omega_1\;,\\
	\cob(\BCL_2(x,0)\cup \text{Init}(x)) & \text{if }x\in \Omega_2\;,\\
    \cob(\BCL_1(x,0)\cup\BCL_2(x,0)\cup\text{Init}(x)) & \text{if }x\in\H\;.
\end{cases}
\end{equation}

At this stage, we have defined rigorously $\BCL$ following the general framework described in
Part~\ref{Part:prelim}\,--\,Chapter~\ref{chap:control.tools} but, since we are mainly in a case where
$b^t=-1$, we are going to drop from now on the $b^t$-part in $\BCL$ and, in order to simplify the
notations, we just write $b=b^x$. In fact, the only place where $b^t$ plays a role is $t=0$. Indeed,
because of the convex hull, $\BCL(x,0)$ contains all the time dynamics $b^t\in[-1,0]$
However, in our case the initial conditions reduce to 
$$ u(x,0) \leq (u_0)^* (x) \quad \hbox{and}\quad v(x,0) \geq u_0(x)\quad \hbox{in  }\R^N\;,$$ 
for a subsolution $u$ and a supersolution $v$, hence they produce no additional difficulty.

The very first checking in order to solve the control problem is the
\begin{lemma}\label{lem:HBCL}
	The set-valued map $\BCL$ satisfies \HBCL.
\end{lemma}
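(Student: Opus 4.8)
\textbf{Proof plan for Lemma~\ref{lem:HBCL}.}

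The strategy is to check the two sets of hypotheses \HBCLa and \HBCLb one by one, using that each $(b_i,c_i,l_i)$ satisfies \HBACP and that Assumption~\NCoH holds, and to trace carefully how the convex-hull construction on $\H$ and the initial-time enlargement at $t=0$ interact with each requirement. First I would treat \HBCLa-$(i)$: the uniform bound on $|b_i|,|c_i|,|l_i|$ coming from \HBACP-$(ii)$ immediately gives \HBCLa-$(ii)$ with $M=\max_i(\|b_i\|_\infty,\|c_i\|_\infty,\|l_i\|_\infty,1)$ (the ``$1$'' absorbing the $b^t$-component and the $\mathrm{Init}$ element $(0,0,1,\u0(x))$, using that $\u0$ is bounded by \HBACP-$(i)$). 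The convexity of the images is built in by construction on $\H$ and at $t=0$; on $\Omega_1,\Omega_2$ it follows from \HBACP-$(ii)$, which asks precisely that the sets $(b_i,c_i,l_i)(x,t,A_i)$ be convex compact; compactness of $\BCL(x,t)$ on $\H$ (resp.\ at $t=0$) is preserved since the convex hull of finitely many compact convex sets in $\R^{N+3}$ is compact. The only genuinely delicate point in \HBCLa is the \emph{upper semicontinuity} of $(x,t)\mapsto\BCL(x,t)$ at points of $\H$ (and at $t=0$): here I would argue that $\BCL_1$ and $\BCL_2$, being images of the compact $A_i$ under continuous maps (by \HBACP-$(ii)$), are themselves u.s.c.\ set-valued maps on $\overline\Omega_i\times[0,T]$, and that taking closed convex hulls of u.s.c.\ maps with uniformly bounded values preserves upper semicontinuity (using the characterization $F(x)\supset\limsup_{y\to x}F(y)$ quoted in the text, together with Carath\'eodory's theorem to keep control of convex combinations in the limit). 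One checks separately that no ``extra'' dynamics appear in the limit from inside $\Omega_1$ or $\Omega_2$: if $x_n\to x\in\H$ with $x_n\in\Omega_1$, then $\BCL(x_n,t_n)=\BCL_1(x_n,t_n)\subset\BCL_1(x,t)+o(1)\subset\BCL(x,t)+o(1)$, and similarly from $\Omega_2$, while from $\H$ itself the convex-hull value is by definition the right limit.

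Next I would verify \HBCLb. Part $(i)$: every $b\in\B(x,t)$ has the form $b^t=-\mu$ for a convex weight $\mu\in[0,1]$ built from the components $b_i^t=-1$ and (at $t=0$) the $\mathrm{Init}$ component $b^t=0$; hence $-1\le b^t\le 0$, and the pure $\BCL_i$ elements give $b^t=-1$, so the ``moreover'' clause holds. Part $(ii)$: with $K$ chosen large enough (namely $K\ge\sup_i\|c_i\|_\infty$ would suffice after noting $b^t\le 0$; more precisely one needs $-Kb^t+c\ge0$, and since $c\ge -\|c_i\|_\infty$ and $b^t\le0$ this fails unless $b^t$ is bounded away from $0$ — so I would instead invoke the standard reduction: \HBACP lets one assume w.l.o.g.\ $c_i\ge 0$ via the $\exp(-Kt)$ change, after which $-Kb^t+c\ge c\ge0$ trivially, as the paper does on p.~\pageref{page:c.positif}); I would state this cleanly. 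Part $(iii)$: at $t=0$ the set $\BCL(x,0)$ contains $\mathrm{Init}(x)=((0,0),1,\u0(x))$, which is of the required form $((0,0),c,l)$ with $c=1\ge\uc$, taking $\uc\le 1$. Part $(iv)$: for an element of $\BCL(x,t)$ with $t>0$, either it is a genuine $\BCL_i$ element, in which case $-b^t=1\ge\uc$; or it is a convex combination on $\H$, and then $-b^t=\mu_1+\mu_2$ where $\mu_1+\mu_2$ is the total weight on the $\BCL_1,\BCL_2$ parts — but on $\H$ with $t>0$ there is no $\mathrm{Init}$ part, so $\mu_1+\mu_2=1$ and $-b^t=1\ge\uc$. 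At $t=0$ one may have weight on $\mathrm{Init}$, say weight $\mu_0$; if $\mu_0=1$ the element is exactly $((0,0),1,\u0(x))$ with $c=1\ge\uc$; if $\mu_0<1$ then $-b^t=1-\mu_0>0$ and, more carefully, $\max(-b^t,c,l)\ge c\ge 1-\mu_0$... — this last estimate needs the value $\uc$ to be chosen appropriately, so I would simply set $\uc:=\min(1,\inf_i\inf c_i)$ after the $c\ge0$ reduction, and when $c$ could still be small, fall back on $-b^t$; a short case analysis closes it.

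The step I expect to be the main obstacle is the upper semicontinuity of $\BCL$ at the interface $\H$ and at $t=0$: one must show that forming the closed convex hull of the two (or three) u.s.c.\ maps does not create spurious limit points, and that the construction glues correctly with the ``interior'' values $\BCL_1$ on $\Omega_1$ and $\BCL_2$ on $\Omega_2$. This is where the uniform bound from \HBACP-$(ii)$ and a Carath\'eodory-type argument (any point of $\cob$ in $\R^{N+3}$ is a convex combination of at most $N+4$ points) are essential, so that a sequence of convex combinations with coefficients in a compact simplex and points in a compact set has a convergent subsequence landing in the target convex hull. Everything else — the bounds, the convexity, the $b^t$-range, the discount and cost conditions — is routine bookkeeping once the reductions from \HBACP (boundedness, $c\ge0$) are invoked. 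I would therefore devote most of the written proof to the u.s.c.\ verification and dispatch \HBCLb in a compact enumerated list.
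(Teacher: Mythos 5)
The proposal follows the same basic route as the paper---a direct case-by-case verification of \HBCLa and \HBCLb from the construction---and the \HBCLa part (bounds, compactness, convexity, upper semicontinuity of the convex-hull map) is correct, though the paper treats it in two lines rather than as the ``main obstacle''.

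The genuine gap is in the verification of \HBCLb-$(ii)$ and $(iv)$, where you treat $b^t$ and $c$ as if they could vary independently on $\BCL(x,0)$. They cannot: they are coupled by the convex combination. If $\theta\in[0,1]$ is the weight on $\mathrm{Init}(x)$ and $1-\theta$ on $\BCL_1\cup\BCL_2$, then $b^t=-(1-\theta)$ and $c=\theta+(1-\theta)\bar c$ with $|\bar c|\le M$, so with $K=M$
\[
-Kb^t+c=(1-\theta)(M+\bar c)+\theta\ \ge\ \theta\ \ge\ 0\,,
\]
with no need for a preliminary $c\ge0$ normalisation. Invoking the $\exp(-Kt)$ change here is in fact circular in spirit: condition $(ii)$ is precisely what licenses that change at the level of trajectories (Lemma~\ref{c-posit}), so it cannot be presupposed when verifying $(ii)$. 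The inequality is linear in $(b^t,c)$ and holds at the extreme points $(b^t,c)=(-1,c_i)$ and $(b^t,c)=(0,1)$, hence on the convex hull---this is what the paper's terse ``$-Kb^t+c\ge c=1$ at $s=0$'' is shorthand for.

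The same oversight produces the explicit slip in your $(iv)$: the claim ``$c\ge 1-\mu_0$'' is false in general; one only has $c\ge\mu_0-(1-\mu_0)M$. The correct estimate uses both lower bounds $-b^t=1-\mu_0$ and $c\ge\mu_0-(1-\mu_0)M$, which cross at $\mu_0=(1+M)/(2+M)$ with common value $1/(2+M)>0$, so one can take $\uc=1/(2+M)$ and there is again no need to fall back on the $c\ge0$ reduction. For $t>0$ the construction forces $b^t\equiv-1$ and $(iv)$ is immediate, as you correctly observe.
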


\begin{proof}
    Concerning \HBCLa, the proof is quite straightforward by construction: first notice that since all
    the $b_i$, $l_i$, $c_i$ are bounded by some constant $M>0$, then it is the same for all the
    elements in $\BCL$. Then, by construction $\BCL(x,t)$ is closed, hence compact, and it is
    convex. It remains to see that $(x,t)\mapsto\BCL(x,t)$ is upper semi-continuous which is clear
    since each $\BCL_i(x,t)$ is upper semi-continuous and we just make a convex hull of them.

    We turn now to \HBCLb, which follows almost immediately from \eqref{cond:control.half-space}:
    $(i)$ is obviously satisfied by our choice for $b^t$ which always belongs to $[-1,0]$. Point
    $(ii)$ clearly holds if $s>0$. Indeed, if we choose $K=M$ (the constant appearing in \HBCLa),
    since $b^t=-1$ for $s>0$ we get the inequality. Now, if $s=0$ the inequality comes from the fact
    that $-Kb^t+c\geq c=1$. Point $(iii)$ is included in \eqref{cond:control.half-space} and point
    $(iv)$ follows from the fact that this condition can only happen for $s=0$ here (otherwise
    $b^t=-1$), in which case we have $\underline{c}=c=1>0$.
\end{proof}

Thanks to Theorem~\ref{thm:existence.traj} (and recalling that we have dropped the $b^t=-1$ term),
we solve the differential inclusion
\begin{equation}\label{eq:diff.half.space}
\begin{cases}
(\dot X,\dot D,\dot L)(s)\in\BCL \big(X(s),t-s\big)&\ \text{ for a.e. }s\in[0,+\infty)\;,\\[2mm]
(X,D,L)(0)=(x,0,0)\;.
\end{cases}
\end{equation}
Notice that we have used the fact that $T(s) = t-s$ when the starting point of the
$(X,T)$-trajectory is $(x,t)$. 
As we saw in Chapter~\ref{chap:control.tools}, we we mostly write 
\begin{equation}\label{notation:XDL}
    \begin{cases}
    \dot X(s) &= b\big(X(s),t-s\big)\\
    \dot D(s) &= c\big(X(s),t-s\big)\\
    \dot L(s) &= l\big(X(s),t-s\big)
    \end{cases}
\end{equation}
in order to remember that $b$, $c$ and $l$ correspond to a specific choice in the set
$\BCL(X(s),t-s)$, but when needed we will also introduce a control $\alpha(\cdot)$ to represent
$(b,c,l)$ as $(b,c,l)(X(s),t-s),\alpha(s))\;.$

Now the aim is to give a more precise description of each trajectory. For the sake of clarity, we
denote by $(b_\H,c_\H,l_\H)$ the $(b,c,l)$ when $X(s)\in\H$ which are of course obtained through a
convex combination of all the $(b_i,c_i,l_i)$, $i=1,2$. 
So, in order to take this into account, 
we introduce the ``extended control space''
$$A:=A_1\times A_2\times\tilde\Delta\quad\text{where}\quad
    \tilde \Delta:=\{(\mu_1,\mu_2)\in[0,1]^2:\mu_1+\mu_2=1\}\;,
$$
and $\mA:=L^\infty(0,\Tf;A)$. The extended control takes the form $a=(\alpha_1,\alpha_2,\mu_1,\mu_2)$
and if $x\in\H$,
$$(b_\H,c_\H,l_\H)=\mu_1(b_1,c_1,l_1)+\mu_2(b_2,c_2,l_2)\, ,$$
with $\mu_1+\mu_2=1$, where $b_1,c_1,l_1$ are computed at the point $(x,t,\alpha_1)$ and
$b_2,c_2,l_2$ at the point $(x,t,\alpha_2)$.

\begin{lemma}\label{lem:struc.traj}
	For any trajectory $(X,D,L)$ of \eqref{eq:diff.half.space} there exists a control 
	$a(\cdot)=(\alpha_1,\alpha_2,\mu_1,\mu_2)(\cdot)\in\mA$ such that
	$$\begin{aligned}(\dot X,\dot D,\dot L)(s) &= 
        (b_1,c_1,l_1)(X(s),t-s,\alpha_1(s))\ind{X(s)\in\Omega_1}\\
	    &+ (b_2,c_2,l_2)(X(s),t-s,\alpha_2(s))\ind{X(s)\in\Omega_2}\\ &+ (b_\H,c_\H,l_\H)(X(s),t-s,
        a(s))\ind{X(s)\in\H}\\
	\end{aligned}$$
	and $b_\H(X(s),t-s,a(s))\cdot e_N=0$ for almost any $s\in(t,\Tf)$ such that $X(s)\in\H$.
\end{lemma}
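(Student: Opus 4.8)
~\textbf{Plan of proof.}

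The statement is essentially a structural decomposition of an arbitrary trajectory of the differential inclusion \eqref{eq:diff.half.space}, so the plan is to combine the measurable selection result of Theorem~\ref{thm:existence.traj}$(ii)$ with the specific form of $\BCL$ in each of the three regions $\Omega_1$, $\Omega_2$ and $\H$. First I would invoke Theorem~\ref{thm:existence.traj}$(ii)$ to get measurable functions $(b,c,l)(\cdot)$ with $(\dot X,\dot D,\dot L)(s)=(b,c,l)(s)\in\BCL(X(s),t-s)$ for a.e.\ $s$. Then I would partition the time half-line according to the position of $X(s)$: set $E_1:=\{s:X(s)\in\Omega_1\}$, $E_2:=\{s:X(s)\in\Omega_2\}$ and $E_\H:=\{s:X(s)\in\H\}$; these are measurable since $X$ is continuous and $\Omega_1,\Omega_2$ are open while $\H$ is closed.

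On $E_1$, by definition $\BCL(X(s),t-s)=\BCL_1(X(s),t-s)=((b_1,-1),c_1,l_1)(X(s),t-s,A_1)$, so I would apply a standard measurable selection theorem (Filippov's lemma, cf.\ \cite{MR0149985}) to produce a measurable $\alpha_1(\cdot)$ with $(b,c,l)(s)=(b_1,c_1,l_1)(X(s),t-s,\alpha_1(s))$ on $E_1$; similarly on $E_2$ with $\alpha_2(\cdot)$. On $E_\H$, $\BCL(X(s),t-s)=\cob(\BCL_1,\BCL_2)(X(s),t-s)$, i.e.\ the set of convex combinations $\sum_i\mu_i(b_i,c_i,l_i)(X(s),t-s,\alpha_i)$ with $\mu_1+\mu_2=1$, $\mu_i\geq0$; again by a measurable selection argument (using that the $b_i,c_i,l_i$ are continuous and that the simplex $[0,1]^2\cap\{\mu_1+\mu_2=1\}$ together with $A_1\times A_2$ is compact), there exist measurable $(\alpha_1,\alpha_2,\mu_1,\mu_2)(\cdot)$ realizing $(b,c,l)(s)$ on $E_\H$. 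Extending each selected function arbitrarily (but measurably, e.g.\ by a fixed constant control) off the set on which it was defined, and gluing, yields a single control $a(\cdot)=(\alpha_1,\alpha_2,\mu_1,\mu_2)(\cdot)\in\mA$, and the indicator-function formula in the statement holds by construction.

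It remains to prove the tangency condition $b_\H(X(s),t-s,a(s))\cdot e_N=0$ for a.e.\ $s\in E_\H$. This is the only genuinely non-formal point. The idea is that $X(\cdot)$ is Lipschitz, the scalar function $s\mapsto X_N(s)$ (the last coordinate) is therefore absolutely continuous and vanishes on $E_\H$, and a classical fact says that the derivative of an absolutely continuous function is zero a.e.\ on the set where the function is constant (here, zero); hence $\dot X_N(s)=0$ for a.e.\ $s\in E_\H$. On $E_\H$ we have $\dot X(s)=b_\H(X(s),t-s,a(s))$, so $\dot X_N(s)=b_\H(X(s),t-s,a(s))\cdot e_N$, giving the claim for a.e.\ $s\in E_\H$.

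The main obstacle is to carry out the measurable selection cleanly in the $\H$-region, where one must select not only controls $\alpha_1,\alpha_2$ but also the convex weights $\mu_1,\mu_2$; this requires checking that the multivalued map $s\mapsto\{(\alpha_1,\alpha_2,\mu_1,\mu_2): \sum_i\mu_i(b_i,c_i,l_i)(X(s),t-s,\alpha_i)=(b,c,l)(s)\}$ has measurable graph and nonempty closed values, which follows from continuity of the $(b_i,c_i,l_i)$ and measurability of $(b,c,l)(\cdot)$ and of $X(\cdot)$, but should be stated carefully. Everything else is routine once the partition $E_1,E_2,E_\H$ is in place and the absolute-continuity argument for $X_N$ on $E_\H$ is invoked.
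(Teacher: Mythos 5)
Your proposal is correct and follows essentially the same route as the paper: a measurable selection via Filippov's lemma to produce the control $a(\cdot)$, followed by Stampacchia's theorem (the derivative of an absolutely continuous function vanishing a.e.\ on its zero set) to obtain the tangency of $b_\H$ on $\H$. The only cosmetic difference is that the paper builds a single Carath\'eodory map $g(s,a)$ defined piecewise over the extended control space $A=A_1\times A_2\times[0,1]^2$ and applies Filippov once, whereas you partition time into $E_1,E_2,E_\H$, select on each piece, and glue — both are standard and equivalent.
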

\begin{proof}
    Given a trajectory, we apply Filippov's Lemma (cf. \cite[Theorem 8.2.10]{AF}).  To do so, we
    define the map $g:\R^+ \times A \rightarrow \R^N$ as follows
    $$
    g(s,\a):=\begin{cases}
    b_1\big(X(s),t-s,\alpha_1\big)  &   \mbox{  if }  X(s) >0    \\
    b_2\big(X(s),t-s,\alpha_2\big) &    \mbox{  if }  X(s)< 0   \\
    b_\H\big(X(s),t-s,a \big)   &        \mbox{  if }  X(s)=0\;,
    \end{cases}
    $$
    where $a=(\alpha_1,\alpha_2,\mu_1,\mu_2)\in A$.

    We claim that $g$ is a Caratheodory map. Indeed, it is first clear that, for fixed $s$, the function
    $a\mapsto g(s,a)$ is continuous. Then, in order to check that $g$ is measurable with respect to
    its first argument we fix $a\in A$, an open set $\mathcal{O}\subset\R^N$ and evaluate
    $$ g^{-1}_a(\mathcal{O})=\big\{ s>0: g(s,a)\cap\mathcal{O}\neq\emptyset\big\} $$
    that we split into three components, the first one being
    $$ g^{-1}_a(\mathcal{O})\cap \{s>0:X(s) < 0\}= 
    \big\{ s>0: b_1(X(s),t-s,\alpha_1)\in\mathcal{O}\big\}\cap \{s>0:X(s)< 0\}\;. $$
    Since the function $s\mapsto b_1(X(s),t-s,\alpha_1)$ is continuous, this set is the intersection
    of open sets, hence it is open and therefore measurable. The same argument works for the other
    components, namely $\{s>0:X(s)<0\}$ and $\{s>0: X(s)= 0\}$ which finishes the claim.

    The function $s\mapsto \dot X(s)$ is measurable and, for any $s$, the differential inclusion
    implies that 
    $$\dot X(s) \in g(s,A)\; ,$$
    therefore, by Filippov's Lemma, there exists a measurable map
    $a(\cdot)=(\alpha_1,\alpha_2,\mu_1,\mu_2)(\cdot) \in \mA$ such that \eqref{eq:g}  is fulfilled.
    In particular, by the definition of $g$, we have for a.e. $s \in [0,\Tf]$
    \begin{equation}  \label{eq:g}
    \dot X(s)= \begin{cases}
    b_1\big(X(s),t-s,\alpha_1(s)\big)  &   \mbox{  if }  X(s) >0    \\
    b_2\big(X(s),t-s,\alpha_2(s)\big) &    \mbox{  if }  X(s)< 0   \\
    b_\H\big(X(s),t-s,\a(s)\big)   &  \mbox{  if }  X(s)=0.
    \end{cases}
    \end{equation}

    The last property is a consequence of Stampacchia's theorem (see for instance \cite{GT}):
    setting $y(s):=X_N(s)$, then $\dot y(s)=0$ almost everywhere on the set $\{y(s)=0\}$. But $\dot
    y(s)=b_\H(X(s),t-s,a(s))\cdot e_N$ on this set, so the conclusion follows.
\end{proof}

\section{The $\VFm$ value function}

Solving \eqref{eq:diff.half.space} with $\BCL$ yields a set $\mT(x,t)$ of all admissible
trajectories, without specific condition on $\H$ for \eqref{pb:half-space} (see
Section~\ref{sec:VF}). Changing slightly the notations of this section to emphasize the role of the
control $a(\cdot)$, we first define the value function
$$\VFm(x,t):=\inf_{\mT(x,t)}\left\{\int_0^tl(X(s),t-s,a(s))\exp(-D(s))\ds+\u0
(X(t))\exp(-D(t))\right\}\;,$$
and the aim is now to prove that $\VFm$ is a viscosity solution of
\eqref{pb:half-space}.\index{Viscosity solutions!value functions as} To do so, we use the control
approach described in Section~\ref{Gen-DCP}: recalling that we use the notation $b$ for $b^x$, the
``global'' Hamiltonian is given by
$$ \F (x,t,u,(p_x,p_t)):= \sup_{(b,c,l)\in\BCL (x,t)}\big(-(b,-1)\cdot (p_x,p_t)+cu-l\big)\;.$$
Writing $p$ for $p_x$ in order to simplify the notations, we decompose 
$$\F (x,t,u,(p_x,p_t))= p_t+H(x,t,u,p)\; ,$$ 
where $H(x,t,u,p)=H_i(x,t,u,p)$ if $x\in\Omega_i$ for $i=1,2$.
By the upper-semicontinuity of $\BCL$, $H$ and $\F$ are upper-semi-continuous and we have the
\begin{lemma}\label{lem:global.super}
    If $x\in\H$ then, for all $t\in [0,\Tf]$, $r\in \R$, $p_x=p\in \R^N$
	$$H(x,t,r,p)=\max\Big(H_1(x,t,r,p),H_2(x,t,u,p)\Big)\;.$$
    As a direct consequence, for any $x\in \H$, $t\in [0,\Tf]$, $u\in \R$, $p_x=p\in \R^N$, 
    $p_t\in \R$
    $$\begin{aligned}
        \F (x,t,u,(p_x,p_t)) &=  \max \big(p_t+ H_1(x,t,u,p),p_t+ H_2(x,t,u,p)\big)\;,\\
        \F_* (x,t,u,(p_x,p_t) &)=\min \big(p_t+ H_1(x,t,u,p),p_t+ H_2(x,t,u,p)\big)\;.
    \end{aligned}$$
\end{lemma}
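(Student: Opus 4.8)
The plan is to read the claimed formula for $H$ on $\H$ as a statement about support functions. By construction, for $x\in\H$ and $t\in(0,T)$ one has $\BCL(x,t)=\cob(\BCL_1(x,t),\BCL_2(x,t))$ where, after dropping the (constant) time-dynamics component $b^t=-1$, $\BCL_i(x,t)=(b_i,c_i,l_i)(x,t,A_i)$; moreover $H_i(x,t,r,p)=\sup\{-b\cdot p+cr-l:(b,c,l)\in\BCL_i(x,t)\}$. So the whole content of the first assertion is that, for the \emph{affine} functional $\ell(b,c,l):=-b\cdot p+cr-l$, one has $\sup_{\cob(\BCL_1,\BCL_2)}\ell=\max\big(\sup_{\BCL_1}\ell,\sup_{\BCL_2}\ell\big)$.

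First I would prove the inequality ``$\leq$''. Given any $(b,c,l)\in\cob(\BCL_1(x,t),\BCL_2(x,t))$, write $(b,c,l)=\mu_1(b^1,c^1,l^1)+\mu_2(b^2,c^2,l^2)$ with $\mu_i\geq0$, $\mu_1+\mu_2=1$ and $(b^i,c^i,l^i)\in\BCL_i(x,t)$. Since $\ell$ is affine, $\ell(b,c,l)=\mu_1\ell(b^1,c^1,l^1)+\mu_2\ell(b^2,c^2,l^2)\leq\mu_1 H_1(x,t,r,p)+\mu_2 H_2(x,t,r,p)\leq\max\big(H_1(x,t,r,p),H_2(x,t,r,p)\big)$. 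Taking the supremum over all $(b,c,l)\in\BCL(x,t)$ yields $H(x,t,r,p)\leq\max(H_1,H_2)$.

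The reverse inequality ``$\geq$'' is immediate: since $\BCL_i(x,t)\subset\BCL(x,t)$ for $i=1,2$, taking suprema gives $H(x,t,r,p)\geq H_i(x,t,r,p)$, hence $H\geq\max(H_1,H_2)$. This establishes the first identity, and one may note that this ``$\max$'' is precisely the one appearing in the Ishii supersolution inequality \eqref{eq:ishii.cond}.

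Finally I would deduce the formula for $\F$. Recall that in the general framework $\F(x,t,u,(p_x,p_t))=\sup_{(b,c,l)\in\BCL(x,t)}\{-b\cdot(p_x,p_t)+cu-l\}$ with $b=(b^x,b^t)$; since $b^t=-1$ for every element of $\BCL(x,t)$ when $t>0$, this equals $p_t+H(x,t,u,p_x)$, and substituting the identity just obtained gives $\F(x,t,u,(p_x,p_t))=p_t+\max\big(H_1(x,t,u,p_x),H_2(x,t,u,p_x)\big)=\max\big(p_t+H_1,p_t+H_2\big)$. There is no genuine obstacle in this argument; the only points requiring a little care are the bookkeeping of the dropped $b^t=-1$ component and the restriction to $t\in(0,T)$, which ensures that the extra ``$\mathrm{Init}$'' element present in $\BCL(x,0)$ plays no role.
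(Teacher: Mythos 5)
Your proof is correct and takes essentially the same approach as the paper's: establish ``$\leq$'' by writing an arbitrary element of $\BCL(x,t)=\cob(\BCL_1,\BCL_2)(x,t)$ as a convex combination and exploiting the affinity of $(b,c,l)\mapsto -b\cdot p+cr-l$, and ``$\geq$'' from the inclusions $\BCL_i(x,t)\subset\BCL(x,t)$. The only difference is presentational: you make explicit the bookkeeping of the dropped $b^t=-1$ component and the irrelevance of the $\mathrm{Init}$ element for $t>0$, which the paper leaves implicit.
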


\begin{proof}
    If $(b,c,l)\in\BCL(x,t)$, it can be written as a convex combination of some
    $(b_i,c_i,l_i)\in\BCL_i (x,t)$, $i=1,2$, and thefore the same is true for $-b\cdot p+cu-l$, namely
    $$ -b\cdot p+cr-l = \sum_i \mu_i(-b_i\cdot p+c_i r -l_i)\; ,$$
    for some $0\leq \mu_i \leq 1$ with $\sum_i \mu_i=1$. Since $(-b_i\cdot p+c_i r -l_i)\leq
    H_i(x,t,u,p)$, we deduce that $-b\cdot p+cr-l \leq \max\big(H_1(x,t,r,p),H_2(x,t,u,p)\big)$ 
    and therefore 
    $$H(x,t,r,p)\leq \max\Big(H_1(x,t,r,p),H_2(x,t,u,p)\Big) \;.$$
    But $H(x,t,r,p)\geq (-b_i\cdot p+c_i r -l_i)$ for any $(b_i,c_i,l_i)\in\BCL_i (x,t)$
    so that $H(x,t,r,p)\geq H_i(x,t,r,p)$ for $i=1,2$. The representation of $H$ as the max follows
    immediately. 

    Concerning $\F$, the first equality (as a maximum) is trivial and the representation formula for
    $\F_*$ derives directly from its definition as the $\liminf$, knowing that of course $H_1$ and
    $H_2$ are both continuous up to $\H$. 
\end{proof}

Then, by using all the results of Section~\ref{Gen-DCP}, we have the
\begin{proposition}\label{prop:ishii}\emph{--- Minimality of the value function.}\smsp
    Assume that the ``standard assumptions in the codimension-$1$ case'' are satisfied.  Then the
    value function $\VFm$ is an Ishii viscosity solutions of \eqref{pb:half-space}.  Moreover $\VFm$
    is the minimal supersolution of \eqref{pb:half-space}.
\end{proposition}

We leave the proof of the reader since it immediately follows from Theorem~\ref{SP} and
Corollary~\ref{VFm-minsup}.  This result gives a good amount of information on $\VFm$ but not all of
them. 

To go further, we have to examine more carefully the viscosity inequality on $\H$ which is done in
the next section. However, in order to do so we need first to make sure that $(\VFm)^*$ is regular
in the sense of Definition~\ref{def:regular}. We provide below a direct ``control proof'' of this fact but
for a pde proof, the reader can also check that Proposition~\ref{reg-sub} applies here since we
assume \NCoH. Notice also that the proof below only uses ``outward normal controllability'' both from
$\Omega_1$ and $\Omega_2$.
\begin{lemma}\label{limsuponH}
    Assume that the ``standard assumptions in the codimension-$1$ case'' are satisfied, then 
    $$ ((\VFm)_{|\H\times (0,\Tf)})^* = (\VFm)^*\quad \hbox{on  }\H\times (0,\Tf)  \;,$$
    where $(\VFm)_{|\H\times (0,\Tf)}$ denotes the restriction to $\H\times (0,\Tf)$ of $\VFm$.
\end{lemma}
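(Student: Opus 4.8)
I only need the nontrivial inequality: the bound $\big((\VFm)_{|\H\times(0,T)}\big)^*\leq(\VFm)^*$ on $\H\times(0,T)$ holds automatically, since the left‑hand side is the usc envelope of a restriction of $\VFm$. So the plan is to show that for every $(x,t)\in\H\times(0,T)$,
$$(\VFm)^*(x,t)\leq\limsup\big\{\VFm(y,s)\ :\ (y,s)\to(x,t),\ y\in\H\big\}\,.$$
By Lemma~\ref{c-posit} I may assume $c\geq0$ for all elements of $\BCL$ (this only keeps the exponential weights in $[0,1]$ and is not essential). I pick a sequence $(y_n,s_n)\to(x,t)$ with $\VFm(y_n,s_n)\to(\VFm)^*(x,t)$; since $\R^N=\Omega_1\cup\Omega_2\cup\H$, up to a subsequence all $y_n$ lie in $\H$ — then the inequality is immediate — or all lie in $\Omega_1$, or all lie in $\Omega_2$. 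The last two cases are symmetric, so I treat $y_n\in\Omega_1$ and set $z_n:=(y_n)_N\to0^+$.

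The idea is to ``project'' each $(y_n,s_n)$ onto $\H$ by a short controlled descent whose cost vanishes, and then to use the Dynamic Programming Principle. The descent is built from normal controllability \NCoH at $(x,t)$: with $\delta=\delta(x,t)>0$ and $\VV=\VV(x,t)$ as in \NCoH, for $(y,s)\in\VV\cap(\overline{\Omega_1}\times[0,T])$ the set of elements $(b_1,c_1,l_1)\in\BCL_1(y,s)$ with $b_1\cdot e_N=-\delta$ is nonempty, and its convex hull — which still satisfies the affine constraint $b\cdot e_N=-\delta$ — is upper semicontinuous with compact convex images; Theorem~\ref{thm:existence.traj} then yields a trajectory $(X_n,D_n,L_n)$ starting at $(y_n,0,0)$, with running time $T_n(s)=s_n-s$, such that $(X_n(s))_N=z_n-\delta s$. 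This trajectory stays in $\overline{\Omega_1}\cap\VV$ for $0\leq s\leq\theta_n:=z_n/\delta$ once $n$ is large, because its spatial and temporal displacements on $[0,\theta_n]$ are $O(\theta_n)\to0$ while $(y_n,s_n)\to(x,t)$ lies in the interior of $\VV$; extend it arbitrarily (by concatenation) to a genuine element of $\mT(y_n,s_n)$. Setting $\tilde y_n:=X_n(\theta_n)\in\H$ and $\tilde s_n:=s_n-\theta_n$, one has $|\tilde y_n-y_n|\leq M\theta_n$, hence $\tilde y_n\to x$ and $\tilde s_n\to t$, so $(\tilde y_n,\tilde s_n)\in\H\times(0,T)$ for $n$ large.

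Applying the Dynamic Programming Principle (Theorem~\ref{DPP}) to $\VFm$ at $(y_n,s_n)$ along this trajectory with horizon $\theta_n$ gives
$$\VFm(y_n,s_n)\leq\int_0^{\theta_n}l\big(X_n(s),T_n(s)\big)\exp(-D_n(s))\,\ds+\VFm(\tilde y_n,\tilde s_n)\exp(-D_n(\theta_n))\,.$$
Since $|l|\leq M$ and $0\leq D_n(s)\leq M\theta_n$ on $[0,\theta_n]$ (by \HBACP), and $\VFm$ is locally bounded near $(x,t)$ by Lemma~\ref{u-bound}, the right-hand side equals $\VFm(\tilde y_n,\tilde s_n)+O(\theta_n)$. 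Letting $n\to\infty$,
$$(\VFm)^*(x,t)=\lim_n\VFm(y_n,s_n)\leq\limsup_n\VFm(\tilde y_n,\tilde s_n)\leq\limsup\big\{\VFm(y,s):(y,s)\to(x,t),\ y\in\H\big\}\,,$$
which is the desired inequality.

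The one delicate step is the construction, just above, of the admissible descending trajectory that reaches $\H$ in time $O(z_n)$ while remaining inside the controllability neighborhood $\VV$ — a routine but careful use of the differential-inclusion machinery of Chapter~\ref{chap:control.tools}. (One could alternatively first apply the regularity of subsolutions, Proposition~\ref{reg-sub} and formula~\eqref{reg-kp-codim1}, to the Ishii subsolution $(\VFm)^*$ in order to reduce to one-sided approaching sequences; but since the controllability estimate above already handles arbitrary sequences, this detour is unnecessary.)
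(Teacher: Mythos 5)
Your proof is correct and essentially reproduces the paper's argument: use normal controllability to build a short descending trajectory from $y_n\in\Omega_1$ to a point $\tilde y_n\in\H$ in time $\theta_n=O((y_n)_N)\to0$, apply the Dynamic Programming Principle, and note that both the running-cost integral and the multiplicative factor $e^{-D_n(\theta_n)}$ contribute only $O(\theta_n)$. The one cosmetic difference is in how the descent is built: the paper simply fixes a constant control $\alpha_1$ with $b_1(x,t,\alpha_1)\cdot e_N=-\delta$ at the limit point and solves the ODE $\dot Y=b_1(Y,t_n-s,\alpha_1)$, relying on continuity of $b_1$ to guarantee the exit time $\tau^1_n\to 0$; you instead invoke the differential-inclusion machinery (a selection with $b\cdot e_N\equiv-\delta$ along the trajectory via Theorem~\ref{thm:existence.traj}), which achieves the same end at the cost of the extra set-valued bookkeeping you flag as the ``delicate step.'' The constant-control ODE shortcut in the paper makes that step routine.
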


\begin{proof} 
    Let $(x,t)\in\H\times(0,\Tf)$.
    By definition of $(\VFm)^*$,  there exists a sequence $(x_n,t_n)\to(x,t)$ such that
    $\VFm(x_n,t_n)\to (\VFm)^* (x,t)$. The statement of Lemma~\ref{limsuponH} means that we can
    assume that $x_n \in \H$. Indeed, if $x_n \in \Omega_1$, we use the normal controllability
    assumption \NCoH at $(x,t)$: there exists $\delta>0$ and a control $\alpha_1$ such that
    $b_1(x,t,\alpha_1) \cdot e_N=-\delta<0$.  Considering the trajectory with constant control
    $\alpha_1$ 
    \begin{equation}\label{Yn}
        \dot Y(s) = b_1(Y(s),t_n-s,\alpha_1)\quad,\quad Y(0)=x_n ,
    \end{equation}
    it is easy to show that $\tau^1_n$, the first exit time of the trajectory $Y$ from $\Omega_1$
    tends to $0$ as $n\to +\infty$.  By the Dynamic Programming Principle, denoting $(\tilde
    x_n,\tilde t_n)= (X(\tau^1_n),t-\tau^1_n)$, we have
    $$ 
        \VFm (x_n,t_n)\leq \int_{0}^{\tau^1_n}l\big(Y(s),t_n- s,\alpha_1\big)\,e^{-D(s)}\ds +
    	\VFm (\tilde x_n,\tilde t_n)\,e^{-D(\tau^1_n)}= \VFm (\tilde x_n,\tilde t_n) + o_n(1)\;,
    $$
    where $o_n(1)\to 0$. Therefore $(\tilde x_n,\tilde t_n) \to (x,t)$, $\VFm(\tilde x_n,\tilde t_n)
    \to (\VFm)^* (x,t)$ and $\tilde x_n \in \H$, which is exactly what we wanted to prove. 
    The same results holds if $x_n\in\Omega_2$ using a control such that 
    $b_2(x,t,\alpha_2) \cdot e_N=\delta>0$.
\end{proof}

\section{The complementary equation}\label{subsec:complemented}

This section is motivated in particular by Lemma~\ref{lem:struc.traj} where the term
$(b_\H,c_\H,l_\H)$ plays a key role as a coupling between the control problems in $\Omega_1$ and
$\Omega_2$. 

Following Section~\ref{sect:Local.Comparison}, we introduce the tangential elements in $\BCL$ which
maintain the trajectories on $\H$: for any $x\in\H$, $t\in [0,\Tf]$, we set
$$\BCL_T(x,t):=\big\{(b,c,l)\in\BCL(x,t):b\cdot e_N=0\big\}\;.$$
Similarly we define $\B_T(x,t)$ for the set-valued map of tangential dynamics: any
$b\in\B_T(x,t)$ can be expressed as a convex combination
\begin{equation}\label{eq:convex.comb.b}
b=\mu_1 b_1+\mu_2 b_2
\end{equation}
for which $(\mu_1 b_1+\mu_2 b_2)\cdot e_N=0$ with $\mu_1+\mu_2=1$, $\mu_1,\mu_2\in [0,1]$.
We also introduce tangential Hamiltonian which was already considered
\begin{equation}\label{def:HT}
\HT(x,t,u,p):=\sup_{\BCL_T(x,t)}\big\{-b\cdot p+cu-l\big\}\;.
\end{equation}
Notice that $p_t+ \HT(x,t,u,p)=\F^N(x,t,u,(p,p_t))$ on $\Man{N}=\H\times(0,\Tf)$ and,
by Lemma~\ref{tgfields} with $k=N$, the Hamiltonian $\HT$ satisfies \TC; in particular, $\\HT$ is continuous in
$x,t$, uniformly with respect to $(u,p)$ in compact sets. Such property can also be obtained by using the
representation formula given by Lemma~\ref{lem:H1m.H2p.a}.

Before deriving an $\HT$-subsolution property, we need first the following preliminary result which
allows us to build trajectories which remains on $\H$, at least for some time.
\begin{lemma}\label{bon-bcl}
    Let $(x,t) \in \H \times (0,\Tf)$ and $(b,c,l)\in \BCL_T(x,t)$, obtained as a convex
    combination $(b,c,l)=\mu_1 (b_1,c_1,l_1)+\mu_2(b_2,c_2,l_2)$. 
    If $$(b_1(x,t,\alpha_1)\cdot e_N)\,\cdot\,(b_2(x,t,\alpha_1)\cdot e_N)<0\;,$$
    there exists a neighborhood $\VV$ of
    $(x,t)$ in $\H \times (0,\Tf)$ and a Lipschitz continuous map $\psi : \VV \to \R^N\times \R \times
    \R$, such that $\psi(x,t)=(b,c,l)$ and  $\psi(y,s) = (\tilde b(y,s), \tilde c(y,s), \tilde l(y,s))
    \in \BCL_T(y,s)$ for any $(y,s) \in \VV$.  
\end{lemma}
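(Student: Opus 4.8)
The plan is to build $\psi$ by hand, as a convex combination of the two \emph{fixed} triples $(b_1,c_1,l_1)(\cdot,\cdot,\alpha_1)$ and $(b_2,c_2,l_2)(\cdot,\cdot,\alpha_2)$, with variable weights tuned so that the resulting dynamic keeps its $e_N$--component equal to zero, i.e.\ stays tangent to $\H$.

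First I would use the sign hypothesis. Since the controls $\alpha_1\in A_1$, $\alpha_2\in A_2$ are now frozen (they are the ones appearing in the given representation of $(b,c,l)$) and, by assumption, $\big(b_1(x,t,\alpha_1)\cdot e_N\big)\big(b_2(x,t,\alpha_2)\cdot e_N\big)<0$, I may assume (exchanging the roles of $\Omega_1$ and $\Omega_2$ otherwise) that $b_1(x,t,\alpha_1)\cdot e_N<0<b_2(x,t,\alpha_2)\cdot e_N$. By continuity of $(y,s)\mapsto b_i(y,s,\alpha_i)$, there exist a neighborhood $\VV$ of $(x,t)$ in $\H\times(0,T)$ and $\delta>0$ such that, for all $(y,s)\in\VV$, $\;b_1(y,s,\alpha_1)\cdot e_N\leq-\delta$ and $b_2(y,s,\alpha_2)\cdot e_N\geq\delta$. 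On $\VV$ I then set
$$\mu_1(y,s):=\frac{b_2(y,s,\alpha_2)\cdot e_N}{b_2(y,s,\alpha_2)\cdot e_N-b_1(y,s,\alpha_1)\cdot e_N}\,,\qquad \mu_2(y,s):=1-\mu_1(y,s)\,.$$
These are well defined (the denominator is $\geq2\delta$), take values in $(0,1)$, and are Lipschitz continuous on $\VV$: numerator and denominator are Lipschitz in $(y,s)$ by \HBACP-$(iii)$ with $\alpha_i$ fixed, they are bounded, and the denominator is bounded away from $0$. By construction $\mu_1(y,s)\,b_1(y,s,\alpha_1)\cdot e_N+\mu_2(y,s)\,b_2(y,s,\alpha_2)\cdot e_N=0$ on $\VV$. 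Moreover, at $(x,t)$ the two conditions ``$\mu_1+\mu_2=1$'' and ``$(\mu_1 b_1+\mu_2 b_2)\cdot e_N=0$'' have a \emph{unique} solution because $b_1(x,t,\alpha_1)\cdot e_N\neq b_2(x,t,\alpha_2)\cdot e_N$; hence $\mu_i(x,t)$ coincides with the weight $\mu_i$ of the prescribed representation of $(b,c,l)$.

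Finally I would define $\psi(y,s):=\mu_1(y,s)\,(b_1,c_1,l_1)(y,s,\alpha_1)+\mu_2(y,s)\,(b_2,c_2,l_2)(y,s,\alpha_2)$. Then $\psi(x,t)=(b,c,l)$ by the previous step; $\psi$ is Lipschitz on $\VV$ (its dynamic component combines Lipschitz weights with the Lipschitz maps $b_i$, and for the full triple one invokes the standing regularity of the data --- in any case what is used downstream is the Lipschitz regularity of the tangential dynamics $\tilde b$, which guarantees that the associated ODE $\dot Z=\tilde b(Z,\cdot)$ has a unique solution remaining on $\H$); $\psi(y,s)$ is a convex combination of $(b_1,c_1,l_1)(y,s,\alpha_1)\in\BCL_1(y,s)$ and $(b_2,c_2,l_2)(y,s,\alpha_2)\in\BCL_2(y,s)$, hence lies in $\BCL(y,s)=\cob(\BCL_1,\BCL_2)(y,s)$ since $y\in\H$; and its $\R^N$--component has zero $e_N$--component, so $\psi(y,s)\in\BCL_T(y,s)$. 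This yields the lemma. The construction is elementary; the only genuinely load-bearing points are that the opposite-sign hypothesis is exactly what allows a tangential convex combination (and keeps the denominator from vanishing), and the uniqueness of the weights at $(x,t)$, which is what makes $\psi(x,t)$ recover the prescribed $(b,c,l)$; the regularity bookkeeping on the $c$ and $l$ components is a minor accounting issue that does not affect the argument.
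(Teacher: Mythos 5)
Your proof is correct and follows essentially the same route as the paper: same explicit formula for the variable weights $\mu_1(y,s)$, $\mu_2(y,s)$ (the paper calls them $\mu^\sharp_i$), same resulting $\psi$ as the corresponding convex combination of $(b_i,c_i,l_i)(\cdot,\cdot,\alpha_i)$. Your added observations --- that the opposite-sign hypothesis keeps the denominator bounded away from zero (which is what makes the weights Lipschitz), and that the weights are uniquely determined at $(x,t)$ so that $\psi(x,t)$ recovers the prescribed triple --- are small but worthwhile details the paper states without proof.
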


\begin{proof}
    Our assumption means that
    $$(\mu_1 b_1(x,t,\alpha_1) +\mu_2 b_2(x,t,\alpha_2))\cdot e_N=0\;.$$
    Now, if $(y,s)$ is close enough to $(x,t)$ we set
    \begin{equation*}
        \mu^\sharp_1(y,s):= \frac{b_2(y,s,\alpha_2) \cdot
	    e_N}{(b_2(y,s,\alpha_1)-b_1(y,s,\alpha_1))\cdot e_N}\;,\quad \mu^\sharp_2:=1-\mu^\sharp_1\;.
    \end{equation*}
    By this choice we have $0\leq\mu^\sharp_1,\mu^\sharp_2\leq 1$ and $\left(
    \mu^\sharp_1(y,s)b_1(y,s,\alpha_1)+  \mu^\sharp_2(y,s)b_2(y,s,\alpha_2)\right)\cdot e_N=0$,
    which yields a tangential dynamic which is well-defined as long as
    $(b_2(y,s,\alpha_1)-b_1(y,s,\alpha_1))\cdot e_N\neq 0$. In particular this is true in a
    neighborhood of $(x,t)$.

    Then the function $\psi$ given by
    $$ \psi(y,s):= \mu^\sharp_1(y,s)(b_1,c_1,l_1) + \mu^\sharp_2(y,s)(b_2,c_2,l_2)\; ,$$
    satisfies all the desired properties: it is Lipschitz continuous since $b_1, b_2$ are Lipschitz
    continuous in $x,t$ and since $\mu^\sharp_1(x,t)=\mu_1$, $\mu^\sharp_2(x,t)=\mu_2$, 
    $\psi(x,t)=(b,c,l)$.
\end{proof}

We now prove that a complementary subsolution inequality holds on $\H$:
\begin{proposition}\label{prop:complemented.one-d}
    Assume that the ``standard assumptions in the codimension-$1$ case'' are satisfied.
    Then the value function $\VFm$ satisfies the viscosity inequality
	$$(\VFm)^*_t+\HT\Big(x,t,(\VFm)^*,D_T(\VFm)^*\Big)\leq 0\quad \hbox{on  }\H\times (0,\Tf)\;.$$
\end{proposition}

We point out that in Proposition~\ref{prop:complemented.one-d}, the 
$\H\times (0,\Tf)$-viscosity inequality means that we look at maximum points of
$(\VFm)^*-\phi$ on $\H\times (0,\Tf)$ where $\phi$ is a smooth test-function on
$\H\times (0,\Tf)$.

\begin{remark}
    In other words, $\VFm$ is an Ishii solution satisfying a complemented $\HT$-inequality on $\H$.
    As we will see in Part~\ref{stratRN}, this can be interpreted as $\VFm$ being a stratified
    solution of the problem. We will actually prove that it is the unique stratified solution.
\end{remark}

\begin{proof}
    If $\phi$ is a smooth test-function on $\H\times(0,\Tf)$, we have to prove that, if $(x,t) \in
    \H\times  (0,\Tf)$ is a maximum point on $\H\times (0,\Tf)$ of $(\VFm)^*-\phi$, then (assuming
    without loss of generality that $(\VFm)^*(x,t)=\phi(x,t)$),
    $$\phi_t (x,t) +\HT (x,t,\phi (x,t) ,D_T \phi(x,t))\leq0\quad \hbox{on  }\H\times (0,\Tf)  \;.$$

    \noindent\textbf{(a)} \emph{Using the dynamic programming principle ---}
    By Lemma~\ref{limsuponH}, we can pick a sequence $(x_n,t_n)\to(x,t)$ such that $\VFm(x_n,t_n)\to
    (\VFm)^* (x,t)$ with $x_n \in \H$ for all $n\in\N$. By the dynamic programming principle, for
    any $\tau>0$ and any trajectory $(X_n,a_n)$ in $\mT(x_n,t_n)$ we have
    \begin{equation}\label{dyn.prog.sub.ishii0}
        \VFm(x_n,t_n)\leq \int_{0}^{\tau}
        l\big(X_n(s),t_n- s,a_n(s)\big)\,e^{-D_n(s)}\ds + \VFm ( X_n(\tau), t_n-\tau)
        \,e^{-D_n(\tau)}\;.
    \end{equation}
    Our aim is to show that this inequality implies
    $$ \phi_t(x,t) -b\cdot D\phi(x,t) +c \phi(x,t) - l \leq 0\; ,$$
    for any $(b,c,l)\in\BCL_T(x,t)$, which will give the conclusion $\HT\leq0$.
    However, replacing $\VFm$ by $\phi$ above can be done only for trajectories which stay on $\H$,
    at least for some interval $[0,\tau]$. 

    \smallskip

    \noindent\textbf{(b)} \emph{Constructing a trajectory which stays on $\H$ ---}
    We start from the fact that by definition of $\BCL_T(x,t)$, $(b,c,l)$ can be expressed as a convex
    combination of the $(b_i,c_i,l_i)$ for $i=1,2$, namely 
    $$(b,c,l) =\mu_1 (b_1,c_1,l_1) +\mu_2 (b_2,c_2,l_2)$$
    with $\mu_1+\mu_2=1$, $\mu_1,\mu_2\in [0,1]$ and $(\mu_1 b_1+\mu_2 b_2)\cdot e_N=0$. We denote
    by $\alpha_i$ the control which is associated to $(b_i,c_i,l_i)$

    Slightly modifying $b_1$ and $b_2$ by using the normal controllability on $\H$, we may assume
    without loss of generality that $b_1\cdot e_N \neq 0$ and  $b_2\cdot e_N \neq 0$ while keeping
    $(\mu_1 b_1+\mu_2 b_2)\cdot e_N=0$. Therefore, either $b_1 \cdot e_N<0<b_2 \cdot e_N$ or 
    $b_1 \cdot e_N>0>b_2 \cdot e_N$ but in both cases Lemma~\ref{bon-bcl} provides us with a
    function $\psi$ that we use to solve the ode
    $$(\dot{X}_n(s), \dot{D}_n(s),\dot{L}_n(s)) = \psi(X_n(s),t_n-s)\; ,$$
    with $(X_n(0),D_n(0),L_n(0))=(x_n,0,0)$.

    Because of the properties of $\psi$, the Cauchy-Lipschitz Theorem implies that there exists a unique
    solution which, for $(x_n,t_n)$ close enough to $(x,t)$, is defined on a small but fixed ($i.e.$
    independent of $n$) interval of time $[0,\tau]$ and $(X_n,D_n,L_n)\in \mT(x_n,t_n)$ for any $n$. 
    Moreover, $X_n\in\H$ on $[0,\tau]$.

    \smallskip

    \noindent\textbf{(c)} \emph{Deriving the tangential inequality ---}
    Since $\VFm(x_n,t_n)=(\VFm)^*(x,t)+o_n(1)=\phi(x,t)+o_n(1)$ while $\VFm\leq\phi$ on
    $\H\times(0,\Tf)$, using $X_n$ in \eqref{dyn.prog.sub.ishii0} we get
    \begin{equation}\label{dyn.prog.sub.ishii1}
	    \phi(x_n,t_n)+o_n(1)\leq \int_{0}^{\tau}
        \dot L_n(s)\,e^{-D_n(s)}\ds + \phi ( X_n(\tau), t_n-\tau)
        \,e^{-D_n(\tau)}\;.
    \end{equation}

    We first let $n$ tend to infinity. Due to the Lipschitz property of $\psi$, up to extraction we
    see that $(X_n,D_n,L_n)\to(X,D,L)$ in $W^{1,\infty}$ where at least on $[0,\tau]$,
    $$(\dot{X}(s), \dot{D}(s),\dot{L}(s)) = \psi(X(s),t-s)\; ,$$
    $X(s)\in\H$ for any $s\in[0,\tau]$ and $(X(0),D(0),L(0))=(x,0,0)$.
    So, passing to the limit in \eqref{dyn.prog.sub.ishii1} yields 
    $$
	\phi(x,t)\leq \int_{0}^{\tau}\dot L(s)\,e^{-D(s)}\ds +
	\phi ( X(\tau), t-\tau)\,e^{-D(\tau)}\;.
    $$
    On the other hand, since $\phi$ is smooth on $\H\times(0,\Tf)$, the following expansion holds:
    $$\phi(X(\tau),t-\tau))e^{-D(\tau)}=\phi(x,t)+\int_0^\tau \Big(D_x\phi(\xi_s)\dot X(s)
    -\partial_t\phi(\xi_s)-\dot D(s)\phi(\xi_s)\Big)e^{-D(s)}\,\ds$$
    where $\xi_s$ stands for $(X(s),t-s)$. Combining both integrals, we arrive at 
    $$
    0\leq \int_0^\tau \Big(-\partial_t\phi(\xi_s)+\dot X(s)\cdot D\phi(\xi_s)-\dot D(s)\phi(\xi_s)
    +\dot L(s)\,e^{-D(s)}\Big)\exp(-D(s))\ds\;.$$
    Finally, after divinding by $\tau$ and sending $\tau\to0$ the conclusion follows from the fact
    that $\psi$ is continuous and $\psi(x,t)=(\dot X(0),\dot D(0),\Dot L(0))=(b,c,l)$: we get
    $$ \phi_t(x,t) -b\cdot D\phi(x,t) +c \phi(x,t) - l \leq 0\;$$
    for any $(b,c,l)\in\BCL_T(x,t)$, which implies that $\HT(x,t,\phi,D\phi)\leq0$.
\end{proof}

\section{A characterization of $\VFm$}

The previous section showed that $\VFm$ satisfies an additional subsolution inequality on
$\H\times (0,\Tf)$. The aim of this section is to prove that this additional inequality is enough to
characterize it.

The precise result is the
\begin{theorem}\label{thm:minimal.charac}\emph{--- Characterization of the minimal vale
    function.}\smsp
	Assume that the ``standard assumptions in the codimension-$1$ case'' are satisfied. Then 
	$\VFm$ is the unique Ishii solution of \eqref{pb:half-space} such that 
\begin{equation}\label{eq:HTregineqsub}
u_t+\HT(x,t,u,D_Tu)\leq0\quad\text{on}\quad\H\times (0,\Tf)\;.
\end{equation}
\end{theorem}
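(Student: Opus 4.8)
By Proposition~\ref{prop:ishii}, $\VFm$ is an Ishii solution of \eqref{pb:half-space}, and by Proposition~\ref{prop:complemented.one-d} its usc envelope satisfies \eqref{eq:HTregineqsub} on $\H\times(0,T)$; so $\VFm$ belongs to the class of the statement. The whole content of the theorem is therefore \emph{uniqueness}, which I would deduce from a strong comparison result: if $u$ is a bounded usc Ishii subsolution of \eqref{pb:half-space} satisfying \eqref{eq:HTregineqsub} (a \emph{stratified subsolution}) and $v$ is a bounded lsc Ishii supersolution of \eqref{pb:half-space}, then $u\le v$ on $\R^N\times[0,T]$. Granting this, the conclusion follows: for any Ishii solution $w$ in the class, $w^*$ is a stratified subsolution and $(\VFm)_*$ is an Ishii supersolution, hence $w^*\le(\VFm)_*$; conversely, by Propositions~\ref{prop:ishii}--\ref{prop:complemented.one-d}, $(\VFm)^*$ is itself a stratified subsolution while $w_*$ is an Ishii supersolution, hence $(\VFm)^*\le w_*$; combining, $(\VFm)^*\le w_*\le w^*\le (\VFm)_*\le (\VFm)^*$, so all these functions coincide and $w=\VFm$ is continuous.

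\textbf{Reduction to a local comparison near $\H$.} To prove the comparison I would follow the scheme of Section~\ref{sect:htc}. The initial inequality $u(\cdot,0)\le v(\cdot,0)$ on $\R^N$ holds because $\u0$ is continuous and both satisfy the initial condition. The structure conditions \LOCa-evol and \LOCb-evol are verified exactly as in Section~\ref{sect:htc}, by subtracting $\alpha[(|x|^2+1)^{1/2}+Kt]$, resp. a localized variant: since $H_1,H_2$ have bounded dynamics and $H_T$ inherits the same bounds on $\BCL_T$, these perturbations remain stratified subsolutions for $K$ large and recover $u$ as the parameters vanish. By Proposition~\ref{reducCR-evol} it then suffices to prove the local comparison result (LCR)-evol. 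Away from $\H$, i.e. in cylinders contained in $\Omega_1$ or in $\Omega_2$, this is the classical local comparison for the continuous Hamiltonians $H_1$, $H_2$ (Theorem~\ref{comp:LC}; note $H_i$ is automatically convex in $(u,p)$, being a sup of affine functions). The only remaining case is a small cylinder $\cyl$ centred at a point of $\H$.

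\textbf{Local comparison in a cylinder centred on $\H$.} Here I would apply Lemma~\ref{lem:comp.fundamental} with $\mathcal{M}=\H$, after regularizing $u$ into an admissible competitor. Problem~\eqref{pb:half-space} fits the good framework of Definition~\ref{GF-HJD}: the stratification $\Man{N}=\Omega_1\cup\Omega_2$, $\Man{N-1}=\H$, $\Man{k}=\emptyset$ otherwise is a flat AFS, and \TC, \NC, \Mong hold for the global Hamiltonian by \HBACP and \NCoH; moreover \HConv holds. Hence Propositions~\ref{reg-by-sc} and \ref{C1-reg-by-sc} yield a sequence $(u^\e)_\e$ of subsolutions of the interior equations, Lipschitz and $C^1$ in the tangential variables $(x',t)$, with $\limssup u^\e=u$; since the tangential sup-convolution/mollification does not act on the normal variable, one checks using \TC and \Monu for $H_T$ that each $u^\e$ still satisfies \eqref{eq:HTregineqsub}, so it is again a stratified subsolution. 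Replacing $u^\e$ by $u^\e_\mu:=(1-\mu)u^\e+\mu\chi$ with $\chi$ the strict subsolution of Lemma~\ref{subsol-chi} (which is strict for $\BCL_T$ as well) turns all inequalities strict: $u^\e_\mu$ is a strict subsolution of $\F=0$ in $\cyl\setminus\H$ and $\F^{\H}(y,s,u^\e_\mu,Du^\e_\mu)<0$ on $\H$. Because a local comparison result holds for $\F=0$ in $\cyl\setminus\H$ (standard, in $\Omega_1\cap\cyl$ and $\Omega_2\cap\cyl$ separately), Theorem~\ref{thm:sub.Qk.dpp.bt} applied to $u^\e_\mu$ provides the strict sub-dynamic programming principle required in hypothesis (iii) of Lemma~\ref{lem:comp.fundamental}. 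That lemma then gives $\max_{\cylb}(u^\e_\mu-v)=\max_{\partial_p\cyl}(u^\e_\mu-v)$; letting $\mu\to0$ and $\e\to0$ and using $\limssup u^\e=u$ yields $\max_{\cylb}(u-v)\le\max_{\partial_p\cyl}(u-v)$, i.e. (LCR)-evol at that point. Together with the interior estimate and Proposition~\ref{reducCR-evol}, this gives $u\le v$ on $\R^N\times[0,T]$ and completes the proof.

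\textbf{Main obstacle.} The delicate point is the regularization step: one must check that the tangential sup-convolution (and mollification) of $u$ is a subsolution not only of the interior Hamilton--Jacobi equations but also of the complemented tangential inequality \eqref{eq:HTregineqsub}, and that after the $\chi$-perturbation this inequality becomes strict with a uniform $\eta>0$ --- this is precisely the hypothesis that feeds Lemma~\ref{lem:comp.fundamental}. The convexity of $H_1,H_2$ (inherited from the control structure) is what makes the $C^1$-in-tangential regularization of Proposition~\ref{C1-reg-by-sc} available; in a merely Lipschitz framework one would instead need $C^{1,1}$-charts and a variant of the fundamental lemma adapted to non-smooth tangential subsolutions.
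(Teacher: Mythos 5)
Your proposal follows essentially the same route as the paper: reduce to a local comparison near $\H$ via \LOCa/\LOCb and Proposition~\ref{reducCR-evol}, regularize the stratified subsolution tangentially with Propositions~\ref{reg-by-sc}/\ref{C1-reg-by-sc}, obtain a strict sub-dynamic programming principle from Theorem~\ref{thm:sub.Qk.dpp.bt}, and conclude with Lemma~\ref{lem:comp.fundamental}. The only cosmetic differences are that the paper invokes the minimality of $\VFm$ among supersolutions (Proposition~\ref{prop:ishii}) directly rather than applying the comparison symmetrically, and it makes the subsolution strict \emph{before} regularizing instead of after --- both reorderings are harmless.
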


\begin{proof} 
    The proof is obtained by a combination of arguments which will also be used in
    Part~\ref{stratRN} for stratified problems. 

    We recall that we already know (cf. Proposition~\ref{prop:ishii}) that $\VFm$ is the minimal
    Ishii supersolution of \eqref{pb:half-space}. Therefore we only need to compare $\VFm$ with
    subsolutions $u$ such that $u_t+\HT(x,t,u,D_Tu)\leq0$ on $\H\times (0,\Tf)$, showing that
    $\VFm\geq u$ in $\R^N\times [0,\Tf]$. 

    Though the proof can be reduced to a mere list of several arguments already exposed in
    Part~\ref{Part:prelim}, we provide below more explanations and redo most of them in the simpler
    hyperplane context for the readers's convenience.

\medskip

\noindent\textbf{Step 1:} \emph{Reduction to a local comparison result \LCR} --
    As already noticed in Part~\ref{Part:prelim} (see Remarks on page~\pageref{page:c.positif}),
    setting $\tilde{u}(x,t):=\exp(Kt)u(x,t)$ for $K>0$ large enough allows to reduce the proof to
    the case where $c_i\geq0$ for any $(b_i,c_i,l_i)\in\BCL_i(x,t)$, $i=1,2$. As a consequence, we
    can assume that the $H_i$ ($i=1,2$) are nondecreasing in the $u$-variable, and that $\HT$ enjoys
    the same property.

    Then, rewriting here some arguments already given in Section~\ref{sect:htc} and using that the
    $c_i$ are positive, we notice that, for $\delta >0$ small enough,
    $\psi(x,t)=-\delta(1+|x|^2)^{1/2}-\delta^{-1}(1+t)$ is not only a $\delta/2$-strict subsolution
    \eqref{pb:half-space}, but also for the $\HT$-equation on $\H\times (0,\Tf)$ and we can also
    assume that $\psi\leq u$ in $\R^N \times [0,\Tf]$. For $\mu\in(0,1)$, setting
    $$u_\mu(x,t):=\mu u(x,t)+(1-\mu)\psi(x,t)$$
    yields an $\eta$-strict subsolution $u_\mu$ for some $\eta(\mu,\delta)>0$. By this, we mean that
    each inequality in \eqref{pb:half-space} is $\eta$-strict for $u_\mu$ but also that
    $(u_\mu)_t+\HT(x,t,u_\mu,D u_\mu)\leq\eta<0$ on $\H\times (0,\Tf)$. This claim is obvious for
    the initial data, let us prove it for instance for $H_1$.

    Using the convexity property of $H_1$ in $r,p$,	we get successively
	$$\begin{aligned}
	& (u_\mu)_t + H_1(x,t,u_\mu,D u_\mu)\\
	&= \mu u_t + (1-\mu) \psi_t +H_1(x,t,\mu u+(1-\mu)\psi,\mu Du+(1-\mu) D\psi)\\
	&\leq \mu u_t + (1-\mu) \psi_t  + \mu H_1(x,t,u,Du)+(1-\mu)H_1(x,t,\psi,D\psi)\\
	&\leq \mu \big\{u_t +  H_1(x,t,u,Du)\big\}+(1-\mu)\big\{\psi_t  + H_1(x,t,\psi,D\psi)\big\}\\
	&\leq \mu\big\{u_t+H_1(x,t,u,Du)\big\}-(1-\mu)(\delta/2)\leq -(1-\mu)(\delta/2)<0\;.
	\end{aligned}$$
    The same is valid for $H_2$ and $\HT$ for similar reasons.
    Moreover, by construction $u_\mu-\VFm\to -\infty$ as $|x|\to+ \infty$ since $\psi(x,t) \to
    -\infty$ as $|x|\to+ \infty$, so that \LOCa is satisfied for any of those Hamiltonians. 

    Checking \LOCb is easier: if we are looking for a comparison result around the point
    $(x_0,t_0)$, it is enough to use 
    $$u_{\delta'}(x,t):=u(x,t)-\delta'(|x-x_0|^2+|t-t_0|^2)$$
    for $\delta'>0$ small enough. Thus we are in the situation where a \LCR is enough to ensure a
    \GCR.

    In order to prove that \LCR holds, we introduce $\cyl$, a (small) cylinder around $(x,t)$ where
    we want to perform the \LCR. Notice that of course, if $x\in\Omega_1$ or $\Omega_2$, then taking
    $r$ small enough reduces the proof to the standard comparison result since in this case, $\cyl$
    does not intersect with $\H$. Thus, we assume in the following that $x\in\H$. Our aim is to
    use Lemma~\ref{lem:comp.fundamental} with $\mathcal{M}:=(\H\times[0,\Tf])\cap\overline{\cyl}$
    and $\mathbb{F}^\mathcal{M}(x,t,r,(p_x,p_t)):=p_t + \HT(x,t,r,p_x)$.

\medskip

\noindent\textbf{Step 2:} \emph{Approximation of the subsolution} --
    We wish to use an approximation by convolutions (inf-convolution and usual convolution with a
    smoothing kernel) for the subsolution as in Proposition~\ref{C1-reg-by-sc}; to do so, we
    introduce a slightly larger cylinder $\cylp$ where $r'>r$ and $h'>h$ are fixed in order to have
    some ``room'' for those convolutions.  From Step 1, we know that $u_\mu$ is an $\eta$-strict
    subsolution of \eqref{pb:half-space} in $\cylp$ for some $\eta=\eta(\mu,\delta)$. 

    Since \HConv, \NCe, \TC and \Monu are satisfied for all the Hamiltonians, we deduce from
    Proposition~\ref{C1-reg-by-sc} that there exists a sequence $(u_{\mu,\eps})_\eps$ of
    $C^0(\cylb)\cap C^1(\mathcal{M})$ functions which are all $(\eta/2)$-strict subsolutions of
    \eqref{pb:half-space} in some smaller cylinder $Q(\eps)\subset\cylp$, and $Q(\eps)\to\cylp$ as
    $\eps\to0$ in the sense of the euclidian distance in $\R^{N+1}$.  Hence, for $\eps$ small
    enough, we can assume with no restriction that $\cyl\subset Q(\eps)\subset\cylp$ so that
    $u_{\mu,\eps}$ is an $(\eta/2)$-strict subsolution in $\cyl$. 

    This has two consequences:
    \begin{enumerate}
    \item[$(a)$] for any $\eps>0$ small enough, $(u_{\mu,\eps})_t+\HT(x,t,u_{\mu,\eps},D_T
    u_{\mu,\eps})\leq -\eta/2<0$ in $\mathcal{M}$ and in a classical sense since $u_{\mu,\eps}$ is
    $C^1$ on $\mathcal{M}$; 
    \item[$(b)$] since $u_{\mu,\eps}$ is an $(\eta/2)$-strict subsolution in
    $\mathcal{O}:=\cyl\setminus\mathcal{M}$ (for the Hamiltonians $H_1,H_2$) and a \LCR holds there,
    we use the subdynamic programming principle for subsolutions (cf.
    Theorem~\ref{thm:sub.Qk.dpp.bt}) which implies that each $u_{\mu,\eps}$ 
    satisfies an $(\eta/2)$-strict dynamic programming principle in $\cyl[\mathcal{M}^c]$.
    \end{enumerate}
    These two properties allow us to make a \LCR in $\cyl$ in the final step.

\medskip

\noindent\textbf{Step 3:} \emph{Performing the local comparison} --
    From the previous step we know that for each $\eps>0$, $u=u_{\mu,\eps}$ satisfies the hypotheses
    of the ``Magical Lemma'' (Lemma~\ref{lem:comp.fundamental})\index{Magical Lemma!for $\VFm$}.  Using $v:=\VFm$ as supersolution in this lemma, we deduce
    that
	$$\forall (y,s)\in\cylb\setminus\partial_P\cyl\;,\quad 
    (u_{\mu,\eps}-\VFm)(y,s)<\max_{\cylb}(u_{\mu,\eps}-\VFm)\;.$$
    Using that $u_\mu=\limssup u_{\mu,\eps}$, this yields a local comparison result (with inequality
    in the large sense) between $u_\mu$ and $\VFm$ as $\eps\to0$.  By step 1, we deduce that the
    \GCR holds: $u_\mu\leq \VFm$ in $\R^N\times [0,\Tf]$, and sending finally $\mu\to1$ gives that
    $u\leq\VFm$.

	The conclusion is that if $u$ is an Ishii solution such that $u_t+\HT(x,t,u,D_Tu)\leq0$ on $\H$, 
	necessarily $u\equiv\VFm$, which ends the proof.
\end{proof}

\chapter{A Less Natural Value Function, Regular Dynamics}
\label{sect:rsd}

\abstract{A new value function $\VFp$ is introduced by defining ``regular trajectories''. Under
the ``good assumptions'', the main results are that $\VFp$ can be characterized as the maximal Ishii
subsolution (and solution) of the standard HJB Equation; it is also the unique solution of
an HJB problem provided an additional subsolution condition is imposed on the discontinuity.
A stability result is also obtained for ``regular trajectories''.
}

While studying $\VFm$ we introduced the set $\BCL_T$, containing the dynamics tangent to $\H$ in
order to examining the trajectories which remain on $\H$. The new point in this section is to remark
that there are two different kinds of dynamics that allow to stay on $\H$, leading to the
construction of a second value function.

\section{Introducing $\VFp$}
Let us first begin with regular trajectories:
\index{Regular strategies}
\begin{definition}\emph{--- Regular controls, dynamics, trajectories.}\smsp
    We say that $b\in\B_T(x,t)$ is \emph{regular} if $b=\mu_1 b_1+\mu_2 b_2$ while the condition
    $b_1\cdot e_N\leq0\leq b_2\cdot e_N$ holds.  We denote by 
    $$\BCL_T^\reg(x,t):=\big\{(b,c,l)\in\BCL_T(x,t): b\text{ is regular }\big\}$$ 
    the set containing the regular tangential dynamics, and $\mT^\reg(x,t)$ the set of controlled
    trajectories with regular dynamics on $\H$, $i.e.$\label{not:mTreg}
    $$\begin{aligned}
	\mT^\reg(x,t):=\Big\{ & (X,D,L) \text{ solution of $\eqref{eq:diff.half.space}$ such that }\\
	& \dot X(s)\in\B_T^\reg(X(s),t-s)\;\text{a.e. when }X(s)\in\H \Big\}\;.
	\end{aligned}
    $$
\end{definition}

In other terms, a regular dynamic corresponds to a ``push-push'' strategy: the trajectory is
maintained on $\H$ because it is pushed on $\H$ from both sides, using only dynamics coming from
$\Omega_1$ and $\Omega_2$; we may also have tangent dynamics, i.e. $b_1\cdot e_N= b_2\cdot e_N=0$.
On the contrary, the dynamic is said \emph{singular} if $b_1\cdot e_N>0$ and $b_2\cdot e_N<0$,
which is a ``pull-pull'' strategy, a quite instable situation where the trajectory remains on $\H$
because each side pulls in the opposite direction. We also recall the notations \eqref{notation:XDL}
that we use throughout this chapter.

We remark that, by \NCoH, the sets $\BCL_T(x,t)$ and $\BCL_T^\reg(x,t)$ are non-empty for any $(x,t)
\in \H$ (see Lemma~\ref{bon-bcl}). Next, for $(x,t) \in \H\times (0,\Tf)$, $r \in \R$ and
$p=(p',0)\in \R^N$, we define a second tangential Hamiltonian
\begin{equation}\label{def:HTreg}
\HTreg(x,t,r,p):=\sup_{\BCL_T^\reg(x,t)}\big\{-b\cdot p+cu-l\big\}\;,
\end{equation}
and a second value function can be defined by minimizing only on regular
trajectories:\index{Viscosity solutions!value functions as}
$$\VFp(x,t):=\inf_{\mT^\reg(x,t)}\left\{\int_0^\infty l(X(s),t-s,a(s))\exp(-D(s))\ds\right\}\;.$$
Of course it is clear that $\VFm\leq \VFp$ in $\R^N \times [0,\Tf]$ but we are going to prove more
interesting properties on $\VFp$.

The Hamiltonian $\HT^\reg$ satisfies \TC on $\H \times [0,\Tf]$; in particular, $\HT^\reg$ is continuous
with respect to $(x,t)$. Contrarily to $\\HT$, this does not follow directly from Lemma~\ref{tgfields},
but a carefull look at the proof will convince the reader that the arguments also apply to $\HT^\reg$.
As it is the case for $\HT$, an alternative proof
consists in using the representation formulas given by Lemma~\ref{lem:H1m.H2p.a}.

Proving the dynamic programming principle for $\VFp$ is done as for $\VFm$ (see Theorem~\ref{DPP}),
but using regular trajectories. So, we skip the proof of the
\begin{lemma}
    Under hypothesis \HBCL, the value function $\VFp$ satisfies
    $$
    \VFp(x,t)=
    \inf_{\cT^\mathrm{reg}(x,t)}\Big\{\int_0^\theta
    l\big(X(s),t-s,a(s)\big)\exp(-D(s)) \ds+\VFp \big(X(\theta),t-\theta)\exp(-D(\theta))\Big\}\;,
    $$
    for any $(x,t)\in\R^N\times(0,\Tf]$, $\theta >0$.
\end{lemma}

The dynamic programming principle naturally leads to a system of pde's satisfied by $\VFp$.  
But before proving this result, we want to make the following important remark: most of the
results we provided in the previous chapter for $\VFm$ were more or less direct consequences of
results given in Chapter~\ref{chap:control.tools}, in particular all the supersolution inequalities
using Lemma~\ref{lem:global.super}. However, this is not the case for $\VFp$ which requires specific
adaptations.
\begin{proposition}\label{prop:ishii-Up} 
    Assume that the ``standard assumptions in the codimension-$1$ case'' are satisfied.  Then the
    value function $\VFp$ is an Ishii solution of \eqref{pb:half-space}. Moreover $\VFp$ satisfies
    on $\H \times (0,\Tf)$ the inequality
	$$(\VFp)^*_t+\HTreg (x,t,(\VFp)^*,D_T(\VFp)^*)\leq 0\quad \hbox{on  }\H\times (0,\Tf)\;.$$
\end{proposition}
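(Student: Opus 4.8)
The plan is to follow, step by step, the analysis already carried out for $\VFm$ in the previous section, restricting throughout to the class $\mT^\reg$. I would first record that, by \NCoH and the Lipschitz-selection construction of Lemma~\ref{bon-bcl}, the set $\BCL_T^\reg(x,t)$ is nonempty for every $(x,t)\in\H\times(0,T]$, hence $\mT^\reg(x,t)\neq\emptyset$ for all $(x,t)\in\R^N\times(0,T]$; combined with $\VFm\le\VFp$ and an explicit regular trajectory reaching the initial slice, this shows $\VFp$ is locally bounded, so that $(\VFp)^*$ and $(\VFp)_*$ make sense. The next ingredient is a Dynamic Programming Principle for $\VFp$ over $\mT^\reg$: the proof of Theorem~\ref{DPP} applies verbatim once one observes that the concatenation of two regular trajectories is regular, and that the restriction of a regular trajectory to a subinterval, as well as its tail, are regular --- all immediate since ``regular'' is a pointwise condition imposed only on the portions of the trajectory lying on $\H$.

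For the subsolution property, at an interior point of $\Omega_1$ or $\Omega_2$ the small-time DPP involves only trajectories not meeting $\H$, where the regularity constraint is vacuous, so the classical argument gives that $(\VFp)^*$ is an Ishii subsolution of $u_t+H_i=0$ in $\Omega_i$. On $\H$ I would argue in three steps. First, the analogue of Lemma~\ref{limsuponH}: using \NCoH to slide a maximizing sequence onto $\H$ through a short transversal arc and the DPP to bound the cost of that arc, one gets $((\VFp)_{|\H\times(0,T)})^*=(\VFp)^*$ on $\H\times(0,T)$, which is what makes an $\H$-viscosity inequality meaningful. Second, the $\HTreg$-inequality: at a maximum point $(x,t)\in\H\times(0,T)$ of $(\VFp)^*-\phi$ with $\phi$ smooth on $\H$, take any $(b,c,l)=\mu_1(b_1,c_1,l_1)+\mu_2(b_2,c_2,l_2)\in\BCL_T^\reg(x,t)$ with $b_1\cdot e_N\le0\le b_2\cdot e_N$; after a harmless perturbation via \NCoH making both normal components nonzero, Lemma~\ref{bon-bcl} yields a Lipschitz selection $(y,s)\mapsto(\tilde b,\tilde c,\tilde l)(y,s)\in\BCL_T^\reg(y,s)$ passing through $(b,c,l)$, and solving the associated ODE produces a regular trajectory staying on $\H$; inserting it into the DPP (written along the maximizing sequence from the first step) and letting the parameters tend to their limits gives $\phi_t-b\cdot D\phi+c\phi-l\le0$, i.e. $\phi_t+\HTreg(x,t,(\VFp)^*,D_T\phi)\le0$ --- exactly the pattern of Proposition~\ref{prop:complemented.one-d}, with $\BCL_T$ replaced by $\BCL_T^\reg$. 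Third, the Ishii subsolution inequality $\min(\phi_t+H_1,\phi_t+H_2)\le0$ on $\H$ follows from the subsolution property in $\Omega_1,\Omega_2$ together with the ``no singular values'' property of $(\VFp)^*$ across $\H$ (obtained from \NCoH as in Proposition~\ref{reg-sub}): a maximum point of $(\VFp)^*-\phi$ on $\H$ is approached by maximum points lying in $\Omega_1$ or in $\Omega_2$, and the inequality passes to the limit.

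For the supersolution property, interior points of $\Omega_i$ are standard. At $(x,t)\in\H\times(0,T)$ let $\phi$ be smooth with $(\VFp)_*-\phi$ attaining a (WLOG strict) local minimum at $(x,t)$, and suppose for contradiction that both $\phi_t+H_1(x,t,\phi,D\phi)<0$ and $\phi_t+H_2(x,t,\phi,D\phi)<0$. By continuity and the convex-hull structure of $\BCL$ on $\H$ (and $\BCL=\BCL_i$ off $\H$), one has $-b\cdot D\phi+c\phi-l<-\phi_t$ for every $(b,c,l)\in\BCL(y,s)$ with $(y,s)$ near $(x,t)$; hence, along any regular trajectory issued from a point near $(x,t)$, $\tfrac{d}{ds}\big(\phi(X(s),t-s)e^{-D(s)}\big)>-l(X(s),t-s)e^{-D(s)}$ as long as it stays in that neighborhood, and integrating on $[0,\sigma]$ and taking the infimum over $\mT^\reg$ contradicts the DPP for $\VFp$ for $\sigma$ small. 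This yields $\max(\phi_t+H_1,\phi_t+H_2)\ge0$, completing the Ishii supersolution inequality.

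The step I expect to demand the most care is the $\HTreg$-inequality on $\H$: as for $\VFm$, it hinges on producing a genuinely tangential, and here \emph{regular}, controlled trajectory that stays on $\H$ and realizes an arbitrary prescribed element of $\BCL_T^\reg(x,t)$, which is precisely where the nonemptiness of $\BCL_T^\reg$ and the Lipschitz selection of Lemma~\ref{bon-bcl} are used, together with the regularity statement $((\VFp)_{|\H})^*=(\VFp)^*$ needed to even formulate the inequality. By contrast the supersolution inequality reduces to the soft differential-inequality-along-trajectories computation, and the DPP for $\VFp$ is routine once one notes that regularity is preserved under concatenation.
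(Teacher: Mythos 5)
Your overall architecture — establishing a DPP for $\VFp$ over $\mT^\reg$, the analogue of Lemma~\ref{limsuponH}, the $\HTreg$-inequality via Lemma~\ref{bon-bcl} and a regular tangential trajectory, and the supersolution inequality by integrating a strict differential inequality against the DPP — is the same as the paper's. The gap is in your ``Third'' step of the subsolution argument on $\H$.

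You claim the Ishii subsolution inequality $\min(\phi_t+H_1,\phi_t+H_2)\le0$ on $\H$ follows from the $\Omega_i$-subsolution properties plus the no-singular-values property: ``a maximum point of $(\VFp)^*-\phi$ on $\H$ is approached by maximum points lying in $\Omega_1$ or in $\Omega_2$, and the inequality passes to the limit.'' This does not work as stated. When you push the maximum off $\H$ with a penalty $\alpha/d(y)$ (or any equivalent device), the test gradient at the interior maximum $(\ye,\se)\in\Omega_1$ is $D\phi(\ye,\se)-\tfrac{\alpha}{d^2}Dd(\ye)$, and the normal correction $\tfrac{\alpha}{d^2}$ does \emph{not} vanish in the limit: it converges (along a subsequence) to a nonnegative constant $\lambda$ that you cannot control. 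The $H_1$-inequality therefore only survives the limit after being weakened to $H_1^+$ (the monotone part that is insensitive to the sign-definite normal shift), as in Proposition~\ref{sub-up-to-b}. Symmetrically you get $H_2^-\le0$. Neither of these implies $H_1\le0$ or $H_2\le0$ on its own; in the critical range of normal slopes (between the argmin of $H_1$ and the argmin of $H_2$) the full Hamiltonians $H_1,H_2$ are governed by their other branches $H_1^-,H_2^+$, and the $H_1^+,H_2^-$ bounds say nothing there.

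There are two ways to close this. One is to do what the paper does: argue by contradiction assuming both $\phi_t+H_1>0$ and $\phi_t+H_2>0$, pick controls $\alpha_1,\alpha_2$ realizing both strict inequalities, and split into the three cases $b_1\cdot e_N>0$ (or $b_2\cdot e_N<0$), $b_1\cdot e_N<0<b_2\cdot e_N$, and the degenerate case; in each case one exhibits a specific regular trajectory (entering $\Omega_i$ with a constant control, or staying on $\H$ via the Lipschitz selection) and plugs it into the DPP to get a contradiction, which never requires the flux-limited machinery. The other way, more in the spirit of your step ``Second,'' is to observe that you have in fact proved the flux-limited subsolution condition for $(\VFp)^*$ — the $H_1^+,H_2^-$ inequalities from the penalization plus the $\HTreg$-inequality you established — and then invoke (the subsolution direction of) Proposition~\ref{sub-alw-max} to deduce that $(\VFp)^*$ is an Ishii subsolution. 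Either way, the direct ``pass to the limit'' claim needs to be replaced; as written, it asserts more than the limiting viscosity inequality delivers.
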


\begin{proof} 
    Of course, the only difficulties comes from the discontinuity on $\H \times (0,\Tf)$, therefore
    we concentrate on this case.

    \smallskip

    \noindent\textbf{(a)} \emph{Ishii supersolution condition in $\R^N$ ---} 
    Since a priori $\VFp$ is not continuous, we have to use semi-continuous envelopes as we did for
    $\VFm$. In order to prove that $(\VFp)_*$ is a supersolution we assume that $(x,t) \in \H
    \times (0,\Tf)$ is a strict local minimum point of $(\VFp)_*-\phi$ where $\phi$ is a smooth
    test-function in $\R^N\times(0,\Tf)$, and we can suppose w.l.o.g that $(\VFp)_*(x,t)=\phi(x,t)$.

    The first part consists in using the dynamic programming principle and follows the same lines as
    several proofs we already established so we condense a little bit some of the arguments below.
    By definition of $(\VFp)_*$, there exists a sequence $(x_n,t_n)$ which converges to $(x,t)$ such
    that $\VFp(x_n,t_n) \to (\VFp)_*(x,t)$ and by the dynamic programming principle,
    \begin{equation*}
    \VFp(x_n,t_n)= \inf_{\mT^{\rm reg}(x_n,t_n)}\Big\{
        \int_{0}^{\tau}\dot L_n(s)\,e^{-D(s)}\ds +
	\VFp \big( X_n(\tau), t_n-\tau\big)\,e^{-D(\tau)}\Big\}\;, 
    \end{equation*}
    where $\tau\ll 1$ and the $n$-index is to recall that this trajectory is associated with
    $X_n(0)=x_n$.  We use that $(i)$ $\VFp(x_n,t_n) = (\VFp)_*(x,t)+o_n(1)$ where $o_n(1) \to 0$,
    $(ii)$ $\VFp \big( X_n(\tau), t_n-\tau\big) \geq (\VFp)_* \big( X_n(\tau), t_n-\tau\big)$ and
    $(iii)$ the minimum point property, to obtain
    \begin{equation*}
    \phi (x_n,t_n)+o_n(1) \geq \inf_{\mTreg(x_n,t_n)}\Big\{
        \int_{0}^{\tau}\dot L_n(s)\,e^{-D(s)}\ds +
	\phi\big(X_n(\tau), t_n-\tau\big)\,e^{-D(\tau)}\Big\}\;.
    \end{equation*}
    Next we use  the expansion of $\phi$ along the trajectory of the differential inclusion, 
    writing $\xi_s=(X_n(s),t_n-s)$ for simplicity:
   $$
    \phi(X_n(\tau),t_n-\tau)\,e^{-D(\tau)} = \phi(x_n,t_n)
	+\int_0^\tau\Big(-\partial_t\phi(\xi_s)
	 +\dot X_n(s)\cdot D\phi(\xi_s)	 -\dot D_n(s)\phi(\xi_s)\Big)e^{-D(s)}\ds\;.
   $$
    Plugging this expansion into the dynamic programming principle and using that the global
    Hamiltonian $H$ is the sup over all the $(b,c,l)$, we are led to
    $$
	o_n(1)\leq\int_0^\tau\Big(\partial_t\phi(\xi_s)+
	H(X_n(s),t_n-s,\phi(\xi_s),D\phi(\xi_s)\Big)e^{-D(s)}\ds\;.
    $$
    Using the smoothness of $\phi$ and the upper semicontinuity of $H$ together with the facts that 
    $|X_n(s)-x| , |(t_n-s)-t|=o_n(1) + O(s)$, $e^{-D(s)}=1+O(s)$, we can replace $X_n(s)$ by $x$
    and $t_n-s$ by $t$ in the integral. Hence, for $\tau$ small enough 
    $$
        o_n(1)\leq\tau\Big(\partial_t\phi(x,t)+H(x,t,\phi(x,t),D\phi(x,t))\Big)+\tau
        o_n(1)+o(\tau)\;.
    $$
    It remains to let first $n\to\infty$, then divide by $\tau>0$ and send $\tau\to0$, which
    yields that $\partial_t\phi(x,t)+H(x,t,\phi,D\phi)\geq0$. Hence $\VFp$ satisfies the Ishii
    supersolution condition on $\H \times (0,\Tf)$.

    \medskip

    \noindent\textbf{(b)} \emph{The Ishii subsolution condition in $\R^N$ ---} 
    We have to consider $(x,t)\in\H \times (0,\Tf)$, a local maximum points of $(\VFp)^*-\phi$,
    $\phi$ being a smooth function and we assume again that $(\VFp)^*(x,t)=\phi(x,t)$.

    By definition of the upper semicontinuous envelope, there exists a sequence $(x_n,t_n)\to(x,t)$
    such that $\VFp(x_n,t_n)\to (\VFp)^* (x,t)$ and we first claim that we can assume $x_n \in \H$.
    To prove this claim, we use exactly the same argument as in the proof of Lemma~\ref{limsuponH} for
    $\VFm$ since it relies only on the normal controllability assumption \NCoH at $(x,t)$.

    Therefore, assuming that $x_n\in\H$, using the maximum point property we insert the
    test-function $\phi$ in the dynamic programming principle and get that for any regular
    control~$a(\cdot)$,
    \begin{equation}\label{dyn.prog.sub.ishii}
	\phi(x_n,t_n)+o_n(1)\leq\int_{0}^{\tau}l\big(X_n(s),t_n- s,a(s)\big)\,e^{-D(s)}\ds +
	\phi ( X_n(\tau), t_n-\tau)\,e^{-D(\tau)}\;.
    \end{equation}

    Then we argue by contradiction: if
    $$
    \min\big\{ \phi_t (x,t) +H_1\big(x,t,\phi(x,t),D\phi(x,t)\big),
    \phi_t (x,t)+H_2\big(x,t,\phi(x,t),D\phi(x,t)\big)\big\} > 0\;,
    $$
    there exists some $(\alpha_1,\alpha_2)\in A_1\times A_2$, such that, for all $i=1,2$
    \begin{equation} \label{contr.sub}
        \phi_t (x,t) -b_i(x,t,\alpha_i)\cdot D\phi(x,t)+c_i(x,t,\alpha_i)\phi(x,t)-l_i(x,t,\alpha_i)
        > 0\;, 
    \end{equation}
    and the same is true, for $n$ large enough, if we replace $(x,t)$ by $(x_n,t_n)$. Notice that,
    though the control $a(\cdot)$ in \eqref{dyn.prog.sub.ishii} is regular, this may not be the case
    a priori for $\alpha_1,\alpha_2$. Now we separate the proof in three cases according to the
    different configurations. For the sake of simplicity of notations, we just note below by $b_i$
    the quantity $b_i(x,t,\alpha_i)$.

    \noindent \textbf{Case 1 --} Either $b_1\cdot e_N>0$ or $b_2\cdot e_N<0$. In the first
    case, we use the trajectory $(X_n,D_n,L_n)$ defined by with the constant control $\alpha_1$. In
    particular
    \begin{equation}\label{Yn2}
        \dot X_n (s) = b_1(X_n (s),t_n-s,\alpha_1)\quad,\quad X_n (0)=x_n .
    \end{equation}
    Then there exists a time $\tau>0$ such that $X_n (s)\in\Omega_1$ for $s\in(0,\tau]$. Choosing
    such constant control $\alpha_1$ in \eqref{dyn.prog.sub.ishii} and arguing as above, we are led
    to
    $$
    \phi_t (x,t) -b_1(x,t,\alpha_1)\cdot D\phi(x,t)+
    c_1(x,t,\alpha_1)\phi(x,t)-l_1(x,t,\alpha_1) \leq 0\;, 
    $$
    which yields a contradiction with \eqref{contr.sub}. And the proof is the same in the second
    case, considering the trajectory associated with the constant control $\alpha_2$ in $b_2$.

    We point out that this case could have been also covered by arguments of
    Proposition~\ref{sub-up-to-b}, by extending the equation to the boundary.

    \noindent \textbf{Case 2 --} if $b_1 \cdot e_N<0<b_2 \cdot e_N$, then borrowing arguments of the
    proof of Lemma~\ref{bon-bcl}, for $(y,s)$ close enough to $(x,t)$, we can set
    \begin{equation*}
        \mu^\sharp_1(y,s):= \frac{b_2(y,s,\alpha_2) \cdot
	    e_N}{(b_2(y,s,\alpha_2)-b_1(y,s,\alpha_1))\cdot e_N}\;,
        \quad \mu^\sharp_2:=1-\mu^\sharp_1\;.
    \end{equation*}
    By this choice we have $0\leq\mu^\sharp_1,\mu^\sharp_2\leq 1$ and $\left(
    \mu^\sharp_1(y,s)b_1(y,s,\alpha_1)+  \mu^\sharp_2(y,s)b_2(y,s,\alpha_2)\right)\cdot e_N=0$,
    hence we have a regular dynamic that we use in \eqref{dyn.prog.sub.ishii}.

    We solve the ode
    \begin{equation*}
        \dot{X}^\sharp(s)=\mu^\sharp_1(X^\sharp(s),t_n-s)b_1(X^\sharp(s),t_n-s,\alpha_1)+
        \mu^\sharp_2(X^\sharp(s),t_n-s)b_2(X^\sharp(s),t_n-s,\alpha_2)\;.
    \end{equation*}
    By our hypotheses on $b_1$ and $b_2$, the right-hand side is Lipschitz
    continuous so that the Cauchy-Lipschitz theorem applies and gives a solution
    $X^\sharp(\cdot)$ which remains on $\H$, at least until some time $\tau>0$.

    Using $X^\sharp(\cdot)$ in \eqref{dyn.prog.sub.ishii} together with the associated discount and
    cost and arguing as above, we are led to
    $$\begin{aligned}
    & \mu^\sharp_1\biggl(\phi_t (x,t) -b_1(x,t,\alpha_1)\cdot D\phi(x,t)
    +c_1(x,t,\alpha_1)\phi(x,t)-l_2(x,t,\alpha_1)\biggr)\\
    + &\mu^\sharp_2\biggl(\phi_t (x,t) -b_2(x,t,\alpha_2)\cdot D\phi(x,t)
    +c_2(x,t,\alpha_2)\phi(x,t)-l_2(x,t,\alpha_2) \biggr)\leq  0\;,
    \end{aligned}$$
    a contradiction.

    \noindent \textbf{Case 3 --} The last case is when we have either $b_1 \cdot e_N=0<b_2 \cdot
    e_N$ or $b_1 \cdot e_N<0=b_2 \cdot e_N$. But using \NCoH, we can slightly modify $b_1$ or $b_2$
    by a suitable convex combination  in order to be in the framework of Case~1 or Case~2. This
    completes the proof that the Ishii subsolution condition holds on $\H\times(0,\Tf)$.

\medskip

    \noindent\textbf{(c)} \emph{The $\HT^\mathrm{reg}$-inequality ---} We do not give a specific
    proof here since this property holds for any Ishii subsolution (hence for $\VFp$ too), see
    Lemma~\ref{subsol-H}. Alternatively, this property can also be proved by similar arguments as
    for the $\HT$-inequality for $\VFm$, but using of course regular trajectories.
\end{proof}

\section{More on regular trajectories}

Let us begin by stating the stability of regular trajectories:
\begin{lemma}\label{lem:limit.reg.traj}
    Assume that all the $(b_i,c_i,l_i)$ satisfy \HBACP. For any $\eps>0$, let
    $(X,D,L)^\eps\in\mT^\reg(x,t)$ be a sequence of regular trajectories converging uniformly to
    $(X,D,L)$ on $[0,t]$. Then $(X,D,L)\in\mT^\reg(x,t)$.
\end{lemma}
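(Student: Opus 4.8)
The statement asserts two things: the limit $(X,T,D,L)$ solves the differential inclusion \eqref{eq:diff.half.space} (hence lies in $\mT(x,t)$), and it is \emph{regular}. The first is the routine closure property for differential inclusions; the second, which is where the discontinuity on $\H$ really enters, is the substance of the lemma. The plan is therefore: (1) pass to the limit in the inclusion by compactness; (2) show that the regularity of the approximating trajectories survives — the ``pull--pull'' part cannot appear in the limit because it is incompatible with the trajectories staying near $\H$.

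\textbf{Step 1 (the limit is a trajectory).} By \HBCLa the velocities $(\dot X^\e,\dot D^\e,\dot L^\e)$ are bounded by $M$ in $L^\infty(0,t)$, so along a subsequence they converge weakly-$*$ to $(\dot X,\dot D,\dot L)$, consistently with the uniform convergence of the trajectories; the initial values at $s=0$ (all equal to $(x,t,0,0)$) and the constraint that $(X(s),t-s)$ remain in $\R^N\times[0,T]$ pass to the limit immediately. Since $(y,s)\mapsto\BCL(y,s)$ is upper semi-continuous with convex compact values and $X^\e\to X$ uniformly, the classical convexity/closure argument for differential inclusions (the one used, via weak convergence, in the proof of Lemma~\ref{lem:super.dpp}) gives $(\dot X,\dot D,\dot L)(s)\in\BCL(X(s),t-s)$ for a.e.\ $s$. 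Hence $(X,T,D,L)\in\mT(x,t)$.

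\textbf{Step 2 (the limit is regular).} It remains to show $\dot X(s)\in\B_T^{\reg}(X(s),t-s)$ for a.e.\ $s\in E:=\{s\in[0,t]:X(s)\in\H\}$. Apply Filippov's lemma (Lemma~\ref{lem:struc.traj}) to each $X^\e$: one gets selections $b_i^\e(s)\in\B_i(X^\e(s),t-s)$ and weights $\rho_1^\e,\rho_2^\e\in[0,1]$ with $\rho_1^\e+\rho_2^\e\equiv1$ such that $\dot X^\e=\rho_1^\e b_1^\e+\rho_2^\e b_2^\e$, where $\rho_1^\e\equiv1$ and $b_1^\e=\dot X^\e$ on $\{X^\e\in\Omega_1\}$, symmetrically on $\{X^\e\in\Omega_2\}$, and — because $X^\e$ is regular — $b_1^\e\cdot e_N\le0\le b_2^\e\cdot e_N$ on $\{X^\e\in\H\}$. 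These $L^\infty$-bounded data $(b_1^\e,b_2^\e,\rho_1^\e,\rho_2^\e)$ generate, along a subsequence, a Young measure $\nu_s$ on $\overline{B(0,M)}^2\times[0,1]^2$ (exactly as in the proof of Lemma~\ref{lem:super.dpp}) whose barycentre reproduces $(\dot X,\dot D,\dot L)(s)$ and whose support at a.e.\ $s$ lies in $\B_1(X(s),t-s)\times\B_2(X(s),t-s)\times[0,1]^2$ by upper semicontinuity of the $\B_i$ together with $X^\e\to X$ uniformly. On $E$ one has $\dot X_N(s)=0$ a.e.\ by Stampacchia's theorem, so the limiting dynamic is tangential. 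The key point is that on $E$ the measure $\nu_s$ charges no ``singular'' configuration $\{\rho_1 b_1\cdot e_N>0,\ \rho_2 b_2\cdot e_N<0\}$: such a configuration would force $X^\e_N$ to drift away from $0$, contradicting $X^\e_N\to0$ uniformly near $E$; quantitatively, on every maximal interval where $X^\e_N$ keeps a constant sign its net variation is $o_\e(1)$, so the $\Omega_1$- and $\Omega_2$-fluxes of the normal component of $\dot X^\e$ through $E$ vanish in the limit while the $\H$-fluxes already carry the good sign. Decomposing $\nu_s$ according to the sign of $b_1\cdot e_N$ and taking conditional barycentres then writes $\dot X(s)=\rho_1(s)\bar b_1(s)+\rho_2(s)\bar b_2(s)$ with $\bar b_i(s)\in\B_i(X(s),t-s)$, $\rho_1(s)+\rho_2(s)=1$, $\bar b_1(s)\cdot e_N\le0\le\bar b_2(s)\cdot e_N$ and zero total normal component — precisely $\dot X(s)\in\B_T^{\reg}(X(s),t-s)$. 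The degenerate cases $\rho_1(s)=0$ or $\rho_2(s)=0$ are harmless: a tangential dynamic coming from a single side is automatically regular once completed by a tangential element of the other side, which exists by \NCoH. This proves $(X,T,D,L)\in\mT^{\reg}(x,t)$.

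\textbf{Main obstacle.} Everything except the sign control in Step 2 is routine: the compactness step, the closed-graph and convexity of the maps $\B_i$ (and of $\B_i\cap\{\pm\,b\cdot e_N\ge0\}$ near $\H$), and the non-emptiness provided by \NCoH. The delicate part is rigorously ruling out a residual singular (pull--pull) dynamic on $\H$ in the limit — i.e.\ proving that the ``singular'' part of the limiting Young measure has zero mass a.e.\ on $E$ — which is where the occupation-time identity $\int_{\{X^\e_N>0\}}\dot X^\e_N\,\d s=o_\e(1)$ (over intervals anchored at points of $E$) is the essential input, and where one must be careful that this sign information is inherited by the conditional barycentres of $\nu_s$.
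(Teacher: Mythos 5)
Your Step~1 is fine and matches the paper's Lemma~\ref{lem:extraction.simple}. Your Step~2 starts along the right lines (Filippov to represent each $X^\e$, Young-measure limits $\nu_1,\nu_2,\nu_\H$, Stampacchia to control the sign of the normal component --- this is essentially the paper's Lemma~\ref{lem:extrac.measures}), but the justification you give for the crucial sign control is incorrect, and you gloss over the hardest part.

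First, the claim that the limiting Young measure ``charges no singular configuration $\{\rho_1 b_1\cdot e_N>0,\ \rho_2 b_2\cdot e_N<0\}$'' is not right as a statement about $\nu_s$: at every fixed $\e$ and $s$, exactly one of the weights $\rho_1^\e,\rho_2^\e,\rho_\H^\e$ equals $1$ (since $X^\e(s)$ lies in exactly one of $\Omega_1,\Omega_2,\H$), so the Young measure is always supported on configurations where at most one weight is positive --- the set you name is never charged, which makes the argument vacuous rather than decisive. Moreover, your physical explanation (``such a configuration would force $X^\e_N$ to drift away from $0$'') does not apply: a pull--pull combination is \emph{tangential} and keeps the trajectory on $\H$, it does not push it away. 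What actually has to be shown is a statement about \emph{conditional barycentres}: the $\nu_1$-average $b_1^\sharp(s)$ and the $\nu_2$-average $b_2^\sharp(s)$ of the $\Omega_1$- and $\Omega_2$-contributions are tangential for a.e.\ $s$ with $X(s)\in\H$, and this is exactly what the weak limit of $\frac{d}{ds}[(X^\e_N)_+]\mathds{1}_{\{X\in\H\}}$ via Stampacchia establishes. Your ``occupation-time identity'' is in the right spirit, but the conclusion you want is about the sign of barycentres, not about what $\nu_s$ charges.

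Second, and more importantly, you dismiss the $\H$-contribution with ``the $\H$-fluxes already carry the good sign.'' This hides the genuinely delicate step. The barycentre $b_\H^\sharp(s)$ of the $\nu_\H$-part is a weak limit of dynamics $b_\H^\e(s)\in K(X^\e(s),t-s)$, where $K(z,\tau)$ denotes the closed convex set of regular tangential dynamics. To conclude that $b_\H^\sharp(s)\in K(X(s),t-s)$ one needs the Hausdorff-continuity of $(z,\tau)\mapsto K(z,\tau)$, a measurable selection argument replacing $a^\e(s)\in A_0^{\rm reg}(X^\e(s),t-s)$ by a comparable $\tilde a^\e(s)\in A_0^{\rm reg}(X(s),t-s)$, and then a separating-hyperplane argument to control the weak limit. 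This is precisely what the paper does in Proposition~\ref{prop:extraction.gen} via the set $\Esing$ and the affine separating functions $\Psi_s$ (Lemma~\ref{lem:extrac.Esing}), establishing $\dist(\dot X(s),K(X(s),t-s))=0$ for a.e.\ $s\in E$. Your proposal contains no analogue of this step; the claim that the $\H$-fluxes ``already'' have the good sign assumes what needs to be proved. Without it, you have shown regularity of the $\Omega_1$- and $\Omega_2$-parts of the barycentre representation but not of the $\H$-part, and hence you have not proved that $\dot X(s)\in\B_T^{\rm reg}(X(s),t-s)$ a.e.\ on $E$.
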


Though it may seem quite natural, this result is quite difficult to obtain. It is a direct corollary of
Proposition~\ref{prop:extraction.gen} (with constant $\BCL$ and initial data) which we prove in 
Subsection~\ref{subsec:extraction} below. We recall here that since $T(s)=t-s$, we just use
trajectories in the form $(X,D,L)$ instead of $(X,T,D,L)$.

Let us focus now on the immediate consequences:
\begin{corollary}\label{cor:VFP-ssDPP}
    Assume that all the $(b_i,c_i,l_i)$ satisfy \HBACP. Then, for any $(x,t)\in\R^N\times(0,\Tf)$,
    there exists a regular trajectory $(X,D,L)\in\mT^\reg(x,t)$ such that 
    \begin{equation}\label{eq.opt.uplus}
        \VFp (x,t) =  \int_0^t l \big(X (s),t-s,a(s)\big)e^{-D(s)} \ds +
    \u0 (X (t))e^{-D(t)}\; ,
    \end{equation}
    therefore there is an optimal trajectory. Moreover, the value function $\VFp$ satisfies the
    sub-optimality principle, i.e., for any $(x,t) \in \R^N\times [0,\Tf]$ and $0<\tau< t$, we have
    $$
    (\VFp)^*(x,t) \leq  \inf_{\mT^\reg(x,t)}\left\{\int_0^\tau l \big(X (s),t-s,a(s) \big)e^{-D(s)}
    \ds + ( \VFp)^*(X (\tau),t-\tau)e^{-D(\tau)}\right\}\;,
    $$
    and the super-optimality principle, i.e.
    $$
    (\VFp)_*(x,t) \geq  \inf_{\mT^\reg(x,t)}\left\{\int_0^\tau l \big(X (s),t-s,a(s)\big)e^{-D(s)}
    \ds + ( \VFp)_*(X (\tau),t-\tau)e^{-D(\tau)}\right\}\;.
    $$
\end{corollary}

Corollary~\ref{cor:VFP-ssDPP} provides slightly different (and maybe more direct) arguments to prove
that $\VFp$ is an Ishii solution of \eqref{pb:half-space} but it relies on the extraction of regular
trajectories, which is again a rather delicate result to prove.

\begin{proof}
    We just sketch things here since everything is a straightforward application of
    Lemma~\ref{lem:limit.reg.traj}. For the existence of an optimal trajectory, we consider
    $\eps$-optimal trajectories $(X^\eps,D^\eps,L^\eps)$, $i.e.$ trajectories which satisfy 
    $$
    \VFp (x,t) \leq  \int_0^t l \big(X^\eps (s),t-s,a^\eps(s)\big)e^{-D^\eps(s)} \ds + \u0 (X^\eps
    (t))e^{-D^\eps(t)}+\eps \; .
    $$
    By applying Ascoli's Theorem on the differential inclusion, we can assume without loss of
    generality that $(X^\eps,D^\eps,L^\eps)\to(X,D,L)$ in $C([0,t])$ and $\dot L^\eps\to \dot L$
    in $L^\infty$-weak$\star$, so that for some control $a(\cdot)$, we have
    $$
    \int_0^t l\big(X^\eps (s),t-s,a^\eps(s)\big)e^{-D^\eps(s)} \ds \to 
    \int_0^t l \big(X (s),t-s,a(s)\big)e^{-D(s)} \ds.
    $$
    Then, applying Lemma~\ref{lem:limit.reg.traj} shows that
    $(X,D,L)$ is actually a regular trajectory and \eqref{eq.opt.uplus} holds.

    The proofs of the sub and super-optimality principle follow from similar arguments considering,
    for example, a sequence $(x_k,t_k)\to (x,t)$ such that $\VFp (x_k,t_k)\to  (\VFp)_*(x,t)$ and
    passing to the limit in an analogous way.
\end{proof}

\section{A Magical Lemma for $\VFp$}
\index{Magical Lemma!for $\VFp$}

Now we turn a key result in the proof that $\VFp$ is the maximal Ishii solution of \eqref{pb:half-space}.
\begin{theorem}\label{teo:condplus.VFp}\emph{--- A Magical Lemma for $\VFp$.}\smsp
    Assume that the ``standard assumptions in the codimension-$1$ case'' are satisfied.  Let $\phi\in
    C^1\big(\H\times[0,\Tf]\big)$ and suppose that $(x,t)\in\H\times(0,\Tf)$ is a
    local minimum point of $(z,s) \mapsto (\VFp)_* (z,s)-\phi (z,s)$ in
    $\H\times[0,\Tf]$.  Then the following alternative holds  \\[2mm] 
    {\bf A)} either there exist $\eta >0$, $i\in\{1,2\}$ and a control $\alpha_i (\cdot)$
	such that the associated trajectory $(X,D,L)$ satisfies $X(s) \in \Omegb_i $ with
	$\dot X(s) = b_i(X(s), t-s, \alpha_i (s))$ for all $s \in ]0,\eta]$
	and
	\begin{equation}
	    (\VFp)_*(x,t) \geq \int_0^{\eta} l_i(X(s),t-s,
	    \alpha_i(s))e^{-D(s)} \ds + (\VFp)_* (X(\eta),t-\eta)e^{-D(\eta)}\,;
	\end{equation} 
    {\bf B)} or  the following viscosity inequality holds
       	\begin{equation}\label{condhyperSUP.VFp}
	    \partial_t \phi(x,t)+\HTreg \big(x,t, (\VFp)_*(x,t), D_\H\phi(x,t)\big) \geq 0.
	\end{equation}
\end{theorem}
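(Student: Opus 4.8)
The argument is a dichotomy based on the behaviour, near the initial time $s=0$, of an optimal regular trajectory issued from $(x,t)$, and it parallels the study of $\VFm$ in the previous section. First I would perform the usual reductions: by the change $u\mapsto\exp(Kt)u$ with $K$ large, one may assume $c\geq0$ for every $(b,c,l)\in\BCL(y,s)$, and one normalises so that $(\VFp)_*(x,t)=\phi(x,t)$; since $(x,t)$ is a local minimum point of $(\VFp)_*-\phi$ on $\H\times[0,T]$, this gives $(\VFp)_*(z,s)\geq\phi(z,s)$ for $(z,s)\in\H$ near $(x,t)$. Next, combining the super-optimality principle for $(\VFp)_*$ (Corollary~\ref{cor:VFP-ssDPP}) with the stability of regular trajectories (Lemma~\ref{lem:limit.reg.traj}) --- extracting from a family of near-optimal regular trajectories and passing to the limit, which keeps the limit in $\mT^\reg(x,t)$ thanks to the lower semicontinuity of $(\VFp)_*$ and the weak convergence of the controls --- I would produce a regular trajectory $(X,T,D,L)\in\mT^\reg(x,t)$ such that, for all small $\tau>0$,
$$(\VFp)_*(x,t)\geq\int_0^\tau l\big(X(s),t-s,a(s)\big)e^{-D(s)}\ds+(\VFp)_*\big(X(\tau),t-\tau\big)e^{-D(\tau)}\,.$$

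Now the dichotomy is carried on the normal component $s\mapsto X_N(s)$ for $s$ near $0$. \textbf{First alternative:} there exist $\eta>0$ and $i\in\{1,2\}$ such that $X(s)\in\bar\Omega_i$ on $[0,\eta]$ and $\dot X(s)=b_i(X(s),t-s,\alpha_i(s))$ on $(0,\eta]$ for the control $\alpha_i$ driving this piece of the trajectory --- this covers both the case where the trajectory immediately enters $\Omega_i$ and the case where it rides along $\H$ with a purely $b_i$, tangential, dynamic. Then the inequality displayed above is exactly conclusion~\textbf{A)}. \textbf{Second alternative:} otherwise, one must show that the trajectory can be taken to stay on $\H$, with a genuinely regular tangential dynamic $(b,c,l)(s)\in\BCL_T^\reg(X(s),t-s)$, on some interval $[0,\eta]$; it is here that normal controllability \NCoH and Lemma~\ref{bon-bcl} are needed, both to organise a trajectory which touches and leaves $\H$ near $s=0$ and to realise regular tangential dynamics through Lipschitz families. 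In this ``tangential'' case I would insert $\phi$ into the inequality above, use $(\VFp)_*\geq\phi$ near $(x,t)$ to replace $(\VFp)_*(X(\tau),t-\tau)$ by $\phi(X(\tau),t-\tau)$, expand $\phi$ along the trajectory as in \eqref{expansion.phi}, and --- using that $-b(s)\cdot D_\H\phi+c(s)\phi-l(s)\leq\HTreg\big(X(s),t-s,\phi(X(s),t-s),D_\H\phi(X(s),t-s)\big)$ since $(b,c,l)(s)\in\BCL_T^\reg(X(s),t-s)$ --- obtain
$$0\leq\int_0^\tau\Big(\partial_t\phi\big(X(s),t-s\big)+\HTreg\big(X(s),t-s,\phi(X(s),t-s),D_\H\phi(X(s),t-s)\big)\Big)e^{-D(s)}\ds\,.$$
Dividing by $\tau$, letting $\tau\to0$, and using the continuity of $\phi$, $D_\H\phi$, $\partial_t\phi$ together with the upper semicontinuity of $\HTreg$ (a consequence of the upper semicontinuity of $\BCL$), one concludes $\partial_t\phi(x,t)+\HTreg\big(x,t,(\VFp)_*(x,t),D_\H\phi(x,t)\big)\geq0$, that is, conclusion~\textbf{B)}.

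The main obstacle is precisely the case analysis in the second alternative: an optimal regular trajectory may approach and leave $\H$ infinitely often as $s\to0^+$, so that on no interval $[0,\eta]$ is it globally contained in some $\bar\Omega_i$ with pure $b_i$ dynamics, nor globally on $\H$. Reorganising such a trajectory into one of the two clean alternatives above is what requires combining the structure and stability of regular trajectories (Lemma~\ref{lem:limit.reg.traj}), the normal controllability \NCoH, and a further extraction argument. This is the analogue, for $\VFp$, of the ``magic lemma'' used in the codimension-$1$ theory, and it is exactly what makes $\HTreg$ --- rather than the full tangential Hamiltonian $\HT$ --- appear: a singular tangential dynamic, which is allowed in the definition of $\VFm$ but excluded from that of $\VFp$, never occurs along a regular trajectory.
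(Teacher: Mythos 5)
Your structure is correct up to the point where it matters most, and then the central argument is missing. The reductions ($c\geq 0$, normalizing $(\VFp)_*(x,t)=\phi(x,t)$, producing a single optimal regular trajectory via Corollary~\ref{cor:VFP-ssDPP} and Lemma~\ref{lem:limit.reg.traj}), the identification of alternative~\textbf{A)}, and the final step of plugging $\phi$ into the dynamic programming inequality and dividing by the time horizon to get~\eqref{condhyperSUP.VFp}, are all as in the paper. The gap is the second alternative. You write that ``one must show that the trajectory can be taken to stay on $\H$, with a genuinely regular tangential dynamic $(b,c,l)(s)\in\BCL_T^\reg$, on some interval $[0,\eta]$,'' and you acknowledge that the real obstacle is an optimal trajectory that touches and leaves $\H$ infinitely often as $s\to0^+$, so that it is on \emph{no} interval $[0,\eta]$ globally in $\H$ nor globally in $\bar\Omega_i$. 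But you offer no mechanism to ``reorganise'' such a trajectory into one that lives on $\H$, and no such reorganisation is performed in the paper; indeed it is far from clear that it is possible while preserving the sub-optimality estimate, since the state visited and the cost paid during the excursions genuinely matter.

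The paper avoids this by \emph{not} modifying the trajectory. In the oscillating case it picks $\eta_k\to0$ with $X(\eta_k)\in\H$, uses the minimum-point property of $(\VFp)_*-\phi$ on $\H$ to turn the super-optimality inequality into
$\int_0^{\eta_k}A[\phi](s)\ds\geq 0$ with $A[\phi]$ the integrand you wrote, and then decomposes $(0,\eta_k)$ into $\mE_\H=\{s:X(s)\in\H\}$ and $\mE_i=\{s:X(s)\in\Omega_i\}$ ($i=1,2$). On $\mE_\H$ the dynamic is already regular tangential, so the integrand is bounded by
$\partial_t\phi+\HTreg$, which is strictly negative by the contradiction hypothesis. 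On each maximal excursion interval $(a_{i,k},b_{i,k})\subset\mE_i$ (an open set) the trajectory leaves and returns to $\H$, hence
$\int_{a_{i,k}}^{b_{i,k}}e_N\cdot b_i(X(s),t-s,a(s))\ds=0$;
combining this cancellation with the Lipschitz regularity of $(b_i,c_i,l_i)$, the \emph{convexity} of the images of $\BCL_i$ to replace the time-average by a single constant control $a^\flat_{i,k}$ whose normal velocity is $O(\eta_k)$, and then normal controllability \NCoH{} to perturb it to $a^\sharp_{i,k}$ with \emph{exactly} zero normal component, one obtains that the average dynamic over $(a_{i,k},b_{i,k})$ is regular-tangential up to a $O(\eta_k)$ error. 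This makes $\int_{a_{i,k}}^{b_{i,k}}A[\phi]\ds$ bounded by
$(b_{i,k}-a_{i,k})\big(\partial_t\phi+\HTreg+O(\eta_k)\big)<0$
for $\eta_k$ small, and summing the three contributions yields $\int_0^{\eta_k}A[\phi]\ds<0$, the desired contradiction. This ``averaging over excursions'' is what replaces the hand-waved reorganisation in your proposal, and it is precisely what makes $\HTreg$, rather than $\HT$, appear. Lemma~\ref{bon-bcl} and Lemma~\ref{lem:limit.reg.traj}, which you invoke, do not by themselves supply this step.
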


\begin{proof}
Using the result and the proof of Corollary~\ref{cor:VFP-ssDPP}, for any $0<\eta<t$, 
there exists a regular trajectory $X$ and a control $a$ such that
$$(\VFp)_* (x,t) \geq  \int_0^\eta l \big(X (s),t-s,a (s) \big) e^{-D(s)}\ds +
    (\VFp)_* (X (\eta),t-\eta)e^{-D(\eta)} \;.$$
Indeed, for any $\eta$ the infimum in the sub-optimality principle is achieved. 
Now there are two cases:
    \begin{enumerate}
    \item[$(i)$] Either there exists $\eta >0$ and $i\in\{1,2\}$ such that $X(s) \in \Omegb_i $ with
	$\dot X(s) = b_i(X(s), t-s, \alpha_i (s))$ for all $s \in ]0,\eta]$, from which {\bf A)} follows.
    \item[$(ii)$] Or this is not the case, which means that there exists a sequence
    $(\eta_k)_k$ converging to $0$ such that $\eta_k >0$ and $X(\eta_k) \in \H$. 
    \end{enumerate}

    In this second case, 
    $$(\VFp)_* (x,t) \geq  \int_0^{\eta_k} l \big(X (s),t-s,a (s) \big)e^{-D(s)} \ds +
    (\VFp)_* (X (\eta_k),t-\eta_k) e^{-D(\eta_k)}\;,$$
    and, assuming w.l.o.g that $\phi (x,t)=(\VFp)_* (x,t)$,  
    the minimum point property on $\H$ yields
    $$
	    \phi(x,t) \geq  \int_0^{\eta_k} l \big(X (s),t-s,a (s) \big)e^{-D(s)} \ds +
        \phi (X (\eta_k),t-\eta_k)e^{-D(\eta_k)} \;. 
    $$
    Using the notation $\xi_s=(X(s),t-s)$, we rewrite this inequality as
    $$\begin{aligned} 
        & \int_0^{\eta_k} A[\phi](s)\ds\geq0\;,\quad\text{where}\\
         A[\phi](s):= & \Big(\phi_t (\xi_s) - \dot X(s) \cdot D_x \phi(\xi_s)
        + c\big(\xi_s,a(s)\big)\phi(\xi_s) - l \big(\xi_s,a (s) \big)\Big) e^{-D(s)}
        \;.
    \end{aligned}
    $$
    In order to prove {\bf B)}, we argue by contradiction, assuming that
    \begin{equation}\label{ineq.contr.Uplus} 
	    \partial_t \phi(x,t)+\HTreg \big(x,t, (\VFp)_*(x,t), D_\H\phi(x,t)\big)<0\; ,
    \end{equation}
    and to get a contradiction we examine the sets $\E_i:=\{s\in(0,\eta_k) : X(s) \in \Omega_i\}$
    and $\E_\H:=\{s\in(0,\eta_k) : X(s) \in \H\}$.

    \smallskip

    \noindent\textbf{(a)} The case $\E_\H$ is easy: since $\dot X(s)=b_\H(X(s),t-s,a(s))$ a.e. if
    $X(s)\in\H$, by definition of $\HT^\reg$ as the supremum we get directly
    $$
        \int_0^{\eta_k} A[\phi](s)\1_{\{s\in \E_\H\}} \ds  \leq \int_0^{\eta_k} \Big\{\partial_t
        \phi(\xi_s)+\HTreg \big(\xi_s, (\VFp)_*(\xi_s), D_\H\phi(\xi_s)\big)\Big\}
        \1_{\{s\in \E_\H\}}\ds\;, 
    $$
    and this integral is stricly negative provided $\eta_k$ is small enough, thanks to
    \eqref{ineq.contr.Uplus} and the continuity of $\HT^\reg$.

    \smallskip

    \noindent\textbf{(b)} On the other hand, the sets $\E_i$ are open and therefore $\E_i=\cup_k
    (a_{i,k}, b_{i,k})$ with $a_{i,k}, b_{i,k}\in \H$.  On each interval $(a_{i,k}, b_{i,k})$,
    $\dot X(s) = b_i(X (s),t-s,\alpha_i (s) \big)$ and introducing the function $d(y)=|y_N|$, we
    have
    \begin{equation}\label{eq:normal.U.plus}
        0= d(X (b_{i,k}))-d(X (a_{i,k}))=\int_{a_{i,k}}^{b_{i,k}}\, e_N \cdot
        b_i(X (s),t-s,\alpha_i(s) \big)\ds\; .  
    \end{equation}
    By the regularity of $(b_i,c_i,l_i)$ with respect to $X(s)$ we have 
    $$
    \int_{a_{i,k}}^{b_{i,k}}\,  (b_i,c_i,l_i)\big(\xi_s,\alpha_i (s) \big)\ds
    =\int_{a_{i,k}}^{b_{i,k}}\,  (b_i,c_i,l_i)\big(x,t,\alpha_i (s)
    \big)\ds+O(\eta_k)(b_{i,k}-a_{i,k})\;.  
    $$
    Then, using the convexity of the images of $\BCL_i$, there exists a control $a^\flat_{i,k}$ such
    that 
    $$
    \int_{a_{i,k}}^{b_{i,k}}\,  (b_i,c_i,l_i)\big(\xi_s,a (s) \big)\ds =({b_{i,k}}-{a_{i,k}})\,
    (b_i,c_i,l_i)\big(x,t,\alpha^\flat_{i,k} \big)\ds+O(\eta_k)({b_{i,k}}-{a_{i,k}})\;, 
    $$
    and \eqref{eq:normal.U.plus} implies that $b_i\big(x,t,\alpha^\flat_{i,k} \big)\cdot
    e_N=O(\eta_k)$.  In terms of $\BCL$, this means we have a
    $(b^\flat_i,c^\flat_i,l^\flat_i)\in\BCL_i(x,t)$ such that $b^\flat_i\cdot e_N=O(\eta_k)$. 

    Using the normal controllabilty and regularity properties of $\BCL_i$, for $\eta_k$ small
    enough, there exists a $(b^\sharp_i,c^\sharp_i,l^\sharp_i)\in\BCL_i(x,t)$ which is
    $O(\eta_k)$-close to $(b^\flat_i,c^\flat_i,l^\flat_i)$ such that $b^\sharp_i\cdot e_N=0$.  
    This means that there exists a control $\alpha^\sharp_{i,k}\in A_i$ such that still
    $$
    \int_{a_{i,k}}^{b_{i,k}}\,  (b_i,c_i,l_i)\big(\xi_s,a (s)\big)\ds =({b_{i,k}}-{a_{i,k}})\,
    (b_i,c_i,l_i)\big(x,t,a^\sharp_{i,k} \big)\ds+O(\eta_k)({b_{i,k}}-{a_{i,k}}) 
    $$
    holds, and $b_i\big(x,t,a^\sharp_{i,k} \big)\cdot e_N=0$. 
    In other words, this specific control provides a regular dynamic.

    Hence, using the regularity of $\phi$, since $a^\sharp_{i,k}$ is regular we get
    $$
	\begin{aligned}
        \int_{a_{i,k}}^{b_{i,k}}&  A[\phi](s)\ds = 
        (b_{i,k}-a_{i,k})\Big\{\phi_t (x,t) - b_i(x,t,a^\sharp_{i,k}) \cdot D_x \phi(x,t)\\
        & \qquad\qquad\qquad +c(x,t,a^\sharp_{i,k})\phi(x,t)- l_i(x,t,a^\flat_{i,k}) + 
        O(\eta_k)\Big\}\;,\\[2mm]
        & \leq (b_{i,k}-a_{i,k})\Big\{ \partial_t \phi(x,t)+\HTreg \big(x,t, (\VFp)_*(x,t), 
        D_\H\phi(x,t) + O(\eta_k) \Big\}<0\; .
    \end{aligned}
    $$

    Therefore, for $\eta_k$ small enough, on each connected component of $\E_1$, $\E_2$ and on
    $\E_\H$, the integral is strictly negative and we get the desired contradiction.  
\end{proof}

\begin{remark}Notice that the alternative above with $\HTreg$ only holds
    for $\VFp$, and not for any arbitrary supersolution---see
    Theorem~\ref{thm:minimal.charac} where $\HT$ is used and not $\HTreg$.
\end{remark}

\section{Maximality of $\VFp$}

In order to prove that $\VFp$ is the maximal subsolution, we need the following result on
subsolutions 
\begin{lemma}\label{subsol-H}
    Assume that the ``standard assumptions in the codimension-$1$ case'' are satisfied.
    If $u:\R^N\times (0,\Tf)\to \R$ is an \usc subsolution of \eqref{pb:half-space}, then it
    satisfies 
    \begin{equation}\label{uH61}
        u_t+\HTreg (x,t,u,D_T u)\leq 0\quad \hbox{on  }\H\times (0,\Tf)\;.
    \end{equation}
\end{lemma}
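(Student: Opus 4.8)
The plan is to combine the regularity of subsolutions (Proposition~\ref{reg-sub}) with the construction of test functions ``biased'' towards one of the two half-spaces, in order to extract from the Ishii subsolution inequalities the two relevant one-sided inequalities. Recall that for $(x,t)\in\H\times(0,T)$, $u\in\R$ and $p'\in\R^{N-1}$ one sets $\Honemin(x,t,u,p'):=\min_{p_N\in\R}H_1(x,t,u,(p',p_N))$, and similarly $\Htwomin$ with $H_2$; by a standard minimax computation, using that the images of $\BCL_1,\BCL_2$ are convex compact and that \NCoH holds, one has $\HTreg=\max(\Honemin,\Htwomin)$ on $\H\times(0,T)$. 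Hence, if $\phi\in C^1(\H\times(0,T))$ and $(x,t)$ is a strict local maximum point of $u-\phi$ on $\H\times(0,T)$, normalized by $u(x,t)=\phi(x,t)$, it is enough to prove
$$\phi_t(x,t)+\Honemin(x,t,u(x,t),D_T\phi(x,t))\leq0\quad\text{and}\quad\phi_t(x,t)+\Htwomin(x,t,u(x,t),D_T\phi(x,t))\leq0\,,$$
since these two inequalities combine into $\phi_t+\HTreg\leq0$.

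First I would reduce to the case where $u$ is Lipschitz continuous: the ``standard assumptions in the codimension-$1$ case'' imply \TC\ and \NC\ for $H_1,H_2$ (the latter through \NCoH), and the structure of the equations ($b^t\equiv-1$) gives \Mong, so Proposition~\ref{reg-by-sc} provides, in a slightly smaller ball around $(x,t)$, a sequence of Lipschitz continuous subsolutions $u^\eta$ with $\limssup u^\eta=u$ as $\eta\to0$; the $\HTreg$-inequality being stable under this half-relaxed limit (by the arguments of Theorem~\ref{hrl}), it suffices to establish it for each $u^\eta$, i.e. we may assume $u$ is $L$-Lipschitz on the relevant ball. To get the first inequality, for $\lambda,\epsilon>0$ I would consider near $(x,t)$, writing $y=(y',y_N)$, the u.s.c. function
$$(y,s)\mapsto u(y,s)-\phi(y',s)-\frac{|y_N|^2}{\epsilon}+\lambda y_N\,,$$
which attains a local maximum at some $(y_\epsilon,s_\epsilon)$. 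Proposition~\ref{reg-sub} (giving that $u(x,t)$ is the $\limsup$ of $u$ along points approaching $(x,t)$ inside $\Omega_1$) together with the strict maximum property forces $(y_\epsilon,s_\epsilon)\to(x,t)$ and $u(y_\epsilon,s_\epsilon)\to u(x,t)$, while the penalization forces $y_{\epsilon,N}\to0$; moreover $y_N\mapsto-|y_N|^2/\epsilon+\lambda y_N$ is maximal at $y_N=\epsilon\lambda/2>0$, so choosing $\lambda$ larger than twice the Lipschitz constant of $u$ ensures $y_{\epsilon,N}>0$, i.e. $(y_\epsilon,s_\epsilon)\in\Omega_1\times(0,T)$, for $\epsilon$ small. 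Writing the $H_1$-subsolution inequality there with the smooth test function $\phi(y',s)+|y_N|^2/\epsilon-\lambda y_N$, whose normal derivative is $q_\epsilon:=2y_{\epsilon,N}/\epsilon-\lambda$, and using \NCoH (availability of controls with $b_1\cdot e_N$ of either sign) to show that $q_\epsilon$ remains bounded, I would pass to the limit and obtain $\phi_t(x,t)+H_1(x,t,u(x,t),(D_T\phi(x,t),q^*))\leq0$ for some $q^*\in\R$, hence a fortiori $\phi_t(x,t)+\Honemin(x,t,u(x,t),D_T\phi(x,t))\leq0$. The second inequality is obtained symmetrically, biasing the test function towards $\Omega_2$ (replacing $\lambda y_N$ by $-\lambda y_N$, with maximum point in $\Omega_2$), and the conclusion follows.

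The hard part is controlling the location of the maximum point $(y_\epsilon,s_\epsilon)$: an Ishii subsolution could, a priori, behave badly enough near $\H$ that the biased maximum still sits on $\H$, in which case the relaxed inequality only yields $\min(\phi_t+H_1,\phi_t+H_2)\leq0$ and one may fail to recover the wanted one-sided inequality. This is precisely why the preliminary Lipschitz regularization through Proposition~\ref{reg-by-sc} and a bias $\lambda$ dominating the Lipschitz constant are needed, and it is also where Proposition~\ref{reg-sub} is essential — both to drive the maximum point to $(x,t)$ and to exclude ``singular values'' of $u$ on $\H$. The remaining, more routine, point is the identity $\HTreg=\max(\Honemin,\Htwomin)$, which reduces the statement to two one-sided inequalities that are insensitive to convex combinations of the two dynamics and are therefore tractable by the biased-test-function argument above.
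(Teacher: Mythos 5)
Your reduction of \eqref{uH61} to the two one-sided inequalities $\phi_t+\Honemin\leq 0$ and $\phi_t+\Htwomin\leq 0$ does not work, because the identity $\HTreg=\max(\Honemin,\Htwomin)$ with $\Honemin:=\min_{p_N}H_1$, $\Htwomin:=\min_{p_N}H_2$ is false. The correct identity is \eqref{HT2}: $\HTreg(x,t,r,p')=\min_{s}\max\bigl(H_1^-(x,t,r,p'+se_N),H_2^+(x,t,r,p'+se_N)\bigr)$, in which the two Hamiltonians are coupled through a \emph{common} normal slope $s$; this is in general strictly larger than $\max(\min_s H_1,\min_s H_2)$. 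A concrete counterexample within the ``standard assumptions'': in dimension $1$ take $\BCL_1=\{(\alpha,0,0):|\alpha|\leq1\}$ and $\BCL_2=\{(\alpha,0,-10\alpha):|\alpha|\leq1\}$, so that $H_1(p)=|p|$ and $H_2(p)=|p-10|$. Then $\min_p H_1=\min_p H_2=0$, whereas the regular push-push combination $\alpha_1=-1$, $\alpha_2=1$, $\mu_1=\mu_2=1/2$ gives a tangential dynamic with cost $-5$, so $\HTreg=5$. Thus your two inequalities only yield $\phi_t\leq0$ where the lemma requires $\phi_t\leq-5$: even if the biased-test-function argument goes through, it proves a strictly weaker statement. (A secondary remark: the preliminary Lipschitz regularization via Proposition~\ref{reg-by-sc} is not needed here; the lemma can be proved directly for an usc Ishii subsolution.)

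The missing idea is that the normal slope of the test function must be tuned to the particular regular convex combination, not chosen to minimize each $H_i$ separately. The paper fixes $(b,c,l)=\mu_1(b_1,c_1,l_1)+\mu_2(b_2,c_2,l_2)\in\BCL_T^\reg(\xb,\tb)$ with (after a small perturbation allowed by \NCoH) $b_1\cdot e_N<0<b_2\cdot e_N$, sets $\mathcal{I}:=a-b\cdot p_T+cu(\xb,\tb)-l$ and introduces the affine functions $\psi_i(\delta):=a-b_i\cdot(p_T+\delta e_N)+c_iu(\xb,\tb)-l_i$; since $\psi_1$ is strictly increasing, $\psi_2$ strictly decreasing and $\mu_1\psi_1+\mu_2\psi_2\equiv\mathcal{I}$, assuming $\mathcal{I}>0$ one can pick $\bar\delta$ with $\psi_1(\bar\delta)=\psi_2(\bar\delta)=\mathcal{I}>0$. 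Testing $u$ against $\phi(x',t)+\bar\delta x_N+x_N^2/\e^2$ then produces approximating maximum points $(\xe,\te)$ at which \emph{whichever} of the two Ishii inequalities holds (the $H_1$-one forces $(\xe)_N\geq0$, so the penalization gradient $2(\xe)_N\e^{-2}e_N$ can be discarded using $b_1\cdot e_N<0$, and symmetrically for $H_2$) contradicts $\psi_i(\bar\delta)>0$. This single well-chosen slope $\bar\delta$ is what lets the argument see the convex combination — and hence the full $\HTreg$ — rather than each Hamiltonian's own minimizing slope. Your mechanism (penalization in $x_N$ plus Proposition~\ref{reg-sub} to drive the maximum point to the interface) is in the right spirit, but as set up it cannot recover \eqref{uH61}.
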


\begin{proof}
   Let $\phi$ be a $C^1$--test-function on $\H\times (0,\Tf)$. Using the decomposition of $x\in
    \R^N$ in $(x',x_N)$ with $x'\in \R^{N-1}$, we can assume that $\phi$ is just a function of $x'$
    and $t$, and we can see $\phi$ as a function defined in $\R^N \times (0,\Tf)$ as well.

    If $(\xb,\tb)\in \H\times (0,\Tf)$ is a strict local maximum point of $u(x,t)-\phi(x',t)$ on
    $\H\times (0,\Tf)$, we have to show that 
    $$
        \phi_t(\xb',\tb)+ \HTreg(\xb,\tb,u(\xb,\tb), D_T \phi(\xb,\tb))\leq 0\; ,
    $$ 
    where $D_T \phi (\xb,\tb)$ is nothing but $D_{x'} \phi (\xb',\tb)$ and we
    also identify it below with the vector $(D_{x'} \phi (\xb',\tb),0)$.
    So, setting $a=\phi_t(\xb',\tb)$ and $p_T =D_T \phi (\xb,\tb)$, we have to prove that
    for any $(b,c,l) \in \BCL_T^\reg(\xb,\tb)$,
    $$\mathcal{I}:=a-b\cdot p_T +c u(\xb,\tb)-l \leq 0 \;.$$

    By definition of $\BCL_T^\reg(\xb,\tb)$, we can write
    $$(b,c,l) =\mu_1 (b_1,c_1,l_1) +\mu_2 (b_2,c_2,l_2)\; ,$$
    with $b_1\cdot e_N\leq0\leq b_2\cdot e_N$ and $\mu_1+\mu_2=1$. 
    Using the normal controllability and an easy
    approximation argument, we can assume without loss of generality that $b_1\cdot e_N< 0 <
    b_2\cdot e_N$. Of course, even if we do not write it to have simpler notations, $(b_1,c_1,l_1)$
    is associated to a control $\alpha_1$ and $(b_2,c_2,l_2)$ to a control $\alpha_2$.

    For $i=1,2$, we consider the affine functions
    $$ \psi_i (\delta):= a-b_i\cdot (p_T+\delta e_N)+c_i u(\xb,\tb)-l_i \; .$$
    By the above properties we have: $(i)$ $\psi_1$ is strictly increasing; $(ii)$ $\psi_2$ is
    strictly decreasing; $(iii)$ $\mu_1\psi_1(\delta)+\mu_2\psi_2(\delta)=\mathcal{I}$, which is
    independent of $\delta$.

    We argue by contradiction, assuming that $\mathcal{I}>0$ and choose $\bar \delta$ such that
    $\psi_1(\bar \delta)=\psi_2(\bar \delta)$. Notice that this is possible due to the strict
    monotonicity properties and the fact that $\psi_1(\R)=\psi_2(\R)=\R$. 
    We have therefore $\psi_1(\bar \delta)=\psi_2(\bar \delta)=\mathcal{I}>0$.

    \smallskip

    Next, for $0<\e \ll 1$, we consider the function
    $$ (x,t) \mapsto u(x,t)-\phi(x',t)-\bar \delta x_N -\frac{x_N^2}{\e^2}\;,$$
    defined in $\R^N\times(0,\Tf)$.
    Since $(\xb,\tb)$ is a strict local maximum point of $u-\phi$ on $\H\times (0,\Tf)$, there
    exists a sequence $(\xe,\te)$ of local maximum point of this function which converges to
    $(\xb,\tb)$, with $u(\xe,\te)$ converging to $u(\xb,\tb)$. 

    Our aim is to show that none of the $H_1$ or $H_2$ viscosity inequality holds for $u$ on $\H$,
    which will contradict the fact that $u$ is a viscosity subsolution. Assume for instance that
    the the $H_1$-inequality holds. Then $(\xe)_N \geq 0$ and by the regularity of $\phi$, 
    $$
    a-b_1(\xe,\te,\alpha_1)\cdot (p_T +\bar \delta e_N
    +\frac{2(\xe)_N}{\e^2}e_N)+c_1(\xe,\te,\alpha_1) u(\xb,\tb)-l_1(\xe,\te,\alpha_1) \leq  o_\e
    (1)\; .  
    $$
    But since $(\xe,\te)\to (\xb,\tb)$, $b_1(\xe,\te,\alpha_1)\to b_1(\xb,\tb,\alpha_1)$ and
    therefore $b_1(\xe,\te,\alpha_1)\cdot e_N < 0$ for $\e$ small enough. Using that
    $(\xe)_N \geq 0$, this inequality implies 
    $$
        a-b_1(\xe,\te,\alpha_1)\cdot (p_T +\bar \delta e_N)+c_1(\xe,\te,\alpha_1)
        u(\xb,\tb)-l_1(\xe,\te,\alpha_1) \leq  o_\e (1)\;. 
    $$
    By the definition and properties of $\bar \delta$ and the fact that $\mathcal{I}>0$, this
    inequality cannot hold for $\e$ small enough, showing that the $H_1$ inequality cannot hold
    neither.  A similar argument being valid for the $H_2$ inequality, we have a contradiction and
    therefore $\mathcal{I}\leq 0$, and the proof is finished. 
\end{proof}

\begin{theorem}\label{thm:Vp-max}\emph{--- Maximality of $\VFp$.}\smsp 
    Assume that the ``standard assumptions in the codimension-$1$ case'' are satisfied. Then 
	$\VFp$ is continuous and it is the maximal Ishii solution of \eqref{pb:half-space}.
\end{theorem}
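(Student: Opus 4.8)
The strategy mirrors closely the proof of Theorem~\ref{thm:minimal.charac} for $\VFm$, but with $\HT$ replaced by $\HTreg$ throughout and with the roles of sub and supersolutions exchanged at the key step: here we must show that \emph{any} Ishii subsolution $u$ lies below $\VFp$. The continuity of $\VFp$ will come for free at the end, once we have the two-sided inequality $\VFm\leq\VFp$ together with $(\VFp)^*\leq\VFm$ (hence all these functions coincide and $\VFp$ is continuous, being squeezed between its usc and lsc envelopes).

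First I would reduce, exactly as in Step~1 of the proof of Theorem~\ref{thm:minimal.charac}, to a local comparison result \LCR: via the change $u\mapsto\exp(Kt)u$ we may assume $c_i\geq0$, hence $H_1,H_2$ and $\HTreg$ are nondecreasing in $u$; then the affine-in-$x$, affine-in-$t$ perturbation $\psi(x,t)=-\delta(1+|x|^2)^{1/2}-\delta^{-1}(1+t)$ provides a strict subsolution (for $H_1$, $H_2$ \emph{and} the $\HTreg$-equation on $\H$, using Lemma~\ref{subsol-H}), and the convex combination $u_\mu=\mu u+(1-\mu)\psi$ is an $\eta$-strict subsolution satisfying \LOCa-evol; checking \LOCb-evol is routine. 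So it suffices to compare $u_\mu$ and $\VFp$ in a small cylinder $\cyl$ around any $(x,t)\in\H$ (the case $x\in\Omega_1$ or $\Omega_2$ being the standard interior comparison). Next, using \HConv, \NC, \TC and \Monu for the $H_i$, I would regularize $u_\mu$ by the inf-convolution plus mollification of Proposition~\ref{C1-reg-by-sc}, obtaining $u_{\mu,\eps}\in C^0(\cylb)\cap C^1(\mathcal{M})$ which are $(\eta/2)$-strict subsolutions of \eqref{pb:half-space} on a slightly smaller cylinder, with $\limssup u_{\mu,\eps}=u_\mu$. By Lemma~\ref{subsol-H} each $u_{\mu,\eps}$ then satisfies $(u_{\mu,\eps})_t+\HTreg(\cdot,u_{\mu,\eps},D_T u_{\mu,\eps})\leq-\eta/2<0$ on $\mathcal{M}=(\H\times[0,T])\cap\overline{\cyl}$ in the classical sense, and by Theorem~\ref{thm:sub.Qk.dpp.bt} it satisfies an $(\eta/2)$-strict subdynamic programming principle in $\cyl[\mathcal{M}^c]$ (all $b^t=-1$ here, so the obstacle-free version applies).

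The heart of the argument is the analogue of Lemma~\ref{lem:comp.fundamental} adapted to $\VFp$, which is precisely Remark~\ref{rem:comp.fundamental}: the supersolution is now $v:=\VFp$, and we use Theorem~\ref{teo:condplus.VFp} in place of the generic super-DPP. Concretely, assuming $(u_{\mu,\eps}-\VFp)$ attains its max over $\cylb$ at an interior point $(\xb,\tb)$, the case $(\xb,\tb)\notin\H$ gives a contradiction through the super-optimality principle for $\VFp$ (Corollary~\ref{cor:VFP-ssDPP}) combined with the strict subdynamic principle. If $(\xb,\tb)\in\H$, I apply Theorem~\ref{teo:condplus.VFp} to $(\VFp)_*$ with the test-function built from $u_{\mu,\eps}$ (which is $C^1$ on $\mathcal{M}$): alternative \textbf{A)} produces a trajectory staying in some $\bar\Omega_i$ on which $u_{\mu,\eps}$ satisfies its strict subsolution-DPP, again contradicting maximality; alternative \textbf{B)} gives $\partial_t\phi(\xb,\tb)+\HTreg(\xb,\tb,(\VFp)_*,D_\H\phi)\geq0$, which contradicts the strict inequality $(u_{\mu,\eps})_t+\HTreg(\cdot,u_{\mu,\eps},D_T u_{\mu,\eps})\leq-\eta/2<0$ at the max point. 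Hence $\max_{\cylb}(u_{\mu,\eps}-\VFp)$ is attained on $\partial_p\cyl$, which is the desired \LCR; letting $\eps\to0$, then invoking Proposition~\ref{reducCR-evol} to pass from \LCR to \GCR, and finally sending $\mu\to1$, yields $u\leq\VFp$ on $\R^N\times[0,T]$.

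Combining this with Proposition~\ref{prop:ishii-Up} (which shows $\VFp$ is itself an Ishii solution) gives that $\VFp$ is the maximal Ishii solution. The main obstacle is the delicate bookkeeping in the $\H$-case of the fundamental lemma: one must verify that the dichotomy of Theorem~\ref{teo:condplus.VFp} genuinely plugs into the strict-inequality machinery, in particular that alternative \textbf{A)} allows us to ``start over'' at an interior point $(X(\eta),T(\eta))$ and apply the interior strict subdynamic principle there — which requires choosing $\eta$ small enough that the whole trajectory arc stays in $\cyl[\mathcal{M}^c]$, exactly as in Case~\textbf{A} of the proof of Lemma~\ref{lem:comp.fundamental}. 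Everything else is a transcription of arguments already in place. The continuity of $\VFp$ then follows since $\VFm\leq\VFp$ while $(\VFp)^*$, being an Ishii subsolution, satisfies $(\VFp)^*\leq\VFm$ by what we just proved; hence $\VFm=\VFp=(\VFp)^*=(\VFp)_*$ is continuous.
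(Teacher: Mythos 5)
Your main argument --- the reduction to \LCR-evol, the regularization of $u_\mu$ via Proposition~\ref{C1-reg-by-sc}, the use of Lemma~\ref{subsol-H} to turn a strict Ishii subsolution into a strict $\HTreg$-subsolution, the strict subdynamic programming principle from Theorem~\ref{thm:sub.Qk.dpp.bt}, and the appeal to Theorem~\ref{teo:condplus.VFp} through the mechanism of Remark~\ref{rem:comp.fundamental} --- is exactly the paper's route and is carried out correctly. (One small phrasing remark: the roles of sub and supersolutions are \emph{not} exchanged relative to Theorem~\ref{thm:minimal.charac}; in both proofs one compares a subsolution against a supersolution-type value function, the difference being that here the supersolution is $(\VFp)_*$ and the subsolution inequality used on $\H$ is the automatic $\HTreg$-one rather than the stronger $H_T$-one.)

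There is, however, a genuine error in the final continuity step. You claim that ``$(\VFp)^*$, being an Ishii subsolution, satisfies $(\VFp)^*\leq\VFm$ by what we just proved; hence $\VFm=\VFp=(\VFp)^*=(\VFp)_*$.'' What you actually proved is that every Ishii subsolution $u$ satisfies $u\leq(\VFp)_*$, not $u\leq\VFm$. The latter is simply false: a general Ishii subsolution need not lie below $\VFm$ (that only holds for subsolutions satisfying the additional $H_T$-inequality of Theorem~\ref{thm:minimal.charac}), and indeed the conclusion $\VFm=\VFp$ that you derive is false in general --- the one-dimensional example in the subsection ``The One Dimensional Case'' exhibits $\VFm(0,t)=0<1-e^{-t}=\VFp(0,t)$. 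The correct finishing move, which is what the paper does, is to take $u:=(\VFp)^*$ (a legitimate Ishii subsolution) in the inequality you just established, obtaining $(\VFp)^*\leq(\VFp)_*$; since the reverse inequality is automatic, $\VFp$ is continuous, and maximality is then immediate from the comparison together with Proposition~\ref{prop:ishii-Up}. Everything else in your argument is sound.
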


\begin{proof}
    Let $u$ be any subsolution of \eqref{pb:half-space}. We want to show that $u\leq (\VFp)_*$ in
    $\R^N \times [0,\Tf)$ and to do so we first notice that, as we did in the proof of the
    characterization of $\VFm$ (Theorem~\ref{thm:minimal.charac}), we can reduce the proof to a
    local comparison argument since \LOCa and \LOCb are satisfied. So, let $\cyl$ be a cylinder
    in which we want to perform the \LCR between $u$ and $(\VFp)_*$.

    Using again the arguments of the proof of Theorem~\ref{thm:minimal.charac}, we may assume
    without loss of generality that $u$ is a strict subsolution of \eqref{pb:half-space} and in
    particular a strict subsolution of \eqref{eq:HTregineqsub}. Finally we can regularize $u$ in
    order that it is $C^1$ on $\H\times (0,\Tf)$.

    Using Theorem~\ref{thm:sub.Qk.dpp.bt} to show that $u$ satisfies a sub-dynamic programming
    principle with trajectories in $\mT(x,t)$, we see that we are (almost) in the framework of
    Lemma~\ref{lem:comp.fundamental}, the usual $\F^{\mathcal{M}}$-inequality for $u$ being replaced
    by \eqref{uH61}.

    Using in an essential way Theorem~\ref{teo:condplus.VFp}\footnote{which replaces the arguments
    for the supersolution $v$ in the proof of Lemma~\ref{lem:comp.fundamental} (cf.
    Remark~\ref{rem:comp.fundamental}).}, it is easy to see that the result of
    Lemma~\ref{lem:comp.fundamental} still holds in this slightly different framework and yields
	$$
    \max_{\cylb}(u-(\VFp)_*)\leq\max_{\partial\cyl}(u-(\VFp)_*)\;,
    $$
	and the \GCR follows: $u\leq(\VFp)_*$ in $\R^N\times[0,\Tf]$.

    Concerning the continuity statement, consider $u=(\VFp)^*$. By definition,
    $(\VFp)^*\geq(\VFp)_*$ but the comparison result above applied to $(\VFp)^*$ which is a
    subsolution shows that in the end $\VFp=(\VFp)_*=(\VFp)^*$. Hence $\VFp$ is continuous and is
    maximal amongst Ishii subsolutions.  
\end{proof}

\section{Appendix: stability of regular trajectories}
\label{subsec:extraction}

This appendix is about proving the convergence property of regular trajectories,
Lemma~\ref{lem:limit.reg.traj}. We actually prove a more general result here:
\begin{proposition}\label{prop:extraction.gen}
	Let $t>0$ be fixed and for each $\eps>0$ let $\BCL^\eps$ be a set-valued map satisfying \HBCLa
	and let $(X,D,L)^\eps$ be solution of 
	the differential inclusion
	$$\forall s\in(0,t)\;,\quad (X,D,L)^\eps(s)\in\BCL^\eps(X^\eps(s),t-s)\;.$$
	\begin{enumerate}
		\item[$(i)$] If $\BCL^\eps$ converges to $\BCL$ locally uniformly in $\R^N\times(0,t)$ 
		(for the Hausdorff distance on sets) and $(X,D,L)^\eps(0)\to (x,d,l)$, then,
		up to extraction, $(X,D,L)^\eps$ converges to some trajectory $(X,D,L)$ which satisfies 
	$$\forall s\in(0,t)\;,\quad (X,D,L)(s)\in\BCL(X(s),t-s)\,$$
		with initial value $(X,D,L)(0)=(x,d,l)$.
		\item[$(ii)$] If moreover each trajectory $X^\eps$ is regular,
        then the limit trajectory $X$ is also regular.
	\end{enumerate}
\end{proposition}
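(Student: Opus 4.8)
The plan is to treat the two parts in order, part $(i)$ being the routine compactness step and part $(ii)$ the genuine difficulty.

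\textbf{Part $(i)$.} By \HBCLa-$(ii)$ each $(X,T,D,L)^\eps$ is $M$-Lipschitz on $[0,t]$, so together with the convergence of the initial values the Ascoli--Arzel\`a theorem yields a subsequence along which $(X,T,D,L)^\eps$ converges uniformly on $[0,t]$ to a Lipschitz limit $(X,T,D,L)$ with $(X,T,D,L)(0)=(x,t,d,l)$, and (up to a further extraction) $(\dot X,\dot T,\dot D,\dot L)^\eps \to (\dot X,\dot T,\dot D,\dot L)$ in the $L^\infty$ weak-$*$ topology. To pass to the limit in the inclusion, fix a Lebesgue point $s$ of $(\dot X,\dot T,\dot D,\dot L)$; for small $h>0$ and every $\eps$, the average $\tfrac1h\int_s^{s+h}(\dot X,\dot T,\dot D,\dot L)^\eps(\sigma)\d\sigma$ lies in the convex compact set $\cob\bigcup_{\sigma\in[s,s+h]}\BCL^\eps\big(X^\eps(\sigma),t-\sigma\big)$. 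Letting $\eps\to 0$ (using the local uniform convergence $\BCL^\eps\to\BCL$ for the Hausdorff distance and the uniform convergence $X^\eps\to X$) and then $h\to 0$ (using the upper semicontinuity of $\BCL$ and the continuity of $X$) gives $(\dot X,\dot T,\dot D,\dot L)(s)\in\BCL(X(s),t-s)$ for a.e.\ $s$. This is the classical convergence theorem for differential inclusions, cf.\ \cite{MR755330,AF}, and one may alternatively simply invoke it.

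\textbf{Part $(ii)$.} We now work in the codimension-$1$ framework of Lemma~\ref{lem:limit.reg.traj}, so \HBACP and \NCoH are available, and we must show that the limit trajectory is regular, i.e.\ that for a.e.\ $s$ with $X(s)\in\H$ the dynamic $\dot X(s)$ admits a \emph{push--push} representation $\dot X(s)=\mu_1 b_1+\mu_2 b_2$, $b_1\cdot e_N\leq 0\leq b_2\cdot e_N$. First, on $E:=\{s\in(0,t):X(s)\in\H\}$ we have $X_N\equiv 0$, hence $\dot X_N=0$ a.e.\ on $E$ by Stampacchia's theorem, so $\dot X(s)$ is tangential there; the real task is to produce the correct decomposition. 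Fix such an $s$ (a Lebesgue point as above). For small $h$ the limit trajectory, and hence $X^\eps$ for $\eps$ small, stays close to $(X(s),t-s)$ on $[s-h,s+h]$. Split $[s-h,s+h]$ according to whether $X^\eps(\sigma)$ lies in $\Omega_1$, in $\Omega_2$ or on $\H$: on the $\H$-part the regularity of $X^\eps$ already gives, a.e., a push--push decomposition of $\dot X^\eps$, whose average over that part is again push--push; on each maximal subinterval where $X^\eps\in\Omega_1$ the two endpoints carry a normal component of size $O(h)+o_\eps(1)$ (being either $0$, by continuity of $X_N^\eps$, or close to $X_N(s)=0$ near $s\pm h$), so the average of $\dot X^\eps=b_1^\eps$ over that subinterval has normal component $O(h)+o_\eps(1)$; by convexity of the images of $\BCL_1$ and the uniform continuity of $(b_1,c_1,l_1)$ this average is, up to $o_\eps(1)+O(h)$, an element of $\BCL_1(X(s),t-s)$ whose $e_N$-component is $O(h)$, which by \NCoH can be corrected to have zero $e_N$-component; likewise on the $\Omega_2$-side. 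Forming the convex combination of all these contributions, weighted by the (normalised) Lebesgue measures of the corresponding pieces, and letting first $\eps\to 0$ and then $h\to 0$, one identifies $\dot X(s)$ as a convex combination of tangential dynamics from $\BCL_1$ and $\BCL_2$ with the correct sign pattern, i.e.\ as a regular tangential dynamic. This averaging computation is of exactly the same nature as the one performed on the sets $\E_i$ in the proof of Theorem~\ref{teo:condplus.VFp}.

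\textbf{Main obstacle.} The whole difficulty lies in part $(ii)$: one must make sure that in the limit the normal components coming from the ``interior'' excursions of $X^\eps$ into $\Omega_1$ and $\Omega_2$ wash out, and that after the \NCoH correction each surviving piece still carries a dynamic of the expected sign, so that the limiting decomposition is genuinely push--push and not pull--pull. This is precisely the reason why singular (pull--pull) behaviour is \emph{not} created in the limit of regular trajectories, and it is what makes the statement non-trivial in spite of its intuitive plausibility.
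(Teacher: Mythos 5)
Your part $(i)$ is correct and is essentially the paper's Lemma~\ref{lem:extraction.simple}. For part $(ii)$ you replace the paper's global argument — the Young-measure type decomposition of Lemma~\ref{lem:extrac.measures} followed by the separating-hyperplane estimate $I(\eta)\leq 0$ of Lemma~\ref{lem:extrac.Esing} — by a local averaging at a Lebesgue point, in the spirit of the proof of Theorem~\ref{teo:condplus.VFp}. That route is viable in principle, but the key estimate is wrong as you state it, and the missing ingredient is exactly the one the paper isolates in property $(c)$ of Lemma~\ref{lem:extrac.measures}.

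Concretely, on a maximal subinterval $(a_k,b_k)$ of $E_1^\eps:=\{X^\eps\in\Omega_1\}\cap[s-h,s+h]$ the \emph{integral} of the normal component is the telescoping difference $X_N^\eps(b_k)-X_N^\eps(a_k)$: it is exactly $0$ when both endpoints are interior (hence on $\H$), and of size $(X_N)_+(s\pm h)+o_\eps(1)$ for the at most two boundary subintervals. Dividing by the length of a subinterval — which may be arbitrarily small, or comparable to $h$ — does \emph{not} give an average with normal component $O(h)+o_\eps(1)$; the normalized average of $b_1^\eps$ over $E_1^\eps$ has normal component of order $\big((X_N)_+(s\pm h)+o_\eps(1)\big)/|E_1^\eps|$, which with only the Lipschitz bound $(X_N)_+(s\pm h)\leq Mh$ is $O(1)$ when $|E_1^\eps|\sim h$. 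An $O(1)$ normal component cannot be ``corrected'' by \NCoH while staying close to the original element, and if it survives with the wrong sign the limiting decomposition is precisely pull--pull, i.e.\ your argument would not exclude a singular limit — which is the whole point. To close the gap you must upgrade $O(h)$ to $o(h)$: at a.e.\ $s\in\{X_N=0\}$ the function $(X_N)_+$ is differentiable with zero derivative (Stampacchia applied to the limit), so $(X_N)_+(s\pm h)=o(h)$; then either $|E_1^\eps|/h$ stays bounded away from $0$ and the normalized normal component tends to $0$ (so \NCoH applies and produces a tangential $b_1^\sharp\in\B_1(X(s),t-s)$), or the weight of the $\Omega_1$-block vanishes. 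This is the localized form of the weak convergence $\dot X_N^\eps\,\mathds{1}_{\{X^\eps\in\Omega_1\}}\mathds{1}_{\{X\in\H\}}\rightharpoonup 0$ on which the paper's proof rests. A secondary point: the claim that the average of the push--push dynamics over the $\H$-part remains push--push uses the convexity \emph{and} the Hausdorff-continuity of $(z,\tau)\mapsto K(z,\tau)$ (the points $(X^\eps(\sigma),t-\sigma)$ vary), and when the $\BCL^\eps$ genuinely depend on $\eps$ one must also relate regular dynamics for $\BCL^\eps$ to regular dynamics for $\BCL$; both should be stated explicitly.
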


This result is obtained through several lemmas. The first one proves part $(i)$ of the proposition,
which is not very difficult.  
\begin{lemma}\label{lem:extraction.simple}
	If $\BCL^\eps$ converges to $\BCL$ locally uniformly in $\R^N\times(0,t)$ 
	(for the Hausdorff distance on sets) and $(X,D,L)^\eps(0)\to (x,d,l)$, then
	up to extraction, $(X,D,L)^\eps$ converges to some trajectory $(X,D,L)$ which is a solution of 
	the differential inclusion associated with $\BCL$, with the corresponding initialization.
\end{lemma}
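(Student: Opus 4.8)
The plan is to run the classical compactness argument for differential inclusions. First I would establish uniform estimates: by \HBCLa the set-valued maps $\BCL^\eps$ are uniformly bounded on compact subsets of $\R^N\times(0,t)$ for $\eps$ small (thanks to the local uniform convergence $\BCL^\eps\to\BCL$ and the global bound $M$ on $\BCL$), so the derivatives of $Z^\eps:=(X,T,D,L)^\eps$ are bounded by a constant $\bar M$ independent of $\eps$ as long as $Z^\eps$ stays in a fixed compact set; since $Z^\eps(0)\to(x,t,d,l)$ is bounded, a standard continuation argument confines all the $Z^\eps$, on $[0,t]$, to a common compact set $\mathcal K$ on which they are $\bar M$-Lipschitz. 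I would then invoke Ascoli's Theorem to extract a subsequence along which $Z^\eps\to Z$ uniformly on $[0,t]$, and, $\dot Z^\eps$ being bounded in $L^\infty(0,t)$, extract further so that $\dot Z^\eps\rightharpoonup \dot Z$ weakly-$*$ in $L^\infty(0,t)$ (the weak limit being the derivative of $Z$ by uniqueness of distributional limits). Passing to the limit in the initial values gives $Z(0)=(x,t,d,l)$, the announced initialization.

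The core of the proof is to check that $\dot Z(s)\in\BCL(X(s),t-s)$ for a.e. $s\in(0,t)$. Since $\BCL(X(s),t-s)$ is closed and convex, it is the intersection of the half-spaces containing it, so it suffices to prove, for each $\xi$ in a countable dense subset of $\R^{N+3}$,
$$\langle \xi,\dot Z(s)\rangle\leq \sup_{\zeta\in\BCL(X(s),t-s)}\langle\xi,\zeta\rangle\qquad\hbox{for a.e. }s\,,$$
the full inclusion then following by continuity in $\xi$ of support functions. Fixing $\xi$ and starting from $\langle\xi,\dot Z^\eps(s)\rangle\leq \sup_{\zeta\in\BCL^\eps(X^\eps(s),t-s)}\langle\xi,\zeta\rangle$, I would combine the Hausdorff convergence $\BCL^\eps\to\BCL$ with the upper semicontinuity of $\BCL$ (together with $X^\eps(s)\to X(s)$) to obtain, for every $s$, $\limsup_\eps \sup_{\zeta\in\BCL^\eps(X^\eps(s),t-s)}\langle\xi,\zeta\rangle\leq \sup_{\zeta\in\BCL(X(s),t-s)}\langle\xi,\zeta\rangle$, all these quantities being bounded by $\bar M|\xi|$. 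Then, for any $[a,b]\subset[0,t]$, the weak-$*$ convergence and the reverse Fatou lemma yield
\begin{align*}
\int_a^b\langle\xi,\dot Z(s)\rangle\,\ds&=\lim_\eps\int_a^b\langle\xi,\dot Z^\eps(s)\rangle\,\ds\\
&\leq\limsup_\eps\int_a^b\sup_{\zeta\in\BCL^\eps(X^\eps(s),t-s)}\langle\xi,\zeta\rangle\,\ds\leq\int_a^b\sup_{\zeta\in\BCL(X(s),t-s)}\langle\xi,\zeta\rangle\,\ds\,,
\end{align*}
and since this holds for every subinterval, the desired pointwise inequality follows.

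The step I expect to be the main obstacle is precisely this last passage to the limit inside the inclusion: one must keep in mind that $\BCL$ is only upper semicontinuous --- not continuous --- so only the ``$\subset$'' half of a Hausdorff-type estimate survives in the limit, and the convexity of the images of $\BCL$ is what lets one recover the inclusion from the half-space inequalities. All the remaining ingredients (the uniform bounds, the Ascoli extraction, the weak-$*$ compactness, and the measurability of the support-function integrands) are routine. One should also bear in mind that part $(ii)$ of Proposition~\ref{prop:extraction.gen}, about the regularity of the limit trajectory, is substantially more delicate and will require the separate arguments developed in the following lemmas rather than this soft compactness reasoning.
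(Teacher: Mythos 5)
Your proof is correct and takes essentially the same route as the paper: equi-Lipschitz bounds from \HBCLa, an Ascoli extraction, and then passing to the limit in the inclusion using the local Hausdorff convergence of $\BCL^\eps$ together with the upper semicontinuity and convexity of $\BCL$. The paper's own proof is very terse at the last step (it states the estimate $\BCL^\eps(X^\eps(s),t-s)\subset\BCL(X(s),t-s)+\kappa B_{N+3}$ and then simply says ``passing to the limit''); you have supplied the standard way to make that step rigorous, namely extracting $\dot Z^\eps\rightharpoonup\dot Z$ weakly-$*$, reducing the inclusion to support-function inequalities over a countable dense set of directions, and closing via reverse Fatou. This is the right mechanism and nothing is missing; the only redundancy is the ``continuation argument'' to stay in a compact set, which is unnecessary here since \HBCLa already gives a global bound $M$ on $\BCL$, but it does no harm.
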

\begin{proof}
    Notice first that since the $\BCL^\eps$ all satisfy \HBCLa with constants independent of $\eps$,
    and the initial value converges, the trajectories $(X,D,L)^\eps$ are equi-Lipschitz and
    equi-bounded on $[0,t]$. Hence we can extract a subsequence $(X,D,L)^\epsn$ converging to 
    $(X,D,L)$ uniformly on $[0,t]$. Moreover, for any $\kappa>0$ small enough, if $n$ is big
    enough we have 
    $$\forall s\in(0,t)\;,\quad \BCL^\eps(X^\epsn(s),t-s)\subset \BCL(X(s),t-s)+\kappa B_{N+3}$$ 
    where $B_{N+3}$ is the unit ball of $\R^{N+3}$. Passing to the limit as $\epsn\to0$,
    we deduce that $(X,D,L)$ satisfies the differential inclusion associated with $\BCL$, and of
    course its initial data is $(X,D,L)(0)=(x,d,l)$.
\end{proof}

Now we need several results in order to prove part $(ii)$ which is much more involved.
Before proceeding, let us comment a little bit: using the control representation of 
the differential inclusion (Lemma~\ref{lem:struc.traj}), there exist some controls $\alpha_i^\eps,a^\eps$
such that
	$$
		 \dot X^{\eps}(s) =  \sum_{i=1,2} b_i^{\eps}\big(X^{\eps}(s), 
		 t-s,\alpha_i^{\eps}(s)\big)    
		 \mathds{1}_{\{X^{\eps}\in\Omega_{i}\}}(s)+ b_\H^{\eps} 
		 \big(X^{\eps}(s),t-s,a^{\eps}(s)\big){}
		 \mathds{1}_{\{X^{\eps}\in\H\}}(s)\;.
	$$
Recall that the control $a^\eps$ is actually complex since it involves $\alpha_1^\eps,\alpha_2^\eps$
but also $\alpha_0^\eps$.  In other words, $b_\H$ is a mix of $b_0,b_1,b_2$ with weights
$\mu_0^\eps,\mu_1^\eps,\mu_2^\eps$.  However, notice that focusing on regular dynamics, the
$b_0$-term is not a problem since it is already tangential (hence, regular).

In order to send $\eps\to0$ we face two difficulties: the first one is that we have to deal 
with weak convergences in the $b_i^{\eps}, b_\H^{\eps}$-terms. But the problem 
is increased by the fact that some pieces of the limit trajectory $X(\cdot)$ on $\H$ 
can be obtained as limits of trajectories $X^{\eps}(\cdot)$ which lie either on 
$\H$, $\Omega_{1}$ or $\Omega_{2}$. In other words, the indicator functions 
$\mathds{1}_{\{X^{\eps}\in\H\}}(\cdot)$ do not necessarily converge to 
$\mathds{1}_{\{X\in\H\}}(\cdot)$, and similarly the
$\mathds{1}_{\{X^{\eps}\in\Omega_{i}\}}(\cdot)$ do not converge to 
$\mathds{1}_{\{X\in\Omega_{i}\}}(\cdot)$. 

From Lemma \ref{lem:extraction.simple} we already know that $\dot X^\eps$ converges weakly on
$(0,t)$ to some $\dot X$ which can be represented as for $X^\eps$ above, by means of some controls
$(\alpha_1,\alpha_2,a)$.  The question is to prove that this control $a$ yields regular dynamics on
$\H$. In order to to do, we introduce several tools. The first one is a representation of $X$ by
means of some regular controls $(\alpha_1^\sharp,\alpha_2^\sharp,a^\sharp)$.  Those controls may
differ from $(\alpha_1,\alpha_2,a)$, but they are an intermediate step which will help us to prove
the final result.

\begin{lemma}\label{lem:extrac.measures}
    For any $s\in(0,t)$ there exists three measures $\nu_1(s,\cdot),\nu_2(s,\cdot),\nu_\H(s,\cdot)$
    on $A_1,A_2,A$ respectively and three controls
    $(\alpha_1^\sharp(s),\alpha_2^\sharp(s),a^\sharp(s))\in A_1\times A_2\times A$ such that
	\begin{enumerate}
		\item[$(a)$] $\nu_1,\nu_2,\nu_\H\geq0$,\ $\nu_1(s,A_1)+\nu_2(s,A_2)+\nu_\H(s,A)=1$\,;
		\item[$(b)$] up to extraction, 
            $b_1^\eps(X^\eps(s),t-s,\alpha_1^\eps)\to b_1(X(s),t-s,\alpha_1^\sharp(s))\cdot
            \nu_i(s,A_1)$\;,\\ and the same holds for $b_2,b_\H$ with measures $\nu_2,\nu_H$ and
            controls $\alpha^\sharp_2,\alpha^\sharp_\H$\,;
        \item[$(c)$] for $i=1,2$, $b_i(X(s),t-s,\alpha^\sharp_i(s))\cdot e_N=0 \quad
            \nu_i\text{-a.e. on }\{X(s)\in\H\}\;.$ 
    \end{enumerate}
	In particular, the dynamic obtained by using $(\alpha_0,\alpha_1^\sharp,\alpha_2^\sharp)$ is regular.
\end{lemma}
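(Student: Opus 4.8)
The strategy is a standard weak-compactness / Young-measure argument adapted to the three regions $\Omega_1,\Omega_2,\H$. First I would fix $s\in(0,t)$ and recall from Lemma~\ref{lem:extraction.simple} that, up to a subsequence $\epsn$, $(X,T,D,L)^\epsn\to(X,T,D,L)$ uniformly on $[0,t]$ while $\dot X^\epsn\rightharpoonup \dot X$ in $L^\infty$ weak-$*$. The first step is to keep track separately of the three pieces of the dynamic: write
$$\dot X^\epsn(s)=\beta_1^\epsn(s)\ind{X^\epsn(s)\in\Omega_1}+\beta_2^\epsn(s)\ind{X^\epsn(s)\in\Omega_2}+\beta_\H^\epsn(s)\ind{X^\epsn(s)\in\H}\,,$$
with $\beta_i^\epsn(s)=b_i^\epsn(X^\epsn(s),t-s,\alpha_i^\epsn(s))$ and $\beta_\H^\epsn(s)=b_\H^\epsn(X^\epsn(s),t-s,a^\epsn(s))$. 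All these are bounded by $M$ by \HBCLa-$(ii)$, and the indicator functions $\ind{X^\epsn\in\Omega_i}$, $\ind{X^\epsn\in\H}$ are bounded in $L^\infty$; so each product $\beta_i^\epsn\ind{X^\epsn\in\Omega_i}$ and $\beta_\H^\epsn\ind{X^\epsn\in\H}$ converges weakly-$*$, after further extraction, to some limits which by the Fundamental Theorem on Young measures can be represented via families of probability measures on the (compact) control sets $A_1,A_2,A$. This is how the measures $\nu_1,\nu_2,\nu_\H$ in $(a)$ and the representation in $(b)$ arise: $\nu_i(s,A_i)$ is the weak-$*$ limit of $\ind{X^\epsn(s)\in\Omega_i}$ (resp. $\ind{X^\epsn(s)\in\H}$ for $\nu_\H$), so they are nonnegative and sum to $1$ since the three indicators partition, giving $(a)$.

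For step $(b)$ the point is to replace a full barycentre of a Young measure by a single control: since each $b_i(X(s),t-s,\cdot)$ is continuous on the compact $A_i$ and (by the convexity assumption \HBACP-$(ii)$, carried over here, on the images of $(b_i,c_i,l_i)$) the set $\{(b_i,c_i,l_i)(X(s),t-s,\alpha_i):\alpha_i\in A_i\}$ is convex compact, the averaged vector $\int_{A_i} (b_i,c_i,l_i)(X(s),t-s,\alpha_i)\,d\nu_i(s,\alpha_i)/\nu_i(s,A_i)$ lies in that set and hence equals $(b_i,c_i,l_i)(X(s),t-s,\alpha_i^\sharp(s))$ for some $\alpha_i^\sharp(s)\in A_i$ (a measurable selection, justified by the measurable-selection theorem as in the proof of Lemma~\ref{lem:super.dpp}). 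Doing the same for the $\H$-part with the extended control space $A$ produces $a^\sharp(s)$. I would also need to pass the convergence $b_i^\epsn\to b_i$: since $\BCL^\epsn\to\BCL$ locally uniformly (Hausdorff) and $X^\epsn\to X$ uniformly, the $b_i^\epsn(X^\epsn(s),\cdot,\cdot)$ are uniformly close to $b_i(X(s),\cdot,\cdot)$, so no discrepancy between the two set-valued maps survives in the limit.

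The main obstacle is step $(c)$: showing that, $\nu_i$-almost everywhere on the set $\{X(s)\in\H\}$, the selected dynamic is tangential, i.e. $b_i(X(s),t-s,\alpha_i^\sharp(s))\cdot e_N=0$. The difficulty, as the text stresses, is that $\ind{X^\epsn\in\Omega_i}$ need not converge to $\ind{X\in\Omega_i}$: pieces of $X$ sitting on $\H$ may be limits of $X^\epsn$ wandering in $\Omega_1$ or $\Omega_2$. The plan is to use the scalar function $y^\epsn(s):=X_N^\epsn(s)=d(X^\epsn(s))$. On the (open) set $\{X(s)\in\H\}$ we have $y(s)=0$, hence $\dot y(s)=0$ a.e.\ there by Stampacchia's theorem (exactly as at the end of the proof of Lemma~\ref{lem:struc.traj}). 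But $\dot y(s)$ is the weak-$*$ limit of $\dot y^\epsn(s)=e_N\cdot\dot X^\epsn(s)$, whose limit, by the representation in $(b)$, is
$$e_N\cdot\Big(b_1(X(s),t-s,\alpha_1^\sharp(s))\,\nu_1(s,A_1)+b_2(X(s),t-s,\alpha_2^\sharp(s))\,\nu_2(s,A_2)+b_\H(X(s),t-s,a^\sharp(s))\,\nu_\H(s,A)\Big)\,.$$
Now on $\{X(s)\in\H\}$ the $b_\H$-term already satisfies $b_\H\cdot e_N=0$ (it is a convex combination of $b_0$, which is tangential, and of $b_1,b_2$, which we handle next). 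So one is left with $\nu_1(s,A_1)\,(b_1\cdot e_N)+\nu_2(s,A_2)\,(b_2\cdot e_N)=0$ where $b_i=b_i(X(s),t-s,\alpha_i^\sharp(s))$. To upgrade this single scalar identity to the pointwise statement $(c)$, I would test against variable weights: repeat the argument on every measurable subset of $\{X(s)\in\H\}$, or equivalently integrate the identity $\dot y=0$ against arbitrary $L^\infty$ test functions localized where $\nu_1$ (resp. $\nu_2$) is ``active'', using that $y^\epsn\to y$ in $C^0$ forces $\int \dot y^\epsn\varphi\to\int\dot y\varphi=0$ for all $\varphi$ supported in $\{X(s)\in\H\}$. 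Because the Young-measure representation is pointwise in $s$ and the two contributions $\nu_1(b_1\cdot e_N)$ and $\nu_2(b_2\cdot e_N)$ are separately measurable, vanishing of their sum against all localizations yields that each of them, where the corresponding $\nu_i$ charges $A_i$, must vanish — giving $b_i(X(s),t-s,\alpha_i^\sharp(s))\cdot e_N=0$ $\nu_i$-a.e.\ on $\{X(s)\in\H\}$, which is $(c)$.

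Finally, the last sentence of the lemma is immediate from $(a)$–$(c)$: writing the $\H$-dynamic of the limit trajectory as $b_\H(X(s),t-s,a^\sharp(s))$ with weights coming from $\nu_1,\nu_2,\nu_\H$ and the (already tangential) $b_0$-component, property $(c)$ says the $b_1$- and $b_2$-pieces are tangential with the correct (zero) normal component, so this dynamic is of the form $\mu_1 b_1+\mu_2 b_2$ (plus a tangential $b_0$ part) with $b_i\cdot e_N=0$, hence in particular regular in the sense of the definition preceding this subsection. I expect steps $(a)$ and $(b)$ to be essentially bookkeeping on top of the standard Young-measure / measurable-selection machinery already used in Section~\ref{Gen-DCP}; the real content, and the only delicate point, is the Stampacchia-type argument of $(c)$ and its localization to recover a pointwise-in-control conclusion rather than a single averaged identity.
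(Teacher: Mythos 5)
Your steps $(a)$ and $(b)$ are in substance what the paper does: the delta-measures $\nu_i^\eps(s,\cdot)=\delta_{\alpha_i^\eps(s)}\ind{X^\eps\in\Omega_i}(s)$ converge weakly to $\nu_i$ with total masses summing to $1$, and the barycentre $\int_{A_i}(b_i,c_i,l_i)(X(s),\cdot,\alpha)\,\nu_i(s,d\alpha)$ is replaced by a single control $\alpha_i^\sharp(s)$ via convexity of the image of $\BCL_i$ and a measurable selection. There is no gap there.

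Your step $(c)$, however, has a genuine gap. Working with $y^\eps=X_N^\eps$ and Stampacchia on the limit $y$, you correctly obtain the \emph{combined} identity
\[
\nu_1(s,A_1)\,\big(b_1(X(s),t-s,\alpha_1^\sharp(s))\cdot e_N\big)+\nu_2(s,A_2)\,\big(b_2(X(s),t-s,\alpha_2^\sharp(s))\cdot e_N\big)=0\quad\text{a.e. on }\{X(s)\in\H\}\,.
\]
But this identity is already pointwise in $s$; testing it against further $L^\infty$ localizations in $s$ cannot separate the two summands. If $f(s)+g(s)=0$ a.e. on a set $E$, it does not follow that $f=g=0$ a.e. on $E$, and indeed here the two contributions can perfectly well cancel (e.g.\ $b_1^\sharp\cdot e_N<0<b_2^\sharp\cdot e_N$ with compensating weights $\nu_1(s,A_1),\nu_2(s,A_2)$). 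That configuration is exactly what a \emph{singular} ``pull--pull'' dynamic on $\H$ looks like, so the combined identity alone is not sufficient to conclude regularity.

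The missing idea, which is the key trick in the paper, is to run the Stampacchia argument on $(X_N^\eps)_+=\max(X_N^\eps,0)$ rather than on $X_N^\eps$ itself. Since $\frac{d}{ds}\big[(X_N^\eps)_+\big]=\dot X_N^\eps\,\ind{X^\eps\in\Omega_1}$ a.e., this \emph{isolates} the $\Omega_1$-piece of the dynamic with no $\Omega_2$-contamination. Passing to the $L^\infty$ weak-$*$ limit of $\frac{d}{ds}\big[(X_N^\eps)_+\big]\ind{X\in\H}$ and using that $(X_N)_+\equiv 0$ on $\{X\in\H\}$ then gives
$\big(b_1(X(s),t-s,\alpha_1^\sharp(s))\cdot e_N\big)\,\nu_1(s,A_1)=0$ a.e. on $\{X(s)\in\H\}$ directly, with no cancellation to worry about. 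The same argument applied to $(X_N^\eps)_-$ gives the corresponding statement for $b_2$. You would need to replace your localization hand-wave by this nonlinear (one-sided) truncation of $X_N^\eps$ for the proof of $(c)$ to go through.
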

\begin{proof}
	We use a slight modification of the procedure leading to relaxed control 
	as follows. We write
	$$
	b_1^{\eps}\big(X^{\eps}(s),t-s,\alpha^{\eps}_1(s)\big) 
		 \mathds{1}_{\{X^{\eps}\in\Omega_{1}\}}(s)=
		 \int_{A_{1}}b_1^{\eps} 
		 \big(X^{\eps}(s),t-s,\alpha\big)\,
		 \nu_{1}^{\eps}(s,\dalpha)\;,
	$$
	where $\nu_{1}^{\eps}(s,\cdot)$ stands for the measure defined on 
	$A_{1}$ by	$\nu_{1}^{\eps}(s,E)=\delta_{\alpha_1^{\eps}}(E)
	\mathds{1}_{\{X^{\eps}\in\Omega_{1}\}}(s)$, for any Borelian set 
	$E\subset A_{1}$. Similarly we define $\nu_{2}^{\eps}$ and 
	$\nu_{\H}^{\eps}$ for the other terms. Notice that $\nu_{\H}^{\eps}$ is 
	a bit more complex measure since it concerns controls of the form 
	$a=(\alpha_{1},\alpha_{2},\mu)$ on $A$, but it works as for $\nu_{1}^\eps$ so we omit 
	the details. 
	
    Note that, for any $s$, $\nu^{\eps}_{1}(s,A_1)+\nu^{\eps}_{2}(s,A_2)+\nu^{\eps}_{\H}(s,A)=1$ and
    therefore the measures $\nu_{1}^{\eps}(s,\cdot), \nu_{2}^{\eps}(s,\cdot),
    \nu_{\H}^{\eps}(s,\cdot)$ are uniformly bounded in $\eps$. Up to successive extractions of
    subsequences, they all converge in $L^\infty(0,\Tf;E)$ weak-$*$ (where $E=A_1,A_2,A$) 
    to some measures $\nu_{1}$, $\nu_{2}$, $\nu_{\H}$. Since moreover
    the total mass is $1$, we obtain in the limit $\nu_{1}(s,A_1)+\nu_{2}(s,A_2)+\nu_{\H}(s,A)=1$.  

    Using that up to extraction $X^{\eps}$ converges uniformly on $[0,t]$, using the local uniform
    convergence of the $b_{1}^{\eps}$, we get that
	$$
		\int_{A_{1}}b_1^{\eps} 
		 \big(X^{\eps}(s),t-s,\alpha\big)\,
		 \nu_{1}^{\eps}(s,\dalpha)
		 \ \mathop{\longrightarrow}_{\eps\to 0} \ 
		\int_{A_{1}}b_1 
		 \big(X(s),t-s,\alpha\big)\,
		 \nu_{1}(s,\dalpha),
	$$
    weakly in  $L^\infty(0,\Tf)$. Introducing $\pi_{1}(s):= \int_{A_1}  \nu_{1}(s,\dalpha)$ and
    using the convexity of $A_{1}$ together with a measurable selection argument (see \cite[Theorem
    8.1.3]{AF}), the last integral can be written as
    $b_{1}\big(X(s),\sigma(s),\alpha_{1}^{\sharp}(s)\big)\pi_{1}(s)$ for some control
	$\alpha^{\sharp}_{1}\in L^{\infty}(0,\Tf;A_{1})$. 
	The same procedure for the 
	other two  terms provides the controls $\alpha^{\sharp}_{2}(\cdot)$, 
	$a^{\sharp}(\cdot)$ and functions $\pi_{2}(\cdot),\ \pi_{\H}(\cdot)$, which yields $(a)$ and $(b)$.

	We now turn to property $(c)$ that we prove for $b_1$, the proof being identical for $b_2$.
	Since $(X_N^\eps)_+:=\max(X_N^\eps,0)$ is a sequence of Lipschitz 
	continuous functions which converges uniformly to $(X_N)_+$ on $[0,t]$, up to 
	an additional extraction of subsequence, we may assume that the derivatives converge
	weakly in $L^\infty$ (weak--$*$ convergence). As a consequence,
	$\frac{d}{ds}\big[(X_N^\eps)_+\big]\mathds{1}_{\{X\in\H\}}$ converges 
	weakly to 
	$\frac{d}{ds}\big[(X_N)_+\big]\mathds{1}_{\{X\in\H\}}$.
	
	By Stampacchia's  Theorem  we have 
	$$
	\frac{d}{ds}\big[(X_N^\eps)_+ \big]=\dot X^{\eps}_N(s)\,
		\mathds{1}_{\{X^{\eps}\in\Omega_1\}}(s)  \quad \mbox{ for almost all } s \in (0,t).
	$$
	Therefore, the above convergence reads, in $L^{\infty}(0,\Tf)\text{weak--}*$
	$$ 
		\dot X^{\eps}_N(s)
		\mathds{1}_{\{X^{\eps}\in\Omega_1\}}(s)\mathds{1}_{\{X\in\H\}}(s)	
		\longrightarrow
		\dot X_N(s)\mathds{1}_{\{X\in\Omega_1\}}(s) 
		\mathds{1}_{\{X\in\H\}}(s)=0\;.
	$$
	Using the expression of $\dot X^{\eps}(s)$, 
	$\big(b_1^{\eps}\big(X^{\eps}(s), 
	t-s,\alpha_1^{\eps}(s)\big)\cdot e_N\big)\mathds{1}_{\{X^{\eps}\in\Omega_1\}}(s)
	\mathds{1}_{\{X\in\H\}}(s) \rightarrow 0$ in $L^{\infty}(0,\Tf)\text{ 
	weak--}*$ which implies that
	\begin{equation}\label{eq:regu.dyn}
        \Big(b_1 \big(X(s),t-s,\alpha_1^{\sharp}(s)\big)\cdot e_N \Big)\,\pi_{i}(s)=0\; \hbox{ a.e.
        on  }\{X(s)\in \H\}\;, 
    \end{equation}
    which yields property $(c)$. This means that $b_i(X(s),t-s,\alpha_i^\sharp(s))$ is tangential on
    $\H$ so that combining them with some $b_0$ (which is tangential by definition), we get a
    regular dynamic on $\H$.
\end{proof}

We now want to prove that the controls $(\alpha_1,\alpha_2,a)$ yield regular strategies, not only 
the $(\alpha_1^\sharp,\alpha_2^\sharp,a^\sharp)$.
In order to proceed we introduce the set of regular dynamics: 
$$\forall (z,s)\in\H\times[0,t]\;,\quad K(z,s):=\big\{b_{\H}\big(z,s,a_{*}\big)\;, 
a_{*}\in \Aoreg(z,s)\big\}\subset\R^{N}\;.$$
We notice that, for any $z\in\H$ and $s\in[0,\Tf]$, $K(z,s)$ is closed 
and convex, and the mapping $(z,s)\mapsto K(z,s)$ is continuous 
on $\H$ for the Hausdorff distance. 
Then, for any $\eta>0$, we consider the subset of $[0,t]$ 
consisting of times $s$ for which one has singular ($\eta$-enough) dynamics for 
the control $a(\cdot)$, namely
$$\begin{aligned}
	\Esing&:=\bigg\{s\in [0,t]: X(s)\in\H\text{ and } 
	\dist\Big(b_{\H}\big(X(s),t-s,a(s)\big);K\big(X(s),t-s\big)\Big)\geq\eta \bigg\}\;.
	\end{aligned}
$$
If $s\in\Esing\neq\emptyset$, since $K(X(s),t-s)$ is closed and convex, 
there exists an hyperplane separating $b_{\H}\big(X(s),t-s,a(s)\big)$ from 
$K(X(s),t-s)$ and we can construct an affine function $\Psi_{s}:\R^{N}\to\R$ of the form
$\Psi_s(z)=\zeta_1(s)z+\zeta_2(s)$ such that
$$
	\Psi_{s}\bigg(b_{\H}\big(X(s),t-s,a(s)\big)\bigg)\geq 1 \; \hbox{if  }s\in \Esing\;,\quad
	\Psi_{s}\leq 0 \text{ on }K\big(X(s),t-s\big) \;.
$$
In other words, $\Psi_s$ ``counts'' the singular dynamics.

Since the mapping $s\mapsto b_{\H}\big(X(s),t-s,a(s)\big)$ is measurable and 
$s\mapsto K\big(X(s),t-s\big)$ is continuous, we can assume that 
$s\mapsto \zeta_1(s),\zeta_2(s)$ are measurable and bounded (because the distance $\eta>0$ 
is fixed), which allows to define the quantity
	$$
	I(\eta):=\begin{cases}
        \displaystyle\int_{0}^{t}\big(\Psi_{s}(\dot X(s)\big)\mathds{1}_{\Esing}(s)\ds & 
        \text{if }\Esing\neq\emptyset\\ 0 & \text{if }\Esing=\emptyset\;.
	\end{cases}
	$$
By definition, it is clear that $I(\eta)\geq|\Esing|$ (the Lebesgue measure of $\Esing$).
The following result gives a converse estimate

\begin{lemma}\label{lem:extrac.Esing}
	For any $\eta>0$, $I(\eta)\leq0$.
\end{lemma}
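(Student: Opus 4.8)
The plan is to pass to the limit $\eps\to0$ in the integral defining $I(\eta)$ and to show, using the decomposition of $\dot X$ inherited from the relaxed controls together with the fact that every $X^\eps$ is regular, that the integrand $\Psi_s(\dot X(s))$ is nonpositive a.e. on $\Esing$. If $\Esing=\emptyset$ there is nothing to prove, so assume $\Esing\neq\emptyset$; by Lemma~\ref{lem:extraction.simple} and the equi-Lipschitz bounds coming from \HBCLa\ we may also assume, along a subsequence, that $X^\eps\to X$ uniformly on $[0,t]$ and $\dot X^\eps\rightharpoonup\dot X$ in $L^\infty(0,t)$ weak-$*$.

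\emph{Step 1 (decomposition of $\dot X$ on $\Esing$).} On $\{X(s)\in\H\}$, hence a.e. on $\Esing$, Lemma~\ref{lem:extrac.measures} provides the representation
\[
\dot X(s)=b_1\big(X(s),t-s,\alpha_1^\sharp(s)\big)\,\pi_1(s)+b_2\big(X(s),t-s,\alpha_2^\sharp(s)\big)\,\pi_2(s)+b_\H\big(X(s),t-s,a^\sharp(s)\big)\,\pi_\H(s),
\]
with $\pi_i(s):=\nu_i(s,A_i)\geq0$ and $\pi_1(s)+\pi_2(s)+\pi_\H(s)=1$.

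\emph{Step 2 (each direction is regular).} I would then check that each of the three dynamics appearing above belongs to $K(X(s),t-s)$ at a.e.\ $s\in\Esing$ for which the corresponding weight is positive. For $i=1,2$ this is exactly property $(c)$ of Lemma~\ref{lem:extrac.measures}: $b_i(X(s),t-s,\alpha_i^\sharp(s))\cdot e_N=0$ a.e.\ on $\{X(s)\in\H\}\cap\{\pi_i>0\}$, and a tangential dynamic issued from $\B_i$ is a \emph{regular} tangential dynamic (it is the trivial convex combination carrying all the weight on side $i$), hence lies in $K(X(s),t-s)$. For the $\H$-term, since each $X^\eps$ is regular the measures $\nu_\H^\eps(s,\cdot)$ are supported on the set of regular controls at $X^\eps(s)$; using $X^\eps(s)\to X(s)$, the upper semicontinuity of the set-valued map associating to a point of $\H$ its regular controls, and a measurable-selection argument compatible with the convexity of $K(X(s),t-s)$, one concludes that on $\{\pi_\H>0\}$, $b_\H(X(s),t-s,a^\sharp(s))=\pi_\H(s)^{-1}\int_A b_\H(X(s),t-s,a)\,\nu_\H(s,\da)\in K(X(s),t-s)$ (alternatively, one can keep this term as the weak limit of the $\mathds{1}_{\{X^\eps\in\H\}}$-contributions of $\dot X^\eps$, which are regular, and use that $K$ is closed and continuous for the Hausdorff distance).

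\emph{Step 3 (conclusion).} Since $\Psi_s\leq0$ on $K(X(s),t-s)$, $\Psi_s$ is affine, and $\pi_1(s)+\pi_2(s)+\pi_\H(s)=1$, the decomposition of Step~1 gives, a.e.\ on $\Esing$,
\[
\Psi_s\big(\dot X(s)\big)=\pi_1(s)\,\Psi_s\big(b_1(X(s),t-s,\alpha_1^\sharp(s))\big)+\pi_2(s)\,\Psi_s\big(b_2(X(s),t-s,\alpha_2^\sharp(s))\big)+\pi_\H(s)\,\Psi_s\big(b_\H(X(s),t-s,a^\sharp(s))\big)\leq0,
\]
each term being either $0$ (when its weight vanishes) or nonpositive (when its weight is positive, by Step~2). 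Integrating over $\Esing$ yields $I(\eta)\leq0$.

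The main obstacle is the $\H$-term in Step~2: one must control the weak limit $\nu_\H$ of the measures $\nu_\H^\eps$ which are supported on regular controls, and the delicate point is that the set of regular controls depends on the base point only in an \emph{upper} semicontinuous way (a control that is ``exactly tangent'' from one side may become singular under an arbitrarily small perturbation of the point), so the fact that $\nu_\H$ is still carried by regular controls — and hence that the selected $a^\sharp$ produces a genuinely regular dynamic — requires combining this upper semicontinuity with the convexity and continuity of $K$. Everything else (the weak-$*$ extraction, property $(c)$, and the affineness of $\Psi_s$) is routine once this point is settled.
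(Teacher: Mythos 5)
Your overall architecture is the right one and coincides with the paper's up to the crucial point: you decompose the weak limit $\dot X$ via Lemma~\ref{lem:extrac.measures}, you use property $(c)$ to see that the $b_1$- and $b_2$-contributions are tangential (hence regular) dynamics, and you conclude by affineness of $\Psi_s$ and convexity of $K(X(s),t-s)$. But the step you yourself flag as unsettled --- showing that the limit of the $b_\H^\eps\,\mathds{1}_{\{X^\eps\in\H\}}$-contributions is still carried by regular controls, so that $b_\H(X(s),t-s,a^\sharp(s))\in K(X(s),t-s)$ --- is not a routine technicality to be "combined with upper semicontinuity and convexity": it is the whole content of the lemma, and as written your proof does not establish it. The difficulty is that $\nu_\H^\eps$ is a Young measure supported, for each $s$, on regular controls \emph{at the moving point} $X^\eps(s)$, and identifying the support (or even just the barycentric dynamic) of its weak limit requires an argument you do not supply; note also that a direct separation argument here would be circular, since $\Psi_s$ is itself the separating functional you are trying to control.

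The paper sidesteps this entirely by a one-sided estimate rather than an identification of the limit. First, by a measurable selection and the uniform convergence $X^\eps\to X$, each $b_\H^\eps(X^\eps(s),t-s,a^\eps(s))$ is rewritten as $b_\H(X(s),t-s,\tilde a^\eps(s))+o_\eps(1)$ with $\tilde a^\eps(s)$ regular \emph{at the limit point} $X(s)$. Then one picks, again by measurable selection (Filippov), a control $a_\star(s)$ maximizing $a\mapsto A(s)\,b_\H(X(s),t-s,a)$ over $K(X(s),t-s)$ and majorizes the $\H$-term by $A(s)\,b_\H(X(s),t-s,a_\star(s))$. This term is independent of $\eps$, so no weak limit has to be taken in it at all: the only terms that pass to the weak limit are the $b_1^\eps,b_2^\eps$ ones, which are exactly those controlled by property $(c)$. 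The resulting convex combination of elements of $K(X(s),t-s)$ then gives $\Psi_s(\cdot)\leq0$ and $I(\eta)\leq0$. If you want to keep your route of directly identifying $\nu_\H$ as carried by regular controls, you would need to prove the Young-measure support statement; the paper's majorization shows this can be avoided altogether.
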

\begin{proof}
    Let $\eta>0$. If $\Esing=\emptyset$ there is nothing to do so let us assume that this is not the
    case, and take some $s\in\Esing$.	Since $\Psi_s$ is affine, using the weak convergence of
    $\dot X^\eps$ we know that
	$$
    I(\eta)=\lim_{\eps\to0}I^{\eps}(\eta):=\int_{0}^{t}\big(\Psi_{s}(\dot
    X^{\eps}(s)\big)\mathds{1}_{\Esing}(s)\ds\;. 
    $$
    The strategy is to use Lemma~\ref{lem:extrac.measures} to pass to the limit and estimate
    $I^\eps(\eta)$, knowing that at each level $\eps>0$, the dynamics are regular. In order to keep
    this information in the limit, dealing with the $b_i^\eps$-terms is handled by property (c) of
    Lemma~\ref{lem:extrac.measures}. But the $b_\H^\eps$-term is more delicate: we need first to fix
    a regular control independent of $\eps$.

    To do so, we start by noticing that for fixed $\eps>0$ and $s \in [0,t]$, for each  $a^{\eps}(s)
    \in A^{\rm reg}_0(X^\eps(s), t-s)$  there exists a $\tilde{a}^\eps(s)   \in A^{\rm reg}_0(X(s),
    t-s)$ such that 
	$$
	  b_\H^{\eps}(X^{\eps}(s),t-s,a^{\eps}(s))=b_\H(X(s),t-s,\tilde{a}^{\eps}(s))+o_\eps(1)\;.
	$$
    Indeed, this comes from a measurable selection argument and the fact that $X^{\eps}$ converges
    uniformly to $X$, while $b^\eps_\H$ also converges locally uniformly (with respect to its first
    variable). So, rewriting the expansion of $\dot X^\eps$ and using that $\Psi_s$ is affine we get
	$$\begin{aligned}
		I^\eps(\eta) =& \int_0^t
		 \Psi_s\Big(\sum_{i=1,2} b_i^{\eps}\big(X^{\eps}(s), 
		 t-s,\alpha_i^{\eps}(s)\big)    
		 \mathds{1}_{\{X^{\eps}\in\Omega_{i}\}}(s)\Big)\mathds{1}_{\Esing}(s)\ds\\
		&+ \int_0^t \zeta_1(s)\Big(b_\H \big(X(s),t-s,\tilde a^{\eps}(s)\big)\,
		 \mathds{1}_{\{X^{\eps}\in\H\}}(s)\Big)\mathds{1}_{\Esing}(s)\ds +o_\eps(1)\;.
	\end{aligned}$$

    Moreover, by construction and using again a measurable selection argument (see Filippov's Lemma
    \cite[Theorem 8.2.10]{AF}), there exists a control $a_{\star}(s) \in K(X(s),t-s)$ such that 
	$$
	 \zeta_1(s)b_\H(X(s),t-s,a_{\star}(s)) = \max_{a\in K(X(s),t-s) }  \zeta_1(s)b_\H(X(s),t-s,a).
	$$ 
	Therefore,
	$$\begin{aligned}
		I^\eps(\eta) \leq \int_0^t
		 \Psi_s\Big\{ \sum_{i=1,2} & b_i^{\eps}\big(X^{\eps}(s), 
		 t-s,\alpha_i^{\eps}(s)\big)    
		 \mathds{1}_{\{X^{\eps}\in\Omega_{i}\}}(s)\\
		+ & b_\H \big(X(s),t-s, a_\star(s)\big)\,
		 \mathds{1}_{\{X^{\eps}\in\H\}}(s)\Big\}\mathds{1}_{\Esing}(s)\ds +o_\eps(1)\;.
	\end{aligned}$$

    Now we pass to the weak limit, using Lemma~\ref{lem:extrac.measures} but with a constant $b_\H$
    instead of $b^\eps_\H$ and, more importantly, a constant control $a_\star$. In other words, the
    measure $\nu_\H^\eps$ is actually independent of $\eps$ in this situation.  We get some measures
    $\nu_1,\nu_2,\nu_\H$ and some controls $\alpha^\sharp_1,\alpha^\sharp_2$ and $a^\sharp=a_*$
    here, for which
	$$\begin{aligned}
		\lim_{\eps\to0} I^\eps(\eta) \leq \int_0^t
		 \Psi_s\Big\{ \sum_{i=1,2} & b_i\big(X(s), 
		 t-s,\alpha_i^{\sharp}(s)\big)\nu_i(s,A_i)\\
		+ & b_\H \big(X(s),t-s, a_\star(s)\big)\nu_H(s,A)\,
		\Big\}\mathds{1}_{\Esing}(s)\ds\;.
	\end{aligned}$$
    Recall that by construction $ b_\H(X(s),t-s,a_{\star}(s)) \in  K(X(s),t-s)$ and that
    $\alpha^\sharp_1,\alpha^\sharp_2$ are regular controls. Therefore, since
    $\nu_{1}(s,A_1)+\nu_{2}(s,A_2)+\nu_{\H}(s,A)=1$ and the set $K(X(s),t-s)$ is convex, we deduce
    that	the convex combination satisfies 
    $$\Psi_s\Big\{ \sum_{i=1,2} b_i\big(X(s), 
		 t-s,\alpha_i^{\sharp}(s)\big)\nu_i(s,A_i)
		+ b_\H \big(X(s),t-s, a_\star(s)\big)\nu_H(s,A)\,
		\Big\}\leq 0\;.$$
	The conclusion is that $I(\eta)=\lim_{\eps\to0} I^\eta(\eta)\leq0$ and the result is proved.
\end{proof}

\begin{proof}[Proof of Proposition~\ref{prop:extraction.gen}]
    The first part $(i)$ is done in Lemma~\ref{lem:extraction.simple}.  As for $(ii)$, we proved
    above that for any $\eta>0$, $|\Esing|\leq I(\eta)=0$, so that set $\Esing$ is of zero Lebesgue
    measure.  Hence, using a countable union of negligeable sets we deduce that
    $$\begin{aligned}
	    \bigg\{s\in [0,t]: X(s)\in\H\text{ and } 
	    b_{\H}\big(X(s),t-s,a(s)\big)\notin K\big(X(s),t-s\big)\Big)\bigg\}\,
	\end{aligned}$$
    is also of zero Lebesgue measure. This means that for almost any $s\in(0,t)$, the strategy
    obtained by choosing $a$ as control is regular, which concludes the proof.
\end{proof}

\chapter{Uniqueness and Non-Uniqueness Features}
\label{sect:uniq.ishii}

\abstract{This chapter is devoted to a discussion of the uniqueness and the non-uniqueness
properties for the Ishii solutions of the standard HJB Equation, \ie we investigate the cases when 
the value functions $\VFm$ and $\VFp$ are equal and when they are different.
Counter-examples to uniqueness are given but also conditions on the Hamiltonians
$\HT$ and $\HT^{\rm reg}$---the Hamiltonians of the additional subsolution inequalities for $\VFm$ and
$\VFp$ on the hyperplane---ensuring that they coincide, leading to a pure pde characterization of the
uniqueness cases.}

In this chapter, we investigate the question of the uniqueness for Ishii solutions of
Problem~\eqref{pb:half-space}, which can be summarized as: when are the value functions $\VFp$,
$\VFm$ equal? It is rather clear that, in general, they are different since the restriction to use
only regular controls can really penalize the controller, leading to the fact that $\VFp$ is
strictly larger than $\VFm$. We give an example of this non-uniqueness situation in the first
section of this chapter.

Then we provide some conditions under which uniqueness holds, using a pde point-of-view: as a
consequence of Theorem~\ref{thm:minimal.charac} and Proposition~\ref{prop:ishii-Up}, we know that
$\VFp=\VFm$ if $\HT=\HT^\reg$, and we give a simple condition under which this last equality is true.

\section{A typical example where $\VFp\not\equiv\VFm$} 

We consider a one-dimensional finite horizon problem where
$$\Omega_1=\{x>0\},\ \Omega_2=\{x<0\},\ \H=\{x=0\}\; .$$
The reader will find in \cite{BBC1} a detailed study of this situation for infinite horizon control
problems, a general description of the structure of solutions, the link between the minimal and
maximal Ishii solutions with state-constraints solutions as well as several explicit examples.
Here we restrict ourselves to exposing an explicit example of non-uniqueness for illustration
purposes.

We consider the dynamics
$$
    \dot X(t)=\alpha_1(t)\text{ in }\Omega_1\;,\quad\dot X(t)=\alpha_2(t)\text{ in }\Omega_2\;,
$$
where $\alpha_1(\cdot),\alpha_2(\cdot)\in L^\infty\big(0,+\infty;[-1,1]\big)$ are the controls. In
other words, $A_1=A_2=[-1,1]$ and $b_1(x,t,\alpha_1)=\alpha_1$, $b_2(x,t,\alpha_2)=\alpha_2$.  As
for the costs, we choose
$$
    l_1(x,t,\alpha_1)=1-\alpha_1+\min(|x|,1) \text{ in }\Omega_1\;,\quad
    l_2(x,t,\alpha_2)=1+\alpha_2+\min(|x|,1) \text{ in }\Omega_2\;.
$$
Finally, we set $c_1(x,\alpha_1)=c_2(x,\alpha_2)=1$ for the discount factor and also $g=\min(|x|,1)$
for the final cost. Therefore, 
$$
    \VFm(x,t)=\inf_{\mT(x,t)}\left\{\int_0^{t}l(X(s),t-s,a(s))e^{-s}\ds + g(X(t))e^{-t}\right\} \;,
$$
where $l$ is either $l_1$, $l_2$ or a convex combination of both for $x=0$, and
$a(\cdot)=(\alpha_1,\alpha_2,\mu)$ is the extended control. The definition for $\VFp$ is similar,
the infimum being taken over $\mT^\reg(x,t)$.

\noindent\textbf{Computing $\VFm(0,t)$.} 
It is clear that $l_1(x,\alpha_1), l_2(x,\alpha_2)\geq 0$ and these running costs are even strictly
positive for $x\neq 0$. Therefore, $\VFm(x,t) \geq 0$ for any $x\in \R$ and $t\geq 0$. On
the other hand, for $x=0$, we have access to a $0$-cost strategy by choosing the singular
``pull-pull'' strategy $a=(\alpha_1,\alpha_2,\mu)=(1,-1,1/2)$ which gives
$$ b (0,t-s,a) = \mu \alpha_1+(1-\mu)\alpha_2=0\; ,$$
$$ l (0,t-s,a)  = \mu(1-\alpha_1)+(1-\mu)(1+\alpha_2)=0\; .$$
As a consequence, it is clear that this is the best strategy for $x=0$ and $\VFm(0,t)=0$ for any
$t\geq 0$.

\noindent\textbf{Computing $\VFp(0,t)$}.
For simplicity, we compute it only for $t\leq 1$ here. In this
case any trajectory satisfies $|X(s)| \leq 1$ for any $0\leq s \leq t$ and $\min(|X(s)|,1)$ can be
replaced by $|X(s)|$ everywhere (in the running cost and terminal cost).

If $X$ is any trajectory starting from $X(0)=0$ and associated to a regular control and if
$X(s)>0$, then 
$$\begin{aligned}
    l(X(s),t-s,a(s))e^{-s}= &(1 -\dot X(s)+X(s))e^{-s} \\
    =& e^{-s}-  (X(s)e^{-s})'\;.
\end{aligned}$$
With analogous computations for $X(s)<0$, we end up with
$l(X(s),t-s,a(s))e^{-s}=e^{-s}- [|X|e^{-s}] '(s)$ if $X(s)\neq0$. 

It remains to examine the case when $X(s)=0$. It is easy to see that, if $b(0,t-s,a)=0$ is a regular
dynamic, then $l(0,t-s,a)\geq 1$ since $\alpha_1\leq 0, \alpha_2\geq 0$ and $l(0,t-s,a)= 1$ if and
only if $\alpha_1=\alpha_2= 0$. Therefore, for $X(s)=0$, the above formula is changed into
$l(X(s),t-s,a(s))e^{-s}\geq e^{-s}- [|X|e^{-s}] '(s)$ since $|X'(s)|=0$ a.e. on the set
$\{X(s)=0\}$. And actually, equality is attained for the above mentioned choice of $a$. Therefore
$$\begin{aligned}
    \int_0^{t}l(X(s),t-s,a(s))e^{-s}\ds + g(X(t))e^{-t}= &
    \int_0^{t}\left(e^{-s}- [|X|e^{-s}] '(s)\right)\ds + g(X(t))e^{-t}\\
    = & 1-e^{-t}>0\; ,
\end{aligned}$$
proving that $\VFp(0,t)=1-e^{-t}>\VFm(0,t)=0$ at least for $0<t\leq 1$. 
The conclusion is that $\VFp\neq\VFm$ and uniqueness does not
hold in the class of Ishii solutions.

\section{Equivalent definitions for $\HT$ and $\HT^\reg$}\label{upoH}

We recall that we defined $\HT$ and $\HT^\reg$ in Section~\ref{subsec:complemented}, using the
subsets $\BCL_T(x,t)$ and $\BCL_T^\reg(x,t)$: for $x \in \H$, $t\in (0,\Tf)$, $r\in\R$, $p\in\R^N$
\begin{equation}\label{def:HT.app}
\HT(x,t,r,p):=\sup_{(b,c,l)\in\BCL_T(x,t)}\big\{-b\cdot p+cu-l\big\}\;,
\end{equation}
while the second Hamiltonian is defined similarly but by
considering only \emph{regular} tangential dynamics~$b$
\begin{equation}\label{def:HTreg.app}
\HT^\reg(x,t,r,p):=\sup_{\BCL_T^\reg(x,t)}\big\{-b\cdot p+cu-l\big\}\;.
\end{equation}

On the other hand, for any $x,t,r,p'$, the functions $f(s):=H_1(x,t,r,p'+se_N)$ and
$g(s):=H_2(x,t,r,p'+se_N)$ are convex and, thanks to Section~\ref{sect:quasi.convexity}, we can
introduce the nonincreasing and nondecreasing parts
$f^\sharp,f_\flat, g^\sharp,g_\flat$ of $f$ and $g$. It is easy to see that
$$f^\sharp(s)=\sup_{(b_1,c_1,l_1)\in\BCL_1(x,t) \atop b_1\cdot e_N\leq0}\big\{-b_1\cdot (p'+se_N)+c_1u
-l_1\big\}:=H_1^-(x,t,r,p'+se_N)\;,$$
and similarly we define ``$H_2^-=g^\sharp$'', ``$H_1^+=f_\flat$''  and ``$H_2^+=g_\flat$'' , the
choice of ``$+$'' or ``$-$'' in $H_i^\pm$ being related to the sign of $b_i\cdot e_N$ in its definition.

In order to provide equivalent definitions of $\HT$,$\HT^\reg$, we follow
Section~\ref{sect:quasi.convexity} where we introduced $M(s):=\max(f(s),g(s))$ and
$M^\reg(s):=\max(f^\sharp(s),g_\flat(s))$, which leads to consider the Hamiltonians defined for $x
\in \H$, $t\in (0,\Tf)$, $r\in\R$, $p \in \R^N$ by 
\begin{equation}  \label{defHti}
\Hti(x,t,r,p):= \max\big( H_1(x,t,r,p), H_2(x,t,r,p)\big )\;,
\end{equation}
\begin{equation}   \label{defHtireg}
\Htireg(x,t,r,p):= \max\big(H_{1}^- (x,t,r,p),H_{2}^+ (x,t,r,p)\big)\; .
\end{equation}

The following representation holds
\begin{lemma}\label{lem:H1m.H2p.a}
    For any $(x,t,r,p')\in\H\times(0,\Tf)\times\R\times\R^N$,
    \begin{equation} \label{HT1}
    \HT(x,t,r,p')= \min_{s\in\R} \Hti(x,t,r,p^\prime+s e_N)\;,
    \end{equation}
    \begin{equation} \label{HT2}
        \HTreg(x,t,r,p')= \min_{s\in\R} \Htireg(x,t,r,p'+s e_N)\;.
    \end{equation}
    Moreover, there exist $\nu_1\leq\nu_2$ such that for any $\lambda\in[\nu_1,\nu_2]$,
    \begin{equation}\label{HT3}
        \HTreg(x,t,r,p')=H_1^-(x,t,r,p'+\lambda e_N)=H_2^+(x,t,r,p'+\lambda e_N)\;.
    \end{equation}
\end{lemma}

\begin{proof}
    Notice first that \eqref{HT3} is a direct consequence of Lemma~\ref{lem:qcr.Mreg}. 
    Now, concerning \eqref{HT1} and \eqref{HT2}, We only provide the full proof in the case of $
    \HT$, the one for $\HTreg$ follows from the same arguments, just changing the sets of
    $(b_1,c_1,l_1)$, $(b_2,c_2,l_2)$ we consider.

    We introduce the function $\varphi:\R\to \R$ defined by
    $$
        \varphi (s) := \max(H_1(x,t,r,p'+se_N) ,H_2(x,t,r,p'+se_N))\; .
    $$
    This function is convex, continuous and coercive since both $H_1,H_2$ have these properties and
    therefore there exists $\ovs \in \R$ such that $\varphi (\ovs)=\min_{s\in \R}\,\varphi (s)$. As a
    consequence, $0\in \partial \varphi (\ovs)$, the convex subdifferential of $\varphi$.

    We apply a classical result on the subdifferentials of convex functions defined as supremas of
    convex (or $C^1$) functions (cf \cite{R}): here 
    $$
    \varphi(s)=\sup\left\{-b_1\cdot(p'+se_N)+c_1r-l_1;-b_2\cdot(p'+se_N)+c_2r-l_2\right\}\; ,$$
    where the supremum is taken over all $(b_1,c_1,l_1)\in \BCL_1(x,t)$ and $(b_2,c_2,l_2)\in
    \BCL_2(x,t)$.

    The functions $s\mapsto -b_i\cdot(p'+se_N)+c_ir-l_i$ for $i=1,2$ and $(b_i,c_i,l_i)\in
    \BCL_i(x,t)$ are all $C^1$ and $\partial \varphi (\ovs)$ is the convex hull of their gradients
    for all the $(b_i,c_i,l_i)$ such that $\varphi (\ovs)=-b_i\cdot(p'+se_N)+c_ir-l_i$.  Since
    $\BCL_1(x,t),\BCL_2(x,t)$ are convex, this means that one of the following cases holds
    \begin{enumerate}
        \item[$(a)$] either the above supremum is only achieved at a unique $(b_i,c_i,l_i)$ but then
        $\varphi$ is differentiable at $\ovs$ and $0=\varphi' (\ovs)=-b_i\cdot e_N$; 
        \item[$(b)$] or there exists $(b_1,c_1,l_1)\in \BCL_1(x,t)$, $(b_2,c_2,l_2)\in \BCL_2(x,t)$ and 
        $\mu \in [0,1]$ such that
        $$\begin{cases}\varphi (\ovs)=-b_1\cdot(p'+\ovs e_N)+c_1r-l_1=-b_2\cdot(p'+\ovs e_N)+c_2r-l_2\\
        0= \mu(-b_1\cdot e_N)+(1-\mu)(-b_2\cdot e_N)\quad\hbox{i.e.  }(\mu b_1+(1-\mu)b_2)\cdot
        e_N =0\; .\end{cases}$$
    \end{enumerate}

    In case $(b)$, we deduce that
    \begin{align}
    \varphi (\ovs) & = \mu(-b_1\cdot(p'+\ovs e_N)+c_1r-l_1)+(1-\mu)(-b_2\cdot(p'+\ovs e_N)+c_2r-l_2)\\
    & = -(\mu b_1+(1-\mu)b_2)\cdot p'+(\mu c_1+(1-\mu)c_2)r -(\mu l_1+(1-\mu)l_2)\\
    & \leq   \ \ \HT(x,t,r,p')\;.\end{align}
    But on the other hand, for any $(\tilde b_1, \tilde c_1,\tilde l_1) \in  \BCL_1(x,t)$, $(\tilde
    b_2,\tilde c_2,\tilde l_2)\in \BCL_2(x,t)$ such that $(\tilde\mu \tilde b_1+(1-\tilde \mu)\tilde
    b_2)\cdot e_N =0$ for some $\tilde\mu \in [0,1]$, the definition of $\varphi$ implies that
    \begin{align}
        \varphi (\ovs) & \geq  \tilde \mu(-\tilde b_1\cdot(p'+ \ovs e_N)+\tilde c_1 r-\tilde
        l_1)+(1-\tilde \mu)(-\tilde b_2\cdot(p'+\ovs e_N)+\tilde c_2 r-\tilde l_2)\\ & = -(\tilde\mu
        \tilde b_1+(1-\tilde\mu)\tilde b_2)\cdot p'+(\mu \tilde c_1+(1-\mu)\tilde c_2)r -(\mu \tilde
        l_1+(1-\mu)\tilde l_2),
    \end{align}
    which, taking the supremum on all such $(\tilde b_1, \tilde c_1,\tilde l_1)$, $(\tilde b_2,\tilde
    c_2,\tilde l_2)$ and $\tilde\mu$, gives $\varphi (\ovs)\geq \HT(x,t,r,p')$. Therefore, the
    equality holds, which gives the result.

    Dealing with case $(a)$ follows from the same arguments as in case $(b)$, with $\mu=0$ or $1$.
    Hence the Lemma is proved.  
\end{proof}

\section{A sufficient condition to get uniqueness}

Applying directly Proposition~\ref{prop:quasi.convex.maxreg} yields a condition under which
$\HT=\HT^\reg$.

\begin{lemma}\label{lem:H1m.H2p.c}
    We denote by $m_1^+(x,t,r,p')$ the largest minimum point of the function $s \mapsto H_{1}(x,t,r,p'+s
    e_N)$ and $m_2^-(x,t,r,p')$ the least minimum of the function $s \mapsto
    H_{2}(x,t,r,p'+s e_N)$.  If $m_1^+(x,t,r,p') \leq m_2^-(x,t,r,p')$ for any $(x,t,r,p')$ then
    $\HT=\HTreg$ on $\H \times [0,\Tf] \times \R \times \R^{N-1}$.
 \end{lemma}

The importance of this lemma is to give the
\begin{corollary}\label{cor:uniqueness.ishii}\emph{--- A uniqueness criterion for Ishii solutions.}\smsp 
    If $m_1^+(x,t,r,p') \leq m_2^-(x,t,r,p')$ for any $(x,t,r,p')\in \H \times [0,\Tf] \times \R
    \times \R^{N-1}$, there is a unique solution of \eqref{pb:half-space} in the sense of Ishii.
\end{corollary}

Therefore we have an easy-to-check sufficient condition in order to have $\VFm=\VFp$, \ie the
uniqueness of the Ishii solution. Moreover this condition can be checked directly on the
Hamiltonians $H_1,H_2$ without coming back to the control problem.

\begin{remark}\label{rem:upoH}
    In Part~\ref{part:NA}, we consider the more general case when $H_1,H_2$ are only quasi-convex. We
    point out that the above results, namely Lemma~\ref{lem:H1m.H2p.a} and \ref{lem:H1m.H2p.c} are
    of course still valid in the quasi-convex setting (in the codimension 1 case), provided that we
    use the definition of the $H_i^\pm$ through $f^\sharp,f_\flat, g^\sharp,g_\flat$. Indeed, in that
    way, the definitions do not require a control formulation. We come back later on this.  
\end{remark}

\section{More examples of uniqueness and non-uniqueness}\label{sec:more-examples}

In this section, we give two simple $1$-d examples to illustrate
Corollary~\ref{cor:uniqueness.ishii}. The first one is 
$$\begin{cases}
    u_t + |u_x-1|=0 \quad\hbox{in }(-\infty,0)\times (0,+\infty)\; ,\\
    u_t +  |u_x+1|=0 \quad\hbox{in }(0,+\infty)\times (0,+\infty)\; ,\\
    u(x,0)=|x| \quad\hbox{ in $\R$}\; .
\end{cases}$$
In this case, $m_1^+(x,t,r,p')= -1< m_2^-(x,t,r,p')=1$, uniqueness occurs and it is easy to compute
the value functions $$\VFm (x,t)=\VFp(x,t)=2(|x|-t)_+ - |x|-(t-|x|)_+=\begin{cases}
2(|x|-t)_+-|x| & \hbox{if $|x|\geq t$}\; ,\\
-t & \hbox{otherwise.}
\end{cases}$$

Next, consider the problem
$$\begin{cases}
    u_t + |u_x+1|=0 \quad\hbox{in }(-\infty,0)\times (0,+\infty)\; ,\\
    u_t +  |u_x-1|=0 \quad(0,+\infty)\times (0,+\infty)\; ,\\
    u(x,0)=|x| \quad\hbox{ in $\R$}\;.
\end{cases}$$
Here, on the contrary, $m_1^+(x,t,r,p')= 1> m_2^-(x,t,r,p')=-1$,
Corollary~\ref{cor:uniqueness.ishii} does not apply and actually the value functions are different
$$\VFm (x,t)=
\begin{cases} 
|x| & \hbox{if $|x|\geq t$} \\
2|x|-t  & \hbox{if $|x|\leq t$} \end{cases}
$$
while $\VFp (x,t)=|x| \;.$

\chapter{Adding a Specific Problem on the Interface}
\label{sec:H0.case}
\abstract{In this chapter, HJB Equations with an additional conditions on the hyperplane are
considered; these additional conditions correspond to a specific control problem on the interface.
We investigate the control formulas for the minimal and maximal solutions in this context.} 

This chapter is devoted to explain the main adaptations and differences when we consider the more
general problem 
\begin{equation}\label{pb:half-space.with.H0}
	\begin{cases}
	u_t+H_1(x,t,u,Du)=0 & \text{ for }x\in\Omega_1\;,\\
	u_t+H_2(x,t,u,Du)=0 & \text{ for }x\in\Omega_2\;,\\	
	u_t+H_0(x,t,u,D_T u)=0 & \text{ for }x\in\H\;,\\
    u(x,0)=\u0 (x) & \text{ for }x\in\R^N\;.
	\end{cases}
\end{equation}
Here, since $H_0$ is only defined on $\H$, the gradient $D_T u$ consists only on the tangential
derivative of $u$ if $x=(x',x_N)\in\R^{N-1}\times\R$, $D_T u = D_{x'}u$ (or $(D_{x'}u,0)$ depending
on the convention we choose). In order to simplify some formula, we may write $Du$ instead of $D_T
u$ and therefore $H_0(x,t,u,Du)$ instead of $H_0(x,t,u,D_T u)$, keeping in mind that $H_0$ depends
only on $p=Du$ through $p_T=D_T u$.

As we explained in Section~\ref{sect:stab}, the conditions on $\H$ for those equations have to be
understood in the relaxed (Ishii) sense, namely for \eqref{pb:half-space.with.H0}
\begin{equation}\label{eq:ishii.cond.H0}
\!\!\begin{cases}
	\max\Big(u_t+H_0(x,t,u,D_Tu),u_t+H_1(x,t,u,Du),u_t+H_2(x,t,u,Du)\Big)\geq0 \;,\\
	\min\Big(u_t+H_0(x,t,u,D_Tu),u_t+H_1(x,t,u,Du),u_t+H_2(x,t,u,Du)\Big)\leq0 \;,\\
	\end{cases}
\end{equation}
meaning that, for the supersolution \resp{subsolution} condition, at least one of the inequations
has to hold. 

In this section, we use the notation with $H_0$ as a sub/superscript in the mathematical objects to
differentiate from the ``non''-$H_0$ case since these are not exactly the same, in particular of
course, the value functions differ whether we have a specific control problem on $\H$ or not.

We say here that the ``standard assumptions in the codimension-$1$ case'' are satisfied for
\eqref{pb:half-space.with.H0} if \HBACP holds for $(b_i,c_i,l_i)$, $i=0,1,2$ and \NCoH holds for
$H_1$ and $H_2$.

\section{The control problem}

The control problem is solved exactly as in the case of \eqref{pb:half-space} that was considered
above. We just need to add a specific control set $A_0$ and triples $(b_0,c_0,l_0)$, defining
$\BCL_0(x,t)$ when $x\in\H$ as for $\BCL_1$ and $\BCL_2$.  Since the case $i=0$ is specific because
$\H$ can be identified with $\R^{N-1}\times\{0\}$, we set for all $(x,t,\alpha_0)$,
$b_0(x,t,\alpha_0)=(b_0'(x,t,\alpha_0),0)$ so that $b_0\cdot p$ reduces to the scalar product of the
first $(N-1)$ components.

Using this convention, we define now the new $\BCL$ as
$$ \BCLHO(x,t):=
\begin{cases}
	\BCL_1(x,t) & \text{if }x\in \Omega_1\;,\\
    \BCL_2(x,t) & \text{if }x\in\Omega_2\;,\\
    \cob(\BCL_0,\BCL_1,\BCL_2)(x,t) & \text{if }x\in\H\;,
\end{cases}
$$
where the convex hull takes into account here the three sets $\BCL_i$ for $i=0,1,2$ so that of
course, on $\H$ we make a convex combination of all the $(b_i,c_i,l_i)$, $i=0,1,2$. 
\begin{lemma}
	The set-valued map $\BCLHO$ satisfies \HBCL.
\end{lemma}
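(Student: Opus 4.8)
The plan is to follow \emph{verbatim} the strategy of Lemma~\ref{lem:HBCL}, since $\BCLHO$ is built exactly as $\BCL$ in \eqref{cond:control.half-space}, the only novelty being that the set-valued map $\BCL_0(x,t):=((b_0,-1),c_0,l_0)(x,t,A_0)$ (with, if needed, its image replaced by its closed convex hull as in \HBACP) now enters the convex hull on $\H$. First I would record that, by \HBACP applied to $(b_0,c_0,l_0)$, the sets $\BCL_0(x,t)$ are compact, convex and uniformly bounded by some $M_0>0$, and that $(x,t)\mapsto\BCL_0(x,t)$ is upper semi-continuous on $\H\times[0,T]$ — the latter because $b_0,c_0,l_0$ are continuous and $A_0$ is compact. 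So $\BCL_0$ plays on $\H$ exactly the same role as $\BCL_1$ and $\BCL_2$.

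For \HBCLa: after enlarging $M$ so that it also dominates $M_0$, every element of $\BCLHO(x,t)$ is bounded by $M$; on $\Omega_i$ ($i=1,2$), $\BCLHO$ coincides with $\BCL_i$, while on $\H$ it is the convex hull of the three compact convex sets $\BCL_0,\BCL_1,\BCL_2$, hence compact and convex; upper semi-continuity follows as before, each $\BCL_i$ ($i=0,1,2$) being upper semi-continuous and the convex hull of upper semi-continuous set-valued maps preserving this property — the only point to watch, namely the approach to $\H$ from $\Omega_1$ or $\Omega_2$, is handled exactly as in Lemma~\ref{lem:HBCL} since $\BCL_1(x,t),\BCL_2(x,t)\subset\BCLHO(x,t)$ there.

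For \HBCLb I would reproduce the four checks of Lemma~\ref{lem:HBCL}: $(i)$ holds because the time component equals $-1$ for every generator coming from $\BCL_0,\BCL_1,\BCL_2$, so $b^t=-1$ for $t>0$ and $b^t\in[-1,0]$ for $t=0$ after taking the convex hull with $\mathrm{Init}(x)$, the value $b^t=-1$ being attained; $(ii)$ is obtained with $K=M$, using that $-Kb^t+c$ is affine in $(b^t,c)$ and nonnegative on all generators (it equals $M+c\geq0$ when $b^t=-1$ and equals $1$ on $\mathrm{Init}(x)$), hence nonnegative on convex combinations; $(iii)$ and the nontrivial part of $(iv)$ come from the element $((0,0),1,\u0(x))\in\mathrm{Init}(x)\subset\BCLHO(x,0)$ with $c=1\geq\uc$, the condition in $(iv)$ being vacuous for $t>0$ (where $-b^t=1$) and settled at $t=0$ by the structure of $\mathrm{Init}(x)$ exactly as in the $H_0$-free case. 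Finally I would remark that the special feature of $b_0$, namely $b_0=(b_0',0)$ (tangency of $b_0$ to $\H$), plays no role whatsoever in \HBCL — it intervenes only later, in the description of the trajectories on $\H$ (cf. Lemma~\ref{lem:struc.traj}) — so there is genuinely nothing new to establish here. As the lemma is a routine transcription, I do not expect any real obstacle; the only mild care is the upper semi-continuity at points of $\H$, which is already dealt with in Lemma~\ref{lem:HBCL}.
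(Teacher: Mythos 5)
Your proof is correct and follows exactly the route the paper intends: the paper simply states that the proof is ``an obvious adaptation of Lemma~\ref{lem:HBCL}'' and omits it, and your write-up is precisely that adaptation, with the right observations that $\BCL_0$ enters the convex hull on $\H$ just like $\BCL_1,\BCL_2$, that all the \HBCLa/\HBCLb checks are unchanged, and that the tangency $b_0\cdot e_N=0$ is irrelevant at this stage.
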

The proof is an obvious adaptation of Lemma~\ref{lem:HBCL}, therefore we skip it. 

In order to describe the trajectories of the differential inclusion with $\BCLHO$, we have to
enlarge the control space with $A_0$ (and introduce a new parameter $\mu_0$ for the convex
combination)
$$\AHO:= A_0\times A_1 \times A_2\times\tilde\Delta\;,\quad\text{and}\quad 
\mAHO:=L^\infty(0,\Tf;\AHO)\;.$$
Here, $\tilde\Delta=\{(\mu_0,\mu_1,\mu_2)\in[0,1]^3: \mu_0+\mu_1+\mu_2=1\}$, so that the extended
control takes the form $a=(\alpha_0,\alpha_1,\alpha_2,\mu_0,\mu_1,\mu_2)$ and if $x\in\H$,
$$(b_\H,c_\H,l_\H)=\mu_0(b_0,c_0,l_0)+\mu_1(b_1,c_1,l_1)+\mu_2(b_2,c_2,l_2)\, ,$$ with $\mu_0+\mu_1
+\mu_2 =1$.

With this modification, solving the differential inclusion with $\BCLHO$ and the description of
trajectories is similar to that in the $\BCL$-case (see Lemma~\ref{lem:struc.traj}), except that the
control has the form $a(\cdot)=(\alpha_0,\alpha_1,\alpha_2,\mu_0,\mu_1,\mu_2)(\cdot)\in\mAHO$.

Then we define $\VFmHO$ by 
$$\VFmHO(x,t):=\inf_{\mTHO(x,t)}\left\{\int_0^tl(X(s),t-s,a(s))\exp(-D(s))\ds+\u0
(X(t))\exp(-D(t))\right\}\;,$$ 
where $\mTHO(x,t)$ is the space of trajectories associated with $\BCLHO$.

\section{The minimal solution}

As far as the value function $\VFmHO$ is concerned, only easy adaptations are needed to handle $H_0$
and the related control problem.  Of course we assume that $H_0$ also satisfies \HConv, \NCe, \TC and
\Monu, as it is the case for $H_1$ and $H_2$.

Lemma \ref{lem:global.super} holds here with 
$$\begin{aligned}
 \HHO (x,t,u,p) &:= \sup_{(b,c,l)\in\BCLHO (x,t)}\big(-b\cdot p+cu-l\big)\;,\\
 \FHO (x,t,u,(p_x,p_t)) &:= p_t+ \HHO(x,t,u,p)\;,
\end{aligned}$$
and of course we have to add $H_0$ in the $\max$ of the right-hand sides
$$\HHO(x,t,r,p)=\max\Big(H_0(x,t,r,p),H_1(x,t,r,p),H_2(x,t,u,p)\Big) \;,$$
$$
    \FHO(x,t,u,(p_x,p_t))=\max \big(p_t+H_0(x,t,r,p),p_t+ H_1(x,t,u,p),p_t+ H_2(x,t,u,p)\big)\;. 
$$
Then, minimality of $\VFmHO$ follows exactly as in Proposition~\ref{prop:ishii}
\begin{proposition}\label{prop:ishii.HO} 
    Assume that the ``standard assumptions in the codimension-$1$ case'' are satisfied for
    \eqref{pb:half-space.with.H0}. Then the value function $\VFmHO$ is an Ishii viscosity solution
    of \eqref{pb:half-space.with.H0}. Moreover $\VFmHO$ is the minimal supersolution of
    \eqref{pb:half-space.with.H0}.
\end{proposition}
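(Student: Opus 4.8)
The plan is to follow verbatim the route used for $\VFm$ in Proposition~\ref{prop:ishii}, reducing everything to the abstract control-theoretic results of Chapter~\ref{chap:control.tools}. Since the preceding lemma guarantees that $\BCLHO$ satisfies \HBCL, Theorem~\ref{SP} applies and gives at once that $\VFmHO$ is a viscosity supersolution of $\FHO(x,t,U,DU)=0$ on $\R^N\times[0,T]$; Corollary~\ref{VFm-minsup} then yields that $\VFmHO$ is the \emph{minimal} supersolution of this equation; and Theorem~\ref{thm:SubP} provides the subsolution property, i.e. $(\FHO)_*(x,t,U,DU)\leq0$ on $\R^N\times\,]0,T]$ together with $(\FHO_{init})_*(x,U(x,0),D_xU(x,0))\leq0$ in $\R^N$.

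The second step is to translate these inequalities for $\FHO$ into the Ishii conditions~\eqref{eq:ishii.cond.H0} for problem~\eqref{pb:half-space.with.H0}. On $\Omega_1$ and $\Omega_2$ this is immediate, since there $\FHO$ coincides with $p_t+H_1$, resp. $p_t+H_2$, which are continuous. On $\H$ one uses the version of Lemma~\ref{lem:global.super} stated just above: $\FHO(x,t,r,(p_x,p_t))=\max\big(p_t+H_0,p_t+H_1,p_t+H_2\big)$, so the supersolution inequality $\FHO\geq0$ is exactly the ``$\max$'' line of~\eqref{eq:ishii.cond.H0}. For the subsolution inequality one computes the lsc envelope of $\FHO$ at a point of $\H$: because $H_0,H_1,H_2$ are continuous, the only discontinuity of $\FHO$ comes from crossing $\H$ in the $x$-variable, and approaching from $\Omega_1$, from $\Omega_2$ and along $\H$ gives $p_t+H_1$, $p_t+H_2$ and $p_t+\max(H_0,H_1,H_2)$ respectively, whence $(\FHO)_*(x,t,r,(p_x,p_t))=\min\big(p_t+H_1,p_t+H_2\big)$ on $\H$. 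Thus $(\FHO)_*\leq0$ forces $\min(p_t+H_0,p_t+H_1,p_t+H_2)\leq0$, which is the ``$\min$'' line of~\eqref{eq:ishii.cond.H0}. This shows that $\VFmHO$ is an Ishii solution of the three equations of~\eqref{pb:half-space.with.H0} in $\R^N\times\,]0,T]$.

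It remains to identify the initial condition. Applying Proposition~\ref{visc-ineq-init} to $\VFmHO$, the function $x\mapsto\VFmHO(x,0)$ is both a sub- and a supersolution of $\FHO_{init}(x,u,D_xu)=0$ in $\R^N$. By construction $\BCLHO(x,0)$ is the convex hull of the $\BCL_i(x,0)$'s (all with $b^t=-1$) together with the element $\mathrm{Init}(x)=\{((0,0),1,\u0(x))\}$; a convex combination has $b^t=0$ only if it reduces to $\mathrm{Init}(x)$ itself, so $\FHO_{init}(x,u,p_x)=u-\u0(x)$, with no semicontinuous envelope needed since $\u0$ is continuous. Hence $\VFmHO(x,0)\leq\u0(x)$ and $\VFmHO(x,0)\geq\u0(x)$, i.e. $\VFmHO(x,0)=\u0(x)$ in $\R^N$. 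Combining the three steps proves the proposition.

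As for the difficulty: there is essentially none beyond bookkeeping, which is precisely why the text leaves the proof to the reader — all the substantial work has already been carried out in Chapter~\ref{chap:control.tools} and in the $\VFm$ case, and adding $H_0$ only enlarges the convex hull defining $\BCLHO$ on $\H$. The only points needing a little care are the computation of the semicontinuous envelopes of $\FHO$ on $\H$ (so that the $H_0$-term genuinely shows up in the ``$\max$'' inequality but is dominated in the ``$\min$'' one) and the verification that the $b^t$-dynamics, active only at $t=0$, produce exactly the classical condition $u=\u0$ rather than an obstacle-type condition; both follow readily from \HBCLb and the explicit form of $\BCLHO(x,0)$.
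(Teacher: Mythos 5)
Your proof is correct and follows exactly the route the paper intends: the paper's proof of Proposition~\ref{prop:ishii.HO} simply says that the result follows as in Proposition~\ref{prop:ishii}, whose proof is in turn reduced to Theorem~\ref{SP}, Corollary~\ref{VFm-minsup} and Theorem~\ref{thm:SubP}, combined with the $H_0$-version of Lemma~\ref{lem:global.super}. Your extra bookkeeping — the computation of $(\FHO)_*=\min(p_t+H_1,p_t+H_2)$ on $\H$ and the identification $\FHO_{init}(x,u,p_x)=u-\u0(x)$ at $t=0$ — correctly fills in the details the paper leaves to the reader.
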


Notice that a tangential dynamic $b\in\B_T^{H_0}(x,t)$ is expressed as a convex combination
\begin{equation}
    b=\mu_0 b_0+\mu_1 b_1+\mu_2 b_2
\end{equation}
for which $\mu_0+\mu_1+\mu_2=1$, $\mu_0,\mu_1,\mu_2\in [0,1]$ and $(\mu_1 b_1+\mu_2 b_2)\cdot e_N=0$
since, here, by definition, $b_0\cdot e_N=0$.

Then, all the results of Section~\ref{subsec:complemented} apply, except that we need a little
adaptation for Lemma~\ref{bon-bcl} in order to take into account the $b_0$-contribution.

\begin{proof}[Proof of Lemma~\ref{bon-bcl} in the $\BCLHO$-case]
    The only modification consists in rewriting the convex combination as
    $$
        \mu_0 b_0(x,t,\alpha_0)+(1-\mu_0)\left(\frac{\mu_1}{1-\mu_0} b_1(x,t,\alpha_1)+
        \frac{\mu_2}{1-\mu_0} b_2(x,t,\alpha_2)\right)\;,
    $$
    and we apply the arguments of Lemma~\ref{bon-bcl} to the convex combination
    $$ 
        \frac{\mu_1}{1-\mu_0} b_1(x,t,\alpha_1)+\frac{\mu_2}{1-\mu_0} b_2(x,t,\alpha_2)\;.
    $$
    Then, setting
    $$ 
        \psi^{H_0}(y,s):= \mu_0 b_0(x,t,\alpha_0)+(1-\mu_0)\left(\mu^\sharp_1(y,s)(b_1,c_1,l_1) 
        + \mu^\sharp_2(y,s)(b_2,c_2,l_2)\right)\;,
    $$
    it is easy to check that the lemma holds for the $\BCLHO$-case.
\end{proof}

Finally, the minimal solution $\VFmHO$ can also be characterized through $\HHO_T$. The proof follows
exactly the ``non-$H_0$'' case with obvious adaptations so that we omit it.
\begin{theorem}\label{thm:minimal.charac.HO}
    Assume that the ``standard assumptions in the codimension-$1$ case'' are satisfied for
    \eqref{pb:half-space.with.H0}. Then $\VFmHO$ is the unique Ishii solution of
    \eqref{pb:half-space.with.H0} such that 
    $$u_t+\HHO_T(x,t,u,D_Tu)\leq0\quad\text{on}\quad\H\times (0,\Tf)\;,$$
    where, for $x\in \H$, $t\in [0,\Tf]$, $r\in \R$, $p\in \R^{N-1}$,
    $$\HHO_T (x,t,r,p) := \sup_{(b,c,l)\in\BCLHO_T (x,t)}\big(-b\cdot p+cu-l\big)\;,$$
    $\BCLHO_T (x,t)$ being the subset of all $(b,c,l)\in \BCLHO(x,t)$ for which $b \in
    \B_T^{H_0}(x,t)$. 
\end{theorem}

\section{The maximal solution}

Surprisingly, for the maximal solution, the case of \eqref{pb:half-space.with.H0} is very different.
And we can see it on the result for subsolutions, analogue to Lemma~\ref{subsol-H}
\begin{lemma}\label{subsol-H0}
    If $u:\R^N\times (0,\Tf)\to \R$ is an \usc subsolution of \eqref{pb:half-space}, then it satisfies
    \begin{equation}\label{uH62}
        u_t+\min\big(H_0(x,t,u,D_T u), \HTreg (x,t,u,D_T u)\big)\leq 0\quad \hbox{on  }\H\times
        (0,\Tf)\;.  
    \end{equation}
\end{lemma}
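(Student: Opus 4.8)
The plan is to adapt the proof of Lemma~\ref{subsol-H} to the presence of the additional Hamiltonian $H_0$ on $\H$. The key structural difference is that, being an Ishii subsolution of \eqref{pb:half-space.with.H0}, the function $u$ now satisfies at a maximum point on $\H$ the inequality $\min(u_t+H_0,u_t+H_1,u_t+H_2)\leq 0$, which means that \emph{one} of the three inequalities holds; this is why a $\min$ with $H_0$ appears in the conclusion \eqref{uH62}. First I would fix a $C^1$ test-function $\phi$ on $\H\times(0,T)$ (depending only on $x'$ and $t$) and a strict local maximum point $(\xb,\tb)\in\H\times(0,T)$ of $(x,t)\mapsto u(x,t)-\phi(x',t)$, normalizing so that $u(\xb,\tb)=\phi(\xb,\tb)$. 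Setting $a=\phi_t(\xb',\tb)$, $p_T=D_T\phi(\xb,\tb)$, the goal is to show
$$\min\Big(a+H_0(\xb,\tb,u(\xb,\tb),p_T),\ a+\HTreg(\xb,\tb,u(\xb,\tb),p_T)\Big)\leq 0\,.$$
Arguing by contradiction, I would assume both quantities are $>0$; the $H_0$-part gives a control $\alpha_0\in A_0$ with $a-b_0(\xb,\tb,\alpha_0)\cdot p_T+c_0 u(\xb,\tb)-l_0>0$, and the $\HTreg$-part gives, exactly as in Lemma~\ref{subsol-H}, for \emph{every} regular tangential element $(b,c,l)=\mu_1(b_1,c_1,l_1)+\mu_2(b_2,c_2,l_2)$ with $b_1\cdot e_N\leq0\leq b_2\cdot e_N$, that $a-b\cdot p_T+cu(\xb,\tb)-l>0$.

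Next I would run the perturbed-test-function argument of Lemma~\ref{subsol-H}. Given a regular combination with (after an \NCoH-approximation) $b_1\cdot e_N<0<b_2\cdot e_N$, I introduce the affine functions $\psi_i(\delta):=a-b_i\cdot(p_T+\delta e_N)+c_i u(\xb,\tb)-l_i$; then $\psi_1$ is strictly increasing, $\psi_2$ strictly decreasing, their $(\mu_1,\mu_2)$-combination is the constant $\mathcal{I}>0$, so there is $\bar\delta$ with $\psi_1(\bar\delta)=\psi_2(\bar\delta)=\mathcal{I}>0$. I then look at the maximum points $(\xe,\te)$ of $(x,t)\mapsto u(x,t)-\phi(x',t)-\bar\delta x_N-x_N^2/\e^2$ near $(\xb,\tb)$ and write the Ishii subsolution inequality for $u$ there: at least one of the $H_0$, $H_1$, $H_2$ inequalities holds. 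The $H_1$-inequality is excluded because it forces $(\xe)_N\geq 0$ and, since $b_1(\xe,\te,\alpha_1)\cdot e_N<0$ for $\e$ small, reduces to $\psi_1(\bar\delta)\leq o_\e(1)$, contradicting $\psi_1(\bar\delta)=\mathcal{I}>0$; symmetrically the $H_2$-inequality is excluded using $(\xe)_N\leq 0$ and $b_2\cdot e_N>0$.

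The genuinely new point — and the main obstacle — is handling the possibility that it is the $H_0$-inequality that holds at $(\xe,\te)$. Since $H_0$ depends only on the tangential gradient, $H_0(\xe,\te,u(\xe,\te),D_T\phi(\xe,\te)+D_T(\bar\delta x_N+x_N^2/\e^2))=H_0(\xe,\te,u(\xe,\te),D_T\phi(\xe,\te))$ (the normal perturbation is invisible to $H_0$), so the $H_0$-inequality at $(\xe,\te)$ passes to the limit to give $a+H_0(\xb,\tb,u(\xb,\tb),p_T)\leq 0$, i.e. the first term in the $\min$ is nonpositive; in that case the claimed inequality \eqref{uH62} holds directly and there is nothing more to prove. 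Thus the dichotomy is: either along the approximating sequence the $H_0$-inequality is used infinitely often (then $a+H_0\leq0$ and we are done), or for $\e$ small only the $H_1$ or $H_2$ inequalities can hold, and the argument above forces $\mathcal{I}=a+\HTreg(\xb,\tb,u,p_T)\leq0$. Either way $\min(a+H_0,a+\HTreg)\leq0$ at $(\xb,\tb)$, which is the desired viscosity inequality; Case~3 configurations ($b_1\cdot e_N=0$ or $b_2\cdot e_N=0$) are reduced to the above by a small \NCoH-perturbation exactly as in the proofs of Proposition~\ref{prop:ishii-Up} and Lemma~\ref{subsol-H}. I would write this up keeping the structure of Lemma~\ref{subsol-H}'s proof and only inserting the $H_0$-branch of the Ishii alternative.
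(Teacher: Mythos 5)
Your proposal is correct and follows exactly the route the paper intends: the paper omits the proof of Lemma~\ref{subsol-H0}, stating only that it is the same as that of Lemma~\ref{subsol-H} once the $b_0$-terms are taken into account, and your reconstruction — running the $\bar\delta x_N+x_N^2/\e^2$ perturbation, excluding the $H_1$- and $H_2$-branches as before, and observing that the $H_0$-branch is blind to the normal perturbation and so passes to the limit to give $a+H_0\leq0$ — is precisely that argument. (One cosmetic slip: $a+\HTreg>0$ yields the existence of \emph{one} regular tangential element with $\mathcal{I}>0$, not that every such element has $\mathcal{I}>0$; since your argument only uses one such element, nothing is affected.)
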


We omit the proof since it is the same as that of Lemma~\ref{subsol-H} (taking into account the
$b_0$-terms), but of course the conclusion is that the $H_0$-inequality necessarily holds if the
$\HTreg$ does not, hence the min.

The important fact in Lemma~\ref{subsol-H0} is that, while, without $H_0$, \eqref{uH61} keeps the
form of an HJB-inequality for a control problem, it is not the case anymore for \eqref{uH62} where
the $\min$ looks more like an Isaacs equation associated to a differential game. As we already
mention it in the introduction of this part, this is the analogue for discontinuities of the
phenomena which arises in exit time problems/Dirichlet problem where the maximal Ishii subsolution
involves a ``worse stopping time'' on the boundary: we refer to \cite{BP2} and \cite{Ba} for
details.

As an illustration, let us provide the form of the maximal solution of \eqref{pb:half-space.with.H0}
in the particular case when for any $x\in \H$, $t\in (0,\Tf)$, $r\in \R$ and $p_T \in \R^{N-1}$
\begin{equation}\label{HTreggeqH0}
    H_0(x,t,r,p_T )\leq \HTreg (x,t,u,p_T)\; .
\end{equation}

\begin{proposition}\label{soussolmaxII} 
    Assume that the ``standard assumptions in the codimension-$1$ case'' are satisfied and assume
    that \eqref{HTreggeqH0} holds. Let $V:\H\times (0,\Tf) \to \R$ be the unique solution of 
    $$ u_t+H_0(x,t,u,D_T u) = 0\quad \hbox{on  }\H\times (0,\Tf)\;,$$
    with the initial data $(u_0)_{|\H}$. For $i=1,2$, let $V_i : \Omega_i \times [0,\Tf]\to \R$ be
    the unique solutions of the problems 
    $$\begin{cases} u_t+H_i(x,t,u,D u)  =   0\quad & \hbox{on  }\Omega_i \times (0,\Tf)\; ,\\
        u(x,t)  =  V(x,t) \quad & \hbox{on  }\H\times (0,\Tf)\; ,\\
        u(x,0)  =  (u_0)_{|\Omegb_i} \quad & \hbox{on  }\Omegb_i\; .
    \end{cases}$$
    Then the maximal (sub)solution of \eqref{pb:half-space.with.H0} is given by
    $$ \VFpHO(x,t)=
        \begin{cases}
        V_i(x,t) & \hbox{if $x \in \Omega_i$} \\
        V(x,t) & \hbox{if $x \in \H$}\; .
        \end{cases}
    $$
\end{proposition}

Before giving the short proof of Proposition~\ref{soussolmaxII}, we examine a simple example in
dimension $1$ showing the main features of this result. We take 
$$ \BCL_1(x,t):=\{(\alpha,0,0);\ |\alpha|\leq 1\}\; ,$$
$$ \BCL_2(x,t):=\{(\alpha,0,1);\ |\alpha|\leq 1\}\; ,$$
and $\BCL_0(0,t)=\{(0,0,2)\}$. In which case
$$ H_1(p)=|p|\; ,\;H_2(p):=|p|-1\; ,\; \HTreg = 0\; ,\; H_0 = -2\; .$$
Hence \eqref{HTreggeqH0} holds. It is easy to check that, if $u_0(x) =0$ for all $x \in \R$
$$ V(t)=2t\; ,\; V_1(x,t)=0\; ,\; V_2(x,t)=t\quad \hbox{for  }x\in \R,\ t\geq 0 .$$
This example shows several things: first, the value function $\VFpHO$ is discontinuous although we
have controllability/coercivity for the Hamiltonians $H_1$ and $H_2$; it is worth pointing out
anyway that the global coercivity is lost since we use the Hamiltonian $\min(H_0,H_1,H_2)$ on $\H$
for the subsolutions instead of $\min(H_1,H_2)$. 

Then, the values of $V(t)$ may seem strange since we use the maximal cost $2$ but as we mention it
above, this phenomena looks like the ``worse stopping time'' appearing in exit time problems.
Finally, and this is even more surprising, the form of $\VFpHO$ shows that no information is
transfered from $\Omega_1$ to $\Omega_2$: indeed, from the control point of view, starting from
$x<0$ where the cost is $1$, it would seem natural to cross the border $0$ to take advantage  of the
$0$-cost in $\Omega_1$ but this is not the case, even if $x<0$ is close to $0$. We have here two
state-constrained problems, both in $\Omega_1 \times [0,\Tf]$ and $\Omega_2 \times [0,\Tf]$. This also
means that the differential games features not only implies that one is obliged to take the maximal
cost at $x=0$ but also may prevent the trajectory to go from a less favourable region to a more
favourable region.

Unfortunately we are unable to provide a general formula for $\VFpHO$, \ie which would be valid for
all cases without \eqref{HTreggeqH0}. Of course, trying to define $\VFpHO$ as in
Proposition~\ref{soussolmaxII} but $V$ being the solution of
\begin{equation}\label{sol-min-H0}
    u_t+\min\Big\{H_0(x,t,u,D_T u), \HTreg (x,t,u,D_T u)\Big\}= 0\quad \hbox{on  }\H\times
    (0,\Tf)\;, 
\end{equation}
does not work as the following example shows. In dimension $1$, we take $H_1(p)=H_2(p)=|p|$, $H_0>0$
and $u_0(x) =-|x|$ in $\R$.  Since $\HTreg = 0$, we have $H_0>\HTreg$ and solving the above pde
gives $V = 0$. Computing $V_1$ and $V_2$ as above gives $-|x|-t$ in both cases. Hence $V_1$ and $V_2$
are just the restriction to $\Omega_1 \times [0,\Tf]$ and $\Omega_2 \times [0,\Tf]$ respectively of
the solution of
$$ u_t + |u_x|=0\quad \hbox{in  }\R\times (0,\Tf)\; ,$$
with the initial data $u_0$. Now defining $\VFpHO$ as in Proposition~\ref{soussolmaxII}, we see that
we do not have a subsolution: indeed the discontinuity of $\VFpHO$ at any point $(0,t)$ implies that
$(0,t)$ is a maximum point of $\VFpHO - px$ for any $p\in \R$ and therefore we should have the
inequality
$$\min( H_0, |p|, |p|) \leq 0\; ,$$
which is not the case if $|p|>0$. 

\begin{remark} 
    Even if we were are able to provide a general formula for $\VFpHO$, we have some (again strange)
    information on this maximal subsolution: first $\VFpHO \geq \VFp$ in $\R^N \times (0,\Tf)$
    since $\VFp$ is a subsolution of \eqref{pb:half-space.with.H0}. A surprising result since it
    shows that adding $H_0$ on $\H\times (0,\Tf)$ does not decrease the maximal subsolution as it
    could be thought from the control interpretation. On the other hand, Lemma~\ref{subsol-H0}
    provides an upper estimate of $\VFpHO$ on $\H\times (0,\Tf)$, namely the solution of
    \eqref{sol-min-H0}.
\end{remark}

\begin{proof}[Proof of Proposition~\ref{soussolmaxII}] 
    First, by our assumptions, $V$ exists and is continuous, since it is obtained by solving
    a standard Cauchy problem in $\R^{N-1} \times [0,\Tf]$. Next by combining the argument of
    \cite{BP2} (See also \cite{Ba}) with the localization arguments of  Section~\ref{sect:htc},
    $V_1$ and $V_2$ exist and are continuous in $\Omega_1\times[0,\Tf]$ and $\Omega_2\times[0,\Tf]$
    respectively, with continuous extensions to $\Omegb_1\times[0,\Tf]$ and
    $\Omegb_2 \times[0,\Tf]$. 

    Considering the Cauchy-Dirichlet problems in $\Omega_1$ and $\Omega_2$, we refer the reader to
    Proposition~\ref{Dir-MN}-$(i)$ where it is proved that the normal controllability implies
    $$
        V_1(x,t), V_2(x,t) \leq V(x,t) \quad \hbox{on  }\H\times (0,\Tf)\; .
    $$
    Hence, defined in that way, $ \VFpHO$ is upper semicontinuous (it may be discontinuous as we
    already saw above).

    It is easy to check that $ \VFp$ is a solution of \eqref{pb:half-space.with.H0}. Indeed the
    subsolution properties on $\Omega_1\times (0,\Tf),\Omega_2\times (0,\Tf)$ are obvious. On
    $\H\times (0,\Tf)$ they come from the properties of $V$ since $ \VFpHO=V$ on $\H\times (0,\Tf)$;
    hence the $H_0$-inequality for $V$ implies the subsolution inequality for $ \VFp$.

    For the supersolution ones, they comes from the properties of $V_1$, $V_2$ and $V$ and the
    formulation of the Dirichlet problem since $(\VFpHO)_*=\min(V_1,V_2,V)=\min(V_1,V_2)$ on
    $\H\times (0,\Tf)$. Indeed if $\phi$ is a smooth function in $\R^N \times (0,\Tf)$ and if
    $(\xb,\tb)\in \H\times (0,\Tf)$ is a minimum point of $(\VFpHO)_*-\phi$, there are several
    cases:

    \noindent $(a)$ if $(\VFpHO)_*(\xb,\tb)=V_1(\xb,\tb)< V(\xb,\tb)$, then $(\xb,\tb)$ is a minimum
    point of $V_1-\phi$ on $\Omegb_1 \times (0,\Tf)$ and, since $V_1$ is a solution of
    the Dirichlet problem in $\Omegb_1 \times (0,\Tf)$ with the Dirichlet data $V$, we
    have
    $$ \max \big(\phi_t (\xb,\tb)+H_1(\xb,\tb,V_1(\xb,\tb),D \phi(\xb,\tb)),V_1(\xb,\tb)-
    V(\xb,\tb)\big )\geq 0\;.$$
    Hence $\phi_t (\xb,\tb)+H_1(\xb,\tb,V_1(\xb,\tb),D \phi(\xb,\tb))\geq 0$, which gives the answer
    we wish. 

    \noindent $(b)$ The case when $(\VFpHO)_*(\xb,\tb)=V_2(\xb,\tb)< V(\xb,\tb)$ is treated in a
    similar way. 

    \noindent $(c)$ Finally if $(\VFpHO)_*(\xb,\tb)=V_1(\xb,\tb)=V_2(\xb,\tb) = V(\xb,\tb)$, we use
    that $(\xb,\tb)$ is a minimum point of $V-\phi$ on $\H\times (0,\Tf)$ and therefore
    $$ \phi_t (\xb,\tb)+H_0(\xb,\tb,V(\xb,\tb),D \phi(\xb,\tb))\geq 0\; ,$$ 
    implying the viscosity supersolution inequality we wanted.

    It remains to prove that any subsolution $u$ of \eqref{pb:half-space.with.H0} is below $
    \VFpHO$. This comes from Lemma~\ref{subsol-H0} which implies, using a standard comparison result
    on $\H\times [0,\Tf]$ that $u(x,t) \leq  V(x,t) = \VFpHO(x,t)  $ on $\H\times [0,\Tf]$.  
\end{proof}

\chapter{Remarks on the Uniqueness Proofs, Problems Without Controllability}
\label{chap:rem.uniq.cont}

\abstract{The aim of this short chapter is twofold: first, it analyzes the uniqueness proof and then
considers cases where the ``good assumptions'' are not satisfied; in particular when the normal
controllability does not hold.}

\section{The main steps of the uniqueness proofs and the role of the normal controllability}
\label{sec:MSUP}

In this part, we have proved several comparison results showing, on one hand, that $\VFm$ is the
minimal supersolution and the unique solution which satisfies the $\HT$-inequality and, on the other
hand, that $\VFp$ is the maximal subsolution and the unique solution which satisfies the
$\HTreg$-inequality.

All the proofs of these results are based on a common strategy which will also be used for
stratified problems in Part~\ref{stratRN} and which can be described in the following ``backwards''
way 
\begin{enumerate}[leftmargin=4em]
    \item[{\bf Step 3 :}] \textbf{The ``Magical Lemma''.} \index{Magical Lemma!role in the comparison proof}
According to Section~\ref{sect:htc} the comparison result
        is reduced to proving that \LCR holds. For the points located on $\H$, this is a direct
        consequence of Lemma~\ref{lem:comp.fundamental} if the subsolution is continuous and $C^1$
        in the tangential variables. This tangential regularity allows to use the subsolution as a
        test-function for the ``tangential inequalities'' (typically the $\HT$ or $\HTreg$ one),
        avoiding in particular the usual ``doubling of variables'' which causes the major problem in
        the discontinuous setting.
    \item[{\bf Step 2:}]\textbf{Regularization of the subsolution.} In order to use the ``Magical Lemma'' to
        obtain the result for any subsolution, we have to be able to regularize any subsolution in order
        that it becomes continuous w.r.t. all the variables, $C^1$ in the tangent variables, and
        preserving the subsolution inequalities. This is the role of Propositions~\ref{reg-by-sc}
        and~\ref{C1-reg-by-sc}.
    \item[{\bf Step 1:}]\textbf{Regularity of the subsolution.} In order to perform the second step in a suitable
        way, we need at least the subsolution to be regular on $\H$. In particular this is necessary
        in order that the second step actually provides a subsolution which is continuous on $\H$
        (but also on the hyperplanes which are parallel to $\H$).
\end{enumerate}

Going further in the analysis of these three steps, it is clear that the normal controllability
assumption \NCe plays a crucial role in Step 1 but even more in Step 2. Looking at
Proposition~\ref{reg-sub}, recalling that \NCe implies \NCw, Case (a) immediately gives us the
complete information we need, even if we can obtain it through Cases (b) and (c) in some situations,
see the examples below.

But this is in Step 2 that \NCe plays the most important (an maybe unavoidable) role: in
order to perform the tangential regularization we have to control, one way or the other, the normal
component of the gradient. This is exactly the role of \NCe. 

This is why we consider \NCe as a key ``natural'' assumption in this type of problems and the fact
that the same remarks can be made for stratified problems reinforces this certainty. Being unable to
perform the regularization process, the ``Magical Lemma'' cannot be used and all the proofs
collapse.

We also point out that the approach via ``Flux-Limited  Solutions'' described
in Part~\ref{part:NA} provides an alternative strategy which seems to avoid some of the above
constraints, and in particular \NCe. The comparison proof is based on an ``almost classical''
doubling of variables but the reader can check that this proof actually uses \NCe in several ways.

However, some problems without normal controllability can also be treated and we give some examples
in the next section.

\section{Some problems without controllability}

In this section, we are not going to examine sophisticated situations: if \NCe is not
satisfied on $\H$ and if we have a mixture of the different ``simple'' situations we describe below
on $\H$, we are led to problems whose difficulties have to be examined separately. A combination of
the arguments which are presented in this book may allow to treat such problems but, in a general
framework, this will not be the case.

In the simple situations we are going to emphasize, we examine the situation separately on both
sides of the discontinuity and we respectively denote by $u_1$ and $u_2$ the solutions in
$\Omegb_1 \times (0,\Tf)$ and $\Omegb_2 \times (0,\Tf)$. Therefore the value
function $\VF$ of the control problem in $\R^N\times (0,\Tf)$ will be given by
$$
    \VF(x,t)=\begin{cases} u_1(x,t) & \hbox{if }x\in \Omega_1\; ,\\
    u_2(x,t) & \hbox{if }x\in \Omega_2\;\end{cases}
$$
while on $\H \times (0,\Tf)$, either $\VF$ will be the common value of $u_1$ and $u_2$ or the
\lsc/\usc envelopes, computed by using values in $\Omega_1 \times (0,\Tf)$ and $\Omega_2 \times
(0,\Tf)$.

The simple situations we have in mind are the following
\begin{enumerate}
    \item[{\bf I.}] For any $x\in \H$, $t\in [0,\Tf]$, $\alpha_2 \in A_2$, $b_2(x,t,\alpha_2)\cdot
        e_N >0$. All the dynamics used in $\Omega_2$ are strictly pointing outside
        $\Omegb_2$ on $\H$. Here, it is easy to show that $H_2$ plays the role of a
        nonlinear Neumann boundary condition on $\H$ for the equation $H_1=0$ in $\Omega_1\times
        (0,\Tf)$\footnote{We give a proof at the end of this section for the reader's convenience}.
        Therefore, in order to obtain $u_1$, we solve this nonlinear Neumann  problem in
        $\Omega_1\times (0,\Tf)$. We obtain a unique continuous solution $u_1$ (which is continuous up
        to the boundary). Then, in order to compute $u_2$, we solve the Dirichlet problem in
        $\Omegb_2\times (0,\Tf)$ with $u_1$ as Dirichlet boundary condition on $\H \times
        (0,\Tf)$. This also provides a continuous solution in $ \Omegb_2\times (0,\Tf)$
        and that way, we have defined a continuous function in $\R^N$ which is the solution of
        Problem~\ref{pb:half-space} and the value function of the associated control problem.

    \item[{\bf II.}] By symmetry the situation is the same if, for any $x\in \H$, $t\in [0,\Tf]$,
        $\alpha_1 \in A_1$, $b_1(x,t,\alpha_1)\cdot e_N<0$.

    \item[{\bf III.}] For any $x\in \H$, $t\in [0,\Tf]$, $\alpha_2 \in A_2$, $b_2(x,t,\alpha_2)\cdot
        e_N \leq 0$. Then all the trajectories of the dynamic starting in $\Omega_2\times (0,\Tf)$
        stay in $\Omega_2\times (0,\Tf)$. In terms of PDE, the consequence is that all the
        viscosity inequalities for sub and supersolutions hold up to the boundary of $\Omega_2\times
        (0,\Tf)$, as soon as these sub and supersolutions are extended up the boundary by upper or
        lower-semicontinuity. Hence the associated HJB problem is $H_2=0$ on
        $\Omegb_2\times (0,\Tf)$. As in the whole space $\R^N$, this problem enjoys a
        comparison result and therefore it provides a unique solution $u_2\in C(\Omegb_2\times
        (0,\Tf))$. This solution is the value function in $\Omega_2\times \Tf$, extended to
        $\Omegb_2\times (0,\Tf)$ by continuity\footnote{Therefore $u_2$ is equal to the
        $\R^N$-value function in $\Omega_2\times \Tf$ but maybe not on $\H\times (0,\Tf)$.}.
        Therefore the problem in $\Omega_2$ completely ignores the problem in $\Omega_1$, and we
        face 3 different cases for the problem in $\Omega_1$
    \begin{enumerate}
     \item[{\bf III.1}] For any $x\in \H$, $t\in [0,\Tf]$, $\alpha_1 \in A_1$,
            $b_1(x,t,\alpha_1)\cdot e_N<0$, a case which is already treated in {\bf II} above. 
            But here we are in the case of a simple Dirichlet problem in $\Omega_1\times \Tf$, the
            Dirichlet boundary condition on $\H\times \Tf$ being the value function of the problem
            in $\Omega_2$. Hence there is a unique continuous solution for
            Problem~\ref{pb:half-space} which is the value function of the control problem in
            $\R^N$. 

        \item[{\bf III.2}] For any $x\in \H$, $t\in [0,\Tf]$, there exist $\alpha_1^1, \alpha_1^2
            \in A_1$ such that $b_1(x,t,\alpha_1^1)\cdot e_N>0$ and $b_1(x,t,\alpha_1^2)\cdot
            e_N<0$, $i.e.$ the normal controllability condition holds. In this case, we also have a
            Dirichlet problem in $\Omega_1\times \Tf$ with the Dirichlet boundary condition on $\H$
            being the value function of the problem in $\Omega_2\times \Tf$. However, while in {\bf
            III.1} the boundary data is assumed in a classical sense and leads to a continuous
            solution in $\R^N$, here it is only assumed in the viscosity sense. The value function
            in $\Omega_1\times \Tf $ being not equal, in general, to the one in $\Omega_2\times \Tf$
            in all $\H\times \Tf$, the value function of the problem in $\R^N$ may have
            discontinuities on $\H\times \Tf$.  

        \item[{\bf III.3}] For any $x\in \H$, $t\in [0,\Tf]$, $\alpha_1 \in A_1$,
            $b_1(x,t,\alpha_1)\cdot e_N\geq 0$: here the problem in $\Omega_1\times \Tf$ and
            $\Omega_2\times \Tf$ are completely independent. There exists both a unique value
            function in $\Omegb_1\times \Tf$ and $\Omegb_2\times \Tf$ but
            their continuous extensions to $\H\times \Tf$ are different in general, and the value
            function in $\R^N$ may have discontinuities on $\H\times \Tf$. Anyway the Ishii
            conditions are satisfied on $\H\times \Tf$ since both equations hold up to the boundary.
\end{enumerate}
\end{enumerate}

\

We conclude this section by proving that, as announced in Case~{\bf I} above, dynamics pointing
outward generate a nonlinear Neumann boundary condition.  
\begin{proposition}\label{prop:dyn.neumann}
    Assume that the ``standard assumptions in the codimension-$1$ case'' are satisfied and that,
    for any $x\in \H$, $t\in [0,\Tf]$, $\alpha_2 \in A_2$, $b_2(x,t,\alpha_2)\cdot e_N >0$. Then any
    locally bounded \usc subsolution \resp{ \lsc supersolution $v$} of Problem~\ref{pb:half-space}
    is a subsolution \resp{supersolution} of the nonlinear Neumann problem
    \begin{equation}
        \left\{ \begin{array}{ccc}u_t+H_1(x,t,u,D_x u) & = & 0\quad \hbox{in  }\Omega_1 \times (0,\Tf)\\
        u_t+H_2(x,t,u,D_x u) & = & 0\quad \hbox{on  }\H \times (0,\Tf)\; .
        \end{array}\right.
    \end{equation}
\end{proposition}

We recall that Neumann boundary conditions for first-order HJ Equations were first studied by Lions
\cite{Li-Neu} and then different comparison results for first and second-order equations were
obtained by Ishii \cite{Is-Neu} and Barles\cite{Ba-Neu}. We refer the reader to the ``User's guide
to viscosity solutions'' of Crandall, Ishii and Lions \cite{Users} for a complete introduction of
boundary conditions in the viscosity sense and to all these references for checking that the
nonlinearity $p_t+H_2(x,t,r,p_x)=0$ satisfies all the requirement for a nonlinear Neumann boundary
condition.

\begin{proof}
    Of course, we just have to check the boundary condition and we provide the proof only in the
    subsolution case, the supersolution one being analogous. Let $\phi \in C^1(\R^N \times (0,\Tf))$
    and let $(\xb,\tb) \in \H \times (0,\Tf)$ be a strict local maximum point of $u-\phi$. For
    $0<\e\ll 1$, we consider the penalized function
    $$ 
        (x,t) \mapsto u(x,t)-\phi(x,t)-\frac{[(x_N)_-]^2}{\e}
    $$
    An easy application of Lemma~\ref{lem:cv-pen} in a compact neighborhood of $(\xb,\tb)$ shows the
    existence of a sequence $(\xe,\te)$ of maximum points for these functions such that
    $(\xe,\te)\to (\xb,\tb)$ and $u(\xe,\te)\to u(\xb,\tb)$. If $(\xe,\te)\in
    \Omegb_1\times \Tf$, we have
    $$\begin{aligned}\text{either } & 
        \phi_t (\xe,\te)+H_1(\xe,\te,u(\xe,\te),D_x \phi(\xe,\te)) \leq 0\\
        \text{or } & \phi_t (\xe,\te)+H_2(\xe,\te,u(\xe,\te),D_x \phi(\xe,\te)) \leq 0\;,
    \end{aligned}$$
    the derivative of the term $\dfrac{[(x_N)_-]^2}{\e}$ being $0$. Hence the only difficulty is when
    $(\xe,\te) \in \Omega_2\times \Tf$ and
    $$ 
    \phi_t \Big(\xe,\te)+H_2(\xe,\te,u(\xe,\te),D_x \phi(\xe,\te)- \frac{2(x_N)_-}{\e}e_N\Big)\leq 0\;.
    $$
    But examining $H_2$ and using the fact that, for any $x\in \H$, $t\in [0,\Tf]$, $\alpha_2 \in
    A_2$, $b_2(x,t,\alpha_2)\cdot e_N >0$, we see that $\lambda\mapsto H_2(x,t,r,p_x +\lambda e_N)$
    is decreasing for all $x\in \H$, $t\in [0,\Tf]$, $r\in \R$ and $p_x \in \R^N$. Therefore 
    $$
    H_2(\xe,\te,u(\xe,\te),D_x \phi(\xe,\te)- \frac{2(x_N)_-}{\e}e_N) \geq H_2(\xe,\te,u(\xe,\te),D_x
    \phi(\xe,\te))
    $$
    and we also get in this case
    $$ \phi_t (\xe,\te)+H_2(\xe,\te,u(\xe,\te),D_x \phi(\xe,\te)) \leq 0\; .$$
    In any case
    $$\begin{aligned}
        \min\Big( & \phi_t (\xe,\te)+H_1(\xe,\te,u(\xe,\te),D_x \phi(\xe,\te)), \\
        & \phi_t (\xe,\te)+H_2(\xe,\te,u(\xe,\te),D_x \phi(\xe,\te))\Big) \leq 0\; ,
    \end{aligned}$$
    and letting $\e \to 0$, we obtain the desired inequality
    $$\min\Big(\phi_t (\xb,\tb)+H_1(\xb,\tb,u(\xb,\tb),D_x \phi(\xb,\tb))\ ,\  
    \phi_t (\xb,\tb)+H_2(\xb,\tb,u(\xb,\tb),D_x \phi(\xb,\tb)\Big) \leq 0\; .$$
\end{proof}

\chapter{Further Discussions and Open Problems}
\label{chap:COP-II}

\abstract{The discussion focuses on the Ishii subsolution inequality and extensions to stationary
problems; then, more general discontinuities are presented, leading to puzzling open problems.}

\section{The Ishii subsolution inequality: natural or unnatural from the control point of view?}
\label{sect:natural.Ishii}

As it is well-known, the Ishii supersolution inequality is very natural from the control point of
view, and even in a very general framework. The reader can be convinced by this claim by looking at
Chapter~\ref{chap:control.tools}, and in particular at Theorem~\ref{SP} and
Corollary~\ref{VFm-minsup}: involving the natural $\F$-Hamiltonian, the proof that the value function is a
supersolution---and even the minimal supersolution---is rather easy and reflects as expected the
property of the control problem, since it is related to the existence of an optimal trajectory.

On the contrary, the proof of the subsolution inequality---which has to handle $\F_*$---is far more
involved, \cf Theorem~\ref{thm:SubP}, and no analogue of Corollary~\ref{VFm-minsup} exists.  This
rises the question: is this Ishii subsolution inequality so natural from the control point of view?

\

\noindent\textsc{Why the Ishii inequality should not hold ---}
We can provide the beginning of an answer in a rather simple way in the two-domains case. We recall
that the role of the subsolution inequality is to reflect the fact that each control (or trajectory)
is suboptimal.

If $U=\VFm$ or $\VFp$, if $(x,t)\in \H \times (0,\Tf)$ and if $\alpha_1$ is a control such that
$b_1(x,t,\alpha_1)\cdot e_N >0$, we solve the ode
$$ \dot X(s)=b_1(X(s),t-s,\alpha_1)\; ,\; X(0)=x\; ,$$ and we remark that, for $s>0$ small enough, $X(s)\in \Omega_1$. Therefore the trajectory $X(\cdot)$ is
admissible and an easy application of the Dynamic Programming Principle (where we assume that we
already know that $U$ is continuous for simplicity) implies, for $h>0$ small enough $$ U(x,t) \leq \int_{0}^{h}
l\big(X(s),t-s,\alpha_1\big)\,e^{-D(s)}\ds + U ( X(h), t-h) \,e^{-D(h)}\;.$$ We easily deduce that,
for such $\alpha_1$ \begin{equation}\label{eqn:ineq-sub-a1} -b_1(x,t,\alpha_1)\cdot DU(x,t) +
c_1(x,t,\alpha_1)U(x,t) -l_1(x,t,\alpha_1)\leq 0 \end{equation} and this inequality can easily be
extended to all $\alpha_1$ such that $b_1(x,t,\alpha_1)\cdot e_N \geq 0$.

On the contrary, if $b_1(x,t,\alpha_1)\cdot e_N < 0$, $X(s) \in \Omega_2$ for $s>0$ small enough and $X(\cdot)$
is not an admissible trajectory anymore since the dynamic is $b_2$ in $\Omega_2 \times (0,\Tf)$; so there is no reason why \eqref{eqn:ineq-sub-a1}
should hold. 

This implies a fortiori that there is no reason why $U_t + H_1(x,t,U(x,t),DU(x,t))$ should be
nonpositive and, since we can argue exactly in the same way with control $\alpha_2$ associated to
the control problem in $\Omega_2\times (0,\Tf)$, there is also no reason why $U_t + H_2(x,t,U(x,t),DU(x,t))$
should be nonpositive either. Hence, the Ishii subsolution inequality, namely
\begin{equation}\label{ineq.ishii.sub}
\min(U_t+H_1(x,t,U,DU),U_t+H_2(x,t,U,DU))\leq0\text{ on }\H\times(0,\Tf) \end{equation}
is not natural at all from the control point of view.

\

\noindent\textsc{Why the Ishii inequality actually holds ---}
The proof of Proposition~\ref{prop:ishii-Up} gives a first way to answer this puzzle in the case of
$\VFp$ (the argument would be exactly the same in the case of $\VFm$). 

On one hand, the $\HT$-inequality is natural since it shows that all the admissible trajectories which stay on
$\H$ are suboptimal. On the other hand, if $\VFp_t+H_1\leq0$, the Ishii inequality holds while if $\VFp_t +H_1>0$, 
the inequality $\VFp_t +\HT\leq 0$ implies that necessarily $\VFp_t +H_2\leq 0$, since the dynamics such that the
$X$-trajectories stay on $\H$ are convex combinations of the $b_1$ and $b_2$-ones. In any case we
obtain \eqref{ineq.ishii.sub} for $\VFp$.

Hence, the Ishii subsolution inequality holds on $\H\times(0,\Tf)$ as a consequence of the natural $\HT$-inequality. 
And one may wonder whether it is not more natural to define subsolution by just imposing the $\HT$-inequality on
$\H\times(0,\Tf)$, dropping \eqref{ineq.ishii.sub}.

This is exactly what the notion of Flux-Limited Solutions is doing, \cf Chapter~\ref{chap:FLSP}.
Indeed, as a by-product of the argument which leads to \eqref{eqn:ineq-sub-a1}, we have natural
$H_1^+$ and $H_2^-$ inequalities, at least for the value functions $\VFm,\VFp$. 

Moreover it is clear that this last remark remains valid in far more general cases: we have natural
subsolution inequalities for the controls for which the dynamics ``move away from the discontinuities''.

\

\noindent\textsc{General subsolutions, General discontinuities: the stratified case ---}
Maybe looking only at value functions is misleading since we know that Theorem~\ref{thm:SubP} holds
and maybe also that the two-domains case is a very particular situation regarding the Ishii
subsolution inequality on the discontinuity.

This suggests a more general question: for unnatural reasons, the $\F_*\leq 0$ inequality holds on
discontinuities for value functions; does this ``little miracle'' hold both for general subsolutions
and for more complicated discontinuities? 

Surprisingly the answer is yes in the stratified framework under suitable assumptions: in
Section~\ref{rweqs}, it is a consequence of a \LCR in the case of regular subsolutions and this
points out that such inequality always holds {\em for any regular subsolution} provided a comparison
result holds. Hence the $\F_*\leq 0$-inequality on discontinuities appears more as a
consequence than as a required inequality in the definition.

This is confirmed by the fundamental Lemma~\ref{lem:comp.fundamental} which is the keystone to prove
comparison results: this lemma is based on $(i)$ a ``tangential inequality'' on the discontinuity
(for example the $\HT$-inequality on $\H\times (0,\Tf)$) and $(ii)$ a subdynamic programming
principle for the subsolution {\em outside} the discontinuity (in $\Omega_1\times (0,\Tf)$ and in
$\Omega_2\times (0,\Tf)$ here).  None of these ingredients uses the Ishii subsolution inequality on
$\H\times (0,\Tf)$.

As a conclusion of this section, we can remark that, thanks to the above arguments, imposing or not
the Ishii subsolution inequality on the discontinuities is not a real issue: one way or the other,
it will hold at least in frameworks where a suitable comparison result holds.

But as the reader can notice everywhere in this book, even if it is not the only way to obtain it,
the Ishii subsolution inequality on $\H\times (0,\Tf)$ provides the regularity of subsolutions, a
fundamental ingredient. This is why we make the choice to maintain it most of the time.

\section{Infinite horizon control problems and stationary equations}

The aim of this section is to briefly describe the analogous results in the infinite horizon case where
the HJ Equation is stationary: we will only skim over this problem since all the results are not only
straightforward translations and adaptations of the finite horizon/evolution equations case but the proofs are
even simpler from a technical point-of-view. We recall that this case was studied in details in the works of
Briani and the authors of this book (cf. \cite{BBC1,BBC2}) and actually almost all the ideas and results of this
part appear for the first time in these two articles.

From the control point-of-view, we are given for $x\in \Omega_i$ and for $i=1,2$
$$ \BCL_i (x):=\{ (b_i(x, \alpha),c_i(x, \alpha),l_i (x, \alpha))\ : \ \alpha \in A\}\; ,$$
where, as above, the control set $A$ is a compact metric space and the $(b_i,c_i,l_i)$ are defined on $\R^N\times A_i$ and
satisfy \HCP. We assume, in addition, that there exists $\lambda>0$ such that, for $i=1,2$,
$$ c_i(x, \alpha)\geq \lambda \quad \hbox{in  }\R^N\times A\; .$$
As in the finite horizon case, we define $\BCL(x)$ as $\BCL_i(x)$ if $x\in \Omega_i$ and as the closed convex envelope of
$\BCL_1 (x)\cup\BCL_2(x)$ if $x\in \H$.

Using this $\BCL$, we can solve the differential inclusion equation for $(X,D,L)$
$$ (\dot X(s) ,\dot D(s) ,\dot L(s) )\in \BCL(X(s))\; ,$$
with $(X(0),D(0),L(0))=(x,0,0)$. We can also define ``regular'' and ``singular'' dynamics on $\H$, $\mT(x)$, $\mT^{reg}(x)$ and
the tangential Hamiltonians $\HT,\HT^{reg}$. The associated value functions are
$$\VFm(x):=\inf_{\mT(x)}\left\{\int_0^{+\infty} l(X(s),a(s))\exp(-D(s))\ds\right\}\, ,$$
$$\VFp(x):=\inf_{\mT^\reg(x)}\left\{\int_0^{+\infty} l(X(s),a(s))\exp(-D(s))\ds\right\}\, ,$$
where $l(X(s),a(s))$ is defined as in Theorem~\ref{thm:existence.traj}.

From the pde point-of-view, the related problem is
\begin{equation}\label{eq:HJ-stat}
\begin{cases}
H_1(x,u,Du)=0 & \hbox{in }\Omega_1\; ,\\
H_2(x,u,Du)=0 & \hbox{in }\Omega_2\; ,
\end{cases}
\end{equation}
with the standard Ishii inequalities on $\H$ where, for $i=1,2$,
$$
        H_i(x,r,p):= \sup_{\alpha \in A}\,\left\{-b_i(x, \alpha)\cdot p + 
        c_i(x, \alpha)r-l_i(x,\alpha)\right\}\; .
$$ 

The result is the following
\begin{theorem}\label{thm:Ishii-codim1-stat} Under the above assumptions,\\[-5mm]
    \begin{enumerate}
        \item[$(i)$] the value functions $\VFm,\VFp$ are well-defined and bounded. They are viscosity solutions of \eqref{eq:HJ-stat}.
        \item[$(ii)$] The value function $\VFm$ satisfies
\begin{equation}\label{eq:HT-stat}
\HT (x,u,Du)\leq 0 \quad \hbox{on  }\H\; ,
\end{equation}
while the value function $\VFp$ satisfies
\begin{equation}\label{eq:HTreg-stat}
\HT^{reg} (x,u,Du)\leq 0 \quad \hbox{on  }\H\; .
\end{equation}
\item[$(iii)$] The value function $\VFm$ is the minimal viscosity supersolution (and solution) of \eqref{eq:HJ-stat}, while $\VFp$
is the maximal viscosity subsolution (and solution) of \eqref{eq:HJ-stat}.
\item[$(iv)$] The value function $\VFm$ is the unique viscosity solution of \eqref{eq:HJ-stat} which satisfies \eqref{eq:HT-stat}.
    \end{enumerate}
\end{theorem}

We leave the proof of this theorem to the reader since, as we already wrote it above, it is a routine adaptation of the ideas
described in this part.

\section{Towards more general discontinuities: a bunch of open problems.}
\label{TMGD}

A very basic and minimal summary of Part~\ref{part:codim1}---including the previous section---can be expressed as follows: for 
Problem~\eqref{pb:half-space}, we are able to provide an explicit control formula for the minimal supersolution (and solution)
$\VFm$, and also an explicit control formula for the maximal (and solution) $\VFp$.

The next natural questions are: is it possible to extend such results to more general type of discontinuities? It can also be thought
that some of them are very particular cases which only appear because of the codimension $1$ discontinuity and that simpler results
may exist for higher codimensions because of some kind of ``eliminability property'' (?). This idea can only be reinforced by the fact that,
as we will see it in Part~\ref{part:NA}, $\VFp$ is the limit of the vanishing viscosity method.

Before coming back to this question of $\VFp$ or more precisely to the identification of the maximal subsolution, we consider
the case of $\VFm$, which may be perhaps considered as being the more natural solution from the control point of view. Here
the answer to the above question is yes and this is not so surprising since, by Corollary~\ref{VFm-minsup}, we know in a very general
framework that $\VFm$ is the minimal viscosity supersolution of the Bellman Equations,  therefore we already have a lot of informations
on $\VFm$.

In the Part~\ref{stratRN}, we provide a rather complete study of stratified solutions in $\R^N$ and then, in Part~\ref{S-BC} in general
domains, which are the natural generalization of $\VFm$ in the case when the codimension-$1$ discontinuity is replaced by discontinuities
on Whitney stratifications\index{Whitney stratification}. As in Section~\ref{sect:codimIa}, we characterize the stratified solution $\VFm$ as
the unique solution of a suitable problem with suitable viscosity inequalities. The methods which are used to study Ishii solutions, relying partly
on control arguments and partly on pde ones, can be extended to this more general setting and we will emphasize the (even more important) roles of the subsolution inequalities, normal controllability, tangential continuity...etc.

But the case of the maximal subsolution (and solution) $\VFp$ is more tricky and several questions can be asked, in particular
\begin{enumerate}
\item[$(i)$] Can one provide an explicit control formula for $\VFp$?
\item[$(ii)$] Is it still true that the vanishing viscosity method converges to $\VFp$? \index{Vanishing viscosity method!general questions on}
\end{enumerate}
Before describing the difficulties which appear even for rather simple configurations, we give a simple example which shows
that  we can definitively forget any hope on ``eliminability property''

\subsection{Non-uniqueness in the case of codimension $N$ discontinuities}

We consider the stationary equation
\begin{equation}\label{eq:exam-stat}
\vert Du - \frac{x}{|x|}\vert+u=|x|\quad\hbox{in  }\R^N\; ,
\end{equation}
for which we have only a discontinuity at $x=0$. The Ishii inequalities at $0$ read
$$ \min_{|e|=1}|Du-e| + u(0)\leq 0\; ,$$
$$ \max_{|e|=1}|Du-e| + u(0)\geq 0\; .$$

A first clear solution is $u_1(x)=|x|$ which is a smooth solution outside $0$ and, at $0$, the superdifferential of $u_1$ is empty
while the subdifferential is $\overline{B(0,1)}$ and the supersolution inequality obviously holds.

Now we look for an other solution of the form $u_2(x)=\varphi(|x|)$ for a smooth function $\varphi:[0,+\infty)\to \R$. Outside $0$,
$u_2$ is smooth and leads to the equation
$$ |\varphi'(s)-1|+\varphi(s)=s\; .$$
And $\psi(s)=\varphi(s)-s$ satisfies $|\psi'(s)|+\psi(s)=0$. If we assume that $\psi(0)=\lambda$ is given, we have by uniqueness for this $1-d$
HJ-Equation (assuming that $\psi$ is bounded), $\psi(s)=\lambda e^{-s}$ and this implies that $\lambda \leq 0$. This means that we have
a family $(u_2^\lambda)_{\lambda \leq 0}$ of candidates for being solutions of \eqref{eq:exam-stat}, where the $u_2^\lambda$ are given by
$$ u_2^\lambda(x) = |x|+\lambda e^{-|x|}\; .$$
First it is clear that $u_2^\lambda$ is a smooth solution outside $0$. At $0$, since $\lambda \leq 0$, the superdifferential of $u_2^\lambda$ is empty
while its subdifferential is $\overline{B(0,1-\lambda)}$. In particular $p=0$ is in the subdifferential of $u_2^\lambda$ and
$$ \max_{|e|=1}|0-e| + \lambda \geq 0\; .$$
This means that $\lambda \geq -1$ and all $\lambda \in [-1,0]$ gives a solution.

Hence we do not have uniqueness despite of this very high codimension of the singularity. Examining a little bit more carefully the above 
argument, it is easy to show that $u_1$ is the maximal subsolution (and solution) while $u_2^{-1}$ is the minimal supersolution (and solution)
of \eqref{eq:exam-stat} in the space of functions with sublinear growth: indeed, it suffices as above to consider that a solution of \eqref{eq:exam-stat} is a solution of the Dirichlet problem
$$
\vert Du - \frac{x}{|x|}\vert+u=|x|\quad\hbox{in  }\R^N\setminus{\{0\}}\; ,\; u(0)=\lambda\; ,$$
for which we have a comparison result. Then we notice that, by the equation, $\lambda \leq 0$ and the solution of this Dirirchlet problem
is necessarily given by $u_2^{\lambda}$ for some $\lambda \in [-1,0]$. 

Last but not least, we look at the associated control problem. Outside $0$, we have
$$b(x,\alpha)=\alpha \in \overline{B(0,1)}\; ,\; c(x,\alpha)=1\; ,\; l(x,\alpha)=|x|-\alpha \cdot \frac{x}{|x|}\; ,$$
and $\BCL(0)$ is obtained by computing the convex enveloppe. It is worth pointing out that the cost $|x|$ in
$l(x,\alpha)$ suggests that the best strategy consists in going to $0$ but a direct path from $x$ to $0$ would use
the control $\alpha=-\dfrac{x}{|x|}$ with a cost $|x|+1$ in $l(x,\alpha)$ because of $-\alpha \cdot \dfrac{x}{|x|}$-term.

This large cost of controls pointing toward $0$ is translated in terms of ``regular'' and ``singular'' strategies to stay at $0$:
a ``regular'' strategy can be thought as a convex combination of controls pointing toward $0$, i.e.
with $-\alpha \cdot \frac{x}{|x|}\geq 0$. Therefore the minimal cost for a ``regular'' strategy is $0$. But if we accept all convex
combination, we may use controls with $-\alpha \cdot \frac{x}{|x|}< 0$ and even $-\alpha \cdot \frac{x}{|x|}=-1$ coming from
two opposite directions $x$ and $-x$ at $0$. 

This explains the extremal value $\lambda=0$ and $\lambda =-1$ and $u_1$ is nothing but a $\VFp$ while $u_2^{-1}$ is nothing but
$\VFm$.

Last remark: in this case, the convergence of the vanishing viscosity method is easy to establish since $u_1$ is convex and therefore a 
subsolution for the vanishing viscosity equation. Hence the two half-relaxed limits for the vanishing viscosity approximation are
larger that $u_1$ but they are also between the maximal subsolution and the minimal supersolution of \eqref{eq:exam-stat}, i.e. 
$u_1$ and $u_2^{-1}$. Therefore they are both equal to $u_1$.

\subsection{Puzzling examples}\label{sec:puzzling}

In general, we are unable to give a control formula for the maximal subsolution of an HJB-Equation with discontinuities of
codimensions $>1$, and even in very simple examples. The problem is both to determine what is a ``regular'' strategy but
also to concretely prove that the associated value function is indeed the maximal subsolution.

In order to be more specific and to fix ideas,
we consider two interesting examples: the first one is the case when we still have two domains but the interface is not smooth, typically Figure~\ref{fig:openpb} below.
\begin{figure}[htp]
    \begin{center}
   \includegraphics[width=0.6\textwidth]{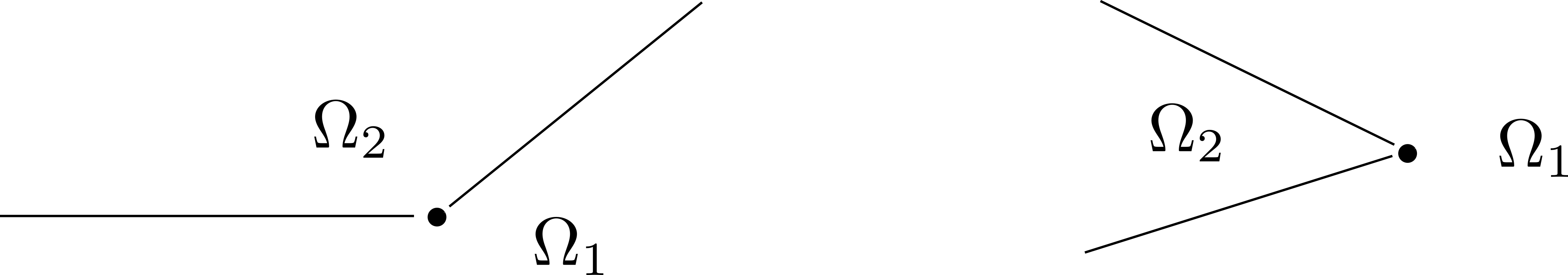}
   \caption{Two domains with a non-smooth interface}
   \label{fig:openpb} 
   \end{center}
\end{figure}

A second very puzzling example is the ``cross-case'' where $\R^2$ is decomposed into its four main quadrants, see Figure~\ref{fig:cross} below. And of course, one may also have in mind ``triple-junction configurations'' in between these two cases.
\index{Stratification!cross case}\label{pb:cross}

\begin{figure}[htp]
    \begin{center}
   \includegraphics[width=0.4\textwidth]{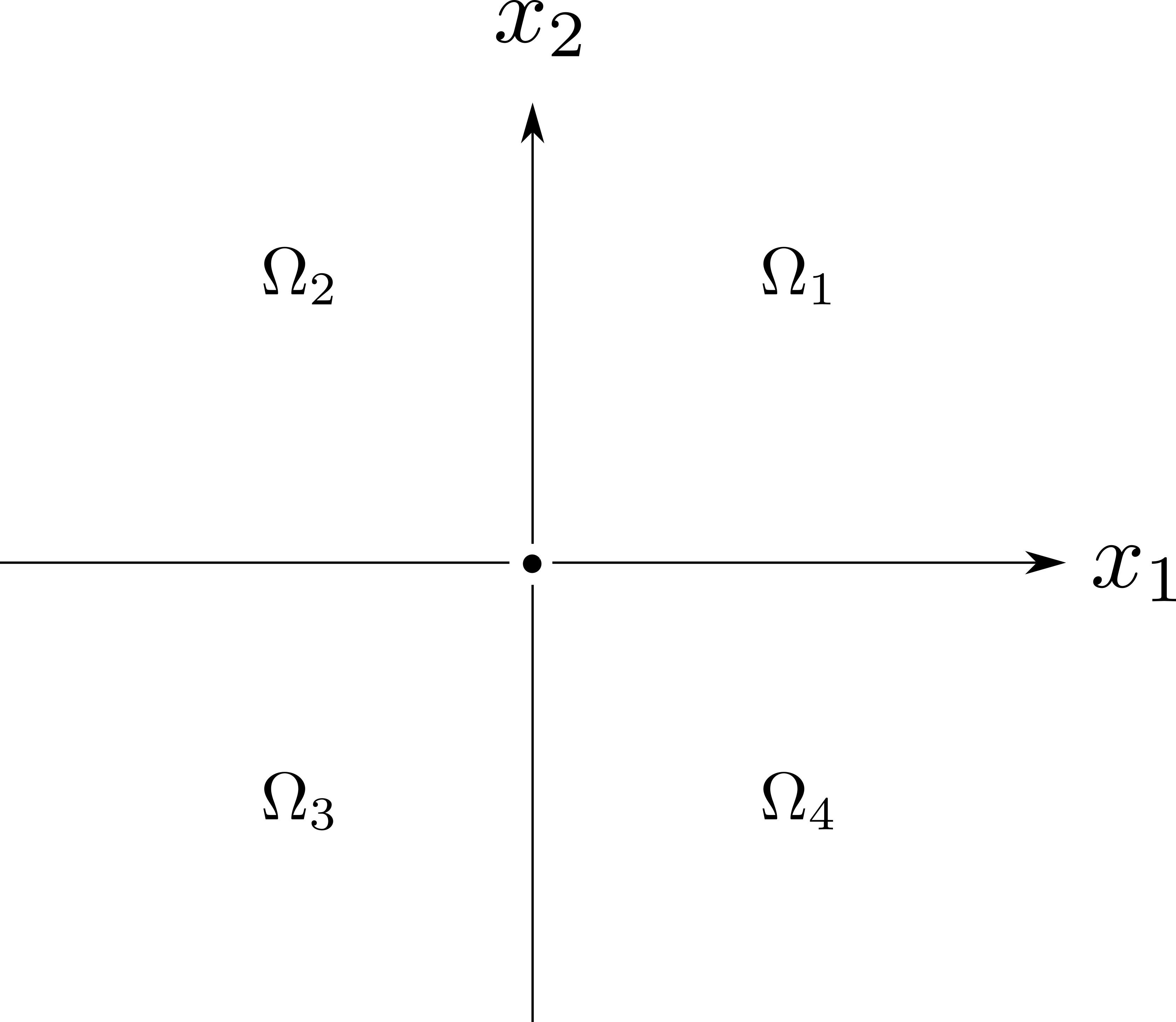}
   \caption{The cross-case}
   \label{fig:cross} 
   \end{center}
\end{figure}

\index{Vanishing viscosity method!general questions on}
The importance of the above questions is due to the numerous applications and we can mention for example front propagations phenomenas or Large Deviations type results: in both case, one has to identify the limit of the vanishing viscosity method and an ``action functional'' which exactly means to answer the above questions if the diffusions and/or drift involved in these problems are discontinuous. 

We refer for example to Souganidis \cite{S-FP} and references therein for the viscosity solutions' approach of front propagations
in reactions diffusion equations (like KPP (Kolmogorov-Petrovskii-Piskunov) type equations) and to Bouin \cite{Bouin} and references therein for front 
propagation in kinetic equations. For the viscosity solutions' approach of Large Deviations problems, we refer to \cite{BP3} (see also \cite{Ba}).

Now we turn to the questions (i) and (ii) of the beginning of Section~\ref{TMGD} which are largely open even in the two simple
cases described above. We first remark that most  of the results of this part, in particular those obtained by pde methods,
use in a crucial way the codimension-$1$ feature of the problem, via the  normal direction which determines which are the
inward and outward dynamics to the $\Omega_i$'s but also the $H_i^\pm$, and therefore the key 
$\HTreg$ Hamiltonian.

Concerning Question~$(i)$, in terms of control, the additional difficulty is to identify the ``regular strategies'' which allow to stay at the new discontinuity point 
($0$ in the cross-case) and then to show that using only these ``regular
strategies'', $\VFp$ is an Ishii solution of the problem. For Question~$(ii)$,
the proofs which are given above use either $\VFp$ (and therefore require an
answer to Question~$(i)$) or the codimension-$1$ feature of the problem via the Kirchhoff condition.

For all these reasons, even in the very simple configurations we propose above, we DO NOT know the right answer... but we hope that some readers will be able to find it!

In order to show the difficulty, we provide a ``simple'' result in the cross-case in $\R^2$, which DOES NOT give the result we wish but which uses the natural ingredients which should be useful to get it.

We are going to consider the problem
$$ u_t + H_i(Du)= 0\quad \hbox{in  }\Omega_i \times (0,\Tf)\;,\; \hbox{for $i=1,2,3,4$},$$
where the Hamiltonian $H_i$ are given by
$$H_i(p)=\sup_{\alpha_i\in A_i}\{-b_i(\alpha_i)\cdot p-l_i(\alpha_i)\}\;.$$
where $A_i$ are compact metric spaces. We are in a very simplified framework since we do not intend to provide
general results, so we also assume that the Hamiltonians $H_i$ are coercive, and even that there exists $\delta >0$ such that
$$B(0,\delta) \subset \{b_i(\alpha_i); \ \ \alpha_i \in A_i \}\quad \hbox{for any $i=1,2,3,4$}\; .$$
This is natural as a normal controllability assumption.

Of course, these equations in each $\Omega_i$ have to be complemented by the Ishii conditions on the two axes:
except for $x=0$, we are in the framework described in this part since we face a codimension $1$ discontinuity. Therefore we
concentrate on the case $x=0$ where, in order to identify $\VFp$, we have to identify the ``$\HTreg$'', i.e. the ``regular strategies''
which allow to remain at $x=0$.

In order to do so, we introduce the set $\A$ of controls $(\alpha_1, \alpha_2, \alpha_3,\alpha_4)$ such that, on one hand, $b_i(\alpha_i) \in D_i$ for $i=1,2,3,4$ where
$$ D_i=\{b_i(\alpha_i); \ b_i(\alpha_i)\cdot x \leq 0\;\hbox{for all  }x\in \Omega_i\}\; ,$$
and, on the other hand, there exists a convex combination of the $b_i(\alpha_i)$ such that
$ \sum_{i=0}^4 \,\mu_i b_i(\alpha_i)=0$. Such a convex combination may not be unique and we denote by $\Delta$ the set of all such convex combinations.

Finally we set
$$ \HT^\mathrm{reg-cross}:= \sup_{\A}\Big\{\inf_\Delta\Big(-\sum_{i=0}^4 \,\mu_i l_i(\alpha_i)\Big)\Big\}\; .$$
Notice that here, since we consider a zero-dimensional set, the Hamiltonian $\HT^\mathrm{reg-cross}$ reduces to a real number.
We have the
\begin{lemma}\label{cross} If $u: \R^2 \times (0,\Tf)\to \R$ is an Ishii subsolution of the above problem then
$$ u_t + \HT^\mathrm{reg-cross}\leq 0 \quad\hbox{on  }\{0\} \times (0,\Tf)\; .$$
\end{lemma}

\begin{proof}Let $\phi$ be a $C^1$ function on $(0,\Tf)$ and $\tb$ be a strict local maximum point of $u(0,t)-\phi(t)$. We have to show that
$\phi_t(\tb)+ \HT^{reg-cross}\leq 0$.

To do so, we consider $(\alpha_i)_i \in \A$ and, for $\delta>0$ small, we consider the affine functions
$$ \psi_i (p)= \phi_t(\tb) -b_i(\alpha_i)\cdot p-l_i(\alpha_i) -\delta \; .$$
Applying Farkas' Lemma, there are two possibilities; the first one is: there exists $\bar p$ such that $\psi_i (\bar p) \geq 0$ for all $i$. In that case,
we consider the function $(x,t) \mapsto u(x,t)-\psi(t) - \bar p \cdot x -\frac{|x|^2}{\e}$ for $0<\e \ll 1$. 

Since $\tb$ is a strict local maximum point of $u(0,t)-\phi(t)$, this function has a local maximum point at $(\xe,\te)$ and $(\xe,\te)\to (0,\tb)$ as $\e\to 0$. Wherever the point $\xe$ is, we have an inequality of the type
$$ \phi_t(\te) + H_i (\bar p + \frac{2x}{\e})\leq 0\; .$$
But if such $H_i$ inequality holds, this means that we are on $\Omegb_i$ and in particular
$$ \phi_t(\te)  -b_i(\alpha_i)\cdot (\bar p + \frac{2x}{\e})-l_i(\alpha_i)  \leq 0\; .$$
Recalling that $b_i(\alpha_i) \in D_i$, this implies
$$ \phi_t(\te)  -b_i(\alpha_i)\cdot \bar p - l_i(\alpha_i)  \leq 0\; .$$
For $\e$ small enough, this inequality is a contradiction with $\psi_i (\bar p) \geq 0$ and therefore this first case cannot hold.

Therefore, we are always in the second case: there exists a convex combination of the $\psi_i$, namely 
$\displaystyle \sum_{i=0}^4 \,\mu_i \psi_i$ which gives a negative number. In that case, it is clear that we have
$$ \sum_{i=0}^4 \,\mu_i b_i(\alpha_i)=0\quad\hbox{and} \quad \phi_t(\tb) -\sum_{i=0}^4 \,\mu_i l_i(\alpha_i) -\delta \leq 0\; .
$$ 
This implies that $$  \phi_t(\tb) + \inf_\Delta\Big(-\sum_{i=0}^4 \,\mu_i l_i(\alpha_i)\Big) -\delta \leq 0\; ,$$
and since this is true for any $(\alpha_i)_i \in \A$ and for any $\delta >0$, we have the result.
\end{proof}

The interest of this proof is to show the two kinds of arguments which seem useful to obtain an inequality for the subsolutions at $0$:
$(i)$ to find the suitable set $\Delta$ of ``regular strategies'' which allow to stay fixed at $0$; $(ii)$ to have suitable properties on the $b_i$'s 
which allow to  deal with the $2x/\e$-term  in the Hamiltonians, in other words we have to define suitable ``outgoing strategies''. 

Again this result is not satisfactory and we do not think that it leads to the desired result in the cross case.


\part{Hamilton-Jacobi Equations with Codimension One Discontinuities: the ``Network" Point-of-View}
\label{part:NA}
\fancyhead[CO]{HJ-Equations with Discontinuities: The Network Approach}


\chapter{Introduction}
\label{intro-NA}
\abstract{Despite looking at the same problem from a pde point-of-view, the approach of this part is
completely different and does not use any optimal control tool, just pure pde arguments. 
The first consequence of this different point-of-view is a change of test-functions. 
Two notions of solutions (flux-limited solutions \`a la Imbert-Monneau and junction
viscosity solutions \`a la Lions-Souganidis) are described in this part with all their stability and
comparison properties. They are associated to two different types of conditions at the interface.
This introduction describes them with the assumptions they should satisfy.}

Contrarily to Part~\ref{part:codim1} where the question of a codimension $1$ discontinuity in
Hamilton-Jacobi Equations is mainly addressed in the case of convex Hamiltonians by using control
arguments, the aim of this part is to describe several complementary pde points-of-view which allow
to obtain more general results, and most of them for non-convex equations.
However we often choose to present them in the framework of Part~\ref{part:codim1} for justifying
the assumptions we use and showing the interest of the results.

\section{The ``network approach'': a different point-of-view}

In order to present these other pde approaches, let us focus first on a simple $1$ dimensional
configuration, the terminology ``network point-of view'' originating from this situation.
Considering an Hamilton-Jacobi Equation with a discontinuity at $x=0$, we have in mind the picture
in Fig.~\ref{fig:ishii.na} below

\begin{figure}[htp]
    \begin{center}
    \setlength{\unitlength}{1.6cm}
    \begin{picture}(4,0.8)(0,0.7)
    \thinlines
    \put(0,1){\vector(1,0){4}}
    \put(1.7,0.95){\small $\bullet$}
    \put(3.8,0.8){\small $x$}
    \put(0.5,1.1){\small $H_2=0$}
    \put(2.5,1.1){\small $H_1=0$}
    \end{picture}
    \caption{The Ishii point of view}
    \label{fig:ishii.na}
    \end{center}
\end{figure}
Here, Ishii's definition of viscosity solutions in $\R$ is quite natural and involves
$\min(H_1,H_2)$ and $\max(H_1,H_2)$ at $x=0$.

But, since the equations are different in the sets $\{x>0\}$ and $\{x<0\}$, we can see as well the
picture as two segments joining at $x=0$:   

\

\begin{figure}[htp]
    \begin{center}
    \setlength{\unitlength}{1.6cm}
    \begin{picture}(4,1.5)(0,1)
    \thinlines
    \put(1.7,1){\vector(-1,0){2}}
    \put(1.7,0.95){\small $\bullet$}
    \put(1.73,0.98){\vector(1,1){1.5}}
    \put(-0.3,0.8){\tiny $x_2$}
    \put(3.2,2.2){\tiny $x_1$}
    \put(0.5,1.1){\small $H_2=0$}
    \put(2.5,1.4){\small $H_1=0$}
    \end{picture}
    \caption{The network point of view}
    \label{fig:na.na}
    \end{center}
\end{figure}
Now, $J_1=\{x>0\}$ and $J_2=\{x<0\}$ become two different branches of a (simple) network and it
becomes natural to introduce adapted coordinates on $J_1,J_2$, which are nothing but $x_1 = x$ on
$J_1$ and $x_2=-x$ on $J_2$.

\subsection{A larger space of test-functions}

The first main consequence of this different point of view is that the ``natural'' test-functions
are not the same as in the Ishii approach since they can be chosen differently in $J_1$ and $J_2$,
with just a continuity assumption at $x=0$. 

In our original framework in $\R^N$ with $\Omega_1, \Omega_2, \H$ introduced in
Section~\ref{sect:stab} where an analogous remark holds, just replacing $J_1$ by $\Omega_1$, $J_2$
by $\Omega_2$ and $0$ by $\H$, this suggests the space of ``natural'' test-functions as
\label{not:PCun}
\begin{definition}
    We denote by $\PC1$ the space of piecewise $C^1$-functions $\psi \in C(\R^N \times [0,\Tf])$
    such that there exist $\psi_1 \in C^1(\Omegb_1\times [0,\Tf])$, $\psi_2 \in C^1(\Omegb_2\times [0,\Tf])$ such that $\psi = \psi_1$
    in $\Omegb_1\times [0,\Tf]$ and $\psi =
    \psi_2$ in $\Omegb_2\times [0,\Tf]$.
\end{definition}

An important point in this definition is that $\psi=\psi_1=\psi_2$ on $\H\times [0,\Tf]$ and $D_\H
\psi = D_\H \psi_1= D_\H \psi_2$ on $\H\times [0,\Tf]$, $\psi_t =(\psi_1)_t= (\psi_2)_t $ on
$\H\times [0,\Tf]$.  We recall here that $D_\H$ is the tangential derivative.

This change of test-functions is a first step but it remains of course to examine the kind of
``junction condition'' we can impose on $\H\times [0,\Tf]$, since, contrarily to what happens for
the Ishii definition, no obvious choice seems to stand out. 

The first attempt could be to try the standard Ishii inequalities with this larger set of
test-functions with the convention (since the test-functions are not necessarily smooth
on $\H\times [0,\Tf]$) to use the derivatives
of $\psi_1$ in the $H_1$-inequalities and those of $\psi_2$ in the $H_2$-inequalities.
On the simplest example where the equations are 
\[ \tag{HJ-gen} 
\begin{cases}
u_t+H_1 (x,t,u,Du) = 0 \quad\hbox{in  }\Omega_1\times (0,\Tf)\; , \\
u_t+H_2 (x,t,u,Du) = 0 \quad\hbox{in  }\Omega_2\times (0,\Tf)\; ,
\end{cases} 
\]
and without additional Hamiltonian on $\H$, these conditions are
\[
\begin{cases}
\min(u_t+H_1 (x,t,u,Du),u_t+H_2 (x,t,u,Du))\leq 0 \quad\hbox{on  }\H \times (0,\Tf)\; , \\
\max(u_t+H_1 (x,t,u,Du),u_t+H_2 (x,t,u,Du))\geq 0 \quad\hbox{on  }\H \times (0,\Tf)\; .
\end{cases} 
\]
But it is easy to check that, with test-functions in $\PC1$, there is no subsolutions if $H_1,H_2$
are both coercive. The argument is the following: if $u-\varphi$ has a maximum at some point
$(0,t)\in\H\times(0,\Tf)$, then $u-(\varphi+C|x_N|)$ also has a maximum at the same point and since
$\varphi_C(x,t):=\varphi(x,t)+C|x_N|$ belongs to  $\PC1$ we can use it to test the inequalities. But,
since the Hamiltonians are coercive, taking $C>0$ large enough yields an impossibility
since both $|D(\varphi_C)_1(x,t)|$ and $|D(\varphi_C)_2(x,t)|$ can be taken as large as we wish.

\subsection{Different types of junction conditions}
\label{sect:types.junctions}

As a consequence of the simple remark above, it is clear that the question of the right junction
condition to be imposed on $\H$ becomes crucial. And it obviously depends on the type of
applications we have in mind.

\

\noindent\textbf{(a)} \emph{Flux-limited condition} ---
From Chapter~\ref{chap:Ishii}, it seems obvious that in the framework of control problems, a natural
contidition on $x=0$ is the following
\[ \tag{FL} 
u_t+G (x,t,u,D_\H u) = 0 \quad\hbox{on  }\H\times (0,\Tf)\; . \\
\]
Indeed, for applications to optimal control, one may have in mind a specific control on $\H$, i.e. a
specific dynamic, discount and cost as in Chapter~\ref{sec:H0.case}. In the network literature
(cf. Imbert and Monneau \cite{IM,IM-md,IN}), the associated terminology is ``flux-limited
condition'' (See Section~\ref{Traffic} for a partial justification of this terminology). Concrete
modellings and applications lead to a variety of different flux-limited conditions at the boundary,
including more general ones 
\[ \tag{GFL} 
G (x,t,u,u_t,D_\H u) = 0 \quad\hbox{on  }\H\times (0,\Tf)\; ,\\
\]
where $G$ satisfies: there exists $\gamma>0$ such that, for any $x\in \H$, $t\in [0,\Tf]$, $r\in \R$,
$p'\in \H$ and $a_2\geq a_1$, one has 
\begin{equation}\label{basic-hyp-GFL}
        G(x,t,r,a_2,p')-G(x,t,r_2, a_1,p') \geq \gamma(a_2-a_1) \; .
\end{equation}
In fact, if \eqref{basic-hyp-GFL} holds, it is a simple exercise to show that there exists $\tilde
G$ such that $G(x,t,r,a,p')$ and $a+\tilde G(x,t,r,p')$ have the same signs. In other words, a
general flux-limited condition \GFL is equivalent to a simple flux-limited condition \FL (both for
the sub and supersolution condition), and pushing the exercise a little bit further, the reader will
notice that the assumptions on $G$ can be transfered without any difficulty to $\tilde G$.

For this reason, in the sequel we focus on the study of Conditions~\FL but either by doing the
above exercise or repeating readily the arguments, it will be clear that all the definitions and
results extend without any difficulty to \GFL.

\

\noindent\textbf{(b)} \emph{Kirchhoff type conditions} ---
This second type of condition involves the normal derivatives of the solution on $\H$. The simplest
one, used in various applications and in particular for networks, is the {\em Kirchhoff
condition}\index{Kirchhoff condition}
\begin{equation}\label{NA-KC}\tag{KC} 
    \frac{\partial u}{\partial n_1}+\frac{\partial u}{\partial n_2}=0
    \quad \hbox{on  }\H\times (0,\Tf)\; ,
\end{equation}
where, for $i=1,2$, $n_i(x)$ denotes the unit normal to $\domeg_i$ pointing outward $\Omega_i$ at
$x\in \domeg_i$. 

\

\noindent\textbf{(c)} \emph{General junction conditions} ---
More generally, a junction type condition may have the form
\begin{equation}\label{GJC}\tag{GJC}
    G\Big(x,t,u,u_t,D_\H u, \frac{\partial u}{\partial n_1},\frac{\partial u}{\partial n_2}\Big)=0
    \quad \hbox{on  }\H\times (0,\Tf)\; ,
\end{equation}
where $G(x,t,r,a, p',b,c)$ has at least to satisfy the following monotonicty assumption: 
there exists $\alpha, \beta\geq 0$ such that, for any $x\in \H$, $t\in (0,\Tf)$, $r_1\geq r_2$,
$p'\in \H$, $a_1\geq a_2$, $b_1\geq b_2$, $c_1\geq c_2$, 
\begin{equation}\label{basic-hyp-GJC}
    \begin{aligned}
        G(x,t,r_1,a_1,p',b_1,c_1)\,-\, & G(x,t,r_2, a_2,p',b_2,c_2) \\[1mm] & \geq \alpha(a_1-a_2) + 
    \beta(b_1-b_2)+ \beta(c_1-c_2)\;.
    \end{aligned}
\end{equation}
In the sequel, we will often drop the dependence in $r$ in junction condition \GJC, just to
simplify a little bit the technicalities. But taking into account such dependence with a suitable
monotonicity assumption does not cause major problems. Precise assumptions are given in next
section.

\

Roughly speaking, each of these conditions is treated in the literature by using a different notion
of solution. In the case of \FL-conditions, and in particular if one has in mind applications to
control problems, the natural notion of solutions is the ``Flux-Limited solutions'', which
is introduced and extensively studied in \cite{IM,IM-md,IN}. However, this kind of solution is not
well-adapted for dealing with Kirchhoff type conditions, where a notion of ``Junction viscosity
solution'' is needed. 

This second notion of solution, rather similar to classical viscosity solutions is called ``relaxed
solutions'' in \cite{IM} and extensively used in the works of Lions and Souganidis \cite{LiSo1,
LiSo2}.

\section{The ``good assumptions'' used in Part~\ref{part:NA}}

In this part, most of the results we present are obtained using PDE methods. For this reason, the
control interpretation, and therefore the convexity of the Hamiltonians, is not playing a key role.
Depending on the chapter or the section, we are going to consider either convex, quasi-convex or
merely continuous Hamiltonians. This is why depending on the context we have to translate in this
section the ``good framework for HJ-Equations with discontinuities'' in the particular case of a codimension $1$
discontinuity already discussed in Section~\ref{sec:GA-codim1}.\index{Good framework for HJE!for a codimension $1$ discontinuity}

\

We refer first the reader to Section~\ref{sec:BasicA} where Basic Assumptions \HBACP and \HBAHJ
are defined. Then, in order to satisfy \Mong, we denote by \\[3mm]
\HBAHJp\ :\label{page:HBAHJp} assumption \HBAHJ in which we assume $\gamma(R) \geq 0$ for any $R$. \\[2mm]
\HBACPp\ : assumption \HBACP in which we assume $c(x,t,\alpha) \geq 0$ for any $x,t,\alpha$.

\smallskip

These reductions are only done in order to simplify matters, in any case a change $u\to u\exp(Kt)$
for a suitable constant $K$ allows to reduce to the above assumptions. We also point out that, thanks
to Chapter~\ref{sect:htc}, \HBAHJp and \HBACPp imply \LOCa, \LOCb because of the Lipschitz
continuity in $p$ of the Hamiltonians.

\subsection{Good assumptions on $H_1,H_2$}

We need here to translate the normal controllability and tangential continuity assumptions to the
case of general Hamiltonians:

\smallskip

\label{page:NCHJ}
\begin{assumption}{\NCHJ}{Normal controllability for general Hamiltonians.}
    For any $R>0$, there exists constants $C^R_2,C^R_3,C^R_4>0$ such that, for any $(x,t)\in
    \H\times (0,\Tf)$ with $|x|\leq R$, $|u|\leq R$ and $p=(p',p_N)$ with $p'\in \R^{N-1}$, $p_N\in
    \R$, $$H(x,t,u,p)\geq C^R_2 |p_N| - C^R_3|p'| -C^R_4\;.$$
\end{assumption}

\vspace*{-1cm}

\label{page:TCsH} 
\begin{assumption}{\TCsH}{Tangential Continuity for general Hamiltonians.} 
    For any $R>0$, there exists $C^R_1>0$ and a modulus of continuity $m^R : [0,+\infty[ \to
    [0,+\infty[$ such that for any $x=(x',x_N), y=(y',x_N)$ with $|x|, |y|\leq R$, $|x_N|\leq
    R^{-1}$, $t,s\in [0,\Tf]$, $|u|\leq R$, $p=(p',p_N)\in \R^{N}$,
    $$|H(x,t,u,p)- H(y,s,u,p)|\leq C^R_1(|x'-y'|+|t-s|)|p'| +m^R\big(|x'-y'|+|t-s|\big)\;.$$
\end{assumption}

With these assumptions we can formulate several ``good assumptions'' depending on the context:\\[3mm]
\GAGen \label{page:GAGen}\emph{General case ---} $H_1,H_2$ satisfy \HBAHJp and \NCHJ.\\[2mm]
\GAConv \label{page:GAConv}\emph{Convex case ---} $H_1,H_2$ satisfy \GAGen and are convex in $p$.\\[2mm]
\GAQC \label{page:GAQC}\emph{Quasi-convex case ---} $H_1,H_2$ satisfy \GAGen and \HQC. \\[2mm]
\GACC \label{page:GACC}\emph{Control case ---} \HBACPp and \NCoH are satisfied.

\begin{remark} 
    A priori, the variable $t$ being a ``tangential variable'', \TCsH should be formulated with a
    right hand side like $C^R_1(|x'-y'|+|t-s|)(|p'|+|p_t|)$ instead of $C^R_1(|x'-y'|+|t-s|)|p'|$;
    but since $H$ does not depend on $p_t$, the above formulation seems more natural. However, using
    the equation which gives $p_t=-H$, it is probably possible to change this assumption into the
    more general one, including a term like $C^R_1(|x'-y'|+|t-s|)(|p'|+\max(|H(x,t,u,p)|, |H(y,s,u,p)|))$
    in the right-hand side of \TCsH. We leave this open question to the reader.
\end{remark}

\subsection{Good assumptions on the junction condition}

We now turn to the assumptions on the function $G$ which appears in \FL or \GJC, recalling that we
are assuming it is independent of $r$ for simplicity. To do so, we first formulate a continuity
requirement, where the role of $\e_0$ will be clear later on. 

\medskip

\label{page:HContG}
\begin{assumption}{\HContG}{Continuity on the interface.}
    For any $R>0$, there exist constants $C^R_5, C^R_6$ such that, for any $x,y \in \H$, $t,s\in
    [0,\Tf]$, $|r|\leq R$, $p'_1,p'_2\in \R^{N-1}$, $a,b,c, a',b',c' \in \R$ 
    $$ |G(x,t,a, p_1',b,c)-G(y,s,a,p'_1,b,c)| \leq C^R_5 (|x-y|+|t-s|)\big(1+ |p_1'| +
    \e_0(|a|+|b|+|c|)\big)\; .$$ $$ |G(x,t,a', p_2',b',c')-G(x,t,a,p_1',b,c)| \leq C^R_6
    (|p_2'-p'_1| + (|a'-a|+|b'-b|+|c'-c|)\big)\; .$$
\end{assumption}

\vspace*{-0.5em}

The ``Good Assumptions'' on $G$ in the various cases are then the following

\label{page:GAGFL}
\begin{assumption}{\GAGFL}{Flux~limiter.}
$G$ is independent of $a,b,c$ and \HContG\  holds with
$\e_0=0$.
\end{assumption}

\label{page:GAGKT}
\begin{assumption}{\GAGKT}{Kirchhoff type.}
 \HContG  holds with $\e_0=0$ and \eqref{basic-hyp-GJC}  holds
with $\alpha \geq 0$, $\beta>0$.
\end{assumption}

\label{page:GAGFLT}
\begin{assumption}{\GAGFLT}{Flux-limited type.}
$G(x,t,a, p',b,c)=G_1(a, p',b,c) +G_2(x,t,a, p')$ 
where $G_1$ is a Lipschitz continuous function which satisfies \eqref{basic-hyp-GJC} with $\alpha > 0$, $\beta=0$ while 
$G_2$ satisfies \GAGFL.
\end{assumption}

The first two assumptions seem relatively natural, only the third one requires some comments: in
order to provide comparison results for the general junction condition \GJC, we are going to present
the Lions-Souganidis approach which is based on a ``tangential regularization'' of both the sub and
supersolution in the spirit of Sections~\ref{sec:regplus-subsol} and~\ref{sec:regul-supersol}. While
we are able to perform these regularizations in a rather general setting if $G$ is of ``Kirchhoff
type'' since \eqref{basic-hyp-GJC}\  holds with $\beta>0$, this is not the case anymore if
\eqref{basic-hyp-GJC}\  holds only with $\beta=0$. For this reason, we need \GAGFLT\  which is
(roughly speaking) the analogue of \TCs.

\section{What do we do in this part?} 

In the next two chapters of this part, we successively describe the notions of ``Flux-Limited Solutions'' and ``Junction
Viscosity Solutions'', and their properties. For each of them, we provide 
\begin{enumerate}
    \item [$(i)$] a general comparison result;
    \item[$(ii)$] a stability result;
    \item[$(iii)$] a convergence result of the vanishing viscosity method by specific arguments
        related to the corresponding notion of solution. 
\end{enumerate}

Moreover, for ``Flux-Limited solutions'', we also describe the connections with control problems.

It is worth pointing out that the notion of ``Junction Viscosity Solutions'' and the arguments of
Lions and Souganidis \cite{LiSo1, LiSo2} allow to obtain results which are valid without any
convexity assumption on the Hamiltonians, and in particular a very general comparison result,
despite some limitations due to \TC.  The theory for this notion of solutions is quite complete,
with very natural stability properties because of a definition which is very similar to the standard
viscosity solutions one.

Despite being very different, we prove in Chapter~\ref{sect:equiv.sols} that these notions of
solutions are ``almost equivalent'' in the case of flux-limited conditions \FL, at least in the
framework of quasi-convex Hamiltonians. We wrote ``almost'' because flux-limited subsolutions are
automatically regular as an easy consequence of their definition, while this is not the case for
junction viscosity subsolutions in general. Hence, complete equivalence holds if we assume that
the junction viscosity subsolutions are regular---which is true for instance in the case of
Kirchhoff conditions---.

In Chapter~\ref{sect:equiv.sols}, we provide the characterizations of the maximal and minimal
Ishii solutions in terms of other solutions. Last but not least, we show that junction viscosity sub
and supersolutions of various general junction conditions \GJC of Kirchhoff type are flux-limited
sub and supersolutions. The associated ``flux~limiter'' can be identified explicitly in terms of the
Hamiltonians $H_1,H_2$ of the equations in $\Omega_1,\Omega_2$ and of the nonlinearity of the
general junction conditions. These connections between general junction conditions \GJC of
Kirchhoff type and flux-limited conditions were extensively studied in \cite{IM,IM-md,IN} and they
are quite important because they allow to take advantage of the good stability properties of
``Junction Viscosity Solutions'' and the good connections of ``Flux-Limited Solutions'' with control
problems at the same time. The applications to the vanishing viscosity method and to the KPP problem
shows the efficiency of this machinery.

We conclude this part by a chapter describing all the results in a simple $1$-d framework very
similar to the scalar conservation law and then by various remarks on possible extensions or open
problems.

\chapter{Flux-Limited Solutions for Control Problems and Quasi-Convex Hamiltonians}
\label{chap:FLSP}

\abstract{This chapter is devoted to study flux-limited solutions \`a la Imbert-Monneau for
quasi-convex Hamiltonians: definition, stability and comparison properties are described in details}

In the control case, as it is clear from Chapter~\ref{chap:Ishii}, one may have in mind a specific
control problem on $\H$, i.e. a specific dynamic, discount and cost as in
Section~\ref{sect:codimIa}. In this setting, the most natural condition on $\H\times(0,\Tf)$ 
takes the form
\[ \tag{FL} 
u_t+G (x,t,u,D_\H u) = 0 \quad\hbox{on  }\H\times (0,\Tf)\; , \\
\]
which is called a ``flux-limited condition'' in the network literature (cf. Imbert and Monneau
\cite{IM,IM-md,IN}). Concrete modellings and applications lead to a variety of different
flux-limited conditions at the boundary, expressed as specific functions $G$.

\section{Definition and first properties}
\label{sec:def-FLS}

Let us first turn to the definition of ``flux-limited sub and supersolutions'' which requires the
introduction of some notations. 

\noindent \emph{In the case of control problems}, for $i=1,2$ the
Hamiltonians are given by
\be  \label{def:Ham-i}
    H_i(x,t,r,p):=\sup_{ \alpha_i \in A_i} \apg  -b_i(x,t,\alpha_i) \cdot p 
    +c_i(x,t,\alpha_i)r - l_i(x,t,\alpha_i)  \chg\;.
\ee
We then set $A_i^-:=\{ \alpha_i \in A_i\:: \: b_i(x,t,\alpha_i) \cdot e_N  \leq  0\}$ and
similarly $A_i^+:=\{ \alpha_i \in A_i\:: \: b_i(x,t,\alpha_i) \cdot e_N  >  0\}$, then
\be  \label{def:Ham-i-}
    H_i^-(x,t,r,p):=\sup_{\alpha_i\in A_i^-} 
    \apg  -b_i(x,t,\alpha_i) \cdot p +c_i(x,t,\alpha_i)r - l_i(x,t,\alpha_i)  \chg\;,
\ee
\be  \label{def:Ham-i+}
    H_i^+(x,t,r,p):=\sup_{\alpha_i\in A_i^+} 
    \apg  -b_i(x,t,\alpha_i) \cdot p +c_i(x,t,\alpha_i)r - l_i(x,t,\alpha_i) \chg\;.
\ee

Notice that the $+/-$ notation refers to the sign of $b_i \cdot e_N$ in the supremum, which implies
that $H^-_i$ ($i=1..2$) is nondecreasing with respect to $p_N$ (the normal gradient variable) while
the $H^+_i$ is nonincreasing with respect to $p_N$.

Finally, for the specific control problem on $\H$, we define for any $x\in\H$, $t\in [0,\Tf]$, $r\in
\R$, and $p_\H\in\R^{N-1}$
\be\label{def:G}
    G(x,t,r, p_\H):=\sup_{\alpha_0\in A_0}\{-b_0(x,t,\alpha_0)\cdot p_\H+
    c_0(x,t,\alpha_0)r-l_0(x,t,\alpha_0)\}\;.
\ee
For $i=1...2$, $b_i,c_i,l_i$ are at least bounded continuous functions defined on $\Omegb_i\times
[0,\Tf] \times A_i$ and $b_0,c_0,l_0$ are also bounded continuous functions defined on  $\H \times
[0,\Tf] \times A_0$. Therefore $H_1,H_2$ and $G$ are continuous.

\noindent \emph{In the case where the Hamiltonians are quasi-convex in $p$},
Section~\ref{sec:QCH-def} provides us with a definition of $H_i^+(x,t,r,p)$, $H_i^-(x,t,r,p)$ and we
assume that these functions and $G$ are continuous.

With these notations, we can give the definition of flux-limited viscosity sub and supersolutions
---\FLSub and \FLSup in short:
\index{Flux-limited solutions!definition} 

\begin{definition}\emph{--- Flux-limited solutions for quasi-convex
    Hamiltonians.}\label{defiFL}
    \begin{enumerate}
        \item[$(i)$] A locally bounded function $u: \R^N \times (0,\Tf) \rightarrow \R$ is a
            \FLSub of \HJgen-\FL if it is a classical viscosity subsolution of \HJgen and if, for
            any test-function $\psi \in \PC1$ and any local maximum point $(x,t) \in \H\times
            (0,\Tf) $ of $u^*-\psi$ in $\R^N\times (0,\Tf)$, at $(x,t)$ the following inequality
            holds 
            \[
              \max \Big(\psi_t + G(x,t,u^*,D_\H \psi), \psi_t+ H^+_1(x,t, u^*, D\psi_1) ,  
              \psi_t+ H^-_2(x,t, u^*, D\psi_2) \Big) \leq 0  \; ,
            \]
            where $u^*=u^*(x,t)$.

        \item[$(ii$)]  A locally bounded function $v: \R^N \times (0,\Tf) \rightarrow \R$
                is a \FLSup of \HJgen-\FL if it is a classical viscosity supersolution of \HJgen and
                if, for any test-function $\psi \in \PC1$ and any local minimum point $(x,t) \in
                \H\times (0,\Tf) $ of $v_*-\psi$ in $\R^N\times (0,\Tf)$, at $(x,t)$ the following
                inequality holds
            \[
              \max \Big(\psi_t + G(x,t,v_*,D_\H \psi), \psi_t+ H^+_1(x,t, v_*, D\psi_1) ,  
              \psi_t+ H^-_2(x,t, v_*, D\psi_2) \Big) \geq 0  \; ,
            \]
            where $v_*=v_*(x,t)$.
        \item[$(iii$)] A locally bounded function is a flux-limited solution if it is both a \FLSub and a \FLSup.
    \end{enumerate}
\end{definition}

Several remarks have to be made on this definition which is very different from the classical ones:
first we have a ``max'' both in the definition of supersolutions AND subsolutions; then we do not
use the full Hamiltonians  $H_i$ in the junction condition on $\H$ but $H^+_1$ and $H^-_2$. These
changes are justified when looking at the interpretation of the viscosity solutions inequalities in
the optimal control framework. Indeed
\begin{enumerate}
\item[$(i)$] the subsolution inequality means that any control is sub-optimal, $i.e.$ if one tries
to use a specific control, the result may not be optimal. But, of course, such a control has to be
associated with an ``admissible'' trajectory: for example, if we are on $\H$, a ``$b_1$'' pointing
towards $\Omega_2$ cannot be associated to a real trajectory, therefore it is not ``admissible'' and
this is why we use $H^+_1$. And an analogous remark justifies $H^-_2$. Finally the ``$\max$'' comes
just from the fact that we test all sub-optimal controls.  
\item[$(ii)$] Analogous remarks hold for the supersolution inequality, except that this inequality
    is related to the optimal trajectory, which has to be admissible anyway.
\end{enumerate}

With these remarks, the reader may be led to the conclusion that an ``universal'' definition of
solutions of \HJgen with the condition \FL can hardly exist: if we look at control problems where
the controller tries to maximize some profit, then the analogue of the $H^+_1$, $H^-_2$ above seem
still relevant because of their interpretation in terms of incoming dynamics but the $\max$ should
be replaced by $\min$ in both the definitions of sub and supersolutions.  Therefore it seems that
such particular definitions have to be used in each case since, again, the Kirchhoff condition does
not seem natural in the control framework.

As in the case of classical Ishii sub and supersolutions, we can define \FLSub and \FLSup using the
notions of sub and superdifferentials. We refer the reader to Section~\ref{sect:eqn-on-b} for the
introduction of these notions and various properties. Following this section, for $i=1,2$, we denote
by $Q_i=\Omega_1\times (0,\Tf)$ and $\overline{Q_i}^\ell=\Omegb_1\times (0,\Tf)$. 
As in Section~\ref{sect:eqn-on-b}, we restrict ourselves to the case of \usc
subsolution and \lsc supersolutions to simplify the notations but, in the general case, these
results have to be reformulated with either the \usc envelope of the subsolution or the \lsc
envelope of the supersolution.

\begin{proposition}\label{defiFL2}\emph{--- Flux-limited viscosity solutions via sub
    superdifferentials.}\smsp
        \noindent An \usc, locally bounded function $u: \R^N \times (0,\Tf) \rightarrow \R$ is a
    \FLSub of \HJgen-\FL if and only if
    \begin{enumerate}
        \item[$(i)$] for any $(x,t) \in Q_i$ ($i=1,2$) and for any $(p_x,p_t)\in D_{\overline{Q_i}^\ell}^+ u (x,t)$
    $$
    p_t + H_i (x,t,u(x,t),p_x) \leq 0,$$
    \item[$(ii)$] for any $(x,t) \in \H\times (0,\Tf)$ and for any $p_\H \in \H$, $p_1,p_2,p_t \in \R$
    such that $((p_\H,p_i),p_t)\in D_{\overline{Q_i}^\ell}^+ u (x,t)$ for $i=1,2$, noting $u=u(x,t)$,
    $$
    \max \Big(p_t + G(x,t,u,p_\H), p_t+ H^+_1(x,t, u, p_\H+p_1e_N) ,
    p_t+ H^-_2(x,t, u, p_\H+p_2 e_N) \Big) \leq 0 \;.
    $$
    \end{enumerate}
      
\noindent  A \lsc, locally bounded function $v: \R^N \times (0,\Tf) \rightarrow \R$
    is a \FLSup of \HJgen-\FL if and only if 
    \begin{enumerate}
        \item[$(i)$] for any $(x,t) \in Q_i$ ($i=1,2$) and for any $(p_x,p_t)\in D_{\overline{Q_i}^\ell}^-v (x,t)$
    $$
    p_t + H_i (x,t,v(x,t),p_x) \geq 0 \;,$$
    \item[$(ii)$] for any $(x,t) \in \H\times (0,\Tf)$ and for any $p_\H \in \H$, $p_1,p_2,p_t \in \R$
    such that $((p_\H,p_i),p_t)\in D_{\overline{Q_i}^\ell}^-v (x,t)$ for $i=1,2$, noting $v=v(x,t)$,
    $$
    \max \Big(p_t + G(x,t,v,p_\H), p_t+ H^+_1(x,t, v, p_\H+p_1e_N) ,  
    p_t+ H^-_2(x,t, v, p_\H+p_2 e_N) \Big) \geq 0 \;.
    $$
    \end{enumerate}
\end{proposition}

We omit the proof of Proposition~\ref{defiFL2} since it is an easy consequence of
Lemma~\ref{subdiff-hp-Omega} and Lemma~\ref{diff-twod}. As we already remark after the statement
of Lemma~\ref{diff-twod}, we point out that this equivalent definition via sub and
superdifferentials allows to show that, instead of using general $\mathrm{PC}^1$ test-functions, we
may consider only test-functions of the form $\chi(x_N)+\varphi (x,t)$ where $\chi \in {\rm PC}^1(\R)$
and $\varphi \in C^1(\R^N \times (0,\Tf))$. The reader will notice that we mainly use test-function
of this form in comparison proof, but this property is also useful to simplify the
proofs of several results.  

\begin{remark}
    Definition~\ref{defiFL} provides the notion of  ``flux-limited viscosity solutions'' for a
    problem with a codimension $1$ discontinuity but it can be used in different frameworks, in
    particular in problems with boundary conditions: we refer to Guerand \cite{G1} for results on
    state constraints problems and \cite{G2} in the case of Neumann conditions where ``effective
    boundary conditions and new comparison results are given, both works being in the case of
    quasi-convex Hamiltonians. 
\end{remark}

We give a first important property of \FLSub

\begin{proposition}\label{reg-sub-FL}\emph{--- Regularity of subsolutions.}\smsp
    Assume that \GAQC holds and that the Hamiltonian $G$ satisfies \GAGFL.
    Any \usc\ \FLSub is regular on $\H$.
\end{proposition}

\begin{proof}
    It is an immediate application of Proposition~\ref{reg-sub} since the Hamiltonian $\G$ defined
    for $x\in \R^N$, $t\in (0,\Tf)$, $r\in \R$, $(p,p_t)\in \R^{N+1}$ by $$ \G(x,t,r,(p,p_t)):= p_t
    + H_i(x,t,r,p) \; \hbox{if  }x \in \Omega_i, $$ $$ \G(x,t,r,(p,p_t)):= \max( p_t +
    H^+_1(x,t,r,p), p_t + H^-_2(x,t,r,p), p_t+G(x,t,r,p')) \; \hbox{if  }x \in \H, $$ satisfies the
    assumptions of this proposition with $y=(x',t)$, $z=x_N$, and in particular the normal
    controllability in the $x_N$-direction.  
\end{proof}

\section{Stability of flux-limited solutions}

In this section, we provide a result on the stability of flux-limited solutions. As the proof will
show it, such result is not an immediate extension of Theorem~\ref{hrl}; indeed, if the change of
test-functions does not really cause any problem, the formulation of flux-limited sub and
supersolutions with global Hamiltonians which are not \lsc or \usc is the source of difficulties.

The result is the\index{Flux-limited solutions!stability}

\begin{theorem}\label{FL-stab}\emph{--- Stability result for flux-limited solutions.}\smsp
    Assume that, for $\e >0$, $u_{\e}$ is a \FLSub \resp{\FLSup} for the problem with Hamiltonians
    $H_1^\e, H_2^\e, G^\e$.  We assume that $H_1^\e, H_2^\e, G^\e$ are continuous and $H_1^\e,
    H_2^\e$ satisfy \HQC.  If $H_1^\e, H_2^\e, G^\e$ converge locally uniformly to respectively
    $H_1, H_2, G$ and if the functions $u_{\e}$ are uniformly locally bounded on $ \R^N$, then
    $\ou=\limssup u_{\e}$ \resp{$\uu=\limiinf u_{\e}$} is a \FLSub \resp{\FLSup} for the problem
    with Hamiltonians $H_1, H_2, G$.  
\end{theorem}

\begin{proof}
    Due to the dissymmetry in the definitions of \FLSub and \FLSup, we have to give the proof in
    both cases.

    \noindent\textbf{(a)} We start by the \FLSub one. Of course, we have just to prove the result on
    $\H$ since, in $\Omega_1, \Omega_2$, the result is an easy application of Theorem~\ref{hrl}. Let
    $\psi=(\psi_1,\psi_2) \in \PC1$ and let $(x,t) \in \H \times (0,\Tf)$ be a strict local maximum
    point of $\ou -\psi$. We have to show that
    \[
    \max \Big(\psi_t + G(x,t,\ou,D_\H \psi), \psi_t+ H^+_1(x,t, \ou, D\psi_1) ,  
    \psi_t+ H^-_2(x,t, \ou, D\psi_2) \Big) \leq 0  \: .
    \]
    By Lemma~\ref{convptsmaxdisc}, there exists a subsequence $(x_{\e'},t_{\e'})$ of maximum point
    of  $u_{\e'}-\psi$ which converges to $(x,t)$ and such that $u_{\e'}(x_{\e'},t_{\e'})$ converges
    to $\ou(x,t)$. To get the $G$-inequality, we replace $\psi$ by $\psi+K| x_N|$. 
    Using the quasi-convexity property of $H_1$ and $H_2$, for $K$ large enough we get
    $$
        \psi_t+ H_1(x,t, \ou, D\psi_1) >0 \quad \hbox{and} \quad \psi_t+ H_2(x,t, \ou, D\psi_2)>0\; .
    $$
    Applying the result of Lemma~\ref{convptsmaxdisc} to this new $\psi$, we see that
    necessarily $x_{\e'} \in \H$. Then, passing to the limit in the \FLSub inequality for
    $(H^{\e'}_1)^+, (H^{\e'}_2)^-, G^{\e'}$, we end up with $\psi_t + G(x,t,\ou,D_\H \psi)\leq 0$
    since the term $K|x_N|$ does not affect $D_\H \psi$.

    It remains to prove the $H^+_1$ and $H^-_2$ inequalities and to do so, we come back to the
    original $\psi$. We assume that $\psi_t+ H^+_1(x,t, \ou, D\psi_1) >0$ and change $\psi$ into
    $\psi+K(x_N)_-$, for $K$ large enough.

    For $\e'$ small enough, $x_{\e'}$ cannot be in $ \Omega_1$: since $H_1\geq H^+_1$
    implies $\psi_t+ H_1(x,t, u, D\psi_1) >0$, hence the $H_1^{\e'}$ inequality cannot hold
    for $\e'$ small enough. Similarly, $x_{\e'}$ cannot be on $\H$ because of the $H^+_1$
    inequality. Finally $x_{\e'}$ cannot be in $\Omega_2$ for $K$ large enough, therefore we reach a
    contradiction which implies that $\psi_t+ H^+_1(x,t, \ou, D\psi_1) \leq0$.

    Arguing the same way for the case $\psi_t+ H^-_2(x,t, \ou, D\psi_2)>0$, the subsolution
    inequality is proved.

    \bigskip

    \noindent\textbf{(b)} For the \FLSup case, again we just have to treat the inequalities on $\H$
    and we assume that $(x,t) \in \H \times (0,\Tf)$ is a strict local minimum point of $\uu -\psi$
    where $\psi=(\psi_1,\psi_2) \in \PC1$. We have to show that 
    \[ 
        \max \Big(\psi_t + G(x,t,\uu,D_\H
        \psi), \psi_t+ H^+_1(x,t, \uu, D\psi_1) ,  \psi_t+ H^-_2(x,t,\uu, D\psi_2) \Big) \geq 0  \: .
    \]
    We argue by contradiction assuming that the three quantities in the $\max$ are strictly
    negative. Similarly to the \FLSub case, we claim that we can choose $K_1,K_2\geq 0$ such that
    $$
        \psi_t+ H_1(x,t, \uu, D\psi_1-K_1e_N) < 0 \quad \hbox{and} \quad 
        \psi_t+ H_2(x,t, \uu, D\psi_2+ K_2e_N)<0\;,
    $$
    which follows here also from the quasi-convexity of $H_1$ and $H_2$. To use
    it, we change $\psi$ in $\psi -K_1(x_N)_+-K_2(x_N)_-$ and notice that $(x,t)$ is still is a
    strict local minimum point of $\uu -\psi$ for this new $\psi$.

    Applying again Lemma~\ref{convptsmaxdisc}, there exists a subsequence $(x_{\e'},t_{\e'})$ of
    minimum points of  $u_{\e'}-\psi$  which converges to $(x,t)$ and such that
    $u_{\e'}(x_{\e'},t_{\e'})$ converges to $\uu(x,t)$. And we examine the possible inequalities for
    $(x_{\e'},t_{\e'})$. Clearly $x_{\e'}$ can be neither in $\Omega_1$ nor in $\Omega_2$ for $\e'$
    small enough because of the above property. 
    Hence $x_{\e'}\in \H$ and the \FLSup inequality holds for $(H^{\e'}_1)^+, (H^{\e'}_2)^-,
    G^{\e'}$. But passing to the limit as $\e'\to0$ in these inequalities yields a contradiction, so
    the supersolution inequality holds.
\end{proof}

\begin{remark}
    The main weakness of Theorem~\ref{FL-stab} is to be strictly restricted to the framework of
    flux-limited solutions for problems with quasi-convex Hamiltonians. Therefore it is not very
    flexible, in particular if we compare it with Theorem~\ref{JV-stab} in the case of junction
    viscosity solutions.
\end{remark}

\section{Comparison results for flux-limited solutions and applications}
\label{chap:FL}

This section is devoted to prove comparison results for flux-limited solutions; the original proofs
given in \cite{IM, IM-md} were based on the rather technical construction of a ``vertex function''.
We present here the simplified proof(s) of \cite{BBCI}.

\subsection{The convex case}

The main result here is the following. 
\index{Comparison result!for flux-limited solutions}
\index{Flux-limited solutions!comparison in the convex case}

\begin{theorem}\emph{--- Comparison principle, the convex case.} \label{comp-IM}\smsp
    Assume that either \GAConv or \GACC holds, that the Hamiltonian $G(x,t,r,p')$ is convex in
    $(r,p')$ and satisfies \GAGFL. If $u,v : \R^N \times (0,\Tf) \rightarrow \R$ are respectively an
    \usc bounded flux-limited subsolution and a \lsc bounded flux-limited supersolution of
    \HJgen-\FL and if $u(x,0) \leq v(x,0)$ in~$\R^N$,  then $u \leq v$ in $\R^N \times (0,\Tf)$.
\end{theorem}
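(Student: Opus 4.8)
The plan is to mimic the three-step pattern already used for the characterization of $\VFm$ (Theorem~\ref{thm:minimal.charac}) and for the maximality of $\VFp$ (Theorem~\ref{thm:Vp-max}): reduce the global comparison to a \emph{local} one near a point of $\H\times(0,T)$, regularize the subsolution so that it becomes $C^1$ along $\H$ while remaining a strict flux-limited subsolution, and then conclude by the variant of the fundamental comparison Lemma~\ref{lem:comp.fundamental} announced in Remark~\ref{rem:comp.fundamental}, in which the flux-limiter $G$ plays the role of the tangential Hamiltonian $\F^{\mathcal{M}}$. Away from $\H$ the equation is standard and Theorem~\ref{comp:LC} applies, and $u\leq v$ on $\{t=0\}$ is assumed, so only points of $\H\times(0,T)$ matter. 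After the classical change $u\mapsto\exp(Kt)u$, $v\mapsto\exp(Kt)v$ (see p.~\pageref{page:c.positif}) one may assume $c_i\geq0$, so that $H_1,H_2$ and $G$ are nondecreasing in the $u$-variable; then $\psi(x,t)=-\delta(1+|x|^2)^{1/2}-\delta^{-1}(1+t)$ is, for $\delta$ small enough, a strict subsolution of \HJgen and of the $G$-flux condition on $\H$, hence $u_\mu:=\mu u+(1-\mu)\psi$ is a strict flux-limited subsolution with $u_\mu-v\to-\infty$ at infinity. This yields \LOCa-evol, while \LOCb-evol follows with $u_{\delta'}=u-\delta'(|x-x_0|^2+|t-t_0|^2)$, and Proposition~\ref{reducCR-evol} reduces everything to a local comparison in a small cylinder $\cyl$ centred at a point of $\H$, where $u_\mu$ is an $\eta$-strict flux-limited subsolution.

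Next I would regularize $u_\mu$ in the tangential variables only. By Proposition~\ref{reg-sub}, applied on each side of $\H$ through \NC and \TC, a flux-limited subsolution has no singular traces on $\H$, so the tangential sup-convolution and mollification of Propositions~\ref{reg-by-sc} and~\ref{C1-reg-by-sc} produce a sequence $(u_{\mu,\eps})_\eps$ in $C^0(\cylb)\cap C^1\big((\H\times[0,T])\cap\cylb\big)$ of $(\eta/2)$-strict flux-limited subsolutions with $\limssup u_{\mu,\eps}=u_\mu$: this uses \HConv, \NC, \TC and \Monu for $H_1,H_2$, together with the fact that $G$ depends only on the tangential gradient and satisfies the analogous convex and tangential properties, so that the tangential convolution preserves the $G$-inequality as well. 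In particular, writing $\mathcal{M}:=(\H\times[0,T])\cap\cylb$, one has $\partial_t u_{\mu,\eps}+G(x,t,u_{\mu,\eps},D_T u_{\mu,\eps})\leq-\eta/2<0$ classically on $\mathcal{M}$; and since here $b^t\equiv-1$ and, by Theorem~\ref{comp:LC}, a \LCR holds for the $\F$-equation in $\cyl\setminus\mathcal{M}$, Theorem~\ref{thm:sub.Qk.dpp.bt} provides a strict sub-dynamic programming principle for $u_{\mu,\eps}$ off $\mathcal{M}$.

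Finally I would feed $u=u_{\mu,\eps}$ and $v$ into the variant of Lemma~\ref{lem:comp.fundamental}. The two properties just obtained are exactly hypotheses $(ii)$ and $(iii)$ of that lemma, so it remains to check, on the supersolution side, that $v$ obeys the corresponding super-dynamic programming inequality along trajectories which reach and remain on $\H$, with tangential condition $\partial_t v+G(x,t,v,D_T v)\geq0$ — equivalently, that the only $\H$-dynamics relevant for such trajectories are the specific tangential control $(b_0,c_0,l_0)$ and the \emph{incoming} ones ($b_1\cdot e_N\geq0$ from $\Omega_1$, $b_2\cdot e_N\leq0$ from $\Omega_2$), so that the ``Case~B'' computation in the proof of Lemma~\ref{lem:comp.fundamental} produces precisely the $G$-Hamiltonian and yields $\geq0$. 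This is what is encoded in Definition~\ref{defiFL} of flux-limited supersolutions, reinforced by the equivalence with junction viscosity supersolutions (Proposition~\ref{prop:equivFL-LS}) and by a convex-combination and measurable-selection argument in the spirit of Lemma~\ref{bon-bcl}, Theorem~\ref{teo:condplus.VFp} and Proposition~\ref{prop:extraction.gen}; alternatively, one introduces the flux-limited value function $\Uim$ and establishes $u\leq\Uim\leq v$. Granting this, Lemma~\ref{lem:comp.fundamental} gives $(u_{\mu,\eps}-v)(y,s)<\max_{\partial_p\cyl}(u_{\mu,\eps}-v)$ for $(y,s)$ in the interior of $\cyl$; letting $\eps\to0$ with $u_\mu=\limssup u_{\mu,\eps}$, and then $\mu\to1$, one obtains \LCR, hence \GCR by Proposition~\ref{reducCR-evol}, that is $u\leq v$ in $\R^N\times(0,T)$.

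\textbf{The main obstacle} is this last point on the supersolution side: extracting from the (a priori rather weak) flux-limited supersolution inequality the correct tangential super-dynamic programming principle, namely that when an optimal trajectory for $v$ is trapped on $\H$ the Dynamic Programming argument only ``sees'' the flux-limiter $G$ together with the incoming Hamiltonians $H_1^+,H_2^-$, and never the full $H_1,H_2$. Everything else is a fairly routine assembly of the tools of Part~\ref{Part:prelim}.
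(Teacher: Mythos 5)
Your localization and tangential-regularization steps do match the paper's Steps 1--2 (reduction to \LCR-evol via \LOCa-evol/\LOCb-evol, then Theorem~\ref{reg-by-sc} to make the subsolution Lipschitz and $C^1$, semi-convex in $(t,x')$). But the way you propose to conclude does not work, and the obstacle you flag at the end is not a technicality to be patched: it is the reason the paper does not argue this way at all. Lemma~\ref{lem:comp.fundamental} (and its variant of Remark~\ref{rem:comp.fundamental}) needs, on the supersolution side, a super-dynamic programming principle whose ``Case~B'' computation along trajectories trapped on $\H$ produces exactly the Hamiltonian appearing in the strict subsolution inequality for $u$. For an arbitrary \lsc flux-limited supersolution $v$ there is no such statement available: Lemma~\ref{lem:super.dpp} only gives the super-DPP over \emph{all} trajectories of the differential inclusion, whose Case~B yields the full tangential Hamiltonian $H_T$, whereas your regularized subsolution only satisfies the $G$-, $H_1^+$- and $H_2^-$-inequalities on $\H$ (not $u_t+H_T\leq0$, which characterizes $\VFm$, not a general flux-limited subsolution). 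The restricted super-optimality principle you would need is proved in the paper only for the specific supersolution $(\VFp)_*$ (Theorem~\ref{teo:condplus.VFp}), via the regularity of its optimal trajectories; there is no analogue for a general $v$, and your suggested substitutes are circular (identifying $\Uim_G$ as a flux-limited solution and proving $\Uim_G\leq v$ already uses the comparison theorem; Proposition~\ref{prop:equivFL-LS} gives no DPP).

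The paper's actual proof sidesteps trajectories entirely after the regularization step. At the interior maximum point $(\tilde x,\tilde t)\in\H$ of $u-v$, using the semi-convexity to define $a=u_t(\tilde x,\tilde t)$ and $p'=D_{x'}u(\tilde x,\tilde t)$, it solves
\begin{equation*}
a+H_1^-(\tilde x,\tilde t,p'+\lambda_1 e_N)=-\eta/2,\qquad a+H_2^+(\tilde x,\tilde t,p'+\lambda_2 e_N)=-\eta/2,
\end{equation*}
and inserts the piecewise-linear correction $\chi(x_N,y_N)=h(x_N)-h(y_N)$, $h(t)=\lambda_1 t_+-\lambda_2 t_-$, into the doubling-of-variables test function. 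The monotonicity of $H_1^\pm,H_2^\pm$ in the $e_N$-direction then lets one read the flux-limited sub- and supersolution inequalities (with their $\max$ over $G$, $H_1^+$, $H_2^-$) directly at the doubled maximum points and reach a contradiction case by case. This construction of $\lambda_1,\lambda_2$ and of $\chi$ is the key idea missing from your proposal.
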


\begin{proof} 
    In order to simplify the proof, we provide it only in the case when the Hamiltonians $H_1,H_2,G$
    are independent of $u$; the general case only contains minor additional technical difficulties.

    \bigskip

    \noindent\textbf{(a)} \emph{Reduction of the proof ---}
    First we follow Section~\ref{sect:htc} and  check \LOCa-evol: the function $\chi : \R^N
    \times (0,\Tf) \rightarrow \R$ defined by $$\bar
    \chi(x,t):=-Kt-(1+|x|^2)^{1/2}-\frac{1}{\Tf-t}\; ,$$ is, for $K>0$ large enough, a strict
    subsolution of \HJgen-\FL with $\bar \chi(x,t) \to -\infty$ when $|x|\to +\infty$ or $t\to
    \Tf^-$. We replace $u$ by either $u_\mu:=u + (1-\mu) \bar \chi $ (a choice which does not use
    the convexity of the Hamiltonians) or $u_\mu:=\mu u + (1-\mu) \bar \chi $ (a choice which uses
    the convexity of the Hamiltonians). Borrowing also the arguments of Section~\ref{sect:htc},
    \LOCb-evol also holds and therefore we are led to show that \LCR-evol is valid in the case when
    $u$ is an $\eta$-strict subsolution of \HJgen-\FL.

    For a point $(\xb,\tb)$ where $\xb\in \Omega_1$ or $\xb\in \Omega_2$, the proof of \LCR-evol in
    $\overline{Q^{\xb,\tb}_{r,h}}$ is standard, hence we have just to treat the case when $\xb \in
    \H$. At this point, we make an other reduction in the proof: using Section~\ref{sect:sup.reg},
    with $y=(t,x')$ and $z=x_N$, since  \GAConv or \GACC are nothing but Assumptions \TC,\NCe and
    \Mong, Theorem~\ref{reg-by-sc} applies. As a consequence, we can assume \wlg that $u$ is
    Lipschitz continuous with respect to all variables and semi-convex in the $(t,x')$-variables.
    But we may also use the ideas of Proposition~\ref{C1-reg-by-sc} to obtain a subsolution which is
    $C^1$ in $(t,x')$ with $u_t$ and $D_{x'}u$ continuous \wrt all variables: indeed, we can apply
    the ideas of the proof of Proposition~\ref{C1-reg-by-sc} separately in $\Omega_1$, $\Omega_2$
    and $\H$ to obtain the $H_1$, $H_2$ and $G$ inequalities for the regularized function, while the
    $H_1^+$ and $H_2^-$ ones are deduced from Proposition~\ref{sub-up-to-b}.

    Then we assume that
    $$\displaystyle M:= \max_{\overline{Q^{x,t}_{r,h}}}(u-v)>0\;.$$ If
    this maximum is achieved on $\partial_p Q^{x,t}_{r,h}$, the result is obvious so we may assume
    that it is achieved at $(\tilde x, \tilde t) \notin \partial_p Q^{x,t}_{r,h}$.  Again, if
    $\tilde x \in \Omega_1$ or $\tilde x \in \Omega_2$, we easily obtain a contradiction and
    therefore we can assume that $\tilde x \in \H$.
  
    \bigskip

    \noindent\textbf{(b)} \emph{Building the test function ---}
    Setting $a=u_t (\tilde x, \tilde t)$, $p'=D_{x'} u (\tilde x, \tilde t)$, we claim that we can
    solve the equations 
    $$ a + H_{1}^- (\tilde x, \tilde t,\tilde p'+\lambda_1 e_N)= -\eta/2
    \quad,\quad  a + H_{2}^+ (\tilde x, \tilde t,\tilde p'+\lambda_2 e_N)=-\eta/2
     \;,$$
    where we recall that $-\eta$ is the constant which measures the strict subsolution property of
    function $u$.

    In order to prove the existence of $\lambda_1$, we look at maximum points of
    $$ u(x,t) - \frac{|x-\tilde x|^2}{\e^2}-\frac{|t-\tilde t|^2}{\e^2} - \frac{\e}{x_N}$$ in
    $(\overline{Q^{x,t}_{r,h}})\cap (\Omega_1\times :[0,\Tf])$, and for $0<\e\ll 1$. This function
    achieves its maximum at $(\xe,\te)$ which converges to $(\tilde x, \tilde t)$ as $\e \to 0$ and
    by the semi-convexity of $u$ in $t$ and $x'$, one has $$ u_t (\xe,\te)+ H_1(\xe,\te, D_{x'} u
    (\xe,\te) + \lambda_\e e_N) \leq -\eta\;,$$ for some $\lambda_\e \in \R$. Moreover, $\lambda_\e$ 
    is bounded \wrt $\e$ since $u$ is Lipschitz continuous. 
    Letting $\e$ tend to $0$ and using that $u_t (\xe,\te)\to a, D_{x'}
    u (\xe,\te)\to p'$ by the semi-convexity property of $u$, together with the extraction of a
    subsequence for $(\lambda_\e)_\e$, we get a $\bar \lambda\in \R$ such that 
    $$ a + H_{1} (\tilde x, \tilde t,\tilde p'+\bar\lambda e_N)\leq  -\eta\;.$$ 
    Since $H_{1}^- \leq H_1$, it follows that 
    $a + H_{1}^- (\tilde x, \tilde t,\tilde p'+\bar\lambda e_N)\leq  -\eta$. Then we use the
    fact that $\lambda \mapsto a + H_{1}^- (\tilde x, \tilde t,\tilde p'+\lambda e_N)$ is
    continuous, nondecreasing on $\R$ and tends to $+\infty$ when $\lambda\to +\infty$ to get the
    existence of $\lambda_1>\bar\lambda$ solving the equation with $-\eta/2$. In this framework,
    $\lambda_1$ is necessarily unique since the convex function $\lambda \mapsto a + H_{1}^- (\tilde
    x, \tilde t,\tilde p'+\lambda e_N)$ only has flat parts at its minimum, while clearly
    $\lambda_1$ is not a minimum point for this function.  
    The proof for $\lambda_2$ is analogous and we skip it.

    In order to build the test-function, we set, for $z \in \R$, $h(z):=\lambda_1z_+-\lambda_2z_-$ 
    where $z_+=\max(z,0)$, $z_-=\max(-z,0)$, and
    \begin{equation} \label{defchila}
        \chi(x_N,y_N): = h(x_N)-h(y_N)=
        \apg
        \begin{array}{ll}
        \lambda_1 ( x_N - y_N) & \mbox{ if }  x_N \geq 0 \: , \: y_N \geq 0\;, \\
        \lambda_1  x_N-\lambda_2
        y_N & \mbox{ if }  x_N \geq 0 \: , \: y_N < 0\;, \\
        \lambda_2 x_N - \lambda_1
        y_N &  \mbox{ if }  x_N < 0 \: , \: y_N \geq 0\;, \\
        \lambda_2( x_N - y_N ) & \mbox{ if }  x_N < 0 \: , \: y_N < 0 \;.
        \end{array} \ch
    \end{equation}
    Then, for $0< \e \ll 1$ we define a test function as follows
    $$
    \psi_{\e}(x,t,y,s):= \frac{|x-y|^2}{\e^2} + \frac{|t-s|^2 }{\e^2}+\chi(x_N,y_N)+
    |x-\tilde x|^2+|t-\tilde t|^2\; .$$
    In view of the definition of $h$, we see that for any $(x,t) \in \R^N\times [0,\Tf]$ the
    function $\psi_{\e}(x,\cdot,t,\cdot) \in \PC1$ and for any $(y,s) \in \R^N\times [0,\Tf]$ the
    function $\psi_{\e}(\cdot,y,\cdot,s) \in \PC1$.

    We now look at the maximum points of
    $$ (x,t,y,s) \mapsto u(x,t)-v(y,t)-\psi_{\e}(x,t,y,s)\quad\text{in}\quad
    \left[\overline{Q^{x,t}_{r,h}}\right]^2\;.$$
    By standard arguments, this function has maximum points
    $(\xe,\te,\ye,\se)$ such that $(\xe,\te,\ye,\se)\to (\tilde x,\tilde t,\tilde x,\tilde t)$. 
    Moreover, using the semi-convexity of $u$, we have
    $$ p'_\e=\frac{2(\xe'-\ye')}{\e^2} \to p' \quad\hbox{and} 
    \quad \frac{2(\te-\se) }{\e^2}\to a\; ,$$
    which the Lipschitz continuity of $u$ implies that
    $ (p_\e)_N=2((\xe)_N-(\ye)_N)/\e^2$  remains bounded.

    \bigskip

    \noindent\textbf{(c)} \emph{Getting contradictions ---}
    We have to consider different cases depending on the position of $\xe$ and $\ye$ in $\R^N$. Of
    course, we have no difficulty for the cases $\xe,\ye \in \Omega_1$ or $\xe,\ye \in \Omega_2$,
    and even less because of the above very precise properties on the derivatives of the
    test-function; only the cases where $\xe$, $\ye$ are in different domains or on $\H$ cause
    problem. So, we are left with considering three cases
    \begin{enumerate}
        \item $\xe \in \Omega_1$, $\ye \in \Omegb_2$ or 
            $\xe \in \Omega_2$, $\ye \in \Omegb_1$.
    \item $\xe \in \H$, $\ye \in (\Omega_1\cup \Omega_2)$.
    \item  $\xe \in \H$, $\ye \in \H$.
    \end{enumerate}

    \noindent\textbf{Case $\mathbf{1}$:}  
    If $\xe \in \Omega_1$, $\ye \in \Omega_2 \cup \H$, we use that $u$ is an $\eta$-strict
    $H_1$-subsolution and taking into account the specific form of the test-function above
    we get
    \begin{equation}\label{ineq:subsol}
    a+o_\e(1)+  H_1(\xe,\te, p' + o(1) + \lambda_1 e_N + (p_\e)_N e_N) \leq -\eta\;.
    \end{equation}
    Then, using that $H_1\geq H_1^-$ and the fact that every term in $H_1$ remains in a compact
    subset, we also have
    $$a+  H_1^-(\tilde x,\tilde t, p' + \lambda_1 e_N + (p_\e)_N e_N) \leq -\eta +o_\e(1)\; .$$
    Now, since $(p_\e)_N\geq 0$, thanks to the monotonicity of $H_1^-$ in the $e_N$-direction
    we obtain
    $$a+  H_1^-(\tilde x,\tilde t, p' + \lambda_1 e_N) \leq -\eta +o_\e(1)\; ,$$
    which is a contradiction with the definition of $\lambda_1$. The case $\xe \in \Omega_2$, $\ye
    \in \Omega_1 \cup \H$ is of course analogue and we skip it.

    \bigskip

    \noindent\textbf{Case $\mathbf{2}$:} Since $\xe \in \H$, the subsolution inequality holds
    \[\begin{aligned}
    \max \Big(  a+ G(\tilde x,\tilde t, p') \ ;\  &
    a+  H_1^+ (\tilde x,\tilde t, p' + \lambda_1 e_N + (p_\e)_N e_N)  \  ;\ \\
    & a+ H_2^-(\tilde x,\tilde t, p' + \lambda_1 e_N + (p_\e)_N e_N)  \Big) \leq -\eta +o_\e(1)\; .
    \end{aligned}
    \]
    On the other hand, if $\ye \in \Omega_1$, since $v$ is a $H_1$-supersolution in $\Omega_1$ and
    of course $(y_\e,t_\e)\to(\tilde x,\tilde t)$,
    \be \label{dis1}
    a+ H_1(\tilde x,\tilde t, p' + \lambda_1 e_N + (p_\e)_N e_N) \geq o_\e(1)\;.
    \ee
    Now the aim is to show that the same inequality holds for $H_1^+$ and to do so, we evaluate this
    quantity for $H_1^-$: taking into account the fact that here $(p_\e)_N \leq 0$, the monotonicity
    of $H_{1}^-$ in the $e_N$-direction yields 
    $$a+ H_1^- (\tilde x,\tilde t, p' + \lambda_1 e_N + (p_\e)_N e_N) \leq -\eta/2 + O_\e(1)< 0
    \;\hbox{if $\e$ is small enough.}$$
    But since $H_1=\max(H_{1}^-,H_{1}^+)$, from \eqref{dis1} we actually deduce that
    $$a+ H_1^+(\tilde x,\tilde t, p' + \lambda_1 e_N + (p_\e)_N e_N)\geq o_\e(1)\;,$$
    which gives a contradiction when compared with the subsolution property on $\H$. The same
    contradiction is obtained in the case $\ye \in \Omega_2$, using $\lambda_2$ and $H_2^+$ instead
    of $\lambda_1$ and $H_1^-$.

    \bigskip

    \noindent\textbf{Case $\mathbf{3}$:} If $\xe \in \H$, $\ye \in \H$, we have viscosity sub and
    supersolution inequalities for the same Hamiltonian and the contradiction follows easily. So,
    the proof is complete.  
\end{proof}

\subsection{The quasi-convex case }
\label{subsec:gen.QC}

    In fact, Theorem~\ref{comp-IM} extends without difficulties in the ``quasi-convex'' case and we
    have the\index{Flux-limited solutions!comparison in the quasi-convex case}
    
\index{Comparison result!for flux-limited solutions (quasi-convex case)}
\begin{theorem}\label{comp-IM-nc}\emph{--- Comparison principle, the quasi-convex case.}\smsp
    The result of Theorem~\ref{comp-IM} remains valid if \GAQC holds and $G$ satisfies \GAGFL.
\end{theorem}

\begin{proof}
    We just sketch it since it follows very closely the proof of Theorem~\ref{comp-IM}.  The only
    difference here is that Section~\ref{sect:sup.reg} only allows to reduce to the case when the
    strict subsolution $u$ is Lipschitz continuous and semi-convex in the $(t,x')$-variables, not
    $C^1$. This obliges us to first look at a maximum of 
    $$ (x,t,y,s) \mapsto u(x,t) -v (y,s) -\frac{|x'-y'|^2}{\e^2} - \frac{|t-s|^2 }{\e^2}\; ,$$
    where $x=(x',x_N)$, $y=(y',x_N)$, which is, of course, an approximation of 
    $\displaystyle \max_{\overline{Q^{x,t}_{r,h}}}\,(u-v)$.

    If $(\tilde x, \tilde t,\tilde y, \tilde s)$ is a maximum point of this function, 
    the semi-convexity of $u$ implies that $u$ is differentiable \wrt $x'$ and $t$ at $(\tilde x,
    \tilde t)$ and we have
    $$ a:=\frac{2(\tilde t-\tilde s)}{\e^2}=u_t (\tilde x, \tilde t)
    \quad\hbox{and}\quad p':=\frac{2(\tilde x'-\tilde y')}{\e^2}=D_{x'} u (\tilde x, \tilde t)\;.$$
    Then we solve the $(\lambda_1,\lambda_2)$-equations with such $a$ and $p'$; it is worth pointing
    out that $\lambda_1$ and $\lambda_2$ are not uniquely defined but this is not important in the
    proof.

    Finally we consider the maxima of the function
    $$\begin{aligned}
        (x,t,y,s) \mapsto u(x,t) -v (y,s) &  -\frac{|x'-y'|^2}{\e^2} - \frac{|t-s|^2 }{\e^2}\\
        & -\chi(x_N,y_N)-\frac{|x_N-y_N|^2}{\gamma^2} - |x-\tilde x|^2 - |t-\tilde t|^2\; ,
    \end{aligned}$$
    where $0<\gamma\ll 1$ is a parameter devoted to tend to $0$ first.
    Using the normal controllability assumption with variables $X=(x',t)$, $Z=x_N$, it is easy to show that 
    $$ |(p_\e)_N|=\frac{2|(\xe)_N-(\ye)_N|}{\gamma^2}=O(|p'_\e|+|a|+1)\;,$$
    which is bounded since $u$ is Lipschitz continuous in the tangent variables $(x',t)$.
    This allows to perform all the arguments of the proof as in the convex case.
    Notice that, even if it is not $C^1$-smooth, the semi-convexity of $u$ ensures that $u_t
    (\xe,\te)\to a, D_{x'} u(\xe,\te)\to p'$.  
\end{proof}

\section{Flux-limited solutions and control problems}
\label{sect:control.NA}

In this section, we come back on the control problem of Section~\ref{sect:codimIa} which we address
here from a different point of view.\index{Control problem!with flux~limiters}

In order to do that, we first have to define the admissible trajectories among all the solutions of
the differential inclusion: we say that a solution $(X,D,L)(\cdot)$ of the differential inclusion
starting from $(x,t,0,0)$ is an admissible trajectory if
\begin{enumerate}
\item there exists a global control $a=(\alpha_1,\alpha_2,\alpha_0)$
  with $\alpha_i\in \mathcal{A}_i:=L^\infty(0,\infty;A_i)$ for
  $i=0,1,2$;
\item there exists a partition $\I=(\I_1, \I_2, \I_0)$ of
  $(0,+\infty)$, where $\I_1, \I_2, \I_0$ are measurable sets, such
  that $X(s)\in\overline{{\Omega}_i}$ for any $s\in \I_i$ if $i=1,2$ and
  $X(s)\in\H$ if $s\in \I_0$;
\item for almost every $0\leq s\leq t$ 
\be\label{eqn:admtraj}
(\dot{X},\dot{D},\dot{L})(s)=\sum_{i=0}^2 (b_i ,c_i,l_i) (X(s),t-s,\alpha_i(s))\1_{\I_i} (s)\;.  
\ee
\end{enumerate}

In Equation~\eqref{eqn:admtraj}, we have dropped $T(s)$ since we are in the $b^t\equiv -1$ case and
therefore $T(s)= t-s$ for $s\leq t$. The set of all admissible trajectories $(X,\I,a)$ issued from a
point $X(0)=x\in\R^N$ (at $T(s)=t)$ is denoted by $\mT_x$. Notice that, under the controllability
assumption \NCoH, for any point $x\in\Omegb_1$, there exist trajectories starting from
$x$, which stay in $\Omegb_1$, and the same remark holds for points in
$\Omegb_2$. These trajectories are clearly admissible (with either $\I_1\equiv \I$ or
$\I_2\equiv \I$) and therefore $\mT_x$ is never void. 

\begin{remark}
    It is worth pointing out that, in this approach, the partition $\I_0, \I_1, \I_2$ which we
    impose for admissible trajectories, implies that there is no mixing on $\H$ between the dynamics
    and costs in $\Omega_1$ and $\Omega_2$, contrarily to the approach of
    Section~\ref{sect:codimIa}. A priori, on $\H$, either we have an independent control problem or
    we can use either $(b_1, c_1,l_1)$ or $(b_2, c_2,l_2)$, but no combination of $(b_1,c_1,l_1)$
    and $(b_2,c_2,l_2)$.
\end{remark}

The value function is then defined as 
$$  \Uim_G(x,t):=\inf_{(X,\I,a)\in\mT_x} \left\{ \int_0^{t} 
    \bigg(  \sum_{i=0}^2 l_i (X(s),t-s,\alpha_i(s))\1_{\I_i}(s)\bigg)
    e^{-D(s)}\d s + u_0(X(t)) \right\}\;,
$$
where $u_0\in C(\R^N)$.

As always, the first key ingredient to go further is the
\index{Dynamic Programming Principle!for flux-limited value functions}
\begin{lemma}\emph{--- Dynamic Programming Principle.}\smsp
    Under assumption \GACC, the value function $\Uim_G$ satisfies: for all $(x,t)\in \R^N \times
    (0,\Tf]$ and $\tau<t$ 
    $$  \Uim_G(x,t)=\inf_{(X,\I,a)\in\mT_x} \left\{ \int_0^{\tau} 
    \bigg(  \sum_{i=0}^2 l_i (X(s),t-s,\alpha_i(s))\1_{\I_i}(s)\bigg)
    e^{-D(s)}\d s + \Uim_G(X(\tau),t-\tau) \right\}\; .
    $$
\end{lemma}
We leave the easy proof of this lemma to the reader, which is standard. Now, using standard
arguments based on the Dynamic Programming Principle and the comparison result, we have
the\index{Flux-limited solutions!and control problems}
\begin{theorem}\label{thm:cont-FL}
    Under assumption \GACC and if $u_0\in C(\R^N)$, the value function $\Uim_G$ is the unique
    flux-limited solution of \HJgen-\FL with $G=H_0$ given by
     $$ H_0(x,t,r,p)= \sup_{\alpha_0\in A_0} 
    \apg  -b_0(x,t,\alpha_i) \cdot p +c_0(x,t,\alpha_i)r - l_0(x,t,\alpha_i) \chg\;.$$
\end{theorem}

\begin{proof}
We describe some non-obvious parts of the proof, in particular those to show that the value function
$\Uim_G$ is a flux-limited solution of \HJgen-\FL.  As we will explain at the end of the proof,
continuity of $\Uim_G$ and its uniqueness are an immediate consequence of Theorem~\ref{comp-IM}.

\medskip

    \noindent\textbf{(a)} \emph{Subsolution property.} 

    Of course, the only difficulty is to prove this property on $\H \times (0,\Tf]$, the cases of
    $\Omega_1 \times (0,\Tf]$ and $\Omega_2 \times (0,\Tf]$ being classical. To do so, we have to
    show that
    \begin{equation}\label{ineq:FL.sub}
    ( \Uim_G)^*_t-b_i(x,t,\alpha_i)\cdot D (\Uim_G)^*+c_i(x,t,\alpha_i)(\Uim_G)^*-l_i(x,t,\alpha_i)\leq 0\; ,
    \end{equation}
    for any $i=0,1,2$ any $\alpha_i \in A_i$ with $b_1(x,t,\alpha_i)\cdot e_N \geq 0$ if $i=1$ and
    $b_2(x,t,\alpha_i)\cdot e_N \leq 0$ if $i=2$. The proof of these inequalities is standard
    once we use the following two remarks: 
    \begin{enumerate}
    \item[$1.$] By the arguments of Theorem~\ref{SubP} which give such result in a more general
    setting, if $\Uim_G(\xe,\te)\to ( \Uim_G)^*(x,t)$, we can assume without loss of generality that
    $(\xe,\te) \in \H\times (0,\Tf]$. This first remark allows to prove \eqref{ineq:FL.sub} in the case
    $i=0$ using classical arguments.  
    \item[$2.$] The convexity of
    $\BCL_1(x,t)=\{(b_1(x,t,\alpha_1),c_1(x,t,\alpha_1),l_1(x,t,\alpha_1)):\ \alpha_1 \in A_1\}$
    together with the normal controllability assumption implies that the set
    $$\big\{(b_1(x,t,\alpha_1),c_1(x,t,\alpha_1),l_1(x,t,\alpha_1)):
    \ b_1(x,t,\alpha_1)\cdot e_N \geq 0, \ \alpha_1 \in A_1\big\}$$
    is the closure of the set
    $$ \big\{(b_1(x,t,\alpha_1),c_1(x,t,\alpha_1),l_1(x,t,\alpha_1)):
    \ b_1(x,t,\alpha_1)\cdot e_N > 0, \ \alpha_1 \in A_1\big\}\;,$$
    and an analogous property holds for $i=2$. This remark reduces the proof of 
    \eqref{ineq:FL.sub} for $\alpha_1$ and $\alpha_2$ such that $b_1(x,t,\alpha_1)\cdot e_N > 0$ and
    $b_2(x,t,\alpha_2)\cdot e_N < 0$. And this allows to use classical arguments since,
    for $s\in (0,\tau]$ and $\tau$ small enough, trajectories $X(s)$ which are associated to
    such dynamics with constant controls remains in $\Omega_1$ in the first case and in
    $\Omega_2$ in the second one.
\end{enumerate}
We point out that Property $(a)$ plays a key role to obtain the three types of inequalities for
$i=0,1,2$.

\medskip

    \noindent\textbf{(b)} \emph{Supersolution property.}

    Again the only non-classical case concerns points of $\H \times (0,\Tf]$.  Let $(x,t) \in \H
    \times (0,\Tf]$ be a minimum point of $(\Uim_G)_*-\phi$ where $\phi=(\phi_1,\phi_2) \in \PC1$.
    We assume \wlg that $(\Uim_G)_*(x,t)=\phi(x,t)$.

    We first fix $0<\tau \ll 1$ and, for $0<\e \ll 1$, we consider $(\xe,\te)$ such that $\Uim_G
    (\xe,\te) \leq (\Uim_G)_*(x,t) +\e \tau$ with $|(\xe,\te)-(x,t)|\leq \e\tau$. Then we choose a
    global $\e$-optimal control $a^\e=(\alpha_1^\e,\alpha_2^\e,\alpha_0^\e)$ and denote by
    $Z_i^\e=Z_i^\e(s)=\big(X^\e(s),\te-s,\alpha^\e_i(s)\big)$ for simplicity of notations. In other words,
    $$
    \Uim_G(\xe,\te)\geq \int_0^{\tau} \bigg(  \sum_{i=0}^2 l_i (Z_i^\e)\1_{\I_i}(s)\bigg)
    e^{-D^\e(s)}\d s + \Uim_G(X^\e (\tau),\te-\tau)- \e \tau\; ,
    $$
    where $X^\e,D^\e$ are the trajectory and the discount term computed with the global control
    $a^\e$.  Using the minimum point property, we have
    $$
    \phi (\xe,\te)\geq \int_0^{\tau} \bigg(  \sum_{i=0}^2 l_i (Z_i^\e)\1_{\I_i}(s)\bigg)
    e^{-D^\e(s)}\d s + \phi(X^\e (\tau),\te-\tau)- 2\e \tau\; ,
    $$
    and by classical computations we obtain
    $$\begin{aligned}
        \int_0^{\tau}  \sum_{i=0}^2 \bigg( & (\phi_i)_t (X^\e(s),\te-s)-b_i(Z_i^\e)\cdot D\phi_i
        (X^\e(s),\te-s)\\
        & + c_i(Z_i^\e)\phi_i (X^\e(s),\te-s)-l_i(Z_i^\e)\bigg)  \1_{\I_i}(s)
    e^{-D^\e(s)}\d s \geq - 2\e \tau\;,
    \end{aligned}$$
    where, by convention, $\phi_0$ denotes $\phi_1=\phi_2$ on $\H\times (0, \Tf]$.

    Then, by using the regularity of $\phi_i$ ($i=1,2$), 
    $$\begin{aligned}
    \int_0^{\tau}  \sum_{i=0}^2 \bigg( & (\phi_i)_t (\xe,\te)-b_i(Z_i^\e)\cdot D\phi_i (\xe,\te)\\ 
    & + c_i(Z_i^\e)\phi_i (\xe,\te)-l_i(Z_i^\e)\bigg)  \1_{\I_i}(s)
    e^{-D^\e(s)}\d s \geq - 2\e \tau+o(\tau)\;.
    \end{aligned}$$
    
    In order to conclude, we have to consider several cases
    \begin{enumerate}
    \item[$(i)$] If $\I_0=(0,\tau)$, the proof just follows classical arguments.
    \item[$(ii)$] If $\I_1=(0,\tau)$, \ie the trajectory $X^\e$ remains in $\Omegb_1$, we notice that
        $$ \frac 1 \tau \int_0^{\tau} b_1(Z_i^\e)ds\cdot e_N = 
        \frac 1 \tau ( X^\e (\tau)- \xe)\cdot e_N \geq -\e \; ,$$
         because of the choice of $(\xe,\te)$. Using the convexity and the compactness of
        $\BCL_1(x,t)$, we conclude that as $\tau,\e\to 0$, up to the extraction 
        of a subsequence, we may assume that
        $$ \frac 1 \tau \int_0^{\tau} \big(b_1(Z_1^\e),c_1(Z_1^\e),l_1(Z_1^\e)\big) \ds \to
        \big(b_1(x,t,\bar\alpha_1), c_1(x,t,\bar \alpha_1),l_1(x,t,\bar \alpha_1)\big)$$ 
        for some $\bar\alpha_1\in A_1$ such that $b_1(x,t,\bar \alpha_1)\cdot e_N\geq 0$. 
        From there, one concludes easily that the $H_1^+$-term is non-negative.
    \item[$(iii)$] If $\I_2=(0,\tau)$, the same arguments allow to conclude that the $H_2^-$-term is
        non-negative. 
    \item[$(iv)$] The remaining case is when two of these three sets are non-empty, and the main
        difficulty is when one of the open sets (or both) $\{s:X^\e(s)\in \Omega_i\}$ is non-empty. We
        assume, for example, that it is the case for $i=1$ and write 
        $$\{s:X^\e(s)\in \Omega_1\}=\bigcup_k\,  ]s_k,s_{k+1}[\; .$$ 
        If $s_k > 0$ and $s_{k+1}> \tau$, we necessarily $X(s_k)\in \H$ and $X(s_{k+1})\in \H$, therefore
        $$ \frac 1{s_{k+1}-s_k} \int_{s_k}^{s_{k+1}} b_1(Z_1^\e)\ds\cdot e_N = 
        \frac 1 \tau ( X^\e (s_{k+1})- X^\e (s_k))\cdot e_N=0 \; .$$
        Using again the convexity and the compactness of $\BCL_1(x,t)$, together
        with the regularity properties of $b_1,c_1,l_1$, we deduce that
        $$\begin{aligned}
         \frac 1{s_{k+1}-s_k} & \int_{s_k}^{s_{k+1}} 
        \bigg(  (\phi_1)_t (\xe,\te)-b_1(Z_1^\e)\cdot D\phi_i (\xe,\te)\\ 
        & + c_1(Z_1^\e)\phi_i (\xe,\te)-l_1(Z_i^\e)\bigg) e^{-D^\e(s)}\ds \\
        & \leq (\phi_1)_t (x,t) +H_1^+(x,t,\phi_1(x,t), D\phi_1(x,t)) + 2\e \tau+o(\tau)\;.
        \end{aligned}$$
        To obtain this last inequality, we have used that if
        $$ H_{1,\eta}^+(x,t,r,p):=\sup_{\alpha_i\in A_{1,\eta}^+} 
        \apg  -b_i(x,t,\alpha_i) \cdot p +c_i(x,t,\alpha_i)r - l_i(x,t,\alpha_i) \chg\;,
        $$
        where  $A_{1,\eta}^+:=\{ \alpha_1 \in A_1\:: \: b_1(x,t,\alpha_i) \cdot e_N  \geq  \eta\}$ and
        $\eta$ can be positive or negative, then $H_{1,\eta}^+(x,t,r,p)\to H_1^+(x,t,r,p)$ locally
        uniformly when $\eta \to 0$, a property which can be easily proved using the normal
        controllability.

        Using similar ideas, one can easily treat the cases $s_k = 0$ or $s_{k+1}= \tau$ and, of
        course, the case when $\{s:X^\e(s)\in \Omega_2\}$ is not empty.  Gathering all these
        informations, we end up showing that a convex combination of $\phi_t +
        H_1^+, \phi_t + H_2^-, \phi_t + H_0$ is non-negative, hence the result.
 \end{enumerate}

 \medskip

 \noindent\textbf{(c)} \emph{Continuity and uniqueness.}

    The function $\Uim_G$ being a discontinuous flux-limited solution of \HJgen-\FL,
    Theorem~\ref{comp-IM} shows that $(\Uim_G)^*\leq (\Uim_G)_*$ in $\R^N\times [0,\Tf]$; indeed it
    is easy to show that $(\Uim_G)^*(x,0)=(\Uim_G)_*(x,0)=u_0(x)$ in $\R^N$. Therefore $\Uim_G$ is
    continuous and the uniqueness comes from the same comparison result.
\end{proof}

Before considering the connections with the results of Section~\ref{sect:codimIa}, we want to point
out that among all these ``flux-limited value functions'', there is a particular one which
corresponds to either no specific control on $\H$ (\ie we just consider the trajectories such that
$\I_0\equiv \emptyset$) or, and this is of course equivalent, to a cost $l_0=+\infty$.
This value function is denoted by $\Uim$.

The aim is to show that the value functions of regional control are flux-limited
solutions.\index{Flux-limited solutions!and control problems}

\begin{theorem}\label{compBBC-IM}\emph{--- Identification of extremal Ishii solutions.}\smsp
Under the assumptions of Theorem~\ref{comp-IM} (comparison result), for any Hamiltonian $H_0$ we have
\begin{enumerate}
\item[$(i)$] $\Um \leq \Up \leq \Uim$ in $\R^N \times [0,\Tf]$.
\item[$(ii)$] $\Um = \Uim_{G}$ in $\R^N\times [0,\Tf]$ where $G=\HT$ and
    $\Um_{H_0} = \Uim_{G}$ in $\R^N\times [0,\Tf]$ where $G=\max(\HT,H_0)$.
\item[$(iii)$] $\Up = \Uim_{G}$ in $\R^N\times [0,\Tf]$ where $G=\HT^{\rm reg}$.
\end{enumerate}
\end{theorem}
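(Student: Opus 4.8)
\textit{Proof strategy ---\;}
The plan is to prove the three identities by combining the characterizations of $\Um=\VFm$ and $\Up=\VFp$ obtained in Chapter~\ref{chap:Ishii} (Theorems~\ref{thm:minimal.charac} and~\ref{thm:Vp-max}), the comparison result for flux-limited solutions (Theorem~\ref{comp-IM}) and the equivalence between flux-limited and junction/Ishii formulations of Section~\ref{sect:equiv.sols} (Propositions~\ref{prop:equivFL-LS} and~\ref{sub-alw-max}). All four value functions solve the same equations \HJgen\ in $\Omega_1\times(0,T)$ and $\Omega_2\times(0,T)$ with the same initial data, so all the content is in the behaviour on $\H\times(0,T)$, and all the relevant uniqueness will come from the flux-limited comparison principle.

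First I would establish $(i)$. The inequality $\Um\leq\Up$ is immediate from the definitions since $\mT^\reg(x,t)\subset\mT(x,t)$ (regular trajectories are a subset of all trajectories). For $\Up\leq\Uim$, the point is that an admissible trajectory for $\Uim$ (in the sense of the partition $\I=(\I_0,\I_1,\I_2)$ with $\I_0=\emptyset$, i.e.\ no mixing and no specific dynamic on $\H$) is, in particular, a regular trajectory for the $\VFp$-problem: when such a trajectory travels along $\H$ using only pieces of $b_1$ and $b_2$ with the constraint that the normal component vanishes on average, one can, using \NCoH\ and the convexity of the images of $\BCL_i$ together with a measurable selection argument (exactly as in the proof of Theorem~\ref{teo:condplus.VFp}), reconstruct a genuinely regular control realizing the same trajectory and the same cost up to $O(\e)$; passing to the limit and using Lemma~\ref{lem:limit.reg.traj} (stability of regular trajectories) gives $\Up\leq\Uim$. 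Alternatively, and more cheaply, one observes that $\Uim=\Uim_G$ for $G=+\infty$ is an Ishii solution of \HJgen, hence $\Uim\geq\VFm$ and $\Uim\leq\VFp$ by the extremality of $\VFm,\VFp$ among Ishii sub/supersolutions (Proposition~\ref{prop:ishii} and Theorem~\ref{thm:Vp-max}); I would use this second argument since it is essentially free.

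Next, for $(ii)$ and $(iii)$, the key observation is that $H_T$, $H_T^{\rm reg}$ and $\max(H_T,H_0)$ are exactly the tangential Hamiltonians $G$ associated to the control problems defining $\Uim_G$ in each case: for $G=H_T$ there is no genuine $\H$-control and one allows arbitrary tangential convex combinations of $b_1,b_2$ (matching the $\HT$-subsolution inequality characterizing $\VFm$); for $G=H_T^{\rm reg}$ one restricts to regular tangential dynamics (matching the $\HTreg$-inequality which, by Lemma~\ref{subsol-H}, every Ishii subsolution satisfies and which characterizes $\VFp$); and for $G=\max(H_T,H_0)$ one adds the specific $\H$-control problem. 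So the strategy is: (a) show $\VFm$ is a flux-limited solution of \HJgen-\FL\ with $G=H_T$ — the subsolution side is Proposition~\ref{prop:complemented.one-d} combined with Proposition~\ref{sub-up-to-b} (giving the $H_1^+,H_2^-$ inequalities) and the supersolution side is the Ishii supersolution property together with Proposition~\ref{prop:equivFL-LS}$(ii)$ applied to the $H_T$-\JVSup\ inequality that $\VFm$ satisfies as the minimal Ishii supersolution; (b) conclude by the comparison Theorem~\ref{comp-IM} that $\VFm=\Uim_{H_T}$. The same scheme gives $\VFp=\Uim_{H_T^{\rm reg}}$ using Proposition~\ref{prop:ishii-Up} for the subsolution inequality, Theorem~\ref{teo:condplus.VFp} (the ``magical lemma'') for the flux-limited supersolution inequality with $G=\HTreg$, and again Theorem~\ref{comp-IM}; and $\Um_{H_0}=\Uim_{\max(H_T,H_0)}$ by the analogous statements of Section~\ref{sec:H0.case} (Theorem~\ref{thm:minimal.charac.HO}).

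The main obstacle I anticipate is verifying the flux-limited \emph{supersolution} inequality on $\H$ for $\VFp$ with flux-limiter $\HTreg$, i.e.\ that one really gets $\max(\psi_t+\HTreg,\psi_t+H_1^+,\psi_t+H_2^-)\geq0$ and not merely the weaker Ishii/junction inequality. This is precisely where Theorem~\ref{teo:condplus.VFp} is needed: its alternative either produces an admissible trajectory realizing a sub-optimality inequality (forcing, via the dynamic programming argument, one of the $H_i^\pm$ inequalities to hold) or directly gives the $\HTreg$-inequality; translating this alternative into the flux-limited supersolution format requires care about which branch of the trajectory ($\Omega_1$, $\Omega_2$ or $\H$) is used, exactly as in Cases 1--3 of the proof of Theorem~\ref{comp-IM}. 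Once this is in hand, everything else is bookkeeping with the already-established equivalences and the comparison principle. \null\  \hfill\textbf{Q.E.D.}\medskip
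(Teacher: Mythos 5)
Your global architecture is the paper's: prove each value function is the flux-limited solution for the appropriate flux limiter and conclude by Theorem~\ref{comp-IM}, using Propositions~\ref{sub-up-to-b}, \ref{prop:complemented.one-d}, \ref{prop:ishii-Up} for the subsolution inequalities and Theorem~\ref{teo:condplus.VFp} for the supersolution inequality of $\VFp$. But the ``cheaper'' argument you commit to for $\Up\leq\Uim$ in $(i)$ is wrong. First, $\Uim$ is \emph{not} an Ishii solution of \HJgen: the one-dimensional example given right after the theorem has $\Uim(0)=0>\Up(0)=-1$, and since $\Up$ is the maximal Ishii subsolution, $\Uim$ cannot be an Ishii subsolution (it is only a flux-limited solution for the ``no control on $\H$'' problem). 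Second, even granting the premise, maximality of $\Up$ among Ishii subsolutions would give $\Uim\leq\Up$, which is the \emph{reverse} of the inequality you need. You should keep your first argument: admissible trajectories for $\Uim$ are regular (a.e.\ on $\{X(s)\in\H\}$ they follow a single $b_i$ with $b_i\cdot e_N=0$ by Stampacchia, a degenerate push-push combination), so the three infima are taken over nested sets of trajectories. Alternatively, the pde route is that $\Up$ satisfies the $H_1^+$ and $H_2^-$ inequalities on $\H$ by Proposition~\ref{sub-up-to-b}, hence is a flux-limited subsolution for the $\Uim$-problem, and Theorem~\ref{comp-IM} gives $\Up\leq\Uim$.

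There is a second soft spot in $(ii)$: the flux-limited \emph{supersolution} property of $\VFm$ with $G=\HT$ does not follow from ``Ishii supersolution plus Proposition~\ref{prop:equivFL-LS}$(ii)$''. To invoke that proposition you must first know that $\VFm$ is an $\HT$-\JVSup, i.e.\ satisfies the max-inequality tested against $\PC1$ functions, and this is strictly stronger than the Ishii supersolution property, which only sees $C^1$ test-functions (a $\PC1$ test-function whose two normal slopes form a concave kink admits no $C^1$ minorant touching $v$ at the same point, so no Ishii information is available there). The paper obtains this inequality exactly as you do for $\VFp$: from the super-optimality dichotomy — alternative A of the proof of Lemma~\ref{lem:comp.fundamental} (the optimal trajectory leaves $\H$ instantly, yielding an $H_1^+$ or $H_2^-$ inequality by dynamic programming) versus alternative B (the trajectory returns to $\H$ along a sequence of times, yielding the $\HT$-inequality). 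With that replacement, and its $\max(\HT,H_0)$ analogue for $\Um_{H_0}$, the rest of your argument coincides with the paper's.
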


This result shows that, by varying the flux~limiter $G$, we have access to the different
value functions described in Section~\ref{sect:codimIa}.

\begin{proof} 
    For $(i)$, the inequalities can just be seen as a consequence of the definition of $\Um ,\Up ,
    \Uim$ remarking that we have a larger set of dynamics-costs for $\Um$ and $\Up$ than for $\Uim$.
    From a more pde point of view, applying  Proposition~\ref{sub-up-to-b}, it is easy to see that
    $\Um ,\Up$ are flux-limited subsolutions of (HJ-gen)-(FL) since they are of course subsolutions
    of
    $$ u_t+ H_1^+(x,t,u, Du) \leq 0 \quad\hbox{in  }\Omega_1\times [0,\Tf] \; ,$$
    $$ u_t+ H_2^-(x, t,u,Du) \leq 0 \quad\hbox{in  }\Omega_2 \times [0,\Tf]\; .$$
    Then Theorem~\ref{comp-IM} allows us to conclude.

    For $(ii)$ and $(iii)$, we have to prove respectively that $\Um$ is a solution of (HJ-gen)-(FL)
    with $G=\HT$, $\Um_{H_0}$ is a solution of (HJ-gen)-(FL) with $G=\max(\HT,H_0)$ and $\Up$ with
    $G=\HT^{\rm reg}$. Then the equality is just a consequence of Theorem~\ref{comp-IM}.

    For $\Um$, the subsolution property just comes from the above argument for the $H_1^+,
    H_2^-$-inequalities and from Proposition~\ref{prop:complemented.one-d} for the $\HT$-one. The
    supersolution inequality is a consequence of the proof of Lemma~\ref{lem:comp.fundamental}:
    alternative {\bf A)} implies that one of the $H_1^+, H_2^-$-inequalities hold while alternative
    {\bf B)} implies that the $\HT$-one holds. The same is true for $\Um_{H_0}$.

    For $\Up$, the subsolution property follows from the same arguments as for $\Um$, both for the
    $H_1^+, H_2^-$-inequalities and from Proposition~\ref{prop:ishii-Up} for the $\HT^{\rm
    reg}$-one. The supersolution inequality is a consequence of Theorem~\ref{teo:condplus.VFp}:
    alternative {\bf A)} implies that one of the $H_1^+, H_2^-$-inequalities hold while alternative
    {\bf B)} implies that the $\HTreg$-one holds.

    And the proof is complete.
\end{proof}

\bigskip

Notice that inequalities in Theorem~\ref{compBBC-IM}-$(i)$ can be strict: various
examples are given in \cite{BBC1}. The following one shows that we can have $\Up < \Uim$ in~$\R$.
\begin{example}  Let
    $\Omega_1=(0,+\infty)$, $\Omega_2=(-\infty, 0)$. We choose $c\equiv 0$, $u_0(x)=0$ in $\R$ and
    \[
    b_1(\alpha_1)=\alpha_1 \in [-1,1]\; , \; l_1(\alpha_1)= \alpha_1\; ,
    \]
    \[
    b_2(\alpha_2)=\alpha_2 \in [-1,1]\; , \; l_1(\alpha_2)= -\alpha_2\; .
    \]
    It is clear that the best strategy---\ie with the minimal cost---is to use $\alpha_1=-1$ in
    $\Omega_1$, $\alpha_2=1$ in $\Omega_2$. We can also use these strategies at $0$ since
    \[ \frac12  b_1(\alpha_1) +\frac12  b_2(\alpha_2) = 0\; ,\] a combination which yields a cost of
    $-1$.  Therfore, an easy computation gives 
    \[ \VFp(x,t) = \int_0^{t} -1.dt = -t\; ,\]
    other words, the ``push-push'' strategy at $0$ allows to maintain the $-1$ cost.

    But, for $\Uim$, this ``push-push'' strategy at $0$ is not allowed and, since the optimal
    trajectories are necessarily monotone, the best strategy when starting at $0$ is to stay at $0$.
    Here, the best possible cost is $0$. 

    Hence $\Uim(0,t) = 0 > \VFp(0,t)=-1$, and in fact it can be shown that
    $$\Uim(x,t)=-|x| > \Up(x,t)=-t\quad\emph{if}\quad |x|<t\;.$$
    On the contrary, for $|x|\geq t$, $\Uim(x,t)= \Up(x,t)=-t$ since the above strategy with
    $\alpha_1=-1$ in $\Omega_1$, $\alpha_2=1$ in $\Omega_2$ can be applied for all time.
\end{example}

Theorem~\ref{compBBC-IM} can be interpreted in several ways but the key point is to chose the kind
of controlled trajectories we wish to allow on $\H$. Then, depending on this choice, different
formulations have to be used for the associated HJB problem. It could be thought that the
flux-limited approach is more appropriate, in particular because of Theorem~\ref{comp-IM} which is
used intensively in the above proof.

\section{Vanishing viscosity approximation (I): convergence via flux-limited solutions}
\label{sect:vanishing}

\index{Vanishing viscosity method!via flux-limited solutions}

In the framework of classical viscosity solutions, getting the convergence of the vanishing viscosity
method is just a simple exercice done either with a stability result, or the combination of the
half-relaxed limits method with a strong comparison result.

However, in the present discontinuous framework, although classical viscosity solutions---\CVS in
short---still have good stability properties as described in Section~\ref{sect:stab}, the lack of
uniqueness makes this stability far less effective: the two half-relaxed limits are lying between
the minimal one $\Um$ and the maximal one $\Up$ and one cannot really obtain the convergence in that
way, except if $\Up=\Um$.

An interesting idea is to turn to flux-limited solutions for which a general comparison result
holds. But, in order to identify the limit of the vanishing viscosity method, a limit flux~limiter
is required and to the best of our knowledge, there is no obvious way to determine it. Actually we
refer the interested reader to Section~\ref{TMGD} for a discussion on more general discontinuities
where the problem is still open.

We also refer anyway to \cite{IM,IM-md} for general stability results for \FLS and to Camilli, Marchi
and Schieborn \cite{CMS-VV} for the first results on the convergence of the vanishing viscosity
method.

In this book, we give several different proofs of the vanishing viscosity result. Tthe first one
below is inspired from \cite{BBCI} and uses only the properties of $\Up$ as flux-limited solution. 

Contrary to the proof relying on \JVS via the Lions-Souganidis approach, the arguments we use in
this section strongly rely on the structure of the Hamiltonians and on the arguments of the
comparison proof. It has the advantage anyway to identify the limit in terms of control problems. An
other way to do the proof goes through the connections between the Kirchhoff condition and
Flux-Limited Conditions (See Section~\ref{sec:KCvsFL}).

\begin{theorem}\emph{--- Vanishing viscosity limit via flux-limited solutions.}\label{teo:viscous}\smsp
    Assume that \GACC holds. For any $\eps>0$, let $u^\eps$ be a viscosity solution of
    \begin{equation}\label{pb:viscous1}
       u^\eps_t -\eps \Delta u^\eps + H (x,t,u^\eps,Du^\eps) = 0\quad\text{in}
        \quad\R^N \times (0,\Tf)\;,
    \end{equation}
    \begin{equation}\label{pb:viscousid1}
       u^\eps(x,0) = u_ 0(x) \quad\text{in}\quad\R^N \;,
    \end{equation}
    where $H=H_1$ in $\Omega_1$ and $H_2$ in $\Omega_2$, and $u_0$ is bounded continuous function in
    $\R^N$. If the $u^\eps$ are uniformly bounded in $\R^N \times (0,\Tf)$ and $C^1$ in $x_N$ in a
    neighborhood of $\H$, then, as $\eps\to0$, the sequence $(u^\eps)_\eps$ converges locally
    uniformly in $\R^N \times (0,\Tf)$ to $\VFp$, the maximal Ishii subsolution of
    \eqref{pb:half-space}.
\end{theorem}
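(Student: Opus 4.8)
The strategy is the classical half-relaxed limit method, adapted to the discontinuous framework, using the characterization of $\VFp$ as the \emph{maximal} Ishii subsolution of \eqref{pb:half-space} (Theorem~\ref{thm:Vp-max}) together with the characterization of $\VFm$ as the \emph{minimal} Ishii supersolution (Proposition~\ref{prop:ishii}). First I would set
$$\overline u := \limssup u^\eps\,,\qquad \underline u := \limiinf u^\eps\,,$$
which are well-defined since the $u^\eps$ are uniformly bounded. The plan is to show $\overline u$ is an Ishii subsolution of \eqref{pb:half-space}, $\underline u$ an Ishii supersolution, both satisfying the initial condition, and then to squeeze: by $\overline u \le \VFp$ (maximality of $\VFp$ among subsolutions) and $\underline u \ge \VFm$ (minimality of $\VFm$ among supersolutions), together with $\overline u \ge \underline u$ always, we would have $\VFm \le \underline u \le \overline u \le \VFp$; the missing inequality $\VFp \le \underline u$ is exactly the hard point and is discussed below.

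For the stability part: away from $\H$, i.e.\ in $\Omega_1\times(0,T)$ and $\Omega_2\times(0,T)$ where $H$ is continuous, the vanishing viscosity terms $-\eps\Delta u^\eps$ pass to the limit by the standard half-relaxed limit argument (Theorem~\ref{hrl}), so $\overline u$ (resp.\ $\underline u$) satisfies $u_t+H_i(x,t,u,Du)\le 0$ (resp.\ $\ge 0$) in each $\Omega_i$. On $\H\times(0,T)$ one applies Theorem~\ref{hrl} with $\OO$ a small ball intersecting $\H$: since $\underline G = \limiinf G_\eps$ where $G_\eps(y,r,p)=p_t - \eps|p|^2_{\text{(in the viscosity sense)}}+\cdots$ — more precisely one works directly with the relaxed Ishii inequalities — the limit $\overline u$ satisfies $\min(u_t+H_1,u_t+H_2)\le 0$ and $\underline u$ satisfies $\max(u_t+H_1,u_t+H_2)\ge 0$ on $\H$, which are exactly the Ishii conditions \eqref{eq:ishii.cond}. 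The initial condition $\overline u(x,0)\le u_0(x) \le \underline u(x,0)$ follows from the uniform convergence at $t=0$ (here using that $u_0$ is continuous and a standard barrier argument to control the behaviour near $t=0$, uniformly in $\eps$; this is where the $C^1$-regularity of $u^\eps$ in $x_N$ near $\H$ plays a role, to get local equicontinuity up to $\H$).

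The main obstacle is to prove that $\underline u \ge \VFp$, i.e.\ that the vanishing viscosity limit is the \emph{maximal} Ishii solution, not merely trapped between $\VFm$ and $\VFp$. The plan for this is to show that $\underline u$ is in fact a flux-limited supersolution of \HJgen-\FL with flux-limiter $G=\HTreg$: this is the heart of the matter, because then by Theorem~\ref{comp-IM} (comparison for flux-limited solutions) and the fact that $\VFp = \Uim_{\HTreg}$ (Theorem~\ref{compBBC-IM}$(iii)$), one concludes $\underline u \ge \VFp$, and combined with $\overline u\le\VFp$ this forces $\overline u=\underline u=\VFp$ and hence local uniform convergence via Lemma~\ref{ubegalub}. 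To obtain the $\HTreg$-supersolution property at a point $(\bx,\bt)\in\H\times(0,T)$ where $\underline u - \psi$ has a local minimum for $\psi\in\PC1$, I would test the viscous equation \eqref{pb:viscous1} with a suitable perturbation of $\psi$ and track the normal derivatives: the uniform $C^1$-bound of $u^\eps$ in $x_N$ near $\H$ allows one to pass to the limit in $\partial_{x_N}u^\eps$ on either side of $\H$, and the Laplacian term $-\eps\partial_{x_N}^2 u^\eps$, integrated/examined across $\H$, produces in the limit precisely a Kirchhoff-type relation $\partial u/\partial n_1 + \partial u/\partial n_2 \le 0$ (the second-order term cannot help the supersolution inequality on the ``wrong'' side). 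Thus $\underline u$ is a supersolution for the Kirchhoff condition in the sense of Definition~\ref{defiKC}, and then Proposition~\ref{kc-fl}$(ii)$ upgrades this to a flux-limited supersolution with $G=\HTreg$, closing the argument. An alternative, as the excerpt hints, is to use directly the properties of $\Up$ as flux-limited solution à la \cite{BBCI}, building from $\Up$ an appropriate viscous supersolution (or subsolution) to compare with $u^\eps$ at fixed $\eps$; the delicate point in either route is the careful bookkeeping of the normal derivatives and the sign of the vanishing Laplacian near $\H$.
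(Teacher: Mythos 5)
Your proposal is correct in substance, but it follows a different route from the one the paper attaches to this theorem; in fact it reproduces the paper's \emph{second} proof of the same convergence result (Theorem~\ref{pro:viscous}). Your main line — standard half-relaxed limits giving $\ou\le\VFp$ by maximality, then the non-standard stability step showing that $\uu$ is a junction (Kirchhoff) supersolution thanks to the $C^1$-regularity of $u^\eps$ in $x_N$, then Proposition~\ref{kc-fl}$(ii)$ to convert this into a flux-limited supersolution with $G=\HTreg$, then the comparison Theorem~\ref{comp-IM} against $\VFp=\Uim_{\HTreg}$ — is exactly the mechanism of Lemma~\ref{StabK} and the proof of Theorem~\ref{pro:viscous}. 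The paper's proof of Theorem~\ref{teo:viscous} itself is more direct and never passes through the Kirchhoff condition: it replaces $\Up$ by a localized, strict, tangentially regularized flux-limited subsolution, picks the two solutions $\lambda_2<\lambda_1$ of $\Htireg(\bar x,\bar t,D_{x'}\Up+\lambda e_N)=-(\Up)_t-\eta/2$, and runs a doubling-of-variables argument between $\Up$ and the viscous solution $u^\eps$ with the broken test function $\chi(x_N,y_N)$ of \eqref{defchila}; the $C^1$-regularity of $u^\eps$ in $x_N$ enters only to exclude the case $x_N=y_N=0$. What each buys: your route is shorter once Proposition~\ref{kc-fl} and Theorem~\ref{comp-IM} are available, but it hinges on the equivalence between Kirchhoff and $\HTreg$ conditions; the paper's direct route uses only the flux-limited subsolution property of $\Up$ and the structure of $H_1^\pm,H_2^\pm$, and it identifies the limit in control-theoretic terms without the junction-condition detour. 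One small correction to your sketch: at a minimum point of $u^\eps-\psi$ on $\H$ with $\psi\in\PC1$, the one-sided derivative comparison with the $C^1$ function $u^\eps$ yields $\frac{\partial\psi_1}{\partial n_1}+\frac{\partial\psi_2}{\partial n_2}\ge 0$ (not $\le 0$); this is precisely what makes the Kirchhoff \emph{supersolution} condition automatic in the limit, no analysis of the term $-\eps\Delta u^\eps$ across $\H$ being needed.
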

\begin{remark}\label{rem:reg}
    A priori \eqref{pb:viscous1}-\eqref{pb:viscousid1} is a uniformly parabolic problem and the
    regularity we assume on $(u^\eps)$ is reasonable. Indeed the function $u^\eps$ is expected to
    be $C^1$ since it is also expected to be in $W^{2,r}_{\rm loc}$ (for any $r>1$). On the other
    hand, it is worth pointing out that, as long as $\eps>0$, it is not necessary to impose a
    condition on $\H$ because of the strong diffusion term: a codimension 1 set is not ``seen''
    by the diffusive equation.
\end{remark}

\begin{proof}
    We first recall that, by Theorem~\ref{thm:Vp-max}, $\Up$ is the maximal subsolution---and Ishii
    solution---of \eqref{pb:half-space} and we proved in Theorem~\ref{compBBC-IM} that it is the
    unique flux-limited solution of \HJgen-\FL with $G=\HTreg$. We recall that the flux-limited
    condition consists in complementing \HJgen with the condition
    \[
    \max \Big( u_t+\HTreg(x,t,D_\H u) , u_t+ H_1^+(x, t,D_x u) , u_t+
    H_2^-(x, t, D_x u) \Big) = 0 \: \ \mbox{ on } \H \times (0,\Tf)\; ,
    \]
    in the sense of Definition~\ref{defiFL}. We refer to Section~\ref{sect:stab} for a definition of
    the usual half-relaxed limits
    \[
    \uu(x,t):=\limiinf u^\eps(x,t)\;,   \quad 
    \ou(x,t):=\limssup  u^\eps(x,t) \: . 
    \]

    \noindent\textbf{(a)} \emph{Reduction of the proof  ---} 
    We observe that we only need to prove the following inequality
    \begin{equation} \label{equ:tesi} 
     \Up(x,t) \leq \underline{u}(x,t) \quad \mbox{ in } \R^N \times [0,\Tf) .
    \end{equation}
    Indeed, the maximality of $\Up$ implies $\overline{u}(x,t)\leq \Up(x,t)$ in $\R^N \times
    [0,\Tf)$. Moreover, by definition we have $\overline{u}(x,t)\geq \underline{u}(x,t)$ in $\R^N
    \times (0,\Tf)$, therefore if we prove \eqref{equ:tesi} we can conclude that $\Up(x,t) \leq
    \underline{u}(x,t) \leq \overline{u}(x,t)\leq \Up(x,t)$ which implies that $(u^\eps)_\eps$
    converges locally uniformly to $\Up$ in $\R^N \times [0,\Tf)$.

    In order to prove the inequality, $\Up \leq \underline{u}$ in $\R^N \times [0,\Tf)$, we are
    going to make several reductions along the lines of Chapter~\ref{chap:pde.tools} by changing
    $\Up$ but we keep the notation $\Up$ for the changed function for the sake of simplicity of
    notations. In the same way, we should argue on the interval $[0,T']$ for $0<T'<\Tf$ but we keep
    the notation $\Tf$ for $T'$.

    First, thanks to the localization arguments of Chapter~\ref{chap:pde.tools}, we can assume that
    $\Up$ is a strict subsolution such that $\Up (x,t) \to -\infty$ as $|x|\to +\infty$, uniformly
    \wrt $t\in [0,\Tf]$. Therefore there exists $(\bx,\bt) \in \R^N \times [0,\Tf]$ such that
    $$
     M:=\Up (\bx,\bt)- \underline{u}(\bx,\bt)=\sup_{(x,t) \in \R^N \times [0,\Tf]} \:  
     \big( \Up (x,t) -  \underline{u}(x,t) \big)  \:.
    $$
    We assume by contradiction that $M >0$ and of course this means that $\bt>0$. The cases when
    $\bx\in \Omega_1$ or $\bx \in \Omega_2$ can be treated by classical methods, hence we may assume
    that $\bx \in \H$.

    Next, by the regularization arguments of Chapter~\ref{chap:pde.tools} we can assume in addition
    that $\Up$ is $C^1$-smooth at least in the $t,x_1,\dots,x_{N-1}$ variables. Finally we can
    suppose that $(\bx,\bt)$ is a  strict maximum point of $\Up-\underline{u}$.

    \bigskip

    \noindent\textbf{(b)} \emph{Construction of the test-function ---} 
    Since $\Up$ is $C^1$ in the $(t,x')$-variables, the strict flux-limited subsolution condition
    can be written as  
    $$
    (\Up)_t(\bx,\bt)+\HTreg(\bx,\bt,D_{x'}\Up(\bx,\bt)) \leq -\eta \:, 
    $$
    where $\eta>0$ measures the strict subsolution property. Therefore 
    $$
    \HTreg(\bx,\bt,D_{x'}\Up(\bx,\bt)) \leq -(\Up)_t(\bx,\bt)-\eta  \: ,
    $$
    and, as in the proof of Theorem~\ref{comp-IM}, there exist two
    solutions $\lambda_1,\lambda_2$, with $\lambda_2 < \lambda_1$, of the equation
    $$
    \Htireg\Big(\bx,\bt,D_{x'}\Up(\bx,\bt)+\lambda e_N\Big) = -(\Up)_t(\bx,\bt)-\eta/2 \;.
    $$
    Notice that, since $\bx, \bt$, $a=-(\Up)_t(\bx,\bt)$ and $p'=D_{x'}\Up(\bx)$ are
    fixed, $\lambda_1, \lambda_2$ are independent of the parameter $\e>0$ that is to come
    below. 

    We proceed now with the construction of the test-function: let $\chi(x_N,y_N)$ be defined as in
    \eqref{defchila} and 
    $$ \psi_\eps (x,y,t,s):=\frac{|t-s|^2}{\eps^{1/2}}+\frac{|x'-y'|^2}{\eps^{1/2}}+
    \chi(x,y)+\frac{|x_N-y_N|^2}{\eps^{1/2}}\;.$$
    Note that  $\psi_\eps(\cdot,y,\cdot,s),  \psi_\eps(x,\cdot,t,\cdot)\in \PC1$.

    Since $(\bx,\bt)$ is a strict global maximum point of $\Up-\underline{u}$ while
    $\underline{u}(\bx,\bt)=\limiinf u^\eps (\bx,\bt)$, the function
    $\Up(x,t)-u^\e(y,s)-\psi_\eps (x,y,t,s)$ has local maximum points $(x_\eps ,y_\eps
    ,t_\eps ,s_\eps )$ which converge to $(\bx,\bx,\bt,\bt)$. For the sake of simplicity of
    notations, we drop the $\eps$ and just denote by $(x,y,t,s)$ such a maximum point.

    \bigskip

    \noindent\textbf{(c)} \emph{Getting a contradiction ---} 
    We now consider 3 different cases, depending on the position of $(x,y,t,s)$.

    \smallskip

    \noindent\textbf{Case 1}: $x_N>0$ and $y_N\leq0$ (or $x_N<0$ and $y_N\geq0$).\\[2mm]
    We use the subsolution condition for $\Up$ in $\Omega_1$: recalling that $\Up$ is $C^1$-regular
    in the $(t,x')$-variables, we write the condition as
    $$
    (\Up)_t(x,t)+H_1 \left(x,t,D_{x'}\Up(x,t) + \lambda_1 e_N + 
    \frac{2(x_N-y_N)}{\eps^{1/2}}\right) \leq -\eta  \; , 
    $$
    where we have used the regularity of $\Up$ to deduce that
    \begin{equation}\label{prop-reg-Up}
        (\Up)_t(x,t) = \frac{2(t-s)}{\eps^{1/2}}\quad\hbox{and}\quad
        D_{x'}\Up(x,t) = \frac{2(x'-y')}{\eps^{1/2}}\; .
    \end{equation}
    Moreover, using further the regularity of $\Up$ and recalling that $(\Up)_t$ and $D_{x'}\Up$ are
    continuous not only in $t,x'$ but also $x_N$, we have $(\Up)_t(x,t)=(\Up)_t(\bx,\bt)+o_\e(1)$,
    $D_{x'}\Up(x,t)=D_{x'}\Up(\bx,\bt)+o_\e(1)$. Therefore,
    $$
    (\Up)_t(\bx,\bt) +H_1 \left(x,t,D_{x'}\Up(\bx,\bt) + 
    \lambda_1 e_N + \frac{2(x_N-y_N)}{\eps^{1/2}}\right) \leq -\eta +o_\e(1)  \;.
    $$
    Next, using that $H^-_1$ is non decreasing in $p_N$, $H_1^-\leq H_1$ and $(x_N-y_N)>0$ we get
    from the above property
    $$\begin{aligned}
    H_1^- \left(x,t,D_{x'}\Up(\bx,\bt) + \lambda_1 e_N\right)  & \leq 
    H_1^- \left(x,t,D_{x'}\Up(\bx,\bt) + \lambda_1 e_N + \frac{2(x_N-y_N)}{\eps^{1/2}}\right) \\
    & \leq  -(\Up)_t(\bx,\bt)-\eta +o_\e(1)\;.
    \end{aligned}$$
    From this inequality, since $D_{x'}\Up(\bx,\bt) + \lambda_1 e_N$ remains bounded with respect to
    $\e$, using the continuity of $H_1^-$ yields 
    $$ H_1^- \left(\bx,\bt,D_{x'}\Up(\bx,\bt) + \lambda_1 e_N \right)
    \leq  -(\Up)_t(\bx,\bt)-\eta +o_\e(1)\;.$$
    The contradiction is obtained for $\e$ small enough from the fact that, by construction of
    $\lambda_1$,
    $$H_1^- \left(\bx,\bt,D_{x'}\Up(\bx,\bt) + \lambda_1 e_N\right) 
    = -(\Up)_t(\bx,\bt)-\eta/2\;.
    $$
    The case $x_N<0$ and $y_N\geq0$ is completely similar, using $H_2$ instead of $H_1$.  

    \medskip

    \noindent\textbf{Case 2}: $x_N=0$ and $y_N>0$ (or $<0$).\\[2mm]
    We use the supersolution viscosity inequality for $u^\eps$ at $(y,t)$, using
    \eqref{prop-reg-Up}:
    \begin{equation}\label{ineq.ud}
      O(\eps^{1/2})+(\Up)_t(x,t) +  H_1\Big(y,s,D_{x'}\Up(x,t)+\lambda_1e_N+
      \frac{2(x_N-y_N)}{\eps^{1/2}}+o_\e(1)\Big)\geq0\;.
    \end{equation}
    Notice that, using the arguments of Case 1 and the fact that here $x_N-y_N=-y_N<0$, 
    we are led by the definition of $\lambda_1$ to
    $$
    O(\eps^{1/2})+(\Up)_t(x,t) + H_1^-\Big(y,s,D_{x'}\Up(x,t)+\lambda_1e_N+
    \frac{2(x_N-y_N)}{\eps^{1/2}}+o_\e(1)\Big)<0\;,
    $$
    from which we deduce that \eqref{ineq.ud} holds true with $H^+_1$. 

    Moreover, by the subsolution condition  of $\Up$ on $\H$ we have 
    $$
    (\Up)_t(x,t)  + H^+_1\Big(x,t,D_{x'}\Up(x,t)+\lambda_1e_N+
    \frac{2(x_N-y_N)}{\eps^{1/2}}+o_\e(1)\Big)\leq-\eta\;,
    $$
    therefore the conclusion follows by standard arguments  putting together the two inequalities
    for $ H^+_1$ and letting $\eps$ tend to zero. If $y_N< 0$, we can repeat the same argument using
    this time $H^-_2$ instead of $H_1^+$.

    \medskip

    \noindent \textbf{Case 3}: $x_N=y_N=0$.\\[2mm]
    Let us remark that this case is not possible. Indeed the  maximum point property on
    $\Up-u^\eps-\psi_\eps$ implies that $0$ is a minimum point of $z_N \mapsto
    u^\eps((y',z_N),s)+\psi_\eps(x,(y',z_N),t,s) )$. But, by definition of $\psi_\e$ and in
    particular of $\chi$, this also means that we have a minimum point for the function
    $$
        \zeta:z_N\mapsto u^\eps((y',z_N),s)-h(z_N)+\frac{|z_N|^2}{\eps^{1/2}}\; .
    $$
    Both $z_N\mapsto |z_N|^2$ and $u^\e$ are $C^1$-smooth, but the function $h$ is only Lipschitz
    continuous at $z_N=0$. So, using that the left derivative of $\zeta$ is
    negative while the right one is positive leads to $-h'(0^-)\leq -h'(0^+)$, \ie
    $\lambda_2\geq \lambda_1$. But this contradicts the construction of function $\chi$ which
    requires $\lambda_2 < \lambda_1$.
\end{proof}

\section{Classical viscosity solutions as flux-limited solutions}

The aim of this section is to show that, under suitable assumptions, a classical viscosity sub or
supersolution of 
\begin{equation}\label{eqn:cl-E}
u_t + H(x,t,u,D_x u) = 0 \quad \hbox{in  }\R^N \times (0,\Tf)\;,
\end{equation}
where $H$ is a continuous quasi-convex Hamiltonian, is a \FLSub or \FLSup of the problem with $H_1=H_2=H$ and $G=\HT$ where, for $x\in \H$, $t\in [0,\Tf]$,
$r\in \R$ and $p'\in \H$
$$ \HT(x,t,r,p')= \min_{s\in\R} H (x,t,r,p'+s e_N)\; .$$
We refer the reader to Section~\ref{upoH} and in particular to Lemma~\ref{lem:H1m.H2p.a} for a motivation
of the definition of $\HT$ in the convex case but we are going to consider below the more general quasi-convex case.

The precise result is the
\begin{proposition}\label{prop:CSasFLS}\emph{--- Classical Ishii solutions and flux-limited
    solutions.}\smsp
    Assume that \GAQC holds with $H_1=H_2=H$ and that $G=\HT$ satisfies \GAGFL.
Then $u$ is a classical Ishii subsolution \resp{supersolution} of \eqref{eqn:cl-E} if and only if it is a \FLSub \resp{\FLSup}
of \HJgen-\FL with $H_1=H_2=H$ and $G=\HT$.
\end{proposition}

The interest of this result is to be able to introduce an artificial discontinuity when it is useful. We refer the reader to
Section~\ref{sec:using-equiv} for an example of such situation.

\begin{proof}To prove that a \FLSub (or \FLSup) is a classical Ishii subsolution (or supersolution) is easy using that (i) $C^1$ 
test-functions are PC$^1$ test-functions and (ii) $\max(H^+,H^-, \HT)=H$. 

We only prove the converse for the subsolution case, the supersolution one being essentially analogous; we just provide below a tiny
additional argument to treat this supersolution case. Of course, only the properties
on $\H\times (0,\Tf)$ are different and therefore we concentrate on this case.

Let $u$ be a classical Ishii subsolution of \eqref{eqn:cl-E} and let $(x,t)\in \H\times (0,\Tf)$ be a strict local maximum point of
$u-\varphi$ where $\varphi=(\varphi_1,\varphi_2) \in \PC1$. We have to look at two different cases
\begin{enumerate}
\item[(i)] $\lambda:=\dfrac{\partial \varphi_1}{\partial x_N}(x,t)\leq \mu:=\dfrac{\partial \varphi_2}{\partial x_N}(x,t)$.
\item[(ii)] $\lambda>\mu$.
\end{enumerate}

Case~$(i)$ is easy: if $p'=D_{x'} \varphi (x,t)$ and $p_t=\varphi_t (x,t)$ then, for any $\lambda \leq \tau \leq \mu$, $((p',\tau),p_t) \in D_{\Omegb_i\times (0,\Tf)}^+ u (\xb,\tb)$ for $i=1$ and $i=2$; hence $((p',\tau),p_t) \in D_{\R^N \times (0,\Tf)}^+ u (\xb,\tb)$ and therefore
$$ p_t + H(x,t,u(x,t), p' +\tau e_N)\leq 0\; .$$
Using that $\max(H^+,H^-, \HT)=H$, we easily obtain the desired inequalities by choosing $\tau =\lambda$ and then $\tau =\mu$.

Case~$(ii)$ is more tricky: by Lemma~\ref{diff-twod}, we can assume without loss of generality that $ \varphi= \chi+\psi$
where $\psi$ is $C^1$ in $\R^N \times (0,\Tf)$ and
$$ \chi(x_N):= \begin{cases}
\lambda x_N & \hbox{if  $x_N\geq 0$}\; ,\\
\mu x_N & \hbox{if  $x_N\leq 0$\; .}
\end{cases} 
$$
We mollify the function $\chi$ by using a mollifying kernel with compact support and we obtain a sequence of
$C^1$-functions $(\chi_\e)_\e$ and then a sequence $(\varphi_\e)_\e$ given by $\varphi_\e=\chi_\e +\psi$.
Moreover, by standard convolution arguments, we have
$$ \mu  \leq \frac{\partial \chi_\e}{\partial x_N}(x_N) \leq \lambda \quad \hbox{for any }x_N\; .$$
Let $(\xe,\te)$ be a sequence of maximum points of $u-\varphi_\e$ which converges to $(x,t)$ and such that $u(\xe,\te) \to u(x,t)$
(such sequence exists since $(x,t)$ is a strict local maximum point of $u-\varphi$ and $\varphi_\e\to \varphi$ locally uniformly). We have 
$$ \frac{\partial \psi}{\partial t}(\xe,\te) + H(\xe,\te, u(\xe,\te), D_{x'}\psi (\xe,\te)+ \frac{\partial \chi_\e}{\partial x_N}(\xe,\te)e_N)\leq 0\; .$$
Introducing
$$ \tilde H(\tau):= \frac{\partial \psi}{\partial t}(x,t) + H(x,t, u(x,t), D_{x'}\psi (x,t)+ \tau e_N)\; ,$$
and denoting respectively by $\tilde H^+$, $\tilde H^-$, $ \tilde H_T$, functions which are defined in the same way, replacing $H$ by
$H^+$, $H^-$ or $\HT$, we deduce from the continuity of $H$, the above properties and the $C^1$ character of $\psi$, that
$$ \tilde H(\frac{\partial \chi_\e}{\partial x_N}(\xe,\te)) \leq o_\e(1)\; .$$
This inequality can be rewritten as
$$ \max(\tilde H^+,\tilde H^-,\tilde H_T )(\frac{\partial \chi_\e}{\partial x_N}(\xe,\te))\leq o_\e(1)\; ,$$
and using the monotonicity of $\tilde H^+,\tilde H^-$, we have, because $\HT$ is independent of the $x_N$-derivative
$$ \max(\tilde H^+ (\lambda),\ \tilde H^-(\mu),\ \tilde H_T )\leq o_\e(1)\; ,$$
and we conclude by letting $\e \to 0$.

For supersolutions, the analogue of Case~$(ii)$ is treated exactly in the same way. Case~$(i)$--which is now $\lambda\geq \mu$--required the following additional arguments: with the above notations, we have
$$ \max(\tilde H^+ (\tau),\ \tilde H^-(\tau),\ \tilde H_T )\geq 0\; ,$$
for any $\mu \leq \tau \leq \lambda$ and we have three cases
\begin{enumerate}
\item If $\tilde H_T \geq 0$, we are done.
\item If $\tilde H_T < 0$, by choosing $\tau = \lambda$, we have $\max(\tilde H^+ (\lambda),\ \tilde H^-(\lambda))\geq 0$. If 
$\tilde H^+ (\lambda)\geq 0$, we are done. In the same way, by choosing $\tau = \mu$, we have $\max(\tilde H^+ (\mu),\ \tilde H^-(\mu))\geq 0$. If  $\tilde H^- (\mu)\geq 0$, we are done.
\item If $\tilde H_T < 0$, $\tilde H^+ (\lambda)< 0$ and $\tilde H^- (\mu)< 0$, then necessarily $\tilde H^-(\lambda)\geq 0$ and $\tilde H^+ (\mu)\geq 0$. Hence
$$ (\tilde H^+ -\tilde H^-) (\lambda)< 0\quad , \quad (\tilde H^+ -\tilde H^-) (\mu)> 0\; ,$$
and there exists $\tau \in (\mu,\lambda)$ such that $\tilde H^+ (\tau) =\tilde H^- (\tau)$. But, for such $\tau$, we have
$\tilde H^+ (\tau) =\tilde H^- (\tau)=\HT$. Therefore using such $\tau$ in the above inequality yields $\HT\geq 0$, a contradiction
which means that we are in one of the two first cases.
\end{enumerate}
And the proof is complete.
\end{proof}

\section{Extension to second-order equations (I)}

In this section, we consider second-order equations of the form 
$$
    u_t+H_i(x,t,Du)-{\rm Tr}(a_i(x)D^2u) = 0 \quad\hbox{in  }\Omega_i\times (0,\Tf)\; , 
$$
where $a_i$ $(i=1,2)$ are continuous functions which are assumed to be on the standard form, \ie
$a_i=\sigma_i\cdot\sigma_i^T$ where $\sigma_i^T$ is the transpose matrix of $\sigma_i$. We suppose
that the $\sigma_i$'s are bounded, Lipschitz continuous functions and in order that the definition
of flux-limited solutions make sense, the following property has to be imposed 
$$
\sigma_i((x',0))=0\quad \hbox{for $i=1,2$ and for all $x'\in \R^{N-1}$.}
$$

The main question we address here concerns the comparison result in this framework.
There are several difficulties that we list below:
\begin{enumerate}
\item[$(i)$] in general, we cannot regularize the subsolution as we did above;
\item[$(ii)$] because of the second-order term, the normal controllability cannot be used
    efficiently outside $\H$;
\item[$(iii)$] a two-parameter proof as in the non-convex case is difficult to handle with the
    second-order term.
\end{enumerate}

We take this opportunity to remark that the above comparison proofs has several common points with the
comparison proof for nonlinear Neumann boundary conditions: in fact, it can be described as a
``double Neumann'' proof since $H_1^-$ (almost) plays the role of a Neumann boundary condition for
the equation in $\Omega_2$ while conversely $H_2^+$ (almost) plays the role of a Neumann boundary
condition for the equation in $\Omega_1$, see Proposition~\ref{prop:dyn.neumann} for more
explanations.

There is anyway a crucial additional difficulty:  $H_1^-,
H_2^+$ are NOT strictly monotone functions \wrt the normal gradient direction. Therefore, if a
general ``one-parameter proof'', avoiding the use of $\gamma \ll \e$ may be possible, it is
probably rather technical and may require additional assumptions on Hamiltonians $H_i$.

Instead, the following result gives some conditions under which the proof of
Theorem~\ref{comp-IM-nc} still works.
\begin{theorem}\emph{--- Comparison principle in the second-order case.}\smsp \label{comp-IM-so} 
    Under the assumptions of Theorem~\ref{comp-IM-nc}, the result of Theorem~\ref{comp-IM} is valid
    provided that the two following assumptions hold, for $i=1,2$, in a neighborhood of $\H$:\\[2mm] 
    $(i)$ $H_i(x,t,p)=H_{i,1}(x',t,p')+H_{i,2}(x_N,p_N)$,\\
    $(ii)$ $\sigma_i=\sigma_i(x_N)$ with $\sigma_i(0)=0$, 
    $\sigma_i$ being locally Lipschitz continuous and bounded.
\end{theorem}

It is worth pointing out that this result holds for non-convex Hamiltonians, but requires rather
restrictive assumptions on $H_i$ and $\sigma_i$. We refer to Imbert and Nguyen \cite{IN} for
general results for second-order equations in the case of {\em networks} where not only comparison
results are obtained but the notions of \FLS and \JVS are discussed and applications are given.

\begin{proof}
    The proof follows readily the proof of Theorem~\ref{comp-IM-nc}, we just add here some
    comments:
    \begin{enumerate}
        \item[--] The structure conditions we impose on $(H_i,\sigma_i)_{i=1,2}$ ensures that we
            can perform a regularization of the subsolution by sup-convolution in the spirit of
            Proposition~\ref{reg-by-ic}: in particular, the Hamiltonians both satisfy
            \TCs. This is the first reason to assume $(i)$ and $(ii)$.
            \index{Regularization of subsolutions}
        \item[--] Once this regularization is done, we still have to control the dependence in the
            derivatives in $x_N$ (or all the terms involving the parameter $\gamma$): this is where
            the special dependence in $x_N$ of $H_i$ and $\sigma_i$ plays a role.
        \item[--] In all the steps where the properties of $\lambda_1,\lambda_2$ are crucial, the
            second-order term is small since $|\sigma_i(x_N)| = O(|x_N|)$ and therefore $|a_i(x_N)|
            = O(x_N^2)$. This can be combined with the facts that 
            $$\frac{|(\xe)_N-(\ye)_N|^2}{\gamma^2}\to 0\quad \hbox{as  }\gamma \to 0\; ,$$
            and the second-order derivatives are a $O(\gamma^{-2})$.
    \end{enumerate}\
\end{proof}

\begin{remark} 
    Anticipating the main result of Section~\ref{sec:KCvsFL} showing that the Kirchhoff boundary
    conditions is equivalent to a flux-limited boundary condition with $G=\HTreg$ under the
    assumptions of Theorems~\ref{comp-IM} or~\ref{comp-IM-nc}, these two results also provide the
    comparison for the (KC)-condition. The proof(s) would apply readily if we were able to show that
    we can choose $\lambda_1 > \lambda_2$ in the test-function (the function $\chi$) but this is not
    obvious at this point and this property will be clarified in Section~\ref{sec:KCvsFL}.
\end{remark}

\chapter{Junction Viscosity Solutions}
\label{chap:JVS}

\abstract{This chapter is devoted to study junction viscosity solutions \`a la Lions-Souganidis for
continuous Hamiltonians: definition, stability and comparison properties are described in details.}

\index{Kirchhoff condition}
Even if flux-limited viscosity solutions have their advantages, it may seem more natural to consider
a definition of viscosity solution with a $\min/\max$ condition on the junction involving $H_1$ and
$H_2$ instead of their nondecreasing/nonincreasing parts.
 
In the next sections, we present the general notion of junction viscosity solutions, which is called
``relaxed solution'' in \cite{IM}. However, because of the similarity to the classical notion of
viscosity solutions, it seems to us that ``junction viscosity solutions'' is more appropriate.  

\section{Definition and first properties}
\label{sec:def-JVS}

\index{Junction viscosity solutions!definition}
We introduce the notion of junction viscosity sub/supersolution for \HJgen associated with a \GJC
given by a nonlinearity $G$ as follows
\footnote{We recall that we assume that $G(x,t,r,a,p',b,c)$ is independent of $r$.}
\begin{definition}\emph{--- Junction Viscosity Solutions.}\\
    \label{defiJVS}
    A locally bounded function $u: \R^N \times (0,\Tf) \rightarrow \R$ is a \JVSub of
    \HJgen-\GJC if it is a classical viscosity subsolution of \HJgen and if, for any test-function
    $\psi=( \psi_1, \psi_2) \in \PC1$ and any local maximum point $(x,t) \in \H\times (0,\Tf) $ of
    $u^*-\psi$ in $\R^N\times (0,\Tf)$,
    \begin{equation} \label{cvsub-n}
        \min \Big(G(x,t,\psi_t,D_\H \psi,\frac{\partial \psi_1}{\partial n_1},
        \frac{\partial \psi_2}{\partial n_2}),  \psi_t+ H_1(x,t, u^*, D\psi_1) ,  
        \psi_t+ H_2(x,t, u^*, D\psi_2) \Big) \leq 0  \: ,
    \end{equation}
    where $u^*$ and the derivatives of $\psi,\psi_1,\psi_2$ are taken at $(x,t)$.\\[2mm]
    A locally bounded function $v: \R^N \times (0,\Tf) \rightarrow \R$ is a \JVSup of
    \HJgen-\GJC if it is a classical viscosity supersolution of \HJgen and if, for any test-function
    $\psi=( \psi_1, \psi_2) \in \PC1$ and any local minimum point $(x,t) \in \H\times (0,\Tf) $ of
    $v_*-\psi$ in $\R^N\times (0,\Tf)$,
    \begin{equation} \label{cvsuper-n}
        \max \Big(G(x,t,\psi_t,D_\H \psi,\frac{\partial \psi_1}{\partial n_1},
        \frac{\partial \psi_2}{\partial n_2}),  \psi_t+ H_1(x,t, v_*, D\psi_1) ,  
        \psi_t+ H_2(x,t, v_*, D\psi_2) \Big) \geq 0  \: ,
    \end{equation}
    where $v_*$ and the derivatives of $\psi,\psi_1,\psi_2$ are taken at $(x,t)$.\\
    A \JVS (i.e. a junction viscosity solution) is a locally bounded function which is both \JVSub and
\JVSup.
\end{definition}

As in the case of \FLSub and \FLSup, we can define \JVSub and \JVSup using the notions
of sub and superdifferentials. With the notations of Proposition~ \ref{defiFL2}, we have the
\footnote{Again we formulate the result for \usc subsolution and \lsc supersolution but the reader
can easily transpose it to general sub and supersolutions}
\begin{proposition}\label{defiJVS-diff}\emph{--- Junction viscosity solutions via sub
    superdifferentials.}\smsp
\noindent An \usc, locally bounded function $u: \R^N \times (0,\Tf) \rightarrow \R$ is a
            \JVSub of  \HJgen-\GJC if and only if 
            \begin{enumerate}
                \item[$(i)$] for any $(x,t) \in Q_i$ ($i=1,2$) and any $(p_x,p_t)\in D_{\overline{Q_i}^\ell}^+ u (x,t)$
            $$
            p_t + H_i (x,t,u(x,t),p_x) \leq 0 \; ,$$
        \item[$(ii)$] for any $(x,t) \in \H\times (0,\Tf)$ and for any $p_\H \in \H$, $p_1,p_2,p_t
            \in \R$ such that $((p_\H,p_i),p_t)\in D_{\overline{Q_i}^\ell}^+ u (x,t)$ for $i=1,2$,
         $$
             \min_i \Big(G(x,t,p_t,p_\H ,p_1,p_2),  
              p_t+ H_i(x,t, u(x,t), p_\H+p_i e_N) \Big) \leq 0\;.
            $$
            \end{enumerate}

            \noindent  A \lsc, locally bounded function $v: \R^N \times (0,\Tf) \rightarrow \R$
                is a \JVSup of  \HJgen-\GJC if and only if, for $(x,t) \in \R^N\times (0,\Tf) $,
                \begin{enumerate}
                    \item[$(i)$] for any $(x,t) \in Q_i$ ($i=1,2$) and for any $(p_x,p_t)\in
            D_{\overline{Q_i}^\ell}^-v (x,t)$
            $$
            p_t + H_i (x,t,v(x,t),p_x) \geq 0 \; ,$$
        \item[$(ii)$] for any $(x,t) \in \H\times (0,\Tf)$ and for any $p_\H \in \H$, $p_1,p_2,p_t
            \in \R$ such that $((p_\H,p_i),p_t)\in D_{\overline{Q_i}^\ell}^-v (x,t)$ for $i=1,2$,
                               $$
             \max_i \Big(G(x,t,p_t,p_\H ,p_1,p_2),
              p_t+ H_i(x,t, v(x,t), p_\H+p_i e_N) \Big) \geq 0 \;.
            $$
                \end{enumerate}
\end{proposition}

As for Proposition~\ref{defiFL2}, we leave the proof of this result to the reader since it is an
easy consequence of Lemma~\ref{subdiff-hp-Omega} and Lemma~\ref{diff-twod}. We again point out that
this equivalent definition via sub and superdifferentials allows to show that instead of using
general PC$^1$ test-functions, we may only use test-functions of the form $\chi(x_N)+\varphi (x,t)$
where $\chi \in {\rm PC}^1(\R)$ and $\varphi \in C^1(\R^N \times (0,\Tf))$. The reader will notice
that we mainly use test-function of this form in the comparison result but this property
is also useful to simplify the proofs of several results.

Before considering the regularity properties of \JVSub and \JVSup, we point out that one of the
advantages of the notion of junction viscosity solution is that it can be applied to a wider class of
junction conditions without any convexity/quasi-convexity type assumption.  On the other hand, its
similarity with the classical notion of viscosity solutions should easily convince the reader that
the notion enjoys the stability properties of classical viscosity solutions.

\subsection{Lack of regularity of subsolutions}

This notion has a slight defect since \usc junction viscosity subsolutions are not necessarily regular,
contrarily to flux-limited solutions, because of the ``$\min$'' in the definition. To show it, we
consider the following $1$-d example
$$\begin{cases}
    u_t +|u_x|=0 & \hbox{in  }\R\setminus\{0\}\times (0,+\infty)\; ,\\
    u_t(0,t)=0  & \hbox{in  } (0,+\infty)\; ,\\
    u(x,0)=-|x| & \hbox{in  }\R\; .
\end{cases}
$$
It is worth pointing out that this problem is far from being pathological since $H_1(p)=H_2(p)=|p|$
satisfy all the ``good assumptions'', in particular \NCe. One checks easily that the expected
solution is $U(x,t)=-|x|-t$ but we also have the non-regular subsolution given by 
$$u(x,t)=\begin{cases}
    U(x,t) & \hbox{if  } x\neq 0\; ,\\
    0 & \hbox{for  }x=0\; .
\end{cases}$$
It is clear that $u$ is \usc and a subsolution for $x\neq 0$, and it is a subsolution for $x=0$
because $u_t(0,t)\equiv 0$ and the ``min'' in the definition allows such inexpected feature.

\subsection{The case of Kirchhoff-type conditions}

We refer to Section~\ref{sect:types.junctions} for the complete and precise definitions of different
junction conditions on the inferface. Let us just recall here that Kirchhoff-type conditions 
essentially satisfy (dropping the $u$-dependance)
\begin{equation}
        G(x,t,a_1,p',b_1,c_1)\,-\,  G(x,t, a_2,p',b_2,c_2)\geq \alpha(a_1-a_2) + 
    \beta(b_1-b_2)+ \beta(c_1-c_2)
\end{equation}
for some $\alpha\geq0,\ \beta>0$. Of course the typical example is the standard Kirchhoff condition
for which $G(x,t,a,p',b,c)=b+c$, encoding $\partial u/\partial n_1+\partial u/\partial n_2=0$.
\index{Junction viscosity solutions!regularity of subsolutions}

\begin{proposition}\label{prop:JVreg}\emph{--- Regularity of subsolutions.}\smsp
    Assume that $H_1$, $H_2$, $G$ are continuous functions and that $H_1$, $H_2$ satisfy \NCHJ. Then
    junction viscosity subsolutions are regular provided \GJC is of Kirchhoff type.  
\end{proposition}

\begin{proof}
    We only provide the proof in the subsolution case, the supersolution one being analogous. Assume
    that $u: \R^N \times (0,\Tf) \rightarrow \R$ is an \usc \JVSub of \HJgen-\GJC and let $(x,t)$ be
    a point of $\H \times (0,\Tf)$. We argue by contradiction: if, for instance, $u$ is not
    $\Omega_1$-regular at $(x,t)$, this means that
    \begin{equation}\label{jump-sub}
    u(x,t) > \limsup_{\substack{ (y,s)\to (x,t)\\ y\in \Omega_1}}u(y,s)\, .
    \end{equation}
    We introduce the function
    $$ \Psi:(y,s) \mapsto u(y,s) -\frac{|y-x|^2}{\eps^2}-\frac{|s-t|^2}{\eps^2}-C_1 (y_N)_+ -C_2 (y_N)_-\;,$$
    where $0<\eps \ll 1$ and $C_1\in \R$, $C_2>0$ are constants to be chosen. 
    Notice that $y\mapsto C_1 (y_N)_+ +C_2 (y_N)_-$ belongs to $\PC1$.

    Choosing $\eps$ small enough and $C_1=0$, $\Psi$ has a maximum point $(\xe,\te)$ near $(x,t)$ and
    $(\xe,\te)\to (x,t)$, $u(\xe,\te)\to u(x,t)$ as $\eps\to 0$. We see that if $C_2$ is large enough,
    the $H_2$-subsolution inequality cannot hold, therefore $(\xe,\te)\in \H$.  Moreover, if
    $\eps$ is small enough, \eqref{jump-sub} is also true at $(\xe,\te)$ and, as a consequence,
    $(\xe,\te)$ is a local maximum point of $\Psi$ for any $C_1\in \R$. 

    In the same way, choosing now $C_1<0$ large enough implies that the $H_1$-subsolution inequality
    cannot hold. But neither can the $G$-one, provided $G$ is of Kirchhoff-type: since $G$ behaves
    like $K_\e+\beta(C_2-C_1)$ for some constant $K_\e$, it is strictly positive if $C_1$ is very
    negative. 

    Hence, none of the subsolution inequalities can hold on the interface and we get the desired
    contradiction.  
\end{proof}

On the contrary,  junction viscosity supersolutions are not necessarily regular, even if \GJC is of
Kirchhoff type as shown by the following example.

\begin{example} The solution $u:\R \times [0,+\infty[$ of
$$ u_t +|u_x|=0 \quad \hbox{in  }\R \times (0,+\infty)\; ,$$
$$ u(x,0)=u_0 \quad \hbox{in  }\R \; ,$$
where $u_0(x):= (1-|x|)_+$ is given by $u(x,t)=(1-|x|-t)_+$.

Now we look at
$$v(x,t)=
\begin{cases}
u(x,t) & \hbox{if  }x\leq 0\; ,\\
1& \hbox{if  }x> 0\; .
\end{cases}
$$
Then one checks easily that $v$ is (i) \lsc, (ii) a \JVSup for \HJgen-\KC with $H_1(p)=H_2(p)=|p|$
and (iii) is not regular at $x=0$. As the reader has probably already noticed it, the \JVSup
property comes from the fact that $u$ is a solution of the state-constrained problem in $(-\infty,0]
\times (0,+\infty)$, hence $u_t + H_2(u_x)\geq 0$ even on the boundary $\{0\}\times (0,+\infty)$. 
This may be seen as a little defect of the network approach which, by using \hbox{\rm PC}$^1$-test-functions,
leads to a slight decoupling of the domains $\Omega_1 \times (0,+\infty)$ and $\Omega_2 \times (0,+\infty)$.
\end{example}

\section{Stability of junction viscosity solutions}

Contrarily to the case of flux-limited viscosity solutions, the stability result for \JVS is almost
an immediate extension of Theorem~\ref{hrl}: the change of test-functions, using $\PC1$ implies
only minor modifications which is why we skip the proof of the  \index{Junction viscosity solutions!stability}
\begin{theorem}\label{JV-stab}\emph{--- Stability of junction viscosity solutions.}\smsp
    Assume that
    \begin{enumerate}
        \item[$(i)$] For any $\e>0$, $H_1^\e, H_2^\e, G^\e$ are continuous and converge locally uniformly
        respectively to $H_1, H_2, G$\;. 
        \item[$(ii)$] For any $\e>0$, $u_{\e}$ is a \JVSub \resp{\JVSup} for the problem with Hamiltonians
        $H_1^\e, H_2^\e, G^\e$\;.
        \item[$(iii)$] The functions $u_{\e}$ are uniformly locally bounded on
        $\R^N\times[0,T_f]$.
    \end{enumerate}
    Then $\ou=\limssup u_{\e}$ \resp{$\uu=\limiinf u_{\e}$} is a \JVSub \resp{\JVSup} for the
    problem with Hamiltonians $H_1, H_2, G$.
\end{theorem}

\begin{remark}
    Contrarily to Theorem~\ref{FL-stab},  Theorem~\ref{JV-stab} turns out to be very flexible,
    without any restriction on the Hamiltonians and with general junction conditions; in particular,
    it can be used to address the problem of the convergence of the vanishing viscosity method.
\end{remark}

\section{Comparison results for junction viscosity solutions: the Lions-Souganidis approach}
\label{sec:compLS}

In this section we expose the Lions-Souganidis approach of the comparison proof for \JVS and apply
it to the case of general Kirchhoff conditions, as well as second-order equations.
\index{Comparison result!for junction viscosity solutions}\index{Kirchhoff condition}
\index{Junction viscosity solutions!comparison via the Lions-Souganidis approach}

\subsection{Preliminary lemmas}

We begin with a simple one-dimensional lemma which can be seen as a little bit more precise version of
Proposition~\ref{sub-ineq-on-b} in this context
\begin{lemma}\label{HJ-dim1}
    Let $H:\R\to\R$ be a continuous function and $u:[0,r]\to \R$ be a Lipschitz continuous 
    subsolution of $ H(u_x)=0$ in $(0,r)$. Defining
    $$ \unp :=\liminf_{x\to 0}\left[\frac{u(x)-u(0)}{x}\right]\leq
    \limsup_{x\to 0}\left[\frac{u(x)-u(0)}{x}\right]=:\ovp\;,$$
    then $H(p)\leq 0$ for all $p\in [\unp,\ovp]$.
\end{lemma}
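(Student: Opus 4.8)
The plan is to reduce the claim to the case of an interior slope $p$ and then exploit the fact that a subsolution cannot be touched from above at a genuine local maximum by an affine function of slope $p$ unless $H(p)\le 0$. First I would fix $p$ in the open interval $(\unp,\ovp)$ and set $g(x):=u(x)-u(0)-px$, so that $g(0)=0$ and a local maximum of $g$ in $(0,r)$ is exactly a local maximum of $u-\varphi$ for the smooth test-function $\varphi(x)=u(0)+px$; the viscosity subsolution property then forces $H(p)=H(\varphi_x)\le 0$ at such a point. The two endpoints $p=\unp$ and $p=\ovp$ will be recovered at the end by letting $p\to\unp^+$, resp. $p\to\ovp^-$, and invoking the continuity of $H$.

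The heart of the argument is to produce, arbitrarily close to $0$, an \emph{interior} local maximum point of $g$. Since $p<\ovp=\limsup_{x\to0^+}\big(u(x)-u(0)\big)/x$, there are points arbitrarily close to $0$ at which $g>0$; since $p>\unp=\liminf_{x\to0^+}\big(u(x)-u(0)\big)/x$, there are points arbitrarily close to $0$ at which $g<0$. Hence, given any $\eta\in(0,r)$, I would first choose $b\in(0,\eta)$ with $g(b)<0$ and then choose $a\in(0,b)$ with $g(a)>0$. On the compact interval $[0,b]\subset[0,r)$ the continuous function $g$ attains its maximum at some point $z$; this maximum is $\ge g(a)>0$, hence strictly larger than $g(0)=0$ and strictly larger than $g(b)<0$, so $z\notin\{0,b\}$. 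Therefore $z\in(0,b)\subset(0,r)$ is an interior point which is a global maximum of $g$ on $[0,b]$, and in particular a local maximum of $g$ on $(0,r)$.

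Finally, applying the subsolution inequality to $u-\varphi$ at $z$ with $\varphi(x)=u(0)+px$ yields $H(p)\le0$. Since $\eta$ was arbitrary this holds for every $p\in(\unp,\ovp)$, and then the continuity of $H$ gives $H(p)\le0$ on the closed interval $[\unp,\ovp]$. I do not anticipate a serious obstacle here: the only delicate point is verifying that the maximum over $[0,b]$ genuinely lands in the open interval, which is precisely why one wants a strictly positive value $g(a)$ squeezed between the non-positive endpoint values $g(0)=0$ and $g(b)<0$. (The Lipschitz hypothesis on $u$ serves only to ensure that $\unp$ and $\ovp$ are finite; it is otherwise not needed for this lemma.)
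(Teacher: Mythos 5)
Your proof is correct and is essentially the same argument as the paper's: in both cases one picks $p$ strictly between $\unp$ and $\ovp$, forms $g(x)=u(x)-u(0)-px$, locates a small interval $[0,b]$ with $g(0)=0$, $g(b)<0$ and some interior point where $g>0$, concludes that the maximum of $g$ over $[0,b]$ is attained at an interior point, applies the subsolution inequality there, and then passes to the endpoints $\unp,\ovp$ by continuity of $H$. Your parenthetical remark on the role of the Lipschitz hypothesis also matches Remark~\ref{rem-RT} in the text.
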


\begin{remark}\label{rem-RT}
    In Lemma~\ref{HJ-dim1}, the subsolution is assumed to be Lipschitz continuous and this is
    consistent with the fact that we consider equations with coercive Hamiltonians, or at least 
    satisfying \NCe. This assumption ensures that $\unp$ and $\ovp$ are bounded, but this is not
    really necessary as the proof will show. 

    Without this assumption, we can still prove at least that if $\unp<+\infty$,
    $H(p)\leq 0$ for all $p\in ]\unp,\ovp[$. The importance of this remark is
    more for supersolutions: we use below an analogous result for them and it is less natural to
    assume them to be Lipschitz continuous.  
\end{remark}

\begin{proof} Let us first notice that since $u$ is assumed to be Lipschitz, both $\unp$ and $\ovp$
    are not infinite. 

    \smallskip

    \noindent\textbf{(a)} We first assume that $\unp<\ovp$.\\[2mm] 
    Let $(x_k)_k$ be a sequence of points of $(0,r)$ such that
    $x_k \to 0$ and $\big(u(x_k)-u(0)\big)/x_k \to \unp$\;.
    We pick any $\unp < p <\ovp$ and consider the function $\psi(y)=u(y)-u(0)-py$ on the interval
    $[0,x_k]$.  Since 
    $$\psi(0)=0\;,\quad \psi(x_k)<0\;,\quad \limsup_{x\to0}\frac{\psi(x)}{x}=\ovp -p>0\;,$$
    there exists a maximum point $\tilde x_k \in (0,x_k)$ of $\psi$. The subsolution property at
    $\tilde x_k$ yiels the desired inequality: $H(p)\leq 0$. Moreover, the continuity of
    $H$ implies that the same property holds true for all $p\in[\unp,\ovp]$.

    \smallskip

    \noindent\textbf{(b)} Now we turn to the case  $\unp=\ovp$.\\[2mm]
    For $0<\eps \ll 1$, we consider $v(x)=u(x) + \eps x \sin(\log(x))$. The function $x\mapsto x
    \sin(\log(x))$ is $C^1$ for $x>0$ and Lipschitz continuous. Therefore, $H(v_x)\leq o_\eps
    (1)$. Moreover 
    $$ \frac{v(x)-v(0)}{x}=\frac{u(x)-u(0)}{x} +\eps \sin(\log(x))\; ,$$
    which implies that
    $$\liminf_{x\to 0}\left[\frac{v(x)-v(0)}{x}\right]=\ovp-\eps<
    \limsup_{x\to 0}\left[\frac{v(x)-v(0)}{x}\right]=\ovp+\eps\; .$$
    Since $\ovp-\eps < \ovp < \ovp+\eps$, case \textbf{(a)} above implies that 
    $H(\ovp)\leq o_\eps (1)$ and the conclusion follows by letting $\eps$ tend to $0$.  
\end{proof}

\begin{remark}
    Of course, analogous results hold for supersolutions: if $v$ is a supersolution of $H(v_x)\geq
    0$ in $(0,r)$, it suffices to use that u=-v(x) is a subsolution of $-H(-u_x)\leq 0$ in $(0,r)$.
\end{remark}

\

In order to connect the $1$-d and multi-dimensional situations and therefore to give a more precise
formulation of Proposition~\ref{sub-ineq-on-b} in the framework which is the one of the comparison
proof, let us consider a set
$$Q:=\big\{(y,x): y\in\mathcal{V}\;,\ x\in]0,\delta[\big\}\subset\R^{p+1}$$
where $\mathcal{V}$ is a neighborhood of $0$ in $\R^p$, and $\delta>0$. If $w: \overline{Q} \to \R$, we
denote by $D^+_{\overline{Q}} w$ and $D^-_{\overline{Q}} w$ the super and sub-differentials of $w$ with respect
to both variables $(y,x)$. 

If $w$ is differentiable with respect to $y$ at $(0,0)$, it can be expected that
these sub/super-differentials of $w$ in both variables have the forms
$$(D_yw(0,0),D^-_x w(0,0))\quad\text{and}\quad (D_yw(0,0),D^+_x w(0,0))\;.$$ 
But in view of Lemma~\ref{HJ-dim1} and using Section~\ref{sect:semi.convexity}, we can give a more
precise result 
\begin{lemma}\label{sb-sp-diff}
  Let $w : \overline{Q} \to \R$ be a function such that the functions $y\mapsto w(y,x)$ are Lipschitz continuous
  in $\mathcal{V}$ uniformly  with respect to $x\in[0,\delta[$ and $y\mapsto w(y,0)$ is differentiable at $0$.
  \begin{enumerate}  
      \item[$(a)$] Superdifferential case\\[2mm] 
     We assume moreover that $w$ is upper-semicontinuous in $Q$ and that, for any
     $x\in  [0,\delta[$, the function $y\mapsto w(y,x)$ is semi-convex in
     $\mathcal{V}$. If 
     $$\overline{p} = \limsup_{x\to  0}\left[\frac{w(0,x)-w(0,0)}{x}\right]$$
     exists and is finite, then
     $$ (D_y  w(0,0), p) \in D_Q^+ w(0,0)\quad\hbox{if and only if}\quad p \geq \overline{p}\;.$$

  \item[$(b)$] Subdifferential case\\[2mm]
     We assume moreover that $w$ is lower-semicontinuous in $Q$ and that, for any
     $x\in  [0,\delta[$, the function $y\mapsto w(y,x)$ is semi-concave in
     $\mathcal{V}$. If 
     $$\underline{q} = \liminf_{x\to  0}\left[\frac{w(0,x)-w(0,0)}{x}\right]$$
     exists and is finite, then
   $$ (D_y  w(0,0), q) \in D_Q^- w(0,0)\quad\hbox{if and only if}\quad q \leq \underline{q}\ \;.$$
  \end{enumerate}
\end{lemma}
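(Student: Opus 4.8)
Part $(b)$ will follow from part $(a)$ applied to $-w$: indeed $-w$ is u.s.c., $y\mapsto -w(y,x)$ is semi-convex and uniformly Lipschitz, $y\mapsto -w(y,0)$ is differentiable at $0$, and $\limsup_{x\to 0^+}\big[(-w(0,x)+w(0,0))/x\big]=-\underline{q}$, while $D^+_Q(-w)(0,0)=-D^-_Q w(0,0)$; so it suffices to prove $(a)$. For $(a)$, I would first subtract from $w$ the affine function $y\mapsto w(0,0)+D_y w(0,0)\cdot y$ — this preserves upper semicontinuity, uniform Lipschitz continuity, semi-convexity in $y$, differentiability of $w(\cdot,0)$ at $0$ and the value of $\overline{p}$, and merely translates $D^+_Q w(0,0)$ by $(D_yw(0,0),0)$ — so that we may assume $w(0,0)=0$ and $D_y w(0,0)=0$, hence $w(z,0)=o(|z|)$ as $z\to 0$. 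The plan is then: fix $p\ge\overline{p}$ and $\eps>0$ and show $w(y,x)\le px+\eps(|y|+x)$ for all $(y,x)$ near $(0,0)$ with $x\ge 0$, arguing by contradiction with a sequence $(y_k,x_k)\to(0,0)$, $x_k\ge 0$, satisfying $w(y_k,x_k)>px_k+\eps(|y_k|+x_k)$.

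Two degenerate situations would be cleared at once: if $x_k=0$ along a subsequence then $w(y_k,0)>\eps|y_k|$ with $y_k\to 0$, $y_k\neq 0$ (the case $y_k=0$ forcing $0>0$), contradicting $w(\cdot,0)=o(|\cdot|)$; and if $y_k=0$ then $w(0,x_k)>(p+\eps)x_k$, contradicting the definition of $\overline{p}$ (which gives $w(0,x)\le(\overline{p}+\eps/2)x\le(p+\eps/2)x$ for $x$ small). So I may assume $x_k>0$, $y_k\neq 0$. The three ingredients I would use are: the semi-convexity chord inequality $w(z,x)\le\tfrac12 w(0,x)+\tfrac12 w(2z,x)+C|z|^2$ (valid for $z,2z\in\mathcal V$, with $C$ the semi-convexity constant of $w(\cdot,x)$, taken uniform in $x$), iterated $m$ times into
\[
 w(z,x)\le 2^{-m}w(2^m z,x)+(1-2^{-m})w(0,x)+(2^m-1)C|z|^2
\]
(whenever $2^j z\in\mathcal V$ for $j\le m$); the fact that $w(0,x_k)\to 0$ along our sequence, obtained by combining the uniform Lipschitz bound $w(0,x_k)\ge w(y_k,x_k)-L|y_k|>px_k+\eps x_k+(\eps-L)|y_k|\to 0$ with the upper bound $w(0,x_k)\le(\overline p+\eps/2)x_k$; and the compactness/u.s.c.\ fact that, for any fixed $\rho$ with $\overline{B}(0,\rho)\subset\mathcal V$, $\Omega(x):=\sup_{|z|\le\rho}\big(w(z,x)-w(z,0)\big)\to 0$ as $x\to 0^+$ (otherwise a limiting point $z_*$, with $|z_*|\le\rho$, would violate upper semicontinuity of $w$ at $(z_*,0)$, using continuity of $w(\cdot,0)$).

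With these in hand I would fix $\rho>0$ small enough that $\overline{B}(0,\rho)\subset\mathcal V$, $|w(z,0)|\le\tfrac{\eps}{10}|z|$ for $|z|\le\rho$, and $C\rho\le\tfrac{\eps}{10}$; then, for $k$ large so that $|y_k|<\rho/2$, choose the least $m_k$ with $2^{m_k}|y_k|\ge\rho/2$, so that $2^{m_k}|y_k|\in[\rho/2,\rho)$, every $2^j y_k$ ($j\le m_k$) lies in $\mathcal V$, and $2^{-m_k}\le 2|y_k|/\rho$. Taking $z=y_k$ in the iterated inequality and estimating the three terms — $2^{-m_k}w(2^{m_k}y_k,x_k)\le\tfrac{2|y_k|}{\rho}\big(\tfrac{\eps}{10}\rho+\Omega(x_k)\big)=\tfrac{\eps}{5}|y_k|+o(|y_k|)$; $(1-2^{-m_k})w(0,x_k)\le w(0,x_k)+2^{-m_k}|w(0,x_k)|\le(p+\tfrac{\eps}{10})x_k+o(|y_k|)$ because $w(0,x_k)\to 0$; and $(2^{m_k}-1)C|y_k|^2< C\rho|y_k|\le\tfrac{\eps}{10}|y_k|$ — one reaches $w(y_k,x_k)\le px_k+\tfrac{\eps}{10}x_k+\tfrac{\eps}{2}|y_k|\le px_k+\tfrac{\eps}{2}(|y_k|+x_k)$ for $k$ large, contradicting $w(y_k,x_k)>px_k+\eps(|y_k|+x_k)$. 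The point requiring care, and the only genuine subtlety, is to control these three error terms simultaneously: this succeeds precisely because differentiability of $w(\cdot,0)$ at $0$ lets $\rho$ be taken as small as we wish, while the choice of $m_k$ keeps $2^{-m_k}$ of order $|y_k|/\rho$, so that both the semi-convexity defect $C\rho|y_k|$ and the ``far'' boundary term $2^{-m_k}w(2^{m_k}y_k,x_k)$ remain $O(\eps|y_k|)$, the remaining piece $2^{-m_k}w(0,x_k)$ being negligible since $w(0,x_k)\to 0$.
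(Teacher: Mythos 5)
Your proof is correct, and it takes a genuinely different route from the paper's. Both proofs make the same preliminary moves: reduce $(b)$ to $(a)$ by applying $(a)$ to $-w$; normalize to $w(0,0)=0$ and $D_yw(0,0)=0$ by subtracting the affine part; and split the estimate so that the term $w(0,x)-px$ is directly controlled by the definition of $\overline p$. The divergence is in the remaining estimate of $w(y,x)-w(0,x)$. The paper regularizes $w(\cdot,x)$ by convolution in $y$, applies the fundamental theorem of calculus to the mollification, and invokes the (nontrivial, if classical) fact that for a family of uniformly Lipschitz, uniformly semi-convex functions, with $w$ continuous at $(0,0)$ and $w(\cdot,0)$ differentiable at $0$, one has $D_yw(y,x)\to D_yw(0,0)$ as $(y,x)\to(0,0)$ wherever the gradient exists. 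You instead run an iterated dyadic chord inequality: semi-convexity along the segment $[0,2^{m_k}y_k]$ propagates the value at $(y_k,x_k)$ out to a macroscopic scale $\rho$ with controlled error, and the far-away term is absorbed by the $o(|\cdot|)$ bound on $w(\cdot,0)$ together with the one-sided upper semicontinuity defect $\Omega(x)$. Your argument is more elementary and quantitative — it uses only the definition of semi-convexity — at the cost of more bookkeeping; the paper's is shorter but leans on a deeper fact about superdifferentials of semi-convex functions. One small imprecision in your write-up: the compactness/u.s.c. argument yields only $\limsup_{x\to 0^+}\Omega(x)\leq 0$, not $\Omega(x)\to 0$ (a lower bound could fail); but since $\Omega$ enters only as an upper bound on $w(2^{m_k}y_k,x_k)$, this is all you actually use.
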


The interest of this lemma is clear:  under suitable assumptions, we can connect
$1$-d and multi-d sub or super-differentials. This will be a key step for
applying Lemma~\ref{HJ-dim1} to multi-d problems. We point out anyway that 
Lemma~\ref{HJ-dim1} gives an important additional information on the interval $[\unp,\ovp]$ if $\unp \neq \ovp$.

\begin{proof}
    We only do the proof in case $(a)$, the other case working with obvious
    adaptations. 
    
    For $p\geq\bar p$, we set 
    $$\overline w(y,x):= w(y,x)-w(0,0)-D_y  w(0,0)\cdot y-px\;.$$
    If $(D_y  w(0,0), p) \in D_Q^+ w(0,0)$, then $ \overline w(y,x) \leq o(|y|+x)$.
    Choosing $y=0$ and dividing by $x>0$, we obtain
    $$ \frac{w(0,x)-w(0,0)}x-p \leq o(1)\; ,$$
    and therefore $\overline{p} \leq p$ by taking the $\limsup$ as $x\to 0$.
    
    Conversely, if $p\geq \overline{p}$, we want to show that $(D_y  w(0,0), p) \in D_Q^+ w(0,0)$, \ie $\overline w(y,x) \leq o(|y|+x)$. 
    To do so, we argue by contradiction assuming that there exists $\eta >0$ and a sequence $(y_k,x_k)_k$ converging to 
    $(0,0)$ such that $x_k>0$ for all $k$ $\overline w(y_k,x_k) \geq \eta(|y_k|+x_k)$.
    
    Using the upper semicontinuity of $w$, hence of $\overline w$, we easily deduce that $\overline w(y_k,x_k)\to \overline w(0,0)=0$
    and the Lipschitz continuity of $y\mapsto \overline w(y,x_k)$ implies that $\overline w(0,x_k)\to \overline w(0,0)=0$.
    
    Next we use the decomposition 
    $$ \overline w(y,x_k)= [\overline w(y,x_k)-\overline w(0,x_k)]+ \overline w(0,x_k)\; .$$
    By the definition of $\bar p$ and $p\geq \bar p$, 
    $\overline w(0,x_k)=w(0,x_k)-w(0,0)-px_k\leq o(x_k)$. Therefore it remains to estimate the bracket to obtain a contradiction.

To do so, we introduce a regularization by convolution of $y\mapsto \overline w(y,x_k)$ for all fixed $x_k$. Let $(\rho_\eps)_\eps$
be a sequence of approximate identities in $\R^p$,
$i.e.$ a sequence of positive, $C^\infty$-functions on $\R^p$ with compact
support in $B_\infty(0,\eps)$ such that $\int_{\R^p}\,\rho_\eps(z)dz=1$. 
Then we set 
$$ \overline w_\e
(y,x_k):= \int_{\R^p} \overline w(y+z,x_k)\rho_\eps(z)\,dz\; .$$ 
For any $k$, the functions $y\mapsto \overline w_\e (y,x_k)$ are $C^1$ in a neighborhood of $0$
and therefore 
$$\overline w_\e (y,x_k)-\overline w_\e (0,x_k)= \int_0^1 D_y\overline w_\e
(s y,x_k)\cdot y\, ds\; .$$ 
Now we examine $D_y\overline w_\e (s y,x_k)$.
Since, for any $k$, 
$y\mapsto \overline w(y,x_k)$ is Lipschitz continuous, hence differentiable almost everywhere by Rademacher's Theorem,
we have  
$$D_y \overline w_\e (s y,x_k) =\int_{\R^p} D_y \overline w(s y+z,x_k)\rho_\eps(z)\,dz \; .$$
Moreover, using again the Lipschitz continuity of $y\mapsto \overline w(y,x_k)$, we have $\overline w(y,x_k)\to \overline w(0,0)=0$
when $y\to 0$ and $k\to +\infty$; therefore, by the semi-convexity assumption, it follows from 
Proposition~\ref{prop:semi-conv-prop}-$(iv)$ that we
have $D_y \overline w(y,x_k)=D_yw(y,x_k)-D_yw(0,0)\to 0$ when $(y,x_k) \to (0,0)$. This
implies that
$D_y \overline w_\e (s y,x_k)=o_k(1)+o_y(1)+o_\e(1)$ as $(y,x_k,\e)\to (0,0,0)$, 
uniformly with respect to $s\in[0,1]$.
Therefore $\overline w_\e (y,x_k)-\overline w_\e (0,x_k)=|y|(o_x(1)+o_y(1)+o_\e(1))$. 
Letting $\e$ tend to $0$, we end up with
$$\overline w(y,x_k)-\overline w(0,x_k)=o(|y|)+|y|o_x(1)=o(|y|)\quad \hbox{for  $k$ large enough},$$
which yields the desired contradiction.  
\end{proof}

\subsection{A comparison result for the Kirchhoff condition}

\index{Kirchhoff condition}
Before considering other junction conditions, 
we first provide a comparison result for the problem \HJgen-\KC, namely
$$\begin{cases} u_t + H_1(x,t,u,Du) = 0 & \hbox{in  }\Omega_1 \times (0,\Tf) \; ,\\[2mm]
    u_t + H_2 (x,t,u,Du) = 0 & \hbox{in  }\Omega_2 \times (0,\Tf)\; ,\\[2mm]
    \dfrac{\partial u}{\partial n_1}+\dfrac{\partial u}{\partial n_2}=0 & 
\hbox{on  }\H\times (0,\Tf)\;,\end{cases}$$
where for $i=1,2$, $n_i(x)$ denotes the unit outward to $\domeg_i$ at $x\in \domeg_i$. 
We recall that this Kirchhoff condition has to be taken in the \JVS sense, namely
$$ \min\Big(u_t + H_1(x,t,u,Du),u_t + H_2 (x,t,u,Du),\frac{\partial u}{\partial n_1}+
\frac{\partial u}{\partial n_2}\Big)\leq 0\quad \hbox{on  }\H\ \times (0,\Tf)\;,$$
for the subsolution condition and 
$$ \max\Big(u_t + H_1(x,t,u,Du),u_t + H_2 (x,t,u,Du),\frac{\partial u}{\partial n_1}+
\frac{\partial u}{\partial n_2}\Big)\geq 0\quad \hbox{on  }\H\times(0,\Tf)\;,$$
for the supersolution condition, using test-functions in $\mathrm{PC}^{1}$.

In order to formulate and prove a comparison result for \HJgen-\KC, we face several difficulties:
first, we are not readily in the ``good framework for HJ-Equations with discontinuities'' because of
the $\min$ in the subsolution junction condition and the \KC condition which prevents \NCe to be
satisfied. As a consequence it is not clear a priori that subsolutions are regular---a general
problem for \JVS---nor supersolutions. This last point is important since, in order to apply the
Lions-Souganidis approach, we have to regularize both the sub and supersolution, needing both
to be regular. Fortunately for subsolutions, this problem is solved by Proposition~\ref{prop:JVreg},
but not for supersolutions. Last but not least, it is not completely clear that we can apply
Proposition~\ref{reducCR} in order to prove only \LCR.

We can overcome all these difficulties under some suitable assumptions
\begin{theorem}\label{LS-CR}\emph{--- Comparison result, the Kirchhoff case.}\smsp
    Assume that $H_1$ and $H_2$ satisfy \GAGen. Then the \GCR holds for any bounded subsolution $u$ 
    and supersolution $v$ provided\\[2mm]
    $(i)$ either $v$ satisfies \eqref{Lip-z-v}, \ie there exists $C>0$ such that for all $(x',t)$
    $$|v((x',x_N),t)-v(x',y_N),t)|\leq C|x_N-y_N|\,;$$
    $(ii)$ or \TCsH holds for both $H_1$ and $H_2$.
\end{theorem}

Of course, the main interest of this result is to allow to prove a \GCR which is valid for non
convex Hamiltonians $H_1$ and $H_2$. In addition, it is easy to see that the proof we give below
(and which is almost exactly the Lions-Souganidis one) can provide a comparison result for different
types of ``junction conditions'' on $\H$ and also for more general networks problems; we come back
on this point in the next section.

\begin{proof}
    This proof consists first in reducing to a one-dimensional proof thanks to various
    reductions and using the preliminary lemmas of the previous section.

    \medskip

    \noindent\textbf{(a)} \emph{Reduction to a \LCR with semiconvex/concave functions.}\\[2mm]
    Thanks to Section~\ref{sect:htc}, we are not going to prove a \GCR but only a \LCR: the results
    of this section apply since the modifications we perform on the subsolution $u$ are $C^1$ and
    therefore do not affect the \KC condition. The next point concerns the regularity of $u$ and $v$
    on $\H$: both are regular by Proposition~\ref{prop:JVreg}. 

    Now, proving the \LCR means that if $u$ is a subsolution and $v$ is a supersolution of \HJgen-\KC, 
    we want to prove that there exists $r>0$ and $0<h<t$ such that, denoting by 
    $\mK:=\overline{Q^{x,t}_{r,h}}$, if $\max_\mK(u-v) >0$, then
    $$ \max_\mK(u-v) = \max_{\partial_p \mK}(u-v)\;.$$
    Considering a point point $(\xb,\tb)$ where $\max_\mK(u-v)>0$ is achieved, we can assume of
    course that $\tb>0$ and $(\xb,\tb) \notin \partial_p \mK$ otherwise the result obviously holds. 
    It is also clear that we can assume \wlg that $\xb\in \H$, otherwise only the $H_1$ or $H_2$
    equation plays a role and we are in the case of a standard proof.

    The proof of the theorem is based on the arguments of Section~\ref{sect:sup.reg}, and more
    precisely on Propositions~\ref{reg-by-sc} and \ref{reg-by-ic}: by using
    Proposition~\ref{reg-by-sc} and adding to $u$ a term of the form $\eta \chi(x_N)$ where $\chi
    \in\mathrm{PC}^1(\R)$ is a bounded function which satisfies $\chi'(0^+)=1$ and $\chi'(0^-)=-1$,
    we can assume \wlg that $u$ is a Lipschitz continuous, $\eta$-strict subsolution of the equation
    and that $u$ is semi-convex in $x'$ and $t$. 

    Using similar arguments, it is also possible to assume that $v$ is semi-concave
    in  $x'$ and $t$ but only under the conditions of Proposition~\ref{reg-by-ic}, hence assumptions
    $(i)$ or $(ii)$ above. We point out that
    these reductions allow to have a supersolution $v$ which is Lipschitz continuous in $x'$ and
    $t$, uniformly in $x_N$, but which can still be discontinuous in $x_N$, we come back on
    this point below by modifying $v$ into some supersolution $\tilde v$.

    A key consequence of the semi-convexity of $u$ and of the semi-concavity of $v$ in the variables
    $x',t$ is that $u$, $v$ are differentiable in $x'$ and $t$ at the maximum point $(\xb,\tb)$ and
    $$ D_{x'}u(\xb,\tb)=D_{x'} v(\xb,\tb)\quad \hbox{and}\quad u_t (\xb,\tb)=v_t (\xb,\tb)\; .$$
    For a precise result, see Proposition~\ref{prop:semi-conv-prop}-$(v)$.
    Moreover, as a consequence of Remark~\ref{rem:cont-SC-D} (since the semi-convexity of $u$
    holds only in the tangent variables), if we denote by $(p',p_N,p_t)$ 
    any element in the superdifferential of $u$ at $(y,s)$
    close to $ (\xb,\tb)$, then $(p',p_t) \to (D_{x'} u(\xb,\tb),u_t (\xb,\tb))$ as $(y,s)$ tends to
    $ (\xb,\tb)$.

    For the supersolution $v$ however, the same property may not be true for the elements of
    the subdifferential since $v$ can be discontinuous at $ (\xb,\tb)$. To turn around this
    difficulty we introduce 
    $$ \tilde v ((x',x_N),t):= \min\Big(v ((x',x_N),t)\;,\, v ((x',0),t)+K|x_N| \Big)\;,$$
    where $K>0$. If we choose $K$ large enough, function $\tilde v$ is a supersolution of the
    equation for $x_N\neq 0$, as the minimum of two supersolutions and is continuous at $ (\xb,\tb)$.
    As a consequence of this continuity property, $\tilde v$ being still semi-concave in $(x',t)$ as
    the minimum of semi-concave functions in $(x',t)$, for any element $(p',p_N,p_t)$ in the
    subdifferential of $\tilde v$ at $(y,s)$ close to $ (\xb,\tb)$, the following limit holds:
    $(p',p_t) \to (D_{x'} \tilde v(\xb,\tb),\tilde v_t (\xb,\tb))=(D_{x'}
    v(\xb,\tb),v_t (\xb,\tb))$ as $(y,s)$ tends to $ (\xb,\tb)$.

    \medskip

    \noindent\textbf{(c)} \emph{Reduction to a stationary, one-dimensional proof.}\\[2mm]
    These properties of $u$ and $\tilde v$ allow us to argue only in the $x_N$ variable since,
    taking into account the regularity of $H_1,H_2$, we have 
    $$ \tilde H_1 (u_{x_N} ) \leq -\eta <0\leq  \tilde H_1 (\tilde v_{x_N} )
    \quad \hbox{for  }x_N >0\; ,$$
    $$ \tilde H_2 (u_{x_N} ) \leq -\eta < 0\leq  \tilde H_2 (\tilde v_{x_N} )
    \quad \hbox{for  }x_N <0\; ,$$
    where for $i=1,2$,
    $$ \tilde H_i (p_N)= u_t (\xb,\tb)+H_i(\xb,\tb,u (\xb,\tb),(D_{x'} u(\xb,\tb),p_N))+o(1)\;,$$
    the $o(1)$ tending to 0 as $\bar r\to 0$ if we consider the equations in $B((\xb,\tb),\bar r)$. 
    It is worth pointing out that for the $H_i$-equations for $v$, we have used the fact that both
    $r\mapsto H_i(x,t,r,p)$ are increasing and that $u(\xb,\tb)>\tilde v(\xb,\tb)=v(\xb,\tb)$.

    In order to proceed, we compute the superdifferentials for $u$ in the two directions $x_N>0$ and
    $x_N<0$.  We recall that since the test-functions are different in $\Omega_1$ and $\Omega_2$,
    these superdifferentials are different.
    For $x_N >0$, we have $D^+_1 u(0)=[\ovp_1,+\infty)$ where $\ovp_1$ is defined as the $\ovp$ in
    Lemma~\ref{HJ-dim1}, but we are referring here to $\Omega_1$.  For $x_N<0$, we have $ D^+_2
    u(0)=[-\infty,-\ovp_2)$ where $\ovp_2$ is defined as the $\ovp$ in Lemma~\ref{HJ-dim1} but for
    $u(-x_N)$, in $\Omega_2$.

    Using the definition of viscosity subsolution together with Lemma~\ref{HJ-dim1}
    and~\ref{sb-sp-diff}, we obtain, since $n_1=-e_N$ and $n_2=e_N$ 
    $$\min(-p_1+p_2, \tilde H_1 (p_1) + \eta, \tilde H_2 (p_2 ) + \eta)\leq 0\; ,$$
    for any $p_1 \geq \ovp_1$ and $p_2\leq - \ovp_2$; moreover
    $$\begin{aligned}
    \tilde H_1 (p_1 ) + \eta & \leq 0 \quad \hbox{if  }p_1 \in [\unp_1,\ovp_1]\; ,\\
    \tilde H_2 (p_2 ) + \eta & \leq 0 \quad \hbox{if  }p_2 \in [-\ovp_2, -\unp_2]\; .
    \end{aligned}$$

    For the supersolution $v$, we argue through $\tilde v$ but the aim is really to identify the
    subdifferential of $v$ at $(\xb,\tb)$. We first notice that, $(\xb,\tb)$ being a maximum point
    of $u-v$, then $ u(x,t)-v(x,t)\leq u(\xb,\tb)-v(\xb,\tb)$ for any $(x,t)$ and the Lipschitz
    continuity of $u$ implies
   $$ -C|(x,t)-(\xb,\tb)| \leq u(x,t)-u(\xb,\tb) \leq v(x,t)-v(\xb,\tb)\;,$$ 
    for $C$ large enough, and in particular in the $x_N$-direction 
\begin{equation}\label{eq:cons-max-prop}
 -C|x_N| \leq v((\xb',x_N),\tb)-v(\xb,\tb)\;.
\end{equation}
 Hence the subdifferentials of $v$ at $(\xb,\tb)$ in both directions, namely $D^-_1 v(0)$ and 
    $D^-_2 v(0)$ are non empty.

    Moreover, applying Lemma~\ref{HJ-dim1} to $\tilde v$, we
    obtain that 
    $$\begin{aligned}
    \tilde H_1 (q_1 ) & \geq 0 \quad \hbox{if  }q_1 \in [\unq_1,\ovq_1]\;,\\
    \tilde H_2 (q_2 ) & \geq 0 \quad \hbox{if  }q_2 \in [-\ovq_2,-\unq_2]\;,
    \end{aligned}$$
    where $D^-_1 \tilde v(0)=(-\infty,\unq_1]$ and $ D^-_2 \tilde v(0)=[-\unq_2,+\infty)$. 

    On the other hand, using Lemma~\ref{sb-sp-diff}, $(D_{x'} v(\xb,\tb),q_1,v_t
    (\xb,\tb)) \in D^-_{\overline{Q_1}^\ell} \tilde v(\xb,\tb)$ \footnote{We recall
    that $D^-_{\overline{Q_1}^\ell} \tilde v(\xb,\tb)$ denotes the
    subdifferential {\em related to $\overline{Q_1}^\ell$} of the function
    $\tilde v$ at $(\xb,\tb)$} for any $q_1 \leq \unq_1$.

    In order to connect the sub-differentials of $v$ and $\tilde v$, we use 
    the following classical result whose proof is an exercise left to the reader.
    \begin{lemma}\label{subdiff-A}
        Let $w_1,w_2:A\subset\R^p\to\R$ be two \lsc functions such that
        $w_1(z_0)=w_2(z_0)$ for some $z_0\in A$. Then
        $$D^-_A \min(w_1,w_2)(z_0)\subset D^-_A w_1(z_0) \cap D^-_A w_2(z_0)\;.$$
    \end{lemma}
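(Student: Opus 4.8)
The plan is to argue directly from the definition of the restricted subdifferential $D^-_A$, using only two elementary observations: that $\min(w_1,w_2)\le w_i$ holds pointwise on $A$ for $i=1,2$, and that because $w_1(z_0)=w_2(z_0)$ this domination becomes an equality at $z_0$, so that in particular $\min(w_1,w_2)(z_0)=w_1(z_0)=w_2(z_0)$. No geometry or compactness is involved; the statement is the ``easy half'' of the fact that one-sided differentials behave well under minima.

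Concretely, I would set $w:=\min(w_1,w_2)$ and take an arbitrary $p\in D^-_A w(z_0)$, which by definition means
\[
w(z)\ge w(z_0)+p\cdot(z-z_0)+o(|z-z_0|)\qquad \text{for }z\in A,\ z\to z_0.
\]
Since $w(z_0)=w_1(z_0)$ and $w_1\ge w$ on $A$, this chain of inequalities gives $w_1(z)\ge w(z)\ge w_1(z_0)+p\cdot(z-z_0)+o(|z-z_0|)$, i.e. $p\in D^-_A w_1(z_0)$; running the identical argument with $w_2$ in place of $w_1$ shows $p\in D^-_A w_2(z_0)$. Hence $p\in D^-_A w_1(z_0)\cap D^-_A w_2(z_0)$, which is exactly the asserted inclusion.

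There is essentially no obstacle here: the lower semicontinuity hypothesis is not even needed for this one-sided inclusion — it is present only so that the subdifferentials are the relevant objects in the surrounding argument, and because the reverse inclusion, which genuinely uses semicontinuity, can fail — so the whole proof is the three-line computation above. In the context of the preceding proof one then applies the lemma with $A=\overline{\Omega}_1$, $w_1=v$ and $w_2$ the second function $(x,t)\mapsto v((x',0),t)+K|x_N|$ appearing in the definition of $\tilde v$ (these agree at $(\xb,\tb)$ since $\xb\in\H$), which yields that every element of $D^-_{\overline{\Omega}_1}\tilde v(\xb,\tb)$ also lies in $D^-_{\overline{\Omega}_1}v(\xb,\tb)$ — the transfer of information that was needed there.
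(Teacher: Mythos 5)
Your proof is correct and is exactly the elementary argument the paper has in mind when it leaves this lemma "as an exercise to the reader": the domination $\min(w_1,w_2)\le w_i$ together with equality at $z_0$ transfers the subdifferential inequality verbatim to each $w_i$. Your side remarks (that lower semicontinuity is not needed for this inclusion, and how the lemma is applied with $w_1=v$ and $w_2=v((x',0),t)+K|x_N|$ in the Kirchhoff comparison proof) are also accurate.
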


    Applying the result with $A:=\Omegb_1\times [0,\Tf] $, $z_0:=(\xb,\tb)$, $w_1(x,t)=
    v(x,t)$ and $w_2(x,t)=v ((x',0),t)+ K|x_N|$, we deduce that $(D_{x'} v(\xb,\tb),q_1,v_t
    (\xb,\tb)) \in D^-_{\overline{Q_1}^\ell} v(\xb,\tb)$.  Of course, the same arguments can be used
    for $D^-_{\overline{Q_2}^\ell} v(\xb,\tb)$.

    Hence, for any $q_1 \leq \unq_1$ and $q_2 \geq - \unq_2$,
    $$\max(-q_1+q_2, \tilde H_1 (q_1 ) , \tilde H_2 (q_2 ))\geq 0\;.$$

    \medskip

    \noindent\textbf{(c)} \emph{Using the viscosity inequalities to get contradictions.}

    \medskip

    \noindent{\bf Case 1 :} Either $[\unp_1,\ovp_1]\cap  [\unq_1,\ovq_1] \neq \emptyset$ or
    $[-\ovp_2, -\unp_2] \cap [-\ovq_2,-\unq_2]\neq \emptyset$ : this means that there exists $p$
    such that we have $$\text{either } \tilde H_1 (p ) + \eta \leq 0 \leq \tilde H_1 (p)\;,\;
    \text{or } \tilde H_2 (p ) + \eta \leq 0 \leq \tilde H_2 (p)\; ,$$ and in each case we reach a
    contradiction.

    \medskip

    \noindent{\bf Case 2 :} Otherwise, since $0$ is a maximum point of $u-v$, we have necessarily
    $\ovp_1\leq \ovq_1$ and therefore $\unp_1 \leq \ovp_1 < \unq_1 \leq \ovq_1$. Considering the
    function $p\mapsto \tilde H_1(p)$ which is less that $-\eta$ in $[\unp_1,\ovp_1]$ and positive
    in $[\unq_1,\ovq_1]$, we see that there exists $\ovp_1 < r_1< \unq_1 $ such that $\tilde H_1
    (r_1)=-\eta/2$.

    Similarly, $-\ovq_2 \leq - \unq_2<-\ovp_2\leq -\unp_2$ and there exists 
    $ - \unq_2<r_2 < -\ovp_2$ such that $\tilde H_2 (r_2)=-\eta/2$.  Then, choosing $\delta>0$ small
    enough and $p_1= r_1-\delta$, $p_2 = r_2 +\delta$, we have $p_1 \geq \ovp_1$ and $p_2\leq -
    \ovp_2$.  Therefore the viscosity inequalities give $$\min(-p_1+p_2, \tilde H_1 (p_1 ) + \eta,
    \tilde H_2 (p_2 ) + \eta)\leq 0\; ,$$ but with the choice of $\delta$, $\tilde H_1(p_1) +
    \eta>0$, $\tilde H_2(p_2) + \eta>0$, which implies $-p_1+p_2 \leq 0$, in other words
    $-r_1+r_2+2\delta\leq0$.

    On the other hand, choosing $q_1= r_1+\delta$ and $q_2 = r_2 -\delta$ and using $ \tilde H_1
    (q_1) <0$, $\tilde H_2 (q_2 ) <0$, we also get $-q_1+q_2 \geq 0$ which leads to a contradiction
    because this implies $-r_1+r_2 -2\delta \geq 0$.

    The conclusion is that $\max_\mK(u-v)$, if positive, cannot be reached inside $\mK$ but
    necessarily on $\partial_P\mK$, which ends the proof\footnote{The authors wish to
    thank Peter Morfe for pointing out several unclear points in this proof which led us to several
    improvements, in particular the statements of Lemma~\ref{sb-sp-diff} and~\ref{subdiff-A}.}.
\end{proof}

\subsection{Remarks on the comparison proof and some possible variations}
\label{rem:v-nonregul} 

In the above proof, the following points are crucial\\

(a) Because of the ``normal controllability assumption'', the tangential regularization of the subsolution does not cause any problem
and provides us with a Lipschitz continuous subsolution to which the Lions-Souganidis Lemma (Lemma~\ref{HJ-dim1}) fully applies:
as a consequence, we can argue as if the problem was $1$-dimensional and we obtain informations on the equation and the junction condition
not only for the elements of the superdifferentials  $D^+_1 u(0)=[\ovp_1,+\infty)$ and $ D^+_2
    u(0)=[-\infty,-\ovp_2)$,  but also on $\tilde H_1 (p_1 )$ for $p_1 \in [\unp_1,\ovp_1]$ and $\tilde H_2 (p_2 )$ for $p_2 \in [-\ovp_2,\unp_2]$, \ie a priori  on larger intervals than the expected ones. \\

(b) The situation is completely different for the supersolution, mainly because the ``normal controllability assumption'' cannot play the
same role. This first generates Assumptions $(i)$ or $(ii)$ in order to be able to do the tangential regularization of the supersolution but
we end up with a function which is Lipschitz continuous and semi-concave in the tangential variables but which can still be discontinuous
in $x_N$. In particular at $x_N=0$. This is a difficulty to apply the Lions-Souganidis approach because we cannot prove that
$\tilde H_i (v_{x_N} )\geq 0$ for any $x_N$ since we cannot use Proposition~\ref{prop:semi-conv-prop}-$(iv)$ which provides some
kind of "continuity" for the tangential derivatives but only if we have the right continuity property on $v$. In the above proof, we have chosen a strategy which consists in fully applying Lions-Souganidis Lemma to the supersolution but this requires the introduction of $\tilde v$. With this trick, whose aim is to superimpose
the continuity of the supersolution at $x_N=0$, the case of the supersolution can be treated as the subsolution one.\\

(c) Is this trick necessary/unavoidable? This question is important to extend the result of Theorem~\ref{LS-CR} to more general junction 
conditions and in particular to \FL or \GJC of flux-limited types for which such trick may not work. A first natural reaction could be to accept
the discontinuity in $x_N$ of the regularized supersolution and to try to prove Lemma~\ref{HJ-dim1} in this framework. This
may be feasible but a natural assumption to do that would be (at least) to have a regular supersolution, an assumption that we would like
to avoid. Before going further, we remark that the proof of Theorem~\ref{LS-CR} consists in examining carefully the properties of the
elements of the different super and subdifferentials of the sub and supersolution (and even more for the subsolution as we mention it above). At this point, it is worth mentioning that the maximum point property \eqref{eq:cons-max-prop} yields
\begin{equation}\label{eq:cons-max-prop-bis}
u(x_N)-u(0) \leq v(x_N)-v(0)\; ,
\end{equation} 
and the first consequence of this inequality and of the Lipschitz continuity of $u$ is that $D^-_1 v(0)$ and $ D^-_2 v(0)$ are
non-empty. Hence we have, for any $q_1 \in D^-_1 v(0)$ and $q_2 \in D^-_2 v(0)$
\begin{equation}\label{eqn:visc-ineq-sup-bound}
\max(-q_1+q_2, \tilde H_1 (q_1 ) , \tilde H_2 (q_2 ))\geq 0\;.
\end{equation}
Then we argue in the following way
\begin{enumerate}
\item If $D^-_1 v(0)=(-\infty, \unq_1]$, then \eqref{eqn:visc-ineq-sup-bound} holds but Lemma~\ref{sb-sp-diff} applies and gives, in
addition, $\tilde H_1(\unq_1)\geq 0$. And, of course, we may use a similar argument if $ D^-_2 \tilde v(0)=[-\unq_2,+\infty)$.

\item Or $D^-_1 v(0)=\R$; this is the case, in particular, if $v$ is not $\Omega_1\times (0,\Tf)$-regular at $(\xb,\tb)$ and \eqref{eqn:visc-ineq-sup-bound} holds for any $q_1 \in \R$ and $q_2 \in D^-_2 v(0)$. In addition, $\tilde H_1 (q_1 )\geq 0$ if $q_1 \geq \unq_1$ for some $\unq_1$ by the coercivity of $\tilde H_1$. And, of course, a similar argument holds if 
$D^-_2 v(0)=\R$.
\end{enumerate}
In summary, by applying Lemma~\ref{sb-sp-diff} instead of Lemma~\ref{HJ-dim1}, not only we have more informations than we could have
obtained by applying the Lions-Souganidis Lemma but we can also avoid introducing $\tilde v$. \\

(d) Last very important point: in Equation~\ref{eq:cons-max-prop-bis}, for $x_N>0$,
we can divide by $x_N$ and take the $\liminf$: by using Lemma~\ref{HJ-dim1} for $u$ and Lemma~\ref{sb-sp-diff} for $v$,
we obtain the inequality $\unp_1 \leq \unq_1$ where the  information on $\unp_1$ becomes crucial (we again insist on the fact that $
\unp_1 \notin D_1^+u(0)$).\\

With all these ingredients, we will be able in the next section to extend the result of Theorem~\ref{LS-CR} to a large class of \GJC without
assuming any regularity on the supersolution.

\subsection{Comparison results for more general junction conditions}

Theorem~\ref{LS-CR} can be generalized for \FL and \GJC conditions under some hypotheses:
\begin{theorem}\label{LS-CR-GC}\emph{--- Comparison result for general junction conditions.}\smsp
    Assume that $H_1$ and $H_2$ satisfy \GAGen and \TCsH. Then \GCR holds 
    \begin{enumerate}
    \item in the case of \FL: for any bounded regular subsolution $u$ and supersolution $v$ if $G$
    satisfies \GAGFL.  
    \item in the case of \GJC of ``Kirchhoff type'', \ie if $G$ satisfies
    \GAGKT: for any bounded subsolution $u$ and supersolution $v$.
    \item in the case of \GJC of ``Flux-limited type'', \ie if $G$ satisfies \GAGFLT: for any bounded
    regular subsolution $u$ and supersolution $v$.  
    \end{enumerate}
    Moreover, if $v$ is locally Lipschitz continuous in $x_N$, uniformly in $x',t$, then these three
    results hold true without assuming \TCsH for $H_1$, $H_2$ and, in the last case by assuming only
    that $G$ satisfies \HContG with $\e_0=1$ and \eqref{basic-hyp-GJC} holds with $\alpha >
    0$, $\beta=0$.
\end{theorem}

Several remarks can be made on Theorem~\ref{LS-CR-GC}. First, for \FL type conditions, we face the
difficulty that sub and supersolutions may not be regular and we have to add these properties as an
assumption. 

But the main one follows along the lines of the remark we made when introducing the
assumptions \GAGFL, \GAGKT, \GAGFLT. Using the Lions-Souganidis approach as in the proof of 
Theorem~\ref{LS-CR} requires a ``tangential regularization'' both for the sub and the supersolution in the
spirit of Sections~\ref{sec:regplus-subsol} and~\ref{sec:regul-supersol} in order to reduce to a
$1$-dimensional proof. While this regularization does not cause much problem in the first case of \FL
---except that we have to impose \TCsH for $H_1$ and $H_2$---, it requires a particular treatment when
$G$ is of ``Kirchhoff type'',  and a particular form of $G$ when it is of ``flux-limited type''. For
this reason, we need \GAGFLT\  which is roughly speaking the analogue of \TCs.

\begin{proof} 
    We are not going to give the full proof of Theorem~\ref{LS-CR-GC} since most of the arguments
    are those of the proof of Theorem~\ref{LS-CR}, using in an essential way the Lions-Souganidis
    approach. We just indicate the additional arguments which are needed.

    We first comment \LOCa,\LOCb: in the case of \FL or when \GJC is of ``Flux-limited type'', the
    checking can be made exactly as in Section~\ref{htc:ev}, the $u_t$-term playing the main role.
    In the case when \GJC is of ``Kirchhoff type'', one has to add a $\alpha \varphi(x_N)$ (or
    $\delta \varphi(x_N)$) term where $\varphi \in \mathrm{PC}^{1} (\R)$ is a bounded function which
    behaves like $L|x_N|$ in a neighborhood of $x_N=0$. With this additional argument, \LOCa,\LOCb
    hold in the three cases and we can reduce the proof to a \LCR.

    The next important point concerns the regularization of both the sub and supersolution which we
    examine case by case.

    \medskip

    \noindent\textbf{(a)} \FL case --- the regularity of the sub and supersolution is a key assumption
    since we have seen that \JVSub and \JVSup are not necessarily regular but the rest of the proof
    follows readily the arguments of Sections~\ref{sec:regplus-subsol} and~\ref{sec:regul-supersol}
    because \TCs is ensured by \TCsH or \GAGFL, even if, for the subsolution, the equation does not
    satisfy \NCe on $\H$.

    \medskip

    \noindent\textbf{(b)} ``Flux-limited type'' \GJC --- The same comments are true here and explain the
    restrictive assumptions we have to impose in this case.

    \medskip    

    \noindent\textbf{(c)} ``Kirchhoff type'' \GJC --- This is the most complicated case where we need the

    \begin{lemma}\label{reg-GKC}
        Under the assumptions of Theorem~\ref{LS-CR-GC} in the case of \GJC of ``Kirchhoff type'',
        if $u$ and $v$ are respectively a sub and supersolution of \HJgen-\GJC, then, for $K_2$
        large enough and $K_1$ large compared to $K_2$, the functions 
        $$u^{\e}(x',x_N,t):=\max_{(y',s) }
        \Big\{u(y',x_N,s)-\exp(K_1t)\exp(-K_2|x_N|)\big( \frac{|x'-y'|^2}{\e^2}+
        \frac{|t-s|^2}{\e^2}\big)\Big\}\;,$$
        $$v^{\e}(x',x_N,t):=\min_{(y',s) }
        \Big\{v(y',x_N,s)+\exp(K_1t)\exp(-K_2|x_N|)\big( \frac{|x'-y'|^2}{\e^2}+
        \frac{|t-s|^2}{\e^2}\big)\Big\}\;,$$
        are respectively approximate \JVSub and \JVSup of \HJgen-\GJC.
    \end{lemma}

\begin{proof}[Proof of Lemma~\ref{reg-GKC}] 
    The proof follows essentially the proofs of the regularization results of
    Sections~\ref{sec:regplus-subsol} and~\ref{sec:regul-supersol} with the following additional
    arguments: first, we notice that we can use a $\mathrm{PC}^{1}$-term $\exp(-K_2|x_N|)$.  If
    $x_N>0$ or $x_N<0$, this term produces a ``bad term'' in the $x_N$-derivative which has to be
    controlled by the $t$-derivative coming from the $\exp(K_1t)$-term.

    At $x_N=0$, if the $G$-inequality holds, the $u_t$-term cannot control the ``bad
    terms'' anymore but the $\exp(K_1t)$-term has the ``good sign'' and, since $\beta>0$, the
    derivatives of $\exp(-K_2|x_N|)$ allow to control all the error terms.
\end{proof}

    Once this regularization is done, we can in each case transform the approximate subsolution into
    a strict subsolution using either a $-\eta t$ term or a $\eta\varphi(x_N)$ term where $\varphi$
    is the function we already used at the beginning of the proof.

    From this point, we can follow readily the proof of Theorem~\ref{LS-CR} to conclude.
\end{proof}

\subsection{Extension to second-order problems (II)}

The same approach allows to deal with second-order problems, with similar structure assumptions on
the Hamiltonians.  
\begin{theorem}\emph{--- Comparison in the second-order case, LS-version.}\label{comp-IM-so-LS}\smsp
    Under the assumptions of Theorem~\ref{LS-CR-GC}, the comparison result remains valid for
    Lipschitz continuous sub and supersolutions of second-order equations of the form
    \begin{equation}\label{so-eqn-LS}
    u_t-{\rm Tr}(a_i(x)D^2u)+ H_i(x,t,u,Du)=0\quad \hbox{in  }\Omega_i \times (0,\Tf)\;,
    \end{equation}
    provided that, for $i=1,2$, $a_i=\sigma_i\sigma_i^T$ for a bounded, Lipschitz continuous
    function $\sigma_i$, depending only on $x_N$ in a neighborhood of $\H$. 
\end{theorem}

Of course, the most restrictive assumption in Theorem~\ref{comp-IM-so-LS} is the Lipschitz
continuity of the sub and supersolutions to be compared. But if we examine the proof in the
first-order case, we remark that the tangential regularization provides a Lipschitz continuous
subsolution because of the normal controllability and the regularized supersolution ``behaves'' like
a Lipschitz continuous function because of the maximum point property in the proof of the \LCR. All
these arguments, and in particular the first one, fail here because of the second-order term and we
need to replace them by the \adhoc assumption. 

    The proof follows the arguments of the proof of Theorem~\ref{LS-CR-GC}, except that we need the
    following extension of the Lions-Souganidis Lemma (Lemma~\ref{HJ-dim1}).

\begin{lemma}\label{HJ-dim1-so}
    We assume that for some $r>0$, $u:B(0,r)\times [0,r] \subset \R^{N-1} \times \R \to \R$ is an \usc
    subsolution \resp{\lsc supersolution} of 
    $$ -{\rm Tr}(a(x_N)D^2w)+ \tilde H(w_{x_N})=0\quad\hbox{in  }B(0,r)\times (0,r)\;,$$
    where $\tilde H$ is a continuous function and $a(x_N)=\sigma(x_N)\sigma^T(x_N)$ for some bounded
    Lipschitz continuous function $\sigma$. If moreover
    \begin{enumerate} 
        \item[$(i)$] $u(x',x_N)$ is Lipschitz continuous and semi-convex \resp{semi-concave} in $x'$, 
        uniformly for $x_N\in [0,r]$\;,
    \item[$(ii)$] there exists a constant $k>0$, such that\quad $u(0,x_N)-u(0,0)\leq k x_N$\\[2mm]
        \resp{$u(0,x_N)-u(0,0)\geq k x_N$} for $0\leq x_N\leq r$\;,
    \item[$(iii)$] the function $u$ is differentiable w.r.t. $x'$ at $(0,0)$\;,
    \end{enumerate}
    then $\tilde H(p)\leq 0$ \resp{$\tilde  H(p)\geq 0$} for all $p\in [\unp,\ovp]$ where
    $$\unp :=\liminf_{x_N\to 0}\left[\frac{u(0,x_N)-u(0,0)}{x_N}\right]\leq
    \ovp:=\limsup_{x_N\to 0}\left[\frac{u(0,x_N)-u(0,0)}{x_N}\right]\;.$$
\end{lemma}

    We first point out that the assumption on $u$ implies that it is continuous at $(0,0)$ but may
    still have discontinuities outside $(0,0)$.

    The additional difficulty in this lemma (compared to Lemma~\ref{HJ-dim1}) is the $x'\in
    \R^{N-1}$-dependence in the $D^2 u$-term which cannot be dropped by the semi-convex (or
    semi-concave) assumption on $u$.

\begin{proof} 
    We only give the proof in the subsolution case, the supersolution one being analogous.

    Replacing $u(x',x_N)$ by the subsolution $u(x',x_N)-u(0,0)-D_{x'} u(0,0)\cdot
    x'$, we can assume \wlg that $u(0,0)=0$ and $D_{x'} u(0,0)=0$. Also, as in the proof of
    Lemma~\ref{HJ-dim1}, we first assume that $\unp<\ovp$.

    \medskip

    \noindent\textbf{(a)} 
    The first step consists in giving some estimates on $u$ for $(x',x_N)$ close to
    $(0,0)$. By the semi-convexity assumption, if $u$ is differentiable in $x'$ at $(x',x_N)$ and if
    $(x',x_N)$ is close to $(0,0)$, then $|D_{x'} u(x',x_N)|$ is also close to $|D_{x'} u(0,0)|= 0$ and
    we deduce from this property that
    $$ u(x',x_N)-u(0,x_N)=|x'|\e(x',x_N)\; ,$$
    where $\e(x',x_N)\to 0$ when $(x',x_N)\to (0,0)$.

    \medskip

    \noindent\textbf{(b)}
    For $r'\leq r$, we consider the domain $D_{r'}:= \{(x',x_N):\ |x'|\leq x_N,\ 0\leq x_N\leq r'\}$ 
    and the function 
    $$\psi(x',x_N):=u(x',x_N)-\frac{|x'|^4}{x_N^4}-p\cdot x_N\;,\quad\text{where}\quad 
    \unp<p<\ovp\;.$$ 
    This function $\psi$ is defined in $D_{r'}\setminus\{(0,0)\}$ but we can
    extend it at $(0,0)$ by setting $\psi(0,0)=0$, which yields an \usc function on $D_{r'}$.

    \medskip

    \noindent\textbf{(c)}
    We consider a sequence $(s_k)_k$ such that $s_k>0$ for all $k$, $s_k\to 0$ and
    $$\frac{u(0,s_k)-u(0,0)}{s_k}=\frac{u(0,s_k)}{s_k}\to \unp\;,$$
    and we choose $r'=s_k$. We claim that the maximum of $\psi$ on $D_{s_k}$ is achieved
    in the interior of the domain.

    Indeed, if $|x'|=x_N$, $\psi(x',x_N)=u(x',x_N)-1-px_N\leq -1+o(1),$
    while for $x_N=r'=s_k$, by the estimate in \textbf{(a)} above,
    \begin{align}
    \psi(x',s_k)\leq & |x'|\e(x',s_k) + u(0,s_k)-ps_k \\
    \leq & (\unp-p)s_k+ o(s_k) <0 \; .
    \end{align}
    Finally $\psi(0,0)=0$ and we conclude that $\psi\leq 0$ on $\partial D_{s_k}$.

    On the other hand, there exists a sequence $(r_l)_l$ such that $r_l>0$ for all $l$, $r_l\to 0$
    and 
    $$\frac{u(0,r_k)-u(0,0)}{r_k}=\frac{u(0,r_k)}{r_k}\to \ovp\; .$$
    For $l$ large enough, we have $r_l<s_k$ and
    \begin{align}
    \psi(0,r_k)\leq & u(0,r_k)-pr_k= (\ovp-p)r_k+ o(r_k) > 0 \; .
    \end{align}
    Therefore the maximum of $\psi$ is achieved in the interior of $D_{s_k}$.

    \medskip

    \noindent \textbf{(d)} We can now apply the viscosity subsolution inequality at the maximum point
    $(x',x_N)$ of $\psi$. To do so, we first remark that $u$ is differentiable \wrt $x'$ at this
    maximum point by the semi-convexity property and
    $$ D_{x'}u(x',x_N)=\frac{4|x'|^2x'}{x_N^4}\;,$$
    therefore $4|x'|^3/x_N^4\to 0$ as $s_k\to 0$\;.
    On the other hand, the term corresponding to the second derivative, taking into account that
    $a(x_N)=O(x_N^2)$ is estimated by 
    $$  -{\rm Tr}(a(x_N)D^2\psi)=
    O(x_N^2)\cdot O \bigg(\frac{|x'|^2}{x_N^4}+\frac{|x'|^3}{x_N^5}+\frac{|x'|^4}{x_N^6}\bigg)\to 0
    \quad \hbox{as  } s_k\to 0\;.$$
    We deduce that
    $$ \tilde H\Big(p-\frac{4|x'|^4}{x_N^5}\Big)\leq o(1)\;,$$
    but since $4|x'|^4/x_N^5\to 0$ as $s_k \to 0$, the conclusion follows: $\tilde H(p)\leq0$.

    \medskip

    \noindent\textbf{(e)} The case when $p=\unp$ or $\ovp$ follows from the continuity of $\tilde
    H$, and the case $\unp=\ovp$ is treated exactly as in the first-order case, so the proof is complete.
\end{proof}

Using Lemma~\ref{HJ-dim1-so}, the proof of Theorem~\ref{comp-IM-so-LS} follows along the lines of
Theorem~\ref{LS-CR-GC}, the Lipschitz continuity of $u$ and $v$ ensuring that the regularization
process allows to use the lemma.

\section{Vanishing viscosity approximation (II): convergence via junction viscosity solutions} 

\index{Vanishing viscosity method!via junction viscosity solutions}

In this section, we use the Lions-Souganidis comparison result to show that the vanishing viscosity
approximation converges to the unique solution of the Kirchhoff problem; this gives another version
of Theorem~\ref{teo:viscous} in a non-convex setting.

\begin{theorem}\label{pro:viscous2}\emph{--- Vanishing viscosity limit via junction viscosity
    solutions.}\smsp
    Assume that, for any $\eps>0$, $u^\eps$ is a continuous viscosity solution of
    \begin{equation}\label{pb:viscous}
       u^\eps_t -\eps \Delta u^\eps + H (x,t,Du^\eps) = 0\quad\text{in}\quad\R^N \times (0,\Tf)\;,
    \end{equation}
    \begin{equation}\label{pb:viscousid}
       u^\eps(x,0) = u_ 0(x) \quad\text{in}\quad\R^N \;,
    \end{equation}
    where $H=H_1$ in $\Omega_1$ and $H_2$ in $\Omega_2$, and $u_0$ is bounded continuous function in
    $\R^N$. Under the assumptions of Theorem~\ref{LS-CR} and if the sequence $(u^\eps)_\eps$ is
    uniformly bounded in $\R^N \times (0,\Tf)$, $C^1$ in $x_N$ in a neighborhood of $\H$, then, as
    $\eps\to0$, the sequence $(u^\eps)_\eps$ converges locally uniformly to the unique solution of
    the Kirchhoff problem in $\R^N \times (0,\Tf)$.
\end{theorem}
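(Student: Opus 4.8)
\emph{Strategy.} The plan is to run the half--relaxed limits method of Section~\ref{sect:stab} exactly as in the convex case (Theorem~\ref{pro:viscous}), but to replace the two convexity--dependent ingredients used there — the stability Lemma~\ref{StabK} and the comparison Theorem~\ref{comp-IM} — by their non--convex analogues: the junction stability, whose proof uses no convexity at all, and the Lions--Souganidis comparison result Theorem~\ref{LS-CR}. Since the family $(u^\eps)_\eps$ is uniformly bounded on $\R^N\times(0,T)$, the half--relaxed limits
$$\ou:=\limssup u^\eps\,,\qquad \uu:=\limiinf u^\eps$$
are well defined, bounded, with $\ou$ usc, $\uu$ lsc and $\uu\le\ou$ on $\R^N\times(0,T)$. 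It therefore suffices to show that $\ou$ is a subsolution and $\uu$ a supersolution of the Kirchhoff problem, that they take the initial data $u_0$ in the right way, and that a strong comparison result forces $\ou\le\uu$.

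\emph{Step 1: stability, including at the interface.} For $x\in\Omega_1\cup\Omega_2$, the classical stability result Theorem~\ref{hrl} applies directly and gives that $\ou$ \resp{$\uu$} is an Ishii subsolution \resp{supersolution} of \HJgen\ in $\Omega_1\times(0,T)$ and $\Omega_2\times(0,T)$. The only new point is the Kirchhoff condition on $\H\times(0,T)$, and this is precisely the content of Lemma~\ref{StabK}; I would observe that its proof uses only test--functions $\phi\in\PC1$, the $C^1$--regularity of the $u^\eps$ in the $x_N$--variable near $\H$ (assumed here), and the elementary comparison of the one--sided normal derivatives of $\phi$, so it transfers verbatim to the present non--convex setting. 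Hence $\ou$ is a subsolution and $\uu$ a supersolution of the Kirchhoff problem. For the initial condition, a standard barrier argument using the continuity of $u_0$ and the uniform parabolicity of the $-\eps\Delta$--term for $\eps>0$ (as in Remark~\ref{rem:reg}) yields $\ou(x,0)\le u_0(x)\le\uu(x,0)$ in $\R^N$.

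\emph{Step 2: comparison and conclusion.} Now I would invoke Theorem~\ref{LS-CR}: under its hypotheses (\TC, \NC and \Mong with $\lambda_R=0$ near each point of $\H$, together with either \TCs or the Lipschitz--in--$x_N$ regularity \eqref{Lip-z-v} for the supersolution), \LCR-evol holds for the Kirchhoff problem around any point of $\H$, and the classical \LCR-evol holds around points of $\Omega_1\cup\Omega_2$. Checking \LOCa-evol and \LOCb-evol as in Section~\ref{sect:htc} — e.g. with $u_\alpha(x,t)=u(x,t)-\alpha[(|x|^2+1)^{1/2}+Kt]$ and the localizing perturbations of \LOCb-evol — Proposition~\ref{reducCR-evol} upgrades this to the global comparison \GCR-evol for the Kirchhoff problem. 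Applying it to $u:=\ou$ and $v:=\uu$, together with $\ou(\cdot,0)\le\uu(\cdot,0)$ from Step~1, gives $\ou\le\uu$ on $\R^N\times(0,T)$; combined with $\uu\le\ou$ this proves $\ou=\uu$. Setting $u:=\ou=\uu$, this function is continuous (being both usc and lsc) and it is a — hence, by the same \GCR-evol, the unique — solution of the Kirchhoff problem, and Lemma~\ref{ubegalub} (applied on compact subsets, via Lemma~\ref{convptsmaxdisc}) gives the locally uniform convergence $u^\eps\to u$ in $\R^N\times(0,T)$.

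\emph{Main obstacle.} The delicate point is making Theorem~\ref{LS-CR} applicable to the half--relaxed limit $\uu$, which a priori is only lsc: if \TCs is part of the assumptions this is immediate, but otherwise one must verify \eqref{Lip-z-v} for $\uu$, and this requires a genuine \emph{uniform} (in $\eps$) Lipschitz--in--$x_N$ estimate for the $u^\eps$ near $\H$ — plausible from the uniform parabolicity and the structure of $H$, but needing a real a priori estimate (a doubling--of--variables or Bernstein--type bound on the approximate problem) rather than the mere pointwise $C^1$--regularity assumed in the statement. A secondary, purely technical obstacle is the boundary--layer analysis at $t=0$ establishing $\ou(\cdot,0)\le u_0\le\uu(\cdot,0)$; this is classical and, as emphasized in Remark~\ref{rem:reg}, relies on the strong diffusion term and not on any interface condition.
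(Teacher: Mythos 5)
Your proposal matches the paper's proof exactly: the paper simply says "The proof follows exactly the proof of Theorem~\ref{pro:viscous}, just using Theorem~\ref{LS-CR} instead of Theorem~\ref{comp-IM}, and without switching to a flux-limiter formulation," which is precisely the substitution you carry out (half-relaxed limits, Lemma~\ref{StabK} for Kirchhoff stability, localization to upgrade \LCR-evol to \GCR-evol, and Lemma~\ref{ubegalub} for uniform convergence). Your closing observation about the \TCs-versus-\eqref{Lip-z-v} alternative is exactly the limitation the authors themselves acknowledge in the remark following the theorem statement, so the safe reading of "under the assumptions of Theorem~\ref{LS-CR}" is that \TCs is assumed to hold.
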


This second result on the convergence of the vanishing viscosity approximation may appear as being
more general than Theorem~\ref{teo:viscous} since it covers the case of non-convex Hamiltonians. But
we point out that Theorem~\ref{LS-CR} requires either \eqref{Lip-z-v} or \TCs which limit its range
of applications. This also suggests connections between solutions with \FL and \KC junctions
conditions: we study these connections in the next section.

The proof is almost standard since we use the half-relaxed limits method to pass
to the limit, coupled with a strong comparison result to conclude, here Theorem~\ref{LS-CR}.
So, the only difficulty consists in proving lemma \ref{StabK} below which, despite its very
classical formulation, is not standard at all: the formulation involves test-function which are not
smooth across $\H$. Therefore, this is not an usual stability result for viscosity solutions.

\begin{lemma}\label{StabK}
    The half-relaxed limits $\ou = \limssup u^\eps$ and $\uu = \limiinf u^\eps$ are respectively sub
    and supersolution of the Kirchhoff problem.  
\end{lemma}

\begin{proof}
    We prove the result for $\ou$, the one for $\uu$ being analogous. Let $\phi \in \PC1$ be a
    test-function and let $(\xb,\tb)$ be a strict local maximum point of $\ou-\phi$. The only
    difficulty is when $\xb\in \H$ and therefore we concentrate on this case.

    By standard arguments, $u^\eps-\phi$ has a local maximum point at $(\xe,\te)$ and $(\xe,\te)\to
    (\xb,\tb) $ as $\eps\to 0$. 

    \medskip

    \noindent\textbf{(a)} 
    If there exists a subsequence $(x_{\eps'},t_{\eps'})$ with
    $x_{\eps'}\notin \H$, the classical arguments can be applied and passing to the limit (along
    another subsequence) in the inequality
    $$\phi_t (\xe,\te) -\eps \Delta \phi (\xe,\te) + 
    H(\xe,\te,u^\eps (\xe,\te) ,D\phi(\xe,\te)) \leq 0\;$$
    yields the result.

    \medskip

    \noindent\textbf{(b)}
    The main difficulty is when $x_{\eps}\in \H$ for all $\eps$ small enough since $\phi$ is not
    smooth at $(\xe,\te)$. Here we use 
    $$a:=\frac{\partial \phi}{\partial x_N}((\xb',0+),\tb) = 
    \lim_{\displaystyle {\substack{x_N\to 0\\ x_N>0}}}\, \frac{\partial \phi}{\partial x_N}((\xb',x_N),\tb)$$
    and
    $$b:= \frac{\partial \phi}{\partial x_N}((\xb',0-),\tb) = 
    \lim_{\displaystyle {\substack{  x_N\to 0\\ x_N<0}}}\, \frac{\partial \phi}{\partial x_N}((\xb',x_N),\tb)\; .$$
    If $-a+b\leq 0$,
    the Kirchhoff subsolution condition is satisfied and the result holds.

    \medskip

    \noindent\textbf{(c)}
    The last possibility is that $-a+b>0$, and since $\ue$ is smooth, the maximum point property at
    $(\xe,\te)$ implies that 
    $$ \frac{\partial \ue}{\partial x_N}((\xe',0),\tb) \leq 
    \frac{\partial \phi}{\partial x_N}((\xe',0+),\tb)\; ,$$
    and
    $$ \frac{\partial \ue}{\partial x_N}((\xe',0),\tb) \geq 
    \frac{\partial \phi}{\partial x_N}((\xe',0-),\tb)\; .$$
    Therefore 
    $$ -\frac{\partial \phi}{\partial x_N}((\xe',0+),\tb)+ 
    \frac{\partial \phi}{\partial x_N}((\xe',0-),\tb) \leq 0\; ,$$ 
    and using that both partial derivatives are continuous in $x'$, we reach a contradiction for
    $\eps$ small enough (remember that $-a+b>0$ here).
\end{proof}

\chapter{From One Notion of Solution to the Others}
\label{sect:equiv.sols}

\abstract{This chapter focuses on the connections between Ishii viscosity solutions, flux-limited
solutions and junction viscosity solutions, in the case of quasi-convex Hamiltonians for which all
these notions of solutions make sense. Various results are given, among which the most surprising is
that a junction viscosity solutions associated to a problem with a Kirchhoff type junction condition
can be seen as a flux-limited solutions for some well-chosen flux~limiter. As an illustration, the
problem of the convergence of the vanishing viscosity method is addressed. Remarks on the existence
of solutions are also given.}

The aim of this chapter is to connect the three notions of solutions we have at hand: Ishii
solutions, flux-limited solutions and junction viscosity solutions. 

We first show that Ishii solutions for \eqref{pb:half-space} can be seen as \FLS associated with an
$\HT$ or $\HTreg$ flux~limiter in the case of quasi-convex Hamiltonians.  We point out
that the definitions of $\HT$ and $\HTreg$ are extended to the case of quasi-convex Hamiltonians by
\eqref{HT1} and \eqref{HT2}, and we refer the reader to Section~\ref{sect:quasi.convexity} for useful
results on them. Through this \FLS interpretation, we complement the results of
Part~\ref{part:codim1} both by taking into account more general Hamiltonians but also by
considering a notion of solution which allows a pure pde approach of the problem.

Then we compare \FLS and \JVS in the context of flux-limited conditions: here the formulation of the
notion of solutions on $ \H$ is the key point. 

Finally, we prove that junction viscosity solutions associated with Kirchhoff conditions are
flux-limited solutions for a specific flux~limiter which we identify explicitly. We do the analysis
first for the most classical Kirchhoff condition and then for generalized ones.

\section{Ishii and flux-limited solutions}

The main result of this section is the
\begin{proposition}\label{prop:cvs-fls}
    Assume that \HQC holds. Then
    \begin{enumerate}
        \item[$(i)$] An \usc function $u:\R^N\times (0,\Tf) \to \R$ is an Ishii subsolution of
        \eqref{pb:half-space} if and only if it is a \FLSub of \HJgen-\FL with the flux~limiter
        $\HTreg$. 
        \item[$(ii)$] A \lsc function $v:\R^N\times (0,\Tf) \to \R$ is an Ishii supersolution of
        \eqref{pb:half-space} if and only if it is a \FLSup of \HJgen-\FL with the flux~limiter
        $\HT$.
    \end{enumerate}
\end{proposition}

An immediate corollary of this result is the
\begin{corollary}\label{cor:UpUmgen}Under the assumptions of Theorem~\ref{comp-IM-nc} with $G=\HT$
    or $\HTreg$,
    \begin{enumerate}
        \item[$(i)$] If $\Up$ is the unique \FLS of \HJgen-\FL with the flux~limiter $\HTreg$, it is the maximal Ishii subsolution of
        \eqref{pb:half-space}.
    \item[$(ii)$] If $\Um$ is the \FLS of \HJgen-\FL with the flux~limiter $\HT$, it is a minimal Ishii supersolution of
        \eqref{pb:half-space}.
    \end{enumerate}
\end{corollary}

\begin{proof}[Proof of Proposition~\ref{prop:cvs-fls}]
    Of course, only the viscosity inequalities on $\H \times (0,\Tf)$ are different and therefore we
    concentrate on them.

    \medskip

    \noindent\textbf{(a)} \emph{Generalities ---}
    Throughout the proof we consider elements of the superdifferential of $u$ or the subdifferential
    of $v$ at $(x,t) \in \H \times (0,\Tf)$ of the form $(p_x+\lambda e_N, p_t)$, and inequalities
    for $H_1, H_2$ and $\HT$ or $\HTreg$. In all these inequalities, only the dependence in
    $\lambda$ is going to play a role and, in order to simplify the notations, we set, for $i=1,2$
    $$ 
         F_i (\lambda):=  p_t +H_i (x,t,r,p_x+\lambda e_N)\;,
    $$
    where $x$, $t$, $p_x$ and $r=u(x,t)$ or $v(x,t)$ are assumed to be fixed. We define analogously
    $ F_i^{\pm}$. We also use the notations
    $$\FT=p_t + \HT(x,t,r,p_x)\;,\; \FTreg=p_t +  \HTreg(x,t,r,p_x)\;,$$ 
    for, again, $x$, $t$, $r=u(x,t)$ or $v(x,t)$ and $p_x$ being fixed. We also recall that, in the
    quasi-convex case, 
    $$ \HT(x,t,r,p_x)=\min_{\lambda\in\R} \max(H_1 (x,t,r,p_x+\lambda e_N),H_2 (x,t,r,p_x+\lambda e_N))\;,$$
    $$ \HTreg(x,t,r,p_x)=\min_{\lambda\in\R} \max(H_1^- (x,t,r,p_x+\lambda e_N),H_2^+ 
    (x,t,r,p_x+\lambda e_N))\;,$$
    \ie $\FT= \min_\lambda \max(F_1(\lambda), F_2(\lambda))$ and similarly
    $\FTreg= \min_\lambda \max(F_1^-(\lambda),  F_2^+(\lambda))$. From these
    representations we deduce easily from Section~\ref{sec:max-qc} the existence of
    $(\underline\lambda,\overline\lambda)\in\R^2$ such that $\FTreg=
    F_1^-(\overline\lambda)= F_2^+(\overline\lambda)$ and 
    \begin{equation}\label{eq.cases.HT}
        \FT=\begin{cases}
        & \text{either }  F_1(\underline\lambda)= F_2(\underline\lambda)\\
        & \text{or } F_1(\underline\lambda)=\min  F_1(\lambda) \text{ if }
         F_1\geq F_2\\
        & \text{or } F_2(\underline\lambda)=\min  F_2(\lambda)\text{ if }
         F_2\geq F_1\;.
    \end{cases}
    \end{equation}

    \medskip

    \noindent\textbf{(b)} \emph{Subsolution case ---} 
    Before providing the proof, we point out that \HQC implies that \NCw holds for all
    the Hamiltonians involved in the Ishii and flux-limited formulations, \cf
    Remark~\ref{rem:reg-sub-weak}. Hence all the subsolutions we are going to consider are regular
    on $\H$.

    If $u$ is an Ishii subsolution of \eqref{pb:half-space} and if we pick $(p_x+\lambda_1 e_N,
    p_t)\in D_{\overline{Q_1}}^+ u (x,t)$, $(p_x+\lambda_2 e_N, p_t)\in D_{\overline{Q_2}}^+ u (x,t)$, we have
    to show that 
    $$\max( F_1^+(\lambda_1), F_2^-(\lambda_2),\FTreg) \leq 0\; .$$
    Notice that Ishii subsolutions are Lipschitz continuous. Therefore, not only are they regular on
    $\H$ but the normal components of their superdifferentials are bounded from below in
    $\overline{Q_1}$ and from above in $\overline{Q_2}$. Proposition~\ref{sub-ineq-on-b} can then be
    applied, which gives existence of  $\mu_1 \leq \lambda_1$ and $\mu_2 \geq \lambda_2$ such that
    $(p_x+\mu_1 e_N, p_t)\in D_{\overline{Q_1}}^+ u (x,t)$, $(p_x+\mu_2 e_N, p_t)\in D_{\overline{Q_2}}^+ u
    (x,t)$ and
    $$F_1(\mu_1)\leq 0 \; , \; F_2(\mu_2)\leq 0\; .$$
    Since these inequalities imply $ F_1^+ (\mu_1)\leq 0 $ and $ F_2^-(\mu_2)\leq 0$,
    we deduce from the monotonicity of $F_1^+$ and $F_2^-$ that $
    F_1^+(\lambda_1)\leq 0$ and $F_2^-(\lambda_2)\leq 0$.

    It remains to prove that $\FTreg \leq 0$. To do so, we argue by contradiction: if $\FTreg >0$,
    then $F_1^-(\overline\lambda)= F_2^+(\overline\lambda)>0$. Since $
    F_1^-(\mu_1)\leq 0$ and $F_2^+(\mu_2)\leq 0$, we deduce from the monotonicity of $
    F_1^-$ and $F_2^+$ that $\mu_1<\overline\lambda<\mu_2$. But this double inequality
    implies that $$(p_x+\overline\lambda e_N, p_t)\in D_{\R^N}^+ u (x,t)$$ and therefore
    $\min(F_1(\overline\lambda),F_2(\overline\lambda))\leq 0$ because $u$ is an Ishii
    subsolution of \eqref{pb:half-space}. This is a contradiction with the fact that $
    F_1^-(\overline\lambda)=F_2^+(\overline\lambda)>0$, and we deduce that the
    \FLSub condition hold son $\H$.

    \medskip 

    Conversely, if $u$ is \FLSub of \HJgen-\FL associated with the flux~limiter $\HTreg$ and if
    $(p_x+\lambda e_N, p_t)\in D_{\R^N}^+ u (x,t)$, we want to show that 
    $$ \min( F_1(\lambda), F_2(\lambda))\leq 0\; .$$
    By the \FLSub property, we know that 
    $$\max(F_1^+(\lambda), F_2^-(\lambda),\FTreg) \leq 0\; ,$$
    and therefore it remains to prove that $\min(F_1^-(\lambda), F_2^+(\lambda))\leq
    0$. We argue by contradiction assuming that this min is strictly positive. Using again the
    monotonicity of $ F_1^-,F_2^+$ and the fact that $\FTreg \leq 0$, we deduce that 
    $\lambda > \overline\lambda$ and $\overline\lambda > \lambda$ which is clearly a contradiction, 
    proving that $u$ is an Ishii subsolution. The proof for the subsolution case is then complete.
    
    \medskip

    \noindent\textbf{(c)} \emph{Supersolution case ---} 
    Contrarily to the case of subsolutions, the supersolutions are not necessarily regular on $\H$
    but as we see below, this does not pose any problem.

    If $v$ is an Ishii supersolution of \eqref{pb:half-space} and if
    $(p_x+\lambda_1 e_N, p_t)\in D_{\overline{Q_1}}^- v (x,t)$, 
    $(p_x+\lambda_2 e_N, p_t)\in D_{\overline{Q_2}}^- v (x,t)$, we want to show that
    $$\max( F_1^+(\lambda_1),F_2^-(\lambda_2),\FT) \geq 0\; .$$
    In order to apply Proposition~\ref{sub-ineq-on-b}, we consider several cases
    \begin{enumerate}
        \item[1.] If the set $J_1= \{\lambda \in \R: (p_x+\lambda
            Dd(x),p_t) \in D_{\overline{Q_1}^l}^-v (x,t)\}$ is bounded from above,
            there exists $\mu_1 \geq \lambda_1$ such that 
            $(p_x+\mu_1 e_N, p_t)\in D_{\overline{Q_1}}^- v (x,t)$ and $F_1(\mu_1)\geq 0$.
    \item[2.] If the set $J_2= \{\lambda \in \R: (p_x+\lambda
            Dd(x),p_t) \in D_{\overline{Q_2}^l}^-v (x,t)\}$ is bounded from below,
            there exists $\mu_2 \leq \lambda_2$ such that $(p_x+\mu_2 e_N, p_t)\in
            D_{\overline{Q_2}}^- v (x,t)$ and $ F_2(\mu_2)\geq 0$.  \item[3.] Otherwise, for any $\mu_i
            \in \R$, $(p_x+\mu_i e_N, p_t)\in  D_{\Omegb_i}^- v (x,t)$ and therefore, by the
            coercivity of $F_i$, $F_i(\mu_i)\geq 0$ for some $\mu_i$ such that $\mu_i \geq
            \lambda_1$ if $i=1$ or
            $\mu_i \leq \lambda_2$ if $i=2$.
    \end{enumerate}

    In any case, if either $F_1^+ (\mu_1)\geq 0 $ or $F_2^-(\mu_2)\geq 0$, we are
    done by using the monotonicity of $F_1^+$ and $F_2^-$. Therefore we can assume
    without loss of generality that $F_1^- (\mu_1), F_2^+(\mu_2)\geq 0$
    and we have to prove that $\FT\geq 0$.

    We argue by contradiction assuming that $\FT<0$ and using \eqref{eq.cases.HT}, we see that
    there are three options.

    \begin{enumerate}
        \item[$(i)$] If there exists $\underline \lambda$ such that $\FT=
    F_1(\underline\lambda)=F_2(\underline\lambda)<0$, then by the monotonicity of $
    F_1^-$ and $ F_2^+$, it follows that $\mu_1 >\underline\lambda >\mu_2$. Hence
    $(p_x+\underline\lambda e_N, p_t)$ is both in $D_{\overline{Q_1}^l}^-v (x,t)$ and in $D_{\overline{Q_2}^l}^-v (x,t)$,
    and therefore in $D_{\R^N}^- v (x,t)$ and by the Ishii supersolution
    property, we get $\max(F_1(\underline\lambda),F_2(\underline\lambda))\geq 0$, a
    contradiction. 

    \item[$(ii)$] If $\FT=F_1(\underline\lambda)=\min
    F_1(\lambda)<0$ with $F_1\geq  F_2$, we still have $
    F_2(\underline\lambda)<0$ and we conclude in the same way. 

    \item[$(iii)$] Of course this is also the case when
    $\FT=F_2(\underline\lambda)=\min F_2(\lambda)<0$ with  $F_2\geq 
    F_1$.
    \end{enumerate}

    In conclusion, $\FT\geq0$ and $v$ is \FLSup associated to the flux~limiter $\HT$. 

    \medskip

    Conversely, if $v$ is \FLSup of \HJgen-\FL with the flux~limiter $\HT$ and if $(p_x+\lambda e_N,
    p_t)\in D_{\R^N}^- v (x,t)$, we have to show that 
    $$ \max(F_1(\lambda), F_2(\lambda))\geq 0\; .$$
    By the \FLSup property, we already know that 
    $$\max(F_1^+(\lambda), F_2^-(\lambda),\FT) \geq 0\; ,$$
    which implies that: $(i)$ either $F_1(\lambda)\geq F_1^+(\lambda)\geq0$ or $F_2(\lambda)\geq
    F_2^-(\lambda)\geq0$, in which case we are done; $(ii)$ or $F_T\geq0$, but using that
    $\max(F_1^-(\lambda), F_2^+(\lambda)) \geq \FT \geq 0$ implies that
    $\max(F_1(\lambda),F_2(\lambda))\geq0$ and we also get the conclusion.
    The proof for the supersolution case is then complete.
\end{proof}

\section{Flux-limited and junction viscosity solutions for flux-limited conditions}
\index{Flux-Limited condition}

We now prove the equivalence of both notions of solutions in the case of Flux-Limited conditions. We
point out that, since \JVSub are not necessarily regular, we have to make this non-trivial
assumption. However, as we saw in Proposition~\ref{prop:JVreg}, this assumption is automatically
satisfied in the case of Kirchhoff-type conditions.
\begin{proposition}\label{prop:equivFL-LS} 
    Assume \GAQC and that $G$ satisfies \HBAHJ. Then
    \begin{enumerate}
    \item[$(i)$] an \usc, locally bounded function $u: \R^N \times (0,\Tf) \rightarrow \R$ is a
    flux-limited subsolution of \HJgen-\FL with flux~limiter $G$ if and only if it is a
    regular $G$-\JVSub.
    \item[$(ii)$] a \lsc, locally bounded function $v: \R^N \times (0,\Tf) \rightarrow \R$ is a
    flux-limited supersolution of \HJgen-\FL if it is a $G$-\JVSup.
    \end{enumerate}
\end{proposition}

\begin{proof}
    In all this proof, $\psi$ is always a generic test-function in $\PC1$ and the maximum or minimum of
    $u-\psi$ in $\R^N\times(0,\Tf)$ is always denoted by $(x,t)$, which we assume to be located on
    $\H\times(0,\Tf)$.

    \noindent\textbf{(a)} \emph{Subsolutions --}
    We just sketch the proof here since this case is easy.  If $u$ is a flux-limited subsolution, it
    clearly satisfies \eqref{cvsub-n}.  Indeed, if $u-\psi$ has a maximum at $(x,t)$, then 
    $\psi_t+G(x,t,v,D_\H \psi) \leq 0$ because of the ``$\max$'' in the definition of flux-limited
    subsolutions.  To prove the
    converse, we use in an essential way the regularity of the \JVSub: using
    Proposition~\ref{sub-up-to-b} with $L=H_1^+$ or $H_2^-$, we see that $(x,t) \in
    \H\times (0,\Tf)$, local maximum point of $u-\psi$ in $\R^N\times (0,\Tf)$ then 
    $$\psi_t+ H_1^+(x,t, u, D\psi_1) \leq 0\quad , \quad  \psi_t+ H_2^-(x,t, u, D\psi_2) 
    \leq 0\; .$$
    It remains to prove that $\psi_t+ G(x,t,v,D_\H \psi) \leq 0$, which is done as follows: for any
    $C>0$, $u-(\psi+C|x_N|)$ has also a maximum at $(x,t)\in\H\times(0,\Tf)$ but taking $C>0$ large
    enough in \eqref{cvsub-n} yields that the min cannot be reached by the $H_1$/$H_2$-terms since
    both Hamiltonians are coercive. Thus necessarily, the non-positive min is given by the junction
    condition and the result follows.

    \medskip

    \noindent\textbf{(b)} \emph{Supersolutions --}
    This case is a little bit more delicate. Of course, a flux-limited supersolution $v$ satisfies
    \eqref{cvsuper-n} since $H_1\geq H_1^+$ and $H_2\geq H_2^-$. The main point is then to prove that
    supersolutions of \eqref{cvsuper-n} are flux-limited supersolutions.

    If $(x,t) \in \H\times (0,\Tf) $ is a local maximum point of $u-\psi$, \eqref{cvsuper-n} holds
    and we wish to show that 
    \[
      \max \Big(\psi_t+ G(x,t,v,D_\H \psi), \psi_t+ H^+_1(x,t, v, D\psi_1) ,  
      \psi_t+ H^-_2(x,t, v, D\psi_2) \Big) \geq 0  \: .
    \]
    Assuming this is not the case, then necessarily all three quantities above are strictly negative
    and \eqref{cvsuper-n} implies
    $$  \max \Big(\psi_t+ H^-_1(x,t, v, D\psi_1), \psi_t+ H^+_2(x,t, v, D\psi_2) \Big) \geq 0\; .$$
    Let us assume for example that $\psi_t+ H^-_1(x,t, v, D\psi_1)\geq 0$, the other case being treated
    similarly.

    Referring the reader to Section~\ref{upoH} and Remark~\ref{rem:upoH} where the properties of
    $H^+_1, H_2^-$ are described we see that, if $D\psi_i=p^i_T+ p^i_N e_N$ for $i=1,2$, where
    $p^i_T \in \H$ and $p^i_N\in \R$, then these inequalities imply for instance
    $$
    -\psi_t(x,t) > H_1^+(x,t,v,p^1_T+p^1_Ne_N)\;,\;\hbox{therefore}\; -\psi_t(x,t)>\min_s
    (H_1(x,t,v,p^1_T+s e_N))\;.$$ 
    Denoting by $s_*\in\R$ a real such that $-\psi_t(x,t)=H_1^-(x,t,v,p ^1_T+se_N)$, we deduce that
    $s_*>m_1^+(x,t,v,p^1_T)$, the largest point of where $s\mapsto  H_1(x,t,v,p^1_T+s e_N)$ reaches its minimum. 
    On the other hand, the inequality $\psi_t+ H^-_1(x,t, v, D\psi_1)\geq 0$ implies that
    $ p^1_N \geq s_*$, so that finally $p^1_N > m_1^+(x,t, v, p^1_T)$.

    There are now two cases. In the first case $\psi_t+ H^+_2(x,t, v, D\psi_2)\geq 0$ and
    similarly as above, $p^2_N < m_2^-(x,t, v, p^1_T)$, the least minimum point for $H_2$. Here, we
    set 
    $$\tilde\psi(x,t):=\begin{cases} \tilde\psi_1(x,t)=\psi_1(x,t)+m_1^+(x,t, v,
    p^1_T)-p^1_N)x_N & \text{if }x_N>0\\
    \tilde\psi_2(x,t)=\psi_2(x,t)+m_2^-(x,t, v, p^2_T)-p^2_N)x_N & \text{if }x_N<0\;.
    \end{cases}$$
    This new test-function still belongs to $\PC1$ and $v-\tilde \psi$ has still a minimum point at
    $(x,t)$, therefore \eqref{cvsuper-n} holds with $\tilde\psi$. But, since by construction
    $D\tilde\psi_1(x,t)=m_1^+(x,t, v, p^1_T)$ while $D\tilde\psi_2(x,t)=m_2^- (x,t, v, p^2_T)$, it
    follows that for $i=1,2$, $$ \tilde \psi_t+ H_i(x,t, u, D\tilde \psi_i) = \tilde  \psi_t+ \min_s
    (H_i(x,t,v,p^i_T+s e_N)) < 0\; .$$
    Therefore $\tilde \psi_t+ G(x,t,v,D_\H \tilde  \psi) \geq 0$, which obviously implies 
    $\psi_t+G(x,t,v,D_\H \psi)\geq 0$, so that the flux-limited condition holds.

    If, on the contrary, $\psi_t+ H^+_2(x,t, v, D\psi_2)< 0$, then $\psi_t+ H_2(x,t,v,D\psi_2)<0$ 
    and the change of test-function reduces to 
    $$\tilde\psi(x,t):=\begin{cases}
    \tilde\psi_1(x,t)=\psi_1(x,t)+m_1^+(x,t, v, p^1_T)-p^1_N)x_N & \text{if }x_N>0\\
    \tilde\psi_2(x,t)=\psi_2(x,t) & \text{if }x_N<0\;,
    \end{cases}$$
    but we conclude as in the first case, which ends the proof. 
\end{proof}

\section{The Kirchhoff condition and flux~limiters}
\label{sec:KCvsFL}

\index{Kirchhoff condition}
Here we compare the sub/supersolution of \HJgen associated with the Kirchhoff condition \KC on one
hand, and \FL-conditions on the other hand in the framework of quasi-convex Hamiltonians. We
also consider the cases of more general Kirchhoff type conditions. To simplify matter, we also drop
here the dependence of the Hamiltonians in $u$ since this does not create much more difficulty in
the proofs.

The results of this section are based on the analysis of various properties of the Hamiltonians (in
particular $\HTreg$) which first appear in Section~\ref{upoH}, taking into account
Remark~\ref{rem:upoH}.  We again recall that the definitions of $\HT$ and $\HTreg$ are extended to
the case of quasi-convex Hamiltonians by \eqref{HT1} and \eqref{HT2} and we refer the reader to
Section~\ref{sect:quasi.convexity} for useful results on them. Notice that these sections are
written in a slightly more general form, where the Hamiltonians depend on $u$ for the sake of
completeness but the results apply here, of course.

Our main result is
\begin{proposition}\label{kc-fl}Assume \GAQC. 
    \begin{enumerate}
    \item[$(i)$] An \usc function $u$ is a \JVSub of \HJgen-\KC 
        if and only if $u$ is a \FLSub with $G=\HTreg$.
    \item[$(ii)$] A \lsc function $v$ is a \JVSup of \HJgen-\KC 
        if and only if $v$ is a \FLSup with~$G=\HTreg$.
    \end{enumerate}
\end{proposition}

It is worth pointing out that this result holds both in the convex and non-convex case, provided
that \HQC is satisfied.

\begin{proof}
    Of course, in both results, only the viscosity inequalities on $\H$ are different and therefore
    we concentrate on this case. Again we are going to use the results of Section~\ref{upoH} in
    light of Remark~\ref{rem:upoH}.
    
    \medskip

    \noindent\textbf{(a)}
    We begin with the simpler implication that a \FLSub \resp{\FLSup} with $G=\HTreg$ is
    a \JVSub \resp{\JVSup} of \HJgen-\KC. This is a consequence of the properties
    \begin{equation}\label{equiv.def.minmax}
        \begin{aligned}
        \HTreg(x,t,p')= &\min_{s\in\R} \max \big(H_{1}^- (x,t,p^\prime+s e_N),H_{2}^+ 
        (x,t,p^\prime+s e_N)\big)\\
        = &\max_{s\in\R} \min\big(H_{1}^- (x,t,p^\prime+s e_N),H_{2}^+ (x,t,p^\prime+s e_N)\big)\; ,
        \end{aligned}
    \end{equation}
    the first equality being the definition of $\HTreg$, the second one being and easy consequence
    of the monotonicity property of $H_{1}^- ,H_{2}^+$.

    We just sketch the proof dropping the variables $x,t,p^\prime$ and keeping only the one
    corresponding to the $x_N$-derivative for the sake of clarity and denote by $a$ the
    $u_t$-variable.

    For the \FLSub case, we start from
    \begin{equation}\label{equiv.JVSub} 
        \max(a+H_1^+(p_1), a+H_2^-(p_2),a+\HTreg)\leq 0
    \end{equation}
    and using \eqref{equiv.def.minmax} in the $\max\min$ form, we get both
    $$\HTreg \geq \min\big(H_{1}^- (p_1),H_{2}^+ (p_1)\big)\text{ and } 
    \min\big(H_{1}^- (p_2),H_{2}^+ (p_2)\big) \; .$$
    Now, there are two possibilities:
    \begin{enumerate}
        \item[1.] Either $-p_1+p_2 \leq 0$, in which case we clearly get the \KC condition:\\
        $\max(a+H_1(p_1), a+H_2(p_2),-p_1+p_2)\leq 0$.
        
        \item[2.] Or $p_1>p_2$ and the monotonicity of $H_{2}^+$ implies 
            $\HTreg \geq \min\big(H_{1}^- (p_1),H_{2}^+ (p_2)\big)\;,$ which leads to
            $$\max\big(a+H_1^-(p_1),a+H_2^+(p_2)\big)\leq0\;.$$
            But combining this inequality with \eqref{equiv.JVSub}, we also end up with the \KC condition: 
            $$\min(a+H_1(p_1), a+H_2(p_2),-p_1+p_2)\leq 0\;.$$
    \end{enumerate}

    The \FLSup case is done by similar arguments, using this time \eqref{equiv.def.minmax} in the
    $\min\max$ form.  Now we turn to the proofs that \JVSub \resp{\JVSup} of \HJgen-\KC are \FLSub
    \resp{\FLSup} with $G=\HTreg$.

    \medskip

    \noindent\textbf{(b)} \emph{Subsolutions --} 
    We first recall that, by Proposition~\ref{prop:JVreg}, the \JVSub of \HJgen-\KC are regular on
    $\H$. As a consequence, the $H_1^+, H_2^-$ inequalities clearly hold on $\H$ thanks to
    Proposition~\ref{sub-up-to-b} with $L=H_1^+$ or $H_2^-$.
    
    Hence we 
    just have to prove that, if $(\xb,\tb)\in \H \times (0,\Tf)$ is a strict local maximum point of
    $u-\psi$ for some function $\psi =(\psi_1,\psi_2) \in \PC1$, then
    \begin{equation}\label{ineq:sub.FL.KC}
        \psi_t (\xb,\tb)+\HTreg\big(\xb,\tb,D_{\H}\psi (\xb,\tb)\big)\leq 0\;.
    \end{equation}
    In particular, $(\xb,\tb)$ is a strict local maximum point of $((x',0),t) \mapsto
    u((x',0),t)-\psi((x',0),t)$ on $\H$. Now, in order to build a specific test-function, 
    we consider for some small $\kappa >0$
    $$ \chi(y_N):= \begin{cases}
     (\lambda -\kappa)y_N & \mbox{ if }  y_N \geq 0\;,\\
     (\lambda +\kappa)y_N & \mbox{ if }  y_N < 0\;,
    \end{cases}
    $$
    where, referring to Lemma~\ref{lem:H1m.H2p.a}, $\lambda$ is a minimum point of the coercive,
    continuous function $s\mapsto \max \big(H_{1}^- (\xb,\tb,D_{x'}\psi(\xb,\tb)+s e_N),H_{2}^+
    (\xb,\tb,D_{x'}\psi(\xb,\tb)+s e_N)\big)$. Notice that by this
    lemma,
     \begin{equation}\label{condlamb}
     \HTreg(\xb,\tb,D_{x'}\psi(\xb,\tb))=H^-_1(\xb,\tb,D_{x'}\psi(\xb,\tb)+\lambda e_N)=
      H^+_2(\xb,\tb,D_{x'}\psi(\xb,\tb)+\lambda e_N)\;,
     \end{equation}
    By standard arguments, the following function
    \begin{equation}\label{test}
        (x,t) \mapsto u(x,t)-\psi((x',0),t)-\chi(x_N)-\frac{(x_N)^2}{\eps^2}
    \end{equation}
     has a  maximum point $(\xe,\te)$ near~$(\xb,\tb)$ and $(\xe,\te)\to (\xb,\tb)$ as $\e$ tends to
     $0$ since $(\xb,\tb)$ is a strict local maximum point of $(x,t) \mapsto u(x,t)-\psi((x',0),t)$
     on $\H$.
    
    Notice that since $\kappa>0$, choosing $\chi$ as above prevents the \KC-condition to hold on
    $\H$, hence the condition on $\H$ reduces to ``$\min\big(\psi_t+H_1,\psi_t+H_2\big)\leq0$''.
    Now we examine the quantity 
    $$Q_{\e} := H_1\Big(\xe,\te,D_{x'}\psi((\xe',0),\te)+(\lambda-\kappa) e_N +
    \frac{2(\xe)_N}{\eps^2} )\Big)\;,$$
    defined only if $(\xe)_N \geq 0$. Since $H_1 \geq H_1^-$ and $H_1^-$ is increasing in the
    $e_N$-direction, it follows that 
    $$\begin{aligned}
    Q_{\e} \geq & \ H_1^-\Big(\xe,\te,D_{x'}\psi((\xe',0),\te)+(\lambda-\kappa) e_N +
        \frac{2(\xe)_N}{\eps^2} )\Big)\\
        \geq &  \ H_1^-\big(\xe,\te,D_{x'}\psi((\xe',0),\te)+(\lambda-\kappa) e_N )\big)\\[2mm]
        \geq &  \ H_1^-\big(\xb,\tb,D_{x'}\psi(\xb,\tb)+(\lambda-\kappa) e_N )+o_\e(1)\\[2mm]
    = & \ \HTreg(\xb,\tb,D_{x'}\psi(\xb,\tb)+\lambda e_N)+o_\e (1) + O(\kappa)\; .
    \end{aligned}
    $$
    An analogous inequality holds if $(\xe)_N \leq 0$ with $H_2$ and $H_2^+$ and we deduce 
    \eqref{ineq:sub.FL.KC} necessarily holds on $\H$.

    \medskip

    \noindent\textbf{(c)} \emph{Supersolutions --}
    Let $v$ be a \JVSup of \HJgen-\KC: we have to prove that $v$ is a flux-limited supersolution
    with~$G=\HTreg$.
    
    To do so, we consider a test-function $\psi = (\psi_1,\psi_2) \in\PC1$ such that $v-\psi$
    reaches a local strict minimum at $(\xb,\tb) \in \H\times (0,\Tf)$. For $i=1,2$, we use the
    notations 
    $$ a = \psi_t (\xb,\tb)\; ,\; p'=D_{x'}\psi(\xb,\tb) \; ,\; \lambda_i = \frac{\partial
    \psi_i}{\partial x_N}(\xb,\tb)\; .$$
    By the supersolution property of $v$, dropping the dependence in $\xb,\tb, p'$ 
    to simplify the notations,
    $$\max\Big( -\lambda_1 + \lambda_2 ,a+H_1(\lambda_1), a+H_2(\lambda_2)\,\Big)\geq 0\;,$$
     and we want to prove that
     $$ \max\Big( a +\HTreg,a+H_1^+(\lambda_1), a+H_2^-(\lambda_2)\Big)\geq 0\; .$$
    We argue by contradiction assuming that in the latter inequality, each term is strictly negative.
    
    With the notations of Section~\ref{sect:eqn-on-b}, we look at the subdifferential of $v$ at
    $(\xb,\tb)$, restricted to each domain $\overline{Q_i}^\ell:=\Omegb_i\times (0,\Tf)$
    for $i=1,2$, and see that $((p',\lambda_i),a) \in D^-_{\overline{Q_i}^\ell}v(\xb,\tb)$. Now
    we apply Proposition~\ref{sub-ineq-on-b}, denoting by 
    $$\begin{aligned}\tilde \lambda_1:= &\sup\big\{\lambda\in\R: ((p',\lambda),a) \in
    D^-_{\overline{Q_1}^l}v(\xb,\tb)\big\}\;,\\ 
        \tilde \lambda_2:= & \inf\big\{\lambda\in\R: ((p',\lambda),a) \in
    D^-_{\overline{Q_2}^l}v(\xb,\tb)\big\}\;,
    \end{aligned}$$ 
    and we point out that $Dd(x)=e_N$ on $\Omega_1$ while $Dd(x)=-e_N$ on $\Omega_2$, which explains the
    difference supremum-infimum. We assume that both quantities are finite and explain at the end of
    the proof that the other cases can be treated by similar and simpler arguments. 
    
    The fact that both $\tilde \lambda_1,\tilde \lambda_2$ are finite implies that $v$ is regular at
    $(\bar x,\bar t)$ and Proposition~\ref{sub-ineq-on-b} implies 
    \begin{equation}\label{Hi-inequa}
        a+H_1(\tilde \lambda_1)\geq 0\quad\hbox{and}\quad  a+H_2(\tilde \lambda_2)\geq 0\; .
    \end{equation}
     Recall that $s\mapsto H_1^+(s)$ is nonincreasing and we are assuming $a+H_1^+(\lambda_1)<0$.
     Therefore, $\tilde \lambda_1 \geq \lambda_1$ implies that also $a+H_1^+(\tilde\lambda_1)<0$.
     In the same way, $a+H_2^-(\tilde \lambda_2)< 0$ which both imply that
    \begin{equation}\label{eq:tildelambda12}
        a+H_1^-(\tilde \lambda_1)\geq 0 \quad\hbox{and} \quad a+H_2^+(\tilde \lambda_2)\geq 0\;.
    \end{equation}

    Taking into account the definition of $\nu_1,\nu_2$ in Lemma~\ref{lem:H1m.H2p.a} and 
    the fact that we assume $a+\HT^\reg(p')<0$, the monotonicity properties of $H_1^-$ and $H_2^+$
    imply that $\tilde \lambda_2 < \nu_1 \leq \nu_2 < \tilde \lambda_1$. 
    Moreover, since $a+H_1^-(\nu_2)=a+\HT^\reg(p')<0$ and $a+H_1^-(\tilde \lambda_1)\geq 0$, there
    exists $\delta_2 \in (\nu_2, \tilde \lambda_1)$ such that 
    $$a+H_1^-(\delta_2)=\frac12\Big(a+\HT^\reg(p')\Big)\;.$$ 
    Since $H_1^-(\delta_2 )>H_1^-(\nu_2 )$, it follows that
    $\delta_2>m_1^-(x,t,p')$, in other words $\delta_2$ belongs to the region where $s\mapsto
    H_1^-(x,t,p'+se_N)$ is increasing, and as a consequence,
    $a+H_1^-(\delta_2)=a+H_1(\delta_2 )$ 

    Similarly, there exists $\delta_1 \in (\tilde \lambda_2 ,\nu_1)$ such that 
    $ H_2^+(\tilde \delta_1 )=H_2(\tilde \delta_1 )=(a+\HT^\reg(p'))/2$ and by
    Proposition~\ref{sub-ineq-on-b} on the structure of the sub-differential, we see that 
    $$
    ((p',\delta_2),a)\in D^-_{\overline{Q_1}^l}v(\xb,\tb)\;,\qquad
    ((p',\delta_1),a)\in D^-_{\overline{Q_2}^l}v(\xb,\tb)\;,
    $$
    which leads to
    $$\max\big(-\delta_2 + \delta_1  ,a+H_1(p'+ \delta_2), a+H_2(p'+\delta_1)\big)\geq 0\;.$$
    But we reach a contradiction here: clearly $-\delta_2+\delta_1<0$, and the other terms are
    obviously negative by the construction of $\delta_1, \delta_2$.

    We finally remark that the key property we use in the proof is \eqref{eq:tildelambda12}, \ie
    roughly speaking, the existence of $\tilde \lambda_1, \tilde \lambda_2$ in the subdifferential
    for which such inequalities hold. If $v$ is not regular on one side (either on $\overline{Q_1}$
    or $\overline{Q_2}$), then any $\lambda \in \R$ is in the corresponding subdifferential and
    therefore  \eqref{eq:tildelambda12} is a consequence of the coercivity of either $H_1^-$
    near $+\infty$, or $H_2^+$ near $-\infty$.  
\end{proof}

We conclude this section by a characterization of the solution associated to $\HTreg$ in the
non-convex case.

\begin{proposition}\label{sub-alw-max} 
    Under the assumptions of Proposition~\ref{kc-fl}, an \usc function $u$ is an Ishii subsolution
    of \HJgen if and only if it is a \FLSub associated to the flux~limiter $\HTreg$.
\end{proposition}

\begin{proof}
    Of course, we are just interested in the inequalities on $\H\times(0,\Tf)$.

    \medskip

    \noindent\textbf{(a)} 
    We first show that an Ishii subsolution of \HJgen is necessarily a subsolution of \HJgen-\FL for
    the flux~limiter $\HTreg$.  

    Let $u$ be an Ishii subsolution of \HJgen-\FL; by Proposition~\ref{sub-up-to-b}, we already know
    that the $H_1^+$ and $H_2^-$ inequalities hold on $\H\times(0,\Tf)$ and therefore we have just
    to check the $\HTreg$-one. 

    To do so, we pick a test-function $\psi: \R^{N-1}\times (0,\Tf) \to
    \R$ and assume that $x'\mapsto u((x',0),t)-\psi(x',t)$ has a strict, local maximum point at
    $(\xb,\tb) =((\xb',0),t) \in \H\times (0,\Tf)$. Then, for $0<\e\ll 1$, we consider the function 
    $$ 
        (x,t)=((x',x_N),t)\mapsto u(x,t)-\psi(x',t) - \lambda x_N -\frac{x_N^2}{\e^2}\;,    
    $$
    where $\lambda\in[\nu_1,\nu_2]$ is fixed, $\nu_1,\nu_2$ being defined in Lemma~\ref{lem:H1m.H2p.a} at the
    point $(\xb,\tb)$ with $p'=D_{x'}\psi(\xb,\tb)$. This function has a local maximum point at a
    point $(\xe,\te)$ which converges to $(\xb,\tb)$.

    If $(\xe,\te)\in \Omega_1\times (0,\Tf)$, it follows that 
    $$ 
        \psi_t (\xe,\te) + H_1 \Big(\xe,\te, D_{x'}\psi(\xe,\te)+\lambda e_N + 
        \frac{2x_N}{\e^2}e_N \Big)\leq 0\;.
    $$
    Using that $H_1 \geq H_1^-$, the monotonicity property of $H_1^-$ (which allows to drop the
    $2x_N\e^{-2}$-term), together with the continuity of both $H_1^-$ and the derivatives of $\psi$, 
    we obtain
    $$ \psi_t (\xb,\tb) + H_1 ^-(\xb,\tb, D_{x'}\psi(\xb,\tb)+\lambda e_N )\leq o_\e(1)\;,$$
    and since $\lambda\in[\nu_1,\nu_2]$, we get
    $$ \psi_t (\xb,\tb) + \HTreg (\xb,\tb, D_{x'}\psi(\xb,\tb)+\lambda e_N )\leq o_\e(1)\;.$$
    The conclusion follows by letting $\e$ tend to $0$. The two other cases $(\xe,\te)\in
    \Omega_2\times (0,\Tf)$ and $(\xe,\te)\in \H\times (0,\Tf)$ can be treated similarly.

    \medskip

    \noindent\textbf{(b)}
    Conversely,  assuming that $u$ is a subsolution with the flux~limiter $\HTreg$, we have to show
    that it satisfies the right Ishii subsolution inequalites on $\H$. Let $\varphi$ be a smooth
    function and $(\xb,\tb) \in \H\times (0,\Tf)$ be a maximum point of $u-\varphi$, we have to show
    that
    \begin{equation}\label{subsol.ishii.ter} 
        \min\Big(a + H_1(\xb,\tb,p'+\lambda e_N), a + H_2(\xb,\tb,p'+\lambda e_N)\Big)\leq 0\;,
    \end{equation}
    where $\displaystyle a= \varphi_t (\xb,\tb),\ p'=D_{x'}\varphi(\xb,\tb),\ 
    \lambda=\frac{\partial \varphi}{\partial x_N}(\xb,\tb)$\;.
    Since the flux-limited condition on $\H\times(0,\Tf)$ reads
    $$\max\Big(a + H^+_1(\xb,\tb,p'+\lambda e_N)\;, 
     a + H^-_2(\xb,\tb,p'+\lambda e_N)\;,
     a + \HTreg (\xb,\tb,p'+\lambda e_N)\Big)\leq0\;,$$
    it is enough to prove  either $a + H^-_1(\xb,\tb,p'+\lambda e_N)\leq 0$ or  
    $a + H^+_2(\xb,\tb,p'+\lambda e_N)\leq 0$ in order to deduce \eqref{subsol.ishii.ter}.

    Now, if $\nu_1=\nu_1(\xb,\tb,p')$ and $\nu_2=\nu_2(\xb,\tb,p')$ are given by
    Lemma~\ref{lem:H1m.H2p.a},
    the result is obvious if $\nu_1\leq \lambda \leq \nu_2$. On the other hand, if $\lambda < \nu_1$, 
    $$a+H^-_1(\xb,\tb,p'+\lambda e_N)\leq a+H^-_1(\xb,\tb,p'+\nu_1 e_N)=a+\HTreg (\xb,\tb,p'+\lambda
    e_N) \leq 0\;,$$
    while if $\lambda > \nu_2$,
    $$a + H^+_2 (\xb,\tb,p'+\lambda e_N)\leq a + H^+_2 (\xb,\tb,p'+\nu_2 e_N)=
    a + \HTreg (\xb,\tb,p'+\lambda e_N) \leq 0\;.$$
    Hence in any case, \eqref{subsol.ishii.ter} holds and the proof is complete.
\end{proof}

\section{General Kirchhoff conditions and flux~limiters}

The aim of this section is to give an extension of Proposition~\ref{kc-fl} to the case of general
Kirchhoff conditions. The identification of the flux-limited condition leads to a \GFL given
by the function
\begin{equation}\label{flux.limiter.A}
    \begin{aligned}
        & A(x,t,a,p'):=\min_{s_1,s_2} \Phi(s_1,s_2) = \max_{s_1,s_2} \tilde  \Phi(s_1,s_2) \quad \text{where} \\
        & \Phi(s_1,s_2):=\max \Big( \,a+ H_1^-(x,t,p'+s_1e_N)\;,
         a+H_2^+(x,t,p'+s_2e_N)\;, G(x,t, a,p',-s_1,s_2)\,\Big)\;,\\
         & \tilde \Phi(s_1,s_2):=\min \Big( \,a+ H_1^-(x,t,p'+s_1e_N)\;,
         a+H_2^+(x,t,p'+s_2e_N)\;, G(x,t, a,p',-s_1,s_2)\,\Big)\;.
    \end{aligned}
\end{equation}
\begin{theorem}\label{thm:gkc-fl}
    Assume \GAQC and that $G$ is a \GJC of Kirchhoff type. Then $u$ is a regular \JVSub \resp \JVSup of
    \HJgen-\GJC if and only if it is a \FLSub \resp \FLSup of \HJgen-\GFL with general flux~limiter 
    $A(x,t,a,p')$ given by \eqref{flux.limiter.A}.
\end{theorem}

Here we face a general flux-limited condition, namely
\begin{equation}\label{eqn:A-FL}
    A(x,t,u_t,D_\H u)=0 \quad \hbox{on }\H\times (0,\Tf)\; .
\end{equation}
and to show that we have indeed a \GFL, we prove below that we are in the framework described in
Section~\ref{sect:types.junctions}, \ie \eqref{basic-hyp-GFL} holds.
 
\begin{proof} 
    First we leave out the proof of \FLSub \resp{\FLSup} implies \JVSub \resp{\JVSup} since, as in
    the proof of Proposition~\ref{kc-fl}, it relies on easy manipulations of the definitions.

    On the other hand, since in all the proof, the dependence in $x$, $t$, $p'$ does not play a
    role, we drop these arguments in $H_1$, $H_2$ and $G$. In other words, we essentially provide
    the proof in dimension $1$ because there is no additional difficulty in higher dimension. Notice
    however that these dependences may generate some smaller terms $o_\e(1)$ as $\e\to0$ below. 

    \medskip

    \noindent\textbf{(a)} \emph{Subsolution case ---} If $u$ is a \JVSub for the generalized
    Kirchhoff condition $G$, we have to show that it is a \FLSub with the general flux~limiter $A$, \ie if
    $\varphi=(\varphi_1,\varphi_2) \in \PC1$ and $(x,t)=((x',0),t)$ is a strict local maximum point
    of $u-\varphi$ then setting 
    $$ a=\varphi_t (x,t)\; , \; p_1=\frac{\partial
    \varphi_1}{\partial x_N}(x,t)\; ,\; p_2=\frac{\partial \varphi_2}{\partial x_N}(xs,t)\; ,$$
    we have to deduce that $ \max(a+ H_1^+(p_1), a+ H_2^-(p_2),A(a))\leq 0$ from the the \JVSub
    property, namely $$ \min(a+ H_1(p_1), a+ H_2(p_2),G(a,-p_1,p_2))\leq 0\; .$$
    The inequalities $a+ H_1^+(p_1)\leq 0, a+ H_2^-(p_2)\leq 0$ are direct consequences of
    Proposition~\ref{sub-up-to-b}, therefore we have just to show that $A(a) \leq 0$.

    Let us assume
    by contradiction that $A(a)>0$ and denoting by 
    $$ f(t)= a+H_1^-(t)\; ,\; g(s)=a+H_2^+(s)\; ,\; h(t,s) =G(a,-t,s)\;,$$
    let us use Lemma~\ref{lem:fgh}, which states that
    $$ A(a)=\min_{t,s}\left\{\max(f(t), g(s), h(t,s))\right\}=
    \max_{t,s}\left\{\min(f(t), g(s), h(t,s))\right\}\;,$$
    that both the $\min$ and $\max$ are achieved at the same point which we denote by  
    $(\bar p_1,\bar p_2)$, and finally that $A(a) = a+H_1^-(\bar p_1)=a+H_2^+(\bar p_2)=G(a,-\bar
    p_1,\bar p_2)\;.$

    We now consider the PC$^1$-function
    $$ \psi(y_N):= \begin{cases}
     \bar p_1 y_N & \mbox{ if }  y_N \geq 0\;,\\
     \bar p_2 y_N & \mbox{ if }  y_N \leq  0\;,
    \end{cases}
    $$
    and we look at maximum points of 
    $$ \chi(y,s)=u(y,s)-\varphi((y',0),s)-\psi (y_N)-\frac{y_N^2}{\eps^2}\; .$$
    Since on $\H\times (0,\Tf)$, $(x,t)$ is  a strict local maximum point of
    $u(y,t)-\varphi((y',0),s)$, there exists a sequence $(\ye,\se)$ of maximum points of $\chi$
    which converges to $((x',0),t)$. Examining the \JVSub inequality at $(\ye,\se)$, we see that, if
    $\ye \in \Omega_1$, then
    $$ a+H_1\Big(\bar p_1 + \frac{2(\ye)_N}{\eps^2}\Big)\leq 0\;.$$
    But, for $\eps$ small enough
    $$ a+H_1\Big(\bar p_1 + \frac{2(\ye)_N}{\eps^2}\Big)\geq 
    a+H_1^-\Big(\bar p_1 + \frac{2(\ye)_N}{\eps^2}\Big)\geq a+H_1^-(\bar p_1)+o_\eps(1) >0\; ,$$
    since $H_1^-$ is increasing in the normal direction $e_N$ and because 
    $a+H_1^-(\bar p_1)=A(a)>0$
    (we recall that the $o_\e(1)$-term reflects the dependence on $(x_\e,t_\e,p'_\e)$).

    Therefore  $\ye$ cannot be in $\Omega_1$, nor $\Omega_2$ by a similar argument using
    $H_2^+$. Hence $\ye = x$ but here also we get a contradiction: using as above that
    $H_1\geq H_1^-, H_2\geq H_2^+$ we obtain
    $$ \min(a+ H_1(\bar p_1), a+ H_2(\bar p_2),G(a,-\bar p_1,\bar p_2))=A(a)>0\;.$$
    This proves that $A(a)\leq 0$ and the proof is complete in the subsolution
    case.

    \medskip

    \noindent\textbf{(b)} \emph{Supersolution case  ---}
    If $v$ is a \JVSup for the generalized Kirchhoff condition $G$, we have to show that it is a
    \FLSup with the flux~limiter $A$, \ie if $\varphi=(\varphi_1,\varphi_2) \in \PC1$ and if
    $(x,t)=((x',0),t)$ is a strict local minimum point of $v-\varphi$ then, with the same notations
    as above, we have to deduce that $$\max(a+ H_1^+(p_1), a+ H_2^-(p_2),A(a))\geq 0\; ,$$ from the
    \JVSup property, namely
    $$ \max\Big(a+ H_1(p_1), a+ H_2(p_2),G(a,-p_1,p_2)\Big)\geq 0\;.$$

    We argue by contradiction assuming that $a+ H_1^+(p_1)<0$, $a+ H_2^-(p_2)<0$ and $A(a)<0$.
    Repeating exactly the arguments of the proof of Proposition~\ref{kc-fl}, we voluntarily shorten
    some passages below. Notice that a key ingredient in the proof is
    Proposition~\ref{sub-ineq-on-b} which describes the structure of sub and superdifferentials on
    $\H \times (0,\Tf)$, on both side. 
    
    Using the same notations as in Proposition~\ref{sub-ineq-on-b} and assuming also
    that $\tilde \lambda_1,\tilde \lambda_2$ are both finite, the arguments in Proposition~\ref{kc-fl} 
    first yield
    $$ a+H_1(\tilde \lambda_1)\geq 0\quad\hbox{and}\quad  a+H_2(\tilde \lambda_2)\geq 0\;,$$
    and then
    $$
    a+H_1^-(\tilde \lambda_1)\geq 0 \quad\hbox{and} \quad a+H_2^+(\tilde \lambda_2)\geq 0\;.
    $$
 
    Now, since
    $$ a+ H_1(\bar p_1)=a+ H_2(\bar p_2)=G(a,-\bar p_1,\bar p_2))=A(a)<0\;,$$
    we get $a+ H_1^-(\bar p_1), a+ H_2^+(\bar p_2)<0$ and therefore $\bar p_1< \tilde \lambda_1$,
    $\bar p_2> \tilde \lambda_2$.  Moreover, there exists $\bar p_1< s_1<\tilde \lambda_1$ and $
    \tilde \lambda_2 < s_2 <\bar p_2$ such that
    $$a+ H_1^-(s_1)= a+ H_2^+(s_2)=A(a)/2\;.$$
    The inequality $a+ H_1^-(s_1) > a+ H_1^-(\bar p_1)$ implies that $s_1$ belongs necessarily to
    the interval where $H_1=H_1^-$, and a similar argument being also true for $s_2$ we arrive at
    $$ a+ H_1^-(s_1)=a+ H_1(s_1)\quad\text{and}\quad a+ H_2^+(s_2)=a+ H_2(s_2)\;.$$

    But the fact that $s_1<\tilde \lambda_1$ and $ \tilde \lambda_2 < s_2$ means that $s_1,s_2$ are
    respectively in the subdifferential relatively to $\overline{Q_1}$ and $\overline{Q_2}$, hence
    $$ \max(a+H_1(s_1),a+ H_2(s_2),G(a,-s_1,s_2))\geq 0\;.$$
    However, each terms of the $\max$ is strictly negative: this is clear for the two first ones,
    and for the last one we use that, by the monotonicity properties of $G$, 
    $$ G(a,-s_1,s_2) \leq G(a,-\bar p_1,\bar p_2))=A(a)<0\;.$$
    So, we reach a contradiction and the proof is then complete.
\end{proof}

Now we show that the function $ A(x,t,a,p')$ given by Theorem~\ref{thm:gkc-fl} is equivalent to a
\FL condition, since it is strictly monotone in $a$.

\begin{proposition}
    Under the assumptions of Theorem~\ref{thm:gkc-fl}, there exists $\bar \gamma>0$ such that, for
    any $x\in \H, t\in [0,\Tf],p'\in \R^{N-1}$ and $a_2>a_1$ 
    $$A(x,t,a,p') - A(x,t,a,p')\geq \bar \gamma(a_2-a_1)\; .$$
    Moreover, junction condition \eqref{eqn:A-FL} is equivalent to \FL for a
    function $\G$ which satisfies  \HBAHJ.  
\end{proposition}

\begin{proof} 
    In order to prove the first part of the result, we drop the variable $x,t,p'$ which are fixed
    for the sake of simplicity of notations and therefore we assume that $H_1^-(x,t,p'+s_1e_N),
    H_2^+(x,t,p'+s_2e_N),G(x,t, a,p',-s_1,s_2)$ and $A$ are functions of $s_1, s_2$ and $a$ only.

    \medskip

    \noindent\textbf{(a)}
    By Lemma~\ref{lem:fgh}, for any $a\in \R$ there exists $s_1(a), s_2(a)$
    such that 
    $$A(a)=a+ H_1^-(s_1(a))= a+ H_2^+(s_2(a))=G(a,-s_1(a),s_2(a))\; .$$ 
    In fact, this lemma does not apply readily since $H_1^-$ is not increasing but only
    non-decreasing and $H_2^+$ is not decreasing but only non-increasing. However, this property
    remains true by easy approximations arguments, using the linear growth of $H_1^-$ at $+\infty$
    and $H_2^+$ at $-\infty$ coming from \NCe, to keep $s_1(a),s_2(a)$ bounded.

     Examining $A(a_2)-A(a_1)$ there are three cases.

    \noindent $(i)$ If $s_1(a_2)\geq s_1(a_1)$, then 
    $$ A(a_2)-A(a_1)=a_2-a_1+ H_1^-(s_1(a_2))-H_1^-(s_1(a_1))\geq a_2-a_1\; ,$$
    since $H_1^-$ is non-decreasing and the desired property is satisfied with $\bar \gamma=1$.

    \noindent$(ii)$ If $s_2(a_2)\geq s_2(a_1)$, then 
    $$ A(a_2)-A(a_1)=a_2-a_1+ H_2^+(s_2(a_2))-H_2^+(s_1(a_1))\geq a_2-a_1\; ,$$
    since $H_2^+$ is non-decreasing and the desired property is satisfied with $\bar \gamma=1$.

    \noindent$(iii)$ If $s_1(a_2)< s_1(a_1)$ and $s_2(a_2)< s_2(a_1)$, then we use the three above
    representations for $A(a_2),A(a_1)$: if $C$ is the Lipschitz constant of $H_1$, $H_2$ in $p$ and
    using the monotonicity of $G$ in $a,s_1,s_2$ 
    $$\begin{aligned}
    (2\alpha+C)(A(a_2)-A(a_1))= &\ \alpha\Big(a_2-a_1 + H_1^-(s_1(a_2))-H_1^-(s_1(a_1))\Big)\\
        & + \alpha\Big(a_2-a_1+ H_2^+(s_1(a_2))-H_2^+(s_1(a_1))\Big)\\
        & + C\Big(G(a_2,-s_1(a_2),s_2(a_2))-G(a_1,-s_1(a_1),s_2(a_1))\Big)\\
    \geq & \  \alpha\Big(a_2-a_1- C|s_1(a_2)-s_1(a_1)|\Big)\\
    & + \alpha\Big(a_2-a_1-C|s_1(a_2)-s_1(a_1)|\Big)\\
        & - \alpha C\Big((s_1(a_2)-s_1(a_1))+(s_2(a_2))-s_2(a_1))\Big)\\
         \geq & \ 2\alpha(a_2-a_1)\; .
    \end{aligned}$$
    Gathering the three cases, we see that the result holds with $\bar \gamma=2\alpha/(2\alpha+C)$.

    \medskip

    \noindent \textbf{(b)} This monotonicity property implies that that there exists $\G(x,t,p')$
    such that 
    $$ A(x,t,a,p')=0 \quad \Leftrightarrow \quad a + \G(x,t,p')=0\; .$$
    And the fact that $\G$ satisfies \HBAHJ can easily be proved by using the definition of $A$---
    which implies that $A$ satisfies \HBAHJ---and the monotonicity of $A$ in $a$.
\end{proof}

\section{Vanishing viscosity approximation (III) }

\index{Vanishing viscosity method!via flux-limited and junction viscosity solutions}

In this section, we revisit the convergence of the vanishing viscosity method in the cases of
quasi-convex Hamiltonians.  By using the connections between flux-limited and junction viscosity
solutions of problems with \FL and \KC, we are able to obtain more general results for this type of
Hamiltonians, with more complete formulations and more natural proofs. Indeed, we can combine the
advantages of these two notions of solutions, the \JVS being more flexible in terms of stability
while more general comparison results are available for \FLS (as far as quasi-convex Hamiltonians
are concerned) since they do not require the restrictive assumption \TCs.

The result is the

\begin{theorem}\label{pro:viscous}\emph{--- Vanishing viscosity limit, third version.}\smsp
    For any $\eps>0$, let $u^\eps \in C(\R^N \times [0,\Tf))$ be a viscosity solution of
    \begin{equation}\label{pb:viscous2}
       u^\eps_t -\eps \Delta u^\eps + H (x,t,u^\eps,Du^\eps) = 0\quad\text{in}\quad\R^N \times (0,\Tf)\;,
    \end{equation}
    with the initial data
    \begin{equation}\label{pb:viscousid2}
       u^\eps(x,0) = u_ 0(x) \quad\text{in}\quad\R^N \;,
    \end{equation}
    where $H(x,t,r,p) =H_1(x,t,r,p)$ if $x \in \Omega_1$ and $H(x,t,r,p) =H_2(x,t,r,p)$ if $x\in
    \Omega_2$ and $u_0$ is bounded continuous function in $\R^N$. We assume that both Hamiltonians
    $H_1,H_2$ satisfy \GAQC.

    If the $u^\eps$ are uniformly bounded in $\R^N \times (0,\Tf)$ and $C^1$ in $x_N$ in a
    neighborhood of $\H$, then, as $\eps\to0$, the sequence $(u^\eps)_\eps$ converges locally
    uniformly in $\R^N \times (0,\Tf)$ to a continuous function $u$ which is at the same time 

\noindent $(i)$ the maximal Ishii subsolution of \eqref{pb:half-space},\\
$(ii)$ the unique \JVS of the Kirchhoff problem, \\
$(iii)$ the unique \FL associated to the flux~limiter $\HTreg$.
\end{theorem}

\begin{proof} It consists in the following steps.
    \begin{enumerate}
        \item[1.] We use the stability result of Lemma~\ref{StabK}: $\ou = \limssup u^\eps$ and $\uu =
    \limiinf u^\eps$ are respectively \JVSub and \JVSup of the Kirchhoff problem. 
\item[2.] By  Proposition~\ref{kc-fl}, $\ou$ and $\uu$ are \FLSub and \FLSup with the flux~limiter
    $G=\HTreg$.  
\item[3.] By the comparison result for \FLS in the quasi-convex setting
        (Theorem~\ref{comp-IM-nc}), $\ou \leq \uu$ in $\R^N \times [0,\Tf))$
    \item[4.] By the usual argument, we deduce that $u^\eps \to u:=\ou=\uu$ in $C(\R^N \times [0,\Tf))$.
    \end{enumerate}
    We conclude the proof by remarking that Proposition~\ref{kc-fl} provides the equivalence of
    properties $(ii)$ and $(iii)$ while $(i)$ comes from the fact that an Ishii subsolution of
    \eqref{pb:half-space} is also a subsolution with \KC, hence a \FLSub with the flux~limiter
    $G=\HTreg$.  Again the comparison comes from Theorem~\ref{comp-IM-nc}.
\end{proof}

\begin{remark}
    The above proof shows how much we can take advantage of Proposition~\ref{kc-fl} and more generally of
    all the results of Chapter~\ref{sect:equiv.sols} in order to use all the different qualities of
    \FLS and \JVS.  
\end{remark}

\section{A few words about existence}
\label{sect:existence}

In general, existence of viscosity solutions is not an issue: the {\em Perron method} of
Ishii~\cite{Is-Per} (see also the User's guide \cite{Users}) provides existence of solutions in such
a general framework that addressing the question of existence has quickly become irrelevant.
On the contrary, when applying Perron method, strong comparison results are crucial in
order to obtain the existence of {\em continuous} viscosity solutions: indeed, the basic arguments
of this method consists in building an \usc subsolution $u$ such that $u_*$ is a supersolution and
then the \SCR implies the continuity of $u$ since it gives $u \leq u_*$, hence $u=u_*$ since of
course $u_*\leq u$ by definition. Therefore $u=u_*$ is both \usc and \lsc, hence continuous. Of course,
this general argument is valid for equations with discontinuous Hamiltonians (or with junctions),
which yields another reason why it is important to extend such \SCR to more and more general
contexts.

As we know, \SCR holds both for \FLS and \JVS but is it so clear that the basic arguments of the
Perron method work in these frameworks? The answer is yes but with some difficulties, which is
the reason why this section exists.

To be more precise we formulate the
\begin{proposition}\label{prop:Perron}\emph{--- Existence of solutions.}
    \begin{enumerate}
    \item[$(i)$] Under the assumptions of Theorem~\ref{comp-IM-nc}, if $u_0$ is a bounded continuous
        function, there exists a unique bounded, continuous solution of \eqref{pb:half-space} with
            the flux-limited condition given by the flux~limiter $G$.
     \item[$(ii)$] Under the assumptions of Theorem~\ref{LS-CR-GC}, if $u_0$ is continuous there
         exists a unique bounded, continuous solution of \eqref{pb:half-space} both for \GJC of
            Kirchhoff type and for \FL conditions.
    \end{enumerate}
\end{proposition}

\begin{proof} 
    Here we just sketch the proof since it readily follows the ``classical Perron method''
    approach and only focus on some specificities below. To simplify the presentation, we assume
    that $u_0$ is $C^1$ with a bounded gradient: in fact, once this particular case is treated, the
    general case follows by standard approximation arguments and stability, using in a crucial way
    a \SCR to conclude.

    We first consider the \FL case and we introduce $\uu(x,t) := u_0(x) -Ct$, $\ou(x,t) :=
    u_0(x) +Ct$. If $C>0$ is large enough, these functions are respectively \FLSub and \FLSup of
    \eqref{pb:half-space}. We then introduce the function $u_\mathrm{FL}:\R^N\times [0,\Tf]\to \R$
    defined at each point $(x,t)$ by 
    $$u_\mathrm{FL}(x,t):=\sup\big\{w(x,t): \uu\leq w\leq \ou\;,\text{ $w$ is an \FLSub}\big\}\;.$$
    Similarly, we define $u_\mathrm{GJC}$ for the \GJC case and, in the rest of the proof, $u$ denotes
    either $u_\mathrm{FL}$ or $u_\mathrm{GJC}$ since many arguments work equally for both. Notice
    that the subsolution property is checked using $u^*$ and the supersolution uses $u_*$ because
    $u$ is not continuous a priori.

    \medskip

    \noindent\textbf{(a)} \emph{The subsolution property ---} This part is easy and follows the
    standard procedure, whether in the \FL or \JVS case. It is done in three steps
    \begin{enumerate}
    \item The maximum of two subsolutions is a subsolution: a result which does not cause any
        problem in the discontinuous framework using the following property which is analogous to
            the one given in Lemma~\ref{subdiff-A}: for any \usc functions $u_1,u_2:\R^N\times
            [0,\Tf]\to \R$, for any $(x,t)\in \H \times (0,\Tf)$ such that
            $u_1(x,t)=u_2(x,t)$ and $i=1,2$ we have
        $$D_{\Omegb_i \times [0,\Tf]}^+ \max(u_1,u_2) (x,t) \subset
            D_{\Omegb_i \times [0,\Tf]}^+ u_1 (x,t) \cap D_{\Omegb_i \times
            [0,\Tf]}^+ u_2 (x,t)\; .$$
    A similar property holds if $(x,t)\in \Omega_1 \times (0,\Tf)$ or $(x,t)\in \Omega_2 \times (0,\Tf)$. 
    \item The supremum of a countable number of subsolutions is a subsolution: this is a consequence
    of Theorem~\ref{FL-stab} or Theorem~\ref{JV-stab}. Indeed, if $(u_n)_n$ is a sequence of \usc
            subsolutions\footnote{We may assume that they are \usc by replacing $u_n$ by $u_n^*$ if
            necessary.} then $v_n:=\max_{k\leq n} u_k$ is a sequence  of subsolutions by Point~1.
            Then, it is a simple exercice to show that since $(v_n)$ is non-decreasing, 
            setting $u:=\sup_{n\geq0} v_n$ yields
            $u^*=\limsup^*_{n} v_n$, were we recall that the relaxed limsup is given by
            $$\limsup\nolimits_n^* v_n=\limsup_{n\to\infty\atop (y,s)\to(x,t)} v_n(y,s)\;.$$
    
    \item The supremum of any set of subsolutions (possibly not countable) is a subsolution: indeed,
        for each $(x,t) \in \R^N\times [0,\Tf]$, there exists a sequence $(u_n)_n=(u_n^{(x,t)})_n$
            of subsolutions, whether \FLSub or \JVSub, such that $u^*(x,t)=\limsup^*_{n} u_n(x,t)$. 
            For this specific sequence $(u_n^{(x,t)})_n$, if we set 
            $$\tilde u(y,s):= \limsup\nolimits_{n}^* u_n^{(x,t)}(y,s)\;,\quad 
            (y,s)\in\R^N\times[0,\Tf]\;,$$
            the following holds: $(i)$ $\tilde u$ is a subsolution by point
            2.; $(ii)$ $\tilde u \leq u^*$ everywhere and $u^*(x,t)=\tilde u (x,t)$; 
           $(iii)$ by a similar property as the one used in point 1., 
           $$D_{\Omegb_i\times [0,\Tf]}^+ u^* (x,t)\subset 
            D_{\Omegb_i \times [0,\Tf]}^+ \tilde u (x,t)\;,\quad
            \text{for any }(x,t) \in  \Omegb_i \times [0,\Tf]\;.$$ 
            Hence the subsolution property of $\tilde u$ is
            automatically transfered to $u$.  
    \end{enumerate}
    As a by-product of the above arguments, $u^*$ is a subsolution which satisfies $\uu\leq u^* \leq \ou
    $, hence $u\geq u^*$, which means that $u=u^*$, \ie $u$ is \usc.

    \medskip

    \noindent\textbf{(b)} \emph{The \JVS case ---}  Proving that 
    the maximal subsolution $u$ is also a supersolution is done via a ``bump function'' argument.
    The reader can easily check that this argument applies without any difficulty in the case of
    $(ii)$, \ie for \JVS, when the junction condition is of Kirchhoff type. 

    The reason is the following: if $u_*$ is not a supersolution, this is
    of course because of the junction condition. Indeed, elsewhere classical Ishii's arguments apply.  
    This means that there exist $(x,t) \in \H \times (0,\Tf)$ and a test-function $\psi
    =(\psi_1,\psi_2) \in \PC1$ such that $u_*-\psi$ has a strict local minimum point at $(x,t)$ and
    $$ \max (\psi_t+H_1(x,t,u_*,D_x\psi_1), \psi_t+H_2(x,t,u_*,D_x\psi_2), G(\cdots))<0\; ,$$
    where all functions are evaluated at $(x,t)$ and $G$ at $x,t,u_*(x,t),\psi_t(x,t),D_\H
    \psi(x,t)$, $\frac{\partial \psi_1}{\partial n_1}(x,t), \frac{\partial \psi_2}{\partial
    n_2}(x,t)$. We may also assume that $u_*(x,t)=\psi(x,t)$.

     The first consequence of this property is that $u_*(x,t)< \ou(x,t)$; otherwise, $u_*$ would
     satisfy the supersolution requirement at $(x,t)$ by the same argument as Point~3. above since we
     would have $u_*\leq \ou$ and $u_*(x,t)= \ou(x,t)$, hence, for $i=1,2$
     $$D_{\Omegb_i\times [0,\Tf]}^- u_* (x,t) \subset
    D_{\Omegb_i\times [0,\Tf]}^+ \ou (x,t) \; .$$

    The second consequence is that that $\psi$ is a \JVSub in a neighborhood of $(x,t)$ since in
    particular $$\psi_t+H_1(x,t,u_*,D_x\psi_1)<0\quad\hbox{and}\quad
    \psi_t+H_2(x,t,u_*,D_x\psi_2)<0\; ,$$ the fact that $G<0$ giving the subsolution property on
    $\H\times(0,\Tf)$. Hence, using also the strict minimum point property, there exists a small
    neighborhood $\VV$ of $(x,t)$ such that, for $\e>0$ small enough, $\psi+\e$ is a \JVSub in $\VV$
    and $\psi+\e < u$ in a neighborhood of $\partial \VV$. If we set $\ue:=\max(u,\psi+\e)$ in $\VV$
    and $\ue=u$ on the complementary of $\VV$, then $\ue$ is a \JVSub  and, for $\e$
    small enough, we have $\uu \leq \ue \leq \ou$. 
    
    To get a contradiction, we have to show that there exists at least one point $(y,s)$ where $
    \ue(y,s)>u(y,s)$ since this will be a contradiction with the definition of $u$. But, by
    definition of $u_*$, there exists a sequence $(y_k,s_k)_k$ 
    converging to $(x,t)$ such that $u(y_k,s_k)\to u_*(x,t)=\psi(x,t)$. Hence
    $$ u(y_k,s_k)-(\psi+\e)(y_k,s_k)\to -\e <0\; ,$$
    and therefore $u(y_k,s_k)<(\psi+\e)(y_k,s_k)$ if $k$ is large enough. Finally
    $\ue(y_k,s_k)=(\psi+\e)(y_k,s_k)>u(y_k,s_k)$, a contradiction which implies that $u_*$ is a
    supersolution. 

    Finally, since subsolutions are regular when \GJC is of Kirchhoff type---\cf
    Proposition~\ref{prop:JVreg}---, Theorem~\ref{LS-CR-GC} shows that $u\leq u_*$ in $\R^N\times
    [0,\Tf]$, proving the continuity of $u$ and showing that $u$ is the unique solution of
    \eqref{pb:half-space} with the \GJC junction condition.

    \medskip

    \noindent\textbf{(c)} \emph{The \FL case ---} On the contrary, in case $(i)$ of
    the argument by contradiction leads to
    $$ \max (\psi_t+H_1^+(x,t,u_*,D_x\psi_1), \psi_t+H_2^-(x,t,u_*,D_x\psi_2), G(\cdots))<0\; ,$$
    which does not imply the same $H_1,H_2$ inequalities. In other words, it is not clear that $\psi$
    is a subsolution in a neighborhood of $(x,t)$ and therefore we cannot apply the ``bump
    function'' argument directly.

    To turn around this difficulty we use Proposition~\ref{prop:equivFL-LS} back and forth, being a
    little bit careful with the regularity. Since $u=u_\mathrm{FL}$ is a \FLSub, it is regular on
    $\H \times (0,\Tf)$ and therefore it is a \JVSub for the \FL condition. The ``bump function''
    argument, used exactly in the same way as above in the \JVS formulation, shows that $u_*$ is
    also a \JVSup for the \FL condition. Indeed, this argument consists in building a \JVSub which
    is strictly larger that $u$ at some point and the construction {\em preserves the regularity of
    subsolutions}. Hence this \JVSub is also a \FLSub by Proposition~\ref{prop:equivFL-LS}.

    By the same argument as above, this shows that $u$ is a continuous \JVS of \eqref{pb:half-space}
    with the \FL junction condition (by Theorem~\ref{LS-CR-GC}), hence a continuous \FLS by applying
    again Proposition~\ref{prop:equivFL-LS}.
\end{proof}

\section{Where the equivalence helps to pass to the limit}
\label{sec:using-equiv}

The aim of this section is to describe an example where using at the same time several notions of solutions
helps to pass to the limit in an asymptotic problem.

To fix ideas and to simplify matters, we consider an example which looks like the one we study in Part~\ref{part:codim1}
but with two ``close'' hyperplanes instead of one. The reader may have in mind a control problem where we only allow
regular strategies on one of the hyperplanes and all the strategies, including singular ones, on the other one. But, in the
sequel, we consider general flux~limiter on each hyperplane.

In terms of pdes, for $0<\e \ll 1$, we consider the solution $\ue \in C(\R^N \times [0,\Tf])$ of 
$$ u_t + H_2(x,t,\ue,D_x \ue)= 0 \quad \hbox{in }\{x_N<-\e\} \times (0,\Tf) \; ,$$
$$ u_t + H_0(x,t,\ue,D_x \ue)= 0 \quad \hbox{in }\{-\e<x_N<\e\}\times (0,\Tf) \; ,$$
$$ u_t + H_1(x,t,\ue,D_x \ue)= 0 \quad \hbox{in }\{x_N>\e\}\times (0,\Tf)\; ,$$
with a flux~limiter $G_2$ on the hyperplane $\{x_N=-\e\}$ and $G_1$ on the hyperplane $\{x_N=\e\}$. Taking into account
the results and methods of Chapter~\ref{chap:FLSP}, both the pde and control ones, using also the equivalence results of Chapter~\ref{sect:equiv.sols}, the associated value function is the unique \FLS or \JVS solution of the problem with the flux~limiters
$G_1$ and $G_2$. We point out that most of the arguments being local, in particular the \LCR, taking into account these two hyperplanes case is not more difficult than to consider only one hyperplane.

Our result is the following
\begin{proposition}\label{prop:pass-using-equiv} Assume that $H_0, H_1,H_2$ satisfy \HBAHJp and \NCHJ and $G_1,G_2$ satisfy
\GAGFL. Then $\ue$ converges locally uniformly to the unique solution $u$ of \HJgen-\FL with the flux~limiter
$G:=\max(G_1,G_2,(H_0)_T)$ where 
$$ (H_0)_T(x,t,r,p')= \min_{s\in\R} H_0 (x,t,r,p'+s e_N)\; .$$
\end{proposition}

\begin{proof} We first recall that, by Proposition~\ref{prop:equivFL-LS}, $\ue$ is either a \FLS or \JVS solution of the associated
flux~limiter problem and the natural idea is to use the half-relaxed limits method for the \JVS formulation which has the most general
and flexible stability result. If $\ou =\limssup \ue$ and $\uu =\limiinf \ue$, we easily obtain the $H_2$-inequality in
$\Omega_2\times (0,\Tf)$, the $H_1$-inequality in $\Omega_1\times (0,\Tf)$ and, dropping the arguments in the Hamiltonians for the sake of notational simplicity
$$ \min(\ou_t+H_0,\ou_t+H_1,\ou_t+H_2,\ou_t+G_1,\ou_t+G_2)\leq 0\; ,$$
$$ \max(\uu_t+H_0,\uu_t+H_1,\uu_t+H_2,\uu_t+G_1,\uu_t+G_2)\geq 0\; .$$
But none of these inequalities is satisfactory since they are very far from the result we wish to prove. In particular, using the normal controllability,
the first one implies
$$ \min(\ou_t+G_1,\ou_t+G_2)\leq 0\; ,$$
while we need (at least) a $\max$.

To improve these results, we first consider the case of $\ou$. We suppose that $(\xb,\tb) \in \H \times (0,\Tf)$ is a strict local maximum
point of $u-\varphi$ where $\varphi \in \PC1$. We are going to consider, for $C>0$, the following functions
$$ (x,t) \mapsto \ue (x,t)- \varphi ((x',x_N+\e),t) - C|x_N+\e|\; ,$$
$$ (x,t) \mapsto \ue (x,t)- \varphi ((x',x_N),t) - C|x_N|\; ,$$
$$ (x,t) \mapsto \ue (x,t)- \varphi ((x',x_N-\e),t) - C|x_N-\e|\; .$$
For each of these functions, there exists a subsequence $(x_{\e'},t_{\e'})$ of maximum points converging to $(\xb,\tb)$ such that $u_{\e'}
(x_{\e'},t_{\e'})\to \ou(\xb,\tb)$. Now we examine the possible viscosity inequalities at $(x_{\e'},t_{\e'})$ and to do so, we use that $u_{\e'}$ 
is a \FLS subsolution on the hyperplanes $\{x_N=-\e\}$ and $\{x_N=\e\}$ but also on the hyperplane $\{x_N=0\}$ with the flux~limiter
$(H_0)_T$ by Proposition~\ref{prop:CSasFLS}. 

By the normal controllability, if we choose $C$ large enough, it is clear that, for the first function, $x_{\e'}$ is necessarily on
$\{x_N=-\e\}$ and the $G_2$-inequality holds, while for the second one, $x_{\e'}$ is necessarily on $\{x_N=0\}$ and the $(H_0)_T$-inequality holds, and the third one leads to the 
$G_1$-inequality. Hence
$$ \max(\ou_t+G_1,\ou_t+G_2, \ou_t +(H_0)_T)\leq 0\quad  \hbox{on  } \H \times (0,\Tf).$$

The next step consists in proving that $\ou$ is regular on $\H \times (0,\Tf)$: indeed this information is crucial, on one hand, to show
that the $H_2^-$ and $H_1^+$ inequalities hold by using Proposition~\ref{sub-up-to-b} and, on the other hand, to be able to use 
Theorem~\ref{LS-CR-GC} later to get the full result.

 If this is not the case, there exists $(\xb,\tb)\in \H \times (0,\Tf)$ such that $$ \hbox{either}\quad\ou (\xb,\tb)> \limsup_{\substack{(y,s)\to (\xb,\tb)\\(y,s)\in \Omega_1 \times (0,\Tf)}} \ou(y,s)\quad \hbox{or}\quad \ou (\xb,\tb)\geq  \limsup_{\substack{(y,s)\to (\xb,\tb)\\(y,s)\in \Omega_2 \times (0,\Tf)}} \ou(y,s)\; .$$
 We assume, for example, that
$\displaystyle \ou (\xb,\tb)\geq  \limsup_{\substack{(y,s)\to (\xb,\tb)\\(y,s)\in \Omega_1 \times (0,\Tf)}} \ou(y,s)+\eta$ for some $\eta>0$, the other case being treated similarly.

For $0<\beta\ll 1$ and some large $C>0$, we introduce the function 
$$\psi_{\beta,C}(y,s) = \ou(x,t) -\frac{|x-\xb|^2}{\beta}-\frac{|t-\tb|^2}{\beta}+Cx_N\; .$$
We first consider this function in $\Omegb_2 \times (0,\Tf)$: if $\beta$ is small enough, $\psi_{\beta,C}$ achieves its maximum  at some point $(x_\beta,t_\beta)$ close to $(\xb,\tb)$ and, if $C$ is chosen large enough compared to $\beta^{-1}$, we even have $(x_\beta,t_\beta) \in \H \times (0,\Tf)$ by the normal controllability assumption because the $H_2$ inequality cannot hold. And, by subtracting a term like $|x-x_\beta|^2+|t-t_\beta|^2$, we can even assume that it is a strict 
local maximum point in $\Omegb_2 \times (0,\Tf)$.

On the other hand, if $(y,s) \in \Omega_1 \times (0,\Tf)$ is close enough to $(x_\beta,t_\beta)$, hence to $
(\xb,\tb)$, we have
\begin{align}
\psi_{\beta,C}(y,s)& =\ou(y,s) -\frac{|y-\xb|^2}{\beta}-\frac{|s-\tb|^2}{\beta}+Cy_N \\
&\leq \ou(\xb,\tb)-\frac{\eta}2 + Cy_N \\
& < \ou(\xb,\tb) = \psi_{\beta,C}(\xb,\tb)\quad \hbox{if  }Cy_N< \eta/2\; ,
\end{align}
and therefore $\psi_{\beta,C}(y,s) < \psi_{\beta,C}(x_\beta,t_\beta)$.
Hence $(x_\beta,t_\beta)$ is a strict  local maximum point in $\R^N \times (0,\Tf)$

Now, for fixed $\beta$ and $C$, we consider the functions $\psi_\e (y,s) := \ue(x,t) -\frac{|x-\xb|^2}{\beta}-\frac{|t-\tb|^2}{\beta}+Cx_N$: there exists a subsequence $(x_{\e'},t_{\e'})$ of 
maximum points of $\psi_{\e'}$ converging to $(\xb,\tb)$ such that $u_{\e'}(x_{\e'},t_{\e'})\to \ou(\xb,\tb)$. If $C$ is chosen large enough compared to $\beta^{-1}$, a case-by-case study, using the
\FLS formulation and the normal controllability, leads to a contradiction since no subsolution inequality can hold at $(x_{\e'},t_{\e'})$ if $\e'$
is small enough\footnote{in order to have $C(x_{\e'})_N< \eta/2$.}, wherever 
$x_{\e'}$ is because of the coercivity of the $H_i$'s or the fact that the $H_i^+$ are positive thanks to the $-Ce_N$-term in the derivative of 
the test-function. This shows that we cannot have $\ou (\xb,\tb)> \limsup_{\substack{(y,s)\to (\xb,\tb)\\(y,s)\in \Omega_1 \times (0,\Tf)}} 
\ou(y,s)$. The proof showing that we cannot have $\ou (\xb,\tb)> \limsup_{\substack{(y,s)\to (\xb,\tb)\\(y,s)\in \Omega_2 \times (0,\Tf)}} 
\ou(y,s)$ can be done analogously and the proof of the regularity is complete.

As we explain it above, this implies that $\ou$ is a \JVS-Sub with the flux~limiter $\max(G_1,G_2,(H_0)_T)$ and the proof for $\ou$ is
complete.

Now we turn to the supersolution properties for $\uu$. We have to prove that $\uu$ satisfies
$$ \max(\uu_t+H_1,\uu_t+H_2,\uu_t+G)\geq 0\; .$$
As above, we suppose that $(\xb,\tb) \in \H \times (0,\Tf)$ is a strict local minimum point
of $u-\varphi$ where $\varphi =(\varphi_1,\varphi_2) \in \PC1$. We argue by contradiction assuming that
$$ \max(\varphi_t+H_1,\varphi_t+H_2,\varphi_t+G)=-\eta< 0\; .$$
We consider the function
$$(x,t) \mapsto \ue(x,t)-\varphi(x,t)-\e \chi\left(\frac{x_N}\e\right)\; ,$$
where $\chi:\R\to \R$ is defined in the following way
$$ \chi(y)=\begin{cases}
-\delta_2 & \hbox{if }y\leq -1\; , \\
\delta_2 y & \hbox{if }-1 \leq y \leq 0\; , \\
\delta_1 y & \hbox{if }0 \leq y \leq 1\; ,\\
\delta_1 & \hbox{if }y \geq 1\; ,
\end{cases}
$$
where $\delta_1, \delta_2$ will be chosen later on.

As above, there exists a subsequence $(x_{\e'},t_{\e'})$ of minimum points of this function converging to $(\xb,\tb)$ such that $u_{\e'}(x_{\e'},t_{\e'})\to \uu(\xb,\tb)$. In order to examine the possible viscosity inequalities at $(x_{\e'},t_{\e'})$, we set for $F=H_0,H_1,H_2,G_1,G_2$
$$ \tilde F(\tau):=\varphi_t(\xb,\tb)+ F(\xb,\tb,\uu(\xb,\tb), D_{x'}\varphi(\xb,\tb) +\tau e_N)\; .$$

By assumption, we have
$$\tilde H_1(\dfrac{\partial \varphi_1}{\partial x_N})\leq -\eta <0\quad,\quad  \tilde H_2(\dfrac{\partial \varphi_2}{\partial x_N})\leq -\eta <0\; ,$$
and the constants (in $\tau$) $\tilde G_1, \tilde G_2,  (\tilde H_0)_T$ are also less than $-\eta<0$ .

Now we examine the different possibilities
\begin{enumerate}
\item[(a)] $(x_{\e'})_N <-\e'$: then, by the continuity of $H_2$ and the fact that $\varphi_2$ is $C^1$, we should have $\tilde H_2(\dfrac{\partial \varphi_2}{\partial x_N}) \geq o(1)$ but clearly this inequality cannot hold for $\e'$ small enough.
\item [(b)] $(x_{\e'})_N =-\e'$: using again the continuity of the Hamiltonians and of $\varphi_2$, the \FLSup inequality should read
$$ \max ((\tilde H_2)^- (\dfrac{\partial \varphi_2}{\partial x_N}), (\tilde H_0)^+ (\dfrac{\partial \varphi_2}{\partial x_N}+\delta_2), \tilde G_2)\geq o(1)\; .$$
Here $\tilde G_2<0$, $(\tilde H_2)^- \leq \tilde H_2<0$ and we choose $\delta_2$ in order that $\dfrac{\partial \varphi_2}{\partial x_N}+\delta_2$
is a minimum point of $\tilde H_0$, hence
$$ (\tilde H_0)^+ (\dfrac{\partial \varphi_2}{\partial x_N}+\delta_2)=(\tilde H_0)_T<0\; .$$
With this choice of $\delta_2$, this second case turns out to be impossible.
\item [(c)] $-\e'<(x_{\e'})_N <0$: with our choice of $\delta_2$, the $H_0$-inequality cannot hold for $\e'$ small enough and this
case cannot happen neither.
\item [(d)] $(x_{\e'})_N =0$: we choose $\delta_1$ such that $\dfrac{\partial \varphi_1}{\partial x_N}+\delta_1$
is a minimum point of $\tilde H_0$, hence
$$ (\tilde H_0)^+ (\dfrac{\partial \varphi_1}{\partial x_N}+\delta_1)=(\tilde H_0)_T<0\; .$$
With this choice, the \FLSup inequality which reads
$$ \max((\tilde H_0)^- (\dfrac{\partial \varphi_2}{\partial x_N}+\delta_2), (\tilde H_0)^+ (\dfrac{\partial \varphi_1}{\partial x_N}+\delta_1), (\tilde H_0)_T)\geq 0\; ,$$
cannot hold for $\e'$ small enough.
\item [(e)] $0<(x_{\e'})_N <\e'$: this case is the exact symmetric of (c),
\item [(f)] $(x_{\e'})_N =\e'$: this case is the exact symmetric of (b),
\item [(g)] $(x_{\e'})_N >\e'$: this case is the exact symmetric of (a),
\end{enumerate}  
and in the three cases (e), (f), (g), we also conclude that the \FLSup inequality cannot hold for $\e'$ small enough. Hence, wherever $x_{\e'}$ is, the \FLSup inequality cannot hold. This gives a contradiction and prove that
$$ \max(\varphi_t+H_1,\varphi_t+H_2,\varphi_t+G)\geq 0\; .$$
Hence $\uu$ is a \JVSup with the flux~limiter $G$.

The classical arguments of the half-relaxed limits method to gether with the comparison result for \JVS solutions (Theorem~\ref{LS-CR-GC}), taking into account that $\ou$ is a regular subsolution, implies $\ou \leq \uu$ in $\R^N\times [0,\Tf]$. Hence $u=\ou=\uu$ is continuous and the unique \JVS with the flux~limiter $G$. And the local uniform convergence of $\ue$ to $u$ follows by classical arguments.
\end{proof}

\chapter{Applications and Emblematic Examples}
\label{Embl-Exemple}
\abstract{This chapter gives an overview of the results of Part~\ref{part:codim1} in the context of
Hamilton-Jacobi equations corresponding to 1D scalar conservations laws with a discontinuous flux.
It is especially intended for the (partial) reader who wishes to get an idea of what can be done in
this context ... without reading the totality of this book!} 

According to the aim of this chapter given in the above abstract, the reader will find here some
redundancy concerning definitions, results, ideas... with respect to the previous sections. This is,
of course, unavoidable taking into account the objective of this chapter. On the other hand, we try
to keep it as simple as possible and refer to those previous sections for more precise results and
proofs.

\section{HJ analogue of a discontinuous 1D-scalar conservation law}
\index{Scalar conservation laws}

The starting point here is the problem
\begin{equation}\label{HJ-d1}
u_t + H(x,u_x)=0\quad\hbox{in }\R \times (0,\Tf)\; ,
\end{equation}
where Hamiltonian $H$ is given by
$$
H(x,p)=\begin{cases}
H_1(p) & \hbox{if $x>0$\;,}\\
H_2(p) & \hbox{if $x<0$\;.}
\end{cases}
$$
Equation~\eqref{HJ-d1} has to be complemented by an initial datum
\begin{equation}\label{HJ-d1-id}
u(x,0)=\u0 (x) \quad\hbox{in }\R \; ,
\end{equation}
where $\u0$ is assumed to be bounded and continuous in $\R$.

In this definition of $H$, $H_1,H_2$ are continuous functions which are coercive, \ie
$$ H_1(p), H_2(p) \to +\infty\quad \hbox{as  }|p| \to +\infty\; ,$$
and we consider two main cases: the ``Lipschitz case'' where both Hamiltonians are supposed to be
Lipschitz continuous in $\R$ and the ``convex case'' where they are supposed to be convex, but not
necessarily Lipschitz continuous, even if this case is not completely covered by the results of
Part~\ref{part:codim1} \footnote{but we trust the reader to be able to fill up the gaps!}.

In the ``Lipschitz case'', a natural sub-case is the one when the $H_i$ $(i=1,2$) are {\em
quasi-convex} \footnote{We refer the reader to Section~\ref{sect:quasi.convexity} for a short
presentation of the notion of quasi-convexity and for the related properties we use throughout this
book.}, \ie built as the maximum of an increasing and a decreasing function.  For this reason, we
write 
$$ H_1=\max(H_1^+,H_1^-)\quad\hbox{and}\quad H_2=\max(H_2^+,H_2^-)\;,$$
where $H_1^+, H_2^+$ are the decreasing parts of $H_1,H_2$ respectively and $H_1^-, H_2^-$ their
increasing parts. Using these notations for the monotone Hamiltonians may seem strange but the
reader has to keep in mind that {\em characteristics}---or dynamics in terms of control problems---play 
a key role in these problems. A way to better understand this remark is to consider the convex
control case where the dynamic is given by $b_1$ and where $H_1(p)=\sup_{b_1\in B}\{-b_1\cdot
p-l_1\}$; in this case
$$H_1^+(p)=\sup_{b_1\geq 0}\{-b_1\cdot p-l_1\}\; ,$$
which means that we keep in $H_1^+$ only the dynamics pointing toward the positive direction,
explaining the ``$+$''.

Of course, the case of quasi-concave (or concave) Hamiltonians can be treated in the same way since,
by changing $u$ in $-u$, we change $H_1(p),H_2(p)$ in $-H_1(-p),-H_2(-p)$, the latter being
quasi-convex (or convex) if the former are quasi-concave (or concave).

\subsection{On the condition at the interface}

Of course, the first key question is: what kind of condition has to be imposed at $x=0$ where the
Hamiltonian $H$ is discontinuous? 

Viscosity solutions theory provides a default answer which is the notion of
{\em Classical Viscosity Solutions} (\CVS in short) introduced by by Ishii \cite{I1}. These conditions are
$$\min(u_t + H_1(u_x), u_t + H_2(u_x))\leq 0\quad\text{on}\quad\{0\} \times (0,\Tf)\; ,$$
$$\max(u_t + H_1(u_x), u_t + H_2(u_x))\geq 0\quad\text{on}\quad\{0\} \times (0,\Tf)\; .$$
These sub and supersolutions properties have to be tested with test-functions which
are $C^1$ in $\R \times (0,\Tf)$. We do not detail them here, referring the
reader to Section~\ref{sect:stab} for more informations.

Unfortunately (or fortunately?), this classical notion of solution has two main defects: on one
hand, \CVS are not unique in general and, on the other hand, in concrete applications,
the modelling may lead to other ``transfer conditions'' at $x=0$. To be
convinced by this claim, it suffices to look at the well-known Kirchhoff condition
\begin{equation}\label{kir-oned}
       - u_x (0^+,t) + u_x (0^-,t) = 0\quad\text{on}\quad\{0\} \times (0,\Tf)\;,
\end{equation}
for which testing with $C^1(\R\times (0,\Tf))$-test-functions is of course meaningless, this
condition being automatically satisfied for smooth test-functions. Clearly we need a larger set of
testing possibilities in order to take into account in a right way such conditions and to have a
hope for a well-posed problem (in particular, comparison and uniqueness results).

\subsection{Network viscosity solutions}

For the Kirchhoff condition but also for more general conditions like
\begin{equation}\label{JC-oned}
    G(u_t,- u_x(0^+,t) , u_x(0^-,t)) = 0\quad\text{on}\quad\{0\} \times (0,\Tf)\;,
\end{equation}
where $G(a,b,c)$ is a continuous function which is increasing in $a,b$ and $c$\footnote{Precise
assumptions will be given later on.}, one has to use a notion of {\em ``Network viscosity
solution''} based on testing the viscosity properties with continuous, ``piecewise
$C^1$''-test-functions, denoted by $\mathrm{PC}^1$. 
More precisely $\phi \in C(\R \times (0,\Tf))$ is a suitable test-function
if there exists two functions $\phi_1, \phi_2$ which are $C^1$ in $\R \times (0,\Tf)$ such that
$$
    \phi(x,t)=\begin{cases}
    \phi_1(x,t) & \hbox{if $x>0$,}\\
    \phi_2(x,t) & \hbox{if $x<0$,}
    \end{cases}
$$
with $\phi_1(0,t)=\phi_2(0,t)$ for any $t \in (0,\Tf)$. In order to define ``Network viscosity
solutions'' in the viscosity properties at a point $(0,t)$ we use the derivatives of $\phi_1$ for
the $H_1$ and $u_x(0^+,t)$-term, and the derivatives of $\phi_2$ for the $H_2$ and
$u_x(0^-,t)$-term. Notice that both time-derivatives $(\phi_1)_t$ and $(\phi_2)_t$ coincide on $x=0$.

However, the notion of ``Network viscosity solution'' with condition at $x=0$ can be used in at
least two slightly different ways. 

\medskip

\noindent\textbf{(a)} \emph{The ``flux-limited''} notion of solutions of Imbert-Monneau---\FL in
short---which is valid in the quasi-convex case, \ie in a more general framework than the ``convex
case''.  A general flux-limited condition at $x=0$ takes the form
\begin{equation}\label{FL1-oned}
u_t + A = 0\quad\text{on}\quad\{0\} \times (0,\Tf)\;,
\end{equation}
where $A$ is a real constant called the \emph{flux~limiter}. 
In terms of viscosity inequalities at $x=0$, the condition reads 
\footnote{with the above mentioned conventions}
$$\begin{cases} 
    \max\big(u_t + H_1^+ (u_x), u_t + H_2^-(u_x), u_t +A\big) \leq 0 \quad\text{on}\quad\{0\} \times (0,\Tf)\;,\\
    \max\big(u_t + H_1^+ (u_x), u_t + H_2^-(u_x), u_t +A\big) \geq 0 \quad\text{on}\quad\{0\} \times (0,\Tf)\;.
\end{cases}$$
Why using only $H_1^+$ and $H_2^-$? As we already explain it above, the most (vague but) convincing
answer is probably through the {\em characteristics}, or dynamics in the control viewpoint: we use
inequalities which test characteristics entering each domain, \ie $[0, +\infty)$ for $H_1$ and
$(-\infty,0]$ for $H_2$. We respectively call these conditions the sub and supersolution \FL
conditions. In the definition above, we can replace the $u_t +A$-term by a more general
$\chi(u_t)$-term where the function $\tau \mapsto \chi(\tau)$ is strictly increasing. 

\medskip

\noindent\textbf{(b)} \emph{The notion of ``junction viscosity solutions'' \JVS} which is closer to
the Ishii formulation since the inequalities for $x=0$ read 
$$\begin{cases}
    \min\,\big(u_t + H_1(u_x), u_t + H_2(u_x),G(u_t,- u_x(0^+,t) , u_x(0^-,t))\big)\leq 0\;,\\
    \max\big(u_t + H_1(u_x), u_t + H_2(u_x),G(u_t,- u_x(0^+,t) , u_x(0^-,t))\big)\geq 0\;.
\end{cases}$$

We refer to Section~\ref{sec:def-FLS} and Section~\ref{sec:def-JVS} for a more precise definition of
\FLS and \JVS. Let us point out three key differences between these notions of solutions:
\begin{enumerate}
    \item[$(i)$] the notion of \JVS can take into account both general Kirchhoff type conditions
        like Equation~\ref{JC-oned} but also flux-limited conditions by assuming that $G(a,b,c)=a+A$
        in Equation~\ref{JC-oned}. On the contrary, the notion of \FLS is restricted to flux-limited
        conditions;
    \item[$(ii)$] while the notion of \FL solutions requires the Hamiltonians to be quasi-convex,
        the \JVS notion is valid for any continuous Hamiltonians;
    \item[$(iii)$] while the \FLS one uses a pair of ``max-max'' inequalities, the \JVS one uses a
        classical ``min-max'' ones.
\end{enumerate}

These three differences seem to indicate that the notion of \JVS is more general and more adapted
than the \FLS one but the notion of \FLS is more natural to address control problems (see
Section~\ref{sect:control.NA}) and therefore is useful in order to obtain explicit formulas \`a la
Oleinik-Lax.

\subsection{Main results}

We now expose briefly the main results and connections between the different notions of solutions,
\CVS, \FLS, \JVS.

The convergence of the vanishing viscosity method is a natural entrance door since, in the classical
framework, it selects the ``right solution''.

\medskip

\noindent\textbf{(a)} \emph{On the vanishing viscosity method and the Kirchhoff junction solution ---} 
In the absence of discontinuities, passing to the limit in this method simply relies on the
stability properties of classical viscosity solutions. However here, in presence of discontinuous
Hamiltonians, we need to identify the right condition on the interface. The result is the
\begin{theorem}
    \label{thm:viscous2-oned}
    \emph{--- Convergence of the vanishing viscosity method.}\smsp
    For each $\eps>0$, let $u^\eps$ be a continuous viscosity solution of
    \begin{equation}\label{pb:viscous-oned}
       u^\eps_t -\eps u_{xx}^\eps + H (x,u_x^\eps) = 0\quad\text{in}\quad\R \times (0,\Tf)\;,
    \end{equation}
    associated with the initial data
    \begin{equation}\label{id:pb:viscousid-oned}
       u^\eps(x,0) = u_ 0(x) \quad\text{in}\quad\R \;.
    \end{equation}
    If the $u^\eps$ are uniformly bounded in $\R \times [0,\Tf)$ and $C^1$ in $x$ in a neighborhood
    of $x=0$ for $t>0$, then, as $\eps\to0$, the sequence $(u^\eps)_\eps$ converges locally
    uniformly to the unique \JVS solution of the Kirchhoff problem
    \eqref{HJ-d1}-\eqref{HJ-d1-id}-\eqref{kir-oned}.
\end{theorem}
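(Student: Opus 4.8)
The plan is to use the half-relaxed limit method, exactly as in the proof of Theorem~\ref{pro:viscous} but now in the one-dimensional setting and without passing through a flux-limiter reformulation. First I would establish the two preliminary facts that make the method run. Since the $u^\eps$ are assumed uniformly bounded in $\R\times[0,T)$, the half-relaxed limits
$$\ou(x,t):=\limssup u^\eps(x,t)\quad\text{and}\quad\uu(x,t):=\limiinf u^\eps(x,t)$$
are well-defined, locally bounded, and satisfy $\uu\leq\ou$ on $\R\times(0,T)$ by construction, with $\ou$ \usc and $\uu$ \lsc. Away from the line $\{x=0\}$, the equation \eqref{pb:viscous-oned} is a standard uniformly parabolic approximation of the continuous Hamilton-Jacobi equation $u_t+H_i(u_x)=0$, so the classical half-relaxed limit argument (Theorem~\ref{hrl}) shows that $\ou$ is an Ishii subsolution and $\uu$ an Ishii supersolution of $u_t+H_i(u_x)=0$ in each of $\Omega_1\times(0,T)$ and $\Omega_2\times(0,T)$. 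The initial condition is handled as usual: $u^\eps(\cdot,0)=u_0$ forces $\ou(x,0)\leq u_0(x)\leq\uu(x,0)$ in $\R$ since $u_0$ is continuous.

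The key step is the stability of the Kirchhoff junction condition at $x=0$, that is, the one-dimensional analogue of Lemma~\ref{StabK}: I would show that $\ou$ is a ``junction'' subsolution and $\uu$ a ``junction'' supersolution of the Kirchhoff condition \eqref{kir-oned}. This is where the hypothesis that $u^\eps$ is $C^1$ in $x$ near $\{x=0\}$ for $t>0$ is essential, since the Kirchhoff condition must be tested against piecewise-$C^1$ functions $\phi=(\phi_1,\phi_2)\in\PC1$ (here $\PC1$ meaning continuous, $C^1$ on each side of $0$). Following the proof of Lemma~\ref{StabK}: if $(\xb,\tb)$ with $\xb=0$ is a strict local maximum of $\ou-\phi$, standard arguments give local maximum points $(x_\eps,t_\eps)$ of $u^\eps-\phi$ converging to $(0,\tb)$. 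If along a subsequence $x_\eps\neq0$, we pass to the limit in the classical viscosity inequality for $u^\eps$ (the $\eps u^\eps_{xx}$-term being harmless since $\phi$ is smooth on that side) and recover one of the $H_i$-subsolution inequalities. If $x_\eps=0$ for all small $\eps$, then we look at the Kirchhoff quantity: either $-\partial_x\phi_1(0^+,\tb)+\partial_x\phi_2(0^-,\tb)\leq0$ and the subsolution Kirchhoff inequality holds directly, or, using the smoothness of $u^\eps$ at $x_\eps=0$, the maximum point property forces $\partial_x\phi_2(0^-,t_\eps)\leq\partial_xu^\eps(0,t_\eps)\leq\partial_x\phi_1(0^+,t_\eps)$, hence $-\partial_x\phi_1(0^+,t_\eps)+\partial_x\phi_2(0^-,t_\eps)\leq0$, and letting $\eps\to0$ by continuity of the partial derivatives of $\phi$ in $t$ gives a contradiction. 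The supersolution case for $\uu$ is symmetric.

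Once $\ou$ and $\uu$ are, respectively, a ``junction'' sub- and supersolution of the Kirchhoff problem \eqref{HJ-d1}-\eqref{kir-oned} with $\ou(\cdot,0)\leq u_0\leq\uu(\cdot,0)$, I would invoke the comparison result for junction viscosity solutions (Theorem~\ref{LS-CR}, applied via the localization reduction of Section~\ref{sect:htc}, noting that the coercive one-dimensional Hamiltonians $H_1,H_2$ satisfy the required \TC, \NC and \Mong conditions with $y=t$, $z=x$, and that the Lipschitz-in-$z$ hypothesis on the supersolution needed in Theorem~\ref{LS-CR} is supplied by the regularization argument there). This yields $\ou\leq\uu$ on $\R\times(0,T)$. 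Combined with $\uu\leq\ou$, we get $\ou=\uu=:u$; since $\ou$ is \usc and $\uu$ is \lsc, $u$ is continuous, it is the unique ``junction'' solution of \eqref{HJ-d1}-\eqref{HJ-d1-id}-\eqref{kir-oned}, and Lemma~\ref{ubegalub} (applied on compact subsets of $\R\times(0,T)$) upgrades the convergence $u^\eps\to u$ to local uniform convergence. The main obstacle is really the junction-stability step above: the standard half-relaxed limit machinery does not automatically handle the non-smooth test-functions at $x=0$, and one genuinely needs the $C^1$-in-$x$ regularity of the viscous approximations to close the argument; everything else is a routine combination of results already established in this part.
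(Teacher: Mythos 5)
Your proposal is correct and follows essentially the route the paper itself takes: the paper does not write out a fresh proof of this one-dimensional statement but refers to the three earlier proofs, and in particular to the Lions--Souganidis route of Theorem~\ref{pro:viscous2}, whose proof is exactly your combination of the half-relaxed limits, the stability argument of Lemma~\ref{StabK} for the Kirchhoff condition (where the $C^1$-in-$x$ regularity of $u^\eps$ near $\{x=0\}$ is used exactly as you use it), and the junction comparison result of Theorem~\ref{LS-CR}.
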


\index{Kirchhoff condition}
We first point out that Theorem~\ref{thm:viscous2-oned} is valid for any continuous Hamiltonians
$H_1,H_2$ without any type of convexity (or concavity) assumption.

The formal idea to prove this result is straightforward: $u^\eps$ being $C^1$ in $x$ in
a neighborhood of $x=0$ for $t>0$, it satisfies the Kirchhoff condition
$$
- u_x^\eps(0^+,t) + u_x^\eps(0^-,t) = 0\quad\text{on}\quad\{0\} \times (0,\Tf)\;,
$$
and it suffices to pass to the limit using the good stability properties of viscosity solutions, but
written with piecewise $C^1$ test-functions and then to use an adapted comparison result. 

This formal proof can be justified using the notion of \JVS solutions via Lions-Souganidis arguments
for the comparison result. Indeed, on one hand, this notion of solutions allows to extend the
classical stability argument for viscosity solutions: the half-relaxed limits of $u^\eps$ are
``junction sub and supersolution'' of the Kirchhoff problem, \ie
$$\begin{cases}
    \min(u_t + H_1(u_x), u_t + H_2(u_x),- u_x (0^+,t) + u_x(0^-,t) )\leq 0\; ,\\
    \max(u_t + H_1(u_x), u_t + H_2(u_x),- u_x (0^+,t) + u_x(0^-,t))\geq 0\; .
\end{cases}$$
Hence we have a stability result which is as similar as it could be to the classical one, despite of
the different spaces of test-functions. It is worth pointing out that the notion of \JVS  is not
only necessary to define properly the Kirchhoff condition but it also plays a key role here via this
stability result. And then the Lions-Souganidis arguments provide the ``Strong Comparison Result''
which is needed to conclude.

We actually provide three different proofs of the convergence of the vanishing viscosity method in
Part~\ref{part:codim1}: a first one via \FLS solutions, the above one via \JVS solutions and a last
one which combines both notions in order to identify the limit, in particular by giving an explicit
formula in the control case.

\medskip

\noindent\textbf{(b)} \emph{On Classical Viscosity Solutions and the Kirchhoff condition ---} 
    The result of Theorem~\ref{thm:viscous2-oned} suggests two natural questions
\begin{enumerate}
    \item[1.] Is it possible to characterize the unique \JVS of
        \eqref{HJ-d1}-\eqref{HJ-d1-id}-\eqref{kir-oned}, \ie the Kirchhoff solution, in terms of
        classical viscosity solutions \CVS? 
    \item[2.] In the ``convex case'', is it possible to write down an explicit formula for solutions
        of the Kirchhoff problem? (\`a la Oleinik-Lax). In other words, is there an underlying control
        problem which gives a control formula for this solution?
\end{enumerate}
Our second result answers these questions, of course in the ``convex case'' since we are looking for
explicit formulas. We point out that the results are unavoidably a little bit vague to avoid long
statements but precise results can be found in Chapter~\ref{chap:Ishii}.  
\begin{theorem}\label{cont:viscous2-oned}
    \emph{--- Classical Viscosity Solutions.}\smsp
    In the ``quasi-convex or convex case'',
    \begin{enumerate}
    \item[$(i)$] Classical Viscosity Solutions of \eqref{HJ-d1}-\eqref{HJ-d1-id} with the natural
        Ishii conditions at $x=0$ are not unique in general. There is a minimal \CVS denoted by
        $\Um$ and a maximal \CVS denoted by $\Up$. In the convex case, they are both given explicitly
        as value functions of suitable control problems.
    \item[$(ii)$] If $m_1$ is the largest minimum point of $H_{1}$ and $m_2$ the least minimum of
        $H_{2}$, a sufficient condition in order to get $\Um=\Up$ is $m_2\geq m_1$.  
    \item[$(iii)$] The solution of the Kirchhoff problem is $\Up$. Hence the vanishing
        viscosity method converges to the maximal \CVS.
    \end{enumerate}
\end{theorem}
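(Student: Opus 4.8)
The statement is essentially a digest of the main results of Chapter~\ref{chap:Ishii} together with Theorem~\ref{teo:viscous}, so the plan is to assemble these pieces rather than prove anything from scratch. First I would recall the one-dimensional setup: $\Omega_1=\{x>0\}$, $\Omega_2=\{x<0\}$, $\H=\{0\}$, no specific Hamiltonian on $\H$, and the control problem associated to the convex Hamiltonians $H_1,H_2$ built via the sets $\BCL_1,\BCL_2$ as in Section~\ref{sect:codimIa}. In this convex framework the "standard assumptions in the co-dimension-$1$ case" hold (or can be arranged to hold after the $\exp(-Kt)$ change and a localization), which is what is needed to invoke all the cited theorems. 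I would then treat the three items in order, each being a specialization to $N=1$ of a general result.

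For item $(i)$: the existence of $\Um$ and $\Up$ as value functions comes directly from Sections~\ref{sect:codimIa} and~\ref{sect:rsd}; $\Um$ is defined by minimizing over all admissible trajectories $\mT(x,t)$ of the differential inclusion, $\Up$ by minimizing only over the regular trajectories $\mT^\reg(x,t)$. By Proposition~\ref{prop:ishii} and Theorem~\ref{thm:Vp-max}, $\Um$ is the minimal Ishii (super)solution and $\Up$ the maximal Ishii (sub)solution of \eqref{pb:half-space}. Non-uniqueness in general is then exhibited by the explicit one-dimensional example of Subsection~on $\VFp\not\equiv\VFm$ (with $l_1=1-\alpha_1+\min(|x|,1)$, $l_2=1+\alpha_2+\min(|x|,1)$, $c_i=1$), where $\Um(0,t)=0$ via the singular "pull-pull" strategy while $\Up(0,t)=1-e^{-t}>0$; I would simply cite this computation.

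For item $(ii)$: here $H_1,H_2$ are convex (hence quasi-convex), $H_1=\max(H_1^+,H_1^-)$ with $H_1^+$ nonincreasing and $H_1^-$ nondecreasing in $p$, and similarly $H_2=\max(H_2^+,H_2^-)$; $m_1$ is the largest minimizer of $H_1$ and $m_2$ the least minimizer of $H_2$. The claim $m_2\ge m_1\Rightarrow\Um=\Up$ is the codimension-$1$ shadow of the general principle that when the interface is "controllable in a compatible way" there is no gap; concretely one argues that under $m_2\ge m_1$ the tangential Hamiltonian $H_T$ and the regular tangential Hamiltonian $\HTreg$ coincide on $\H$, so the characterizations of $\Um$ (Theorem~\ref{thm:minimal.charac}, via $H_T$) and of $\Up$ (as the unique solution associated to $\HTreg$) are the same, whence $\Um=\Up$. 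The identification $H_T=\HTreg$ under $m_2\ge m_1$ follows from the analysis of $\HTreg$ in the appendix Chapter~\ref{chap:appendix.codim1} (the properties of $H_1^-,H_2^+$ and Lemma~\ref{lem:H1m.H2p.b}); in dimension $1$, $\HTreg$ is just the common value $\min_s H_1(s)=\min_s H_2(s)$ when the ranges overlap in the right way, which is exactly what $m_2\ge m_1$ guarantees.

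For item $(iii)$: by Theorem~\ref{compBBC-IM}$(iii)$, $\Up$ is the unique flux-limited solution associated to the flux-limiter $\HTreg$, and by Proposition~\ref{kc-fl} the Kirchhoff condition is equivalent (for convex, or more generally quasi-convex, $H_1,H_2$) to the flux-limited condition with $G=\HTreg$; hence $\Up$ is the solution of the Kirchhoff problem. Then Theorem~\ref{teo:viscous} (vanishing viscosity, specialized to $N=1$) gives that $u^\eps\to\Up$ locally uniformly, so the vanishing viscosity limit is the maximal \CVS. The main obstacle in writing this out cleanly is item $(ii)$: one must be careful to state precisely what "$m_1$ the largest minimum point of $H_1$, $m_2$ the least minimum of $H_2$" means and to verify that $m_2\ge m_1$ is exactly the condition making $H_T=\HTreg$ on $\H$ — this requires invoking the Appendix results on quasi-convex Hamiltonians and checking the one-dimensional reduction. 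Everything else is a direct citation of the chapter's theorems with $N$ set to $1$.
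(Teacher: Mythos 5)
Your proposal is correct and follows essentially the same route as the paper, which does not give this theorem an independent proof: it is a summary statement assembled by citing the results of Chapter~\ref{chap:Ishii}, Chapter~\ref{chap:FL}, Chapter~\ref{chap:JVS} and the Appendix. One small slip: for item~$(ii)$ the right reference is Lemma~\ref{lem:H1m.H2p.c} (which states precisely that $m^-_2\ge m^+_1$ implies $\HT=\HTreg$, hence $\Um=\Up$ by the characterizations as flux-limited solutions), not Lemma~\ref{lem:H1m.H2p.b}, which only describes the shape of $\Htireg$ in terms of the crossing points of $H_1^-$ and $H_2^+$; your description of what the appendix result should say is nevertheless accurate, so this is a citation error rather than a gap in the argument.
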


This result shows the weakness of \CVS for equations with discontinuities: although they are very
stable because of the half-relaxed limits method, they are not unique in this framework and this
is, of course, more than a problem. Result~$(ii)$ is a last desperate attempt to maintain
uniqueness in a rather general case but it seems to be a little bit anecdotic.

Result~$(iii)$ is a first bridge between the notions of \CVS and ``junction solutions'' and it is
proved using in a key way the notion of ``flux-limited solutions''.

We refer to Chapter~\ref{sect:equiv.sols} for various results comparing the \CVS, \FLS and \JVS
notions of solutions.

\medskip

\noindent\textbf{(c)} \emph{On the characterization via flux-limited solutions ---}
The previous results open the way to the next questions which can be formulated in several different
ways, but which all concern the relations between different notions of solutions.
\begin{enumerate}
    \item[1.] In the case of control problems, two particular value functions appear in
        Theorem~\ref{cont:viscous2-oned}, $\Um$ and $\Up$. Both may be interesting for some
        particular application but clearly, the characterization as \CVS is not appropriate. Is
        there any other way to identify them uniquely?
    \item[2.] From the pde point of view, Result~$(iii)$ gives a connection between the ``junction
        solution'' for the Kirchhoff condition and a value function of some control problem. But is it
        possible to prove some similar connexion for more general conditions \eqref{JC-oned} with a
        rather explicit way?
\end{enumerate}
The answer is provided in the following result which relies on the notion of  \FLS.
\begin{theorem}\label{FL-oned}
    \emph{--- Characterizations with flux~limiters.} \smsp 
    \noindent{\bf A. In the quasi-convex case}
    \begin{enumerate}[topsep=3pt]
        \item[$(i)$] For any $A$, there exists a unique flux-limited solution of
            \eqref{HJ-d1}-\eqref{HJ-d1-id}-\eqref{FL1-oned}. Moreover a comparison principle holds 
            result for this flux-limited problem.
        \item[$(ii)$] If $G$ satisfies: there exists $\alpha, \beta\geq 0$ with $\beta >0$
            such that for any $a_1\geq a_2$, $b_1\geq b_2$, $c_1\geq c_2$
            $$ G(a_1,b_1,c_1)-G(a_2,b_2,c_2)\geq \alpha(a_1-a_2) + \beta(b_1-b_2)+ \beta(c_1-c_2)\; ,$$
            then any JVS subsolution \resp{supersolution} of \eqref{HJ-d1}-\eqref{HJ-d1-id}-\eqref{JC-oned}
            is a \FLS subsolution \resp{supersolution} with flux~limiter
            $$ \chi(a)= \max_{p_1,p_2}\Big( \min\left(a+ H_1^-(p_1), 
            a+ H_2^+(p_2),G(a,-p_1,p_2)\right) \Big)\; .$$
    \end{enumerate}

    \noindent{\bf B. In the convex case}
     \begin{enumerate}[topsep=3pt]
    \item[$(i)$] The value function $\Um$ is associated to the flux~limiter
        $$ A^-=\min_s\big(\max(H_1(s),H_2(s))\big)\; .$$
    \item[$(ii)$] The value function $\Up$ is associated to the flux~limiter
        $$A^+=\min_s\big(\max(H_1^- (s),H^+ _2(s))\big)\; .$$
     \end{enumerate}
\end{theorem}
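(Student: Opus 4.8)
\textbf{Proof proposal for Theorem~\ref{FL-oned}.}

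The plan is to assemble the statement from the codimension-one machinery already developed in Part~\ref{part:codim1}, specialized to the one-dimensional setting where $\Omega_1=\{x>0\}$, $\Omega_2=\{x<0\}$ and $\H=\{0\}$. For the uniqueness and comparison assertions in the quasi-convex case, the first step is to observe that \eqref{FL1-oned} is exactly the flux-limited condition of Definition~\ref{defiFL} with constant flux-limiter $G\equiv A$ (here the ``$G$'' of that definition being just the constant $A$, viewed as the tangential Hamiltonian on the zero-dimensional interface $\H$, so that the $G$-term is simply $\psi_t+A$). Once this identification is made, the comparison result follows directly from Theorem~\ref{comp-IM-nc}: one must check that \HQC holds — which is precisely the ``quasi-convex case'' hypothesis, since $H_i=\max(H_i^+,H_i^-)$ with the required monotonicity of $\lambda\mapsto H_i^\pm(p+\lambda e_N)$ in the $e_N=1$ direction — and that $H_i^\pm$ and the constant $A$ satisfy \HBAHJ, which is automatic for a coercive quasi-convex Hamiltonian independent of $u$ and for a constant. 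Existence of a flux-limited solution is obtained by Perron's method (using sub/supersolutions built from linear functions $px-\alpha t$ with $p$ chosen near the minima of $H_1,H_2$), together with this comparison result; continuity then follows from the \SCR as in the half-relaxed limit method of Section~\ref{sect:stab}.

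For the second bullet in the quasi-convex case — reducing a solution of the general junction condition \eqref{JC-oned} to a flux-limited solution — I would proceed as in the proof of Proposition~\ref{kc-fl}, which handles exactly the Kirchhoff case $G(a,b,c)=b+c$. The key structural input is the monotonicity hypothesis on $G$: strict increase in the $a$-variable (with rate $\alpha$) plus monotone increase in $b,c$ (with rate $\beta$), and $\alpha+\beta>0$. For the subsolution implication, one first shows via Proposition~\ref{sub-up-to-b} that the $H_1^+$ and $H_2^-$ inequalities are automatic on $\H$ (they only involve dynamics pointing into the correct half-line); then one shows that the ``$\min$'' junction condition \eqref{JC-oned}, combined with coercivity and the monotonicity of $G$, forces $u_t+\chi(u_t)\le 0$ at maxima, where $\chi(a)$ is the value obtained by optimizing the min over the normal gradients $p_1,p_2$ as displayed. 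The formula $\chi(a)=\max_{p_1,p_2}\min(a+H_1^-(p_1),a+H_2^+(p_2),G(a,-p_1,p_2))$ is precisely the ``effective flux'' one gets by saturating the three competing inequalities; the monotonicity of $G$ in $(b,c)$ guarantees that this max-min is attained and that $a\mapsto\chi(a)$ is strictly increasing (using $\alpha+\beta>0$), so \eqref{FL1-oned} with a $\chi(u_t)$-term in place of $u_t+A$ makes sense. The supersolution implication runs along the same lines, changing test-functions as in the proof of Proposition~\ref{prop:equivFL-LS} to move the normal-gradient slopes onto the minima of $H_1^-$, $H_2^+$; the delicate point there, as in that proposition, is producing the correct modified test-function in $\PC1$ when the junction inequality is active and splitting cases according to which of $H_1^+$, $H_2^-$ controls.

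For the convex case, the identifications $\Um \leftrightarrow A^-=\min_s\max(H_1(s),H_2(s))$ and $\Up\leftrightarrow A^+=\min_s\max(H_1^-(s),H_2^+(s))$ follow from Theorem~\ref{compBBC-IM}(ii)--(iii) together with the appendix results of Chapter~\ref{chap:appendix.codim1}. Indeed, Theorem~\ref{compBBC-IM} already states that $\Um=\Uim_G$ with $G=H_T$ and $\Up=\Uim_G$ with $G=\HTreg$; the remaining task is to compute the tangential Hamiltonians $H_T$ and $\HTreg$ explicitly in this one-dimensional autonomous setting and to recognize that, since the interface $\H=\{0\}$ is a single point, these ``tangential Hamiltonians'' reduce to constants, namely the critical values $A^-$ and $A^+$. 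The computation $H_T=\min_s\max(H_1(s),H_2(s))$ comes from the definition \eqref{def:HT} of $H_T$ as the sup over tangential (here: zero-velocity) dynamics, dualized back to a Hamiltonian; and $\HTreg=\min_s\max(H_1^-(s),H_2^+(s))$ comes from \eqref{def:HTreg} restricted to regular (``push-push'') dynamics, which is exactly the content of Lemma~\ref{lem:H1m.H2p.b} in the appendix. So the statement is obtained by substituting these values of the flux-limiter into the uniqueness result of part~(i).

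The main obstacle I expect is the supersolution half of the reduction in part~(i), second bullet: constructing the right $\PC1$ test-function when the junction condition $G$ is in force and carefully tracking the monotonicity constants $\alpha,\beta$ through the max-min in the definition of $\chi$, so as to genuinely conclude $\psi_t+\chi(\psi_t)\ge 0$ rather than a weaker inequality. This is the step where Proposition~\ref{prop:equivFL-LS} already required a somewhat intricate case analysis (splitting on the signs of $\psi_t+H_i^\pm$ and redefining slopes at the minima $m_i$), and the general $G$ — as opposed to the additive Kirchhoff $G(a,b,c)=b+c$ — makes the bookkeeping heavier, even though the idea is unchanged. A secondary technical point is verifying that $\chi$ is strictly increasing (needed so that \eqref{FL1-oned}-type uniqueness applies); this uses $\alpha+\beta>0$ but requires a short argument distinguishing the case $\alpha>0$ from the case $\alpha=0,\ \beta>0$, in the latter of which one leans on the strict monotonicity of $H_1^-$ (resp. $H_2^+$) past its minimum.
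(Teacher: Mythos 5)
Your overall architecture is correct and matches what the paper intends: Theorem~\ref{FL-oned} lives in the ``Emblematic Example'' summary chapter and is assembled, exactly as you propose, from the codimension-one machinery of Chapters~\ref{chap:Ishii}--\ref{chap:JVS} together with Appendix~\ref{chap:appendix.codim1}. Your treatment of the first bullet of (i) (constant flux-limiter $G\equiv A$, comparison via Theorem~\ref{comp-IM-nc}, Perron for existence) and of part (ii) (Theorem~\ref{compBBC-IM} plus the computation $\HT=\min_s\max(H_1(s),H_2(s))$ and $\HTreg=\min_s\max(H_1^-(s),H_2^+(s))$ from Lemma~\ref{lem:H1m.H2p.a}, noting that on a zero-dimensional junction these are constants) is exactly right.

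The gap is in the step you yourself flag as ``the main obstacle'': the reduction of the general junction condition \eqref{JC-oned} to a flux-limited problem. You say you would ``proceed as in the proof of Proposition~\ref{kc-fl}'' and describe $\chi(a)$ as ``the effective flux one gets by saturating the three competing inequalities,'' but you never produce the precise mechanism that does the saturating. The paper has a dedicated abstract lemma for exactly this purpose, Lemma~\ref{lem:fgh} in Appendix~\ref{tmgktc}, and the parallel statement in Section~\ref{sec:QCH} explicitly invokes it (``in order to treat the more general $G$-case, the result of Lemma~\ref{lem:fgh} is needed''). What Lemma~\ref{lem:fgh} provides, for $\psi(t,s)=\max(f(t),g(s),h(s,t))$ with $f=a+H_1^-$, $g=a+H_2^+$, $h=G(a,\,\cdot\,,\cdot\,)$, is: (a) coercivity and existence of a minimizer $(\tb,\sb)$; (b) the crucial equalities $f(\tb)=g(\sb)=h(\tb,\sb)$ at the minimizer, i.e. all three constraints are simultaneously active; and (c) the converse, that any point where the three are equal is a minimizer. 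Point (b) is what lets you build the $\PC1$ test-function with slopes $(\tb,\sb)$ in the two half-lines and carry out the modified-test-function argument of Proposition~\ref{prop:equivFL-LS}/\ref{kc-fl}: without it, you cannot choose a single pair of slopes at which the $H_1^-$, $H_2^+$ and $G$ inequalities all bind, and your ``saturation'' argument has no footing. The monotonicity hypothesis on $G$ (rate $\alpha$ in $a$, rate $\beta$ in the slopes, $\alpha+\beta>0$) is exactly what is needed to verify hypothesis (iii) of Lemma~\ref{lem:fgh}, which you do not check. Relatedly, note that Lemma~\ref{lem:fgh} and the analogous statement in Section~\ref{sec:QCH} are formulated as a $\min$ over slopes of a $\max$ of the three terms, whereas the displayed formula for $\chi$ in Theorem~\ref{FL-oned} has $\max\min$; the two agree here precisely because of Lemma~\ref{lem:fgh} (saddle point), but this is a fact that must be established, not taken for granted — and your proposal glosses over it. In short, the missing ingredient is Lemma~\ref{lem:fgh}; with it your plan closes, without it the second bullet of (i) does not go through.
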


The second part of this result shows that value functions of control problems can be characterized as
a ``flux-limited solution'' of \eqref{HJ-d1}-\eqref{HJ-d1-id} with the right flux~limiter
 at $x=0$. Contrarily to \CVS, a uniqueness result holds but, as the vanishing viscosity
method shows, stability becomes a problem since one has to identify the right flux~limiter for
the limiting problem.

\begin{remark}
    The case of more general junction conditions like 
    \eqref{HJ-d1}-\eqref{HJ-d1-id}-\eqref{JC-oned} can be treated by the Lions-Souganidis approach:
    in particular, we have a comparison result for \eqref{HJ-d1}-\eqref{HJ-d1-id}-\eqref{JC-oned} in
    the case of general Hamiltonians $H_1,H_2$ without assuming them to be quasi-convex. Of course,
    the monotonicity properties of $G$ are necessary not only for having such a comparison result
    but even for the notion of ``junction solution'' to make sense.
\end{remark}

We conclude this section by the extension of the Oleinik-Lax formula to our discontinuous framework.
To do so, we set $\Omega_1=\{x>0\}$ and $\Omega_2=\{x<0\}$ and we denote by $H_i^*$ the Fenchel
conjugate of $H_i$ for $i=1,2$.  
\begin{proposition}\label{prop:OL}\emph{--- Oleinik-Lax Formula.}\smsp
    Under the assumptions of Theorem~\ref{FL-oned}, we assume moreover that $H_1,H_2$ are convex
    coervive, continuous and set, for $x\in \Omegb_i$ and $t>0$, 
    $$\mathbf{U}^i(x,t):=\inf_{z\in\Omegb_i} \left(u_0(z) +
    tH_i^*(\dfrac{x-z}{t})\right)\;.$$
    Then the following formulas hold: $\Up (x,t)$ is given by
    $$
    \min
    \left(\mathbf{U}^i(x,t) , \inf_{\substack{j=1,2,\ z\in \Omega_j \\ 0\leq t_1 \leq t_2\leq t}}\,
    \left\{ u_0(z) + t_1H_i^*(\frac{x}{t_1})-A^+(t_2-t_1)+ (t-t_2)H_j^*(\frac{-z}{t-t_2})  \right\} \right)\; ,
    $$
    while $\Um (x,t)$ is given by
    $$
     \min\left( \mathbf{U}^i(x,t), \inf_{\substack{j=1,2,\ z\in \Omega_j \\ 0\leq t_1 \leq
    t_2\leq t}}\,\left\{ u_0(z)+ t_1 H_i^*(\frac{x}{t_1})-A^-(t_2-t_1)+
    (t-t_2)H_j^*(\frac{-z}{t-t_2})  \right\} \right)\; ,
    $$
    with the convention that $(t-t_2)H_j^*((-z)/(t-t_2))=0$ if $z=0$ and $t-t_2=0$.
\end{proposition}

\medskip

In order to apply these Oleinik-Lax formulas, we come back on the examples of
Section~\ref{sec:more-examples}. In the first one, $$ H_1(p)=|p+1|\quad,\quad H_2(p)=|p-1|\; ,$$
therefore $A^+=A^-=1$ and it follows that uniqueness holds in the Ishii class of solutions, $\Up\equiv\Um$.

Now, $H^*_1(p)=-p$ if $|p|\leq 1$ and $+\infty$ otherwise, while $H^*_2(p)=p$ if
$|p|\leq 1$ and $+\infty$ otherwise. Hence, since $u_0(x)=|x|$ in $\R$, we see that
$$\begin{cases}
    \mathbf{U}^1(x,t):=\inf\limits_{z\geq 0,\ |z-x|\leq t} \left(z -(x-z)\right)=2(|x|-t)_+ -|x| \;,\\
    \mathbf{U}^2(x,t):=\inf\limits_{z\leq 0,\ |z-x|\leq t} \left(-z +(x-z)\right)=2(|x|-t)_+-|x|\;.
\end{cases}$$
Then, in order to compute $\Up=\Um$, we face several cases
\begin{enumerate}
    \item[$(i)$] If $|x|>t$, then $H_i^*(\frac{x}{t_1})=+\infty$ so 
        $\Up(x,t)=\mathbf{U}^i(x,t)=2(|x|-t)_+-|x|$.
    \item[$(ii)$] If $|x|\leq t$, for the second part of the $\min$, if $x,z\in \Omegb_1$
        we get
        $$\inf_{\substack{z\leq t-t_2, x\leq t_1 \\ 0\leq t_1 \leq t_2\leq t}}\,
        \left\{ z-x - (t_2-t_1)+z  \right\} =-t\; ,$$
        since $z=0$, $t_1=x$ and $t_2=t$ is clearly optimal.
        If $x,z\in \Omegb_2$, an analogous result holds.
    \item[$(iii)$] If $|x|\leq t$, for the second part of the $\min$, if $x\in \Omegb_1$,
        $z\in \Omegb_2$ we get 
        $$\inf_{\substack{z\leq t-t_2, x\leq t_1 \\ 0\leq t_1 \leq t_2\leq t}}\,
        \left\{ -z-x - (t_2-t_1)-z  \right\} =-t\; ,$$
        since $z=0$, $t_1=x$ and $t_2=t$ is clearly optimal.
        The case $x\in \Omegb_2$, $z\in \Omegb_1$ gives an analogous result.
\end{enumerate}
Finally, since for $|x|\leq t$, we have $2(|x|-t)_+-|x|\geq - t$, we conclude that
$$ \Up(x,t)=\Um(x,t)=\begin{cases}
2(|x|-t)_+-|x| & \hbox{if $|x|\geq t$}\; ,\\
-t & \hbox{otherwise.}
\end{cases}$$

In the second example
$$ H_1(p)=|p-1|\quad,\quad H_2(p)=|p+1|\; ,$$
and therefore $A^+=0$, $A^-=1$. This time, the solution $\Up$ and $\Um$ are different. We leave the checking of their formulas
to the reader (they are given in Section~\ref{sec:more-examples}).

\section{Traffic flow models with a fixed or moving flow constraint}
\label{Traffic} 

\index{Traffic!LWR model}

Traffic flows can be studied at the micro or macroscopic level, leading to different, yet
complementary models. Here we focus only on the macroscopic scale, looking at the density
$\rho(x,t)$ of vehicles at each point $x$ of a one-dimensional infinite highway modeled by $\R$ and
any time $t$. It is often more convenient to use the ``renormalized density", \ie the ratio between
the actual density and a maximal density, therefore assuming that $0\leq \rho(x,t)\leq 1$ for any
$x$ and $t$.

\subsection{The LWR model}
\index{Traffic!LWR model}

In the context of simple traffic flow without constraints, one of the most famous macroscopic
models is the LWR model, originated in the works of Lighthill and Whithan \cite{LiWh} and Richards
\cite{Ri}. It consists in describing the evolution of $\rho$ through a scalar conservation law in
$\R\times(0,\Tf)$, namely
\begin{equation}\label{scl-wc}
    \rho_t +\partial_x(f(\rho))=0 \quad \hbox{in  }\R \times (0,\Tf)\; ,
\end{equation}
where the flux $f:\R \to \R$ is given in the simplest case by $f(\rho)=\rho(1-\rho)$.

In one space dimension, a rather easy way to tackle Equation~\eqref{scl-wc} is to use the
connections with the Hamilton-Jacobi formulation, \cf Corrias, Falcone and Natalini~\cite{CFN} and
Aaibid and Sayah \cite {MA-AS}: if $u:\R \times (0,\Tf)\to \R$ is the unique Lipschitz continuous
viscosity solution of 
\begin{equation}\label{hj-wc}
    u_t +f(u_x)=0 \quad \hbox{in  }\R \times (0,\Tf)\; .
\end{equation}
with a Lipschitz continuous initial data $u_0$ such that $0\leq u_0'(x)\leq 1$ in $\R$, then
$\rho=u_x$ is the unique entropy solution of ~\eqref{scl-wc}.

At this point, it is worth pointing out that here $f$ is concave, not convex. However, using that
$-u$ is a solution of \eqref{hj-wc} with $f$ being replaced by $h(p)=-f(-p)$ which is convex, one
may use the Oleinik-Lax formula to obtain the explicit form of the solution:
$$ 
    u(x,t):=\sup_{y \in \R}\left\{u_0(y) -\frac t 4 
    \left\vert \frac{x-y}{t}-1\right\vert ^2 \right\}.
$$
In particular, if $u_0(y)=\rho_0y$ in $\R$ for some $0<\rho_0<1$, corresponding to a constant
density $\rho_0$ at time $t=0$, then 
\begin{equation}\label{traffic.formula}
    u(x,t)=\rho_0x-\rho_0(1-\rho_0)t \quad \hbox{for any  }(x,t)\in \R \times (0,\Tf)\; .
\end{equation}

\subsection{Constraints on the flux}
\index{Traffic!constraints on the flux}

A more interesting question in the context of this book is to investigate the question of traffic
reduction, due to a car crash or traffic lights located at one point, $x=0$. Such problems were
first studied by Colombo and Rosini \cite{CoRo} and Colombo and Goatin \cite{CoGo} and they lead to
a constraint of the type
$$f(\rho)\,\Big|_{x=0}\leq \delta \;,$$
where we we choose here to consider a {\em constant} flux~limiter $\delta>0$.

To the best of our knowledge, there is no rigorous result connecting such problems with
Hamilton-Jacobi ones in this framework. We can just guess that the corresponding constraint in the
Hamilton-Jacobi case takes the form
$$ u_t \geq -\delta \quad\hbox{at  }x=0$$
since at least formally, $f(\rho)=f(u_x)=-u_t$. This formulation corresponds to a flux~limiter
$G=+\delta$ at $x=0$ coupled with Hamiltonians $H_1=H_2=f$ but we recall that $f$ being concave we
have either to change $u$ in $-u$ or to adapt the results of the previous sections. In any case, we
point out that all these results apply and the theory can be applies without any difficulty.

Now, let us go back to the same initial data as above: $u_0(y)=\rho_0y$ in $\R$ for some $0<\rho_0<1$,
associated to a flux $\delta<\rho_0(1-\rho_0)<1/4$ otherwise the flux is not really limited, but
moreover we assume that $\delta$ is close to 0 for simplicity. This corresponds to a strong
limitation on the crossing at $x=0$. 

Using the control formulation, it is clear that $u(0,t)= -\delta t$ since the reward $-\delta$ for
$x=0$ is maximal---recall that we maximize the reward since $f$ is concave. Since $u$ is identified
on $x=0$, it only remains to solve a Dirichlet problem for the HJ-Equation in the domains $x>0$ and
$x<0$ separately. We can also guess that the solution is piecewise affine,
$$u(x,t)=at+bx \quad \hbox{with }a=-b(1-b)\;,$$
and by using this ansatz we solve the equations separately in different regions. Matching everything
is done by finding suitable lines originating from $(0,0)$ so that $u$ is globally continuous.

\medskip

\noindent\textbf{(a)} {If $x\neq0$ and $t$ is close to $0$} the flux limitation is not
interacting yet, so we use formula \eqref{traffic.formula} which yields here also
$$u(x,t)=\rho_0x-\rho_0(1-\rho_0)t\;.$$ 

\noindent\textbf{(b)} {If $t>0$ and $x$ close to $>0$} the flux limitation is acting on the
solution and using the boundary value $u(0,t)=-\delta$, we define $\rho_1$ and $\rho_2$ as the two
solutions of the equation $-\delta = -b(1-b)$, namely
$$ \rho_1=\frac{1+(1-4\delta)^{1/2}}2\quad\text{close to 1}\;,$$
$$ \rho_2=\frac{1-(1-4\delta)^{1/2}}2\quad\text{close to 0}\;.$$
In this region, the solution is given by $u(x,t)=\rho_1x-\delta t$ if $x<0$ and $\rho_2x-\delta t$
if $x>0$.

\noindent\textbf{(c)} {It remains to match everything} by continuity. For example, the continuity
condition 
$$ \rho_0x-\rho_0(1-\rho_0)t= \rho_1 x-\delta t \quad \hbox{implies  } 
x=\frac{\delta - \rho_0(1-\rho_0)}{\rho_1-\rho_0}t\; ,$$
where the coefficient of $t$ is strictly negative since the denominator is positive while for
$\delta$ small enough, $\rho_1>\rho_0$. This defines a first line $\Delta_1$, located on the left.
Similarly, $\Delta_2$ is defined by the matching condition using $\rho_0$ and $\rho_2$, here the
coefficient of $t$ is positive since for $\delta$ small enough, $\rho_2<\rho_0$.

Computing $\rho=u_x$, we find that $\rho(0^-,t)=\rho_1$ is close to $1$, which is reasonable since
the limited flux implies an accumulation of cars for $x<0$ close to $0$. On the contrary,
$\rho(0^+,t)=\rho_2$ is close to $0$ since the flux of cars is limited, therefore only a few cars go
through $x=0$. Figure~\ref{fig:traffic} gives a typical picture when $\rho_0>1/2$.

\begin{figure}[htp]
   \begin{center}
   \includegraphics[width=0.6\textwidth]{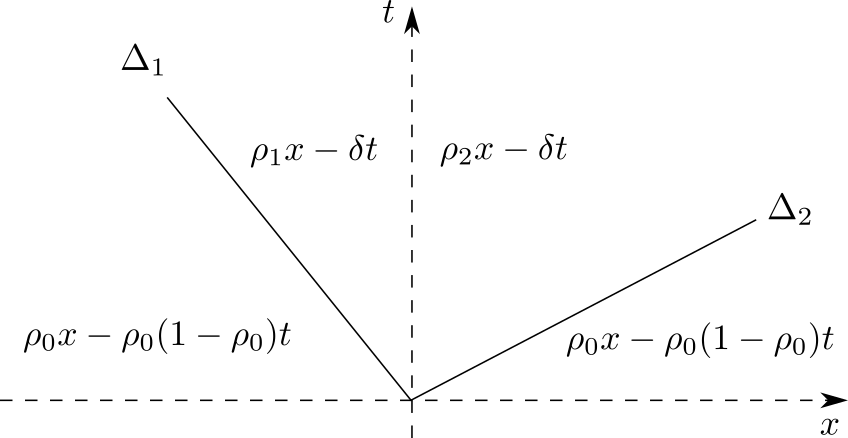}
   \caption{The solution $u$}
   \label{fig:traffic} 
   \end{center}
\end{figure}

\ 

We conclude this section by mentioning the case of moving constraints
$$f\big(\rho(y(t),t) \big)-\dot y(t) \rho(y(t),t) \leq g(t) \; ,$$
where $y$ and $g$ are given functions. Looking at the new function 
$$ w(x,t) = u(x+y(t),t)\; ,$$ 
we end up being in an analogous situation where again the theory of the previous chapter applies.
But, of course, we also use a completely formal argument to connect this problem with the HJ-one.

\chapter{Further Discussions and Open Problems}
\label{sec:SCQ}

\abstract{Here are collected, summarized and commented the results that are provided in the case of
codimension $1$ discontinuities in Part~\ref{part:codim1} and \ref{part:NA}. Some puzzling
questions are also considered.}

Let us first examine the three approaches we have described.

The first one, using {\em Ishii's notion of viscosity solutions}, has the advantage to be
very stable and universal in the sense that it can be formulated for any type of Hamiltonians, convex or not.
But Chapter~\ref{chap:Ishii} shows that it has poor uniqueness properties in the present situation. In the simple case
of the optimal control framework we have considered, with a discontinuity on an hyperplane $\H$ and with perhaps
a specific control on $\H$, we are able to identify the minimal solution ($\VFm$) and the maximal solution ($\VFp$):
if $\VFm$ is a natural value function providing the minimal cost over all possible controls, $\VFp$ completely ignores
some controls and in particular all the specific control on $\H$.

Why can $\VFp$ be an Ishii viscosity solution of the Bellman Equations anyway? The answer is that the Ishii 
subsolution condition on $\H$  is not strong enough in order to force the subsolutions to see all the particularities of 
the control problem on $\H$. 
This generates unwanted (or not?) subsolutions. We point out that, as all the proofs of Chapter~\ref{chap:Ishii} show, 
there is a  complete disymmetry between the sub and supersolutions properties in this control setting: this fact is 
natural and well-known due to the form of the problem but it is accentuated in the discontinuous framework.

This lack of uniqueness properties for Ishii viscosity solutions leads to consider different notions of solutions but, in some interesting applications, one may recover this uniqueness since $\VFm=\VFp$. We point out Lemma~\ref{lem:H1m.H2p.c} below which provides a condition under which $\HT=\HTreg$ and therefore $\VFm=\VFp$. This condition is formulated directly on the Hamiltonians and can sometimes be easy to check (see for example, Section~\ref{mg-KPP}).

In the {\em Network Approach}, one can either use the notion of {\em flux-limited solutions} or the notion 
of {\em junction viscosity solutions}. The first one is particularly well-adapted to control problems and has the great advantage to 
reinforce the  subsolutions conditions on $\H$ and, through the flux~limiter, to allow to consider various control problems at 
the same time by just varying this flux~limiter. The value functions $\VFm$ and $\VFp$ are reinterpreted in this framework as 
value functions associated to particular flux~limiters.

But we are very far from the universality of the definition of viscosity solutions since this ``max-max'' definition in the case of convex Hamiltonians has to be replaced by a ``min-min'' one in the case of concave ones, and it has no analogue for general ones. On the other hand, this notion of solution is less flexible in terms of stability properties compared to Ishii solutions.

The notion of {\em junction viscosity solution} tries to recover all the good properties of Ishii solutions for general Hamiltonians: it is valid for any kind of ``viscosity solutions compatible'' junction conditions, it is stable and the Lions-Souganidis proof (even if there are some limitations in Theorem~\ref{LS-CR}) is the only one which is valid for general Hamiltonians with Kirchhoff's boundary conditions. Though this approach is not as well-adapted to control problems as the flux~limiter one, it gives however a common formulation for problems when the controller wants to minimize some cost (which leads to convex Hamiltonians) or maximize it (which leads to concave Hamiltonians).

The Kirchhoff boundary condition is one of the most natural ``junction condition'' in the networks theory but a priori, it has no
connection with control problems. However, as it is shown by Proposition~\ref{kc-fl} together with Theorem~\ref{compBBC-IM}, this boundary condition is associated $\VFp$. The explanation is maybe in the next paragraph.

In fact, the main interest of the approach by junction solution, using the Lions-Souganidis comparison result,
is to provide the convergence of the vanishing viscosity method in the most general framework (with the limitations of Theorem~\ref{LS-CR}), 
without  using some convexity or quasi-convexity assumption on the Hamiltonians. In the convex 
setting, we have several proofs of the convergence to $\VFp$ which shows that it is the
most stable value function if we add a stochastic noise on the dynamic.

In the next parts, we examine stratified solutions in $\R^N$ or in general domains, $i.e.$ essentially the generalization of $\VFm$ which we aim at 
characterizing as the unique solution of a suitable problem with the right viscosity inequalities. And we will emphasize the (even more important) 
roles of the subsolution inequalities, normal controllability, tangential continuity...etc. But we will not consider questions related to $\VFp$ and the 
vanishing viscosity method, even if some of these questions are really puzzling.

This analysis generates a lot of questions.

The first one may concerns the limitations due to the assumptions of Theorem~\ref{LS-CR}: it is not completely clear that \TCs
is really necessary; maybe a different proof, avoiding the tangential regularization, can handle general Hamiltonians without this
superfluous hypothesis. 

All the other questions concern the extensions to higher codimension discontinuities of the notions of flux-limited and junction viscosity
solutions. Clearly the first step should be to have the right space of test-functions (like $\PC1$ above). It is not very difficult to guess
what this space could be: in the stratified case, i.e. if the discontinuities for a stratification $\M=(\Man{k})_k$, one may perhaps use
continuous functions those restrictions to each $\Man{k}$ are $C^1$ and with derivatives which have continuous extensions to $\overline{\Man{k}}$. To write that is one point, to make a concrete proof is an other one.

We point out anyway how Lemma~\ref{lem:comp.fundamental} is closely related to flux-limited solutions by looking
at trajectories which either leave $\mathcal{M}$ (suggesting a $H_1^+-H_2^-$-type inequality) or stay on $\mathcal{M}$.
It seems clear that a pde analogue of this lemma should exists  and allow to obtain a comparison
result for---at least---HJB-equations 
by a pure pde method, even in the stratified case of Part~\ref{stratRN}.

Finally we come back to the question which is clearly the most puzzling for us: one of the main result of this part is the convergence of 
the vanishing viscosity method to $\VFp$, the maximal viscosity subsolution in the control framework. What is the analogue of this
result in the case of higher codimension discontinuities? Is the convergence to the maximal viscosity subsolution always true? And of course,
can we identify this maximal viscosity subsolution in the control framework via an explicit formula?\index{Vanishing viscosity method!general questions 
on}


\part{General Discontinuities: Stratified Problems}
\label{stratRN}
\fancyhead[CO]{HJ-Equations with Discontinuities: Stratified Problems}


\chapter{Stratified Solutions}
\label{chap:strat-def}

\abstract{This chapter is devoted to present the notions of stratified solutions and to provide the
associated comparison result. Compared to \cite{AEYW}, ``weak'' and ``strong'' stratified solutions
are introduced and it is proved that the comparison holds for regular, weak solutions. Which implies
that comparison also holds for strong solutions, since a regular weak solution is a strong solution.}

\section{Introduction}
\label{sect:intro.strat}

Throughout Part~\ref{stratRN}, we consider Hamilton-Jacobi-Bellman Equations with more general
discontinuities than hyperplanes. Those discontinuities can be of any codimension but with the
restriction that they form a ``Whitney stratification'' and even a \TFS, \cf Section~\ref{sect:whitney}. 

This generality is at the expense of considering only equations which are closely related to control
problems (hence with convex Hamiltonians) but with the advantage that we do not have to deal with
existence results: as can be expected, the value function of the associated control problem is a
solution, even if this fact will not be completely obvious, \cf Chapter~\ref{chap:stratcontr}.
\index{Whitney stratification}

We always assume that we are in the ``good framework for discontinuities'': even if some of these
assumptions can certainly be weakened, this general framework seems the most natural for us since,
as we have already pointed out several times, the basic hypothesis we impose are useful---if not
unavoidable---in the proof of {\em any} results. 

This chapter is devoted to introduce the notion of \emph{Stratified Solutions} in this framework
and to present a comparison result which is valid under ``natural'' assumptions. We also show that
the stratified solution corresponds to the minimal Ishii supersolution (see Section~\ref{sec:Strat-Is}). 

We give here two notions of stratified solution: a \emph{weak} one and a \emph{strong}
one, the strong notion involving additional inequalities with respect to the weaker one. The difference
between these notions can be understood in a better way after reading Section~\ref{sect:natural.Ishii}:
for the strong one, we impose the $\F_*\leq 0$-inequality on the discontinuities while, for the weak one,
we just impose ``tangential inequalities''. Each notion may have a specific interest, in particular for stability
results but also for further developments, but they turn out to be the same in the ``good framework''
which we always use, see \HBASF below (see Section~\ref{rweqs}). 

More concretely, we are given a general HJB Equation of the form
\begin{equation}\label{eq:super.H.strat}
    \F(x,t,U,DU)=0\quad\text{in}\quad\R^N\times[0,\Tf]\;,
\end{equation}
where $DU=(D_x U, D_t U)$ and 
\begin{equation}\label{hamil.strat}
    \F(x,t,r,p):=\sup_{(b,c,l)\in\BCL(x,t)}\big\{ -b\cdot p + c r- l \big\}\;,
\end{equation}
where $\BCL: \R^N\times[0,\Tf] \to \R^{N+3}$ is a set-valued map (\cf Section~\ref{sect:disc.pb}).
And we define the initial Hamiltonian as
\begin{equation}\label{eq:Finit.start}\F_{init}(x,r,p_x)=
    \sup_{\substack{((b^x,0),c,l)\in\BCL(x,0)}}\big\{-b^x \cdot p_x + c r - l \big\}\;.
\end{equation}

The fundamental assumptions we make in this part are the following\label{page:HBASF}

\begin{assumption}{\HBASF}{Basic Assumptions on the Stratified Framework.}\\[-1.3cm]
\begin{enumerate}
    \item[$(i)$] There exists a \TFS $\M=(\Man{k})_{k=0...(N+1)}$ of $\R^{N}\times
        (0,\Tf)$ such that, for any $r\in \R,p\in \R^{N+1}$, $(x,t)\mapsto\F(x,t,r,p)$ is continuous on $\Man{N+1}$ and may be
        discontinuous on $\Man{0} \cup \Man{1}\cup \cdots \cup \Man{N}$. Moreover $(0_{\R^N},1)
        \notin (T_{(x,t)}\Man{k})^\bot$ for any $(x,t) \in \Man{k}$ and for any
        $k=1...N$~\footnote{This assumption, whose aim is to avoid ``flat part'' of $\Man{k}$ in
        time, will be redundant to the normal controllability assumption in $\R^N\times(0,\Tf)$.  }.
        In the same way, there exists a \TFS $\M_0=(\Man{k}_0)_{k=0...N}$ of $\R^N$ such
        that, for any $r\in \R,p_x\in \R^{N}$, the Hamiltonian $x\mapsto\F_{init}(x,r,p_x)$ is continuous on $\Man{N}_0$ and may be
        discontinuous on $\Man{0}_0 \cup \Man{1}_0 \cup \cdots \cup
        \Man{N-1}_0$.
    \item[$(ii)$] The ``good framework for HJB Equations with discontinuities'' holds for
        Equation~\eqref{eq:super.H.strat} in $\OO=\R^N\times (0,\Tf)$ associated to the
        stratification $\M$.
    \item[$(iii)$] The ``good framework for HJB Equations with discontinuities'' holds for the
        equation $\F_{init}=0$ in $\OO=\R^N$, associated to the stratification $\M_0$. 
\end{enumerate}
\end{assumption}

\index{Good framework for HJE!for stratified problems}We recall that the assumptions for a ``Good
Framework for HJ Equations with Discontinuities'' are that \HBCL, \TCBCL and \NCBCL hold. We refer
to Section~\ref{sec:GFHJD} where the connections with Hamiltonian assumptions \Mong, \TC, \NCe are
described.

\label{rem:MM0}The reader may be surprised that, in \HBASF, the stratifications $\M$ and $\M_0$ are defined
independently the one to the other; in particular, it may seem natural that $\Man{k}_0$ contains at
least the trace of $\Man{k+1}$ at $t=0$. 

The simple framework of Chapter~\ref{chap:BasicFram} is helpful to explain why this is not the case:
indeed, if we consider the case when $(b(x,t,\alpha), c(x,t,\alpha), l(x,t,\alpha))$ is
discontinuous in $x,t$ on a stratification $\M$ for $t>0$, so is $\F(x,t,r,p)=p_t+H(x,t,r,p_x)$ but,
for $t=0$, the $\F_{init}$-equation, which is just $u(x,0)=u_0(x)$ in $\R^N$, does not present any
discontinuity if $u_0$ is continuous.

Hence the discontinuity in $\BCL$ creates difficulties for $t>0$ but not for $t=0$. In the same way, we can consider
the case when $b(x,t,\alpha), c(x,t,\alpha), l(x,t,\alpha)$ are continuous---hence $\Man{N+1}=\R^N
\times (0,\Tf)$---and $u(x,0)$ is obtained by solving a stationary stratified problem in $\R^N$, in
which case the difficulty is now at $t=0$ but not for $t>0$. 

These two examples show that actually the stratifications $\M$ and $\M_0$ are independent the one to
the other, with difficulties to solve the equations which are also independent.

\section{Definition of weak and strong stratified solutions}
\label{sec:strat-sols}

In order to state a definition, we introduce Hamiltonians $\F^k$, defined as follows:
if $(x,t)\in \Man{k}$, $r \in \R$ and $p \in T_{(x,t)}\Man{k}$,
we set
\begin{equation}\label{def:Fk}
\F^k(x,t,r,p):=\sup_{\substack{(b,c,l)\in\BCL(x,t)\\ b\in T_{(x,t)}\Man{k}}}\big\{ -
    b\cdot p +cr - l\big\}\;.
\end{equation}
Similarly, for $t=0$ we define
\begin{equation}\label{def:Fk-init}
\F^k_{init}(x,r,p_x):=
    \sup_{\substack{((b^x,0),c,l)\in\BCL(x,0)\\ b^x\in T_{x}\Man{k}_0}}\big\{-b^x \cdot p_x + c r - l
    \big\}.  
\end{equation}
We may also use these definitions for $p\in \R^{N+1}$ or $p_x \in \R^N$ since it is clear that there is no
contribution from the $(T_{(x,t)}\Man{k})^\bot$ or $(T_{(x,t)}\Man{k}_0)^\bot$ part of $p$ or $p_x$.

In the framework of Part~\ref{part:codim1}, as the reader may guess, we have $\Man{N} =
\H\times (0,\Tf)$, $\Man{N+1} = (\Omega_1\cup \Omega_2)\times (0,\Tf)$ and $\HT$ is exactly $\F^{N}$,
while $\Man{0},\cdots \Man{N-1}=\emptyset$ . 

In the sequel, the notation \HJBS refers to problem \eqref{eq:super.H.strat}, seen in the context
of stratified solutions, that we detail below. 
The notion of stratified supersolution, denoted by \SSup, is nothing but the usual Ishii
supersolution definition involving $\F^*=\F$. On the other hand, we introduce two notions of weak
and strong subsolutions, respectively denoted by \wSSub and \sSSub, recalling that, because of \HBASF, the Hamiltonians $ \F^k, \F_{init}^k$ are continuous for all $k$.  
\index{Stratified solutions!definition}

\begin{definition}\label{def:HJBSD}\label{defiStrat}
   \emph{--- Stratified sub and supersolutions of \HJBS.}\smsp
    {\bf 1.\,---} \SSup {\rm :} A locally bounded function $v : \R^N\times [0,\Tf[ \to \R$ is a
    stratified supersolution of \HJBS if $v$---or equivalently $v_*$---is an Ishii supersolution of 
    \eqref{eq:super.H.strat}. 
    
    \medskip

    \noindent {\bf 2.\,---} \wSSub {\rm :}
    A locally bounded function $u : \R^N\times [0,\Tf[ \to \R$ is a {\em weak} stratified subsolution
    of \HJBS if 
    \begin{enumerate}
    \item[$(a)$] for any $k=0,...,(N+1)$, $u^*$ is a viscosity subsolution of 
        $$ \F^k\big(x,t,u^*,Du^*\big) \leq 0 \quad \hbox{on  }\Man{k},
        $$
     \item[$(b)$] similarly, for $t=0$, and $k=0..N$, $u^*(x,0)$ is a viscosity subsolution of
            $$ \F_{init}^k(x,u^*(x,0),D_x u^*(x,0)) \leq 0 \quad \hbox{on  }\Man{k}_0\;.$$
    \end{enumerate}

    \noindent {\bf 3.\,---} \sSSub {\rm :} 
    A locally bounded function $u : \R^N\times [0,\Tf[ \to \R$ is a {\em strong}
    stratified subsolution of \HJBS if it is a \wSSub and satisfies additionally
    \begin{enumerate}
        \item[$(a)$] $\F_*\big(x,t,u^*,Du^*\big)\leq0\quad \hbox{in }\R^N\times(0,\Tf$)\;,
        \item[$(b)$] $(\F_{init})_*(x,u^*(x,0),D_x u^*(x,0)) \leq 0 \quad \hbox{in  }\R^N\;.$
    \end{enumerate}

    \noindent {\bf 4.\,---}  A weak or strong \emph{stratified solution} is a
function which is both a \SSup and either a \wSSub or a \sSSub.
\end{definition}

As usual, we will say that $u$ is an $\eta$-strict (weak or strong) stratified
subsolution if the various subsolution inequalities of the type ``$\mathbb{G}\leq0$'' are replaced
by a ``$\mathbb{G}\leq -\eta$'' inequality, the constant $\eta>0$ being independent of~$(x,t)$.

The difference between weak and strong stratified solutions can be better understood through the
discussion in Section~\ref{sect:natural.Ishii}, let us comment  on this now.

The notion of ``strong'' stratified (sub)solution which was used in \cite{AEYW} is the easiest to
interpret and maybe the more natural one from the viscosity solutions---or pde---point of view:
Part~\ref{part:codim1} teaches us that a subsolution inequality is missing on the discontinuity $\H
\times (0,\Tf)$ in order to get a uniqueness property, and that adding the right one solves this
problem. Therefore, it is not surprising to introduce the concept of ``stratified solution'' by
super-imposing {\em additional} subsolutions inequalities on each set of discontinuity $\Man{k}$,
including those at time $t=0$. We point out that these additional subsolution conditions are real
``$\Man{k}$-inequalities'', \ie they are obtained by looking at maximum points of $u^*-\varphi$ on $\Man{k}$
where $\varphi$ is a test-function which is smooth on $\Man{k}$.

Of course, at first glance, removing the $\F_*\leq 0$-inequality does
not seem to go in the right direction since some subsolution property is clearly missing. This is why, in
\cite{AEYW}, looking for ``additional'' (and not different) subsolution properties seems more natural; and
Section~\ref{rweqs} gives an other a posteriori justification of the interest of ``strong'' stratified subsolutions.

But, on the other hand, from the control point of view, the
$\F_*$-inequality (as well as the $(\F_{init})_*$-one) is not so natural for reasons explained in
Section~\ref{sect:natural.Ishii}. In that sense, a controller may understand in a better way the notion of ``weak''
stratified (sub)solution since all the inequalities have a clear sense in terms of control.

However, the defect of the notion of ``weak'' stratified (sub)solution is that it completely decouples the
subsolution inequalities on the different $\Man{k}$ and by doing so, opens the possiblity to
generate ``artificial'' subsolutions with uncorrelated values on these discontinuities.

The reconciliation of these two points of view involves in a central way the question of the regularity
of subsolutions on the various $\Man{k}$. Spoiling the results of Section~\ref{rweqs}, we can
summarize the answer as
$$\text{``\ if \HBASF holds,  then regular \wSSub $=$ \sSSub.\ ''}$$

Roughly speaking, this means that under Assumption \HBASF, the $\F_*$ and $(\F_{init})_*$
inequalities are just used to obtain the regularity of subsolutions and, once this regularity is
obtained, only the ``weak'' notion of stratified subsolution plays a role. Actually, as far as
the regularity issue is concerned, $\F_*$ and $(\F_{init})_*$ are not playing any special role in the
definition of \sSSub; in fact they could be replaced by other Hamiltonians satisfying \NCe which is
the real key point in order to get regularity. 
This remark may seem anecdotical here but it will play a real role in
Part~\ref{S-BC}. 

\

In the rest of the book, since we will always use Assumption \HBASF, we will always be in a context
where ``weak'' and ``strong'' stratified solutions coincide and we will use mainly \emph{strong
stratified solutions} (or we will mention the difference if there is one). 
Therefore the terminologies ``sub/supersolution of \HJBS'' or ``stratified
sub/supersolution of \eqref{eq:super.H.strat}'' refer to the above definition combining \sSSub
and \SSup. We also sometimes use the terminology \emph{Standard Stratified Problem} \SSP referring
to a problem in the form of \eqref{eq:super.H.strat}--\eqref{hamil.strat}--\eqref{eq:Finit.start}
satisfying \HBASF, understanding sub and supersolutions in the strong sense of
Definition~\ref{def:HJBSD}.

The next section is devoted to show that \sSSub are regular, being more precise about the term
``regular''itself. Then we will show that, under Assumption \HBASF, a comparison holds between
regular \wSSub and \SSup, which will allow us to prove also that regular \wSSub and \sSSub are the
same.

\section{The regularity of strong stratified subsolutions}
\label{sec:regstratsub}

We recall that the regularity of discontinuous functions is defined in Definition~\ref{def:regular};
based on it, we define the regularity of stratified subsolutions (weak or strong).
\begin{definition}\emph{--- Regularity of stratified subsolutions.}\smsp
    Let $u:\R^N\times [0,\Tf] \to \R$ be a \usc weak or strong stratified subsolution of
\eqref{eq:super.H.strat}. We say that $u$ is a regular subsolution if
    \begin{enumerate}
        \item[$(i)$] for any $k<N+1$, $u$ is $\omega^k$-regular on $\Man{k}$, where
            $$\omega^k=\Man{k+1}\cup \Man{k+2}\cup\cdots\cup \Man{N+1}\;.$$
            In other words, for any $(x,t) \in \Man{k}$ and $k<N+1$, 
            \begin{equation}\label{bonnelim11}
            u(x,t) = \limsup\{u(y,s),\ (y,s)\to (x,t),\ (y,s) \in \omega^k\}\;.
            \end{equation}
            Moreover, for the special case where $(x,t) \in \Man{N}$, we also have
            \begin{align}\label{bonnelim12}
            u(x,t) = &\limsup\{u(y,s),\ (y,s)\to (x,t),\ (y,s)\in M^{(x,t)}_+ \}\nonumber \\
            =& \limsup\{u(y,s),\ (y,s)\to (x,t),\ (y,s)\in M^{(x,t)}_- \},
            \end{align}
            where, for $r$ small enough,
            $M^{(x,t)}_+,M^{(x,t)}_-\subset\Man{N+1}\cap B((x,t),r)$ are the locally disjoint
            connected components of $\big((\R^N\times (0,\Tf))\setminus\Man{N}\big)\cap
            B((x,t),r)$.
        \item[$(ii)$] For $t=0$, for any $k<N$, the function $x\mapsto u(x,0)$ is
            $\omega_0^k$-regular on $\Man{k}_0$ where 
            $$\omega_0^k=\Man{k+1}_0\cup \Man{k+2}_0\cup\cdots\cup \Man{N}_0$$
            and, in the special case where $(x,0) \in\Man{N-1}_0$, we also have
            \begin{align}\label{bonnelim12-0}
            u(x,0) = &\limsup\{u(y,0),\ y\to x,\ (y,0)\in M_{0,+}^{(x,t)} \}\nonumber \\
            =& \limsup\{u(y,s),\ y\to x,\ (y,0)\in M_{0,-}^{(x,t)} \},
            \end{align}
            where, for $r>0$ small enough,
            $M_{0,+}^{(x,t)},M_{0,-}^{(x,t)}\subset\Man{N}_0\cap (B(x,r)\times\{0\})$ 
            are the locally disjoint connected components of
            $\big((\R^N\times\{0\})\setminus\Man{N-1}_0\big)\cap  (B(x,r)\times\{0\})$.
    \end{enumerate}
\end{definition}

The result on strong stratified subsolutions is the following. 
\begin{proposition}\label{prop:reg-sub-strat}Assume that \HBASF holds. 
    Then any strong stratified subsolution is regular.
\end{proposition}

We leave the proof of this important proposition to the reader since it is a routine application of
Proposition~\ref{reg-sub} after a suitable flattening of the $\Man{k}$ we are interested in, using
the definition of a \TFS.

    Again we insist on the very anecdotical role played by the Hamiltonians $\F_*, (\F_{init})_*$ and
    the $\F_*, (\F_{init})_*\leq 0$ inequalities in this result: only Assumption \NCBCL is
    playing a key role. As a consequence, if we replace the $\F_*, (\F_{init})_*\leq 0$ inequalities 
    in the definition of \sSSub, by some other ones like $\G_*, (\G_{init})_*\leq 0$ with
    $\G_*, (\G_{init})_*$ satisfying \NCBCL, we would still get regular subsolutions. This remark is
    important when dealing with stability results: given a sequence $(\ue)_\e$ of \sSSub for
    Hamiltonians $(\F_\e)_\e$, then $\limssup \ue$ is a subsolution for $\underline{\F}:=\limiinf
    \F_\e$. With suitable assumptions, $\underline{\F}$ may satisfy \NCBCL, while, in general, it is not
    clear that $\underline{\F}=(\overline{\F})_*$, where $\overline{\F}=\limssup \F_\e$. Hence a stability result
    for ``strong'' solutions is far more delicate to obtain than for ``weak'' solution.

    This remark yields another justification for introducing the notion of \wSSub, apart from being
    natural from the control viewpoint: while their needed regularity may come indeed from $\F_*,
    (\F_{init})_*$ inequalities---referring to strong solutions---it may also come from other
    inequalities coming for Hamiltonians satisfying \NCBCL. But also, it can derive from a
    particular situation where \NCBCL may not even be satisfied. We refer the reader to Section~\ref{abl} where the connections
    between the regularity of subsolutions and the existence of certain viscosity inequalities on the
    boundary are discussed for state-constrained problems; such arguments can also be applied on $\Man{k}$ for standard stratified
    problems.

    As an example, the reader may consider the cases when all the dynamics are pointing toward
    $\Man{k}$ for some $k$: clearly \NCBCL is not satisfied but the subsolutions are expected to be
    regular on $\Man{k}$. Under suitable assumptions, it should be possible to handle such
    cases by solving a problem on $\Man{k}$, and then by using the solution on $\Man{k}$ as a
    Dirichlet data for the problem in $\R^N \times (0,\Tf)$.

Despite we are not going to use it in this part, let us mention an immediate consequence of
Proposition~\ref{prop:reg-sub-strat} which will be useful in the case of state-constrained problems.
\begin{corollary}\label{cor:reg-MN}
    Assume that \HBASF holds and let $u:\R^N\times [0,\Tf] \to \R$ be an \usc strong stratified
    subsolution of \eqref{eq:super.H.strat}. Then, for any $k<N+1$, $u$ is $\Man{N+1}$-regular on
    $\Man{k}$. Similarly for $t=0$, for any $k<N$, the function $x\mapsto u(x,0)$ is
    $\Man{N}_0$-regular on $\Man{k}_0$.
\end{corollary}

\begin{proof}
    We just sketch it in the case of $\Man{k}$ (\ie for $t>0$) because it is an easy consequence of
    Proposition~\ref{prop:reg-sub-strat} by backward induction. The case $t=0$ is of course similar.
    Notice first that the result clearly holds for $k=N$ as a direct application of
    Proposition~\ref{prop:reg-sub-strat}. 

    Now, assume that the result holds for $k=N, \cdots,(N-l)$ and take $(x,t) \in \Man{N-l-1}$. By
    Proposition~\ref{prop:reg-sub-strat}, 
    $u(x,t)=\limsup_\e u(\xe,\te)$ with $(\xe,\te)\in \Man{N-l}\cup\cdots\cup \Man{N+1}$. 

    But we can use the $\Man{N+1}$-regularity at $(\xe,\te)\in \Man{N-l}\cup\cdots\cup \Man{N+1}$ to
    build a new sequence $(\xe',\te')\in \Man{N+1}$ such that $u(x,t) =\limsup_\e u(\xe',\te')$,
    implying that the induction works since the regularity result holds true for $k=N-l-1$.   
\end{proof}

\section{The comparison result}
\label{sec:compstrat}

The main advantage of the concept of (weak and strong) stratified solutions is reflected in the
comparison principle which we state now. \index{Comparison result!for stratified solutions
($\R^N$-case)}

\begin{theorem}\emph{--- Comparison result for stratified solutions.}\label{comp-strat-RN} \smsp
    $(i)$ Assuming \HBASF, a comparison result holds between bounded regular \wSSub and bounded \SSup of
    Equation~\eqref{eq:super.H.strat}.\\[2mm]
    $(ii)$ Assuming \HBASF, a comparison result holds between bounded \sSSub and bounded \SSup of
    Equation~\eqref{eq:super.H.strat}.
\end{theorem}

\begin{proof} 
    Of course, the second part of the result is an immediate consequence of the first one
    because of Proposition~\ref{prop:reg-sub-strat}.

Now we turn to the proof of the first part. Essentially the proof follows the main steps as the proof of 
Theorem~\ref{thm:minimal.charac} where it is shown that $\VFm$ is the unique solution of the Bellman Equation
with the $\HT$-complemented inequality, which turns out to be an $\Man{N}$-inequality in the stratified setting. The
only difference is that we have to use the more sophisticated form of Theorem~\ref{thm:sub.Qk.dpp}. 

    Before describing these main steps, let us introduce some notations and perform some reductions.
    Let $u,v : \R^N\times [0,\Tf[ \to \R$ be respectively a bounded \usc regular stratified subsolution and
    a bounded \lsc stratified supersolution of Equation~\eqref{eq:super.H.strat}. Our aim is to
    show that $u \leq v$ in $\R^N\times [0,\Tf[$\footnote{The reason why we do not include $\Tf$ in
    the comparison will be clarified later on.}.

    This inequality is proved via two successive comparison results: first, one has to show that
    $u(x,0) \leq v(x,0)$ in $\R^N$ which derives from a comparison result associated to the
    stationary equation $\F_{init}=0$. Then, to prove that $u \leq v$ in
    $\R^N\times ]0,\Tf[$, using a comparison for the evolution problem.
    The global strategy to obtain the comparison is the same in both cases and the changes to
    pass from one to the other are minor. Therefore we are going to provide the full proof only in
    the evolution case, admitting that $u(x,0) \leq v(x,0)$ in $\R^N$.

    \medskip

    \noindent\textbf{Reductions ---}
    In order to prove these comparison results, we perform the following changes which are based on
    Assumption \HBCLb.  We first use the by-now classical change $$ \bar u(x,t) =
    \exp(-Kt)u(x,t)\quad\hbox{ and } \quad \bar v(x,t) = \exp(-Kt)v(x,t)\;,$$
    which, according to \HBCLb-$(ii)$, allows to reduce to the case when $c\geq 0$ for any
    $(b,c,l)\in \BCL(x,t)$ and $(x,t)\in \R^N\times [0,\Tf]$. 
    We may also assume that $c\geq 1$ if
    $-b^t\geq \uc$ given by \HBCLb-$(iv)$.
    Notice  that Assumption \HBCLb-$(ii)$ implies that
    $c\geq 0$ if $t=0$ and $b^t=0$, hence the Hamiltonian $\F_{init}(x,r,p_x)$ is increasing in $r$.

    Next, adding $C_1 t +C_2$ to $\bar u$ and $\bar v$ and using \HBCLb-$(iv)$, we can assume
    without loss of generality that $l \geq \uc$ for any element $(b,c,l) \in \BCL(x,t)$,
    for any $(x,t)\in \R^N\times [0,\Tf]$.

    In the comparison proofs, both for $t\in (0,\Tf)$ and $t=0$, we use in a key way
    that $c\geq 0$ and $l \geq \uc$ for any element $(b,c,l) \in \BCL(x,t)$,  any $(x,t)\in
    \R^N\times [0,\Tf]$. Indeed, these properties together with the convexity of $\F$ and $\F_{init}$
    allow us to reduce to the case of {\em strict subsolutions}, a favorable situation both in the
    stationary and evolution case.

    Then, the comparison proof in $\R^N\times ]0,\Tf[$ is done in five steps.

\medskip

\noindent\textbf{Step 1:} \emph{Reduction to a local comparison result \LCR-evol} --
    To do so, we adapt in a suitable way the ideas introduced in Section~\ref{sect:htc}. The precise
    result is the 
\begin{lemma}\label{lem:loc-strat} Let \HBCL hold and $\psi_\mu : \R^N \times [0,\Tf]\to \R$ 
    be defined by
    $$ \psi_\mu(x,t):=-\mu (1+|x|^2)^{1/2}\;.$$
    There exists $\mus:=\mu(M,\uc)>0$ such that for any $(x,t) \in \R^N \times [0,\Tf]$ and 
    $(b,c,l)\in \BCL(x,t)$, 
    $$ -b\cdot (D_x \psi_\mus (x,t), D_t \psi_\mus (x,t))+c\psi_\mus(x,t)
     -l\leq \mus M + 0 - \uc < -\uc/2\;.$$
    In particular, $\psi_\mus$ is a $(\uc/2)$-strict \sSSub and it follows that
    \begin{enumerate}
        \item[$(i)$] for $\alpha\in(0,1)$, \LOCaEV is satisfied by
        $$ \bar u_\alpha (x,t):= \alpha \bar u(x,t)+(1-\alpha)\psi_\mus(x,t)\;;$$
        \item[$(ii)$] \LOCbEV is satisfied by considering 
        $$\bar u_\alpha^\delta(x,t):=\bar  u_\alpha (x,t)- \delta (|x-\xb|^2+|t-\tb|^2)$$
        where $(\xb,\tb)$ is the point where we wish to check \LOCbEV and $\delta>0$ is small enough.  
    \end{enumerate}
\end{lemma}

We leave the easy proof of this (important) result to the reader since it presents no difficulty at
all. We point out anyway that the checking of \LOCaEV uses techniques related to what is called the
``convex case'' in Section~\ref{sect:htc}, while the checking of \LOCbEV relies on the simplest
argument presented in the second particular case of this section. This strategy, based on
Assumption \HBCLb$-(iv)$, allows to overcome the difficulty mentioned in
Remark~\ref{rem:htc}-$(iii)$, appearing in equations like $\max(\G(x,u,D_xu);|D_x u|-1)=0$.

Since all the above reductions do not affect the regularity of the subsolution, we are reduced to
prove local comparison results between regular subsolutions and supersolutions. For the sake of
simplicity of notations, from now on we just denote by $u$ a \emph{strict regular stratified
subsolution} and $v$ a stratified supersolution.

    \medskip

    \noindent\textbf{Step 2:} \emph{Local comparison and argument by induction} -- In order to
    prove \LCR-evol we argue by induction. But using Theorem~\ref{thm:sub.Qk.dpp} we have to show,
    at the same time a local comparison result not only for Equation~\eqref{eq:super.H.strat} but
    also for equations of the type $\max(\F(x,t,w,Dw), w-\psi)=0$ where $\psi$ is a continuous function.
    In fact, with the assumptions we use, there is no difference when proving \LCR-evol for these
    two slightly different equations but, in order to be rigorous, we have to consider the
    ``obstacle'' one, which reduces to the $\F$--one if we choose $\psi(x)=K$ where the constant $K$
    is larger than $\max(||u||_\infty,||v||_\infty)$.

    For the sake of simplicity, we use below the generic expression $\psi$--Equation for the
    equation $\max(\F(x,t,w,Dw), w-\psi)=0$ and we will always assume that $\psi$ is a continuous
    function, at least in a neighborhood of the domain we consider.

    We are then reduced now to show that, for any $(\xb,\tb) \in \R^N \times (0,\Tf)$:\\

    \label{page:LCRxt}
    \noindent\emph{\LCRxt {\rm :} There exists $r=r(\xb,\tb)>0$ and $h=h(\xb,\tb)\in(0,\tb)$ such
    that, if $u$ and $v$ are respectively a strict regular stratified subsolution\footnote{According to the
    type of obstacle $\psi$ we have to use in the proof of Theorem~\ref{thm:sub.Qk.dpp}, we can
    assume \wlg  that $u \leq \psi -\delta$ for some $\delta>0$ in $\overline{Q^{\xb,\tb}_{r,h}}$
    and therefore a strict subsolution of $\F=0$ or of the $\psi$-Equation have essentially the same meaning.}
    and a stratified supersolution of some $\psi$--Equation in $Q^{\xb,\tb}_{r,h}$ and if
    $\displaystyle \max_{\overline{Q^{\xb,\tb}_{r,h}}}(u-v) >0$, then
    $$ \max_{\overline{Q^{\xb,\tb}_{r,h}}}(u-v) \leq \max_{\partial_p Q^{\xb,\tb}_{r,h}}(u-v)\;,$$
    where we recall that $\partial_p Q^{\xb,\tb}_{r,h}$ stands for the parabolic boundary of
    $Q^{\xb,\tb}_{r,h}$, namely here $ \partial {B(\xb,r)}\times [\tb-h,\tb]\cup
    \overline{B(\xb,r)}\times \{\tb-h\}$.    }\\

    It is clear that \LCRxt holds in $\Man{N+1}$ since $\F^{N+1}$ and all the $\psi$--Equations
    satisfy all the property ensuring a standard comparison result in the open set $\Man{N+1}$;
    therefore \LCRxt is satisfied for $r$ and $h$ small enough---see Section~\ref{simple-ex-comp}.

    In order that it holds for $(\xb,\tb)$ in any $\Man{k}$, we use a (backward) induction on $k$
    and more precisely, we introduce the property\\

    \noindent\PP{k}:=$\Big\{ $ \emph{\LCRxt holds for any $(\xb,\tb) \in \Man{k}\cup
    \Man{k+1}\cup\cdots \cup \Man{N+1}\Big\} $}\;.\\

    Since \PP{N+1} is true, the core of the proof consists in showing that \PP{k+1} implies \PP{k}
    for $0\leq k \leq N$.  To do so, we assume that $(\xb,\tb) \in \Man{k}$ and want to prove that
    \LCRxt holds provided \PP{k+1} is satisfied.

    \medskip

    \noindent\textbf{Step 3:} \emph{Regularization of the subsolution}
    \index{Regularization of subsolutions}
    -- In order to apply the ideas of Section~\ref{sec:regplus-subsol}, we use the
    definition of an \TFS which allows us to assume that $\xb = 0$, $\tb>0$ and
    that we are in the case when $\Man{k}$ is a $k$-dimensional affine space parametrized by
    $(t,x_1,\cdots,x_{k-1})$, given by the equations $x_{k}=x_{k+2}=,\cdots, =x_N =0$. This
    reduction is based on a $C^{1,1}$-change of variable in $x$ which is done {\em only for the
    regularization step} and then we come back to the initial framework by the inverse of the
    change.

    In the new setting, we keep the notations $\F$, $\F^j$ (for all $j$) and $u$. We just point out
    here that the $t$-variable is always part of the tangent variables which explains some
    restriction in the assumption concerning the behavior of $\F^l$ in $t$, \cf\ \TC. Before
    proceeding, we emphasize the fact that, since $r$ and $h$ may depend on $(\xb,\tb)$, we can
    handle without any difficulty the localization to reduce to the case of a tangentially flat stratification.

    \index{Regularization of subsolutions} 
    Since $(\xb,\tb)=(0,\tb)\in \Man{k}$, we may assume that $Q^{x,t}_{r,h}$ only contains points of
    $\Man{k}, \Man{k+1},\cdots,\Man{N+1}$ and, by assumption\footnote{or by using using
    Proposition~\ref{prop:reg-sub-strat} for the reader who is just interested in strong stratified
    subsolutions.}, we know that the subsolution $u$ is $\omega^k$-regular on
    $\Man{k}$ where $\omega^k=\Man{k+1}\cup \Man{k+2}\cup \cdots\cup\Man{N+1}$.

    In order to regularize the subsolution and apply Proposition~\ref{reg-by-sc}, we 
    make the change of functions
    $$ \tilde u(x,t) = -\exp(-\alpha u(x,t))\quad \hbox{and} \quad 
    \tilde v(x,t) = -\exp(-\alpha v(x,t))\;.$$
    Indeed, a priori the initial $\F$ and $\F^j$ do not satisfy \Mong while the new Hamiltonians
    obtained after this exponential change satisfy \Monu if $\alpha$ is small enough. However, these
    new Hamiltonians are not necessarily convex in $r$ and $p$.

    Hence we first apply Proposition~\ref{reg-by-sc} to regularize the strict subsolution in
    $Q^{x,t}_{r,h}$, using the variables $y=(t,x_1,\cdots,x_{k-1})$, $z=(x_{k},x_{k+2},\cdots,x_N)$
    and the $ G((y,z),u,p)$ corresponding to $\max(\F_*(x,t,r,p),\F^j (x,t,r,p),u-\psi)$ but with
    the new Hamiltonian obtained with the above change of variable.  This approximation by a
    sup-convolution in the tangential variables leads to a Lipschitz continuous subsolution which is
    semi-convex in the tangential variables $y$.

    To proceed in order to obtain a sequence of strict stratified subsolutions which are $C^1$ in
    the variables $y=(t,x_1,\cdots,x_{k-1})$, there are two options: either we use
    Proposition~\ref{C1-reg-by-sc} with Remark~\ref{rem:r-nonconvex} since the new Hamiltonians
    satisfy \Monu but are not necessarily convex in $r$ or we make the change back and we use
    Lemma~\ref{combconvsub} to avoid assumption \Monu.

    In any case, applying back the change of variables if necessary, and using that the above
    procedure gives a strict stratified subsolution in a neighborhood of $(\xb,\tb)=(0,\tb)$, we
    find that  there exists $r,h>0$, $t'>\tb$ and a sequence $(\ue)_\eps$ of subsolutions of the
    stratified problem in $Q^{\xb,t'}_{r,h}$, which are in $C^0\left(\overline{Q^{\xb,t'}_{r,h}}\right)\cap
    C^1\left(\Man{k}\cap \overline{Q^{\xb,t'}_{r,h}}\right)$ and are all $(\eta/2)$-strict subsolutions of
    Equation~(\ref{eq:super.H}) in $Q^{\xb,t'}_{r,h}$. Moreover, because of Remark~\ref{rem:MaT}, we
    can assume as well that each $\ue$ is a $(\eta/2)$-strict subsolution on $Q^{\xb,\tb}_{r,h}$
    \footnote{This regularization step cannot be done if $\tb=\Tf$: this is why the comparison may only
    be proved on $\R^N\times [0,\Tf-\delta]$ for any $\delta>0$.}.

    \medskip

    \noindent\textbf{Step 4:} \emph{Properties of the regularized subsolution} -- Step 3 has two
    consequences 
    \begin{enumerate}
    \item[$(a)$] for any $\eps>0$ small enough, $\F^k(x,t,\ue,D \ue)\leq -\eta/2<0$ on $\Man{k}\cap
        \overline{Q^{\xb,\tb}_{r,h}}$ in a classical sense; 
    \item[$(b)$] since $\ue$ is an
        $(\eta/2)$-strict \wSSub of the $\psi$-Equation in
        $\mathcal{O}:=Q^{\xb,\tb}_{r,h}\setminus\Man{k}$ and since \LCR holds there because
        \PP{k+1} holds, we use the subdynamic programming principle for subsolutions (cf.
        Theorem~\ref{thm:sub.Qk.dpp}\index{Dynamic Programming Principle!for subsolutions}) which implies that
        each $\ue$ satisfies an $(\eta/2)$-strict dynamic programming principle in $\mathcal{O}$\footnote{We
        leave to the reader the careful checking that the proof of Theorem~\ref{thm:sub.Qk.dpp} uses only
        \PP{k+1} in $\mathcal{O}$ and never the $\F_*$-inequalities.}.
    \end{enumerate}
    These two properties allow us to have \LCR-evol in $Q^{\xb,\tb}_{r,h}$ in the final step.

    \medskip

    \noindent\textbf{Step 5:} \emph{Performing the local comparison} --
    From the previous step we know that for each $\eps>0$, $\ue$ satisfies the hypotheses of
    Lemma~\ref{lem:comp.fundamental} and we deduce from this lemma that 
    $$\forall (y,s)\in Q^{\xb,\tb}_{r,h} \;,\quad (\ue-v)(y,s)
    <\max_{\partial_p Q^{\xb,\tb}_{r,h}}(\ue -v)\;.$$
    Using that $u=\limssup \ue$, this yields a local comparison result (with inequality in the large
    sense) between $u$ and $v$ as $\eps\to0$.
	
    Therefore we have shown that \PP{k+1} implies \PP{k}, which ends the proof.
\end{proof}

\begin{remark} As it is clear in the above proof, the special structure of $\M$ does not play any
    role and time-dependent stratifications do not differ so much from time-independent ones. We
    remark anyway that a difference is hidden in the normal controllability assumption is that we
    cannot have a normal direction of the form $(0_{\R^N}, \pm 1)$ for $\Man{k}$ and this, for any $k$. 
\end{remark}

\section{Regular weak stratified subsolutions are strong stratified subsolutions}\label{rweqs}

As the title of the section indicates it, the main result is the
\begin{proposition}\label{wequals}
    If \HBASF holds, then any regular weak stratified subsolution is a strong stratified subsolution.
\end{proposition}

It is a little bit surprising to see that, provided \HBASF holds, the ``unnatural'' $\F_*,
(\F_{init})_*$ inequalities necessarily hold for regular weak stratified subsolution.
Besides, the rather indirect proof of Proposition~\ref{wequals} below---via the comparison
result---confirms how artificial these inequalities are. 

But, for the purpose of this book, this has a clear consequence: since \HBASF is a basic assumption which
is supposed to hold everywhere in this book, regular weak or strong stratified subsolutions make no
difference. For this reason, in the sequel, we almost only use the notion of \sSSub.

\begin{proof}
    We only show that a regular weak stratified subsolution satisfies the $\F_*$ inequality, the
    $(\F_{init})_*$ one being similar.

    Let $u$ be a regular weak stratified subsolution and $(x,t)\in \R^N\times (0,\Tf)$ be a strict
    local maximum point of $u-\phi$ where $\phi$ is a $C^1$-function in $\R^N\times [0,\Tf]$. We may
    assume \wlg that $u(x,t)=\psi(x,t)$. If $ \F_*(x,t,\phi(x,t),D\phi(x,t)\leq 0$, we are done.
    Hence we assume by contradiction that $ \F_*(x,t,\phi(x,t),D\phi(x,t)=2\delta>0$. 

    But $\F_*$ being \lsc and $\phi$ being smooth, $\F_*(y,s,\phi(y,s),D\phi(y,s)\geq \delta>0$ in
    $Q^{x,t}_{r,h}$ for $r,h>0$ small enough and we may also assume because of the strict local
    maximum point property that $u-\phi < 0$ on $\partial_p Q^{x,t}_{r,h}$. Since $\F_*\leq \F$,
    $\phi$ is a \SSup in $Q^{x,t}_{r,h}$ and the \LCR for the equation which holds as a by-product
    of the proof of Theorem~\ref{comp-strat-RN}---choosing perhaps smaller $r,h$---leads to 
    $$0=u(x,t)-\psi(x,t)\leq \max_{\partial_p Q^{x,t}_{r,h}} \,(u-\phi)<0\; ,$$
    \ie a contradiction to the maximum point property. Hence $ \F_*(x,t,\phi(x,t),D\phi(x,t)\leq 0$
    and the result is proved.  
\end{proof}

\chapter{Connections with Control Problems and Ishii Solutions}
\label{chap:stratcontr}    

\abstract{In this chapter, the connections between the notions of stratified solution to control problems and
classical Ishii solutions are studied. Given a set-valued map, the natural value function is
identified as the unique (strong) stratified solution of the associated Bellman Equation. As can be
expected, it corresponds to the minimal Ishii solution. Some partial results connecting Ishii and
stratified subsolutions are presented.}

\section{Value functions as stratified solutions}

\index{Control problem!in stratified domains}
In Section~\ref{Gen-DCP}, it is already shown that the value function $U$ defined by
$$U(x,t):=\inf_{\cT(x,t)}\Big\{\int_0^{+\infty}l\big(X(s),T(s)\big)\,e^{-D(s)}\ds\Big\}$$
is an Ishii supersolution of $\F=0$, therefore it is a stratified supersolution. But in order to get
the subsolution properties, the behavior of the dynamic is playing a key role via Assumptions
\TCBCL and \NCBCL.
\index{Stratified solutions!and control problems}

In the sequel, we treat in details the subsolution properties on $\Man{k}$ ($0\leq k\leq N+1$), \ie those
for $t>0$. The case $t=0$ and the corresponding inequalities on the $\Man{k}_0$ follow readily from the same arguments.

\begin{theorem}\label{SubP}\emph{--- Subsolution Properties.}\smsp
    Under assumption \HBASF, the value function $U$ is a regular weak stratified subsolution. More
    precisely,
    \begin{enumerate}
        \item[$(i)$] For any $k=0..N$, $U^* = (U|_{\Man{k}})^*$ on
        $\Man{k}$\,; 
        \item[$(ii)$] for any $k=0..(N+1)$,  $U^*$ is a Ishii subsolution of 
        $$\Fk(x,t,U^*,DU^*)=0\quad\text{on }\Man{k}\;.$$
    \end{enumerate}
\end{theorem}

In this result, we recall again that, for $k=0..N$, $(ii)$ is a viscosity inequality for an equation {\em
restricted to $\Man{k}$}, which means that if $\phi$ is a smooth function on
$\Man{k}$ (or equivalently on $\R^N\times(0,\Tf)$ by extension) and if $(x,t)\in
\Man{k}$ is a local maximum point of $U^*-\phi$ on $\Man{k}$, then
$$\Fk(x,t,U^*(x,t),D\phi(x,t))\leq 0\ \;.$$ 
This is why point $(i)$ is an important fact since it allows to restrict everything
(including the computation of the \usc envelope of $U$) to $\Man{k}$.

Of course, the case $k=N+1$ is particular since $\F^{N+1}=\F_*=\F$ on $\Man{N+1}$,
because \HBASF implies that $\F$ is continuous on $\Man{N+1}$.

This result already shows that $U^*$ is a weak stratified subsolution of the problem. But the
reader has probably already understood that, since \NCBCL holds, $U^*$ is also going to be a strong
stratified subsolution by Proposition~\ref{wequals}.

\begin{remark} 
    As we detail it in Section~\ref{sec:vfar}, the value function $U$ is \lsc and therefore regular
    on every $\Man{k}$ for $k=1..N$, \ie it satisfies \eqref{bonnelim11}. But unfortunately the
    lower semi-continuity does not provide the ``two-sided'' regularity \eqref{bonnelim12}. This is
    why Proposition~\ref{wequals} is required to have the right property on $\Man{N}$.
\end{remark}

\begin{proof} 
    Since all the results are local, we can assume \wlg that we are in the case of a flat $\Man{k}$,
    \ie if $(x,t)\in \Man{k}$ then, in a neighborhood of $(x,t)$, $\Man{k}=(x,t)+V_k$, a complete
    proof being obtained via a simple change of variable.

    \medskip

    \noindent\textbf{(a)} \emph{Proof of $(i)$ ---}
    For $k=0..N$, we consider $(x,t)\in \Man{k}$ and a sequence $(\xe,\te)\to (x,t)$ such that 
    $$ U^*(x,t) = \lim_{\e\to0}  U(\xe,\te)\; .$$ 
    We have to show that we can assume that $(\xe,\te) \in \Man{k}$. In all the
    sequel, we assume that $\e>0$ is small enough so that all the points remain in $B((x,t),r)$, the ball
    given by \NCBCL.

    Given a sequence $(x_\e,t_\e)$, we build a sequence $(\bar x_\e,\bar t_\e)_\e$ such that
    $(\bar x_\e,\bar t_\e) \in \Man{k}$ for any $\e$ and with $ U^*(x,t) = \lim_\e U(\bar x_\e,\bar
    t_\e)$. Notice that of course if $(x_\e,t_\e)$ already belongs to $\Man{k}$ we can set $(\bar
    x_\e,\bar t_\e)=(x_\e,t_\e)$ so let us assume that this is not the case.

    By Theorem~\ref{DPP}, for any solution $(X,T,D,L)$ of the
    differential inclusion starting from $(\xe,\te,0,0)$ and any $\theta>0$,
    $$U (\xe,\te) \leq \int_0^\theta
    l\big(X(s),T(s)\big)\exp(-D(s)) ds+U \big(X(\theta),T(\theta))\exp\big(-D(\theta)\big)\;.$$
    Now let $(\tilde x_\e,\tilde t_\e)$ be the projection of $(\xe,\te)$ onto $\Man{k}$ and let us
    denote by $n_\e$ the vector $n_\e:= (\tilde x_\e,\tilde t_\e)-(\xe,\te) \in V_k^\bot$.
    Using \NCBCL we know that, for any $(y,s) \in B((x,t),r)$, there exists $b\in\B(y,s)$ with normal
    component $2^{-1}\delta.n_\e  |n_\e |^{-1}\in B(0,\delta)$. More precisely, there exist $b$ which
    can be decomposed as $b=b_\top + b_\bot $ with $b_\top\in V_k$, $b_\bot\in V_k^\bot$,
    and $b_\bot:= 2^{-1}\delta.n_\e  |n_\e |^{-1}$. We denote by $\widetilde{\BCL}(y,s)$ the set of all
    $(b,c,l) \in \BCL(y,s)$ for which $b$ is of this form.
    
    Clearly, the map $(x,t)\mapsto\widetilde{\BCL}(y,s)$ has compact, convex images and is upper
    semi-continuous. Solving the associated differential inclusion starting from
    $(\xe,\te)$, we get a solution $(X,T,D,L)$ such that $(X(s),T(s))\in B((x,t),r)$ for $s$ small
    enough, independent of $\e$. Moreover, for $s_\e=2|n_\e |/\delta$, 
    $$(\bar x_\e,\bar t_\e) = (X(s_\e),T(s_\e))=(\xe,\te)+s_\e b=(\tilde x_\e +
    y_\e,\tilde t_\e + \tau_\e)\; ,$$ 
    where $(y_\e,\tau_\e) \in V_k$, $|(y_\e,\tau_\e)| = O(|\tilde x_\e-\xe|+|\tilde t_\e-\te|)$. 
    Indeed, $s_\e b_\bot = n_\e$ and therefore $(\xe,\te)+s_\e b \in (\tilde x_\e,\tilde t_\e) +V_k$.

    Therefore, $(\bar x_\e,\bar t_\e) \in \Man{k}$ since $\Man{k}=(x,t)+V_k$ in a neighborhood of
    $(x,t)$ and using the Dynamic Programming Principle above with $\theta = s_\e$ yields
    $$U (\xe,\te) \leq O(s_\e)+U \big(X(s_\e),T(s_\e)\big)\exp(-D(s_\e))= 
    O(s_\e)+U \big(\bar x_\e,\bar t_\e \big)(1+ O(s_\e))\;.$$ 
    Finally since $s_\e \to 0$ as $\e \to 0$, we deduce that 
    $$ \limsup_{\e\to0} U \big(\bar x_\e,\bar t_\e\big) \geq \limsup_{\e\to0} \, 
    U (\xe,\te)=U^*(x,t)\; ,$$ 
    which shows $(i)$ since $(\bar x_\e,\bar t_\e\big) \in \Man{k}$.

\medskip

    \noindent \textbf{(b)} \emph{Proof of $(ii)$ --} As we already mentioned above, 
    the result for $k=N+1$ is given by Theorem~\ref{thm:SubP}.
    Hence it remains to examine the cases $k=0..N$.
    
    For such $k$, let $\phi$ be a smooth function on $\Man{k}$ and
    let $(x,t)\in \Man{k}$ be a local maximum point of $U^*-\phi$ on $\Man{k}$, we have to show that
    $$\Fk(x,t,U^*(x,t),D\phi(x,t))\leq 0\ \;.$$ 

    Using $(i)$, we can consider a sequence $(\xe,\te) \in \Man{k}$ such that $U(\xe,\te)\to U^*(x,t)$ and use 
    Theorem~\ref{DPP}, which implies
    \begin{equation}\label{dpp-xete}
    U (\xe,\te) \leq \int_0^\theta
    l\big(X(s),T(s)\big)\exp(-D(s)) ds+U \big(X(\theta),T(\theta)\big)\exp\big(-D(\theta)\big)\ ,
    \end{equation}
    for any solution $(X,T,D,L)$ of the differential inclusion starting from $(\xe,\te,0,0)$.

    But now we can use the result of Lemma~\ref{tgfields}: for any $(b,c,l)\in \BCL^k (x,t)$ and
    $\eta >0$, $\BCL^k (y,s)\cap  B((b,c,l),\eta) \neq \emptyset$ if $(y,s)$ is close enough to
    $(x,t)$. Solving locally the differential inclusion with $\BCL^k (y,s)\cap  B((b,c,l),\eta) $
    instead of $\BCL$ and using the associated solution in \eqref{dpp-xete} allows to obtain the
    viscosity inequality for $(b,c,l)$ as in the standard case.

    Since this is true for any $(b,c,l)\in \BCL^k (x,t)$, the result is complete.
\end{proof}

An immediate consequence of Theorem~\ref{SubP}---and of its analogue for $t=0$---, using also Theorem~\ref{comp-strat-RN} and Proposition~\ref{wequals}, is the
\begin{corollary} 
    Under the assumptions of Theorem~\ref{SubP}, the value function $U$ is continuous in $\R^{N}\times [0,\Tf[$ 
    and is the unique (strong) stratified solution of the Bellman Equation.  
\end{corollary}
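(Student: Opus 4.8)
The proof of the corollary is short because all the heavy lifting has already been done in the preceding theorems. The plan is to assemble three ingredients: Theorem~\ref{SubP} (which gives the stratified subsolution property of $U$), the supersolution result of Section~\ref{Gen-DCP} (Theorem~\ref{SP}, which already tells us $U$ is an Ishii supersolution, hence a stratified supersolution), and the comparison result Theorem~\ref{comp-strat-RN}. First I would recall that under Assumptions \HBCL, $U$ is locally bounded (Lemma~\ref{u-bound}), so the sub/supersolution notions make sense. Then I would observe that $U$ is simultaneously a stratified subsolution and a stratified supersolution: the supersolution part is Theorem~\ref{SP} together with the observation in Definition~\ref{defiStrat}$(i)$ that an Ishii supersolution is automatically a stratified supersolution; the subsolution part combines Theorem~\ref{thm:SubP} (the Ishii subsolution inequality in $\R^N\times(0,T]$ and the $\F_{init}$-inequality at $t=0$) with Theorem~\ref{SubP}$(ii)$ above (the $\Fk$-inequalities on each $\Man{k}$) and, for $t=0$, with the analogue of Theorem~\ref{SubP} for the stratification $\M_0$ giving the $\Fk_{init}$-inequalities.

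Next, to get continuity: since $U^*$ is a stratified subsolution and $U_*$ is a stratified supersolution, Theorem~\ref{comp-strat-RN} applied to the pair $(U^*,U_*)$ yields $U^* \le U_*$ on $\R^N\times[0,T[$. The reverse inequality $U_* \le U^*$ holds trivially by definition of the semicontinuous envelopes, so $U^* = U_* = U$, which means $U$ is continuous on $\R^N\times[0,T[$. (One should note here that Theorem~\ref{SubP}$(i)$ is what makes the $\Fk$-subsolution inequalities for $U^*$ meaningful and consistent, since it guarantees $U^* = (U|_{\Man{k}})^*$ on $\Man{k}$; I would mention this explicitly.) For continuity up to $t=T$ one would invoke Proposition~\ref{jusquaT} and Remark~\ref{rem:MaT}, since \HGCP-type properties hold for the standard evolution structure $p_t + H$, but as the corollary is stated on $\R^{N+1}$ in the original phrasing I would be careful to state the continuity precisely on $\R^N\times[0,T[$ (or $[0,T]$ after the terminal-time discussion).

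Finally, for uniqueness: let $v$ be any stratified solution of the Bellman equation, i.e. $v^*$ is a stratified subsolution and $v_*$ a stratified supersolution (and similarly $v$ itself, being continuous if one has already proven that, but in general one works with the envelopes). Applying Theorem~\ref{comp-strat-RN} twice — once to the pair $(v^*, U)$ and once to $(U, v_*)$, using that $U$ is both a stratified sub- and supersolution — gives $v^* \le U \le v_*$, hence $v = U$. I do not expect any genuine obstacle here; the only point requiring a little care is bookkeeping at $t=0$, namely checking that the subsolution inequalities $(\F_{init})_*(x,U,D_xU)\le 0$ and $\Fk_{init}(x,U,D_xU)\le 0$ on $\Man{k}_0$ are indeed satisfied by the value function. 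The first follows from Theorem~\ref{thm:SubP}'s equation \eqref{eq:sub.Ht0}, and the second requires rerunning the argument of Theorem~\ref{SubP} with the stratification $\M_0$ in place of $\M$ and the set $\BCL(x,0)$ of admissible initial elements (those with $b^t=0$), which is why the ``good framework'' assumption was imposed on $\F_{init}=0$ as well; I would state this as an immediate adaptation rather than redoing the computation.
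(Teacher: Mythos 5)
Your proposal is correct and is exactly the assembly the paper intends: the corollary is stated without proof as "an immediate consequence" of Theorem~\ref{SubP}, combined with the supersolution property (Theorem~\ref{SP}) and the comparison result (Theorem~\ref{comp-strat-RN}), which is precisely what you do. Your additional care about the $t=0$ inequalities and the domain $\R^N\times[0,T[$ versus the loosely stated "$\R^{N+1}$" is appropriate and does not change the argument.
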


\section{Stratified solutions and classical Ishii viscosity solutions}
\label{sec:Strat-Is}

The aim of this section is to compare the two notions of solutions, in particular under the
assumptions of Theorem~\ref{comp-strat-RN}. Of course, (weak or strong) stratified solutions and classical
Ishii viscosity solutions can coincide only when the latter are uniquely identified and the case of
codimension-$1$ discontinuities shows that this clearly requires some additional assumptions, \cf
Part~\ref{part:codim1}.

We present two kind of results in this section: the first one, which is just an easy remark, is that
the stratified solution is the minimal Ishii (super)solution; the second one provides a particular
case where we can show that Ishii subsolutions are (strong) stratified subsolutions.

\subsection{The stratified solution as the minimal Ishii solution}

Before addressing the question of identifying conditions under which classical
Ishii viscosity solution and stratified solution coincide, we begin with an
easy consequence of Theorem~\ref{comp-strat-RN}.
\begin{corollary}\label{Strat-eq-Ishii} If \HBASF holds, the unique stratified solution of
    Equation~\eqref{eq:super.H.strat} is also the minimal Ishii viscosity supersolution and solution
    of Equation~\eqref{eq:super.H.strat}.
\end{corollary}
The proof of this result is obvious since Ishii viscosity supersolutions and
stratified supersolutions are the same; therefore
Corollary~\ref{Strat-eq-Ishii} is a straightforward application of
Theorem~\ref{comp-strat-RN}.

In the case of codimension-$1$ discontinuities (see Part~\ref{part:codim1}),
Corollary~\ref{Strat-eq-Ishii} implies that $\Um$ is the unique stratified
solution and actually the reader can check that
Theorem~\ref{thm:minimal.charac} is nothing but the first uniqueness (and
comparison) result for a stratified solution in this book, $\HT$ providing the
subsolution inequality on $\Man{N}$.

\subsection{Ishii subsolutions as stratified subsolutions}

The next very natural question is: under which conditions can it be proved that
a classical Ishii viscosity subsolution is a stratified (strong) subsolution? Of course,
this question is meaningful only for subsolutions since the supersolutions are
the same. Notice that when this is the case, we conclude that uniqueness holds for the Ishii
formulation since the unique stratified subsolution is also the unique classical Ishii viscosity
solution. 

This question then appears as a generalization in the direction of looking for
conditions which ensure, in one dimension, that $\Up\equiv\Um$. A partial but rather general
answer is given by Lemma~\ref{lem:H1m.H2p.c}. The reader can check on examples
that this lemma is of a rather simple use as it can be seen on
Chapter~\ref{chap:KPP}.

In the more complicated framework of stratified problems, we are also looking
for simple conditions which can easily be checked for more general types of
discontinuities. The ones we propose in this section are unavoidably rather
restrictive but they cover anyway some interesting cases as we will illustrate
below by several examples.

As in the previous section, we treat only the case of the subsolution inequalities on the $\Man{k}$,
\ie those for $t>0$, but similar arguments gives the same results for the $\Man{k}_0$ if $t=0$.

\medskip

A way to obtain the $\F^k$-subsolution inequalities by using the Ishii subsolution
condition $\F_*\leq0$ consists in\\[2mm]
$(i)$ ``shifting'' $\Man{k}$ into some $\Man{k}_\e\subset\Man{N+1}$ such that
$\Man{k}_\e\to\Man{k}$;\\[2mm]
$(ii)$ using the Ishii inequality $\F_*\leq 0$ on $\Man{k}_\e$, since it is contained in $\Man{N+1}$, in order 
to obtain a $\F^k_\e \leq 0$-inequality on $\Man{k}_\e$;\\[2mm]
$(iii)$ passing to the limit through a stability property to get $\F^k\leq0$ on $\Man{k}$ as $\e\to0$.

In order to perform this stability strategy in a quite general way we need to take into
account the fact that $\Man{k}$ may be approached by several $\Man{k,i}_\e\subset\Man{N+1}$, and of
course we only need to choose one which yields the result. We refer to the example
below to better understand this remark. Notice that using stability in the stratified setting is by no
means a routine exercise as it is in the classical continuous case, \cf Chapter~\ref{NESR}, but here we use
a ``simple'' stability result since the $\F^k_\e$-inequalities are set on the $\Man{k}_\e$, which are just
copies of $\Man{k}$ and which converge to $\Man{k}$ in a strong enough way. 

\

To be more precise, let us assume that $\mathcal{O}\subset\R^{N+1}$ is an open set and that
for $(x,t)\in\mathcal{O}$,
$$\F(x,t,r,p)=\sup_{(b,c,l)\in\BCL(x,t)}\big\{ -b\cdot p + c r- l \big\}$$ 
for some set-valued map $\BCL:\mathcal{O} \to\mathcal{P}(\R^{N+3})$.

\begin{definition}
    For $k=0..N$, we say that $\Man{k}$ is locally $\Man{N+1}$-approached by a family of $k$-dimensional manifolds
    $(\Man{k,i}_\e)_{i\in\mathcal{I}}$ at $(x,t)\in \Man{k}$ if there exists $r>0$ such that
    $B((x,t),\bar r) \subset \mathcal{O}$ and for all $i\in\mathcal{I}$,
    $$\begin{cases}
   \Man{k,i}_\e \cap B((x,t),\bar r) \subset \Man{N+1}\;,\\ 
   \Man{k,i}_\e \cap B((x,t),\bar r) \to \Man{k} \cap B((x,t),\bar r)\text{ in the $C^1$-topology\;.}
    \end{cases}$$
In such a situation, for $(y,s) \in \Man{k,i}_\e$, we set
\begin{equation*}\label{eq:def.ham.eps}
    \F^{k,i}_\e (y,s,r,p): = \sup_{\substack{(b,c,l)\in\BCL(y,s)\\ b\in T_{(y,s)}
   \Man{k,i}_\e}}\big\{ -b\cdot p +cr - l\big\}\;.
\end{equation*}
\end{definition}

Our result using this notion is the

\begin{proposition}\label{IequalS} 
    Let us assume that \HBASF holds and that $u$ be an \usc classical
    Ishii viscosity subsolution of
    $$ \F(x,t,u,Du)\leq0\quad \hbox{in  }\mathcal{O}\subset\R^{N+1}\;.$$
    We also assume that $\Man{k}$ is $\Man{N+1}$-approached by $\displaystyle (\Man{k,i}_\e)_{i\in\mathcal{I}}$ at $(x,t)$.
    If
    \begin{equation}\label{eqn:fk-appr}
\F^k \leq \max_{i\in\mathcal{I}}\big(\liminf \F^{k,i}_\e\big)\text{  in }\Man{k} \cap
    B((x,t),\bar r)\;,
\end{equation}
    then 
    $$\F^k(x,t,u,Du)\leq 0\text{ in }\Man{k} \cap B((x,t),\bar r)\;.$$
\end{proposition}

The idea of this proposition is very simple and follows the above described program: if, on the
``shifted'' $\Man{k,i}_\e$, the (approximate) $\F^{k,i}_\e$-inequalities follow from the $\F_*$-one,
we can conclude by ``stability'' that $\liminf \F^{k,i}_\e\leq 0$ on $\Man{k}$. Then by using any possible choice of
these ``shifted'' families $(\Man{k,i}_\e)_{i,\e}$, it is enough to have \eqref{eqn:fk-appr} in order
to conclude that the $\F^k$-inequality holds.

We point out that this result takes a simpler form in the \AFS case since we can typically use
manifolds $\Man{k,i}_\e$ which are nothing but $e_i^\e + \Man{k}$ for some suitable choice of
$e_i^\e \in \R^{N+1}$, \ie on some copy of $\Man{k}$ which is {\em included in $\Man{N+1}$}.
Typically, we need at least one of these copies on each connected components of
$\Man{N+1}\setminus\Man{k}$. Of course, we are in a similar setting in the \TFS case after a
suitable change of variables.

From a control point-of-view, the interpretation of Proposition~\ref{IequalS} is the following:
the best strategy to stay on $\Man{k}$ is to use tangential dynamics which already exist in one of the
connected components of $\Man{N+1}$, without combining incoming or outgoing dynamics coming from
several of these connected components. Such situation clearly leads to $\VFp=\VFm$ in the two-domains case.

We also insist on the fact that we have treated the case of $\F$ but an analogous result also holds for
$\F_{init}$.

\begin{proof} 
    Using Proposition~\ref{ajout} later in this book, the
    $\F^{k,i}_\e$-inequalities on the $ \Man{k,i}_\e$ are direct consequences of the $\F$-one on
    $\Man{N+1}$. Hence $\F^{k,i}_\e\leq 0$ on $\Man{k,i}_\e \cap B((x,t),\bar r)$ and, by stability
    we deduce that $\liminf \F^{k,i}_\e\leq 0$ on $\Man{k} \cap B((x,t),\bar r)$.  This implies that
    $\max_i (\liminf \F^{k,i}_\e) \leq 0$ on $\Man{k} \cap B((x,t),\bar r)$ and the result follows.
\end{proof}

An interesting direct consequence is the 
\begin{corollary}\emph{--- Equivalence of Ishii and stratified solutions.}\smsp
    If the assumptions of Proposition~\ref{IequalS} hold for any $(x,t) \in \Man{k}$ for $k=1..N$,
    then any Ishii subsolution in $\R^N \times (0,\Tf)$ is a stratified subsolution. As a
    consequence, Ishii sub and supersolutions are the same as stratified sub and supersolutions in
    $\R^N \times (0,\Tf)$.  
\end{corollary}

We have decided to restrict Proposition~\ref{IequalS} and this corollary to the domain $\R^N \times
(0,\Tf)$ but, of course, similar results can be obtained at $t=0$ and for general stationary
problems. We leave these generalization to the reader.

\

The example below shows a simple situation where the result can be applied, leading to uniqueness
for the Ishii problem.

\begin{example}\label{ex:ishii.strat}
    {\rm  We consider the equation
    $$u_t + a(x)|Du|=g(x)\quad \hbox{in  }\R^2\times (0,\Tf)\; ,$$
    where $a=a_i$ and $g=g_i$ in $\Omega_i$ where the $\Omega_i$ are in Figure~\ref{fig:cross} and
    the functions $a_i, g_i$ are continuous. Of course, we assume that $a_i(x) \geq 0$ for
    any $x\in \Omega_i$, $1\leq i \leq 4$.

    \medskip

    \noindent\textbf{(a)}
    Let us first consider $\Man{2}$ and the part $\{x_1=0, x_2>0\}$. Here 
    $$\F^2 (x,t,(p_x,p_t))= p_t + \sup\, \left\{ -(\theta a_1 v_1 +
    (1-\theta)a_2v_2)\cdot p_x -(\theta g_1 +(1-\theta)g_2)\right\}\; ,$$
    where the supremum is taken on all $|v_1|, |v_2|\leq 1$ and all $0\leq \theta \leq 1$ such
    that\\ $(\theta a_1 v_1 +(1-\theta)a_2v_2)\cdot e_1 = 0$.

    It is obvious that $\F^2$ can be computed by choosing $v_1,v_2$ such that
    $v_1\cdot e_1=v_2\cdot e_1 =0$, \ie by taking dynamics which are in the direction of $\Man{2}$
    and writing 
    $$-(\theta a_1 v_1 +(1-\theta)a_2v_2).p_x -(\theta g_1 +(1-\theta)g_2)=\theta (- a_1
    v_1\cdot p_x-g_1) +(1-\theta)(-a_2v_2\cdot p_x -g_2)\; ,$$
    it follows that 
    $$\F^2 (x,t,(p_x,p_t))=\max(p_t +a_1|(p_x)_2|-g_1, p_t +a_2|(p_x)_2|-g_2)\; ,$$
    where $(p_x)_2$ is the second component of $p_x$, \ie the tangential part of the gradient in
    space.

    \medskip

    \noindent\textbf{(b)}
    Examining the condition to be checked for Proposition~\ref{IequalS}, we see that we can choose $\Man{2,i}=(-1)^{i+1}\e e_1+\Man{2}$
    and for $\F^{2,i}_\e$, we have
$$
        \F^{2,i}_\e (x+(-1)^{i+1}\e e_1 ,t,(p_x,p_t)) =p_t +a_i(x+(-1)^{i+1}\e e_1)|(p_x)_2|-g_i(x+(-1)^{i+1}\e e_1)\; ,$$
 and $\G_i(x ,t,(p_x,p_t)) =p_t +a_i(x)|(p_x)_2|-g_i(x)$. Therefore we have $\F^2=\max(G_1,G_2)$, we can apply the result and of course the same property holds for the three other
    parts of $\Man{2}$.

    \medskip

    \noindent\textbf{(c)}
    For $\Man{1}:=\{(0,0)\}\times (0,\Tf),$ the checking is even simpler by considering $(0,0)\pm \e e_1\pm \e e_2$, one can easily check that the $\F^1$
    condition, \ie $p_t-\min(g_i)\leq 0$, is satisfied.}
\end{example}

\begin{remark}
    As the above example shows, the result of Proposition~\ref{IequalS} is not very sophisticated
    but it has the advantage to be very simple to apply.
\end{remark}

\section{Concrete situations that fit into the stratified framework}

In this section, we consider stratified problems through a different point of view, maybe closer to
concrete applications. We give general frameworks in which \HBASF, \NCe/\NCBCL and \TC/\TCBCL
are satisfied so that the connections between the control and pde approaches are satisfied.  

In the following two subsections, we assume that we are given a tangentially flattenable stratification
$(\Man{k})_k$ of $\R^N$ and each manifold $\Man{k}$ is written as the union of its connected
components $\Man{k,j}$ $$ \Man{k} = \bigcup_{j=1}^{J(k)}\, \Man{k,j}\; ,$$ 
where $J(k)\in \N \cup\{+\infty\}$.

\subsection{A general control-oriented framework}

Here we start from a collection of specific control problems on each $\Man{k,j}$.

\medskip

\noindent\textbf{The control problems ---}
On each $\Man{k,j}$, we are given a space of control $A_{k,j}$ and functions $(b^{k,j}, c^{k,j},
l^{k,j})$ representing the dynamic, discount factor and cost for a control problem on $\Man{k,j}$.
For the sake of simplicity, we assume that all these function are defined in $\R^N\times [0,
\Tf]\times A_{k,j}$ with the condition $b^{k,j} (x,t,\alpha_{k,j}) \in T_x \Man{k}$ for any
$(x,t)\in \Man{k,j}$ and $\alpha_k \in A_k$ in order that the dynamic preserves $\Man{k,j}$ at least
for a short time. 

\medskip

\noindent\textbf{The Hamiltonians --- }
If $(x,t)\in \Man{k,j}$, we introduce the associated Hamiltonian 
$$ \tilde H^{k,j}(x,t,r,p):= \sup_{\alpha_{k,j} \in A_{k,j}}\,\left\{-b^{k,j}(x,t, \alpha_{k,j})\cdot p +
c^{k,j}(x,t, \alpha_{k,j})r-l^{k,j}(x,t, \alpha_{k,j})\right\}\; ,$$
which is defined for $r\in \R$ and a priori only for $p\in T_x \Man{k}$ but we can as usual extend
this definition for $p\in \R^N\times \R$.

If $(x,t) \in \R^N \times (0,\Tf)$, setting $L(x,t):=\{(k,j) ;\ (x,t) \in \overline{\Man{k,j}}\}$
and define
$$\BCL(x,t)=\overline{\rm Conv}\left\{\bigcup_{{(k,j)} \in L(x,t)} 
\{(b^{k,j}, c^{k,j}, l^{k,j})(x,t, \alpha_{k,j}),\ \alpha_{k,j} \in A_{k,j}\} \right\}\;,$$
$$
\F(x,t,r,p)= \sup_{\substack{(k,j) \in L(x,t),\\ \alpha_{k,j} \in A_{k,j}}}
\Big\{  -b^{k,j}(x,t, \alpha_{k,j})\cdot p + c^{k,j}(x,t, \alpha_{k,j})r - 
l^{k,j}(x,t, \alpha_{k,j})\Big\}\; .$$

\noindent\textbf{Assumptions ---}
In order to have Assumption \TC satisfied, it is enough that each $(b^{k,j}, c^{k,j}, l^{k,j})$
satisfies \hyp{BACP} and for \NCe, we have to assume that if $(x,t)\in \Man{\bar k}$, then the set
$$\overline{\rm Conv}\left(\bigcup_{\substack{(k,j) \in L(x,t),\\ k>\bar k}} \{(b^{k,j}, c^{k,j},
l^{k,j})(x,t, \alpha_{k,j}),\ \alpha_{k,j} \in A_{k,j}\} \right)\; ,$$
 satisfies \NCBCL (instead of $\B$).

\subsection{A general pde-oriented framework}

On the contrary, here, we start from a general equation and define all the $\F^k$ by induction.
Unfortunately this pde-oriented example will not be completely formulated in terms of pde and
Hamiltonians, the difficulty being analogous to defining $\HT$ in Part~\ref{part:codim1}. To
simplify, we treat the case when the stratification does not depend on times, \ie
$\Man{k+1}=\tMan{k}\times (0,\Tf)$ for all $0\leq k \leq N$, where $ (\tMan{k})_{k}$ is a
stratification--a \TFS--of $\R^N$.

\medskip

\noindent\textbf{The case $k=N+1$ ---}
We start from $\Man{N+1}$ which we write as the union of its connected components
$$ \Man{N+1} = \bigcup_{j=1}^{J(N+1)}\, \tMan{N,j} \times (0,\Tf)\; .$$
We consider the case when
$$ \F^{N+1}(x,t,r,(p_x,p_t))=p_t + \tilde H^{N,j}(x,t,r,p_x)\quad\hbox{in  }\tMan{N,j}\times (0,\Tf)\; ,$$
for all $j$ where the Hamiltonians $\tilde H^{N,j}$ are defined by
$$ \tilde H^{N,j}(x,t,r,p)= \sup_{\alpha_{N,j} \in A_{N,j}}\left\{  -b^{N,j}(x,t, \alpha_{N,j})\cdot p + c^{N,j}(x,t, 
\alpha_{N,j})r - l^{N,j}(x,t, \alpha_{N,j})\right\}\; ,$$
where the control sets $A_{N,j}$ are compact metric spaces. A simple but natural situation is when all these 
Hamiltonians can be extended as continuous in $\R^N\times [0,\Tf]$ functions satisfying \hyp{BA-HJ}. These 
Hamiltonians are the analogues of $H_1,H_2$ in Part~\ref{part:codim1}.

\medskip

\noindent\textbf{Induction for $k<N+1$ ---}
It remains to define $\F$ and $\F^{k+1}$ on all $\tMan{k} \times (0,\Tf)$ for $k<N$ and this has to
be done by induction.  For $k=N-1$, if
$$ \Man{N} = \bigcup_{j=1}^{J(N)}\, \tMan{N-1,j}\times (0,\Tf)\; ,$$
we can assume that, on each $\tMan{N-1,j}\times (0,\Tf)$, we have an Hamiltonian $\tilde H^{N-1,j}$
and we have, for any $(x,t) \in \tMan{N-1,j} \times (0,\Tf)$
$$ \F(x,t,r,(p_x,p_t))=\max_{l\in L(x,t)} \Big( p_t + \tilde H^{N,l}(x,t,r,p_x), p_t + 
\tilde H^{N-1,j}(x,t,r,p_x)\Big)\; ,$$
with $L(x,t):=\{l ;\ (x,t) \in \overline{\tMan{N,l}}\times (0,\Tf)\}$. On the other hand, $\F^N$ may
be decomposed into two parts: the analogue of the $\HT$-one in Part~\ref{part:codim1} coming from
$\F^{N+1}$ and the specific $\tilde H^{N-1,j}$-one reflecting a particular control problem on
$\tMan{N-1,j}\times (0,\Tf)$. This means
$$ \F^N (x,t,r,(p_x,p_t))=\max \Big( \F^{N+1}_T (x,t,r,(p_x,p_t)), 
p_t + \tilde H^{N-1,j}(x,t,r,p_x)\Big)\; ,$$
where $\F^{N+1}_T (x,t,r,(p_x,p_t))$ is built in the following way: as in the previous section, we set
$$ \overline{\rm Conv}\Big(\bigcup_{l \in L(x,t)} 
\{(b^{N,j}, c^{N,j}, l^{N,j})(x,t, \alpha_{N,j}),\ \alpha_{N,j} \in A_{N,j}\} 
\Big)\; ,$$ 
and, for $(x,t)\in \tMan{N-1,j}\times (0,\Tf)$ we denote by $\BCL^{N-1}_T(x,t)$ the subset of $(b,c,l)$ in this 
closed convex envelope such that $b\in T_{x} \tMan{N-1,j}$. Then
$$ \F^{N+1}_T (x,t,r,(p_x,p_t))=p_t + \sup_{\BCL^{N-1}_T(x,t)}\left\{  -b\cdot p_x + cr - l\right\}\; .$$

For any $k$, the construction is analogous. For any connected component of $\tMan{k,j} \times (0,\Tf)$ of $\tMan{k}
\times (0,\Tf)$, $\F$ and $\F^{k+1}$ are constructed in the same way by using, for $\F$, a maximum of the $\F^{k+2}, 
\F^{k+3}, \cdots, \F^{N+1}$ nearby and of $p_t +  \tilde H^{k,j}(x,t,r,p_x)$ where $\tilde H^{k,j}$ is a specific 
Hamiltonian on $\Man{k,j} \times (0,\Tf)$, while for $\F^{k+1}$, one has to built a tangential Hamiltonian $\F^{k+2}_T$ 
and take the maximum with $p_t +  \tilde H^{k,j}(x,t,r,p_x)$. The construction of $\F^{k+2}_T$ is the same as in the
previous section and is based on computing the element of $\BCL(x,t)$ for $(x,t) \in \tMan{k,j} \times (0,\Tf)$ coming 
from $\tMan{\bar k,l} \times (0,\Tf)$ for $\bar k >k$ and for the nearby connected components of the $\tMan{\bar k} 
\times (0,\Tf)$.

\chapter{Stability Results}
\label{NESR}

\abstract{This chapter provides several stability results for stratified solutions: in the most
complete one, new parts of the stratification can appear at the limit, while other of them can
disappear.}


Stability results are, of course, a fundamental feature of viscosity solutions.  A general stability
result for solutions in the Ishii sense (\cf Theorem~\ref{hrl}) is readily available for
Hamiltonians with any type of discontinuities. Therefore it can be used for supersolutions in the
stratified case, stratified supersolutions being nothing but ordinary Ishii supersolutions.

But clearly the case of subsolutions is far more complicated: passing to the limit in all the
viscosity inequalities $\F^k\leq 0$ on $\Man{k}$ for all $k=0,..,N+1$ creates difficulties both at
the level of the Hamiltonians and the stratification. 

For the Hamiltonians, assuming there is sequence of Hamiltonians $(\F_\e)_\e$ all associated to the
same stratification and $\limssup \F_\e=\F$, first it is not clear that $\limiinf \F_\e=\F_*$; but
it is even less clear that for $k=0..N$, $\limiinf \F_\e^k = \F^k$, or at
least  $\limiinf \F_\e^k \geq \F^k$, which would be sufficient to get standard subsolution
inequalities for the limiting problem. We point out here that the notion of \wSSub may drop the
first difficulty---which is already an important result---but the second one is, of course,
unavoidable.

For the stratification, there are two levels of difficulties: either we just want to take into
account cases for which the local structure of the stratification is unchanged, \ie the discontinuities
are the same, they are just slightly moved; or we wish to treat cases where some parts of the
stratification are created or deleted, \ie some discontinuities may appear or disappear in the
Hamiltonians.

There is also a last difficulty, connected to the half-relaxed limits method: in order to use it in
the easiest way, one wants to use the $\limssup$ related to $\R^N\times (0,\Tf)$. But then,
passaging to the limit in the $\F^k$-inequalities becomes a problem: even if we consider problems with a
fixed stratification, we cannot simply use the standard stability result on $\Man{k}$ since it
relies on the $\limssup$ related to $\Man{k}$, not to $\R^N\times (0,\Tf)$. This difficulty which
looks like the one we encounter in control problems (\cf Theorem~\ref{SubP}-$(i)$) is solved by the
usual \emph{normal controllability assumption}.

In this chapter, we address all these difficulties: we first provide a basic stability result in the
case where the structure of the stratification is unchanged, without solving the difficulty related
to the convergence of the Hamiltonians. Then we show how the difficulty related to the convergence
of the Hamiltonians can be treated and finally we show how to take into account some modifications in
the structure of the stratification, both when new parts appear and when some parts disappear in the limit.

We conclude this introduction on stability results by a warning: as it will be clear in some of the
applications, there are often simpler method to show that a limit of stratified (sub)solutions is a
stratified (sub)solution. The aim of this chapter is more to give ideas on the various possibilities
that may be considered than to give results that can be applied blindly.

\section{Strong convergence of stratifications when the local structure is unchanged}

As it is clear from the above introduction, a stability result for a stratified problem requires two
ingredients; first a suitable notion of convergence for stratifications and then some
assumptions on the convergence of the Hamiltonians. But, of course, these ingredients should be
compatible enough to lead to a stability result.

Let us start from a definition given in \cite{AEYW} for the convergence of locally flattenable stratifications
which we adapt to the case of stratifications of $\R^N \times (0,\Tf)$. \index{Stratification! convergence of}
\label{page:strat.conv.un}
\begin{definition}\label{def:conv.strat-art}\emph{--- Strong convergence of locally flattenable
    stratifications.}\smsp
    We say that a sequence $(\M_\eps)_\eps$ of locally flattenable stratifications of $\R^N\times (0,\Tf)$
    converges to a locally flattenable stratification $\M$ if, for each $(x,t)\in\R^N\times (0,\Tf)$, there exists
    $r>0$, an \AFS $\M^\star=\M^\star((x,t),r)$ in $\R^N\times (0,\Tf)$, a change of coordinates $\Psi^{x,t}$ as in
    Definition~\ref{def:LFS} and, for any $\eps >0$, a family of changes of
    coordinates $\Psi^{x,t}_\eps$ as in Definition~\ref{def:LFS}\ 
    \footnote{without imposing $\Psi^{x,t}_\eps(x,t)=(x,t)$ and $\Psi^{x,t}(x,t)=(x,t)$} satisfying
    \begin{enumerate}
        \item[$(i)$] for any $0\leq k \leq N$, if $\Man{k}\cap B((x,t),r) \neq \emptyset$, then
        $\Psi^{x,t}(\Man{k}\cap B((x,t),r))=\M^\star\cap\Psi^{x,t}(B((x,t),r))\,$ and, for any $\e>0$, 
            $\Psi^{x,t}_\eps(\Man{k}_\eps\cap B((x,t),r))= \M^\star \cap \Psi^{x,t}_\eps(B((x,t),r))$.
             
        \item[$(ii)$] the changes of coordinates $\Psi^{x,t}_\eps$ converge in $C^{1}(B((x,t),r))$ to
            $\Psi^{x,t}$ and their inverses $(\Psi^{x,t}_\eps)^{-1}$ defined on $\Psi^{x,t}(B((x,t),r))$ also
            converge in $C^1$ to $(\Psi^{x,t})^{-1}$.
    \end{enumerate}
    We denote this convergence by $\M_\eps \toLFSs \M$ where {\rm LFS} stands for Locally Flattenable
    Stratification and ``$s$'' for ``strong'' convergence.   
\end{definition}

This definition essentially means that a sequence $(\M_\eps)_\eps$  of stratification converges to
$\M$ if the $\M_\eps$ are locally just {\em smooth}, little deformations of $\M$. Indeed,
$$\Man{k}_\eps\cap B((x,t),r)= [\Psi^{x,t}_\eps]^{-1}\circ \Psi^{x,t}\Big(\Man{k}\cap
B((x,t),r)\Big)\quad\text{and}\quad [\Psi^{x,t}_\eps]^{-1}\circ\Psi^{x,t}\to \mathrm{Id}\text{ in }C^1\;.$$
Technically, this allows to work locally with a fixed stratification $\M^\star$, removing completely
the difficulty of the convergence of stratification which is easily described by the convergence
of $\Psi^{x,t}_\eps$ to $\Psi^{x,t}$.

Of course, in this definition, we can replace ``locally flattenable stratification'' by 
``tangentially flattenable stratification'' and \AFS by \TFS without changing the global idea of this convergence and in this
more general case, we will just write $\M_\eps \toTFSs \M$.

\medskip

\noindent\textbf{Drawbacks ---}
Unfortunately this definition---even with the above generalization---excludes a lot of interesting
cases, the first one being the regularization of a corner, see Figure~\ref{fig:book}: in $\R^3$, we
define $\M$ by
$$ \Man{1}:=\{(0,0,x_3),\, x_3 \in \R\}\; ,\; 
\Man{2}:=\{(x_1,|x_1|,x_3),\, x_1\in \R \setminus\{0\},\, x_3 \in \R\}\, ,$$
and $\Man{0}=\emptyset$, $\Man{3}=\R^3 \setminus\left(\Man{1}\cup \Man{2}\right)$. 
Defining $\M_\eps$ through
$$\Man{2}_\e :=\{(x_1,(x_1^2+\e^2)^{1/2}-\e ,x_3),\, x_1\in \R \setminus\{0\},\, x_3 \in \R\}\, ,$$
and with $\Man{0}_\e= \Man{0}$, $\Man{1}_\e= \Man{1}$ and $\Man{3}_\e= \R^3 
\setminus\left(\Man{1}\cup \Man{2}_\e\right)$,
we see that we cannot expect the convergence of $\M_\e$ in the sense of the above definition.
Indeed, the dashed axis on Figure~\ref{fig:book} which should converge to the $x_3$-axis of the
limiting stratification does not exist in the approximating stratifications.

\begin{figure}[h!]
    \begin{center}
   \includegraphics[width=0.4\textwidth]{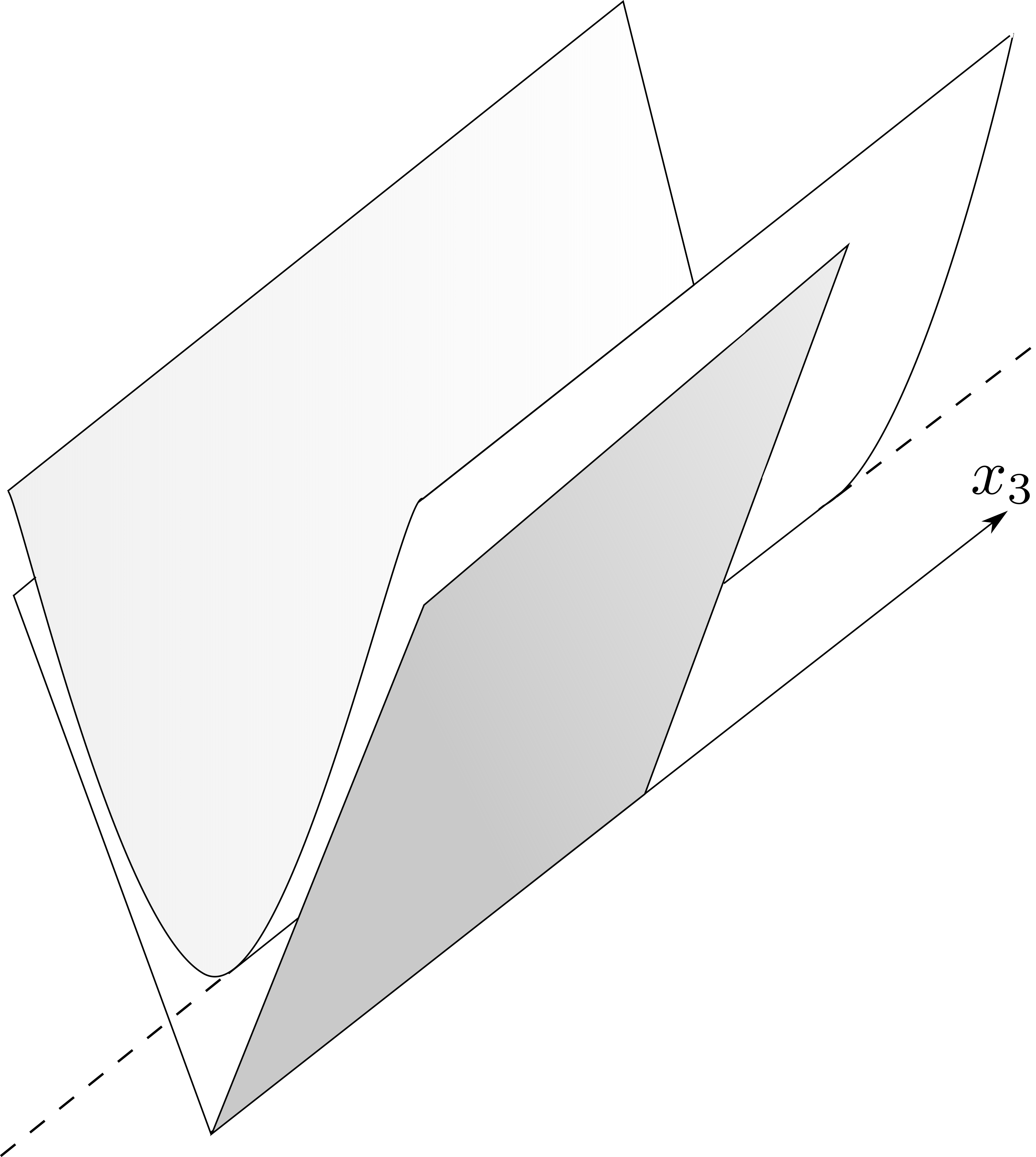}
   \caption{The "book" approximation}
   \label{fig:book} 
   \end{center}
\end{figure}

Another approach uses the following approximation of $\M$: we set $\Man{0}_\e= \emptyset$,
$$ \Man{1}_\e :=\{(\e,0,x_3),\, x_3 \in \R\}\cup \{(-\e,0,x_3),\, x_3 \in \R\}\; ,$$
$$\Man{2}_\e :=\{(x_1+\e ,x_1-\e ,x_3),\, x_1>0,\, x_3 \in \R\} \cup 
\{(x_1-\e ,x_1+\e ,x_3),\, x_1<0,\, x_3 \in \R\} \, ,$$
and $\Man{3}_\e= \R^3 \setminus\left(\Man{1}\cup \Man{2}_\e\right)$.
But this other sequence of stratification $\M_\e$ does not converge either to $\M$ in the sense of
the above definition. This second example is a bit trickier since the limiting $\Man{1}$ is obtained
by merging the two connected components of the $\Man{1}_\e$, a case which is again clearly excluded
by definition \ref{def:conv.strat-art}.

\section{Weak convergence of stratifications and the associated stability result}

The aim of this section is to provide a notion of convergence of stratifications which partially
corrects the defects above and allows to take into account the second above approximation of $\M$
(but not the first one yet). This notion of convergence allows the ``merging'' of different
connected components of $\Man{k}_\e$ but does not permit the emergence of new parts of the
stratification (\ie, no creation of new discontinuities for the equation). On the contrary, it
allows the disappearance of some of them (elimination of discontinuities). We address these
questions in a more complete way later in this chapter.

To do so, we concentrate on the equation in $\R^N \times (0,\Tf)$, the case $t=0$ being treated
analogously. In order to formulate the stability result, a notion of convergence of stratifications
of \cite{AEYW} is changed into the more general following definition.
\index{Stratification!convergence of}
\label{page:strat.conv.deux}
\begin{definition}\label{def:conv.strat}\emph{--- Weak convergence of tangentially flattenable
    stratifications.}\smsp
    We say that a sequence $(\M_\eps)_\eps$ of \TFS of $\R^N \times (0,\Tf)$
    converges to a \TFS $\M$ if: 
    for any $k=1..N+1$, for any $(x,t) \in \Man{k}$, there exists $r>0$ and
            $J\geq 1$ such that, for $\e$ small enough
            \begin{enumerate}
                \item[$(a)$] there exists $J$ connected components
                    $(\Man{k}_{j,\e})_{j=1..J}$ of $\Man{k}_{\e}$ such that $$\Man{k}_\e \cap
                    B((x,t),r) = \cup_j \Man{k}_{j,\e}\cap B((x,t),r)\,;$$
                \item[$(b)$] for any $j=1..J$ and $\eps >0$, there exists a $C^{1,1}$-change of
                    coordinates $\Psi^{x,t}_{j,\eps} : B((x,t),r) \to \R^N \times (0,\Tf)$ such that
                    $\Psi^{x,t}_{j,\eps}(\Man{k} \cap B((x,t),r))= \Man{k}_{j,\e} \cap
                    B((x,t),r)$\,;
                \item[$(c)$] the family of changes of coordinates $\Psi^{x,t}_{j,\eps}$ and their
                    inverses $(\Psi^{x,t}_{j,\eps})^{-1}$ converge in $C^1$ to identity in a
                    neighborhood of $(x,t)$ as $\e\to0$\;.
                \item[$(d)$] For any $l<k$, we have
                $$\Man{l}_\e \cap B((x,t),r) = \emptyset \;.$$
\end{enumerate}

    We denote this convergence by $\M_\eps \toTFSw \M$ where ``$w$'' stands for ``weak'' convergence.    
\end{definition}

In the previous definition, we were using local changes of coordinates which transform globally one
stratification into an other one. Here, on the contrary, the changes act
only on a single connected component $\Man{k}_j$ with no information on their effects on the other parts of the
stratification. As we mentioned above, this formulation allows the merging of several connected
components of the $\Man{k}_\e$. The reader can easily check that it applies without any difficulty
to the second approximation of $\M$ in the example of the previous section.

Clearly, no new part of the stratification can be created in this passage to the limit since,
locally, any connected component of $\Man{k}$ is the limit of one or more connected components of
$\Man{k}_\eps$. On the contrary, some parts of the stratification can disappear, in addition to the
merging of connected components with the same dimension, as shown in the following example in
$\R^N$ (we drop the time for the sake of simplicity):

\begin{figure}[!htp]
    \begin{center}
    \includegraphics[width=0.55\textwidth]{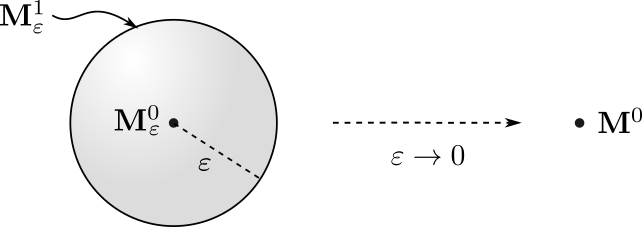}
    \caption{Collapsing of a component.}
    \label{fig:ring}    
    \end{center}
\end{figure}

For $\M$, we take $\Man{0}=\{0\}$ and
$\Man{N}=\R^N\setminus \Man{0}$, and for $\M_\eps$
$$ \Man{0}_\eps=\{0\}\; ,\; \Man{N-1}_\eps=\partial B(0,\eps)\; ,\; \Man{N}_\eps=
\R^N\setminus (\Man{0}_\eps \cup \Man{N-1}_\eps)\; .$$
The reader can easily that $\M_\eps \toTFSw \M$ and $\Man{N-1}$ is empty since $\Man{N-1}_\eps$
vanishes. Here a natural question could be: is it possible to assume that $\Man{0}_\eps=\emptyset$?
We answer to this question in the next sections.

Now, for each $\eps >0$, we consider the associated Hamilton-Jacobi-Bellman problem in the
stratified domain $\M_\eps$. The meaning of sub and supersolutions is the one that
is introduced in Definition~\ref{def:HJBSD}, with the family of Hamiltonians
$\F_\eps$ and $(\F^k_\eps)_k$ that are constructed from some family $\BCL_\eps$.
We write $\HJBS_\e$ for the stratified problem associated to $\F_\eps$ and $(\F^k_\eps)_k$.

In order to formulate the following stability result, we have to define the limiting Hamiltonians:
$\F$---but here this seems classical using the half-relaxed limits method---and the $\F^k$---or some
suitable Hamiltonians---on $\Man{k}$, which, in any case, are defined only if $p\in T_{(x,t)}
\Man{k}_\eps$. The definition of the weak convergence of stratifications gives us a first step in
this direction: with the notations of Definition~\ref{def:conv.strat}, if $(x,t) \in \Man{k}$, we
set
$$ \limiinf\F^k_{j,\eps} (x,t,r,p)= 
\liminf_{\displaystyle{\mathop{\scriptstyle{(\xe,\te)\in \Man{k}_{j,\eps}\to (x,t),\ 
\re \to r}}_{\scriptstyle{\pe \in T_{(\xe,\te)} \Man{k}_{j,\eps}\to p,\ \varepsilon\to 0}}}}\ 
\F^k_{j,\eps}(\xe ,\te,\re,\pe)\; .$$
Notice that this definition is consistent with Definition~\ref{def:conv.strat} since if $\pe \in
T_{(\xe,\te)} \Man{k}_{j,\eps}\to p$ then $p\in T_{(x,t)} \Man{k}$.
\begin{theorem}\label{thm:main.stability}\emph{--- Stability for stratified problems.}\smsp
    Let $\HJBS_\e$ be a sequence of stratified problems in $\R^N \times (0,\Tf)$ associated to
    $\F_\eps$, $\M_\eps$ and $(\F^k_\eps)_k$, such that $\M_\eps \toTFSw \M$. Then the following
    holds: 
  \begin{enumerate} 
       \item[$(i)$] if for all $\eps >0$, $v_\eps$ is a \lsc supersolution of
           $\HJBS_\e$, then $ \underline{v}=\liminf_{*} v_\eps $ is a \lsc
            supersolution of  \HJBS, associated to $\F =\limssup \,\F_\eps$\,;
      \item[$(ii)$] if, for $\eps >0$, $u_\eps$ is an strong \usc subsolution of
          $\HJBS_\e$ and the Hamiltonians $\F_\eps, (\F^k_\eps)_{k=0..N}$ satisfy
          \NCe and \TC with uniform constants on a uniform neighborhood of $\M$, 
          then $\bar{u} = \limsup^*u_\eps$ is a regular \usc subsolution of
          \HJBS associated to $\G^k:=\max_j(\liminf_* \F^k_{j,\eps})$ for any $k=0..N$.
   \end{enumerate}
\end{theorem}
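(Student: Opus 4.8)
\textbf{Proof strategy for Theorem~\ref{thm:main.stability}.}

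The plan is to treat the supersolution part $(i)$ and the subsolution part $(ii)$ separately, as they require genuinely different tools. For $(i)$, I would proceed essentially as in the proof of the half-relaxed limits theorem (Theorem~\ref{hrl}): the supersolution inequality for stratified solutions is nothing but the Ishii supersolution inequality (see Definition~\ref{defiStrat}~$(i)$), so one only needs to pass to the limit in the standard viscosity inequality using test-functions in $C^1(\R^N\times[0,T])$. The key point is that the manifolds $\Man{k}_\eps$ converge to $\Man{k}$ in the Hausdorff distance, so a point $(x,t)$ lying in some $\Man{k}$ is approached by points $(\xe,\te)$ which may lie in various $\Man{j}_\eps$ with $j\geq k$ (by the Whitney condition, no point of lower dimension can appear in the limit); since $\F_\eps$ is upper-semicontinuous and $\underline{G}=\limssup\F_\eps$, the lower-semicontinuity of $\underline{G}$ passes to the limit in the direction we need. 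Concretely, I would apply Lemma~\ref{convptsmaxdisc} to $w_\eps=\varphi-v_\eps$ at a strict local minimum point of $\varphi-\underline{v}$, extract a sequence of local minimum points $(y_{\eps'},s_{\eps'})$ of $\varphi-v_{\eps'}$ converging to $(x,t)$ with $v_{\eps'}(y_{\eps'},s_{\eps'})\to\underline{v}(x,t)$, write the supersolution inequality $\F_{\eps'}^*(y_{\eps'},s_{\eps'},v_{\eps'},D\varphi)\geq 0$ (which is just the Ishii supersolution inequality for $\hbox{(HJB-S)}_{\eps'}$), and pass to the limit using the definition of $\underline{G}$ as the upper-relaxed limit. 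This part is routine.

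The heart of the matter is $(ii)$. Here one must establish two things for $\bar u=\limsup^* u_\eps$: first, the global Ishii subsolution inequality for $\F=\limssup\F_\eps$, which again follows by the Lemma~\ref{convptsmaxdisc} argument exactly as above; and second, and this is the delicate point, the $\Man{k}$-inequality $\underline{G}^k(x,t,\bar u,D\phi)\leq 0$ for each $k$, tested with functions $\phi$ that are only smooth on $\Man{k}$. The obstruction is precisely the one that motivated the whole subsolution machinery in Chapter~\ref{chap:Ishii}: the $\Man{k}_\eps$-subsolution inequality for $u_\eps$ is a restricted inequality — the maximum of $u_\eps-\phi_\eps$ is taken only over $\Man{k}_\eps$ — and one cannot naively pass to the limit because the $\Man{k}_\eps$ are moving sets and because $\limsup^*$ over $\R^N\times[0,T]$ need not coincide with the $\limsup$ taken along $\Man{k}$. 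This is where the regularity of subsolutions under \NC\ and \TC\ is indispensable. The strategy is: $(a)$ use the RS-convergence charts $\Psi^{x,t}_\eps$ to reduce locally to the case where $\Man{k}_\eps=\Man{k}$ is a fixed flat manifold (transporting $u_\eps$ by $\Psi^{x,t}_\eps$ and noting that the transported Hamiltonians still satisfy \NC, \TC\ with uniform constants by the $C^1$-convergence of the charts to identity); $(b)$ invoke Proposition~\ref{reg-sub}, which holds for each $u_\eps$ with uniform constants, to get that each $u_\eps$ has no singular values off $\Man{k}$, hence $u_\eps(y,z)=\limsup\{u_\eps(y',z'):(y',z')\to(y,z),\ z'\neq z\}$ uniformly enough that the same property survives in the limit for $\bar u$; $(c)$ crucially, apply Proposition~\ref{reg-by-sc} to replace each $u_\eps$ by a regularized $u_\eps^{\e'}$ which is Lipschitz continuous (and $C^1$ in the tangential variables), still an approximate $\Man{k}_\eps$-subsolution, with $\limssup_{\e'} u_\eps^{\e'}=u_\eps$; this regularization is exactly what allows the tangential test to be done with classical derivatives, so that the restricted inequality on $\Man{k}_\eps$ becomes stable.

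Once this regularization is in place, I would conclude as follows: given a strict local maximum point $(x,t)\in\Man{k}$ of $\bar u-\phi$ on $\Man{k}$, the uniform regularity allows one to localize to $\Man{k}$ and apply a Lemma~\ref{convptsmaxdisc}-type argument \emph{within the manifold} $\Man{k}$ (after the chart reduction, within a fixed flat piece), producing points $(y_{\eps'},s_{\eps'})\in\Man{k}_{\eps'}$ with $u_{\eps'}(y_{\eps'},s_{\eps'})\to\bar u(x,t)$ that are local maxima of $u_{\eps'}-\phi$ restricted to $\Man{k}_{\eps'}$; the $\Man{k}_{\eps'}$-subsolution inequality $\F^k_{\eps'}(y_{\eps'},s_{\eps'},u_{\eps'},D\phi)\leq 0$ then passes to the limit by the very definition of $\underline{G}^k=\limiinf\F^k_\eps$, using that $D\phi(y_{\eps'},s_{\eps'})\in T_{(y_{\eps'},s_{\eps'})}\Man{k}_{\eps'}$ converges to an element of $T_{(x,t)}\Man{k}$. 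I expect the main obstacle to be the careful bookkeeping in step $(c)$: one must run the double extraction — first in $\eps$ (the stability parameter) and then in $\e'$ (the regularization parameter) — in the right order, checking that the regularized subsolutions $u_\eps^{\e'}$ converge appropriately and that the uniform \NC/\TC\ bounds provided by the hypothesis survive the chart transport $\Psi^{x,t}_\eps$; the chart-reduction is legitimate precisely because $\Psi^{x,t}_\eps\to\mathrm{id}$ in $C^1$, but one needs to verify that this $C^1$-convergence is enough to preserve the constants in \NC\ and \TC\ (for \TC\ it is, being a Lipschitz-type condition; for \NC\ the uniform ellipticity/controllability constant $\delta$ degrades only by a factor close to $1$). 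The two-step relaxed-limit bookkeeping, rather than any single hard estimate, is where the proof becomes technical.
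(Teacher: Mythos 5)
Part $(i)$ you identify correctly: the paper likewise regards it as nothing but the classical half-relaxed limit argument, since the supersolution inequality involves only the Ishii Hamiltonian $\F_\eps$ and is tested with globally smooth functions.

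For part $(ii)$ your route is genuinely different from the paper's, and the difference matters. You propose to (a) flatten via the charts $\Psi^{x,t}_\eps$ to a fixed $\Man{k}$, (b) invoke Proposition~\ref{reg-sub} for each $u_\eps$ so that no singular values appear off $\Man{k}$ and assert this ``survives in the limit'' for $\bar u$, and (c) regularize each $u_\eps$ via Proposition~\ref{reg-by-sc} and run a double relaxed-limit bookkeeping in $\eps$ and the regularization parameter. The paper does none of this: it explicitly \emph{stays on the moving stratification} (no chart reduction to a fixed AFS), uses no regularization of the subsolutions, and instead employs a single penalization device. Given a strict local maximum point $(x_0,t_0)$ of $\bar u-\phi$ on $\Man{k}$ with $\phi\in C^1$, one maximizes
\[
u_\eps(x,t)-\phi(x,t)-L\,\psi_\eps(x,t),\qquad
\psi_\eps(x,t)=\dist\bigl((\Psi^{x,t}_\eps)^{-1}(x,t),\Man{k}\bigr),
\]
over a full neighborhood. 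Since $\psi_\eps$ vanishes exactly on $\Man{k}_\eps$, is nonnegative, and has a gradient whose component transverse to $\Man{k}_\eps$ is bounded below by a fixed $\kappa>0$ (by the $C^1$-convergence of the charts to identity), the normal controllability \NC\ forces the maximum point $(x_\eps,t_\eps)$ to lie on $\Man{k}_\eps$ once $L$ is taken large enough, uniformly in $\eps$. Dropping the penalization on $\Man{k}_\eps$ then yields the $\F^k_\eps$-inequality at $(x_\eps,t_\eps)$, and one passes to the limit using the definition of $\limiinf\F^k_\eps$.

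The genuine gap in your proposal is at step $(b)$: the assertion that the no-singular-values property ``survives in the limit'' for $\bar u$ is precisely the delicate point, and it is not a consequence of Proposition~\ref{reg-sub} applied to the individual $u_\eps$. That proposition is a pointwise, nonquantitative regularity statement; nothing in it controls uniformly how fast $u_\eps$ near $\Man{k}_\eps$ recovers the value on $\Man{k}_\eps$, so the relaxed upper limit $\bar u$ could a priori carry an artificial value on $\Man{k}$ unreachable along $\Man{k}_\eps$, and your restricted-to-$\Man{k}$ maximum-extraction argument would then fail (the limit of $u_\eps(y_{\eps'},s_{\eps'})$ along $\Man{k}_{\eps'}$ could undershoot $\bar u(x_0,t_0)$). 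Step $(c)$ does not repair this: the sup-convolutions of Proposition~\ref{reg-by-sc} introduce Lipschitz constants of order $1/\e'$, so they give no uniform control as $\e'\to 0$, and the double-limit bookkeeping you correctly anticipate as ``where the proof becomes technical'' is in fact where it would break. The penalization trick bypasses the whole issue: it is the \NC-based coercivity in the normal direction, encoded once and for all in the choice of $L$, that pins the approximating maxima onto $\Man{k}_\eps$ without any regularization of the $u_\eps$ and without comparing their restricted limsups. That device is the key idea your proposal is missing.
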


Of course, the ``strong'' convergence of stratification implies the ``weak'' one and therefore
Theorem~\ref{thm:main.stability} a fortiori holds if we replace ``$\M_\eps \toTFSw \M$'' by
``$\M_\eps \toTFSs \M$''.

\medskip

\noindent\textbf{Important ---} In the statement of Theorem~\ref{thm:main.stability}, we have used
the notation $\G^k$ for $\max_j(\limiinf \F^k_{j,\eps})$ because it is not clear a priori that the limit problem
is a consistent stratified problem, \ie that there exists $\BCL$ such that $\F$ is  given by
\eqref{hamil.strat} and $\G^k=\F^k$ is given by \eqref{def:Fk}. We refer to the Section~\ref{suff-cond-stab} for
sufficient conditions that allow to get this property. 

\medskip

\begin{proof} Result $(i)$ is standard since only the $\F_\eps / \F$-inequalities
    are involved and therefore $(i)$ is nothing but the standard stability result
    for discontinuous viscosity solutions with discontinuous Hamiltonians, see
    \cite{I1}. We now focus on getting $(ii)$.

\medskip

    \noindent\textbf{(a)} We assume that $(x_0,t_0) \in \Man{k}$ is a {\em strict} local maximum
    point of $\bar u -\phi$ on $\Man{k}$ where $\phi$ is a $C^1$-function in $\R^N \times (0,\Tf)$
    and we want to show that $$ \G^k(x_0,t_0, \bar u(x_0,t_0), D\phi(x_0,t_0)\leq 0\; .$$
    To do so, it suffices to show that $\liminf_* \F^k_{j,\eps}(x_0,t_0, \bar u(x_0,t_0), D\phi(x_0,t_0)\leq 0$ for any 
    $j$ and we are going to do it with $j=1$ to fix ideas.
    
    On the other hand, since we are going to argue locally, we may assume without loss of generality that
    $\Man{k}= (x_0,t_0)+V_k$ where $V_k$ is a $k$-dimensional subspace of $\R^{N+1}$.

    We consider, in a small neighborhood of $(x_0,t_0)$, 
    $$\chi_\e:(x,t) \mapsto u_\eps (x,t) -\phi (x,t)-L\omega_\e(x,t) \; ,$$ 
    where $L>0$ is a large enough constant to be chosen later on and $$\omega_\e (x,t) = \dist((\Psi^{x,t}_{1,\eps})^{-1}(x,t),\Man{k})\; ,$$ the function
    $\dist(\cdot,\Man{k})$ denoting the distance to $\Man{k}$ which is smooth in a neighborhood of
    $\Man{k}$, except on $\Man{k}$. We point out that we have chosen the change
    $(\Psi^{x,t}_{1,\eps})^{-1}$ of Definition~\ref{def:conv.strat} since our aim is to show the $\limiinf \F^k_{1,\eps}$-inequality.

    \medskip

    \noindent\textbf{(b)} For $\eps>0$ small enough and $L$ large enough, $\chi_\e$ has a maximum point
    $(x_\eps,t_\eps)$ near $(x_0,t_0)$. From the definition of an \TFS, we can find a small
    neighborhood of $(x_0,t_0)$ excluding any point of $\Man{l}$ for $l<k$, and also from
    connected components of $\Man{k}$ than the one of $(x_0,t_0)$ itself. In the same way, the weak convergence of stratification
    also exclude any point of $\Man{l}_\eps $ for $l<k$. So, for $\e>0$ small
    enough, we know that $(x_\eps,t_\eps)\in \Man{l}_\e $ for some $l\geq k$ depending on $\e$.

    We first examine the case when $(\xe,\te)\notin \Man{k}_{1,\e}$. Since $(\Psi^{x,t}_{1,\eps})^{-1}(\xe,\te)$ does not
    belong to $\Man{k}$, $\omega_\e$ is $C^1$ in a neighborhood of $(\xe,\te)$ and $u_\eps$ being a strong \usc
    subsolution of $\HJBS_\e$, we deduce that 
    $$ (\F_\eps)_* \Big(x_\eps,t_\eps, u_\eps(x_\eps,t_\eps),D\phi(x_\eps,t_\eps) +  L
    D\omega_\e (x_\eps,t_\eps) \Big)\leq 0\;.$$

    Next we remark that, on the one hand, $D\big[\dist((x,t),\Man{k})\big]$ is orthogonal to $V_k$
    and on the other hand $\big|D\big[\dist((x,t),\Man{k})\big]\big|=1$ where the distance function
    is differentiable, \ie outside $\Man{k}$. Therefore, by Definition~\ref{def:conv.strat} and the convergence of
    $(\Psi^{x,t}_{1,\eps})^{-1}$ to identity in $C^1$, $D\omega_\e (x_\eps,t_\eps)$ is a transverse vector
    to $\Man{k}$. 
    Moreover, recalling that we are in the flat case, it is easy to see that 
    $$|[D\omega_\e (x_\eps,t_\eps)]^{\bot}|\geq\kappa>0\;,$$
    for some $\kappa\in(0,1)$ which does not depend neither on $\eps$. Here we have
    again strongly used that the distance to $\Man{k}$ is smooth if we are not on $\Man{k}$.

    Using \NCe which  holds in an uniform neighborhood of $\Man{k}$ by assumptions, we deduce
    that the $(\F_\eps)_*$-inequality cannot hold if we chosen $L$ large enough, and of course,
    $L$ can be chosen independently of $\e$ since it depends only on $\kappa$ and $\F$.

    We deduce that, necessarily, $(x_\eps,t_\eps) \in \Man{k}_{1,\e}$ and since $\omega_\e \equiv 0$ on $\Man{k}_{1,\e}$, we have
    $$\F^k_{1,\eps} \Big(x_\eps,t_\eps, u_\eps(x_\eps,t_\eps),D\phi(x_\eps,t_\eps)\Big)\leq 0\; .$$
    But using that $\bar{u} = \limsup^*u_\eps$ and that $(x_0,t_0)$ is a strict local maximum point
    of $\bar u -\phi$ on $\Man{k}$, classical arguments imply that $(x_\eps,t_\eps) \to (x_0,t_0)$ and $u_\eps(x_\eps,t_\eps) \to \bar{u}(x_0,t_0)$
    and the conclusion of the proof follows as in the standard case.
    
    Hence $\limiinf \F^k_{1,\eps}(x_0,t_0, \bar u(x_0,t_0), D\phi(x_0,t_0)\leq 0$ and the index ``$1$'' playing no role, it is true for any $j$ and
    we have $\G^k(x_0,t_0, \bar u(x_0,t_0), D\phi(x_0,t_0)\leq 0$.
    
    It remains to show that $\bar{u}$ is regular on any $\Man{k}$. In fact, $\bar{u}$ is not a strong subsolution but it satisfies $\limiinf \F_\e \leq 0$
    in $\R^N\times (0,\Tf)$. This Hamiltonian is not necessarily equal to $\F_*$ but satisfies \NCe in a neighborhood of $\Man{k}$ for any $k=1..N$.
    Hence $\bar{u}$ is regular by Proposition~\ref{reg-sub}.
\end{proof}

\subsection{Some problematic examples} \label{sec:diffwHamil}

In this section, we look at several example which show both the advantages and disadvantages of
Theorem~\ref{thm:main.stability}, but mainly the defects which have to be corrected. We drop here
the time dependence for the sake of simplicity and investigate the following examples.

\begin{example}\label{prob-example1}
If $(e_1,e_2)$ is the canonical basis of $\R^2$, \ie $e_1=(1,0), e_2=(0,1)$, we
consider the stratification $\M$ defined by
$$ \Man{1}=\R e_1\quad, \quad \Man{2}=\R^2\setminus \Man{1}\; .$$
Next we introduce $\BCL(x_1,x_2)$ defined in the following way: $(b^x,c,l)\in \BCL(x_1,x_2)$ if
$c=1$, $l=1$ and $b^x \in \{e_1\} \times [-1,1]$ if $x_2>0$, $b^x \in \{-e_1\}\times [-1,1]$ if
$x_2<0$. Hence, by the assumptions on $\BCL$, we have 
$$ \BCL(x_1,0)= ([-1,1]\times [-1,1])\times \{1\}\times \{1\}\; ,$$
and, if $p=(p_1,p_2)$, $\F^1$ is given on $\Man{1}$ by
$$ \F^1(x_1,r,p)=\sup_{(b_1,0)\in[-1,1]\times\{0\}}\{-b_1p_1-b_2p_2+r-1\}=|p_1|+r-1\; .$$
    On Figure~\ref{fig:prob-ex-one}, the grey boxes represent the allowed dynamics $b^x$ according to
    the location of $(x_1,x_2)$.
\begin{figure}[!htp]
    \begin{center}
    \includegraphics[width=0.5\textwidth]{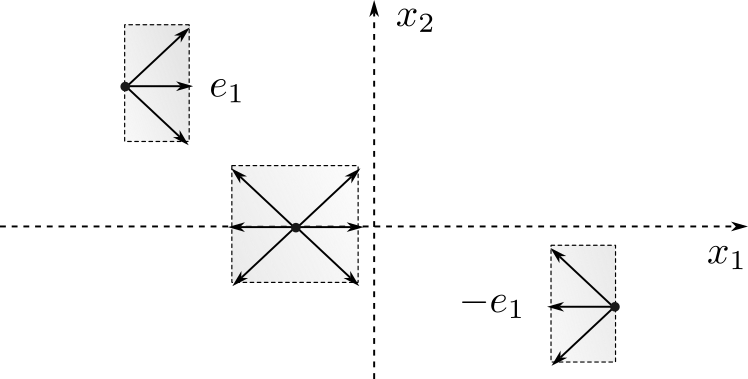}
    \caption{Problematic example one.}
    \label{fig:prob-ex-one}    
    \end{center}
\end{figure}

Now we consider the approximation of $\M$ and $\BCL$ by $\M_\e=\M$ and 
$$ \BCL_\e(x_1,x_2)= (\{(b_x^\e)_1\} \times [-1,1])\times \{1\}\times \{1\}\; ,$$
where, if $\chi:\R\to \R$ is the Lipschitz continuous function given by $ \chi(t)=0$ if $t\leq 0$,
$ \chi(t)=t$ if $0\leq t\leq 1$ and $ \chi(t)=1$ if $t\geq 1$
$$(b_x^\e)_1= \chi(\frac{x_1}{\e}) e_1- \chi(-\frac{x_1}{\e}) e_1\; .$$
Admittedly we are in a continuous setting for $\e >0$ but we ignore this point on purpose, this
regularization yielding a smooth transition between $-e_1$ and $e_1$.

Here, specifically at $x_1=0$ we see that $\F^1_\e(0,r,p)=r-1$, which has nothing to do with $\F^1$,
although it is also clear that 
$$\F^1(x_1,r,p)=\limsup_{y_1\to x_1} \F^1_\e(y_1,r,p)\;.$$
\end{example}

This first example shows that, in general, the $\G^k$ in Theorem~\ref{thm:main.stability} are different
from $\F^k$ and this is a clear problem for the applications. If we want to correct this flaw, we
need to slightly modify the approach we have for this type of convergence.

This is going to be even more striking in the second example. 

\begin{example}\label{prob-example2}
    Here we start from a control problem in $\R^2 \times (0,\Tf)$ where we define the $\BCL$ as
    $$ \BCL(x,t)=\BCL(x):=
    \begin{cases}
    \overline{B(0,1)}\times\{0\}\times \{0\} & \hbox{if $x=0$},\\
    \overline{B(0,1)}\times\{0\}\times \{1\} & \hbox{in $\R^2 \setminus \{0\}$}.
    \end{cases}
    $$
    In other words, we have a fully controllable system ($b$ can be chosen in $\overline{B(0,1)}$),
    $c\equiv0$ and the cost $l$ is $1$ everywhere except at $0$ where
    it is $0$. Hence 
    $$\Man{3}= (\R^2\setminus \{0\})\times (0,\Tf)\quad \hbox{ and }
    \quad\Man{1}=\{0\}\times (0,\Tf)\; .$$

    If we consider the natural approximation obtained by enlarging the discontinuity point
    $$ \BCL_\e(x,t)=\BCL_\e (x):=
    \begin{cases}
    \overline{B(0,1)}\times\{0\}\times \{0\} & \hbox{if $|x|\leq \e$},\\
    \overline{B(0,1)}\times\{0\}\times \{1\} & \hbox{in $\R^2 \setminus \overline{B(0,\e)}$},
    \end{cases}
    $$
    then $\Man{3}_\e= [(\R^2\setminus \overline{B(0,\e)})\cup B(0,\e)] \times (0,\Tf)$ and
    $\Man{2}=\partial B(0,\e)\times (0,\Tf)$. In particular, $\Man{1}_\e=\emptyset$ and
    Theorem~\ref{thm:main.stability} cannot be applied to obtain the $\F^1$-inequality at the limit.
    A defect which has absolutely to be corrected.

    \begin{figure}[!htp]
        \begin{center}
        \includegraphics[width=0.8\textwidth]{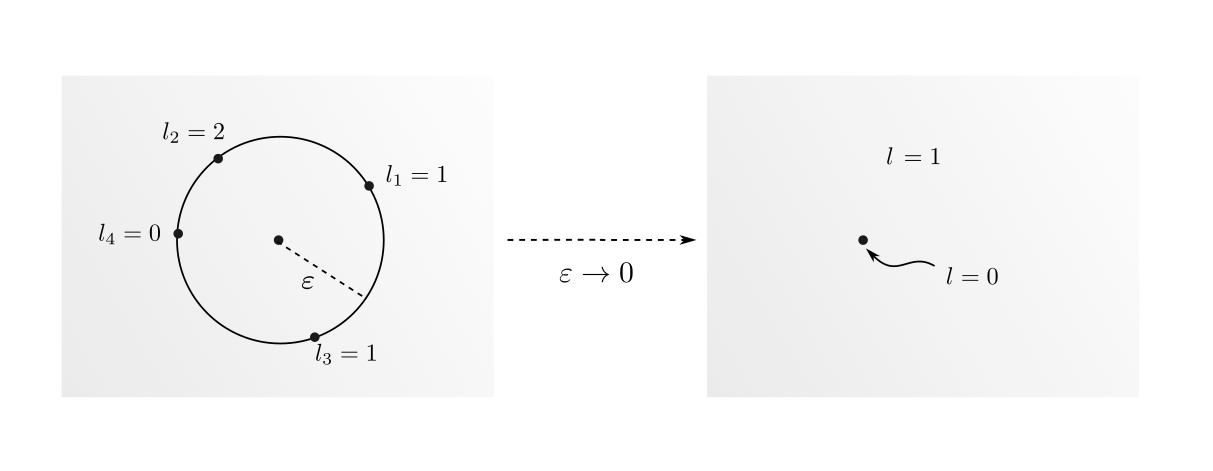}
        \caption{Problematic example two.}
        \label{fig:prob-ex-two}    
        \end{center}
    \end{figure}

    On the other hand, an another approximation shows all the interest of the framework
    of Theorem~\ref{thm:main.stability}, as depicted on Figure~\ref{fig:prob-ex-two}: if we choose
    several distinct elements $e_1, e_2,\cdots,e_J$ of $\R^2$ and
    $$ \BCL_\e(x,t)=\BCL_\e (x):=
    \begin{cases}
    \overline{B(0,1)}\times\{0\}\times \{l_j \} & \hbox{if $x=\e e_j$},\\
    \overline{B(0,1)}\times\{0\}\times \{1\} & \hbox{in $\R^2 \setminus \{\e e_1, \e e_2,\cdots,\e e_J\}$},
    \end{cases}
    $$
    then Theorem~\ref{thm:main.stability} applies and gives $\F^1(x,t,r,p)= p_t -\min_j(l_j)$. 
    Hence
    we recover the expected answer if $\min_j(l_j)=0$.  But we point out that one may have points
    $\e e_j$ with unreasonable cost like $l_j=2$ which the controller should ignore. This is where
    the $\max_j$ in the definition plays an essential role to obtain the right information.
\end{example}

We refer the reader to the end of the chapter to see how to handle these examples.

\subsection{Sufficient conditions for stability}\label{suff-cond-stab}

We conclude this first part devoted to the basic stability results with some sufficient conditions
on $\BCL$ correcting some the above defect and implying a real stability of solutions.

\begin{lemma}  \label{lem:stab.bl.ham} For any $\eps >0$, we assume that $\BCL_\eps$ satisfies
    \hyp{\BCL}, \TCBCL and \NCBCL on a uniform neighborhood of  
    a tangentially flattenable stratification $\M_\e$ with constants independent of $\eps$.
    Moreover, we assume that there exists a tangentially flattenable stratification $\M$ such that $\M_\eps \toTFSs
    \M$.
    \begin{enumerate}
        \item[$(i)$] If the following condition holds
        \begin{equation}\label{hyp:cv-BCL}
            \BCL (x,t) = \limssup_{\e\to0}
        \BCL_\eps (x,t) = \bigcap_{\delta >0}  \bigcap_{\e >0} 
        \left(\overline{K(x,t,\delta,\e)}\right)\; ,
        \end{equation}
        where
        $$K(x,t,\delta,\e):=\bigcup_{\displaystyle{\mathop{\scriptstyle 
            {|(y,s)-(x,t)|\leq \delta}}_{0<\tilde \e \leq \e}}} \BCL_{\tilde \e}(y,s)\; ,$$
        then $\F =\limssup \,\F_\eps$ and the stability result for supersolutions holds.
    \item[$(ii)$] If \eqref{hyp:cv-BCL} holds and if, for any
            $k=0,..,(N+1)$, any $(x,t)\in \Man{k}$ and any $j$
            $$\BCL_\e ([\Psi^{x,t}_{j,\e}]^{-1} (y,s))\to \BCL (y,s)$$
            for any $(y,s) \in B((x,t),r)$ in the sense of the Hausdorff distance  
            where $r,\Psi^{x,t}_{j,\e}$ are as in Definition~\ref{def:conv.strat},
            then Theorem~\ref{thm:main.stability} holds true for subsolutions with $\G^k=\F^k$.  
    \end{enumerate}
\end{lemma}

\begin{proof} We treat successively $(i)$ and $(ii)$.

    \noindent\textbf{The supersolution case ---} 
    If $(b,c,l) \in \BCL (x,t)$, \eqref{hyp:cv-BCL} implies that,
    for all $\delta, \e >0$ small enough, there exists $|(y,s)-(x,t)|\leq \delta$, $0<\tilde \e \leq
    \e$ and $(\tilde b,\tilde c,\tilde l) \in \BCL_{\tilde \e} (y,s)$ such that $|b-\tilde
    b|+|c-\tilde c|+|l-\tilde l|\leq \e$. Therefore, if $|p|+|r|\leq R$ and $|\tilde p-p|+
    |\tilde r-r|\leq 1$,
    $$ -b\cdot p + cr -l \leq -\tilde b\cdot \tilde p + \tilde c \tilde r
    -\tilde l + \e(2R+1)\leq \F_{\tilde \e} (y,s,\tilde r,\tilde p)+ \e(2R+1)\; .$$
    Taking the $\limsup$ in $\delta,\e \to 0$ but also on $\tilde r\to r$, $\tilde p \to p$ and
    using the definition of the $\limssup$, we deduce that
    $$ -b\cdot p + cr -l \leq \limssup \,\F_\eps (x,t,r,p) \; .$$
    Since this is true for any $(b,c,l) \in \BCL (x,t)$, we get that for any $(x,t,r,p)$,
    $$\F(x,t,r,p)\leq \limssup \,\F_\eps (x,t,r,p)\;.$$

    To get the conversely inequality, we consider a sequence 
    $(x_{\tilde \e},t_{\tilde \e},r_{\tilde \e},p_{\tilde \e}) \to (x,t,r,p)$ such that 
    $$ \F_{\tilde \e} (x_{\tilde \e},t_{\tilde \e},r_{\tilde \e},p_{\tilde \e})\to 
    \limssup \,\F_\eps (x,t,r,p)\; .$$
    Since the sets $\BCL_\e$ are compact, there exists 
    $(b_{\tilde \e},c_{\tilde \e},l_{\tilde \e})\in  \BCL_{\tilde \e} (x_{\tilde \e},t_{\tilde \e})$
    such that
    \begin{equation}\label{eqn:becele}
         \F_{\tilde \e} (x_{\tilde \e},t_{\tilde \e},r_{\tilde \e},p_{\tilde \e})=
         -b_{\tilde \e}\cdot p_{\tilde \e}+c_{\tilde \e}r_{\tilde \e}-l_{\tilde \e}\; .
    \end{equation}

    Now we pick $\delta, \e>0$. It is clear that, for $\tilde \e$ small enough, $(b_{\tilde
    \e},c_{\tilde \e},l_{\tilde \e}) \in \overline{K(x,t,\delta,\e)}$. But, since
    $\overline{K(x,t,\delta,\e)}$ is compact, we can assume without loss of generality that 
    $$(b_{\tilde \e},c_{\tilde \e},l_{\tilde \e})\to (b,c,l) \in \overline{K(x,t,\delta,\e)}\; .$$
    This property being true for all $\delta$ and $\e$, we have by assumption $(b,c,l) \in
    \BCL(x,t)$. Letting $\tilde \e \to 0$ in \eqref{eqn:becele}, we get 
    $$ \limssup \,\F_\eps (x,t,r,p)=-b\cdot p+c r-l \leq \F(x,t,r,p)\; ,$$
    which proves that $(i)$ holds: $\F=\limssup \,\F_\eps$.

    \medskip

    \noindent\textbf{The subsolution case ---}
    For the proof of $(ii)$, we only have to examine the convergence of the Hamiltonians $\F^k_\eps$, and 
    not $\limiinf F_\e$. We recall that this is a consequence of the regularity of ``weak subsolutions'' in
    this framework.
    
    Because of the assumptions, we can assume \wlg that we are in a static situation where the
    stratification is fixed and therefore all the Hamiltonians $\F^k_\eps$ are all defined
    on the same set. On the other hand, by \TCBCL, all these Hamiltonians are equicontinuous on
    $\Man{k}=\Man{k}_\eps$ for any $\eps$.  Combining the convergence of $\BCL_\eps$ to $\BCL$ with \NCBCL implies that
    $(\BCL_\eps)|_k$ (the restriction to $\Man{k}\times[0,\Tf]$) converges to $\BCL|_k$. It follows
    directly that
    $$\F_\e^k(x,r,p):=\sup_{\substack{(b,c,l)\in\BCL_\eps(x,t) \\ b\in
          T_x\Man{k}}}
      \big\{-b\cdot p+cr -l \big\}\longrightarrow
      \sup_{\substack{(b,l)\in\BCL(x,t) \\ b\in T_x\Man{k}}}
      \big\{-b\cdot p+cr -l \big\}
    = \F^k(x,r,p)\;.$$
    Combining this pointwise convergence with Ascoli's Theorem, we obtain the local uniform
    convergence of the $\F_\e^k$ to $\F^k$ on $\Man{k}$, and the result is proved.
\end{proof}

\begin{corollary}\label{cor:stability} 
    Under the assumptions of Lemma~\ref{lem:stab.bl.ham}, for any $\e>0$ let $U_\eps$ is the unique
    solution of $\HJBS_\e$. If the functions $U_\eps$ are uniformly bounded, then 
    $$U_\eps\to  U\quad\text{locally uniformly in }\R^N\times[0,\infty)\;,$$ 
    where $U$ is the unique solution of the limit problem \HJBS associated to $(\F^k)_{k=0..N}$.
\end{corollary}

\begin{proof}
    The proof is immediate: by Lemma~\ref{lem:stab.bl.ham},  the half-relaxed limits of the $U_\eps$
    are sub and supersolutions of the limit problem \HJBS thanks to
    Theorem~\ref{thm:main.stability}. Then, the comparison result---Theorem~\ref{comp-strat-RN}---implies 
    that $\limiinf U_\e=\limssup U_\e$, so that all the sequence converges to the common
    limit, $U$, locally uniformly by the classical half-relaxed limits method, see
    Lemma~\ref{ubegalub}.
\end{proof}

\section{Stability under structural modifications of the stratification}

In the previous section, we have provided a stability result in the case when the structure of the
stratification remains unchanged. On the contrary, in this section, we consider cases where this
structure can be changed by the appearance of new discontinuity sets or the disappearance of
existing ones. Anyway, the first stability property shown in the previous section is be the keystone
of this improved result. So, we have to show how to introduce a new part of $\Man{k}$ or remove an
existing one in order to manage these changes of stratifications. Again we only treat the case of
$\R^N\times (0,\Tf)$, the case $t=0$ following similar principles.

It is important to notice that here, such structural modifications of the stratification have an
impact on the associated Hamiltonians and conversely. So, a generalized stability
result necessarily implies considering both at the same time.

\subsection{Introducing new parts of the stratification}\label{sec:INPofS}

The result is the

\begin{proposition}\label{ajout}
    Let $\Sbb=(\Man{k}, \Fk)_k$ be a \SSP and $u:\R^N\times (0,\Tf)\to \R$
    an \usc subsolution of this problem. If $\mathcal{M}$ is a $C^1$-smooth $l$-dimensional
    submanifold of $\Man{k}$ for some $l<k$ and if the normal controllability assumption is satisfied
    in a neighborhood of $\mathcal{M}$, then
    $$ \F^\mathcal{M}(x,t, u, Du)\leq 0\;,$$
    where for $x\in \mathcal{M}$, $t \in (0,\Tf)$, $r\in \R$, $p=(p_x,p_t) \in \R^N\times \R$
    $$ \F^\mathcal{M}(x,t, r ,p):=\sup_{\substack{(b,c,l)\in\BCL(x,t)\\ 
    b \in T_{(x,t)}\mathcal{M}}}\big\{-b\cdot p+cr - l\big\}\;.
    $$

\end{proposition}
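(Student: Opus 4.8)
The statement says that an Ishii subsolution $u$ of a stratified problem automatically satisfies a stronger tangential inequality along any lower-dimensional submanifold $\mathcal{M}$ of some stratum $\Man{l}$, provided normal controllability holds near $\mathcal{M}$. The key structural observation is that $\F^\mathcal{M}$ is built from the \emph{same} $\BCL(x,t)$, only restricting the dynamics to be tangent to $\mathcal{M}$; since $T_{(x,t)}\mathcal{M}\subset T_{(x,t)}\Man{l}$, we always have $\F^\mathcal{M}(x,t,z,p)\le \F^l(x,t,z,p)$, but we want to recover the full $\F^\mathcal{M}$-inequality which is stronger than the $\F^l$-one (the supremum defining $\F^l$ ranges over a larger set of $b$'s). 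The point is therefore that adding $\mathcal{M}$ to the stratification does not create genuinely new constraints on $u$: normal controllability ``sees'' the tangential dynamics through nearby dynamics in the ambient stratum.

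The plan is to localize and flatten: by the definition of a regular stratification we may assume via a $C^1$-change of variables that we are in a flat situation near a point $(x_0,t_0)\in\mathcal{M}$, with $\mathcal{M}$ identified with an affine subspace and $\Man{l}\supset\mathcal{M}$ a larger affine subspace, and that the only strata meeting a small ball are $\Man{l}$ (containing $\mathcal{M}$) and higher-dimensional ones. Then I would take a test-function $\phi$ smooth on $\mathcal{M}$ and a strict local maximum point $(x_0,t_0)$ of $u-\phi$ on $\mathcal{M}$, and argue essentially as in the proof of Proposition~\ref{reg-sub} / Theorem~\ref{SubP}$(i)$--$(ii)$: for a tangential dynamic $(b,c,l)\in\BCL(x_0,t_0)$ with $b\in T_{(x_0,t_0)}\mathcal{M}$, one uses the normal controllability assumption \NCBCL near $\mathcal{M}$ to build, for nearby $(y,s)$, dynamics in $\BCL(y,s)$ that are close to $b$ \emph{and} that keep the trajectory on $\mathcal{M}$ (this is the analogue of Lemma~\ref{tgfields}: the sets $\BCL(y,s)$ restricted to tangential directions approximate $\BCL(x_0,t_0)$ restricted to $T_{(x_0,t_0)}\mathcal{M}$, thanks to \TCBCL and \NCBCL). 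Solving locally the differential inclusion with these dynamics keeps the trajectory inside $\mathcal{M}$ for a short time, so the sub-dynamic programming inequality for the subsolution $u$ (which follows from the Ishii subsolution property together with a local comparison result, cf. Theorem~\ref{thm:sub.Qk.dpp} or the argument in Theorem~\ref{SubP}$(ii)$) gives, after the usual expansion of $\phi$ along the trajectory and division by the time increment, the desired inequality $-b\cdot D\phi(x_0,t_0)+c\,u(x_0,t_0)-l\le 0$. Taking the supremum over all such tangential $(b,c,l)$ yields $\F^\mathcal{M}(x_0,t_0,u,D\phi)\le 0$.

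Two preliminary points need to be dealt with. First, one should check the ``regularity'' property $u(x,t)=\limsup\{u(y,s):(y,s)\to(x,t),\ (y,s)\notin\mathcal{M}\}$, i.e. that $u$ has no singular values along $\mathcal{M}$ — this is exactly Proposition~\ref{reg-sub} applied with the normal variables being those transverse to $\mathcal{M}$, and it uses normal controllability near $\mathcal{M}$; it guarantees that the sub-dynamic principle can be exploited with trajectories leaving $\mathcal{M}$ if needed, and that $u$ restricted to $\mathcal{M}$ is the upper semicontinuous envelope of $u$ near $\mathcal{M}$, so testing on $\mathcal{M}$ is legitimate. Second, one must verify that the sub-dynamic programming inequality is available: it is, because away from $\mathcal{M}$ (in the ambient stratum $\Man{l}$ and the higher-dimensional strata) a local comparison result holds — $\mathcal{M}$ has strictly lower dimension, so the argument of Theorem~\ref{thm:sub.Qk.dpp} / Theorem~\ref{thm:sub.Qk.dpp.bt} applies with $\mathcal{M}$ playing the role of the ``bad set''.

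\textbf{Main obstacle.} The delicate step is the construction, uniformly for $(y,s)$ near $(x_0,t_0)$, of admissible dynamics in $\BCL(y,s)$ that are simultaneously close to a prescribed tangential $b\in T_{(x_0,t_0)}\mathcal{M}$ and tangent to $\mathcal{M}$ at $(y,s)$ — i.e. the analogue of Lemma~\ref{tgfields} and Lemma~\ref{bon-bcl} in this higher-codimension setting. This requires combining \TCBCL (to transport the element $b$ to nearby points) with \NCBCL near $\mathcal{M}$ (to correct the normal-to-$\mathcal{M}$ component and land back in $T\mathcal{M}$ while staying inside the convex set $\BCL(y,s)$). The convexity and upper semicontinuity of $\BCL$ make this possible, but writing it carefully — especially checking that the normal correction stays inside $\BCL(y,s)$ and that the resulting trajectory genuinely remains on $\mathcal{M}$ (Stampacchia-type argument as in Lemma~\ref{lem:struc.traj}) — is where the real work lies. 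Everything else is a routine repetition of the arguments already used for the $\F^k$-inequalities in Theorem~\ref{SubP}.
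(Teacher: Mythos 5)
Your plan is correct in spirit but takes a genuinely different — and noticeably heavier — route than the paper. The paper's proof is a clean PDE penalization argument that never invokes a sub-dynamic programming principle or any comparison machinery. After localizing and flattening so that $\Man{l}=\R^l$ and $\mathcal{M}$ is an affine subspace, the paper takes a strict local max $(\xb,\tb)$ of $u-\phi$ on $\mathcal{M}$ and penalizes the distance to $\mathcal{M}$ by adding $-[d(x,t)]^2/\e$, where $d$ is the distance to $\mathcal{M}$, considering the resulting function on all of $\Man{l}$. At a max point $(\xe,\te)$ the stratified subsolution property directly gives the $\F^l$-inequality with the extra gradient term $2d(\xe,\te)Dd(\xe,\te)/\e$; tangential continuity moves this to the projection $(\ye,\se)$ of $(\xe,\te)$ onto $\mathcal{M}$; and then the decisive structural fact — that for $b^1\in T_{(\ye,\se)}\mathcal{M}$ one has $b^1\cdot Dd(\xe,\te)=0$, since $(\ye,\se)$ is the foot of the projection — kills the penalization term in the supremum defining $\F^\mathcal{M}\le\F^l$. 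Passing to the limit $\e\to 0$, with one use of tangential continuity and normal controllability to approximate tangential $(b,c,l)\in\BCL(\xb,\tb)$ by tangential elements of $\BCL(\ye,\se)$, finishes the proof.

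What you propose instead is the control-theoretic route: reproduce the argument of Theorem~\ref{SubP} by invoking a sub-dynamic programming inequality for $u$ and building nearby trajectories constrained to $\mathcal{M}$ via a Lemma~\ref{tgfields}-type measurable selection. This would also work, but it is strictly more expensive: it requires (a) a regularity property along $\mathcal{M}$ (Proposition~\ref{reg-sub}), (b) a local comparison result away from $\mathcal{M}$ to justify the sub-dynamic principle via Theorem~\ref{thm:sub.Qk.dpp}, and (c) the tangential-field selection you flag as the ``main obstacle''. None of that is needed: the paper's proof starts from the $\F^l$-inequality that the stratified subsolution \emph{already} satisfies and merely chooses a test-function whose gradient is automatically orthogonal to $T\mathcal{M}$. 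The orthogonality observation $b^1\cdot Dd=0$ at the projection is what replaces all of your trajectory-construction work, and it is the one idea your plan does not contain. Your approach would generalize more readily to settings where no $\F^l$-inequality is available, but here it is overkill.
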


This result means that, a priori, we can create an artificial $\Man{l}$-component in $\M$ 
since $\mathcal{M}$ can be seen as some new part of $\Man{l}$. 

But of course, for a concrete use, there are conditions in order that replacing $\Man{l}$ by
$\Man{l}\cup \mathcal{M}$ in $\M$ leads to a new, consistant \SSP:
$\mathcal{M}$ may have a boundary which has to be taken into account, and new viscosity inequalities
have also to be checked on this boundary.

\begin{example}
    $\mathcal{M}=(-1,1) \times \{0\}$ in the whole space $\R^2$ generates a new $\Man{1}$-part but
    also a $\Man{0}$-set with $(\{-1\}\times \{0\})\cup (\{1\} \times \{0\})$.  Moreover, for the
    equation, one also has to examine the $\F^0$-inequalities at these two points.
\end{example}

\begin{proof}
    Since the result is local, we can assume without loss of generality that $\Man{k}=\R^k$ and that
    $\mathcal{M}$ is an affine subspace of $\R^k$. If $\phi : \R^N\times [0,\Tf]\to \R$ is a smooth
    function and $(\xb,\tb) \in  \mathcal{M}$ is a strict, local maximum point of $u-\phi$ on $
    \mathcal{M}$, we have to show that 
    $$ \F^\mathcal{M}(\xb,\tb, u(\xb,\tb), D\phi(\xb,\tb))\leq 0 \; .$$
    To do so, for $0<\e \ll 1$, we consider the function defined for $(x,t)\in\Man{k}=\R^k$
    $$(x,t) \mapsto u(x,t)-\phi(x,t)-\frac{[d(x,t)]^2}{\e}\; ,$$
    where $d(x,t)=d((x,t),\mathcal{M})$ is the distance function to $\mathcal{M}$ which is $C^1$
    outside $\mathcal{M}$ but not on $\mathcal{M}$. On the contrary, $(x,t)\mapsto [d(x,t)]^2$ is
    $C^1$ even on $\mathcal{M}$.

    By standard arguments, this function has a maximum point at $(\xe,\te)$ and
    $$ (\xe,\te)\to (\xb,\tb)\quad u(\xe,\te)\to u(\xb,\tb)\quad\hbox{and}\quad 
    \frac{[d(\xe,\te)]^2}{\e}\to 0\quad \hbox{as  }\e\to 0\; .$$
    Since $u$ is a subsolution of the stratified problem, 
    $$ \F^k\Big(\xe,\te, u(\xe,\te), D\phi(\xe,\te) + 
    \frac{2 d(\xe,\te) Dd(\xe,\te)}{\e}\Big)\leq 0 \; .$$
    In order to deduce the result from this inequality, we use the tangential continuity property on
    $\Man{k}$: if $(\ye,\se)$ is the unique projection of $(\xe,\te)$ on $\mathcal{M}$ (recall that
    locally we are reduced to consider affine subspaces), then
    $|\ye-\xe|+|\te-\se|=d(\xe,\te)$ and
    $$ \F^k\Big(\ye,\se, u(\xe,\te), D\phi(\xe,\te) + 
    \frac{2 d(\xe,\te) Dd(\xe,\te)}{\e}\Big)\leq o_\e (1) \; .$$
    On the other hand, if $b^1 \in T_{(\ye,\se)}\mathcal{M}$ then $b^1\cdot Dd(\xe,\te)=0$
    because $(\ye,\se)$ is the unique projection of $(\xe,\te)$ on $\mathcal{M}$. Therefore,
    restricting the above inequality to such vectors $b^1$, it follows that
    $$\F^{\mathcal{M}}\Big(\ye,\se, u(\xe,\te), D\phi(\xe,\te)\Big)\leq o_\e (1) \; .$$

    In order to conclude, we use again the tangential continuity on $\Man{k}=\R^k$ combined with the
    normal controllability: if $(b,c,l)\in\BCL(\xb,\tb)$ with $b \in T_{(\xb,\tb)}\mathcal{M}$,
    there exists $(b^1_\e,c^1_\e,l^1_\e)\in\BCL(\ye,\se)$ with $b^1_\e \in T_{(\ye,\se)}\mathcal{M}$
    and such that $(b^1_\e,c^1_\e,l^1_\e)\to (b,c,l)$ as $\e \to 0$. Using this property, the result
    is obtained by letting $\e$ tend to $0$.  
\end{proof}

\subsection{Eliminable parts of the stratification}

\index{Stratification!eliminable part of}
In this section, the aim is to remove ``artificial'' parts of the stratification, that is, parts on
which there is no real discontinuity and the viscosity inequalities are just a consequence of those
coming from lower codimensions manifolds. Our result is the
\begin{proposition}\label{elim}
    Let $\Sbb=(\Man{k}, \Fk)_k$ be a \SSP and $u : \R^N\times (0,\Tf)\to
    \R$ an \usc subsolution of this problem. Let $\mathcal{M}\subset \Man{k}$ be a $C^1$-smooth 
    submanifold such that
    \begin{enumerate}
    \item[$(i)$] $\mathcal{M}\subset\overline{\Man{l}}$ for some $l>k$\,;
    \item[$(ii)$] $\mathcal{M}\cup \Man{l}$ is a $l$-dimensional submanifold of $\R^N$\,;
    \item[$(iii)$] $\BCL$ satisfies the tangential continuity assumption on $\mathcal{M}\cup \Man{l}$\;.
    \end{enumerate}
    Then $u$ is a subsolution of
    $$ \tilde \F^l (x,t, u, Du)\leq 0 \quad \hbox{on  } \mathcal{M}\cup \Man{l}\; ,$$
    where, for $x\in \mathcal{M}\cup \Man{l}$, $t \in (0,\Tf)$, $r\in \R$, $p=(p_x,p_t)\in \R^N\times \R$
    $$ \tilde \F^l  (x,t, r ,p):=\sup_{\substack{(b,c,l)\in\BCL(x,t)\\ 
    b\in T_{(x,t)}(\mathcal{M}\cup \Man{l})}}\big\{-b\cdot p+cr - l\big\}\;.$$
\end{proposition}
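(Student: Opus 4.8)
\textbf{Proof strategy for Proposition~\ref{elim}.}

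The plan is to mimic very closely the proof of Proposition~\ref{ajout}, but in the "reverse" direction: there we \emph{created} a lower-dimensional constraint inside $\Man{l}$, here we want to \emph{absorb} the $k$-dimensional piece $\mathcal{M}$ into the $l$-dimensional manifold $\mathcal{M}\cup\Man{l}$ and show the $\tilde\F^l$-inequality holds on all of $(\mathcal{M}\cup\Man{l})\times(0,T)$. Since the result is local and $u$ is already known to be a subsolution of $\F^l(x,t,u,Du)\le 0$ on $\Man{l}\times(0,T)$ (which coincides with $\tilde\F^l$ there, as the tangent spaces agree), the only thing to prove is the $\tilde\F^l$-inequality at points $(\xb,\tb)$ with $\xb\in\mathcal{M}$. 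After a local change of coordinates given by the regular stratification, I would reduce to the flat case where $\mathcal{M}\cup\Man{l}$ is an affine $l$-dimensional subspace — say identified with $\R^l$ — and $\mathcal{M}$ is an affine $k$-dimensional subspace sitting inside it.

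First I would take a smooth test-function $\phi$ on $\R^N\times[0,T]$ and assume $(\xb,\tb)\in\mathcal{M}\times(0,T)$ is a strict local maximum point of $u-\phi$ restricted to $(\mathcal{M}\cup\Man{l})\times(0,T)=\R^l\times(0,T)$; normalize so that $u(\xb,\tb)=\phi(\xb,\tb)$. The goal is $\tilde\F^l(\xb,\tb,u(\xb,\tb),D\phi(\xb,\tb))\le 0$. The key device is the same penalization by the squared distance to $\mathcal{M}$ that appears in Proposition~\ref{ajout}: for $0<\eps\ll1$ consider
$$(x,t)\mapsto u(x,t)-\phi(x,t)-\frac{[d(x,t)]^2}{\eps}$$
on $\R^l\times(0,T)$, where $d(x,t)=d((x,t),\mathcal{M})$. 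This function attains a maximum at some $(\xe,\te)$ with $(\xe,\te)\to(\xb,\tb)$, $u(\xe,\te)\to u(\xb,\tb)$ and $[d(\xe,\te)]^2/\eps\to0$. Now $(\xe,\te)\in\R^l$ so $(\xe,\te)\in\Man{l}$ for $\eps$ small (it cannot be on $\mathcal{M}$ for the maximizing sequence generically, but in any case the subsolution property at $(\xe,\te)$ uses either $\F^l$ if $(\xe,\te)\in\Man{l}$ or $\F^k$ if $(\xe,\te)\in\mathcal{M}$; in the latter case $d(\xe,\te)=0$ and we argue directly, so assume $(\xe,\te)\in\Man{l}$), and $(x,t)\mapsto[d(x,t)]^2$ is $C^1$ even on $\mathcal{M}$, hence the $\Man{l}$-viscosity subsolution inequality gives
$$\F^l\Big(\xe,\te,u(\xe,\te),D\phi(\xe,\te)+\tfrac{2d(\xe,\te)Dd(\xe,\te)}{\eps}\Big)\le 0.$$

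Next, exactly as in Proposition~\ref{ajout}, I would let $(\ye,\se)$ be the projection of $(\xe,\te)$ onto $\mathcal{M}$, so $|\ye-\xe|+|\se-\te|=d(\xe,\te)=o(\eps^{1/2})$, and the tangential continuity of $\BCL$ on $(\mathcal{M}\cup\Man{l})\times(0,T)=\R^l\times(0,T)$ — which is hypothesis $(iii)$ and is exactly the right continuity since $\ye$ and $\xe$ differ only by a vector tangent to $\R^l$ — lets me replace $(\xe,\te)$ by $(\ye,\se)$ in the first two arguments of $\F^l$ at the cost of an $o_\eps(1)$, controlling the extra gradient term via the estimate $d(\xe,\te)\cdot|Dd(\xe,\te)|/\eps=d(\xe,\te)/\eps=o(\eps^{-1/2})$ multiplied against the $|p|$-Lipschitz bound in \TC. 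More precisely, for any $(b,c,\ell)\in\BCL(\ye,\se)$ with $b\in T_{(\ye,\se)}(\mathcal{M}\cup\Man{l})=\R^l$, the vector $Dd(\xe,\te)$ is normal to $\mathcal{M}$ inside $\R^l$, but $b$ need not annihilate it; however $b$ \emph{does} lie in $\R^l$, so I only pick up $b\cdot Dd(\xe,\te)$ which is bounded, and multiplied by $2d(\xe,\te)/\eps$ this is $o(\eps^{-1/2})\cdot O(\eps^{1/2})$... wait — that is the subtle point. Unlike Proposition~\ref{ajout}, here the penalization direction is normal to $\mathcal{M}$ but \emph{tangent} to $\Man{l}$, so $b\cdot Dd(\xe,\te)$ is genuinely nonzero for general $b\in T(\mathcal{M}\cup\Man{l})$, and the product $2d(\xe,\te)\,(b\cdot Dd)/\eps$ need not be small. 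The fix — and the main obstacle to get right — is to keep the penalization term \emph{inside} the supremum: write $\F^l(\ye,\se,\cdot,D\phi+\tfrac{2d\,Dd}{\eps})\ge -b\cdot(D\phi+\tfrac{2d\,Dd}{\eps})+c u(\xe,\te)-\ell$ and absorb $-b\cdot\tfrac{2d\,Dd}{\eps}$ by choosing, for the supremum, controls $b$ that are tangent to $\mathcal{M}$ (so $b\cdot Dd=0$); this is legitimate because $\tilde\F^l$ only requires the inequality to hold against $\BCL$-elements tangent to $\mathcal{M}\cup\Man{l}$, and among these the ones tangent to $\mathcal{M}$ suffice to conclude once one notices that, by convexity and tangential continuity combined with normal controllability near $\mathcal{M}$ (which would need to be invoked if not already available — but here actually it is the genuine discontinuity structure that guarantees $\F^k$ gives what we need), the relevant elements can be produced. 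I would therefore reorganize: test also against $\F^k$ at points near $\mathcal{M}$, or — cleaner — observe that it is enough to prove the inequality $-b\cdot D\phi(\xb,\tb)+c\,u(\xb,\tb)-\ell\le 0$ for $(b,c,\ell)\in\BCL(\xb,\tb)$ with $b\in T_{(\xb,\tb)}(\mathcal{M}\cup\Man{l})$, and split according to whether $b\in T_{(\xb,\tb)}\mathcal{M}$ (use the penalization argument above with $b\cdot Dd=0$, so everything goes through exactly as in Proposition~\ref{ajout}) or $b$ has a component normal to $\mathcal{M}$ inside $\R^l$ (use a directional penalization $-C e\cdot x$ pushing into $\Man{l}$ in the direction of that component, reducing to the $\F^l$-inequality at a point of $\Man{l}$, exactly as in the standard codimension-one argument of Chapter~\ref{chap:Ishii}). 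Passing to the limit $\eps\to0$, then $C\to\infty$ where needed, and using the continuity of the data and the fact that $(\xe,\te)\to(\xb,\tb)$, $(\ye,\se)\to(\xb,\tb)$, yields $\tilde\F^l(\xb,\tb,u(\xb,\tb),D\phi(\xb,\tb))\le0$, which is the desired conclusion.

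The main obstacle, as indicated, is the bookkeeping in the penalization step: the penalizing gradient $Dd/\eps$ blows up and, unlike in Proposition~\ref{ajout} where it was normal to the ambient manifold and hence killed by tangential dynamics, here it is tangential to $\Man{l}$, so one must carefully split the target dynamics $b\in T(\mathcal{M}\cup\Man{l})$ into a part tangent to $\mathcal{M}$ (handled by the distance-squared penalization) and a complementary part normal to $\mathcal{M}$ within $\Man{l}$ (handled by a linear penalization pushing off $\mathcal{M}$ into the open stratum $\Man{l}$, where the hypothesis $\mathcal{M}\subset\overline{\Man{l}}$ and $(ii)$ guarantee there are nearby points of $\Man{l}$ to land on). Assembling these two sub-cases and checking that the $o_\eps(1)$ and $O(1/C)$ errors really vanish is the only delicate point; everything else is a routine transcription of the arguments already developed for Proposition~\ref{ajout} and for the codimension-one case.
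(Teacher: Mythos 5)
Your overall architecture is the right one and coincides with the paper's: after reducing to the flat case, you fix $(b,c,\ell)\in\BCL(\xb,\tb)$ with $b\in T_{(\xb,\tb)}(\mathcal{M}\cup\Man{l})$ and split according to whether $b\in T_{(\xb,\tb)}\mathcal{M}$ (where the $\F^k$-type inequality on $\mathcal{M}$ already gives the conclusion, or your distance-squared variant works since $b\cdot Dd=0$ kills the bad term) or $b=b^\top+b^\bot$ with $b^\bot\neq 0$ normal to $\mathcal{M}$ inside $\R^l$, in which case one penalizes toward the half-space $D=\{(x-\xb,t-\tb)\cdot b^\bot>0\}\subset\Man{l}$ so as to invoke the $\F^l$-inequality there, transfer back by \TC, and drop the penalization-gradient term thanks to its sign ($-b^1_\e$ paired with the gradient produces a positive multiple of $|b^\bot|^2$). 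You also correctly diagnose why the naive transcription of Proposition~\ref{ajout} fails here.

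There is, however, a concrete gap in your execution of the second sub-case. A linear penalization $-Ce\cdot x$ does not ``push the maximum into $\Man{l}$'': for fixed $C$ the maximum of $u-\phi+Ce\cdot(x-\xb)$ over a small neighborhood can perfectly well sit on $\mathcal{M}$ (indeed at $(\xb,\tb)$ itself, where you only have the $\F^{\mathcal{M}}$-inequality, which sees no $b$ with $b^\bot\neq0$), and letting $C\to\infty$ only drives the maximum to the boundary of your localization region. Two ingredients are needed to repair this. First, the penalization must be singular on the hyperplane containing $\mathcal{M}$ --- the paper uses $\e\big((x-\xb,t-\tb)\cdot b^\bot\big)^{-1}$ on the open half-space $D$, which equals $+\infty$ on $\mathcal{M}$ and therefore forces the maximum strictly into $D\subset\Man{l}$. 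Second, for the penalized maxima $(\xe,\te)\in D$ to satisfy $u(\xe,\te)\to u(\xb,\tb)$ (without which you cannot recover the inequality with the value $u(\xb,\tb)$), you need the one-sided regularity
$$u(\xb,\tb)=\limsup_{\substack{(y,s)\to(\xb,\tb)\\ (y,s)\in D}}u(y,s)\;,$$
which is \emph{not} a consequence of the hypotheses you list but of the normal controllability of $\BCL$ in a neighborhood of $\Man{k}\supset\mathcal{M}$ (a Proposition~\ref{reg-sub}-type argument). Your parenthetical remark about normal controllability gestures in this direction but does not identify this limsup property as the ingredient that makes the one-sided penalization legitimate; once it is in place and the singular penalization is used, the rest of your argument (tangential continuity to produce $b^1_\e\in T_{(\xe,\te)}\Man{l}$ close to $b$, the sign of the penalization term, and the passage to the limit $\e\to0$) goes through exactly as in the paper.
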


In other words, this proposition means that $ \Man{l}$ can be replaced by $\mathcal{M}\cup \Man{l}$:
the higher codimension discontinuity manifold $\mathcal{M}$ can be removed and integrated into 
$\Man{l}$. Such a result can be used when a standard continuous HJ-Equation is approximated by a
problem with discontinuities: to recover the right equation at the limit, one has to remove the
artificial discontinuities created by the approximation.

Concerning assumption $(ii)$, notice that of course, including $\mathcal{M}$ into $\Man{l}$ may
completely change its decomposition into connected components: for instance, adding $\{0\}$ to
$\Man{1}=(-\infty;0)\cup(0;+\infty)$ leads to a unique connected component, $\R$ itself.

\begin{example}
    we consider a case similar to the one described at the beginning of the stability chapter: in
    $\R^3$ we define $\M$ by 
    $$ \Man{1}:=\{(0,0,x_3),\, x_3 \in \R\}\; ,\; \Man{2}:=\{(x_1,x_1^2,x_3),\, 
    x_1\in \R \setminus\{0\},\, x_3 \in \R\}\, ,$$
    and $\Man{0}=\emptyset$, $\Man{3}=\R^3 \setminus\left(\Man{1}\cup \Man{2}\right)$.  In this
    setting, it seems relevant to remove $\Man{1}$ and see if we can replace $\Man{2}$ by
    $\{(x_1,x_1^2,x_3),\, x_1\in \R,\, x_3 \in \R\}$. This can be done provided a suitable
    continuity of the Hamiltonian (assumption $(iii)$ above) holds.
\end{example}

\begin{proof}
    Again we can assume without loss of generality that $\Man{l}=\R^l$ and that $\mathcal{M}$ is an
    affine subspace of $\R^l$. If $\phi : \R^N\times [0,\Tf]\to \R$ is a smooth function and
    $(\xb,\tb) \in  \mathcal{M} $ is a strict, local maximum point of $u-\phi$ on $ (\mathcal{M}\cup
    \Man{l})$, we have to show that 
    $$ \tilde \F^l (\xb,\tb, u(\xb,\tb), D\phi(\xb,\tb)) \leq 0\;.$$
    Here the difficulty is that the set $(b,c,l)\in\BCL(x,t)$ with $b \in T_{(x,t)} (\mathcal{M}\cup
    \Man{l})$ is larger than the set for which $b \in T_{(x,t)} \mathcal{M}$.

    If $b \in T_{(x,t)} \mathcal{M}$, the desired inequality is nothing but a consequence of
    the $\Fk$-inequality on $\mathcal{M}$, therefore we can assume \wlg
    that $b \notin T_{(x,t)} \mathcal{M}$. We decompose
    $$ b = b^\top + b^\bot\quad\hbox{with }b^\top \in T_{(x,t)}\mathcal{M},\  
    b^\bot \hbox{  in its orthogonal space}. $$

    For $0<\e\ll1$, we consider on $D=\big\{(x,t) \in \Man{l}=\R^l;\ 
    (x-\xb,t-\tb)\cdot b^\bot >0\big\}$ the function
    $$ (x,t)\mapsto u(x,t)-\phi(x,t) - \frac{\e}{(x-\xb,t-\tb)\cdot b^\bot}\; .$$
    We first remark that the normal controllability assumption on $\Man{k}$ (and therefore on
    $\mathcal{M}$) implies the regularity property 
    $$u(\xb,\tb)= \limsup_{\substack{(x,t)\to(\xb,\tb)\\(x,t)\in D}} u(x,t)\; ,$$
    and because of this property, standard arguments show that this function has a maximum point at
    $(\xe,\te)\in D$ satisfying 
    $$ (\xe,\te)\to (\xb,\tb)\;,\quad u(\xe,\te)\to u(\xb,\tb)\quad\hbox{and}\quad 
    \frac{\e}{(\xe-\xb,\te-\tb)\cdot b^\bot}\to 0\quad \hbox{as }\e\to 0\; .$$

    Using assumption $(iii)$, there exists  
    $(b^1_\e,c^1_\e,l^1_\e)\in\BCL(\xe,\te)$ with $b^1_\e\in T_{(\xe,\te)}\Man{l}$ such that 
    $(b^1_\e,c^1_\e,l^1_\e)\to (b,c,l)$ as $\e \to 0$. The $\F^l$-inequality for such triplet yields
    $$ -b^1_\e\cdot \left (D\phi(\xe,\te) - \frac{\e b^\bot }{((\xe-\xb,\te-\tb)\cdot b^\bot)^2}
    \right )+ c^1_\e u(\xe,\te)- l^1_\e \leq 0\; .$$
    But $(-b^1_\e)\cdot (-b^\bot) \to |b^\bot|^2>0$ as $\e\to 0$ and therefore the corresponding 
    term is positive for $\e$ small enough. We deduce that for such $\e$,
    $$-b^1_\e\cdot D\phi(\xe,\te) + c^1_\e u(\xe,\te)- l^1_\e \leq 0\; ,$$
    and the conclusion follows by letting $\e$ tend to $0$.
\end{proof}

\subsection{Sub and super-stratified problems; generalized stability result}
\index{Stratification!sub and super-stratifications}

The two previous sections lead us to introduce the following definition
\begin{definition}\emph{--- Sub and super stratified problems.}\smsp
    Let $\Sbb = (\Man{k}, \Fk)_k$, $\tilde \Sbb=(\mathbf{\tilde M}^{k}, \tilde \Fk)_k$ 
    be two \SSP associated with the same $\BCL$ set.
    \begin{enumerate}
    \item[$(i)$] $\tilde \Sbb$ is said to be a super-stratified problem of $\Sbb$ if it can be deduced from
        $\M$ by applying a finite (or countable) number of time Proposition~\ref{ajout}.
    \item[$(ii)$] $\tilde \Sbb$ is said to be a sub-stratified problem of $\Sbb$ if it can be deduced from
        $\M$ by applying a finite (or countable) number of time Proposition~\ref{elim}.
    \end{enumerate}
\end{definition}

Before commenting these definitions, we use them to extend the notion of convergence of stratified
problems.
\index{Stratification!convergence of}

\begin{theorem}\label{nsr}\label{ncos}\emph{--- Extended stability result for stratified
    problems.}\smsp
    Let $\Sbb_\e=(\Man{k}_\e,\F^k_\e)_{k,\e}$ be a sequence of standard stratified problems such
    that there exists  $\Sbb=(\Man{k},\F^k)_k$, a sequence 
    $\tilde{\Sbb}_\e= (\tMan{k}_\e,\tilde{\F}^k_\e)_{k,\e}$ and 
     $\tilde{\Sbb}=(\tMan{k},\tilde{\F}^k)_k$ such that
    \begin{enumerate}
        \item[$(i)$] for any $\e>0$, $\tilde{\Sbb}_\e$ is a super-stratified problem of $\Sbb_\e$\,;
        \item[$(ii)$] $\tilde{\M}_\e \toTFSw \tilde{\M}$\,;
        \item[$(iii)$] $\Sbb$ is a sub-stratified problem of $\tilde\Sbb$.
    \end{enumerate}
    Then the stability results of Theorem~\ref{thm:main.stability} remain valid, taking into account
    the addition and removal of subsolution inequalities due to the super/sub stratification induced
    by $\tilde \Sbb$ and $\tilde{\Sbb}_\e$.
\end{theorem}

Theorem~\ref{nsr} makes precise a very simple and natural idea: of course, the conditions imposed by
Theorem~\ref{thm:main.stability} on the convergence of stratified problems are very restrictive and
do not cover (for example) the convergence of problems without discontinuities (like, for instance,
Fillipov's approximation) to a problem with discontinuities. 

To correct this defect, it suffices to introduce suitable ``artificial'' elements of stratification,
using Proposition~\ref{ajout} (thus creating a super-stratified problem) then to use
Theorem~\ref{thm:main.stability} and, at the end, we can drop some useless part of the obtained
stratification using the elimination result of Proposition~\ref{elim}. Of course, all these
operations require suitable tangential continuity or normal controllability assumptions.

\begin{example}
    Denoting by $x=(x_1,x_2)$ the points in $\R^2$, let us consider a stationary discontinuous
    problem along the curve
    $$\Man{1}_\e:=\big\{x_2=\gamma_\e(x_1):=\sqrt{x_1^2+\e^2}:x_2\in\R\big\}\;,$$ 
    the set $\BCL_\e$ being given by:
    \begin{equation}\label{ex.BCL}
        \BCL_\e(x):=\begin{cases}
        B(0,1)\times\{1\}\times\{1\} & \text{if }x_2>\gamma_\e(x_1)\;,\\
        B(0,1)\times\{1\}\times\{0\} & \text{if }x_2<\gamma_\e(x_1)\;,\\
        B(0,1)\times\{1\}\times[0,1] & \text{if }x_2=\gamma_\e(x_1)\;.
        \end{cases}\end{equation}
    The associated Hamiltonians are easy to compute: $\F^2_\e(x,r,p)=r+|p|-\ind{x_2>\gamma_\e(x_1)}$
    and $\F^1_\e(x,r,p')=r+|p'|-1$. We recall that in $\F^1$, $p'$ is the tangential component of
    the gradient.

    The singular point $\{0,0\}$ appears as a specific singularity in the limit stratification $\M$,
    which is given by $\Man{1}=\{x_2=-x_1:x_1<0\}\cup\{x_2=x_1:x_1>0\}$, $\Man{0}=\{(0,0)\}$ and
    $\M^2=\R^2\setminus(\Man{1}\cup\Man{0})$. So, in order to understand the limit as $\e\to0$, we
    creat an artificial singularity $\tMan{0}_\e=\{(0,0)\}$ in $\M_\e$, respecting the structure of
    $\M$. Let also $\tMan{1}_\e:=\{x_2=\gamma_\e(x_2):x_1<0\}\cup
    \{x_2=\gamma_\e(x_1):x_1>0\}$ and $\tMan{2}=\R^2\setminus(\tMan{1}\cup\tMan{0})$.
    The associated set of Hamiltonians is essentially the same, except that there is a new one:
    $\F^0_\e((0,0),r,p)=r-1$. 

    Now, passing to the limit we get
    $$\begin{cases}
        \limssup \F_\e(x,r,p)=r+|p|-\ind{x_2>|x_1|} & \text{in }\R^2\;,\\
        \limiinf \F_\e(x,r,p)=r+|p|-\ind{x_2\geq|x_1|} & \text{in }\R^2\;,\\
        \limiinf \tilde{\F}^2_\e(x,r,p) = r+|p|-\ind{x_2>|x_1|} & \text{in }\tMan{2}\;,\\
        \limiinf \tilde{\F}^1_\e(x,r,p) = r+|p|-1 & \text{on }\tMan{1}\;,\\
        \limiinf \tilde{\F}^0_\e(0,r,p) = r-1 & \text{at }\tMan{0}\;.\\
    \end{cases}$$
        On the other hand, the limit $\BCL$ is given by \eqref{ex.BCL} with $\gamma(x_1):=|x_1|$
        instead of $\gamma_\e$. So, we see that the $\limiinf$ above coincide with the various
        Hamiltonians associated with $\BCL$ and the stability property works. Notice that here we do
        need to perform a substratification (step $(iii)$ in Theorem~\ref{nsr}).
\end{example}


Now we come back to the examples of Section~\ref{sec:diffwHamil} to show how they can be treated.

\begin{example} (solving Example~\ref{prob-example1})\\
    We describe again this example for the reader's convenience: if $(e_1,e_2)$ is the canonical
    basis of $\R^2$, \ie $e_1=(1,0), e_2=(0,1)$, we consider the stratification $\M$ defined by 
    $$\Man{1}=\R e_1\quad, \quad \Man{2}=\R^2\setminus \Man{1}\; .$$
    Next we introduce $\BCL(x_1,x_2)$ defined in the following way: $(b_x,c,l)\in \BCL(x_1,x_2)$ if
    $c=1$, $l=1$ and $b_x \in \{e_1\} \times [-1,1]$ if $x_2>0$, $b_x \in \{-e_1\}\times [-1,1]$ if
    $x_2<0$. Hence, by the assumptions on $\BCL$, we have 
    $$ \BCL(x_1,0)= ([-1,1]\times [-1,1])\times \{1\}\times \{1\}\; ,$$
    and, if $p=(p_1,p_2)$, $\F^1$ is given on $\Man{1}$ by
    $$ \F^1(x_1,r,p)=\sup_{(b_1,0)\in[-1,1]\times\{0\}}\{-b_1p_1-b_2p_2+r-1\}=|p_1|+r-1\; .$$

    Now we consider the approximation of $\M$ and $\BCL$ by $\M_\e=\M$ and 
    $$ \BCL_\e(x_1,x_2)= (\{(b_x^\e)_1\} \times [-1,1])\times \{1\}\times \{1\}\; ,$$
    where, if $\chi:\R\to \R$ is the Lipschitz continuous function given by $ \chi(t)=0$ if $t\leq
    0$, $ \chi(t)=t$ if $0\leq t\leq 1$ and $ \chi(t)=1$ if $t\geq 1$
    $$(b_x^\e)_1= \chi(\frac{x_1}{\e}) e_1- \chi(-\frac{x_1}{\e}) e_1\; .$$
    Admittedly we are in a continuous setting for $\e >0$ but we ignore this point on purpose, this
    regularization yielding a smooth transition between $-e_1$ and $e_1$.

    Here, specifically at $x_1=0$ we see that $\F^1_\e(0,r,p)=r-1$, which has nothing to do with
    $\F^1$, although it is also clear that 
    $$\F^1(x_1,r,p)=\limsup_{y_1\to x_1} \F^1_\e(y_1,r,p)\;.$$

    To correct this flaw, we have to introduce new parts of the stratification at the $\e$-level and
    more precisely we can set 
    \begin{align*}
    \Man{1}_{1,\e}:=\{x_2=0\}  & \ \quad \hbox{with  }\quad \F^1_{1,\e}(0,r,p)=r-1\; ,\\
    \Man{1}_{2,\e}:=\{x_2=\e\} & \ \quad \hbox{with  }\quad \F^1_{2,\e}(0,r,p)=-p_{x_1}+r-1\; ,\\
    \Man{1}_{3,\e}:=\{x_2=-\e\} & \ \quad \hbox{with  }\quad \F^1_{3,\e}(0,r,p)=p_{x_1}+r-1\; .
    \end{align*}
    Applying Theorem~\ref{thm:main.stability} with this super-stratification gives the correct
    $\F^1$ on $\Man{1}$ which turns out to be $\max(\F^1_{1,\e},\F^1_{2,\e},\F^1_{3,\e})$, the three
    Hamiltonians being in fact independent of $\e$.
\end{example}

The same type of argument also gives the answer in the second example. 

\begin{example} (solving Example~\ref{prob-example2})\\
    Again we completely describe this example: we start from a control problem in $\R^2 \times (0,\Tf)$
    where we define the $\BCL$ as $$ \BCL(x,t)=\BCL(x):=
    \begin{cases}
    \overline{B(0,1)}\times\{0\}\times \{0\} & \hbox{if $x=0$},\\
    \overline{B(0,1)}\times\{0\}\times \{1\} & \hbox{in $\R^2 \setminus \{0\}$}.
    \end{cases}
    $$
    In other words, we have a fully controllable system ($b$ can be chosen in $\overline{B(0,1)}$),
    $c\equiv0$ and the cost $l$ is $1$ everywhere except at $0$ where
    it is $0$. Hence 
    $$\Man{3}= \R^2\setminus \{0\}\times (0,\Tf)\quad \hbox{ and }\quad\Man{1}=\{0\}\times (0,\Tf)\; .$$

    Here we can consider several approximations: the first one---and maybe the most natural one---is
    obtained by enlarging the discontinuity point 
    $$ \BCL_\e(x,t)=\BCL_\e (x):=
    \begin{cases}
    \overline{B(0,1)}\times\{0\}\times \{0\} & \hbox{if $|x|\leq \e$},\\
    \overline{B(0,1)}\times\{0\}\times \{1\} & \hbox{in $\R^N \setminus \overline{B(0,\e)}$},
    \end{cases}
    $$ 
    then $\Man{3}_\e= [(\R^2\setminus \overline{B(0,\e)})\cup B(0,\e)] \times (0,\Tf)$ and
    $\Man{2}_\e=\partial B(0,\e)\times (0,\Tf)$. Since $\Man{1}_\e=\emptyset$,
    Theorem~\ref{thm:main.stability} cannot be applied to obtain the $\F^1$-inequality at the limit
    but, if we use a super-stratification of $\M_\e$ obtained by introducing $\Man{1}_\e=\{0\}\times
    (0,\Tf)$ and modifying $\Man{3}_\e$ accordingly, Theorem~\ref{thm:main.stability} applies.

    Another approximation can be 
    $$ \BCL_\e(x,t)=\BCL_\e (x):=
    \begin{cases}
    \overline{B(0,1)}\times\{0\}\times \{1\} & \hbox{if $|x|< \e$},\\
    \overline{B(0,1)}\times\{0\}\times \{\varphi \left(\hat x \right)\} & \hbox{if $|x|= \e$},\\
    \overline{B(0,1)}\times\{0\}\times \{1\} & \hbox{in $\R^N \setminus \overline{B(0,\e)}$},
    \end{cases}
    $$
    where $\hat x = x/|x|$ and $\varphi$ is a continuous function such that
    $\min_{|x|=1}\varphi(x)=0$. If $\bar x$ is a point such that $|\bar x|=1$ and $\varphi(\bar
    x)=0$, then, in order to apply Theorem~\ref{thm:main.stability}, one can enlarge the
    $\M_\e$-stratification by introducing
    $$\Man{1}_\e := \{\e \bar x\} \times (0,\Tf)\; ,$$
    and modifying $\Man{N}_\e$ accordingly.
\end{example}

\

These examples show that, in general, some little hacks on the stratifications have to be used in
order to be able to apply Theorem~\ref{thm:main.stability}. This is why a complete theory of the
stability seems very hard to design.

%

\chapter{Applications and Extensions}
\label{chap:appl-strat}

\abstract{Several applications and extensions are presented in this chapter. Some of them being concrete
    examples, some of them being more abstract (like the large time behavior) or can even be seen as an
    extension of the theory (solutions \`a la Barron-Jensen).}

\section{A crystal growth model -- where the stratified formulation is needed}

The following problem concerns a model of $2$-$d$ nucleation in crystal growth phenomenon. In
\cite{GH}, Giga and Hamamuki use concave Hamiltonians but we re-formulate the equations with convex
ones to be in the framework of this book. Moreover, we consider the problem in $\R^N$ instead of
$\R^2$ since this does not create any additional difficulty. 
\index{Applications!crystal growth}

The simplest equation takes the form
\begin{equation}\label{eq:GH}
u_t + |D_x u|=I(x) \quad \hbox{in  }\R^N \times (0,\Tf)
\end{equation}
where the function $I : \R^N \to \R$ is given by
$$ I(x) = \begin{cases} 1 & \hbox{if  }x\neq 0 ,\\
0 & \hbox{if  }x = 0.
\end{cases}
$$
This equation is associated with a bounded, continuous initial data 
\begin{equation}\label{id:GH}
u(x,0)=\u0(x)\quad \hbox{in  }\R^N\;.
\end{equation}

\subsection{Ishii solutions}

Of course, the key difficulty in this problem comes from the discontinuity of $I$. In terms of
classical viscosity solutions' theory, Ishii's definition yields the subsolution condition 
$$ u_t + |Du|\leq I^* (x)=1\quad \hbox{in  }\R^N \times (0,\Tf)\; ,$$
and the important information that $I(0)=0$ completely disappears here. As a consequence, one easily checks that
$u(x,t)=t$ is an Ishii subsolution associated to the initial data $u_0(x)=0$ in $\R^N$.

On the other hand, and formally for the time being, the classical control interpretation of
\eqref{eq:GH} is that the system can evolve at any velocity $b^x$ with $|b^x|\leq 1$, with cost
$l=1$ outside $0$ and $l=0$ at $0$. In the case $\u0=0$, the natural value function is
$U(x,t)= \min(|x|,t)$ in $ \R^N \times [0,\Tf]$ by adopting the strategy to go as quickly as
possible to $x=0$ and then to stay there.

Clearly $u(x,t) > U(x,t)$ if $|x|<t$  although $U$ should be the ``good solution'' and $u$ is a
subsolution. Therefore, we cannot expect any comparison result in this framework. But it is also clear that $u$
is a kind of ``unnatural'' subsolution, due to the fact that Ishii's definition erases the value $0$ of
$I$ at $x=0$ as we saw it above, which is undoubtedly an important information.

\subsection{The stratified formulation}

In this context, the stratified approach could certainly be simplified but let us stick to our
framework: if $t>0$, taking into account the upper semi-continuity and convexity of $\BCL$, we
introduce
$$ \BCL
(x,t)=\BCL(x)=\begin{cases}
    \big\{ \big((b^x,-1),0,1\big);\ |b^x|\leq 1\big\}\; ,& \hbox{if  }x\neq 0\; ,\\[2mm]
\big\{ \big((b^x,-1),0,l\big);\ |b^x|\leq 1,\ 0\leq l \leq 1\big\}\; ,& \hbox{if  }x= 0\; .
\end{cases}$$
And if $t=0$, $\BCL (x,0)$ is the convex hull of $\BCL(x)\cup \big\{ \big((0,0),1,\u0 (x)\big)\big\}$.

The stratification of $\R^N\times (0,\Tf)$ just contains $\Man{1} = \{0\}\times (0,\Tf)$ and
$\Man{N+1}=(\R^N \setminus \{0\})\times (0,\Tf)$ and, since $I(x)=1$ in $\Man{N+1}$,
$$ 
\F^{N+1}(x,t,p)=p_t + |p_x|-1\;.
$$
While, since $b=(b^x,-1)\in T_{(0,t)}\Man{1}$ is equivalent to $b^x=0$, it follows that
$$
\F^1(t,p)= \max_{\substack{(b,c,l)\in\BCL(0)\\ b^x=0} }\left\{p_t -l\right\}=p_t\;.
$$
For $t=0$, we just get the classical initial condition since $b^t\equiv -1$ for any
$(b,c,l)\in\BCL(x)$ and for any $x$.

Therefore, a subsolution\footnote{Here we use the notion of ``strong stratified subsolution'' to have the $\F_* \leq 0$-inequlity at $x=0$.} of the problem is an \usc function $u:\R^N \times [0,\Tf] \to \R$ 
satisfying 
\begin{equation}\label{GH-str1}
    u_t + |D_x u|\leq 1 \quad \hbox{in  }\R^{N} \times (0,\Tf)\; ,
\end{equation}
\begin{equation}\label{GH-str2}
    u_t \leq 0 \quad \hbox{on  }\Man{1} \; ,
\end{equation}
this last subsolution inequality being understood as a $1$-d inequality which is obtained by looking
at maxima of $u(0,t)-\phi(t)$ for smooth functions $\phi$, while the first one is just the classical
Ishii subsolution definition.

A supersolution of the problem is a \lsc function $v:\R^N \times [0,\Tf] \to \R$ which satisfies
\begin{equation}\label{GH-str3}
v_t + |D_x v|\geq I(x) \quad \hbox{in  }\R^N \times (0,\Tf)\; .
\end{equation}

As we developed in the previous chapters, the stratified formulation consists in super-imposing the
right subsolution inequalities on $\Man{1}$, while the supersolution condition is nothing but the
classical Ishii conditions. Finally it is easy to see that the $\F_{init}$-conditions reduce to
\begin{equation}\label{GH-init}
u(x,0) \leq  \u0(x) \leq v(x,0) \quad \hbox{in  }\R^N \; .
\end{equation}

In this framework, several results hold
\begin{theorem}\emph{--- Crystal growth problem.}\label{thm:GH}
    \begin{enumerate}
        \item[$(i)$] A comparison result between stratified sub and supersolutions of
            \eqref{eq:GH}--\eqref{id:GH}, \ie sub and supersolutions which satisfy
            \eqref{GH-str1}--\eqref{GH-str2} and \eqref{GH-str3} respectively, with
            \eqref{GH-init}.
        \item[$(ii)$] There exists a unique stratified solution of
            \eqref{eq:GH}--\eqref{id:GH}, which is given by
            $$U(x,t) = \inf\left\{ \int_0^t I(X(s))ds + \u0(X(t));\ X(0)=x, \ 
            \ |\dot X(s)|\leq 1 \right\}\; .$$
        \item[$(iii)$] This solution is the minimal Ishii viscosity solution.
        \item[$(iv)$] Finally, if $(I_k)_k$ is a sequence of continuous functions such that 
            $$ \limiinf_k I_k(x)=I(x)\quad \hbox{and}\quad \limssup_k I_k(x)=I^*(x)=1\; ,$$
            then the unique (classical) viscosity solutions $u_k$ associated to $I_k$  converges
            locally uniformly to $U$.  
    \end{enumerate}
\end{theorem}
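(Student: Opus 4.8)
\textit{Proof plan ---\;}

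The plan is to verify that this concrete example falls within the general stratified theory developed in the previous chapters, so that most of the work reduces to checking hypotheses rather than reproving comparison. First I would check that the set-valued map $\BCL$ defined above satisfies \HBCL: boundedness of $(b,c,l)$ is clear since $|b^x|\leq 1$, $b^t\in\{-1\}$ (or $[-1,0]$ at $t=0$), $c\in\{0,1\}$ and $l=I(x)\in\{0,1\}$; convexity and compactness of the images are immediate; upper semi-continuity holds because $I$ is u.s.c. (its only discontinuity is at $x=0$ where $I$ jumps \emph{down}, which is exactly what u.s.c. of the whole triple requires after taking the convex hull there, but in fact here we do not even add anything at $x=0$ to $\BCL$, so u.s.c. is automatic). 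Then \HBCLb-$(i)$--$(iv)$ are checked as in Lemma~\ref{lem:HBCL}: $b^t\in[-1,0]$, the existence of $b^t=-1$, $-Kb^t+c\geq0$ for $K\geq1$, the initial-time stopping elements $((0,0),1,\u0(x))$, and $\max(-b^t,c,l)\geq1$. Next I would verify that we are in the ``good framework for HJ-Equations with discontinuities'' of Definition~\ref{GF-HJD} for the stratification $\M$ with $\Man{1}=\{0\}\times(0,T)$, $\Man{N+1}=(\R^N\setminus\{0\})\times(0,T)$ and the analogous $t=0$ stratification: the stratification is flat, no diffeomorphism is needed, and it remains to check \TC and \NC for the relevant Hamiltonians. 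Here \TC for $\F^{N+1}$ is trivial since $\F^{N+1}(x,t,r,p)=p_t+|p_x|-1$ is independent of $x$; for $\Man{1}$, which is $0$-dimensional in the space variable, there is no tangential direction and \TC is void. For \NC (equivalently \NCBCL) at the point $\{0\}$: since $k=0$ in the space variable, we need $B(0,\delta)\subset\B(y,t)$ for $(y,t)$ near $\{0\}\times(0,T)$, and indeed $\B(y,t)=\{(b^x,-1):|b^x|\leq1\}$ contains $B(0,1)\times\{-1\}$, whose spatial projection contains $B(0,1)$. So \NCBCL holds with $\delta=1$.

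Once these hypotheses are in place, the four assertions follow from general results of Part~\ref{stratRN}. For $(i)$, the comparison between bounded stratified sub- and supersolutions is exactly Theorem~\ref{comp-strat-RN}; one only has to check the two reductions used there apply (Kruzkov's change at $t=0$ via \HBCL-$(iv)$, and the $\exp(-Kt)$ change, both of which are formal here since $c\geq0$ already and the initial condition is the classical one \eqref{GH-init}). For $(ii)$, the value function $U$ defined above is the value function $U$ of \eqref{eqn:VF} for this particular $\BCL$; by Theorem~\ref{SubP} (using \TCBCL, \NCBCL) together with Theorem~\ref{SP} and Theorem~\ref{thm:SubP} it is a stratified solution, hence continuous, and by $(i)$ it is the unique one --- I would just rewrite \eqref{eqn:VF} in the present notation, noting $D(s)\equiv0$ since $c\equiv0$ for $t>0$ and that the terminal/initial cost $\u0(X(t))$ comes from the $t=0$ stopping element as explained on page~\pageref{page:HBCLb}. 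For $(iii)$, Corollary~\ref{VFm-minsup} gives that $U$ is the minimal Ishii supersolution of $\F=0$; since every Ishii solution is in particular an Ishii supersolution, $U$ is the minimal Ishii solution, and one checks it is itself an Ishii solution (its Ishii subsolution inequality $U_t+|D_xU|\leq1$ being weaker than the stratified one \eqref{GH-str1}).

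For $(iv)$, I would argue by stability. Let $\BCL_k(x,t):=\{((b^x,-1),0,I_k(x)):|b^x|\leq1\}$ for $t>0$, with the usual enlargement at $t=0$. Since $I_k$ is continuous, the equation $u_t+|D_xu|=I_k(x)$ has a unique (classical) viscosity solution $u_k$ by Theorem~\ref{MBR:C-HJ} (or Theorem~\ref{comp:LC}), which is also the unique stratified solution of the trivial one-stratum problem $(\M_k$ with $\Man{N+1}_k=\R^N\times(0,T)$). The hypothesis $\liminf_k I_k=I$, $\limsup_k I_k=I^*\equiv1$ gives $\limssup\BCL_k(x,t)=\BCL(x,t)$ in the sense of Lemma~\ref{lem:stab.bl.ham}: at $x\neq0$ the sets converge to $\{((b^x,-1),0,1):|b^x|\leq1\}$, and at $x=0$ the upper limit picks up all values of $l$ between $I(0)=0$ and $I^*(0)=1$, whose convex hull is exactly $\BCL(0,t)$. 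The stratifications $\M_k$ (trivial) converge to $\M$ in the sense of Definition~\ref{ncos}: indeed $\M$ is obtained from the trivial stratification by \emph{adding} the artificial stratum $\Man{1}=\{0\}\times(0,T)$, which is legitimate by Proposition~\ref{ajout} since \NC holds near $\{0\}$ --- so the trivial $\M_k$ is a sub-stratification of itself seen as a super-stratification, and after passing to the RS-limit we may reintroduce $\Man{1}$. Hence by Theorem~\ref{nsr} (and Corollary~\ref{cor:stability}) the half-relaxed limits $\overline u=\limssup u_k$ and $\underline u=\limiinf u_k$ are respectively a stratified sub- and supersolution of \eqref{eq:GH}-\eqref{id:GH}, the comparison result $(i)$ gives $\overline u\leq\underline u$, hence $\overline u=\underline u=U$, which is the claimed locally uniform convergence.

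\textbf{Main obstacle.} The delicate point is part $(iv)$, specifically justifying that the trivial stratifications $\M_k$ ``converge'' to $\M$ in the precise technical sense required to invoke the stability theorem: the subtlety is that $\M$ has a genuinely new $1$-dimensional stratum that is invisible in the approximating problems, and one must thread it through the super-/sub-stratification machinery of Definition~\ref{ncos}, checking that the normal controllability needed for Proposition~\ref{ajout} is uniform in $k$ (it is, since $\delta=1$ independently of $k$) and that the tangential continuity needed for the elimination/introduction lemmas holds. A subsidiary point to be careful about is that the subsolution inequality \eqref{GH-str2} is a genuine one-dimensional viscosity inequality on $\Man{1}$, so in applying the stability result one must confirm that $\F^1=\limiinf\F^1_k$ gives back precisely $p_t\leq0$ --- which it does, because for the approximating problems the ``restriction to $\Man{1}$'' Hamiltonian is $p_t+|0|-I_k(0)=p_t-I_k(0)$ and $\liminf_k I_k(0)=I(0)=0$.

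\null\  \hfill\textbf{Q.E.D.}\medskip
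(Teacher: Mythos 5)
Your treatment of parts (i)--(iii) is essentially the paper's: verify \HBCL, \TC, \NC for this $\BCL$, then invoke Theorems~\ref{comp-strat-RN}, \ref{SubP}, and Corollary~\ref{VFm-minsup}. (A small slip: $I$ jumps \emph{down} at $0$, which makes $I$ \emph{lower} semi-continuous, not u.s.c.; the set-valued map $\BCL$ as written is therefore not u.s.c. at $x=0$ and must be enlarged there --- but since the Hamiltonians are suprema and adding elements with larger cost $l$ does not change any sup, this is harmless.)

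Part (iv) contains a genuine gap. You place the artificial stratum at the \emph{fixed} point $\Man{1}=\{0\}\times(0,T)$ in each approximating problem, and you conclude by asserting that ``$\liminf_k I_k(0)=I(0)=0$''. But the hypothesis is the \emph{half-relaxed} lower limit $\limiinf_k I_k(x)=I(x)$, i.e.\ $\liminf_{y\to x,\,k\to\infty}I_k(y)=I(x)$; at $x=0$ this only says that there exist $x_k\to 0$ with $I_k(x_k)\to 0$. It does \emph{not} force $I_k(0)\to 0$: for instance $I_k(x)=\min\bigl(1,\,k\lvert x-1/k\rvert\bigr)$ satisfies the hypotheses, yet $I_k(0)=1$ for every $k$. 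With your fixed stratum, the tangential Hamiltonians $\F^1_k(p_t)=p_t-I_k(0)$ are read off at $x=0$ only (the liminf in Theorem~\ref{thm:main.stability} runs over points in $\Man{1}_k=\{0\}\times(0,T)$), so in the example above $\limiinf\F^1_k(p_t)=p_t-1\not\geq \F^1(p_t)=p_t$, and the stability theorem fails to deliver \eqref{GH-str2} for $\bar u$. The fix --- and this is precisely what the paper does --- is to choose a sequence $x_k\to 0$ realizing $I_k(x_k)\to 0$ and to place the artificial stratum at $\Man{1}_k=\{x_k\}\times(0,T)$ via Proposition~\ref{ajout}; then $\Man{1}_k\toRS \Man{1}$ and $\F^1_k(p_t)=p_t-I_k(x_k)\to p_t$ along the moving strata, so Corollary~\ref{cor:stability} and part (i) apply. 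Your verification that the normal controllability constant $\delta=1$ is uniform in $k$ is correct and remains the right observation; it is only the location of the inserted stratum that must follow the low values of $I_k$.
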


\begin{proof} 
    The proof just consists in applying the result of Chapters~\ref{chap:strat-def},
    \ref{chap:stratcontr} and \ref{NESR}, and therefore in checking the normal controllability  and
    tangential regularity assumptions, which are obvious here. Then, comparison result $(i)$ is just
    a very particular case of Theorem~\ref{comp-strat-RN}, $(ii)$ is obtained by examining carefully
    the value function of the stratified problem. 

    For $(iii)$, it is enough to remark that any Ishii supersolution is a supersolution of the
    stratified problem, as it was done in Corollary~\ref{Strat-eq-Ishii}.

    Finally, $(iv)$ is a straightforward adaptation of Chapter~\ref{NESR}: indeed, there exists a
    sequence $x_k \to 0$ such that $I_k(x_k) \to 0$ and using the stratification $\Man{1}_k =
    \{x_k\}\times (0,\Tf)$ and $\Man{N}_k=\left(\R^N \times (0,\Tf) \right)\setminus \Man{1}_k$,
    Proposition~\ref{ajout} shows that 
    $$(u_k)_t \leq I_k(x_k) \quad \hbox{in  }\Man{1}_k \;. $$
    Using the stability result (Corollary~\ref{cor:stability}) and part $(i)$ of
    Theorem~\ref{thm:GH} lead directly to $(iv)$.  
\end{proof}

In the introduction of the chapter on stability results, we point out that there are situations
where, instead of applying blindly our stability results, some simpler proofs---or proofs in more
general frameworks---can be used. Here, for example, we have made a point to apply
Corollary~\ref{cor:stability} and, to do so,  the functions $I_k$ have to be continuous outside
$\Man{1}_k$. But the reader can easily verify that such continuity is unnecessary, at least as long
as we want to pass to the limit. 

We point out that the above proof can give the convergence of the sequence $(u_k)$ to the unique
stratified solution $U$, even if the functions $I_k$ are discontinuous. Since the Hamilton-Jacobi Equation
satisfied by $u_k$ may have, in general, several solutions (because $I_k$ can have any type of
discontinuities), this result gives the convergence of all the solutions of these equations to $U$.

\begin{remark}
    In \cite{GH}, Giga and Hamamuki tested several notions of solutions for
    \eqref{eq:GH}--\eqref{id:GH} and remarked that most of them were not completely adapted: for the
    notion of $D$ or $\bar D$-solutions, they tried to impose on $\Man{1}$ an Ishii
    subsolution inequality with $I(x)$, not $I^*(x)$. But this was a $\R^N \times (0,\Tf)$-
    inequality, not a $\Man{1}$-one. Although imposing a stronger subsolution condition on $\Man{1}$
    was going in the right direction, this inequality was too strong compared to \eqref{GH-str2}, at
    least the $\bar D$-ones, and they found that the problem has no $\bar D$-solution in general.
    They ended up considering enveloppe solutions, \ie using Result $(iii)$ of Theorem~\ref{thm:GH}.
\end{remark}

\subsection{Generalization}

Of course, the simplest case we study above can be generalized in several ways, even if we wish to
stay in a similar context: it is clear enough that the case when $I$ vanishes at several points
instead of one can be treated  exactly in the same way, just changing $\Man{1}$. A more intriguing
case which is considered in \cite{GH} is when 
$$ I(x) = \begin{cases} 1 & \hbox{if  }x\notin \mathcal{S}\;,\\
0 & \hbox{if  }x \in \mathcal{S}\;,
\end{cases}
$$
for some closed subset $\mathcal{S}$ of $\R^N$.

Giga and Hamamuki aim at treating the case of very general closed subsets $\mathcal{S}$, which does
not seem possible in our framework---though maybe we are missing something here.
A natural assumption for us is the following: there exists a stratification $\tM=(\tMan{k})_k$
of $\R^N$ such that 
$$\tMan{N}=\mathcal{S}^c \cup \mathrm{Int}\,(\mathcal{S})\; ,$$
where $\mathrm{Int}\,(\mathcal{S})$ denotes the interior of $\mathcal{S}$, and
$$\partial \mathcal{S}=\tMan{N-1}\cup \tMan{N-2}\cdots \cup \tMan{0}\; .$$
Once this hypothesis holds, we then set $\Man{k} = \tMan{k-1} \times (0,\Tf)$ for $1 \leq k \leq N+1$.

Clearly this assumption on $\mathcal{S}$ implies that $\partial \mathcal{S}$ has some regularity
properties but, at least, it allows to use all the stratification arguments and therefore all the
above results can be extended thoroughly.

\section{Combustion -- where the stratified formulation may unexpectedly help}\label{SF-H}

In \cite{BR}, motivated by a model of solid combustion in heterogeneous media, Roquejoffre and the
first author studied the time-asymptotic behavior of flame fronts evolving with a periodic
space-dependent normal velocity. By using the ``level-set approach'', the authors introduce an
Eikonal Equation 
\begin{equation}\label{e1.2}
    u_t+R(x)\vert Du\vert=0 \quad \hbox{ in  } \R^N \times (0, +\infty)\; ,
\end{equation}
where, in the most standard case, $R : \R^N \to \R$ is a positive, Lipschitz continuous function.

In \cite{BR}, results on the propagation are given, in particular on the asymptotic velocity but
only in the case of Lipschitz continuous functions $R$. However, an interesting case---which is the
purpose of an entire but formal section in \cite{BR}---concerns the case when $R$ is discontinuous, 
given in $\R^2$ by
$$
    R(x)=R(x_1,x_2) = \begin{cases} M
    & \hbox{if $x_1 \in \Z$} \cr m & \hbox{otherwise,}\end{cases}
$$
where $m, M$ are positive constants. The interesting case is when $m \ll M$ for which we have
``lines with maximal speed''. 

The stratified approach allows to bridge the gap between the formal results in \cite{BR} in the
discontinuous case, and detailed proofs. This section is devoted to expose such content and we hope this
will help the reader be convinced that the classical proofs for the homogenization of
Hamilton-Jacobi Equations extend easily to the discontinuous case provided the right stratified
formulation is used.

\subsection{The level-set approach}
\index{Applications!level-set approach}

We recall that the ``level-set approach'' consists in identifying a moving front $\Gamma_t$ with the
zero-level-set of a solution $u$ of a ``geometric type'' equation, for which one has a
unique viscosity solution, \ie  $\Gamma_t=\{x\in\R^N:u(x,t)=0\}$,

Based on an idea appearing in Barles \cite{Ba-FP} for constant normal velocity, the
``level-set approach'' was first used for numerical computations by Osher and Sethian \cite{OS} who
did these computations for more general normal velocities, in particular curvature dependent ones.
Then Evans and Spruck \cite{ES-I}, Chen, Giga and Goto \cite{CGG} developed the theoretical basis.
We also refer to Souganidis \cite{S-CIME,S-FP} and to \cite{BS-nga} for a complete description of
the ``level-set approach'' but also for applications to the study of moving fronts in
reaction-diffusion equations.

As we already mentioned above, the key idea here is to represent the moving
front $t\mapsto \Gamma_t$ using the level-set, and in general the $0$-level-set, of a continuous
function $u:\R^N \times [0,+\infty)\to \R$, typically a solution of Equation~\eqref{e1.2} or a more general
parabolic equation
\begin{equation}\label{gen-eqn-lsa}
 u_t + F(x,t,D_xu, D^2_{xx}u)=0 \quad \hbox{ in  } \R^N\times(0,\Tf)\; .
\end{equation}
where $F$ satisfies suitable properties. But, in fact, one remarks that a more adapted way of describing things
consists in saying that the ``level-set approach'' actually describes the evolution of a domain
$t\mapsto \Omega_t$, whose boundary is precisely $\Gamma_t$. In combustion, $\Omega_t$
typically represents the ``burnt region'' while $\Gamma_t$ is the flame front and $\R^N \setminus (\Omega_t\cup \Gamma_t)$ is the
``unburnt region''.

The key result of the ``level-set approach'' can be described in the following way: suppose that we can solve
\eqref{gen-eqn-lsa} to gether with any initial data
\begin{equation}
\label{id:e1.2}
 u(x,0)=\u0(x) \quad \hbox{ in  } \R^N\; ,
 \end{equation}
where $\u0 \in C(\R^N)$ represents the front at time $t=0$ in the sense that $\Gamma_0=\{x:\ \u0(x)=0\}$ and,
for example, $\Omega_0=\{x:\ \u0(x)<0\}$ and $\R^N \setminus (\Omega_0\cup \Gamma_0)=\{x:\ \u0(x)>0\}$. 
Then the sets 
$$\Omega_t=\{x:\ u(x,t)<0\}, \ \Gamma_t=\{x:\ u(x,t)=0\}\ \hbox{ and }\ \R^N \setminus
(\Omega_t\cup \Gamma_t)=\{x:\ u(x,t)>0\}$$
are independent of the choice of $\u0$ satisfying the above conditions, but they depend only on
$\Omega_0$, $\Gamma_0$ and $F$. Of course, opposite signs can be chosen for $u_0$ in $\Omega_0$ and
$\R^N \setminus (\Omega_0\cup \Gamma_0)$: a similar result holds and we come back on the effect of this
change later.

Without entering into details, the above result is based on two key properties of the equation: first a comparison
result for bounded continuous sub and supersolutions and then the fact that \eqref{gen-eqn-lsa} is invariant by
change of unknown function $u\to \varphi(u)$, for all $C^1$-change $\varphi$ such that $\varphi'>0$ in $\R$.

Clearly Equation~\eqref{e1.2} satisfies these two conditions when $R$ is a positive, Lipschitz continuous function
since the classical existence and uniqueness theory applies. This allows to define $t \mapsto \Gamma_t$ as
the level-set evolution of $\Gamma_0$ with normal velocity $R$. In addition, the solution $u$ is
given by the control formula 
\begin{equation}
\label{e1.3}
u(x,t)=\inf\Big\{ \u0(\gamma(t)): \gamma(0)=x,\ \vert\dot\gamma (s)\vert\leq R(\gamma(s))\Big\}
\end{equation}
where $\gamma$ is taken among all piecewise $C^1$ curves.

On this example, the role of the choice of the signs of $u_0$ is clear: by Equation~\eqref{e1.2}, $u_t\leq 0$
and therefore, if the burnt region is defined by $\Omega_t=\{x:\ u(x,t)<0\}$, it increases, an expected phenomena.
With the choice of the other sign, the unburnt region would increase, which would be unsatisfactory from the
modelling point of view.

Hence, the choice of the signs of $u_0$ in $\Omega_0$ and $\R^N \setminus (\Omega_0\cup \Gamma_0)$,
to gether with the equation, gives the direction of propagation of the front by implying the
expansion or shrinking of $\Omega_t$, the direction of propagation for $\Gamma_t$ being either
outward or inward to $\Omega_t$ in one or the other case. Such property holds in general for
level-sets equations, even if, for the Mean Curvature Equation, $$ u_t -\Delta u + \frac{\langle
D^2_{xx} u D_x u,D_x u\rangle}{|D_x u|^2}=0 \quad \hbox{ in  } \R^N\times(0,\Tf)\; ,$$ the signs of
$u_0$ are irrelevant.

As we said, the reader will find in \cite{BR} results on this propagation and on the asymptotic
velocity in the case of Lipschitz continuous functions $R$, the discontinuous case being only
considered formally.

\subsection{The stratified formulation}

We extend the discontinuous $\R^2$-framework to a $\R^N$-one by setting 
$$R(x)=R(x',x_N) =R(x') = \begin{cases} M
 & \hbox{if $x' \in \Z^{N-1}$} \cr m & \hbox{otherwise,}\end{cases}$$
where, as usual $x=(x',x_N)$ with $x'\in \R^{N-1}$, addressing the problem through the stratified
formulation. More precisely, we consider the stratification $\R^N \times (0, +\infty) = \Man{2}\cup
\Man{N+1}$ where $\Man{2}= (\Z^{N-1} \times \R)\times (0, +\infty)$,
and $\Man{N+1}$ is its complementary set in $\R^N \times (0, +\infty)$.
Next, let 
$$\BCL(x,t)=\BCL(x) = \begin{cases}
    \big\{((mv,-1),0,0);\ v\in \R^N,\ |v|\leq 1\big\} &\text{if } x \in \Man{N+1}\;,\\[2mm]
    \big\{((Mv,-1),0,0);\ v\in \R^N,\ |v|\leq 1\big\} &\text{if } x\in\Man{2}\;.
\end{cases}$$
Notice that, since $M>m$, $\BCL$ is actually upper semi-continuous on $\Man{2}$. Therefore a
(strong) stratified subsolution $u:\R^N \times (0, +\infty)$ of \eqref{e1.2} is an \usc function with
satisfies 
\begin{align} 
    & u_t+m\vert Du\vert\leq 0 \quad \hbox{ in  } \R^N \times (0, +\infty)\; ,
    \label{str-sub-e1.2-a} \\[2mm]
    & u_t+M\vert Du\vert\leq 0 \quad \hbox{ in  } \Man{2} \times (0, +\infty)\; ,
    \label{str-sub-e1.2-b}
\end{align}
while a stratified supersolution $v:\R^N \times (0, +\infty)$ of \eqref{e1.2} is a \lsc function
satisfying
\begin{equation} \label{str-sub-e1.2-c}
    v_t+R(x) \vert Dv\vert\geq 0 \quad \hbox{ in  } \R^{N} \times (0, +\infty)\; .
\end{equation}

Using results of Section~\ref{sec:compstrat}, one can easily prove the
\begin{theorem}\label{thm:comp.e1.2} 
    For any $\u0 \in C(\R^N)$, problem \eqref{e1.2}-\eqref{id:e1.2} has a unique stratified
    solution given by \eqref{e1.3}. Moreover, a comparison result holds for this problem.
\end{theorem}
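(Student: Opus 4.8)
The plan is to check that this problem falls within the ``good framework for HJ Equations with discontinuities'' of Definition~\ref{GF-HJD}, so that the comparison statement is an immediate consequence of Theorem~\ref{comp-strat-RN}, and then to identify the stratified value function \eqref{eqn:VF} with the right-hand side of \eqref{e1.3}. First I would fix the control data: for $t>0$ and $x\in\Man{N+1}$ set $\BCL(x,t):=\{((mv,-1),0,0)\,:\,v\in\R^N,\ |v|\le 1\}$, for $x\in\Man{2}$ set $\BCL(x,t):=\{((Mv,-1),0,0)\,:\,v\in\R^N,\ |v|\le 1\}$, and for $t=0$ complete $\BCL(x,0)$ with the stopping element $((0,0),1,u_0(x))$, taking convex hulls as in the general construction. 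Then $\F$ given by \eqref{hamil.strat} coincides with $p_t+R(x)|p_x|$ off $\Man{2}$, while on $\Man{2}$ the tangential Hamiltonian $\F^{2}$ (which only sees the $x_N$- and $t$-derivatives and the tangential dynamics, i.e.\ $v$ of the form $(0,\dots,0,v_N)$) reduces to $p_t+M|p_{x_N}|$, matching \eqref{str-sub-e1.2-b}, where $Du$ there is understood as the tangential, i.e.\ $x_N$-, derivative.

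Next I would verify the structural hypotheses. The decomposition $\R^N\times(0,+\infty)=\Man{2}\cup\Man{N+1}$, with the remaining $\Man{k}$ empty, is an \emph{admissible flat stratification}: indeed $\Man{2}$ is the disjoint union of the $2$-planes $\{z\}\times\R\times(0,+\infty)$, $z\in\Z^{N-1}$, which are closed and mutually at positive distance, whereas $\Man{N+1}$ is open and dense, so \HST holds, no change of coordinates is needed, and it extends to a flat stratification of $\R^N\times\R$ in the obvious way. The map $\BCL$ satisfies \HBCL: its images are compact, convex and uniformly bounded by $M$; \HBCLb holds because $b^t\equiv-1$ for $t>0$ and the stopping element supplies the required structure at $t=0$ (with $c\ge 0$ already); and — the only point which is not entirely routine — $\BCL$ is upper semicontinuous precisely because $m<M$, so that the \emph{larger} set is carried by the lower-dimensional stratum $\Man{2}$. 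Assumption \TCBCL is immediate since $\BCL$ is locally constant in space along each stratum, and \NCBCL holds because, at a point of $\Man{2}$, the normal directions are the $x'$-directions and, for $(y,s)$ off $\Man{2}$, the spatial part of $\B(y,s)$ is the ball $\bar B(0,m)$ of $\R^N$, whose orthogonal projection onto the normal space contains $B(0,\delta)$ for any $\delta<m$; the case of $\Man{0}$ is vacuous.

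With these verifications, Theorem~\ref{comp-strat-RN} yields the comparison result between bounded stratified sub and supersolutions of \eqref{e1.2}--\eqref{id:e1.2}, the finite speed of propagation (Theorem~\ref{comp:FSP}, available here since $|\dot X|\le M$) allowing the usual localization so that boundedness of $u_0$ is not needed; at $t=0$ the Hamiltonian $\F_{init}$ reduces, by Proposition~\ref{visc-ineq-init} and the fact that the stopping element is the only one with $b^t=0$, to $\F_{init}(x,u,p_x)=u-u_0(x)$, so the initial conditions are just $u(x,0)\le u_0(x)\le v(x,0)$. By the results of Chapter~\ref{chap:stratcontr} (Theorem~\ref{SP}, Theorem~\ref{SubP} together with its corollary), the value function $U$ defined by \eqref{eqn:VF} with the above data is continuous and is the unique stratified solution; uniqueness of stratified solutions also follows directly from comparison, any such solution being at once a stratified subsolution and an Ishii supersolution.

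It remains to identify this value function with the right-hand side of \eqref{e1.3}, and this is the step I expect to be the main obstacle. Since the running cost and discount vanish and $b^t\equiv-1$, a trajectory of the differential inclusion started at $(x,t)$ has the form $(X(s),t-s,0,0)$ with $|\dot X(s)|\le m$ when $X(s)\notin\Z^{N-1}\times\R$ and $|\dot X(s)|\le M$ when $X(s)\in\Z^{N-1}\times\R$, that is $|\dot X(s)|\le R(X(s))$ since $R\equiv M$ on $\Z^{N-1}\times\R$; hence \eqref{eqn:VF} reads $U(x,t)=\inf\{u_0(X(t))\,:\,X(0)=x,\ |\dot X(s)|\le R(X(s))\}$, which is \eqref{e1.3} up to replacing Lipschitz trajectories by piecewise-$C^1$ curves. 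The equality of the two infima is obtained by a density argument in one direction and by the measurable-selection / Filippov lemma (Theorem~\ref{thm:existence.traj}) in the other; the point to be checked here is that no ``cross-case'' pathology occurs, i.e.\ that the admissible tangential speed along $\Man{2}$ is genuinely $M$ and is correctly captured by both formulations, which is exactly what the normal controllability \NCBCL and the upper semicontinuity of $\BCL$ (again $m<M$) guarantee. Once this identification is established, all the assertions of the theorem are proved.
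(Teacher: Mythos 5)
Your proposal is correct and takes essentially the approach the paper has in mind: the paper itself gives no proof, stating only that the result ``comes from a simple checking of the assumptions required in Section~\ref{sec:compstrat}.'' Your write-up carries out exactly that checking — identifying the AFS structure, verifying \HBCL, \TCBCL and \NCBCL (in particular, correctly observing that upper semicontinuity of $\BCL$ hinges on $m<M$, and that on $\Man{2}$ the tangential Hamiltonian $\F^2$ only involves $(p_t,p_{x_N})$), invoking Theorem~\ref{comp-strat-RN} for comparison and Chapter~\ref{chap:stratcontr} for the characterization of the value function — and then closes the loop by identifying \eqref{eqn:VF} with \eqref{e1.3}. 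The one point worth tightening is the remark that Theorem~\ref{comp:FSP} ``is available here'': that statement is proved for the continuous equation, and for general continuous $u_0$ you would need to redo the finite-speed argument in the stratified setting (the localization $\psi$ of the proof of Theorem~\ref{comp-strat-RN} assumes bounded sub- and supersolutions). This is a genuine gap in the paper's own formulation as well, so flagging it is appropriate, but you should state it as an argument to be carried out rather than a result to be quoted.
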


We leave the proof to the reader since it comes from a simple checking of the assumptions required
in Section~\ref{sec:compstrat}.

\subsection{Asymptotic analysis}
\index{Applications!homogenization}

The next question concerns the asymptotic velocity when $t\to +\infty$. A classical method
consists in looking first at initial data of the form $\u0(x)=p\cdot x$ for some $p\in \R^N$, in order to
obtain the velocity when the normal direction is $p$.

The classical hyperbolic scaling $(x,t) \to (x/\e,t/\e)$, which preserves velocities, allows to
reduce to finite times the asymptotic behaviour, leading to study the equation satisfied by the
rescaled function $\ue(x,t):= \e u (x/\e,t/\e)$, namely \begin{equation} (\ue)_{t} + R(\xoe)\vert D
\ue \vert = 0\quad \hbox{in }\R^N  \times (0,+\infty)\;.  \label{EqnR} \end{equation} We also notice
that the initial data is unchanged by the scaling, \ie $\ue(x,0)=p\cdot x$.  We can formulate the
result in the following simple form \begin{theorem}\label{thm:asymp.e1.2} The following limit holds
    $$\lim_{\e\to0}\ue(x,t)= p\cdot x - t \Hb(p)$$ where, for $p=(p_1,p_2,\cdots,p_N)$, $\Hb
    (p)=\max ( M\vert p_{N} \vert, m\vert p \vert)$.  \end{theorem} This theorem implies in
    particular that if $|p|=1$, $\Hb(p)$ is the velocity of the front in the direction $p$. Let us
    first remark that, by Theorem~\ref{thm:comp.e1.2}, since $m\leq R(x)\leq M$ in $\R^N$, $$
    p\cdot x-Mt \leq \ue(x,t) \leq p\cdot x-mt\quad \hbox{ in  }\R^{N}\times(0, +\infty)\;.$$
    Therefore $\ue$ is uniformly locally bounded.

Now, in order to prove the convergence result we provide two proofs. The more general consist in
following the method of Lions, Papanicolaou and Varadhan \cite{LPV} together with the perturbed
test-function method of Evans~\cite{E-PTF1,E-PTF2} as in the article of Briani, Tchou and the two
authors \cite{BBCT}. These arguments allows to treat far more general problems but here we can also
provide simplified arguments.

\medskip

\begin{proof}[Proof of Theorem~\ref{thm:asymp.e1.2}: the common ingredients]

    The first step is the \begin{lemma}\label{EP-BR} For any $p\in \R^N$, there exists a
    unique constant $\Hb(p)$ such that the equation \begin{equation}\label{eq:EP} R(x)|p+D_x w| =
    \Hb(p)\quad \hbox{in } \R^N \end{equation} has a bounded, Lipschitz continuous stratified
    solution $w=w(x,p)$. Moreover, $\Hb(p)=\max ( M\vert p_N \vert, m\vert p \vert)$.  \end{lemma}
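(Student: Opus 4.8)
\medskip

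The cell problem \eqref{eq:EP} is a stratified Eikonal-type equation on the periodic stratification of $\R^N$ built from $\Man{2}=(\Z^{N-1}\times\R)$ and its complement $\Man{N+1}$, and the natural strategy is the classical one of Lions--Papanicolaou--Varadhan adapted to the stratified setting. First I would fix $p\in\R^N$ and, for $\delta>0$, consider the approximate (discounted) cell problem
$$ \delta w^\delta + R(x)\,|p+D_x w^\delta| = 0 \quad\hbox{in }\R^N\,,$$
understood in the stratified sense: $w^\delta$ is required to satisfy the Ishii subsolution inequality with $R=m$ on $\Man{N+1}$, the stronger $\Man{2}$-subsolution inequality with $R=M$ on $\Man{2}$ (a $(N{-}1)$-dimensional-manifold viscosity inequality for the tangential Hamiltonian), and the Ishii supersolution inequality with $R(x)$ everywhere. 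Here the relevant $\BCL$ is $\BCL(x)=\{((R(x)v,-1),\delta,\, -\delta p\cdot x/??)\}$ — more precisely, one works directly with the stationary Hamiltonian $G(x,r,q):=\delta r + R(x)|p+q|$, which after the change $w\mapsto \tilde w$ fits the ``good framework'' of Definition~\ref{GF-HJD}: the normal controllability \NCBCL holds on $\Man{2}$ because $R\ge m>0$ gives full coercivity in the normal direction, and tangential continuity \TCBCL is trivial since $R$ is constant along $\Man{2}$. Existence and uniqueness of a bounded stratified solution $w^\delta$ then follow from Theorem~\ref{comp-strat-RN} (comparison) together with Perron's method / the value-function representation of Chapter~\ref{chap:stratcontr}, the value function being finite and bounded because the cost is bounded and the dynamics are bounded.

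\medskip

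The second step is the standard two a priori estimates. Testing with $w^\delta\equiv 0$ type comparison arguments (or directly from the control formula) one gets $\|\delta w^\delta\|_\infty \le M|p|$, so $\{-\delta w^\delta(0)\}_\delta$ is bounded; and the coercivity $R(x)|p+D_xw^\delta|\le M|p|$ combined with $R\ge m$ gives a uniform Lipschitz bound $\|D_xw^\delta\|_\infty\le C(|p|)$ — here one must be a little careful near $\Man{2}$, but Proposition~\ref{reg-sub} (regularity of subsolutions, valid under \NCBCL and \TCBCL) guarantees that the stratified subsolution has no singular values on $\Man{2}$, so the Lipschitz estimate genuinely holds across the interface. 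By periodicity of $R$ one may moreover subtract $w^\delta(0)$ and work with $\bar w^\delta := w^\delta - w^\delta(0)$, which is $\Z^{N-1}$-periodic in $x'$, bounded uniformly in $\delta$, and equi-Lipschitz. Passing to a subsequence, $-\delta w^\delta(0)\to \Hb(p)$ for some constant and $\bar w^\delta\to w$ locally uniformly; by the stability result for stratified solutions (Theorem~\ref{thm:main.stability}, here with the stratification fixed so the structure is constant) the limit $w$ is a bounded, Lipschitz, $\Z^{N-1}$-periodic stratified solution of \eqref{eq:EP}. Uniqueness of the constant $\Hb(p)$ is the usual argument: if two constants $\lambda_1<\lambda_2$ admitted bounded stratified solutions $w_1,w_2$, then $w_1$ is a strict stratified subsolution of the equation with constant $\lambda_2$ while $w_2$ is a supersolution, and applying the stratified comparison principle to $w_1-c$ for large $c$ (using periodicity to make the sup of $w_1-w_2$ attained) yields a contradiction.

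\medskip

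The last step is the explicit identification $\Hb(p)=\max(M|p_N|,\,m|p|)$. I would do this by producing explicit sub- and supersolutions, which is where a small genuine computation enters. For the lower bound on $\Hb(p)$ (equivalently, no stratified solution exists for smaller constants): the plain periodic-homogenization of $m|Du|$ forces $\Hb(p)\ge m|p|$, and restricting attention to a single hyperplane $\{x'=k\}\subset\Man{2}$, where the tangential Hamiltonian is $M\,|p_{\mathrm{tang}}+ D_{\mathrm{tang}}w|$ with $p_{\mathrm{tang}}$ the component of $p$ along that hyperplane (which contains the $x_N$-direction), homogenization along $\Man{2}$ forces $\Hb(p)\ge M|p_N|$. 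For the upper bound one exhibits, for each of the two regimes, a bounded periodic stratified subsolution: when $m|p|\ge M|p_N|$ the affine-in-$x'$ function $w\equiv 0$ already works (one checks $R(x)|p|\le m|p|$ on $\Man{N+1}$ is false unless... ) — more honestly, one builds $w$ piecewise, constant along $\Man{2}$ and interpolating with controlled slope in the cells, chosen so that $R(x)|p+D_xw|\le \max(M|p_N|,m|p|)$ holds both on $\Man{N+1}$ and, as a tangential inequality, on $\Man{2}$; the interface condition is exactly the stratified subsolution requirement and is where the ``$M$ only sees the tangential gradient'' phenomenon produces the $|p_N|$ rather than $|p|$. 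The main obstacle here is checking the $\Man{2}$-inequality for the candidate corrector — i.e. verifying that the tangential Hamiltonian $\F^2$ along the hyperplanes is $\le \Hb(p)$ — and making sure the piecewise construction is genuinely a stratified subsolution (no bad behaviour at the junctions $\Man{2}\cap\Man{2}=\Z^{N-1}\times\R$, which is really codimension-$1$ here so Part~\ref{part:codim1} machinery applies). Once the matching sub/supersolutions are in hand, the stratified comparison principle pins $\Hb(p)$ between the two explicit values and forces equality. Finally, the convergence $\ue(x,t)\to p\cdot x - t\Hb(p)$ follows from the perturbed test-function method of Evans: given a smooth test function $\phi$ touching the half-relaxed limit, one perturbs by $\e\, w(x/\e,D\phi)$ and uses that $w$ is a stratified sub/supersolution of the cell problem together with the stratified stability/comparison results (exactly as in \cite{BBCT}), the normal controllability again being what makes the perturbed test function a legitimate competitor near the scaled interfaces.
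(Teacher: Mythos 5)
Your overall scaffolding (discounted cell problem, a priori bounds on $\alpha w^\alpha$ and on the gradient, periodicity, Ascoli plus stratified stability) matches the paper's proof, but two of the three key steps contain genuine gaps.

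\textbf{Uniqueness of the ergodic constant.} Your argument ``$w_1$ is a strict stratified subsolution with constant $\lambda_2$, $w_2$ is a supersolution, apply the stratified comparison principle to $w_1-c$'' does not work: the stationary equation $R(x)|p+D_xw|=\lambda$ has no zero-order term, so none of the comparison results of Section~\ref{sec:compstrat} (which rest on \Mong, i.e.\ some strict monotonicity in $u$ or in $p_t$) apply to it, and shifting $w_1$ by a constant does not change its subsolution property. The paper sidesteps this exactly by lifting to the evolution problem: if $w$ solves with $\lambda$ and $w'$ with $\lambda'$, then $(x,t)\mapsto w(x)-\lambda t$ and $(x,t)\mapsto w'(x)-\lambda' t$ both solve $u_t+R(x)|p+D_xu|=0$, and the \emph{evolution} comparison result (which does hold in the stratified setting since $p_t$ provides the monotonicity) gives $\|(w-\lambda t)-(w'-\lambda' t)\|_\infty\le\|w-w'\|_\infty$ for all $t>0$, forcing $\lambda=\lambda'$. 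You need this device; without it the uniqueness claim is unsupported.

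\textbf{Identification $\Hb(p)=\max(M|p_N|,m|p|)$.} You attempt to produce explicit sub- and supersolutions but, as you acknowledge in the middle of the paragraph, you cannot actually complete the construction (``one checks \dots{} is false unless\dots{} — more honestly, one builds $w$ piecewise\dots{} the main obstacle here is checking the $\Man{2}$-inequality''). The paper instead reads off the value of $\Hb(p)$ directly from the Dynamic Programming Principle for the corrector: for any $T>0$,
$$
w(x)=\inf\Big\{\int_0^T(b(s)\cdot p+\Hb(p))\,ds+w(X(T))\,:\ \dot X(s)=b(s)\in\B(X(s))\Big\},
$$
with $\B(y)=mB(0,1)$ off $\Man{2}$ and $\B(y)=MB(0,1)$ on $\Man{2}$. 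Choosing the constant dynamic $b=-mp/|p|$ (always admissible) and noting that $w$ is bounded gives $\Hb(p)=m|p|$ when that is the optimal cost; choosing $b=\pm Me_N$ on $\Man{2}$ gives $\Hb(p)=M|p_N|$ when starting from $\Man{2}$; boundedness of $w$ and optimization over these choices identifies $\Hb(p)$ as the maximum. This control argument is cleaner and avoids the corrector construction you were stuck on. (Incidentally, your aside that ``$\Man{2}$ is really codimension~$1$'' is not correct: the spatial part $\Z^{N-1}\times\R$ is a union of lines, hence of codimension $N-1$ in $\R^N$, and the stratified machinery is precisely what is needed here when $N\ge3$.)
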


\begin{proof} This lemma is classical and so is its proof, except that, in our case, $R$ is
    discontinuous but the method remains the same.

    \medskip

    \noindent\textbf{(a)} for $0<\alpha \ll 1$, we consider the equation
    \begin{equation}\label{eq:EPa} R(x)|p+D_x w^\alpha | + \alpha w^\alpha = 0\quad \hbox{in }
    \R^N\; .  \end{equation} Borrowing arguments in Section~\ref{sec:compstrat} and
    Chapter~\ref{NESR}, it is easy to prove that \eqref{eq:EPa} has a unique stratified solution: if
    $R$ is Lipschitz continuous, such result is standard and can easily be obtained by the Perron
    method of Ishii \cite{Is-Per}.  Here we can use an approximation of $R$ by Lipschitz continuous
    functions from above since $R$ is \usc and then to use the stability results of
    Chapter~\ref{NESR}.

    Now, $w^\alpha$ depends only on $x'$ since $R$ depends only on $x'$ and it is
    $\Z^{N-1}$-periodic since $R$ is $\Z^{N-1}$-periodic: indeed, $w^\alpha(x',x_N)$ and
    $w^\alpha(x'+k,x_N+h)$ are solutions of the same equation for any $k\in \Z^{N-1}$ and $h\in \R$
    and therefore they are equal. Hence, for $k=0$, $w^\alpha(x',x_N)=w^\alpha(x' ,x_N+h)$ for any
    $h\in \R$ and for $h=0$, $w^\alpha(x',x_N)=w^\alpha(x'+k,x_N)$ for any $k\in \Z^{N-1}$, proving
    the claims.

    Moreover, thanks again to the comparison results, $-M|p|\leq \alpha w^\alpha(x) \leq -m|p|$ in
    $\R^N$ since $-M|p|/\alpha$ and $-m|p|/\alpha$ are respectively sub and supersolution of
    \eqref{eq:EPa}. Finally, the $w^\alpha$ are equi-Lipschitz continuous since $\alpha w^\alpha$ is
    uniformly bounded and the term $R(x)|p+q |$ is coercive in $q$, uniformly in $x$. We point out
    that, in all the proof, we use extensively the comparison result for stratified solutions of
    \eqref{eq:EPa}.

    \medskip

    \noindent\textbf{(b)} Applying Ascoli's Theorem to the sequence
    $(w^\alpha(\cdot)-w^\alpha(0))_\alpha$ which is equi-Lipschitz continuous and equi-bounded by
    the periodicity of each $w^\alpha$, we can extract a subsequence $(w^{\alpha
    '}(\cdot)-w^{\alpha'}(0))_{\alpha '}$ which converges uniformly in $\R^N$ (by periodicity) to a
    periodic, Lipschitz continuous function $w$. Moreover, we can assume that $\alpha 'w^{\alpha
    '}(0)$ converges to a constant $-\lambda$. By the stability result for stratified solutions, $w$
    is a stratified solution of $$R(x)|p+D_x w| = \lambda\quad \hbox{in }\R^N\; .$$ In order to
    prove that $\lambda $ is unique, we assume by contradiction that there exists a bounded
    stratified solution $w'$ of $$R(x)|p+D_x w'| = \lambda'\quad \hbox{in } \R^N\; ,$$ for some
    different constant $\lambda'$. 

    Since the functions $(x,t)\mapsto w(x)-\lambda t$ and $(x,t)\mapsto w'(x)-\lambda' t$ are
    stratified solutions of the same equation, therefore for any $t>0$ $$ ||(w(x)-\lambda
    t)-(w'(x)-\lambda' t)||_\infty \leq  ||w(x)-w'(x)||_\infty\; ,$$ an inequality which can hold
    for large $t$ only if $\lambda = \lambda'$, proving the claim about the uniqueness of $\lambda$.

    \medskip

    \noindent\textbf{(c)} It remains to show that $\lambda = \Hb(p)$ is given by $\max ( M\vert p_N
    \vert, m\vert p \vert)$. By the Dynamic Programming Principle, we have, for any $\theta>0$ $$
    w(x) = \inf\left\{\int_0^\theta (b(s)\cdot p + \Hb(p))ds + w(X(\theta));\ X(0)=x,\ \dot
    X(s)=b(s)\in B(X(s))\right\}\; ,$$ where $\B (y) = MB(0,1)$ if $y\in \Z^{N-1}$ and $\B (y) =
    mB(0,1)$ if $y\notin \Z^{N-1}$. Here we trust the reader will be able to translate in this
    setting the framework of Chapters~\ref{chap:strat-def} and \ref{chap:stratcontr} without any
    difficulty, even if we have dropped the $b^t$-term since $b^t\equiv -1$.

    In order to compute the infimum in the above formula, there are several choice for $b(s)$.
    First, at any point of $\R^N$, one can choose $|b(s)|=m$ which comes associated to a minimal
    cost $b(s)\cdot p= -m|p|$; if $X(s)\in \Z^{N-1}$, we can choose $b(s)=+/- Me_N$ to stay on
    $\Z^{N-1}$ and then the minimal cost becomes $b(s)\cdot p= -M|p_N|$. The optimal choice, at
    least if $x\in \Man{2}$ is $\min(-m|p|,-M| p_N|)= -\max(m|p|, M|p_N|)$ since, by the above
    choice of $b(s)$, we have $X(s)\in \Man{2}$ if the maximum is $M|p_N|$.  Choosing this strategy
    for $x\in \Man{2}$, we see that for any $\theta>0$, $$ w(x) \leq \theta (-\max(m|p|, M|p_N|) +
    \Hb(p))+  w(X(\theta))$$ and therefore $\Hb(p) \geq \max(m|p|, M|p_N|)$.

    To prove the equality, we examine the two different cases: if the maximum is $m|p|$ and $p\neq
    0$ (the case $p=0$ is obvious and $ \Hb(0)=0$ since $w$ can be taken as a constant), we notice
    that, for any $b(s)$, $b(s)\cdot p \geq -m|p|$. Hence $b(s)\cdot p + \Hb(p) \geq -m|p|+\Hb(p)$
    and if $-m|p|+\Hb(p)\geq \eta >0$, we get for any choice of $b(s)$ $$ \int_0^\theta (b(s)\cdot p
    + \Hb(p))ds\geq \theta \eta,$$ which leads to a clear contradiction with the boundedness of $w$.

    If the maximum is $M|p_N|$, we cannot have $\Hb(p)\geq M|p_N|+\eta \geq m|p| + \eta$ exactly by
    the same argument: either $X(s)\in \Z^{N-1}$ and the minimal cost is $b(s)=- M|p_N|$, while if
    $X(s)\notin \Z^{N-1}$, it is $-m|p|$. In any case, $b(s)\cdot p + \Hb(p) \geq \eta$ and we
    conclude as above.  \end{proof}

We now continue by the\\

\noindent\textbf{A.-- Simplified proof.}

 Because of the very simple form of the
    initial data for $\ue$ and even more, the simple form of the limit of the $\{\ue\}$, there is a
    very quick proof to conclude. Indeed the function $\chi_\e (x,t):= p\cdot x - t \Hb(p) -\e
    w\left(x/\e,p\right)$ is a solution of \eqref{EqnR} and moreover $$ \chi_\e (x,0)-\e
    ||w(\cdot,p)||_\infty \leq \ue(x,0) \leq \chi_\e (x,0)+\e ||w(\cdot,p)||_\infty\; .$$ Therefore,
    using that $\chi_\e + C$ is also a solution for any constant $C$, by the comparison result we
    get $$ \chi_\e (x,t)-\e ||w(\cdot,p)||_\infty \leq \ue(x,t) \leq \chi_\e (x,t)+\e
||w(\cdot,p)||_\infty\; .$$ Taking into account the form of $\chi_\e$ and the boundedness of $w$,
this gives the result.  

\medskip
Now we turn to \\

\noindent\textbf{B.-- A more general proof.}

\medskip

\noindent Now we provide more general arguments, which allow to take care of more general initial
data and limits. Here, proving the convergence of the sequence $\{\ue\}$ relies on the perturbed
test-function method of Evans \cite{E-PTF1,E-PTF2} as in the article of Briani, Tchou and the two
authors \cite{BBCT}.

Let us introduce the half-relaxed limits $\overline u=\limssup u_{\e}$ and $\underline u=\limiinf
u_{\e}$ which are well-defined since the sequence $\{\ue\}$ is locally uniformly bounded. We recall
that for each $p\in\R^N$, Lemma~\ref{EP-BR} provides a unique real denoted by $\bar H(p)$ such that
there exists a solution $w$ of ergodic problem \eqref{eq:EP}.  The key step is the
\begin{lemma}\label{Homo-BR} The functions $\overline u$ and $\underline u$ are respectively
    (classical) viscosity sub and supersolution of \begin{equation}\label{eq:homo} u_t +
        \Hb(Du)=0\quad \hbox{in } \R^N\times (0,+\infty)\; , \end{equation}
        \begin{equation}\label{id:eq:homo} u(x,0)=p\cdot x \quad \hbox{in } \R^N\; .  \end{equation}
\end{lemma}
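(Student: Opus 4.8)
The plan is to prove Lemma~\ref{Homo-BR} by the perturbed test-function method of Evans. I would only detail the subsolution case for $\overline u$ — the supersolution case for $\underline u$ being completely analogous, and the initial condition \eqref{id:eq:homo} being immediate from the uniform bounds $p\cdot x - Mt \leq \ue(x,t) \leq p\cdot x - mt$ which force $\overline u(x,0)=\underline u(x,0)=p\cdot x$ (this is where Proposition~\ref{jusquaT}-type arguments, or simply the explicit barriers, give the initial trace). So the core is: show $\overline u$ is a viscosity subsolution of \eqref{eq:homo} in $\R^N\times(0,+\infty)$ in the \emph{classical} Ishii sense, i.e. testing with smooth test-functions on all of $\R^N\times(0,+\infty)$.

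First I would fix a smooth test-function $\phi$ and a strict local maximum point $(\xb,\tb)$ of $\overline u - \phi$ with $\tb>0$, set $p_0 := D_x\phi(\xb,\tb)$, $a_0 := \phi_t(\xb,\tb)$, and argue by contradiction assuming $a_0 + \Hb(p_0) \geq \theta > 0$. The perturbed test-function is
$$\phi_\e(x,t) := \phi(x,t) + \e\, w\!\left(\frac{x}{\e},p_0\right),$$
where $w(\cdot,p_0)$ is the bounded Lipschitz stratified corrector of Lemma~\ref{EP-BR} satisfying $R(y)|p_0 + D_y w(y,p_0)| = \Hb(p_0)$ in the stratified sense on $\R^N$. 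The point is that $\phi_\e$ is, up to the $o(1)$ coming from the smoothness of $\phi$, a \emph{stratified supersolution} of $(\phi_\e)_t + R(x/\e)|D_x\phi_\e| = \theta/2 > 0$ in a small neighborhood of $(\xb,\tb)$: indeed, freezing $D_x\phi$ at its value $p_0$ up to a small error and using that $w(\cdot/\e,p_0)$ solves the cell problem after rescaling (so $R(x/\e)|p_0 + D_xw(x/\e,p_0)| = \Hb(p_0)$ in the appropriate scaled-stratified sense), we get $(\phi_\e)_t + R(x/\e)|D_x\phi_\e| \geq a_0 + \Hb(p_0) - o_\e(1) \geq \theta - o_\e(1) > 0$. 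Here one must be careful that $w$ is only Lipschitz and the equation for $w$ holds in the stratified sense — so $\phi_\e$ is not smooth; this is why we need the stratified-supersolution machinery rather than classical viscosity arguments, and this is the step I expect to be the main obstacle: transferring the corrector equation, which lives on the stratification $\{x'\in\Z^{N-1}\}$ of $\R^N$, into a clean local supersolution property for $\phi_\e$ on the $\e$-rescaled stratification $\Man{2}_\e$, $\Man{N+1}_\e$, and making the $o_\e(1)$ truly uniform. One invokes here that the local comparison / sub-dynamic-programming tools of Chapter~\ref{chap:control.tools} and Section~\ref{sec:compstrat} allow to compare the genuine stratified subsolution $\ue$ with the approximate stratified supersolution $\phi_\e$ in a small cylinder.

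Then I would conclude classically: by the local comparison result for stratified sub/supersolutions of the $\e$-problem \eqref{EqnR} (Theorem~\ref{comp-strat-RN} applied in a small cylinder $Q^{\xb,\tb}_{r,h}$, after the usual reduction to a strict subsolution and noting that $\ue$ is a stratified subsolution since it is the value function of the scaled problem), the maximum of $\ue - \phi_\e$ over $\overline{Q^{\xb,\tb}_{r,h}}$ must be attained on the parabolic boundary $\partial_p Q^{\xb,\tb}_{r,h}$ for $\e$ small. But $\phi_\e \to \phi$ uniformly (since $\e w(\cdot/\e,p_0)$ is $O(\e)$ by boundedness of $w$) and, $(\xb,\tb)$ being a strict local maximum of $\overline u - \phi$, standard half-relaxed-limit arguments (as in Lemma~\ref{convptsmaxdisc}) give points $(\xe,\te)\to(\xb,\tb)$ in the interior which are near-maxima of $\ue - \phi_\e$, a contradiction with the maximum being on $\partial_p Q^{\xb,\tb}_{r,h}$. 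Hence $a_0 + \Hb(p_0) \leq 0$, proving $\overline u$ is an Ishii subsolution of \eqref{eq:homo}. The supersolution property of $\underline u$ follows by the symmetric construction with $\phi_\e = \phi + \e w(\cdot/\e, p_0)$ and the reverse inequality (using that $\ue$ is also a supersolution). Once Lemma~\ref{Homo-BR} is established, the convergence $\ue \to p\cdot x - t\Hb(p)$ follows from the standard comparison principle for the continuous, convex, coercive Hamiltonian $\Hb$ applied to \eqref{eq:homo}-\eqref{id:eq:homo}, whose unique solution is precisely $p\cdot x - t\Hb(p)$; together with $\Hb(p) = \max(M|p_N|, m|p|)$ from Lemma~\ref{EP-BR}, this finishes the theorem.
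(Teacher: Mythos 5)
Your proposal is correct and follows essentially the same route as the paper: perturbed test-function $\phi_\e=\phi+\e\,w(x/\e,D\phi(\xb,\tb))$ with the corrector from Lemma~\ref{EP-BR}, a formal computation showing $\phi_\e$ is an approximate stratified supersolution near $(\xb,\tb)$, and then the local comparison result for the scaled stratified problem on a small cylinder $Q^{\xb,\tb}_{r,h}$ to reach a contradiction with the strict-maximum property. The transfer of the cell-problem equation to the $\e$-scaled stratification — which you rightly flag as the delicate point — is precisely what the paper handles with its remark that "this formal computation can be justified by looking carefully at the stratification formulation," so your concern is well placed but does not indicate a gap.
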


\begin{proof} We provide the proof only for $\overline u$, the one for $\underline u$ being
    analogous. 

    \medskip

    \noindent\textbf{(a)} Let $\phi :  \R^N\times (0,+\infty) \to \R$ be a smooth test-function and
    let $(\xb,\tb)$ be a strict local maximum point of $\overline u-\phi$. Since we may assume \wlg
    that $(\overline u-\phi)(\xb,\tb)=0$, we know that for $r,h>0$ small enough, $(\overline
    u-\phi)(x,t) \leq 0$ in $Q^{\xb,\tb}_{r,h}$. Moreover, there exists some $\delta=\delta(r,h) >0$
    such that $(\overline u-\phi)(x,t) \leq -2\delta$ on $\partial_p Q^{\xb,\tb}_{r,h}$.

    We want to show that $\phi_t (\xb,\tb) +  \Hb(D\phi (\xb,\tb))\leq 0$ and to do so, we argue by
    contradiction, assuming on the contrary that $\phi_t (\xb,\tb) +  \Hb(D\phi (\xb,\tb)) > 0$. 

    \medskip

    \noindent\textbf{(b)} The first step consists in considering the \emph{perturbed
    test-function} $$\phi_\e (x,t) := \phi(x,t) + \e w\Big(\xoe,D\phi(\xb,\tb)\Big)$$ where $w$ is
    defined in Lemma~\ref{EP-BR}, and to look at $(\phi_\e)_{t} (x,t) + R(\xoe)\vert D\phi_\e (x,t)
    \vert$ in $Q^{\xb,\tb}_{r,h}$.  Formally, using the equation satisfied by
    $w(\cdot,D\phi(\xb,\tb))$, we have $$\begin{aligned} (\phi_\e)_{t} (x,t) &+ R(\xoe)\vert
        D\phi_\e (x,t) \vert = \phi_t (x,t) +R(\xoe)\Big\vert D\phi (x,t) + D_x
        w(\xoe,D\phi(\xb,\tb))\Big\vert\\ & = \phi_t (\xb,\tb) + R(\xoe)\Big\vert D\phi (\xb,\tb) +
    D_x w(\xoe,D\phi(\xb,\tb))\Big\vert + O(r)+O(h)\\ & = \phi_t (\xb,\tb) +  \Hb(D\phi (\xb,\tb))+
    O(r)+O(h), \end{aligned}$$ the terms $O(r),O(h)$ coming from the replacement of $\phi_t (x,t)$
    by $\phi_t (\xb,\tb)$ and of $D\phi (x,t)$ by $D\phi (\xb,\tb)$. Therefore, taking potentially
    $r,h,\delta$ smaller, we have $$ (\phi_\e)_{t} (x,t) + R(\xoe)\vert D\phi_\e (x,t) \vert \geq
    \delta >0\quad \hbox{in  } Q^{\xb,\tb}_{r,h}.$$ The formal computations above can be justified
    by looking carefully at the stratification formulation but such checking does not present any
    difficulty, it only consists in adding the specific tangential inequality on the lines in
    $\Man{2}=\Z^{N-1}\times\R\times(0,+\infty)$.

    \medskip

    \noindent\textbf{(c)} From the first part of this proof, we know that $\phi(x,t) \geq \overline
    u+ 2\delta$ on $\partial_p Q^{\xb,\tb}_{r,h}$. Therefore, by the definition of $\overline u$, it
    follows that for $\e$ small enough $\phi_\e(x,t) \geq \ue+ \delta$ on $\partial_p
    Q^{\xb,\tb}_{r,h}$. 

    Using the local comparison result for stratified solutions, we conclude that for any $\e>0$
    small enough, $\phi_\e(x,t) \geq \ue+ \delta$ in $Q^{\xb,\tb}_{r,h}$. Then, passing to the
    $\limssup$ yields $$\phi(x,t) \geq \overline u+ \delta\quad\text{in } Q^{\xb,\tb}_{r,h}\;,$$
    which contradicts the fact that $(\overline u-\phi)(\xb,\tb)=0$.  Hence we conclude that $\ou$
is indeed a (classical) subsolution of \eqref{eq:homo}--\eqref{id:eq:homo}.
\end{proof}

Notice that in the above proof we use the classical notion of viscosity solutions for the limit
problem \eqref{eq:homo} while we use stratifed solutions for the construction of the perturbed
test-function at level $\e>0$. The proof now follows easily

Since $\chi(x,t):=p\cdot x - t \Hb(p)$
    is an explicit solution of \eqref{eq:homo}-\eqref{id:eq:homo} for which a (classical) comparison
    result holds, we deduce that $$ \overline u(x,t)\leq p\cdot x - t \Hb(p)\leq \underline u(x,t)
    \quad\hbox{in } \R^N\times [0,+\infty)\;.$$ Using the usual arguments of the half-relaxed limits
method \cf Section~\ref{sec:hrl}, we conclude that $u_\e\to u =\ou=\uu$  and the result is proved.
\end{proof}

\section{Large time behavior}

This section enters a little bit more in the description of open problems, that we consider mainly
in Chapter~\ref{chap:strat-ext}. However here we give substantially more information and some
partial result. Before considering the case where discontinuities occur, let us (very) briefly
recall the situation in the simple periodic framework.

\subsection{A short overview of the periodic case}

We consider here Hamilton-Jacobi Equations of the form $$ u_t + H(x,Du)= 0\quad \hbox{in  }\R^N
\times (0,+\infty)\; ,$$ where $H(x,p)$ is convex and coercive in $p$, and periodic in $x$; for
example, let us assume that it is $\Z^N$-periodic in $x$.

In this framework, the expected large time behavior of the solution $u(x,t)$ is an {\em ergodic
behavior}, \ie $$ u(x,t)=\lambda t + \bar \phi (x) + o(1)\quad \hbox{as  }t \to +\infty ,$$ where
$\lambda$, the {\em ergodic constant}, is the unique constant such that the following {\em ergodic
or additive eigenvalue problem} has a periodic solution $\phi $ $$ H(x,D\phi )=-\lambda \quad
\hbox{in  }\R^N \;, $$
and  $\bar \phi$ is one of the solutions of this ergodic problem\footnote{We remark that, if $\phi$ is a solution of 
the ergodic problem then $\phi+C$ is also a solution for any constant $C\in \R$; hence the ergodic problem has
always many solutions and this invariance by the addition of constants is not, in general, the only
cause of non-uniqueness.}.

We refer to \cite{LPV} for the introduction of the ergodic problem which is
nothing but the ``cell problem'' in homogenization and for the proof of the uniqueness of $\lambda$.
Actually we already met this type of ergodic problem in Section~\ref{SF-H}: some of the basic
arguments to study it are given in the proof of Lemma~\ref{EP-BR}, while a concrete use of its
solution is done in the proof of Lemma~\ref{Homo-BR}.

The proof of such ergodic behaviors can be done in two different ways: either by the ``Weak KAM''
theory initiated by Fathi \cite{F1,F2}, based on dynamical systems arguments using control formulas;
or by PDE-type methods.

The first results in this direction were obtained by Namah and Roquejoffre \cite{NR} for equations
with a particular structure. In order to be a little bit more specific on the kind of results they
obtained and the methods they used, we consider the simplest example where we can describe them,
namely the case when the Hamiltonian is given by
$$ H(x,p)=|p|-f(x)\quad \hbox{for $x\in \R^N,$ $p \in \R^N,$}$$
the function $f$ being
periodic, continuous, positive and $K=\{x: f(x)=0\}$ being a non-empty set. In this case, it can
easily be proved that $\lambda=0$ and the large time behavior is obtained by noticing first that
$u(x,t)$ is decreasing in $t$ on $K$; therefore it converges on $K$. On the other hand, the
half-relaxed limits method associated with a comparison result allows to deduce the convergence to a
function $\bar \phi$ on $\R^N \setminus K$; here the key point to have such a comparison result is the
fact that $0$ is (locally) a strict subsolution. We conclude this short description by pointing out
that here, periodicity does not play an important role, nor the convexity of $H$ in $p$
but only the fact that
$$ H(x,p)\geq H(x,0) \quad \hbox{in  }\R^N \times \R^N\; \hbox{and}\; \max_{\R^N}
H(x,0)=0\footnote{We recall that $H(x,0)$ is $\Z^N$-periodic.}\; .$$

Results in the general framework of strictly convex Hamiltonians were then obtained by Fathi
\cite{F1,F2} using the ``Weak KAM'' theory. In fact, they were not generalizations of the Namah and
Roquejoffre results since, as the reader can see on the example we have chosen above, their result
does not require strict convexity.

To give an idea of  the ``Weak KAM'' theory, we first notice that we can assume \wlg that $\lambda=0\,$ by
replacing $u$ by $u(x,t)-\lambda t$ and $H$ by $H+\lambda$. Using such reduction, we then recall that when $H$
is strictly convex, we can introduce the Lagrangian $L$, which is the Legendre-Fenchel transform of $H$, namely
$$
L(x,v)=\sup_{q\in \R^N}\big\{v\cdot q-H(x,q)\big\}\; ,$$ and the solution of the evolution equation
can be written in terms of the Lagrangian as $$ u(x,t)= \inf \left\{ \int_0^t L(\gamma (s), \dot
\gamma(s))ds + u_0(\gamma (0));\ \gamma(t)=x\right\}\; ,$$ where $u_0$ is the initial data of $u$.

Furthermore, the solution of the ergodic
problem satisfies some dynamical programming property like $$ \phi(x)= \inf \left\{ \int_0^t
L(\gamma (s), \dot \gamma(s))ds + \phi(\gamma (0));\ \gamma(t)=x\right\}\text{ for any }t>0\;.  $$
In the ``Weak KAM'' theory, the main point is to identify the large time behavior of geodesics for
the action functional
$$ \int_0^t L(\gamma (s), \dot \gamma(s))ds\; ,$$ the main result being that the large time behavior
of $(\gamma,\dot \gamma)$ is described in terms of the Aubry-Mather set. Here ``large time'' means
both that $t$ is large AND $(t-s)$ is large too: more precisely, if we set
$\tilde \gamma(s) =\gamma (s+t)$ for $-t\leq s\leq 0$, we are interested in the behavior of
$(\tilde \gamma (s), \dot {\tilde \gamma}(s))$ as $s\to -\infty$.

One of the key results of Fathi was to prove that, for $t$ and $t-s$ large enough (for example if $s$ is bounded), the
geodesics satisfy\footnote{we recall that $\lambda=0$, otherwise we would have $-\lambda$ at the
right-hand side here.}$$ H\left (\gamma(s), \frac{\partial L}{\partial v}(\gamma (s), \dot
\gamma(s))\right) \simeq 0\;.  $$ One way or the other, this property is a cornerstone to obtain the
large time behavior of $u$ using the ``Weak KAM'' theory.

This approach was then revisited, simplified, developed in several directions and generalized by
Roquejoffre \cite{Roq}, Davini and Siconolfi \cite{DS}, Fathi and Siconolfi \cite{FS-wk}. Of course,
this short list of references is far from being complete. We refer to the book of Fathi
\cite{fathi2008weak} and his survey \cite{F-sur} for a more satisfactory one.

This general case can also be treated by PDE methods, with slightly more general assumptions than
convexity, which was first done in Barles and Souganidis \cite{BS-LTB}. Here the idea was to show
that $(u_t)_- \to 0$ as $t \to +\infty$. Roughly speaking, the consequence is that the solution
looks like a subsolution of $H=0$ for large $t$: indeed, if $(u_t)_-\to0$, then $u_t \geq -o(1)$
and, using the equation $u_t+H=0$, we deduce that $H\leq o(1)$.

But, if the initial data is a subsolution, the behavior is well-known since $u(x,t)$ is increasing
in $t$. Using this argument together with the ``compactness'' given by the periodic feature of the
problem, one concludes easily. Such compactness is crucial and we refer to \cite{BS-LTB2} for
counter-examples in the case where we still have $ (u_t)_- \to 0$ as $t \to +\infty$ but without
``compactness''.

Before considering the discontinuous case, we point out that both approaches can be extended to
problems with boundary conditions, \cf for example Ishii \cite{H-WK-N} Barles, Mitake and Ishii
\cite{BIM1}.

\subsection{The discontinuous framework}

After this quick overview of the ``continuous'' theory, the question is: what could we expect to be
easily extendable to the ``discontinuous'' case?  Of course, because of the above framework where we
aim at treating discontinuities of $H(x,p)$ in $x$, the natural stratifications of $\R^N \times
\R$ we have to deal with take the form $$\M = \tM \times
\R \quad\text{where}\ \tMan{} \text{ is a stratification of }\R^N\;,$$ hence we are not
looking at general stratifications in $x$ and $t$.

Let us give some ideas for the PDE approach and let people who are more experts than us to have a
look at the  ``Weak KAM'' theory in the discontinuous case\,\footnote{a not so easy task since now
$L$ is discontinuous, at least on an hyperplane...}

\begin{enumerate}
    \item[$(i)$] One point is clear: under suitable assumptions, we do not see any major difficulty to
        extend Namah-Roquejoffre type results in the discontinuous framework: they only rely on
        the half-relaxed limits method and comparison results, both ingredients which are available
        in the stratified case.
    \item[$(ii)$] The Barles-Souganidis approach is more tricky since---even if it is completely
        transparent in the continuous case---the method involves at the same time sub and
        supersolutions properties, \ie $H_*$ and $H^*$, and, in general, a tripling of variables.
        This does not seem very convenient in the discontinuous case where doubling variables is
        already a major difficulty.
    \item[$(iii)$] We believe that the $(u_t)_-$-estimate can be done by rewriting completely the
        Barles-Souganidis proof in terms of a comparison result but this is not completely
        straightforward and far beyond the scope of this book.
\end{enumerate}
Let us check anyway that a relatively easy proof can be done under the stronger
assumption\,\footnote{ We also assume here that $\lambda=0$}

\begin{assumption}{\hyp{asymp}}{Approximate subsolutions}
    For any $\e>0$, there exists a bounded, $C^1$-function  $(\phi_\e)_\e$ such that
    \begin{equation}\label{ass:long.time}
        H^*(x,D\phi_\e (x))\leq \e \quad \hbox{in  }\R^N\;.
    \end{equation}
\end{assumption}
\vspace*{-1em}

This assumption may seem too strong because of the use of $H^*$ (instead of
$H_*$) but it allows to use the arguments as in \cite{BC-re} to obtain the
$(u_t)_-$-estimate through a simple inf-convolution in $t$.
More precisely, the idea is to introduce
$$ v(x,t):=\inf_{s\geq t}\big\{u(x,s)+(s-t)\eta (s)\big\}\; ,$$
for some suitable function $\eta$ such that $\eta(s) \to 0$ as $s\to +\infty$,
and then to use the stratified comparison result. Under the above assumption
and with suitable hypothesis on $H$, this argument yields the estimate
$$u_t (x,t)\geq-\eta(t)\to0\;.$$
We point out that the proof uses a classical inf-convolution in a direction
which is parallel to the discontinuities of $H$, and is therefore not affected
by them. But again, this proof requires the above unnatural assumption to be
useful in this context.

\subsection{An example}

We consider the $1$-d example
$$ u_t + |u_x + p|^2 = V(x) \quad\hbox{in  } \R \times (0,+\infty)\; ,$$
where $p\in \R$ is a parameter and $V(x)=0$ if $x\in \Z$ and $V(x)=1$ if $x\in \R\setminus \Z$.
Applying the above approach, the $1$-d computations are easy
$$ \lambda(p)=
\begin{cases}
0 & \hbox{if  }|p|\leq 1,\\
1-|p|^2 & \hbox{otherwise.}
\end{cases}
$$
Concerning $\phi$, it is given (up to an additive constant) by
\begin{enumerate}
    \item if $|p|\leq 1$, and $x\in [0,1]$,
        $$ \phi(x)= \begin{cases}
            (1-p)x & \hbox{if  }0\leq x \leq \frac{1+p}{2}\;,\\
        (1+p)(1-x) & \hbox{if } \frac{1+p}{2} < x\leq 1\;,\end{cases}$$
        and then this function is extended by periodicity for $x\notin[0,1]$.
    \item if $|p|\geq 1$, $ \phi(x)= 0$ for any $x\in \R$.
\end{enumerate}
For the large time behavior, the following remarks can be made
\begin{enumerate}
    \item[$(i)$] The case $p=0$ is the Namah-Roquejoffre case which can be
        handled without any difficulty.
    \item[$(ii)$] On the other hand, if $p\neq 0$, we are not anymore in the
        Namah-Roquejoffre framework and assumption~\eqref{ass:long.time}
        requires the existence of $C^1$-functions $\phi_\e$ such that
        $$ |\phi_\e '(x)  + p|^2 \leq 1+\e\quad \hbox{on  }
        \R\quad \hbox{and}\quad
        |\phi_\e '(x)+p|\leq\e \quad \hbox{if  }x\in \Z\; . $$
        If $|p| < 1$ it is easy to check that the assumption is
        satisfied. Actually, since the Hamiltonian is independent of $x$ and since $\phi$ has a particular form,
        we can even take a single function obtained by smoothing in a suitable way the solution of the ergodic problem
        which satisfies the inequality with $\e=0$. But,
        for $|p| \geq 1$, the two properties which are required on $\phi_\e$ are
        incompatible with its boundedness.
\item[$(iii)$] If $|p|\geq 1$, one can conclude by the following arguments: let
    $w$ be the unique solution of
    $$ w_t + |w_x + p|^2 = 1 \quad\hbox{in  } \R \times (0,+\infty)\; ,$$
    $$ w(x,0)=u_0(x)\quad\hbox{in  } \R \; ,$$
    where $u_0$ is a continuous, periodic initial data.
    Notice that we have replaced $V(x)$ by $1$ in the equation.

    For this equation, the ergodic problem has exactly the same ergodic constant
    $\lambda(p)=1-|p|^2$ and the same periodic solutions  (the constant functions, this will be
    proved below). Since this equation is now continuous,
    we know that, as $t\to +\infty$,
    $$ w(x,t)=\lambda(p) t + \phi (x) + o(1) \; ,$$
    where $\phi$ is a solution of the ergodic problem and the $o(1)$ is uniform
    on $\R$.

    The equation for $\phi$ reads $|\phi_x+p|^2=|p|^2$ and rewriting it as $2p\phi_x+|\phi_x|^2=0$,
    one proves easily that the periodic function $p\phi$ is decreasing and therefore $p\phi$
    (hence $\phi$) is a constant function. We claim that $\phi =
    \min_{\R}u_0$.

    Indeed, applying the Oleinik-Lax formula to $\tilde w= w-\lambda(p) t $ yields
    $$ \tilde w(x,t) = \min_{y\in \R} \left(u_0(y)+\frac1{4t}\left(x-y-2pt\right)^2\right) \; ,$$
    and therefore $\tilde w(x+2pt,t) \leq u_0(x)$ for any $x$\footnote{The same result can be obtained by
    a careful examination of the pde satisfied by $\tilde w$.}.

    Choosing $x$ such that $u_0(x)=\min_{\R}u_0$ and noticing that, by
    comparison, $\min_{\R}u_0 \leq \tilde w$ in $\R\times (0,+\infty)$, we
    have $ \tilde w(x+2pt,t) \equiv \min_{\R}u_0$. But the uniform
    convergence of $\tilde w$ to $\phi$ on $\R$ immediately yields that
    $\phi=\min_{\R}u_0$.

    To conclude, we remark that, by comparison results
    $$ \min_{\R}u_0 \leq u(x,t) - \lambda(p)t \leq \tilde w (x,t)
    \quad\hbox{in  } \R \times (0,+\infty)\;.$$
    Indeed, the constant $\min_{\R}u_0$ is a subsolution of the equation
    satisfied by $u(x,t) - \lambda(p)t $, which is itself a subsolution
    for the $\tilde w$-equation. The conclusion immediately follows from
    the uniform convergence of $\tilde w$ to $\min_{\R}u_0$.

    This last case mixes (in some sense) ``weak KAM'' arguments and pde ones:
    indeed we point out the important role of the geodesic $\gamma(t)=x+2pt$
    and of the behavior of the different solutions along the geodesic
    (of course we are here in a very simple framework).
    The key point in this case is that these geodesics cross the
    discontinuity in a transversal way, making it irrelevant.
    This is why $u$ and $w$ have the same behavior.
\end{enumerate}

\section{Lower semicontinuous solutions \`a la Barron-Jensen}
\label{sect:SBJ}

The extension of the Barron-Jensen approach to the stratified case requires a change of definition
since it is based on the fact that, when considering equations with a convex Hamiltonian, one can
just look at minimum points when testing both the sub and supersolutions properties. Of course, the
same is true for stratified problems and leads to a new definition.

\subsection{A typical lower semi-continuous eikonal example}
\label{sect:BJ.example}

Before providing precise definitions and a comparison result, we want to examine a key example in
order to recall the difficulties which are solved by the Barron-Jensen approach. We consider the
Eikonal Equation
$$ u_t + |D_x u| = 0 \quad \hbox{in  } \R^N \times (0,\Tf)\; ,$$
with a \lsc initial data 
$$ u(x,0)= g(x) = \begin{cases} 1 & \hbox{if $x\neq 0$}\\
0 & \hbox{otherwise.}
 \end{cases}$$
 Using---at least formally to begin with---the Oleinik-Lax formula, the ``natural solution'' is
 given by $$ u(x,t):= \min_{|y-x|\leq t}(g(y))=\begin{cases} 0 & \hbox{if  }|x|\leq 1\; ,\\
1 & \hbox{if  }|x|> 1\; .\end{cases}
$$
Therefore the solution is discontinuous and the approach via a \SCR is useless in this \lsc
framework.  One wishes to prove, anyway, that $u$ is the unique solution of the above problem.

Ishii's notion of viscosity solution is not well-adapted, in particular for subsolutions: for
example, $w(x,t) = 1$ in $\R^N \times [0,\Tf)$ is an Ishii viscosity subsolution because it
satisfies the equation and $w^*(x,0) \leq g^*(x) = 1$ in $\R^N$.  But we are far from having
the expected inequality $w\leq u$ in $\R^N \times [0,\Tf)$ that a comparison result would give.

On this example, it is clear that the problem comes from the initial data, and more precisely
the way it is taken, since, with the classical viscosity solutions definition, the upper-semicontinuous
enveloppe erases the value $g(0)=0$. Similarly to the difficulties in the stratified framework, the subsolution
inequality has to be reinforced at $t=0$ but here we cannot impose the ``stratification-like'' inequality
$w^*(0,0)\leq 0$ because this inequality for an \usc subsolution is clearly too strong, it is not even satisfied by $u$.

Therefore, if we wish to take into account \lsc initial data, we have to argue only with \lsc
enveloppes and then super-impose subsolution inequalities at $t=0$ in a {\em suitable way} in order
to be sure that this initial data will be seen. Indeed, the lower continuous function
$$ \tilde u(x,t)=  \begin{cases} 1 & \hbox{if $t > 0$}\\
g(x) & \hbox{if $t=0$,}
 \end{cases}$$
is a \lsc subsolution of the problem but $\tilde u$ ``does not see the initial data enough'' since
$$ \tilde u(0,0) < \liminf\{\tilde u(y,t),\ (y,t)\to (0,0)\ \hbox{with  }t>0\}\; .$$

Besides formulating the notions of viscosity sub and supersolution both for \lsc functions (or
their \lsc enveloppes), an important assumption in the Barron-Jensen approach is to avoid such
problems with the initial data, hence the initial regularity assumption \eqref{A-BJ} below.

\subsection{Definition and regularity of subsolutions}

We use below the acronym \SBJ for \emph{Stratified Barron-Jensen} subsolutions, supersolutions and
solutions of the general equation
\begin{equation}\label{eq:SBJ}
    \F\big(x,t,u,(u_t,Du)\big)=0\text{ in }\R^N\times[0,\Tf]\;.
\end{equation}
In order to get a reasonable comparison result for \eqref{eq:SBJ}, we restrict ourselves to the
following set of assumptions

\label{page:HSBJ}
\begin{assumption}{\HSBJ}{Assumptions for the Stratified Barron-Jensen framework.}\\[-1.3cm]
\begin{enumerate}
    \item[$(i)$] The stratification does not depend on time: for any $k=0..N$,
        $$\Man{k+1} = \tMan{k} \times \R\;, $$
        where $(\tMan{k})_k$ is a stratification of $\R^N$.
    \item[$(ii)$] We are given a classical \lsc and bounded initial data $g$, \ie we assume the 
        \lsc sub and supersolutions $u$ and $v$ we are considering satisfy
        $$ u(x,0) \leq g(x) \leq v(x,0) \quad \hbox{in  }\R^N\;.$$
    \item[$(iii)$] Hamiltonian $\F$ is a classical Hamiltonian of the form
        $$ \F\big(x,t,r,(p_t,p_x)\big)= p_t + \tilde \F(x,t,r,p_x)\; ,$$
        and there exists $0<\tTf \leq \Tf$ such that $\tilde \F$ is independent of $t$ if
        $0\leq t\leq \tTf$ and coercive, \ie there exists $\nu >0$ such that
        $$ \tilde \F(x,t,r,p_x) = \tilde \F(x,\tTf,r,p_x)\geq \nu |p_x| -M|r| -M\; ,$$
        for any $x\in \R^N$, $t\in [0,\tTf]$, $r\in \R$ and $p_x\in \R^N$,
        $M$ being the constant appearing in the assumptions for $\BCL$\;.
    \item[$(iv)$] The ``good framework for stratified solutions'' is satisfied.
\end{enumerate}
\end{assumption}

We can now give some precise definitions for the \SBJ problem.
\index{Stratified solutions!\`a la Barron-Jensen}
\begin{definition}\emph{--- Stratified Barron-Jensen sub and supersolutions.}
    \begin{enumerate}
        \item[$(i)$] A locally bounded, \lsc function $v : \R^N\times [0,\Tf[ \to \R$ is a \SBJ
            supersolution of Equation~(\ref{eq:SBJ}) iff it is an Ishii
            supersolution of this equation on $[0,\Tf]$.
        \item[$(ii)$] A locally bounded, \lsc function $u : \R^N\times [0,\Tf[ \to \R$ is a \SBJ
            subsolution of Equation~(\ref{eq:SBJ}) iff\\[2mm]
            $(a)$ it is a Barron-Jensen subsolution of this equation, \ie for any smooth function
            $\varphi$, at any minimum point $(x,t)$ of $u-\varphi$, 
    $$ \F_*\big(x,t,u(x,t) ,(D_t \varphi(x,t) ,D_x\varphi(x,t) )\big) \leq 0\;;$$
    \\
    $(b)$ for any $k=0,...,(N+1)$, for any smooth function $\varphi$,
    at any minimum point $(x,t)$ of $u-\varphi$ on $\Man{k}$,
            $$ \F^k\big(x,t,u(x,t) ,(D_t \varphi(x,t) ,D_x\varphi(x,t)) \big) \leq 0\;;$$
    \item[$(iii)$]In addition, we will say that $u$ is an $\eta$-strict \SBJ subsolution if the various
        ``$\F_*\leq 0$'' or ``$\F^k \leq 0$'' subsolution inequalities are replaced by strict ``$\F_*\leq -\eta <0$''
        ``$\F^k \leq -\eta <0$'' inequalities, $\eta>0$ being independent of $x$ and $t$\footnote{A notion that we will
        use only locally.}. 
    \end{enumerate}
\end{definition}

We point out that, in this definition, the notion of subsolution is in the spirit of \sSSub but we could
as well choose to present a notion of ``weak Barron-Jensen subsolution''. However, we have decided
not to do so since the aim of this section is just to present few ideas for the extension of stratified solutions in the case
of \lsc data and we do not intend to go too far in this direction. Of course, it is not difficult to imagine that, in order
to deal with ``weak Barron-Jensen subsolution'', we have to assume them to be ``regular'', a notion which has to be
properly redefined here and this is the purpose of the next paragraphs.

As we already noticed in the standard stratified framework and throughout this book, comparison
results require some regularity property of the subsolution with respect to the stratification. 
Whether this property follows automatically from the specific structure of the equation or it has
to be imposed, we cannot escape it.

In the standard stratified case, since subsolutions are \usc, the regularity takes the form of a limsup
property, which is also linked to a regularization by sup-convolution in a first step. We refer to
Section~\ref{sect:sup.reg} for the details.

Here, since subsolutions are \lsc, we have to change the strategy by using the inf-convolution tool.
The consequence is also that the regularity property for subsolutions has to be expressed in terms
of liminf. But, as we already noticed in Remark~\ref{rem:reg-sub-weak}-$(ii)$, such property holds
provided the normal controllability assumption is satisfied, which is the case under \HSBJ above.
More precisely, we have the 

\begin{proposition}\label{reg-sub-BJ}\index{Regularity of subsolutions!in the Barron-Jensen
    framework}\emph{--- Regularity of subsolutions.}\smsp 
    Assume that \HSBJ holds and that $u$ is a bounded, \lsc, \SBJ subsolution. Then $u$ is regular: 
    for any $(x,t)\in  \tMan{k} \times (0,\Tf)$ and $0\leq k<N$,
    \begin{equation}\label{reg-kp-BJ}
    u(x,t)=\liminf\{u(y,s)\ ; (y,s) \to (x,t),\ y \in \tMan{k+1}\cup \cdots \cup \tMan{N}\}\; .
    \end{equation}
    Moreover, if $k=N-1$, then locally $\R^N \setminus \tMan{N-1}$ has two connected components
    $(\tMan{N-1})_+$, $(\tMan{N-1})_-$ and the above result is valid imposing to $y$ to be
    either in  $(\tMan{N-1})_+$ or in $(\tMan{N-1})_-$.
 \end{proposition}

Notice that this result does not provide any similar regularity property as $t\to0$. In the standard
case of \usc subsolutions, this is not needed: the fact that $u-v$ is \usc implies that if a
maximizing sequence $(x_k,t_k)$ for $\max(u-v)>0$ is such that $t_k\to0$, using that $\limsup
(u-v)(x_k,t_k)\leq (u-v)(x,0)\leq 0$ easily yields a contradiction. 

On the contrary, if $u$ is \lsc instead of \usc, the argument obviously fails and we have seen in
Section~\ref{sect:BJ.example} above that simple counter-examples to uniqueness can be built because of this.
Hence, in order to get a comparison result, a specific regularity requirement has to be made on the
subsolution as $t\to0$: 
\begin{definition}\label{def:subsol.reg.SBJ}
    A \lsc \SBJ subsolution $u : \R^N\times [0,\Tf[ \to \R$ is initially regular if, for any $x\in\R^N$,
    \begin{equation}\label{A-BJ}
        u(x,0)=\liminf\big\{u(y,t),\ (y,t)\to (x,0)\ \hbox{with  }t>0\big\}\;.
    \end{equation}
\end{definition}

\subsection{The comparison result for stratified Barron-Jensen solutions}

In the \SBJ approach we described above, we are able to present very general results but we just
provide here a uniqueness result using \HSBJ, a framework which slightly generalizes the one of
Ghilli, Rao and Zidani \cite{GRZ}.

\begin{theorem}\label{comp-strat-BJ}\emph{--- Comparison for stratified Barron-Jensen
    solutions.}\smsp
    Assume that \HSBJ holds. Let $u$ and $v$ be two bounded, \lsc, \SBJ sub and supersolution of
    \eqref{eq:SBJ} respectively such that $u$ is initially regular, \ie it satisfies \eqref{A-BJ}.
    Then, the comparison result holds
    $$ u(x,t) \leq v(x,t) \quad \hbox{in  }\R^N \times [0,\Tf)\; .$$
\end{theorem}

\begin{proof} 
    Of course, the approach of Section~\ref{sect:htc} has to be slightly modified. The
     quantities $\max_{\mathcal{K}} (u-v)_+$ and $\max_{\partial_p\mathcal{K}}(u-v)_+$ where
    $\mathcal{K}=\overline{Q^{x,t}_{r,h}[\mF]}$ have to
    be replaced by $\max_{\mathcal{K}}[(u-v)_+]^*$ and $\max_{\partial_p\mathcal{K}}[(u-v)_+]^*$.
    Inded, since $u-v$ is not \usc anymore there is no reason why $(u-v)_+$ would achieve
    its supremum. But, with \adhoc modifications, the ideas of Section~\ref{sect:htc} still apply;
    we skip these modifications here, trusting the reader will be able to perform them.

    We point out anyway, that we face two different situations: with a standard localization
    argument, we can assume that $(u-v)(x,t) \to -\infty$ when $|x|\to +\infty$ or $t\to \Tf$, and
    there exists maximizing sequences $(x_k,t_k)_k$ which are bounded, $t_k$ remaining away from
    $\Tf$; then either, at least along a subsequence, we have $t_k\to t>0$ and an analogue of a \LCR
    is needed, or $t_k\to t=0$ and we face the difficulty connected to the way the initial data is
    assumed and how the initial regularity of $u$ can be used.

    \medskip

    \noindent\textbf{(a)} \emph{The case $t>0$} ---  Here we can argue
    in a similar way to the standard stratified case with the help of the following result,
    which is an easy adaptation Proposition~\ref{reg-by-sc}

    \begin{proposition}\label{reg-by-sc-BJ}
        Under the assumptions of Theorem~\ref{comp-strat-BJ}, if $u$ is a bounded \lsc, stratified
        Barron-Jensen subsolution of \eqref{eq:SBJ}, then for any $(x,t)\in  \tMan{k} \times (0,\Tf)$,
        there exists a sequence of Lipschitz continuous functions $(u^{\e,\alpha})_{\e,\alpha}$ defined in
        a neighborhood $\mathcal{V}$ of $(x,t)$ such that
        \begin{enumerate}
            \item[$(i)$]each $u^{\e,\alpha}$ is a stratified Barron-Jensen subsolutions of
                \eqref{eq:SBJ} in $\mathcal{V}$\,;
            \item[$(ii)$] each $u^{\e,\alpha}$ is semi-concave and $C^1$ on $\tMan{k} \times
                (0,\Tf)$\;,
            \item[$(iii)$] $\sup u^{\e,\alpha} = \lim_{\e,\alpha\to 0}u^{\e,\alpha}= u$ in $\mathcal{V}$.
        \end{enumerate} 
    \end{proposition}

    Proposition~\ref{reg-by-sc-BJ} is proved exactly as Proposition~\ref{reg-by-sc} except that we
    use an inf-convolution instead of a sup-convolution and we treat differently the tangent space
    variable (with the parameter $\e$) and the $t$-variable (with parameter $\alpha$). Of course,
    the regularity of $u$ in terms of liminf, Proposition~\ref{reg-sub-BJ}, is used to proceed here.

    With this adaptation, we get a contradiction in the case $t>0$ exactly as in the standard
    stratified case. 

    \medskip

    \noindent\textbf{(b)} \emph{The case $t=0$} --- By the coercivity assumption on the time interval
    $(0,\tTf)$, $u$ is a Barron-Jensen
    subsolution of the (continuous) equation
    $$ u_t + \nu |D_x u| -M(||u||_\infty+1) = 0\quad \hbox{in  }\R^N \times (0,\Tf)\;.$$
    Therefore, using \eqref{A-BJ}, by the uniqueness property for this problem and the Oleinik-Lax
    (or control) formula, 
    $$ u(x,t) \leq \inf_{|y-x|\leq \nu t}\left(u(y,0) \right) + M(||u||_\infty+1)t\; .$$
    On the other hand, a similar (yet reversed) inequality for $v$ holds, either by the same
    arguments or using the Dynamic Programming Principle
    $$ v(x,t) \geq \inf_{|y-x|\leq M t}\left(u(y,0) \right) - M(||v||_\infty+1)t\; .$$
    From these two inequalities we deduce that if $\delta>0$ is a small constant, 
    $$ u(x,\delta + t) \leq v(x,t) +  K\delta\; ,$$
    for any $\displaystyle 0\leq t \leq \frac{\nu \delta}{M-\nu}$ and for some constant $K$ which can be computed explicitly.

   But the problem is that this inequality is valid for $t$ in a time interval which depends on $\delta$. To get rid
   of this dependence, we remark that, thanks to the assumptions on $\F$ on the time interval $[0,\tTf]$, the function
   $u(x,\delta + t)$ is a subsolution of the problem. On the other hand, because of the Lipschitz continuity of $\F(x,t,r,(p_t,p_x))$ in $r$, $v(x,t)+K\delta\exp(\tilde K t)$ is also a supersolution of the problem for $\tilde K>0$ large enough. By using the
   argument of Step (a) and the fact that $u(x,\delta + t)\leq v(x,t) + K\delta\exp(\tilde K t)$ for $\displaystyle 0\leq t \leq \frac{\nu \delta}{M-\nu}$, we can compare them; hence
$$u(x,\delta + t) \leq v(x,t) +  K\delta\exp(\tilde K t)
\quad \hbox{in  }\R^N \times [0,{\tilde \Tf}-\delta)\; ,$$
for any $0<\delta\ll {\tilde \Tf}$. For fixed $(x,t) \in [0,{\tilde \Tf}/2]$, we can send $\delta$ to $0$ using the lower semi-continuity of $u$: this yields $u\leq v$ in  $\R^N \times [0,{\tilde \Tf}/2]$. And the proof is complete.
\end{proof}

In this book, we have chosen not to develop extensively the stratified analogue of the Barron-Jensen
approach for continuous Hamiltonians, partly because we had to fix some limits to what we decide to
expose, but of course, partly also because it seems difficult to solve some issues.

Among the tractable questions, we point out the case of \emph{continuous subsolutions}: in the
continuous framework, any Ishii subsolution is a Barron-Jensen subsolution and a relatively easy
regularization argument should allow to show that an analogous result holds in the stratified
setting. This argument clearly relies on the use of a suitable notion of ``weak subsolution'', the regularity being
clear from the continuity of the subsolution. In the same way, in the continuous framework, stability results using
only the $\limiinf$ can be proved for the Barron-Jensen approach, and here also such results are probably true.

Finally, the case of obstacle problems with \lsc obstacles $\psi$, \ie
$$ \max\left(\F\big(x,t,u,(u_t,Du)\big); u-\psi\right) =0\text{ in }\R^N\times[0,\Tf]\;,$$
does not enter into the \HSBJ framework since the tangential continuity may not be satisfied. But
notice that the functions $u^{\e,\alpha}$ built in Proposition~\ref{reg-by-sc-BJ} through an
inf-convolution procedure satisfy $u^{\e,\alpha}\leq \psi^{\e,\alpha}\leq \psi$ where the
$\psi^{\e,\alpha}$ are built by using exactly the same procedure. Thus, one may expect that the
results should extend to this more general case under suitable assumptions on the initial data, even
if it is not so clear at once.

Among the less clear issues stands the question whether it is possible to remove the restrictive
assumption on $\F$ near $t=0$ or not. It is worth pointing out that the role of the inf-convolution in the
classical Barron-Jensen argument---typically an inf-convolution in $x$ on the solution, in order to
treat the lower-semi-continuity of the initial data---and the inf-convolution which is used for the
``tangential regularization'', taking care of the stratification, are not completely compatible.
This is what is generating these strong and restrictive assumptions which are not so easily
removable.

In any case, we think that the Barron-Jensen approach can certainly be extended in some of (or all)
the directions we mention above and certainly also in other ones.

\chapter{Further Discussions and Open Problems}
\label{chap:strat-ext}

\abstract{Several possible extensions of the stratified approach are discussed in this chapter:
more general dependence in time, unbounded control problems, etc. Possible applications too:
Large Deviations, homogenization, convergence of numerical
schemes, etc. Finally, the ``unnatural'' Ishii subsolution inequality and the regularity of
value functions are also discussed.}

We start this section by recalling the main ideas of a comparison proof for stratified solutions
\begin{enumerate} 
    \item[{\bf (i)}] We localize, \ie we reduce the proof of a \GCR to the proof of a \LCR.
    \item[{\bf (ii)}] In order to show that the \LCR holds, we first regularize the subsolution by a
            partial sup-convolution procedure\index{Regularization of subsolutions} using the
            tangential continuity and the normal controllability and then (still tangentially) with
        a standard convolution with a smoothing kernel. 
    \item[{\bf (iii)}] After Step \textbf{(ii)} the
            subsolution is Lipschitz continuous \wrt all variables and $C^1$ \wrt the tangent
            variable and we use the ``Magical Lemma'' (Lemma~\ref{lem:comp.fundamental}) to
conclude. \index{Magical Lemma!role in the comparison proof} 
\end{enumerate}

Analyzing these 3 steps in conjunction with Section~\ref{sect:htc} and the examples therein, it
seems rather clear that the localization procedure can be made via various arguments and is not a
limiting step---even if we agree that there are more complicated situations where this might become
a problem. In the same way, Step \textbf{(iii)} is not really a limiting step, especially the way we
use it in the proof by induction.

Hence, in the generalizations we wish to present here, the main issue comes from Step \textbf{(ii)}
and more precisely from the first part, \ie the tangential sup-convolution procedure. This is why we
mainly insist on this point.

However we want to make a remark on Step~\textbf{(i)}. In the proof of Theorem~\ref{comp-strat-RN},
this step is done in the most standard way---explained in Section~\ref{sect:htc}---in order to show
that it can handle several different general situations: both what we call the ``Lipschitz'' and
``convex'' cases in Section~\ref{sect:htc} and, due to the possibility of having $b^t=0$, some
unbounded control case. An other possibility, which requires suitable assumptions on the
Hamiltonians, is to use the localization method of Theorem~\ref{comp:FSP} in order to prove {\em
finite speed of propagation} type results. Such results have, of course, the advantage to take into
account general initial data and solutions without any restriction on their growths at infinity but
they cannot be valid for problems involving unbounded control. Therefore, they can only treat the
cases when $ \F(x,t,r,(p_x,p_t))$ is (or can be written as) $p_t+H(x,t,r,p_x)$, with $H$ possibly
discontinuous in $x,t$ but Lipschitz continuous in $r,p_x$.

\section{More general dependence in time}\label{sect:mgdt}

A quite restrictive---or at least unusual---assumption we have used so far concerns the time
dependence of the Hamiltonians and on the dynamics of the control problems. In general, it is
well-known that a simple continuity assumption is a sufficient requirement.

In stratified problems however, we face two main cases: the general case when the stratification may
depend on time for which space and time play a similar role; and the case when the stratification
does not depend on time. While, in the first case, it seems natural to impose similar assumptions on
$x$ and $t$ for the Hamiltonians, this is no longer the case for the second one and actually this
particular structure allows to weaken the assumptions on the time dependence.

Indeed, in this second case, we can write the stratification as $$\Man{k+1} = \tMan{k} \times \R\;
,$$ where $\tM=(\tMan{k})_k$ is a stratification of $\R^N$ and $\M=(\Man{k})_k$ is the resulting one
in $\R^N \times [0,\Tf]$, which is here presented as the trace on $\R^N \times [0,\Tf]$ of a
stratification on $\R^N \times \R$.

As far as Section~\ref{sect:sup.reg} is concerned, the $t$-variable is always a tangent variable
---this is the main difference with the general case--- and we can use, as it is classical in all
the comparison proofs in viscosity solutions' theory, a ``double parameters sup-convolution''. More
precisely, if $u: \R^N \times [0,\Tf] \to \R$ is a sub-solution, $\tMan{k}$ is identified with
$\R^k$ and $x=(y,z)$ with $y\in \R^k$ and $z\in \R^{N-k}$, we set $$ u^{\e,\beta}(x,t):=\max_{y' \in
\R^k, s \in [0,\Tf]}\Big\{u((y',z),s)- \frac{\left(|y-y'|^2+\e^4\right)^{\alpha/2}}{\e^\alpha} -
\frac{\left(|t-s|^2+\beta^4\right)^{\alpha/2}}{\beta^\alpha} \Big\}, $$ where the parameter $\beta$
governing the regularization in time satisfies $0< \beta \ll \e$.

We drop all the details here but we are sure that they will cause no problem to the reader.

\section{Unbounded control problems}

\index{Control problem!unbounded} In the case of unbounded control problems we face two
difficulties: $(i)$ the localization that we treat---probably in a non-optimal way---in
Section~\ref{sect:htc}, \cf the ``convex case'; $(ii)$ the sup-convolution regularization.

In order to treat this difficulty, we refer the reader to Section~\ref{simple-ex-comp}, in
particular to Theorem~\ref{comp:CC} and Assumption \hyp{BA-HJ-U}. Indeed, in the sup-convolution
procedure, if we examine the proof of Theorem~\ref{reg-by-sc}, we have to manage the error made by
replacing $y$ by $y'$ and this is done by using the dependence in $u$ of the Hamiltonian. This is
exactly what Assumption \hyp{BA-HJ-U} means: performing the Kru\v{z}kov's change of variable $u\to
-\exp(-u)$, one compensates the large terms in ``$D_x H$'' by large terms in ``$D_u H$''.

The same ideas can be used in the stratified framework: we drop the details here since a lot of very
different situations can occur. It would be impossible and maybe useless to try to describe all of
them.

We refer to \cite{Reis2022} and \cite{CRS22} where unbounded control problems are studied in the
hyperplane case under the assumption
$$\lim_{|\alpha|\to+\infty}\frac{l(x,\alpha)}{1+|b(x,\alpha)|}=+\infty\;,$$ locally uniformly in
$x$. This assumption which appears in \cite{Barles90} allows to recover some compactness of
trajectories since fast-moving trajectories get associated with high costs.

\section{Large deviations type problems} \index{Applications!Large Deviations}

The sections of this book in which we consider KPP-type problems give an idea of what can be done in
the context of Large Deviations, but also of the limitations: in the cases where only codimension-1
discontinuities are present, Part~\ref{part:codim1} provides all the needed tools to completely
analyze the problem. We point out that, as it was already remarked in Imbert and Nguyen \cite{IN},
this allows not only to treat in a rather easy way the problem of Bou\'e, Dupuis and Ellis
\cite{GD-BDE} but even to generalize it, by allowing the diffusion matrix to be discontinuous on the
hyperplane, \cf Section~\ref{mg-KPP}.

For more general discontinuities, the situation is not so well understood. Section~\ref{mg-KPP} only
gives few arguments to treat very particular cases.  We can summarize the difficulty in one
sentence: we have learned from the codimension-$1$ case that the vanishing viscosity method
converges to the maximal Ishii subsolution (and solution) of the limiting Hamilton-Jacobi Equation.
Though we think that it is still the case for any type of discontinuities, we are unable to identify
this maximal subsolution, which implies TWO open problems: the identification of the maximal
subsolution and the convergence of the vanishing viscosity method.

Most of Large Deviations problems involve boundary conditions and for these problems, there are two
different cases: either there is no specific difficulty with the boundary conditions (as it is
mainly the case in the four examples presented in \cite{Ba}) and we believe that the above mentioned
tools apply; or there is some specific difficulties with the boundary conditions. In this latter
case, the problem and its solution may not only be related to discontinuities in the Hamiltonians
and/or boundary conditions, see for example \cite{BB1997}.

\section{Homogenization} \index{Applications!homogenization}

We first point out that the arguments which are used in Section~\ref{SF-H}, which are strongly
inspired by those appearing in Barles, Briani, Chasseigne and Tchou \cite{BBCT}, are very flexible:
the identification of the effective Hamiltonian and the application of the perturbed test-function
method of Evans \cite{E-PTF1,E-PTF2} rely on basic results of the theory (existence of solutions,
comparison results and stability). They can therefore be used in a very general framework.

Among all possible applications, the first one we have in mind concerns homogenization in a
chessboard-type configuration, this problem is treated in Forcadel and Rao \cite{MR3262586}. The
approach we describe above together with the results of this book lead to more general results with
simpler proofs; typically the case of all periodic stratified domains can be addressed without
additional difficulties, of course under suitable assumptions. A second one can be found in
Achdou and Le Bris \cite{ALB}: the authors study the homogenization of continuous Hamilton-Jacobi Equations 
where the periodic Hamiltonians are perturbed near the origin; the limiting problem can be identified as
a stratified one where the $\F^0$-subsolution condition at the origin keeps track of the perturbation.

Let us conclude this section with some additional references. Some of them may have been put in the
networks section but we think that they are relevant here since some methods are either similar or
share common features. In addition to \cite{BBCT}, the most specific one on HJ Equations with
discontinuities is Achdou, Oudet and Tchou\cite{zbMATH06647315} for the two-domain case, while in
the networks configurations, the reader can check Achdou and Tchou \cite{zbMATH06448676}, Galise,
Imbert and Monneau \cite{zbMATH06529167}, Forcadel and Salazar \cite{zbMATH07186756}.

\section{Convergence of numerical schemes and estimates} 
\index{Applications!numerical schemes}

For first-order Hamilton-Jacobi Equations, the convergence of numerical schemes is usually obtained
by using the half-relaxed limits method and a comparison result. Therefore we seem to have the key
tools in the stratified framework.

Actually, in \cite{CC}, Cacace and Camilli introduce a semi-Lagrangian approximation scheme for a general
stratified problem and prove the convergence by using these tools, and in particular the comparison result.

The estimates are generally obtained by a comparison result, combined with the consistency of the
scheme and the regularity of the solution. Here the difficulty may come from the different nature of
the equation and the scheme which may appear as being more problematic than in the continuous case.
Maybe the scheme has, in some sense, to ``respect'' the discontinuities and it does not seem so easy
to produce a general theory.

We did not find so many specific references---we apologize if we have missed some works---but the
work of Guerand and Koumaiha \cite{zbMATH07073233} addresses the key difficulties we have in mind.

\section{About Ishii inequalities and weak stratified solutions}

We show in Section~\ref{rweqs} that, roughly speaking, Ishii subsolutions' inequalities are a
consequence of the \LCR for weak stratified solutions. One way or the other, this type of property is
connected to several existing results in the viscosity solutions literature which show the links
between this notion of solutions and monotonicity.

For example, Alvarez, Guichard, Lions \& Morel \cite{AGLM} (see also Biton \cite{Biton}) prove under
suitable assumptions that a monotone semi-group acting on a space of continuous functions is
necessarily the semi-group of viscosity solutions for a possibly fully nonlinear parabolic equation.
In a different framework, the ``geometrical approach to front propagation problems'' of Souganidis
and the first author \cite{BS-nga} allows to define a weak motion of subsets of $\R^N$ which is
almost equivalent to the Level-Set Approach by using: $(i)$ the monotonicity property of sets for
the inclusion relation; $(ii)$ the use of suitable smooth moving sets, which can be seen as the
analogue of test-functions in the geometrical framework.

This second example is closer in the spirit to what is done in Section~\ref{rweqs} and it seems
interesting to re-formulate the idea of Section~\ref{rweqs} in a more general, abstract way, even
if we are going to do so a little bit formally. We consider here a ``stationary'' framework which,
as it is the case in Section~\ref{sect:htc}, is easier to describe, but we trust the reader to be
able to extend the following to the evolution case.

We assume that, for a local equation $\F(x,u,D_x u)=0$ in $\OO$, we are given two ``abstract'' sets
of functions: a set of locally bounded, \usc ``subsolutions'' $\mathcal{S}^{sub}$ and a set of
locally bounded, \lsc  ``supersolutions'' $\mathcal{S}^{sup}$ with the following properties 
\begin{enumerate}
    \item[$(sub)$] — For any $x\in \OO, r>0$ such that $B(x,r)\subset \OO$, any smooth function $\phi$
        in $\OO$ such that $\F_* (x,\phi,D_x \phi)\geq  0$ in $B(x,r)$, we have, for any $u \in
        \mathcal{S}^{sub}$, $$ u(y) -\phi(y) \leq \max_{\partial B(x,r)}\,(u-\phi)\quad \hbox{for
        any }y\in B(x,r) .$$
    \item[$(sup)$] — For any $x\in \OO, r>0$ such that $B(x,r)\subset \OO$, any smooth function $\phi$
        in $\OO$ such that $\F^* (x,\phi,D_x \phi)\leq 0$ in $B(x,r)$, we have, for any $v \in
        \mathcal{S}^{sub}$, $$ \phi(y)-v(y) \leq \max_{\partial B(x,r)}\,(\phi-v)\quad \hbox{for any
        }y\in B(x,r) .$$
\end{enumerate}
We point out that properties $(sub)$ and $(sup)$ can be interpreted as \LCR between either subsolutions
and smooth local supersolutions or supersolutions and smooth local subsolutions. In this context,
it follows that the Ishii inequalities are satisfied. More precisely
\begin{enumerate}
    \item[$(i)$] for any $u \in \mathcal{S}^{sub}$, $\F_*(x,u,D_xu)\leq 0$ in $\OO$ in the viscosity
        sense;
    \item[$(ii)$] for any $v \in \mathcal{S}^{sup}$, $\F^*(x,v,D_x v)\geq 0$ in $\OO$ in the
        viscosity sense.
\end{enumerate}
This result is an easy consequence of the arguments of Section~\ref{rweqs} by looking at {\em
strict} local maxima and minima. Take for instance $u\in\mathcal{S}^{sub}$ and suppose that
$\F_*(x,u,Du)\leq0$ does not hold in the viscosity sense. Then, there exists a test-function $\phi$
such that $u-\phi$ has a strict local maximum at $x$ in $B(x,r)$ and $\F_*(x,\phi,D\phi)>0$. But
using property $(sub)$ above we get a contradiction with the fact that $u-\phi$ has a strict local
maximum at $x$.

\section{Are value functions always regular?}\label{sec:vfar}

What may seem a strange question has an even stranger answer: yes, almost true! This is due to the
lower semi-continuity property, but a little problem still remains: in the stratified framework, we
get \eqref{bonnelim11} but not \eqref{bonnelim12}. In other words, the desired regularity holds on
each $\Man{k}$ for $k<N$ but not on $\Man{N}$ where a ``one-sided'' regularity holds, not a
``two-sided'' one. And this is optimal as shown by the example of the \lsc Heaviside function at $x=0$.

We have anyway the
\begin{lemma}\label{lem:lsc-reg}
    If $u: \R^N \times (0,\Tf)$ is a \lsc function and $\M=(\Man{k})_k$ a stratification of $\R^N
    \times (0,\Tf)$, then $u^*$ is regular, \ie it satisfies \eqref{bonnelim11} on each $\Man{k}$
    ($1 \leq k \leq N$).
\end{lemma}

We have presented this result in the framework of $\R^N \times (0,\Tf)$. But of course, an analogous
one holds for $\R^N\times \{0\}$ and also in the state-constraints framework where it will be very
useful on the boundary since the defect that we cannot provide a ``two-sided'' regularity is
irrelevant there.

\begin{proof}
    We argue by contradiction assuming that, for some $(x,t)\in \Man{k}$,
    $$u^*(x,t) > \limsup \{u^*(y,s): (y,s)\in \Man{k+1}\cup \cdots \Man{N+1}\} \; .$$ 
    By definition of $u^*$, there exists a sequence $(\xe,\te)$ converging to $(x,t)$ such that
    $u^*(x,t)= \lim u(\xe,\te)$ and the above inequality implies that necessarily $(\xe,\te)\in
    \Man{k}$ for $\e$ small enough.

    But, on the other hand, the lower-semicontinuity of $u$ implies the existence of $(\ye,\se) \in
    \Man{k+1}\cup \cdots \Man{N+1}$ such that $\vert (\ye,\se) -(\xe,\te)\vert \leq \e$ and
    $u(\ye,\se) \geq u(\xe,\te)-\e$. Hence $$ \limsup u^* (\ye,\se) \geq \limsup u(\ye,\se) \geq
    u^*(x,t)\; ,$$
    a contradiction which proves the claim.
\end{proof}


\part{State-Constrained Problems}
\label{S-BC}
\fancyhead[CO]{HJ-Equations with Discontinuities: state-constrained problems}


\chapter{Introduction to State-Constrained Problems}
\label{chap:intro.sc}
\abstract{This introduction presents the issues and inherent difficulties of the stratified approach
for state-constrained problems: this approach allows to treat all types of boundary conditions (even
in rather singular settings) in non-smooth domains. The tanker problem is an emblematic example to
this generality. Of course, the boundary creates some difficulties for the regularity of
subsolutions and this is even worse for the initial data.}

In this part we extend the results of Part~\ref{stratRN} to the case of problems set in a bounded or
unbounded domain of $\R^N$ with state-constraints boundary conditions. In
Chapter~\ref{chap:control.tools} on ``Control Tools'', we have already presented finite horizon
control problems in a state-constraints framework; indeed, the space-time trajectory $(X,T)$ has to
satisfy the constraint $T(s) \in [0,\Tf]$ for any $s\geq 0$, \ie $(X(s),T(s))$ has to stay in the
domain $\R^N \times [0,\Tf]$. As a consequence of this general framework, the usual initial data
(the terminal cost) was not given but it has to be computed by solving the $\F_{init}$-equation. It
is therefore natural to investigate problems for which this constraint on $T$ is complemented by a
constraint on $X$, typically $X(s) \in \Omegb$ for some domain $\Omega$~of~$\R^N$.

However we immediately point out that there is a key difference between these two types of
constraints. In Chapter~\ref{chap:control.tools}, when the trajectory reaches the boundary
$\{t=0\}$, it has to stay there because all the dynamics are pointing outward to the domain $\R^N
\times (0,\Tf)$ at $t=0$: for any $x\in\R^N$ and $(b,c,l)\in \BCL(x,0)$, $b^t(x,0)\leq0$. 

On the contrary, here, the normal controllability assumptions
which are going to hold on $\domeg \times (0,\Tf)$ allow both types of dynamics, either pointing
inward or outward the domain at the boundary $\domeg \times (0,\Tf)$. Therefore, the trajectory can
either stay on $\domeg \times (0,\Tf)$ or re-enter the domain. This explains the fundamental
difference between the boundaries $\{t=0\}$ and $\domeg \times (0,\Tf)$ from the trajectory point of
view. 

The ``good news'' is that the points of $\domeg \times (0,\Tf)$ behave essentially as
interior points, and this is why the state-constraints boundary condition does not create much
difficulties to be handled for $t>0$. However, we will need to address some difficulties on the
boundary $\partial\Omega\times\{0\}$, \cf more details in Section~\ref{sect:initial.boundary.interaction}.

\section{Why only state-constrained problems?}

State-constrained problems are a ``natural extension'' of what is done in
Chapter~\ref{chap:control.tools} and Part~\ref{stratRN}, and we already mentioned that this
framework does not lead to major additional difficulties. These two points explain why the study of
such problems is an unavoidable step in the study of stratified problems. However, as the reader may
notice by looking at the \emph{table of contents} of this book, state-constraints boundary
conditions are the only boundary conditions which we study within the stratified framework. This
rises the question: why?

As we are going to explain with more details in the next section, this study readily includes the
classical Dirichlet, Neumann, Robin etc. boundary conditions in a unique framework. But more
importantly, the state-constraints stratified approach allows to deal at the same time with $(i)$
singular (discontinuous) boundary value problems; $(ii)$ non-smooth boundaries; $(iii)$ a mix of
various boundary conditions on different portions of the boundary.

It may be thought that this generality is at the expense of a lot of technicalities. This is not the
case at all---and we were about to write ``on the contrary''---but there are indeed two additional
difficulties:
\begin{enumerate}
    \item[1.] the first one coming from the boundary of the domain and related both to the regularity of
        subsolutions and the possible non-smoothness of the boundary;
    \item[2.] the second one, occurring at $t=0$, concerns the way the initial data is defined and is related
        to the very general framework we want to handle, and the ``good assumptions'' which are
        necessary to do make it work.
\end{enumerate}

These difficulties explain the way Chapter~\ref{chap-StratSC} is organized: as a first step, we are
going to ignore the two above mentioned difficulties and show that state-constrained problems in {\em
stratified domains}\footnote{As we will see it later on, $\Omegb \times (0,\Tf)$ is a stratified
domain if it is a finite union of submanifolds of $\R^N\times (0,\Tf)$.} can easily be handled with
the methods of Part~\ref{stratRN}. Then we show how to address the first difficulty and then the
second one before applying our result to a state-constrained control problem.

We conclude this first part of the introduction by pointing out that other approaches for treating
state-constrained problems in stratified situations appear in Hermosilla and Zidani
\cite{Her-Zid-2015}, Hermosilla, Wolenski and Zidani \cite{Her-Wol-Zid-2017}, Hermosilla, Vinter and
Zidani \cite{Her-Vin-Zid-2017}.

\section{State-constraints and boundary conditions}

Traditionally, Dirichlet, Neumann, Robin, state-constrained problems etc. are considered as separate,
different problems with specific boundary conditions. Even in the control framework, exit
time/stopping time problems or problems with reflections on the boundary seems different; a
combination of them is often delicate to treat. But the stratified formulation of state-constrained
problems allows to treat within the same global framework all these different types of boundary
conditions, both for smooth and non-smooth domains, as well as combinations of them even in rather
singular settings.  All this flexibility comes, on one hand, from the possible discontinuities in
the $\BCL$-sets and therefore on $\F$ and the $\F^k$, \ie both in the equation and boundary
conditions, and, on the other hand, on the possibility to handle ``stratified domains'' which may
have non-smooth boundary.

To convince the reader and to give a more concrete idea of what we mean in the previous paragraph,
we describe in the next section a deterministic control problem proposed  by
P.L.~Lions~\cite{PLL-CF} in one of his lessons at the Coll\`ege de France in 2016---the ``Tanker
problem''---which was one of our main motivation to look at such formulations.

This remark allows us to revisit Dirichlet and Neumann boundary conditions in deterministic control
problems in the next chapter and extend some results to far more general frameworks: discontinuous
Hamiltonians, of course, non-smooth boundary conditions, mixing of boundary conditions and treatment
of rather singular cases (including the above example).

\subsection{A tanker problem mixing boundary conditions}
\label{sect:tanker}

\index{Applications!tanker problem}
\index{Boundary conditions!mixed}
\index{Control problem!tanker problem}

In this situation, a controller has to manage a tanker: the aim is to decide when and where it will
unload its cargo depending typically on the market price for the goods in the cargo. Of course,
this price may depend on the location---typically the country---therefore to the harbor where the
unloading takes place.

In the simplest modelling, the sea is identified with a smooth domain $\Omega \subset \R^2$ and the
harbors are isoled points $P_1, P_2,\cdots, P_L$ on the boundary $\domeg$. The tanker has to be
controlled in such a way that it stays far from the coast and keeps its cargo if prices are low or,
on the contrary, comes to one of the harbors, unloads and sells its cargo when they become higher at
this harbor. The choice of the harbor is clearly part of the problem and there is no reason why
all harbors should be equivalent. Of course, there is an underlying state constraint boundary
condition on $\domeg$ outside $P_1, P_2,\cdots, P_L$ since the tanker cannot accost where no harbor
exists!

In terms of boundary conditions, we are facing a non-standard and rather singular problem involving a
state-constraints boundary condition on $\domeg \setminus \{P_1, P_2,\cdots, P_L\}$ and P.L. Lions
suggested Neumann boundary conditions for the harbors to model the flux of goods which are
sold, leading to a mathematical formulation as follows: 
\begin{equation}\label{PL-Tanker}
\begin{aligned}
    u_t + H(x,t,Du)=0 & \hbox{ in $\Omega \times (0,\Tf)$\; ,} \\[2mm]
    u_t + H(x,t,Du)\geq 0 & \hbox{ on $\domeg\setminus \{P_1, P_2,\cdots, P_L\}
    \times (0,\Tf)$\; ,}\\[2mm]
    \frac{\partial u}{\partial n} =g_i (t)
    & \hbox{ at $P_i$\; , for $i=1,\cdots,L$. }
\end{aligned}
\end{equation}

To the best of our knowledge, there is no work on such type of boundary conditions: here the mixing
of state-constraints and Neumann boundary conditions (which is already not so standard) 
is even more complicated since the Neumann boundary conditions take
place only at isolated points. In fact, even if one can give a sense
to such problems using viscosity solutions' theory, these problems are ill-posed in the sense that
no uniqueness result holds in general, \cf Section~\ref{cex-PLL} for a counter-example. 

The important point is that the Neumann boundary conditions, imposed only at isolated points, are
``not sufficiently seen'' to give sufficient constraints on solutions to provide a uniqueness
result. 

To overcome this difficulty, we use below a re-formulation in terms of stratified
problems, allowing discontinuities in the Hamiltonians as well as in the boundary conditions as we
will develop here. The point is also that the definition of viscosity
solutions for stratified problem consists in ``super-imposing'' some (subsolutions) inequalities on
the discontinuity sets of the Hamiltonians, which can be not only of codimension $1$ but also of
higher codimension. This is exactly what is lacking for obtaining uniqueness, as described in the
previous paragraph.

\subsection{A counter-example for the tanker problem}
\label{cex-PLL}

Let us examine problem \eqref{PL-Tanker} in the following case: $\Omega=\{x_N>0\} \subset \R^N$;
there is only one harbor $P_1=0 \in \domeg$; the equation is given by
$$ u_t +|Du| = 1\quad\hbox{in  }\Omega \times (0,+\infty)\;,$$
and the Neumann boundary condition is 
$$ \frac{\partial u}{\partial n} = g\quad\hbox{at  }0\; \hbox{for all $t\in (0,+\infty)$}\;,$$
for some constant $g\in \R$. For the initial data, we choose $u(x,0)=0$ on $\Omegb$.

To compute a solution, we argue formally: the associated control problem is a problem with a
reflection at $0$ and the controlled trajectory is given by \footnote{We give here a general formula
which the reader will recognize for a reflection term.}
$$ \dot X(s) = \alpha(s)\ds -
\1_{\{X(s)=0\}}n(X(s))\,\mathrm{d}|k|_s\; ,\; X(0) = x \in \Omegb\; ,$$
where $\alpha(\cdot)$ is the control taking values in $B(0,1)$. The term
$-\1_{\{X(s)=0\}}n(X(s))\,d|k|_s$ is the reflection at $0$, $(|k|_s)_s$ being the intensity of the
reflection and $n(X(s))=-e_N$ is the outward unit normal vector to $\domeg$ at $X(s)$. The
value function is $$U(x,t)=\inf_{\alpha(\cdot)}\,\left\{\int_0^t 1\ds +  \int_0^t g
\1_{\{X(s)=0\}}\,\mathrm{d}|k|_s\right\}\; .$$
In this case, the term $\1_{\{X(s)=0\}}\,\mathrm{d}|k|_s$ is nothing but
$\1_{\{X(s)=0\}}\alpha(s)\cdot n(0)\ds$.

\

If $g<0$---a favorable case to unload the cargo---the clear strategy to minimize the cost is to
maximize the integral of $|g| \1_{\{X(s)=0\}}\,\mathrm{d}|k|_s$. Therefore, the strategy is to reach
$0$ as soon as possible and then to have $\alpha(s)\cdot n(0)=1$, \ie $\alpha(s)=n(0)$.
Since $|x|$ is the time which is necessary to reach $0$ from $x$ and then we integrates $g$ till
time $t$, this gives the solution:
$$ U(x,t)= t + g(t-|x|)_+\;,$$

\

Now take $g < g' <0$ and consider $V(x,t)= t + g'(t-|x|)_+$. We claim that $V$ is still a
subsolution of \eqref{PL-Tanker}: indeed, since changing $g$ into $g'$, we just have to check the
inequality at $x=0$, for $t>0$. But, if $(y,s)\sim (0,t)$, then $(s-|y|)_+>0$ and $V(y,s)=s+g'(s-|y|)$. 
Now, since $g'<0$ the super-differential of $V$ is empty at $(0,t)$, leaving us with no
subsolution inequality to check.

Therefore $V$ is a subsolution of the problem but clearly $V>U$ for $t>|x|$ and this shows that no
comparison result can hold.

The interpretation of this counter-example is that the Neumann boundary condition at only one point
(or at isolated points) is not seen enough by the notion of viscosity solution, at least not
sufficiently to imply comparison/uniqueness. This defect will be corrected by the stratified formulation
which superimposes an inequality at $0$ for all~$t$.

\section{A first difficulty: boundary regularity of subsolutions}

We recall that the question of the ``regularity'' of subsolutions is crucial in the stratified
approach: wether it is considered as an assumption for {\em weak} stratified solutions or as a
property for the {\em strong} ones, this regularity is used on each part of the stratification in
the comparison proof and is a key property to make the comparison proof work.

Unfortunately, checking this regularity becomes far trickier on the boundary, for two main reasons:
first, even if the boundary is smooth, there is no available, natural inequality on the boundary
which can easily provide the needed regularity for subsolutions; the second one comes from the fact
that the stratified approach can handle very general domains, with non-smooth boundary, and even in
rather singular situations.

Let us now describe with more details these difficulties. 

\noindent\textsc{1. The lack of boundary inequalities ---} 
For strong stratified subsolutions, the regularity property in the $\R^N \times
(0,\Tf)$-case is a consequence of the $\F_*\leq 0$ inequality---or maybe of a similar inequality with
a suitable Hamiltonian in the cases of weak ones---and the ``normal controllability'' assumption. We
recall that the subsolution inequalities can be interpreted by ``all trajectories are sub-optimal''
or equivalently ``all choices of the dynamic are sub-optimal'' from the control point-of-view.  But
on the boundary, even if we only consider smooth boundaries at this point,
we cannot use all the trajectories, only those which stay in $\Omegb\times
(0,\Tf)$ \footnote{We refer the reader to Section~\ref{sect:natural.Ishii} for a discussion on the
difficulties connected to the $\F_*\leq 0$ inequality, even if it is in a slightly different
context.}.  Hence the $\F_*\leq 0$ inequality does not necessarily hold on $\domeg\times(0,\Tf)$,
and we have to find a suitable substitute providing the regularity of subsolutions.
Of course, the first natural idea is to use an Hamiltonian built out of all the dynamics pointing
inside $\Omega$ but this a priori optimal choice may be complicated in general since $\Omega$ is not
necessarily smooth so that the definition of ``pointing inside $\Omega$'' may be delicate.
Another choice is to use only ONE dynamic pointing inside $\Omega$ but we still need to make
precise the sense of this property. In the sequel, we mainly use this second option.

We point out that such difficulty with the regularity of sub and supersolutions on the boundary
already appears when studying state-constraints or Dirichlet boundary conditions, even in the most
standard  continuous cases. It is clear that such boundary conditions in the viscosity sense allow
the sub and supersolutions to have ``artificial values'' on $\domeg$ (they may be non-regular in the
language of this book). In particular, this is obviously the case for the subsolutions of ``classical''
state-constrained problems since they do not satisfy anything on the boundary. Therefore,  one way
or the other, some additional properties have to be imposed to solve this difficulty.

In the pionneering works of Soner \cite{Son1,Son2}, the ``cone condition'' appears involving both
some regularity of the boundary---an interior cone regularity---but also some property of the
dynamic---one of the control fields has to enter in this cone. From these first articles on the subject, it
was clear that a comparison result holds if the subsolution is not only $\Omega$-regular
at each point of the boundary but is also $K$-regular where $K$ is the interior cone.

Then, in their systematic study of Dirichlet problems, Perthame and the first author
\cite{BP1,BP2,BP3} obtain comparison results avoiding the direct use of a cone condition by showing,
under \NCe-type conditions, that on some parts of the boundary, these sub and supersolutions are
regular while on other parts, one can redefine their values on the boundary in order to transform
them into regular sub and supersolutions.

Perhaps closer to the spirit of what we suggest above, Ishii and Koike \cite{IK} have
formulated the state-constraints boundary condition in a different way, with an unusual subsolution
condition on the boundary, by looking only at dynamics which are pointing inside the domain on the
boundary: as can be guessed, their boundary condition ``$u_t + H_{in} \leq 0$'' avoids
non-regular subsolutions provided there is an inner dynamic and Lemma~\ref{RSub-1} below justifies this
natural idea.

Finally we point out that some results for first but also second-order equations are obtained by
Katsoulakis \cite{Kat} or Rouy and the first author \cite{GBER}: in \cite{GBER}, a blow-up argument
allows to show that the cone condition holds under suitable assumptions for first-order equations
and that a related property for the second-order case also holds.

\medskip

\noindent\textsc{2. Problems related to the geometry of the boundary ---}
This second difficulty comes from the wide variety of ``stratified domains'' we
can handle with the stratified approach. We refer the reader to Definition~\ref{def:stratdom} below
for a precise definition but let us already give several examples which show the particularities of the
stratified approach, its generality in terms of situations which can be taken into account and the
related difficulties.

Our first example, which is important since ambiguous, is given by
$$\Omega:=(-1,1)\times (-1,1) \setminus [0,1)\times \{0\}\;,$$
which is clearly not a smooth domain, see Figure~\ref{fig:pacman} below. 
The difficulty with the regularity of subsolutions appears at
the points of $(0,1)\times \{0\}$, where one has to carefully apply Definition~\ref{def:regular}. For
$r>0$ is small enough, if $x\in(0,1)\times\{0\}$, then $\Omega\cap B(x,r)$ has two
connected components and the regularity means a ``two-sided regularity'', like on $\Man{N}$
above in the case of tangentially (or locally) flattenable stratifications in $\R^N\times (0,\Tf)$. For $x=(0,0)$, on the contrary,
there is only one connected component and ``regular'' takes a more standard sense.

\begin{figure}[!htp]
    \begin{center}
    \includegraphics[width=0.3\textwidth]{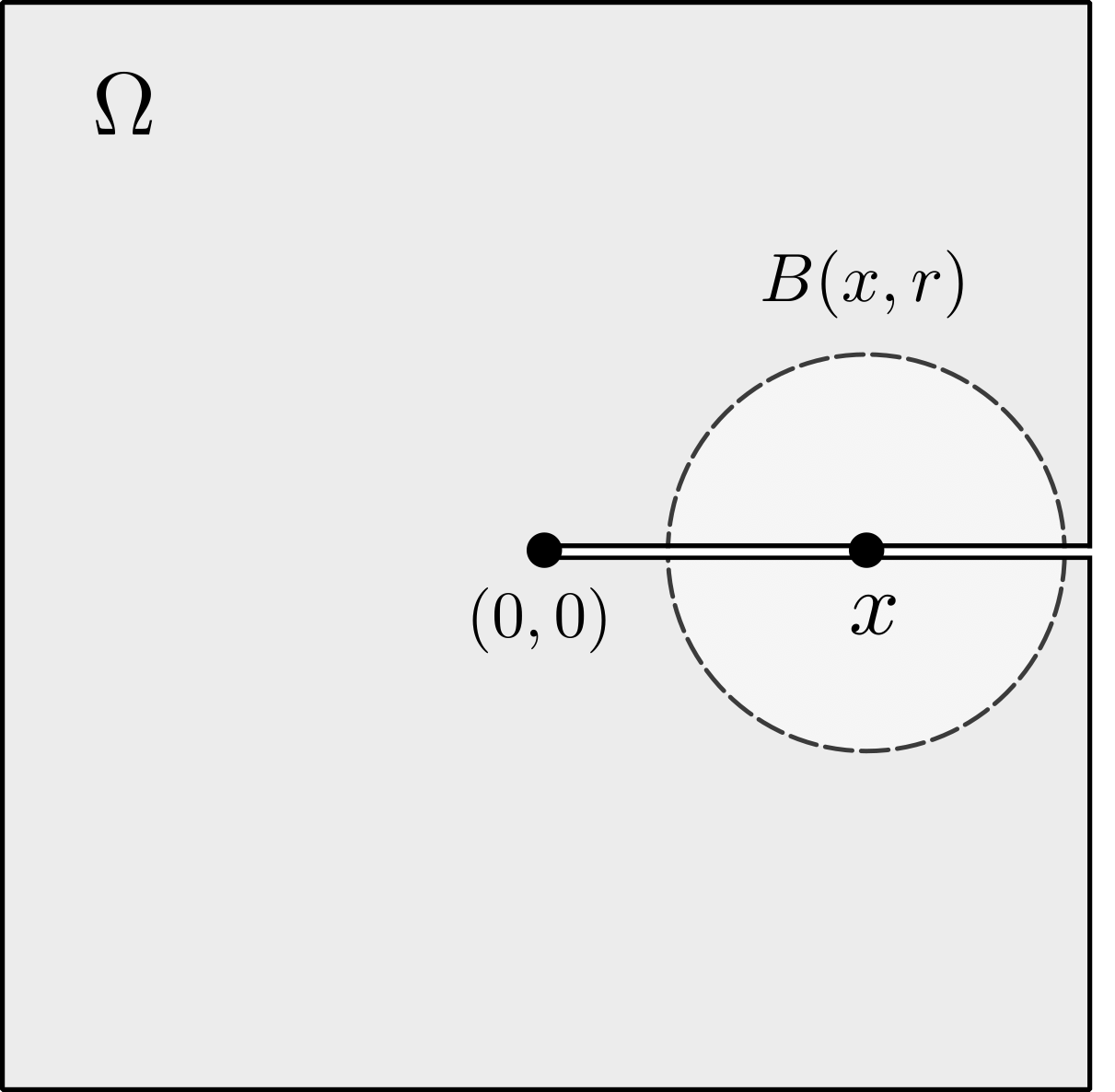}
    \caption{A non-smooth domain with a peculiar boundary.}
    \label{fig:pacman}    
    \end{center}
\end{figure}

But besides of this remark, a more intriguing question could be: are we right to consider
$[0,1)\times \{0\}$ as a part of the boundary? The answer is (more or less) that it does not matter!
Indeed, everything here is a matter of interpretation: either we can keep the idea that it is part of
the boundary; or we may consider that it is a part of an inside stratification. With the stratified
approach, there is no real difference between the ``equation inside the domain'' and the ``boundary
condition'', and therefore all the interpretations lead to the same formulation.

Next, the notion of stratified domain allows to treat---under suitable assumptions---non connected
domains which are connected through their boundaries. The simplest example being
$$ \Omega=\left[(-1,0)\times (-1,0)\right] \cup \left[(0,1)\times (0,1)\right]\;,$$
see Figure~\ref{fig:daisy} below (on the left).

\begin{figure}[!htp]
    \begin{center}
    \includegraphics[width=0.5\textwidth]{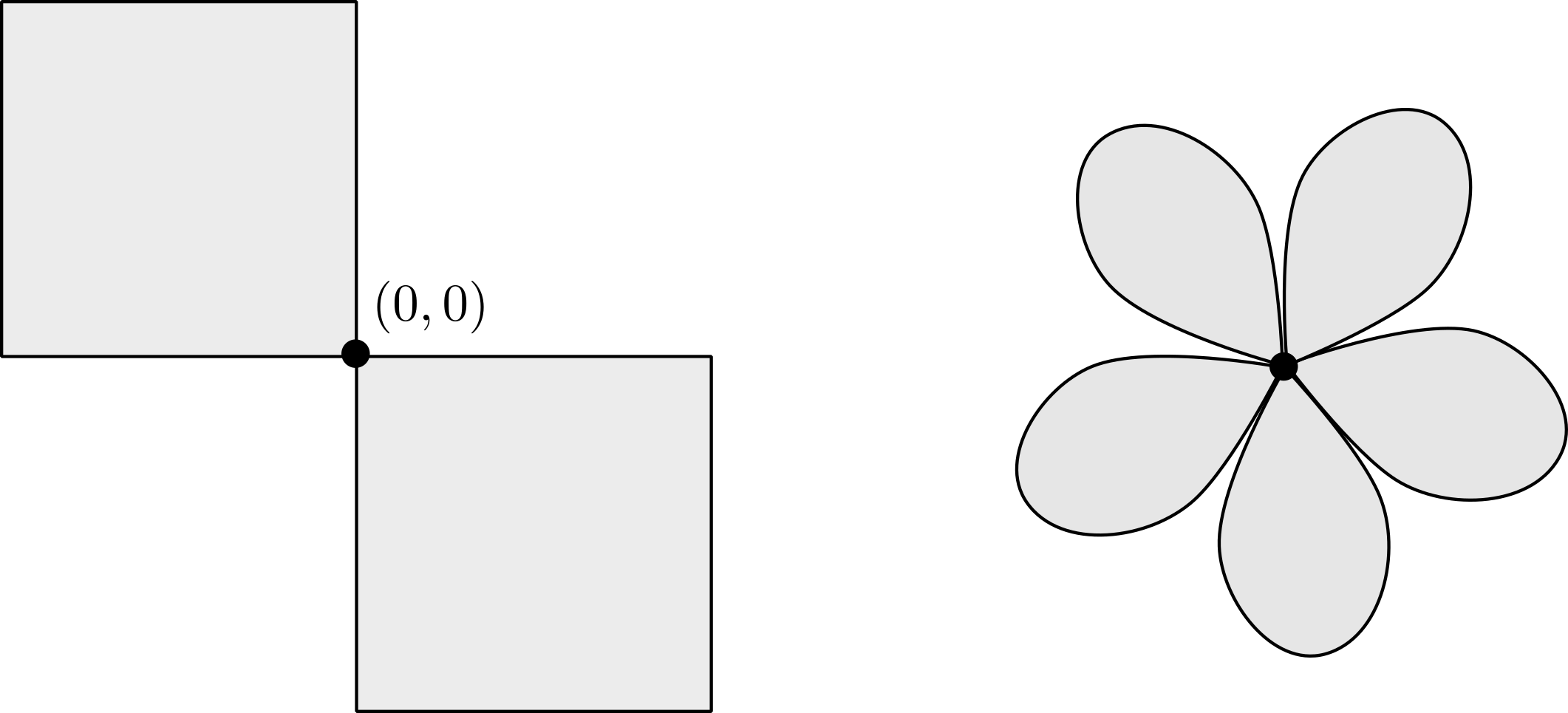}
    \caption{Daisy-like configurations.}
    \label{fig:daisy}    
    \end{center}
\end{figure}

Here $(0,0) \in \domeg$  belongs to the closure of both connected components of $\Omega$ and again
one has to apply carefully Definition~\ref{def:regular} to define regular subsolutions. Of course,
this example can be generalized as a daisy with several petals (the connected components of
$\Omega$), the center of the flower being the point $(0,0)$. At $(0,0)$ the regularity of
subsolutions has to be established \wrt each petal---\cf Figure~\ref{fig:daisy}, right.

\medskip

In all this part, we are going to avoid the difficulty connected to the regularity of sub and
supersolutions by proving several results only for regular ones, \ie for sub and/or supersolutions
whose boundary values are essentially limits of their values inside $\Omega$. Of course, the next
natural question is to identify some stable viscosity inequalities implying that, in particular,
subsolutions are ``regular'': we refer the reader to Section~\ref{abl} for a discussion. For
supersolutions, this regularity is treated in a more classical way.

\section{A second difficulty: initial and boundary data interaction}
\label{sect:initial.boundary.interaction}

In the $\R^N$-case, defining the initial data does not create any major difficulty: computing the
solution at time $t=0$ or comparing a sub and a supersolution just consist in studying a stationary
stratified problem with similar methods as for $t>0$, and with similar assumptions, which does not
lead to impose unnatural conditions. It may be thought that the same is true for state-constrained
problems, where the additional difficulties on the boundary of the domain are the same as for $t>0$.

Unfortunately, there are specific and unavoidable issues with $\domeg \times \{0\}$, which do
not depend on the approach we use. There are indeed well-known difficulties in initial-boundary
value problems: the compatibility of initial and boundary conditions in the case of Dirichlet
problems, the interaction of the initial condition and the Neumann boundary condition and, in
control problems with discontinuities, the possibility of having a specific control problem on
$\domeg \times \{0\}$, allowed by the upper-semicontinuity of $\BCL$.

In addition, in our general approach $\domeg \times \{0\}$ is itself a stratified set so the normal
controllability assumption should hold in a neighborhood of $\domeg \times \{0\}$, preventing an
Hamiltonian $\F_{init}(x,r,p_x)$ of the form $r-u_0(x)$ in $\Omega \times \{0\}$ to be admissible.
Nevertheless, this is a natural situation which should be handled by an appropriate treatment.

For this reason, we perform a specific study of the problem at time $t=0$, leading to restrict
ourselves to the two following cases.
\begin{enumerate}
    \item[(A)] The analogue to the $\R^N$-case, where the ``good assumptions'' are
        satisfied up to time $t=0$, in particular the normal controllability ones. In this case, we have
        to solve a stationary state-constrained problem to compute the initial data at time $t=0$
        and, of course, the regularity of subsolutions is a problem on $\domeg \times \{0\}$ as
        it is on $\domeg \times(0,\Tf)$.
        
     \item[(B)] The case of a Cauchy problem where the initial condition is---or can be reduced
        to---$u(x,0)=u_0(x)$ on $\Omegb \times \{0\}$ where $u_0 \in C(\Omegb)$ and for which we know
        that any subsolution $u$ and any supersolution $v$ satisfy $u(x,0)\leq u_0(x)\leq v(x,0)$ on
        $\Omegb \times \{0\}$.

\end{enumerate}

In order to study these two cases, we use the two following assumptions: we still use the notation
\HBASF when this assumption is satisfied with $\R^N$ replaced by $\Omegb$, while
\HBASFstar means that only Hypotheses \HBASF-$(i)$-$(ii)$ hold, again with $\R^N$ replaced by $\Omegb$. 

The key difference is that \HBASF contains an assumption for $t=0$ while it is not the case for
\HBASFstar, but we will complement it with the specfic Cauchy Problem initial condition
when we use it---see \HBAIDCP below.

%
%

\chapter{Stratified Solutions for State-Constrained Problems}
\label{chap-StratSC}

\abstract{This chapter contains every result on the stratified approach for state-constrained
problems: comparison result; discussions on the regularity of subsolutions at the boundary in order
to check the assumptions of the comparison result; applications to optimal control problems.}

\section{Admissible stratifications for state-constrained problems}

\index{Stratification!for state-constrained problems}

In this section, we extend the notions of {\em admissible stratification} for a Bellman Equation set
on $\Omegb \times (0,\Tf)$. Of course, the initial stratification $\Omegb\times\{t=0\}$ has to be treated
similarly and independently, as we remarked on page~\pageref{rem:MM0}. So, in the following we introduce
$$\M=(\Man{k})_{k=0..(N+1)}\quad\text{and}\quad \M_0=(\Man{k}_0)_{k=0..N}$$
where, for each $k=0..(N+1)$, $\Man{k}$ is a $k$-dimensional submanifold of $\R^N\times(0,\Tf)$ and
similarly, for each $k=0..N$, $\Man{k}_0$ is a $k$-dimensional submanifold of $\R^N$.

\begin{definition}\label{def:stratdom}\emph{--- $(\M,\M_0)$-stratified domains.}\smsp
    Let $\Omega$ be an open subset of $\R^N$ such that $\domeg=\partial [\Omegb^c]$.
    \begin{enumerate}
        \item[$(i)$] We say that $\Omegb \times (0,\Tf)$ is a $\M$-stratified domain if 
    $$\Omegb\times (0,\Tf) =\Man{0}\cup\Man{1}\cup\cdots\cup\Man{N+1}\; ,$$
    $$\domeg\times (0,\Tf) \subset \Man{0}\cup\Man{1}\cup\cdots\cup\Man{N}\; ,$$
    and the family $\tM=(\tMan{k})_k$ defined by $\tMan{k}=\Man{k}$ for $0\leq k\leq N$ and 
    $$\tMan{N+1}= \Man{N+1} \cup \left[\Omegb^c \times (0,\Tf)\right]$$ 
    is an \TFS of $\R^N\times (0,\Tf)$.
        \item[$(ii)$] We say that $\Omegb\times[0,\Tf)$ is a $(\M,\M_0)$-stratified domain if in
            addition to $(i)$, similar properties hold for $\Omegb$ and $\partial\Omega$ with respect to $\M_0$.
        \item[$(iii)$] When no confusion arises, we will just say that $\Omegb\times[0,\Tf)$ or
    $\Omegb\times(0,\Tf)$ are stratified domains, meaning that there are underlying stratifications
    $\M$ and $\M_0$ as above.
    \end{enumerate}
\end{definition}

In this definition, where we begin with an assumption on $\Omega$ whose aim is to avoid too
pathological cases, the only difference comes from the boundaries $\domeg \times (0,\Tf)$ and
$\partial\Omega\times\{0\}$. The following result explains the specific structure in the
state-constraints case\footnote{Of course, here, the value $k=(N+1)$ is excluded since no component of
$\Man{N+1}$ can be included in the boundary for obvious dimension considerations.}: 

\begin{proposition}\emph{--- Structure of the stratification.}\label{struct-mko}\smsp
    Let $\bar\Omega\times[0,\Tf)$ be a stratified domain.
    For any $0\leq k \leq N$, if $\Man{k}_i$ is a connected component of $\Man{k}$, then \\
    $$\text{either }\Man{k}_i\subset \domeg\times (0,\Tf)\text{ or }\Man{k}_i\subset
    \Omega\times (0,\Tf)\;.$$
    Of course, a similar property holds for $\Man{k}_0$, $0\leq k\leq(N-1)$.
\end{proposition}

\begin{proof} We do the proof only for the case $t>0$, the adaptations for $t=0$ being obvious, 
    so let us fix $0\leq k\leq N$.

    \smallskip

    \noindent\textbf{(a)} We first claim that 
    \begin{equation}\label{Struct-M-bord}
        (x,t) \in \Man{k} \cap \domeg\times (0,\Tf)\Rightarrow \exists r>0,\
        \Man{k} \cap B((x,t),r) \subset \domeg\times (0,\Tf)\;.
    \end{equation}
    This result being local, we can assume without loss of generality that there exists $r >0$ such that
    $\Man{k}\cap B((x,t),r) =[(x,t) + V_k]\cap B((x,t),r) $ where
    $V_k$ is a $k$-dimensional vector space. Then, \eqref{Struct-M-bord} is a consequence of
    the properties of an \TFS using the $\tM$-stratification:

    If, for some $v \in V_k$, $(x,t)+v \in [\Omega\times (0,\Tf)] \cap
    B((x,t),r)$, then there exists $0<\delta \ll r$ such that $B((x,t)+v,\delta) \subset
    [\Omega\times (0,\Tf)] \cap B((x,t),r)$. On the other hand, $B((x,t),\delta) \cap
    [\Omegb^c\times (0,\Tf)] \neq \emptyset$ and if $(x_\delta,t_\delta) \in B((x,t),\delta) \cap
    [\Omegb^c\times (0,\Tf)] \subset B((x,t),r) \cap \tMan{N+1}$, necessarily $(x_\delta,t_\delta) \in
    \tMan{N+1}$.

    By the properties of an \TFS, $(x_\delta,t_\delta) + V_k \subset \tMan{N+1}$ but
    $(x_\delta,t_\delta) \in \Omega^c\times (0,\Tf)$ and $(x_\delta,t_\delta) + v
    \in \Omega\times (0,\Tf)$ since $(x_\delta,t_\delta) + v \in B((x,t)+v,\delta)
    \subset [\Omega\times (0,\Tf)]$. Therefore $(x_\delta,t_\delta) + V_k$ has a point in
    $\domeg\times (0,\Tf)$ which is a contradiction since there is no point of $\tMan{N+1}$ on
    $\domeg\times (0,\Tf)$. 

    \smallskip

    \noindent\textbf{(b)} Now we come back to the Proposition. If $\Man{k}_i$ is a
    connected component of $\Man{k}$, there are two cases:\\
    -- either $\Man{k}_i\subset \Omega\times (0,\Tf) $ and we are done;\\
    -- or there exists $(x,t) \in \Man{k}_i\cap \domeg\times (0,\Tf)$.
    Now, if $\Man{k}_i$ is not entirely contained in $\domeg\times (0,\Tf)$, then the two subsets of 
    $\Man{k}_i$ defined by 
    $$\Man{k}_{i,1}=\Man{k}_i\cap \domeg\times (0,\Tf)\; , \,
    \Man{k}_{i,2}=\Man{k}_i\cap \Omega\times (0,\Tf)\; ,$$
    are both non-empty, open (by the above claim for $\Man{k}_{i,1}$) and we get 
    $\Man{k}_{i}=\Man{k}_{i,1}\cup \Man{k}_{i,2}$. A situation which is a contradiction with the
    connectedness of $\Man{k}_{i}$. Hence $\Man{k}_i \subset \domeg\times (0,\Tf)$ and the proof is complete.
\end{proof}

As a consequence, there is no interaction between $\Omega\times (0,\Tf) $ and $\domeg\times (0,\Tf)
$ through the stratification $\Man{k}$: no connected component can have some part intersecting
$\Omega\times (0,\Tf)$ and at the same time the complementary in $\domeg\times (0,\Tf)$. 

Notice though that the closure of some $\Man{k}_i \subset \Omega\times (0,\Tf)$ can
contain some points of $\domeg\times (0,\Tf)$. But then, they are contained in some $\Man{l}$ for some $l<k$.
As an example of such situation, where we drop the time-variable in order to simplify the example,
consider
$$ \Omega:= \{(x_1,x_2)\in \R^2;\ |x_1|+|x_2|<1\}\;.$$
We get a stratification of $\Omegb$ by setting $$\Man{0}=\{(0,-1), (1,0),(0,1),(-1,0), (0,0)\}\; ,$$
$$\Man{1}=\{(x_1,0);\  0<|x_1|<1\}\cup \{(0,x_2);\  0<|x_2|<1\}\cup \left(\domeg\setminus \Man{0}\right)\; ,$$
$$ \Man{2}=\Omegb\setminus \left(\Man{1}\cup \Man{0}\right)\; .$$
Of course, Proposition~\ref{struct-mko} applies but the two first connected components of $\Man{1}$
are not bounded away from $\domeg$.

As we will see later on, this will have a key importance in the definition of stratified
subsolutions: either we will consider interior points and, of course, this will be analogous to
the $\R^N \times (0,\Tf)$ case; or we will consider $\Fk$-inequalities at points of the boundary
which will not see any influence from $\Omega \times (0,\Tf)$. Indeed, in this last case $\Man{k}$
is included in $\domeg$ in a neighborhood of such points, so that these inequalities are just
``tangent'' inequalities.

\section{Stratified solutions and a basic comparison result}

In this section, we define the notion of stratified solution in the context of state-constrained
problems in full generality, even if we are going to use it only in $\bar\Omega\times(0,\Tf)$ in the
``basic'' comparison result we give at the end of the section.  As in the $\R^N$-case, we present
both a weak and a strong notion. We keep the same notations as in $\R^N$ for sub and supersolutions,
namely \SSup,\wSSub,\sSSub because the definition is actually the same: the case $\Omega=\R^N$ can
be viewed just as a particular case here. 

In the following, $\Omegb\times[0,\Tf)$ is a stratified domain associated to the 
collections of manifolds $\M$ for $t\in(0,\Tf)$, and $\M_0$ for $t=0$.

\index{Stratified solutions!definition (state-constaints case)}

\begin{definition}\emph{--- Stratified sub/supersolutions for state-constrained problems.}\smsp
    \noindent{\bf 1. ---} \SSup A locally bounded function $v : \Omegb \times [0,\Tf) \to \R$ is a
    stratified supersolution of \begin{equation}\label{BE-SC}
        \F(x,t,w,Dw)= 0\quad \hbox{on  }\Omegb \times [0,\Tf)\; ,
    \end{equation}
    if $v$---or equivalently $v_*$---is an Ishii supersolution of this equation on 
    $\Omegb \times [0,\Tf)$.\\[2mm]
    {\bf 2. ---} \wSSub A locally bounded function $u : \Omegb \times [0,\Tf)$ is a {\em weak}
    stratified subsolution of Equation~(\ref{BE-SC}) if
    \begin{enumerate}
        \item[$(a)$] for any $k=0,...,(N+1)$, $u^*$ is a viscosity subsolution of
        $$\F^k(x,t,u^*,D_xu^*) \leq 0 \quad \hbox{on  }\Man{k},$$
        \item[$(b)$] similarly for $t=0$ and $k=0..N$, $u^*(x,0)$ is a viscosity subsolution of 
            $$ \F_{init}^k(x,u^*(x,0),D_xu^*(x,0)) \leq 0 \quad \hbox{on  }\Man{k}_0 .$$
    \end{enumerate}
    \noindent{\bf 3. ---} \sSSub
    A locally bounded function $u$ is a {\em strong} stratified subsolution of Equation~(\ref{BE-SC})
    if it is a weak stratified subsolution which satisfies additionaly
    \begin{enumerate}
        \item[$(a)$] $\F_*(x,t,u^*,Du^*)\leq0$ in $\Omega\times(0,\Tf)$;
        \item[$(b)$] $(\F_{init})_*(x,u^*,D_xu^*)\leq 0$ in  $\Omega$\;.
    \end{enumerate}
    \noindent\textbf{4. ---} A weak or strong stratified solution is a function which is both a
    \SSup and either a \wSSub or a \sSSub.
\end{definition}

    In addition, we will say that $u$ is a strict (weak or strong) stratified subsolution 
    if the $\leq 0$-inequalities are replaced by a $\leq -\eta <0$-inequality 
    where $\eta>0$ is independent of $x$ and $t$.

Let us make several remarks on the definition.
\begin{enumerate}
    \item[$(i)$] The supersolution definition is just the classical
Ishii inequality, up to the boundary $\domeg \times (0,\Tf)$ as it is
classical for state-constrained problems. Of course at time $t=0$, the analogue
of Proposition~\ref{visc-ineq-init} implies that $\F$ can be replaced by
$\F_{init}$.

\item[$(ii)$] For the subsolution case, there is no change in $\Omega \times (0,\Tf)$, 
the main feature of stratified subsolutions are preserved, \ie we have to super-impose
$\F^k$-inequalities on all $\Man{k}$ (including at time $t=0$). What may be more suprising and
unusual in this state-constraints framework is the fact that there are subsolutions inequalities
on $\domeg \times (0,\Tf)$. But, on one hand these inequalities concern $\Man{k}\cap [\domeg
\times (0,\Tf)]$ for $k=0,...,N$ and therefore they take into account only the
dynamics which stay on $\Man{k}$, \ie on $\domeg \times (0,\Tf)$; on the other hand,  taking into
account these inequalities on the boundary is not a real difficulty here as long as we deal with
``regular subsolutions on the boundary''. This notion is defined precisely below, it is the natural
extension of the notion of regular subsolutions that we have seen in Part~\ref{stratRN}.

\item[$(iii)$] In fact, the new difficulty which is caused by the boundary is the following: as
we saw in the $\R^N$-case, the regularity of subsolutions is ensured for instance by the inequality
$\F_*\leq 0$ and \NCe, which are quite natural and allow to prove that \emph{weak} and \emph{strong}
subsolutions are the same.

We also mentioned that such regularity may come from other Hamiltonian inequalities or specific
properties depending on the situation.

And precisely here, on $\domeg \times (0,\Tf)$, the $\F_*\leq 0$ inequality cannot be expected in
general: since we only consider state-constrained trajectories, the outward pointing dynamics on
$\partial\Omega$ should be excluded from the computation of $\F_*$ there. 
This makes the regularity of subsolutions a real issue: we have to find a way to prove that
subsolutions are ``regular'' on the $\Man{k}$-components which lie on the boundary. As we
already mention it above, this is THE additional difficulty for state-constrained problems
in $\Omegb \times (0,\Tf)$.
\end{enumerate}

To address this question, we need to introduce a notion of boundary
regularity for subsolutions.

\begin{definition}\label{def:reg.boundary}
    Let $\Omegb\times[0,\Tf)$ be a stratified domain. We say that 

    \noindent $(i)$ an \usc function $u:\Omegb\times [0,\Tf)\to \R$
    is regular at the boundary $\domeg\times(0,\Tf)$ with respect to the stratification $\M$ if 
    $$\text{for any $1\leq k\leq N$,  $u$ is regular on }[\domeg\times(0,\Tf)] \cap \Man{k}\;.\ 
    \footnote{Here Definition~\ref{def:regular}-$(iii)$ is used with $A=\Omegb\times (0,\Tf)$
    and $\mathcal{E}=[\domeg\times(0,\Tf)] \cap \Man{k}$.}$$
    $(ii)$ an \usc function $u:\Omegb\times [0,\Tf)\to \R$
    is regular at the boundary $\domeg\times\{0\}$ with respect to the stratification $\M_0$ if 
    $$\text{for any $0\leq k\leq (N-1)$,  $u(x,0)$, is regular on }\domeg \cap \Man{k}_0\;.\ 
    \footnote{Here Definition~\ref{def:regular}-$(iii)$ is used with $A=\Omegb$ and 
    $\mathcal{E}=\domeg \cap \Man{k}_0$.}$$ 
\end{definition}
For the sake of simplicity, we will omit the mention ``with respect to $\M$/$\M_0$''.

We then conclude this part by a ``basic'' comparison result which has to be complemented by a
specific study of the problem at time $t=0$, \cf Section~\ref{sst0}. 
\index{Comparison result!for stratified solutions, state-constraints case}

\begin{theorem}\label{thm:basic.omega}\emph{--- Comparison in stratified domains.}\smsp
    Let $\Omegb\times[0,\Tf)$ be a stratified domain, assume that \HBASFstar holds and
    let $u$ be an \usc\ \wSSub, $v$ a \lsc\ \SSup such that
    \begin{equation}\label{cidSC}
    u(x,0)\leq v(x,0)\quad \hbox{on  }\Omegb.
    \end{equation}
    If $u$ is a regular subsolution in $\Omega \times (0,\Tf)$ and at the boundary
    $\domeg\times(0,\Tf)$, then $$ u(x,t)\leq v(x,t)\quad \hbox{on  }\Omegb\times[0,\Tf).$$
    In the case of strong subsolutions, the result holds for subsolutions which are regular at the boundary.
\end{theorem}

As we point out above, ``strong subsolutions'' are necessarily regular in $\Omega \times (0,\Tf)$.
Nevertheless, we have to keep the assumption that they are regular at the boundary since this is not
automatic.

As the reader may guess, the proof is almost exactly the same as the proof of
Theorem~\ref{comp-strat-RN} and it is easy to understand why: the fact that some
parts of the stratification are located on the boundary does not cause any problem and
the key ingredients were already used in the $\R^N$-case. The only difference comes
from the regularity of the subsolution at the boundary whose aim is, of course, to eliminate
``artificial values'' there.

As we will see below, this condition is analogous to the ``cone condition'' which is used in
state-constrained or Dirichlet problems for standard continuous equations. We will see in
Section~\ref{abl} how an analogue of the $\F_*$-inequality and Proposition~\ref{reg-sub} for
boundary points can be used to obtain some regularity property. This point may be important also for
stability reasons: while proving that the limit of a sequence of \wSSub is still a \wSSub may be
relatively easy, in order to use this convergence, the comparison result requires the limit \wSSub
to be regular. And this fact may be more complicated to prove, except if the conditions of
Lemma~\ref{RSub-1} are satisfied by the sequence of \wSSub in a suitable uniform way.

\section{On the boundary regularity of subsolutions}\label{abl}

\index{Regularity of subsolutions!on the boundary}
As we keep pointing out, the ``regularity'' of subsolutions plays a central role since this is a
keystone argument of the comparison result in the stratified case when we deal with subsolutions. We
recall that such regularity allows to obtain continuous subsolutions after ``tangential
regularization'' by sup-convolution.  In $\R^N$, this property is, in
general, a consequence of the standard Ishii subsolution inequality $\F_* \leq 0$, provided that the
normal controllability assumption is satisfied, \cf Proposition~\ref{reg-sub}. In other words,
strong stratified subsolutions are regular weak subsolutions, \cf Section~\ref{sec:regstratsub}. Of
course, in the present context, the same is true if we consider parts of the stratification which
are inside the domain $\Omega \times (0,\Tf)$.

However, the situation is a completely different on $\domeg \times (0,\Tf)$ since the subsolution
inequality $\F_* \leq 0$ does not hold, in general, on the boundary. It is replaced by the
$\F^k$-ones, involving only tangential dynamics which, therefore, cannot give information on the
values of the subsolution in $\Omega\times (0,\Tf)$ near the boundary. Actually, it is well-known
that, even in classical cases, (sub)solutions of the Dirichlet problems may have ``artificial''
values on the boundary which have no connections with interior ones---see \cite{BP1,BP2,BP3} or
\cite{Ba}. We come back on that classical case below.

We propose here two ways to get such connection between interior and boundary values: 
\begin{enumerate}
    \item[$(i)$] the first one is when some \adhoc inequalities on
        the boundary play the role of the ``$\F_* \leq 0$''-one, allowing to prove the regularity of
        subsolutions;
        \item[$(ii)$] the second one, inspired by the continuous case, is completely different: it consists
        in redefining the subsolution on the different portions $\Man{k}$ of the boundary in order to get
        the desired regularity property satisfied.
\end{enumerate}
Of course this second way is far more restrictive since it requires that no real discontinuity, in
terms of $\BCL$, is present on the boundary. But it may be useful though, since the stratified
approach allows non-smooth boundaries.

Finally, as we already mentioned it above, the question of the regularity of subsolutions on the
boundary can be even more delicate if some points of the boundary belong to the closure of several
connected components of $\Omega \times (0,\Tf)$. We will not try to answer this question in a
general way but we introduce below the notion of \emph{quasi-regular boundary} in order to be
able to fully apply strategy $(i)$ or $(ii)$ above.

\subsection{An inward-pointing cone condition}

In order to present our first result let us introduce some truncated cone.
The positive vectorial cone of height $\tau>0$ and aperture $\delta>0$ around direction $e\in\R^N$
is given by: 
$$\mathcal{C}^\tau(e,\delta):=\bigcup_{0< h\leq\tau}B(h e,h\delta)\;.$$
Of course, such vectorial cones can be transformed into affine cones by just adding $x\in\R^N$,
which amounts to translate $B(he,\delta h)$ into $B(x+he,h\delta)$. Notice that here we do not
require $e$ to be of unit length for simplicity. Figure~\ref{fig:cone.cond} below illustrates the cone
condition described in Lemma~\ref{RSub-1}, at time $t=t_0$.

\begin{figure}[!htp]
    \begin{center}
    \includegraphics[width=0.5\textwidth]{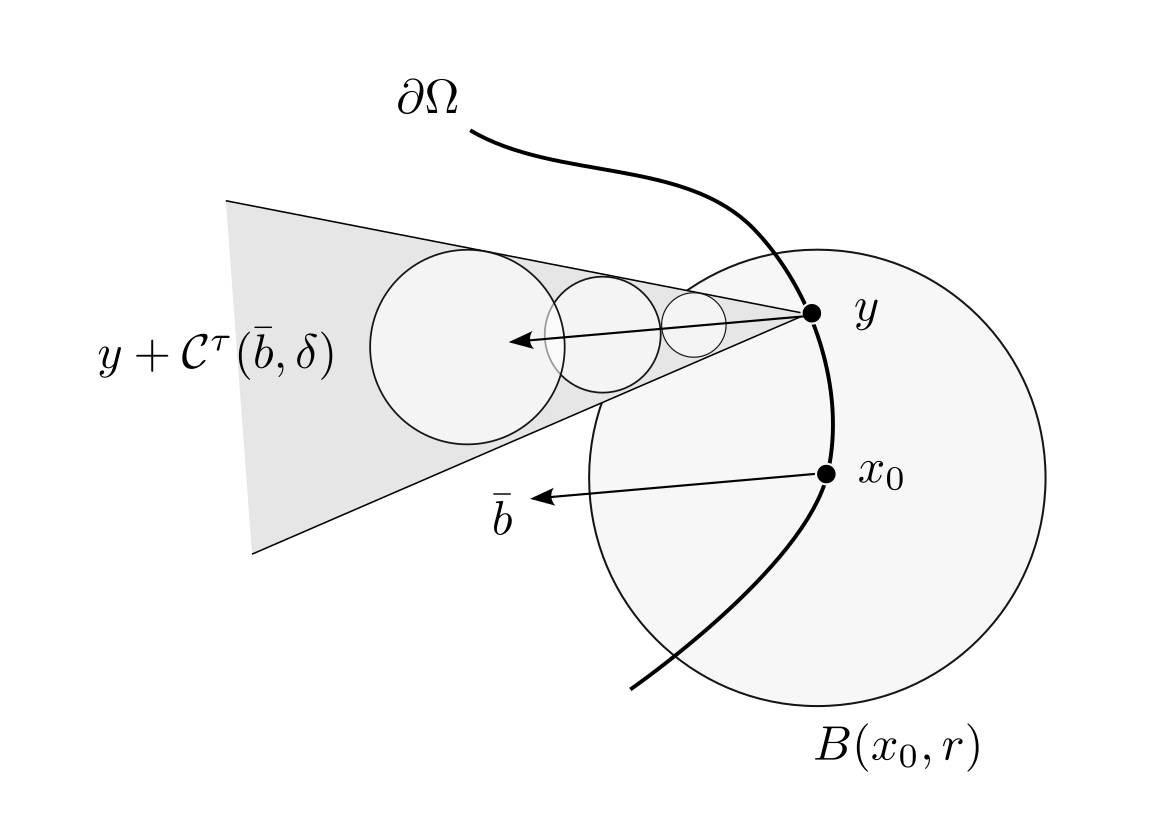}
    \caption{Inward pointing cone condition}
    \label{fig:cone.cond}    
    \end{center}
\end{figure}

\begin{lemma}\label{RSub-1} 
    Assume that $\overline \Omega\times (0,\Tf)$ is a stratified domain and that
    $(x_0,t_0) \in \Man{k} \cap [\domeg\times (0,\Tf)]$. Assume also that there
    exist $r,{\overline M},\tau,\delta >0$ and a continuous function 
    $b=(b^x,b^t):[\overline \Omega\times (0,\Tf)] \cap B((x_0,t_0),r) \to\R^{N+1}$
    such that, for any $y \in \Omega \cap B(x_0,r)$
    \begin{equation}\label{cone} 
        y+\mathcal{C}^\tau\big(\bar b^x,\delta\big)\subset \Omega\quad\text{where}\quad
        \bar b^x:=b^x(x_0,t_0)\; .
    \end{equation}
 If $u$ is a subsolution of
    \begin{equation}\label{b-eq}
-b(x,t) \cdot D u \leq {\overline M} \quad \hbox{on  } [\domeg \times (0,\Tf)]\cap
    B((x_0,t_0),r)\ \footnote{in the sense that, for any smooth test-function
    $\phi$ if $u-\phi$ has a local maximum point relatively to $\Omegb \times
    (0,\Tf)$ at $(x,t)$ then $-b(x,t) \cdot D\phi(x,t) \leq {\overline M}$.}\; ,
    \end{equation}
    then, for any $k\leq l <N+1$ and $(x,t) \in \Man{l}\cap [\domeg \times (0,\Tf)]\cap
    B((x_0,t_0),r)$, $u$ is $(\Man{l+1}\cup\cdots\cup \Man{N+1})$-regular at $(x,t)$. More precisely, we have
    \begin{equation}\label{reg-sub-Hk}
        u(x,t) =\limsup\Big\{u(y,s);\ (y,s)\to(x,t),\ (y,s) \in \Man{l+1}\cup \cdots\cup
        \Man{N+1}\Big\}\;.
    \end{equation}
\end{lemma} 

Notice that, referring to Definition~\ref{def:regular}-$(iii)$, Lemma~\ref{RSub-1} does not states
that $u$ is \emph{regular} on $\Man{k}$. Indeed, $\Man{k+1}\cup \cdots \cup \Man{N+1}$ may
have several connected components, so that \eqref{reg-sub-Hk} is not enough to ensure such
regularity. This will lead to the introduction of Assumption~\QRB later on. However, even if we are
in this ``bad'' situation with several connected components, it may be natural to try to apply the
same type of ideas with cones lying in each of the connected components of $\Omega$.

Before proving this lemma, we want to point out that, as the proof is going to show, this is a very
basic result; a more interesting point would be to give general and, if possible, natural conditions
under which a subsolution of the stratified problem is a viscosity subsolution of an equation like
\eqref{b-eq}. Of course, but this has to be formulated a little bit more precisely, such property is
in general a consequence of $(i)$ an interior cone condition like \eqref{cone} and $(ii)$ the normal
controllability assumption, together with a suitable compatibility between the two.

\begin{proof} 
    In order to simplify the presentation we are just going to prove \eqref{reg-sub-Hk} for $l=k$ and
    $(x,t)=(x_0,t_0)$, the proof for the other points being analogous.

    \smallskip

    \noindent\textbf{(a)} 
    We first claim that $\bar b=b(x_0,t_0)=(b^x(x_0,t_0),b^t(x_0,t_0))$ cannot be in
    $V_k=T_{(x_0,t_0)}\Man{k}$.

    This property is an easy consequence of Claim~\eqref{Struct-M-bord} in the proof of 
    Proposition~\ref{struct-mko}: indeed, otherwise we would have that, for small $\tau>0$, the
    distance from $(x_0, t_0)+\tau \bar b $ to $\Man{k}$ would be a $o(\tau)$ which would contradict 
    the cone assumption which implies that the distance of $(x_0, t_0)+\tau \bar b $
    to $\domeg \times (0,\Tf)$---and therefore to $\Man{k}$---is at least $\delta \tau$ 
    with $\delta>0$ because $x_0+\mathcal{C}^\tau\big(\bar b^x,\delta\big)\subset \Omega$.

    As a consequence, there exists a vector $e \in \R^{N+1}$, such that $e$ is orthogonal to
    $T_{(x_0,t_0)}\Man{k}$ and $\bar b\cdot e >0$. Then, in a small compact neighborhood of
    $(x_0,t_0)$, we consider the function 
    $$ (x,t) \mapsto u(x,t)-\frac{|x-x_0|^2}{\eps^2}+\frac{1}{\eps}e\cdot(x-x_0,t-t_0)
    -\frac{|t-t_0|^2}{\eps^2}\; .$$
    We notice that, by Cauchy-Schwarz inequality,
    $$ -\frac{|x-x_0|^2}{\eps^2}+\frac{1}{\eps}e\cdot(x-x_0,t-t_0)
    -\frac{|t-t_0|^2}{\eps^2}\leq -\frac{|x-x_0|^2}{2\eps^2}
    -\frac{|t-t_0|^2}{2\eps^2}+\frac{1}{2}|e|^2\; ,$$
    and therefore all the maximum points of this function satisfy at least that $\displaystyle
    \frac{|x-x_0|^2}{2\eps^2} +\frac{|t-t_0|^2}{2\eps^2}$ remains bounded when $\e \to 0$ and
    therefore these maximum points necessarily converge to $(x_0,t_0)$.

    \smallskip

    \noindent\textbf{(b)} For $0<\eps\ll 1$, if \eqref{reg-sub-Hk} does not hold and if $|e|$ small
    enough\footnote{smaller than the jump size of $u$ on the boundary}, then this function
    necessarily achieves its maximum on $\Man{k}$ at $(\xe,\te)$. Since $e$ is orthogonal to
    $T_{(x_0,t_0)}\Man{k}$, we have
    $$  e\cdot(\xe-x_0)=o\left(|\xe-x_0|\right)\; ,$$ and,
    as a consequence of the maximum point property we have
    $$ \begin{aligned} 
       u(x_0,t_0) &\leq u(\xe,\te)-\frac{|\xe-x_0|^2}{\eps^2}-
       \frac{|\te-t_0|^2}{\eps^2}+\frac{1}{\eps}e\cdot(\xe-x_0)\\
       &= u(\xe,\te)-\frac{|\xe-x_0|^2}{\eps^2}-\frac{|\te-t_0|^2}{\eps^2}+ \frac{1}{\eps}o\left(|\xe-x_0|\right)\; .
    \end{aligned}$$
    Refining the above Cauchy-Schwarz inequality, we can use (at least) the arguments of
    Lemma~\ref{lem:cv-pen} to prove that the penalisation terms $\displaystyle
    \frac{|\xe-x_0|^2}{\eps^2},\frac{|\te-t_0|^2}{\eps}$ tend to $0$ when $\eps \to 0$.  In
    particular, we have an other proof of the convergence of $(\xe,\te)$ to $(x_0,t_0)$ with a
    better estimate for the rate of convergence. 

    \smallskip

    \noindent\textbf{(c)}
    Writing the \eqref{b-eq} subsolution inequality yields
    $$ -b(\xe,\te)\cdot\left(\frac{2(\te-t_0)}{\eps^2} ,
    \frac{2(\xe-x_0)}{\eps^2}\right)+ \frac{1}{\e}e\cdot b(\xe,\te)\leq {\overline M}\; , $$
    which gives, thanks to the previous properties
    $$ \frac{o(1)}{\eps} + \frac{1}{\e}e\cdot b(\xe,\te)\cdot e \leq {\overline M}\; .$$
    But, by the continuity of $b$, $b (\xe,\te) \cdot e \to b(x_0,t_0)\cdot e>0$, and we get a
    contradiction in this above inequality for $\eps$ small enough.  
\end{proof}

The next result shows how \eqref{b-eq} can be obtained and the kind of
compatibility conditions which are needed to get it, combining the cone condition
\eqref{cone} and the dynamic in the control problem.

\begin{lemma}\label{lem:sufficient.cone}
    Let $\overline \Omega\times (0,\Tf)$ be a stratified domain and 
    $(x_0,t_0) \in \Man{k}\cap [\domeg\times (0,\Tf)]$. We make the following assumptions:
    \begin{enumerate}
        \item[$(i)$] \HBCL, \NCBCL, \TCBCL hold;
        \item[$(ii)$] there exist $r, \tau , \delta >0$ and $b:[\overline \Omega\times
    (0,\Tf)] \cap B((x_0,t_0),r)\to\R^{N+1}$ continuous such that for any $y \in \domeg \cap
    B(x_0,r)$, cone condition \eqref{cone} holds;
    \item[$(iii)$] for any $(x,t) \in [\overline \Omega \times (0,\Tf)]\cap B((x_0,t_0),r)$,
        $b(x,t) \in  \B(x,t)$.
    \end{enumerate}
    If $u$ is a subsolution of the stratified state constraint problem in $[\Omegb \times (0,\Tf)]
    \cap B((x_0,t_0),r)$ and if \eqref{reg-sub-Hk} holds for any $(x,t) \in [\domeg \times (0,\Tf)] \cap
    B((x_0,t_0),r)$, then $u$ is a subsolution of \eqref{b-eq} for eventually a smaller $r$ and
    for some large enough constant ${\overline M}$ depending on the
    $\L^\infty$-norm of $u$ and on the constant $M$ which appears in \HBCL.  
\end{lemma} 

This corollary means that, in some sense, property \eqref{reg-sub-Hk} is equivalent to a natural
``control'' inequality (as it is the case in $\Omega$) and that such inequality should be
automatically extended to the boundary if the boundary values are the limit of the interior ones.

\begin{proof} 
    Thanks to \HBCL, \NCBCL and \TCBCL, we can use the regularization procedure of
    Section~\ref{sect:sup.reg}, so that we can assume without loss of generality that $u$ is
    Lipschitz continuous on $[\Omegb \times (0,\Tf)] \cap B((x_0,t_0),r)$. We point out that
    \eqref{reg-sub-Hk} plays a key role in this property in order to avoid any discontinuity on the
    boundary.

    \smallskip

    \noindent\textbf{(a)}
    If $\phi$ is a smooth test-function and if $(\xb,\tb) \in [\domeg  \times (0,\Tf)] \cap
    B((x_0,t_0),r)$ is a strict local maximum point of $u-\phi$ in $[\Omegb  \times (0,\Tf)] \cap
    B((x_0,t_0),r)$, we consider the function 
    $$ \Psi(x,t,y,s) = u(x,t) - \phi(y,s)- \frac{|x-y-\eps
    \tilde b^x|^2}{\eps^2} -\frac{|t-s-\e\tilde b^t|^2}{\eps^2}\; ,$$
    where $\tilde b=(b^x(\xb,\tb),b^t(\xb,\tb))$. We notice that, taking possibly a smaller $r$ and changing $\delta$ in $\delta/2$,
    \eqref{cone} holds with $\bar b=b(x_0,t_0)$ replaced by $\tilde b$.
    
    The function $\Psi$ achieves its maximum at some point
    $(\xe,\te,\ye,\se)$. Since $u$ is Lipschitz continuous, $u(\xb +\eps \tilde b^x,\tb+\e\tilde b^t) =
    u(\xb ,\tb) + o_\eps(1)$ and therefore 
    $$ u(\xb,\tb) - \phi(\xb,\tb) + o_\eps(1) \leq \Psi(\xb +\eps \tilde b^x,\tb+\e \tilde b^t,\xb ,\tb) 
    \leq \Psi (\xe,\te,\ye,\se) \leq u(\xb,\tb) - \phi(\xb,\tb) +  o_\eps(1)\; ,$$
    the last inequality coming from the facts that $\xe-\ye,\te-\se$ are $O(\eps)$, $u$ is Lipschitz continuous and $(\xb,\tb)$
    is a maximum point of $u-\phi$.

    \smallskip

    \noindent\textbf{(b)}
    By Lemma~\ref{lem:cv-pen} (or at least by using the underlying arguments), we deduce that as
    $\eps\to0$, not only $(\xe,\te),(\ye,\se)\to (\xb,\tb)$ but also 
    $$ \frac{|\xe-\ye-\eps \tilde b^x|^2}{\eps^2} +\frac{|\te-\se-\e\tilde b^t|^2}{\eps^2} \to 0\;.$$
    In particular for $\e>0$ small enough, 
    $\xe \in B(\ye + \eps \tilde b,\delta\eps)\subset y_\e + \mathcal{C}^\tau(\tilde b,\delta) 
    \subset \Omega$ since $|\xe-\ye-\eps \tilde b^x|=o(\eps)$.
    We then write down the viscosity subsolution inequality for $u$
    $$  \F_*(\xe, \te, u(\xe,\te), (p_\eps,\alpha_\eps))\leq 0\; ,$$
    where $\alpha_\eps=2(\te-\se-\e\tilde b^t){\eps}^{-2}= \phi_t (\ye,\se)$
    by the maximum point property in $s$, while 
    $p_\eps = 2(\xe-\ye-\eps \tilde b^x)\eps^{-2} = 
    D \phi (\ye,\se)$ if $\ye \in \Omega$, but not necessarily if $\ye \in \domeg$---more about
    this case below. 

    \smallskip

    \noindent\textbf{(c)} In order to estimate $\F_*$, we recall that for any $(\hat b,\hat c,\hat l)\in\BCL(x,t)$,
    $$ \F (x,t,r,p) \geq -\hat b\cdot p + \hat cr - \hat l\;.$$
    In particular, by Assumption~$(iii)$, we see that
    \begin{equation}\label{prop-Fls-int}
        \F_* (x,t,r,p) \geq  - b(x,t) \cdot p - \tilde M
    \end{equation}
    for some constant $\tilde M$ since $c,l$ are bounded and since, for $r$, $u$ is a bounded
    subsolution. Therefore, 
    \begin{equation}\label{ineq-Fls-int}
        0\geq  \F_* (\xe,\te,u(\xe,\te),(p_\e,\alpha_\e)) \geq -  b(\xe,\te) \cdot (p_\eps,\alpha_\e)-
        \tilde M
    \end{equation}
    and we conclude easily if we know that $\ye \in \Omega$ at least for a
    subsequence of $\eps$ tending to $0$, with ${\overline M}=\tilde M/\kappa$.

    If $\ye \in \domeg$, we first notice that, for $0<\tau \ll 1$, $\ye +\tau b^x(\xe,\te)\in
    \Omega$ as a consequence of the cone condition and then we use  $$\Psi(\xe,\te,\ye +\tau
    b^x(\xe,\te),\se +\tau b^t(\xe,\te)) \leq \Psi (\xe,\te,\ye,\se)\; ,$$ and this variation gives
    $-b(\ye,\te)\cdot D \phi (\ye,\se) \leq - b(\ye,\te) \cdot (p_\eps,\alpha_\e)$. The conclusion
    follows easily from \eqref{ineq-Fls-int} and the proof is complete.
\end{proof}

\begin{remark}\label{rem:ineg-sub-cone-int} 
    The conclusion of Lemma~\ref{lem:sufficient.cone} is the boundary inequality \eqref{b-eq}; we
    state it in this way since this is the information which is useful for applying
    Lemma~\ref{RSub-1}.  But, in fact, the subsolution inequality \eqref{b-eq} holds not only on
    $\domeg  \times (0,\Tf)] \cap B((x_0,t_0),r)$ but also in $\Omegb  \times (0,\Tf)] \cap
    B((x_0,t_0),r)$; this is obvious from inequality~\eqref{prop-Fls-int}. Such property may be
    useful for recovering the regularity of a subsolution obtained through a stability result like
    the half-relaxed limits method. We refer the reader to Chapter~\ref{chap:networks} for a
    situation where this remark plays a key role.
\end{remark}

\subsection{Redefining the boundary values}

Now we turn to the second possibility which is more restrictive but which may be interesting in exit
time and associated Dirichlet problems.  In order to simplify the formulation of this result, we use
the notation $\QOm=B((x_0,t_0),r)\cap \left (\overline \Omega\times (0,\Tf)\right)$ for a
localization cylinder.
\begin{lemma}\label{lem:rbv}
    Assume that $\overline \Omega\times (0,\Tf)$ is a stratified domain. Let 
    $(x_0,t_0) \in \Man{k}$ and $r>0$ such that 
    $\QOm \subset \Man{k}\cup \Man{k+1}\cup \cdots \cup \Man{N+1}$.
    We make the following additional assumptions:
    \begin{enumerate}
        \item[$(i)$] There exists a set-valued map $\BCLp : \QOm \to \R^{N+3}$ such that
        \begin{enumerate}
            \item[$(a)$] for any $(x,t) \in \QOm$, $\BCLp(x,t) \subset \BCL(x,t)$.
        \item[$(b)$] for any $(x,t) \in \Man{k} \cap \QOm$ and $(y,s) \in \Man{l} \cap 
            \QOm$ for some $l\geq k$,
        if $(b,c,l)\in\BCLp(x,t)$ is such that
            $b\in T_{(x,t)}\Man{k}$, there exists $(b',c',l')\in\BCLp(y,s)$ with $b'\in T_{(y,s)}\Man{l}$
            satisfying
            $$  |b'-b| \leq O(|x-y| + |t-s|)\;,\, |c-c'|+|l-l'| =o(1)\;\text{as }
            (y,s)-(x,t)\to 0\;.$$
     \end{enumerate}
        \item[$(ii)$] For any $l=(k+1),..,(N+1)$, $u$ is an \usc subsolution of the $\F^{l}$-equation 
            in $\Man{l}\cap \QOm$.
    \end{enumerate}
    Then the function $\tilde u$ defined on $\Man{k} \cap \QOm$  by 
    $$
    \tilde u(x,t) := \limsup\Big\{u(y,s);\ (y,s)\to (x,t), \ (y,s) \in (\Man{k+1}\cup \cdots \cup
    \Man{N+1})\cap \QOm\Big\} \;
    $$
    satisfies 
    $$ \sup_{\substack{(b,c,l)\in\BCLp(x,t)\\ b\in T_{(x,t)}\Man{k}}}\big\{ -
    b\cdot D\tilde u +c\tilde u - l\big\} \leq 0\quad \hbox{on  }\Man{k} \cap \QOm\; ,$$
\end{lemma}

Let us now comment the admittedly strange formulation of Lemma~\ref{lem:rbv}, the idea being, one way
or the other, to obtain an $\F^k$-inequality on $\Man{k}$. One may have two cases in mind.

\noindent \emph{1. The case when $\BCLp=\BCL$.} 

With this choice, the result---which is indeed an
$\F^k$-inequality on $\Man{k}$---can be applied even to parts of $\Man{k}$ which lie inside $\Omega
\times (0,\Tf)$. But it is clear that the interest of Lemma~\ref{lem:rbv} is limited in this
situation since Assumption~$(i)$-$(b)$ means that there is no real discontinuity in $\BCL$ on
$\Man{k}$. Of course, a similar remark may be made for parts of $\Man{k}$ which lie on the boundary
$\domeg \times (0,\Tf)$ but here the existence of $\Man{k}$ is not necessarily connected to a
discontinuity in the equation (\ie in the $\BCL$): it may also come from a non-smooth boundary,
typically a corner. However, it is also clear in this situation that there is no particular
``boundary condition'' on $\Man{k} \cap \QOm$ since Assumption~$(i)$-$(b)$ implies that
the $(b,c,l)\in\BCL(x,t)$ such that $b\in T_{(x,t)}\Man{k}$ are just limits of the ones in
$\Man{k+1}\cup \cdots \cup \Man{N+1}$, and in particular in $\Man{N+1}$. Therefore this is a
restrictive case which typically applies to a non-discontinuous state-constraints case.

\smallskip

\noindent 2. \emph{The case of Dirichlet boundary conditions.}

Here the specific boundary condition is encoded as
$$ \BCL(x,t)=\overline{\mathrm{co}}\big(\BCLp(x,t) \cup \{(0,1,\varphi(x))\})
\quad \hbox{on  }\Man{k} \cap \QOm\;,$$
where $\varphi$ is a continuous function on $B((x_0,t_0),r) \cap \left(\domeg \times
(0,\Tf)\right)$. In general, the normal controllability assumption implies that $u(x,t) \leq
\varphi(x,t)$ on $B((x_0,t_0),r) \cap \left(\domeg \times (0,\Tf)\right)$ and obviously the same
inequality is satisfied by $\tilde u$ since $\tilde u \leq u$. Finally, the expected $\F^k$-inequality 
for $\tilde u$ on $\Man{k} \cap \QOm$ is obtained by combining the result of
Lemma~\ref{lem:rbv} with the fact that $\tilde u(x,t) \leq \varphi(x,t)$ on $B((x_0,t_0),r) \cap
\left(\domeg \times (0,\Tf)\right)$.

Such result, which is the analogue of a procedure used in Barles and Perthame \cite{BP2,BP3} (see
also \cite{Ba}) to ``clean'' the boundary values of the sub or supersolution, is particularly useful
for non-smooth boundaries. In practical use, one has to proceed via a reverse induction, first
redefining the subsolution on $\Man{N}\cap \left (\overline \Omega\times (0,\Tf)\right) $ and then
on $\Man{N-1}\cap \left (\overline \Omega\times (0,\Tf)\right) $...etc.

\begin{proof} 
    Let $\phi$ be a smooth test-function and $(x,t) \in \Man{k} \cap \QOm$, a strict local
    maximum point of $\tilde u-\phi$ on $\Man{k} \cap \QOm$. We want to prove that, for
    every $(b,c,l)\in\BCL'(x,t)$ with $b\in T_{(x,t)}\Man{k}$, 
    $$ - b\cdot D\phi(x,t) +c \tilde u(x,t) - l \leq 0\; .$$
    From now on, we fix such a $ (b,c,l)$.

    \smallskip

    \noindent\textbf{(a)}
    By definition of $\tilde u$, there exists a sequence $((x_\eta,t_\eta))_\eta$ in
    $(\Man{k+1}\cup \cdots \cup \Man{N+1})\cap \QOm$ such that $(x_\eta,t_\eta) \to (x,t)$
    and $u(x_\eta,t_\eta) \to \tilde u(x,t)$. As a consequence, for any $\e>0$ small enough, there
    exists $\eta$ such that $d((x_\eta,t_\eta),\Man{k})\leq \e^2$ and we set $\alpha :=
    [d((x_\eta,t_\eta),\Man{k})]^2$. Notice that $\alpha \to 0$ when $\e \to 0$.

    Next, we introduce the function
    $$(y,s)\mapsto u(y,s)-\phi(y,s)-\frac{\alpha}{d((y,s),\Man{k})}-\frac{d((y,s),\Man{k})}{\eps}\;.$$
    An easy use of Lemma~\ref{lem:cv-pen}  implies that, for $\e$ small enough, this function
    achieves its maximum at $(\xb,\tb)\in (\Man{k+1}\cup \cdots \cup \Man{N+1})\cap \QOm$, where 
    we drop the dependence of  $(\xb,\tb)$ in $\e$ for the sake of simplicity of notations.
    Moreover, 
    \begin{equation}\label{est.pen}
        u(\xb,\tb)\to \tilde u(x,t)\;,\quad  \frac{\alpha}{d((\xb,\tb),\Man{k})} +
        \frac{d((\xb,\tb),\Man{k})}{\eps}\to 0\;.
    \end{equation}

    \noindent\textbf{(b)}
    For $\e$ small enough, there exists a unique point $(\yb, \ovs) \in \Man{k} \cap
    \QOm$ such that $d((\xb,\tb),\Man{k})=\vert (\yb, \ovs)-(\xb,\tb)\vert$. Using
    Assumptions $(i)$-$(b)$, there exists $(b',c',l')\in\BCLp(\yb,\ovs)$
    with $b'\in T_{(\yb,\ovs)}\Man{k}$ satisfying
    $$  |b'-b| \leq O(|x-\yb| + |t-\ovs|)\;,\, |c-c'|+|l-l'| =o(1)\;\text{as }\e\to 0\;.$$
    For the same reason, if $(\xb,\tb)\in \Man{l}$, there exists $(b'',c'',l'')\in\BCLp(\xb,\tb)$ 
    with $b''\in T_{(\xb,\tb)}\Man{l}$ satisfying
    \begin{equation}\label{est.b''}
        |b''-b'| \leq O(d((\xb,\tb),\Man{k}))\;,\, |c'-c''|+|l'-l''| =o(1)\;\text{as }\e\to 0\;.
    \end{equation}
    With these notations, the $\F^l$-inequality at $(\xb,\tb)$ gives in particular
    $$-b''\cdot \left(D\phi(\xb,\tb) - \frac{\alpha Dd((\xb,\tb),\Man{k})}{[d((\xb,\tb),\Man{k})]^2} - 
    \frac{Dd((\xb,\tb),\Man{k})}{\eps}\right)+c'' u(\xb,\tb)-l''\leq 0\;.$$
    Combining \eqref{est.pen} and \eqref{est.b''} we are led to
    $$ -b'\cdot \left(D\phi(\xb,\tb) - \frac{\alpha Dd((\xb,\tb),\Man{k})}{[d((\xb,\tb),\Man{k})]^2} -
    \frac{Dd((\xb,\tb),\Man{k})}{\eps}\right) +c' u(\xb,\tb)-l' \leq o_\e(1)\;.$$

    \noindent\textbf{(c)}
    We notice that $Dd((\xb,\tb),\Man{k})\cdot b' =0$ since $b'\in T_{(\yb,\ovs)}\Man{k}$ and
    the gradient of the distance is orthogonal to $T_{(\yb,\ovs)}\Man{k}$---the reader can be
    even more convinced by this fact assuming that $\Man{k}$ is flat. This implies that
    $$ -b'\cdot D\phi(\xb,\tb) + c'u(\xb,\tb)-l' \leq o_\e (1)\;,$$
    and we conclude, by letting $\e$ tend to $0$, using that $(b',c',l') \to (b,c,l)$,
    $(\xb,\tb)\to (x,t)$ and $u(\xb,\tb)\to \tilde u(x,t)$
\end{proof}

\subsection{Quasi-regular boundaries}

We conclude Section~\ref{abl} with giving sufficient regularity conditions in the case where each
point of $\domeg \times (0,\Tf)$ belongs to the closure of only one connected component of $\Omega
\times (0,\Tf)$. More precisely, we use the hypothesis

\begin{assumption}{\QRB}{Quasi-regular boundary assumption.} \label{page:QRB}
For any $(x,t) \in \domeg \times (0,\Tf)$, if $(x,t) \in \Man{k}$, then there exists $r_0>0$ such that
$$ \left[ \Man{k+1}\cup \cdots \Man{N+1}\right] \cap B((x,t),r_0) \; \hbox{is connected}\; .$$
    \vspace*{-1cm}
\end{assumption}

With this assumption, the regularity of an \usc function $u:\Omegb \times (0,\Tf) \to \R$ on
$\Man{k}\cap [\domeg \times (0,\Tf)]$ just reduces to \eqref{reg-sub-Hk} and the previous
subsections provide sufficient conditions to get it. The result is the

\begin{corollary}\emph{--- Boudary regularity of subsolutions.}\smsp
    Let $\Omegb\times(0,\Tf)$ be a stratified domain, assume that \QRB and \HBASFstar hold and let
    $u$ be an \usc~\wSSub.
    \begin{enumerate}
        \item[$(i)$] If the hypotheses of Lemma~\ref{RSub-1}  hold on each point
            $(x,t)\in\partial\Omega\times(0,\Tf)$, then $u$ is a regular \wSSub.
        \item[$(ii)$] If the hypotheses of Lemma~\ref{lem:rbv} hold on each point
            $(x,t)\in\partial\Omega\times(0,\Tf)$, then $u$ can be redefined on
            $\partial\Omega\times(0,\Tf)$ so that it becomes a regular \wSSub. 
    \end{enumerate}
\end{corollary}

Notice that of course, there are situations where \QRB does not hold for which the above may apply.
In particular, if $(i)$ is satisfied in each connected component touching the boundary, then
regularity follows.

\section{Refined versions of the comparison result}
\label{sst0}

We start by the easiest case which is the analogue of the $\R^N$-one by assuming \HBASF, in other
words, that the ``good assumptions'' hold up to $t=0$.

\begin{theorem}
    Let $\Omegb\times[0,\Tf)$ be a stratified domain and assume that \HBASF holds. 
    Let $u$ be an \usc~\wSSub and $v$ be a \lsc~\SSup such that $u$ is a
    regular subsolution in $\Omega \times (0,\Tf)$, in $\Omega \times \{0\}$, at the boundary
    $\domeg\times(0,\Tf)$ and on $\domeg\times\{0\}$. Then, 
    $$u(x,0)\leq v(x,0)\quad\text{in}\quad\Omegb$$
    and therefore
    $$ u(x,t)\leq v(x,t)\quad \text{in}\quad\Omegb\times[0,\Tf).$$
    In the case of strong subsolutions, the result holds for subsolutions which are regular at the
    boundaries $\domeg\times(0,\Tf)$ and $\domeg\times\{0\}$.
\end{theorem}

The proof of this result is simple since the proof of \eqref{cidSC} follows from similar arguments
as for the proof of the ``basic'' comparison result and then it suffices to apply this ``basic''
comparison result.

The defect of the above result is that it is not of an easy use, even in a simple case like when
$\F_{init}(x,r,p_x)=r-u_0(x)$ in $\Omega \times \{0\}$, where $u_0\in C(\Omegb)$. Indeed, such
$\F_{init}$ does not satisfy \HBASF-$(iii)$, and more precisely \NCBCL in a neighborhood of $\domeg
\times \{0\}$. And this is not just a technical difficulty since a specific control problem may
exist on the boundary $\domeg\times\{0\}$---or even just the trace of a Dirichlet boundary condition
on $\domeg\times[0,\Tf)$---and be the source of a discontinuity for the value function. In these
cases, we cannot expect a comparison result to hold.

Hence the aim is to investigate cases where, for any sub and supersolution the following holds:
$$ u(x,0)\leq v(x,0)\quad \text{in}\quad\Omegb.$$
To do so, we first provide a pseudo-analogue of Proposition~\ref{visc-ineq-init} for points on
$\domeg\times\{0\}$, for which we use the following assumption:

\begin{assumption}{\HBAIDCP}{Basic Assumption on the Initial Data for the Cauchy Problem.}\label{page:HBAIDCP}
    There exists $u_0 \in C(\Omegb)$ such that
    $$\begin{cases} \F_{init}(x,r,p)=r-u_0(x) & \text{in } \Omegb \times \{0\}\;,\\
      \F^k_{init}(x,r,p)=r-u_0(x) & \text{on }\domeg \times \{0\}\;,\quad k=0..N\;.
    \end{cases}$$
\end{assumption}

The result is the

\begin{proposition}\label{visc-ineq-init-bord}
    Let $\Omegb\times[0,\Tf)$ be a stratified domain.
    \begin{enumerate}
            \item[$(i)$] Under assumption \HBASF, if $v: \Omegb \times [0,\Tf) \to \R$ is a \lsc~\SSup 
            then $v(x,0)$ is a supersolution of 
            $$\F_{init}(x,v(x,0),D_x v(x,0) )\geq 0 \quad\hbox{in }\Omegb\;.$$
            \item[$(ii)$] Under assumption \HBASFstar and \HBAIDCP, if $u: \R^N \times [0,\Tf] \to \R$
                is an \usc~\wSSub, then $$u(x,0)\leq u_0(x)\quad\hbox{in }\Omegb\;.$$
    \end{enumerate}
\end{proposition}

\begin{proof}
    We just sketch the proof since it is an easy adaptation of standard arguments, and in particular
    those of the proof of Proposition~\ref{visc-ineq-init} and, actually, the proof of $(i)$ follows
    from readily the same arguments.

    For $(ii)$, we have to transform $\F_{init}$ or $\F_{init}^k$-viscosity inequalities into usual
    inequalities. If $x\in \Man{k}_0$, we have just to consider the function 
    $$(y,s) \mapsto u(y,s)-\frac{|y-x|^2}{\e}\quad\hbox{on  }\Man{k}_0\; ;$$
    we have a sequence of local maximas $(\ye,\se)$ such that $(\ye,\se)\to (x,0)$ and
    $u(\ye,\se)\to u(x,0)$ as $\e\to 0$ and, thanks to Assumption \HBAIDCP, we have $u(\ye,\se) \leq
    u_0(\ye)$. The conclusion follows by letting $\e\to 0$.
\end{proof}

\begin{remark}\label{rem:pbidbord}
    Let us examine Assumption \HBAIDCP in the case of a standard Dirichlet problem
    $$
    u_t + H(x,t,D_x u)=0 \quad \hbox{in $ \Omega \times (0,\Tf)$}\; ,$$
    $$
    u(x,0) =u_0 (x) \quad  \hbox{in $\Omega$}\; ,$$
    $$
    u(x,t) =\varphi (x,t) \quad  \hbox{on $\domeg$}\; ,$$
    where $\Omega$ is a domain in $\R^N$---we may even assume that $\Omega$ is a smooth domain---,
    $u_0,\varphi$ are continuous functions and $H$ is a continuous Hamiltonian coming from a control
    problem. Clearly the computation of $\F_{init}(x,r,p_x)$ gives $r-u_0(x)$ if $x\in \Omega$ but,
    on the boundary, an interaction occurs between the initial and boundary data, which yields
    $$\F_{init}(x,r,p_x)=\max\big(r-u_0(x), \varphi(x,0)\big)\;.$$ 
    Obviously, Assumption \HBAIDCP is satisfied provided $\varphi(x,0)\geq u_0(x)$.

    Of course, a comparison result implies that the solution is continuous and clearly, in the
    control case, this solution should be the value function. In the above case, where we have both
    an exit cost $\varphi$ and a terminal cost $u_0$, if the exit cost satisfies $\varphi(x_0, 0) <
    u_0(x_0)$, the same inequality remains valid on a neighborhood of $x_0$. In this neighborhood,
    the controller should try to exit the domain in order to pay the cheapest cost $\varphi$. This is
    possible because of \NCBCL but only for points $(x,t)$ for which $x$ is close enough to the
    boundary. Hence, we see a discontinuity at the points separating the region where exiting is
    possible and those for which this is not the case.

    Therefore, \HBAIDCP seems a rather natural assumption and we refer the reader to
    Chapter~\ref{chap:RefBF} for various examples of the checking of \HBAIDCP which will be even
    more convincing.
\end{remark}

We end up with the
\begin{corollary}\label{cor:comp.BACPID}\emph{--- Refined version of the comparison result.}\smsp
    Let $\Omegb\times[0,\Tf)$ be a stratified domain and assume that assumptions \HBASFstar and
    \HBAIDCP hold. If $u$ is an \usc~\wSSub which is regular
    in $\Omega \times (0,\Tf)$ and at the boundary $\domeg\times(0,\Tf)$ and $v$ is a \lsc~\SSup, 
    then
    $$ u(x,t)\leq v(x,t)\quad \hbox{on  }\Omegb\times[0,\Tf).$$
    In the case of strong subsolutions, the result holds for subsolutions which are regular at the
    boundary.  
\end{corollary}

\section{Control problems, stratifications and state-constraints conditions}

\index{Control problem!state-constrained}
In this section we consider finite horizon, deterministic control problems with state-constraints
conditions on the space-time trajectory: $(X(s),T(s))\in\Omegb\times[0,\Tf)$. Here, 
$\Omega$ is a domain in $\R^N$, which is not required to be bounded or regular a priori. 

In order to formulate such problems, let $\Omegb\times[0,\Tf)$ be a stratified domain in
the sense of Definition~\ref{def:stratdom}. We assume that the dynamics, discounts and costs are
defined in $\R^N \times [0,\Tf]$---which is not a loss of generality---and may be discontinuous on
the submanifolds $\Man{k}$ for $k<N+1$, and $\Man{k}_0$ for $k<N$. More precise assumptions will be
given later on.

Following Section~\ref{Gen-DCP}, we first define a general control problem associated to a
differential inclusion. As we mention it above, at this stage, we do not need any particular
assumption concerning the structure of the stratification, nor on the control sets. We also use the
same notations and assumptions as in Section~\ref{Gen-DCP}.

\

\noindent\textsc{The control problem ---} we embed the accumulated cost in the
trajectory by solving a differential inclusion in $\R^N\times\R$, namely \eqref{eq:diff.inc} and we
introduce the value function which is defined only on $\Omegb \times [0,\Tf)$ by
$$U(x,t)=\inf_{\cT(x,t)}\Big\{\int_0^{+\infty} 
     l\big(X(s),T(s)\big)\exp(-D(s))ds\Big\}\;,
$$
where $\cT(x,t)$ stands for all the Lipschitz trajectories $(X,T,D,L)$ of the differential inclusion
which start at $(x,t)\in \Omegb\times [0,\Tf)$ and such that $(X(s),T(s)) \in \Omegb\times [0,\Tf)$
for all $s>0$.

Contrary to Section~\ref{Gen-DCP}, we point out that assumptions are needed in order to have
$\cT(x,t)\neq \emptyset$ for all $(x,t) \in \R^N \times (0,\Tf)$: indeed, while the boundary
$\{t=0\}$ does not pose any problem, there is a priori no reason why trajectories $s\mapsto X(s)$ 
satisfying the constraint to remain in $\Omegb$ for given $(x,t)\in\Omegb\times [0,\Tf)$ should exist.
Therefore, the fact that $\cT(x,t)$ is non-empty will be an assumption in all this part: we will
assume equivalently,

\begin{assumption}{\hyp{\hU}}{the value function $U$ is locally bounded on $\Omegb \times [0,\Tf)$\;.}
    \label{page:hU} 
\end{assumption}

\vspace*{-4mm}

A first standard result gathers Theorem~\ref{DPP} and \ref{SP}\index{Stratified
solutions!state-constrained control problems}\index{Dynamic Programming Principle!for value functions in the state-constraints case}
\begin{theorem}\label{DPP-SP-SC}\emph{--- Dynamic Programming Principle, Supersolution
    Properties.}\smsp 
    Under assumptions \HBCL and \hyp{\hU}, the value function $U$ is \lsc and satisfies
    $$U(x,t)=\inf_{\cT(x,t)}\Big\{\int_0^\theta
    l\big(X(s),T(s)\big)\exp(-D(s)) ds+U \big(X(\theta),T(\theta))\exp(-D(\theta))\big)\Big\}\;,$$
    for any $(x,t)\in\R^N\times [0,\Tf)$, $\theta >0$.
    Moreover, if $\F$ is defined by \eqref{hamil.strat}, then the value function $U$ is 
    a viscosity supersolution of 
    \begin{equation}\label{eq:super.H-SC}
        \F(x,t,U, DU) = 0 \quad \hbox{on  } \Omegb \times[0,\Tf)\; ,
    \end{equation}
    where we recall that $DU=(D_x U, D_t U)$.
\end{theorem}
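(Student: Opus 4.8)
The statement is essentially the concatenation of two results proven earlier (Theorem~\ref{DPP} and Theorem~\ref{SP}) adapted to the state-constrained setting, where the ambient state space $\R^N$ is replaced by $\Omegb\times[0,T]$ and the set of admissible trajectories $\cT(x,t)$ now carries the constraint $(X(s),T(s))\in\Omegb\times[0,T]$. The role of Assumption \hyp{\hU} is precisely to guarantee that this constrained trajectory set is non-empty for all starting points, so that $U$ is well-defined and locally bounded. My plan is to follow the same two-step structure: (i) prove the Dynamic Programming Principle for the constrained value function, and (ii) deduce the supersolution property from the DPP together with the upper semicontinuity of $\F$.

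\textbf{Step 1: the Dynamic Programming Principle.} First I would fix $(x,t)\in\R^N\times(0,T]$ and $\theta>0$. The inequality ``$\leq$'' follows from the definition of $U$ by concatenation of trajectories: given a near-optimal $(X,T,D,L)\in\cT(x,t)$, restrict it to $[0,\theta]$ and then concatenate with a near-optimal trajectory for the problem starting at $(X(\theta),T(\theta))$; the constraint $(X(s),T(s))\in\Omegb\times[0,T]$ is preserved by this concatenation since both pieces satisfy it, so the concatenation lies in $\cT(x,t)$, and the multiplicative structure of the discount factor $\exp(-D(s))$ together with additivity of the integral gives the bound. The inequality ``$\geq$'' follows from the reverse argument: any $(X,T,D,L)\in\cT(x,t)$ restricted to $[0,\theta]$ yields, by definition of $U$ at the intermediate point $(X(\theta),T(\theta))\in\Omegb\times[0,T]$, the lower bound on the integral over $[\theta,+\infty)$; taking the infimum over all such trajectories gives the result. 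The only subtlety compared with the unconstrained case is that the intermediate point lies in $\Omegb\times[0,T]$, so one must invoke \hyp{\hU} (i.e.\ $\cT(X(\theta),T(\theta))\neq\emptyset$) to make sense of $U$ there — but this is exactly what \hyp{\hU} provides.

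\textbf{Step 2: the supersolution property.} Here I would argue exactly as in the proof of Theorem~\ref{SP}. Let $\phi$ be a $C^1$ test-function and let $(\xb,\tb)\in\Omegb\times[0,T]$ be a local minimum point of $U_*-\phi$ (recall that the Ishii supersolution inequality is tested with the lsc envelope). Assume without loss of generality $U_*(\xb,\tb)=\phi(\xb,\tb)$. Take a sequence $(x_n,t_n)\to(\xb,\tb)$ with $U(x_n,t_n)\to U_*(\xb,\tb)$, and for each admissible choice $(b,c,l)\in\BCL(\xb,\tb)$ use the DPP along the constant-control trajectory (or a trajectory staying close to the $(b,c,l)$-direction, constrained to $\Omegb\times[0,T]$ — here one uses that $b^t\le 0$ by \HBCLb so that the time coordinate stays in $[0,T]$, and that the constraint inside $\Omega$ is non-restrictive for short times at an interior point; at a boundary point one uses only trajectories preserving the constraint, which is again what makes the state-constrained supersolution inequality weaker). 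Divide by $\theta$, send $\theta\to0$, then $n\to\infty$, and use the expansion of $\phi$ along the trajectory together with the lower semicontinuity of $U_*$ to obtain $-b\cdot D\phi(\xb,\tb)+cU_*(\xb,\tb)-l\geq0$ up to an error; taking the supremum over $(b,c,l)\in\BCL(\xb,\tb)$ yields $\F(\xb,\tb,U_*(\xb,\tb),D\phi(\xb,\tb))\geq0$.

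\textbf{Main obstacle.} The delicate point — as always for state-constraint problems — is the supersolution inequality \emph{at the boundary} $\domeg$ and at $t=T$: one must check that the DPP argument goes through using only trajectories that respect the constraint, and that no more is claimed. But the Ishii supersolution notion on a closed set (as set up in Section~\ref{subsec.disc.visc.sol}) is precisely designed so that at boundary points one tests with all $C^1$ test-functions on $\Omegb$ and obtains $\G^*\geq0$ with the ``$\max$'' structure, which is automatically satisfied by the DPP argument restricted to admissible trajectories; no normal controllability or regularity of $\domeg$ is needed for this. Thus the proof is genuinely a routine transcription of Theorems~\ref{DPP} and \ref{SP}, the single new ingredient being the invocation of \hyp{\hU} to ensure $\cT(x,t)\neq\emptyset$ at every intermediate point, and I would state it as such, referring to those earlier proofs for the details that are unchanged.
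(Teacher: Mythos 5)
Your top-level plan is correct and matches the paper's intent: the paper states this theorem without proof, simply noting it ``gathers Theorem~\ref{DPP} and \ref{SP}'' with the added observation that \hyp{\hU} is what makes the constrained trajectory set nonempty. Your Step~1 (the DPP by concatenation for ``$\leq$'' and restriction-plus-definition for ``$\geq$'') is fine, as is the remark that the state-constraint supersolution inequality at the boundary comes ``for free'' because the constrained trajectory set is smaller.

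However, Step~2 contains a genuine error in the mechanism. You write that for \emph{each} $(b,c,l)\in\BCL(\xb,\tb)$ one should ``use the DPP along the constant-control trajectory'' and thereby obtain $-b\cdot D\phi(\xb,\tb)+cU_*(\xb,\tb)-l\geq 0$. This is the \emph{subsolution} mechanism applied with the inequality reversed, and it does not work here. Concretely: a specific (constant-control) trajectory is \emph{sub}optimal, so the DPP gives $U(x_n,t_n)\leq \int_0^\theta l\,e^{-D}+U(X(\theta),T(\theta))e^{-D(\theta)}$; at a local \emph{minimum} of $U_*-\phi$ one has $U\geq\phi$ nearby, and substituting $U\geq\phi$ on the right-hand side of a ``$\leq$'' inequality gives no information at all. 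Even if the algebra went through, you would obtain $-b\cdot p+cU_*-l\leq 0$ (the subsolution-type inequality), not $\geq 0$; asserting $\geq 0$ for \emph{every} $(b,c,l)$ would in fact be the far stronger statement that the \emph{infimum} over $\BCL$ is nonnegative, which is certainly false in general.

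The correct supersolution argument — the one the paper uses, e.g.\ in the proof of Proposition~\ref{prop:ishii-Up} — runs the opposite way. One uses the DPP as an \emph{equality} (equivalently the ``$\geq$'' direction), extracts an $\varepsilon$-optimal trajectory $(X_n,T_n,D_n,L_n)$, uses the minimum-point property to replace $U$ by $\phi$ \emph{from below} on the right-hand side (this is now the correct direction), and then plugs in the expansion of $s\mapsto \phi(X_n(s),T_n(s))e^{-D_n(s)}$. The key step is to bound the resulting integrand $\phi_t-b\cdot D_x\phi+c\phi-l$ \emph{from above} by $\phi_t+\F(\cdot,\phi,D_x\phi)$ using that $\F$ is the supremum over $\BCL$ — one never chooses a specific $(b,c,l)$. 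Dividing by $\theta$, sending $\theta\to 0$ and $n\to\infty$, and using upper semicontinuity of $\F$ and lower semicontinuity of $U_*$ gives $\F(\xb,\tb,U_*(\xb,\tb),D\phi(\xb,\tb))\geq 0$. You should replace the second paragraph of Step~2 with this argument (or with an argument by contradiction: if $\F<-\eta<0$ in a neighbourhood, the $\varepsilon$-optimal trajectory forces a contradiction). The rest of your write-up is sound.
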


We point out that, in the same way as Theorem~\ref{DPP} and \ref{SP},  Theorem~\ref{DPP-SP-SC} holds
in a complete general setting, independently of the stratification we may have in mind. The
value function is \lsc as a consequence of the compactness of the trajectories $(X,T,D,L)$.

We conclude this first part by the analogue of Lemma~\ref{lem:super.dpp} showing that supersolutions
always satisfy a super-dynamic programming principle, even in this constrainted setting: again we
remark that this result is independent of the possible discontinuities for the dynamic, discount and
cost.\index{Dynamic Programming Principle!for supersolutions in the state-constraints case}

\begin{lemma}\label{lem:super.dpp-SC}
    Under assumptions \HBCL, \hyp{\hU} and \hyp{\Sub}, if $v$ is a bounded \lsc supersolution of $
    \F(x,t,v, Dv) = 0$ on $\Omegb \times(0,\Tf)$, then for any $(\xb,\tb)\in\Omegb \times(0,\Tf)$
    and any $\sigma >0$,
    \begin{equation}\label{ineq:super.dpp-SC}
        v(\xb,\tb)\geq
        \inf_{\cT(\xb,\tb)}
        \Big\{\int_0^{\sigma}
        l\big(X(s),T(s)\big)\exp(-D(s)) \ds+v\big(X(\sigma),T(\sigma)\big)\exp(-D(\sigma))\Big\}
    \end{equation}
\end{lemma}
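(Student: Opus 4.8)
The statement is the state-constrained analogue of Lemma~\ref{lem:super.dpp}, so the plan is to follow closely the proof of that lemma given earlier in the excerpt, adapting each step to the constrained setting. The global strategy is: approximate the supersolution $v$ from below by an increasing sequence of Lipschitz continuous functions $(v_\delta)_\delta$; build a family of regularized Hamiltonians $\F_\delta \downarrow \F$ which are globally Lipschitz and for which $v$ remains a supersolution; introduce a value function $u_\delta$ for a constrained control problem over the relaxed dynamics $\BCL_\delta$ on the domain $\overline{B(\xb,M\sigma)}\cap\Omegb$ (intersecting the ball with $\Omegb$ is the only real structural change), which by construction is continuous and satisfies $\F_\delta(x,t,u_\delta,Du_\delta)=0$ there; prove $u_\delta \le v$ on this set by a comparison argument using the strict subsolution $\chi$ from Lemma~\ref{subsol-chi}; then pass to the limit $\delta\to 0$ trajectory by trajectory using Ascoli-Arzel\`a and the weak convergence machinery (relaxed controls / measurable selection, the $\psi$-penalization forcing the limit to solve the $\BCL$-inclusion) exactly as in the proof of Lemma~\ref{lem:super.dpp}.

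\textbf{Key steps in order.} First I would use Lemma~\ref{c-posit} to reduce to the case $c\ge 0$. Second, fix $(\xb,\tb)$ and $\sigma$, and work in the bounded set $\overline{B(\xb,M\sigma)}\cap\Omegb$ — here $M$ is the bound from \HBCLa so that trajectories cannot exit $B(\xb,M\sigma)$ before time $\sigma$; the state-constraint in $\Omegb$ is incorporated by restricting admissible trajectories of the inclusion to those staying in $\Omegb$, which is nonempty by \hyp{\hU} (this is exactly where assumption \hyp{\hU} is used, and presumably \hyp{\Sub} plays the role that \HBCLb played in the unconstrained lemma, ensuring an element $((0,0),c,l)$ with $c\ge\uc$ at $t=0$ and a strict subsolution $\chi$). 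Third, define $\F_\delta$ and $\BCL_\delta$ via the Lipschitz penalization function $\psi(b,c,l,x,t)=\inf_{(y,s)}(\dist((b,c,l),\BCL(y,s))+|y-x|+|t-s|)$ exactly as in the proof of Lemma~\ref{lem:super.dpp}, and note $\F_\delta \ge \F$, hence $v$ is a $\F_\delta$-supersolution. Fourth, introduce the approximate value function $u_\delta$ on $\overline{B(\xb,M\sigma)}\cap\Omegb$, stopped at $\sigma\wedge\theta$ where $\theta$ is the exit time from $B(\xb,M\sigma)$ or hitting $t=0$; enlarge $\BCL_\delta$ by the elements $((0,0),1,v_\delta(x,t)+\delta)$ so that $u_\delta \le v_\delta+\delta$. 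Fifth, show $u_\delta \le v$ on $\overline{B(\xb,M\sigma)}\cap\Omegb$ by the standard doubling-of-variables argument with the strict subsolution $\chi$ and the convexity trick $u_{\delta,\mu}=\mu u_\delta+(1-\mu)\chi$. Sixth, use the inequality $u_\delta(\xb,\tb)\le v(\xb,\tb)$ — noting $\sigma\wedge\theta=\sigma$ since the trajectory cannot leave the ball before time $\sigma$ — and pass to the limit $\delta\to0$: extract via Ascoli a limit trajectory $(X,T,D,L)$, use the $\psi\ge0$, $\psi=0$ on $\BCL$ characterization together with weak convergence / Young measures to conclude $(X,T,D,L)$ solves the original $\BCL$-inclusion and stays in $\Omegb$ (closedness of $\Omegb$), and that $l_\delta \ge l$ to get the inequality with $l$ in the limit; then take the infimum over constrained trajectories.

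\textbf{Main obstacle.} The delicate point compared to the unconstrained case is ensuring that the approximate value function $u_\delta$ is well-defined, continuous, and a subsolution on the constrained domain $\overline{B(\xb,M\sigma)}\cap\Omegb$, and — crucially — that the limiting trajectory obtained by Ascoli still satisfies the state-constraint $(X(s),T(s))\in\Omegb$. The first issue is handled by \hyp{\hU} (nonemptiness of constrained trajectories) and is essentially a known feature of state-constrained problems; the second is automatic since $\Omegb$ is closed and uniform convergence preserves membership in a closed set. A more subtle point is that, in the comparison step proving $u_\delta\le v$, the supersolution inequality for $v$ need only hold in the viscosity sense up to the boundary $\partial(\Omegb\cap B(\xb,M\sigma))$ — but since $u_\delta\le v_\delta\le v$ on the relevant part of the boundary by construction (the enlargement and the choice of $K$ large in $\chi$), the maximum of $u_{\delta,\mu}-v$ cannot be attained there, so the doubling argument proceeds as in the unconstrained case with no genuinely new difficulty. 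Thus I expect the proof to be, essentially, a careful transcription of the proof of Lemma~\ref{lem:super.dpp} with $\R^N$ replaced by $\Omegb$ throughout and \HBCLb replaced by \hyp{\hU} and \hyp{\Sub}.
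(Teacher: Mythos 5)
Your plan differs from the paper's, and it has a genuine gap. The paper does \emph{not} rebuild the approximate value function $u_\delta$ inside the constrained set $\Omegb\cap\overline{B(\xb,M\sigma)}$. Instead, it stays in the unconstrained setting and penalizes the exterior of $\Omegb$: it extends the supersolution to $v_\delta$ by setting $v_\delta=\delta^{-1}$ outside $\Omegb$, enlarges $\BCL$ to $\BCL_\delta$ by adding, outside $\Omega$, both a penalized cost $l+\delta^{-1}d(x,\Omegb)$ and a pure penalty element $(0,1,\delta^{-1})$, checks that $v_\delta$ is then an lsc supersolution of $\F_\delta=0$ in the full cylinder $B(\xb,M\sigma)\times(0,\tb)$, and applies the already proven unconstrained Lemma~\ref{lem:super.dpp} as a black box. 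The constraint $(X(s),T(s))\in\Omegb$ only reappears at the very end, when the $\delta$-optimal trajectories are sent to a limit and the penalty $l_\delta\ge -M+\delta^{-1}d(x,\Omegb)$ forces $d(X(s),\Omegb)=0$ a.e.\ after multiplying the dpp inequality by $\delta$.

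The gap in your proposal is the assertion that the \emph{state-constrained} auxiliary value function $u_\delta$ ``by construction is continuous and satisfies $\F_\delta=0$'' on $\Omegb\cap\overline{B(\xb,M\sigma)}$. In the unconstrained proof of Lemma~\ref{lem:super.dpp} this continuity is indeed immediate because the only boundary is $t=0$, which is handled by $\HBCLb$\,-$(iii)$. But once you impose the state constraint $X(s)\in\Omegb$, continuity of a state-constrained value function is exactly the classical difficulty of the state-constraint problem: without some inward controllability (an inward cone or related) condition along $\partial\Omega$, $u_\delta$ need not be continuous, and the viscosity solution property at $\partial\Omega$ becomes a genuine state-constraint boundary condition rather than the free-boundary version you can treat by doubling variables. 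Assumption $\hyp{\hU}$ only guarantees that the \emph{original} value function is locally bounded (equivalently, that constrained trajectories exist); it says nothing about continuity of the auxiliary $u_\delta$. This is precisely the issue the paper's penalization trick is designed to avoid: by working unconstrained until the last step, the paper never needs well-posedness of a state-constrained HJB problem for $u_\delta$.

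The rest of your plan (reduction to $c\ge 0$ via Lemma~\ref{c-posit}, Lipschitz regularization of the Hamiltonians via the $\psi$-penalty, Ascoli plus measurable selection to pass to the limit, closedness of $\Omegb$ to keep the limit trajectory admissible) is consistent with both the unconstrained proof and the paper's proof here, so those parts are fine. But to salvage your direct route you would either need to impose an additional boundary controllability hypothesis to get continuity of $u_\delta$, or replace that step by the paper's exterior-penalization and appeal to Lemma~\ref{lem:super.dpp} directly.
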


\begin{proof} The idea is to use Lemma~\ref{lem:super.dpp} with a penalization type argument.

    To do so, as in the proof of Lemma~\ref{lem:super.dpp}, we are going to prove Inequality
    \eqref{ineq:super.dpp-SC} for fixed $(\xb,\tb)$ and $\sigma$, and to argue in the domain
    $B(\xb,M\sigma)\times [0,\tb]$ where $M$ is given by \hyp{\mathbf{\BCL}}, thus in a bounded
    domain. Next, for $\delta >0$ small, we set 
    $$ v_\delta (x,t):=\begin{cases}
    v(x,t) & \hbox{if $x \in \Omegb$} \\
    \delta^{-1} & \hbox{otherwise}
    \end{cases}$$
    Since we argue in $B(\xb,M\sigma)\times [0,\tb]$, $v_\delta$ is \lsc in $B(\xb,M\sigma)\times [0,\tb]$.

    Next we change $\BCL$ into $\BCL_\delta$ in the following way: if $x\in \Omega$,
    $\BCL_\delta(x,t)=\BCL(x,t)$, while if $x\notin \Omega$, then $(b_\delta, c_\delta, l_\delta)
    \in \BCL_\delta(x,t)$ if \\
    $(a)$ either $(b_\delta, c_\delta,l_\delta)=(b,c,l+\delta^{-1}d(x,\Omegb))$
    where $(b,c,l)\in \BCL(x,t)$ and $d(\cdot,\Omegb)$ denotes the distance to $\Omegb$,\\
    $(b)$ or $(b_\delta, c_\delta, l_\delta)=(0,1,\delta^{-1})$.

    Now, if we set for $(x,t) \in B(\xb,M\sigma)\times [0,\tb]$
    $$\F_\delta(x,t,r,p):=\sup_{(b_\delta ,c_\delta ,l_\delta )\in \BCL_\delta (x,t)}
    \big\{ -b_\delta \cdot p +c_\delta  r -l_\delta  \big\}  \;,$$
    then $v_\delta$ is a \lsc supersolution of $\F_\delta(x,t,v_\delta,Dv_\delta)=0$ in
    $B(\xb,M\sigma)\times (0,\tb)$. Indeed, at the same time $\F_\delta \geq \F$ if $x\in
    \Omegb$ and $\F_\delta(x,t,r,p)\geq r-\delta^{-1}$ if $x\notin \Omega$.

    Therefore Lemma~\ref{lem:super.dpp} implies
    $$v_\delta (\xb,\tb)\geq
        \inf
        \Big\{\int_0^{\sigma}
        l_\delta\big(X_\delta(s),t-s\big)\exp(-D_\delta (s)) \ds+v_\delta 
        \big(X_\delta(\sigma),T_\delta(\sigma)\big)\exp(-D_\delta(\sigma))\Big\}\; ,
    $$
    the infimum being taken on all the solutions $(X_\delta, T_\delta, D_\delta,L_\delta)$ of the
    $\BCL_\delta$ differential inclusion. 

    To conclude the proof, we have to let $\delta$ tend to $0$ in the above inequality where we can
    notice that $v_\delta (\xb,\tb) = v(\xb,\tb)$. To do so, we pick an optimal or $\delta$-optimal
    trajectory $(X_\delta,T_\delta, D_\delta,L_\delta)$.

    By the uniform bounds on  $\dot X_\delta, \dot T_\delta, \dot D_\delta,\dot  L_\delta,$
    Ascoli-Arzela' Theorem implies that up to the extraction of a subsequence, we may assume that
    $X_\delta,T_\delta, D_\delta,L_\delta$ converges uniformly on $[0, \sigma]$ to $(X,T,D,L)$. And
    we may also assume that they derivatives converge in $L^\infty$ weak-* (in particular $\dot
    L^\delta =l^\delta$).

    We use the above property for the $\delta$-optimal trajectory, namely
    $$
        \int_0^{\sigma}
        l_\delta\big(X_\delta(s),t-s\big)\exp(-D_\delta (s)) \ds+v_\delta
        \big(X_\delta(\sigma),T_\delta(\sigma)\big)\exp(-D_\delta(\sigma)) -\delta \leq v(\xb,\tb)\; ,
    $$
    in two ways: first by multiplying by $\delta$, using that $l_\delta\geq -M +
    \delta^{-1}d(x,\Omegb)$ and the definition of $v_\delta$ outside $\Omegb$, we get 
    $$\int_0^{\sigma} d(X_\delta(s),\Omegb)) \exp(-Ms)ds + 
    \1_{X_\delta(\sigma)\notin \Omegb}\exp(-M\sigma)= O(\delta)\; .$$
    Then, the uniform convergence of $X_\delta$ and the fact that both terms in the left-hand side
    necessarily tend to $0$, meaning that $X(s) \in \Omegb$ for any $s \in [0,\sigma]$. And the
    proof is complete. 
\end{proof}

Now we turn to the subsolution properties. We have the following analogue of Theorem~\ref{SubP}
but only in the case of a  stratification which is a \LFS-one on the boundary.
\begin{theorem}\label{SubP-SC} 
    Assume that $\Omegb\times[0,\Tf)$ is a stratified domain which satisfies the \LFS-requirement for
    any point of the boundary and \QRB, and also that \HBASF holds. Then the result of
    Theorem~\ref{SubP} remains valid
    \begin{enumerate}
        \item[$1.$] for $U^*(x,t)$, associated with $\M$ and $(\F^k)_k$ in $\Omegb \times (0,\Tf)$;
        \item[$2.$] for $U^*(x,0)$, associated with $\M_0$ and $(\F^k_0)_k$ in $\Omegb\times \{0\}$. 
    \end{enumerate}
    Moreover $U^*$ is a regular subsolution in the domain and on the boundary both in
    $\Omegb \times (0,\Tf)$ and $\Omegb\times \{0\}$.
\end{theorem}

We do not know if the strange assumption on the stratification on the boundary is necessary or not. But
clearly cusps on the boundary may create some difficulty for the control problem.

\begin{proof} 
    We provide both proofs in the $\Omegb \times (0,\Tf)$-case, the $\{t=0\}$-one being analogous.
    Also, we restrict ourselves to the $\Man{k}$ which are included in the boundary since otherwise
    the proof of Theorem~\ref{SubP} fully applies.

    \smallskip

    \noindent\textbf{(a)} The proof of Theorem~\ref{SubP}-$(i)$, \ie that $U^* = (U|_{\Man{k}})^*$
    on $\Man{k}$ needs to be slighly modified since the trajectories we use have to stay in $\Omegb
    \times (0,\Tf)$. Of course, we also do it in the case of an \AFS\footnote{Below $\Omegb \times (0,\Tf)$
    should be replaced by its image by the diffeomorphism which flattens the stratification but we keep the notation 
    $\Omegb \times (0,\Tf)$ since this does not change anything and this clarifies the important points by avoiding new 
    notations.}.

    In a first step, we can repeat readily the proof of Theorem~\ref{SubP}-$(i)$: if, for $\e$ small
    enough, the trajectory $(\xe,\te)+s b$ exits $\Omegb \times (0,\Tf)$ before time $s_\e$
    (otherwise we are done), this proves at least that $U^*(x,t)=(U|_{\domeg \times
    (0,\Tf)})^*(x,t)$. We choose the minimal integer $l$ such that $(\xe,\te) \in \Man{l}$.

    If $l=k$, we are done hence we may assume \wlg that $l>k$ and repeat exactly the
    same proof: since $b \in V_l$ by the property of the flat stratification, the trajectory
    $(\xe,\te)+s b$ stays in $(\xe,\te)+V_l$ and two cases may occur:\\
    -- either it stays in $\Man{l}$ till time $s_\e$, at least for a subsequence and we are done;\\
    -- or, for $\e$ small enough it leaves $\Man{l}$ at some point which is necessarily in
    $\Man{l'}$ for some $l'<l$. But this would contradict the minimality of $l$ and the proof is
    complete.

    Given this first result, the proof of $(ii)$ follows exactly from the arguments of the proof of
    Theorem~\ref{SubP}.

    \smallskip

    \noindent\textbf{(b)} It remains to show the regularity properties of $U^*$. 
    Because of \QRB, we have just to show \eqref{reg-sub-Hk} which is a consequence of the
    lower-semicontinuity of $U$. Indeed, assume by contradiction that, for some
    $(x,t)\in \Man{k}$, we have $$U^*(x,t) > \limsup\, \{U^*(y,s): (y,s)\in \Man{k+1}\cup \cdots
    \Man{N+1}\} \; .$$ By $(i)$, we have $U^*(x,t)= \lim U(\xe,\te)$ for some sequence
    $((\xe,\te))_\e$ of points of $\Man{k}$ and by the lower-semicontinuity of $U$, there exists
    $(\ye,\se) \in \Man{k+1}\cup \cdots \Man{N+1}$ such that $U(\ye,\se) \geq U(\xe,\te)-\e$. Hence 
    $$ \limsup U^* (\ye,\se) \geq \limsup U(\ye,\se) \geq U^*(x,t)\; ,$$
    proving the claim by contradiction.
\end{proof}

Now we can give the final result.

\begin{theorem}\label{VF-SC-GR}\emph{--- The value function as the unique stratified solution.}\smsp
    Let $\Omegb\times[0,\Tf)$ be a stratified domain which satisfies the \LFS-requirement for any
    point of the boundary and \QRB, and also that \hyp{\hU} holds. Then the value function $U$ is
    continuous and the unique stratified solution of the state-constrained problem in the two
    following cases
    \begin{enumerate}
        \item[$(i)$] \HBASF holds.
        \item[$(ii)$] \HBASFstar and \HBAIDCP hold.    
    \end{enumerate}
\end{theorem}

%
%

\chapter{Classical Boundary Conditions and Stratified Formulation}
\label{chap:RefBF}

\index{Boundary conditions!classical}

\abstract{This chapter answers following question: in which cases are
classical Ishii viscosity (sub)solutions of problems with classical boundary conditions (Dirichlet,
Neumann, mixed, etc.) also stratified solutions of the associated state-constrained problem? The
specific case of the tanker problem is also considered: the stratified formulation is needed in
order to get a well-posed problem.}

In this chapter we investigate the connections between stratified problems with state-constraints
conditions and classical---or almost classical---problems with boundary conditions: Dirichlet,
Neumann, mixed boundary conditions. Of course, the interest of the stratified formulation is to
allow to treat cases where either the boundary is not smooth or the boundary conditions may present
discontinuities, and also both at the same time.

Clearly our aim cannot be to give the most general results: this would be 
unreadable and of a poor interest. But what is done in Section~\ref{sec:solutionTP}
for the Tanker Problem shows that the stratified formulation allows to treat very general
problems, even with exotic ``boundary conditions''. Actually, the reader can notice that, 
in this framework, there is no main difference between the equation and the boundary
conditions. As a consequence, most of the Dirichlet, Neumann, oblique derivatives and mixed
problems we are going to consider have a unique stratified solution provided that we formulate
them in the right way and that the ``natural assumptions''---meaning here essentially
\HBASFstar complemented with \HBAIDCP---are satisfied. We refer to Chapter~\ref{chap:openpb-partV}
below for some discussions on other types of problems, including more interactions between the
equation and the initial conditions as well as stationary problems.

Here we address the following two complementary
questions, mainly in very simple frameworks, whose answers may emphasize the role and the interest
of the stratified formulation:
\begin{enumerate}
\item[$(i)$] in which cases classical Ishii viscosity 
solutions and stratified solutions are the same? Of course, in such cases, the 
theory which is developed in the previous chapter provides complete 
comparison results;
\item[$(ii)$] on the contrary, in which cases is the stratified formulation needed 
because the Ishii formulation is not precise enough to identify the ``good''
solution?
\end{enumerate}

In order to do so and focus on the main difficulties, throughout this Chapter we make several
simplifications and assumptions that we sum up as follows:

\begin{assumption}{\HSBC}{Simplified Framework for Classical Boundary Conditions.}
    \label{page:simplifications}
    We assume that simplifications 1 to 4 below hold. Essentially, this means that $(i)$ we have a
    standard Cauchy problem with a continuous initial data; $(ii)$ the equation has no
    discontinuities inside the domain; $(iii)$ the domain is bounded, associated to a
    time-independent stratification of the boundary and \QRB holds; $(iv)$ we assume that the``good
    framework'' for the stratified approach is satisfied, \ie \HBACP and \HBASFstar hold.
\end{assumption}

Notice that the ``good framework'' assumption is not a simplification, it is mandatory to treat the
problem through the stratified approach. Other than that, the other hypotheses are really simplifications, not
\emph{limitations}: the methods and tools in this book allow to cover far more general situations
and again, we refer to Chapter~\ref{chap:openpb-partV} for some possible generalizations. Let us now
be more precise on \HSBC.

\

\noindent\textbf{Simplification 1 ---} 
We assume that the Hamiltonian has the following structure $\F(x,t,u,(D_xu,D_tu))=u_t +
H(x,t,Du)$ if $x\in \Omega, t \in (0,\Tf)$, \ie we have a Cauchy problem associated to an initial data~$u_0 \in C(\Omegb)$. 
The problem is then written as
\begin{equation}
\begin{cases}\label{standardHJB}
u_t + H(x,t,D_x u)=0 & \hbox{in $ \Omega \times (0,\Tf)$}\;, \\
u(x,0) =u_0 (x) & \hbox{in $\Omega$}\;,
\end{cases}
\end{equation}
where $H$ has the form
\begin{equation}\label{eq:Hmsh}
H(x,t,p_x):= \sup_{\alpha \in A}\,\left\{-b(x,t, \alpha)\cdot p_x -l(x,t,\alpha)\right\}\;,
\end{equation}
for any $x\in \Omegb$, $t\in [0,\Tf)$, $p_x\in \R^N$, where $A$ is a compact metric space. Of
course, for the stratified approach, if $x\in \domeg$ or if $t=0$, $\F$ has to incorporate the terms
corresponding to the boundary and initial conditions.

\

\noindent\textbf{Simplification 2 ---} In problem~\eqref{standardHJB} we restrict ourselves to the
case where the equation inside the domain is continuous. This means that the difficulty only comes 
from the boundary geometry and boundary data. 

So, in this chapter, $b,l$ are continuous functions on $\Omegb\times [0,\Tf) \times A$, taking
values respectively in $\R^N$ and $\R$.
In order to reframe the situation in a stratified setting, let us mention that the notation
$b(x,t,\alpha)$ always refers to the (spatial) dynamic defining $H$; of course, the time dynamic
is $-1$, yielding the $u_t$-term in the equation.
Introducing the set $\BCL$ below, we use the bold notation $(\gb,\gc,\gl)\in\BCL(x,t)$ with of
course here, $\gc=0$. This means that for some control $\alpha$, 
$$\begin{aligned}
   \gb=(\gb^x,\gb^t) &=  (b(x,t,\alpha),-1)\;,\\
   \gl &=  l(x,t,\alpha)\;.
\end{aligned}$$
We also recall that in various computations, we use the notation $p=(p_x,p_t)\in\R^N\times\R$
for the complete gradient variable.

\

\noindent\textbf{Simplification 3 ---} We assume that the geometry of the boundary stratification and
boundary data singularities is time-invariant. This implies that 
$$\Man{N+1}=\Omega \times\R\;,\quad \domeg \times \R=(\tMan{N-1}\cup\cdots \cup \tMan{0})\times\R\;,$$
where $(\tMan{k})_{k=0..(N-1)}$ is a (stationary) stratification of
$\partial\Omega$. Notice in particular that here, the geometry which is induced at time
$t=0$ is not different from the one for positive times.

\

\noindent\textbf{Simplification 4 ---} We assume that $\Omega$ is a bounded domain and that \QRB
holds\footnote{Obviously, \QRB is a natural assumption for Neumann type problems where ``pointing
inward'' or ``pointing outward'' to the domain should have a clear sense.}. In particular, this
allows to forget about localization hypothesis \LOCa / \LOCaEV, but it simplifies also some
arguments especially in the Neumann case. Notice that the equivalence between stratified and Ishii
solutions is purely a local result so that the boundedness of $\Omega$ is not really restrictive of
course.

\

\noindent\textbf{The good framework holds ---} 
In order to make everything work, we need to assume that we are in the ``good framework for
HJ Equations with discontinuities'' by requiring at least \HBASFstar. Of course, this assumption imposes
conditions on both the equation---through $H$ or $b(x,t,\alpha), l(x,t,\alpha)$---and the boundary
condition we are interested in.  Concerning Assumption~\TCBCL inside the domain, it derives
immediately from \HBACP--\HBAHJ and we leave to the reader the checking that, on each different
case, under the hypotheses we make, it will be satisfied up to the boundary.

As expected, \NCBCL just the Hamiltonian $H$ or equivalently the dynamics
$b(x,t,\alpha)$.  Since the stratification does not depend on $t$, then for any $(x,t)\in \Man{k}$,
$T_{(x,t)}\Man{k}=T_{x} \tMan{k}\times \R$ and therefore $(T_{(x,t)}\Man{k})^\bot =(T_{x}
\tMan{k})^\bot \times\{0\}$. Taking into account the regularity of $b$, this allows to express
\NCBCL in a rather simple way, namely: for any $(x,t)\in \Man{k}$, there exists $\delta>0$ such that
\begin{equation}\label{NC:H2} 
    \big\{b(x,t,\alpha); \alpha \in A\big\} \cap (T_{x} \tMan{k})^\bot \supset B(0,\delta) 
    \cap (T_{x} \tMan{k})^\bot\;.
\end{equation}

As a consequence of \NCBCL--\TCBCL and Lemma~\ref{tgfields}, the Hamiltonians $\F^k$ we will define
on each $\Man{k}\subset \domeg \times (0,\Tf)$ satisfy the right assumptions, even if this is not
completely obvious on the formulas which define them. Hence, we will be able to apply
partially the by-now standard tangential regularization procedure to the subsolutions. However, we
point out that the ``$\min$'' in the Ishii subsolution formulation is a non-trivial difficulty when
trying to perform the regularization \emph{up to the boundary}, since the boundary condition does not
satisfy the needed coercivity requirement.

Again we refer the reader to Chapter~\ref{chap:openpb-partV} for extensions to problems where
discontinuities also occur inside $\Omega\times(0,\Tf)$. Clearly some of these extensions are easy
using some ideas of this chapter: typically, if the discontinuities of $H$ inside $\Omega \times
[0,\Tf)$ stay away from the boundary; but some other ones are more delicate, if the discontinuities
of $H$ inside $\Omega \times [0,\Tf)$ interfere with the boundary.

Apart from \HSBC, we will use some other assumptions: \IDP for the Dirichlet problem and
and \Hgamma---to be introduced later---for the oblique derivative problem.

\section{On the Dirichlet problem}\label{RefBF:Dir}
\index{Boundary conditions!Dirichlet}

We are interested in this section in the Dirichlet problem for Hamilton-Jacobi-Bellman Equations,
namely \eqref{standardHJB} associated with the boundary condition
\begin{equation}\label{DP}
u(x,t)=\varphi(x,t) \quad\hbox{on $\domeg\times (0,\Tf)$\;,}
\end{equation}
where we first assume that $\varphi$ is a continuous function which satisfy the compatibility
condition
\begin{equation}\label{eqn:compcondid}
u_0(x)=\varphi(x,0)\quad\hbox{on  }\domeg\; .
\end{equation}

In this classical case,  there are two kinds of results which are described in
the book \cite{Ba} and are originated from the works of Perthame and the first
author \cite{BP1,BP2,BP3}.

\begin{enumerate}
    \item[$(a)$] The \emph{discontinuous approach} where one tries to determine the
        minimal and maximal solution of \eqref{standardHJB}-\eqref{DP} in full generality. By this
        we mean here: without any particular additional assumption on the dynamic and cost, and
        without assuming the boundary of $\Omega$ to be smooth. The result is that there exist a
        minimal solution $U^-$ and a maximal solution $U^+$ which are value functions of exit time
        problems,  $U^-$ being associated to the best stopping time on the boundary, while $U^+$ is
        associated to the worst stopping time on the boundary. 
    
    \item[$(b)$] The \emph{continuous approach} in which one looks for conditions
        under which the value function is continuous and the unique solution of
        \eqref{standardHJB}-\eqref{DP}. In \cite{BP3}, the result is obtained under classical
        assumptions on the dynamics and cost, plus an hypothesis of normal controllability on the
        boundary which looks very much like \NCe. This second type of results require some
        regularity of the boundary, $C^{1,1}$ in general.  
\end{enumerate}

As we said, in this section our aim is to reformulate the Dirichlet problem in the
\emph{stratified framework}, in order to investigate the cases when it is
equivalent to the classical viscosity solutions formulation and then to examine
the type of extensions that we can get in that way. 

We recall that, in order to avoid confusions, we use bold faces for the $\BCL$ elements while $b,l$
are the ones defining $H$, and $p=(p_x,p_t)$ is the gradient. There are also fundamental assumptions
and several simplifications that we assume, listed on page~\pageref{page:simplifications}, referred to
as \HSBC.

\subsection{Stratified formulation of the classical case}
\index{Dirichlet problems!stratified formulation}

Reformulating the problem is quite clear and classical: if $(x,t)\in \Omegb\times [0,\Tf]$, we set
$$\BCLeq (x,t):=\Big\{\, \big((b(x,t,\alpha),-1),0,l(x,t,\alpha)\big);\ \alpha \in A\,\Big\}\; ,$$
``eq'' for ``equation'' and, if $(x,t)\in \domeg\times [0,\Tf]$, we introduce $$\BCLbc
(x,t):=\Big\{\,\big((0,0), 1,\varphi(x,t)\big)\,\Big\}\; ,$$ ``bc'' for ``boundary condition''.
Indeed, at the level of the general Hamiltonian $\F$, this produces the expected term on the
boundary, namely $$ - \gb \cdot p + \gc u-\gl= u-\varphi(x,t)\; ,$$ and, for the control point of
view, this provides a  $0$-dynamic allowing to stop at the point $(x,t)$ and pay a cost which is
$\varphi(x,t)$, the discount factor being $1$.

Of course the complete $\BCL$ is given by $$\BCL(x,t)= \begin{cases}\qquad \BCLeq (x,t) &\text{if }
(x,t)\in \Omega\times (0,\Tf]\;,\\[2mm] \overline{\mathrm{co}}\Big(\BCLeq (x,t)\cup \BCLbc
(x,t)\Big) &\text{if }(x,t)\in \domeg\times (0,\Tf]\;.  \end{cases}$$ At $t=0$, and this is by now
classical in this book, we need to add to $\BCL$ the term $(\gb,\gc,\gl)=((0,0),1,u_0(x))$ in order
to take into account the initial data.  Then, $\BCL(x,0)$ is given by

$$\BCL(x,0)= \begin{cases} \overline{\mathrm{co}}\Big(\BCLeq (x,0)\cup
\big\{((0,0),1,u_0(x))\big\}\Big)\;, &\text{if } x\in \Omega\;,\\[2mm]
\overline{\mathrm{co}}\Big(\BCLeq (x,0)\cup \BCLbc(x,0)\cup \big\{((0,0),1,u_0(x))\big\}\Big)\;,
&\text{if } x\in \partial\Omega\;.  \end{cases}$$ With this point of view, we end up with just a
state-constrained problem since the trajectory $(X,T)$ exists for all times and stays in
$\Omegb\times [0,\Tf]$, the Dirichlet condition allowing the choice $\gb=0$ on the boundary. This is
also the case for the initial data at $t=0$:  $\gb^t=-1$ for any $(\gb,\gc,\gl)$ in $\BCLeq (x,0)$
for which $\gb\neq 0$ but the initial data term, namely $((0,0),1,u_0(x))$ allows to stay in
$\Omegb\times [0,\Tf]$ and actually $$ \F_{init}(x,r,p_x)= r-u_0(x)\quad\hbox{for  }x\in \Omegb,\
r\in \R,\  p_x \in \R^N\; ,$$ because of the compatibility condition \eqref{eqn:compcondid}.

Since $H$ is continuous on $\Omegb\times [0,\Tf]$, the stratified approach consists in considering,
for $t>0$, the stratification $\Man{N+1}=\Omega \times (0,\Tf)$ and $\Man{N}=\domeg\times (0,\Tf)$.
In order to apply the above results, we have to impose at least two conditions.  \begin{enumerate}
    \item[$(i)$] Some regularity of $\domeg$. Here, $C^{1,1}$---exactly as in \cite{BP3}---is
        natural in general since we have to flatten $\Man{N}$ while keeping the needed properties on
        $H$, in particular \TC. But this can be reduced to $C^1$ if $H$ is coercive, to the cost of
        sophisticating a little bit our arguments, treating differently the variables $t$~and~$x$.
    \item[$(ii)$] Some normal controllability assumptions which turn out to be also the same as in
        \cite{BP3}, namely for any $(x,t) \in \domeg \times [0,\Tf]$, the existence of two controls
        $\alpha_i=\alpha_i(x,t)$ for $i=1,2$ such that \begin{equation}\label{eq:nca:dir}
        b(x,t,\alpha_1)\cdot n(x)<0\quad,\quad  b(x,t,\alpha_2)\cdot n(x)>0\; , \end{equation} where
$n(x)$ is the unit outward normal vector to $\domeg$ at $x$.  \end{enumerate} We come back later on
the advantages of this new approach but let us examine first the boundary condition from the
stratified point of view.

\medskip

\noindent\textsc{Computing the boundary condition ---} On the boundary $\domeg \times (0,\Tf]$,
$\BCL$ is obtained by considering the convex enveloppe of elements of the form
$$(\gb,\gc,\gl)=\big(\,(b(x,t,\alpha),-1),0,l(x,t,\alpha)\,\big)\in\BCLeq(x,t)\;,$$ associated to
Hamiltonian $H$, and of $\big(\,(0,0), 1,\varphi(x,t)\,\big)$ associated to the Dirichlet boundary
condition. Therefore, we have to consider all the combinations $$\Big(\,\mu
(b(x,t,\alpha),-1)\;,\,(1-\mu)\;,\,\mu l(x,t,\alpha)+(1-\mu)\varphi(x,t)\,\Big )$$ where $0\leq
\mu\leq 1$ satisfies $\mu \gb=\mu (b(x,t,\alpha),-1) \in T_{(x,t)}\Man{N}\;,$ in other words
$b(x,t,\alpha) \in T_x \domeg$.

In order to compute $\F^N(x,t,r,p)$, we look at the supremum in $\mu$ and $(\gb,0,\gl)\in\BCL(x,t)$
with $\gb^x\in T_x \domeg$, of $$- \mu \gb\cdot p +(1-\mu)u - (\mu \gl+(1-\mu) \varphi(x,t))= \mu
(-\gb\cdot p -  \gl)+(1- \mu)(u - \varphi(x,t))\;.  $$ Clearly, this supremum is achieved either for
$\mu=0$, or $\mu=1$ since the dependence with respect to $\mu$ is linear. Hence, the subsolution
inequality takes the form

\begin{equation}\label{StratDir-MN} \begin{cases} \max\big(u_t + H^{N}(x,t,D_xu), u -
\varphi(x,t)\big)\leq 0 \quad\hbox{on }\Man{N}\;,\\[2mm] \text{where} \\[2mm]
H^{N}(x,t,p_x)=\sup\limits_{b(x,t,\alpha) \in T_x \domeg}\{-b(x,t,\alpha) \cdot p_x -l(x,t,\alpha)
\}\;.  \end{cases} \end{equation}

To the best of our knowledge, this quite unusual inequality never appears in the study of Dirichlet
boundary conditions for HJ-Equations, the closest being the one introduced for state-constrained
problems by Ishii and Koike \cite{IK} but where their Hamiltonian $H_{in}$ takes also into account
inner dynamics.  But, on the other hand, it is rather natural from the control point of view: the
inequality $u_t + H^{N}(x,t,D_xu)\leq 0$ means that tangential dynamics are sub-optimal and, in the
same way, the inequality $u - \varphi(x,t) \leq 0$ reflects the sub-optimality of the strategy
consisting in stopping at $(x,t)$, paying the cost $\varphi(x,t)$.  We point out anyway that the
normal controllability plays a role here: such stopping strategy is available to the controller as
soon as the state $(X,T)$ comes close to $\domeg \times (0,\Tf)$ since he can choose to quickly exit
the domain via a dynamic pointing outward $\Omega \times (0,\Tf)$. Then, when $(X,T)$ is on $\domeg
\times (0,\Tf)$, he can either stop and pay the $\varphi$-cost or continue on the boundary using
tangential dynamics $b(x,t,\alpha) \in T_x \domeg$, waiting a better stopping time on $\domeg \times
(0,\Tf)$.

It is also worth remarking that the non-tangential dynamics are taken into account in the Ishii
viscosity subsolution inequality $$ \min\big(u_t + H(x,t,D_xu), u - \varphi(x,t)\big)\leq 0 \quad
\hbox{on  }\domeg\times (0,\Tf)\; .$$

Now we turn to the first key question: do \emph{classical viscosity subsolutions} always satisfy
such $H^N$-inequality in the stratified framework? And, in the case of a less regular boundary---but
still in a stratified framework---, does an analogous one hold on $\Man{k}$ for $1\leq k \leq N$?

\medskip

\noindent\textsc{About the initial condition ---} As we have seen it in the study of stratified
solutions for the state-constrained problem, the way the initial data is taken into account is
important and the points of $\domeg\times \{0\}$ create a difficulty.  Here this difficulty comes
from the interference between the initial data $u_0$ and the Dirichlet boundary condition $\varphi$.

To discuss this difficulty, we first provide the 
\begin{proposition}\label{eqn:ibd-Ishii}\emph{--- The classical Dirichlet problem.}\smsp 
    Assume
    that $H\in C(\Omegb\times [0,\Tf)\times \R^N)$, $u_0\in C(\Omegb)$ and $\varphi$ is a locally
    bounded function on $\domeg\times [0,\Tf)$. If $u$ and $v$ are respectively an \usc classical
    viscosity subsolution and a \lsc classical viscosity supersolution of the Dirichlet problem
    \eqref{standardHJB}-\eqref{DP} then we have $$ u(x,0) \leq u_0(x) \leq v(x,0) \quad \hbox{in
    }\Omega\; ,$$ and $$ \begin{cases} u(x,0) \leq \max\big(u_0(x),\varphi^*(x,0)\big)\quad \hbox{on
        }\domeg\;,\\[2mm] v(x,0) \geq \min\big(u_0(x),\varphi_*(x,0)\big) \quad \hbox{on }\domeg\;.
    \end{cases}$$ 
\end{proposition}

We first want to emphasize the fact that this result holds without any assumption on the smoothness
    of the boundary. Therefore we will always be able to use it, for any type of domain.

We skip the proof of this proposition which is easy, following similar argument as those of
    Section~\ref{IC-HJB}: indeed it suffices to look at either maximum points of $$ (y,s) \mapsto
    u(y,s) -\frac{|y-x|^2}{\e^2}-C_\e s\; ,$$ or minimum points of $$ (y,s) \mapsto v(y,s)
    +\frac{|y-x|^2}{\e^2}+C_\e s\; ,$$ where $0<\e \ll 1$ is a parameter devoted to tend to $0$ and
    $C_\e\gg \e^{-1}$ is a large enough constant.

This result shows that there two main cases \begin{enumerate} \item[$(i)$] The case when $\varphi$ is
            continuous, at least at points of $\domeg\times \{0\}$ and \eqref{eqn:compcondid} holds,
            which implies $$ u(x,0) \leq u_0(x) \leq v(x,0) \quad \hbox{on }\Omegb\;.$$ We are then
            in the situation where \HBAIDCP holds and the stratified approach just requires
        \HBASFstar, which is here given by \HBACP, providing the tangential continuity, and
    \eqref{NC:H2} for the normal controllability.  \item[$(ii)$] If we are not in the first case, a
        further discussion is needed, even if $\varphi$ is still a continuous function. Indeed, as
        we already mention it in Remark~\ref{rem:pbidbord}, if $\varphi$ is continuous but satisfies
        $\varphi(x_0,0)<u_0(x_0)$ at some point $x_0 \in \domeg$, we may easily build a control
        problem---even with full controllability properties---for which the value function is
        discontinuous, and therefore no comparison result can hold. 

        In fact, since the exit cost $\varphi$ is strictly below $u_0$ in a neighborhood of $x_0$,
        the controller aims at paying the cost $\varphi$ but this is possible only by starting from
        a point $(y,s)$ for which $y$ is sufficiently close to the boundary, measured in terms of
        $s$. Hence, the discontinuity of the value function separates the regions where we can
        actually exit from $\domeg$ from the ones where it is impossible.

        This short and maybe vague analysis shows that a natural assumption should be
        \begin{equation}\label{eqn:natassid} u_0(x) \leq \varphi_*(x,0)\quad \hbox{for all  }x \in
        \domeg\;, \end{equation} leading to $v(x,0) \geq u_0(x)$ for all  $x \in \domeg$ by
        Proposition~\ref{eqn:ibd-Ishii}. Unfortunately, we still only get $u(x,0) \leq
\max(u_0(x),\varphi^*(x,0))$ for the subsolution.  \end{enumerate}

In the next sections, we are going to show how to treat these two cases.

\subsection{Continuous data with a stratified boundary}
\index{Dirichlet problems!stratified formulation}

In this section, we address all the above questions in a full generality under assumptions \HSBC,
    given at the beginning of the chapter: in particular $H$ is continuous but $\domeg\times
    (0,\Tf)$ is not smooth anymore.

We point out that the stratification of the boundary is not a priori related to discontinuities in
    the equation/boundary condition, but is just an assumption on the kind of non-smooth boundary we
    can handle. For such domains, we first consider here the case when $\varphi$ is continuous on
    $\domeg \times [0,\Tf)$.

The next subsection will focus on the cases when $\varphi$ may be discontinuous but in a way which
    is compatible with the stratification of the boundary, \ie $\varphi$ is continuous on each
    $\Man{k}$ for $1 \leq k\leq N$.

We first examine the situation on $\Man{N}$ since this is a common denominator of all the cases we
are going to consider.  \begin{proposition}\label{Dir-MN} Assume that \HSBC holds and that $\varphi$
is continuous on $\Man{N}$. Then, \begin{enumerate} \item[$(i)$] if $u$ is any \usc viscosity
            subsolution of the Dirichlet problem, $u\leq \varphi$ on $\Man{N}$; \item[$(ii)$] if
                $\tilde u$ is given by $\tilde u=u$ in $\Omega \times (0,\Tf)$ and for
                $(x,t)\in\Man{N}$, $$ \tilde u (x,t)= \limsup_{\displaystyle \mathop{\scriptstyle
                (y,s) \to (x,t)}_{\scriptstyle  y\in \Omega}} u(y,s)\; ,$$ then $\tilde u$ is still
                a classical viscosity subsolution of the Dirichlet problem and
                \begin{equation}\label{StratDir-MN-bis} \max\big(\tilde u_t + H^{N}(x,t,D_x\tilde
                    u), \tilde u - \varphi(x,t)\big)\leq 0 \quad\hbox{on }\Man{N}\;.  \end{equation}
Moreover $\tilde u$ is regular on $\Man{N}$.  \end{enumerate} \end{proposition}

We first want to point out that, in Proposition~\ref{Dir-MN}, the essential consequence of the
    ``good framework for stratified problem'' is that \eqref{eq:nca:dir} holds on $\Man{N}$.

The introduction of the function $\tilde u$ in order to redefine $u$ on the boundary is classical:
    in fact, it is needed because the viscosity subsolution inequality is not strong enough to avoid
    artificial values of $u$ on the boundary. Indeed since the viscosity subsolution property is
    ensured by the fact that $u\leq \varphi $ on $\Man{N}$, $u$ could be changed on the boundary
    into any \usc function which lies above $u$ and below $\varphi$ on $\Man{N}$, with no link
    whatsoever with the values inside $\Omega \times (0,\Tf)$. The introduction of $\tilde u$
    consists in imposing the ``natural'' values of the subsolution on $\Man{N}$ since they are
    consistent with those in $\Omega \times (0,\Tf)$.  Once this ``cleaning'' of the boundary values
    is done, then we have the desired result, namely that viscosity subsolutions are stratified
    subsolutions on $\Man{N}$.

A different point of view is the regularity of subsolutions on the boundary: we have insisted, since
    the beginning of Part~\ref{S-BC}, that this is a key difficulty in state-constrained problems.
    Here we face it and Proposition~\ref{Dir-MN} solves it in the case when $\Man{N}=\domeg \times
    (0,\Tf)$ by replacing the non-regular subsolution by a regular one, changing only its values on
    the boundary.

In fact, Proposition~\ref{Dir-MN} is a first step for applying inductively Lemma~\ref{lem:rbv}:
    \begin{corollary}\label{Dir-cont-comp}\emph{--- Comparison in the stratified, continuous
        case.}\smsp
            Assume that \HSBC holds and that $\varphi$ is continuous on
$\domeg\times [0,\Tf)$.  \begin{enumerate} \item[$(i)$] The \usc function $\tilde u$ defined
            inductively by the process of Lemma~\ref{lem:rbv} is given by $$ \tilde u(x,t) :=
            \limsup\Big\{u(y,s);\ (y,s)\to (x,t), \ (y,s) \in \Omega \times (0,\Tf) \Big\} \quad
            \hbox{on  }\Man{k}\; , $$ for $1\leq k\leq N$ and $\tilde u$ is still a classical
            viscosity subsolution of the Dirichlet problem and is also a stratified subsolution of
            the state constraint problem which is regular on the boundary.

        \item[$(ii)$] As a consequence, if $u$ and $v$ are respectively classical viscosity sub and
        supersolution of the Dirichlet problem then $$ \tilde u \leq v \quad \hbox{on  }\Omegb
    \times [0,\Tf)\; ,$$ and in particular $$u \leq v \quad \hbox{on  }\Omega \times [0,\Tf)\;.$$
\item[$(iii)$] Thus, there exists a unique viscosity solution of the Dirichlet problem which is
continuous on $\Omegb \times [0,\Tf)$, uniqueness being understood as up to a redefinition of the
solution on the boundary.  \end{enumerate} 
\end{corollary}

The interest of this result is clear: by redefining a classical viscosity subsolution $u$ on the
    boundary, we can show that it becomes a regular \wSSub of a state-constrained problem and
    therefore we can apply the comparison result for stratified sub and supersolutions, which shows
    the uniqueness solutions of Dirichlet problems for general non-smooth boundaries. We insist
    anyway on the fact that, as it was the case in \cite{BP1,BP2,BP3}, the solution is really unique
    only in $\Omega \times [0,\Tf)$ since we have to modify its values on the boundary.

The proof of this result  consists in applying readily Lemma~\ref{lem:rbv}: the definition of
    $\tilde u$ together with the continuity of $\varphi$ imply that $\tilde u$ is still a classical
    viscosity subsolution of the Dirichlet problem since we have $\tilde u \leq \varphi$ on
    $\domeg\times [0,\Tf)$. On the other hand, the \wSSub-property follows directly from our
    assumptions which allows to apply Lemma~\ref{lem:rbv}. Finally the formula for $\tilde u$ given
    in the statement of Corollary~\ref{Dir-cont-comp} comes from similar arguments as the ones used
    in the proof of Corollary~\ref{cor:reg-MN}. We leave the easy checking of all these details to
    the reader.

Now we turn to the \begin{proof}[Proof of Proposition~\ref{Dir-MN}] Let $u$ be an \usc viscosity
    subsolution. 

    \medskip

    \noindent\textbf{(a)} We start proving that $u\leq \varphi$ on $\Man{N}$. We can argue locally
    and therefore assume that $\domeg\times (0,\Tf)=\Man{N}$ is smooth, hence $\domeg$ is smooth. If
    $d$ denotes the distance to $\domeg$, $d$ is at least $C^1$ and we recall that $D_x d(y)=-n(y)$
    if $y \in \domeg$, where $n(y)$ is the unit outward normal to $\domeg$ at $y$. 

    If $(x,t) \in \Man{N}$, we consider the function $$(y,s) \mapsto
    u(y,s)-\frac{(s-t)^2}{\e^2}-\frac{|y-x|^2}{\e^2}-C_\e d(y)\; ,$$ where $C_\e>0$ is a large
    constant to be chosen later. This function has a maximum point $(\ye,\te)$ near $(x,t)$ and, by
    classical arguments, we have $(\ye,\te)\to (x,t)$ and $u(\ye,\te)\to u(x,t)$. 

    We claim that for $C_\e$ large enough, $y_\e\in\partial\Omega$ and
    $u(\ye,\te)\leq\varphi(\ye,\te)$. Indeed, otherwise the $H$-inequality holds at $(\ye,\te)$,
    which leads to $$ \frac{2(\se-t)}{\e^2}+H\Big(\ye,\se, \frac{2(\ye-x)}{\e^2}+C_\e
    D_xd(\ye)\Big)\leq 0\;.$$ But $D_xd (\ye)=D_x d(x) + o(1)=-n(x)+o(1) $ and, by the normal
    controllability assumption---\cf \eqref{eq:nca:dir}---this inequality cannot hold for $C_\e$
    large enough. As a consequence, the claim holds. Then, letting $\e \to 0$, with a suitable
    $C_\e$, we obtain the desired result, using that $\varphi$ is continuous on $\Man{N}$.

    \medskip

    \noindent\textbf{(b)} As we already mentioned it above, the viscosity subsolution inequality
    being reduced to $u\leq \varphi$ on $\Man{N}$, since $\tilde u \leq u$ (because $u$ is \usc), it
    follows that $\tilde u$ is also a viscosity subsolution of the Dirichlet problem.

    Next we have to show that the $\F^N$-inequality holds for $\tilde u$. We may assume without loss
    of generality that $\tilde u$ is Lipschitz continuous because we can perform the regularization
    in the tangent variables (including $t$), and then use the normal controllability property. In
    the same way, we can assume that the boundary is flat and use the definition of $H^N$ not only
    when $x\in \domeg$ but also for $x\in \Omega$. We notice that $H^N(x,t,p_x) \leq H(x,t,p_x)$ if
    $x\in \domeg$ since the supremum is taken on a smaller set than $\BCL$ and, if $n$ is the unit
    outward normal to $\domeg$ (which is flat), $H^N(x-\e n, t,p_x) \to H^N(x, t,p_x)$ when $\e \to
    0$ as a consequence on the \HBACP-assumptions and of the normal controllability.  Hence
    $H^N(x-\e n, t,p_x)\leq H(x,t,p_x) + o_\e(1)$ where the $o_\e(1)$ is uniform for bounded $p$.

    As in Proposition~\ref{IequalS}, it is clear that $\tilde u^\e (x,t):=\tilde u (x-\e n,t)$ is a
    subsolution of $$ \tilde u^\e_t + H^{N}(x-\e n ,t,D_x\tilde u^\e)\leq o_\e(1) \quad\hbox{on
    }\Man{N}\; ,$$ and passing to the limit by a standard stability result (since $\tilde u^\e $
    converges to $\tilde u$ uniformly and since the $o_\e(1)$ is uniform for bounded $p$), we obtain
    \eqref{StratDir-MN-bis}.
    
    Finally $\tilde u$ is regular on $\Man{N}$ by its very definition; and the proof is complete.
\end{proof}

\begin{remark}\label{MgMk} In the above proof, the inequality $u(x,t) \leq \varphi(x,t)$ plays a key
    role.  In fact, even if $\varphi$ is discontinuous, the inequality $u(x,t) \leq \varphi^*(x,t)$
    (with the \usc enveloppe of $\varphi$ on $\partial\Omega\times(0,\Tf)$ of course) can be proved
    not only for points in $\Man{N}$ but for any point where the exterior sphere condition holds,
    \ie there exists $\xb\in \R^N, \bar r >0$ such that $$\overline{B(\xb, \bar r)}\cap \overline
    \Omega=\{x\}\; .$$ The modification consists in reproducing the same proof replacing the
    function $d(y)$ by $\chi(y):= |y-\xb |-\bar r$. Indeed, if $x$ is a minimum point of $\chi$ on
    $\domeg$ and therefore, if $(x,t)\in \Man{k}$,  $(D\chi(x),0)$  is orthogonal to
    $T_{(x,t)}\Man{k}$, allowing to use \NCBCL.  This inequality is therefore a general fact, but
    unfortunately not convenient for the stratification formulation which requires the more
    restrictive inequality $u(x,t) \leq\varphi_*(x,t)$.\\[2mm] \end{remark}

\subsection{Discontinuous data well-adapted to a stratified boundary} 

\index{Dirichlet problems!well-adapted boundary conditions}
In order to go further, \ie to take into account more general boundary conditions $\varphi$, we
introduce the 
\begin{definition}\emph{--- Well-adapted boundary conditions.}\smsp
    Assume that $\overline \Omega \times [0,\Tf)$ is a stratified
domain and let $\varphi : \domeg \times [0,\Tf) \to \R$ be a lower-semicontinuous function.
\begin{enumerate} \item[$(i)$]We say that $\varphi$ is adapted to the stratification if $$\hbox{for
        all  } 1\leq k\leq N\;,\ \varphi|_{\Man{k}} \text{ is continuous.}$$ \item[$(ii)$] Moreover,
            $\varphi$ is said to be W-adapted (``well-adapted'') to the stratification if in
            addition, for any $(x,t) \in \Man{k}$ and any $1\leq k\leq N$, $$
\varphi(x,t)=\liminf_{\displaystyle  \mathop{\scriptstyle (y,s) \to (x,t)}_{\scriptstyle  (y,s) \in
\Man{N}}} \varphi (y,s)\; .$$ 
\end{enumerate} 
\end{definition}

On the other hand, we notice that while hypothesis \HBASFstar ensures some controllability on the
    boundary $\partial\Omega$ for positive times, it is not assumed to be uniform as $t\to0$. In the
    case of discontinuous data, this degeneracy causes some issues with the condition on
    $\partial\Omega\times\{0\}$. In order to avoid that, let us introduce the following assumption

\begin{assumption}{\IDP}{Inward-pointing Dynamic Property.}\label{page:IDP}
For any $x\in \domeg$,
    there exists $\tau,r>0$ and a $C^1$-function $\phi$ defined in $B(x,r)$ such that $\phi(y) = 0$
    if $y \in \domeg\cap B(x,r)$, $\phi(y)>0$ if $y \in \Omega\cap B(x,r)$, satisfying $$\hbox{For
    all  }(y,s)\in \big(\Omega\cap B(x,r)\big)\times[0,\tau]\;,\quad \sup_{\alpha \in A}\left\{
b(y,s,\alpha)\cdot D_x\phi (y)\right\} \geq 0\;.$$ 
\end{assumption}

\vspace*{-8mm}

\begin{figure}[!h] 
    \begin{center}
    \includegraphics[width=0.5\textwidth]{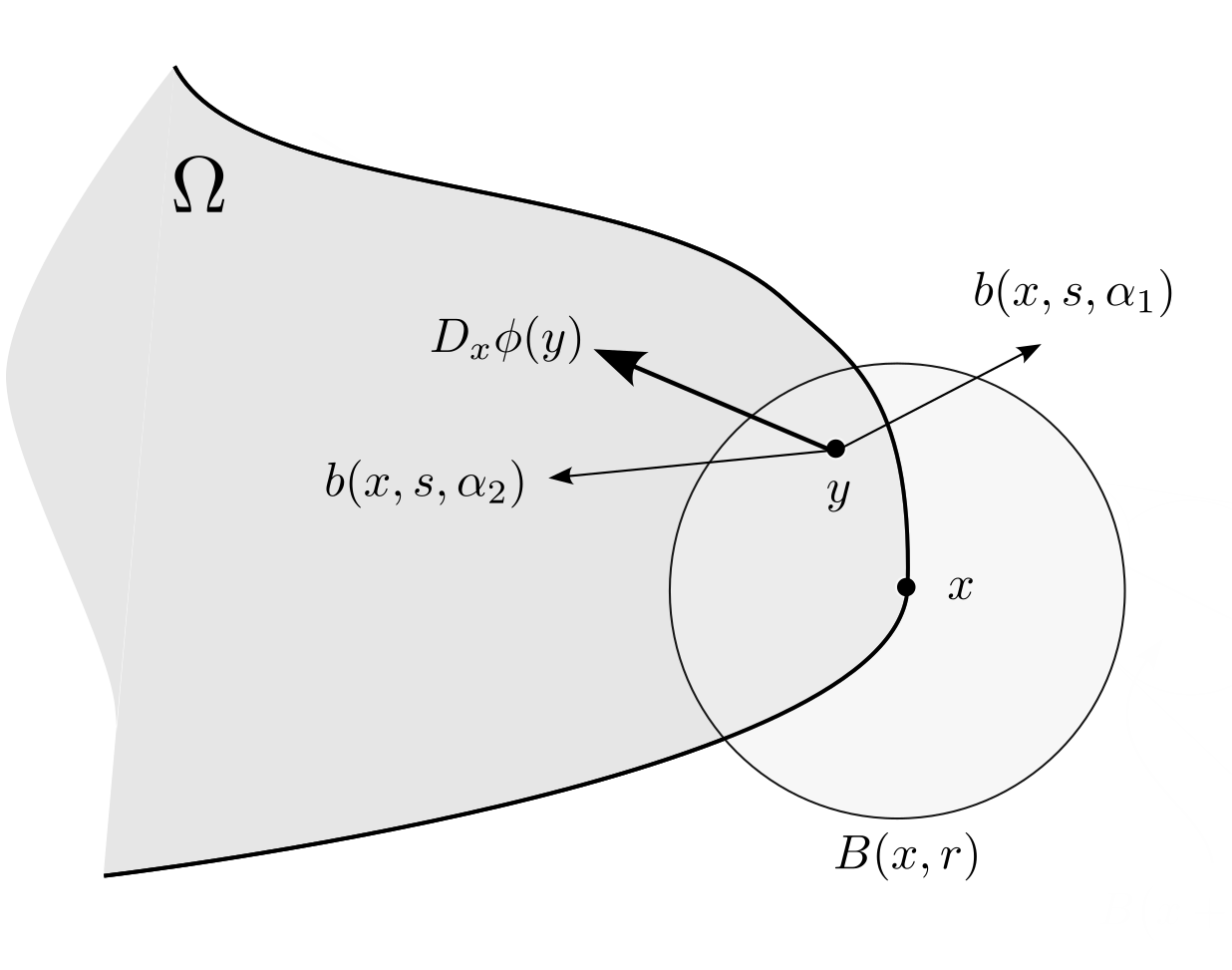}
    \caption{The IDP assumption at time $s$} \label{fig:idp}    
    \end{center}
\end{figure}

Roughly speaking, \IDP means that on a neighborhood of $\partial\Omega\times\{0\}$, $\BCL$ contains
    at least some inward-pointing dynamics $b(y,s,\alpha)$. Indeed, the function $\phi$ above can be
    seen as a local substitute for the distance function, and the condition $b(y,s,\alpha) \cdot
    D_x\phi\geq0$ which is satisfied by at least one control $\alpha$, means that $b(y,s,\alpha)$ is
    pointing inside $\Omega$, at least in a weak sense: while $D_x\phi(y)$ is really pointing
    inwards (or could be tangential at most) on $\domeg$, there is some room for other directions
    inside $\Omega$ which allow $b(y,s,\alpha)$ to point outwards while still satisfying
    $b(y,s,\alpha)\cdot D_x\phi\geq0$, \cf Figure~\ref{fig:idp} below.\footnote{on this figure the
    boundary is smooth but of course more complex, non-smooth situations are allowed here.}

Notice also that if $b(y,s,\alpha)=0$ can be used at $t=0$, it may not be a usable dynamic for
    positive times, so that \IDP is not a trivial hypothesis. We refer the reader to
    Example~\ref{ex:square} and Remark~\ref{rem:idp} below for some example and comments on the
    existence of such pseudo-distance functions $\phi$.

The result for W-adapted boundary conditions is the following
\begin{proposition}\label{Dir-nonreg}\emph{--- Comparison for well-adapted boundary
    conditions.}\smsp
    Assume that \HSBC holds.  
\begin{enumerate} 
    \item[$(i)$] If
            $\varphi : \domeg \times [0,\Tf) \to \R$ is a lower-semicontinuous function, W-adapted
            to the stratification and $u$ is viscosity subsolution of the Dirichlet problem, then
            $\tilde u : \overline \Omega \times [0,\Tf) \to \R$ defined by $\tilde u (x,t)=u(x,t)$
            if $x\in \Omega$ and $$ \tilde u (x,t)= \limsup_{\displaystyle  \mathop{\scriptstyle
            (y,s) \to (x,t)}_{\scriptstyle y\in \Omega}} u(y,s)\quad \hbox{if  } x \in \domeg\; ,$$
            is a regular stratified subsolution of the problem on $\Omegb \times (0,\Tf)$.

    \item[$(ii)$] If in addition we assume that \IDP holds and $u_0 \in C(\overline \Omega)$
        satisfies $ u_0(x)\leq \varphi_*(x,0)$ on  $\domeg,$ then for any viscosity supersolution of
        the Dirichlet problem, $$ \tilde u \leq v \quad \hbox{on  }\overline \Omega \times [0,\Tf)
        \; .$$ In particular, in this case there exists a unique continuous viscosity solution of
        the Dirichlet problem, up to a modification of its values on the boundary.  \end{enumerate}
\end{proposition}

The first part of this proposition says that, under suitable ``standard'' assumptions and
modification of the subsolution on the boundary, then Ishii viscosity subsolutions and stratified
subsolution are the same. For a complete application of this first result, one needs to treat the
initial data and, as it will be clear in the proof, the additional conditions in $(ii)$ imply that
\HBAIDCP is satisfied, since $$\tilde u (x,0) \leq u_0 (x)\leq v(x,0)\quad \hbox{on  } \overline
\Omega\;.$$ Notice that this double inequality prevents maximum points of $u-v$ to be achieved on
$\domeg \times \{0\}$ if this maximum is assumed to be strictly positive.

\begin{proof}
    The proof of $(i)$ still consists in applying Lemma~\ref{lem:rbv} by
    induction, using of course that \QRB holds in order to have to consider only one connected
    component (locally speaking).

    \smallskip

    \noindent\textbf{(a)} 
    The first step is easy: by Proposition~\ref{Dir-MN}, $\tilde u$ is
    a stratified subsolution on $\Man{N}$ and it remains to show that the same is true on any
    $\Man{k}$.
  
    The main difficulty is to show that $\tilde u\leq \varphi$ on $\Man{k}$ for any $k$. If $(x,t)
    \in \Man{k}$, we use a tangential regularization of $\tilde u$ in a neighborhood of $(x,t)$,
    \cf Proposition~\ref{reg-by-sc}. We obtain Lipschitz continuous functions $\tilde u^\e$ which
    lie below $\varphi$ on each connected component of $\Man{N}$. Therefore, $\tilde u^\e (x,t) \leq
    \varphi(x,t)$ on $\Man{k}$ since the lower semicontinuous enveloppe of $\varphi$ can be computed
    using only points of $\Man{N}$. Passing to the limit as $\e\to0$ yields the desired inequality,
    $\tilde u\leq\varphi$ on $\Man{k}$.
  
    Once we have this inequality, the $\F^k$-one comes by applying Lemma~\ref{lem:rbv} by induction.

    \smallskip

    \noindent\textbf{(b)} For the comparison result, the only additional difficulty is $t=0$ and
    more precisely the points of $\domeg \times \{0\}$ where we have to show that $\tilde u \leq
    u_0$ and $v\geq u_0$. The proof for $v$ is easy since Proposition~\ref{eqn:ibd-Ishii} implies $v
    \geq \max(u_0,\varphi_*)\geq u_0$ on $\domeg \times \{0\}$.

    But for the subsolution case, we only get $\tilde u \leq \max(u_0,\varphi^*)$ on $\domeg \times
    \{0\}$, which is clearly not sufficient. To turn around this difficulty at $(x,0)$, $x\in
    \domeg$, we introduce the function $$(y,s) \mapsto \tilde
    u(y,s)-\frac{s}{\e}-\frac{|y-x|^2}{\e}-\frac{\alpha}{\phi(y)}\; ,$$ where $0<\alpha\ll \e \ll 1$
    are parameters devoted to tend to $0$ and $\phi$ is the function coming from assumption \IDP at
    $x$.

    By classical arguments, this function has a local maximum point $(\ye,\se)$ in a neighbordhood
    of $(x,0)$ and $(\ye,\se) \to (x,0)$ with $\tilde u(\ye,\se) \to \tilde u(x,0)$ at least if
    $\alpha, \e \to 0$ with $\alpha\ll \e$ \footnote{By the definition of $\tilde u$, the values of
    $\tilde u$ on the boundary are the limits of the values of $\tilde u$ in $\Omega \times (0,\Tf)$
    and for $\alpha$ small enough, we keep track of the boundary values of $\tilde u$}.

    Because of the $\phi$-term, $\ye \in \Omega$ for $\e>0$ small enough. If $\se >0$, the
    $H$-inequality holds and we have $$\frac 1 \e + H\left (\ye,\se, p_\e -\frac{\alpha
    D_x\phi(\ye)}{[\phi(\ye)]^2}\right)\leq 0\; ,$$ where $\displaystyle p_\e = \frac{2(\ye-x)}{\e}=
    \frac{o(1)}{\e}$. Examining the $H$-term, it can be estimated by $$ \frac 1 \e -M(
    \frac{o(1)}{\e} +1) + \sup_{\alpha \in A} \left\{b(\ye,\se,\alpha)\cdot \frac{\alpha
    D_x\phi(\ye)}{[\phi(\ye)]^2}\right\}\leq 0\; ,$$ where $M$ takes into account the Lipschitz
    constant of $H(x,t,p_x)$ in $p_x$ (coming from boundedness of $b$) and the boundedness of $l$.

    By the assumption on $\phi$, the supremum is non-negative and therefore this inequality cannot
    hold for $\e$ small enough. This implies that necessarily, $\se=0$ and $\tilde u (\ye,\se) \leq
    u_0(\ye)$. Finally, letting $\alpha, \e \to 0$ with $\alpha\ll \e$, we obtain $\tilde u (x,0)
    \leq u_0(x)$.
 
    These inequalities at time $t=0$ being proved, we have just to apply the comparison result for
the stratified problem, Corollary~\ref{cor:comp.BACPID}, and the proof is complete.  \end{proof}

\begin{example}\label{ex:square} A standard example where Proposition~\ref{Dir-nonreg} can be
    applied is the square $[0,1] \times [0,1]$ in $\R^2$, with $$\varphi(x)=\varphi_i (x,t) \quad
    \hbox{on  }S_i,$$ where $S_1=]0,1[\times \{0\} $, $S_2=\{1\} \times ]0,1[ $ , $S_3=]0,1[\times
    \{1\}$ $S_4=\{0\} \times ]0,1[ $, each $\varphi_i$ being continuous on $S_i$. Of course, in
    order to have a function $\varphi$ which is W-adapted to the stratification, the values at the
    four corners are imposed by the values on each $S_i$ and obtained by computing their lower
    semi-continuous extensions. For example, at $(0,0)$ we have $\min (\varphi_1(0,t),
    \varphi_4(0,t))$.  We point out that $\varphi$ is still adapted if the values at the four
    corners are below these values. 

    If $H$ satisfies all the controllability conditions, then the first part
    Proposition~\ref{Dir-nonreg} applies.

    For the second one, the compatibility condition on $\domeg \times \{0\}$ should hold and for
    $\phi$, we can choose the distance to the boundary if $x$ is not located on one of the corners.
    In case of a corner, say $(0,0)$, we may choose, noting $x=(x_1,x_2)$, the function
    $\phi(x)=x_1x_2$, while for $(0,1)$, we may choose $\phi(x)=x_1(1-x_2)$, i.e. in each case the
product of the distances to the adjacent sides. The controllability condition ensures that the
requirement on $D_x\phi$ is satisfied.  \end{example}

\begin{remark}\label{rem:idp} We are not going to push very far the question of the existence of
    functions $\phi$ above playing the role of a distance function. Let us just mention that this
    should not be an issue in general, even if it might be difficult to provide a very general
    result.

    A convincing example is the case when $\overline \Omega$ is a convex set given by $$\overline
    \Omega := \bigcap_i \{x:\ p_i \cdot x \geq q_i\}\; ,$$ where the $p_i$ are in $\R^N$ and the
    $q_i$ in $\R$. The example of the square above can be generalized in the following way: if $x\in
    \domeg$ and if $I(x)$ is the set of indices $i$ for which $p_i \cdot x = q_i$, then one can
    choose $$ \phi(y):= \prod_{i\in I(x)}(p_i \cdot x - q_i)\; .$$ It is easy to check that the
    condition on $D_x\phi$ is satisfied as an easy consequence of the normal controllability since
    all the $p_i$ are clearly orthogonal to the space of $\Man{k}$ at $x$ is in $\Man{k}$.

    In the case of domains which are the complementaries of convex domains, namely $$\overline
\Omega := \bigcup_i \{x:\ p_i \cdot x \geq q_i\}\; ,$$ one can choose $$ \phi(y):= \sum_{i\in
I(x)}[(p_i \cdot x - q_i)_+]^2\; .$$ \end{remark}

\subsection{The case of non well-adapted data} 
\index{Dirichlet problems!non well-adapted boundary conditions}

We just described above some general framework for which the stratified formulation and the
classical viscosity solutions' one are in some sense equivalent. But let us also consider here the
case when the stratified formulation is unavoidable to get uniqueness. 

Let $\varphi$ be a \lsc function which is adapted, but not W-adapted to the stratification, \ie
assume that for some $(x,t)\in\Man{k}$, $$ \varphi(x,t)< \liminf_{\displaystyle
\mathop{\scriptstyle (y,s) \to (x,t)}_{\scriptstyle  (y,s) \in \Man{N}}} \varphi (y,s)\;.$$ Then,
there is no way that a subsolution---even after ``cleaning'' it---should satisfy $u \leq \varphi$ on
$\Man{k}$. This property has to be superimposed through the stratification formulation since the
Ishii one, using $\varphi^*$, will simply erase the small values of $\varphi$.

In this case we have the 
\begin{proposition}\label{Dir-nonreg-nW}\emph{--- Comparison for non well-adapted boundary
    conditions.}\smsp 
    Assume that \HSBC holds.  Let
$\varphi : \domeg \times [0,\Tf) \to \R$ be a lower-semicontinuous function, adapted to the
stratification.  \begin{enumerate} \item[$(i)$] If $u$ is an \usc viscosity subsolution of the
            Dirichlet problem such that \begin{equation}\label{con-Dir-Mk} u(x,t) \leq \varphi(x,t)
            \quad \hbox{for any  }(x,t) \in \Man{N-1}\cup\cdots\cup\Man{1}\; , \end{equation} then
            $\tilde u : \overline \Omega \times [0,\Tf) \to \R$ defined by $\tilde u (x,t)=u(x,t)$
            if $x\in \Omega$ and $$ \tilde u (x,t)= \limsup_{\displaystyle  \mathop{\scriptstyle
            (y,s) \to (x,t)}_{\scriptstyle y\in \Omega}} u(y,s)\quad \hbox{if  } x \in \domeg\; ,$$
        is a stratified subsolution of the problem.  \item[$(ii)$] If, in addition, we assume that
            \IDP holds and $u_0 \in C(\overline \Omega)$ satisfies $ u_0(x)\leq  \varphi_*(x,0)$ on
            $\domeg$, then for any viscosity supersolution of the Dirichlet problem, $$ \tilde u
            \leq v \quad \hbox{on  }\overline \Omega \times [0,\Tf) \; .$$ In particular, in this
            case there exists a unique continuous viscosity solution of the Dirichlet problem which
            satisfies \eqref{con-Dir-Mk}.  
\end{enumerate}
\end{proposition}

As we already explain it above, the key difference between Propositions~\ref{Dir-nonreg}
    and~\ref{Dir-nonreg-nW} is that the first one applies to all Ishii viscosity solutions while, in
    the second case, Condition~\ref{con-Dir-Mk} has to be imposed.

\begin{proof} We only give a sketch here since it follows the ideas of the proof of
    Proposition~\ref{Dir-nonreg}, namely\\[2mm] $(i)$ For any $k$, the condition on $\Man{k}$, \ie
    $$\max(u_t + H^k(x,t,D_x u), u - \varphi(x,t))\leq 0 \quad\hbox{on  }\Man{k}\; ,$$ where
    $$H^k(x,t,p_x)=\sup_{b(x,t,\alpha)\in T_{x} \tMan{k}}\{-b (x,t,\alpha) \cdot p_x
    -l(x,t,\alpha)\}\; ,$$ is obtained by combining \eqref{con-Dir-Mk} with an approximation ``from
    inside'', following Remark~\ref{MgMk}.\\[2mm] $(ii)$ The comparison result follows from the
    stratified formulation, while the existence is provided by the value function of the associated
    control problem.  \end{proof}

We conclude by an example showing the interest of the stratified formulation, related to
    Proposition~\ref{Dir-nonreg-nW}.

\begin{example}\label{ex:square2} We come back to an example in the square $S=[0,1] \times
    [0,1]\subset\R^2$.  The equation is $$ u_t +|D_x u|=1 \quad \hbox{in  }S\times (0,1)\;,$$ with
    the initial data $u(x,0)=0$ in $S$ and the (time-independent) Dirichlet boundary condition
    $$\varphi(x)=1 \quad \text{on  }\partial S\setminus \{0\}\;, \quad \varphi(0)=0\; .$$ Since
    $\varphi^*(x)\equiv 1$ on $\partial S$, it is easy to check that $u_1(x,t)=t$ is a classical
    viscosity solution of this problem for $0\leq t\leq 1$. Of course this first solution completely
    ignores the fact that $\varphi(0)=0$.

    On the other hand, $u_2(x,t)=\min(t,|x|)$ is also a solution of our problem but it satisfies
    $u_2(0,t)\leq 0$, \ie Condition \eqref{con-Dir-Mk}.  On this example, one can verify that
    Condition \eqref{con-Dir-Mk} is nothing but the main missing stratified inequality on $\Man{1}$,
    the other ones on $\Man{2}$ being also satisfied. We also point out that, at time $t=0$, it is
important to have the stratified inequality $u\leq \min(u_0,\varphi)$ on $\partial S$ to recover the
correct initial data, solving the $\F_{init}$ equation.  \end{example}

\section{On the Neumann problem}\label{sec:Neumann}
\index{Boundary conditions!Neumann}

In this section, we consider several cases where Neumann, or more generally oblique derivative
    boundary conditions arise, namely \begin{equation}\label{NP} \frac{\partial
    u}{\partial \gamma} =g(x,t) \quad \hbox{on $\domeg\times (0,\Tf)$\; ,} \end{equation} where
    $\gamma, g$ are bounded functions on $\domeg \times [0,\Tf]$, taking respectively values in
    $\R^N$ and $\R$. We recall that throughout this chapter, we make several simplifications
    referred to as \HSBC, listed on page~\pageref{page:simplifications}. In particular, we have a
    time-independent stratification of $\Omegb \times \R$ with
    $$\partial\Omega=\tMan{N-1}\cup\cdots\cup\tMan{0}\;,$$ and $\Man{k}= \tMan{k-1}\times \R$ for
    $k=1,..,N+1$.

For the assumptions on $\gamma$ and $g$, we anticipate the case of mixed boundary conditions and we
first introduce the following hypothese where $\omega$ is a connected, open subset of $\domeg$ which
is a $(N-1)$-dimensional $C^{1,1}$-submanifold of $\R^N$:\index{Neumann and oblique derivatives problems!natural assumptions}

\begin{assumption}{\Hgam[\omega]}{Natural Assumptions on $\gamma$ and $g$ on $\omega$.}\label{page:Hgam}\\[-1cm]
\begin{enumerate} \item[$(i)$] There exists $\nu>0$ and a Lipschitz continuous $\gamma_\omega:
            \R^N\times\R \to \R^N$ such that $\gamma=\gamma_\omega$ on $\omega\times[0,\Tf]$ and
            \begin{equation}\label{assump:NP} \gamma_\omega (x,t)\cdot n(x)\geq \nu >0\quad \hbox{on
            } \omega\times [0,\Tf], \end{equation} where $n(x)$ is the unit outward normal to $\domeg$
            at $x$\footnote{We point out that $\domeg$ and $\omega$ coincide in a neighborhood of
            each $x\in \omega$ and therefore $\domeg$ is smooth at such points as a consequence of
the assumptions on $\omega$.}.  \item[$(ii)$] There exists a continuous function $g_\omega:
\R^N\times \R\to \R$ such that $g=g_\omega$ on $\omega\times [0,\Tf]$.  \end{enumerate}
\end{assumption} We use \Hgam[\omega] for problems where we have an oblique derivative boundary
condition on $\omega\times [0,\Tf]$ and typically a Dirichlet boundary condition on the
complementary. For pure oblique derivative problem, we use the following assumption where we denote
by $(\tMan{N}_i)_{i\in I^N}$ the connected components of $\tMan{N}\subset \domeg$.

\begin{assumption}{\Hgamma}{Specific Hypotheses for the Oblique Derivative Problem.}\label{page:Hgams} For any $i\in
I^N$, \Hgam[\omega] holds for $\omega=\tMan{N}_i$ and we denote by $\gamma_i,g_i$ the corresponding
functions $\gamma_\omega,g_\omega$.  \end{assumption}

Several remarks can be made on these assumptions. First, notice that the above
$C^{1,1}$-assumption on $\omega$ is natural: as \Hgamma shows, we have in mind that $\omega$ is a
connected component of $\tMan{N-1}$, hence it should satisfy the classical regularity imposed on a
stratification. We recall that this $C^{1,1}$-regularity can be replaced by a $C^1$-one if
$H(x,t,p)$ is coercive in $p$, uniformly \wrt $x$ and $t$.

Next we point out that the assumptions on $\gamma$ are, of course, the same as those for
$b(x,t,\alpha)$ in \HBACP because, as it will become even more obvious later on, they play analogous
roles. Clearly, while the Lipschitz continuity in $x$ seems natural, the Lipschitz continuity in $t$
is quite restrictive. As for $b(x,t,\alpha)$, we refer the reader to Section~\ref{sect:mgdt} in
order to weaken this assumption.

Before coming back to $\gamma$ and $g$ and the exact sense of the notion of Ishii solution for the
oblique derivative problem, let us mention that the assumptions on $H$ are the same as in
Section~\ref{RefBF:Dir}: $H$ is given by \eqref{eq:Hmsh} with $b,l$ satisfying \HBACP and therefore
it satisfies \HBAHJ. We assume also that the normal controllability assumption \NCBCL holds, \ie
\eqref{eq:nca:dir} on $\Man{N}$ and more generally \eqref{NC:H2} on the various
manifolds~$(\Man{k})_k$.  

Obviously, we did not say anything on $\gamma$ and $g$ on $(\domeg \times (0,\Tf))\setminus
\Man{N}$. In fact, the notion of Ishii solution just uses their values on $\Man{N}$ in the following
way: if $(x,t) \in \domeg \times (0, \Tf)$, let us denote by $J(x,t)$ the set of $i$ such that
$(x,t) \in \overline{\Man{N}_i}$. Then the definition of Ishii sub and superslution is $$ \min
\Big(u_t+H(x,t,D_x u),\min_{i\in J(x,t)}(\gamma_i(x,t) \cdot D_x u-g_i(x,t))\Big)\leq 0\; ,$$ and $$
\max \Big(u_t+H(x,t,D_x u),\max_{i\in J(x,t)}(\gamma_i(x,t) \cdot D_x u-g_i(x,t))\Big)\leq 0\; .$$
In other words, only the values of $\gamma$ and $g$ on $\Man{N}$ really play a role. And an
analogous definition holds on $\domeg \times \{0\}$ with the additional $(u-u_0)$-term.

\

In this section, we look at the following cases.

\begin{enumerate} \item[1.] We first revisit the most classical case where both the boundary is smooth
            and the direction of reflection is Lipschitz continuous. Of course, here, the stratified
            approach does not bring any new result and this section just consists in describing the
            stratified formulation, which is rather different from the classical one.  \item[2.] We
                then consider the case of a smooth boundary with a codimension one discontinuity in
                the direction of reflection, a case which---to the best of our knowledge---is not so
                much investigated in the literature.  \item[3.] The two next cases can be called the
``Dupuis-Ishii'' configurations since they are those which these authors investigate in
\cite{DuIs2,DuIs1,DuIs-SDE}.  \end{enumerate}

Let us point out that, in \cite{DuIs2, DuIs1, DuIs-SDE}, Dupuis and Ishii study oblique derivative
problems in non-smooth domains for {\em possibly second-order, elliptic and parabolic, fully
nonlinear equations}, \ie in a far more general framework than ours.  They both prove comparison
results in two different cases that we describe below but they also obtain the uniqueness of
solutions for stochastic differential equations with oblique directions of reflection in domains
with corners. The two main cases that Dupuis and Ishii consider are the followings: 
\index{Neumann and oblique derivatives problems!Dupuis-Ishii configurations}

\medskip

\noindent\textbf{Configuration I} is the case of a smooth direction of reflection in domains which
satisfy only an exterior cone condition. More precisely, given $\gamma \in C^2(\R^N,\R^N)$ they
assume that there exists $\bar \delta,\eta>0$ such that, for any $0<\delta \leq \bar \delta$ and any
$x\in \domeg$, $$ B\big(x+\delta \gamma(x),\eta \delta\big)\subset \R^N\setminus\Omega\; .$$ Of
course, the $C^2$-regularity on $\gamma$ appears as a rather strong assumption but one has to keep
in mind that they obtain results for {\em second-order} equations. On the contrary, the assumption
on $\Omega$ is very weak, allowing corners and even worse configurations, \cf
Figure~\ref{fig:DupuisIshii}, \emph{left}. Concerning \Hgamma, this means that all the
$(\gamma_i)_{i\in I^N}$ can be taken equal and the same is true for the $(g_i)_{i\in I^N}$.

\medskip

\noindent\textbf{Configuration II} is the case when $\Omega$ is a bounded domain obtained as an
intersection: $$ \Omega=\bigcap_{i\in I} \Omega_i\; ,$$ where $I$ is a finite set of indices and the
$\Omega_i$ are $C^1$-domain, \cf Figure~\ref{fig:DupuisIshii}, \emph{right}. On the boundary of
each $\Omega_i$, the direction of reflection, denoted by $\gamma_i$, is assumed to be Lipschitz
continuous.  There are complicated assumptions which link $\gamma_i$ and $n_i$, the normal vector to
$\partial \Omega_i$ pointing outside $\Omega_i$.  We do not detail them here but let us just mention
that these conditions are inspired by those of Harrison and Reiman \cite{HaRe} and Varadhan and
Williams \cite{VaWi}, and they are known as being natural in this framework in order to obtain
comparison results (or uniqueness for stochastic differential equations with oblique directions of
reflection in such domains with corners). This case fully justifies the form of \Hgamma.

\begin{figure}[!h] 
    \begin{center}
    \includegraphics[width=0.9\textwidth]{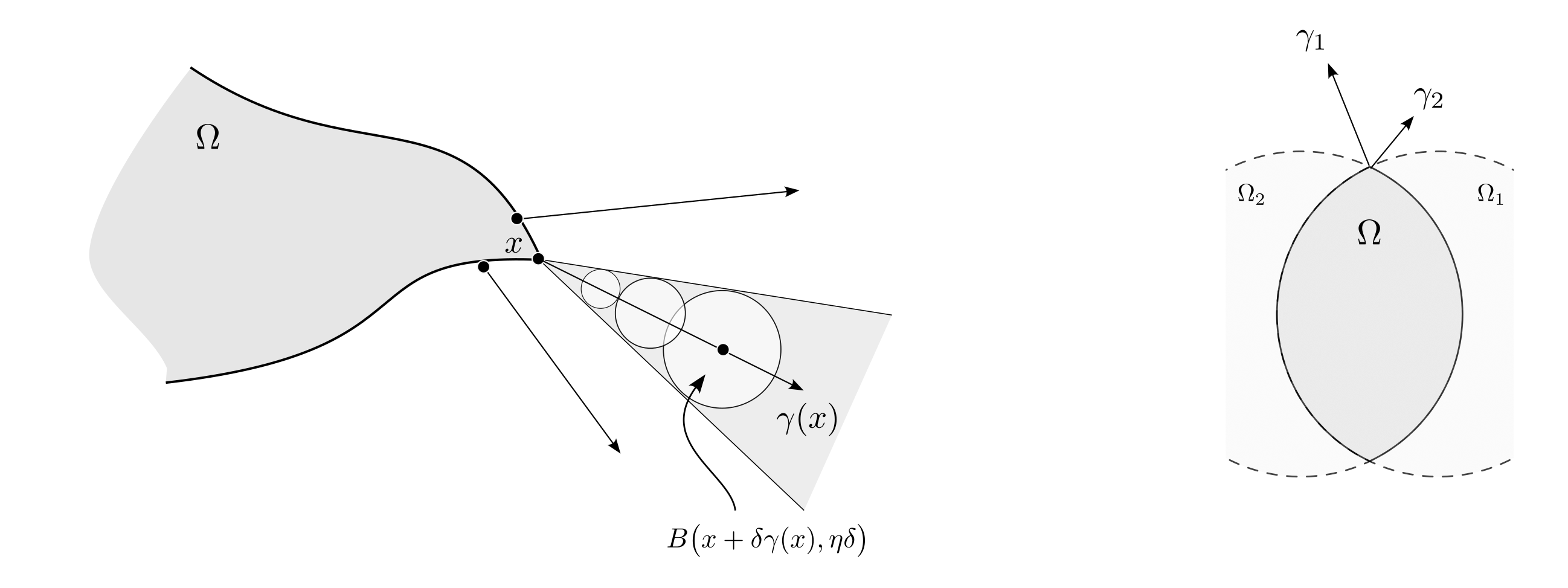}
    \caption{The Dupuis-Ishii configurations (\emph{left: }I, \emph{right: }II).}
    \label{fig:DupuisIshii}   
    \end{center}
\end{figure}

\bigskip

In the sequel, our aim is to treat these two configurations with, of course, some restrictive
assumptions due to the stratified approach.

More generally, in the four frameworks we have mentioned above, our aim is to give conditions under
which Ishii's (sub)solutions are stratified (sub)solutions. Of course, since stratified
supersolutions are just Ishii supersolutions, only the case of subsolutions has to be considered.
Sometimes we give full results, sometimes we just give indications on how to address the problem if
it is too complicated to state a general result. We recall that for simplicity, $\Omega$ is bounded
here but under suitable modifications, similar results are valid in the unbounded case too.

We conclude this introduction by showing that time $t=0$ does not cause any problem under natural
assumptions: this is a consequence of the following result whose proof is based on arguments of
Proposition~\ref{reg-sub}. Actually these arguments, together with those relying on the
\NCBCL-assumption on the Hamiltonian, also allow to prove that subsolutions are regular on the
boundary for $t>0$.  Under the simplifications we make, the initial stratification is nothing but
$\M_0=(\tMan{k})_{k=0..(N-1)}$, but the result holds in general, even if $\M_0$ is not the trace of
$\M$ at $t=0$.

\begin{proposition}\label{init-Neumann} Assume that $\Omegb$ is a stratified domain associated to a
    stratification $\M_0$ and consider the problem \begin{equation}\label{pb:neumann.initial}
        \begin{cases} u=u_0(x) & \hbox{in  }\Omega\;,\\ \min\big(u-u_0(x), \G(x,D_x u)\big)=0 &
        \hbox{on  }\domeg\;.  \end{cases} \end{equation}

    \noindent $(i)$ Assume that, for any $\xb \in \Man{k}_0 \cap \domeg$, there exists $e \in
        (T_{\xb} \Man{k}_0)^\bot$ and $\nu,K,r>0$ such that, for any $x\in \Man{k}_0 \cap \domeg
        \cap B(\xb,r)$ $$ \G(x,p_x + Ce) \geq \nu C -K(|p_x|+1)\;.$$ Then, any  \usc viscosity
        subsolution $u$ of \eqref{pb:neumann.initial} satisfies $u\leq u_0(x)$ on $\domeg$.

    \smallskip

    \noindent$(ii)$ Assume that, for any $\xb \in \Man{k}_0 \cap \domeg$, there exists $e \in
        (T_{\xb} \Man{k}_0)^\bot$ and $\nu,K,r>0$ such that, for any $x\in \Man{k}_0 \cap \domeg
        \cap B(\xb,r)$ $$\G(x,p_x + Ce) \leq - \nu C + K(|p_x|+1)\;.$$ Then, any \lsc viscosity
        supersolution $v$ of \eqref{pb:neumann.initial} satisfies $v\geq u_0(x)$ on $\domeg$.
    \end{proposition} We leave the easy proof of this result to the reader since, as we mention it
    above, it is based on the arguments of the proof of Proposition~\ref{reg-sub}. 

Let us point out that 
\begin{enumerate} \item[$(i)$] thanks to Proposition~\ref{visc-ineq-init}, or at
            least by borrowing the arguments in its proof, a sub or supersolution of an oblique
            derivative boundary condition $$\frac{\partial u}{\partial \gamma} =g(x,t) \quad
            \hbox{on }\domeg\times (0,\Tf)$$ when $\gamma$ and $g$ are continuous,  typically
            satisfies the conditions of Proposition~\ref{init-Neumann} with
            $$\G(x,p_x)=\gamma(x,0)\cdot p_x - g(x,0)\;.$$

    \item[$(ii)$] In the case of several directions of reflection nearby $\Man{k}$, \ie when
        $\gamma$ and $g$ are discontinuous, existence of a vector~$e$ as above is a natural
        assumption on $\gamma$ (or the various $\gamma_i$ involved) provided \eqref{assump:NP}
        holds, for example. In this case we apply Proposition~\ref{init-Neumann} by considering
        different Hamiltonians for the subsolution and the supersolution, introducing respectively
        $$ \G(x,p_x)=\min_i(\gamma_i(x,0)\cdot p_x - g_i(x,0))\; , \; \tilde
        \G(x,p_x)=\max_i(\gamma_i(x,0)\cdot p_x - g_i(x,0))\;.$$ Using $\G$ for the subsolution and
$\tilde\G$ for the supersolution leads to the desired result, $u(x,0)\leq u_0(x)\leq v(x,0)$ on
$\domeg$.  
\end{enumerate}

    Of course, similar remarks hold for nonlinear boundary conditions of Neumann type.  For this reason,
    we will always assume in the sequel that \HBAIDCP holds since Proposition~\ref{init-Neumann}
    gives this property in an easy and natural way and we concentrate on the stratified formulation
    for $t>0$.

\subsection{Stratified formulation of the classical case} \label{subsec:classical.oblique}
\index{Neumann and oblique derivatives problems!stratified formulation}

As for the Dirichlet problem, we begin with the most standard framework: an oblique derivative
    problem in a smooth domain. More precisely, we consider the standard problem introduced in
    \eqref{standardHJB}, namely \begin{equation} \begin{cases}\label{hjb.standard.neumann} u_t +
        H(x,t,D_x u)=0 & \hbox{in $ \Omega \times (0,\Tf)$}\;, \\ u(x,0) =u_0 (x) & \hbox{in
    $\Omega$}\;, \end{cases} \end{equation} associated with the boundary condition \eqref{NP}.
    Because of these hypotheses, the situation reduces to $\Man{N+1}=\Omega \times (0,\Tf)$ and
    $\Man{N}=\domeg\times (0,\Tf)$.

The first key difference with the Dirichlet problem is that viscosity subsolutions are regular at
the boundary and therefore we do not need to redefine them on the boundary. More precisely
    \begin{proposition}\emph{--- Regularity of subsolutions..}\smsp
        Let $\Omega$ be a bounded $C^{1,1}$-smooth domain. Assume that \HSBC and
    \Hgamma hold\footnote{Here, of course, \Hgamma is the same as \Hgam[\domeg].}. Then, any \usc
subsolution of \eqref{hjb.standard.neumann}-\eqref{NP} is regular at the boundary for $t>0$.
\end{proposition}

\begin{proof} Let $u$ be an \usc subsolution of \eqref{hjb.standard.neumann}-\eqref{NP} and $(x,t)
    \in \domeg \times (0,\Tf)$. If $u$ is not regular at $(x,t)$ this means that $$
    u(x,t)>\limsup_{\displaystyle  \mathop{\scriptstyle (y,s) \to (x,t)}_{\scriptstyle  (y,s) \in
    \Man{N+1}}} u (y,s)\; .$$ We consider, for $0<\e \ll 1$, the function defined on $\Man{N}$ by
    $$(y,s)\mapsto u(y,s)-\frac{(s-t)^2}{\e^2}-\frac{|y-x|^2}{\e^2}\; .$$ This function has a local
    maximum point at $(\ye,\se)$ near $(x,t)$ and $u(\ye,\se)\to u(x,t)$ as $\e \to 0$. But the jump
    of $u$ on the boundary implies that necessarily $(\ye,\se)\in \domeg \times (0,\Tf)$. 

    Notice that the distance function to the boundary $\domeg$, denoted by $d(\cdot)$, is $C^1$ in a
    neighborhood of $\domeg$ by the assumption on the regularity of $\Omega$. Hence, given any
    $\lambda\in\R$, $(\ye,\se)$ is also a local maximum point of the function $\Psi_\lambda$ defined
    on $\overline \Omega \times (0,\Tf)$ by
    $$\Psi_\lambda(y,s):=u(y,s)-\frac{(s-t)^2}{\e^2}-\frac{|y-x|^2}{\e^2}-\lambda d(y)\;.$$

    Using the Ishii viscosity inequality on the boundary implies that $$ \min\Big(a_\e+
    H\big(\ye,\se, \pe-\lambda n(\ye)\big)\;,\,\big(\pe-\lambda n(\ye)\big)\cdot
    \gamma(\ye)-g(\ye,\se)\Big)\leq 0\;,$$ where $$ a_\e:= \frac{2(\se-t)}{\e^2}\quad
    \hbox{and}\quad \pe:=\frac{2(\ye-x)}{\e^2}\; .$$ But of course, for $\lambda<0$ large enough, we
    obtain a contradiction because of the normal controllability assumption on $H$ and the
assumption on $\gamma$, which ends the proof.  \end{proof}

In this simple case, it remains to identify the $\F^{N}$-inequality on $\Man{N}$ and to show the
equivalence between Ishii viscosity (sub)solutions and stratified (sub)solutions. As we did for the
Dirichlet case, we enlarge the set $\BCL$ on the boundary to take into account the boundary
condition. Here, the enlargement consists in adding triplets of the form
$((-\gamma(x,t),0),0,g(x,t))$, assigning the cost $g(x,t)$ to a reflection-type boundary dynamic
$-\gamma(x,t)$ on $\partial\Omega$.

The result is the following 
\begin{proposition}\label{OD-MN}
    Let $\Omega$ be a bounded
    $C^{1,1}$-smooth domain. Assume that \HSBC and \Hgamma hold.  If $u$ is a viscosity
    subsolution of the oblique derivative problem, it is a stratified subsolution of the problem
    with $$\F^N(x,t,(p_x,p_t))= \sup \Big\{\theta p_t -\big(\theta \gb^x- (1-\theta)\gamma\big)\cdot
    p_x -\big(\theta \gl +(1-\theta)g\big)\Big\}\;\hbox{on }\Man{N}\; ,$$ where the supremum is
    taken on all $(\gb,0,\gl) \in \BCL (x,t)$ such that there exists $\theta \in [0,1]$ satisfying
    $(\theta \gb^x-(1-\theta)\gamma)\cdot n(x)=0$, where $n(x)$ is the unit outward normal to
$\domeg$ at $x$.  
\end{proposition}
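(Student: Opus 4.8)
\textbf{Proof plan for Proposition~\ref{OD-MN}.}

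The strategy is to mimic the reasoning already carried out for the Dirichlet case in Proposition~\ref{Dir-MN}, but taking advantage of the fact that no ``cleaning'' of the boundary values is needed here. First I would record (as established in the discussion preceding the statement) that, under the normal controllability assumption on $H$, any Ishii viscosity subsolution $u$ already satisfies the regularity property
$$ u(x,t)=\limsup_{\displaystyle  \mathop{\scriptstyle (y,s) \to (x,t)}_{\scriptstyle (y,s) \in \Man{N+1}}} u (y,s)\quad\text{on }\Man{N}\,,$$
so that $u$ coincides with its natural extension from $\Omega\times(0,T)$ and there is no artificial-boundary-value issue. This means we may immediately regularize $u$ by a tangential sup-convolution in the variables along $\Man{N}$ (including $t$) followed by a standard mollification in those variables, exactly as in Section~\ref{sect:sup.reg}, using \NC and \TC; hence we may assume without loss of generality that $u$ is Lipschitz continuous on a neighbourhood of the boundary point under consideration and $C^1$ along $\Man{N}$.

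Next, the computation of $\F^N$ itself: by the definition of a stratified problem, $\F^N(x,t,\cdot)$ is the Hamiltonian built from $\BCL(x,t)$ restricted to dynamics tangent to $\Man{N}=\domeg\times(0,T)$. On the boundary, the enlarged $\BCL$ is the convex hull of the ``interior'' triplets $(b,0,l)=((b^x,-1),0,l)\in\BCL(x,t)$ and the reflection triplets $((-\gamma(x,t),0),0,g(x,t))$; a generic convex combination is
$$\big((\theta b^x-(1-\theta)\gamma,\,-\theta),\,0,\,\theta l+(1-\theta)g\big)\,,\qquad \theta\in[0,1]\,.$$
The tangency constraint $b\in T_{(x,t)}\Man{N}$ amounts to the spatial part being tangent to $\domeg$, i.e. $(\theta b^x-(1-\theta)\gamma)\cdot n(x)=0$. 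Taking the supremum of $-b\cdot p-l$ over such elements gives exactly the displayed formula for $\F^N$, where one notes that $\theta=0$ is excluded since $\gamma\cdot n(x)\geq\nu>0$, and $\theta=1$ would require $b^x\cdot n(x)=0$; the genuinely new boundary behaviour comes from the interior values of $\theta\in(0,1)$, which is why the supremum is taken over those $(b,0,l)$ admitting some $\theta\in(0,1)$ realizing the tangency. One also checks, exactly as in the Dirichlet case, that $\F^N\le\F$ since the supremum defining $\F^N$ runs over a subset of $\BCL(x,t)$.

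Finally, to obtain the $\F^N$-viscosity inequality for the (regularized) subsolution $u$ on $\Man{N}$, I would use the ``approximation from inside'' device already exploited in Proposition~\ref{IequalS} and in Remark~\ref{MgMk}$(ii)$: locally flattening $\domeg$ and letting $e$ be the inward unit normal, the shifted function $u^\eps(x,t):=u(x+\eps e,t)$ is an Ishii subsolution of $u^\eps_t+H(x+\eps e,t,D_xu^\eps)\le0$ in a neighbourhood of $\Man{N}$, hence in particular of the weaker inequality obtained by restricting to tangential dynamics plus the reflection dynamics; passing to the limit $\eps\to0$ by the standard stability result (using that $u^\eps\to u$ uniformly by Lipschitz continuity) yields $u_t+\F^N(x,t,D_xu)\le0$ on $\Man{N}$. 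Together with the Ishii subsolution inequality in $\Man{N+1}=\Omega\times(0,T)$ (which holds by hypothesis), this shows $u$ is a stratified subsolution. The main obstacle is purely technical and concentrated in the second paragraph: justifying that the tangential regularization can be performed at boundary points of a $C^1$ (rather than $W^{2,\infty}$) domain while preserving \TC for $H$ — as noted in the Dirichlet discussion, when $H$ is merely continuous this typically needs $W^{2,\infty}$ regularity unless $H$ is coercive, in which case one treats the time and space variables separately; since here $H$ is assumed normally controllable one is in the coercive regime and the argument goes through, but this point must be handled with care rather than invoked blindly.
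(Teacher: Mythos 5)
Your computation of $\F^N$ from the convex hull of the interior triplets and the reflection triplet is correct, and your observation that no cleaning of boundary values is needed in the Neumann case (because the Ishii subsolution automatically coincides with the limsup from inside, by normal controllability) is also correct and matches the discussion preceding the proposition.

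However, the final step — the ``approximation from inside'' device of Proposition~\ref{IequalS} and Remark~\ref{MgMk}$(ii)$ — does not carry over from the Dirichlet case to the Neumann case, and this is a genuine gap. The shifted function $u^\eps(x,t)=u(x+\eps e,t)$ is an Ishii subsolution of $u^\eps_t+H(x+\eps e,t,D_x u^\eps)\le 0$, where $H(x+\eps e,t,\cdot)$ is the \emph{interior} Hamiltonian built from $\BCL(x+\eps e,t)$ alone. This Hamiltonian contains no trace of the reflection dynamics $-\gamma$ or the boundary cost $g$: those triplets live only on $\domeg$, they are not present in $\BCL$ at interior points $x+\eps e$, and they do not appear in the $\eps\to0$ limit of $H(x+\eps e,t,\cdot)$ either. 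In the Dirichlet case the device works because the tangential boundary Hamiltonian $H^N$ involves only tangential interior dynamics, so $H^N\le H$ holds for the \emph{interior} $H$ and stability delivers the result; here $\F^N$ genuinely mixes in the $-\gamma$ direction and $g$, and there is no inequality relating $\F^N$ to the interior $H$ that a shift-and-pass-to-the-limit argument could exploit. The ``$\F^N\le\F$'' observation you make is valid only at the boundary point where $\BCL$ has been enlarged, not for the shifted Hamiltonians, so it does not help.

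The paper's argument is entirely different and avoids regularization altogether: for a fixed test-function $\phi$ and a fixed $(b,0,l)$ and $\theta$ realizing the tangency, it introduces the scalar $\lambda$ solving $\gamma\cdot(D_x\phi-\lambda n)=g$ and adds the penalization $-(\lambda-\delta)d(y)-[d(y)]^2/\eps^2$ to $u-\phi$. The choice of $\lambda$ ensures that at the resulting maximum point the $\gamma$-inequality strictly fails (by the $\delta$-margin), so the $H$-inequality must hold; moreover, since $\theta>0$ forces $b^x\cdot n>0$, the sign of $b^x\cdot n$ lets one discard the singular $2d(\xe)/\eps^2$ term from the gradient. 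Passing $\eps\to0$, then $\delta\to0$, and finally taking the $\theta$-convex combination of the resulting $H$-inequality with the defining equation for $\lambda$ produces exactly the $\F^N$-inequality. This is a localized, pointwise argument in the spirit of boundary-layer test-function constructions for Neumann conditions, and it is what actually transmits the reflection information that your shift-inside argument cannot see.
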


Notice that in Proposition~\ref{OD-MN}, we have used lighter notations but it is clear that
$\gb^x=b(x,t,\alpha)$ for some $\alpha \in A$, $\gl=l(x,t,\alpha)$ and $\gb=(\gb^x,-1)$.  We also
point out that the parameters $\theta$ which satisfy $(\theta \gb^x-(1-\theta)\gamma)\cdot n(x)=0$
for some $x,t,\gb^x$ are bounded away from $0$ because of \Hgamma: indeed 
$$ \theta (\gb^x+\gamma)\cdot n(x)\geq \nu >0\; .$$  
This property implies that the $\F^N$-Hamiltonians are strictly increasing in $p_t$, uniformly \wrt
$x,t,p_x$ and they can be expressed as a ``$u_t+H^N(x,t,D_xu)$''-one since we may divide by $\theta$
inside the $\sup$. This remark which is also true for the various $(\F^k)$ allows a simple checking
of \LOCb. 

We also take this opportunity to recall that, thanks to Lemma~\ref{tgfields}, for $1\leq k\leq N$,
each $\F^k$ satisfies the needed ``good assumptions''. In particular, it can be shown that \HBAHJ
holds for $H^N(x,t,p_x)$, a not completely obvious fact.

In the case of unbounded domains, analogous properties play a key role, in particular for the
checking of Assumption~\LOCa : the presence of a $u_t$-term allows to better localize the equation,
see Chapter~\ref{chap:openpb-partV}. But in order to be true, such properties require suitable
assumptions on the boundary and the direction of reflection.

\begin{proof}
    We have to show that, if $\phi$ is a smooth function and if $(x,t)\in \Man{N}$ is a strict local
    maximum point of $u-\phi$ then 
    $$\theta \phi_t (x,t) -(\theta \gb^x-(1-\theta)\gamma)\cdot D_x\phi(x,t)
    -(\theta \gl +(1-\theta)g) \leq 0 \;,$$
    for any $\gb,\gl,\theta$ satisfying the conditions of Proposition~\ref{OD-MN}.

    \smallskip

    \noindent\textbf{(a)} 
    To do so, we introduce $\lambda \in \R$, defined as the unique solution of the equation
    \begin{equation}\label{eq:def.lambda.neuman}
    \gamma (x,t)\cdot (D_x\phi(x,t)-\lambda n(x))=g(x,t)\;.
    \end{equation}
    Notice that since $\gamma(x,t)\cdot n(x)\geq\nu>0$, $\lambda$ is well-defined.
    Then, we consider the function 
    $$ \Psi_\e:(y,s) \mapsto u(y,s)-\phi(y,s) -(\lambda-\delta)d(y) -\frac{[d(y)]^2}{\e^2}\; ,$$
    for $0<\e,\delta\ll 1$. We recall that, as above, $d$ denotes the distance function to the
    boundary $\domeg$ and that $D_xd(x)=-n(x)$ on $\domeg$; we will use the notation
    $n(x)$ for $-D_xd(x)$ even if $x$ is not on the boundary.

    \smallskip

    \noindent\textbf{(b)}
    We first fix $\delta>0$. If $\e$ is small enough, $\Psi_\e$ has a local maximum point at
    $(\xe,\te)$ and $(\xe,\te)\to (x,t)$ as $\e \to 0$ by the maximum point property of $(x,t)$. 

    If $(\xe,\te)\in \domeg \times (0,\Tf)$, we claim that the $\gamma$-inequality cannot hold.
    Indeed otherwise 
    $$ \gamma(\xe,\te)\cdot \big(D_x \phi (\xe,\te)-(\lambda-\delta)n(\xe)\big)\leq
    g(\xe,\te)\;,$$
    which cannot be valid for $\e$ small enough because of the definition of $\lambda$ and the fact
    that $\delta>0$ and $\gamma (\xe,\te)\cdot n(\xe)\geq \nu >0$. Hence, necessarily the
    $H$-inequality holds at $(\xe,\te)$, as well as for interior points.

    \smallskip

    \noindent\textbf{(c)}
    For any $(\gb,0,\gl) \in \BCL(x,t)$, there exists a control $\alpha$ such that $\gb^x=b(x,t,\alpha)$
    and $\gl=l(x,t,\alpha)$. Choosing $(\gb_\e,0,\gl_\e)\in\BCL(\xe,\te)$ such that 
    $$(\gb_\e,0,\gl_\e)=\big((b(\xe,\te,\alpha),-1),0,l(\xe,\te,\alpha)\big)\;,$$
    we have, as a particular case of the $H$-inequality,
    $$\phi_t (\xe,\te) -\gb_\e^x \cdot \Big(D_x\phi(\xe,\te)-
    (\lambda-\delta)n(\xe)-\frac{2d(\xe)}{\e^2}n(\xe)\Big) - \gl_\e  \leq 0\; .$$
    Taking $(\gb,0,\gl) \in \BCL(x,t)$ and $\theta$ such that the property $(\theta
    \gb^x-(1-\theta)\gamma)\cdot n(x)=0$ holds, we first deduce that $\theta >0$ since
    $\gamma(x,t)\cdot n(x)>0$, and then that $\gb^x\cdot n(x)>0$. 

    Therefore, for $\e$ small enough, $\gb_\e^x \cdot n(\xe)>0$ and we can drop the
    $2d(\xe)(\gb_\e^x \cdot n(\xe))\e^{-2}$ term in the above inequality.
    Letting $\e \to 0$, yields
    $$\phi_t (x,t) -\gb^x \cdot \Big(D_x\phi(x,t)-(\lambda-\delta)n(x)\Big) - \gl  \leq 0\; .$$
    Finally, we let $\delta \to 0$ and the conclusion follows by using the $\theta$-convex
    combination of this inequality with \eqref{eq:def.lambda.neuman}.
\end{proof}

Several remarks after this result. 
\begin{enumerate}
\item[$(i)$] It is clear enough from the proof that the case of {\em sliding boundary conditions},
    \ie $$u_t + \frac{\partial u}{\partial \gamma} =g(x,t) \quad \hbox{on }\domeg\times (0,\Tf)\;,$$
    can be treated exactly in the same way. 
\item[$(ii)$] Less obviously (but this is still easy), the case where there is a control on the
    reflection
    $$\sup_\beta \left\{ \gamma_\beta \cdot D_x u -g_\beta\right\}=0 \quad \hbox{on }\domeg\times (0,\Tf)\;,$$
        where the set of $(\gamma_\beta,g_\beta)$ is convex and
        continuous in $(x,t)$, can also be treated\footnote{In general this set is not convex but we
        can take a convex enveloppe and this does not change the ``sup'' in the boundary
        condition.}. It is easy to check that one has just to repeat the above arguments for $(b,l)$
        and $(\gamma_\beta,g_\beta)$ such that $(\theta \gb^x-(1-\theta)\gamma_\beta)\cdot n(x)=0$.
\item[$(iii)$] But, as it may be expected, the stratified formulation does not bring new results
    as long as all data are continuous.

\item[$(iv)$]
    In the case of a Neumann boundary condition of the form
    $$ 
    \frac{\partial u}{\partial n} =g(x,t) \quad \hbox{on }\domeg\times (0,\Tf)\; ,$$
    the supremum in $\F^N$ is taken on all $(\gb,0,\gl) \in \BCL (x,t)$ such that there exists $\theta
        \in [0,1]$ such that $(\theta \gb^x -(1-\theta)n(x))\cdot n(x)=0$, which reduces to
    $$\theta \gb^x\cdot n(x) -(1-\theta)=0\; .$$
    Therefore, $\gb^x \cdot n(x) \geq 0$ and $\theta= (1+\gb^x\cdot n(x))^{-1}$. Decomposing $\gb^x=
    \gb^{x,\bot}+\gb^{x,\top}$ where $\gb^{x,\bot}$ is the projection of $\gb^x$ on the normal direction and
    $\gb^{x,\top}$ on the tangent space of $\domeg$ at $x$, we have to look at the supremum of
        $$\theta \Big(p_t - \gb^{x,\top} \cdot p_x - \big(\gl+\gb^x\cdot n(x) g(x,t)\big)\Big)$$
    for $\gb^x \cdot n(x) \geq 0$, since $(1-\theta)=\theta \gb^x \cdot n(x)$. 

    As we already mentioned, $\theta$ cannot vanish and the condition reduces to
    $$u_t + \sup_{\gb^x \cdot n(x) \geq 0}\Big(\gb^{x,\top} \cdot D_xu -
        \big(\gl+\gb^x\cdot n(x) g(x,t)\big)\Big)\leq 0\;.$$

    Now, when looking at the reflected trajectory for a control problem, one has to solve an ode like
    $$ \dot X (s) = \gb^x(s) -\1_{\{X(s) \in \domeg\}}n(X(s)).d|k|_s \; ,$$
    where $|k|_s$ is the process with bounded variation which keeps the trajectory inside $\Omegb$
    and the associated cost is 
    $$ \int_0^t \gl(s)ds + \int_0^t g(X(s),s)\1_{\{X(s) \in \domeg\}}d|k|_s \; .$$
    It is easy to see that $d|k|_s= \1_{\{X(s) \in \domeg\}} \gb^x(s).n(X(s))ds$ if $\gb^x(s)\cdot
    n(X(s))\geq 0$ and the cost becomes 
    $$ \int_0^t (\gl(s) +\1_{\{X(s) \in \domeg\}} \gb^x(s).n(X(s)))ds \;,$$
    which is exactly what the stratified formulation is seeing on the boundary.
\end{enumerate}

\subsection{Codimension one discontinuities in the direction of reflection}
\label{subsec:codim.one.neumann}
\index{Neumann and oblique derivatives problems!codimension one discontinuities}

In this section we consider the case of discontinuous directions of reflection $\gamma(\cdot)$, but
restrict ourselves to the codimension one case. For starters, we first address the case of a
``flat'' discontinuity in $\R^2$, and then generalize the result to $\R^N$.

\bigskip

\noindent\textbf{Flat discontinuity on a line in $\R^2$ ---} The situation is depicted in
Fig.~\ref{fig:neuman.flat} below: we assume that $\gamma_1$ and $\gamma_2$ are constant directions
of reflexion, outward pointing and satisfying $\det(\gamma_1,\gamma_2)<0$. We come back on the cases
$\det(\gamma_1,\gamma_2)=0$ or $>0$ just after Proposition~\ref{ODFlat-M1}. The hypotheses on $H$
are the same as in Section~\ref{subsec:classical.oblique}.

Of course here, we just need to study the situation ocurring at $\Man{1}=\{(0,0)\}\times (0,\Tf)$
since the arguments of Proposition~\ref{OD-MN} prove that Ishii subsolutions satisfy the stratified
conditions on $\Man{2}=[(-\infty,0) \times \{0\}\cup (0,+\infty) \times \{0\}] \times (0,\Tf)$.

\begin{figure}[!h]
    \begin{center}
    \includegraphics[width=0.5\textwidth]{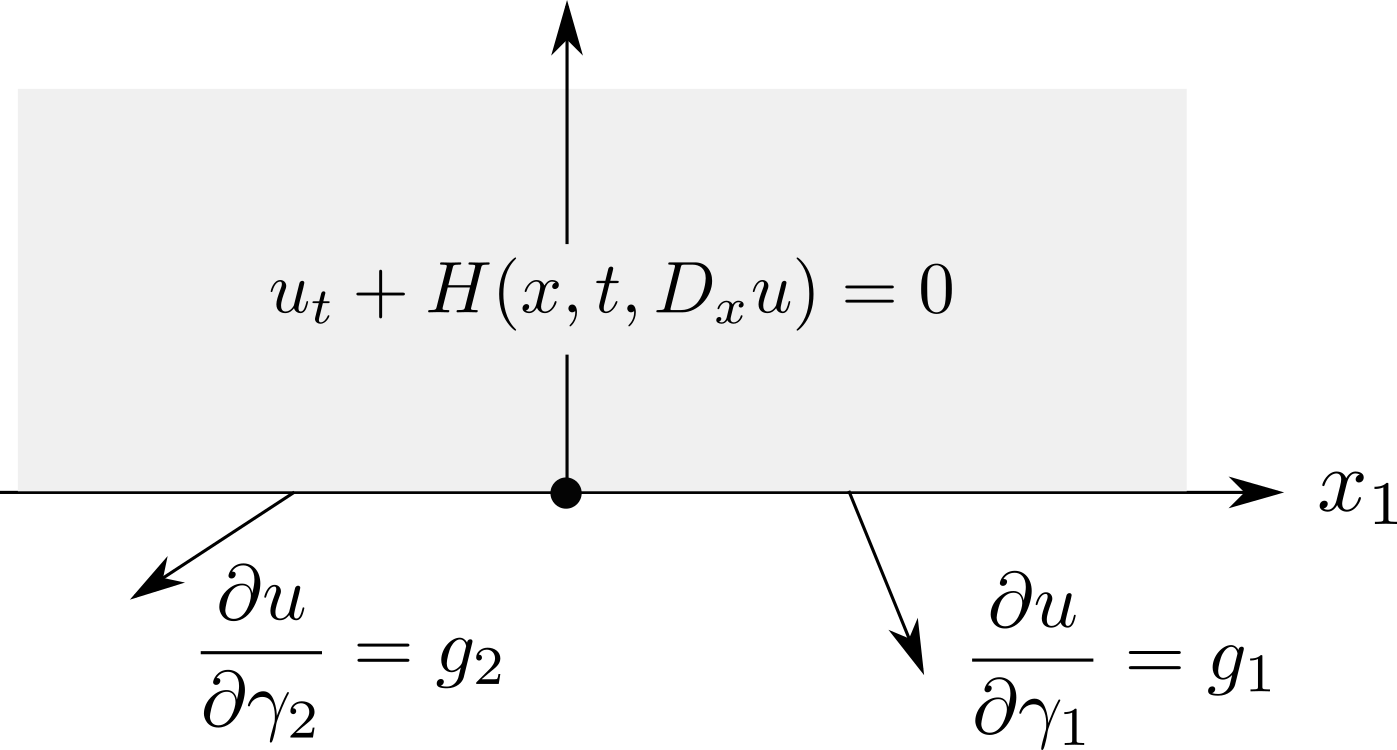}
    \caption{Flat discontinuous oblique derivative problem}
    \label{fig:neuman.flat}    
    \end{center}
\end{figure}

\begin{proposition}\label{ODFlat-M1}
    Assume that \HSBC and \Hgamma hold with constant directions of
    reflections $\gamma_1,\gamma_2$ satisfying moreover $\det(\gamma_1,\gamma_2)<0$. 

    If $u$ is an \usc viscosity subsolution of the above oblique
    derivative problem, it is a stratified subsolution of the problem associated with
    $$\F^1((p_x,p_t))= \sup
    \Big\{\theta_3 p_t -\big(\theta_3 \gl +\theta_1 g_1+\theta_2 g_2\big)\Big\}\;\hbox{on }\Man{1}\; ,$$
    where the supremum is taken on all $(\gb,0,\gl) \in \BCL (0,t)$ such that there exists
    $\theta_1,\theta_2,\theta_3 \in [0,1]$ satisfying $\theta_1+\theta_2+\theta_3=1$ and
    $\theta_1\gamma_1 + \theta_2\gamma_2 -\theta_3 \gb^x=0$.  
\end{proposition}

\begin{proof}
    As we said, we only focus on $\Man{1}=\{(0,0)\}\times(0,\Tf)$.
    Let $\phi$ be a $C^1$-function on $\R$ and $t$ be a strict local maximum
    point of the function $s \mapsto u(0,s)-\phi(s)$. We have to show that, if
    $\theta_1,\theta_2,\theta_3, \gamma_1 ,\gamma_2 , \gb$ satisfy the property
    which is required in Proposition~\ref{ODFlat-M1}, then $$\theta_3
    \phi'(t) -(\theta_3 \gl -\theta_1 g_1-\theta_2 g_2)\leq 0\; .$$ It is worth
    pointing out that we can do that only if $(\gb,0,\gl)$ is in the interior of
    $\BCL(0,t)$, a point that we will use in the proof.

    \medskip

    \noindent\textbf{(a)} Let us fix $\delta>0$ small and let us build $p_\delta\in\R^2$ such that
    \begin{equation}\label{def:pdelta1} 
        p_\delta\cdot \gamma_1=g_1+\delta\quad , \quad p_\delta\cdot \gamma_2=g_2+\delta\;,
    \end{equation}
    noticing that such a $p_\delta$ exists because of the assumptions on $\gamma_1,\gamma_2$.
    Next, we introduce the function
    $$ (y,s) \mapsto u(y,s)-\phi(s) -p_\delta\cdot y -\frac{Ay\cdot y}{\e^2}\; .$$
    where $A$ is a symmetric, positive definite matrix. Additional properties on $A$ will be
    needed and described all along the proof and at the end, we will show that such a matrix exists.

    This function has a maximum point at $(\xe,\te)$ and $(\xe,\te)\to (0,t)$ as $\e \to 0$ by the
    strict local maximum point property of $(0,t)$. Now we examine the different possibilities: if
    $\xe =((\xe)_1,0)= (\xe)_1 e_1$ with $(\xe)_1\geq 0$ and $e_1=(1,0)$, we get
    $$\Big(p_\delta+\frac{2A\xe}{\e^2}\Big)\cdot \gamma_1 -g_1 = \delta + \frac{2\xe}{\e^2}\cdot A\gamma_1
    = \delta + \frac{2(\xe)_1e_1}{\e^2}\cdot A\gamma_1 \;.$$
    Hence, if $A\gamma_1\cdot e_1 \geq 0$, since $\delta>0$ the inequality 
    ``$\displaystyle \frac{\partial u}{\partial \gamma_1}\leq g_1$'' cannot hold. 
    Similarly, if $(\xe)_1\leq 0$, the inequality 
    ``$\displaystyle \frac{\partial u}{\partial\gamma_2}\leq g_2$'' cannot hold provided
    $A\gamma_2\cdot e_1 \leq 0$.

    Therefore, wherever $\xe$ is, the $H$-inequality always holds and, since $(\gb,0,\gl)$ is in the
    interior of $\BCL(0,t)$, for $\e$ small enough, $(\gb,0,\gl) \in \BCL(\xe,\te)$ which implies
    $$  \phi'(\te) -\gb^x \cdot\Big(p_\delta+\frac{2A\xe}{\e^2}\Big)\leq \gl \; .$$

    \noindent\textbf{(b)} Now we examine the $\gb^x$-term, remarking that $\theta_3$ cannot be $0$ and
    using that $A$ is symmetric:
    \begin{align}
        -\gb^x \cdot \frac{2A\xe}{\e^2}= & -\frac1{\theta_3}(\theta_1\gamma_1+\theta_2\gamma_2)\cdot 
        \frac{2A\xe}{\e^2}\\
         = & - \frac1{\theta_3}(\theta_1A\gamma_1+\theta_2 A\gamma_2)\cdot  \frac{2\xe}{\e^2}\;.
     \end{align}
    A natural constraint on $A$ is $A\Gamma = -e_2$ where $\Gamma:=\theta_1\gamma_1+\theta_2 \gamma_2$
    since in this case 
    $$ (\theta_1A\gamma_1+\theta_2 A\gamma_2)\cdot
     \frac{2\xe}{\e^2}= A\Gamma\cdot \frac{2\xe}{\e^2}= -e_2 \cdot \frac{2\xe}{\e^2}=
     -\frac{2(\xe)_2}{\e^2}\leq 0\; ,$$
    and therefore the term $\displaystyle -\gb^x \cdot \frac{2A\xe}{\e^2}$ is nonnegative.
 
    In this case, taking the convex combination of the inequality $\phi'(\te) -\gb^x \cdot p_\delta
    \leq \gl $ with those of \eqref{def:pdelta1} gives the result.

    \medskip

    \noindent\textbf{(c)} In order to conclude the proof, we have to investigate the existence of
    a matrix $A$ satisfying the above requirements. To do so, we introduce first a symmetric matrix
    $A^{-1}$ under the form 
    $$A^{-1}=\left(\begin{array}{cc}\alpha & -\Gamma_1 \\ -\Gamma_1 & -\Gamma_2\end{array}\right)$$
    for some suitable parameter $\alpha >0$, its second column being imposed by the property
    $A\Gamma=-e_2$. Notice that 
    $$ -\Gamma_2=-\Gamma \cdot e_2=-(\theta_1\gamma_1+\theta_2 \gamma_2)\cdot e_2 >0\; ,$$
    by the properties of $\gamma_1, \gamma_2$. Therefore if $\alpha>0$ is large enough,
    $\det(A^{-1})>0$ which implies that $A^{-1}$ is symmetric, positive definite.
    We deduce that $A$ has the form
    $$A= \frac{1}{\det(A^{-1})}\left(\begin{array}{cc}-\Gamma_2 & \Gamma_1 \\ \Gamma_1 & \alpha
    \end{array}\right)\;.$$
    Now, notice that 
    $$\begin{aligned}
        A\gamma_1\cdot e_1 = \gamma_1 \cdot A e_1 &=
    \frac{1}{\det(A^{-1})}\big(-\gamma_{1,1}\Gamma_2+\gamma_{1,2}\Gamma_1\big)\\
        &= \frac{1}{\det(A^{-1})}\det(\Gamma,\gamma_1)\\[2mm]
        &= \theta_2\det(A)\det(\gamma_2,\gamma_1)\;.
    \end{aligned}$$
    So, since $\det(A)>0$, the sign of $A\gamma_1\cdot e_1$ is the same as the sign of
    $\det(\gamma_2,\gamma_1)$.  Similarly, the sign of $A\gamma_2\cdot e_1 = \gamma_2 \cdot A e_1$
    is the same as the one of $\theta_1\det(\gamma_1,\gamma_2)$.

    Therefore the matrix $A$ we look for exists (taking $\alpha >0$ large enough) provided
    $\det(\gamma_1,\gamma_2)\leq 0$, which completes the proof.
\end{proof}

Let us comment on the complementary situation $\det(\gamma_1,\gamma_2)\geq0$ :

\noindent $(i)$ in the case $\det(\gamma_1,\gamma_2)=0$, we can assume \wlg that
            $\gamma_1=\gamma_2$ (typically: $\gamma_1=\gamma_2= -e_2$). This situation can be
            treated by the methods of the proof of Proposition~\ref{prop:DuIs1} below. Here the
            discontinuity is just in the cost $g_1,g_2$ and it is clear that, if $g_1<g_2$, $\F^1$
            just reduces to 
            $$\F^1((p_x,p_t))= \sup \left\{\theta_3 p_t -(\theta_3 \gl +\theta_1 g_1)
            \right\}\;\hbox{on }\Man{1}\; ,$$
            where the supremum is taken on all $(\gb,0,\gl) \in \BCL (0,t)$ such that there exists
            $\theta_1,\theta_3 \in [0,1]$ such that $\theta_1+\theta_3=1$ and $\theta_1\gamma_1
            -\theta_3 \gb^x=0$. Indeed, the supremum is clearly achieved for the lowest cost $g_1$ and
            the proof of Proposition~\ref{prop:DuIs1} shows how to deduce the $\F^1$-inequality from
            the $\F^2$-one on $[(0,+\infty)\times \{0\}]\times (0,\Tf)$.

\noindent $(ii)$ If we assume that $\gamma_1\cdot e_2=\gamma_2\cdot e_2=-1$, the condition
            $\det(\gamma_1,\gamma_2)\leq0$ reduces to the tangential components inequality
            $\gamma_{2,1}\leq\gamma_{1,1}$. Fig.~\ref{fig:neuman.flat.config} below shows different
            types of situation where $\gamma_1,\gamma_2$ and their oppposite---in dashed lines---are
            shown, those opposites being involved in the reflexion process that occurs on the
            boundary.

            From the first two examples it could be guessed that the trajectories in the good case
            are more of a ``regular'' type. However, the other examples show that we can also allow
            some cases where the reflexions go in the same direction, provided some ``squeezing''
            effect holds.  

\begin{figure}[!h]
    \begin{center}
    \includegraphics[width=0.75\textwidth]{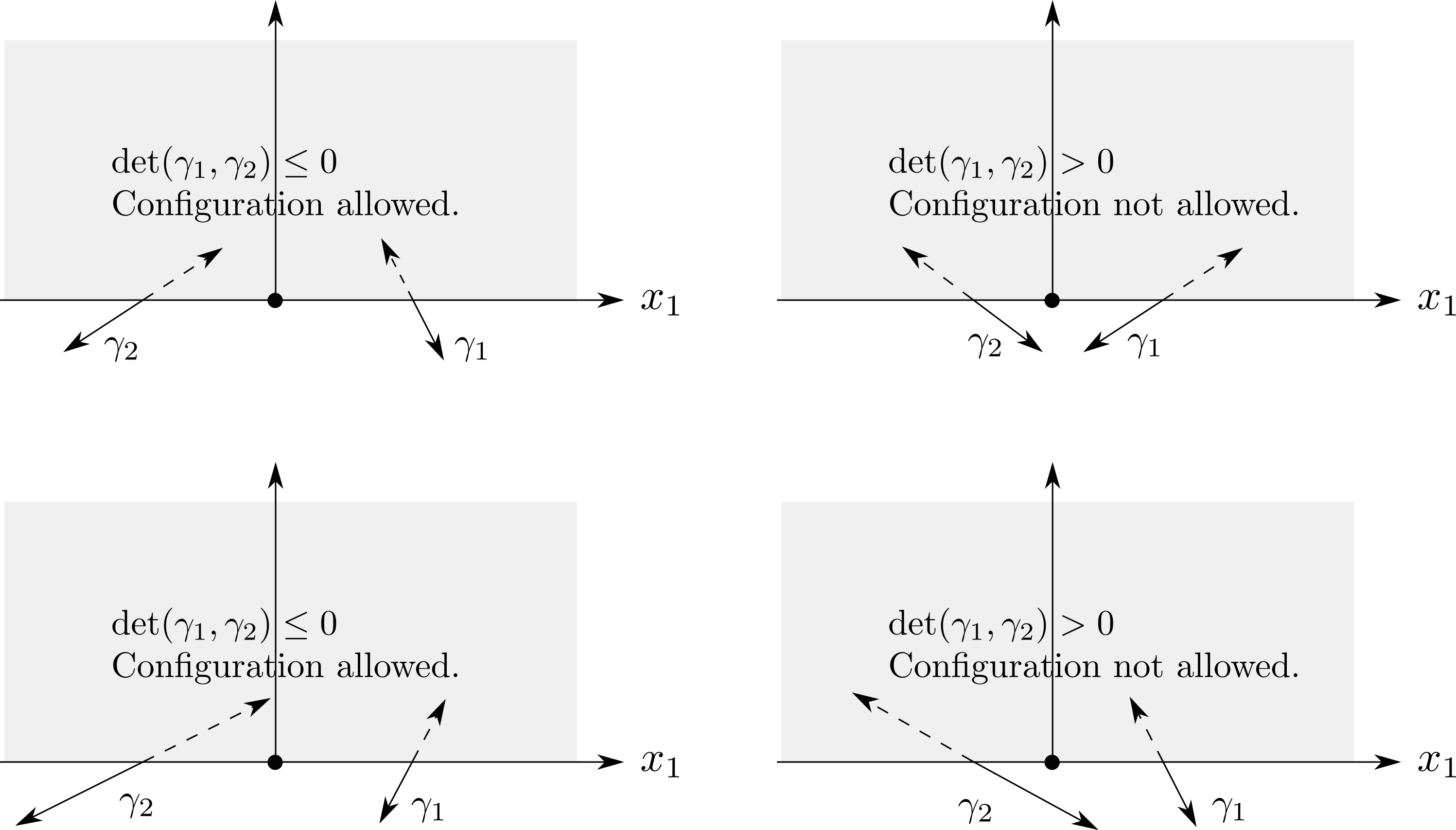}
    \caption{Configurations for a discontinuous oblique derivative problem}
    \label{fig:neuman.flat.config}    
    \end{center}
\end{figure}

\

\bigskip

\noindent\textbf{General codimension one discontinuities in $\R^N$ ---} We consider now the
case of a general stratified domain $\Omegb\times[0,\Tf)$, \cf Definition~\ref{def:stratdom} but we
make some specific assumptions on the structure of the stratification:
\begin{enumerate}
    \item[$(i)$] the set $\Omega$ is a bounded, $C^{1,1}$-smooth domain;
    \item[$(ii)$] we decompose $\domeg$ as $\tMan{N-1}_1\cup\H\cup\tMan{N-1}_2$ where
        $\H=\tMan{N-2}$;
    \item[$(iii)$] the stratification of $\Omegb\times(0,\Tf)$ is given by
        $\tMan{N-1}_{1,2}\times(0,\Tf)$ and $\H\times(0,\Tf)$.
\end{enumerate}
The notation $\H$ for $\tMan{N-2}$ is for the fact that $\H$ plays the role of
an hyperplane in $\domeg$, separating the two components $\Man{N-1}_{1,2}$ of $\domeg$. 

On $(\tMan{N-1}_{1}\cup\tMan{N-1}_2)\times(0,\Tf)$, we get the oblique derivative boundary condition
$$ \frac{\partial u}{\partial \gamma_i}=g_i(x,t) \quad \hbox{on }\tMan{N-1}_i\times(0,\Tf) \; ,$$
where we assume that the directions of reflections $\gamma_i$ depend only on $x$, not on $t$, and
are Lipschitz continuous function in $\R^N$, while the costs $g_i$ are continuous. Here again, the
hypotheses on $H$ are the same as in Section~\ref{subsec:classical.oblique}.

In this framework, we have the
\begin{proposition}\label{OD-gamma-disc}\emph{--- Comparison for discontinuous Neumann conditions.}\smsp
    Assume that \HSBC and \Hgamma hold.
    Assume moreover that
    \begin{enumerate}
        \item[$(i)$] for any $x \in \H$, the projections of $\gamma_1$ and $\gamma_2$ on 
            $(T_x \H)^\bot$ are linearly independent;
    \item[$(ii)$] for any $\theta_1,\theta_2 \geq 0$, there exists
            a neighborhood $\VV$ of $\H$ and a function $\psi:\VV \to [0,+\infty)$ such that
    \begin{enumerate}
        \item[$(a)$] $\psi(x)=0$ if $x \in \H$, $\psi(y) >0$ if $y \in \domeg\setminus \H$;
        \item[$(b)$] $\psi$ is Lipschitz continuous in $\VV$ and $C^1$ in $\VV\setminus \H$;
        \item[$(c)$] for any $i=1,2$ and $y\in \VV\cap\tMan{N-1}_i$,
        $ D_x \psi (y)\cdot \gamma_i (y)\geq 0\,;$
        \item[$(d)$] for any $y\in\VV\setminus\H$,
            $ D_x \psi (y)\cdot\big(\theta_1\gamma_1 (y)+\theta_2 \gamma_2 (y)\big) \geq 0\;.$
    \end{enumerate}
    \end{enumerate}
    Then, any \usc viscosity subsolution of the above oblique derivative problem is a stratified
    subsolution of the problem associated with 
    $$\F^{N-1}(x,t,(p_x,p_t))= \sup \Big\{\theta_3 p_t
    +\big(\theta_1\gamma_1 + \theta_2\gamma_2 -\theta_3 \gb^x\big) \cdot p_x -\big(\theta_3 \gl +\theta_1
    g_1+\theta_2 g_2\big)\Big\}\;\hbox{on }\H\times(0,\Tf)\; ,$$ 
    where the supremum is taken on all $(\gb,0,\gl) \in \BCL (x,t)$ such that there exists
    $\theta_1,\theta_2,\theta_3 \in [0,1]$ such that $\theta_1+\theta_2+\theta_3=1$ and
    $\theta_1\gamma_1 + \theta_2\gamma_2 -\theta_3 \gb^x\in T_{x}\H$.

    As a consequence, we have a comparison result for classical viscosity sub and supersolution of
    the oblique derivative problem and therefore a uniqueness result for this problem.
\end{proposition}

In order to clarify the statement of this result, let us comment it in view of the example in
dimension $2$ treated at the begining of the section.

Two main ingredients were important in this $2$-d example. First, we had to solve \eqref{def:pdelta1}
which turns out to be a linear system in $p_\delta$, solvable provided $\gamma_1$ and $\gamma_2$ are
linearly independent. In the general case we assume a little bit more---but we are in a slightly
more complicated framework---because we want $p_\delta$ to be in $(T_x \H)^\bot$, a $2$-dimensional
vector space. Therefore, we require that the projections of $\gamma_1$ and $\gamma_2$ on
this vector space are linearly independent.

Next ingredient is related to function $\psi$, which plays the role of a distance function. More
precisely, we have in mind a distance to $\H$, under the form $\psi(y)=\tilde d (y,\H)$. Of course,
the most usual distance cannot satisfy all our requirements and therefore we need a special distance
$\tilde d(\cdot)$. Like in the $2$-d case, we replace the norm of $y$ by $\psi(y):=(Ay\cdot
y)^{1/2}$ where $A$ actually depends on $\theta_1, \theta_2$. We immediately point out that, if
$\psi$ is not $C^1$ on $\H$, $\psi^2$ is at least $C^1$ on $\VV$.

The formulation we propose is the most intrinsic one. One could think that it is possible to reduce
the proof to a simple flat situation like in $\R^2$ through a change of variable. However, this
change of coordinates interferes in a complicated way with the condition on the directions
$(\gamma_1,\gamma_2)$. Therefore, on one hand, this result indicates what is needed to show that a
classical viscosity subsolution is a stratified subsolution and, on the other hand, the above
example shows how to check the assumptions.

\begin{proof}[Sketch of proof] Since the proof of the result follows the $2$-d case, let us just
    mention the adaptations: we argue locally around $(x,t) \in \H\times(0,\Tf)$ and use the
    function
    $$ (y,s) \mapsto u(y,s)-\phi(y,s) -p_\delta\cdot y -\frac{[\psi (y)]^2}{\e^2}\; ,$$
    where $p_\delta \in (T_x \H)^\bot$ solves 
    $$\gamma_i(x)\big(D_x\phi(x,t)+p_\delta\big)=g_i(x,t) \quad \hbox{for  }i=1,2\; .$$
    With the assumptions on $\psi$ and $\gamma_1,\gamma_2$, the proof readily follows the arguments
    of the $2$-d case.
\end{proof}

\subsection{The Dupuis-Ishii configurations}
\index{Boundary conditions!oblique derivative}\index{Neumann and oblique derivatives problems!Dupuis-Ishii configurations}

In this section, we treat the Dupuis-Ishii configurations mentioned at the beginning of
Section~\ref{sec:Neumann}. We briefly recall that the first configuration corresponds to a smooth
direction of reflection in a stratified domain while in the second case, non-smooth directions of
reflection are considered.

In both configurations, we recall that we assume \HSBC holds, see
page~\pageref{page:simplifications}. We also recall that treating the initial data $u=u_0$ in
\eqref{pb:neumann.initial} is not a problem, as shown in Proposition~\ref{init-Neumann}.

\bigskip

\noindent\textbf{Configuration I ---}
\emph{Smooth reflections in a stratified domain.}

The key assumption here is \Hgam[\domeg], hence $\gamma$ is Lipschitz continuous and $g$ is continuous
on $\domeg\times [0,\Tf]$.

\begin{proposition}\label{prop:DuIs1}
    Assume that \HSBC and \Hgam[\domeg] hold. Then, any \usc viscosity subsolution of
    \eqref{standardHJB}-\eqref{NP} is also a stratified subsolution of
    \eqref{standardHJB}-\eqref{NP}. Therefore, a comparison result holds for viscosity subsolutions
    $u$ and supersolutions $v$ of \eqref{standardHJB}-\eqref{NP} 
 \end{proposition}

\begin{proof}
    Let us first notice that the initial data does not cause any problem for the comparison: as a
    consequence of Proposition~\ref{init-Neumann} with $G(x,p)= \gamma(x,0)\cdot p-g(x,0)$, it
    follows that
    $$u(x,0) \leq u_0(x)\leq v(x,0) \quad \hbox{on  }\Omegb\;.$$
    Hence the whole result easily follows from the first part, \ie from the fact that a standard
    viscosity subsolution is a stratified subsolution.

    In order to prove this fact, we first remark that Proposition~\ref{OD-MN} provides the result on
    $\Man{N}$, namely $\F^N(x,t,(D_xu,D_tu))\leq 0$, so that we are left with proving that
    similarly, for any $1\leq k\leq (N-1)$, 
    $$\F^k(x,t,(D_xu,D_tu))\leq 0 \quad \hbox{on }\Man{k}\;.$$
    Here of course,
    $$\F^k(x,t,(p_x,p_t))= \sup \Big\{\theta p_t -\big(\theta \gb^x-
    (1-\theta)\gamma\big)\cdot p_x -\big(\theta \gl +(1-\theta)g\big)\Big\}\;\hbox{on }\Man{k}\;,$$
    where the supremum is taken on all $(\gb,0,\gl) \in \BCL (x,t)$ such that there exists $\theta \in
    [0,1]$ satisfying $\theta \gb^x-(1-\theta)\gamma \in T_{x}\tMan{k}$.

    In order to do so, we argue locally around a point $(\xb,\tb) \in \Man{k}$ and assume \wlg that
    $\Man{k}\cap B((\xb,\tb),r) =((\xb,\tb)+V_k)\cap B((\xb,\tb),r)$
    for some $r>0$, where $V_k$ is a $k$-dimensional vector space. 

    Notice that any point in $\Man{k}$ necessarily belongs also to $\overline{\Man{N}}$. So, we will
    use the inequality $\F^N\leq0$ to get the result by approaching $\Man{k}$ from $\Man{N}$.

    Let $(\xb,\tb)\in \Man{k}\cap \overline{\Man{N}_1}$ where $\Man{N}_1$ is one of the connected components
    of $\Man{N}$ and let $((\xe,\te))_\e$ be a sequence of points in $\Man{N}_1$ converging to
    $(\xb,\tb)$. By choosing possibly a smaller $r$ we have 
    $((\xe,\te)+V_k)\cap B((\xb,\tb),r)\subset \Man{N}_1$ and 
    $$ \F_\e^k(x,t,(D_xu,D_tu))\leq 0 \quad \hbox{on  }((\xe,\te)+V_k)\cap B((\xb,\tb),r)\; ,$$
    where $\F_\e^k$ is defined in the same way as $\F^k$, noticing that $T_{(x,t)}\Man{k}=V_k$.
    Indeed, this inequality is an immediate consequence of the inequality $\F_\e^k\leq \F^N$
    resulting from the fact that $T_{(x,t)}\Man{k} \subset T_{(x,t)}\Man{N}$ by the \TFS property.

    In order to obtain the result, we just have to let $\e$ tend to $0$: the convergence of
    $\F_\e^k$ to $\F^k$ comes from the normal controllability assumption, together with the
    continuity properties of $\gb, \gl$ and $\gamma$; indeed, on one hand, these properties gives the
    (uniform in $\e$) continuity of the $\F_\e^k$ and, on the other hand, if $((\gb^x,-1),0,\gl)\in
    \BCL(\xb,\tb)$ and $0\leq \theta \leq 1$ are such that $(\theta \gb^x-(1-\theta)\gamma(\xb), -\theta)\in
    T_{(\xb,\tb)}\Man{k}$, then there exists $((\gb_\e^x,-1),0,\gl_\e)\in \BCL(\xe,\te)$ and $0\leq
    \theta_\e \leq 1$ such that $((\gb_\e^x,-1),0,\gl_\e)\to (\gb^x,-1),0,\gl)$, $\theta_\e \to \theta$ and
    $(\theta_\e \gb_\e^x-(1-\theta_\e)\gamma(\xe,\te), -\theta)\in  (\xe,\te)+V_k$.

    We end up with the desired inequality $\F^k(\xb,\tb,(D_xu,D_tu))\leq0$ which completes the
    proof.
\end{proof}

\begin{remark}
    It is worth pointing out that the above proof is valid also for $(x,t)$-dependent stratifications and
    directions of reflections---which was not the case in the papers of Dupuis and Ishii. Moreover,
    the proof is rather simple and it does not require much assumptions. 
\end{remark}

\bigskip

\noindent\textbf{Configuration II ---}
\emph{Discontinuous reflections in a stratified domain.}

\medskip

As in Section~\ref{subsec:codim.one.neumann} in the $\R^N$-case, we assume here that $\Omegb\times
[0,\Tf)$ is a stratified domain with time-independent stratification with $\Omega\times
\R=\tMan{N}\times\R$. However, the boundary stratification is general here: 
$$\domeg=\bigcup_{k=0}^{N-1}\tMan{k}\;,$$
and we assume \Hgamma, so that $\Man{N}=\partial\Omega\times\R=\cup_{i\in I^N} \tMan{N-1}_i \times \R$ and on each 
$\Man{N-1}_i$, we have an oblique derivative boundary condition
$$ \frac{\partial u}{\partial \gamma_i}=g_i(x,t) \quad \hbox{on }\tMan{N-1}_i \times (0,\Tf)\; ,$$
where the $\gamma_i$ is a Lipschitz continuous function, satisfying \eqref{assump:NP} and $g_i$ is a continuous function
on $\tMan{N-1}_i \times [0,\Tf)$.

The statement of the result will remind to the reader that of Proposition~\ref{OD-gamma-disc}.

\begin{proposition}\label{OD-gamma-omega-disc}
    We assume that \HSBC and \Hgamma hold. Moreover, we assume that 
    for any $k=0,..,(N-1)$,
    \begin{enumerate}
    \item[$(i)$] for any $x \in \tMan{k}$ and $0<\delta\ll1$, 
        there exists a unique solution $p_\delta\in (T_x \tMan{k})^\bot$ to the system
        $$ \gamma_i\cdot p_\delta =g_i(x,t)+\delta\;, \quad i\in I(x):=
            \{i:\ x\in  \overline{\Man{N-1}_i}\}\,;$$
    \item[$(ii)$] for any $(\theta_i)_{i\in I(x)}$ where $0\leq\theta_i\leq1$ for any $i$, 
        there exists a neighborhood $\VV$ of $\tMan{k}$ and a function $\psi:\VV \to [0,+\infty)$ such that
        \begin{enumerate}
            \item[$(a)$] $\psi(x)=0$ if $x \in \tMan{k}$, $\psi(y) >0$ if $y \notin \tMan{k}$;
            \item[$(b)$] $\psi$ is Lipschitz continuous in $\VV$ and $C^1$ in $\VV\setminus
                \tMan{k}$;
            \item[$(c)$] for any $y\in \VV\cap\tMan{N-1}_i$, $D_x \psi (y)\cdot \gamma_i (y)\geq 0$;
            \item[$(d)$] for any $y\in\VV\setminus\tMan{k}$, $D_x \psi (y)\cdot \left(\sum_{i\in I(x)}\theta_i\gamma_i
                (y)\right) \geq 0$.
        \end{enumerate}
    \end{enumerate}
    Then, any \usc viscosity subsolution of the above oblique derivative problem is a stratified
    subsolution of the problem associated to 
    $$\F^{k}(x,t,(p_x,p_t))= \sup \left\{\bar \theta p_t
    +\left(\gamma -\bar \theta \gb^x\right) \cdot p_x -(\bar \theta \gl + g)\right\}\;\hbox{on }\Man{k}\; ,$$ 
    where $$\gamma :=\sum_{i\in I(x)}\theta_i\gamma_i (x)\; ,\; \;  g :=\sum_{i\in I(x)}\theta_i g_i (x,t)$$
    and $(\theta_i)_{i\in I(x)}$ is such that $\theta_i \geq 0$  for all $i$. The supremum is taken
    on all $(\gb,0,\gl) \in \BCL (x,t)$ and $(\theta_i)_{i\in I(x)},\bar \theta$ such that $\bar \theta
    + \sum_{i\in I(x)}\theta_i=1$ and $ \gamma-\bar \theta \gb^x\in T_{(x,t)}\Man{k}$.

    As a consequence, a comparison result holds for classical viscosity sub and supersolution of
    the oblique derivative problem, which implies also a uniqueness result for this problem.
\end{proposition}

The comment we could make after the statement of Proposition~\ref{OD-gamma-omega-disc} are the same as those after Proposition~\ref{OD-gamma-disc}, so we drop them and we leave the easy proof of Proposition~\ref{OD-gamma-omega-disc} to
the reader. More interesting are examples which we consider now.

\subsection{Applications to domains with corners}

\noindent\textbf{A. --- Corners in the plane.}

We start by a standard evolution problem in $2$-d described by
Fig.~\ref{fig:neuman.standard}

\begin{figure}[!h]
    \begin{center}
    \includegraphics[width=0.9\textwidth]{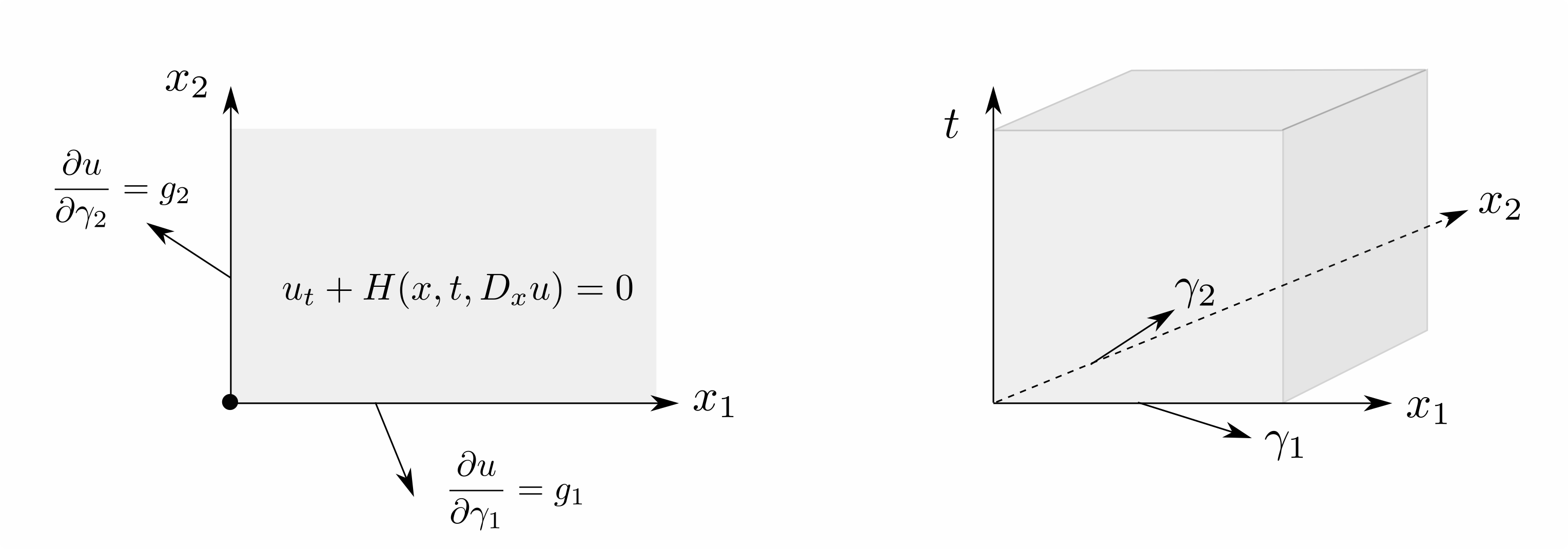}
    \caption{Standard Neuman problem with corner}
    \label{fig:neuman.standard}    
    \end{center}
\end{figure}

Here we have
$$\begin{aligned}
    \Man{3} &= \{(x_1,x_2);\ x_1>0,\ x_2>0\}\times (0,\Tf)\;,\\
    \Man{2} &= \{(x_1,x_2);\ x_1=0,x_2>0 \hbox{ or }x_1>0,x_2=0\}\times (0,\Tf)\;
    ,\\
    \Man{1} &= \{(0,0)\} \times (0,\Tf)\; .
\end{aligned}$$

Of course the analysis of the previous section gives the stratified formulation
on all the boundary except on $\Man{1}$, i.e at the points $((0,0),t)$ for $t\in (0,\Tf)$, which
require a specific treatment. 

For $\Man{1}$, the answer is given by the following result in which we denote by 
$\BCL$ the set of dynamic-discount factor and cost related to $H$. We also point out that, in order to simplify, we argue as if $\gamma_1, \gamma_2,g_1,g_2$ were constants but the reader can check that all the arguments work if they are continuous functions of $x$ and $t$.
 
\begin{proposition}\label{ODCorner-M1}
    We assume that 
    \begin{enumerate}
        \item[$(i)$] either $\gamma_1\cdot e_1=\gamma_2 
\cdot e_2 = 0$ 
\item[$(ii)$] or $\gamma_1\cdot e_1$, $\gamma_2 
\cdot e_2$ have the same strict sign and $\det(\gamma_1,\gamma_2)<0$.
\end{enumerate}
If $u$ is a viscosity 
subsolution of the above oblique derivative problem, it is a stratified subsolution of the problem with
$$\F^1((p_x,p_t))= \sup \left\{\theta_3 p_t -(\theta_3 \gl -\theta_1 g_1-\theta_2 g_2)\right\}\;\hbox{on }\Man{1}\; ,$$
where the supremum is taken on all $(\gb,0,\gl) \in \BCL (0,t)$ such that there exists
    $\theta_1,\theta_2,\theta_3 \in [0,1]$ such that $\theta_1+\theta_2+\theta_3=1$ and $\theta_1\gamma_1 + \theta_2\gamma_2 -\theta_3 \gb^x=0$.
\end{proposition}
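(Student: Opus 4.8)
\textbf{Plan of proof for Proposition~\ref{ODCorner-M1}.} The strategy is exactly the one used for Proposition~\ref{OD-MN}: given a smooth test-function $\phi$ on $\Man{1}$ (equivalently, a smooth function of $t$ alone) and a strict local maximum point $\tb$ of $t\mapsto u(0,t)-\phi(t)$ on $\Man{1}$, we must show that for every $((b^x,-1),0,l)\in\BCL((0,0),\tb)$ lying in the interior of $\BCL$ and every triple $\theta_1,\theta_2,\theta_3\in(0,1)$ with $\theta_1+\theta_2+\theta_3=1$ and $\theta_1\gamma_1+\theta_2\gamma_2-\theta_3 b^x=0$, one has
$$\theta_3\phi'(\tb)-(\theta_3 l-\theta_1 g_1-\theta_2 g_2)\leq 0\,.$$
By continuity of $H$ we may assume $((b^x,-1),0,l)\in\BCL((x,t),t)$ for all $(x,t)$ near $((0,0),\tb)$. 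The relation $\theta_1\gamma_1+\theta_2\gamma_2=\theta_3 b^x$ expresses $b^x$ as a positive combination of the two reflection directions, which is the analogue of the tangency condition $(\theta b^x-(1-\theta)\gamma)\cdot n(x)=0$ in the smooth case but now with two constraints (one per face).

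The key step is the construction of the right test-function. Following the proof of Proposition~\ref{OD-MN} we first solve a two-by-two linear system for multipliers $\lambda_1,\lambda_2\in\R$ attached to the two faces $F_1=\{x_1=0,x_2>0\}$ and $F_2=\{x_1>0,x_2=0\}$: we want $\lambda_1,\lambda_2$ such that the perturbation $p':=D_x\phi+\lambda_1 e_1+\lambda_2 e_2=\lambda_1 e_1+\lambda_2 e_2$ satisfies simultaneously $\gamma_1\cdot p'=g_1$ and $\gamma_2\cdot p'=g_2$. This system is solvable precisely under the hypotheses of the proposition: in case $(i)$ ($\gamma_1\cdot e_1=\gamma_2\cdot e_2=0$, i.e. each $\gamma_i$ is normal to its own face) it is diagonal, and in case $(ii)$ the determinant of the coefficient matrix is (up to sign) $\det(\gamma_1,\gamma_2)\neq 0$. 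We then use the penalized test-function
$$(y,s)\mapsto u(y,s)-\phi(s)-\frac{1}{\e}\psi_\e(y)+ (\lambda_1-\delta)\,y_1+(\lambda_2-\delta)\,y_2\,,$$
where $\psi_\e(y)$ is a smooth, strictly convex regularization of $\mathrm{dist}(y,\Omega^c)^2$ adapted to the corner (of the same type as the function $\psi_\e$ in the proof of the Dupuis--Ishii-type result, built from $\eta^2$-cones around each face), and $\delta>0$ is a small parameter to be sent to $0$ after $\e$. The strict maximum property at $\tb$ forces a maximizer $(y_\e,s_\e)\to((0,0),\tb)$. The sign conditions on the $\gamma_i$ (together with $\theta_3 b^x=\theta_1\gamma_1+\theta_2\gamma_2$, which forces $b^x$ to point into the quadrant in the appropriate sense) guarantee, exactly as in Proposition~\ref{OD-MN}, that: if $y_\e\in F_1$ the $\gamma_1$-viscosity inequality cannot hold because of the $\lambda_1$ choice and the $-\delta$ shift; similarly on $F_2$ with $\gamma_2,\lambda_2$; and at the corner the same kind of computation (using Lipschitz continuity of the data and $|y_\e|^2/\e=o(1)$) rules out both boundary inequalities. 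Hence the interior $H$-inequality must hold at $(y_\e,s_\e)$, which after dropping the nonnegative $-\frac1\e D_y\psi_\e\cdot b^x$ term (here one checks, as in the Dupuis--Ishii argument, that $D_y\psi_\e\cdot b^x\le o(1)$ using that $b^x$ points inward and the cone structure of $\psi_\e$) and letting $\e\to0$, then $\delta\to0$, gives
$$\phi'(\tb)-b^x\cdot p'-l\le 0\,,\qquad\text{with }p'=\lambda_1 e_1+\lambda_2 e_2\,.$$
Finally we take the convex combination of this inequality (weight $\theta_3$) with the two scalar identities $\gamma_i\cdot p'=g_i$ (weights $\theta_i$) and use $\theta_3 b^x=\theta_1\gamma_1+\theta_2\gamma_2$: the $p'$-terms cancel and we are left with $\theta_3\phi'(\tb)-\theta_3 l+\theta_1 g_1+\theta_2 g_2\le 0$, which is the claim. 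Taking the supremum over the admissible $(b^x,0,l)$ and $(\theta_1,\theta_2,\theta_3)$ yields $\F^1\le0$ on $\Man{1}$; combined with the previous section's identification of $\F^2$ on the two faces and the comparison theorem for stratified solutions (Chapter~\ref{chap-StratSC}), we get the announced comparison result for classical sub and supersolutions.

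The main obstacle is the corner construction, i.e.\ the two points where cases $(i)$ and $(ii)$ are genuinely needed: (a) solvability of the $2\times2$ system for $(\lambda_1,\lambda_2)$, which is where the condition $\det(\gamma_1,\gamma_2)<0$ (or the diagonal structure in case $(i)$) enters; and (b) the verification that $D_y\psi_\e\cdot b^x$ has the right sign at an interior maximizer near the corner, which forces $\psi_\e$ to be built so that its gradient points outward along \emph{every} direction of the exterior cone at $(0,0)$ — here the compatibility of the sign of $\gamma_i\cdot e_i$ with the orientation $\det(\gamma_1,\gamma_2)<0$ is what ensures that $b^x=\theta_3^{-1}(\theta_1\gamma_1+\theta_2\gamma_2)$ indeed points into the quadrant $\{x_1>0,x_2>0\}$, so that the penalization term can be discarded. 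Everything else is a routine adaptation of the one-face proof of Proposition~\ref{OD-MN} together with the Dupuis--Ishii-style choice of $\psi_\e$ already used earlier in this chapter.
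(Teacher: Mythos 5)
Your overall skeleton matches the paper's: a linear corrector $p_\delta$ solving the two boundary equations $p_\delta\cdot\gamma_i=g_i+\delta$, a penalization localizing the maximum at the corner, exclusion of the two oblique-derivative inequalities on the faces, use of the interior $H$-inequality, discarding of the penalization gradient, and the final $\theta$-convex combination. But the technical heart of the argument is missing. In the paper the penalization is the quadratic form $Ay\cdot y/\e^2$ for a symmetric, positive definite matrix $A$ required to satisfy \emph{three} sign conditions simultaneously: $A\gamma_1\cdot e_1\geq0$ and $A\gamma_2\cdot e_2\geq0$ (so the added gradient on each face reinforces, rather than spoils, the violation of the corresponding oblique-derivative inequality), and $\theta_1A\gamma_1+\theta_2A\gamma_2\leq0$ componentwise (so that $-b^x\cdot A\xe\geq0$ for $\xe$ in the closed first quadrant, allowing the penalization term to be dropped). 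Hypotheses $(i)$ and $(ii)$ are precisely the conditions under which such an $A$ exists (via $A^{-1}e_1=-\lambda_1\gamma_2$, $A^{-1}e_2=-\lambda_2\gamma_1$); they are \emph{not} merely the solvability condition for the $2\times2$ system defining $p_\delta$, which only needs $\det(\gamma_1,\gamma_2)\neq0$. You replace this construction by an unspecified Dupuis--Ishii-type cone function $\psi_\e$ and never verify the analogous sign properties, so the place where the hypotheses actually do their work is absent from your argument.

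Worse, the justification you offer for discarding the penalization term rests on a false claim: $b^x=\theta_3^{-1}(\theta_1\gamma_1+\theta_2\gamma_2)$ does \emph{not} point into the quadrant $\{x_1>0,x_2>0\}$. For instance, with normal reflections ($\gamma_1=(0,-1)$, $\gamma_2=(-1,0)$, case $(i)$) one gets $\theta_3b^x=(-\theta_2,-\theta_1)$, which points into the opposite quadrant -- as it must, since it is the reflection mechanism, not the drift, that keeps the trajectory at the corner. What is actually needed is that $Ab^x$ lies in the negative orthant, which is exactly the third condition on $A$ above and holds with $A=\mathrm{Id}$ only in case $(i)$. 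Since your step (b) is based on this false premise, the argument for $D_y\psi_\e\cdot b^x\le o(1)$ does not go through, and the proof has a genuine gap at its central point.
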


\begin{proof}
    Instead of trying to apply Proposition~\ref{OD-gamma-omega-disc} by building only a function
    $\psi$, and since we have not really proved this proposition, we are going to provide the proof
    in the particular case of Proposition~\ref{ODCorner-M1}.  But the reader will notice that the
    important point in the proof below is to build a matrix $A$ in order that $\psi(y):=[Ay\cdot
    y]^{1/2}$ satisfies the requirements of Proposition~\ref{OD-gamma-omega-disc}.

    Let $\phi$ be a $C^1$-function on $\R$ and $t$ be a strict local maximum
    point of the function $s \mapsto u(0,s)-\phi(s)$. We have to show that, if
    $\theta_1,\theta_2,\theta_3, \gamma_1 ,\gamma_2 , \gb$ satisfy the property
    which is required in Proposition~\ref{ODCorner-M1}, then $$\theta_3
    \phi'(t) -(\theta_3 \gl -\theta_1 g_1-\theta_2 g_2)\leq 0\; .$$ It is worth
    pointing out that we can do that only if $(\gb,0,\gl)$ is in the interior of
    $\BCL(0,t)$, a point that we will use in the proof.

To do so, we first construct $p_\delta$ such that
\begin{equation}\label{def:pdelta2} 
p_\delta\cdot \gamma_1=g_1+\delta\quad , 
\quad p_\delta\cdot \gamma_2=g_2+\delta\;,
\end{equation}
notice that such a $p_\delta$ exists 
because of the assumptions on $\gamma_1,\gamma_2$.

Next, we introduce the function
$$ (y,s) \mapsto u(y,s)-\phi(s) -p_\delta\cdot y -\frac{Ay\cdot y}{\e^2}\; .$$
where $A$ is a symmetric, positive definite matrix $A$. Additional properties on
$A$ will be needed and described all along the proof. At the end, we will
show that such a matrix exists.

This function has a maximum point at $(\xe,\te)$ and $(\xe,\te)\to (0,t)$ as $\e \to 0$ by the maximum point property of $(0,t)$. Now we examine the different possibilities: if $\xe =((\xe)_1,0)= (\xe)_1 e_1$ with $(\xe)_1\geq 0$ and $e_1=(1,0)$, we have
$$\begin{aligned}
    \big[p_\delta+\frac{2A\xe}{\e^2}\big]\cdot \gamma_1 & =  
 g_1+\delta + \frac{2\xe}{\e^2}\cdot A\gamma_1\\
& = g_1+\delta + \frac{2(\xe)_1e_1}{\e^2}\cdot A\gamma_1 \; .
\end{aligned}$$
Hence if $A\gamma_1\cdot e_1 \geq 0$, the inequality ``$
\displaystyle \frac{\partial u}{\partial \gamma_1}\leq g_1$'' cannot hold.

In the same way, if $\xe =(0, (\xe)_2)=(\xe)_2e_2$, $(\xe)_2\geq 0$ and
$e_2=(0,1)$, the inequality ``$\displaystyle \frac{\partial u}{\partial
    \gamma_2}\leq g_2$'' cannot hold provided $A\gamma_2\cdot e_2 \geq 0$.

Therefore, wherever $\xe$ is, the $H$-inequality holds and, since $(\gb,0,\gl)$ is in the interior of $\BCL(0,t)$, for $\e$ small enough, $(\gb,0,\gl) \in \BCL(\xe,\te)$  and we have
$$ \phi'(\te) -\gb^x \cdot[p_\delta+\frac{2A\xe}{\e^2}]\leq l \; .$$
Now we examine the $\gb^x$-term, remarking that $\theta_3$ cannot be $0$
\begin{align}
 -\gb^x \cdot \frac{2A\xe}{\e^2}= & -\frac1{\theta_3}(\theta_1\gamma_1+\theta_2\gamma_2)\cdot \frac{2A\xe}{\e^2}\\
 = & - \frac1{\theta_3}(\theta_1A\gamma_1+\theta_2 A\gamma_2)\cdot
 \frac{2\xe}{\e^2}\; ,
 \end{align}
 (recall that, being a symmetric matrix, the transpose of $A$ is $A$ itself).
Since we want this term to be positive for any $\xe=((\xe)_1,(\xe)_2)$ with
 $(\xe)_1,(\xe)_2 \geq 0$, we have to require that all the coordinates of the
 vector $\theta_1A\gamma_1+\theta_2 A\gamma_2$ be negative.
 
If these properties hold true, we end up with
 $$\phi'(\te) -\gb ^x \cdot p_\delta \leq l\; .$$
 Letting $\e$ tend to $0$, and using a convex combination with \eqref{def:pdelta2}
 provides the answer, after letting $\delta$ tend to $0$.

It remains to show that such matrix $A$ exists under the conditions of 
Proposition~\ref{ODCorner-M1}. We point out that this matrix may depend on the
convex combination, hence on $\theta_1, \theta_2,\theta_3$ since the above proof
is done for any fixed such convex combination.

We recall that the three conditions are
$$A\gamma_1\cdot e_1 \geq 0\quad , \quad A\gamma_2\cdot e_2 \geq 0\; ,$$
$$\theta_1A\gamma_1+\theta_2 A\gamma_2\leq 0\; ,$$
this last condition meaning that all the components of the vector are negative.

Looking at these conditions, a natural choice would be
$$ A\gamma_1= -\lambda_1 e_2\quad \hbox{and}\quad A\gamma_2=-\lambda_2e_1\; .$$
where $\lambda_1, \lambda_2$ are non-negative constants which have to be chosen properly, or equivalently
$$ A^{-1}e_1= - (\lambda_2)^{-1} \gamma_2\quad\hbox{ and }\quad A^{-1}e_2= - (\lambda_1)^{-1} 
\gamma_1\; .$$
Therefore
$$ A^{-1}=
\left(\begin{array}{cc}  
-(\lambda_2)^{-1} \gamma_{2,1} & -(\lambda_1)^{-1}\gamma_{1,1} \\ 
-(\lambda_2)^{-1} \gamma_{2,2} & - (\lambda_1)^{-1}\gamma_{1,2}
\end{array}\right)\; .$$
In order that $A$ satisfies the required condition to be a symmetric, positive definite matrix, we have just to check that $ A^{-1}$
satisfies these properties. Recalling that $\gamma_1\cdot e_2=\gamma_{1,2}<0$ and $\gamma_2\cdot e_1=\gamma_{2,1}<0$ by
the oblique derivatives conditions, this leads to the following conditions
\begin{enumerate}
    \item[$(i)$] $ A^{-1}$ is symmetric if either $\gamma_{1,1}=\gamma_{2,2}=0$ or
$\gamma_{1,1},\gamma_{2,2}$ have the same strict sign. Then we can choose
$\lambda_1=|\gamma_{1,1}|$ and $\lambda_2 =|\gamma_{2,2}|$.
\item[$(ii)$] The trace of $A$ is non-negative since $\gamma_{2,1},\gamma_{1,2}<0$ by the conditions on the directions of reflection.
\item[$(iii)$] $\det(A^{-1})= (\lambda_1\lambda_2)^{-1} \det(\gamma_2,\gamma_1)>0$ by assumption.
\end{enumerate}
Hence we can conclude if one of the two conditions holds\\
1. $\gamma_{1,1}=\gamma_{2,2}=0$ with $A=Id$.\\
2. $\gamma_{1,1},\gamma_{2,2}$ have the same strict sign and $\det(\gamma_1,\gamma_2)<0$.\\

In order to investigate the other cases and to show that $A$ does not exist in these cases, we assume (without loss of generality) 
that $\gamma_1 =(\gamma_{1,1},-1)$, $\gamma_2=(-1,\gamma_{2,2})$ and we 
write $A$ as
$$ A=\left(\begin{array}{cc}  \alpha & \beta  \\  \beta  & \gamma
\end{array}\right)\; ,$$
where $\beta$ can be chosen as $0, 1$ or $-1$ since $A$ can be replaced by $\lambda A$ for $\lambda>0$.

The constraint can be written as
$$ \alpha\gamma_{1,1}-\beta \geq 0\; ,$$
$$ -\beta + \gamma\gamma_{2,2} \geq 0\; ,$$
 $$ \theta_1(\alpha\gamma_{1,1}-\beta)+\theta_2(-\alpha +\beta \gamma_{2,2}) \leq 0\; ,$$
 $$ \theta_1(\beta\gamma_{1,1}-\gamma)+\theta_2(-\beta + \gamma\gamma_{2,2}) \leq 0\; .$$

We begin with the case when $\gamma_{1,1} \geq 0$, $\gamma_{2,2}\leq 0$.
In this case, the (necessary) choice $\beta=-1$ yields
$$ \alpha\gamma_{1,1}+1 \geq 0\; ,$$
$$1 + \gamma\gamma_{2,2} \geq 0\; ,$$
 $$ \theta_1(\alpha\gamma_{1,1}+1)+\theta_2(-\alpha - \gamma_{2,2}) \leq 0\; ,$$
 $$ \theta_1(-\gamma_{1,1}-\gamma)+\theta_2(1 + \gamma\gamma_{2,2}) \leq 0\; .$$
The first constraint gives no limitation on $\alpha$, while the second one imposes (a priori) $\gamma$ to be small enough. For the two next ones we recall that $\theta_3 \gb= \theta_1\gamma_1 + \theta_2 \gamma_2$ and therefore
$$\theta_3 \gb_1 = \theta_1\gamma_{1,1} - \theta_2 \quad ,\quad
\theta_3 \gb_2 = - \theta_1 + \theta_2 \gamma_{2,2}<0\; .$$
Hence the two last constraints can be written as
$$ \alpha \gb_1 -\gb_2 \leq 0 \; ,$$
 $$ -\gb_1 + \gamma \gb_2 \leq 0\; .$$
Clearly one can conclude only if $\gb_1<0$ by choosing $\alpha$ large and with
$$ \frac{\gb_1}{\gb_2} \leq \gamma \leq - \frac{1}{\gamma_{2,2}}\; .$$
We have indeed the existence of such $\gamma$ since
 $$ \frac{\gb_1}{\gb_2} \leq - \frac{1}{\gamma_{2,2}}$$
 is equivalent to $\det (\gb,\gamma_2)\leq 0$ and $\displaystyle \det
 (\gb,\gamma_2)=\frac{\theta_1 }{\theta_3}\det(\gamma_1,\gamma_2)\leq 0$.
 
 But, given $\gamma_1, \gamma_2$, in order to have $\gb_1< 0$ for any choice of the convex combination, the only possibility is $\gamma_{1,1}=0$. And the same conclusion holds in the case
$\gamma_{1,1} \leq 0$, $\gamma_{2,2}\geq 0$.

The proof is then complete.
\end{proof}

\begin{example}We consider the following problem where $Q=(0,1) \times (0,1)$
\begin{equation}
\begin{cases}\label{NPexample}
u_t + a(x,t)|D_x u|=f(x) & \hbox{in $ Q \times (0,\Tf)$} \\
u(x,0) =u_0 (x) & \hbox{in $\Omega$}\\
\displaystyle \frac{\partial u}{\partial n_i} =g_i (x,t) & \hbox{on $\partial Q_i \times (0,\Tf)$\; ,}
\end{cases}
\end{equation}
where $\partial Q_1 = (0,1)\times \{0\}$, $\partial Q_2 = \{1\} \times (0,1) $, , $\partial Q_3 = (0,1)\times \{1\}$, ,$\partial Q_4 = \{1\} \times (0,1)$ and $n_i$ is the exterior unit normal vector to $\partial Q_i$.

If $a$ is a Lipschitz continuous function (in particular in $x$) satisfying $a(x,t) >0$ on $\overline Q \times [0,\Tf]$, and $u_0, f, g_1,\cdots g_4$ are continuous, there exists a unique viscosity solution of this problem which coincides with the stratified solution. This result is a straightforward consequence of the former results which shows that the notions of viscosity solutions and stratified solutions are the same. It is worth remarking on this example that, despite we did not insist above on that point, the Hamiltoniant $\F^1,\F^2$ satisfy the right conditions: indeed these Hamiltonians fullfill the required continuity assumptions because in the above convex combinations like $\theta_1+\theta_2+\theta_3=1$ and $\theta_1\gamma_1 + \theta_2\gamma_2 -\theta_3 \gb^x=0$, $\theta_3$ is bounded away from 0.

In the present exemple, on $\partial Q_i$
$$ \F^2(x,t,(p_x,p_t))=\max(\theta(p_t -a(x,t)v\cdot p_x -f(x))+(1-\theta)(n_i \cdot p_x -g_i))\; ,$$
the maximum being taken on all $|v|\leq 1$ and $\theta \in [0,1]$ such that $[\theta a(x,t)v - (1-\theta)n_i ]\cdot n_i=0$

Writing $v= v^\bot + v^\top$, where $v^\bot$ is the normal part of $v$ (i.e. the part which is colinear to $n_i$) and $v^\top$ the tangent part, we have $\theta a(x,t)v^\bot\cdot n_i = (1-\theta)$ and we take divide by $\theta$ to have 
$$ \F^2(x,t,(p_x,p_t))=\max_{|v^\top|^2 + |v^\bot|^2=1}(p_t -a(x,t)v^\top \cdot p_x -f(x)+a(x,t)v^\bot\cdot n_i g_i)\; .$$
On an other hand, at $x=0$, a simple computation gives
$$ \F^1(0,t,p_t)=\max(p_t -f(x)- g_1; p_t -f(x)- g_4)\; .$$
\end{example}

\begin{remark}We do not know if the conditions given in Proposition~\ref{ODCorner-M1} are
optimal or not. Clearly they are stronger than those given in Dupuis and Ishii \cite{DuIs1,DuIs2}
inspired by those of Harrison and Reiman \cite{HaRe} and Varadhan and Williams \cite{VaWi}. Maybe a
different choice of test-function, namely the term $\e^{-2}(Ay\cdot y)$, could lead to more general
cases but we have no idea how to build such a function which necessarily will be $C^1$ but not $C^2$
at $0$. \end{remark}

\bigskip

\noindent\textbf{B. --- The $\R^N$ case.}

\medskip

Of course, in $\R^N$, there exists a lot of possibilities and we are going to investigate the following three situations\\
\noindent{\bf Case 1: a simple $1$-dimensional corner} 
$$\begin{aligned}
    \Man{N+1} &= \{(x_1,\cdots, x_N);\ x_{1}>0, x_2>0\}\times (0,\Tf),\\
    \Man{N}\quad &= \{(x_1,\cdots, x_N);\ x_{1}=0,x_2>0 \hbox{ or
        }x_{1}>0,x_2=0\}\times (0,\Tf)\; ,\\
  \Man{N-1} &= \{(x_1,\cdots, x_N);\ x_{1}=0,x_2=0\} \times (0,\Tf)\;.
  \end{aligned}$$
\noindent{\bf Case 2: a simple discontinuity in the oblique derivative boundary 
condition} 
$$\begin{aligned} 
    \Man{N+1} &= \{(x_1,\cdots, x_N);\ x_2>0\}\times (0,\Tf), \\
    \Man{N}\quad &= \{(x_1,\cdots, x_N);\ x_{1}\neq 0,x_N=0\}\times (0,\Tf)\; ,\\
    \Man{N-1} &= \{(x_1,\cdots, x_N);\ x_{1}= 0,x_2=0\} \times (0,\Tf)\;.
\end{aligned}$$
\noindent{\bf Case 3: a multi-dimensional corner}
$$\begin{aligned} 
    \Man{N+1} &= \{(x_1,\cdots, x_N);\ x_1>0 \cdots x_N>0\}\times (0,\Tf), \\
    \Man{N}\quad &= \bigcup_i \{(x_1,\cdots, x_N);\ x_1\geq 0 \cdots x_N\geq 0,
        x_i=0\}\times (0,\Tf)\; ,\\
    \Man{N-1} &= \{(x_1,\cdots, x_N);\ x_{N-1}=0,x_N=0\} \times (0,\Tf)\;.
\end{aligned}
$$

In each case, the question is: when is the classical notion of subsolution 
equivalent to the stratification formulation?

The answer is simple in the two first cases. Let us write 
$$ \gamma_1=(\gamma^{(1)}_1, \gamma^{(1)}_2, \cdots,\gamma^{(1)}_N) \quad 
\hbox{and}\quad \gamma_2=(\gamma^{(2)}_1, \gamma^{(2)}_2, \cdots,
\gamma^{(2)}_N)\; ,$$
and introduce the two vectors of $\R^2$ 
$$ \tilde \gamma_1=(\gamma^{(1)}_1, \gamma^{(1)}_2) \quad \hbox{and}\quad 
\tilde \gamma_2=(\gamma^{(2)}_1, \gamma^{(2)}_2)\; .$$

The result is
\begin{proposition}In Case 1 and 2, the classical viscosity formulation and the stratified formulation are equivalent if $\tilde \gamma_1,\tilde \gamma_2$ satisfy the condition of Proposition~\ref{ODCorner-M1} in Case~1 and Proposition~\ref{ODFlat-M1} in Case~2.
\end{proposition}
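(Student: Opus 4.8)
The statement asserts that, in Case~1 and Case~2, an Ishii viscosity subsolution of the oblique derivative problem is automatically a stratified subsolution (and conversely a stratified subsolution is clearly an Ishii subsolution), so that by the comparison result of Chapter~\ref{chap-StratSC} the two notions coincide. Since supersolutions are the same in both formulations, and since on $\Man{N+1}=\Omega\times(0,T)$ there is nothing to prove, the whole content is to verify the $\F^k$--subsolution inequalities on the boundary strata $\Man{N}$ and $\Man{N-1}$ for an Ishii subsolution $u$. As in the previous sections I would first observe that, because of the normal controllability assumption, $u$ needs no ``cleaning'' on $\domeg$: the argument already given just before Proposition~\ref{OD-MN} shows that $u(x,t)=\limsup\{u(y,s): (y,s)\to(x,t),\ y\in\Omega\}$ at every boundary point, so $u$ itself is ``regular'' in the sense required by the comparison theorem, and moreover we may regularize $u$ tangentially (sup-convolution in the tangent variables plus a mollification, using \NCBCL and \TCBCL as in Section~\ref{sect:sup.reg}) to assume $u$ Lipschitz continuous and $C^1$ along each $\Man{k}$.

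\textbf{The inequality on $\Man{N}$.} Here $\Man{N}$ is, locally, a flat hyperplane piece (one of the two faces in Case~1, the face $\{x_N=0\}$ minus $\{x_1=0\}$ in Case~2), carrying a single, continuous direction of reflection. This is exactly the situation covered by Proposition~\ref{OD-MN}: I would simply invoke it (after flattening, which is harmless since the strata are affine here) to conclude that $u$ satisfies the stratified inequality
$$\F^N(x,t,(p_x,p_t))=\sup\big\{\theta p_t-(\theta b^x-(1-\theta)\gamma)\cdot p_x-(\theta l+(1-\theta)g)\big\}\le 0\quad\text{on }\Man{N},$$
the supremum being over $(b,0,l)\in\BCL(x,t)$ and $\theta\in(0,1)$ with $(\theta b^x-(1-\theta)\gamma)$ tangent to $\Man{N}$. (In Case~2, $\gamma$ equals either $\gamma_1$ or $\gamma_2$ depending on the sign of $x_1$, and each face is handled separately; continuity of $\F^N$ holds because in these convex combinations $\theta$ stays bounded away from $0$, using $\gamma\cdot n\ge\nu>0$.)

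\textbf{The inequality on $\Man{N-1}$ --- the main point.} The only genuinely new work is at the codimension--$2$ stratum $\Man{N-1}$, where two reflection directions $\gamma_1,\gamma_2$ meet. The key reduction is that along $\Man{N-1}$ the tangential coordinates $x_3,\dots,x_N$ play no active role: a test-function depending only on $(t)$ (or $(t,x_3,\dots,x_N)$) concentrates everything on the two-dimensional normal slice spanned by $e_1,e_2$, where $\gamma_1,\gamma_2$ project to $\tilde\gamma_1,\tilde\gamma_2$. Thus, after flattening and freezing the tangential variables, the situation is \emph{exactly} the $\R^2$--situation already treated: Proposition~\ref{ODCorner-M1} in Case~1 (two faces $\{x_1=0,x_2\ge0\}$ and $\{x_1\ge0,x_2=0\}$ meeting at a corner) and Proposition~\ref{ODFlat-M1} in Case~2 (a flat interface $\{x_1=0,x_2=0\}$ inside the boundary hyperplane $\{x_2=0\}$, with reflection directions jumping across $x_1=0$). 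Under the stated hypotheses on $\tilde\gamma_1,\tilde\gamma_2$ --- namely the hypotheses of Proposition~\ref{ODCorner-M1} ($\tilde\gamma_1\cdot e_1=\tilde\gamma_2\cdot e_2=0$, or $\tilde\gamma_1\cdot e_1,\tilde\gamma_2\cdot e_2$ of the same strict sign with $\det(\tilde\gamma_1,\tilde\gamma_2)<0$) in Case~1, and of Proposition~\ref{ODFlat-M1} ($\det(\tilde\gamma_1,\tilde\gamma_2)\le0$) in Case~2 --- those propositions produce precisely the symmetric positive definite matrix $A$ in the $2\times 2$ block and the associated test-function $\theta_1,\theta_2,\theta_3$--combination argument, yielding the $\F^{N-1}$--inequality
$$\F^{N-1}=\sup\big\{\theta_3 p_t-(\theta_3 l-\theta_1 g_1-\theta_2 g_2)\big\}\le0\quad\text{on }\Man{N-1},$$
where the supremum runs over $(b,0,l)\in\BCL$ and $\theta_1,\theta_2,\theta_3\in(0,1)$, $\sum\theta_i=1$, $\theta_1\gamma_1+\theta_2\gamma_2-\theta_3 b^x\in T\Man{N-1}$. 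Concretely, I would rerun the proof of Proposition~\ref{ODCorner-M1}/\ref{ODFlat-M1} verbatim but with the test-function $u(y,s)-\phi(s,y_3,\dots,y_N)-p_\delta\cdot(y_1,y_2)-\varepsilon^{-2}A(y_1,y_2)\cdot(y_1,y_2)$, the only new observation being that the extra tangential variables contribute only a standard $o(1)$ term. Once the $\F^N$-- and $\F^{N-1}$--inequalities and the initial-time inequality are in hand, $u$ is a stratified subsolution, hence $\le$ any stratified (equivalently Ishii) supersolution, and uniqueness follows.

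\textbf{Expected main obstacle.} The genuinely delicate point is the $\Man{N-1}$ step: one must be careful that the freezing of the $N-2$ tangential variables is legitimate (that the test-function can be taken constant, or mildly dependent, in those directions without destroying the strict-maximum-point structure) and that the matrix $A$ constructed in the $2\times2$ slice can be extended to an $N\times N$ symmetric positive definite matrix acting trivially on the tangential block --- this is automatic but must be stated. Everything else ($\Man{N}$, the regularization, the reduction to local comparison) is a direct citation of earlier results. In Case~3 the same scheme would require an $(N)\times(N)$ analogue of $A$ compatible with $N$ reflection directions simultaneously, which is why the proposition is only stated there as a separate (open-ended) matter.
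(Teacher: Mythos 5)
Your proposal is correct and is essentially the same as the paper's proof: you reduce to the $\R^2$ case by choosing a test-function that is smooth in $(t,y_3,\dots,y_N)$ and penalizes only in the normal slice $\tilde y=(y_1,y_2)$ via the linear term $p_\delta$ and a $2\times 2$ quadratic term $\e^{-2}A\tilde y\cdot\tilde y$, so that only the $\tilde y$-contributions are active and the $2$-d argument of Propositions~\ref{ODCorner-M1}/\ref{ODFlat-M1} applies verbatim. (One small remark: the paper does not extend $A$ to an $N\times N$ matrix acting trivially on the tangential block as you suggest — it simply keeps $A$ as a $2\times 2$ matrix applied to $\tilde y$, which amounts to the same thing.)
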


\begin{proof}In Case 1, we have to show that a viscosity subsolution $u$ is also a stratified subsolution on $\Man{N-1}$. To do so, we denote any $x\in\R^N$ by $(x_1,x_2,x')$ where $x'=(x_3,\cdots,x_N)$. 

If $(\xb,\tb)\in \Man{N-1}$ is a maximum point of $x' \mapsto u(0,0,x',t)-\phi(x',t)$ where $\phi$ is a smooth function, we have to show that, if we have a convex combination $(- \theta_1 \gamma_1 -\theta_2 \gamma_2 + \theta_3 \gb^x,-\theta_3) \in T_{(\xb,\tb)}\Man{N-1}$ with $(\gb,0,\gl) \in \BCL(\xb,\tb)$ and $\gb=(\gb^x,-1)$. Then
$$ \theta_3 \phi_t +(\theta_1 \gamma_1 + \theta_2 \gamma_2 - \theta_3 \gb^x)\cdot D_x \phi(\xb',\tb)\leq - \theta_1 g_1 - \theta_2 g_2 + \theta_3 l\; .$$

As in the proof of Proposition~\ref{ODCorner-M1}, we introduce $p_\delta$ such that
$$ p_\delta\cdot \gamma_1=g_1+\delta\quad , \quad p_\delta\cdot \gamma_2=g_2+\delta\; ,$$
and the function
$$ (y,s) \mapsto u(y,s)-\phi(y',s) -p_\delta\cdot y -\frac{A\tilde y\cdot \tilde y}{\e^2}\; ,$$
where $\tilde y=(y_1,y_2)$ and $A$ is a $2\times 2$ symmetric, positive definite matrix.

It is clear on this formulation that, only the $\tilde y$ terms plays a real role and we are in the same situation as in $\R^2$. This explains the statement of the result and shows that the proof for Case 2 follows from the same arguments.
\end{proof}

\begin{example}A standard example for Case~2 is the case when we look at an oblique derivative problem
in a smooth domain $\Omega \subset \R^N$ whose boundary is splitted into three components
$$ \domeg = \domeg_1 \cup \domeg_2 \cup \Gamma\; ,$$
where $\domeg_1,\domeg_2$ are smooth $(N-1)$-dimensional manifolds and $\Gamma$ a smooth $(N-2)$-dimensional manifold. The idea is to have the oblique derivative boundary condition
$$\frac{\partial u}{\partial \gamma_i} =g_i\quad \hbox{on $\domeg_i\times (0,\Tf)$}\; ,$$
for $i=1,2$.

The question is when the classical viscosity solution coincides with the stratified one and therefore is unique?

To answer to this question is not completely obvious since we have to apply the above result for Case~2 in the right way on $\Gamma$. To do so, we consider $x\in \Gamma$ and we introduce two unit vectors: $n$ the unit outward normal to $\domeg$ at $x$ and $r \in T_x \domeg$ a unit vector which is normal to $T_x\Gamma$ and which is pointing toward $\Omega_1$.

With these notations, the answer to the above question is yes if the determinant
$$ \left|\begin{array}{cc}\gamma_1\cdot r & \gamma_2\cdot r \\-\gamma_1\cdot n & -\gamma_2\cdot n \end{array}\right| \leq 0\; .$$
\end{example}

For Case 3, we introduce the $N\times N$-matrix $\Gamma$ whose columns are given by $\gamma_1, \gamma_2,\cdots \gamma_n$ and we formulate the
\begin{proposition}In Case 3, the classical viscosity formulation and the stratified formulation are equivalent if there exists a $N\times N$-diagonal matrix $D$ with strictly positive diagonal terms such that $\Gamma.D^{-1}$ is a symmetric, negative definite matrix.
\end{proposition}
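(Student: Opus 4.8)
The plan is to proceed exactly as for the analogous lower-dimensional cases (Propositions~\ref{ODCorner-M1} and~\ref{ODFlat-M1}). Since a stratified supersolution is nothing but an Ishii supersolution, everything reduces to proving that an arbitrary Ishii viscosity subsolution $u$ of the oblique derivative problem satisfies the $\F^k$-inequalities on the corner strata; on $\Man{N+1}$ there is nothing to prove, and on each smooth face $\{x_i=0\}$ of $\Man{N}$ (carrying the condition $\partial u/\partial\gamma_i=g_i$) the $\F^N$-inequality is given by Proposition~\ref{OD-MN}. I will spell out the decisive stratum, the corner point $\Man{1}=\{x_1=\dots=x_N=0\}\times(0,T)$, where all reflections interact and the full matrix $\Gamma$ enters; the intermediate strata $\{x_i=0:i\in I\}\times(0,T)$ are then settled by the same argument with the projected reflection directions $\tilde\gamma_i|_I$ ($i\in I$), the relevant $|I|\times|I|$ matrix being the principal submatrix $(\Gamma D^{-1})_{I,I}=\tilde\Gamma_I D_I^{-1}$ which, as a principal submatrix of a symmetric negative definite matrix, is again symmetric negative definite. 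Granting all these $\F^k$-inequalities, the comparison result for stratified solutions with state-constraints (Chapter~\ref{chap-StratSC}) — whose regularity hypotheses hold here because Neumann subsolutions are automatically regular up to the boundary under normal controllability, as recalled before Proposition~\ref{OD-MN} — gives the claimed equivalence and uniqueness.

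For the core step I work locally, flatten the stratification, and using Section~\ref{sect:sup.reg} reduce to the case where $u$ is Lipschitz continuous. Let $\phi\in C^1((0,T))$ and let $\tb\in(0,T)$ be a strict local maximum point of $s\mapsto u(0,s)-\phi(s)$. Since $T_{(0,\tb)}\Man{1}$ is the time axis, the $\F^1$-inequality reduces to an inequality of the form $\theta_0\,\phi'(\tb)\leq\theta_0\,l-\sum_{i=1}^N\theta_i\,g_i$ for every $((b^x,-1),0,l)$ in the interior of $\BCL(0,\tb)$ and every $\theta_0,\dots,\theta_N\geq0$ with $\sum_{i=0}^N\theta_i=1$ and $\theta_0 b^x=\sum_{i=1}^N\theta_i\gamma_i$ (restricting to interior elements is harmless since $\F^1$ is continuous in $(b,l)$). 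Following Proposition~\ref{ODCorner-M1}, for $0<\delta\ll1$ I pick $p_\delta\in\R^N$ with $p_\delta\cdot\gamma_i=g_i+\delta$ for all $i$ — possible because $\Gamma$ is invertible, being $D$ times the negative definite matrix $\Gamma D^{-1}$ — choose a symmetric positive definite matrix $A$ (to be determined), and consider the maximum point $(\xe,\te)$ of
\[
(y,s)\ \longmapsto\ u(y,s)-\phi(s)-p_\delta\cdot y-\frac{Ay\cdot y}{\e^2}\,,
\]
which converges to $(0,\tb)$ with $u(\xe,\te)\to u(0,\tb)$ and whose coordinates $(\xe)_k$ are non-negative (the convergence uses the strict maximum property together with the regularity of $u$ up to the boundary).

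I then impose on $A$ the single structural requirement $A\gamma_i=\lambda_i e_i$ with $\lambda_i<0$ for every $i$. With this, on the face $\{x_i=0\}$ one has $\xe\cdot A\gamma_i=\lambda_i(\xe)_i=0$, hence $(p_\delta+2\e^{-2}A\xe)\cdot\gamma_i=g_i+\delta>g_i$, so the $\gamma_i$-viscosity inequality cannot hold at $(\xe,\te)$; therefore, wherever $\xe$ sits, the $H$-inequality holds there, reading $\phi'(\te)-b^x\cdot(p_\delta+2\e^{-2}A\xe)\leq l$. Because $\theta_0 b^x=\sum_k\theta_k\gamma_k$ and $A\gamma_k=\lambda_k e_k$, the drift contribution is $-b^x\cdot 2\e^{-2}A\xe=2(\theta_0\e^2)^{-1}\sum_k\theta_k|\lambda_k|(\xe)_k\geq0$, so $\phi'(\te)-b^x\cdot p_\delta\leq l$; letting $\e\to0$, then $\delta\to0$, and taking the convex combination with $p_\delta\cdot\gamma_i=g_i+\delta$ yields the required $\F^1$-inequality. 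It remains to produce $A$: the requirement $A\gamma_i=\lambda_i e_i$ means $A\Gamma=\mathrm{diag}(\lambda_i)$, i.e. $A^{-1}=\Gamma\,\mathrm{diag}(\lambda_i)^{-1}$; writing $\lambda_i=-1/d_i$ with $d_i>0$ this is $A^{-1}=-\Gamma D^{-1}$, which is symmetric positive definite exactly when $\Gamma D^{-1}$ is symmetric negative definite — the hypothesis. Hence $A:=-(\Gamma D^{-1})^{-1}=-D\Gamma^{-1}$ works, with $A\Gamma=-D$ so $\lambda_i=-d_i<0$.

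The main obstacle is precisely this algebraic point: producing one symmetric positive definite $A$ that simultaneously makes every reflected inequality fail at its face and makes the drift term have the good sign in the interior inequality. The hypothesis on $\Gamma D^{-1}$ is exactly the $N$-dimensional incarnation of the conditions of Propositions~\ref{ODCorner-M1} and~\ref{ODFlat-M1}, and it is what allows the test-function $Ay\cdot y/\e^2$ to be built; as the two-dimensional examples show, without it no such $A$ exists and the corner inequality genuinely fails. A secondary, routine matter is the uniform treatment of the intermediate strata and of the tangential terms $D_x\phi$ arising there, handled via the projected directions exactly as in the proofs of Cases~1 and~2.
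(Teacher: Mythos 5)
Your proposal is correct and follows essentially the same route as the paper: the paper's (very terse) proof also reduces everything to producing a symmetric positive definite $A$ with $A\gamma_i=-d_ie_i$, i.e. $A\Gamma=-D$, hence $A=-(\Gamma D^{-1})^{-1}$, which exists precisely under the stated hypothesis, the rest being the test-function argument of Proposition~\ref{ODCorner-M1} that you reproduce faithfully. Your added remarks (invertibility of $\Gamma$ for the existence of $p_\delta$, and the treatment of intermediate strata via principal submatrices of $\Gamma D^{-1}$) are correct fillings-in of details the paper leaves implicit.
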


\begin{proof}The proof follows along the arguments of the proof of Proposition~\ref{ODCorner-M1}: the key (and only) point is to find a symmetric, positive definite matrix $A$ such that
$$ A\gamma_i = -d_i e_i\quad \hbox{with  }d_i >0,\ \  \hbox{for any  }1\leq i \leq N \; .$$
 This property can be written as $A.\Gamma=-D$ and therefore $A=-D\Gamma^-1=-(\Gamma. D^{-1})^{-1}$. The assumption ensures the existence of $A$.
\end{proof}

This result can, of course, be extended to the case of more general convex domains like
$$ \Omega := \bigcap_i \{x:\ n_i \cdot x < q_i\}\; ,$$
with a direction of reflection $\gamma_i$ on $\{x:\ n_i \cdot x = q_i\}$ by the
\begin{proposition}
The classical viscosity formulation and the stratified formulation are equivalent if there exists a $N\times N$ symmetric, positive definite matrix $A$ such that
$$ A\gamma_i = d_i n_i\quad \hbox{with  }d_i >0,\ \  \hbox{for any  }1\leq i \leq N \; .$$.
\end{proposition}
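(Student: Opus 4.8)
The final statement is a direct generalization of the two previous propositions, and the plan is to reduce it to exactly the same argument that was used in the proof of Proposition~\ref{ODCorner-M1}, with the only difference that the relevant obstacle is the existence of the symmetric positive definite matrix $A$, which is now \emph{assumed}. So the structure of the proof will be: first set up the localization and the test-function, then derive the subsolution inequalities on the singular part of the boundary by playing the constraints on $A$ against the oblique derivative conditions associated with each face $\{x:\ n_i\cdot x=q_i\}$, and finally conclude via the comparison result for stratified solutions (Theorem from Chapter~\ref{chap-StratSC}).

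\textbf{Step 1: Reduction and setup.} As before, we only need to show that an Ishii subsolution $u$ of the oblique derivative problem in $\Omega=\bigcap_i\{x:\ n_i\cdot x<q_i\}$ is a stratified subsolution. Since the equation inside $\Omega$ is continuous, the only nontrivial inequalities concern the $\F^k$-inequalities on the strata $\Man{k}$ contained in $\domeg$, and by the structure of an admissible stratified domain (Lemma~\ref{Struct-M-bord}) these are ``tangential'' inequalities. We argue locally near a point of such a stratum, and we can assume the $\gamma_i$, $n_i$ and $g_i$ are (locally) constant, the general continuous case following by standard perturbation. As in the proof of Proposition~\ref{ODCorner-M1}, pick a smooth test-function $\phi$ such that $u-\phi$ has a strict local maximum at a point of the stratum, and for a convex combination $\theta_i\geq0$, $\sum\theta_i=1$ with $\sum_i\theta_i\gamma_i-\theta_0 b^x\in T\Man{k}$ and $((b^x,-1),0,l)$ in the interior of $\BCL$, introduce $p_\delta$ solving $p_\delta\cdot\gamma_i=g_i+\delta$ for all relevant indices $i$ (which exists because the $\gamma_i$ are linearly independent, being pre-images of the $e_i$ under $A^{-1}$ up to the positive scalars $d_i$), and the penalized function $(y,s)\mapsto u(y,s)-\phi(\cdot)-p_\delta\cdot y-\varepsilon^{-2}A y'\cdot y'$, where $y'$ denotes the coordinates transverse to the stratum.

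\textbf{Step 2: The key computation and conclusion.} Let $(x_\varepsilon,t_\varepsilon)$ be the maximum point; it converges to the base point and $u(x_\varepsilon,t_\varepsilon)$ to $u$ at that point. If $x_\varepsilon$ lies on the $i$-th face, the normal derivative of the test-function contributes $\big(p_\delta+2\varepsilon^{-2}Ay'_\varepsilon\big)\cdot\gamma_i=g_i+\delta+2\varepsilon^{-2}y'_\varepsilon\cdot(A\gamma_i)$; since $A\gamma_i=d_i n_i$ with $d_i>0$ and $y'_\varepsilon$ points into $\Omega$ (so $y'_\varepsilon\cdot n_i\le 0$ on that face, or rather the sign works out so that the extra term has the right sign), the oblique derivative inequality $\partial u/\partial\gamma_i\le g_i$ cannot hold. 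Hence the $H$-inequality holds; inserting it, using $\theta_0 b^x=\sum_i\theta_i\gamma_i$ to rewrite $-b^x\cdot(2\varepsilon^{-2}Ay'_\varepsilon)=-\theta_0^{-1}\sum_i\theta_i(A\gamma_i)\cdot(2\varepsilon^{-2}y'_\varepsilon)$, and noting that $A$ symmetric positive definite together with the sign of $y'_\varepsilon$ forces this quantity to be $\ge o(1)$, we may drop it; letting $\varepsilon\to0$ and then $\delta\to0$, and taking the $\theta_i$-convex combination with the defining relations of $p_\delta$, yields the desired $\F^k$-inequality. Then the comparison result for stratified solutions applies.

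\textbf{Main obstacle.} The genuinely delicate point is verifying that the extra term involving $A$ has the correct sign for \emph{every} stratum simultaneously, i.e.\ that the single matrix $A$ provided by the hypothesis $A\gamma_i=d_i n_i$ ($d_i>0$) makes \emph{all} the inequalities on \emph{all} faces meeting at the corner go in the favorable direction, and at the same time makes the $b^x$-term droppable. This is precisely where the two-dimensional case splits into ``good'' and ``bad'' configurations in Propositions~\ref{ODCorner-M1} and~\ref{ODFlat-M1}: the existence of such an $A$ encodes a compatibility condition between the geometry of $\domeg$ (the normals $n_i$) and the directions of reflection $\gamma_i$. Here this obstacle is simply moved into the hypothesis, so the proof becomes a matter of carefully bookkeeping which sign appears where; the argument is otherwise identical to that of Proposition~\ref{ODCorner-M1}, and I would present it by pointing out the (minimal) changes rather than rewriting the whole computation.
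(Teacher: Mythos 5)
Your approach matches the paper's, which here reduces to a pointer back to the argument of Proposition for the 2-d corner (the paper actually gives no explicit proof for this polyhedral statement, so you are filling in detail the paper leaves implicit). The structure you propose -- localize near a stratum point, build $p_\delta$ solving $p_\delta\cdot\gamma_i=g_i+\delta$ on the faces meeting that stratum, penalize with $A$, rule out the oblique-derivative inequalities face by face, then drop the $A$-penalization term via the decomposition $\theta_0 b^x=\sum_i\theta_i\gamma_i$ -- is exactly what the paper invokes.

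Two small imprecisions worth tightening. First, the $\gamma_i$ are pre-images of the $n_i$ (not of the $e_i$) under $A^{-1}$ up to the scalars $d_i$; the needed linear independence of the $\gamma_i$ meeting at a stratum therefore rests on the linear independence of the corresponding outward normals $n_i$, which is the relevant geometric nondegeneracy assumption. Second, the sign that rules out the $\gamma_i$-inequalities and lets you drop the penalization is \emph{not} a consequence of $A$ being positive definite: on the $i$-th face one has $A\gamma_i\cdot(x_\eps-x_0)=d_i\,n_i\cdot(x_\eps-x_0)=0$, so the contradiction comes from $\delta>0$, and for the $b^x$-term the favorable sign comes from $A\gamma_j\cdot(x_\eps-x_0)=d_j\,n_j\cdot(x_\eps-x_0)\le 0$ because $x_\eps\in\overline\Omega$. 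Positive definiteness of $A$ is what you need to make the quadratic penalization genuinely localize the maximum point; the assumed identities $A\gamma_i=d_i n_i$ with $d_i>0$ are what produce the favorable signs. With these clarifications the argument is correct and coincides with the paper's.
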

\begin{remark}Clearly, as in the case of the $2$-dimensional corner we have no idea 
if these conditions are optimal or not but, at least, they are obviously satisfied if $\gamma_i=n_i$ with $A=Id$ and all $d_i=1$.
\end{remark}

We conclude this section by an open question in the case of a non-convex domain, the model case being in $2$-d
$$ \Omega =\{(x_1,x_2):\ x_1>0\hbox{  or  }x_2>0\}\; ,$$
with normal reflection on the two parts of the boundary, $\{(x_1,0):\ x_1>0\}$ and
$\{(0,x_2):\ x_2>0\}$, or different oblique derivative boundary conditions.

The strategy we follow above clearly fails due to the non-convexity of the domain and, maybe surprisingly, we were unable to obtain any general result in this case (some 
particular cases can, of course, be treated). We do not know if 
this is just a technical problem or if really they are counterexample where Ishii solutions 
are not unique since, otherwise, they coincide with the unique stratified solution.

\section{Mixing the Dirichlet and Neumann problems}

\index{Boundary conditions!mixed}

In this section, we present two very different models mixing Dirichlet and oblique derivative problems on
the boundary:
\begin{enumerate}
    \item[$(i$)] the most standard case is when the boundary $\domeg$ can be
        decomposed as $\partial\Omega_1\cup\partial\Omega_2\cup\H$, where $\partial\Omega_{1/2}$ are
        open subsets of the boundary and $\H$ is a $(N-2)$ submanifold of $\partial\Omega$,
        the boundary condition being of composite type: Dirichlet on $\partial\Omega_1\times(0,\Tf)$
        and oblique derivative on $\partial\Omega_2\times(0,\Tf)$;
    \item[$(ii$)] the second example is the ``Tanker problem'', a far less standard case already
        presented in Section~\ref{sect:tanker}, involving Neumann conditions on
        $\{P_i\}\times(0,\Tf)$, for a collection of points $(P_i)$ in $\R^N$ (the harbors).
\end{enumerate}

We treat both examples in the context of \HSBC, the stratified approach we developped allowing us to
handle these situations almost effortlessly.

\subsection{The most standard case}

Here we still assume that $\Omega$ is a bounded domain and we also assume that the boundary $\domeg$
can be decomposed as $$ \domeg = \domeg_1\cup\domeg_2\cup \H\; ,$$
where, in terms of stratification
\begin{equation}\label{omega:mix}
\tMan{N-1}=\domeg_1\cup\domeg_2\quad\hbox{and}\quad \tMan{N-2}=\H\; .
\end{equation}

The boundary condition we consider is the following:
\begin{align}{}
  u = \varphi \quad\hbox{on  }\domeg_1 \times (0,\Tf)\;, \label{partDir}\\
  \frac{\partial u}{\partial \gamma}  =g \quad\hbox{on  }
  \domeg_2 \times (0,\Tf)\;,\label{partNeu}
\end{align}
where $\varphi$ is a continuous function and $\gamma,g$ satisfy \Hgam[\domeg_2].

One may have in mind two cases depicted in Fig.~\ref{fig:neuman.dir}: either, like on the left side,
$\domeg$ is smooth or, like on the right, we may face a corner.

\medskip

\begin{figure}[!h]
    \begin{center}
    \includegraphics[width=0.4\textwidth]{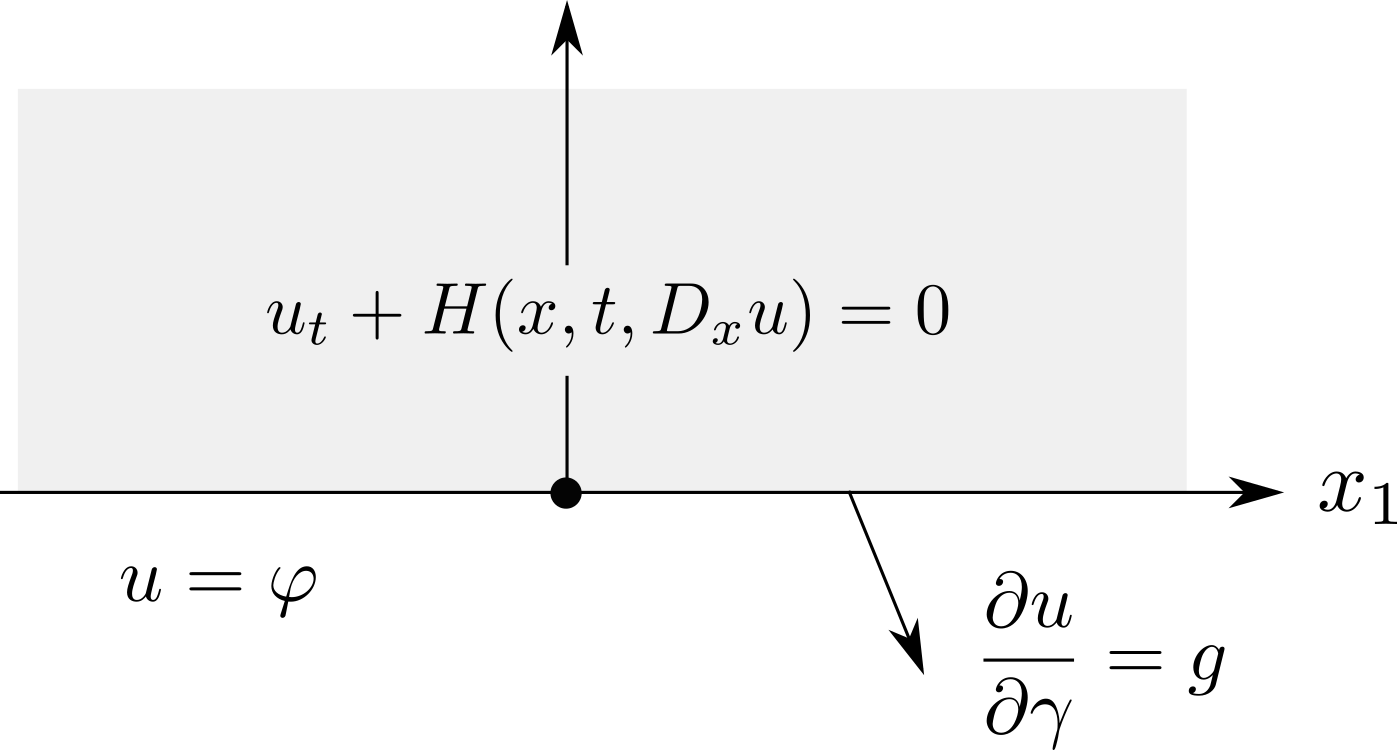}
    \qquad
    \includegraphics[width=0.4\textwidth]{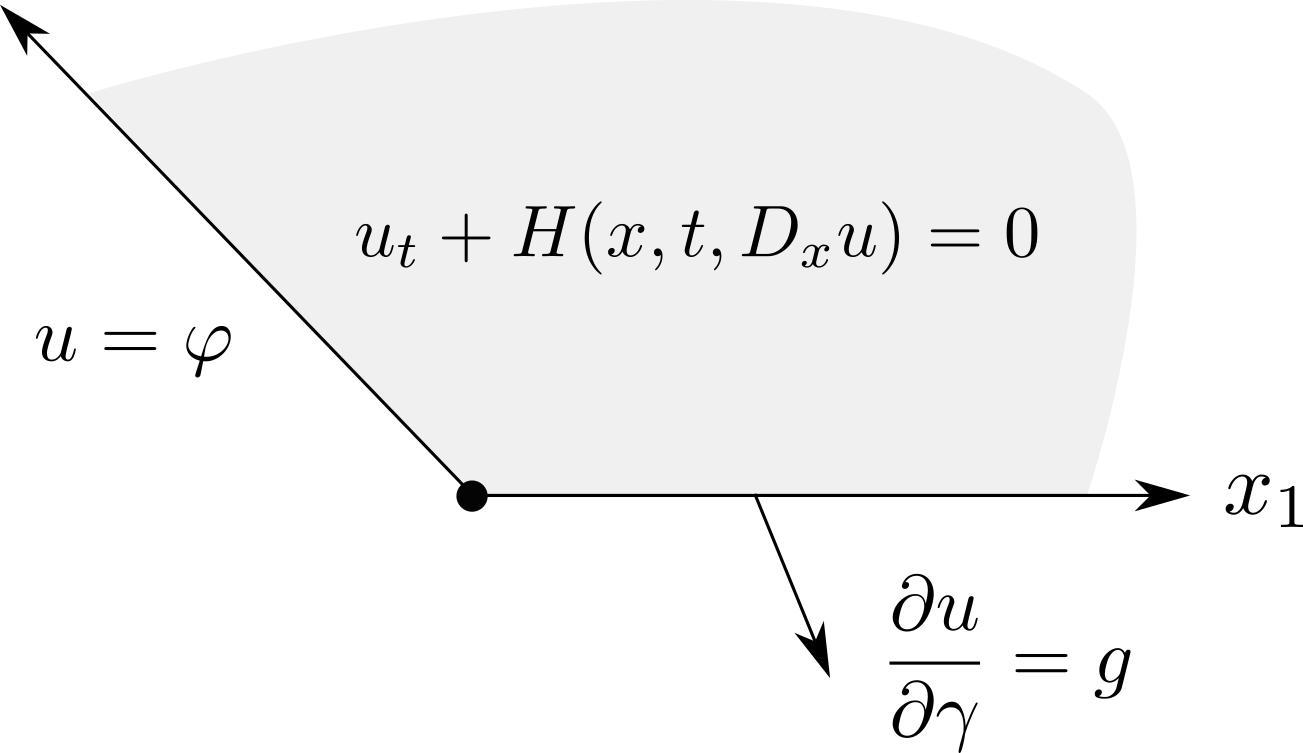}
    \caption{Flat and angular Dirichlet-Neuman problems}
    \label{fig:neuman.dir}    
    \end{center}
\end{figure}

\bigskip

Maybe surprisingly, both cases can be treated in the same way, the main
property needed being the 
\begin{lemma}\label{Dir-ineq-ju}
    Let $\Omega$ be a bounded, stratified domain satisfying \eqref{omega:mix} and assume that $\varphi$ is a continuous function and that
    \HSBC and \Hgam[\domeg_2] hold.
    Then
    $$ u\leq \varphi \quad \hbox{on  }\Man{N-1}= \H\times (0,\Tf)\;.$$
\end{lemma}

\begin{proof}Let $(\xb,\tb) \in \Man{N-1}$; we want to prove the inequality
$u(\xb,\tb)\leq \varphi(\xb,\tb)$.

We first remark that $u(x,t)\leq \varphi(x,t)$ if $x\in \partial \Omega_1$, $t >0$ as a consequence of the results for 
the Dirichlet problem. Hence, if we assume by contradiction that $u(\xb,\tb)> \varphi(\xb,\tb)$ and if we redefine $u$ 
on $\domeg_1$ by introducing
$$ \tilde u (x,t)= \limsup_{\displaystyle  \mathop{\scriptstyle (y,s) \to (x,t)}_{\scriptstyle  y\in \Omega}} u(y,s)\quad 
\hbox{if  } x \in \domeg_1\; ,$$
$\tilde u$ being equal to $u$ otherwise, then $\tilde u$ is still an \usc subsolution of the problem.

Next, following the arguments of the Neumann part, we have
$$ \tilde u (x,t)= \limsup_{\displaystyle  \mathop{\scriptstyle (y,s) \to (x,t)}_{\scriptstyle  y\in \Omega}} \tilde u(y,s)\quad \hbox{if  } x \in \domeg_2,\  t>0\; .$$
With all these properties, the regularization procedure of Section~\ref{sec:regplus-subsol} together with
 normal controllability properties of $H$ implies that the partial
 sup-convolution procedure in the $\Man{N-1}$-direction provides a sequence of functions $(\ue)_\e$ which are Lipschitz continuous in $B((\xb,\tb),r)\cap [\Omegb\setminus \H]\times (0,\Tf)$, but we have a discontinuity at least at $(\xb,\tb)$.
 
 Moreover we have
 \begin{enumerate}
 \item $\ue(x,t)\geq \tilde u(x,t)$ on $B((\xb,\tb),r)\cap \Omegb\times (0,\Tf)$ by construction (the sup-convolution),
 \item $\ue(x,t)\leq \varphi (x,t)+o_\e (1)$ on $B((\xb,\tb),r)\cap[\domeg_1\times (0,\Tf)]$ again by construction since $\tilde u(x,t)\leq \varphi(x,t)$ if $x\in 
 \partial \Omega_1$, 
 \item By using the Lipschitz continuity of $\ue$:
 $$  \limsup_{\displaystyle  \mathop{\scriptstyle (y,s) \to (\xb,\tb)}_{\scriptstyle  
 y\in\Omega\cup\domeg_1\cup\domeg_2 }} \ue(y,s)\leq \varphi(\xb,\tb) \; .$$
 \end{enumerate}
 
 As a consequence of Points 1 and 3, we also have
 $$  \limsup_{\displaystyle  \mathop{\scriptstyle (y,s) \to (\xb,\tb)}_{\scriptstyle  
 y\in\Omega\cup\domeg_1\cup\domeg_2 }} \ue(y,s)\leq \varphi(\xb,\tb) < \tilde u(\xb,\tb)\; .$$
 
 Now we consider the function $\displaystyle (x,t)\mapsto \tilde u (x,t)
 -\frac{|x-\xb|^2}{\e^2}-\frac{|t-\tb|^2}{\e^2}$: for $\e>0$ small enough, this
 function has a maximum point at $(\xe ,\te)$ near $(\xb,\tb)$ and the above
 properties implies:
 
 \noindent $(i)$ $(\xe ,\te) \in \Man{N-1}$,\\
 $(ii)$ $\tilde u (\xe ,\te)\to
 \tilde u (\xb,\tb)$ and $\displaystyle
 \frac{|\xe-\xb|^2}{\e^2}+\frac{|\te-\tb|^2}{\e^2}\to 0 $ as $\e\to 0$,\\
 $(iii)$ assuming without loss of generality that $\tMan{N-1}$ is flat, for any vector $p$ which is normal to $\tMan{N-1}$, $(\xe ,\te)$ is still a maximum
 point of $$(x,t)\mapsto \tilde u (x,t)
 -\frac{|x-\xb|^2}{\e^2}-\frac{|t-\tb|^2}{\e^2}-\frac{p\cdot (x-\xe)}{\e}\; .$$
 Choosing $p$ such that $p\cdot \gamma(\xe,\te)>0$---this is possible since $\gamma(\xe,\te)$ cannot be in $\tMan{N-1}$---and using the normal controllability assumption, it is clear that the viscosity subsolution inequality at $(\xb,\tb)$ cannot hold for $\e$ small enough, giving the desired contradiction.
\end{proof}

As we did separately for the Dirichlet and oblique derivative problems, let us check that under our
hypotheses, the initial condition is of Cauchy type.
\begin{lemma}
    Let $\Omega$ be a bounded, stratified domain satisfying \eqref{omega:mix} and assume that $\varphi,u_0$ are a continuous functions and that
    \HSBC and \Hgam[\domeg_2] hold. If
\begin{equation}\label{eqn:comp-domeg1}
\varphi(x,0)=u_0(x)\quad\hbox{on  } \domeg_1 \times \{0\}\, ,
\end{equation}
and if $u$ and $v$ are respectively an \usc viscosity subsolution and a \lsc supersolution of
    the mixed problem, necessarily $$ u(x,0)\leq u_0(x) \leq v(x,0) \quad \hbox{on }\domeg\;.$$
\end{lemma}

\begin{proof}
    Of course, the difficulty comes from the points of $\domeg \times \{0\}$ which are located on
    $\H$. We only prove the result for the subsolution, the proof for the supersolution being
    analogous.

    If $\xb\in \H$, we want to prove that $u(\xb,0) \leq u_0(\xb)$. Since we are in a stratified
    framework, we can assume that $\domeg_2\subset \{x:\ (x-\xb)\cdot n_2=0\}$, where $n_2\cdot
    \gamma >0$ on $\domeg_2$.  

    For $0<\e \ll 1$, we consider the function 
    $$ (x,t) \mapsto u(x,t)-\frac{|x-\xb|^2}{\e^2}-C_\e t -\e \psi \left(\frac{(x-\xb)\cdot
        n_2}{\e^2}\right)\; .$$ 
    where $C_\e>\e^{-1}$ is a large constant to be chosen later on and $\psi :\R\to \R$ is a $C^1$,
    increasing function such that $\psi(t)=-1$ if $t\leq -1$, $\psi(t)=1$ if $t\geq 1$ and,
    $\psi(0)=0$, $\psi'(0)=1$.

    This function has a maximum point at $(\xe ,\te)$ near $(\xb,0)$ and 
    $(\xe ,\te) \to (\xb,0)$, $u (\xe ,\te)\to  u (\xb,0)$, 
    $\displaystyle \frac{|\xe-\xb|}{\e^2}+C_\e \te \to 0 $ as $\e\to 0$. 

    If $\xe \in \domeg_2$, then the oblique derivative inequality cannot holds since in the viscous
    sense,
    $$ Du \cdot \gamma = \frac{2(\xe-\xb)}{\e^2}\cdot\gamma+\frac1{\e} \psi' (0)n_2\cdot\gamma=
    \frac{o(1)}{\e}+\frac1{\e}n_2\cdot \gamma>0\; ,$$
    for $\e$ small enough. On the other hand, by choosing $C_\e$ large enough, the $H$-inequality
    cannot hold wherever  $(\xe ,\te)$ is. Hence one of the inequalities $u (\xe ,\te)\leq \varphi
    (\xe ,\te)$ or $u (\xe ,\te)\leq u_0  (\xe)$ holds and the conclusion follows by letting $\e$
    tend to $0$.
\end{proof}

The above two lemma give us the final result:

\begin{proposition}\label{Mix-Gen}\emph{--- Comparison for a mixed Dirichlet-Neumann case.}\smsp
Let $\Omega$ be a bounded, stratified domain satisfying \eqref{omega:mix} and assume that $\varphi, u_0$ are continuous functions 
satisfying \eqref{eqn:comp-domeg1} and that
    \HSBC and \Hgam[\domeg_2] hold.
    If $u$ is an \usc viscosity subsolution of the mixed problem, then the function
    $\tilde u : \overline \Omega \times [0,\Tf) \to \R$ defined by $\tilde u
    (x,t)=u(x,t)$ if $x\in \Omega \cup \domeg_2 \cup \H$ and $$ \tilde u (x,t)=
    \limsup_{\displaystyle  \mathop{\scriptstyle (y,s) \to (x,t)}_{\scriptstyle
        y\in \Omega}} u(y,s)\quad \hbox{if  } x \in \domeg_1\cup \H\; ,$$
    is a stratified subsolution of the stratified problem associated to the
    Hamiltonian defined on $\Man{N-1}=\H\times(0,\Tf)$:
    $$\begin{aligned}\mathbb{F}^{N-1} & (x,t,Du) :=\\
    &\max \Big(u-\varphi(x,t)\,;\, \sup \left\{\theta p_t -(\theta \gb^x-
    (1-\theta)\gamma)\cdot p_x -(\theta \gl +(1-\theta)g)\right\}\Big)\;,
    \end{aligned}$$
    where the supremum is taken on all $(\gb,0,\gl) \in \BCL (x,t)$ such that there
    exists $\theta \in [0,1]$ satisfying $\theta \gb^x-(1-\theta)\gamma\in
    T_{x}\tMan{N-1}$.

    As a consequence, up to a modification of the value of the subsolution on $\domeg_1\cup\H$, a
    comparison result holds for the mixed problem and therefore there exists a unique continuous viscosity
    solution of the mixed problem, up to this modification. 
\end{proposition}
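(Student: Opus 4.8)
The statement is essentially a synthesis of results already established in this part: the characterization of the Dirichlet and Neumann boundary conditions in stratified terms (Propositions~\ref{Dir-nonreg}, \ref{OD-MN}), the key boundary inequality $u\leq\varphi$ on $\Man{N-1}$ (Lemma~\ref{Dir-ineq-ju}), and the comparison result for stratified solutions on state-constrained domains (Theorem of Section~\ref{sec:compstrat}). The plan is therefore to \emph{assemble} these pieces rather than to reprove anything from scratch. First I would fix the stratification: $\Man{N+1}=\Omega\times(0,T)$, $\Man{N}=(\domeg_1\cup\domeg_2)\times(0,T)$, $\Man{N-1}=\H\times(0,T)$, together with the lower-dimensional strata coming from $t=0$, i.e. $\Man{k}_0=\Man{k}\cap\{t=0\}$. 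One checks that this is an admissible stratification of $\Omegb\times(0,T)$ in the sense of the definition of that chapter: the main point, that any connected component of $\Man{k}$ ($k\leq N$) lies entirely in $\domeg\times(0,T)$, is built into the way $\domeg_1,\domeg_2,\H$ were chosen, and the regularity assumptions (smooth boundary pieces, or the ``angular'' configuration of Fig.~\ref{fig:neuman.dir.2}) guarantee we are in the ``good framework for HJ Equations with discontinuities'' after a $C^1$ (or $C^{1,1}$) flattening; assumption $(ii)$ on $\gamma$ (Lipschitz continuity) is exactly what is needed to preserve \TC for the Neumann-type Hamiltonian $\mathbb{F}^{N-1}$, and assumption $(i)$ ($\varphi(x,0)=u_0(x)$) is the compatibility needed at $t=0$.

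\textbf{The subsolution is a stratified subsolution.} The next step is to show that $\tilde u$ (the given redefinition of $u$ on $\domeg_1$) is a stratified subsolution. Since $\tilde u\leq u$ and the only subsolution content on $\domeg_1$ is $u\leq\varphi^*$ (cf.\ Proposition~\ref{Dir-MN}), $\tilde u$ remains an Ishii subsolution in $\Omega\times(0,T)$. On $\Man{N}$: on the $\domeg_1$-part, exactly as in Proposition~\ref{Dir-MN}, $\tilde u$ has been ``cleaned'' so that it equals the $\limsup$ of its interior values, hence $\tilde u\leq\varphi$ there and the $\mathbb{F}^N$-inequality \eqref{StratDir-MN} holds by the interior-approximation argument (using the characterization of the stratification, i.e.\ a parallel hyperplane to $\Man{N}$ inside $\Man{N+1}$, plus $H^N\leq H$); on the $\domeg_2$-part, by the normal controllability of $H$, $u$ (hence $\tilde u$) already satisfies the interior-$\limsup$ property without modification, and Proposition~\ref{OD-MN} gives the oblique-derivative $\mathbb{F}^N$-inequality. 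On $\Man{N-1}=\H\times(0,T)$, Lemma~\ref{Dir-ineq-ju} gives $\tilde u\leq\varphi$, so the first term of $\mathbb{F}^{N-1}$ is handled; for the second (Neumann-type) term, after a tangential sup-convolution regularization in the $\Man{N-1}$-direction (legitimate by Section~\ref{sec:regul-subsol}, using \NC and \TC) one obtains a Lipschitz function, and the combined-convex-combination argument of Proposition~\ref{OD-MN} / Proposition~\ref{ODFlat-M1} applied ``from inside'', together with the interior-approximation trick of Remark~\ref{MgMk}$(ii)$, yields the $\mathbb{F}^{N-1}$-inequality. The analogous strata at $t=0$ ($\Man{k}_0$) are treated by the same arguments combined with the $\F_{init}$ reasoning of the Dirichlet case (Proposition~\ref{visc-ineq-init} and the proof of Proposition~\ref{Dir-nonreg}).

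\textbf{Comparison and conclusion.} Once $\tilde u$ is a stratified subsolution and $v$ is (automatically) a stratified supersolution, one invokes the comparison theorem for stratified state-constraint problems. Two regularity conditions on the subsolution are required there, namely \eqref{reg-sub-Hk}--\eqref{reg-sub-Hk0} on the boundary strata; these hold for $\tilde u$ precisely because of its definition on $\domeg_1$ and the automatic interior-$\limsup$ property on $\domeg_2$ (hence also on $\H$ by passing to the limit through the adjacent $\domeg_i$). It remains to handle $t=0$: one must show $\tilde u(x,0)\le u_0(x)\le v(x,0)$. The supersolution inequality $v\ge\max(u_0,\varphi_*)\ge u_0$ is immediate; for $\tilde u$ one adapts the barrier argument in the proof of Proposition~\ref{Dir-nonreg} (using on $\domeg_1$ a local function $\phi$ with $\sup_{\BCL}\{b^x\cdot D_x\phi\}\ge0$, which exists by normal controllability, and on $\domeg_2\cup\H$ the automatic regularity already noted) to push the maximum point away from $\domeg\times\{0\}$. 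Then the stratified comparison result gives $\tilde u\le v$ on $\Omegb\times[0,T)$; taking $\tilde u=(\tilde u)^{**}$ of the Ishii supersolution/subsolution built from the value function shows existence and continuity, and uniqueness up to modification of boundary values on $\domeg_1$ follows.

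\textbf{Main obstacle.} The genuinely delicate point is Lemma~\ref{Dir-ineq-ju} and, relatedly, verifying that $\mathbb{F}^{N-1}$ satisfies the continuity/structure hypotheses of the ``good framework'' on $\H$: this is where the interaction of a Dirichlet condition on one side with a Neumann-type condition on the other must be controlled, and where one uses that in the convex combination defining $\mathbb{F}^{N-1}$ the Dirichlet weight can be separated (the $\sup$ is attained at $\theta=0$ or $\theta=1$ for the Dirichlet part) while the Neumann part keeps $\theta$ bounded away from $0$ thanks to $\gamma\cdot n\ge\nu>0$. Everything else is a careful but routine transcription of the $\R^N$ stratified machinery to the boundary setting, plus the $t=0$ barrier argument.
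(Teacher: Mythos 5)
Your proposal is correct and follows essentially the same route as the paper: assemble the Dirichlet and Neumann stratified characterizations on $\Man{N}$ (Propositions~\ref{Dir-MN} and \ref{OD-MN}), invoke Lemma~\ref{Dir-ineq-ju} for $\tilde u\le\varphi$ on $\Man{N-1}$, obtain the Neumann-type part of the $\F^{N-1}$-inequality by approximating from inside $\Man{N}$ in the spirit of Remark~\ref{MgMk}/Proposition~\ref{IequalS}, and finally treat $t=0$ by a barrier argument (which the paper isolates in a separate lemma immediately following the proposition). The paper's sketch for the $\F^{N-1}$-inequality is a touch more streamlined than yours — it goes directly through the interior-approximation/stability argument rather than interposing a full tangential sup-convolution regularization before replaying the convex-combination argument of Proposition~\ref{OD-MN} — but the two are morally the same step and both are valid.
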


The proof of this result is simple since we use both the ingredients for the Dirichlet and Neumann
problems in $\domeg_1$ and $\domeg_2$, the only difficulty being of course to deal with
$\H\times (0,\Tf)=\Man{N-1}$. 
Lemma~\ref{Dir-ineq-ju} provides the inequality $\tilde u\leq \varphi$ on $\Man{N-1}$, while the
other part in $\F^{N-1}$ is obtained by a stability result from ``inside'' $\Man{N}$ in the spirit of
Remark~\ref{MgMk} or Proposition~\ref{IequalS}.
We leave these details to the reader.

\subsection{The tanker problem}\label{sec:solutionTP}

\index{Applications!tanker problem}

As an example where the classical Ishii viscosity solutions formulation cannot be sufficient for
treating singular discontinuities, we  come back to the example given by P.L. Lions in his course at
the Coll\`ege de France, namely the problem \eqref{PL-Tanker}.

At $P_1, P_2, P_L$, one would like to impose Neumann boundary conditions
$$ \frac{\partial u}{\partial n} =g_i (t) \quad \hbox{at $P_i$\; ,}$$
(see Fig.~\ref{fig:tanker}) but such a boundary condition is far from being
classical. However, we can handle it through the stratified formulation by setting
$\Man{N+1}=\Omega \times (0,\Tf)$, $\Man{1}=\{P_1, P_2,\cdots, P_L\} \times (0,\Tf)$
and $\Man{N}=(\domeg\setminus \{P_1, P_2,\cdots, P_L\}) \times (0,\Tf)$.

\begin{figure}[!h]
    \begin{center}
    \includegraphics[width=0.5\textwidth]{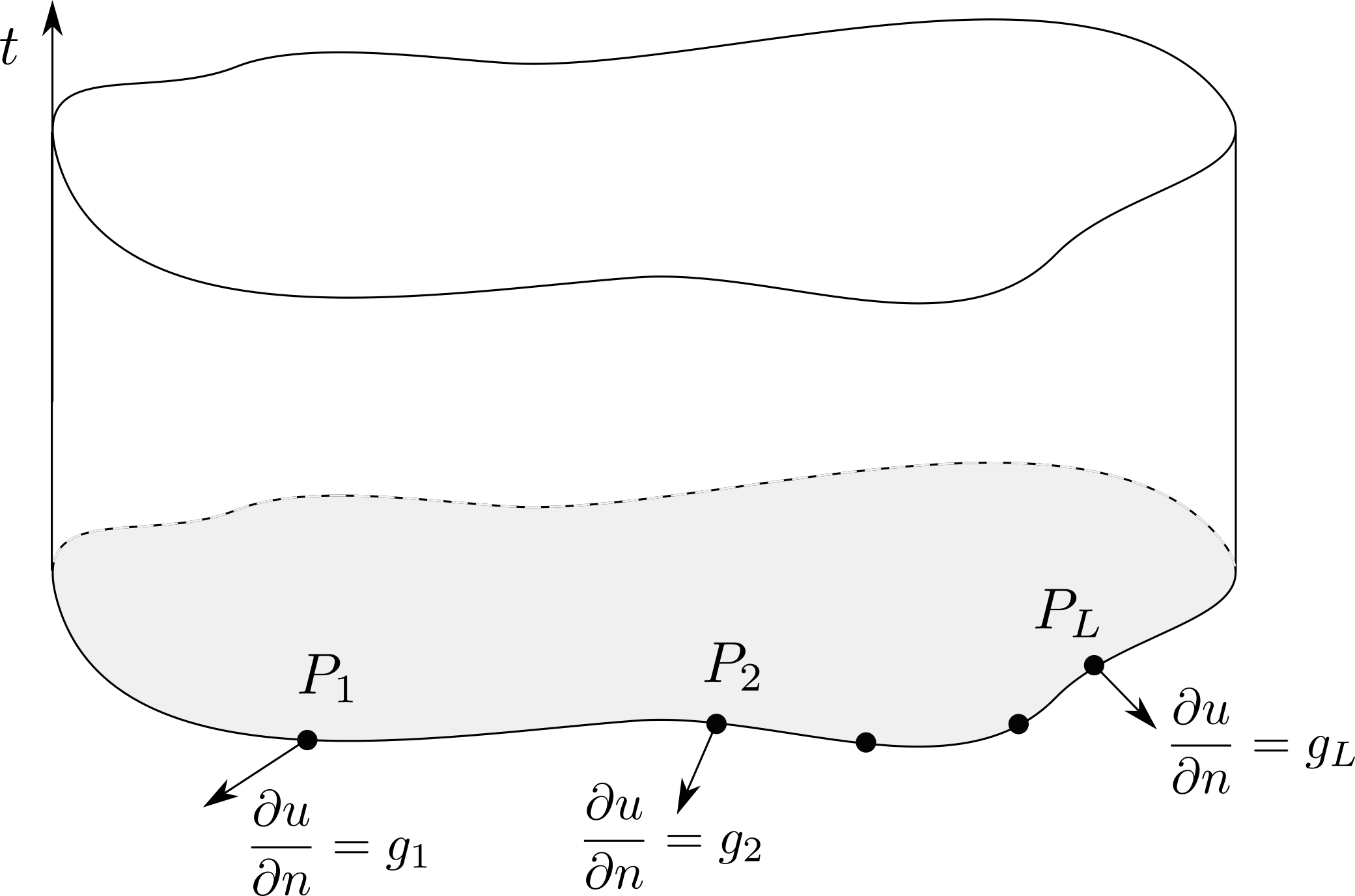}
    \caption{The tanker problem}
    \label{fig:tanker}    
    \end{center}
\end{figure}

The only point is to compute $\F^1$, which is done as in the previous section, except that
we are in $\Man{1}$ and we look for dynamics consisting in staying at $P_i$ for some $i$.

At any $P_i$, we have to consider the convex combinations of $(\gb,\gc,\gl)=((\gb^x,-1),0,\gl) \in\BCL(x,t)$
and $((-n(P_i),0)\;,\,0\;,\,g_i(t))$, \ie 
$$\Big(\mu \gb-(1-\mu)(n(P_i),0)\;,\,0\;,\,\mu \gl+(1-\mu)g(x,t)\Big )$$
for $0\leq \mu\leq 1$ with---since we are on $\Man{1}$---the constraint $\mu \gb^x-(1-\mu)n(P_i)=0$,
leading to
$$\mu \gb^x  = (1-\mu)n(P_i)\; .$$

In order to compute $\F^1(x,t,p_t)$, we have to look at the supremum of 
$ \mu p_t -(\mu \gl+ (1-\mu)g_i(t))$ but taking into account the fact that
$1-\mu= \mu \gb^x\cdot n(P_i)$. Since $\mu$ cannot vanish the condition reduces to
$$u_t + \sup_{\gb^x=\lambda n(P_i),\; \lambda \geq 0}\Big\{- \big(\gl+\gb^x\cdot n(P_i)
    g_i(t)\big)\Big\}\leq 0\quad\hbox{at }P_i\times(0,\Tf)\; .$$

Adequate controllability assumptions yield uniqueness of the stratified
solution for such problem and of course, we can weaken the regularity
assumptions on $\Omega$ which can be a square in $\R^2$ if the corners are
harbors.

\chapter{On the Stability for Singular Boundary Value Problems}
\label{chap:RefSRBC} 

\abstract{The stability properties of stratified solutions in the state-constraints case is not
investigated in its full generality. But relevant examples of the two strategies that can be used
are proposed: $(i)$ the standard half-relaxed limits method, using the standard stability result for
classical viscosity solutions; $(ii)$ borrowing the arguments of the stability results for stratified
solutions.}

\index{Boundary conditions!stability}

We do not plan to try to investigate stability properties of stratified solutions in the
state-constraints case in its full generality here. On one hand, Chapter~\ref{NESR} contains most of
the ideas which are necessary to prove such results. On the other hand however, in order to obtain the
regularity of the limiting subsolutions on the boundary (a necessary ingredient to get a comparison
result allowing to complete the half-relaxed limits method), one has to use, one way or the other,
the ideas of Section~\ref{abl}. But here we face various situations that we think is meaningless  to attempt to
list. 

Let us just indicate that, for example, Lemma~\ref{lem:sufficient.cone} may provide inequalities in
which one can pass to the limit and then use Lemma~\ref{RSub-1} to conclude. We refer the reader to
Chapter~\ref{chap:networks} where such a strategy is used in the case of a generalized network. This example shows that
some extra inequalities---highly depending on the problem at hand---may play a role in the stability
process. But other arguments can also be used.

This variety of situations leads us to present only two examples of strategies that can be used for
passing to the limit in ``singular boundary value problems'', \ie in problems for which the
stratified formulation is necessary to have the right comparison result for the limiting
equation. These two strategies can be described in the following way.

\medskip

\noindent\textbf{Strategy 1} consists in using a standard half-relaxed limits method, with the
standard stability result for classical viscosity solution, \cf Theorem~\ref{hrl}, and then using the
arguments of Chapter~\ref{chap:RefBF} in order to conclude. The advantage of this first approach is
the wide type of perturbations that can be handled for the boundary value problems, the defect being that
it only works in frameworks where classical viscosity solutions and stratified solutions are the
same for the limiting problems.

\noindent\textbf{Strategy 2} relies on borrowing the arguments of the stability results for
stratified solutions. Here the advantages and defects are opposite: we can handle only perturbations
for which a stratified formulation is at hand, but we can a priori address limiting problems where
classical viscosity solutions are not necessarily stratified solutions.

\medskip

The aim of this chapter is to provide two illustrative examples for these two strategies.

\section{Stability via classical stability results}
\index{Vanishing viscosity method!in the stratified framework}

Assume that $u$ is the unique viscosity solution of the Dirichlet problem
\eqref{standardHJB}-\eqref{DP} or the oblique derivative problem \eqref{standardHJB}-\eqref{NP} and
that the conditions of one of the following propositions hold: Proposition~\ref{Dir-nonreg-nW}, \ref{OD-MN}, \ref{OD-gamma-disc},
\ref{prop:DuIs1}, or \ref{OD-gamma-omega-disc} so that classical viscosity subsolutions and
stratified subsolutions are the same. 

We consider approximations by the vanishing viscosity method:
\begin{equation}\label{eqn:approx-e}
    \begin{cases}
        u^\e_t-\kappa(\e)\Delta u^\e+ H_\e (x,u^\e,Du^\e)=0 & \text{in }\Omega_\e \times(0,\Tf)\;,\\
        u^\e(x,0)=u^\e_0(x) & \text{in }\Omega_\e\;.
    \end{cases}
\end{equation}
Here, $\kappa(\e)\geq 0$, $H_\e$ is a continuous function in $\Omegb_\e\times \R \times
\R^N$ and $u^\e_0\in C(\Omegb_\e)$. This problem is associated with either a Dirichlet
boundary condition
\begin{equation}\label{eqn:approx-e-D}
    u^\e(x,t)=\varphi_\e (x) \quad \hbox{on }\Omega_\e \times(0,\Tf)\, ,
\end{equation}
or an oblique derivative boundary condition
\begin{equation}\label{eqn:approx-e-OD}
    \frac{\partial u^\e}{\partial \gamma_\e} =g_\e (x,t) \quad \hbox{on }\domeg_\e\times (0,\Tf)\; .
\end{equation}
In this oblique derivative case, we say that $\tilde \gamma_\e \in \Gamma_\e (x,t)$ for $(x,t)
\in \domeg_\e\times [0,\Tf)$ if there exists a sequence $(x_\delta,t_\delta)\in \domeg_\e\times
[0,\Tf)$ converging to $(x,t)$ such that $\gamma_\e(x_\delta,t_\delta)\to \tilde \gamma_\e$. We 
also use a similar definition for $\e=0$ with $\gamma$ instead of $\gamma_\e$.

Our result is the
\begin{proposition}\emph{--- Stability via classical arguments.}\smsp
    Assume that the above conditions hold and that moreover
    \begin{enumerate}
        \item[$(i)$] $\Omegb_\e\to \Omegb$ and $\domeg_\e \to \domeg$ in the sense of the
        Hausdorff distance;
        \item[$(ii)$] $\kappa(\e)\to 0$, $H_\e\to H$ and $u^\e_0\to u_0$ locally uniformly;
        \item[$(iii)$] according to the Dirichlet or oblique derivative case, 
        \begin{enumerate}
            \item[$(a)$] either $\limssup \varphi_\e = \varphi^*$ and  $\limiinf \varphi_\e =
                \varphi_*$ on $\domeg\times[0,\Tf)$\,;
            \item[$(b)$] or $\Gamma\supset\limsup^*\Gamma_\e$ and $g_\e\to g$ locally uniformly on
                $\domeg\times[0,\Tf)$. 
        \end{enumerate}
    \end{enumerate}
    Then, if $(\ue)_\e$ is a sequence of uniformly bounded viscosity solutions of either the
    Dirichlet ot the oblique derivative problem, 
    $$ u^\e \to u \quad\hbox{locally uniformly in }\Omega \times [0,\Tf)\;,$$
    where of course, $u$ is the solution of either the Dirichlet or oblique derivative problem,
    accordingly.
 \end{proposition}
 
As was announced at the beginning of this chapter, the strategy of proof is clear: the reader will
check easily first that under the assumptions, $\limssup u^\e$ and $\limiinf u^\e$ are classical
viscosity sub and supersolution of either the Dirichlet or oblique derivative problem; then applying
the arguments of Chapter~\ref{chap:RefBF}---in particular the fact that classical viscosity
subsolutions are stratified subsolutions---provides the required comparison result. This allows to
fully apply the half-relaxed limits method in order to conclude.
 
Notice that the approximation by \eqref{eqn:approx-e} is rather general, possibly combining
vanishing viscosity method and an approximation of $H$ by non-convex Hamiltonian. Both perturbations
which cannot be handled by a pure stratified stability result.

\section{Stability via stratified approximations}
 
Let us come back on the tanker problem in $\R^N$, with dimension $N>2$ but on
$\Man{N}=(\domeg\setminus \{P_1, P_2,\cdots, P_L\}) \times (0,\Tf)$, we replace the
state-constraints boundary condition, \ie the condition to ``not unload the cargo outside the
harbors $(P_i)$'': we allow here some smuggling, where one can unload everywhere on the boundary
at a higher cost, the smuggling risk. We model this situation by a Neumann boundary condition on the whole
boundary
$$ \frac{\partial u^\e}{\partial n}=g_\e (x,t) \quad \hbox{on }\domeg_\e\times (0,\Tf)\; ,$$
where $(g_\e)_\e$ is an increasing sequence continuous function such that $g_\e(P_i,t)=g_i(t)$ and
$g_\e (x,t) \to +\infty\ \text{uniformly on any compact subset of }
(\domeg\setminus \{P_1, P_2,\cdots, P_L\})\times  (0,\Tf)\;.$
 
As can be expected, this relaxed boundary condition by allowing smuggling converges to the original
Tanker problem:
\begin{proposition}\emph{--- Stability via stratified arguments, the Taker problem.}\smsp
    Under the assumptions of the tanker problem and the above requirement on $(g_\e)_\e$, the
    unique solution $u^\e \in C(\Omegb \times [0,\Tf))$ of the Neumann problem is uniformly bounded and
    converges to the unique solution $u$ of the tanker problem, uniformly on $\Omegb \times
    [0,\Tf-\delta]$, for any $\delta >0$. 
\end{proposition}
 
\begin{proof} 
    We first remark that $u^\e \leq u$ in $\Omegb \times [0,\Tf)$. This inequality is intuitively
    true since, in terms of control, there are more controls involved in the $u^\e$-problem than in the
    $u$-one. More rigorously, one can show that $\ue$ is a stratified subsolution for the $u$-problem, the
    $\F^1$-inequality resulting from the arguments of Section~\ref{sec:INPofS}. On the other hand,
    for any $\e \leq \bar \e$, $u^\e\geq u^{\bar \e}$, providing a lower estimate so that the limsup
    and liminf of $u_\e$ are well-defined. 
  
    For the convergence, there are a lot of details to be checked but we concentrate on the
    $\F^1$-inequalities on each $P_i$ and leave the rest of the proof to the reader since it is
    based on by now routine arguments---at least we hope so.

    \medskip

    \noindent\textbf{(a)} Let us denote by $\ou=\limssup u^\e$. Let $\bar t > 0$ be a local strict
    maximum point of the function $\ou(P_i,t)-\phi(t)$ where $\phi$ is a $C^1$-smooth. We have to
    prove that
    $$\F^1\big(P_i,\bt,(D_x\phi,D_t,\phi)\big)=\phi_t (\tb)+H^1 (P_i,\tb)\leq 0\; ,$$
    where $$H^1 (P_i,t)=\sup_{b^x=\lambda n(P_i),\; \lambda \geq 0}
    \Big\{- \big(l+b^x\cdot n(P_i) g_i(t)\big)\Big\}.$$
    
    In order to do so, we consider the function
    $$ (x,t) \mapsto u^\e (x,t)-\phi(t)-C|x-P_i| - C^{1/2}(t-\tb)^2\; ,$$
    for some large constant $C>0$ to be chosen later on.

    We look at this function in a small compact neighborhood $\VV$ of $(P_i,\tb)$. It is clear that this
    function achieves its maximum in $\mathcal{V}$ at some point $(\xe,\te)$ and, by choosing $C$
    large enough, we make sure that $(\xe,\te)$ cannot be on the boundary of $\mathcal{V}$; indeed
    $u^\e$ and $\phi$ being uniformly bounded, there exists $M>0$ such that
    $$ C|\xe-P_i| + C^{1/2}(\te-\tb)^2\leq M$$
    so that $(\xe,\te)$ is close to $(P_i,\tb)$ for $C$ big enough.

    \medskip

    \noindent\textbf{(b)} Next we have to investigate several cases:\\[2mm]
    1. The case $\xe \in \Omega$ cannot happen because of the normal controllability assumption. 
    Indeed, if $e=(\xe -P_i)|\xe-P_i|^{-1}$ the $H$-inequality
    $$ \phi_t(\te)+2C^{1/2}(\te-\tb)+ H(\xe,\te,Ce)\leq 0$$
    cannot hold for $C$ large enough. Notice that the size of $C$ to rule out this case is
    independent of $\e$.\\[2mm]
    2. If $\xe\in \domeg \setminus \{P_i\}$, the $\F^N$-inequality reads
    $$\sup_{(\theta b^x-(1-\theta)n)\cdot n=0}
    \Big\{\theta\big(\phi_t(\te)+ 2C^{1/2}(\te-\tb)\big)-\big(\theta b^x-(1-\theta)n\big)\cdot Ce -
    \big(\theta l +(1-\theta)g_\e(\xe,\te)\big)\Big\}\; .$$
    But since the distance function $d(\cdot)$ to the boundary is $C^{1,1}$, we have
    $$ 0=d(\xe)-d(P_i)=-(\xe-P_i)\cdot n(\xe)+o(|\xe-P_i|)\; ,$$
    where the ``$o$'' is uniform in $\e$.
    Hence $e \cdot n(\xe)=o(1)$, \ie these two unit vectors are almost orthogonal. 
    On the other hand, by the local normal controllability, there
    exists $b^x \in T_{\xe}\domeg$ such that $b^x \cdot e \geq \eta>0$ for some fixed $\eta$. 
    We use this $b^x$ with $\theta=1$ in the $\F^N$-inequality which implies
    $$
        \phi_t(\te)+2C^{1/2}(\te-\tb)-C b^x\cdot e-l \leq 0\; .
    $$
    Again this case cannot happen if $C$ is chosen large enough.\\[2mm]
    3. The only remaining case is $\xe =P_i$ and the strict maximum point property for
    $\ou(P_i,t)-\phi(t)$, which is even more true for $\ou(P_i,t)-\phi(t)- C^{1/2}(t-\tb)^2$,
    implies that $\te \to \tb$. 

    Now, the $\F^N$-inequality at $(P_i,\te)$ implies the $\F^1$-one since, by definition, $\F^1\leq
    \F^N$ in our case. It finally remains to let $\e\to 0$, leading to the result.

    \medskip

    \noindent\textbf{(c)} We want to comment on two other points, in order to complete the proof.
    
    \noindent -- The case $\tb=0$ can be treated similarly but it leads to an inequality of the
    form $\min(u-u_0,\F^1)\leq 0$ at $(P_i,0)$. Indeed, in the proof we may face the case $\xe \in
    \Omega$ and $\te=0$ for which we have $u^\e(\xe,\te)\leq u_0(\xe)$. This inequality at $(P_i,0)$
    allows to show that $u(P_i,0)\leq u_0(P_i)$ by the methods of Section~\ref{IC-HJB} and therefore
    gives an information leading to show that \HBAIDCP holds.

    \noindent -- The regularity of $b=b(x,\alpha)$ together with the normal controllability
    assumption allows to use Lemma~\ref{lem:sufficient.cone} and then Lemma~\ref{RSub-1} is a very
    easy way, both for $u$ or the $(u^\e)$: indeed, if $(x,t)\in \domeg \times (0,\Tf)$, the normal
    controllability implies that there exists $\bar \alpha$ such that $b(x, t, \bar \alpha)\cdot
    n(x)<0$. On one hand, this property clearly gives \eqref{cone}, and on the other hand, by
    Proposition~\ref{sub-up-to-b}, we have
    $$ w_t - b(x, t, \bar \alpha)\cdot D_x w\leq l(x,t,\bar \alpha) \quad \hbox{on }
    [\Omegb \times (0,\Tf)] \cap [B(x,r)\times (t-r,t+r)]\; ,$$
    for $r$ small enough and for $w=u,u^\e$. Of course, we can use a standard stability result to
    pass to the limit in this inequality and this gives the regularity of $\ou$ on $\domeg \times
    (0,\Tf)$.

    With this two additional points, the proof is complete.
 \end{proof}
 
 \begin{remark}
     As the reader may have noticed, the above proof is nothing but a (slightly)
     simplified version of the proof of the stability result for stratified subsolutions. It is
     rather strange that there is no real penalization of the distance to $\Man{1}$ since $C$ is
     chosen large, but never tends to infinity. On the contrary, the normal controllability
     assumption forces the maximum points to be on $\Man{1}$, showing how important this assumption
     is.
 \end{remark}

This section is concerned with some stability results involving Dirichlet and 
Neuman problems in presence of singularities. The aim here is to mix results
from Sections~\ref{NESR} and \ref{chap:RefBF}. However, we do not try to cover
all the possible results here since such formulations would imply a lot of
technicalities and make everything very difficult to read.

We prefer instead to present a few illustrative examples that highlight the
important ideas and results.

So, the main theme of this section is the following: we start from standard, continuous
boundary condition problems and we want to pass to the limit when singularities are created 
(in the limit) at the boundary.

\section{A concrete application: singularities in Dirichlet problems}
\index{Dirichlet problems!stability}

Starting from a sequence of smooth domain $(\Omega_\e)_{\e>0}$, and smooth 
(say continuous) boundary data $(\varphi_\e)_{\e>0}$, there are essentially 
two mechanisms that can yield some singularities as $\e\to0$:
\begin{enumerate}
    \item[(a)] the boundaries $\partial\Omega_\e$ converge to a boundary 
        $\partial\Omega$ presenting a non-trivial stratification, 
        ie. $\partial\Omega$ is not a $C^1$ hypersurface of $\R^N$;
    \item[(b)] the sequence $\varphi_\e$ converges to a discontinuous 
        limit boundary condition $\varphi$.
\end{enumerate}
Of course, a general problem may involve both mechanisms and even at the same
location in the limit.  But here we are going to separate both cases in order
to keep things as simple as possible. The good news being that both situations
can be handled by the stratified framework.

\subsection{Non-smooth domains}

We consider here the case of a square $\Omega:=(0,1)^2\subset\R^2$, already considered in 
Example~\ref{ex:square} and a stratified problem
\begin{equation}\label{pb:square}
    \begin{cases}
        u_t+H(x,t,Du)=0 & \text{in }\Omega\times(0,\Tf)\;,\\
        u(x,t)=\varphi(x,t) & \text{on }\partial\Omega\times(0,\Tf)\;,\\
        u(x,0)=u_0(x) & \text{in }\Omega\;.
    \end{cases}
\end{equation}
where $\varphi$ and $u_0$ are W-adapted to the natural stratification of
$\partial\Omega\times(0,\Tf)$. For simplicity here, let us even assume that
$\varphi,u_0$ are continuous and satisfy the compatibility condition
$\varphi(x,0)=u_0(x)$ for $x\in\partial\Omega$ and that $H$ is as in Chapter~\ref{chap:RefBF}, with the right
\NCBCL-\TCBCL which are adapted to the stratification of the boundary.

Here the problem is singular due to the geometric nature of $\Omega$ which is
clearly only a Lipschitz domain. As we have already seen, this singular
Dirichlet problem can be handled in the stratified framework under some quite
general hypotheses. We refer to the corresponding section for all the details.
Let us just mention here that there is an underlying set-valued map $\BCL$ defined on
$\bar\Omega\times[0,\Tf]$, taking into account the Dirichlet boundary data
$\varphi$ on $\partial\Omega\times(0,\Tf)$ as well as the initial data $u_0$ on
$\Omega\times\{0\}$, which allows to get a unique stratified solution
of~\eqref{pb:square}. 

Of course, the stratification $\M$ of $\Omega\times[0,\Tf]$ is time-independent and given by
$\M^3:=\Omega\times \R $, $\M^1:=\{\{P_i\}\times \R ;i=1..4\}$ where $P_i$ are the four corners of
the square and $\M^2=\{\E_i \times \R; i=1..4\}$, where the $(\E_i)_{i=1..4}$ are the four (open)
sides of the square.

The question is to know whether can we approximate the stratified solution by a sequence of
```standard'' problems, defined in smooth domains. Or equivalently, can we
identify the limit of such regular problems when the geometry of the boundary
yields some singularities in the limit ?

In order to answer the question, we approximate the square by a sequence of
smooth domains $(\Omega_\eps)_\e$ ``converging'' to $\Omega$. 
For simplicity, we choose here
$$\Omega_\eps:=\{x\in\Omega:{\rm dist}(x,\partial\Omega)>\eps\}\;.$$
This specific sequence has some advantages like preserving the symmetries of the problem, 
being convex, monotone, included in $\Omega$, see
Fig.~\ref{fig:approx-stab}. However, we could use more general approximations
provided they satisfy some properties, the most important one being of course
the convergence to $\Omega$ in the sense of Definition~\ref{ncos}.

\begin{figure}[htp]
\begin{center}
    \includegraphics[width=0.5\textwidth]{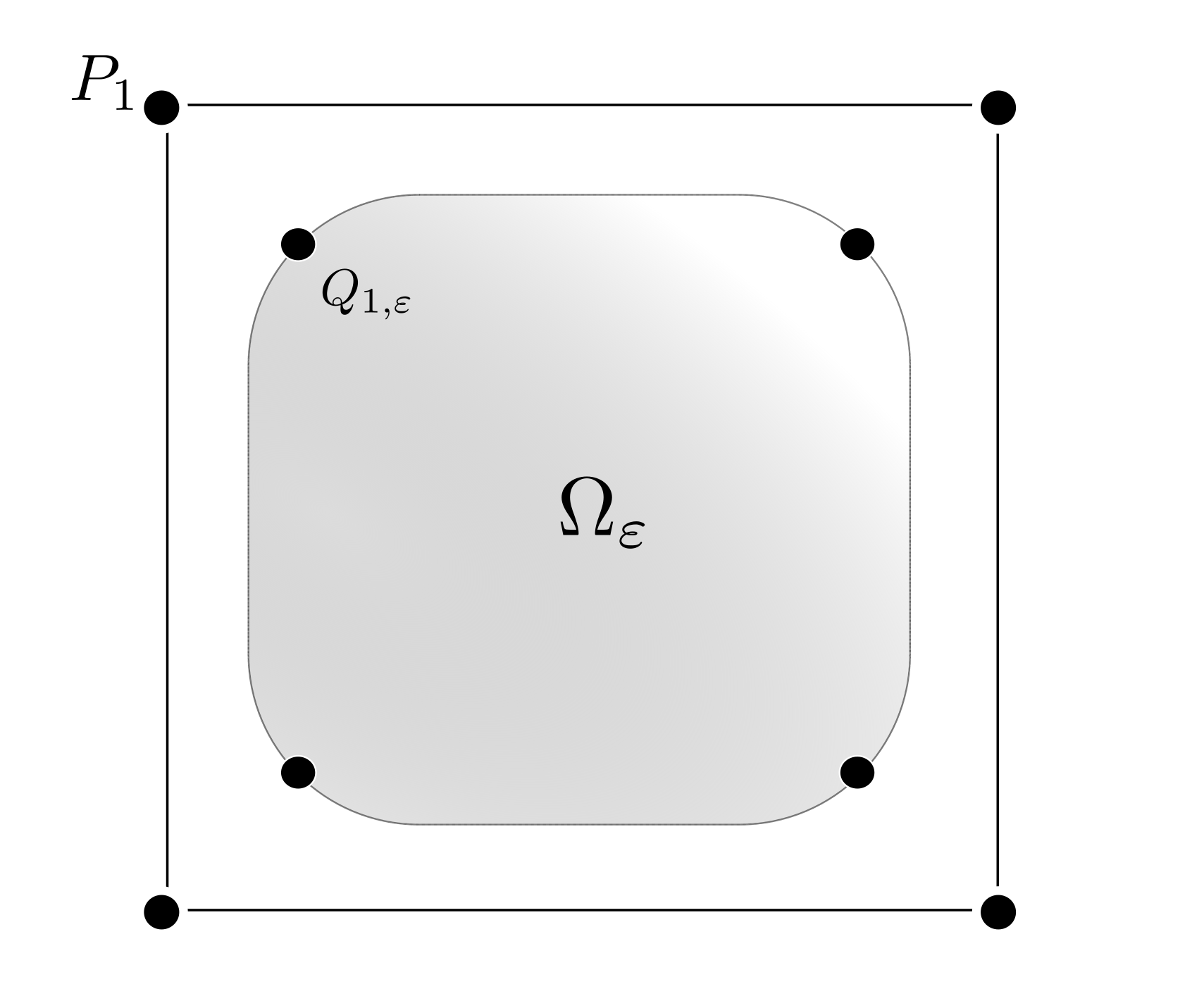}
    \caption{Approximation of the square}
    \label{fig:approx-stab}
\end{center}
\end{figure}

Concerning the Dirichlet boundary data, we introduce a sequence of 
continuous function $\varphi_\e$ defined on $\partial\Omega_\e\times(0,\Tf)$,
satisfying the following convergence property: for any sequence of points
$(x_\e,t_\e)\in\partial\Omega_\e\times(0,\Tf)$
converging to a point $(x,t)\in\partial\Omega\times(0,\Tf)$, 
$$\lim_{\e\to0}\varphi_\e(x_\e,t_\e)=\varphi(x,t)\;.$$
Moreover, we assume that for each $\e>0$, the compatibility condition 
$\varphi_\e(x,0)=u_0(x)$ is valid on $\Omega_\e$. Under these assumptions, we know that
for any $\e>0$ there is a unique viscosity solution $u_\e$ of the following problem
 $$\begin{cases}
     (u_\e)_t+H(x,t,Du_\e)=0 & \text{in }\Omega_\e\times(0,\Tf)\;,\\
        u_\e(x,t)=\varphi_\e(x,t) & \text{on }\partial\Omega_\e\times(0,\Tf)\;,\\
        u_\e(x,0)=u_0(x) & \text{in }\Omega_\e\;.
    \end{cases}$$
The main result on the convergence is the
\begin{theorem}\label{thm:stab.disc.geo}\emph{--- Stability for a singular domain.}\smsp 
    As $\e\to0$, the sequence $(u_\e)_\e$ converges locally uniformly in 
    $\Omega\times(0,\Tf)$ to the stratified solution $u$ of problem \eqref{pb:square}.
\end{theorem}

\begin{proof}
    The proof consists just in assembling various results that appear in the previous 
    parts of this book. However, there are a few things to do before that.

    \noindent\textsc{A. Stratifying the problem —}
    Thanks to our assumption on $\Omega_\e$, we can easily define four points 
    $Q_{i,\eps}\in\Omega_\e$, located on the square diagonals.
    Notice that $Q_{i,\eps}$ actually minimizes the distance to $P_i$ from $\Omega_\e$, 
    so that $Q_{i,\e}\to P_i$ for each $i=1..4$ as $\e\to0$.
    Those points allow us to define a super-stratification $\M_\e$ of
    $\Omega_\e\times\R$ where $\Man{1}_\e=\{Q_{i,\e}\times\R;i=1..4\}$
    and the four others parts of $\partial\Omega_\e\times\R$ 
    are elements of $\Man{2}_\e$. 

    \noindent\textsc{B. Defining the sets $\BCL_\e$ and the Hamiltonians —} 
    here we follow exactly 
    Section~\ref{RefBF:Dir}: $\BCL_\e$ is just a modification of $\BCL$
    on $\partial\Omega_\e\times(0,\Tf)$, where we extend it as the 
    convex enveloppe of 
    $$\BCL(x,t)\,\cup\,\{(0,1,\varphi_\e(x,t))\}\;.$$
    Of course this new $\BCL_\e$ is $\M_\e$-adapted. 
    
    Using Proposition~\ref{ajout}, we introduce three Hamiltonians:
    first, $\mathbb{F}^3_\e(x,t,u,p):=p_t+H(x,t,p_x)$ defined in
    $\Omega_\e\times(0,\Tf)$, which is just related to the equation inside
    $\Omega_\e$.
    Then, on each point $Q_{i,\e}$ of $\Man{1}_\e$, we set 
    $$\mathbb{F}_\e^1(x,t,u,p):=\max(u(x,t)-\varphi_\e(x,t), u_t + H^1(Q_{i,\e},t)) \;,$$
    where 
    $$ H^1(Q_{i,\e},t):= \max_{b(Q_{i,\e},t,\alpha)=0}\,(-l(Q_{i,\e},t,\alpha))\; ,$$
    and similarly, we set, on $\E_i \times (0,\Tf)$
    $$\mathbb{F}^2_\e(x,t,u,p)=\max(u(x,t)-\varphi_\e(x,t), u_t + H^2(x,t,p)) \;,$$
 where 
    $$ H^2(Q_{i,\e},t,p):= \max_{b(x,t,\alpha)\in T_{x}E_i}\,(-b(x,t,\alpha)\cdot p-l(Q_{i,\e},t,\alpha))\; ,$$
   
     when
    $x$ belongs to $\partial\Omega_\e\setminus\{Q_{i,\e};i=1..4\}$.
    
    In this setting,  $u_\e$ can be seen as a the stratified solution of
    $\mathbb{F}_\e(x,t,u,D_xu,u_t)=0$, meaning that for each $k=1..3$, 
    a $\mathbb{F}_\e^k$-subsolution inequality holds on $\Man{k}_\e$ 
    while $u_\e$ is a (classical viscosity) $\mathbb{F}^3$-supersolution. 

    \noindent\textsc{C. Passing to the limit —}
    In the sense of Definition~\ref{ncos}, $\M_\e$ converges to 
    $\M$, the  stratification of $\partial\Omega$.

    In order to apply Theorem~\ref{nsr}, we just notice that by construction,
    $\BCL_\e$ converges to $\BCL$ in the sense of Lemma~\ref{lem:stab.bl.ham} 
    (because of our assumption on the convergence of $\varphi_\e$). This allows us
    to use the theorem, and conclude as in Corollary~\ref{cor:stability}.
\end{proof}

\subsection{Non-smooth data}

Here we assume that $\Omega$ is a fixed smooth domain in $\R^2$ and that we have
a sequence of boundary data $\varphi_\e$ continuous on $\partial\Omega\times[0,\Tf]$, 
converging to some $\varphi$ which may be discontinuous at some isolated 
points of $\partial\Omega\times[0,\Tf]$. For simplicity here, we assume that there is only one
point $P_0=x_0\in \domeg$ with $t_0>0$ at which $\varphi$ is not continuous. Hence, more precisely we have
$$ \limssup \varphi_\e = \limiinf \varphi_\e = \varphi \quad \hbox{on  }[\domeg \setminus P_0]\times [0,\Tf)\; ,$$
and there exists a sequence $x_\e \to x_0$ such that
$$\varphi_*(x_0,t):=\lim \varphi_\e(x_\e,t)\quad \hbox{for any }t>0\;.$$

We assume that we are in the framework of Chapter~\ref{chap:RefBF}, \ie
$$\BCL(x,t):= \{(b(x,t,\alpha),0,l(x,t,\alpha));\ \alpha \in A\}\quad \hbox{for  }(x,t) \in \Omegb \times [0,\Tf]\; .$$ 
At the $\e$-level we set $\BCL_\e(x,t)=\BCL(x,t)$ if $(x,t)\in \Omega\times(0,\Tf)$ and 
$$\BCL_\e (x,t)=\overline{\mathrm{co}}\Big(\BCL(x,t)\,\cup
\,(0,1,\varphi_\e(x,t)  \Big)\quad \hbox{if  }x\in \domeg\times(0,\Tf)\;.$$
Of course, for $t=0$ we have to incorporate the term corresponding to the initial data.
For $\e=0$, the only change is that we have to replace $\varphi_\e(x,t)$ by $\varphi_*(x,t)$.

From the pde viewpoint, we are considering the following boundary value problem
$$\begin{cases}
     (u_\e)_t+H(x,t,Du_\e)=0 & \text{in }\Omega\times(0,\Tf)\;,\\
        u_\e(x,t)=\varphi_\e(x,t) & \text{on }\partial\Omega\times(0,\Tf)\;,\\
        u_\e(x,0)=u_0(x) & \text{in }\Omega\,
\end{cases}$$
which has a unique viscosity solution since $\varphi_\e$ is continuous, 
and the singular limit problem
$$\begin{cases}
     u_t+H(x,t,Du)=0 & \text{in }\Omega\times(0,\Tf)\;,\\
        u(x,t)=\varphi_*(x,t) & \text{on }\partial\Omega\times(0,\Tf)\;,\\
        u(x,0)=u_0(x) & \text{in }\Omega\;.
\end{cases}$$
Since $\varphi$ is discontinuous at $P_0$, we have to consider a stratification 
of $\partial\Omega\times(0,\Tf)$ involving $\M^1:=\{\{P_0\}\times(0,\Tf)\}$.
The result is the following
\begin{theorem}\emph{--- Stability for a singular limit data.}\smsp
    As $\e\to0$, the sequence of viscosity solutions $(u_\e)_\e$ converges to the
    stratified solution of the limit problem, associated with the boundary data
    $\varphi_*$.
\end{theorem}

\begin{proof}
    The strategy is essentially the same as for the case of a singular boundary: at the $\e$-level,
    we introduce a super-stratification $\M_\e$ with $\Man{1}_\e:=\{x_\e\}\times \R$ which converges to 
   $\Man{1}_\e:=\{x_0\}\times \R$ in order to take into account the discontinuity
    at $P_0$. 

    With this new stratification, $(u_\e)_\e$ can be seen as the stratified solution of the Dirichlet problem with the
    boundary condition $\varphi_\e$, meaning that the $\F^k_\e$-subsolution inequalities ($k=1..3)$
    coupled with a $\F^3_\e$-supersolution inequality.

    We do not detail every Hamiltonian since they are similar to those that we
    used in Theorem~\ref{thm:stab.disc.geo}. Notice that we only get $\limsup^*\BCL_\e(x,t) \supset \BCL_*(x,t)$ because the limsup contains
    any possible relaxed limit of $\varphi_\e$, not only $\varphi_*$.

    Using the convergence results of Theorem~\ref{nsr} and Corollary~\ref{cor:stability}
    we see that $u_\e$ converges locally uniformly to the stratified solution $u$ of the Dirichlet problem 
    associated to the boundary data $\varphi_*$, which we wanted to prove.
\end{proof}

\chapter{Further Discussions and Open Problems}
\label{chap:openpb-partV}

\abstract{In this chapter, the main comments concern the ``regularity of subsolutions'' on the
boundary; probably a lot of work remains to be done to have completely satisfying results.  For the
extensions, stationary problems, more general dependence in time and unbounded control problems are
considered.}

In this part, we have extended the results of Part~\ref{stratRN} without---apparently---much more
additional difficulties.  This sentence is at the same time true since no new arguments is really
needed in the state-constraints framework but partly wrong since the two difficulties we describe in
the introduction of this part are rather serious.

This gives us the opportunity to make an overall assessment of our approach to stratified problems:
our point of view was to look for a general framework for which we could prove ``nice results'' for
both the HJB Equation (comparison, stability, etc.) and the associated control problem (continuity
of the value function in particular). We end up thinking that our two main assumptions, namely the
tangential continuity and the normal controllability, are indeed playing a key role. Even if,
obviously, this framework does not cover all the possible interesting cases, we keep thinking that it
is a ``natural'' general setting to treat problems with discontinuities. And actually we hope to
have convinced the reader that these two main assumptions are useful for proving {\em any result}.

But, in order to use these assumptions, the important notion of ``regular subsolution'' appears
everywhere as the reader can check it through the table of contents and actually, in
Part~\ref{stratRN}, this regularity was ensured by the normal controllability.

Obviously the new point in Part~\ref{S-BC} is that we have a boundary and obtaining the regularity
of subsolutions on the boundary is far more delicate. In the classical theory, this difficulty was
already appearing in the Dirichlet problem: we recall the idea of a ``cone condition'' for
state-constrained problem which was initiated by Soner \cite{Son1,Son2} and used in different ways
for example in \cite{BP2,BP3}.

We have tried to provide ideas in order to prove this regularity of subsolutions on the boundary.
But the methods we use to turn around the difficulties are far from being as general as one could
hope and, probably, they have to be improved specifically on a per-example basis. As we pointed out
in the section related to the regularity of subsolutions on the boundary, the stratified approach
can handle such a wide variety of non-smooth domains that providing a general theory to treat all of
the situations is hopeless.\\

\

As in Part~\ref{stratRN}, several extensions can be considered, for instance:
stationary problems, more general dependence in time, unbounded control problems. Let us make
several comments on these points.

Concerning our study of classical boundary value problems, a puzzling question is related to the
optimality of the results we obtain: from a technical point of view, it is not clear that the
various conditions we use to show that Ishii's subsolutions are stratified subsolutions are really
necessary. But the main question is, of course, whether it is possible to obtain comparison results
between Ishii sub and supersolutions in a more general setting or if these conditions are more or
less necessary. Unfortunately we do not have any example or counter-example for these questions.

\medskip

\noindent $(i)$ To begin with, stationary problems with boundary data should not pose any major difficulty
apart from those already dealt with in this book. Actually, the reader may get direct translations
of our results by assuming that the $\BCL$-set that we construct is independent of $t$, as well as
the sub and supersolutions.

\medskip

\noindent $(ii)$ Another problem concerns the generality of the equations we handle: we have chosen to
consider only the case of HJB Equations in the standard form $u_t+H(x,t,D_xu)$ but one may wonder
what can be done about equations involing ``gradient constraint'', typically
$$
\max(u_t + H(x,t,D_x u),|D_x u|-1)=0 \quad \hbox{in $ \Omega \times (0,\Tf)$} \; .
$$
Such feature would modify the way the initial data is taken into account, but on the other hand we
get Lipschitz continuous subsolutions for free due to the constraint.

\medskip

\noindent $(iii)$ The treatment of boundary value problems that we provide ``for the sake of simplicity'' in
bounded domains and for a time-independent stratification of the boundary (and with a standard
equation inside the domain) can certainly be extended to the case of unbounded domains with a
time-dependent stratification of the boundary and with an equation with discontinuities inside the
domain. 

As we already mentioned it above, if the stratification inside the domain does not interfere with
the one of the boundary, such extension is easy. If there is an interference, certainly the basic
arguments of the comparison proof are not affected but one has to check that the regularity of
subsolutions on the boundary is true. We leave these checkings to the reader.

\medskip

\noindent $(iv)$ Time-dependent stratifications of the boundary should not cause a major difficulty
in bounded domains but we did not check it precisely.  It is clear that proving that an Ishii
subsolution is a stratified subsolution is a local proof and neither time-dependent stratifications
of the boundary nor unbounded domains seem so difficult to handle. 

There is anyway a difficulty which appears in
unbounded domains and which may be even worse in case of time-dependent stratifications of the
boundary; we describe it now for oblique derivative problem but it arises in any type of boundary
conditions, with different forms.

Let us consider the oblique derivative problem as an example, and even in the simplest case of a smooth boundary.
We have for $\F^N$:
$$\F^N(x,t,(p_x,p_t))= \sup \Big\{\theta p_t -\big(\theta \gb^x-
    (1-\theta)\gamma\big)\cdot p_x -\big(\theta \gl +(1-\theta)g\big)\Big\}\;\hbox{on }\Man{N}\; ,$$
    where the supremum is taken on all $(\gb,0,\gl) \in \BCL (x,t)$ such that there exists $\theta \in
    (0,1)$ satisfying $(\theta \gb^x-(1-\theta)\gamma)\cdot n(x)=0$, where $n(x)$ is the unit outward
    normal to $\domeg$ at $x$.
    
We have made the following remark above:\\

\noindent {\em ``The $\theta$'s which satisfy $(\theta \gb^x-(1-\theta)\gamma)\cdot n(x)=0$ for some
$x,t,\gb^x$ are  bounded away from $0$ because of Assumption~\ref{assump:NP}: indeed $$ \theta
(\gb^x+\gamma)\cdot n(x)\geq \nu >0\; .$$ This property, and the analogous ones for the $\F^k$'s,
will play a key role in checking Assumption~\LOCaEV.''}\\

Indeed, if all these $\theta$ are greater than $\bar \theta >0$ then the dependence of
$\F^N(x,t,(p_x,p_t))$ in $p_t$ allows to check Assumption~\LOCaEV as in Section~\ref{sect:htc} and
this is true if Assumption~\ref{assump:NP} strictly holds with a fixed $\nu$. But if $\nu$ depends
on $r$ if $x \in B(0,r)$ then the ``$\bar \theta$'' may also depend on $r$ and the checking of
\LOCaEV may become a problem.

As we already mentioned it above, such difficulty may arise for all type of boundary conditions
(Dirichlet, Neumann or mixed) under different form where the time-dependence of the stratifications
of the boundary can play a role since the various parameters $\theta$ depend on $T_{(x,t)}\Man{k}$.
As we remark it above, showing that an Ishii's subsolution is a stratified subsolution is done
through a local proof and therefore is not affected by the boundedness or unboundedness of the
domain but then it remains to check that the obtained stratified problem satisfies the right
assumption for the comparison result.


\part{Investigating Other Applications}
\label{part:compl-appl}
\fancyhead[CO]{HJ-Equations with Discontinuities: Investigating Other Applications}


\chapter{KPP-Type Problems with Discontinuities}
\label{chap:KPP}

\index{KPP type problems}

\abstract{KPP-Type Problems are usual applications for viscosity solutions theory since their
complete treatment requires a combination of (i) stability result to pass to the limit in a vanishing
viscosity-like context, (ii) comparison result and (iii) connection with optimal control. A perfect playground
to test the progress in the discontinuous framework. Several results combining the results of all
the previous parts are exposed.}

\section{Introduction on KPP Equations and front propagations}

In this chapter, we are interested in Kolmogorov-Petrovsky-Piskunov \cite{KPP} type 
equations (KPP in short), whose simplest form is
\begin{equation}\label{eq:kpp.simple}
    u_t -\frac12 \Delta u = cu(1-u)\quad \hbox{in  }\R^N\times (0,+\infty)\; ,
\end{equation}
where $c$ is a nonnegative constant.

Such reaction-diffusion equation appears in several different models in Physics
(combustion for example) and Biology (typically for the evolution of population) 
and, in all these applications, one of the main interest comes from the large time 
behavior of the solutions which is mainly described in terms of front propagations.
One of the main ingredients to understand this behavior is the study of the
existence of {\em travelling waves solutions}, \ie solutions which can be
written as $$ u(x,t):= q(x\cdot e - \alpha t)\; ,$$
where $q:\R\to [0,1]$ is a smooth enough function, 
$e \in \R^N$ is such that $|e|=1$, and $\alpha \in \R$. 
The travelling wave connects the instable 
equilibrium $u=0=q(-\infty)$ with the stable one $u=1=q(+\infty)$.

The connection between these 
travelling waves solutions and front propagation phenomenon is clear: the
existence of such a solution implies that hyperplanes $x \cdot e = constant$
propagate with a normal velocity $\alpha$. And clearly, understanding the 
propagation of such flat fronts is a key step towards dealing with 
more complicated fronts.

The case of KPP Equations is complicated in terms of travelling waves: while for 
other nonlinearities---for example cubic non linearities like $f(u)=(u-\mu)(1-u^2)$---
there exists a unique characteristic velocity, KPP Equations admit a critical 
velocity $\alpha^*>0$ such that travelling waves solutions exist for all $\alpha \geq \alpha_*$.
And it is well-known that the large time behavior of the solutions (in particular the choice of
the velocity) depends on the behavior at infinity of the initial data. Actually this large time
behavior can be rather complicated since it can be explained by the ``mixing'' of several different
travelling waves as explained in 
Hamel and Nadirashvili \cite{HN}.

We are going to concentrate here on the case where the minimal velocity 
$\alpha_*$ is selected. In this case it is known that $\alpha_*=\sqrt{2c}$ and
that the large time behavior of the solutions of the KPP Equation is described by a
front propagating with a $\sqrt{2c}$ normal velocity, where the front separates
the regions where $u$ is close to $0$ and to $1$.

In order to prove this result, Freidlin \cite{F} introduced a scaling in space
and time $\displaystyle (x,t)\to (\frac{x}{\e},\frac{t}{\e})$ which has the
double advantage to preserve the velocities and to allow to observe in finite
times the large time behavior of the solution by examining the behavior of the
scaled solution as $\e\to 0$. Hence one has to
study the behavior when $\e \to 0$ of 
$$ \ue(x,t)=u \left(\frac{x}{\e},\frac{t}{\e}\right)\; ,$$
which solves the singular perturbation problem
$$ (\ue)_t -\frac{\e}2 \Delta \ue = \frac{c}{\e}\ue(1-\ue)
\quad \hbox{in  }\R^N\times (0,+\infty)\; .$$
We complement this pde with the initial data
$$ \ue(x,0)=g(x) \quad \hbox{in  }\R^N\; ,$$
where $g : \R^N \to \R$ is a compactly supported continuous function satisfying
$0 \leq g(x) \leq 1$ in $\R^N$. 

The reader might be surprised by this unscaled initial data but, in this
approach, the role of $g$ is just to initialize the position of the front,
given here by the boundary of the support of $g$,
$\Ga_0:=\partial\,\mathrm{supp}(g)$.

In this context, the following properties can be proved:
$$ \ue(x,t) = \exp\left( - \frac{I(x,t) + o(1)}{\e} \right)\; ,$$
where $I$ is the unique viscosity solution of the variational inequality 
$$ \min \left( I_t + \frac 12 |DI|^2+ c ,I \right) = 0 \quad\hbox{in 
}\R^N \times (0,+\infty)\; , $$
with
$$ I (x,0) = \begin{cases}
0 & \hbox{if  }x\in \Ga_0 \\ +\infty & \hbox{if  }x\in \R^N 
\backslash \Ga_0.\end{cases} $$  
 Moreover $I=\max(J,0)$ where $J$ is the unique 
viscosity solution of
$$ J_t + \frac 12 |D J|^2+ c = 0 \quad\hbox{in 
}\R^N \times (0,+\infty)\; .$$

The importance of this second part of the result is to allow for an easy
computation of $J$, and therefore $I$, through the Oleinik-Lax formula 
$$J(x,t) = \frac{[d(x,\Ga_0)]^2}{t} -ct \; ,$$
where $\Ga_0=\mathrm{supp}(g)$. Hence $\ue(x,t) \to 0$ in the domain $\{I>0\}=\{J>0\}=
\{d(x,\Ga_0) >\sqrt{2c}t\}$ and it can be shown that $\ue(x,t) \to 1$ in the interior of 
the set $\{I=0\}=\{J\leq 0\}=\{d(x,\Ga_0) \leq \sqrt{2c}\,t\}$.
Therefore the propagating 
front is $\Gamma_t = \{d(x,\Ga_0) = \sqrt{2c\,}t\}$ which means a propagation with 
normal velocity $\sqrt{2c}$ as predicted by the travelling waves.

Such kind of results, in the more general cases of $x,t$ dependent velocities
$c(x,t)$, diffusion and drift terms, were obtained by Freidlin \cite{F} using
probabilistic Large Deviation type methods and later, pde proofs, based on
viscosity solutions' arguments, were introduced by Evans and Souganidis \cite{ES1,ES2}.
They were then developed not only for KPP Equations but for other reaction-diffusion
equations by Barles, Evans and Souganidis \cite{BES}, Barles, Bronsard and
Souganidis \cite{BBrS}, Barles, Georgelin and Souganidis \cite{BGS}. Later, these front
propagation problems were considered in connections with the ``levet-set
approach'': one of the first articles in this direction was the one by Evans,
Soner and Souganidis \cite{ESS} (see also Barles, Soner and Souganidis \cite{BSS}).
The most general results in this direction are obtained through the
``geometrical approach'' of Barles and Souganidis \cite{BS-nga}. A complete overview
of all these developments can be found in the CIME course of Souganidis \cite{S-CIME}
where a more complete list of references is given.

Of course, the aim of this section is to extend the results for KPP Equations to the 
case of discontinuous diffusions, drifts and reaction terms. But before doing so, we come back to
the main steps of the above mentioned result: one has to
\begin{enumerate}
\item introduce the change of variable $I_\e:= -\e \log(\ue)$ and show that $I_\e$ 
is uniformly locally bounded;
\item pass to the limit by using the half-relaxed limits method in the equation
    satisfied by~$I_\e$;
\item prove a strong comparison result for the variational inequality which allows 
to prove that $I_\e \to I$ locally uniformly in $\R^N \times (0,+\infty)$;
\item show that $I=\max(J,0)$, when this is true---see just below. 
\end{enumerate}
All these steps are classical, except perhaps the last one which is related to the 
{\em Freidlin condition}: $J$ is given by a formula of representation given by the 
associated control problem and Freidlin's condition holds if the optimal 
trajectories for points $(x,t)$ such that $J(x,t)>0$ remain in the domain $\{J>0\}$. 

It is worth pointing out that this condition is not always satisfied, but keep
in mind that this fourth step is only used to give a simplest form to the result.

\section{A simple discontinuous example}

In order to introduce discontinuities in the KPP Equation, but also to point out an
interesting feature of the fronts associated to this equation, let us consider a
$1$-d example borrowed from Freidlin's book \cite{F} in which the following model is considered: 
$$ u_t -\frac12 \Delta u = c(x) u(1-u)\quad \hbox{in  }\R\times (0,+\infty)\; ,$$
where $c(x)=c_1$ if $x<1$ and $c(x)=c_2$ if $x\geq 1$. We also assume that $\Ga_0 =(-\infty, 0)$, \ie
the front is located at $x=0$ initially. Concerning function $J$ as in the previous section, 
it is intiialized by $J(x,0)=0$ if $x\leq 0$ while $J(x,0)=+\infty$ if $x>0$ and in the present
case, it satisfies the discontinuous equation
$$ J_t + \frac 12 |D J|^2+ c(x) = 0 \quad\hbox{in }\R^N \times (0,+\infty)\; .$$

For the control formulation for the function $J$, we follow the approach of Part~\ref{part:codim1}:
for $x\in\Omega_1:=\{x<1\}$, we set
$$ \BCL_1 (x,t):=\Big\{\Big(v_1\;,\,0\;,\,-c_1 +\frac{|v_1|^2}2\Big); \ v_1 \in \R^N\Big\}\;.$$
and for $x\in\Omega_2:=\{x>1\}$, we set
$$ \BCL_2 (x,t):=\Big\{\Big(v_2\;,\,0\;,\,-c_2+\frac{|v_2|^2}2 \Big); \ v_2 \in \R^N\Big\}\;.$$
Therefore the cost $-c$ is discontinuous at $x=1$ and 

The following formula allows to compute explicitly function $J$
$$ J(x,t) = \inf\left \{\int_0^t \left(\frac{|\dot y (s)|^2}{2} -c(y(s))\right) ds\  ;
\ y \in H^1(0,t), \ y(0)=x,\ y(t) \leq 0\right\}\; .$$
Notice that, a priori, we should have been careful with this formal formula since the function $c$
is discontinuous at $x=1$ but, at this point of the book, it should be clear for the reader that the
present situation is quite easy to handle, even if we face an unbounded control problem with
an initial data taking value $+\infty$ on $(0,+\infty)$. These two features do not create a
very important difficulty here: the first one because of the simplicity of the Hamiltonian and the
fact that there is no interference between the discontinuity and the gradient term;
the second one is solved by standard arguments, in particular by using the solutions associated with
constant velocities $\max(c_1,c_2)$ and $\min(c_1,c_2)$, \ie solutions of problems without
discontinuity which are sub and supersolutions to our problem. 

Notice that if the trajectory stays on the line $x=1$, the optimal choice consists in choosing
$c(y(s))= \max(c_1,c_2)$, using a tangential dynamic for the trajectory.

\bigskip

From now on, we assume for our purpose that $c_2 > c_1$ and we address the
following question: 
$$\text{\emph{when does the front, starting from $x=0$ reach the value $1$?}}$$
If we just consider the domain $x<1$, the answer should be $t_1=\big(\sqrt{2c_1}\big)^{-1}$ since
the velocity of the front is $\sqrt{2c_1}$ in this domain.

But we may also examine $J(1,t)$ and compute the smallest $t$ for which it reaches zero,
corresponding to the arrival of the front.  It is clear that an optimal trajectory should stay at
$x=1$ on an interval $[0,h]$ and then a straight line to reach $x=0$. Therefore 
$$J(1,t)=\min_{0\leq
    h\leq t}\left(-c_2 h + \frac{1}{2(t-h)}-c_1(t-h)\right)\; .$$
An easy computation gives
$$ J(1,t)= \begin{cases} 
\displaystyle \frac{1}{2t}-c_1t & \hbox{if  } \displaystyle  t\leq \frac{1}{\sqrt{2(c_2-c_1)}}\; ,\\
\sqrt{2}\sqrt{c_2-c_1}-c_2 t & \hbox{otherwise}
\end{cases}\; ,$$
and the front reaches $1$ either at time $t_1=\big(\sqrt{2c_1}\big)^{-1}$ or
$\displaystyle t_2=\sqrt{2}\frac{\sqrt{c_2-c_1}}{c_2}$ if $t_2$ satisfies the constraint
$$ t_2 \geq \frac{1}{\sqrt{2(c_2-c_1)}}\; ,$$
\ie if $c_2\geq 2c_1$.

The reader can check that the inequality $t_2<t_1$ is true for any $c_2> 2c_1$: indeed, if
$X=c_2/c_1$, it is equivalent to $$ X^2-4X+4=(X-2)^2>0\; .$$ In this case, the front looks like the
following picture

\begin{figure}[htp]
   \begin{center}
       \includegraphics[width=0.75\textwidth]{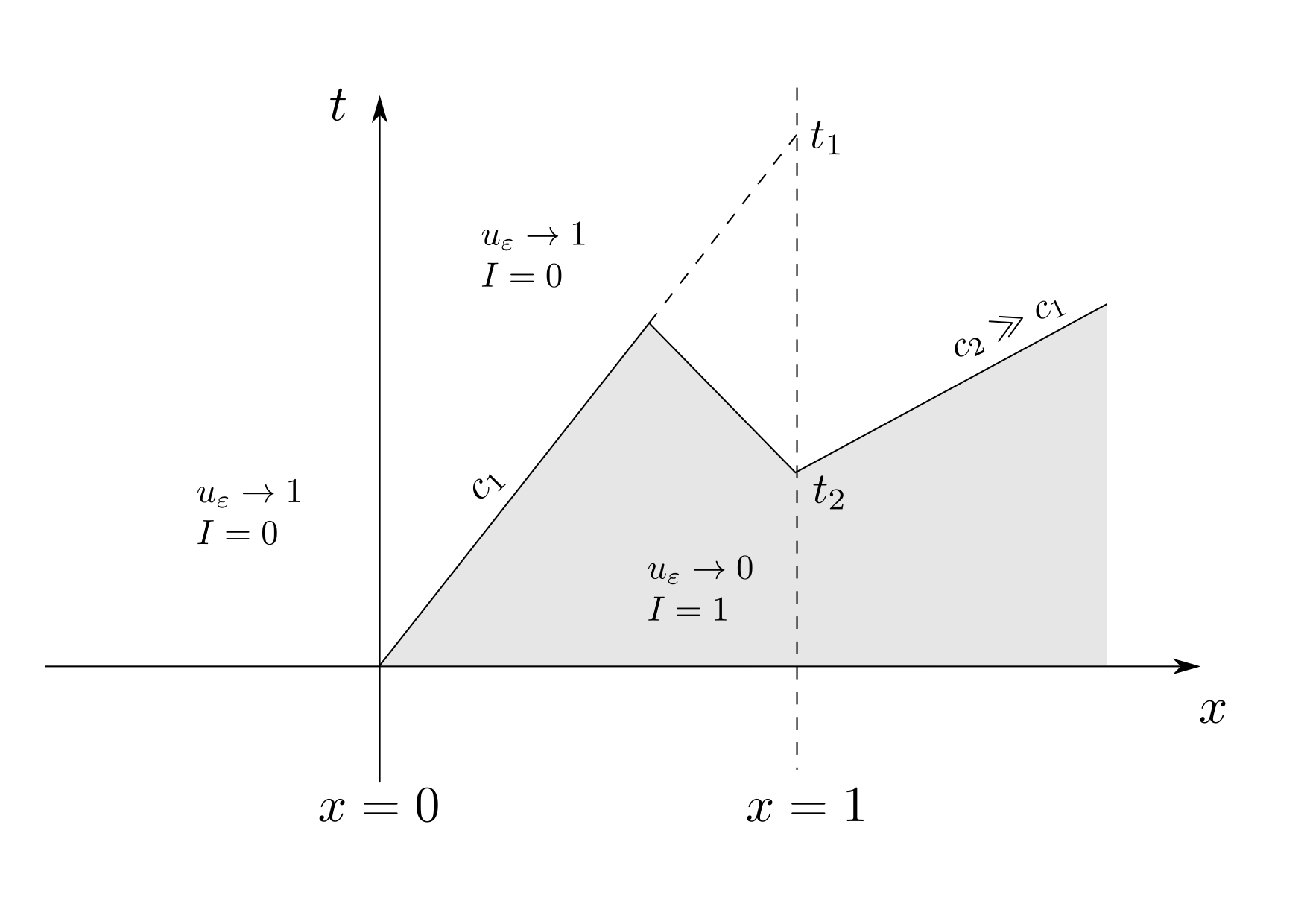}
       \caption{KPP front}
       \label{fig:kpp} 
   \end{center}
\end{figure}

We observe at time $t_2$ a strange phenomenon: a new front is created at 
$x=1$, ahead of the front travelling in $\Omega_1$ with velocity $\sqrt{2c_1}$. This kind of
phenomenon can arise even if $c(x)$ is continuous but the computations are 
easier to describe in the discontinuous setting. We also point out that Freidlin's 
condition holds true in this example.

In the next sections, we first provide results for general KPP Equations in the 
framework of Part~\ref{part:codim1}, \ie  in the case when we have 
discontinuities on an hyperplane. Then we consider some extensions to more 
general type of discontinuities which uses some particular features of the KPP Equations.

\section{The codimension one case}

\index{KPP type problems!half-space case}

With the notations of Part~\ref{part:codim1}, we consider the problem
\begin{equation}\label{KPP-E1}
(\ue)_t -\frac{\e}2 {\rm Tr}(a(x)D^2 \ue)-b(x)\cdot D\ue = \frac{1}{\e}
f(x,\ue)\quad \hbox{in  }\R^N \times (0,+\infty)\; ,
\end{equation}
where, in $\Omega_i$, $a=a^{(i)}$, $b=b^{(i)}$, $f=f^{(i)}$ for $i=1,2$, where $ a^{(i)},
b^{(i)},f^{(i)}$ are bounded Lipschitz continuous functions taking values respectively in $\sym$,
$\R^N$ and $\R$. We assume that the following additional properties hold:

\bigskip

\noindent {\bf (Uniform ellipticity)} \emph{There exists $\nu >0$ such that}
\begin{equation}\label{ueKPP-E1}
a^{(i)}(x)p\cdot p \geq \nu |p|^2 \quad \hbox{for any $x,p \in \R^N$}\; .
\end{equation}

\medskip

\noindent {\bf (KPP-nonlinearity)} \emph{For $i=1,2$ and for any $x \in \Omega_i$: $u\mapsto f^{(i)}(x,u)$
is differentiable at $0$ and for any $u \in [0,1]$}
\begin{equation}\label{knKPP-E1}
\left\{\begin{array}{l}
    f^{(i)}(x,0)= f^{(i)}(x,1)=0, \quad  f^{(i)}(x,u)>0 \quad \hbox{if $0<u<1$}  \\
    \displaystyle c^{(i)}(x)=\frac{\partial f^{(i)}}{\partial u}(x,0)= \sup_{0<u<1} 
    \left(\frac{f^{(i)}(x,u)}{u}\right), 
\end{array}
\right.
\end{equation}
\emph{with $c^{(i)}$ being bounded Lipschitz continuous on $\Omegb_i$. }

\bigskip

Of course, the prototypal example of $f^{(i)}$ is $f^i{(i)}(x,u)
=c^{(i)}(x)u(1-u)$ which is not a globally Lipschitz continuous function of $u$
but since all the solutions $\ue$ will take values in $[0,1]$, this is not a
problem.\\

Next we complement \eqref{KPP-E1} with the initial data
\begin{equation}\label{idKPP-E1}
\ue(x,0)=g(x) \quad \hbox{in  }\R^N\; ,
\end{equation}
where $g : \R^N \to \R$ is a compactly supported continuous function such that
$0 \leq g(x) \leq 1$ in $\R^N$. As above we denote by $\Ga_0$ the support of $g$
which is assumed to be a non-empty compact subset of $\R^N$ satisfying
$$ \overline{\mathrm{Int}(\Ga_0)}=\Ga_0\; .$$
In order to formulate the result, we introduce the following Hamiltonians for
$i=1,2$:
$$ H_i(x,p):= \frac12 a^{(i)}(x)p\cdot p -b^{(i)}(x)\cdot p
+c^{(i)}(x) \; .$$
As we already noticed in the previous subsection, keep in mind that from the
control viewpoint, the cost is $l^{(i)}=-c^{(i)}$.

\begin{theorem}\label{KPP-mr}\ 
    \begin{enumerate}
        \item[$(i)$] As $\e \to 0$, the following convergence holds:
        $$ -\e \log(\ue) \to I \quad \hbox{locally uniformly in  }\R^N \times (0,+\infty)\; ,$$
         where $I$ is the unique solution of
     \begin{equation}\label{vi-KPP}
        \left\{\begin{array}{ll}
            \min(I_t + H_i (x,DI),I)= 0 & \hbox{in  } \Omega_i \times (0,+\infty)\; ,\\[4mm]
        I(x,0) = \left\{\begin{array}{ll}
        0 & \hbox{if $x \in \Ga_0$,}\\ +\infty & \hbox{otherwise}\; ,
        \end{array}
        \right.
        \end{array}
        \right.
    \end{equation}
    associated to the Kirchhoff condition
    \begin{equation}\label{KC-KPP}
        \frac{\partial I}{\partial n_1}+\frac{\partial I}{\partial n_2}=0 \quad \hbox{on  }
        \H \times (0,+\infty)\;.
    \end{equation}
    Equivalently, $I$ is the maximal Ishii solution of 
    variational inequality \eqref{vi-KPP} in $\R^N \times (0,+\infty)$. 
    \item[$(iii)$] As $\e\to0$, the asymptotic behavior of $\ue$ is given by
    $$ \ue(x,t) \to \left\{\begin{array}{ll}
        0 & \hbox{in $\{I>0\}$,}\\
        1 & \hbox{in the interior of the set $\{I=0\}$.}
        \end{array}
        \right.
    $$
    \item[$(iv)$] If Freidlin's condition holds, then $I=\max(J,0)$ where $J$ 
    is either the unique solution of
    \begin{equation}\label{E-KPP}
        \left\{\begin{array}{ll}
            J_t + H_i (x,DJ)= 0 & \hbox{in  } \Omega_i \times (0,+\infty)\; ,\\[4mm]
        J(x,0) = 
        \left\{\begin{array}{ll}
        0 & \hbox{if $x \in \Ga_0$,}\\
        +\infty & \hbox{otherwise}\; ,
        \end{array}
        \right.
        \end{array}
        \right.
    \end{equation}
    associated to the Kirchhoff condition, or equivalently the maximal Ishii
    solution of \eqref{E-KPP} in $\R^N \times (0,+\infty)$. 
    \item[$(v)$] Function $J$ is given by the following representation formula
    $$J(x,t) =\inf \left\{\int_0^t l(y(s), \dot y(s))ds;\  
    y(0)=x,\  y(t) \in G_0,\  y \in H^1(0,t) \right\}\; ,$$
    where $\displaystyle l(y(s), \dot y(s))= \dfrac12 
    [a^{(i)}(y(s))]^{-1}(\dot y(s)-b^{(i)}
    (y(s)))\cdot (\dot y(s)-b^{(i)}(y(s)))-c^{(i)}(y(s))$ 
    if $y(s) \in \Omega_i$ and with 
    the regular control procedure on $\H\times (0,+\infty)$. 
    \end{enumerate}
\end{theorem}

\noindent
We can summarize this result by saying that the ``usual'' KPP-result holds true provided that the
``action functional'' $J$ is suitably defined, taking only regular controls on $\H\times
(0,+\infty)$, using the links between the maximal Ishii viscosity solution, flux-limited solutions
and junction viscosity solutions for the Kirchhoff condition.

\begin{remark}
    As the previous paragraph suggests, the proof of Theorem~\ref{KPP-mr} uses the most
    sophisticated results and tools of Parts~\ref{part:codim1} and \ref{part:NA}, combining the
    different approaches and their connections. We refer the reader to Section~\ref{mg-KPP} where a
    different point of view is described with the aim of treating more general discontinuities. That
    point of view consists in checking whether it is possible to conclude by using only the notion
    of Ishii viscosity solution.  
\end{remark}

\begin{proof} 
    The proof relies on classical arguments which remains valid
    because of the results of Theorem~\ref{IJ-gen-disc} given in Section~\ref{sec:exvi}.
    The aim is make the change of variable
    $$ I_\e (x,t)=-\e \log(\ue(x,t)) $$
    and to show that $I_\e \to I$ locally uniformly in $\R^N \times (0,+\infty)$.  But, in order to
    do so, we first need local uniform bounds on $I_\e$.

    \

    \noindent\textbf{(a)}
    We first notice that, by the Maximum Principle, we have 
    $$ 0 \leq \ue(x,t) \leq 1 \quad \hbox{in }\R^N\times (0,+\infty)\; ,$$
    and therefore $I_\e(x,t) \geq 0$ in  $\R^N\times (0,+\infty)$. In addition, $I_\e$ is
    well-defined because $\ue(x,t)>0$ in  $\R^N\times (0,+\infty)$ by the Strong Maximum Principle.

    Getting an upper bound on $I_\e$ is done by using the trick introduced in \cite{BP2,BP3} (the
    reader can look in those references for the details which follow): we set
    $$ I_\e^A (x,t)=-\e \log\left(\ue(x,t)+ \exp(-A/\e)\right)\; ,$$
    where $A\gg 1$. Then $o(1)\leq I_\e^A (x,t) \leq A$, and it is easy to show that
    $$ \limssup I_\e^A = \min(\limssup I_\e, A)\; . $$
    Therefore controlling $I_\e^A$ uniformly in $A$ provides the same control on $I_\e$.
    Next, using that $f^{(i)} (x,\ue)\geq 0$ in  $\R^N\times (0,+\infty)$, the function $I_\e^A$
    satisfies 
    $$ (I_\e^A)_t -\frac{\e}2 {\rm Tr}(a^{(i)}(x)D^2 I_\e^A)+ \frac12
    a^{(i)}(x)D I_\e^A\cdot D I_\e^A -b^{(i)}(x)\cdot D I_\e^A \leq 0 \quad \hbox{in  }
    \Omega_i \times (0,+\infty)\;,$$
    and the ellipticity assumption together with a Cauchy-Schwartz inequality on the
    $b^{(i)}$-term leads to
    $$ (I_\e^A)_t -\frac{\e}2 {\rm Tr}(a^{(i)}(x)D^2 I_\e^A)+ \frac12 \nu
    |D I_\e^A|^2 \leq k(\nu) \quad \hbox{in  }\Omega_i \times (0,+\infty)\; ,$$
    for some constant $k(\nu)$ large enough, depending only on 
    $\| b^{(i)}\|_\infty$ and $\nu$.

    \

    \noindent\textbf{(b)}
    Passing to the limit through the half-relaxed limits method, setting $\bar I_A=\limssup
    I_\e^A$, we get
    $$ \left\{\begin{array}{ll}
    (\bar I_A)_t + \dfrac12 \nu |D \bar I_A|^2 \leq k(\nu) & \hbox{in  } \R^N \times (0,+\infty)\;
        ,\\[4mm]
    \bar I_A(x,0) = \left\{\begin{array}{ll} 0 & \hbox{if $x \in \Ga_0$,}\\ A & \hbox{otherwise}\; . \end{array}
    \right. \end{array} \right.
    $$
    The Oleinik-Lax formula then implies
    $$ \bar I_A (x,t) \leq \frac{[d(x,\Ga_0)]^2}{2\nu t} + k(\nu)t \quad \hbox{in  }
    \R^N \times (0,+\infty)\; ,$$
    which is the desired uniform bound.

    \

    \noindent\textbf{(c)}
    Therefore we can perform the $I_\e$ change of function and we obtain
    $$ (I_\e)_t -\frac{\e}2 {\rm Tr}(a^{(i)}(x)D^2 I_\e)+ \frac12 
    a^{(i)}(x)D I_\e\cdot D I_\e -b^{(i)}(x)\cdot D I_\e \leq -\frac{f^{(i)} (x,\ue)}{\ue} 
    \quad \hbox{in  }\Omega_i \times (0,+\infty)\; ,$$
    where we have kept the notation $\ue$ in the right-hand side to emphasize the
    role of the quantity $f^{(i)} (x,\ue)/\ue$. Indeed we have both
    $$ - \frac{f^{(i)} (x,\ue)}{\ue} \geq -c^{(i)}(x) \quad \hbox{for any  }x\; ,$$
    and
    $$ - \frac{f^{(i)} (x,\ue)}{\ue} \to -c^{(i)}(x) \quad \hbox{if  }\ue(x,t) \to 0\; ,$$
    and this last case occurs if $I_\e(x,t)$ tends to a strictly positive quantity.

    Using these properties, Theorem~\ref{pro:viscous} implies that $\overline I=
    \limssup I_\e$ and $\underline I=\limiinf I_\e$ are respectively sub and
    supersolutions of the variational inequality (\ref{vi-KPP}) 
    associated with Kirchhoff condition on $\H$.

    \

    \noindent\textbf{(d)}
    In order to conclude, we have just to use Theorem~\ref{IJ-gen-disc}: with the
    notations of this result, we have $$\overline I (x,t) \leq I^+ (x,t) \leq
    \underline I (x,t) \quad \hbox{in  }\R^N\times (0,+\infty)\; ,$$ and, $I^+$
    being continuous, this implies that $I_\e \to I^+$ locally uniformly in
    $\R^N\times (0,+\infty)$. 

    The proof is complete since the other results can be obtained exactly as in the
    standard KPP case.  
\end{proof}

\section{The variational inequality in the codimension~one case}
\label{sec:exvi}

In this section, we study the control/game problems related to the functions
$I$ and $J$ arising in the statement of Theorem~\ref{KPP-mr}, together with the
properties of the associated Bellman Equation or variational inequality.

To do so, we follow the approach of Part~\ref{part:codim1}: for $x\in\Omega_i$,
we set
$$ \BCL_i (x,t):=\Big\{\big(v_i\;,\,0\;,\,l_i(x,v_i)\big); \ v_i \in \R^N\Big\}\;,$$
where 
$$l_i(x,v):=\frac12 [a^{(i)}(x)]^{-1}(v-b^{(i)}(x))\cdot (v-b^{(i)}(x))-c^{(i)}(x)\;.$$
Of course, we are in an unbounded control framework but this does not create any
major additional difficulty, as was already said.

It remains to define the (regular or not) dynamic and cost on $\H=\{x_N=0\}$. So, for $x\in\H$ we
have $(v,0,l) \in \BCL_T(x,t)$ if $v=\alpha v_1+ (1-\alpha) v_2\in\H$ 
and
$$ l=\alpha l_1 (x,v_1) + (1-\alpha)l_2(x,v_2)\; ,$$
The set $\BCL_T^\reg(x,t)$ is defined in the 
same way, adding the condition $v_1\cdot e_N \leq 0$, $v_2\cdot e_N \geq 0$.
Now, if $I_0 \in C_b (\R^N)$, we introduce
$$ J^-(x,t) = \inf_{\Ta}\left\{\int_0^t  l(X(s),\dot X(s)) ds + I_0(X(t)) \right\}\; ,$$
$$ J^+(x,t) = \inf_{\Treg}\left\{\int_0^t  l(X(s),\dot X(s)) ds + I_0(X(t)) \right\}\; ,$$
where, in these formulations, we have replaced $v_i (s) $ ($i=1,2$) or $v(s)$ by
$\dot X(s)$.

In the same way, we introduce
$$ I^-(x,t) = \inf_{\Ta}\sup_\theta \left\{\int_0^{t\wedge \theta}  l(X(s),\dot X(s)) ds + \1_{t < \theta} I_0(X(t)) \right\}\; ,$$
$$ I^+(x,t) = \inf_{\Treg}\sup_\theta\left\{\int_0^{t\wedge \theta}  l(X(s),\dot X(s)) ds + \1_{t < \theta}  I_0(X(t)) \right\}\; .$$

Following the methods of Part~\ref{part:codim1} and~\ref{part:NA}, it is easy to show the following result
\begin{theorem}\label{IJ-gen-cont}\ 
    \begin{enumerate}
        \item[$(i)$]
    The value functions $J^-$ and $J^+$ are continuous and respectively the
minimal Ishii supersolution (and solution) and maximal Ishii subsolution
(and solution) of the equation 
\begin{equation}\label{e-kpp}
J_t + H(x,DJ)= 0 \quad \hbox{in  }\R^N \times (0,+\infty)\; ,
\end{equation}
where $H=H_i$ in $\Omega_i \times (0,+\infty)$ with the initial data
$$ J(x,0)=I_0 (x)  \quad \hbox{in  }\R^N \; .$$

\item[$(ii)$] \SCR holds for the flux-limited problems for Equation~\eqref{e-kpp} with
flux~limiters $\HT$ and $\HTreg$ on $\H$; $J^-$ is the unique flux-limited
solution associated to the flux~limiter $\HT$ and $J^+$ is the unique
flux-limited solution associated to the flux~limiter $\HTreg$. $J^+$ is also the
unique solution associated to the Kirchhoff condition on $\H$.

\item[$(iii)$]
The functions $I^-$ and $I^+$ are continuous and respectively the minimal
Ishii supersolution (and solution) and maximal Ishii subsolution (and solution)
of the equation 
\begin{equation}\label{vi-kpp}
\min(I_t + H(x,DI), I) = 0 \quad \hbox{in  }\R^N \times (0,+\infty)\; ,
\end{equation}
where $H=H_i$ in $\Omega_i \times (0,+\infty)$ with the initial data
$$ I(x,0)=I_0 (x)  \quad \hbox{in  }\R^N \; .$$

\item[$(iv)$] \SCR holds for the flux-limited problems for the variational inequality
\eqref{vi-kpp} with flux~limiters $\HT$ and $\HTreg$; $I^-$ is
the unique flux-limited solution associated to the flux~limiter $\HT$ and $I^+$
is the unique flux-limited solution associated to the flux~limiter $\HTreg$.
$I^+$ is also the unique solution associated to the Kirchhoff condition on
$\H$.
\end{enumerate}
\end{theorem}

In order to treat the KPP problem, we have to extend this result to the case of 
discontinuous $I_0$, with possibly infinite values. Of course, stricto sensu, a
\SCR cannot hold in this case. Indeed, if $u$ and $v$ are respectively a sub and 
supersolution of either \eqref{e-kpp} or \eqref{vi-kpp} with initial data $I_0$,
the inequalities at time $t=0$ are 
$$ u(x,0)\leq I_0^* (x) \quad \hbox{and}\quad v(x,0)\geq (I_0)_* (x)\quad 
\hbox{in  }\R^N\; ,$$
and it is false in general that $u(x,0)\leq v(x,0)$ in $\R^N$. Therefore we have to
extend the meaning of \SCR by saying that a \SCR holds in this context if we have
$$ u(x,t)\leq  v(x,t) \quad 
\hbox{in  }\R^N\times (0,+\infty)\; ,$$
hence for all $t>0$.

With this modified definition, we can formulate a simple result which is exactly
what we need (we do not try to reach the full generality here):
\begin{theorem}\label{IJ-gen-disc} 
    Assume that $\overline{\mathrm{Int}(\Ga_0)}=\Ga_0$, then the results of
    Theorem~\ref{IJ-gen-cont} remain true if $I_0(x) = A \1_{\Ga_0}$ for
    some $A>0$, and even if $A = +\infty$. 
\end{theorem}

\begin{proof} 
    We begin with the case when $A<+\infty$ and we provide the full proof only in the $I$-case, the
    $J$-one being obtained by similar and even simpler arguments.

    \noindent\textsc{Step 1: }
    \emph{Approximation of the data.}    

In order to prove the analogue of $(iii)$, we can approximate $I_0$ by above and
below by sequences $((I_0)^A)_A$ and $((I_0)_A)_A $ of bounded continuous
initial data such that
$$ (I_0)^A \downarrow I_0^* \quad \hbox{and}\quad (I_0)_A \uparrow (I_0)_*\; .$$
We denote by $(I^A)^\pm$ and $(I_A)^\pm$ the minimal and maximal solutions given by 
Theorem~\ref{IJ-gen-cont} with these intial data.

If $u,v$ are respectively a subsolution and a supersolution of the variational
inequality with initial data $I_0$, they are respectively subsolution with
$(I_0)^A$ and supersolution with $(I_0)_A$. Therefore
$$ u\leq (I^A)^+ \quad \hbox{and}\quad (I_A)^- \leq v \quad \hbox{in  }\R^N \times (0,+\infty)\; .$$
It remains to pass to the limit in the variational formulas for $(I^A)^+$ and
$(I_A)^-$. This step is easy for $(I^A)^-$ by the stability of solutions of
differential inclusion (one has just to be careful of the fact that 
we obtain $(I_0)_*$ in the formula at the limit).

For $(I^A)^+$, things are more delicate since we have to deal with regular
trajectories. But here, we can take advantage of the inequality we wish to show
and first argue with a FIXED trajectory (here also one has to be careful because
we obtain $(I_0)^*$ in the formula at the limit).

\noindent \textsc{Step 2: } 
\emph{Both functions $(I^A)^+$ and $(I^A)^-$ are continuous.}

In order to prove the claim, we can use the approach of
the authors in \cite{BC-re}, showing that $I=(I^A)^-$ or $(I^A)^+$ both
satify $$ -\eta (t) \leq I_t(x,t) \leq C\;.$$
for some positive function $\eta$ which may tend to $+\infty$ when $t\to 0$ and 
for some constant $C$. This inequality is obtained by using the arguments of
\cite{BC-re}: we just use a sup-convolution in time
$$ \sup_{0\leq s \leq t}(I(x,s) -\eta (s) (t-s))\; ,$$
and combine it with a comparison result for flux-limited solutions (with the
suitable flux~limiter for $(I^A)^-$ and $(I^A)^+$).

This argument shows that $(I^A)^-$ and $(I^A)^+$ are Lipschitz 
continuous in $x$ (for $t >0)$
where they are strictly positive. 
Indeed, if $I>0$, variational inequality \eqref{vi-kpp} implies that
$H(x,DI)=-I_t\leq \eta(t)$, and the coercivity of $H$ implies a bound on $DI$. 
Then, it is a simple exercice to extend it to all points in 
$\R^N \times (0,+\infty)$, whether $I>0$ or $I=0$.

\noindent \textsc{Step 3: } 
\emph{Strong Comparison Result.}

For the proofs of the \SCR, we still consider $(I_0)^A$, $(I_0)_A$ but the \SCR 
for either $\HT$, $\HTreg$ or the Kirchhoff condition. In the case of $\HTreg$,
for example, we obtain
$$ u\leq (I^A)^+ \quad \hbox{and}\quad (I_A)^+ 
\leq v \quad \hbox{in  }\R^N \times (0,+\infty)\; .$$
To conclude in this case, we have to use Proposition~\ref{prop:extraction.gen}
to pass to the limit by extracting a sequence of trajectories which converges to
a regular trajectory. The case of $\HT$ is simpler.

\noindent \textsc{Step 4: } 
\emph{Passing to the limit to treat the case $A=\infty$.}

In the case where $A=+\infty$, we first notice that all solutions associated
with initial data like $I_0(x) = A \1_{\Ga_0}$, and slightly enlarging or slightly
reducing the set $\Ga_0$ are uniformly locally bounded with respect to $A$
(this can be obtained by choosing appropriate trajectories such as straight
lines). And the limiting function are 
$$ I^-(x,t) = \inf_{\Ta }\sup_\theta\left\{\int_0^{t\wedge \theta}
    l(X(s),\dot X(s)) ds; \ X(t)\in \Ga_0 \right\}\; ,$$
$$ I^+(x,t) = \inf_{\Treg}\sup_\theta\left\{\int_0^{t\wedge \theta}  l(X(s),\dot X(s)) ds; \ X(t)\in
\Ga_0 \right\}\; .$$

Now, if $u$ is a subsolution then, for all $A$ and $C=\max_i (||c_i||_\infty)$,
$\min(u,A-Ct)$ is also a subsolution associated to the initial data $A
\1_{\Ga_0}$. Indeed, since the Hamiltonians are convex, the infimum of two
subsolutions remains a subsolution. We then use the first result to conclude. 
We can use a similar argument for the supersolution, using this time a
comparison with $(I_A)^\pm$, depending of the result we want.  
\end{proof}

\section{Remarks on more general discontinuities}
\label{mg-KPP}

\index{KPP type problems!general domains}

In the proof of Theorem~\ref{KPP-mr}, even if we hide it carefully inside the 
proof of Theorem~\ref{IJ-gen-disc}, we use in an essential way the various 
notions of solutions which are described in Part~\ref{part:NA}, namely \FLS and \JVS 
together with results concerning their links.

This heavy sophisticated machinery is a weakness if we want to address the
case of more general discontinuities for which we are not able to provide such a
precise analysis. Therefore, it is natural to investigate what can be done in those
more general cases.

\subsection{Using the standard notion of Ishii viscosity solution}

In the framework of Chapter~\ref{chap:KPP}, \ie with a codimension $1$ discontinuity
on an hyperplane, the answer is straightforward and this can be seen from two slightly different
points of view:
\begin{enumerate}
        \item[$(a)$] On one hand, in order to conclude, it is enough that the functions $I^+$ and
            $I^-$ appearing in Theorem~\ref{IJ-gen-cont} and Theorem~\ref{IJ-gen-disc} are equal,
            and so the same for  $J^+$ and $J^-$.  Lemma~\ref{lem:H1m.H2p.c} gives conditions under
            which this happens.

    \item[$(b)$] On the other hand, and this is a more general point of view, we can also look for
        conditions under which Ishii viscosity subsolutions are stratified subsolutions (since, as
        always, supersolutions are the same). The conclusion then follows from the comparison result
        for stratified solutions. Since it is easy to see that, on the hyperplane, the
        $\F^N$-inequality on $\H \times (0,\Tf)$ is the $\HT$-one, Lemma~\ref{lem:H1m.H2p.c} still
        gives the answer.
\end{enumerate}

In order to exploit this result, we recall that we have 
$$ H_i(x,p):= \frac12 a^{(i)}(x)p\cdot p-b^{(i)}(x)\cdot p +c^{(i)}(x)\; ,$$
and the computation of $m_1(x,p'), m_2(x,p')$ is easy:
$$ m_i (x,p') = -\frac1{a^{(i)}(x)e_N\cdot e_N} 
\left( a^{(i)}(x)p'\cdot e_N- b^{(i)}(x)\cdot e_N\right)\; .$$
The condition $m_2(x,p') \geq m_1(x,p')$ for any $(x,p')$ which is required in
Lemma~\ref{lem:H1m.H2p.c} in order to have $\HT=\HTreg$ leads to two properties
by using the affine dependence in $p' \in \H$:
\begin{equation}\label{cond-uni-Ishii-KPP-a}
    \frac{a^{(2)}(x)e_N}{a^{(2)}(x)e_N\cdot e_N} - 
    \frac{a^{(1)}(x) e_N}{a^{(1)}(x)e_N\cdot e_N}=0\; ,
\end{equation}
and 
\begin{equation}\label{cond-uni-Ishii-KPP-b}
    \frac{b^{(2)}(x)\cdot e_N}{a^{(2)}(x)e_N\cdot e_N} 
    \geq \frac{b^{(1)}(x)\cdot e_N}{a^{(1)}(x)e_N\cdot e_N}\; .
\end{equation}
Indeed, the inequality $m_2(x,p') \geq m_1(x,p')$ for any $(x,p')$ implies that
the left-hand side of \eqref{cond-uni-Ishii-KPP-a} is colinear to $e_N$ while
its scalar product with $e_N$ is $0$. Notice that in this computation, we have
implicitly assumed that $N\geq 2$ but, if $N=1$ the result remains true with
only \eqref{cond-uni-Ishii-KPP-b}.

Under this condition, Theorem~\ref{KPP-mr} can be proved using only the basic
notion of viscosity solutions.

\begin{remark}
    Recalling that the costs for the associated control problems are
    $$ l_i(x,v)=\frac12 [a^{(i)}(x)]^{-1}(v-b^{(i)}
    (x))\cdot (v-b^{(i)}(x))-c^{(i)}(x)\;,$$ 
    the stronger condition 
    $$ \forall x\in\H\;,\quad 
    b^{(2)}(x)\cdot e_N \geq 0 \geq b^{(1)}(x)\cdot e_N$$
    is quite natural.  Indeed with $b^{(1)}, b^{(2)}$
    pointing towards $\H$, it is clear that a priori regular controls give
    better costs than singular ones.  Condition \eqref{cond-uni-Ishii-KPP-b}
    generalizes this simple case.
\end{remark}

\subsection{Going further with stratified solutions}

Following Section~\ref{sec:Strat-Is} and in particular Proposition~\ref{IequalS}, we can treat more
general situations even if this leads to very restrictive assumptions. 

We consider the following example in the ``cross case'': consider Equations~\eqref{KPP-E1} which
holds in $Q_i \subset \R^2$ where the $Q_i$'s are the four quadrants in $\R^2$, namely
$$ Q_1=\{x_1>0,x_2>0\}\; , Q_2=\{x_1<0,x_2>0\}\; ,\; Q_3=-Q_1\; ,\; Q_4 =
-Q_2\; .$$ 
In order to be able to apply Proposition~\ref{IequalS}, we assume that, for $i=1,2,3,4$, the
$b^{(i)}$ are equal to $0$ and that $a^{(i)}(x)=\lambda^{(i)} (x)Id$ in $Q_i$ for some bounded,
Lipschitz continuous function $\lambda^{(i)}$. We assume also the existence of some constant $\nu
>0$ such that $\lambda^{(i)}(x) \geq \nu$ in $Q_i$ for any $i$.

Then, under natural assumptions on the regularity of the coefficients, the asymptotics of $\ue$ can
easily be obtained in this framework: indeed
\begin{enumerate}
    \item[$(i)$] $\underline I$ is an Ishii viscosity supersolution of the variational inequality in
        $\R^2\times (0,+\infty)$, 
    \item[$(ii)$] $\overline I$ turns out to be a ``stratified subsolution'' of the variational
        inequality in $\R^2\times (0,+\infty)$. Indeed,  on the axes (except $0$), \ie on $\Man{2}$,
        the above analysis shows that $\HT=\HTreg$ inequality holds for $\overline I$ and therefore
        the $\F^2$-one holds too. At $x=0$ for $t>0$, \ie on $\Man{1}$, we clearly have 
        $$ \min( \overline I_t + \max_i(c^{(i)}(x), \overline I)\leq 0\; ,$$
        because all the inequalities $ \min(\overline I_t + c^{(i)}(x),\overline I)\leq 0$ 
        hold by passage to the limit (stability) from the $Q_i$ domain. This is a case where
        Proposition~\ref{IequalS} applies in a very simple way.
\end{enumerate}
 
Hence, $\underline I$ and $\overline I$ are respectively stratified super and subsolutions of the
variational inequality and we can conclude since the comparison result for stratified solutions
easily extend to this framework.

Proposition~\ref{IequalS} allows to treat the following kind of KPP problems: we assume that
$\M=(\Man{k})_{k=0..N}$ is a stratification of $\R^N$ and that in the framework of
Chapter~\ref{chap:KPP}, the $\ue$ are solutions of Equation~\ref{KPP-E1} where
\begin{enumerate}
    \item $a^{(i)}(x)=\lambda^{(i)} (x)Id$ in $\Omega_i$ where the $\Omega_i$ are the connected components 
        of $\Man{N}$. We assume that the functions $\lambda^{(i)}$ are uniformly bounded and Lipschitz 
        continuous functions and there exists a constant $\nu >0$ such that $\lambda^{(i)}(x) \geq \nu$ in
        $\Omega_i$ for any~$i$.
    \item $b=0$ in $\R^N$.
    \item $f=f^{(i)}$ in $\Omega_i$ where the $f^{(i)}$ are KPP-nonlinearities, 
        the $c^{(i)}$ being uniformly bounded and Lipschitz continuous on $\Omegb_i$.
\end{enumerate}

Under these conditions, and if the initial data $g$ is as in Chapter~\ref{chap:KPP}, the result is
the 
\begin{proposition}\
    \begin{enumerate}
        \item[$(i)$] As $\e \to 0$, the following convergence result holds
        $$ -\e \log(\ue) \to I \quad \hbox{locally uniformly in  }\R^N \times (0,+\infty)\; ,$$
        where $I$ is the unique stratified solution of the equation with
        \begin{equation}\label{vi-KPP-II}
            \left\{\begin{array}{ll}
                \min(I_t + H_i (x,DI),I)= 0 & \hbox{in  } \Omega_i \times (0,+\infty)\; ,\\[4mm]
        I(x,0) = 
        \left\{\begin{array}{ll}
        0 & \hbox{if $x \in \Ga_0$,}\\
        +\infty & \hbox{otherwise}\; .
        \end{array}
        \right.
        \end{array}
        \right.
        \end{equation}
    \item[$(ii)$] As $\e\to0$, the following asymptotic behavior holds
        $$ \ue(x,t) \to \left\{\begin{array}{ll}
        0 & \hbox{in $\{I>0\}$,}\\
        1 & \hbox{in the interior of the set $\{I=0\}$.}
        \end{array}
        \right.
        $$
    \item[$(iii)$] If Freidlin's condition holds then $I=\max(J,0)$ where $J$ 
        is the unique stratified solution of
        \begin{equation}\label{E-KPP-II}
        \left\{\begin{array}{ll}
            J_t + H_i (x,DJ)= 0 & \hbox{in  } \Omega_i \times (0,+\infty)\; ,\\[4mm]
        J(x,0) = 
        \left\{\begin{array}{ll}
        0 & \hbox{if $x \in \Ga_0$,}\\
        +\infty & \hbox{otherwise}\; .
        \end{array}
        \right.
        \end{array}
        \right.
        \end{equation}
    \item[$(iv)$] Function $J$ is given by the following representation formula
        $$J(x,t) =\inf \left\{\int_0^t l(y(s), \dot y(s))ds;\  
        y(0)=x,\  y(t) \in \Ga_0,\  y \in H^1(0,t) \right\}\; ,$$
        where $\displaystyle l(y(s), \dot y(s))= \frac12 
        [\lambda^{(i)}(y(s))]^{-1}|\dot y(s)|^2-c^{(i)}(y(s))$ 
        if $y(s) \in \Omega_i$. 
    \end{enumerate}
\end{proposition}

Several remarks on this results
\begin{enumerate}
\item[$(i)$] We have left this result with a slightly imprecise statement, giving the
equations only in $\Man{N}\times (0,\Tf)$ and defining $l$ only in $\Man{N}\times
(0,\Tf)$. The next section will (at least partially) show why this is enough.
\item[$(ii)$] As above in the ``cross case'', the proof that $\overline I$ is a
stratified subsolution comes from the arguments given in the next section.
\item[$(iii)$] The first part of this result holds for example in the counter-example
in dimension $1$ given in Chapter~\ref{chap:KPP}, the only point is that
Freidlin's condition is not satisfied.
\end{enumerate}

%
%

\chapter{Dealing with jumps}
\label{chap:jumps} 
\index{Jumps}

\abstract{Almost everywhere in this book, a key assumption is that the problem at hand can be
``localized'', in particular in the comparison proof. The question of jumps which, in
addition, does not seem so consistent with the ``regularity of subsolutions'', may appear as being
completely out of reach. These questions are discussed here: some problems with jumps are already
treated by the results of this book; some are more tractable than one may think, including some
strange quasi-variational inequalities like those which arise in a recent article by Bouin, Calvez,
Grenier and Nadin \cite{BCGN}.}

The aim of this chapter is to know whether the formalism of Chapter~\ref{chap:control.tools} allows
to deal with control problems involving jumps of the trajectories, and what kind of problems can be
solved at the pde level.

In  Chapter~\ref{chap:control.tools}, the dynamic-discount-cost is defined by
$$(\dot X,\dot T,\dot D, \dot L)(s)=(b,c,l)(s)\in\BCL(X(s),T(s))\;,$$
but notice that in this differential inclusion, variable $s$ is actually an ``artificial time'', used
to describe the state of the system $x=X(s)$ at the ``real time'' $t=T(s)$. 

Our assumptions on the set $\BCL$ allow the possibility that $b^t(s)=0$ on some interval $[s_1,s_2]$,
leading to $\dot T(s)=b^t(s)=0$ there. In that situation, the behavior of $X$ can be interpreted as
a jump with respect to the ``real time'' variable: while the trajectory $s\mapsto (X(s),T(s))$ remains
continuous, at time $t=T(s_1)=T(s_2)$, we observe a jump for the spatial trajectory $X$ from
$X(s_1)$ to $X(s_2)$. Figure~\ref{fig:jump} below illustrates the situation in the $(X,T)$ plane.

It is then clear that formally at least, our framework allows trajectories with jumps.

In the next sections, we present three interesting examples involving jumps which can be fitted into
our framework, under some assumptions:
\begin{enumerate}
    \item[$(i)$] an obstacle problem which readily fits into the framework;
    \item[$(ii)$] a quasi-variational inequality, which seems a priori way out of our reach
        but turns out to be tractable under some assumption;
    \item[$(iii)$] a large deviation problem with jumps, implying a more complexe quasi-variational
        inequality that we manage to solve thanks to a series of ``miracles''.
\end{enumerate}

\begin{figure}[htp]
   \begin{center}
   \includegraphics[width=0.6\textwidth]{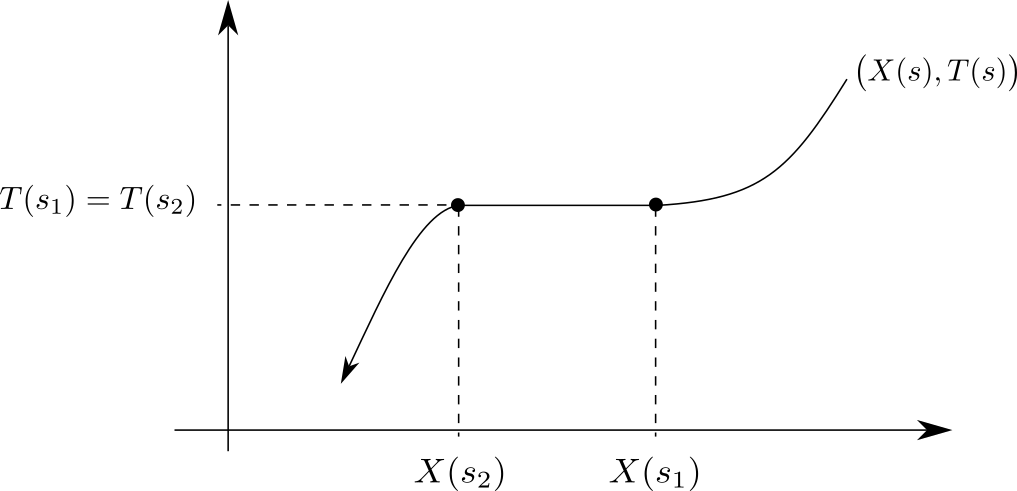}
   \caption{Jump in a controled trajectory}
   \label{fig:jump} 
   \end{center}
\end{figure}

\section{A simple obstacle problem}

Let us examine a ``pure jump'' situation where $b^x=v \in \overline{B(0,R)}\subset \R^N$,
$b^t\equiv 0$, $c\equiv 0$ and $l=l(v)$. More precisely, $\BCL(x,t)$ is independent of $x$ and $t$
and reduces to
$$ \BCL=\Big\{ \big((v,0),0,l(v)\big):v\in\overline{B(0,R)}\Big\}\;.$$
We assume that $l(0)=0$ and the first consequence of the convexity of $\BCL$ is that $l(\alpha
v)=\alpha l(v)$ for any $v\in \overline{B(0,R)}$. Another assumption which avoids oscillating
trajectories is that if $v=v_1+v_2$ then $l(v)\leq l(v_1)+l(v_2)$. As a consequence of these two
properties, $l$ is a convex function of $v$. 

Now we turn to the resolution of a very simple obstacle problem, which looks like a $\F_{init}$
problem: if $u_0$ is a continuous function, let us solve
$$ \max(h(D_xu), u-u_0)=0 \quad\hbox{in  }\R^N\; ,$$
where $h(p_x)= \max\limits_{|v| \leq R}\left(-v\cdot p_x-l(v)\right)$. 
The control interpretation suggests the solution
$$ u(x):= \inf\left\{ \int_0^\theta l(\dot X(s)) ds + u_0(X(\theta)): 
\ X(0)=x,\ |\dot X(s)|\leq R, \ \theta >0 \right\}\;,$$
but by Jensen's inequality and the homogeneity of $l$
$$  \int_0^\theta l(\dot X(s)) ds \geq \theta l \left (\frac 1 \theta \int_0^\theta \dot X(s) ds\right)= 
\theta l \left (\frac 1 \theta  (X(\theta)-x)\right) = l \left (X(\theta)-x\right) \;.$$
Therefore,
$$ u(x)= \inf \left\{ u_0(X(\theta))+l \left (X(\theta)-x\right): \ X(0)=x,
\ |\dot X(s)|\leq R, \ \theta >0 \right\}\; ,$$
or equivalently
$$ u(x)= \inf_{y\in \R^N} \Big\{ u_0(y)+l(y-x) \Big\}\; ,$$
which can be interpreted as the minimal value which can be obtained by making a jump from $x$ to
$y=X(\theta)$ with a cost $l(y-x)=l \big (X(\theta)-x\big)$ for this jump.

This very simple example gives an idea of the type of jumps which can be taken into account by the
framework of Chapter~\ref{chap:control.tools}. The next section examines cases which may not enter
into this framework but which can be handled.

\section{Quasi-variational inequalities}

In the control literature, jumps arise in particular in inventory management and lead to
quasi-variational inequality (QVI in short). We refer the reader to Bensoussan and Lions
\cite{BL-QVI} for a study of such QVI in the framework of stochastic control/elliptic-parabolic
pdes, which was the first situation where they were studied. 

In their book, the jumps play a role via an operator $\mathcal{M}$ which is typically of the form 
$$
\mathcal{M}u(x):=\min_{\xi \in \Xi}\left(u(x+\xi) + k +C(\xi)\right)\quad \hbox{for  }u\in
C_b(\R^N)\;,
$$
where $\Xi$ is a bounded or unbounded subset of $\R^N$, $k\geq 0$ is fixed cost and $C$ is a cost
depending on the size of the jump. In general, one assumes that $C$ is a continuous function such
that $C(\xi)\geq 0$ and $C(0)=0$ if $0\in \Xi$.  If $\Xi$ is unbounded, it is generally assumed that
$C$ is coercive. The typical case which is studied in \cite{BL-QVI} is when $\Xi=[0,+\infty)^N$ and
$C$ satisfies the following sublinearity assumption
\begin{equation}\label{eqn:sublin}
C(\xi_1+\xi_2)\leq C(\xi_1)+C(\xi_2)\quad\hbox{for any  }\xi_1,\xi_2\in  \Xi\; .
\end{equation}
This hypothesis means that ``one large jump is better than two smaller ones'' and its main technical
interest is to avoid the accumulation of very small jumps.

In the QVI, the complete Hamiltonian $\F$ takes the form $\max(\,\cdots, u-\mathcal{M}u)$ and as a
by-product of this form or of the control problem, one gets immediately the inequality $u\leq
\mathcal{M}u$ in $\R^N$ or $\R^N\times[0,\Tf]$.

\subsection*{The case $k>0$}

A favorable situation is when $k>0$, both for second-order HJB Equations as in \cite{BL-QVI} but
also for first-order HJ-Equations: we refer for example to \cite{GB-qvi-cont,GB-qvi-Eq} for simple
ideas to treat such QVI in the continuous framework, both from the control and pde points-of-view.
We point out that the classical comparison results of the continuous case extend without any change
of assumptions to the QVI-case.

In the discontinuous framework, and in particular in the stratified one where the localization of
the comparison proof seems unavoidable, the situation seems hopeless: how a local proof could give
the result for a nonlocal equation? We stress that here, not only do the local values of the sub and
supersolution play a role, but also those associated to points where the process can jump to. So, from a
purely technical perspective, the localization methods of we developed in Section~\ref{sect:htc}
seem a priori unadapted. 

We want however to make a simple remark concerning a particular case: assume that $C$ is a Lipschitz
continuous, coercive function satisfying \eqref{eqn:sublin}. An immediate consequence of
\eqref{eqn:sublin} is that
\begin{equation}\label{ineq:sublin2}
    -C(x)\leq -C(x+\xi)+C(\xi)\;.
\end{equation}
Using a standard regularization of $C$ with a sequence of positive, compactly supported
mollifying kernels $(\rho_\e)_\e$, we get a $C^1$-function $-C_\e:=-C\star \rho_\e$ satisfying also
\eqref{ineq:sublin2}, \ie $ -C_\e (x)\leq \mathcal{M}(-C_\e) (x)-k$.

As a consequence, for $K>0$ large enough, the function $-C_\e(x)-Kt$ provides a strict subsolution
of the QVI, which is as natural as it could be since we use the cost function $C$ in an essential
way.

Of course, in general $C$ may be defined only on $\Xi$ and its extension to $\R^N$ can be a problem.
For example, if $\Xi=[0,+\infty)^N$, it is not completely clear how to do it without additional
assumption but if $C(\xi)=C(\xi_1,\cdots,\xi_N)$ is increasing \wrt $\xi_i$ for all $i$, setting
$$C(\xi_1,\cdots,\xi_N)=C(|\xi_1|,\cdots,|\xi_N|)\; ,$$
for all $(\xi_1,\cdots,\xi_N)\in \R^N$ solves the problem.

If the difficulty associated to the localization is solved---either as above or by an other
argument---and we succeed to build a {\em strict subsolution} $u$, the following argument which
plays a key role in the case of continuous equations should also give the answer (we drop the time
variable for simplicity here):

\

\begin{minipage}{0.9\textwidth}
    \emph{If $v$ is the supersolution to be compared to $u$ and if $\xb$ is a maximum point
    of $u-v$, we cannot have $v(\xb) \geq \mathcal{M}v(\xb)$.}
\end{minipage}

\

Indeed, since $u$ is a strict subsolution we have $u(\xb) \leq \mathcal{M}u(\xb)-\delta$ for some
$\delta >0$ and therefore, by the form of $\mathcal{M}$, 
$$ u(\xb)- v(\xb)\leq \mathcal{M}u(\xb)-\mathcal{M}v(\xb)-\delta\leq \max(u-v)-\delta\; ,$$ 
a clear contradiction with the definition of $\xb$. Moreover, this argument is ``robust'' in the
sense that, if $u$ is regularized into $u_\e$ and $x_\e$ is a maximum point of $u_\e-v$, the above
argument also applies at $x_\e$ for $\e$ small enough. Hence, either $v(\xb) < \mathcal{M}v(\xb)$ or
$v(x_\e) < \mathcal{M}v(x_\e)$ and it remains to show that the DPP holds for $v$ at $\xb$ or $x_\e$
{\em without any jump} for a small time interval $[0,\tau]$, a not so difficult task. Using all
these ingredients, we recover all the inequalities for $u$ (or $u_\e$) and $v$ which allow to can
argue as usual. 

We admit that the arguments above are presented a little bit formally, but we believe that they
provide the answer in ``reasonable cases''.

\subsection*{The case $k=0$}

This situation is more complicated and even hopeless if $\min_{\xi \in \Xi}\left(
C(\xi)\right)=0$, in particular if $0\in \Xi$: indeed, in this case, no comparison can hold since,
for any constant $\bar c$, we have 
$$\bar c - \mathcal{M}\bar c = \bar c-\min_{\xi \in \Xi}\left( \bar c +C(\xi)\right)
=-\min_{\xi \in \Xi}\left( C(\xi)\right)=0\;,$$
so that all constants are supersolutions.

Therefore, either we are more or less back to the case when $k>0$ if $0\notin \Xi$; or we have to
reinterpret the QVI in terms of gradient constraints---as we did in the previous section---in order
that these cases fit into the theory. This means that we should have the classical sublinearity
assumption together with the homogeneity of degree~$1$.

Obviously we are not going to study these cases in details but we want to point out that jumps can
help by ensuring the regularity of subsolutions on $\Man{k}$ if $0\in \Xi$: indeed subsolutions of
QVI satisfy
$$ u(x) \leq u(x+\xi) + C(\xi) \quad \hbox{for all }\xi \in \Xi\; ,$$
and provided there exists a sequence $(\xi_\e)_\e$ converging to $0$ such that $x+\xi_\e \notin \Man{k}$,
we get the regularity on $\Man{k}$. For $\Man{N}$, it is enough to adapt this assumption to have the
regularity from both sides, which is in particular true if
$B(0,\eta)\subset \Xi$ for some $\eta >0$.

In the next section, we present an example which is almost entering in the stratified framework and
for which an additional information allows to prove the comparison result.

\section{A large deviations problem involving jumps}

The aim of this section is to examine an HJ-problem which appears in Bouin, Calvez, Grenier and
Nadin \cite{BCGN}; its formulation is highly non-standard and seems rather far from what we are
doing in this book but we show how to analyze the different difficulties in light of the stratified
approach.

The problem consists in looking for a function $u: [0,\Tf)\times \R^N \times\R^N\to \R$,
solution in $(0,\Tf)\times\R^N\times\R$ of 
$$\begin{cases}
    \max\Big(u_t  (t,x,v) +v\cdot D_x u(t,x,v)-1\;,\, u(t,x,v)- m(t,x) -|v|^2\Big)=0\;,\\[2mm]
    m_t (t,x)\leq 0 \quad \hbox{and} \quad m_t (t,x)= 0\quad \hbox{if  } \mathcal{S}(t,x)=\{0\}\;,
\end{cases}$$
where $m(t,x)=\min_{v'} u(t,x,v')$ and $\mathcal{S}(t,x)$ is the set of all $v'$ where this min is
achieved. These equations are complemented by an initial data 
$$u(0,x,v)=u_0(x,v)\quad \hbox{in  } \R^N\; ,$$
where $u_0$ is a continuous function such that $u_0(x,v)-|v|^2$ is bounded.

\subsection*{Analysis of the problem}

In order to analyze this problem, it is more convenient to consider $w(t,x,v)=u(t,x,v)-|v|^2$ which
is expected to be a bounded continuous function and which first solves
$$ \max\Big(w_t  (t,x,v) +v\cdot D_x w(t,x,v)-1\;,\, w(t,x,v)- \mathcal{M}w(t,x)\Big)=0$$ 
in $(0,\Tf)\times \R^N\times \R^N$ where 
$$ \mathcal{M}w(t,x):=\min_{v'}\big(w(t,x,v')+|v'|^2\big)\; .$$
This equation for $w$ generates several remarks: of course, it looks like a quasi-variational
inequality presented in the previous section, \ie 
$$ \mathcal{M}w(t,x):=\min_{v'}\big(w(t,x,v')+C(v')\big)\; ,$$
with $C(v')=|v'|^2$. But here the function $C$ has all sorts of disadvantages: it is not sublinear,
nor homogeneous of degre $1$ and $ \min_{v'} C(v')=0$. This causes a problem for the initial data,
\ie for the $\F_{init}$-equation,
\begin{equation}\label{eq:id-qvi-c}
\max\Big(w(0,x,v)-w_0(x,v)\;,\, w(0,x,v)-\mathcal{M}w(0,x)\Big)=0\quad\hbox{in  }\R^N \times \R^N\; ,
\end{equation}
where $w_0(x,v)=u_0(x,v)-|v|^2$. Indeed, any constant is a supersolution and therefore the
$\F_{init}$-equation does not determine uniquely $w(0,x,v)$ and it seems we are in the worst
scenario possible because there is no way that the above equation could fit into the control
framework we have described in Chapter~\ref{chap:control.tools}.

We are going anyway to push the arguments as far as possible, in order to show that we can also take
advantage of some features of the QVI along the lines of the remarks we did at the end of the
previous section. We are also going to forget the problem with the initial data, by assuming that it
is achieved in the classical way: solving formally \eqref{eq:id-qvi-c}, which consists here in
taking the maximal subsolution, the ``natural'' initial data should be 
$w(0,x,v)=\min\big(w_0(x,v),\mathcal{M}w_0(x)\big)$. 

\bigskip

For starters, we remark that the equation implies
\begin{equation}\label{eqn:qvi-reg}
w(t,x,0)\leq  \mathcal{M}w(t,x)\leq w(t,x,v')+|v'|^2\quad\hbox{for any  } v'\; ,
\end{equation}
and therefore the min is always achieved for $v'=0$. Since $w(t,x,v')+|v'|^2=u(t,x,v')$, we have
$w(t,x,0)=m(t,x)$ and 
$$\mathcal{S}(t,x)=\{v' ;\; w(t,x,v')+|v'|^2=\mathcal{M}w(t,x)\}\; .$$
We deduce two properties from this remark: on one hand, on $\Man{N}:=\{(t,x,v);\ v=0\}$, the
the stratified inequality holds:
$$ w_t(t,x,0) \leq 0\quad \hbox{in  }(0,\Tf)\times \R^N\; ,$$
and, on the other hand, we get the unusual supersolution inequality
$$ w_t(t,x,0) \geq 0\quad \hbox{if  } \mathcal{S}(t,x)=\{0\}\; ,$$
where $\mathcal{S}(t,x)$ is defined above in terms of $w$ and $\mathcal{M}$.

At this level of the analysis, we face a problem which cannot be formulated as a standard control
problem satisfying the \HBCL assumptions. But apparently, all the correct stratified inequalities on
$\Man{N}$ are available. Moreover, $\Man{N}$ seems to be a discontinuity for the cost since the term
$w_t (t,x,v) +v\cdot D_x w(t,x,v)-1$ in the equation is associated to a cost $1$ while the $
w_t(t,x,0) \leq 0$ suggests a cost $0$ on $\Man{N}$.

Now we turn to the standard assumptions in the stratified framework, namely \TC and \NCe. Concerning
\TC, the Hamiltonian $p_t+v\cdot p_x -1$ satisfies \TCs and it can easily be seen that the term
$w(t,x,v)- \mathcal{M}w(t,x)$ does not cause any problem for tangential regularization; we can
even remark that a regularization in $(t,x)$ can be performed even far from $\Man{N}$, allowing to
assume that the subsolution are smooth in $t$ and $x$. For \NCe, \eqref{eqn:qvi-reg} gives more than
needed.

Hence, we can almost perform the proof of Theorem~\ref{comp-strat-RN} except two additional
difficulties: on one hand, since we are not in a standard control framework, we cannot use
Lemma~\ref{lem:comp.fundamental}. On the other hand, the localization arguments are more tricky
to apply here.

\

\noindent\emph{Complete failure? Not yet!}

\subsection*{Sketching the comparison result}

If $w_1$ and $w_2$ are respectively sub and supersolutions of the above problem with $w_1(0,x,v)\leq
w_2(0,x,v)$ in $\R^N\times \R^N$, we consider, for some parameters $0<\mu<1$ close to $1$, $\delta,
\alpha, \eta >0$ small enough,
$$M= M(\mu,\delta, \eta):=\max_{[0,\Tf]\times \R^N\times \R^N}\Big(\mu w_1(t,x,v)-w_2(t,x,v)- 
\delta |v|^2 -\alpha (|x|^2+1)^{1/2}-\eta t \Big)\; .$$
If $M>0$, then the max cannot be achieved for $t=0$ \footnote{By adding some large positive constant to
$w_1$ and $w_2$, we may assume \wlg that $w_1,w_2\geq 0$ in $[0,\Tf]\times \R^N\times \R^N$.}. Now,
if $(t,x,v)$ is a maximum point, there are two cases.

\

\noindent\textsc{Case 1.---} If $w_2(t,x,v) < \mathcal{M}w_2 (t,x)$, the conclusion follows easily
since $(i)$ because of $t>0$, we can assume \wlg that $w_1$ is smooth in $t$ and $x$; $(ii)$ for
$\alpha$ small enough compared to $\eta$, $\mu w_1(t,x,v) -\alpha (|x|^2+1)^{1/2}-\eta t$ is a
strict, smooth subsolution of $w_t  (t,x,v) +v\cdot D_x w(t,x,v)-1=0$; $(iii)$ this smooth
subsolution is a test-function for $w_2$.

        \

\noindent\textsc{Case 2.---} If $w_2(t,x,v) \geq \mathcal{M}w_2 (t,x)= w_2(t,x,v')+|v'|^2$, using
that $w_1(t,x,v)\leq w_1(t,x,v')+|v'|^2$, we have
$$ M \leq \mu(w_1(t,x,v')+|v'|^2)-(w_2(t,x,v')+|v'|^2)- \delta |v|^2 -\alpha(|x|^2+1)^{1/2}-\eta t\; ,$$
and, if $\delta < (1-\mu)$
\begin{align}
 M & \leq \mu w_1(t,x,v')- w_2(t,x,v')- (1-\mu) |v'|^2 - \delta |v|^2 -\alpha (|x|^2+1)^{1/2}-\eta t \\
   &\leq   \mu w_1(t,x,v')- w_2(t,x,v')- \delta |v'|^2 - \delta |v|^2 -\alpha (|x|^2+1)^{1/2}-\eta t\\
   &\leq  M-\delta |v|^2\; .
\end{align}
Hence, necessarily $v=0$ but examining more carefully the above inequalities and using $\delta <
(1-\mu)$, we can also deduce that $v'=0$. Hence $\mathcal{S}_2(t,x)=\{v' ;\;
w_2(t,x,v')+|v'|^2\}=\mathcal{M}w_2(t,x)\}=\{0\}$ and $(w_2)_t (t,x,v) \geq 0$. There, we reach a
contradiction since $\mu w_1(t,x,v) -\alpha (|x|^2+1)^{1/2}-\eta t$ is a strict, smooth subsolution
of $w_t (t,x,0)=0$.

And the sketch of the proof is complete.
 
\bigskip

The reader may think---and he/she would be right---that the above proof works because of a
succession of miracles: it is clear that the ``$\mu$-trick'', rather classical in this
QVI-framework, allows to overcome in a perfect way the difficulty due to the non-standard features
of the QVI by leading us to the exact situation where we can use the exotic supersolution property
on $v=0$.
  
But the above example is a rare case where some $b^t=0$-controls play a key role while the above
analysis shows, as was already mentioned, that these jumps can easily be taken into account---in
particular in \NCe.

\chapter{On Stratified Networks}
\label{chap:networks}
\abstract{The case of Hamilton-Jacobi Equations on general networks is left aside of this book in
order to keep its length somehow ``reasonable''(!). Here, a sketch of what could be done in the
case of ``stratified network'' is given.}

We recall that a simple network in $\R^2$ is a set containing points, also called nodes, connected
by segments (called edges). Typical examples are a map with roads or highways connecting cities,
cross-roads etc. In this most simple framework, edges are one-dimensional
objects but, of course, more complicated situations can be considered.

One can define control problems and Hamilton-Jacobi Equations on such networks and
Part~\ref{part:NA} is strongly inspired by the theoretical works of Imbert and Monneau
\cite{IM,IM-md,IN}, and Lions and Souganidis \cite{LiSo1,LiSo2} for treating various junctions
conditions at nodes. This may give an idea of what can be done in this direction. Several works have
also been devoted to consider applications, and in particular to traffic problems. We refer the
reader to Imbert, Monneau and Zidani \cite{IMZ}, Forcadel and Salazar \cite{MR3385134}, Forcadel,
Salazar, Wilfredo and Zaydan \cite{FSZ}, but our list is far from being complete and up-to-date. We also recall
that multi-dimensional networks were considered in Achdou, Oudet and Tchou \cite{AOT,AOTbis}, Imbert and Monneau
\cite{IM-md} for all dimensions.
\index{Applications!traffic problems}

In this chapter investigate the fact that the framework of stratified problems in Whitney
stratifications can lead to a rather general point of view of networks, connecting manifolds of various
dimensions. However, we are going to restrict ourselves to a simplified situation.

\section{Stratified networks by penalization}
\index{Stratified networks}

Let us consider a time-independent stratification $\M=(\Man{k})_k$, \ie of the form
$$ \Man{k}= \tMan{k-1}\times \R \quad \hbox{for  }k\geq 1\; ,$$
where $(\tMan{k})_k$ is a \AFS of $\R^N$. 
The \emph{stratified network} we consider here is the following:
$\mathbf{N}:=\bigcup_{k=1}^{N}\Man{k}$. In other words, this network contains manifolds of strictly
positive codimensions only, no open set of $\R^N\times\R$.

Now, we introduce the value function $U^\e:\R^N \times [0,\Tf]\to \R$ defined in the framework of
Chapter~\ref{chap:stratcontr}, replacing $(b,c,l)\in \BCL(x,t)$ by 
$$\big(b,c,l+\e^{-1}\dN(x)\big)\in\BCL_\e(x,t)\;,$$ 
where function $\dN(\cdot)$ denotes the distance to $\tMan{0}\cup\cdots
\cup\tMan{N}$. Obviously this change of cost has the objective to make more and more expensive an
excursion in $\Man{N+1}$ and therefore to force the trajectories to remain on $\mathbf{N}$.

We recall that, by Chapter~\ref{chap:stratcontr} and if suitable assumptions are satisfied, the
value function $U^\e$ is continuous and the unique solution of $\F=\e^{-1}\dN(x)$ in $\R^N \times
[0,\Tf]$ with $\Fk(x,t,U^\e,DU^\e)=0$ on $\Man{k}$ for $k<N+1$.

\bigskip

Our main aim is to provide the asymptotic behavior of $U^\e$ and to do so, we face several problems: the
first one has to do with the half-relaxed limits method. Of course, we have $\limssup U^\e (x,t)=+\infty$ on
$\Man{N+1}$ and, with the standard definition, this would imply that $\limssup U^\e (x,t)=+\infty$
for any $(x,t)\in \mathbf{N}$. Hence, we have to modify the definition of the $\limssup$ in order to
take into account only the points~in~$\mathbf{N}$.

The next difficulty is with the initial data and, to avoid a tedious discussion, we are going to
assume that we know that 
\begin{equation}\label{nopbid}
\overline U (x,0):=\limssup U^\e (x,0) \leq u_0(x) \leq \underline 
    U (x,0):=\limiinf U^\e (x,0) \quad \hbox{for all  }x\in \R^N\; ,
\end{equation}
for some function $u_0 \in C(\R^N)$. This assumption is automatically satisfied if, for example,
$\F_{init}$ reduces to $r-u_0(x)$. We refer to Part~\ref{S-BC} where we already dealt with this
specific topic. 

The last one is related to the regularity of $\overline U:=\limssup U^\e$: indeed the subsolution
$\overline U$ is just defined on $\mathbf{N}$ and we no longer have a $\F_*$-inequality to ensure
the regularity. A priori it could be possible that, if $(x,t) \in \Man{k}$ is on the boundary of
some connected component $\Man{k'}_i$ of $\Man{k'}$ for some $k'>k$, $\overline U$ would not be
$\Man{k'}_i$-regular. This difficulty that we possibly face on the boundary of a connected
component $\Man{k'}_i$ of the $\Man{k'}$ for $k<N+1$ is rather closed to the one we encounter on
$\domeg \times (0,\Tf)$ for state-constrained problems.

To overcome this difficulty, we have to connect the different parts of the network to avoid a
completely different behavior of $U^\e$ on them and to do so, let us first introduce the right space
of test-functions:
\begin{definition}
    A function $\psi$ is in ${\rm PC}^1(\mathbf{N})$ if $\psi$ is continuous on $\mathbf{N}$ and
    $\psi$ is $C^1$ on each $\Man{k}$ for $1\leq k \leq N$.  
\end{definition}
Notice that clearly, such test-functions are well-adapted to stratified subsolutions on networks
since we have to check inequalities on $\Man{k}$ for $1\leq k \leq N$.

Next we give the
\begin{lemma}\label{lem:onlyMk}
    Under assumptions \HBCL, \TCBCL, \NCBCL, then for any $1\leq k \leq N-1$ we have
    $$ \limssup U^\e = \limssup \left(U^\e|_{\Man{k}}\right)\quad \hbox{on  }\Man{k}\; .$$
\end{lemma}

The interest of this lemma is clear: the values of $\overline U=\limssup U^\e$ on $\Man{k}$ are
obtained by using only points on $\Man{k}$; this prevents the values of $\overline U$ on $\Man{k}$
to depend on the nearby different connected components $\Man{k'}_i$.

\begin{proof}
    Let $(x,t)$ be a point in $\Man{k}$. We first assume that there exists $\psi\in {\rm
    PC}^1(\mathbf{N})$ such that $(x,t)$ is a local strict maximum point on $\mathbf{N}$ of
    $\overline U - \psi$. The point $(x,t)$ is also a local strict maximum point on $\mathbf{N}$ of
    $\overline U  - \psi-Cd(x,\Man{k})$ for any $C>0$ and, by the standard properties of the
    $\limssup$, there is a sequence $((x_\e, t_\e))_\e$ of local maximum points on $\mathbf{N}$ of
    $U^\e - \psi-Cd(x,\Man{k})$ such that $(x_\e, t_\e)\to (x,t)$ and $U^\e(x_\e, t_\e)\to \overline
    U (x,t)$.  But, if $C$ is large enough, the normal controllability assumption implies that
    $(x_\e, t_\e)$ cannot be on $\Man{k'}$ for $k'>k$ since the $\F^{k'}$-inequality cannot hold (we
    recall that the distance function to $\Man{k}$ is smooth outside $\Man{k}$). Hence
    $(x_\e,t_\e)\in \Man{k}$ and the claim is proved for such points.

    It remains to prove the claim for points for which, a priori, such a function $\psi$ does not
    exist. This can be done classically by looking at the function $$(y,s) \mapsto \overline U
    (y,s)- \frac{|y-x|^2}{\alpha}-\frac{|s-t|^2}{\alpha}-C_\alpha d(x,\Man{k})$$ which necessarily
    achieves its maximum on $\Man{k}$ if $C_\alpha$ is large enough by the same argument as above.
    At any maximum point $(y_\alpha,t_\alpha)$, we have the desired property and since
    $(y_\alpha,t_\alpha)\to (x,t)$ with $\overline U (y_\alpha,t_\alpha)\to \overline U (x,t)$, the
    result easily follows.
\end{proof}

Anyway, Lemma~\ref{lem:onlyMk} is not sufficient to get the regularity for which we add the assumption

\begin{assumption}{\IDPN}{Inward-pointing Dynamic Property for a Network.}\label{page:IDPN} 
    For any $1\leq k \leq N$ and for each connected component $\Man{k}_i$ of $\Man{k}$, the
    assumptions of Lemma~\ref{lem:sufficient.cone} hold true for any $(y,t) \in
    \overline{\Man{k}_i}$ with $\Omega$ replaced by $\tMan{k}_i$ seen as a domain in $\R^{k-1}$.
\end{assumption}

Thanks to Lemma~\ref{lem:sufficient.cone} and Remark~\ref{rem:ineg-sub-cone-int}, an immediate
consequence of \IDPN is the 
\begin{lemma}
    For $1\leq k \leq N-1$, if $(y,t) \in \partial \Man{k}_i$ for some connected component of
    $\Man{k}$, then there exist $r, \overline M >0$ and a continuous function $b$---all depending on
    $(y,t)$---such that 
    \begin{equation}\label{transm-netw}
        - b(x,t)\cdot D U^\e\leq {\overline M}\quad \hbox{on}\quad \overline{\Man{k}_i}\cap B((y,t),r)\; .
    \end{equation}
\end{lemma}

Now we can state our result
\begin{theorem}\label{thm:network}
    Under assumptions \HBCL, \TCBCL, \NCBCL and \IDPN, the value functions $U^\e$ converge locally
    uniformly on $\mathbf{N}$ to a continuous function $U:\mathbf{N}\to \R$ which is the unique
    solution of: for any $k=1,..,N$ 
    $$\Fk(x,t,U,DU)=0\quad\text{on }\Man{k}\, ,$$
    with all the transmission conditions \eqref{transm-netw}.
\end{theorem}

\begin{proof}
    We just sketch it, the details being tedious but straightforward at this point of the book.

    \smallskip

    \noindent\textbf{(a)} The sequence $(U^\e)$ is uniformly bounded on
    $\mathbf{N}=\bigcup_{k=1}^{N}\Man{k}$: on one hand, $U^\e \geq U^\infty$ where $U^\infty$ is the
    value function obtained by dropping the term $\e^{-1}d(x)$ in the cost and, on the other hand,
    the normal controllability implies that once we start from a point in
    $\mathbf{N}$, we can stay there. So, $U^\e(x,t)\leq M\int_0^{+\infty}\exp(-C(s))\ds<\infty$.

    \smallskip

    \noindent\textbf{(b)} It is clear that $\underline U = \limiinf U^\e = +\infty$ in $\Man{N+1}$
    but, by the classical stability result, 
    $$ \F(x,t,\underline U,D\underline U)\geq 0\quad \hbox{on}\quad \mathbf{N}$$
    and this inequality reduces to 
    $$\Fk(x,t,\underline U,D\underline U)=0\quad\text{on}\quad\Man{k}$$
    for any $k$ as above. Here we used that on each (flat) connected component of $\Man{k}$, if
    $\underline U-\phi$ has a minimum point at $(x,t)\in \Man{k}$, where $\phi$ is a smooth
    function, then $\underline U-\phi -p\cdot x$ has also a minimum point for any $p$ which is
    orthogonal to $\tMan{k}$ at $x$. Then the choice of $p$ as a minimum point of 
    $\F(x,t,\underline U(x,t),D\underline \phi(x,t)+p)$ gives the answer.

    \smallskip

    \noindent\textbf{(c)} For the $\limssup$, we just take it on $\mathbf{N}$ and using (of course)
    only the points of $\mathbf{N}$. Denoting by $\overline U$ this $\limssup$, we have,  
    $$\Fk(x,t,\overline U,D\overline U)=0\quad\text{on }\Man{k}\, .$$
    And we can also pass to the limit in the transmission conditions \eqref{transm-netw}: to do so,
    we remark that, by Lemma~\ref{lem:onlyMk}, $\overline U=\limssup (U^\e|_{\overline{\Man{k}_i}})$
    for any $k$ and $i$.

    \smallskip

    \noindent\textbf{(d)} Using the arguments of Lemma~\ref{RSub-1}, the transmission conditions
    \eqref{transm-netw} allow to show that $\overline U$ is $\Man{k}_i$-regular at each point of
    $\partial\Man{k}_i$.

    \smallskip

    \noindent\textbf{(e)} This last point allows to copy exactly the stratified proof which
    provides the key inequality $\overline U \leq \underline U$ on $\mathbf{N}$ and
    the continuity/uniqueness of $U:=\overline U \leq \underline U$.
\end{proof}

\section{Some examples}

\subsection*{An easy one}

We begin with a very easy but relevant infinite chessboard example in $\R^2$ (but this can easily be
generalized to $\R^N$): 
$$ \tMan{0}=\Z^2\; ,\; \tMan{1}= (\R\times \Z \cup \Z \times \R) \setminus \Z^2\;,$$
and $\tMan{2}=\R^2\setminus (\tMan{1}\cup \tMan{0})$.

On this stratification, one can imagine lots of control problems by imposing a
certain limitation of speed and a certain cost on each edge $E_{i,j}^-=((i,j),(i+1,j))$ or
$E_{i,j}^+=((i,j),(i,j+1))$. For instance, if $x\in E_{i,j}^-$ or $x\in E_{i,j}^+$
$$ \BCL_1(x,t):=
\begin{cases}
\{((b^x,-1),0, |b^x|/2), |b^x|\leq 2\} & \hbox{if $i$ or $j$ is a prime}\\
\{((b^x,-1),0, 2|b^x|), |b^x|\leq 1\} & \hbox{otherwise}\; ,
\end{cases}
$$
the ``$1$'' in $\BCL_1$ referring to $\tMan{1}$. And if one insists on defining $\BCL(x,t)$ on
$\tMan{2}\times (0,+\infty)$, we can always choose $\{((b^x,-1),0, \e^{-1}), b^x \in
\overline{B(0,0.1)}\}$ and, on $\tMan{0}\times (0,+\infty)$ (but also on $\tMan{1}\times
(0,+\infty)$), we just use the extension by upper semicontinuity.

Such example is very simple because each connected component of $\tMan{1}$ (or $\Man{2}$) is
extremely simple and we have no problem to check all the needed assumptions by using the simple form
of the $\BCL$ and the complete controllability.

\subsection*{Ad augusta, per angusta}

The second example in $\R^3$ is the case where, if $(x_1,x_2,x_3)$ are the coordinates of $x\in \R^3$
$$ \tMan{2}= \{\ x_1=-1,\ (x_2,x_3)\neq (0,0)\} \cup \{\ x_1=+1,\ (x_2,x_3)\neq (0,0)\} \; ,$$
$$  \tMan{1}= (-1,1)\times\{ (0,0)\}\; , \; \tMan{0}= \{(-1,0,0), (1,0,0)\} \; ,$$
and $\tMan{3}=\R^3 \setminus (\tMan{2} \cup \tMan{1} \cup \tMan{0})$. 

Here we just define the specific dynamic and cost $\BCL_i$ on $\tMan{i}\times (0,+\infty)$
$$ \BCL_2(x,t):=\{((b^x,-1),0, |b^x|/2), |b^x|\leq 2\}\; ,$$
$$ \BCL_1(x,t):=\{((b^x,-1),0, 2|b^x|), |b^x|\leq 1\}\; ,$$
and we can see $\BCL_3(x,t)$ as being $\{((b^x,-1),0, \e^{-1}), |b^x|\leq 2\}$. On $\tMan{0}\times
(0,+\infty)$, we do not impose any particular cost, the $\BCL$ at such points (but also elsewhere)
being computed using the upper semi-continuity of $\BCL$.

What could be interesting in such example, at the ``network level'', is to force the dynamic $X$ to
go through $\tMan{1}$, which can be done by a suitable choice of the initial data. Choose for
example
$$ u_0(x_1,x_2,x_3)= -10 x_1 + (1-x_2^2)_+\; .$$
If we look at the control problem on $(\tMan{2} \cup \tMan{1} \cup \tMan{0})\times (0,+\infty)$, it
is clear that, starting from a point $(-1,x_2,x_3)$ and staying on $\{x_1=-1\}$, we are
going to pay a final cost at least $10$. 

But if we decide to go directly to $(-1,0,0)$, to use the ``channel'' $ \tMan{1}$ and then to go to
the point $(1,1,0)$, the total cost on the long run (\ie for $t$ large enough) will be
$$ 2^{-1}(x_2^2+x_3^2)^{1/2} + 4 + 2^{-1} -10 \; .$$
The four terms represent successively the cost for joining $(-1,0,0)$, crossing the channel, going
to the minimum point $(1,1,0)$ and finally the terminal cost.  Of course, if $(x_2^2+x_3^2)^{1/2}$
is not too large, this strategy is far better than the other one.

\chapter{Further Discussions and Open Problems}
\label{chap:openpb-partVI}

\abstract{Several questions and possible extensions are discussed here. The possible
generalization of the results for KPP which echo the key question of the convergence of the
vanishing viscosity method in (even relatively simple) stratified cases. Also considered are some
open problems including jumps and networks.}

This part of the book provides several open problems and the interest of some of them we already
described. But let us make some further comments on these.

\bigskip

\noindent\textbf{(a)} We first come back on the {\em applications to KPP}, considering the convergence of the vanishing
viscosity method. While, in the case of codimension~$1$ discontinuities, rather general results are
available, \cf for example Theorem~\ref{pro:viscous}, the least generalization to even very simple
stratified situations remains open.

Two questions are really very puzzling: on one hand, is it ``always'' true that the vanishing
viscosity method converges to the maximal Ishii (sub)solution? (and does the ``always'' require
some restrictions either on the geometry of discontinuities or on the Hamiltonians?). 
On the other hand, if this first result is correct, can we characterize the maximal Ishii (sub)solution?
This second question is discussed in Section~\ref{sec:puzzling} and as the reader can notice it
on page~\pageref{pb:cross}, even the simplest cases cause problems.\index{Vanishing viscosity method!in the stratified framework}

\bigskip

\noindent\textbf{(b)} Problems with {\em jumps}, even if we made a point to have a section which is
dedicated to discuss them, are completely open in general and perhaps/probably a change of strategy
in most of the proofs is needed. Hence almost everything needs to be done in this direction.

\bigskip

\noindent\textbf{(c)} The {\em network case}---and may be even more striking with the introduction of ${\rm
PC}^1(\mathbf{N})$---rises the questions of a pure pde comparison proof in the stratified or
network setting; with the idea of considering more general Hamiltonians
like in the case of codimension~$1$ discontinuities. But first, it is not clear for us that, the way
we define it, ${\rm PC}^1(\mathbf{N})$ is the right space of test-function (or $\PC1$ defined in an
analogous way in the stratified framework). Maybe in addition to be $C^1$ on each connected
component $\Man{k}_i$ of $\Man{k}$ for any $k$ and $i$, test-functions should also have $C^1$ extensions to
$\overline{\Man{k}_i}$ like in the codimension~$1$ case.

Next, clearly the ``Magical Lemma'', Lemma~\ref{lem:comp.fundamental} is based on the idea that
either an optimal trajectory stays on $\Man{k}$ and this leads to the $\F^k$-inequality or it enters
in the domain $\Man{k+1}\cup \cdots \cup \Man{N+1}$ and in this case, ``stratified inequalities'' are
missing. Above, in the network case, we introduced the transmission conditions \eqref{transm-netw}
which partly play this role. But this is not enough because, on one hand, the above reference on the
``Magical Lemma'' means that {\em supersolution inequalities} are missing and while
\eqref{transm-netw}---which are just here to ensure the regularity of subsolutions---are ``poor''
replacements for the needed inequalities since we have to take into account ALL the inner dynamics
to $\Man{k+1}\cup \cdots \cup \Man{N+1}$. 

We also refer the reader to the notion of \FL-solutions: in the stratified setting, the missing
inequalities are the analogue of the $H_1^+,H_2^-$-ones for the \FL-supersolutions and we recall
that these inequalities are automatically satisfied by the subsolutions thanks to
Proposition~\ref{sub-up-to-b}.

Yet a very formal proof is easy to write (with other open questions there if the Hamiltonians are
not convex!): 
\begin{enumerate}
\item[1.] We introduce the space $\PC1$ of continuous functions with are $C^1$ on each $\Man{k}$.
\item[2.] We apply the localization techniques which transform the subsolution into a coercive,
    strict subsolution.  
\item[3.] The regularization of a strict subsolution gives a strict subsolution which is in $\PC1$,
    hence this subsolution becomes a test-function (not completely true, even in the convex case,
        but very close to be valid. A real difficulty in the non-convex case).
\item[4.] Since the strict subsolution is a test-function, one should coclude by applying the
    definition (here clearly the problem is with the definition, \ie with the missing correct
        inequalities).
\end{enumerate}

Obviously a lot of works and certainly new ideas are needed to make this formal proof work!

%
%

\chapter*{Final words}
\addcontentsline{toc}{part}{Final words}

If we were to choose some concluding thoughts for the reader to be left with, we would
certainly insist on this one: even if the general framework we developed has some complexities and
technicalities, in particular for the stratified approach, 

\begin{center}
    \begin{minipage}{0.8\textwidth}\emph{we are convinced that the tandem \emph{normal
        controllability--tangential continuity} is the right setting for producing general results.}
    \end{minipage}
\end{center}

\noindent And we believe we have clearly justified this claim in the introduction, as well as in 
various places throughout the manuscript. 

Clearly, this leaves aside a lot of interesting examples where these assumptions are not fully
satisfied, at least partially. Probably some of them can be treated by some \emph{ad hoc}
modifications of our approach, but certainly other ones require different treatments.

This also leads us to insist again on some very interesting problems and open questions on the subject,
the resolution of which can lead to real improvements in what we are doing here. Some of them are
presented in the \emph{Further Comments} sections; let us recall here those which we find are the
most puzzling.
\begin{enumerate}
    \item The most iconic, simple problem we still do not know how to solve is the
        \emph{cross problem}, presented in Section~\ref{sec:puzzling}: in such configurations, the
        stratified approach works very well and provides a unique stratified solution, the minimal
        Ishii solution. But five years after, through coffee and headaches spending
        hours\footnote{To be frank, we have probably spent altogether something like a month or two
        thinking about this question!} on this, we are still not able to define and work with a
        maximal viscosity (sub)solution in this setting. More generally the question of whether it
        is possible to \emph{always} identify the maximal Ishii subsolution in the stratified
        framework is largely open. In particular, the case where the discontinuity is just a line in
        $\R^3$ is also very challenging.

    \item Assuming this first question is solved, the next one concerns the convergence of the
        vanishing viscosity method: is it \emph{always} true, at least in the stratified framework,
        that the vanishing viscosity method converges to the maximal Ishii subsolution? Or are there
        different characterizations depending on the nature of the discontinuities?
        \index{Vanishing viscosity method!general questions on}

    \item In our framework, the required notion of stratification is \TFS. This is clearly a
        restriction compared to a general stratification. Is this restriction really necessary to
        obtain comparison results? Or is it a condition linked to our method of proof? We have no
        idea about the answer.
        
    \item Because it is a natural notion of solution in the control/convex framework, it seems to us that the stratified
    Barron-Jensen approach should be pushed. We refer the reader to the end of Section~\ref{sect:SBJ} for some
    possible starting points of future investigations.

    \item A question that people keep asking and which is indeed very puzzling: is it possible to
        have a pure pde proof for the stratified problem, \ie some kind of generalization of the
        ``network approach'' in the stratified context? Of course, this would open the way to the
        treatment of non-convex equations, \emph{what a dream!}
        
    \item We have just scratched the surface for the theory and possible applications of the stratified approach for
    state-constraints: it is hard to imagine the wide scope of all problems which can be addressed through this approach.
    
\end{enumerate}

\

Let us end here by saying that, though we tried to minimize them, we take all the imperfections and
maybe mistakes in this book as good news. First, because embracing them is always more productive
(and more relaxing than being angry about it!), but also because anything that can stimulate further
research on the subject is positive. 

We have no doubt that new results and better understanding will unfold. In that process, we hope
that this book or at least some specific parts in it will serve as ``building blocks" for later
applications and new developments.


\appendix 

\part*{Appendices}
\addcontentsline{toc}{part}{Appendices}
\fancyhead[CO]{Appendices}

\chapter{Notations and Terminology}
\label{app:notations}

\begin{tabular}{ll}
$\F,\G,H$ & Generic Hamiltonians\\
$\mathcal{A,E}$ & Generic subsets of $\R^N$\\
$\mathcal{O,F}$ & Generic open and closed subsets of $\R^N$\\
$\mathcal{K}$ & Generic compact subset of $\R^N$\\
$B(y,r)$ & Open ball of center $y\in\R^k$ and of radius $r>0$
for the Euclidian norm.\\
\ & \ 
\\
$A$ & a compact, convex subset of $\R^p$\\
$\mathbb{A}$ & the space of controls, $\mathbb{A}=L^\infty(0,T;A)$ \\
$\BCL(\cdot,\cdot)$ & set-valued map combining all the dynamics, costs, discount factors, p.\pageref{not:BCL}\\
$(X,T,D,L)$ & a generic trajectory of the differential inclusion, p.\pageref{not:XTDL}\\
$\mT(x,t)$ & space of controled trajectories such that $(X,T,D,L)(0)=(x,t,0,0)$, p.\pageref{not:mT}\\
$\mT^\reg(x,t)$ & space of regular controlled trajectories such that \\
& $(X,T,D,L)(0)=(x,t,0,0)$, p.\pageref{not:mTreg}\\

\\

$s_-,s_+$ & negative and positive parts of $s\in \R$, p.\pageref{pos.neg}\\
$z_*,z^*$ & lower and upper semi-continuous enveloppes of a function $z$, p.\pageref{not:semic}\\
$\usc,\lsc$ & upper/lower semi-continuous function, p.\pageref{not:usc}\\
$V_k$ & $k$-dimensional vectorial subspace, typically $V_k=\R^k\times\{0\}^{N-k}$, p.\pageref{not:Vk}\\
$Q^{x,t}_{r,h}$ & the open cylinder $B(x,r)\times (t-h,t)$.\\
$Q^{x,t}_{r,h}[\mF]$ & the open cylinder $\big(B(x,r)\cap\mF\big)\times (t-h,t)$, p.\pageref{not:cyl}.\\
\\

$\USCS(\mF)$ & set of \usc subsolutions on $\mF$, p.\pageref{not:USCS}\\
$\LSCS(\mF)$ & set of \lsc supersolutions on $\mF$, p.\pageref{not:LSCS}\\
$\PC1$ & piecewise $C^1$-smooth test functions, p.\pageref{not:PCun}\\

\ & \
\\

\end{tabular}

\begin{tabular}{ll}
$\M$ & a general stratification of $\R^N$ or $\R^N\times (0,\Tf)$, p.\pageref{page:HSTreg}\\
\AFS & Admissible Flat Stratification, p.\pageref{not:AFS}\\
\LFS & Locally Flattenable Stratification, p.\pageref{page:HSTreg}\\
\TFS & Tangentially Flattenable Stratification, p.\pageref{page:HSTtfs}\\

\\

\TC & Tangential Continuity (pde version), p.\pageref{not:TC}\\
\TCBCL & Tangential Continuity (control version), p.\pageref{page:TCBCL}\\
\NCw & Weak Normal Controllability, p.\pageref{page:wNC}\\
\NCe & Normal Controllability (pde version), p.\pageref{not:NC}\\
\NCBCL & Normal Controllability (control version, p.\pageref{page:NCBCL}\\
\Mong & Monotonicity Assumption, p.\pageref{not:Mong}\\

\\

$\SCR$ & Strong Comparison Result, p.\pageref{not:SCR},\pageref{not:SCR2}\\
$\LCR$ & Local Comparison Result, p.\pageref{not:LCR}\\
$\GCR$ & Global Comparison Result, p.\pageref{not:GCR}\\

\end{tabular}

\

\

\textbf{Notions of solutions} (see also appendix B for quick reference)

\

\begin{tabular}{ll} 
    (CVS) & Ishii solutions / Classical viscosity solutions, 
    p.\pageref{eq:ishii.cond} (see also Section~\ref{subsec.disc.visc.sol})\\
    (FLS) & Flux-Limited Solution, p.\pageref{defiFL}\\
    (JVS) & Junction Viscosity solution, p.\pageref{defiJVS}\\
    (S-Sub) & Stratified subsolutions, p.\pageref{defiStrat}\\
    (S-Super) & Stratified supersolutions, p.\pageref{defiStrat}\\

\end{tabular}

\

\textbf{NB: }The ``good framework for HJ Equations with discontinuities'' is defined
p.\pageref{page:GF.HJE}.

\chapter{Assumptions, Hypotheses, Notions of Solutions}
\label{app:hyp}    

The page number refers to the page where the assumption is stated for the first time in the book.

\section*{Basic or fundamental assumptions}

\begin{app}{\HBACPc}{Basic Assumptions on the Control Problem -- Classical
    case,}{p.~\pageref{page:HBACPc}}
\begin{enumerate} 
\item[$(i)$] The function $\u0: \R^N\to \R$ is a bounded, uniformly continuous function.
\item[$(ii)$] The functions $b,c,l$ are bounded, uniformly continuous on $\R^N \times [0,\Tf] \times A$.
\item[$(iii)$] There exists a constant $C_1>0$ such that, for any $x,y \in \R^N$, $t \in [0,\Tf]$, $\alpha \in A$, we have
$$ |b(x,t, \alpha)-b(y,t,\alpha)| \leq C_1 |x-y|\; .$$
\end{enumerate} 
\end{app}

\begin{app}{\HBACP}{Basic Assumptions on the Control Problem,}{p.~\pageref{page:HBACP}}
\begin{enumerate} 
\item[$(i)$] The function $\u0: \R^N\to \R$ is a bounded, continuous function.
\item[$(ii)$] The functions $b,c,l$ are bounded, continuous functions on $\R^N \times [0,\Tf] \times A$ and the sets $(b,c,l)(x,t, A)$ are convex compact subsets of $\R^{N+2}$ for any $x\in \R^N$, $t\in [0,\Tf]$ \footnote{The last part of this assumption which is not a loss of generality will be used for the connections with the approach by differential inclusions.}.
\item[$(iii)$] For any ball $B\subset \R^N$, there exists a constant $C_1 (B)>0$ such that, for any $x,y \in \R^N$, $t \in [0,\Tf]$, $\alpha \in A$, we have
$$ |b(x,t, \alpha)-b(y,s,\alpha)| \leq C_1(B)\left( |x-y| +|t-s| \right)\; .$$
\end{enumerate} 
\end{app}

\begin{app}{\HBAHJ}{Basic Assumptions on the Hamilton-Jacobi equation,}{p.~\pageref{page:BA-HJ}}\\[2mm]
There exists a constant $C_2>0$ and, for any ball $B\subset \R^N\times [0,\Tf]$, for any $R>0$,
    there exists constants $C_1=C_1(B,R)>0, \gamma (R)\in \R $ and a modulus of continuity $m=m(B,R) : [0,+\infty) \to [0,+\infty)$ such that, for any $x,y \in B$, $t,s \in [0,\Tf]$, $-R \leq r_1 \leq r_2 \leq R$ and $p,q \in \R^N$
$$ |H(x,t,r_1,p)-H(y,s,r_1,p)|\leq C_1[|x-y|+|t-s|]|p| + m(|x-y|+|t-s|)\; ,$$
$$ |H(x,t,r_1,p)-H(x,t,r_1,q)|\leq C_2 |p-q|\; ,$$
$$ H(x,t,r_2,p)-H(x,t,r_1,p) \geq \gamma(R) (r_2-r_1) \; .$$
\end{app}

\begin{app}{\HGCP}{Basic Assumption on the $p_t$-dependence,}{p.~\pageref{page:HGCP}}\\[2mm] 
For any $(x,t,r,p_x,p_t)\in \mF\times(0,\Tf]\times\R\times\R^N\times\R$,
the function $p_t\mapsto \G\big(x,t,r,(p_x ,p_t)\big)$ is increasing and $\G\big(x,t,r,(p_x ,p_t) \big)
\to +\infty$ as $p_t \to +\infty$, uniformly for bounded $x,t,r,p_x$.
\end{app}

\begin{app}{\HBAConv}{Basic Assumption in the convex case,}{p.~\pageref{page:HJConv}}\\[2mm]
	$H(x,t,r,p)$ is a locally Lipschitz function which is convex in $(r,p)$. 
	Moreover, for any ball $B\subset \R^N\times [0,\Tf]$, for any $R>0$, there exists constants $L=L(B,R), K=K (B,R) >0$ 
	and a function $G=G(B,R):\R^N\to [1,+\infty[$ such that, for any $x,y \in B$, $t,s \in [0,\Tf]$, $-R \leq u \leq v \leq R$ and $p \in \R^N$
	$$ D_p H(x,t,r,p)\cdot p -H(x,t,u,p) \geq G(p) -L\; ,$$
	$$ |D_x H(x,t,r,p)|, |D_t H(x,t,r,p)| \leq K G(p)( 1+ |p|)\; ,$$
	$$ D_r H(x,t,r,p) \geq 0 \; .$$
\end{app}

\section*{Stratification assumptions}

\begin{app}{\HSTGEN}{General Stratifications,}{p.~\pageref{page:HSTgen}}\\[2mm]
        $\M=(\Man{k})_{k=0..N}$ is a \emph{General Stratification} of $\R^N$
        if the following set of hypotheses \HSTGEN is
        satisfied:
        \begin{enumerate}
    \item[$(i)$] For any $k=0..N$, $\Man{k}$ is a $k$-dimensional submanifold of $\R^N$.
    \item[$(ii)$] If $\Man{k}_i \cap \overline {\Man{l}_j}\neq \emptyset$ for some $l>k$
       then $\Man{k}_i \subset \overline {\Man{l}_j}$.
    \item[$(iii)$] For any $k=0..N$, $\overline{\Man{k}} \subset
       \Man{0}\cup\Man{1}\cup\cdots\cup\Man{k}$. 
    \item[$(iv)$] If $x \in \Man{k}$ for some $k=0..N$, there exists $r=r_x>0$ such that
    \begin{enumerate}
    \item[$(a)$] $B(x,r) \cap \Man{k}$ is a {\em connected} submanifold of $\R^N$;
     \item[$(b)$]  For any $l<k$, $ B(x,r) \cap \Man{l}=\emptyset$\,;
     \item[$(c)$]  For any $l>k$, $B(x,r)\cap\Man{l}$ is either empty or has at most a finite
                number of connected components\,;
    \item[$(d)$] For any $l>k$, $B(x,r)\cap\Man{l}_j\neq\emptyset$ if and only if
                $x\in\partial\Man{l}_j$.
        \end{enumerate}
    \end{enumerate}
\end{app}

\begin{app}{\HSTFLAT}{Flat Stratifications,}{p.~\pageref{page:HSTflat}}\\[2mm]
    The stratification $\M$ is an \AFS if it satisfies \HSTGEN, with the exception of 
    property \HSTGEN-$(iv)(a)$, which is replaced by 
    \begin{equation*} 
        \text{\HSTFLAT-}(iv)(a)\quad B(x,r) \cap \Man{k} = B(x,r) \cap (x+V_k) \text{ for some
        }(x+V_k)\in\V{k}(x)\;.
    \end{equation*}
    We denote by \HSTFLAT the set of conditions $(i)-(iv)$ with this replacement.
\end{app}

\begin{app}{\HSTLFS}{Locally Flattenable Stratifications,}{p.~\pageref{page:HSTlfs}}\\[2mm]
    $\M=(\Man{k})_{k=0..N}$ is a \emph{locally flattenable stratification} of
    $\R^N$---\,\LFS in short---\,if it satisfies the two following assumptions denoted by \HSTLFS
    \begin{enumerate}
        \item[$(i)$] the following decomposition holds:
            $\R^N=\Man{0}\cup\Man{1}\cup\cdots\cup\Man{N}$; 
        \item[$(ii)$] for any $x \in \R^N$, there exists $r=r(x)>0$ and a $C^{1,1}$-change of
            coordinates $\Psi^x: B(x,r) \to \R^N$ such that $\Psi^x(x)=x$ and
            $\{\Psi^x(\Man{k}\cap B(x,r))\}_{k=0..N}$ is the restriction to $\Psi^x(B(x,r))$ of an
            \AFS in $\R^N$.
    \end{enumerate}
\end{app}

\begin{app}{\HSTTFS}{Tangentially Flattenable Stratifications,}{p.~\pageref{page:HSTtfs}}\\[2mm]
    We say that $\M=(\Man{k})_{k=0..N}$ is a \emph{Tangentially Flattenable Stratification of $\OO$}\,---\TFS
    in short---\,if the following hypotheses hold:
    \begin{enumerate}
        \item[$(i)$] Hypotheses \HSTGEN are satisfied;
        \item[$(ii)$] for any $k$, $\Man{k}$ is a $C^{1,1}$-submanifold of $\OO$; moreover if $x\in \Man{k}$,
            there exists $r=r_x>0$ such that $B(x,r)\subset \OO$ and a $C^{1,1}$-diffeomorphism $\Psi_x$
            defined on $B(x,r)$ such that $\Psi_x(x)=x$ and
            $$\Psi_x(B(x,r) \cap \Man{k})=\Psi_x(B(x,r)) \cap (x+V_k)$$ 
            where $V_k$ is a $k$-dimensional vector subspace of $\R^N$;
        \item[$(iii)$] setting $\tMan{l}:=\Psi_x(B(x,r) \cap \Man{l}$ and
            $\tMan{l}_j= \Psi_x(B(x,r) \cap \Man{l}_j)$ for any connected component $\Man{l}_j$ of
            $\Man{l}$, 
            \begin{enumerate}
                \item[$(a)$] for any $l<k$, $ \tMan{l}=\emptyset$\,;
                \item[$(b)$] for any $l>k$, $\tMan{l}$ is either empty or has at most a finite
                number of connected components;
            \item[$(c)$] if $x\in\partial\tMan{l}_j$ and $y\in \tMan{l}_j$,
                $\Psi_x(B(x,r)) \cap (y+V_k) \subset \tMan{l}_j\;.$
            \end{enumerate}
    \end{enumerate}
\end{app}

\

\noindent\textbf{N.B.} \emph{Convergence in the sense of stratifications is
        defined} pp.~\pageref{page:strat.conv.un} and \pageref{page:strat.conv.deux}. 

\section*{Assumptions for the differential inclusion and the value function}

\begin{app}{\HBCLa}{Fundamental assumptions on the set-valued map $\BCL$,}{p.\pageref{page:HBCLa}}\\[2mm]
The set-valued map $\BCL:\R^N\times[0,\Tf]\to\mathcal{P}(\R^{N+3})$ satisfies
\begin{enumerate}
\item[$(i)$] The map $(x,t)\mapsto\BCL(x,t)$ has compact, convex images
      and is upper semi-continuous;
  \item[$(ii)$] There exists $M>0$, such that for any $x\in\R^N$ and $t> 0$,
      $$\BCL(x,t)\subset \big\{(b,c,l)\in\R^{N+1}\times\R\times\R:|b|\leq M; |c|\leq M; | l | \leq
          M\big\}\;,$$
  \end{enumerate}
\end{app}

\begin{app}{\HBCLb}{Structure assumptions on the set-valued map $\BCL$,}{p.\pageref{page:HBCLb}}\\[2mm]
There exists $\uc, K >0$ such that
\begin{enumerate}
\item[$(i)$] For all $x\in \R^N$, $t\in [0,\Tf]$ and $b=(b^x,b^t) \in \B (x,t)$, $-1 \leq b^t \leq 0$. Moreover, there exists $b=(b^x,b^t) \in \B (x,t)$ such that $b^t =-1$.
\item[$(ii)$] For all $x\in \R^N$, $t\in [0,\Tf]$,  if $((b^x,b^t),c,l) \in \BCL (x,t)$, then $-Kb^t + c \geq 0$. 
\item[$(iii)$] For any $x \in \R^N$, there exists an element in $\BCL(x,0)$ of the form $((0,0),c,l)$ with $c \geq \uc$.
\item[$(iv)$] For all $x\in \R^N$, $t\in [0,\Tf]$, if $(b,c,l) \in \BCL (x,t)$ then $\max(-b^t,c,l)\geq \uc$.
\end{enumerate}
\end{app}

\

\noindent \HBCL is just the conjunction of \HBCLa and \HBCLb.

\

\noindent \hyp{\hU} --- The value function $U$ is locally bounded on $\Omegb \times [0,\Tf]$,~p.\pageref{page:hU}.

\

\section*{Normal controllability, tangential continuity, monotonicity}

\begin{app}{\TC}{Tangential Continuity, HJ version,}{p.~\pageref{page:TC}}\\[2mm]
For any $\vc_1=(\vt_1,\vn), \vc_2=(\vt_2,\vn) \in B_\infty (\vcb,r)$, $|u|\leq R$ and $p\in \R^{N}$,
    then 
    $$|\G(\vc_1,u,p)- \G(\vc_2,u,p)|\leq C^R_1|\vt_1-\vt_2|.|p| +m^R\big(|\vt_1-\vt_2|\big)\;.$$
\end{app}

\begin{app}{\TCs}{Strong Tangential Continuity,}{p.~\pageref{page:TCs}}\\[2mm]
For any $R>0$, there exists $C^R_1>0$ and a modulus of continuity $m^R :
    [0,+\infty[ \to [0,+\infty[$ such that for any $\vc_1=(\vt_1,\vn), \vc_2=(\vt_2,\vn) \in
    B_\infty (\vcb,r)$, $|u|\leq R$, $p=(p_\vt,p_\vn)\in \R^{N}$, then 
    $$|\G(\vc_1,u,p)- \G(\vc_2,u,p)|\leq C^R_1|\vt_1-\vt_2|.|p_\vt| +m^R\big(|\vt_1-\vt_2|\big)\;.$$
\end{app}

\begin{app}{\NCw}{Weak Normal Controllability,}{p.~\pageref{page:wNC}}
\begin{enumerate}
\item[(i)] If $N-k>1$, there exists $e \in \R^{N-k}$ such that, for any $R>0$, we have
$$
\G(\vc,u,(p_\vt,Ce))\to +\infty \quad\hbox{when  }C \to +\infty\;,
$$
uniformly for $\vc=(\vt,\vn)\in B_\infty (\vcb,r)$, $|u|\leq R$, $|p_\vt|\leq R$. 
\item[(ii)] If $N-k=1$, this property holds for $e =+1$.
\item[(iii)] If $N-k=1$, this property holds for $e =-1$.
\end{enumerate}
\end{app}

\begin{app}{\NCe}{Normal Controllability, HJ version,}{p.~\pageref{page:NC}}\\[2mm]
    For any $\vc=(\vt,\vn)\in B_\infty (\vcb,r)$, $|u|\leq R$, $p=(p_\vt,p_\vn)\in \R^{N}$, then 
    $$\G(\vc,u,p)\geq C^R_2 |p_\vn| - C^R_3|p_\vt| -C^R_4\;.$$
\end{app}

\begin{app}{\NCoH}{Normal Controllability -- codimension $1$ case,}{p.~\pageref{page:NCoH}} \\[2mm]
For any $(x,t) \in \H \times [0,\Tf]$, there exists $\delta =
    \delta(x,t)$ and a neighborhood $\VV = \VV(x,t)$ such that, for any $(y,s) \in \VV$ \\[-4mm]
    $$\begin{aligned}[]
    [-\delta,\delta\,] & \subset \{b_1(y,s,\alpha_1)\cdot e_N,\ \alpha_1 \in A_1\}\quad 
    \hbox{if }(y,s)\in \Omegb_1\;,\\
    [-\delta,\delta\,] & \subset \{b_2(y,s,\alpha_2)\cdot e_N,\ \alpha_2 \in A_2\}\quad 
    \hbox{if }(y,s)\in \Omegb_2\;,
    \end{aligned}$$
    where $e_N=(0,0\cdots,0,1)\in \R^N$.
\end{app}

\begin{app}{\Mong}{Monotonicity property,}{p.~\pageref{page:Mong}}\\[2mm]
    For any $R>0$, there exists $\lambda_R, \mu_R \in \R$, such that one of the two following properties holds
\begin{enumerate}
    \item[] \Monu : $\lambda_R>0$ and for any $\vc\in B_\infty (\vcb,r)$, $p=(p_\vt,p_\vn)\in \R^{N}$,
        any $ R\leq u_1 \leq u_2 \leq R$, 
        \begin{equation}\label{mon-u-A}
        \G(\vc,u_2,p)-\G(\vc,u_1,p)\geq \lambda^R(u_2-u_1)\;; 
        \end{equation}
    \item[] \Monp :
         \eqref{mon-u-A} holds with $\lambda_R=0$, we have $\mu_R>0$ and
        \begin{equation}\label{mon-p-A}
            \G(\vc,u_1,q)-\G(\vc,u_1,p)\geq \mu^R(q_{\vt_1}-p_{\vt_1})\; ,
        \end{equation}
        for any $q=(q_\vt,p_\vn)$ with $p_{\vt_1}\leq q_{\vt_1}$ and
        $p_{\vt_i}=q_{\vt_i}$ for $i=2,...,p$.
\end{enumerate} 
\end{app}

\begin{app}{\TCBCL}{Tangential Continuity -- $\BCL$ version (when $\M$ is a \TFS and we can assume \wlg that
    $\Man{k}=(x,t) + V_k$).}{p.\pageref{page:TCBCL}}\\[2mm]
For any  $0\leq k\leq N+1$ and for any $(x,t) \in
\Man{k}$, there exists a constant $C_1>0$ and a modulus $m:[0,+\infty)\to \R^+$ such that,
for any $j\geq  k$, if
$(y_1,t_1),(y_2,t_2)\in\Man{j}\cap B((x,t),r)$ with 
$(y_1,t_1)-(y_2,t_2) \in V_k$, then for any $(b_1,c_1,l_1) \in \BCL(y_1,t_1)$,
there exists $(b_2,c_2,l_2) \in \BCL(y_2,t_2)$ such that
$$ |b_1-b_2|\leq C_1(|y_1-y_2|+|t_1-t_2|)\quad,\quad 
|c_1-c_2|+|l_1-l_2| \leq m\big(|y_1-y_2|+|t_1-t_2|\big)\; .$$
\end{app}

\begin{app}{\NCBCL}{Normal Controllability -- $\BCL$ version (when $\M$ is a \TFS and we can assume \wlg that
    $\Man{k}=(x,t) + V_k$).}{p.\pageref{page:NCBCL}}\\[2mm]
For any  $0\leq k\leq N+1$ and for any $(x,t) \in
\Man{k}$, there exists $\delta = \delta(x,t)>0$, such that, for any $(y,s) \in B((x,t),r)$,
one has $$B(0,\delta) \cap V_k^\bot \subset P^\bot\left(\B(y,s) \right)\, .$$
\end{app}

\section*{Localization, convexity, subsolutions}

\begin{app}{\LOCa}{localization hypothesis 1,}{p.~\pageref{page:LOCa}}\\[2mm]
If $\mF$ is unbounded, for any $u \in \USCS(\mF)$, for any $v \in \LSCS(\mF)$, there exists a
    sequence $ (u_\alpha)_{\alpha>0}$ of \usc subsolutions of \eqref{GenHJ} such that $u_\alpha(x) -
    v (x) \to -\infty$ when $|x|\to +\infty$, $x\in \mF$.  Moreover, for any $x\in \mF$,
    $u_\alpha(x) \to u(x)$ when $\alpha \to 0$.\end{app}

\begin{app}{\LOCb}{localization hypothesis 2,}{p.~\pageref{page:LOCb}}\\[2mm]
    For any $x \in \mF$, $r>0$, if $u \in \USCS(\mF^{x,r})$, there exists a sequence
    $(u^\delta)_{\delta>0}$ of functions in $\USCS(\mF^{x,r})$ such that $\floorF{u^\delta -
    u}\geq\eta(\delta)>0$ for any $\delta$.  Moreover, for any $y\in \mF^{x,r}$, $u^\delta(y) \to
    u(y)$ when $\delta \to 0$.
\end{app}

\noindent\textbf{N.B.} we recall that $\displaystyle
\floorF{f}:=f(x)-\max_{y\in\partial\mF^{x,r}}f(y)\;.$

\begin{app}{\LOCaEV}{Localization assumption one -- evolution case,}{p.~\pageref{page:LOCevol1}}\\[2mm]
If $\mF$ is unbounded, for any $u \in \USCS(\mF\times [0,\Tf])$, for any $v \in \LSCS(\mF\times
    [0,\Tf])$, there exists a sequence $(u_\alpha)_{\alpha>0}$ of \usc subsolutions of \eqref{GenHJ-evol}
    such that $u_\alpha(x,t) - v (x,t) \to -\infty$ when $|x|\to +\infty$, $x\in \mF$, uniformly for
    $t\in [0,\Tf]$. Moreover, for any $x\in \mF$, $u_\alpha(x,t) \to u(x,t)$ when $\alpha \to 0$.
\end{app}

\begin{app}{\LOCbEV}{Localization assumption two -- evolution case,}{p.~\pageref{page:LOCevol2}}\\[2mm]
     For any $x \in \mF$, if $u \in \USCS(Q^{x,t}_{\bar r,\bar h}[\mF])$ for some $0<\bar r$, $0<\bar h <t$, there
    exists $0<h\leq \bar h$ and a sequence $(u^\delta)_{\delta>0}$ of functions in
    $\USCS(Q^{x,t}_{\bar r,h}[\mF])$ such that $\floorFt{u^\delta-u}\geq\tilde\eta(\delta)>0$ with
    $\tilde\eta(\delta)\to0$ as $\delta\to0$.  Moreover $u^\delta \to u$ uniformly on $\overline{Q^{x,t}_{r,h}[\mF]}$
    when $\delta \to 0$.\end{app}

\begin{app}{\HSubHJ}{Existence of a subsolution,}{p.~\pageref{page:HJSub}}\\[2mm]
	There exists a $C^1$-function $\psi : \R^N \times [0,\Tf] \to \R$ which is a subsolution of \eqref{HJ:evol} 
	and which satisfies $\psi(x,t) \to -\infty$ as $|x| \to +\infty$, uniformly for $t\in [0,\Tf]$ and $\psi (x,0) \leq u_0(x) $ in $\R^N$.
\end{app}

\begin{app}{\HConv}{Convexity for a general Hamiltonian,}{p.~\pageref{page:HConv}}\\[2mm] 
    For any $x\in B_\infty (\vcb,r)$, the function $(u,p)\mapsto G(\vc,u,p)$ is convex.
\end{app}

\begin{app}{\QCR}{Quasiconvex Hamiltonians in $\R$,}{p.~\pageref{page:QCR}}\\[2mm]
$f:\R \to \R$ is continuous, coercive and quasi-convex, i.e for any $a\in\R$, the lower level set $\{x:f(x)\leq a\}$ is convex or
equivalently, for any $x,y\in \R$ and $\lambda\in(0,1)$, $$f(\lambda x+(1-\lambda)y)\leq
\max\{f(x),f(y)\}\;.$$
\end{app}

\begin{app}{\HQC}{Quasi-convex Hamiltonians in the $e_N$-direction in
    $\R^N$,}{p.~\pageref{page:HQC}}\\[2mm]
For any fixed $(x,t,r,p')$, the function $h:s\mapsto H(x,t,r,p'+se_N)$ satisfies \QCR. As a consequence, $H=\max(H^+,H^-)$
where $s\mapsto H^+(x,t,r,p'+p_N e_N)$ is decreasing and $s\mapsto H^-(x,t,r,p'+p_N e_N)$ is increasing.
\end{app}

\section*{Comparison results}

\begin{app}{$\GCR^\mF$}{Global Comparison Result in $\mF$;}{p.~\pageref{page:GCR}}\\[2mm]
	 For any $u \in \USCS(\mF)$, for any $v \in \LSCS(\mF)$, we have $u\leq v$ on $\mF$\;.
\end{app}

\begin{app}{$\LCR^\mF$}{Local Comparison Result in $\mF$,}{p.~\pageref{page:LCR}}\\[2mm]
 For any $x \in \mF$, there exists $\bar r>0$ such that, 
if $u \in \USCS(\mF^{x,\bar r})$, $v \in \LSCS(\mF^{x,\bar r})$ then for any $0<r\leq \bar r$,
$$ \max_{\overline{\mF^{x,r}}}(u-v)_+ \leq \max_{\partial \mF^{x,r} }(u-v)_+ .$$
\end{app}

\begin{app}{\LCREV}{Local comparison result -- evolution case,}{p.~\pageref{page:LCRevol}}\\[2mm]
    For any $(x,t) \in \mF \times (0,\Tf]$, there exists $\bar r>0$, $0<\bar h<t$ such that, for any $0<r\leq \bar r$, $0<h<\bar h$, 
    if $u \in \USCS(Q^{x,t}_{\bar r,\bar h}[\mF])$, $v \in \LSCS(Q^{x,t}_{\bar r,\bar h}[\mF])$,
    $$ \max_{\overline{Q^{x,t}_{r,h}[\mF]}}(u-v)_+ \leq \max_{\partial_p
    Q^{x,t}_{r,h}[\mF]}(u-v)_+\;.$$
\end{app}

    \noindent \textbf{N.B.:} here, $\partial_p Q^{x,t}_{r,h}[\mF]$ stands for the parabolic boundary of $Q^{x,t}_{r,h}[\mF]$, 
    composed of a ``lateral'' part and an ``initial'' part as follows
    $$\begin{aligned}\partial_p Q^{x,t}_{r,h}[\mF] &= \Big\{(\partial {B(x,r)}\cap \mF)\times
    [t-h,t]\Big\}\bigcup \Big\{(\overline{B(x,r)}\cap \mF)\times \{t-h\}\Big\}\\
    &=: \partial_\mathrm{\,lat} Q \cup \partial_\mathrm{\,ini}Q\;.\end{aligned}
    $$

\begin{app}{\LCRxt}{Local Comparison Result around $(x,t)$ in the stratified
    case,}{p.~\pageref{page:LCRxt}}\\[2mm]
            There exists $r=r(\xb,\tb)>0$ and $h=h(\xb,\tb)\in(0,\tb)$ such
            that, if $u$ and $v$ are respectively a strict stratified
            subsolution and a stratified supersolution of some
            $\psi$--Equation in $Q^{\xb,\tb}_{r,h}$ and if $\displaystyle
            \max_{\overline{Q^{\xb,\tb}_{r,h}}}(u-v) >0$, then $$
            \max_{\overline{Q^{\xb,\tb}_{r,h}}}(u-v) \leq \max_{\partial_p
                Q^{\xb,\tb}_{r,h}}(u-v)\;.$$
            \textbf{N.B.} here, $\psi$-equation means an equation with obstacle
            $\psi$, a continuous function: $\max(\F(x,t,w,Dw),w-\psi)=0$.
\end{app}

\section*{Notions of solutions}

\textbf{N.B.} The following definitions are just gathered here as a quick
reminder, the reader will find more details and the precise definition
on the page given in reference.

\begin{app}{(CVS)}{Classical Ishii Solutions for the hyperplane case,}{p.~\pageref{eq:ishii.cond}}\\[2mm]
            This is the ``classical'' notion of viscosity solution (hence the
            acronym \CVS) where on the hyperplane the relaxed condition reads (in
            the viscosity sense)
        \begin{equation*}\begin{cases}
	\max\Big(u_t+H_1(x,t,u,Du),u_t+H_2(x,t,u,Du)\Big)\geq0 \;,\\
	\min\Big(u_t+H_1(x,t,u,Du),u_t+H_2(x,t,u,Du)\Big)\leq0 \;.\\
	\end{cases}
        \end{equation*}
        The notion is ``classical'' in the sense that testing is done with
        test-functions in $C^1(\R^N\times[0,\Tf])$ contrary to (FLS) and (JVS)
        below.
\end{app}

For the ``network approach'', we use the following space of test-functions

\begin{app}{}{Test-functions in the ``network approach''}{p.~\pageref{not:PCun}}\\
    $\PC1$ is the space of piecewise $C^1$-functions $\psi \in C(\R^N \times [0,\Tf])$
    such that there exist $\psi_1 \in C^1(\Omegb_1\times [0,\Tf])$, $\psi_2 \in C^1(\bar
    \Omega_2\times [0,\Tf])$ such that $\psi = \psi_1$ in $\Omegb_1\times [0,\Tf]$ and $\psi =
    \psi_2$ in $\Omegb_2\times [0,\Tf]$.
\end{app}

Below, we use the convention (since the test-functions are not necessarily smooth
on $\H\times [0,\Tf]$) to use the derivatives of $\psi_1$ in the $H_1$-inequalities and those of $\psi_2$ in the $H_2$-inequalities.

\begin{app}{(FLS)}{Flux-Limited Solution,}{p.~\pageref{defiFL}}\\[2mm]
  \noindent A locally bounded function $u: \R^N \times (0,\Tf) \rightarrow \R$ is a
            flux-limited subsolution of \HJgen-\FL (\FLSub in short) if it is a classical viscosity subsolution of \HJgen and if, for
            any test-function $\psi \in \PC1$ and any local maximum point $(x,t) \in \H\times
            (0,\Tf) $ of $u^*-\psi$ in $\R^N\times (0,\Tf)$, at $(x,t)$ the following inequality
            holds 
            \[
              \max \Big(\psi_t + G(x,t,u^*,D_\H \psi), \psi_t+ H^+_1(x,t, u^*, D\psi_1) ,  
              \psi_t+ H^-_2(x,t, u^*, D\psi_2) \Big) \leq 0  \; ,
            \]
            where $u^*=u^*(x,t)$.

            \noindent  A locally bounded function $v: \R^N \times (0,\Tf) \rightarrow \R$
                is a flux-limited supersolution of \HJgen-\FL (\FLSup in short)  if it is a classical viscosity supersolution of \HJgen and
                if, for any test-function $\psi \in \PC1$ and any local minimum point $(x,t) \in
                \H\times (0,\Tf) $ of $v_*-\psi$ in $\R^N\times (0,\Tf)$, at $(x,t)$ the following
                inequality holds
            \[
              \max \Big(\psi_t + G(x,t,v_*,D_\H \psi), \psi_t+ H^+_1(x,t, v_*, D\psi_1) ,  
              \psi_t+ H^-_2(x,t, v_*, D\psi_2) \Big) \geq 0  \; ,
            \]
            where $v_*=v_*(x,t)$.\\
 \noindent A locally bounded function is a flux-limited solution if it is both a \FLSub and a \FLSup.
 \end{app}

\begin{app}{(JVS)}{Junction Viscosity solution,}{p.~\pageref{defiJVS}}\\[2mm]
A locally bounded function $u: \R^N \times (0,\Tf) \rightarrow \R$ is a \JVSub of
    \HJgen-\GJC if it is a classical viscosity subsolution of \HJgen and if, for any test-function
    $\psi=( \psi_1, \psi_2) \in \PC1$ and any local maximum point $(x,t) \in \H\times (0,\Tf) $ of
    $u^*-\psi$ in $\R^N\times (0,\Tf)$,
$$
        \min \Big(G(x,t,\psi_t,D_\H \psi,\frac{\partial \psi_1}{\partial n_1},
        \frac{\partial \psi_2}{\partial n_2}),  \psi_t+ H_1(x,t, u^*, D\psi_1) ,  
        \psi_t+ H_2(x,t, u^*, D\psi_2) \Big) \leq 0  \: ,
$$
    where $u^*$ and the derivatives of $\psi,\psi_1,\psi_2$ are taken at $(x,t)$.\\[2mm]
    A locally bounded function $v: \R^N \times (0,\Tf) \rightarrow \R$ is a \JVSup of
    \HJgen-\GJC if it is a classical viscosity supersolution of \HJgen and if, for any test-function
    $\psi=( \psi_1, \psi_2) \in \PC1$ and any local minimum point $(x,t) \in \H\times (0,\Tf) $ of
    $v_*-\psi$ in $\R^N\times (0,\Tf)$,
$$
        \max \Big(G(x,t,\psi_t,D_\H \psi,\frac{\partial \psi_1}{\partial n_1},
        \frac{\partial \psi_2}{\partial n_2}),  \psi_t+ H_1(x,t, v_*, D\psi_1) ,  
        \psi_t+ H_2(x,t, v_*, D\psi_2) \Big) \geq 0  \: ,
$$
    where $v_*$ and the derivatives of $\psi,\psi_1,\psi_2$ are taken at $(x,t)$.\\[2mm]
    A \JVS (\ie a junction viscosity solution) is a locally bounded function which is both \JVSub and
\JVSup.
\end{app}

\begin{app}{(S-Sub) / (S-Super)}{Stratified sub/supersolutions,}{p.~\pageref{defiStrat}}\\[2mm]
 {\bf 1.\,---} \SSup {\rm :} A locally bounded function $v : \R^N\times [0,\Tf[ \to \R$ is a
    stratified supersolution of \HJBS if $v$---or equivalently $v_*$---is an Ishii supersolution of 
    \eqref{eq:super.H.strat}. 
    
    \medskip

    \noindent {\bf 2.\,---} \wSSub {\rm :}
    A locally bounded function $u : \R^N\times [0,\Tf[ \to \R$ is a {\em weak} stratified subsolution
    of \HJBS if 
    \begin{enumerate}
    \item[$(a)$] for any $k=0,...,(N+1)$, $u^*$ is a viscosity subsolution of 
        $$ \F^k\big(x,t,u^*,Du^*\big) \leq 0 \quad \hbox{on  }\Man{k},
        $$
     \item[$(b)$] similarly, for $t=0$, and $k=0..N$, $u^*(x,0)$ is a viscosity subsolution of
            $$ \F_{init}^k(x,u^*(x,0),D_x u^*(x,0)) \leq 0 \quad \hbox{on  }\Man{k}_0\;.$$
    \end{enumerate}

    \noindent {\bf 3.\,---} \sSSub {\rm :} 
    A locally bounded function $u : \R^N\times [0,\Tf[ \to \R$ is a {\em strong}
    stratified subsolution of \HJBS if it is a \wSSub and satisfies additionally
    \begin{enumerate}
        \item[$(a)$] $\F_*\big(x,t,u^*,Du^*\big)\leq0\quad \hbox{in }\R^N\times(0,\Tf$)\;,
        \item[$(b)$] $(\F_{init})_*(x,u^*(x,0),D_x u^*(x,0)) \leq 0 \quad \hbox{in  }\R^N\;.$
    \end{enumerate}

    \noindent {\bf 4.\,---}  A weak or strong \emph{stratified solution} is a
function which is both a \SSup and either a \wSSub or a \sSSub.
\end{app}

\section*{``Good Assumptions'' for the Network Approach}

\begin{app}{\HBAHJp}{Nonnegative discount factor -- Hamiltonian version,}{p.~\pageref{page:HBAHJp}}\\[2mm]
    This is assumption \HBAHJ in which we assume $\gamma(R) \geq 0$ for any $R$.
    \end{app}

\begin{app}{\HBACPp}{Nonnegative discount factor -- Control version,}{p.~\pageref{page:HBAHJp}}\\[2mm]
    This is assumption \HBACP in which we assume $c(x,t,\alpha) \geq 0$ for any $x,t,\alpha$.
\end{app}

\begin{app}{\NCHJ}{Normal controllability for general Hamiltonians,}{p.~\pageref{page:NCHJ}}\\[2mm]
For any $R>0$, there exists constants $C^R_2,C^R_3,C^R_4>0$ such that, for any $(x,t)\in \H\times
(0,\Tf)$ with $|x|\leq R$, $|u|\leq R$ and $p=(p',p_N)$ with $p'\in \R^{N-1}$, $p_N\in \R$, 
$$H(x,t,u,p)\geq C^R_2 |p_N| - C^R_3|p'| -C^R_4\;.$$
\end{app}

\begin{app}{\TCsH}{Tangential Continuity for general Hamiltonians,}{p.~\pageref{page:TCsH}}\\[2mm]
for any $R>0$, there exists $C^R_1>0$ and a modulus of continuity $m^R : [0,+\infty[ \to
[0,+\infty[$ such that for any $x=(x',x_N), y=(y',x_N)$ with $|x|, |y|\leq R$, $|x_N|\leq R^{-1}$,
$t,s\in [0,\Tf]$, $|u|\leq R$, $p=(p',p_N)\in \R^{N}$,
$$|H(x,t,u,p)- H(y,s,u,p)|\leq C^R_1(|x'-y'|+|t-s|)|p'| +m^R\big(|x'-y'|+|t-s|\big)\;.$$
\end{app}

With these assumptions we can formulate several ``good assumptions'' on $H_1,H_2$ depending on the context\\

\begin{app}{\GAGen}{General case,}{p.~\pageref{page:GAGen}}\ $H_1,H_2$ satisfy \HBAHJp and \NCHJ.
    \end{app}

\begin{app}{\GAConv}{Convex case,}{p.~\pageref{page:GAConv}}\ 
    $H_1,H_2$ satisfy \GAGen, and convex in $p$.
    \end{app}

\begin{app}{\GAQC}{Quasi-convex case,}{p.~\pageref{page:GAQC}}\ $H_1,H_2$ satisfy \GAGen and \HQC.
    \end{app}

\begin{app}{\GACC}{Control case,}{p.~\pageref{page:GACC}}\ \HBACPp and \NCoH are satisfied.
\end{app}

\bigskip

\noindent\textbf{Assumptions on the junction condition $G$} 

\begin{app}{\HContG}{A general assumption,}{p.~\pageref{page:HContG}}\\[2mm]
    For any $R>0$, there exist constants $C^R_5, C^R_6$ such that, for any $x,y \in \H$, $t,s\in
    [0,\Tf]$, $|r|\leq R$, $p'_1,p'_2\in \R^{N-1}$, $a,b,c, a',b',c' \in \R$ 
    $$ |G(x,t,a, p_1',b,c)-G(y,s,a,p'_1,b,c)| \leq C^R_5 (|x-y|+|t-s|)\big(1+ |p_1'| +
    \e_0(|a|+|b|+|c|)\big)\; .$$ $$ |G(x,t,a', p_2',b',c')-G(x,t,a,p_1',b,c)| \leq C^R_6
    (|p_2'-p'_1| + (|a'-a|+|b'-b|+|c'-c|)\big)\; .$$
\end{app}

With this assumption we can formulate several ``good assumptions'' on $G$ depending on the context:

\begin{app}{\GAGFL}{Flux-Limited,}{p.~\pageref{page:GAGFL}}\\[2mm] 
    $G$ is independent of $a,b,c$ and \HContG holds with $\e_0=0$.
\end{app}

\begin{app}{\GAGKT}{Kirchhoff type,}{p.~\pageref{page:GAGKT}}\\[2mm]
    \HContG  holds with $\e_0=0$ and there exists $\alpha \geq 0$, $\beta>0$ such that, for any $x\in \H$, $t\in (0,\Tf)$, $r_1\geq r_2$,
$p'\in \H$, $a_1\geq a_2$, $b_1\geq b_2$, $c_1\geq c_2$, 
\begin{equation}\label{basic-hyp-GJC-A}
    \begin{aligned}
        G(x,t,r_1,a_1,p',b_1,c_1)\,-\, & G(x,t,r_2, a_2,p',b_2,c_2) \\[1mm] & \geq \alpha(a_1-a_2) + 
    \beta(b_1-b_2)+ \beta(c_1-c_2)\;.
    \end{aligned}
\end{equation}
\end{app}

\begin{app}{\GAGFLT}{Flux-limited type,}{p.~\pageref{page:GAGFLT}}\\[2mm] 
    $G(x,t,a, p',b,c)=G_1(a, p',b,c) +G_2(x,t,a, p')$ where $G_1$ is a Lipschitz continuous function
    which satisfies \eqref{basic-hyp-GJC-A} with $\alpha > 0$, $\beta=0$ while $G_2$ satisfies \GAGFL.
\end{app}

\section*{``Good Assumptions'' for Stratified Problems in $\R^{N}\times
       [0,\Tf)$}

\begin{app}{\HBASF}{Basic Assumptions on the Stratified Framework,}{p.~\pageref{page:HBASF}}
\begin{enumerate}
    \item[$(i)$] There exists a \TFS $\M=(\Man{k})_{k=0...(N+1)}$ of $\R^{N}\times
        (0,\Tf)$ such that, for any $r\in \R,p\in \R^{N+1}$, $(x,t)\mapsto\F(x,t,r,p)$ is continuous on $\Man{N+1}$ and may be
        discontinuous on $\Man{0} \cup \Man{1}\cup \cdots \cup \Man{N}$. Moreover $(0_{\R^N},1)
        \notin (T_{(x,t)}\Man{k})^\bot$ for any $(x,t) \in \Man{k}$ and for any
        $k=1...N$~\footnote{This assumption, whose aim is to avoid ``flat part'' of $\Man{k}$ in
        time, will be redundant to the normal controllability assumption in $\R^N\times(0,\Tf)$.  }.
        In the same way, there exists a \TFS $\M_0=(\Man{k}_0)_{k=0...N}$ of $\R^N$ such
        that, for any $r\in \R,p_x\in \R^{N}$, the Hamiltonian $x\mapsto\F_{init}(x,r,p_x)$ is continuous on $\Man{N}_0$ and may be
        discontinuous on $\Man{0}_0 \cup \Man{1}_0 \cup \cdots \cup
        \Man{N-1}_0$.
    \item[$(ii)$] The ``good framework for HJB Equations with discontinuities'' holds for
        Equation~\eqref{eq:super.H.strat} in $\OO=\R^N\times (0,\Tf)$ associated to the
        stratification $\M$.
    \item[$(iii)$] The ``good framework for HJB Equations with discontinuities'' holds for the
        equation $\F_{init}=0$ in $\OO=\R^N$, associated to the stratification $\M_0$. 
\end{enumerate}
\end{app}

We recall that the assumptions for a ``Good Framework for HJB Equations with Discontinuities'' are
that \HBCL, \TCBCL (p.~\pageref{page:TCBCL}) and \NCBCL (p.~\pageref{page:NCBCL}) hold. We refer to
Section~\ref{sec:GFHJD} where the connections with Hamiltonian assumptions \Mong, \TC, \NCe are
described.

\begin{app}{\HSBJ}{Assumptions for the Stratified Barron-Jensen framework,}{p.~\pageref{page:HSBJ}}
\begin{enumerate}
    \item[$(i)$] The stratification does not depend on time: for any $k=0..N$,
        $$\Man{k+1} = \tMan{k} \times \R\;, $$
        where $(\tMan{k})_k$ is a stratification of $\R^N$.
    \item[$(ii)$] We are given a classical \lsc and bounded initial data $g$, \ie we assume the 
        \lsc sub and supersolutions $u$ and $v$ we are considering satisfy
        $$ u(x,0) \leq g(x) \leq v(x,0) \quad \hbox{in  }\R^N\;.$$
    \item[$(iii)$] Hamiltonian $\F$ is a classical Hamiltonian of the form
        $$ \F\big(x,t,r,(p_t,p_x)\big)= p_t + \tilde \F(x,t,r,p_x)\; ,$$
        and there exists $0<\tTf \leq \Tf$ such that $\tilde \F$ is independent of $t$ if
        $0\leq t\leq \tTf$ and coercive, \ie there exists $\nu >0$ such that
        $$ \tilde \F(x,t,r,p_x) = \tilde \F(x,\tTf,r,p_x)\geq \nu |p_x| -M|r| -M\; ,$$
        for any $x\in \R^N$, $t\in [0,\tTf]$, $r\in \R$ and $p_x\in \R^N$,
        $M$ being the constant appearing in the assumptions for $\BCL$\;.
    \item[$(iv)$] The ``good framework for stratified solutions'' is satisfied.
\end{enumerate}
\end{app}

\section*{``Good Assumptions'' for Stratified Problems in the State-Constraints Case}

\begin{enumerate}
\item We still denote by \HBASF the case when this assumption is satisfied with $\R^N$ replaced by $\Omegb$.
\item \HBASFstar means that only Hypotheses \HBASF-$(i)$-$(ii)$ hold, again with $\R^N$ replaced by $\Omegb$. 
\end{enumerate}
\ 

\begin{app}{\HBAIDCP}{Basic Assumption on the Initial Data for the Cauchy
    Problem,}{p.~\pageref{page:HBAIDCP}}\\[2mm]
    There exists $u_0 \in C(\Omegb)$ such that
    $$\begin{cases} \F_{init}(x,r,p)=r-u_0(x) & \text{in } \Omegb \times \{0\}\;,\\
      \F^k_{init}(x,r,p)=r-u_0(x) & \text{on }\domeg \times \{0\}\;,\quad k=0..N\;.
    \end{cases}$$
\end{app}

\begin{app}{\QRB}{Quasi-regular boundary assumption,}{p.~\pageref{page:QRB}}\\[2mm]
For any $(x,t) \in \domeg \times (0,\Tf)$, if $(x,t) \in \Man{k}$, then there exists $r_0>0$ such that
$$ \left[ \Man{k+1}\cup \cdots \Man{N+1}\right] \cap B((x,t),r_0) \; \hbox{is connected}\; .$$
\end{app}

\begin{app}{\IDP}{Inward-pointing Dynamic Property,}{p.~\pageref{page:IDP}}\\[2mm]
    For any $x\in \domeg$,
    there exists $\tau,r>0$ and a $C^1$-function $\phi$ defined in $B(x,r)$ such that $\phi(y) = 0$
    if $y \in \domeg\cap B(x,r)$, $\phi(y)>0$ if $y \in \Omega\cap B(x,r)$, satisfying $$\hbox{For
    all  }(y,s)\in \big(\Omega\cap B(x,r)\big)\times[0,\tau]\;,\quad \sup_{\alpha \in A}\left\{
b(y,s,\alpha)\cdot D_x\phi (y)\right\} \geq 0\;.$$ 
\end{app}

\begin{app}{\IDPN}{Inward-pointing Dynamic Property for a Network,}{p.~\pageref{page:IDPN}}\\[2mm]
     For any $1\leq k \leq N$ and for each connected component $\Man{k}_i$ of $\Man{k}$, the
    assumptions of Lemma~\ref{lem:sufficient.cone} hold true for any $(y,t) \in
    \overline{\Man{k}_i}$ with $\Omega$ replaced by $\tMan{k}_i$ seen as a domain in $\R^{k-1}$.
\end{app}

\begin{app}{\HSBC}{Simplified Framework for Classical Boundary
    Conditions,}{p.~\pageref{page:simplifications}}\\[2mm]
    This set of assumptions essentially means that $(i)$ this is a standard HJB problem with
    Cauchy initial data; $(ii)$ the equation has no discontinuities inside the domain; $(iii)$ the
    domain is bounded, associated to a time-independent stratification. Moreover, the ``good
    framework'' holds, namely \HBACP and \HBASFstar hold.
\end{app}

\begin{app}{\Hgam[\omega]}{Natural Assumptions on $\gamma$ and $g$ on
    $\omega$,}{p.~\pageref{page:Hgam}}
    \begin{enumerate}
    \item[$(i)$] There exists $\nu>0$ and a Lipschitz continuous $\gamma_\omega:
            \R^N\times\R \to \R^N$ such that $\gamma=\gamma_\omega$ on $\omega\times[0,\Tf]$ and
            \begin{equation}\label{assump:NP-A} \gamma_\omega (x,t)\cdot n(x)\geq \nu >0\quad \hbox{on
            } \omega\times [0,\Tf], \end{equation} where $n(x)$ is the unit outward normal to $\domeg$
            at $x$\footnote{We point out that, when this assumption holds, $\domeg$ and $\omega$ coincide in a neighborhood of
            each $x\in \omega$ and therefore $\domeg$ is smooth at such points as a consequence of
the assumptions on $\omega$.}. 
\item[$(ii)$] There exists a continuous function $g_\omega:
\R^N\times \R\to \R$ such that $g=g_\omega$ on $\omega\times [0,\Tf]$.  \end{enumerate}\end{app}

\begin{app}{\Hgamma}{Specific Hypotheses for the Oblique Derivative
    Problem,}{p.~\pageref{page:Hgams}}\\[2mm]
    For any $i\in I^N$, \Hgam[\omega] holds for $\omega=\tMan{N}_i$ and we denote by $\gamma_i,g_i$
    the corresponding functions $\gamma_\omega,g_\omega$.  
\end{app}



\fancyhead[CO]{HJ-Equations with Discontinuities: Bibliography}
\bibliographystyle{plain}
\fancyhead[CO]{HJ-Equations with Discontinuities: Bibliography}
\addcontentsline{toc}{part}{References}


\addcontentsline{toc}{part}{Index}
\printindex

\end{document}